\definecolor{mygray}{gray}{0.75}
\newtheorem{tvrz}{Proposition}[section]
\newtheorem{lemma}[tvrz]{Lemma}
\newtheorem{theorem}[tvrz]{Theorem}
\newtheorem{cor}[tvrz]{Corollary}
\theoremstyle{definition}
\newtheorem{definice}[tvrz]{Definition}
\theoremstyle{remark}
\newtheorem{rem}[tvrz]{Remark}
\theoremstyle{definition}
\newtheorem{mdexample}[tvrz]{Example}
\newenvironment{example}%
{\begin{mdframed}[topline=false, rightline=false, bottomline=false, linewidth=0.2em, linecolor=mygray, innerleftmargin=0.5em, innerrightmargin=0,leftmargin=-0.7em]\begin{mdexample}}%
{\end{mdexample}\end{mdframed}}
\def\^{\wedge}
\def\<{\langle}
\def\>{\rangle}
\def\M{\mathcal{M}}
\def\O{\mathcal{O}}
\def\N{\mathbb{N}}
\def\cN{\mathcal{N}}
\def\cK{\mathcal{K}}
\def\cL{\mathcal{L}}
\def\cS{\mathcal{S}}
\def\X{\mathfrak{X}}
\def\R{\mathbb{R}}
\def\C{\mathcal{C}}
\def\Q{\mathcal{Q}}
\def\B{\mathcal{B}}
\def\E{\mathcal{E}}
\def\czC{\check{C}}
\def\czH{\check{H}}
\def\F{\mathcal{F}}
\def\G{\mathcal{G}}
\def\U{\mathcal{U}}
\def\V{\mathcal{V}}
\def\J{\mathcal{J}}
\def\P{\mathcal{P}}
\def\H{\mathcal{H}}
\def\Z{\mathbb{Z}}
\def\I{\mathcal{I}}
\def\ol{\overline}
\def\fPsi{\mathbf{\Psi}}
\def\frJ{\mathfrak{J}}
\def\frU{\mathfrak{U}}
\def\fp{\mathbf{p}}
\def\fq{\mathbf{q}}
\def\fr{\mathbf{r}}
\def\bbz{\mathbbm{z}}
\def\bby{\mathbbm{y}}
\def\bbu{\mathbbm{u}}
\def\fD{\mathbf{D}}
\def\dr{\mathrm{d}}
\def\1{\mathbbm{1}}
\def\f1{\mathbf{1}}
\def\A{\mathcal{A}}
\def\~{\widetilde}
\def\dlim{\varinjlim}
\def\tr{\triangleright}
\def\tl{\triangleleft}
\def\frS{\mathfrak{S}}
\newcommand{\Li}[1]{\mathcal{L}_{#1}}
\newcommand\ul[1]{\underline{#1}}
\DeclareMathOperator{\gdim}{gdim}
\DeclareMathOperator{\colim}{colim}
\DeclareMathOperator{\Op}{\textbf{Op}}
\DeclareMathOperator{\AgMod}{\mathit{A}\text{-}\textbf{gMod}}
\DeclareMathOperator{\gcRng}{\textbf{gcRng}}
\DeclareMathOperator{\gRng}{\textbf{gRng}}
\DeclareMathOperator{\Rng}{\textbf{Rng}}
\DeclareMathOperator{\Et}{\textbf{Et}}
\DeclareMathOperator{\Set}{\textbf{Set}}
\DeclareMathOperator{\ev}{ev}
\DeclareMathOperator{\pg}{pg}
\DeclareMathOperator{\lc}{lc}
\DeclareMathOperator{\fin}{\hspace{0.3mm}f}
\DeclareMathOperator{\lfin}{\hspace{0.3mm}lf}
\DeclareMathOperator{\Vect}{\textbf{Vec}}
\DeclareMathOperator{\gVect}{\textbf{gVec}}
\DeclareMathOperator{\PSh}{\textbf{PSh}}
\DeclareMathOperator{\gAb}{\textbf{gAb}}
\DeclareMathOperator{\Sh}{\textbf{Sh}}
\DeclareMathOperator{\Sff}{Sff}
\DeclareMathOperator{\gMan}{\textbf{gMan}}
\DeclareMathOperator{\gSet}{\textbf{gSet}}
\DeclareMathOperator{\Man}{\textbf{Man}}
\DeclareMathOperator{\As}{\textbf{As}}
\DeclareMathOperator{\gAs}{\textbf{gAs}}
\DeclareMathOperator{\gcAs}{\textbf{gcAs}}
\DeclareMathOperator{\cAs}{\textbf{cAs}}
\DeclareMathOperator{\gLRS}{\textbf{gLRS}}
\DeclareMathOperator{\gVBun}{\textbf{gVBun}}
\DeclareMathOperator{\im}{im}
\DeclareMathOperator{\Der}{Der}
\DeclareMathOperator{\GL}{GL}
\DeclareMathOperator{\op}{op}
\DeclareMathOperator{\odd}{odd}
\DeclareMathOperator{\Lin}{Lin}
\DeclareMathOperator{\lin}{lin}
\DeclareMathOperator{\gDer}{gDer}
\DeclareMathOperator{\an}{an}
\DeclareMathOperator{\gr}{gr}
\DeclareMathOperator{\rk}{rk}
\DeclareMathOperator{\grk}{grk}
\DeclareMathOperator{\supp}{supp}
\begin{document}
\begin{flushright}
\today
% preprint number (if any)
\end{flushright}
\vspace{0.7cm}
\begin{center}
 %\vskip1cm

\baselineskip=13pt {\Large \bf{Global Theory of Graded Manifolds}\\}
 \vskip0.5cm
 {\it Dedicated to Frank Vincent Zappa}  
 \vskip0.7cm
 {\large{Jan Vysoký$^{1}$}}\\
 \vskip0.6cm
$^{1}$\textit{Faculty of Nuclear Sciences and Physical Engineering, Czech Technical University in Prague\\ Břehová 7, 115 19 Prague 1, Czech Republic, jan.vysoky@fjfi.cvut.cz}\\
\vskip0.3cm
\end{center}

\begin{abstract}
A theory of graded manifolds can be viewed as a generalization of differential geometry of smooth manifolds. It allows one to work with functions which locally depend not only on ordinary real variables, but also on $\Z$-graded variables which can either commute or anticommute, according to their degree. To obtain a consistent global description of graded manifolds, one resorts to sheaves of graded commutative associative algebras on second countable Hausdorff topological spaces, locally isomorphic to a suitable ``model space''. 

This paper aims to build robust mathematical foundations of geometry of graded manifolds. Some known issues in their definition are resolved, especially the case where positively and negatively graded coordinates appear together. The focus is on a detailed exposition of standard geometrical constructions rather then on applications. Necessary excerpts from graded algebra and graded sheaf theory are included. 
\end{abstract}

{\textit{Keywords}: Graded manifolds, graded commutative rings and algebras, graded sheaf theory, graded vector bundles, supermanifolds}.

\tableofcontents
\section*{Introduction} \label{sec_introduction}
\addcontentsline{toc}{section}{\nameref{sec_introduction}}
In recent years, graded manifolds rose to prominence as a useful tool both in differential geometry and mathematical physics. In a nutshell, they can be viewed as a modification of supergeometry, allowing for a more refined grading of local coordinates governed by the abelian group $\mathbb{Z}$ instead of just $\mathbb{Z}_{2}$. Most notably, the theory of graded manifolds is a crucial mathematical notion for BV \cite{batalin1983quantization,batalin1984gauge} and AKSZ \cite{alexandrov1997geometry} formalism in quantum field theory. There is a plethora of excellent materials covering this topic, see e.g. \cite{qiu2011introduction,ikeda2017lectures,2011RvMaP..23..669C}. On the other hand, graded manifolds provide an elegant alternative description of Courant algebroids \cite{roytenberg2002structure}, they can be used to construct Drinfel'd doubles for Lie bialgebroids \cite{Voronov:2001qf}, Lie algebroids and their higher analogues \cite{Voronov:2010hj} or they are utilized for integration of Courant algebroids \cite{li2011integration, vsevera2015integration}. Note that this is not intended to be a complete list of references. 

Essentially, graded manifolds can be defined in two distinct ways. Note that by ``graded'' we exclusively mean ``$\Z$-graded''. First, one can define them as supermanifolds whose structure sheaf is equipped with an additional $\Z$-grading. It can be either compatible with the supermanifold grading \cite{Kontsevich:1997vb, severa2001some}, or not \cite{Voronov:2019mav}. Alternatively, one can attempt to mimic the Berezin--Leites and Kostant viewpoint on supermanifolds \cite{leites1980introduction, kostant1977graded} and define graded manifolds directly as a sheaf of $\Z$-graded commutative associative algebras over a second countable Hausdorff topological space, locally isomorphic to an appropriate ``local model'' sheaf. This was carried out e.g. in  \cite{mehta2006supergroupoids, jubin2019differential}, and used also in \cite{2011RvMaP..23..669C}. The main idea is to consider a finite-dimensional graded vector space $V$ and its corresponding symmetric algebra $S(V)$. For each open set $U \subseteq \R^{n}$, one can then consider a graded commutative associative algebra
\begin{equation} \label{eq_modelcommutative}
\C^{\infty}_{n}(U) \otimes_{\R} S(V),
\end{equation}
where $\C^{\infty}_{n}$ denotes the sheaf of smooth functions on $\R^{n}$. After choosing some (homogeneous) basis $(\xi_{\mu})_{\mu=1}^{m}$ of $V$, elements of this algebra can be viewed as polynomials in the variables $\xi_{\mu}$ with coefficients in the algebra of smooth functions on $U$, where 
\begin{equation}
\xi_{\mu} \xi_{\nu} = (-1)^{|\xi_{\mu}||\xi_{\nu}|} \xi_{\nu} \xi_{\mu}.
\end{equation}
However, it was noted in \cite{fairon2017introduction} that the assignment (\ref{eq_modelcommutative}) in general does not define a (graded) locally ringed space, an important assumption of the sheaf-theoretic approach to graded manifolds. This can be fixed by considering just non-negatively graded manifolds (usually called $N$-manifolds in the literature). Unfortunately, this does not help with the other issue - it does not define a sheaf of graded commutative associative algebras. Note that this was first brought to my attention in the bachelor thesis \cite{bialas}. The solution to this issue, proposed in \cite{fairon2017introduction}, is to resort to formal power series in ``purely graded'' coordinates $\xi_{\mu}$. Additionally, this brings new subtle issues with the very notion of graded objects (vector spaces, rings, algebras) which have to be resorted in order to get a consistent theory of graded manifolds. In this paper, we stick to the second approach. More precisely, we aim towards the following statement:

\textit{A graded manifold is a graded locally ringed space $\M = (M, \C^{\infty}_{\M})$, where $M$ is a second countable Hausdorff topological space, locally isomorphic to a graded domain. 
}

Taking into account the issues described above, this involves settling for a precise definition of a graded locally ringed space and choosing an appropriate local model for graded manifolds, their ``prototypical'' example, called henceforth a \textit{graded domain}. Note that one of our intentions was to evade the dangerous waters of functional analysis and popular ``by a suitable completion'' arguments, in order to not obscure the algebraic nature of graded manifolds. 

This brings us to the main question. What is the purpose of this paper? There are already many research papers aimed towards examples and applications in geometry and physics. Our main philosophy is quite the opposite. We would like to explore the territory of graded manifolds \textit{themselves}, with an emphasis on their global description. In particular, we investigate the graded analogues of most of standard notions of differential geometry. Our big motivation was to find a $\Z$-graded counterpart of the excellent book \cite{carmeli2011mathematical}. 

To achieve this goal, we have decided to build everything from the ground up. The reader is only required to have a basic knowledge of (linear) algebra, differential geometry and category theory. No prior knowledge of sheaf theory or algebraic geometry is needed to understand the text. Except for very few occasions, we provide explicit proofs of all propositions, moving the more complicated ones to the appendix to not disrupt the flow of the text. 

The paper is organized as follows:

\begin{enumerate}[$\bullet$]
\item In Section \ref{sec_galgbasics}, we introduce basic notions of graded algebra. Let us emphasize that we took a bit unconventional approach. Graded objects are \textit{not} assumed to be ordinary ones allowing for a direct sum decomposition labeled by $\Z$, but rather sequences of ordinary objects labeled by $\Z$, with graded morphisms being sequences of ordinary ones. It turns out that not only this viewpoint is necessary to work with formal power series, but it is also more ``category friendly''. 

Graded abelian groups are defined as sequences of ordinary abelian groups. Since there is a notion of a tensor product of two graded abelian groups, one can define graded (commutative) rings. We introduce a notion of local graded rings and their basic properties. Similarly, graded vector spaces are sequences of ordinary vector spaces and a tensor product of two graded vector spaces is used to define graded (commutative) associative algebras. 

Tensor and symmetric algebras of a given graded vector space are constructed as examples of graded (commutative) associative algebras. Importantly, we generalize this notion to obtain a so called extended symmetric algebra with coefficients in a given graded commutative associative algebra, essential in order to define a local model for graded manifolds. The final subsection is devoted to graded modules and their derivations. 
\item Section \ref{sec_grsheaves} serves as a necessary introduction to sheaf theory. We start by defining presheaves and sheaves on a topological space valued in the category of graded commutative associative algebras. After initial definitions, we construct an example which is of utmost importance for this paper, see Example \ref{ex_thesheaf}. On the other hand, in Example \ref{ex_notasheaf} we argue why (\ref{eq_modelcommutative}) in general fails to define a sheaf.

Next, we define stalks of presheaves of graded commutative associative algebras and prove that they exist. Consequently, we may use them to describe the canonical sheafification procedure making each presheaf of graded commutative associative algebras into a sheaf. 

In the following subsection, we introduce the category of graded locally ringed spaces. Each its object consists of a topological space $X$ together with a sheaf of graded commutative associative algebras on $X$, such that all its stalks are graded local rings. This is a fundamental mathematical notion of this paper, since graded manifolds \textit{are} just special examples of graded locally ringed spaces. In Example \ref{ex_twogLRS}, we show that under certain assumptions, the sheaf from Example \ref{ex_thesheaf} defines a graded locally ringed space. 

Finally, we in detail examine the category of sheaves of graded $\A$-modules, where $\A$ is a sheaf of graded commutative associative algebras. In particular, we focus on its subcategory of locally finitely and freely generated ones. There is a good reason for this - graded vector bundles will be \textit{defined} as locally finitely and freely generated sheaves of $\A$-modules, where we choose $\A$ to be a structure sheaf of a graded manifold. 

\item In Section \ref{sec_grman}, we deal with the main subject of this paper, graded manifolds. We start with the definition of graded domains, examples of graded locally ringed spaces serving as a local model for graded manifolds. After examining their basic properties, we use a graded analogue of Hadamard's lemma to understand the morphisms of graded domains. It turns out that they have a quite rigid structure, see Theorem \ref{thm_gradedomaintheorem}.

Having the notion of a graded domain in hand, we arrive to the definition of a graded manifold. We show that its underlying topological space is always an ordinary smooth manifold, and there is a canonical \textit{body map} from its structure sheaf to the sheaf of ordinary smooth functions. We give some elementary examples and examine standard properties of functions on graded manifolds (sections of its structure sheaf).

In the following subsection, we formulate and prove a set of collation and gluing theorems, allowing one to construct graded manifolds from collections of local data satisfying certain consistency conditions. An example of this procedure is used to construct a degree shifted vector bundle. 

We then prove that partitions of unity exist on graded manifolds and explain standard manipulations with them. We list some of the vast consequences for the properties of  structure sheaves of a graded manifolds. In particular, there exist graded bump functions, locally defined functions can be in some sense extended to globally defined ones, and quotients of a structure sheaf by a sheaf of ideals always form a \textit{sheaf}.

The final subsection shows that the category of graded manifolds has binary products. We can thus construct a product graded manifold $\M \times \cN$ of two graded smooth manifolds together with canonical projections, having the universal property. This determines it uniquely up to a graded diffeomorphism. 

\item Section \ref{sec_infinitesimal} is devoted to infinitesimal properties of graded manifolds. A tangent space to a graded manifold at a given point of its underlying topological space is defined as a graded vector space of graded derivations from the respective stalk to the space of real numbers. We define a differential of a graded smooth map at a given point and argue that it has all the usual properties. Using graded local charts, we construct a usual (total) basis of a tangent space, consisting of ``partial derivatives''. This allows us to prove that the graded dimension is invariant with respect to graded diffeomorphisms. 

Vector fields on a graded manifold $\M$ are introduced as the sheaf of graded derivations of the structure sheaf $\C^{\infty}_{\M}$. We prove that it defines a locally freely and finitely generated sheaf of graded $\C^{\infty}_{\M}$-modules by constructing a local frame consisting of usual ``coordinate'' vector fields. We prove that vector fields can be evaluated at a point, defining thus a tangent vector at the respective tangent space. Graded commutator of two vector field is introduced and we show that this makes the sheaf of vector fields into a sheaf of graded Lie algebras. Every graded smooth map $\phi$ allows one to talk about $\phi$-related vector fields, and we show how this is compatible with differentials and graded commutators. 

In the final subsection, we prove the graded version of one of the most fundamental statements of differential geometry, the inverse function theorem. We then use it to prove the local immersion and submersion theorems, providing one with convenient graded local charts adapted to immersions and submersions. The graded version of the implicit function theorem is derived. 

\item In Section \ref{sec_vb}, we deal with graded vector bundles over graded manifolds. They are defined as locally finitely and freely generated sheaves of graded $\C^{\infty}_{\M}$-modules of a constant graded rank. We show that each graded vector bundle has its dual, and use this fact to define a category of graded vector bundles over arbitrary graded manifolds. 

Subbundles and quotients by subbundles are introduced. Most importantly, there is a natural notion of a pullback graded vector bundle. Other canonical operations with vector bundles are introduced, like Whitney sums, tensor products and degree shifts. Transition maps and functions are constructed from a given local trivialization and their form for dual, pullback and degree shifted vector bundles is examined. 

In the final subsection, we use the transition functions to construct a total space of a graded vector bundle. We obtain a graded manifold over an ordinary vector bundle, together with a projection to the base manifold. It is unique up to a graded diffeomorphism and we show how the original sheaf of sections can be obtained from its sheaf of smooth functions. 

\item A sheaf of differential forms on a graded manifold is defined in Section \ref{sec_exterior} as a sheaf of functions on a degree shifted tangent bundle. Note that since our graded manifolds can have coordinates of arbitrary negative degree, this requires one to ``shift degrees just enough'' to ensure that the coordinate $1$-forms have a positive degree. We show that for each $p \in \Z$, there is a canonical subsheaf of $p$-forms and introduce an alternate more convenient grading. $0$-forms can be then identified with functions and $1$-forms can be identified with sections of the cotangent bundle. 

The exterior differential and the interior product can be naturally obtained as vector fields on the degree shifted tangent bundle, hence by construction, they define graded derivations of the sheaf of differential forms. The Lie derivative is then given as their graded commutator, and a full list of Cartan relations is satisfied by all three operations. We show that differential forms can be pulled along graded smooth maps, and on $1$-forms, this can be viewed as an induced morphism of the respective tangent bundles.  

In the final subsection, we define the de Rham cohomology for graded manifolds. It is easy to see that it is trivial for non-zero degree forms. We prove the Poincaré lemma for the degree zero de Rham cohomology, showing that for graded domains, it is isomorphic to the ordinary de Rham cohomology of the underlying open subset. In particular, this shows that  closed forms on graded manifolds are locally exact. We prove that globally, the degree zero de Rham cohomology of any graded manifold is isomorphic to the Čech cohomology of the constant sheaf, hence to the ordinary de Rham cohomology. In particular, the de Rham cohomology in its presented form cannot distinguish among two graded manifolds over the same underlying manifold. 

\item In Section \ref{sec_submanifolds}, the last one in this paper, we deal with submanifolds. Immersed and (closed) embedded submanifolds are defined in a straightforward way. We prove the extension property for functions on embedded submanifolds. One can identify the tangent space of any immersed submanifold with a subspace of the tangent space of the ambient graded manifold and we examine the subset of vector fields tangent to a submanifold. The tangent bundle to any immersed submanifold can be identified with a subbundle of the restricted tangent bundle, hence allowing for a definition of a normal bundle.

We spend some time by examining a special class of sheaves of ideals of a structure sheaf, called sheaves of regular ideals. In a nutshell, for certain points of the underlying manifold, they are finitely generated on some its neighborhood, such that the generating graded subset consists of global functions independent at that point. This kind of sheaves of ideals plays a crucial part in the algebraic description of closed embedded submanifolds explored in the following subsection. More precisely, we show that every closed embedded submanifold $(\cS,\iota)$ of $\M$ defines and is uniquely determined by a regular sheaf of ideals $\ker( \iota^{\ast}) \subseteq \C^{\infty}_{\M}$. 

After defining the notion of transversal graded smooth maps, one can use this observation to construct an inverse image of a closed embedded submanifold. We observe that it can be actually  defined as a pullback in the category of graded smooth manifolds. Consequently, this can be used as a general definition of an inverse image of a submanifold. As an important example, one can define regular level set submanifolds for any regular value of a graded smooth map.

In the final subsection, we show how inverse image submanifolds can be utilized to obtain the well defined notion of a fiber product of two manifolds. We prove that for a pair of transversal maps, the fiber product exists. In particular, we can define an intersection of two immersed submanifolds and prove that if they are transversal, they intersect cleanly. 
\end{enumerate}

\section{Basics of graded algebra} \label{sec_galgbasics}
Let us establish the most important conventions which will be used throughout the entire paper. 

A \textbf{graded set} is a sequence $S = \{ S_{k} \}_{k \in \Z}$, where for each $k \in \Z$, $S_{k}$ is a set. We usually write $x \in S$ and say that \textbf{$x$ is an element of $S$}, if there exists unambiguous $k \in \Z$, such that $x \in S_{k}$. This $k$ is called the \textbf{degree of $x$} and it is denoted as $|x|$. We say that a graded set $U$ is a subset of a graded set $S$, if $U_{k} \subseteq S_{k}$ for all $k \in \Z$. We will write $U \subseteq S$. 

A \textbf{graded mapping} $\varphi: S \rightarrow T$ from a graded set $S$ to a graded set $T$ is a sequence $\varphi = \{ \varphi_{k} \}_{k \in \Z}$, where for each $k \in \Z$, $\varphi_{k}: S_{k} \rightarrow T_{k}$ is an ordinary mapping of sets. Whenever possible, for any $x \in S$, we will write just $\varphi(x)$ instead of $\varphi_{|x|}(x)$. Graded sets together with graded mappings form a \textbf{category of graded sets} denoted as $\gSet$. 

We will use these conventions for every subsequent ``graded'' category. Note that a graded set $S$ \textit{is not} a set. Moreover, in our formalism there is no place for inhomogeneous (without a defined degree) elements.
\subsection{Graded abelian groups and rings}
By a \textbf{graded abelian group}, we mean a sequence $A = \{ A_{k} \}_{k \in \Z}$, where for each $k \in \Z$, $A_{k}$ is an ordinary abelian group. By a \textbf{graded morphism} $\varphi: A \rightarrow B$ of two graded abelian groups $A$ and $B$, we mean a sequence $\varphi = \{ \varphi_{k} \}_{k \in \Z}$, where for each $k \in \Z$, $\varphi_{k}: A_{k} \rightarrow B_{k}$ is a group homomorphism. Graded abelian groups together with graded morphisms form a category $\gAb$. 

\begin{rem}
To get in habit of using an imprecise notation, let $a,b \in A$. We write $a + b$ whenever it makes sense, that is iff $|a| = |b|$ and the addition is carried out in the abelian group $A_{|a|} = A_{|b|}$. Similarly, we will often use $a = 0$ instead of $a = 0_{|a|}$, where $0_{|a|}$ is the neutral element in $A_{|a|}$. We also use $0$ to denote the trivial graded abelian group $\{ 0_{k} \}_{k \in \Z}$. 
\end{rem}

A \textbf{graded subgroup} $B \subseteq A$ is a collection $B = \{ B_{k} \}_{k \in \Z}$, where for each $k \in \Z$, $B_{k} \subseteq A_{k}$ is a subgroup. For every $A,B \in \gAb$, define the tensor product $A \otimes_{\Z} B$ for each $k \in \Z$ by 
\begin{equation}
(A \otimes_{\Z} B)_{k} := \bigoplus_{i \in \Z} A_{i} \otimes_{\Z} B_{k-i}.
\end{equation}
One can view $\Z$ as a graded abelian group. It is then easy to see that $\otimes_{\Z}$ makes $\gAb$ into a monoidal category with $\Z$ playing the role of its unit object. Moreover, one can introduce a braiding $\tau_{AB}: A \otimes_{\Z} B \rightarrow B \otimes_{\Z} A$ defined on generators by 
\begin{equation} \label{eq_gAbbraiding}
\tau_{AB}(a \otimes b) := (-1)^{|a||b|} b \otimes a,
\end{equation}
for all $a \in A$ and $b \in B$. This makes $\gAb$ into a symmetric monoidal category. 

\begin{definice} \label{def_gring}
Let $R$ be a graded abelian group together with a graded morphism $\mu: R \otimes_{\Z} R \rightarrow R$. Let  $1 \in R_{0}$ be an element called the \textbf{ring unit of $R$}. For all $r,r' \in R$, we will henceforth use the notation
\begin{equation}
r \cdot r' := \mu( r \otimes r').
\end{equation} 
We say that $(R,\mu,1)$ is a \textbf{graded ring}, if for all $r,r',r'' \in R$, one has the associativity rule
\begin{equation}
r \cdot (r' \cdot r'') = (r \cdot r') \cdot r'',
\end{equation}
and the multiplication with the unit does nothing: $r \cdot 1 = 1 \cdot r = r$, for all $r \in R$. We say that $(R,\mu,1)$ is a \textbf{graded commutative ring}, if for all $r,r' \in R$, one has 
\begin{equation}
r \cdot r' = (-1)^{|r||r'|} r' \cdot r. 
\end{equation}
A graded morphism $\varphi: R \rightarrow S$ (an arrow in $\gAb$) is called a \textbf{graded ring morphism}, if $\varphi(r \cdot r') = \varphi(r) \cdot \varphi(r')$ and $\varphi(1) = 1$, for all $r,r' \in R$. Graded rings together with graded ring morphisms form a category which we denote as $\gRng$, and graded commutative rings form its full subcategory $\gcRng$. 
\end{definice}
Note that for a graded (commutative) ring $R$, the component $R_{0}$ is an ordinary (commutative) ring. The definition of graded rings is justified by the following observation:
\begin{tvrz} \label{tvrz_ringsaremonoids}
Graded (commutative) rings are (commutative) monoids in the symmetric monoidal category $(\gAb,\otimes_{\Z},\tau)$. 
\end{tvrz}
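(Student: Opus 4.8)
The plan is to unwind both sides of the claimed equivalence into the same data and check they coincide. Recall that a monoid in a monoidal category $(\mathcal{C}, \otimes, \mathbf{1})$ is an object $R$ equipped with a multiplication arrow $\mu \colon R \otimes R \to R$ and a unit arrow $\eta \colon \mathbf{1} \to R$ satisfying the associativity pentagon (really a square, since $\otimes_{\Z}$ is strict enough here) and the two unit triangles; it is commutative if additionally $\mu \circ \tau_{RR} = \mu$. For us $\mathbf{1} = \Z$, so a unit arrow $\eta \colon \Z \to R$ in $\gAb$ is the same as a choice of element $\eta(1) \in R_{0}$, since a graded group morphism out of $\Z$ (concentrated in degree $0$) is determined by the image of the generator $1$, and it must land in $R_{0}$. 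I would write $1 := \eta(1)$ and observe this matches the ``ring unit'' datum in Definition \ref{def_gring}.

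First I would set up the dictionary: given a graded ring $(R, \mu, 1)$, define $\eta \colon \Z \to R$ by $\eta(n) = n \cdot 1$ (the $n$-fold sum in $R_{0}$); conversely given a monoid $(R, \mu, \eta)$ set $1 := \eta(1)$. Then I would verify that the associativity axiom $r \cdot (r' \cdot r'') = (r \cdot r') \cdot r''$ for all homogeneous $r, r', r''$ is literally the statement that the diagram expressing $\mu \circ (\mu \otimes \id_R) = \mu \circ (\id_R \otimes \mu)$ commutes: evaluating both composite graded morphisms $R \otimes_{\Z} R \otimes_{\Z} R \to R$ on a generator $r \otimes r' \otimes r''$ gives exactly the two sides of the equation, and since such generators span the relevant abelian groups (the tensor product $(R \otimes_{\Z} R \otimes_{\Z} R)_k = \bigoplus_{i+j+l=k} R_i \otimes_{\Z} R_j \otimes_{\Z} R_l$ is generated by homogeneous elementary tensors), equality on generators is equality of morphisms. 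Similarly the unit triangles $\mu \circ (\eta \otimes \id_R) = \id_R = \mu \circ (\id_R \otimes \eta)$, after identifying $\Z \otimes_{\Z} R \cong R \cong R \otimes_{\Z} R$ via the strict-ish left/right unitors, unwind to $1 \cdot r = r = r \cdot 1$. For the commutative case, I would evaluate $\mu \circ \tau_{RR}$ on a generator $r \otimes r'$: by \eqref{eq_gAbbraiding} this is $\mu((-1)^{|r||r'|} r' \otimes r) = (-1)^{|r||r'|} r' \cdot r$, so $\mu \circ \tau_{RR} = \mu$ is exactly graded commutativity $r \cdot r' = (-1)^{|r||r'|} r' \cdot r$.

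Then I would check the dictionary is a genuine bijection compatible with morphisms: a graded ring morphism $\varphi \colon R \to S$ is an arrow of $\gAb$ with $\varphi \circ \mu_R = \mu_S \circ (\varphi \otimes \varphi)$ and $\varphi(1_R) = 1_S$, and the latter is equivalent to $\varphi \circ \eta_R = \eta_S$ because $\eta_S = $ the unique morphism sending $1 \mapsto 1_S$ and $\varphi(\eta_R(1)) = \varphi(1_R)$; this is precisely the definition of a monoid morphism. So $\gRng$ is isomorphic (not merely equivalent) as a category to the category of monoids in $(\gAb, \otimes_{\Z}, \tau)$, restricting to $\gcRng$ and commutative monoids. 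The only mildly delicate points — and the ``main obstacle,'' though it is minor — are bookkeeping ones: making sure the identification $\Z \otimes_{\Z} R \cong R$ is the correct unitor so that $\eta \otimes \id_R$ composed with it really is ``multiply by $1$ on the left'', and confirming that no Koszul sign sneaks in when verifying associativity (it does not, since associativity involves no transposition of factors). I would note explicitly that because every component group $(R \otimes_{\Z} R)_k = \bigoplus_i R_i \otimes_{\Z} R_{k-i}$ is generated by homogeneous simple tensors, it suffices throughout to test equality of graded morphisms on such generators, which is what licenses passing between the element-wise axioms of Definition \ref{def_gring} and the arrow-theoretic axioms of a monoid.
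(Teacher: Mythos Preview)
Your proposal is correct. The paper does not actually provide a proof of this proposition; it is stated as a justifying observation immediately after Definition~\ref{def_gring} and left to the reader. Your unpacking of the correspondence --- matching $\eta(1)$ with the unit, checking associativity, unit, and commutativity axioms on homogeneous generators, and verifying that graded ring morphisms coincide with monoid morphisms --- is exactly the routine verification the paper omits, and it is carried out cleanly.
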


A graded subgroup $J \subseteq R$ is called a \textbf{left ideal}, if for all $r \in R$ and $j \in J$, one has $r \cdot j \in J$. Right ideals are defined similarly. If $J$ is both left and right ideal, we call it an \textbf{ideal}. If $R$ is graded commutative, all these notions coincide. For any ideal $J$, there is a unique graded ring structure on the quotient $R / J$, where $(R / J)_{k} := R_{k} / J_{k}$ for all $k \in \Z$, making the canonical quotient map $q: R \rightarrow R/J$ into a graded ring morphism. 

For a graded ring $R$, there is a graded set $\frU(R) \subseteq R$ defined for each $k \in \Z$ as 
\begin{equation}
\frU(R)_{k} := \{ r \in R_{k} \; | \; \text{there is } s \in R_{-k} \text{ such that } r \cdot s = s \cdot r = 1 \},
\end{equation}
called the \textbf{group of units of $R$}. $R$ is called a \textbf{graded division ring}, if $\frU(R)_{k} = R_{k} - \{0_{k}\}$ for all $k \in \Z$. 

A left ideal $J \subsetneq R$ is called a \textbf{maximal left ideal}, if for any other left ideal $I \subsetneq R$, the inclusion $J \subseteq I$ implies $J = I$. Using the Zorn's lemma, one can show that any left ideal $J \subsetneq R$ is contained in some maximal left ideal. In particular, if $R \neq 0$, there always exists some maximal left ideal. For $R \neq 0$, one can thus define the \textbf{left Jacobson radical} $\frJ(R)$ as the intersection of all maximal left ideals. For $R = 0$, set $\frJ(R) := 0$. One can show that $\frJ(R)$ is always an ideal.

\begin{definice}
Let $R$ be a non-zero graded ring. We say that $R$ is a \textbf{local graded ring}, if it contains a \textit{unique} maximal left ideal. In such a case, $\frJ(R)$ is \textit{the} unique maximal left ideal. 

Let $R$ and $S$ be local graded rings. A graded ring morphism $\varphi: R \rightarrow S$ is called a \textbf{local graded ring morphism}, if $\varphi( \frJ(R)) \subseteq \frJ(S)$. Local graded rings together with local graded ring morphisms form a subcategory of $\gRng$. 
\end{definice}

One can restate this definition in several equivalent ways. The proof of the following proposition can be obtained by modifying the one of Theorem 19.1 in \cite{lam2013first}.
\begin{tvrz} \label{tvrz_grlocal}
Let $R$ be a non-zero graded ring. Then the following statements are equivalent:
\begin{enumerate}[(i)]
\item $R$ contains a unique maximal left ideal.
\item $R$ contains a unique maximal right ideal.
\item $R / \frJ(R)$ is a graded division ring.
\item $R - \frU(R)$ is an ideal in $R$.
\item $R - \frU(R)$ is a graded subgroup of $R$. 
\item For any $k \in \Z$, the condition $r + s \in \frU(R)_{k}$ implies that either $r \in \frU(R)_{k}$ or $s \in \frU(R)_{k}$. 
\end{enumerate}
If $R$ is a local graded ring, then one has $\frJ(R) = R - \frU(R)$. 
\end{tvrz}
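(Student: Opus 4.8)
The plan is to transcribe the classical noncommutative argument (Theorem~19.1 in \cite{lam2013first}) into the graded setting, keeping careful track of degrees. Write $N := R - \frU(R)$, so that $N_{k} = R_{k} - \frU(R)_{k}$; since $R \neq 0$ we have $0 \in N_{0}$, and $N$ is closed under negation because the negative of a unit is a unit. Before the main cycle I would isolate three facts. First, in any graded ring an element with both a left and a right inverse is a two-sided unit: if $r \in R_{k}$, $v,w \in R_{-k}$ and $vr = 1 = rw$, then $v = v(rw) = (vr)w = w$. Second, for every $j \in \frJ(R)_{0}$ one has $1 + j \in \frU(R)_{0}$: the left ideal $R(1+j)$ is not proper, since a proper left ideal lies in some maximal left ideal $\frm$, which already contains $j \in \frJ(R) \subseteq \frm$ and would then contain $1$; hence $1+j$ has a left inverse $u \in R_{0}$, and from $u = 1 - uj$ with $uj \in \frJ(R)_{0}$ the same argument gives $u$ a left inverse, while $u$ already has the right inverse $1+j$, so the first fact yields $u \in \frU(R)_{0}$ and therefore $1+j \in \frU(R)_{0}$. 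Third, the idempotent trick: if $r \in R_{k}$ has a left inverse $v \in R_{-k}$, then $e := rv \in R_{0}$ is idempotent and $e \neq 0$ (otherwise $v = v(rv) = 0$, contradicting $vr = 1$), and if moreover $e \in \frU(R)_{0}$ then $r$ has the right inverse $v e^{-1}$, hence $r \in \frU(R)_{k}$.

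With these in hand I would run the cycle $(i) \Rightarrow (iii) \Rightarrow (iv) \Rightarrow (v) \Rightarrow (vi) \Rightarrow (i)$. For $(i) \Rightarrow (iii)$: the unique maximal left ideal $\frm = \frJ(R)$ contains no unit, so $\frm \subseteq N$; conversely if $r \in R_{k} \setminus \frm$ then $Rr \not\subseteq \frm$, so $Rr = R$ (a proper left ideal would lie in $\frm$), giving a left inverse $v$, and then $e = rv$ is a nonzero idempotent with $R(1-e)$ a proper left ideal (else $1 - e = 1$, i.e. $e = 0$), so $1 - e \in \frm \subseteq \frJ(R)$, whence $e \in \frU(R)_{0}$ by the second preliminary fact and $r \in \frU(R)_{k}$ by the third; thus $\frm = N$ and each nonzero homogeneous class in $R/\frJ(R) = R/N$ is the class of a unit, i.e. $R/\frJ(R)$ is a graded division ring. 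For $(iii) \Rightarrow (iv)$ I would show again $N = \frJ(R)$: the inclusion $\frJ(R) \subseteq N$ is clear, and if $r \in R_{k} \setminus \frJ(R)$ then its class is a unit in the graded division ring $R/\frJ(R)$, so $sr = 1 + j$ and $rs = 1 + j'$ for some $s \in R_{-k}$ and $j,j' \in \frJ(R)_{0}$; by the second fact $sr, rs \in \frU(R)$, so $r$ is left- and right-invertible, hence $r \in \frU(R)$. Then $(iv) \Rightarrow (v)$ is immediate (an ideal is in particular a graded subgroup), and $(v) \Rightarrow (vi)$ is the contrapositive of the closure of each $N_{k}$ under addition.

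For $(vi) \Rightarrow (i)$ I would check that $N$ is a two-sided ideal. It is a graded subgroup: closure of $N_{k}$ under addition is exactly the contrapositive of $(vi)$. For $RN \subseteq N$, suppose $r \in N_{k}$, $t \in R$, but $tr \in \frU(R)$ with inverse $u$; then $(ut)r = 1$, so $r$ has a left inverse, and the idempotent $e = r(ut) \in R_{0}$ is nonzero; applying $(vi)$ to $1 = e + (1-e)$ in degree $0$ and excluding $1 - e \in \frU(R)_{0}$ (which forces $e = 0$) gives $e \in \frU(R)_{0}$, hence $r \in \frU(R)$, a contradiction — so $tr \in N$; the inclusion $NR \subseteq N$ is the mirror argument. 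Therefore $N$ is a proper left ideal that contains every proper left ideal (a proper left ideal contains no unit), so it is the unique maximal left ideal, which is $(i)$; and from the above (or already from $(i) \Rightarrow (iii)$) we read off $\frJ(R) = R - \frU(R)$, the final assertion. Finally $(i) \Leftrightarrow (ii)$: conditions $(iv)$–$(vi)$ are left–right symmetric, and the argument $(vi) \Rightarrow (i)$ read with ``left'' and ``right'' interchanged gives $(ii)$ — indeed $N$ was shown to be a two-sided ideal absorbing all proper left ideals and, symmetrically, all proper right ideals, so it is simultaneously the unique maximal left and the unique maximal right ideal.

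I do not expect a conceptual obstacle: the statement is classical and the work is essentially bookkeeping. The point that needs care is precisely that bookkeeping — inverses reverse the degree, so the idempotents $rv$ manufactured from one-sided inverses land automatically in $R_{0}$, which is what makes condition $(vi)$, stated degree by degree, applicable to them; and the recurring mechanism ``a one-sidedly invertible element of such a ring is a two-sided unit'', together with ``$1 + \frJ(R)_{0} \subseteq \frU(R)_{0}$'', is the genuine engine of the proof and is worth isolating as a lemma before the cycle is begun.
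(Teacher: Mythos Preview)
Your proposal is correct and follows exactly the approach the paper indicates, namely transcribing Lam's Theorem~19.1 into the graded setting with attention to degrees; the paper itself gives no proof beyond that reference. The one cosmetic point is the parenthetical ``else $1-e=1$'' in the step $(i)\Rightarrow(iii)$: what you mean is that $R(1-e)=R$ would give $s(1-e)=1$ for some $s$, whence $e = s(1-e)e = 0$ --- worth spelling out, but the argument is sound.
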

This proposition has two consequences important for this paper.
\begin{cor} \label{cor_isomorphismarelocal}
Let $R$ and $S$ be two local graded rings and let $\varphi: R \rightarrow S$ be a graded ring isomorphism. Then it is a local graded ring morphism and $\varphi(\frJ(R)) = \frJ(S)$. 
\end{cor}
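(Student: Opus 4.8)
The plan is to deduce this from Proposition \ref{tvrz_grlocal}, using the characterization of the Jacobson radical as the complement of the group of units. Since $\varphi$ is a graded ring isomorphism, it has a graded ring inverse $\varphi^{-1}: S \to R$, and both $\varphi$ and $\varphi^{-1}$ are multiplicative and unit-preserving. The key observation is that a graded ring isomorphism maps the group of units bijectively onto the group of units: if $r \in \frU(R)_k$ with inverse $s \in R_{-k}$, then $\varphi(r) \cdot \varphi(s) = \varphi(r \cdot s) = \varphi(1) = 1$ and similarly $\varphi(s) \cdot \varphi(r) = 1$, so $\varphi(r) \in \frU(S)_k$; applying the same argument to $\varphi^{-1}$ gives the reverse inclusion, hence $\varphi(\frU(R)) = \frU(S)$ as graded subsets.

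Now I would combine this with the last assertion of Proposition \ref{tvrz_grlocal}: since $R$ and $S$ are local graded rings, one has $\frJ(R) = R - \frU(R)$ and $\frJ(S) = S - \frU(S)$. Because $\varphi$ is a bijection in each degree $k$ carrying $\frU(R)_k$ onto $\frU(S)_k$, it carries the complement $R_k - \frU(R)_k$ onto $S_k - \frU(S)_k$; that is, $\varphi(\frJ(R)) = \frJ(S)$. In particular $\varphi(\frJ(R)) \subseteq \frJ(S)$, which is precisely the condition for $\varphi$ to be a local graded ring morphism, completing the proof.

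There is essentially no hard part here — the statement is a formal consequence of the unit-complement description of the radical for local graded rings. The only point requiring a small amount of care is that the equality $\varphi(\frU(R)) = \frU(S)$ needs the inverse map to also be a graded ring morphism (so that units are reflected, not just preserved); this is automatic since the inverse of a graded ring isomorphism is again a graded ring morphism, which follows by applying $\varphi^{-1}$ to the identities $\varphi(r \cdot r') = \varphi(r)\cdot\varphi(r')$ and $\varphi(1)=1$. One should also note that degree bookkeeping is trivially respected: $r \in R_k$ forces $\varphi(r) \in S_k$ and the inverse $s$ of $r$ lives in $R_{-k}$, so everything stays within the graded framework with no degree shifts.
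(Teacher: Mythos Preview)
Your proof is correct and follows essentially the same approach as the paper: both use the last assertion of Proposition~\ref{tvrz_grlocal} to identify $\frJ(R) = R - \frU(R)$ and $\frJ(S) = S - \frU(S)$, and then argue that a graded ring isomorphism induces a bijection between the groups of units (the paper phrases this via the contrapositive, showing $\varphi(r) \in \frU(S)$ implies $r \in \frU(R)$, while you show units map to units in both directions, but this is the same content).
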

\begin{proof}
By the previous proposition, we have $\frJ(R) = R - \frU(R)$ and $\frJ(S) = S - \frU(S)$. To prove that $\varphi(\frJ(R)) \subseteq \frJ(S)$, it thus suffices to show that whenever $\varphi(r) \in \frU(S)$, then $r \in \frU(R)$. But if $s$ is the two-sided inverse to $\varphi(r)$, then $\varphi^{-1}(s)$ is obviously the two-sided inverse to $r$. Hence $\varphi$ is a local graded ring morphism. The inclusion $\frJ(S) \subseteq \varphi(\frJ(R))$ is proved similarly. 
\end{proof}
\begin{cor} \label{cor_isoringtolocalislocal}
Let $R$ be a local graded ring and $S$ a graded ring. Let $\varphi: R \rightarrow S$ be a graded ring isomorphism. Then $S$ is also local. 
\end{cor}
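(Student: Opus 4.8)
The plan is to transport the characterization of locality in part (iv) of Proposition \ref{tvrz_grlocal} along the isomorphism $\varphi$, reusing the unit-chasing trick from the proof of Corollary \ref{cor_isomorphismarelocal}. First I would check that $S$ is a non-zero graded ring, so that the notion of local graded ring and Proposition \ref{tvrz_grlocal} actually apply to it: since $R$ is non-zero, its unit satisfies $1 \neq 0$ (otherwise $r = r \cdot 1 = 0$ for all $r \in R$), and because $\varphi$ is injective in each degree, $1 = \varphi(1) \neq 0$ in $S$.

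Next I would show that $\varphi$ restricts to a bijection $\frU(R) \to \frU(S)$. If $r \in \frU(R)$ has two-sided inverse $s \in R_{-|r|}$, then $\varphi(r)\varphi(s) = \varphi(r \cdot s) = \varphi(1) = 1 = \varphi(s \cdot r) = \varphi(s)\varphi(r)$, so $\varphi(r) \in \frU(S)$; applying the same observation to the graded ring morphism $\varphi^{-1}$ gives $\varphi^{-1}(\frU(S)) \subseteq \frU(R)$, hence $\varphi(\frU(R)) = \frU(S)$. Since $R$ is local, Proposition \ref{tvrz_grlocal} tells us $\frJ(R) = R - \frU(R)$ is an ideal of $R$. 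The image of an ideal $J \subseteq R$ under the isomorphism $\varphi$ is again an ideal of $S$: it is a graded subgroup (the degreewise image of one), and for $j \in J$ and $s = \varphi(r) \in S$ we have $s \cdot \varphi(j) = \varphi(r \cdot j) \in \varphi(J)$ and likewise on the right. Because $\varphi$ is a degreewise bijection, $\varphi(R - \frU(R)) = \varphi(R) - \varphi(\frU(R)) = S - \frU(S)$, so $S - \frU(S)$ is an ideal of $S$. By the implication (iv) $\Rightarrow$ (i) of Proposition \ref{tvrz_grlocal}, $S$ contains a unique maximal left ideal, i.e. $S$ is a local graded ring.

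The argument is essentially routine; the only points requiring a little care are verifying that $S$ is non-zero (needed for Proposition \ref{tvrz_grlocal} to be available) and using the multiplicativity of $\varphi$ — not merely its additivity — both when sending units to units and when checking $\varphi(J)$ is an ideal. I do not expect any genuine obstacle.
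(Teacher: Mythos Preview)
Your proof is correct and takes essentially the same approach as the paper: transport the characterization of locality from Proposition \ref{tvrz_grlocal} along the isomorphism $\varphi$, using the unit-preservation argument from Corollary \ref{cor_isomorphismarelocal}. The only cosmetic difference is that the paper invokes part $(v)$ (that $R - \frU(R)$ is a graded subgroup) rather than part $(iv)$, which lets it skip the ideal verification; on the other hand, you are more careful in checking explicitly that $S$ is non-zero.
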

\begin{proof}
In the previous proof, we have argued that $\varphi( R - \frU(R)) = S - \frU(S)$. If $R$ is local, the left hand side is a graded subgroup of $S$ by Proposition \ref{tvrz_grlocal}-$(v)$ for $R$. Hence $S - \frU(S)$ is a graded subgroup and the same statement proves that $S$ is local. 
\end{proof}
\begin{rem} \label{rem_gringsum}
To any graded ring $R$, one can always assign a direct sum $R_{\oplus} := \bigoplus_{k \in \Z} R_{k}$. It can be equipped with a structure of an ordinary ring. In fact, the assignment $R \mapsto R_{\oplus}$ defines a functor from $\gRng$ to $\Rng$. The components $R_{k}$ then form subgroups of the abelian group $R_{\oplus}$ and the ring multiplication satisfies $R_{i} \cdot R_{j} \subseteq R_{i+j}$. Most authors define graded rings as ordinary rings that allow such a direct sum decomposition, see e.g. Chapter XVI, \S 6. in \cite{lang2005algebra}. Note that when $R$ is a local graded ring, $R_{\oplus}$ does not need to be a local ring. Interestingly, the converse is true \cite{bergman}.
\end{rem}

\subsection{Graded vector spaces and algebras}
For the purposes of this paper, all vector spaces are assumed to be real. By a \textbf{graded vector space}, we mean a sequence $V = \{ V_{k} \}_{k \in \Z}$, where for each $k \in \Z$, $V_{k}$ is an ordinary vector space. By a \textbf{graded linear map} $\varphi: V \rightarrow W$ of two graded vector spaces $V$ and $W$, we mean a sequence $\varphi = \{ \varphi_{k} \}_{k \in \Z}$, where for each $k \in \Z$, $\varphi_{k}: V_{k} \rightarrow W_{k}$ is a linear map. Graded vector spaces together with graded linear maps form a category $\gVect$. 

A \textbf{graded linear subspace} $W \subseteq V$ is a collection $W = \{ W_{k} \}_{k \in \Z}$, where for each $k \in \Z$, $W_{k} \subseteq V_{k}$ is a linear subspace. 

Let $V,W \in \gVect$ and let $\varphi = \{ \varphi_{k} \}_{k \in \Z}$, where for each $k \in \Z$ and fixed $\ell \in \Z$, $\varphi_{k}: V_{k} \rightarrow W_{k+\ell}$ is a linear map. We say that $\varphi$ is a \textbf{graded linear map of degree $\ell$}. We denote the degree $\ell$ of $\varphi$ as $|\varphi|$. For any $v \in V$, we thus have $|\varphi(v)| = |\varphi| + |v|$. Such mappings form a vector space which we denote as $\Lin_{\ell}(V,W)$. Note that $\gVect(V,W) = \Lin_{0}(V,W)$. We can thus form a graded vector space $\Lin(V,W) := \{ \Lin_{\ell}(V,W) \}_{\ell \in \Z}$. We write $\Lin(V)$ for $\Lin(V,V)$. 

Viewing the field $\R$ as a graded vector space, one can to each $V \in \gVect$ assign its \textbf{graded dual} $V^{\ast} := \Lin(V,\R)$. Note that for each $k \in \Z$, one has $(V^{\ast})_{k} = (V_{-k})^{\ast}$.
\begin{rem} \label{rem_gvectsum}
Similarly to Remark \ref{rem_gringsum}, to any $V \in \gVect$, one may assign a vector space $V_{\oplus} = \bigoplus_{k \in \Z} V_{k}$, thus defining a functor from $\gVect$ to $\Vect$. Most of the literature defines graded vector spaces as vector spaces allowing such a direct sum decomposition. However, note that in general $\Vect(V_{\oplus},W_{\oplus}) \neq \Lin(V,W)_{\oplus}$. In particular, it can happen that $(V_{\oplus})^{\ast} \neq (V^{\ast})_{\oplus}$. It thus seems more natural to work with sequences of objects instead of their direct sums. 
\end{rem}
\begin{definice} \label{def_degreeshifted}
Let $V \in \gVect$ and let $\ell \in \Z$. A \textbf{degree shifted vector space} $V[\ell]$ is a graded vector space defined by $(V[\ell])_{k} := V_{k+\ell}$. 
\end{definice} 
\begin{rem}
Degree shifted vector spaces allow for an equivalent description of graded linear maps of degree $\ell$. Indeed, obviously $\Lin_{\ell}(V,W) \cong \Lin_{0}(V,W[\ell]) = \gVect(V,W[\ell])$. 
\end{rem}

A \textbf{graded subspace} $W \subseteq V$ of $V \in \gVect$ is a collection $W = \{W_{k}\}_{k \in \Z}$ such that $W_{k} \subseteq V_{k}$ is a vector subspace for every $k \in \Z$. For any sequence $\{ V_{\alpha} \}_{\alpha \in I} \subseteq \gVect$, direct sums and products are defined component-wise, that is 
\begin{equation}
(\bigoplus_{\alpha \in I} V_{\alpha})_{k} := \bigoplus_{\alpha \in I} (V_{\alpha})_{k}, \; \; (\prod_{\alpha \in I} V_{\alpha})_{k} := \prod_{\alpha \in I} (V_{\alpha})_{k},
\end{equation}
for all $k \in \Z$. Similarly to the ordinary case, both notions coincide for a finite indexing set $I$. 

To any $V \in \gVect$, we may assign a sequence $\gdim(V) := (\dim(V_{k}))_{k \in \Z}$ called the \textbf{graded dimension} of $V$. We define the \textbf{total dimension} of $V$ as $\dim(V) := \sum_{k \in \Z} \dim(V_{k})$ and say that $V$ is finite-dimensional, if $\dim(V) < \infty$. Note that this requires $\dim(V_{k}) < \infty$ for all $k \in \Z$ and $\dim(V_{k}) \neq 0$ only for finitely many $k \in \Z$. 

Suppose $\dim(V) < \infty$. A tuple $(\xi_{\mu})_{\mu = 1}^{\dim(V)}$ is called a \textbf{total basis} of $V$, if there is a collection $\{I_{k} \}_{k \in \Z}$ of mutually disjoint subsets $I_{k} \subseteq \{1, \dots, \dim(V)\}$ with $\#{I_{k}} = \dim(V_{k})$, such that $(\xi_{\mu})_{\mu \in I_{k}}$ is the basis of $V_{k}$. In particular, for each $k \in \Z$ and $\mu \in I_{k}$, we have $|\xi_{\mu}| = k$. One can always choose a total basis. 

\begin{example} \label{ex_Rtosequence}
Let $(n_{j})_{j \in \Z}$ be any sequence of non-negative integers. By $\R^{(n_{j})}$ we denote a graded vector space $(\R^{(n_{j})})_{k} := \R^{n_{k}}$. Clearly $\gdim(\R^{(n_{j})}) = (n_{j})_{j \in \Z}$ and $\dim(\R^{(n_{j})}) = \sum_{j \in \Z} n_{j}$.  

Let $V \in \gVect$. If $\dim(V_{k}) < \infty$ for all $k \in \Z$, a choice of a basis in each of the components determines an isomorphism $V \cong \R^{\gdim(V)}$. 

Moreover, let $\R^{(n_{j})}_{\ast}$ denote the graded vector space obtained by throwing out the degree zero component of $\R^{(n_{j})}$, that is $(\R^{(n_{j})}_{\ast})_{k} = \R^{n_{k}}$ for $k \neq 0$ and $(\R^{(n_{j})}_{\ast})_{0} = \{0\}$. We can thus write 
\begin{equation} \R^{(n_{j})} = \R^{n_{0}} \oplus \R^{(n_{j})}_{\ast},
\end{equation} where we view $\R^{n_{0}}$ as a (trivially) graded vector space. 
\end{example}

For any $V,W \in \gVect$, one can define their tensor product $V \otimes_{\R} W$ for each $k \in \Z$ by
\begin{equation}
(V \otimes_{\R} W)_{k} := \bigoplus_{i \in \Z} V_{i} \otimes_{\R} W_{k-i}. 
\end{equation}
$\otimes_{\R}$ makes $\gVect$ into a monoidal category with the role of the unit object played by the field $\R$ viewed as a graded vector space. The braiding $\tau$ can be given by the same formula as (\ref{eq_gAbbraiding}), hence making $\gVect$ into a symmetric monoidal category. We can thus consider the following notion.
\begin{definice} \label{def_galgebra}
Let $A$ be a graded vector space together with a graded linear map $\mu: A \otimes_{\R} A \rightarrow A$. Let  $1 \in A_{0}$ be an element called the \textbf{algebra unit of $A$}. For all $a,a' \in A$, we will henceforth use the notation
\begin{equation}
a \cdot a' := \mu(a \otimes a').
\end{equation} 
We say that $(A,\mu,1)$ is a \textbf{graded associative algebra}, if for all $a,a',a'' \in A$, one has the associativity rule
\begin{equation}
a \cdot (a' \cdot a'') = (a \cdot a') \cdot a'',
\end{equation}
and the multiplication with the unit does nothing: $a \cdot 1 = 1 \cdot a = a$, for all $a \in A$. We say that $(A,\mu,1)$ is a \textbf{graded commutative associative algebra}, if for all $a,a' \in A$, one has 
\begin{equation}
a \cdot a' = (-1)^{|a||a'|} a' \cdot a. 
\end{equation}
A graded linear map $\varphi: A \rightarrow B$ (an arrow in $\gVect$) is called a \textbf{graded algebra morphism}, if $\varphi(a \cdot a') = \varphi(a) \cdot \varphi(a')$ and $\varphi(1) = 1$, for all $a,a' \in A$. Graded associative algebras together with graded algebra morphisms form a category which we denote as $\gAs$, and graded commutative associative algebras form its full subcategory $\gcAs$. 
\end{definice}
Note that for a graded (commutative) associative algebra $A$, the component $A_{0}$ is an ordinary (commutative) algebra. We also have an analogue of Proposition \ref{tvrz_ringsaremonoids}.
\begin{tvrz}
Graded (commutative) associative algebras are (commutative) monoids in the symmetric monoidal category $(\gVect,\otimes_{\R},\tau)$. 
\end{tvrz}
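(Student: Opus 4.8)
The statement to prove is the analogue of Proposition~\ref{tvrz_ringsaremonoids} in the symmetric monoidal category $(\gVect,\otimes_{\R},\tau)$: graded (commutative) associative algebras are precisely the (commutative) monoids therein. The plan is to unwind both definitions and observe that they are literally the same data subject to the same axioms, so the proof amounts to translating the arrow-theoretic monoid axioms into the elementwise identities of Definition~\ref{def_galgebra} and back. Concretely, a monoid in $(\gVect,\otimes_{\R},\tau)$ is a triple $(A,\mu,\eta)$ with $A \in \gVect$, a graded linear map $\mu: A \otimes_{\R} A \to A$, and a graded linear map $\eta: \R \to A$, such that the associativity square $\mu \circ (\mu \otimes \id_{A}) = \mu \circ (\id_{A} \otimes \mu)$ commutes (after identifying $(A \otimes_{\R} A)\otimes_{\R} A \cong A \otimes_{\R} (A \otimes_{\R} A)$ via the associator) and the two unit triangles $\mu \circ (\eta \otimes \id_{A}) = \id_{A} = \mu \circ (\id_{A} \otimes \eta)$ commute (after identifying $\R \otimes_{\R} A \cong A \cong A \otimes_{\R} \R$ via the left/right unitors).

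First I would set up the bijection on the level of data: a graded linear map $\eta: \R \to A$ is the same as a choice of element $1 := \eta(1) \in A_{0}$, since $\R$ is concentrated in degree $0$ and $\eta$ is determined by its value on $1 \in \R$; conversely any $1 \in A_{0}$ defines such an $\eta$. The map $\mu$ and the notation $a \cdot a' := \mu(a \otimes a')$ are already shared verbatim between the two formulations, so no translation is needed there. Second, I would evaluate the associativity square on a generator $(a \otimes a') \otimes a''$ of $(A \otimes_{\R} A) \otimes_{\R} A$: the associator of $\gVect$ is (as in $\Vect$) the obvious ``reshuffling of parentheses'' isomorphism carrying $(a \otimes a') \otimes a''$ to $a \otimes (a' \otimes a'')$ with no sign, so the square says exactly $(a \cdot a') \cdot a'' = a \cdot (a' \cdot a'')$ for all homogeneous $a,a',a''$; since these generators span and $\mu$ is linear, the square commutes iff the associativity rule of Definition~\ref{def_galgebra} holds. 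Third, I would evaluate the unit triangles: the left unitor $\R \otimes_{\R} A \to A$ sends $1 \otimes a \mapsto a$, so $\mu \circ (\eta \otimes \id_{A})$ sends $a \mapsto 1 \cdot a$, and the triangle says $1 \cdot a = a$; symmetrically the right triangle gives $a \cdot 1 = a$. Thus monoids in $(\gVect,\otimes_{\R},\tau)$ correspond bijectively to graded associative algebras in the sense of Definition~\ref{def_galgebra}, and one checks immediately that a morphism of monoids — a graded linear map $\varphi$ with $\varphi \circ \mu_{A} = \mu_{B} \circ (\varphi \otimes \varphi)$ and $\varphi \circ \eta_{A} = \eta_{B}$ — is exactly a graded algebra morphism.

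For the commutative case, I would add the observation that a monoid $(A,\mu,\eta)$ is commutative iff $\mu \circ \tau_{A,A} = \mu$, where $\tau_{A,A}$ is the braiding of $(\gVect,\otimes_{\R},\tau)$; evaluating on a generator $a \otimes a'$ and using $\tau_{A,A}(a \otimes a') = (-1)^{|a||a'|} a' \otimes a$ from the formula~(\ref{eq_gAbbraiding}), this reads $a \cdot a' = (-1)^{|a||a'|} a' \cdot a$, which is precisely the graded commutativity rule. This part is essentially verbatim the same as the corresponding step in the proof of Proposition~\ref{tvrz_ringsaremonoids}.

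The proof is entirely formal and presents no real obstacle; the only point requiring a sentence of care is the claim that the associator, unitors, and braiding of $(\gVect,\otimes_{\R},\tau)$ act on homogeneous generators by the same formulas as in $\Vect$ (with the Koszul sign only in the braiding). This was already implicitly established when $\gVect$ was declared a symmetric monoidal category via ``the same formula as~(\ref{eq_gAbbraiding})'', mirroring the situation in $\gAb$; so I would simply invoke that and note the argument is word-for-word parallel to Proposition~\ref{tvrz_ringsaremonoids}, of which this is the $\R$-linear analogue.
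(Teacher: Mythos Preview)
Your proposal is correct. The paper does not supply a proof for this proposition at all; it is stated as an immediate analogue of Proposition~\ref{tvrz_ringsaremonoids} (which is likewise left unproved), so your unwinding of the monoid axioms into the elementwise identities of Definition~\ref{def_galgebra} is exactly the intended verification.
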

Note that we consider only associative algebras with a unit. Observe that every graded vector space can be viewed as a graded abelian group and every graded (commutative) associative algebra can be naturally interpreted as an example of a graded (commutative) ring. 

Left, right and (two-sided) ideals $J \subseteq A$ are defined similarly to the case of graded abelian groups. If $J$ is a two-sided ideal, there us a unique graded associative algebra structure on the quotient graded vector space $A / J$ making the quotient map $q: A \rightarrow A / J$ into a graded algebra morphism. 
There is an important way how to obtain ideals in graded associative algebras.

\begin{tvrz} \label{tvrz_idealgenerated}
Let $A \in \gAs$ and let $S \subseteq A$ be a graded subset. Then there is the unique smallest ideal $\<S \> \subseteq A$ satisfying $S \subseteq \<S\>$.
\end{tvrz}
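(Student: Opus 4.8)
The plan is to construct $\langle S \rangle$ explicitly as the smallest sub-object containing $S$ and closed under the ideal axioms, then verify minimality. Concretely, for each $k \in \Z$ I would set
\[
\langle S \rangle_{k} := \Big\{ \textstyle\sum_{i=1}^{N} a_{i} \cdot s_{i} \cdot b_{i} \ \Big|\ N \in \N,\ s_{i} \in S,\ a_{i}, b_{i} \in A,\ |a_{i}| + |s_{i}| + |b_{i}| = k \Big\},
\]
i.e. the set of all finite sums of products $a \cdot s \cdot b$ with one factor drawn from $S$, subject to the degree constraint so that the whole expression lands in $A_{k}$. The first step is to check this is a graded subgroup: each $\langle S \rangle_{k}$ is visibly closed under the addition of $A_{k}$ (concatenate the sums) and under negation (negate each $a_{i}$), and contains $0_{k}$ (empty sum), so $\langle S \rangle = \{ \langle S \rangle_{k} \}_{k \in \Z}$ is a graded subgroup of $A$.

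Next I would verify $\langle S \rangle$ is a two-sided ideal. Given $r \in A$ and $j = \sum_{i} a_{i} \cdot s_{i} \cdot b_{i} \in \langle S \rangle$, associativity of $\mu$ gives $r \cdot j = \sum_{i} (r \cdot a_{i}) \cdot s_{i} \cdot b_{i}$, which is again a finite sum of the required form (with $r \cdot a_{i}$ in place of $a_{i}$, and the degrees add up correctly since $|r \cdot a_i| = |r| + |a_i|$); similarly $j \cdot r = \sum_{i} a_{i} \cdot s_{i} \cdot (b_{i} \cdot r) \in \langle S \rangle$. Hence $\langle S \rangle$ is a left and right ideal, i.e. an ideal. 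For $S \subseteq \langle S \rangle$, note that for $s \in S$ one has $s = 1 \cdot s \cdot 1$ with $|1| = 0$, so $s \in \langle S \rangle_{|s|}$; here I use that the algebra has a unit.

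For minimality and uniqueness, suppose $J \subseteq A$ is any ideal with $S \subseteq J$. For any generator $a \cdot s \cdot b$ with $s \in S$: since $J$ is a left ideal and $s \in J$, we get $s \cdot b \in J$ (wait—$J$ is a right ideal, so $s \cdot b \in J$), and then $a \cdot (s \cdot b) \in J$ since $J$ is a left ideal; as $J$ is a subgroup, any finite sum of such elements lies in $J_{k}$ in the appropriate degree. Thus $\langle S \rangle_{k} \subseteq J_{k}$ for all $k$, i.e. $\langle S \rangle \subseteq J$. This shows $\langle S \rangle$ is the smallest ideal containing $S$, and smallest such objects are unique by antisymmetry of inclusion. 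I do not anticipate a genuine obstacle here; the only mild subtlety worth stating carefully is bookkeeping of degrees — ensuring that restricting the defining sums to the homogeneous degree-$k$ piece still yields a graded subgroup that is closed under multiplication by arbitrary (homogeneous) elements of $A$ on both sides, which is exactly where the grading convention of the paper (sums only of equal-degree elements) must be respected. The case $S = \emptyset$ (giving $\langle S \rangle = 0$) and the one-sided-ideal variants follow by the same argument and could be remarked upon in passing.
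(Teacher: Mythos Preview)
Your proposal is correct and follows essentially the same route as the paper: define $\langle S\rangle_{k}$ as the span of elements $a\cdot s\cdot b$ with $|a|+|s|+|b|=k$, use the unit to get $S\subseteq\langle S\rangle$, and observe that any ideal containing $S$ must contain all such products and their sums. The paper's proof is terser (``easily checked''), but your explicit verifications of the subgroup and absorption properties, and the minimality argument, match it step for step.
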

\begin{proof}
For every $k \in \Z$, define the subspace $\<S\>_{k}$ as 
\begin{equation}
\<S\>_{k} := \R\{ a \cdot s \cdot b \; | \; a,b \in A, \; s \in S, \; \; |a| + |s| + |b| = k \},
\end{equation}
where $\R$ indicates the linear hull. It is easily checked to be an ideal in $A$ containing $S$. Note that this is true only for associative algebras with a unit. Obviously, every ideal $J$ satisfying $S \subseteq J$ must contain $\<S\>$, hence $\<S\>$ is the smallest of such ideals.
\end{proof}
\begin{rem} \label{rem_tensorproductofgcAs} Let us make some important observations.
\begin{enumerate}[(i)]
\item Let $A,B \in \gcAs$. One can make their tensor product $A \otimes_{\R} B$ into a graded commutative associative algebra. Indeed, set $1 := 1_{A} \otimes 1_{B}$ and 
\begin{equation}
\mu( (a \otimes b) \otimes (a' \otimes b')) := (-1)^{|b||a'|} \mu_{A}(a \otimes a') \otimes \mu_{B}(b \otimes b'),
\end{equation}
for all $a,a' \in A$ and $b,b' \in B$. It is an easy check that $(A \otimes_{\R} B, \mu, 1)$ forms a graded commutative associative algebra. In fact, observe that the braiding $\tau_{AB}: A \otimes_{\R} B \rightarrow B \otimes_{\R} A$ becomes a graded algebra morphism, so $\otimes_{\R}$ and $\tau$ make $\gcAs$ into a symmetric monoidal category. Note that the unit object $\R$ is a graded commutative associative algebra with the product given by the ordinary multiplication of real numbers. 

\item Suppose that $\{ A_{\alpha} \}_{\alpha \in I}$ is a family of graded commutative associative algebras. One can make the direct product $A := \prod_{\alpha \in I} A_{\alpha}$ into a graded commutative algebra. Namely, if $a = (a_{\alpha})_{\alpha \in I}$ and $b = (b_{\alpha})_{\alpha \in I}$ are two elements of $A$, we define
\begin{equation}
a \cdot b := ( a_{\alpha} \cdot b_{\alpha})_{\alpha \in I}.
\end{equation}
Clearly $|a \cdot b| = |a| + |b|$ and the unit element of $A$ is $1 = (1_{\alpha})_{\alpha \in I}$, where $1_{\alpha} \in A_{\alpha}$ is the algebra unit of $A_{\alpha}$, for all $\alpha \in I$. Equivalently, for each $\alpha \in I$ let $\mu_{\alpha}: A_{\alpha} \otimes_{\R} A_{\alpha} \rightarrow A_{\alpha}$ denote the multiplication and let $p_{\alpha}: A \rightarrow A_{\alpha}$ denote the canonical projection. Then the multiplication $\mu: A \otimes_{\R} A \rightarrow A$ is uniquely determined by the equation
\begin{equation}
p_{\alpha} \otimes \mu := \mu_{\alpha} \circ (p_{\alpha} \otimes p_{\alpha}), 
\end{equation}
for all $\alpha \in I$. One can easily argue that $A$ has the universal property of a product over the set $I$, that is for any $B \in \gcAs$ and any collection $\{ \varphi_{\alpha} \}_{\alpha \in I}$ of graded algebra morphisms $\varphi_{\alpha} \in \gcAs(B,A_{\alpha})$, there is a unique graded algebra morphism $\varphi: B \rightarrow A$ such that $p_{\alpha} \circ \varphi = \varphi_{\alpha}$ for all $\alpha \in I$. 
\item Every $A \in \gcAs$ may be viewed as a graded commutative ring. Formally, if $\mu: A \otimes_{\R} A \rightarrow A$ is its multiplication, we obtain a graded ring multiplication $\hat{\mu}: A \otimes_{\Z} A \rightarrow A$ by composing $\mu$ with the canonical morphism $A \otimes_{\Z} A \rightarrow A \otimes_{\R} A$ of graded abelian groups. Informally, one simply ``forgets'' the scalar multiplication and the corresponding homogeneity of $\mu$. 
\end{enumerate}
\end{rem}
\begin{tvrz} \label{tvrz_idealpower}
Let $A \in \gcAs$ and let $J \subseteq A$ be its ideal. For any $r \in \N$, define a graded subset $J^{r} = \{ J^{r}_{k} \}_{k \in \Z}$ of $A$, where we set
\begin{equation}
J^{r}_{k} := \R \{ a \in A_{k} \; | \; a = a_{1} \cdots a_{r}, \; a_{i} \in J \text{ for all } i \in \{1,\dots,r\} \}.
\end{equation}
Then $J^{r} \subseteq A$ is an ideal and $J^{r} \subseteq J^{s}$ whenever $r \geq s$. 
\end{tvrz}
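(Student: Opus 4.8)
The plan is to prove the two claims in turn, both by elementary direct arguments using the structure of $J^r$ as defined, together with Proposition \ref{tvrz_idealgenerated} to streamline the bookkeeping. First I would observe that each $J^r_k$ is by construction a linear subspace of $A_k$, so $J^r$ is a graded subspace of $A$; it remains to check that it is an ideal and that the inclusions $J^r \subseteq J^s$ hold for $r \geq s$.

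To see $J^r$ is an ideal, it suffices (since $A$ is graded commutative, so left and right ideals coincide) to show that $a \cdot x \in J^r$ for all homogeneous $a \in A$ and all $x \in J^r$. By linearity of the multiplication $\mu$ and the fact that $J^r_k$ is the linear hull of products $a_1 \cdots a_r$ with $a_i \in J$, it is enough to check this on such a generator $x = a_1 \cdots a_r$. Then $a \cdot x = (a \cdot a_1) \cdot a_2 \cdots a_r$, and since $J$ is an ideal we have $a \cdot a_1 \in J$, so $a \cdot x$ is again a product of $r$ elements of $J$, hence lies in $J^r_{|a|+k}$. This gives the ideal property. (Alternatively, one can note that $J^r$ coincides with $\langle S \rangle$ for $S$ the graded subset of $r$-fold products of homogeneous elements of $J$, and invoke Proposition \ref{tvrz_idealgenerated} directly.)

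For the monotonicity $J^r \subseteq J^s$ when $r \geq s$, the key point is that $1 \in A_0$ and $J$, being an ideal containing... — more carefully: take a generator $a_1 \cdots a_r \in J^r$ with all $a_i \in J$. Write $r = s + t$ with $t \geq 0$. Group the product as $(a_1 \cdots a_{s-1}) \cdot (a_s \cdots a_r)$; the second factor $b := a_s \cdots a_r$ lies in $J$ because $J$ is an ideal and $a_s \in J$ (multiply $a_s$ on the right by $a_{s+1} \cdots a_r \in A$). Hence $a_1 \cdots a_{s-1} \cdot b$ is a product of $s$ elements of $J$, so it lies in $J^s$. Passing to linear hulls, $J^r_k \subseteq J^s_k$ for all $k$. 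The case $s = 0$ is degenerate and should be handled by the convention $J^0 = A$ (the empty product being $1$), or else one may simply restrict attention to $r \geq s \geq 1$, which is the case of interest.

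No step here is a genuine obstacle; the only mild subtlety is the careful handling of homogeneity and the linear-hull structure, ensuring that statements proved on generators propagate to all elements by linearity of $\mu$, and making sure the grouping arguments respect the associativity rule of Definition \ref{def_galgebra}. If one prefers the slick route, the entire proof reduces to: $J^r = \langle \{\, a_1 \cdots a_r \mid a_i \in J \,\} \rangle$ is an ideal by Proposition \ref{tvrz_idealgenerated}, and the inclusion $J^r \subseteq J^s$ follows since every $r$-fold product of elements of $J$ is an $s$-fold product of elements of $J$ (collapsing the last $r-s+1$ factors into a single element of $J$).
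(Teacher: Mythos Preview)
Your proof is correct; the paper states this proposition without proof, treating it as routine, so your elementary argument (checking the ideal property on generators and collapsing the last $r-s+1$ factors for the inclusion) is exactly what is implicitly expected. The only remark is that the paper has $r,s \in \N$, so the $s=0$ digression is unnecessary.
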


\subsection{Tensor and symmetric algebra}
In this subsection, we will construct two graded associative algebras associated canonically to every graded vector space. In fact, we will obtain two functors $T: \gVect \rightarrow \gAs$ and $S: \gVect \rightarrow \gcAs$, respectively. For the needs of this paper, we will then modify this construction to obtain a so called extended symmetric algebra, a canonical functor $\bar{S}: \gVect \times \gcAs \rightarrow \gcAs$. 

Let $V \in \gVect$. For $p \in \N$, let $T^{p}(V) := V \otimes_{\R} \dots \otimes_{\R} V$ be a $p$-fold tensor product of $V$ with itself. For each $k \in \Z$, one thus has
\begin{equation}
(T^{p}(V))_{k} = \hspace{-3mm} \bigoplus_{k_{1} + \dots + k_{p} = k} \hspace{-3mm} V_{k_{1}} \otimes_{\R} \dots \otimes_{\R} V_{k_{p}}.
\end{equation}
As a convention, define $T^{0}(V) := \R$. 

For $V,X \in \gVect$ and $p \in \N$, by a \textbf{graded $p$-linear map} from $V$ to $X$, we mean a collection $\beta = \{ \beta_{k_{1} \dots k_{p}} \}_{(k_{1},\dots,k_{p}) \in \Z^{p}}$, where $\beta_{k_{1} \dots k_{p}}: V_{k_{1}} \times \dots \times V_{k_{p}} \rightarrow X_{k_{1} + \dots + k_{p}}$ is an ordinary $p$-linear map. The space of $0$-linear maps is by definition identified with the vector space $X_{0}$. We will write $\beta: V \times \dots \times V \rightarrow X$ and for $v_{1},\dots,v_{p} \in V$ use just $\beta(v_{1},\dots,v_{p})$ instead of $\beta_{|v_{1}| \dots |v_{p}|}(v_{1},\dots,v_{p})$. There is a canonical $p$-linear map $\alpha$ from $V$ to $T^{p}(V)$ given by 
\begin{equation}
\alpha(v_{1}, \dots, v_{p}) := v_{1} \otimes \dots \otimes v_{p},
\end{equation}
for any $p \in \N$ and $v_{1},\dots,v_{p} \in V$. For $p = 0$, $\alpha$ is set to correspond to $1 \in (T^{0}(V))_{0} = \R$. $T^{p}(V)$ now allows one to interpret graded $p$-linear maps as graded linear maps:

\begin{tvrz} \label{tvrz_plinearaslinear}
Let $V,X \in \gVect$ and $p \in \N_{0}$. For any graded $p$-linear map $\beta$ from $V$ to $X$, there exists a unique graded linear map $\hat{\beta}: T^{p}(V) \rightarrow X$, such that $\hat{\beta} \circ \alpha = \beta$, where $\alpha$ is the canonical $p$-linear map from $V$ to $T^{p}(V)$ defined above. 
\end{tvrz}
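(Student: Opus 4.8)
The plan is to construct $\hat\beta$ componentwise, i.e. for each fixed $k \in \Z$ define a linear map $\hat\beta_k : (T^p(V))_k \to X_k$, and then verify that the sequence $\hat\beta = \{\hat\beta_k\}_{k\in\Z}$ is the required graded linear map. First I would dispose of the trivial case $p = 0$: there $T^0(V) = \R$, the $0$-linear map $\beta$ is by definition an element of $X_0$, the canonical $\alpha$ corresponds to $1 \in \R$, and $\hat\beta$ is simply the unique linear map $\R \to X_0$ sending $1 \mapsto \beta$ (and $0$ in all other degrees). For $p \geq 1$, fix $k \in \Z$. By definition $(T^p(V))_k = \bigoplus_{k_1 + \dots + k_p = k} V_{k_1} \otimes_\R \dots \otimes_\R V_{k_p}$, a finite direct sum, so it suffices to define $\hat\beta_k$ on each summand $V_{k_1} \otimes_\R \dots \otimes_\R V_{k_p}$ and take the direct sum of these maps. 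On such a summand, the ordinary $p$-linear map $\beta_{k_1 \dots k_p} : V_{k_1} \times \dots \times V_{k_p} \to X_{k_1 + \dots + k_p} = X_k$ factors, by the universal property of the ordinary tensor product of real vector spaces, uniquely through a linear map $V_{k_1} \otimes_\R \dots \otimes_\R V_{k_p} \to X_k$; call it $\hat\beta_{k_1\dots k_p}$. Set $\hat\beta_k := \bigoplus_{k_1 + \dots + k_p = k} \hat\beta_{k_1 \dots k_p}$.

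Next I would check that $\hat\beta = \{\hat\beta_k\}_{k\in\Z}$ satisfies $\hat\beta \circ \alpha = \beta$. Unwinding the notational conventions: for homogeneous $v_1, \dots, v_p \in V$ with $|v_i| = k_i$, we have $\alpha(v_1,\dots,v_p) = v_1 \otimes \dots \otimes v_p \in V_{k_1} \otimes_\R \dots \otimes_\R V_{k_p} \subseteq (T^p(V))_k$ where $k = k_1 + \dots + k_p$, and $\hat\beta_k$ applied to this element equals $\hat\beta_{k_1\dots k_p}(v_1 \otimes \dots \otimes v_p) = \beta_{k_1\dots k_p}(v_1,\dots,v_p) = \beta(v_1,\dots,v_p)$ by construction of $\hat\beta_{k_1\dots k_p}$ from the ordinary universal property. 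This is exactly the required identity.

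Finally, uniqueness: suppose $\psi : T^p(V) \to X$ is any graded linear map with $\psi \circ \alpha = \beta$. The elements $v_1 \otimes \dots \otimes v_p$ with $v_i \in V_{k_i}$ homogeneous span $V_{k_1} \otimes_\R \dots \otimes_\R V_{k_p}$ over $\R$, and these summands span $(T^p(V))_k$; since $\psi_k$ is linear and agrees with $\hat\beta_k$ on all such spanning elements (both send $v_1 \otimes \dots \otimes v_p$ to $\beta(v_1,\dots,v_p)$), we get $\psi_k = \hat\beta_k$ for every $k$, hence $\psi = \hat\beta$. For $p = 0$ uniqueness is immediate since a linear map out of $\R$ is determined by the image of $1$.

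There is no serious obstacle here; the statement is essentially the universal property of the ordinary tensor product reassembled degree by degree. The only point requiring a little care is bookkeeping with the graded conventions — tracking that the degree of $v_1 \otimes \dots \otimes v_p$ is $|v_1| + \dots + |v_p|$, that $(T^p(V))_k$ decomposes as the stated finite direct sum so that patching the summand-wise maps is legitimate, and that ``graded linear map'' means a degree-$0$ sequence $\{\hat\beta_k\}$ so that landing each summand of degree $k$ in $X_k$ is exactly what is needed. Everything else is a routine invocation of the classical universal property together with the fact that a direct sum of linear maps is the unique linear map restricting correctly to each summand.
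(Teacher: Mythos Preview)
The paper states this proposition without proof, so there is nothing to compare against directly; your argument is the natural one and is essentially correct. One small inaccuracy: you call $\bigoplus_{k_1 + \dots + k_p = k} V_{k_1} \otimes_\R \dots \otimes_\R V_{k_p}$ ``a finite direct sum'', but the indexing set $\{(k_1,\dots,k_p) \in \Z^p : k_1 + \dots + k_p = k\}$ is infinite for $p \geq 2$, and if $V$ has nonzero components in infinitely many degrees then infinitely many summands can be nonzero. This does not affect the argument --- a linear map out of an arbitrary direct sum is still uniquely determined by its restrictions to the summands --- but the word ``finite'' should be dropped.
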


For each $p,q \in \N_{0}$, one can now consider a graded linear map $\otimes_{p,q}: T^{p}(V) \otimes_{\R} T^{q}(V) \rightarrow T^{p+q}(V)$ for all $v_{1},\dots,v_{p},w_{1},\dots,w_{q} \in V$ by the formula
\begin{equation}
\otimes_{p,q}( (v_{1} \otimes \dots \otimes v_{p}) \otimes (w_{1} \otimes \dots \otimes w_{q})) := v_{1} \otimes \dots \otimes v_{p} \otimes w_{1} \otimes \dots \otimes w_{q},
\end{equation}
It is an easy check that it is well-defined. It follows that this collection can be used to construct a graded linear map $\otimes: T(V) \otimes_{\R} T(V) \rightarrow T(V)$, where 
\begin{equation}
T(V) = \bigoplus_{p = 0}^{\infty} T^{p}(V).
\end{equation}
Let $1 \in T(V)_{0}$ be the number $1$ viewed as an element of $(T^{0}(V))_{0} = \R$. It follows that $(T(V),\otimes,1)$ becomes a graded associative algebra called the \textbf{tensor algebra of the graded vector space $V$}. Up to an isomorphism, $T(V)$ can be uniquely characterized by the following universal property:
\begin{tvrz} \label{tvrz_TVuniversal}
Suppose $V \in \gVect$ and let $i_{V}: V \rightarrow T(V)$ denote the inclusion of $V$ as a graded subspace $T^{1}(V) \subseteq T(V)$. Let $A \in \gAs$ be any graded associative algebra together with a graded linear map $\varphi: V \rightarrow A$. Then there is a unique graded algebra morphism $\hat{\varphi}: T(V) \rightarrow A$ fitting into the commutative diagram
\begin{equation}
\begin{tikzcd}
V \arrow{r}{i_{V}} \arrow{rd}{\varphi} & T(V) \arrow[dashed]{d}{\hat{\varphi}} \\
& A 
\end{tikzcd}.
\end{equation}
The assignment $V \mapsto T(V)$ defines a functor $T: \gVect \rightarrow \gAs$ and the universal property can be viewed as the adjunction
\begin{equation}
\gAs( T(V), A) \cong \gVect(V, \square(A)),
\end{equation}
where $\square: \gAs \rightarrow \gVect$ is the obvious forgetful functor.
\end{tvrz}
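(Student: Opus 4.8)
The plan is to construct the morphism $\hat{\varphi}$ explicitly, verify it is the unique graded algebra morphism with the stated property, and then package the construction into a functor and an adjunction. First, given a graded linear map $\varphi: V \rightarrow A$, for each $p \in \N_0$ I define a graded $p$-linear map $\beta^{(p)}$ from $V$ to $A$ by $\beta^{(p)}(v_1,\dots,v_p) := \varphi(v_1) \cdots \varphi(v_p)$, using the multiplication of $A$ (for $p = 0$ this is the element $1 \in A_0$). Each $\beta^{(p)}$ is genuinely graded $p$-linear because $\varphi$ is a degree-zero graded linear map and $\mu$ is degree-preserving, so the total degree works out. By Proposition \ref{tvrz_plinearaslinear} there is a unique graded linear map $\widehat{\beta^{(p)}}: T^p(V) \rightarrow A$ with $\widehat{\beta^{(p)}} \circ \alpha = \beta^{(p)}$, and since $T(V) = \bigoplus_{p=0}^{\infty} T^p(V)$, the family $\{ \widehat{\beta^{(p)}} \}_{p \in \N_0}$ assembles into a single graded linear map $\hat{\varphi}: T(V) \rightarrow A$.

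Next I check that $\hat{\varphi}$ is a graded algebra morphism. It sends $1 \in T(V)_0$ to $\widehat{\beta^{(0)}}(1) = 1 \in A_0$ by construction. For multiplicativity, it suffices by graded bilinearity to test on decomposable elements $v_1 \otimes \dots \otimes v_p \in T^p(V)$ and $w_1 \otimes \dots \otimes w_q \in T^q(V)$; here $\hat{\varphi}$ of their product is $\widehat{\beta^{(p+q)}}(v_1 \otimes \dots \otimes w_q) = \varphi(v_1) \cdots \varphi(v_p) \varphi(w_1) \cdots \varphi(w_q)$, which by associativity of $A$ equals $\hat{\varphi}(v_1 \otimes \dots \otimes v_p) \cdot \hat{\varphi}(w_1 \otimes \dots \otimes w_q)$. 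Commutativity of the triangle is immediate: $\hat{\varphi} \circ i_V = \widehat{\beta^{(1)}} = \varphi$ since $\beta^{(1)}(v) = \varphi(v)$. For uniqueness, if $\psi: T(V) \rightarrow A$ is any graded algebra morphism with $\psi \circ i_V = \varphi$, then on $T^1(V)$ it agrees with $\hat{\varphi}$, on $T^0(V)$ both send $1$ to $1$, and on $T^p(V)$ with $p \geq 2$ multiplicativity forces $\psi(v_1 \otimes \dots \otimes v_p) = \psi(v_1) \cdots \psi(v_p) = \varphi(v_1) \cdots \varphi(v_p) = \hat{\varphi}(v_1 \otimes \dots \otimes v_p)$; since these decomposables span $T^p(V)$ and both maps are graded linear, $\psi = \hat{\varphi}$.

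Finally, for functoriality I define $T(\phi): T(V) \rightarrow T(W)$ for a graded linear map $\phi: V \rightarrow W$ as the unique graded algebra morphism extending $i_W \circ \phi: V \rightarrow T(W)$ (which exists by the universal property just proved); the identities $T(\id_V) = \id_{T(V)}$ and $T(\phi' \circ \phi) = T(\phi') \circ T(\phi)$ both follow from the uniqueness clause by checking the defining triangle commutes. The adjunction $\gAs(T(V), A) \cong \gVect(V, \square(A))$ is then exactly the bijection $\hat{\varphi} \mapsto \hat{\varphi} \circ i_V = \varphi$, whose inverse is $\varphi \mapsto \hat{\varphi}$; naturality in both variables is another routine diagram chase using uniqueness. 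The only mildly delicate point — and the one I would be most careful about — is confirming that the assembly of the $\widehat{\beta^{(p)}}$ over the direct sum respects the grading and the multiplication correctly on mixed-degree pieces, i.e. that no sign or degree-bookkeeping error creeps in when one element of $T(V)$ has components in several $T^p(V)$; but since $T(V)$ carries no Koszul sign in its own multiplication (the tensor algebra is just concatenation) and $\varphi$ is strictly degree-preserving, this reduces to the associativity already used.
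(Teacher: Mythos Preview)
Your proposal is correct and follows exactly the standard route the paper has in mind: the paper's own proof simply says ``the proof is completely analogous to the one for ordinary vector spaces and we leave it for the reader,'' adding only the remark that $T(\phi)$ is obtained from the universal property with $A = T(W)$ and $\varphi = i_W \circ \phi$, which is precisely what you do. You have faithfully filled in the details the paper omits, including the use of Proposition~\ref{tvrz_plinearaslinear} to build $\hat{\varphi}$ piecewise on each $T^p(V)$.
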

\begin{proof}
The proof is completely analogous to the one for ordinary vector spaces and we leave it for the reader. Note that for every graded linear map $\phi: V \rightarrow W$, the induced graded algebra morphism $T(\phi): T(V) \rightarrow T(W)$ can be constructed using the universal property by taking $A = T(W)$ and $\varphi = i_{W} \circ \phi$. 
\end{proof}

In general, $T(V)$ is not graded commutative. This can be fixed by considering a different graded algebra. First, define a graded set $M \subseteq T(V)$ for each $k \in \Z$ as 
\begin{equation} \label{eq_Mkset}
M_{k} := \{ v \otimes w - (-1)^{|v||w|} w \otimes v \; | \; v,w \in V, \; |v| + |w| = k \}.
\end{equation}
By Proposition \ref{tvrz_idealgenerated}, we may then consider an ideal $\< M \> \subseteq T(V)$ generated by this set and thus endow the corresponding quotient
\begin{equation}
S(V) := T(V) / \< M \>.
\end{equation}
with a structure of a graded associative algebra, called the \textbf{symmetric algebra of the graded vector space $V$}. Note that we may define $\<M\>^{p} := \<M\> \cap T^{p}(V)$ and write 
\begin{equation}
S(V) = \bigoplus_{p=0}^{\infty} S^{p}(V), \text{ where } S^{p}(V) = T^{p}(V) / \<M\>^{p}.
\end{equation}
Note that $\<M\>^{0} = \<M\>^{1} = 0$, which shows that $S^{0}(V) = \R$ and $S^{1}(V) = V$, so that we have the inclusion $j_{V}: V \rightarrow S(V)$. It follows that for each $k \in \Z$ and $p \in \N$, $(S^{p}(V))_{k}$ is generated by elements of the form 
\begin{equation}
v_{1} \dots v_{p} := q( v_{1} \otimes \dots \otimes v_{p}), 
\end{equation}
where $v_{1},\dots,v_{p} \in V$ such that $|v_{1}| + \dots + |v_{p}| = k$ and $q: T(V) \rightarrow S(V)$ denotes the quotient map. Besides the linearity in each $v_{i}$ inherited from the generators of $T^{p}(V)$, they are subject to relations following from the definition of $\<M\>$. For each $j \in \{1,\dots,p-1\}$, one has 
\begin{equation} \label{eq_gradedsymmetry}
v_{1} \dots v_{j}v_{j+1} \dots v_{p} = (-1)^{|v_{j}||v_{j+1}|} v_{1} \dots v_{j+1} v_{j} \dots v_{p}.
\end{equation}
As $q$ is declared to be a graded algebra morphism, it follows that the multiplication reads
\begin{equation}
(v_{1} \dots v_{p}) \cdot (w_{1} \dots w_{q}) = v_{1} \dots v_{p} w_{1} \dots w_{q},
\end{equation}
for all $v_{1},\dots,v_{p},w_{1},\dots,w_{q} \in V$. The algebra unit is still $1 \in (S^{0}(V))_{0} = \R$. Using (\ref{eq_gradedsymmetry}), it is now easy to prove that $S(V)$ is a  graded commutative associative algebra. 

Now, a graded $p$-linear map $\beta$ from $V$ to $X$ is called \textbf{completely symmetric}, if for all $v_{1},\dots,v_{p} \in V$ and $j \in \{1, \dots, p-1\}$, one has 
\begin{equation}
\beta(v_{1},\dots,v_{j},v_{j+1}, \dots, v_{p}) = (-1)^{|v_{j}||v_{j+1}|} \beta(v_{1}, \dots, v_{j+1},v_{j}, \dots, v_{p}).
\end{equation}
Every graded $0$-linear map (an element of $X_{0}$) is declared completely symmetric. For each $p \in \N_{0}$, there is a canonical completely symmetric graded $p$-linear map $\alpha$ from $V$ to $S^{p}(V)$ given for each $p \in \N$ by the formula
\begin{equation}
\alpha(v_{1},\dots,v_{p}) := v_{1} \dots v_{p},
\end{equation}
for all $v_{1},\dots,v_{p} \in V$ and for $p = 0$ identified with the unit $1 \in S(V)_{0}$. There are straightforward analogues of Proposition \ref{tvrz_plinearaslinear} and Proposition \ref{tvrz_TVuniversal} for the symmetric algebra:
\begin{tvrz} \label{tvrz_skewplinearaslinear}
Let $V,X \in \gVect$ and $p \in \N_{0}$. For any completely symmetric graded $p$-linear map $\beta$ from $V$ to $X$, there exists a unique graded linear map $\hat{\beta}: S^{p}(V) \rightarrow X$, such that $\hat{\beta} \circ \alpha = \beta$, where $\alpha$ is the canonical $p$-linear map from $V$ to $S^{p}(V)$ defined above. 
\end{tvrz}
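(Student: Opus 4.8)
The plan is to obtain $\hat\beta$ by first factoring through the tensor algebra and then descending to the quotient $S^p(V) = T^p(V)/\<M\>^p$. The case $p = 0$ is degenerate: there $S^0(V) = \R$ is concentrated in degree zero, $\alpha$ is by convention the unit $1 \in S(V)_0$, and $\beta$ is an element of $X_0$, so the only possible map is the graded linear map $\hat\beta \colon \R \to X$ with $\hat\beta(\lambda) = \lambda\beta$, which clearly works and is unique. Assume henceforth that $p \geq 1$.

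A completely symmetric graded $p$-linear map is in particular a graded $p$-linear map, so Proposition \ref{tvrz_plinearaslinear} furnishes a unique graded linear map $\tilde\beta \colon T^p(V) \to X$ with $\tilde\beta \circ \alpha_T = \beta$, where $\alpha_T$ denotes the canonical $p$-linear map $V \times \cdots \times V \to T^p(V)$. Let $q_p \colon T^p(V) \to S^p(V)$ be the canonical quotient map; by definition of $S^p(V)$ its kernel is $\<M\>^p$, and it coincides with the restriction to $T^p(V)$ of the quotient morphism $q \colon T(V) \to S(V)$, so that $\alpha = q_p \circ \alpha_T$ by the very definition of $\alpha$. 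It therefore suffices to show that $\tilde\beta$ annihilates $\<M\>^p$: then, componentwise, $\tilde\beta$ descends along $q_p$ to a unique graded linear map $\hat\beta \colon S^p(V) \to X$, and $\hat\beta \circ \alpha = \hat\beta \circ q_p \circ \alpha_T = \tilde\beta \circ \alpha_T = \beta$.

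To check the vanishing, recall from Proposition \ref{tvrz_idealgenerated} that $\<M\>$ is spanned by elements $a \otimes s \otimes b$ with $a, b \in T(V)$ homogeneous and $s \in M$. Since $M \subseteq T^2(V)$ and $T(V) = \bigoplus_{p} T^p(V)$ is a direct-sum grading under which each such $a \otimes s \otimes b$ is homogeneous of tensor degree $(\text{length of } a) + 2 + (\text{length of } b)$, the subspace $\<M\>^p$ is spanned by those generators with $a = u_1 \otimes \cdots \otimes u_i$, $b = z_1 \otimes \cdots \otimes z_m$, $i + 2 + m = p$, and $s = v \otimes w - (-1)^{|v||w|} w \otimes v$ as in (\ref{eq_Mkset}). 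For such a generator,
\begin{equation*}
a \otimes s \otimes b = \alpha_T(u_1,\dots,u_i,v,w,z_1,\dots,z_m) - (-1)^{|v||w|}\,\alpha_T(u_1,\dots,u_i,w,v,z_1,\dots,z_m),
\end{equation*}
so applying $\tilde\beta$ and using $\tilde\beta \circ \alpha_T = \beta$ turns the right-hand side into $\beta(u_1,\dots,u_i,v,w,z_1,\dots,z_m) - (-1)^{|v||w|}\beta(u_1,\dots,u_i,w,v,z_1,\dots,z_m)$, which vanishes because $\beta$ is completely symmetric under the transposition of its two adjacent arguments $v$ and $w$ (occupying slots $i+1$ and $i+2$). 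Hence $\tilde\beta$ kills every generator of $\<M\>^p$, and therefore all of $\<M\>^p$.

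Uniqueness of $\hat\beta$ is immediate: for $p \geq 1$, $(S^p(V))_k$ is spanned by the elements $v_1 \cdots v_p = \alpha(v_1,\dots,v_p)$ with $|v_1| + \cdots + |v_p| = k$ (the description accompanying (\ref{eq_gradedsymmetry})), and $\hat\beta \circ \alpha = \beta$ fixes $\hat\beta$ on each of them. I expect the only slightly delicate point --- the closest thing to an obstacle --- to be the remark that complete symmetry under \emph{adjacent} transpositions, which is all the definition provides, is exactly what is needed: the generators of $\<M\>$ involve a single swap of two letters that become adjacent once $a$ is prepended and $b$ appended, so no stronger symmetry of $\beta$ is required.
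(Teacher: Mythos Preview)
Your proof is correct. The paper does not actually prove this proposition; it merely states it as a ``straightforward analogue'' of Proposition~\ref{tvrz_plinearaslinear}, so your argument (factor through $T^p(V)$ via Proposition~\ref{tvrz_plinearaslinear}, then check that the resulting map kills the generators $a \otimes s \otimes b$ of $\langle M\rangle^p$ using adjacent-transposition symmetry) is exactly the natural way to fill in the omitted details.
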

\begin{tvrz} \label{tvrz_SVuniversal}
Suppose $V \in \gVect$ and let $j_{V}: V \rightarrow T(V)$ denote the inclusion of $V$ as a graded subspace $S^{1}(V) \subseteq S(V)$. Let $A \in \gcAs$ be any graded commutative associative algebra together with a graded linear map $\varphi: V \rightarrow A$. Then there is a unique graded algebra morphism $\hat{\varphi}: S(V) \rightarrow A$ fitting into the commutative diagram
\begin{equation}
\begin{tikzcd}
V \arrow{r}{j_{V}} \arrow{rd}{\varphi} & S(V) \arrow[dashed]{d}{\hat{\varphi}} \\
& A 
\end{tikzcd}.
\end{equation}
The assignment $V \mapsto S(V)$ defines a functor $S: \gVect \rightarrow \gcAs$ and the universal property can be viewed the adjunction
\begin{equation}
\gAs( S(V), A) \cong \gVect(V, \square(A)),
\end{equation}
where $\square: \gcAs \rightarrow \gVect$ is the obvious forgetful functor.
\end{tvrz}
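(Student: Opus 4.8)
The plan is to deduce this from the universal property of the tensor algebra, Proposition \ref{tvrz_TVuniversal}, by observing that every graded algebra morphism from $T(V)$ into a \emph{graded commutative} algebra automatically kills the ideal $\langle M \rangle$. First I would construct $\hat{\varphi}$. Given $\varphi: V \to A$ with $A \in \gcAs$, viewed as an object of $\gAs$, Proposition \ref{tvrz_TVuniversal} produces a unique graded algebra morphism $\tilde{\varphi}: T(V) \to A$ with $\tilde{\varphi} \circ i_{V} = \varphi$. For $v,w \in V$ with $|v| + |w| = k$, graded commutativity of $A$ gives
\[
\tilde{\varphi}\big( v \otimes w - (-1)^{|v||w|} w \otimes v \big) = \varphi(v) \cdot \varphi(w) - (-1)^{|v||w|} \varphi(w) \cdot \varphi(v) = 0,
\]
so $M_{k} \subseteq (\ker \tilde{\varphi})_{k}$ for every $k \in \Z$. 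Since $\ker \tilde{\varphi}$ is a (two-sided, graded) ideal of $T(V)$ containing the graded subset $M$ of \eqref{eq_Mkset}, minimality of $\langle M \rangle$ from Proposition \ref{tvrz_idealgenerated} forces $\langle M \rangle \subseteq \ker \tilde{\varphi}$. Hence $\tilde{\varphi}$ factors through the quotient map $q: T(V) \to S(V) = T(V)/\langle M \rangle$ as $\tilde{\varphi} = \hat{\varphi} \circ q$ for a unique graded linear map $\hat{\varphi}: S(V) \to A$, which is an algebra morphism because $q$ is a surjective algebra morphism. As $j_{V} = q \circ i_{V}$ under the identification $S^{1}(V) = V$, this gives $\hat{\varphi} \circ j_{V} = \varphi$.

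For uniqueness of $\hat{\varphi}$, I would use that $S(V)$ is generated as a graded algebra by $j_{V}(V)$: each $(S^{p}(V))_{k}$ is spanned by products $v_{1} \dots v_{p} = q(v_{1} \otimes \dots \otimes v_{p})$, and $S^{0}(V) = \R \cdot 1$. Therefore any graded algebra morphism out of $S(V)$ is completely determined by its precomposition with $j_{V}$, and two solutions of the commutative triangle must coincide.

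For functoriality, to $\phi \in \gVect(V,W)$ I assign $S(\phi) := \widehat{j_{W} \circ \phi}: S(V) \to S(W)$, the morphism obtained by applying the established universal property to the graded linear map $j_{W} \circ \phi: V \to S(W)$. The identities $S(\mathrm{id}_{V}) = \mathrm{id}_{S(V)}$ and $S(\psi) \circ S(\phi) = S(\psi \circ \phi)$ then follow at once from the uniqueness clause, applied to the defining commutative triangles; so $S: \gVect \to \gcAs$ is a functor. Finally, the assignment $\Phi \mapsto \square(\Phi) \circ j_{V}$ defines a map $\gcAs(S(V), A) \to \gVect(V, \square(A))$, and the existence-and-uniqueness statement just proved shows it is a bijection with inverse $\varphi \mapsto \hat{\varphi}$. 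Naturality in $V$ (precomposition with $S(\phi)$ versus with $\phi$) and in $A$ (postcomposition with a graded algebra morphism) is a one-line diagram chase using $\hat{\varphi} \circ j_{V} = \varphi$ together with uniqueness, exhibiting $j_{V}$ as the unit of the adjunction $S \dashv \square$.

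I do not expect a genuine obstacle here, since the argument mirrors the classical ungraded one; the only point demanding care is the inclusion $\langle M \rangle \subseteq \ker \tilde{\varphi}$, where one must explicitly invoke that the kernel of a graded algebra morphism is an ideal, so that containing the generating set $M$ is enough to contain the whole ideal $\langle M \rangle$ it generates. A secondary bookkeeping nuisance is keeping straight the identification $S^{1}(V) \cong V$ so that $j_{V} = q \circ i_{V}$ literally holds, which is what makes the two triangles match up.
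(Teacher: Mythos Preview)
Your proof is correct and follows the standard approach; the paper does not supply an explicit proof of this proposition (it is stated as the symmetric-algebra analogue of Proposition~\ref{tvrz_TVuniversal}, whose proof is itself left to the reader as analogous to the ungraded case). Your derivation via factoring $\tilde{\varphi}$ through the quotient by $\langle M\rangle$ is exactly what one would write to fill in the details.
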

In the following, we will need a more complicated example of a graded commutative associative algebra. Let $V \in \gVect$ and $A \in \gcAs$ be arbitrary. Define a graded vector space 
\begin{equation}
\bar{S}(V,A) := \prod_{p = 0}^{\infty} A \otimes_{\R} S^{p}(V).
\end{equation}
\begin{rem}
The presence and the position of the direct product instead of a direct sum is essential as typically $S^{p}(V) \neq 0$ for all $p \in \N_{0}$ and the direct product and the tensor product are in general not distributive, that is 
\begin{equation}
\prod_{p = 0}^{\infty} A \otimes_{\R} S^{p}(V) \neq A \otimes_{\R} \prod_{p=0}^{\infty} S^{p}(V).
\end{equation}
\end{rem}
To make $\bar{S}(V,A)$ into a graded associative algebra, for each $p,q \in \N_{0}$ define 
\begin{equation} \label{eq_mupqproduct}
\mu_{p,q}( (a \otimes \sigma) \otimes (a' \otimes \sigma')) := (-1)^{|\sigma||a'|} (a \cdot a') \otimes (\sigma \cdot \sigma') \in A \otimes_{\R} S^{p+q}(V),
\end{equation}
for all $a,a' \in A$, $\sigma \in S^{p}(V)$ and $\sigma' \in S^{q}(V)$, and extend it to a graded linear map 
\begin{equation} \mu_{p,q}: (A \otimes_{\R} S^{p}(V)) \otimes_{\R} (A \otimes_{\R} S^{q}(V)) \rightarrow A \otimes_{\R} S^{p+q}(V).
\end{equation}
Now, each element of $\bar{S}(V,A)$ is a sequence $x = (x_{p})_{p=0}^{\infty}$, where $x_{p} \in A \otimes_{\R} S^{p}(V)$. Note that $|x| = |x_{p}|$ for all $p \in \N_{0}$. The graded linear map $\mu: \bar{S}(V,A) \otimes_{\R} \bar{S}(V,A) \rightarrow \bar{S}(V,A)$ is defined as
\begin{equation}
\mu(x \otimes y) := ( \sum_{q=0}^{p} \mu_{q,p-q}( x_{q} \otimes y_{p-q}))_{p=0}^{\infty},
\end{equation}
for all $x,y \in \bar{S}(V,A)$. The algebra unit is $1 := 1 \otimes 1 \in A_{0} \otimes (S^{0}(V))_{0} \subseteq \bar{S}(V,A)_{0}$. It is now a straightforward exercise to prove that $(\bar{S}(V,A),\mu, 1)$ forms a graded commutative associative algebra called the \textbf{extended symmetric algebra of $V$ with coefficients in $A$}.

\begin{rem} \label{rem_simplerextsymmetric}
There are notable cases where $\bar{S}(V,A)$ coincides with a simpler graded commutative associative algebra $S(V,A)$, defined as
\begin{equation}
S(V,A) := A \otimes_{\R} S(V) = \bigoplus_{p = 0}^{\infty} A \otimes_{\R} S^{p}(V),
\end{equation}
see Remark \ref{rem_tensorproductofgcAs}-(i). This happens for example if $A$ is non-negatively graded ($A_{k} = 0$ for $k < 0$) and $V$ is strictly positively graded ($V_{k} = 0$  for $k \leq 0$). This forces $\bar{S}(V,A)_{k} = 0$ for all $k < 0$ and for any given $k \in \N_{0}$, one has $(A \otimes_{\R} S^{p}(V))_{k} = 0$ whenever $p > k$, whence we obtain
\begin{equation}
\bar{S}(V,A)_{k} = \prod_{p=0}^{k} (A \otimes_{\R} S^{p}(V))_{k} = \bigoplus_{p=0}^{k} (A \otimes_{\R} S^{p}(V))_{k} = S(V,A)_{k}.
\end{equation}
The fact that also the graded algebra structures coincide is a direct verification. The same statement holds also for $A$ non-positively graded with $V$ strictly negatively graded. 
\end{rem}

\begin{tvrz} \label{tvrz_SVAisfunctor}
The assignment $(V,A) \mapsto \bar{S}(V,A)$ defines a functor $\bar{S}: \gVect \times \gcAs \rightarrow \gcAs$. 
\end{tvrz}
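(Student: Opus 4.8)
The plan is to define $\bar{S}$ on morphisms and then verify the two functor axioms; the construction on objects is already in hand. A morphism $(V,A) \to (W,B)$ in $\gVect \times \gcAs$ is a pair $(\phi,\psi)$ with $\phi \in \gVect(V,W)$ and $\psi \in \gcAs(A,B)$. By Proposition \ref{tvrz_SVuniversal}, $S$ is a functor $\gVect \to \gcAs$, and since $S(\phi): S(V) \to S(W)$ is the unique algebra morphism extending $j_W \circ \phi$, it sends a generator $v_1 \cdots v_p$ of $S^p(V)$ to $\phi(v_1) \cdots \phi(v_p) \in S^p(W)$; hence $S(\phi)$ restricts to a graded linear map $S^p(\phi): S^p(V) \to S^p(W)$ for each $p \in \N_{0}$, with $S^0(\phi) = \mathrm{id}_{\R}$. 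For each $p$ we thus obtain a graded linear map $\psi \otimes_{\R} S^p(\phi): A \otimes_{\R} S^p(V) \to B \otimes_{\R} S^p(W)$, and I set
\[
\bar{S}(\phi,\psi) := \prod_{p=0}^{\infty} \big( \psi \otimes_{\R} S^p(\phi) \big) : \bar{S}(V,A) \to \bar{S}(W,B),
\]
that is, $\bar{S}(\phi,\psi)(x)_p := (\psi \otimes_{\R} S^p(\phi))(x_p)$ for $x = (x_p)_{p=0}^{\infty}$. This is a well-defined graded linear map, since a map into a direct product is precisely a family of maps into its factors.

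Next I would check that $\bar{S}(\phi,\psi)$ is an arrow in $\gcAs$. It sends the unit $1 \otimes 1 \in A_0 \otimes (S^0(V))_0$ to $\psi(1) \otimes 1 = 1 \otimes 1$ because $\psi$ is unital. For multiplicativity, note that the product $\mu$ on $\bar{S}$ is assembled componentwise from the maps $\mu_{p,q}$, each component of $\mu(x \otimes y)$ being a finite sum of terms $\mu_{q,p-q}(x_q \otimes y_{p-q})$; it therefore suffices to prove, for all $p,q \in \N_{0}$,
\[
(\psi \otimes_{\R} S^{p+q}(\phi)) \circ \mu_{p,q} = \mu_{p,q} \circ \big( (\psi \otimes_{\R} S^p(\phi)) \otimes (\psi \otimes_{\R} S^q(\phi)) \big).
\]
Evaluating both sides on a generator $(a \otimes \sigma) \otimes (a' \otimes \sigma')$ and using the defining formula (\ref{eq_mupqproduct}), the left-hand side gives $(-1)^{|\sigma||a'|}\, \psi(a \cdot a') \otimes S^{p+q}(\phi)(\sigma \cdot \sigma')$, while the right-hand side gives $(-1)^{|S^p(\phi)(\sigma)||\psi(a')|}\, (\psi(a)\cdot\psi(a')) \otimes (S^p(\phi)(\sigma)\cdot S^q(\phi)(\sigma'))$. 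These coincide: $\psi$ and $S(\phi)$ are algebra morphisms (and $S^p(\phi)(\sigma)\cdot S^q(\phi)(\sigma') = S^{p+q}(\phi)(\sigma\cdot\sigma')$ by the restriction property above), and since $\phi$ and $\psi$ have degree zero we have $|S^p(\phi)(\sigma)| = |\sigma|$ and $|\psi(a')| = |a'|$, so the Koszul signs agree. Note also that $\psi \otimes_{\R} S^p(\phi)$ carries no extra sign precisely because both factors are degree-zero maps.

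Finally I would verify the functor axioms. For identities, $S^p(\mathrm{id}_V) = \mathrm{id}_{S^p(V)}$ by functoriality of $S$, hence $\psi = \mathrm{id}_A$ gives $\mathrm{id}_A \otimes_{\R} \mathrm{id}_{S^p(V)} = \mathrm{id}_{A \otimes_{\R} S^p(V)}$, and $\bar{S}(\mathrm{id}_V,\mathrm{id}_A)$ is the product of these identities, i.e. $\mathrm{id}_{\bar{S}(V,A)}$. For composition, given $(\phi',\psi'): (W,B) \to (W',B')$, functoriality of $S$ yields $S^p(\phi'\circ\phi) = S^p(\phi')\circ S^p(\phi)$, and the interchange law for $\otimes_{\R}$ gives $(\psi'\circ\psi) \otimes_{\R} (S^p(\phi')\circ S^p(\phi)) = (\psi'\otimes_{\R} S^p(\phi'))\circ(\psi\otimes_{\R} S^p(\phi))$; forming the product over $p$ gives $\bar{S}((\phi',\psi')\circ(\phi,\psi)) = \bar{S}(\phi',\psi')\circ\bar{S}(\phi,\psi)$. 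The only mildly delicate point in the whole argument is the sign bookkeeping in the multiplicativity identity, and, as observed, it is harmless exactly because morphisms in $\gVect \times \gcAs$ are degree-preserving; everything else reduces to functoriality of $S$ and to the componentwise nature of $\bar{S}$.
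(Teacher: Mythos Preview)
Your proposal is correct and follows exactly the same approach as the paper: define $\bar{S}(\phi,\psi) := \prod_{p=0}^{\infty} \psi \otimes S^{p}(\phi)$ using the restriction of the functorial $S(\phi)$ to each $S^{p}$, and then verify the algebra-morphism and functor axioms. The paper simply declares these verifications ``straightforward'' and omits them, whereas you carry them out explicitly; your sign analysis and use of the interchange law are correct.
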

\begin{proof}
Let $\varphi \in \gVect(V,W)$ and $\psi \in \gcAs(A,B)$ for $V,W \in \gVect$ and $A,B \in \gcAs$. We have to define a graded algebra morphism $\bar{S}(\varphi,\psi): \bar{S}(V,A) \rightarrow \bar{S}(W,B)$. By Proposition \ref{tvrz_SVuniversal}, we have the induced graded algebra morphism $S(\varphi): S(V) \rightarrow S(W)$. It follows from its construction that for each $p \in \N_{0}$, it restricts to a graded linear morphism $S^{p}(\varphi): S^{p}(V) \rightarrow S^{p}(W)$. We define
\begin{equation}
\bar{S}(\varphi,\psi) := \prod_{p = 0}^{\infty} \psi \otimes S^{p}(\varphi).
\end{equation}
It is straightforward to check that this is a graded algebra morphism and the assignment $(\varphi,\psi) \mapsto \bar{S}(\varphi,\psi)$ makes $\bar{S}$ into a functor.  
\end{proof}
Now, suppose that $V$ is finite-dimensional with a fixed total basis $(\xi_{\mu})_{\mu=1}^{n}$, where $n := \dim(V)$. To each $k \in \Z$, we assign the following set:
\begin{equation}
\N^{n}_{k} := \{ (p_{1},\dots,p_{n}) \in (\N_{0})^{n} \; | \; \sum_{\mu=1}^{n} p_{\mu} |\xi_{\mu}| = k, \; \; p_{\mu} \in \{0,1\} \text{ whenever } |\xi_{\mu}| \text{ is odd} \}.
\end{equation}
Note that for a general $V$, this set is infinite. It can be written as a disjoint union $\N^{n}_{k} = \sqcup_{p=0}^{\infty} \N^{n}_{k}(p)$ of finite sets $\N^{n}_{k}(p)$ defined by
\begin{equation} \label{eq_Nnkpset}
\N^{n}_{k}(p) = \{ (p_{1},\dots,p_{n}) \in \N^{n}_{k} \; | \; w(p_{1},\dots,p_{n}) := \sum_{\mu=1}^{n} p_{\mu} = p \}.
\end{equation}
Finally, let $\N^{n}(p) = \cup_{k \in \Z} \N^{n}_{k}(p)$ and note that this also a finite subset. We will use the shorthand notation $\fp = (p_{1},\dots,p_{n})$. We assume that all these sets inherit a lexicographical ordering from $(\N_{0})^{n}$, but this choice is not really important in anything what follows. For each $\fp \in (\N_{0})^{n}$, write
\begin{equation}
\xi^{\fp} := (\xi_{1})^{p_{1}} \dots (\xi_{n})^{p_{n}}, 
\end{equation}
where $(\xi_{\mu})^{0} := 1$ and for each $q > 0$ one sets \begin{equation}
(\xi_{\mu})^{q} := \underbrace{\xi_{\mu} \dots \xi_{\mu}}_{q \text{ times}}.
\end{equation}

\begin{tvrz} \label{tvrz_totalbasisSV}
Let $V \in \gVect$ with $n := \dim(V) < \infty$. Suppose $(\xi_{\mu})_{\mu=1}^{n}$ is its fixed total basis. 

Then for each $p \in \N_{0}$ and $k \in \Z$, the collection $( \xi^{\fp} )_{\fp \in \N^{n}_{k}(p)}$ forms a basis of $(S^{p}(V))_{k}$. Consequently, $( \xi^{\fp} )_{\fp \in \N^{n}(p)}$ forms a total basis of the graded vector space $S^{p}(V)$ for each $p \in \N_{0}$. In particular, $S^{p}(V)$ is a finite-dimensional graded vector space. 
\end{tvrz}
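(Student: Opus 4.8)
I would start from the presentation $S^{p}(V) = T^{p}(V)/\langle M\rangle^{p}$ established above. Fixing the total basis $(\xi_{\mu})_{\mu=1}^{n}$, the tensor monomials $\xi_{\mu_{1}}\otimes\cdots\otimes\xi_{\mu_{p}}$ with $|\xi_{\mu_{1}}|+\cdots+|\xi_{\mu_{p}}|=k$ form a basis of $(T^{p}(V))_{k}$, so their images under the quotient map $q\colon T^{p}(V)\to S^{p}(V)$ span $(S^{p}(V))_{k}$. The argument then splits into an easy spanning reduction and the genuine content, a linear-independence argument. Once $(\xi^{\fp})_{\fp\in\N^{n}_{k}(p)}$ is shown to be a basis of $(S^{p}(V))_{k}$ for every $k$, the assertions about total bases and finite-dimensionality are immediate: $\N^{n}(p)=\bigsqcup_{k\in\Z}\N^{n}_{k}(p)$ is a finite disjoint union of finite sets, so putting $I_{k}:=\N^{n}_{k}(p)$ in the definition of a total basis exhibits $(\xi^{\fp})_{\fp\in\N^{n}(p)}$ as a total basis of $S^{p}(V)$, with $\dim S^{p}(V)=\#\N^{n}(p)<\infty$.

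\textbf{Spanning.} Applying the graded symmetry relation (\ref{eq_gradedsymmetry}) repeatedly, any product $\xi_{\mu_{1}}\cdots\xi_{\mu_{p}}$ equals $\pm\,\xi_{\nu_{1}}\cdots\xi_{\nu_{p}}$ with $\nu_{1}\le\cdots\le\nu_{p}$; collecting equal indices, this is $\pm\,\xi^{\fp}$ for some $\fp\in(\N_{0})^{n}$ with $w(\fp)=p$ and $\sum_{\mu}p_{\mu}|\xi_{\mu}|=k$. If a basis vector $\xi_{\mu}$ of odd degree occurs with $p_{\mu}\ge 2$, then (\ref{eq_gradedsymmetry}) applied to two adjacent copies of $\xi_{\mu}$ gives $\xi_{\mu}\xi_{\mu}=-\xi_{\mu}\xi_{\mu}$, hence $\xi_{\mu}\xi_{\mu}=0$ (we work over $\R$), so that monomial vanishes. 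Therefore the nonzero $\xi^{\fp}$ that arise are exactly those with $\fp\in\N^{n}_{k}(p)$, and $(\xi^{\fp})_{\fp\in\N^{n}_{k}(p)}$ spans $(S^{p}(V))_{k}$.

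\textbf{Independence.} This is the crux, and the plan is to produce detecting functionals. For fixed $\fp\in\N^{n}_{k}(p)$ I would construct a graded linear map $\hat{\beta}_{\fp}\colon S^{p}(V)\to\R[-k]$ — equivalently, since $(\R[-k])_{j}$ is $\R$ for $j=k$ and $0$ otherwise, an ordinary functional on $(S^{p}(V))_{k}$ — with $\hat{\beta}_{\fp}(\xi^{\fq})=\delta_{\fp\fq}$ for all $\fq\in\N^{n}_{k}(p)$; the existence of such functionals immediately upgrades the spanning set to a basis. By Proposition \ref{tvrz_skewplinearaslinear} it suffices to exhibit a completely symmetric graded $p$-linear map $\beta_{\fp}\colon V\times\cdots\times V\to\R[-k]$. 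Let $(\xi^{\mu})$ denote the basis of $V^{\ast}$ dual to $(\xi_{\mu})$, label the $p$ "slots" so that $p_{1}$ of them carry label $1$, the next $p_{2}$ carry label $2$, and so on, write $\ell(i)$ for the label of slot $i$, and set
\begin{equation}
\beta_{\fp}(v_{1},\ldots,v_{p}) := \frac{1}{\prod_{\mu=1}^{n}p_{\mu}!}\sum_{\sigma}\epsilon(\sigma;v_{1},\ldots,v_{p})\prod_{i=1}^{p}\langle \xi^{\ell(i)},v_{\sigma(i)}\rangle,
\end{equation}
where $\sigma$ ranges over all permutations of $\{1,\dots,p\}$ and $\epsilon(\sigma;v_{1},\ldots,v_{p})\in\{\pm1\}$ is the Koszul sign picked up when $\sigma$ rearranges the homogeneous arguments $v_{1},\ldots,v_{p}$. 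One then checks that $\beta_{\fp}$ is graded $p$-linear, that it is nonzero only on components $V_{k_{1}}\times\cdots\times V_{k_{p}}$ with $k_{1}+\cdots+k_{p}=k$ (so it indeed takes values in $\R[-k]$), and that it is completely symmetric, because interchanging two adjacent arguments reproduces precisely the Koszul sign demanded by the definition. Evaluating $\hat{\beta}_{\fp}$ on $\xi^{\fq}$ reduces to elementary combinatorics: a term survives only when each label-$\mu$ slot is matched to an argument equal to $\xi_{\mu}$, which forces $\fp=\fq$; when $\fp=\fq$ there are exactly $\prod_{\mu}p_{\mu}!$ such matchings, each contributing sign $+1$ (for odd $\xi_{\mu}$ one has $p_{\mu}\le 1$, so only even blocks are permuted nontrivially), and the normalization cancels this overcount, giving $\hat{\beta}_{\fp}(\xi^{\fq})=\delta_{\fp\fq}$.

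\textbf{The hard part, and an alternative.} The only genuinely delicate step is the independence argument — specifically the Koszul-sign bookkeeping in the definition of $\beta_{\fp}$ and its interaction with the symmetrization factor $\prod_{\mu}p_{\mu}!$, the point where the constraint $p_{\mu}\in\{0,1\}$ for $|\xi_{\mu}|$ odd is used essentially. If one prefers to sidestep the signs, an equivalent route is to first prove $S(V\oplus W)\cong S(V)\otimes_{\R}S(W)$ (both sides carry the universal property of the coproduct in $\gcAs$ by Proposition \ref{tvrz_SVuniversal}), refine it to $S^{p}(V\oplus W)\cong\bigoplus_{a+b=p}S^{a}(V)\otimes_{\R}S^{b}(W)$ by tracking polynomial degree, and then induct on $n=\dim V$; the base case $\dim V=1$ is the explicit computation of $S(\R\xi)$, a one-variable polynomial algebra when $|\xi|$ is even and the two-term algebra $\R\oplus\R\xi$ when $|\xi|$ is odd.
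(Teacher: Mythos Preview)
Your proposal is correct and considerably more detailed than the paper's own proof, which merely verifies that each $\xi^{\fp}$ with $\fp\in\N^{n}_{k}(p)$ lies in $(S^{p}(V))_{k}$, observes that the restriction $p_{\mu}\in\{0,1\}$ for odd $|\xi_{\mu}|$ prevents the monomial from vanishing, and then declares that ``it is now straightforward to see that they actually form a basis'' without further argument. You supply exactly what the paper omits: a clean spanning argument via the graded symmetry relations, and a genuine linear-independence proof by constructing explicit dual functionals $\hat{\beta}_{\fp}$ through Proposition~\ref{tvrz_skewplinearaslinear}. The Koszul-sign bookkeeping you flag as delicate is handled correctly, and your observation that only even-degree blocks are permuted nontrivially (so all surviving signs are $+1$) is the right way to see why the normalization works. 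Your alternative route via $S(V\oplus W)\cong S(V)\otimes_{\R}S(W)$ and induction on $\dim V$ is also valid and arguably cleaner; neither route appears in the paper.
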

\begin{proof}
For $\fp \in \N^{n}_{k}(p)$, the element $\xi^{\fp}$ lies in $S^{p}(V)$ as the sum of powers is assumed to be $p$. Moreover, one has 
\begin{equation}
|\xi^{\fp}| = | (\xi_{1})^{p_{1}} \dots (\xi_{n})^{p_{n}}| = \sum_{\mu = 1}^{n} p_{\mu} |\xi_{\mu}| = k,
\end{equation}
hence it lies in the vector space $(S^{p}(V))_{k}$. The restriction on the values of $p_{\mu}$ for odd $|\xi_{\mu}|$ ensures that the corresponding monomial $\xi^{\fp}$ does not vanish. It is now straightforward to see that they actually form a basis. This finishes the proof.
\end{proof}
\begin{example} \label{ex_formalpower}
Let us now examine the graded commutative associative algebra $\bar{S}(V,A)$ for a finite-dimensional $V$ and suppose $A$ is an ordinary commutative associative algebra, that is $A_{k} = 0$ for $k \neq 0$. Let $(\xi_{\mu})_{\mu=1}^{n}$ be a fixed total basis for $V$, where $n = \dim(V)$. 

We claim that elements $\bar{S}(V,A)$ can be naturally interpreted as formal power series in $n$ variables $(\xi_{\mu})_{\mu=1}^{n}$ with coefficients in the algebra $A$. The variables are assumed to commute according to the rule 
\begin{equation} \label{eq_variablesrelation}
\xi_{\mu} \xi_{\nu} = (-1)^{|\xi_{\mu}||\xi_{\nu}|} \xi_{\nu} \xi_{\mu}.
\end{equation}
Every $x \in \bar{S}(V,A)_{k}$ can be written as a formal sum
\begin{equation} \label{eq_formalsum}
x = \sum_{\fp \in \N^{n}_{k}} x_{\fp} \xi^{\fp},
\end{equation}
for the unique sequence $(x_{\fp})_{\fp \in \N^{n}_{k}}$, where $x_{\fp} \in A$. Indeed, $x$ was originally a sequence $x = (x_{q})_{q = 0}^{\infty}$, where $x_{q} \in (A \otimes_{\R} S^{q}(V))_{k} = A \otimes_{\R} (S^{q}(V))_{k}$. By Proposition \ref{tvrz_totalbasisSV}, one can write each $x_{q}$ as 
\begin{equation}
x_{q} = \sum_{\fp \in \N^{n}_{k}(q)} x_{\fp} \otimes \xi^{\fp},
\end{equation} 
for the unique (finite) sequence $( x_{\fp} )_{\fp \in \N^{n}_{k}(q)}$ of elements of $A$. Gathering them into a single sequence, we can form the formal sum (\ref{eq_formalsum}). 

Next, observe that for any $\fp \in \N^{n}_{k}$ and $\fq \in \N^{n}_{\ell}$, such that $\fp + \fq \in \N^{n}_{k+\ell}$, there is a sign $\epsilon^{\fp,\fq}$ determined uniquely by the equation
\begin{equation} 
\xi^{\fp} \xi^{\fq} = \epsilon^{\fp, \fq} \xi^{\fp + \fq},
\end{equation}
It can happen that $\fp + \fq \notin \N^{n}_{k+\ell}$. This is the case precisely when there is $\mu \in \{1,\dots,n\}$ such that $|\xi_{\mu}|$ is odd and $p_{\mu} + q_{\mu} = 2$. But then $\xi_{\mu}^{2} = 0$ and thus $\xi^{\fp + \fq} = 0$. In such a scenario, we declare $\epsilon^{\fp, \fq} := 0$. In fact, it is not difficult to obtain $\epsilon^{\fp,\fq}$ explicitly from (\ref{eq_variablesrelation}), finding
\begin{equation} \label{eq_epsilonpqexplicit}
\epsilon^{\fp, \fq} = (-1)^{\sum_{\mu=1}^{n-1} \{  q_{\mu} |\xi_{\mu}| \cdot \sum_{\nu=\mu+1}^{n} p_{\nu} |\xi_{\nu}| \}},
\end{equation}
whenever $\fp + \fq \in \N^{n}_{k+\ell}$. However, we will seldom need this formula in what follows. Let 
\begin{equation}
x = \sum_{\fp \in \N^{n}_{k}} x_{\fp} \xi^{\fp}, \; \; y = \sum_{\fp \in \N^{n}_{\ell}} y_{\fp} \xi^{\fp}
\end{equation}
be two formal power series. Their product is defined as one would expect for the product of formal power series, that is 
\begin{equation} \label{eq_formalseriesproduct}
x \cdot y = \sum_{\fp \in \N^{n}_{k+\ell}} (x \cdot y)_{\fp} \xi^{\fp}, \text{ where } (x \cdot y)_{\fp} = \sum_{\substack{\fq \in \N^{n}_{k} \\ \fq \leq \fp}} \epsilon^{\fq,\fp-\fq}  x_{\fq} \cdot y_{\fp - \fq},
\end{equation}
where $\fq \leq \fp$, iff $q_{\mu} \leq p_{\mu}$ for all $\mu \in \{1, \dots, n\}$. Note that for $\fp \in \N^{n}_{k+\ell}$ and $\fq \in \N^{n}_{k}$ such that $\fq \leq \fp$, one always has $\fp - \fq \in \N^{n}_{\ell}$. It is not difficult to see that this product coincides with the one of $\bar{S}(V,A)$. In the rest of the paper, we will exclusively use this formalism. 
\end{example}
\begin{rem}
Note that $\bar{S}(V,A)$ is our main motivation to \textit{not work} with graded vector spaces as direct sums of its homogeneous subspaces. Indeed, suppose $V \in \gVect$ and let $V_{\oplus} = \bigoplus_{k \in \Z} V_{k}$. In the literature (see e.g. one of the appendices in \cite{eisenbud2013commutative}), the \textbf{graded symmetric algebra} $S_{\Z}(V_{\oplus})$ is usually defined as the quotient of the ordinary tensor algebra $T(V_{\oplus})$ by the ideal generated by the set $\cup_{k \in \Z} M_{k} \subseteq T(V_{\oplus})$, see (\ref{eq_Mkset}). For an associative commutative algebra $A \in \cAs$, we would then need to consider the associative algebra
\begin{equation}
\bar{S}_{\Z}(V_{\oplus},A) := \prod_{p = 0}^{\infty} A \otimes_{\R} S^{p}_{\Z}(V_{\oplus})
\end{equation}
and write it as a direct sum of its homogeneous subspaces. But this does not work. 
\end{rem}
\subsection{Graded modules and derivations} \label{subsec_gmods}
Let $A \in \gcAs$ be a graded associative commutative algebra. Let $V \in \gVect$ and let $\lambda: A \otimes_{\R} V \rightarrow V$ be a graded linear map, such that
\begin{equation} \label{eq_moduleaxioms}
(a \cdot a') \tr v = a \tr (a' \tr v), \; \; 1 \triangleright v = v,
\end{equation} 
for all $v \in V$ and $a,a' \in A$, where we use the shorthand notation $a \tr v := \lambda(a \otimes v)$. Then $(V,\lambda)$ is called a \textbf{graded $A$-module}. For each $v \in V$ and $a \in A$, let us also write $v \tl a := (-1)^{|a||v|} a \tr v$. It follows that $v \tl (a \cdot a') = (v \tl a) \tl a'$ and $v \tl 1 = v$ for all $v \in V$ and $a,a' \in A$. 

\begin{example} \label{ex_AisAmodule}
Every $A \in \gcAs$ is also a graded $A$-module, if we set $a \tr a' := a \cdot a'$ for all $a,a' \in A$. 
\end{example}

\begin{rem} \label{rem_shiftedmodule}
Let $\ell \in \Z$ be fixed and suppose $V \in \gVect$ is a graded $A$-module. Let us equip the degree shifted vector space $V[\ell]$ with a graded $A$-module structure. 
Observe that there is a canonical graded linear map $\delta[\ell]: V[\ell] \rightarrow V$ of degree $\ell$, where $(\delta[\ell])_{k} := \1_{V_{k+\ell}}: (V[\ell])_{k} \rightarrow V_{k+\ell}$ for each $k \in \Z$. $\delta[\ell]$ is called the \textbf{degree shifting operator}. Now, for each $a \in A$ and $v \in V[\ell]$, define $a \tr' v$ to fit into the equation
\begin{equation} \label{eq_shiftedmodule}
\delta[\ell]( a \tr' v) := (-1)^{|a|\ell} a \tr \delta[\ell](v).
\end{equation}
The sign seems to be unnecessary and quite strange. However, it is in accordance with the ``Koszul convention'' as we are swapping the positions of objects with non-zero degrees $|\delta[\ell]| = \ell$ and $|a|$. It is sometimes a difficult decision \textit{where} to put the signs - but they always appear somewhere. It is not difficult to verify that $\tr'$ makes $V[\ell]$ into a graded $A$-module. 
\end{rem}

\begin{definice} \label{def_Alinearmaps}
Let $V,W \in \gVect$ be two graded $A$-modules. Let $\ell \in \Z$. We say that $\varphi \in \Lin_{\ell}(V,W)$ is \textbf{graded $A$-linear of degree $\ell$}, if for every $a \in A$ and $v \in V$, one has
\begin{equation} \label{eq_Alinearity}
\varphi(a \tr v) = (-1)^{|a|\ell} a \tr \varphi(v). 
\end{equation}
The space of \textbf{graded $A$-linear maps of degree $\ell$} is denoted as $\Lin^{A}_{\ell}(V,W)$. Altogether, these maps form a graded subspace $\Lin^{A}(V,W) \subseteq \Lin(V,W)$. Graded $A$-modules together with graded $A$-linear maps of degree $0$ form a subcategory $\AgMod \subseteq \gVect$. 
\end{definice}

Note that the sign in (\ref{eq_Alinearity}) is important for the following proposition:
\begin{tvrz} \label{tvrz_LinAmodule}
Let $V,W \in \AgMod$. Then there is a canonical graded $A$-module structure on $\Lin^{A}(V,W)$. In particular, for any $V \in \AgMod$, there is its \textbf{dual $A$-module} $V^{\ast} := \Lin^{A}(V,A)$.
\end{tvrz}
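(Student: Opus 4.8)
The plan is to equip $\Lin^{A}(V,W)$ with the module structure $(a \tr \varphi)(v) := a \tr \varphi(v)$ for $a \in A$, $\varphi \in \Lin^{A}(V,W)$ and $v \in V$, and then verify that this lands in the correct graded component, that it is again graded $A$-linear of the appropriate degree, and that the module axioms \eqref{eq_moduleaxioms} hold. The sign in the $A$-linearity condition \eqref{eq_Alinearity} is what makes the computation work, which is presumably why the remark preceding the proposition flags it.

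First I would check degrees: if $\varphi \in \Lin^{A}_{\ell}(V,W)$ and $a \in A_{j}$, then for $v \in V_{k}$ we have $a \tr \varphi(v) \in W_{j + \ell + k}$, so $v \mapsto a \tr \varphi(v)$ raises degree by $j + \ell$; hence $a \tr \varphi \in \Lin_{j + \ell}(V,W)$, i.e. $|a \tr \varphi| = |a| + |\varphi|$ as required for a graded linear map. Next I would verify that $a \tr \varphi$ is graded $A$-linear of degree $|a| + |\varphi|$: for $a' \in A$, using associativity and graded commutativity of $A$ together with \eqref{eq_Alinearity} for $\varphi$,
\begin{equation}
(a \tr \varphi)(a' \tr v) = a \tr \big( \varphi(a' \tr v) \big) = a \tr \big( (-1)^{|a'||\varphi|} a' \tr \varphi(v) \big) = (-1)^{|a'||\varphi|} (a \cdot a') \tr \varphi(v).
\end{equation}
Now $a \cdot a' = (-1)^{|a||a'|} a' \cdot a$, so this equals $(-1)^{|a'||\varphi| + |a||a'|} (a' \cdot a) \tr \varphi(v) = (-1)^{|a'|(|a| + |\varphi|)} a' \tr \big( (a \tr \varphi)(v) \big)$, which is exactly \eqref{eq_Alinearity} with $\ell$ replaced by $|a| + |\varphi| = |a \tr \varphi|$. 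So $a \tr \varphi \in \Lin^{A}_{|a| + |\varphi|}(V,W)$, confirming that $\tr$ restricts to a graded linear map $A \otimes_{\R} \Lin^{A}(V,W) \to \Lin^{A}(V,W)$.

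It then remains to check the two module axioms in \eqref{eq_moduleaxioms}. For the unit, $(1 \tr \varphi)(v) = 1 \tr \varphi(v) = \varphi(v)$ for all $v$, so $1 \tr \varphi = \varphi$. For associativity of the action, $\big( (a \cdot a') \tr \varphi \big)(v) = (a \cdot a') \tr \varphi(v) = a \tr \big( a' \tr \varphi(v) \big) = a \tr \big( (a' \tr \varphi)(v) \big) = \big( a \tr (a' \tr \varphi) \big)(v)$, using the module axioms of $W$. This gives the graded $A$-module structure on $\Lin^{A}(V,W)$; taking $W = A$ with its canonical $A$-module structure from Example \ref{ex_AisAmodule} yields the dual module $V^{\ast} = \Lin^{A}(V,A)$.

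The only real subtlety — the ``main obstacle'' in an otherwise routine verification — is the sign bookkeeping in the $A$-linearity check above: one must use graded commutativity of $A$ at precisely the right moment to convert $(-1)^{|a'||\varphi| + |a||a'|}$ into $(-1)^{|a'|(|a| + |\varphi|)}$, and this is exactly where the $(-1)^{|a|\ell}$ in \eqref{eq_Alinearity} is needed rather than a naive sign-free definition. I would make sure to spell out that step explicitly and leave the remaining identities, which are immediate, to the reader.
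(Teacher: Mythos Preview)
Your proposal is correct and follows exactly the same approach as the paper: define $(a \tr \varphi)(v) := a \tr \varphi(v)$ and verify everything directly. The paper's own proof gives only this definition and declares the rest ``a straightforward verification'', so your write-up is simply the filled-in version of that verification, including the sign computation using graded commutativity that the paper alludes to in the sentence preceding the proposition.
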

\begin{proof}
For each $\varphi \in \Lin^{A}(V,W)$, $a \in A$ and $v \in V$, define $(a \tr \varphi)(v) := a \tr \varphi(v)$. The rest is a straightforward verification. 
\end{proof}

\begin{rem} \label{rem_LinAofshifted}
For every $\ell \in \Z$, and any $V,W \in \AgMod$, there are obvious canonical graded $A$-module isomorphisms
\begin{equation}
\Lin^{A}(V,W[\ell]) \cong \Lin^{A}(V[-\ell],W) \cong \Lin^{A}(V,W)[\ell].
\end{equation}
In particular, for each $k \in \Z$, there are vector space isomorphisms 
\begin{equation}
\Lin_{k}^{A}(V,W[\ell]) \cong \Lin_{k}^{A}(V[-\ell],W) \cong \Lin^{A}_{k+\ell}(V,W).
\end{equation}
Note that graded $A$-module structures on degree shifted graded vector spaces are those described in Remark \ref{rem_shiftedmodule}.
\end{rem}

\begin{definice}
Let $A \in \gcAs$ and let $V$ be a graded $A$-module. Let $\varphi \in \Lin_{\ell}(A,V)$ be a graded linear map of degree $\ell \in \Z$. We say that $\varphi$ is a \textbf{graded derivation of degree $\ell$}, if 
\begin{equation} \label{eq_derlleibniz}
\varphi(a \cdot a') = \varphi(a) \tl a' + (-1)^{\ell |a|} a \tr \varphi(a'),
\end{equation}
for all $a,a' \in A$. Graded derivations of degree $\ell$ form a subspace $\gDer_{\ell}(A,V) \subseteq \Lin_{\ell}(A,V)$, whence $\gDer(A,V) = \{ \gDer_{\ell}(A,V) \}_{\ell \in \Z}$ forms a graded linear subspace of $\Lin(A,V)$. For $V = A$, see Example \ref{ex_AisAmodule}, we write $\gDer(A,A) = \gDer(A)$. 
\end{definice}
Let us recall some elementary properties of graded derivations. 
\begin{tvrz}\label{tvrz_derivations}
Let $A \in \gcAs$ and let $V \in \AgMod$. Then:
\begin{enumerate}[(i)]
\item For any $\varphi \in \gDer(A,V)$, one has $\varphi(1) = 0$.
\item Let $A' \in \gcAs$ and let $\psi: A \rightarrow A'$ be a graded algebra morphism. Suppose $V$ is also a graded $A'$-module (with the action denoted as $\tr'$), such that $a \tr v = \psi(a) \tr' v$ for all $v \in V$ and $a \in A$. 

Then for any $\varphi' \in \gDer(A',V)$, the composition $\varphi' \circ \psi$ is in $\gDer(A,V)$. 
\item Let $V'$ be another graded $A$-module. Suppose that $\psi \in \Lin^{A}_{0}(V,V')$, see Definition \ref{def_Alinearmaps}. 

Then for any $\varphi \in \gDer(A,V)$, the composition $\psi \circ \varphi$ is in $\gDer(A,V')$.
\item For any $\varphi,\varphi' \in \gDer(A)$, one can define their \textbf{graded commutator} as 
\begin{equation}
[\varphi,\varphi'] = \varphi \circ \varphi' - (-1)^{|\varphi||\varphi'|} \varphi' \circ \varphi.
\end{equation}
Then $[\varphi,\varphi'] \in \gDer(A)$, and one has $|[\varphi,\varphi']| = |\varphi| + |\varphi'|$. 
\end{enumerate}
\end{tvrz}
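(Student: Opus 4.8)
The plan is to verify the four claims in order, treating (i) as a quick consequence of the Leibniz rule, (ii) and (iii) as direct computations exploiting how the module structures interact with the maps involved, and (iv) as the substantive part requiring careful bookkeeping of Koszul signs.

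For (i), I would apply \eqref{eq_derlleibniz} to $a = a' = 1$. Since $1 \cdot 1 = 1$, this gives $\varphi(1) = \varphi(1) \tl 1 + (-1)^{\ell \cdot 0} 1 \tr \varphi(1) = \varphi(1) + \varphi(1)$, using $v \tl 1 = v$ and $1 \tr v = v$ from the module axioms \eqref{eq_moduleaxioms}. Hence $\varphi(1) = 0$ in the abelian group $V_{\ell}$.

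For (ii), let $\varphi' \in \gDer_{\ell}(A',V)$ and set $\chi := \varphi' \circ \psi$, which lies in $\Lin_{\ell}(A,V)$ since $\psi$ has degree $0$. For $a, a' \in A$, compute $\chi(a \cdot a') = \varphi'(\psi(a) \cdot \psi(a'))$ using that $\psi$ is an algebra morphism, then expand via the Leibniz rule for $\varphi'$ to get $\varphi'(\psi(a)) \tl' \psi(a') + (-1)^{\ell|a|} \psi(a) \tr' \varphi'(\psi(a'))$; finally rewrite $\psi(a) \tr' w = a \tr w$ (and the corresponding right-action identity, which follows since $v \tl' b = (-1)^{|b||v|} b \tr' v$ and degrees are preserved) to recover exactly the Leibniz rule for $\chi$ with values in the $A$-module $V$. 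Part (iii) is analogous but easier: for $\varphi \in \gDer_{\ell}(A,V)$ and $A$-linear $\psi$ of degree $0$, apply $\psi$ to \eqref{eq_derlleibniz}; since $\psi$ is $A$-linear of degree $0$ it commutes with both $\tr$ and $\tl$ without introducing signs, so $\psi \circ \varphi$ satisfies the Leibniz rule into $V'$.

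For (iv), the strategy is to expand $[\varphi,\varphi'](a \cdot a')$ using the Leibniz rule for $\varphi$ and $\varphi'$ separately, then collect terms. Writing $\varphi\varphi'(a\cdot a') = \varphi\big(\varphi'(a)\cdot a' + (-1)^{|\varphi'||a|} a \cdot \varphi'(a')\big)$ and expanding each $\varphi$ again produces four terms; similarly for $\varphi'\varphi(a \cdot a')$, and one forms the graded commutator. Four of the eight terms are the ``second-derivative'' terms that should survive — $(\varphi\varphi')(a) \cdot a'$, $(-1)^{(|\varphi|+|\varphi'|)|a|} a \cdot (\varphi\varphi')(a')$ and their primed counterparts with the $(-1)^{|\varphi||\varphi'|}$ prefactor — and the remaining four are the ``mixed'' terms of the form $\pm \varphi(a) \cdot \varphi'(a')$ which must cancel in pairs. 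The main obstacle, and the only place real care is needed, is checking that the Koszul signs on the two mixed terms $\varphi(a)\cdot\varphi'(a')$ coming from $\varphi\varphi'$ versus from $(-1)^{|\varphi||\varphi'|}\varphi'\varphi$ are exactly opposite: this requires tracking a sign like $(-1)^{|\varphi||a|}$ against $(-1)^{|\varphi||\varphi'|}(-1)^{|\varphi'||a|}(-1)^{|\varphi|(|\varphi'|+|a|)}$ and confirming they differ by $-1$, using also that the left and right actions are related by $v \tl a = (-1)^{|a||v|} a \tr v$. Once the mixed terms cancel, recognizing the surviving four as $[\varphi,\varphi'](a)\tl a' + (-1)^{(|\varphi|+|\varphi'|)|a|} a \tr [\varphi,\varphi'](a')$ is immediate, and the degree statement $|[\varphi,\varphi']| = |\varphi|+|\varphi'|$ follows since composition of maps of degrees $|\varphi|$ and $|\varphi'|$ has degree $|\varphi|+|\varphi'|$ and the difference of two such maps keeps that degree.
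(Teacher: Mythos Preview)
Your proposal is correct and follows essentially the same approach as the paper, which simply says that (ii)--(iv) are ``easily verified directly from the respective definitions'' and gives a short argument for (i). Your treatment of (i) is in fact slightly more direct: the paper applies the Leibniz rule to $a \cdot 1$ for general $a$ to get $a \tr \varphi(1) = 0$ and then sets $a = 1$, whereas you go straight to $1 \cdot 1$; both work, and your detailed sign-tracking for (iv) is exactly the direct verification the paper has in mind.
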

\begin{proof}
For any $\varphi \in \gDer(A,V)$ and any $a \in A$, one finds $\varphi(a \cdot 1) = \varphi(a) \tl 1 + (-1)^{|a||\varphi|}a \tr \varphi(1)$. Hence $a \tr \varphi(1) = 0$. By choosing $a = 1$, one obtains the claim $(i)$. The claims $(ii)$ - $(iv)$ are easily verified directly from the respective definitions. 
\end{proof}

\begin{tvrz} \label{tvrz_gDerisAmodule}
Let $V$ be a graded $A$-module. For each $\varphi \in \gDer(A,V)$ and all $a,b \in A$, define
\begin{equation}
(a \tr \varphi)(b) := a \tr \varphi(b),
\end{equation}
Then $a \tr \varphi \in \gDer(V,A)$ and $|a \tr \varphi| = |a| + |\varphi|$. This makes $\gDer(V,A)$ into a graded $A$-module. 
\end{tvrz}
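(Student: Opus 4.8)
The plan is a direct verification of the module axioms; the one substantive point is the Leibniz rule for $a \tr \varphi$, which amounts to a Koszul sign computation relying on graded commutativity of $A$.

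First I would record three elementary identities, valid for any graded $A$-module $V$, any $a,c \in A$ and $v \in V$, which are immediate from the module axioms (\ref{eq_moduleaxioms}), from the definition $v \tl c := (-1)^{|c||v|} c \tr v$, and from graded commutativity of $A$: namely $|a \tr v| = |a| + |v|$,
\[ a \tr (c \tr v) = (a \cdot c) \tr v = (-1)^{|a||c|}\, (c \cdot a) \tr v = (-1)^{|a||c|}\, c \tr (a \tr v), \]
and, as a consequence,
\[ a \tr (v \tl c) = (-1)^{|c||v|}\, a \tr (c \tr v) = (-1)^{|c||v|}(a \cdot c)\tr v = (a \tr v)\tl c. \]

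Next I would fix $\varphi \in \gDer_{\ell}(A,V)$ and $a \in A$ and note that $b \mapsto a \tr \varphi(b)$ is graded linear of degree $|a| + \ell$, since $\varphi$ has degree $\ell$ and $a \tr (-)$ raises degree by $|a|$; hence $|a \tr \varphi| = |a| + |\varphi|$. To see $a \tr \varphi \in \gDer(A,V)$, take $b,b' \in A$, apply the Leibniz rule (\ref{eq_derlleibniz}) for $\varphi$, then apply $a \tr (-)$ and the two identities above:
\[ (a \tr \varphi)(b \cdot b') = \big(a \tr \varphi(b)\big) \tl b' + (-1)^{\ell|b|}(-1)^{|a||b|}\, b \tr \big(a \tr \varphi(b')\big). \]
Since $(-1)^{\ell|b|}(-1)^{|a||b|} = (-1)^{(|a|+\ell)|b|}$, the right-hand side equals $(a\tr\varphi)(b)\tl b' + (-1)^{|a\tr\varphi|\,|b|}\, b \tr (a\tr\varphi)(b')$, which is exactly the defining identity for a graded derivation of degree $|a|+\ell$.

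Finally I would check that $\varphi \mapsto a \tr \varphi$ is bilinear and assembles into a graded linear map $A \otimes_{\R} \gDer(A,V) \to \gDer(A,V)$ satisfying (\ref{eq_moduleaxioms}): evaluating at an arbitrary $b \in A$, one has $((a\cdot a')\tr\varphi)(b) = (a\cdot a')\tr\varphi(b) = a\tr\big(a'\tr\varphi(b)\big) = \big(a\tr(a'\tr\varphi)\big)(b)$ by the module axiom for $V$, and $(1\tr\varphi)(b) = 1\tr\varphi(b) = \varphi(b)$; linearity in $a$ and in $\varphi$ and the degree count are immediate. This exhibits $\gDer(A,V)$ as a graded $A$-module. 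I expect no genuine obstacle here: the entire content is the sign identity $(-1)^{\ell|b|}(-1)^{|a||b|} = (-1)^{(|a|+\ell)|b|}$ together with $a\tr(c\tr v) = (-1)^{|a||c|} c\tr(a\tr v)$, i.e. precisely the point where graded commutativity of $A$ is used.
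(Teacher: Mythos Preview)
Your proof is correct and takes the same approach as the paper, which simply says ``This is an easy verification.'' You have written out precisely that verification, and the key point you identify—the sign identity coming from graded commutativity—is exactly what makes the Leibniz rule go through.
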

\begin{proof}
This is an easy verification. 
\end{proof}
\section{Graded sheaf theory} \label{sec_grsheaves}
\subsection{Graded presheaves and sheaves}
In this subsection, we will consider mostly presheaves valued in the category $\gcAs$. Completely analogous definitions work for other ``graded categories'' as $\gSet$, $\gVect$ or $\gAb$.

Let $X$ be a topological space. By $\Op(X)$, we denote the category, objects of which are open subsets of $X$. For any $U,V \in \Op(X)$ there is a single arrow $i_{V}^{U}: V \rightarrow U$, iff $V \subseteq U$ and none otherwise. The composition of arrows works since $W \subseteq V$ and $V \subseteq U$ implies $W \subseteq U$ and there are identity arrows as $U \subseteq U$. For any collection $\{ U_{\alpha} \}_{\alpha \in I} \subseteq \Op(X)$, any $p \in \N$ and $\alpha_{1},\dots,\alpha_{p} \in I$, we will henceforth write
\begin{equation}
U_{\alpha_{1} \dots \alpha_{p}} := U_{\alpha_{1}} \cap \dots \cap U_{\alpha_{p}}.
\end{equation}

\begin{definice}
Let $X$ be a topological space. A \textbf{presheaf of graded commutative associative algebras on $X$} is a functor $\F: \Op(X)^{\op} \rightarrow \gcAs$.

In other words, $\F$ is equivalent to the following data:
\begin{enumerate}[(i)]
\item For each $U \in \Op(X)$, one has $\F(U) \in \gcAs$. Elements of $\F(U)$ are usually called \textbf{sections of the presheaf $\F$ over $U$}.
\item For every $U,V \in \Op(X)$ such that $V \subseteq U$, there is a graded algebra morphism $\F^{U}_{V} \equiv \F(i^{U}_{V}): \F(U) \rightarrow \F(V)$, called the \textbf{restriction from $U$ to $V$}. Whenever possible, we will write $s|_{V} := \F^{U}_{V}(s)$ for any $s \in \F(U)$. 
\item For all $U,V,W \in \Op(X)$, such that $W \subseteq V \subseteq U$, one has $\F^{U}_{U} = \1_{\F(U)}$ and $\F^{V}_{W} \circ \F^{U}_{V} = \F^{U}_{W}$. 
\end{enumerate}
\end{definice}
For each $k \in \Z$, we define the \textbf{$k$-th component presheaf} $\F_{k}: \Op(X)^{\op} \rightarrow \Vect$ as 
\begin{equation}
\F_{k}(U) := \F(U)_{k},
\end{equation}
for all $U \in \Op(X)$. Restriction morphisms are linear maps $(\F_{k})^{U}_{V} := (\F^{U}_{V})_{k}$. Let $\F,\G$ be two presheaves of graded commutative associative algebras. A \textbf{presheaf morphism} $\eta: \F \rightarrow \G$ is a natural transformation of the two functors, that is a collection $\eta = \{ \eta_{U} \}_{U \in \Op(X)}$, where $\eta_{U}: \F(U) \rightarrow \G(U)$ is a graded algebra morphism, such that the diagram
\begin{equation}
\begin{tikzcd}
\F(U) \arrow{r}{\eta_{U}} \arrow{d}{\F^{U}_{V}} & \G(U) \arrow{d}{\G^{U}_{V}} \\
\F(V) \arrow{r}{\eta_{V}} & \G(V)
\end{tikzcd}
\end{equation}
commutes for all $U,V \in \Op(X)$ such that $V \subseteq U$. Presheaves of graded commutative associative algebras on $X$ together with presheaf morphisms form a category $\PSh(X, \gcAs)$. 

Although $\gcAs$ is not a concrete category, we can still define sheaves in a usual wording.
\begin{definice} \label{def_sheaf}
Let $\F \in \PSh(X,\gcAs)$. We say that $\F$ is a \textbf{sheaf of graded commutative associative algebras on $X$}, if for any $U \in \Op(X)$ and any its open cover $\{ U_{\alpha} \}_{\alpha \in I}$, it has the following two properties:
\begin{enumerate}[(s1)]
\item Let $s,t \in \F(U)$, such that $s|_{U_{\alpha}} = t|_{U_{\alpha}}$ for all $\alpha \in I$. Then $s = t$. This is called the \textbf{monopresheaf property of $\F$}.
\item Consider a collection $\{ s_{\alpha} \}_{\alpha \in I}$, where $s_{\alpha} \in \F(U_{\alpha})$ for each $\alpha \in I$, such that $s_{\alpha}|_{U_{\alpha \beta}} = s_{\beta}|_{U_{ \alpha \beta}}$ for each $(\alpha,\beta) \in I^{2}$. Then there exists $s \in \F(U)$, such that $s|_{U_{\alpha}} = s_{\alpha}$ for all $\alpha \in I$. By (s1), it is unique. This is called the \textbf{gluing property of $\F$}. 
\end{enumerate}
Sheaves of graded commutative associative algebras on $X$ form a full subcategory $\Sh(X,\gcAs)$ of the category $\PSh(X,\gcAs)$. 
\end{definice}
\begin{rem}
There is one important detail hidden in our conventions, namely that elements involved in (s1) and (s2) necessarily satisfy $|s| = |t|$ and $|s_{\alpha}| = |s_{\beta}|$ for all $(\alpha,\beta) \in I^{2}$. In other words, we may say that $\F$ is a sheaf of graded commutative associative algebras, if its $k$-th component presheaf $\F_{k}$ is a sheaf of vector spaces for all $k \in \Z$. 
\end{rem}
Now, the category $\gcAs$ has products over all indexing sets, see Remark \ref{rem_tensorproductofgcAs}. One can thus define sheaves using equalizers, see e.g. \cite{maclane2012sheaves}. These two definitions agree:
\begin{tvrz}
Let $\F \in \PSh(X,\gcAs)$. For any $U \in \Op(X)$ and any its open cover $\{ U_{\alpha} \}_{\alpha \in I}$, we have a commutative diagram
\begin{equation} \label{eq_sheafdiagram}
\begin{tikzcd}
\F(U) \arrow{r}{\varphi} & \prod_{\alpha \in I} \F(U_{\alpha}) \arrow[shift left=1mm]{r}{\psi_{1}} \arrow[shift right=1mm]{r}[swap]{\psi_{2}} & \prod_{(\alpha,\beta) \in I^{2}} \F(U_{\alpha \beta}),
\end{tikzcd}
\end{equation}
where $\varphi$, $\psi_{1}$ and $\psi_{2}$ are graded algebra morphisms uniquely determined by equations
\begin{equation}
p_{\alpha} \circ \varphi = \F^{U}_{U_{\alpha}}, \; \; p_{\alpha \beta} \circ \psi_{1} = \F^{U_{\alpha}}_{U_{\alpha \beta}} \circ p_{\alpha}, \; \; p_{\alpha \beta} \circ \psi_{2} = \F^{U_{\beta}}_{U_{\alpha \beta}} \circ p_{\beta}, 
\end{equation}
for all $(\alpha,\beta) \in I^{2}$, where $p_{\alpha}$ and $p_{\alpha \beta}$ are the obvious projections.

Then $\F$ is a sheaf, if and only if (\ref{eq_sheafdiagram}) is an equalizer diagram in $\gcAs$ for any $U \in \Op(X)$ and any its open cover $\{ U_{\alpha} \}_{\alpha \in I}$. 
\end{tvrz}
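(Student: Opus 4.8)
The plan is to reduce the statement to an elementary, degreewise fact about equalizers of vector spaces, exploiting the observation made just before the proposition: $\F$ is a sheaf of graded commutative associative algebras precisely when each component presheaf $\F_{k}$ is a sheaf of vector spaces. First I would note that the commutativity $\psi_{1}\circ\varphi=\psi_{2}\circ\varphi$ holds for \emph{any} presheaf, simply by functoriality of $\F$: $p_{\alpha\beta}\circ\psi_{1}\circ\varphi=\F^{U_{\alpha}}_{U_{\alpha\beta}}\circ\F^{U}_{U_{\alpha}}=\F^{U}_{U_{\alpha\beta}}=\F^{U_{\beta}}_{U_{\alpha\beta}}\circ\F^{U}_{U_{\beta}}=p_{\alpha\beta}\circ\psi_{2}\circ\varphi$ for every $(\alpha,\beta)\in I^{2}$. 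So the real content to be matched with (s1)--(s2) is the \emph{universal property} of $\varphi$.

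The key reduction is that the forgetful functor $\square\colon\gcAs\to\gVect$ creates equalizers, and equalizers in $\gVect$ are computed degreewise as equalizers in $\Vect$. For the first claim, given parallel arrows $f,g\colon A\to C$ in $\gcAs$, the equalizer in $\gVect$ is the graded subspace $E\subseteq A$ with $E_{k}=\{a\in A_{k} \mid f(a)=g(a)\}$; since $f,g$ are unital graded algebra morphisms, $1\in E$ and $a,a'\in E$ force $f(aa')=f(a)f(a')=g(a)g(a')=g(aa')$, so $E$ is a graded subalgebra, and the inclusion $E\hookrightarrow A$ is immediately seen to have the equalizer universal property in $\gcAs$. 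This is the one step that is not pure bookkeeping — it is exactly where unitality of the algebras is used. Granting it, (\ref{eq_sheafdiagram}) is an equalizer in $\gcAs$ if and only if for every $k\in\Z$ the $k$-th component
\[
\F_{k}(U)\xrightarrow{\ \varphi_{k}\ }\prod_{\alpha\in I}\F_{k}(U_{\alpha})\rightrightarrows\prod_{(\alpha,\beta)\in I^{2}}\F_{k}(U_{\alpha\beta})
\]
is an equalizer in $\Vect$.

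Now I would invoke the standard description of equalizers in $\Vect$: a diagram $P\xrightarrow{\varphi}Q\rightrightarrows R$ with $\psi_{1}\varphi=\psi_{2}\varphi$ is an equalizer if and only if $\varphi$ is injective and $\im\varphi=\{q\in Q\mid\psi_{1}(q)=\psi_{2}(q)\}$. It then remains to translate. Since $\varphi_{k}(s)=(s|_{U_{\alpha}})_{\alpha\in I}$, injectivity of $\varphi_{k}$ for all $k$ is exactly the statement that $s|_{U_{\alpha}}=t|_{U_{\alpha}}$ for all $\alpha$ implies $s=t$, i.e.\ the monopresheaf property (s1). The equalizer subspace of $\psi_{1,k},\psi_{2,k}$ is $\{(s_{\alpha})_{\alpha}\mid s_{\alpha}|_{U_{\alpha\beta}}=s_{\beta}|_{U_{\alpha\beta}}\ \forall\,\alpha,\beta\}$; the inclusion $\im\varphi_{k}\subseteq$ this subspace is automatic from the commutativity noted in the first paragraph, while the reverse inclusion — every matching family is $\varphi$ of some global section — is precisely the existence half of the gluing property (s2), its uniqueness half being subsumed in (s1). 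Quantifying over all $U\in\Op(X)$ and all open covers $\{U_{\alpha}\}_{\alpha\in I}$ yields the asserted equivalence.

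For completeness I would remark that one can avoid the reduction of the second paragraph by arguing directly in $\gcAs$: if $\F$ is a sheaf, any test morphism $g=(g_{\alpha})_{\alpha}\colon B\to\prod_{\alpha}\F(U_{\alpha})$ equalizing $\psi_{1},\psi_{2}$ determines, for each homogeneous $b\in B$, a matching family $(g_{\alpha}(b))_{\alpha}$, hence a unique glued section $\tilde g(b)\in\F(U)$; one checks via (s1) that $\tilde g$ respects the algebra operations and that $\varphi\circ\tilde g=g$ uniquely. Conversely one extracts (s1) from the fact that equalizers are monomorphisms and (s2) from the universal property tested against free graded commutative associative algebras on one generator. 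I would present the componentwise argument as the main line, relegating this alternative to a remark, since it reuses machinery already available in the text.
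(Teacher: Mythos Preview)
Your proof is correct. The paper's proof is extremely terse: it simply asserts that the uniqueness of the mediating morphism corresponds to the monopresheaf property (s1) and its existence corresponds to the gluing property (s2), with no further details. This is precisely the direct argument you sketch in your final paragraph, only you spell out more of it (in particular, you correctly identify that extracting (s1) requires that monomorphisms in $\gcAs$ are degreewise injective, and (s2) requires testing against free objects $S(V)$ on one generator).

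Your main line of argument --- reducing to $\gVect$ and then to degreewise equalizers in $\Vect$ --- is a genuinely different route. Its virtue is that it makes the content completely elementary once one checks the single algebraic fact that the equalizer subspace $E$ is a unital subalgebra; after that, everything is the familiar characterization of kernel equalizers in $\Vect$, and the equivalence with (s1)--(s2) becomes transparent. The paper's direct argument in $\gcAs$ is shorter to state but hides exactly the work you make explicit: one must verify that the glued map $\tilde g$ is an \emph{algebra} morphism (using (s1)), and conversely one must probe the universal property with suitable test algebras. One minor terminological point: what you actually use is that the forgetful functor $\gcAs\to\gVect$ \emph{preserves and reflects} equalizers, which is slightly different from (though closely related to) ``creates'' in some conventions; your argument for it is complete regardless of the label.
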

\begin{proof}
Recall that (\ref{eq_sheafdiagram}) is an equalizer diagram if it commutes, and for any $A \in \gcAs$ together with a graded algebra morphism $\chi: A \rightarrow \prod_{\alpha \in I} \F(U_{\alpha})$ satisfying $\psi_{1} \circ \chi = \psi_{2} \circ \chi$, there is a unique graded algebra morphism $\psi: A \rightarrow \F(U)$, such that $\varphi \circ \psi = \chi$. This is called the \textbf{universal property of the equalizer}. It is not difficult to see that the assertion about the uniqueness of $\psi$ corresponds to the monopresheaf property, whereas the statement about its existence is equivalent to the gluing property. 
\end{proof}

\begin{rem} \label{rem_ShgcAs}
\begin{enumerate}[(i)]
\item Sheaves valued in a given category exist only if it has a terminal object. In the case of $\gcAs$, it is the trivial graded algebra $A = 0$. Then choose $U = \emptyset$ and its empty cover, $I = \emptyset$. The empty product is always equal to the terminal object, whence in the diagram (\ref{eq_sheafdiagram}), we obtain a monomorphism $\varphi: \F(\emptyset) \rightarrow 0$. Hence necessarily $\F(\emptyset) = 0$. 

\item On the other hand, suppose $\F \in \PSh(X,\gcAs)$ satisfies $\F(U) \neq 0$ for some $U \in \Op(X)$. Then $\F(V) \neq 0$ for all $V \in \Op(X)$ such that $U \subseteq V$. In particular, for any non-trivial presheaf $\F \in \PSh(X,\gcAs)$, one has $\F(X) \neq 0$. 

Indeed, if $\F(V) = 0$, then the algebra unit $1$ is equal to the zero vector $0 \in \F(V)_{0}$. But as $\F^{V}_{U}$ is a graded algebra morphism, this proves that the algebra unit of $\F(U)$ is equal to the zero vector in $\F(U)_{0}$. This contradicts the assumption $\F(U) \neq 0$. 
\end{enumerate}
\end{rem}

Let $\F \in \PSh(X,\gcAs)$. For any $U \in \Op(X)$, we may restrict $\F$ to the full subcategory $\Op(U) \subseteq \Op(X)$ to obtain a \textbf{restriction of $\F$ to $U$}, denoted as $\F|_{U} \in \PSh(U,\gcAs)$. If $\F$ is a sheaf, then so is $\F|_{U}$. For any presheaf morphism $\eta: \F \rightarrow \G$, we obtain the obvious induced presheaf morphism $\eta|_{U}: \F|_{U} \rightarrow \G|_{U}$. There is one immediate consequence of the universal property of equalizers:

\begin{tvrz} \label{tvrz_gluingmorphisms}
Let $\F \in \PSh(X,\gcAs)$ and $\G \in \Sh(X,\gcAs)$. Let $U \in \Op(X)$ and $\{U_{\alpha} \}_{\alpha \in I}$ any its open cover. Then the following statements are true:
\begin{enumerate}[(i)]
\item Let $\eta,\eta': \F|_{U} \rightarrow \G|_{U}$ be a pair of presheaf morphisms, such that $\eta|_{U_{\alpha}} = \eta'|_{U_{\alpha}}$ for all $\alpha \in I$. Then they must coincide globally, $\eta = \eta'$. 
\item Let $\{ \eta_{\alpha} \}_{\alpha \in I}$ be a collection of presheaf morphisms $\eta_{\alpha}: \F|_{U_{\alpha}} \rightarrow \G|_{U_{\alpha}}$, satisfying
$\eta_{\alpha}|_{U_{\alpha \beta}} = \eta_{\beta}|_{U_{\alpha \beta}}$ for all $\alpha,\beta \in I$. Then there is a (unique) presheaf morphism $\eta: \F|_{U} \rightarrow \G|_{U}$, such that $\eta|_{U_{\alpha}} = \eta_{\alpha}$ for all $\alpha \in I$. 
\end{enumerate}
\end{tvrz}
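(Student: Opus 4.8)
The plan is to deduce both statements from the sheaf axioms for $\G$ applied on the cover $\{U_{\alpha}\}_{\alpha \in I}$ of $U$, working component-wise over each open set. Recall that a presheaf morphism $\eta \colon \F|_{U} \to \G|_{U}$ is a natural transformation, hence for every $V \in \Op(U)$ it provides a graded algebra morphism $\eta_{V} \colon \F(V) \to \G(V)$ commuting with restrictions, and the hypothesis $\eta|_{U_{\alpha}} = \eta'|_{U_{\alpha}}$ means precisely that $\eta_{W} = \eta'_{W}$ for every $W \in \Op(U_{\alpha})$.

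For part $(i)$, fix an arbitrary $V \in \Op(U)$ and $s \in \F(V)$. I would apply the monopresheaf property (s1) of $\G$ to the section $\eta_{V}(s) - \eta'_{V}(s) \in \G(V)$ (meaningful since $|\eta_V(s)| = |\eta'_V(s)|$) and the open cover $\{V \cap U_{\alpha}\}_{\alpha \in I}$ of $V$: using naturality of both $\eta$ and $\eta'$ with respect to the restriction $i^{V}_{V \cap U_{\alpha}}$, we get $\eta_{V}(s)|_{V \cap U_{\alpha}} = \eta_{V \cap U_{\alpha}}(s|_{V \cap U_{\alpha}})$, and since $V \cap U_{\alpha} \in \Op(U_{\alpha})$ the hypothesis gives $\eta_{V \cap U_{\alpha}} = \eta'_{V \cap U_{\alpha}}$, so the two restrictions agree for all $\alpha$. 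Hence $\eta_{V}(s) = \eta'_{V}(s)$; as $V$ and $s$ were arbitrary, $\eta = \eta'$.

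For part $(ii)$, I would construct $\eta_{V}$ for each $V \in \Op(U)$ as follows. Given $s \in \F(V)$, consider the sections $t_{\alpha} := (\eta_{\alpha})_{V \cap U_{\alpha}}(s|_{V \cap U_{\alpha}}) \in \G(V \cap U_{\alpha})$. On overlaps $V \cap U_{\alpha\beta}$, the compatibility $\eta_{\alpha}|_{U_{\alpha\beta}} = \eta_{\beta}|_{U_{\alpha\beta}}$ together with naturality forces $t_{\alpha}|_{V \cap U_{\alpha\beta}} = t_{\beta}|_{V \cap U_{\alpha\beta}}$, so by the gluing property (s2) of $\G$ there is a unique $t \in \G(V)$ with $t|_{V \cap U_{\alpha}} = t_{\alpha}$; set $\eta_{V}(s) := t$. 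One then checks that $\eta_{V}$ is a graded algebra morphism (it preserves degree, addition, the product and the unit because each $(\eta_{\alpha})_{V \cap U_{\alpha}}$ does and because the restriction maps and the gluing are graded algebra morphisms, uniqueness in (s2) upgrading the local identities to global ones), and that the $\eta_{V}$ are compatible with the restrictions of $\F|_{U}$ and $\G|_{U}$, again by uniqueness in (s2). Finally $\eta|_{U_{\alpha}} = \eta_{\alpha}$ follows because for $W \in \Op(U_{\alpha})$ the cover $\{W \cap U_{\beta}\}_{\beta}$ of $W$ refines through $W$ itself, and the defining property of $\eta_{W}$ together with (s1) pins it down to $(\eta_{\alpha})_{W}$; uniqueness of $\eta$ is then immediate from part $(i)$.

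The only mildly delicate point — the "main obstacle" if there is one — is the bookkeeping in part $(ii)$: verifying that $\eta_{V}$ is genuinely a morphism in $\gcAs$ and genuinely natural, for which one repeatedly invokes the uniqueness clause of the gluing property to promote equalities that hold after restricting to every $U_{\alpha}$ to equalities over $V$. This is exactly the content already packaged in the universal property of the equalizer diagram (\ref{eq_sheafdiagram}), so an alternative and shorter route would be to phrase the whole argument diagrammatically: a presheaf morphism into $\G|_{U}$ is the same as a compatible family of morphisms into each $\G|_{U_{\alpha}}$, because $\G(V)$ is the equalizer of $\prod_{\alpha} \G(V \cap U_{\alpha}) \rightrightarrows \prod_{\alpha,\beta} \G(V \cap U_{\alpha\beta})$ naturally in $V$. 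Either way the proof is routine and I would keep it short.
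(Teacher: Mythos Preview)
Your proposal is correct. The paper does not give an explicit proof at all, merely noting that the proposition is ``one immediate consequence of the universal property of equalizers''; your direct argument via the sheaf axioms (s1) and (s2) is a valid unpacking of that claim, and you even identify the paper's intended route explicitly in your final paragraph.
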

This is a vital proposition allowing us to compare and define presheaf morphisms locally. 

\begin{example} \label{ex_thesheaf}
Let $\F \in \Sh(X,\As)$ by any sheaf of commutative associative algebras on $X$. Let $K \in \gVect$ be a finite-dimensional graded vector space. For each $U \in \Op(X)$, define
\begin{equation}
\O_{X}(U) := \bar{S}(K, \F(U)).
\end{equation}
For $V \subseteq U$, let $(\O_{X})^{U}_{V} := \bar{S}(\1_{K}, \F^{U}_{V})$. We claim that $\O_{X} \in \Sh(X,\gcAs)$. 

It follows immediately from Proposition \ref{tvrz_SVAisfunctor} that $\O_{X} \in \PSh(X,\gcAs)$. To prove that it is a sheaf, note that every section $s \in \O_{X}(U)$ can be written as 
\begin{equation}
s = \sum_{\fp \in \N^{n}_{|s|}} s_{\fp} \xi^{\fp},
\end{equation}
for a unique sequence $(s_{\fp})_{\fp \in \N^{n}_{|s|}} \subseteq \F(U)$, where we have fixed a total basis $(\xi_{\mu})_{\mu=1}^{n}$ of $K$. See Example \ref{ex_formalpower} for details. It follows that for each open subset $V \subseteq U$, the restriction morphism just restricts the coefficient sections, that is 
\begin{equation}
s|_{V} = \sum_{\fp \in \N^{n}_{|s|}} s_{\fp}|_{V} \xi^{\fp}.
\end{equation}
Now, let $\{ U_{\alpha} \}_{\alpha \in I}$ be an open cover of $U \in \Op(X)$ and suppose we have $s,t \in \O_{X}(Y)$ such that $s|_{U_{\alpha}} = t|_{U_{\alpha}}$ for all $\alpha \in I$. But then for each $\fp \in \N^{n}_{|s|}$, one finds that $s_{\fp}|_{U_{\alpha}} = t_{\fp}|_{U_{\alpha}}$. The monopresheaf property of $\F$ then implies that $s_{\fp} = t_{\fp}$ for all $\fp \in \N^{n}_{|s|}$, hence $s = t$. This shows that $\O_{X}$ has the monopresheaf property. The gluing property is proved similarly and we conclude that $\O_{X}$ is the sheaf of graded commutative associative algebras on $X$. 
\end{example}
\begin{example}  \label{ex_notasheaf}
Let us now offer an example of a similarly constructed presheaf, which in general fails to be a sheaf. Again, let $\F \in \Sh(X,\As)$ and $K$ a finite-dimensional graded vector space. For each $U \in \Op(X)$, one may define
\begin{equation}
\O^{\text{pol}}_{X}(U) := S(K,\F(U)) \equiv \F(U) \otimes_{\R} S(K),
\end{equation}
and $(\O^{\text{pol}}_{X})^{U}_{V} := \F^{U}_{V} \otimes S(\1_{K})$. Clearly $\O_{X}^{\text{pol}} \in \PSh(X,\gcAs)$. We claim that this (in general) fails to be a sheaf.

Indeed, consider $K \in \gVect$ with $K_{-2} = K_{2} = \R$ and all other components trivial. Let $(\xi_{1},\xi_{2})$ be a total basis of $K$ with $|\xi_{1}| = -2$ and $|\xi_{2}| = 2$. Let $X = \R$ and consider a sheaf $\F = \C^{0}_{\R}$ of continuous functions on $\R$. Elements of $\O^{\text{pol}}_{\R}(U)$ are \textit{polynomials} in $(\xi_{1},\xi_{2})$ with coefficients in the algebra $\C^{0}_{\R}(U)$. Let $U$ be a disjoint union of open subsets $U_{j} = (j,j+1)$ over $j \in \N_{0}$, and let 
\begin{equation}
s_{j} := (\xi_{1})^{j}(\xi_{2})^{j} \in (\O^{\text{pol}}_{\R}(U_{j}))_{0},
\end{equation}
for each $j \in \Z$. Those sections agree on the overlaps (as they are empty). However, there is no polynomial in $(\xi_{1},\xi_{2})$ with coefficients in $\C^{0}_{\R}(U)$ restricting to $s_{j}$ on $U_{j}$ for every $j \in \N_{0}$. Hence $\O_{\R}^{\text{pol}}$ is not a sheaf of graded commutative associative algebras. 
\end{example}
\begin{rem}
To any $\F \in \PSh(X,\gcAs)$, we may assign a presheaf $\F_{\oplus} \in \PSh(X, \As)$ of associative algebras, defined as $\F_{\oplus}(U) := \F(U)_{\oplus} \equiv \bigoplus_{k \in \Z} \F_{k}(U)$ for all $U \in \Op(X)$. This can be again viewed as a functor from $\PSh(X,\gcAs)$ to $\PSh(X,\As)$. However, $\F \in \Sh(X,\gcAs)$ \textit{does not imply} that $\F_{\oplus}$ is a sheaf of associative algebras. Note that this is an issues also for positively graded vector space $K$. 

Indeed, let $\O_{\R}$ be the sheaf from Example \ref{ex_thesheaf} for $X = \R$. Consider $K \in \gVect$ with $K_{2} = \R$ and all other components trivial, and $\F = \C^{0}_{\R}$ be the sheaf of continuous functions on $\R$. Let $\xi \in K_{2}$ be a fixed non-zero element. It is now easy to see that $(\O_{\R})_{\oplus}(U)$ is just the algebra of polynomials in a single variable $\xi$ with coefficients in $\C^{0}_{\R}(U)$. Using the arguments analogous to the previous example, one can show that $(\O_{\R})_{\oplus}$ is not a sheaf of associative algebras on $\R$. 
\end{rem}
\begin{definice}
Let $\F \in \PSh(X,\gcAs)$. We say that $\J \in \PSh(X,\gcAs)$ is a \textbf{presheaf of ideals in $\F$}, if the following two properties are satisfied:
\begin{enumerate}
\item $\J(U) \leq \F(U)$ is an ideal for every $U \in \Op(X)$.
\item Restriction morphisms of $\J$ are obtained from those of $\F$, that is $\J^{U}_{V} = \F^{U}_{V}|_{\J(U)}$ for all $U,V \in \Op(X)$, such that $V \subseteq U$. 
\end{enumerate}
If $\F \in \Sh(X,\gcAs)$, a \textbf{sheaf of ideals in $\F$} is $\J \in \Sh(X,\gcAs)$ with the same properties.
\end{definice}
\begin{rem}
To construct a presheaf of ideals, it suffices to define an ideal $\J(U) \subseteq \F(U)$ for each $U \in \Op(X)$ and verify that $\F^{U}_{V}(\J(U)) \subseteq \J(V)$ so one can \textit{define} $\J^{U}_{V} := \F^{U}_{V}|_{\J(U)}$. To construct a \textit{sheaf} of ideals, one has to verify the gluing property of $\J$. Let $\{ U_{\alpha} \}_{\alpha \in I}$ be an open cover of $U \in \Op(X)$ together with a collection $\{ s_{\alpha} \}_{\alpha \in I}$ of sections $s_{\alpha} \in \J(U_{\alpha})$ agreeing on the overlaps. By the gluing property of $\F$, there is a unique section $s \in \F(U)$ with $s|_{U_{\alpha}} = s_{\alpha}$. One only has to verify that this section is actually in $\J(U)$. 
\end{rem}
\begin{example} \label{ex_kernelsheaf}
Let $\eta: \F \rightarrow \G$ be a morphism of two sheaves $\F,\G \in \Sh(X,\gcAs)$. Let
\begin{equation}
\ker(\eta)(U) := \ker( \eta_{U}),
\end{equation}
for all $U \in \Op(X)$. Then $\ker(\eta)$ is a sheaf of ideals in $\F$ (with inherited restriction morphisms). This is can be shown easily using the naturality of $\eta$. 
\end{example}
\begin{rem} \label{rem_quotientsheaf}
Let $\J$ be a presheaf of ideals in $\F \in \PSh(X,\gcAs)$. For each $U \in \Op(X)$, define 
\begin{equation} (\F/\J)(U) := \F(U) / \J(U). \end{equation}
For each $V \subseteq U$, let $(\F / \J)^{U}_{V}: (\F/\J)(U) \rightarrow (\F/\J)(V)$ be the graded algebra morphism induced on the quotients by the graded algebra morphism $\F^{U}_{V}: \F(U) \rightarrow \F(V)$. This defines a \textbf{quotient presheaf} $\F / \J \in \PSh(X,\gcAs)$. However, even if $\F \in \Sh(X,\gcAs)$ and $\J$ is a \textit{sheaf} of ideals of $\F$, in general it is \textit{not true} that $\F/\J$ is a sheaf. 
\end{rem}
Let us now recall the following two general theorems for sheaves. Their proofs are a bit complicated diagram chasing using the equalizer diagrams (\ref{eq_sheafdiagram}) and we omit them here. See Exercise 8 in Chapter II of \cite{maclane2012sheaves} or Chapter I of \cite{serre1955faisceaux}.

\begin{tvrz}[\textbf{Collation of sheaves I}] \label{tvrz_shcollation1}
Let $X$ be a topological space and let $\{ U_{\alpha} \}_{\alpha \in I}$ be its open cover. Suppose we are given the following data:
\begin{enumerate}[(i)]
\item a collection $\{ \F_{\alpha} \}_{\alpha \in I}$, where $\F_{\alpha} \in \Sh(U_{\alpha}, \gcAs)$;
\item a collection $\{ \phi_{\alpha \beta} \}_{(\alpha,\beta) \in I^{2}}$, where $\phi_{\alpha \beta}: \F_{\alpha}|_{U_{\alpha \beta}} \rightarrow \F_{\beta}|_{U_{\alpha \beta}}$ are sheaf isomorphisms satisfying for each $(\alpha,\beta,\gamma) \in I^{3}$ the cocycle condition
\begin{equation} \label{eq_shcollcocycle}
\phi_{\beta \gamma} \circ \phi_{\alpha \beta} = \phi_{\alpha \gamma}, 
\end{equation}
where all morphisms are assumed to be restricted to the open subset $U_{\alpha \beta \gamma}$. 
\end{enumerate}
Then there exists $\F \in \Sh(X,\gcAs)$ together with a collection $\{ \lambda_{\alpha} \}_{\alpha \in I}$, where $\lambda_{\alpha}: \F|_{U_{\alpha}} \rightarrow \F_{\alpha}$ are sheaf isomorphisms satisfying $\phi_{\alpha \beta} \circ \lambda_{\alpha}|_{U_{\alpha \beta}} = \lambda_{\beta}|_{U_{\alpha \beta}}$ for all $(\alpha,\beta) \in I^{2}$. 

If $\F'$ and $\{\lambda'_{\alpha} \}_{\alpha \in I}$ are another data having these properties, there exists a unique sheaf isomorphism $\varphi: \F \rightarrow \F'$, such that $\lambda'_{\alpha} \circ \varphi|_{U_{\alpha}} = \lambda_{\alpha}$ for every $\alpha \in I$. 
\end{tvrz}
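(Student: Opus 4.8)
The plan is to construct $\F$ as the \emph{sheaf of compatible families}. For each $W \in \Op(X)$ I would first form the product $P(W) := \prod_{\alpha \in I} \F_{\alpha}(W \cap U_{\alpha})$, which is a graded commutative associative algebra by Remark \ref{rem_tensorproductofgcAs}-(ii), and then let $\F(W) \subseteq P(W)$ consist of those tuples $(s_{\alpha})_{\alpha \in I}$ with $\phi_{\alpha\beta}(s_{\alpha}|_{W \cap U_{\alpha\beta}}) = s_{\beta}|_{W \cap U_{\alpha\beta}}$ for all $(\alpha,\beta) \in I^{2}$. Since every $\phi_{\alpha\beta}$ and every restriction map is a graded algebra morphism (in particular degree-preserving and unital), this condition is preserved by the product, the unit and the grading, so $\F(W)$ is a graded subalgebra of $P(W)$; explicitly $\F(W)_{k}$ is the set of such tuples with each $s_{\alpha} \in \F_{\alpha}(W\cap U_{\alpha})_{k}$. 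I would take restrictions componentwise, $(s_{\alpha})_{\alpha} \mapsto (s_{\alpha}|_{W'\cap U_{\alpha}})_{\alpha}$, which clearly preserves the defining condition and is functorial, so $\F \in \PSh(X,\gcAs)$.

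Next I would verify the sheaf axioms for $\F$ using those of the $\F_{\alpha}$. Given an open cover $\{W_{j}\}$ of $W$: if $(s_{\alpha})_{\alpha}, (t_{\alpha})_{\alpha} \in \F(W)$ agree on each $W_{j}$, then for fixed $\alpha$ the sections $s_{\alpha}, t_{\alpha}$ agree on the cover $\{W_{j}\cap U_{\alpha}\}_{j}$ of $W\cap U_{\alpha}$, hence $s_{\alpha}=t_{\alpha}$ by the monopresheaf property of $\F_{\alpha}$; this gives (s1). For (s2), starting from a compatible family of sections over the $W_{j}$, I would for each fixed $\alpha$ glue the $\alpha$-components into a section $s_{\alpha}\in\F_{\alpha}(W\cap U_{\alpha})$ via the gluing property of $\F_{\alpha}$, and then check that the tuple $(s_{\alpha})_{\alpha}$ satisfies the cocycle relation: the identity $\phi_{\alpha\beta}(s_{\alpha}|_{W\cap U_{\alpha\beta}}) = s_{\beta}|_{W\cap U_{\alpha\beta}}$ holds after restriction to each $W_{j}\cap U_{\alpha\beta}$ (because the original sections lay in $\F(W_{j})$ and $\phi_{\alpha\beta}$ commutes with restriction), so it holds on $W\cap U_{\alpha\beta}$ by the monopresheaf property of $\F_{\beta}$. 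Thus $(s_{\alpha})_{\alpha}\in\F(W)$ and $\F\in\Sh(X,\gcAs)$.

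Now I would define $\lambda_{\alpha}\colon \F|_{U_{\alpha}} \to \F_{\alpha}$ for $W \subseteq U_{\alpha}$ (so $W\cap U_{\alpha}=W$) by $(\lambda_{\alpha})_{W}((s_{\beta})_{\beta}) := s_{\alpha}$; this is a natural graded algebra morphism, and for $W\subseteq U_{\alpha\beta}$ the relation $\phi_{\alpha\beta}\circ\lambda_{\alpha}|_{U_{\alpha\beta}} = \lambda_{\beta}|_{U_{\alpha\beta}}$ is precisely the defining condition of $\F(W)$. To see $\lambda_{\alpha}$ is an isomorphism I argue in each degree: it is injective because if $s_{\alpha}=0$ then $s_{\beta}|_{W\cap U_{\beta}} = \phi_{\alpha\beta}(s_{\alpha}|_{W\cap U_{\beta}}) = 0$ for every $\beta$ (using $W\cap U_{\alpha\beta}=W\cap U_{\beta}$ since $W\subseteq U_{\alpha}$), hence $(s_{\beta})_{\beta}=0$; it is surjective because, given $t\in\F_{\alpha}(W)$, the tuple $s_{\beta} := \phi_{\alpha\beta}(t|_{W\cap U_{\beta}})$ lies in $\F(W)$ — its cocycle compatibility follows from (\ref{eq_shcollcocycle}) together with the fact that $\phi$'s commute with restriction, and $\phi_{\alpha\alpha}=\1$ follows from (\ref{eq_shcollcocycle}) applied with $\beta=\gamma=\alpha$ (so $s_{\alpha}=t$). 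Hence $\lambda_{\alpha}$ is a sheaf isomorphism with the required compatibility.

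For the uniqueness clause, let $(\F',\{\lambda'_{\alpha}\})$ be another datum with the same properties. The condition $\lambda'_{\alpha}\circ\varphi|_{U_{\alpha}} = \lambda_{\alpha}$ forces $\varphi|_{U_{\alpha}} = (\lambda'_{\alpha})^{-1}\circ\lambda_{\alpha}$, so $\varphi$ is unique once it exists. These local isomorphisms agree on overlaps: on $U_{\alpha\beta}$, using $\phi_{\alpha\beta}\circ\lambda_{\alpha}=\lambda_{\beta}$ and $\phi_{\alpha\beta}\circ\lambda'_{\alpha}=\lambda'_{\beta}$ one gets $(\lambda'_{\beta})^{-1}\circ\lambda_{\beta} = (\lambda'_{\alpha})^{-1}\circ\phi_{\alpha\beta}^{-1}\circ\phi_{\alpha\beta}\circ\lambda_{\alpha} = (\lambda'_{\alpha})^{-1}\circ\lambda_{\alpha}$. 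Since $\F'$ is a sheaf, Proposition \ref{tvrz_gluingmorphisms}-(ii) glues them to a unique presheaf morphism $\varphi\colon\F\to\F'$, and gluing the inverses the same way yields a two-sided inverse, so $\varphi$ is a sheaf isomorphism; Proposition \ref{tvrz_gluingmorphisms}-(i) re-confirms uniqueness. I expect the only genuine work to be the bookkeeping in the second step — checking that the $\alpha$-wise glued sections still satisfy the cocycle relation — everything else being a routine transcription of the classical gluing of sheaves into the graded, sequence-of-objects formalism.
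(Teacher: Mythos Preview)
Your proof is correct and is precisely the standard ``compatible families'' construction that the paper alludes to; note that the paper actually omits the proof entirely, pointing instead to the equalizer characterization (Proposition \ref{eq_sheafdiagram}) and external references (Mac Lane--Moerdijk, Serre). Your $\F(W)$ is nothing but the equalizer of the parallel pair $\psi_{1},\psi_{2}$ in the diagram (\ref{eq_sheafdiagram}) for the product $\prod_{\alpha}\F_{\alpha}(W\cap U_{\alpha})$, so your explicit write-up is exactly what the paper's hint amounts to when unpacked.
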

This proposition can be further refined, regarding refinements and different choices of the collation data.
\begin{tvrz}[\textbf{Collation of sheaves II}] \label{tvrz_shcollation2}
Let $X$ be a topological space and let $\{U_{\alpha}\}_{\alpha \in I}$ be its open cover. Suppose $\{\F_{\alpha} \}_{\alpha \in I}$ and $\{ \phi_{\alpha\beta} \}_{(\alpha,\beta)\in I^{2}}$ are the data $(i)$ and $(ii)$ as in the previous proposition. Suppose $\F$ and $\{ \lambda_{\alpha} \}_{\alpha \in I}$ are obtained by the collation of those data. Then the following observations are true:
\begin{enumerate}[(i)]
\item Let an open cover $\{ V_{\mu} \}_{\mu \in J}$ of $X$ be a refinement of $\{ U_{\alpha} \}_{\alpha \in I}$, that is there is a map $\zeta: J \rightarrow I$, such that $V_{\mu} \subseteq U_{\zeta(\mu)}$ for every $\mu \in J$. For every $\mu,\nu \in J$, define
\begin{equation} \F'_{\mu} := \F_{\zeta(\mu)}|_{V_{\mu}}, \; \; \phi'_{\mu \nu} := \phi_{\zeta(\mu)\zeta(\nu)}|_{V_{\mu \nu}}: \F'_{\mu}|_{V_{\mu\nu}} \rightarrow \F'_{\nu}|_{V_{\mu \nu}}.
\end{equation}
Then $\{ \F'_{\mu} \}_{\mu \in J}$ and $\{ \phi'_{\mu \nu} \}_{(\mu,\nu) \in J^{2}}$ form data $(i)$ and $(ii)$ as in the previous proposition, corresponding to the open cover $\{V_{\mu}\}_{\mu \in J}$. 

Suppose $\F'$ and $\{ \lambda'_{\mu} \}_{\mu \in J}$ are obtained by the collation of those data. Then there is a unique sheaf isomorphism $\varphi: \F \rightarrow \F'$, such that for every $\mu \in J$, one has $\lambda'_{\mu} \circ \varphi|_{V_{\mu}} = \lambda_{\zeta(\mu)}|_{V_{\mu}}$.

\item Let $\{ \F'_{\alpha} \}_{\alpha \in I}$ and $\{\phi'_{\alpha \beta} \}_{(\alpha,\beta) \in I^{2}}$ be another data (i) and (ii) in the previous proposition corresponding to the same open cover $\{ U_{\alpha} \}_{\alpha \in I}$ of $M$. Suppose $\F'$ and $\{ \lambda'_{\alpha} \}_{\alpha \in I}$ are obtained by the collation of those data. 

Let $\{ \psi_{\alpha} \}_{\alpha \in I}$ be a collection of sheaf isomorphisms $\psi_{\alpha}: \F_{\alpha} \rightarrow \F'_{\alpha}$, such that for all $(\alpha,\beta) \in I^{2}$
\begin{equation} \label{eq_collationisos} \phi'_{\alpha \beta} \circ \psi_{\alpha}|_{U_{\alpha \beta}} = \psi_{\beta}|_{U_{\alpha \beta}} \circ \phi_{\alpha \beta}. \end{equation}
Then there exists a unique sheaf isomorphism $\psi: \F \rightarrow \F'$, such that $\lambda'_{\alpha} \circ \psi|_{U_{\alpha}} = \psi_{\alpha} \circ \lambda_{\alpha}$ for all $\alpha \in I$. Conversely, every sheaf isomorphism induces a collection $\{\psi_{\alpha}\}_{\alpha \in I}$  satisfying (\ref{eq_collationisos}). 
\end{enumerate}
\end{tvrz}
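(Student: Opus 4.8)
\section*{Proof proposal}

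The plan is to deduce both statements from the uniqueness clause of Proposition~\ref{tvrz_shcollation1} (Collation of sheaves I) and from the morphism-gluing statement Proposition~\ref{tvrz_gluingmorphisms}, with essentially no new gluing performed by hand. The guiding observation for part~(i) is that the \emph{already constructed} sheaf $\F$, together with suitably restricted comparison isomorphisms, is \emph{also} a collation of the refined data; uniqueness of collations then produces $\varphi$ for free. For part~(ii) I would build $\psi$ by gluing the local comparison isomorphisms $(\lambda'_\alpha)^{-1} \circ \psi_\alpha \circ \lambda_\alpha$, and deduce both its existence and its invertibility from Proposition~\ref{tvrz_gluingmorphisms}.

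\textit{Part (i).} First I would check that $\{\F'_\mu\}_{\mu \in J}$ and $\{\phi'_{\mu\nu}\}_{(\mu,\nu)\in J^2}$ satisfy the hypotheses $(i)$ and $(ii)$ of Proposition~\ref{tvrz_shcollation1}: each $\F'_\mu = \F_{\zeta(\mu)}|_{V_\mu}$ is a sheaf on $V_\mu$ because a restriction of a sheaf is a sheaf; since $V_{\mu\nu} \subseteq U_{\zeta(\mu)\zeta(\nu)}$, restricting $\phi_{\zeta(\mu)\zeta(\nu)}$ to $V_{\mu\nu}$ gives a sheaf isomorphism $\phi'_{\mu\nu}: \F'_\mu|_{V_{\mu\nu}} \to \F'_\nu|_{V_{\mu\nu}}$; and the cocycle condition (\ref{eq_shcollcocycle}) for the $\phi'_{\mu\nu}$ on $V_{\mu\nu\rho}$ is obtained by restricting the cocycle condition for the $\phi_{\alpha\beta}$ from $U_{\zeta(\mu)\zeta(\nu)\zeta(\rho)}$ to the smaller set $V_{\mu\nu\rho}$. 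Next I claim that $\F$ together with the isomorphisms $\tilde\lambda_\mu := \lambda_{\zeta(\mu)}|_{V_\mu}: \F|_{V_\mu} \to \F'_\mu$ is a collation of this refined data: each $\tilde\lambda_\mu$ is a sheaf isomorphism, and restricting the identity $\phi_{\zeta(\mu)\zeta(\nu)} \circ \lambda_{\zeta(\mu)}|_{U_{\zeta(\mu)\zeta(\nu)}} = \lambda_{\zeta(\nu)}|_{U_{\zeta(\mu)\zeta(\nu)}}$ to $V_{\mu\nu}$ yields $\phi'_{\mu\nu} \circ \tilde\lambda_\mu|_{V_{\mu\nu}} = \tilde\lambda_\nu|_{V_{\mu\nu}}$, the compatibility required in Proposition~\ref{tvrz_shcollation1}. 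Since $\F'$ with $\{\lambda'_\mu\}$ is, by assumption, another collation of the very same data, the uniqueness clause of Proposition~\ref{tvrz_shcollation1} hands us a unique sheaf isomorphism $\varphi: \F \to \F'$ with $\lambda'_\mu \circ \varphi|_{V_\mu} = \tilde\lambda_\mu = \lambda_{\zeta(\mu)}|_{V_\mu}$, which is exactly the claim.

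\textit{Part (ii).} I would set $\chi_\alpha := (\lambda'_\alpha)^{-1} \circ \psi_\alpha \circ \lambda_\alpha: \F|_{U_\alpha} \to \F'|_{U_\alpha}$ for each $\alpha \in I$. Reading off $\lambda_\alpha|_{U_{\alpha\beta}} = \phi_{\alpha\beta}^{-1} \circ \lambda_\beta|_{U_{\alpha\beta}}$ and $(\lambda'_\alpha)^{-1}|_{U_{\alpha\beta}} = (\lambda'_\beta)^{-1}|_{U_{\alpha\beta}} \circ \phi'_{\alpha\beta}$ from the defining compatibilities of the two collations, and inserting the hypothesis (\ref{eq_collationisos}), a one-line chase gives $\chi_\alpha|_{U_{\alpha\beta}} = \chi_\beta|_{U_{\alpha\beta}}$ for all $\alpha,\beta$. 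Then Proposition~\ref{tvrz_gluingmorphisms}-$(ii)$ (with $U = X$, source $\F$, target the sheaf $\F'$) yields a unique presheaf morphism $\psi: \F \to \F'$ with $\psi|_{U_\alpha} = \chi_\alpha$, equivalently $\lambda'_\alpha \circ \psi|_{U_\alpha} = \psi_\alpha \circ \lambda_\alpha$; its uniqueness as a morphism with this property is again Proposition~\ref{tvrz_gluingmorphisms}-$(i)$. To see $\psi$ is a sheaf isomorphism I would observe that $\{\psi_\alpha^{-1}\}$ satisfies the analogue of (\ref{eq_collationisos}) with the primed and unprimed data interchanged, run the same construction to obtain $\tilde\psi: \F' \to \F$, and note that $\tilde\psi \circ \psi$ and $\psi \circ \tilde\psi$ restrict to the identity on every $U_\alpha$, hence equal the identities by Proposition~\ref{tvrz_gluingmorphisms}-$(i)$. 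Finally, for the converse, given any sheaf isomorphism $\psi: \F \to \F'$ I would put $\psi_\alpha := \lambda'_\alpha \circ \psi|_{U_\alpha} \circ \lambda_\alpha^{-1}$ and verify (\ref{eq_collationisos}) directly, using $\phi'_{\alpha\beta} \circ \lambda'_\alpha|_{U_{\alpha\beta}} = \lambda'_\beta|_{U_{\alpha\beta}}$ and $\lambda_\alpha^{-1}|_{U_{\alpha\beta}} = \lambda_\beta^{-1}|_{U_{\alpha\beta}} \circ \phi_{\alpha\beta}$.

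The whole argument is diagram chasing; the only genuine idea is the recognition in part~(i) that the original collated sheaf $\F$ carries a second, equally valid collation structure relative to the refined cover, so that the statement collapses to the uniqueness part of Proposition~\ref{tvrz_shcollation1}. I expect the main practical obstacle to be purely notational bookkeeping: keeping the nested restrictions $(\,\cdot\,)|_{U_{\alpha\beta}}$, $(\,\cdot\,)|_{V_{\mu\nu}}$ and the inverses of the various $\lambda$'s and $\phi$'s consistent, and checking at each step that the open set one restricts to actually lies inside the domain on which the relevant morphism is defined.
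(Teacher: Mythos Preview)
Your proposal is correct. The paper does not actually prove this proposition: it states just before Proposition~\ref{tvrz_shcollation1} that the proofs of both collation statements are ``a bit complicated diagram chasing using the equalizer diagrams'' and omits them, referring instead to Mac~Lane--Moerdijk and Serre.

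Compared to that indicated approach, your argument is cleaner and more structural. For part~(i), rather than re-running an equalizer construction, you recognise that $\F$ equipped with the restricted isomorphisms $\lambda_{\zeta(\mu)}|_{V_\mu}$ is itself a collation of the refined data and invoke the uniqueness clause of Proposition~\ref{tvrz_shcollation1}; this reduces the statement to one already established. For part~(ii), you bypass equalizer arguments entirely by gluing the local comparisons $(\lambda'_\alpha)^{-1}\circ\psi_\alpha\circ\lambda_\alpha$ via Proposition~\ref{tvrz_gluingmorphisms}, which is exactly the tool designed for this purpose. Both routes are valid; yours has the advantage of making the logical dependencies transparent and avoiding any repetition of the sheaf-condition diagram chase.
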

\subsection{Stalks and sheafification}
Let $x \in X$ be a fixed point of a topological space $X$. By $\Op_{x}(X)$, we denote the set of all open neighborhoods of $x$. The category $\Op_{x}(X)^{\op}$ can be then easily seen to be a filtered category. The restriction of $\F \in \PSh(X,\gcAs)$ to $\Op_{x}(X)^{\op}$ then defines a functor over a filtered category, thus allowing for a definition of the corresponding filtered colimit\footnote{Historically, this is also called a direct limit and denoted as $\dlim$.}. See Chapter IX of \cite{mac2013categories} for details. 

\begin{tvrz} \label{tvrz_stalk}
For each $\F \in \PSh(X,\gcAs)$ and $x \in X$, there exists a filtered colimit 
\begin{equation}
\F_{x} := \colim_{U \in \Op_{x}(X)} \F(U).
\end{equation}
called the \textbf{stalk of the presheaf $\F$ at $x$}. For each $U \in \Op_{x}(X)$, let $\pi_{U,x}: \F(U) \rightarrow \F_{x}$ denote the graded algebra morphism of its \textbf{universal colimiting cone}. For each $s \in \F(U)$, the element $[s]_{x} := \pi_{U,x}(s) \in \F_{x}$ is called the \textbf{germ of $s$ at $x$}.
\end{tvrz}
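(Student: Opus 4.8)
The plan is to construct the colimit explicitly by the classical ``germ'' recipe, carried out degree by degree for the underlying graded vector space and then promoted to an object of $\gcAs$. First one notes that $\Op_{x}(X)^{\op}$ is filtered: it is non-empty (it contains $X$), any two neighbourhoods $U,V$ of $x$ receive arrows in $\Op_{x}(X)^{\op}$ from the common refinement $U \cap V \in \Op_{x}(X)$, and parallel arrows coincide automatically because $\Op(X)$ is a poset. Thus the restriction of $\F$ to $\Op_{x}(X)^{\op}$ is a diagram over a filtered category, as is the restriction of each component presheaf $\F_{k}$, now valued in $\Vect$. For each $k \in \Z$ I would set $(\F_{x})_{k} := \big( \bigsqcup_{U \in \Op_{x}(X)} \F_{k}(U) \big)/{\sim_{k}}$, where $s \in \F_{k}(U)$ and $t \in \F_{k}(V)$ satisfy $s \sim_{k} t$ iff $s|_{W} = t|_{W}$ for some $W \in \Op_{x}(X)$ with $W \subseteq U \cap V$. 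Filteredness makes $\sim_{k}$ an equivalence relation (only transitivity needs an argument, settled by intersecting the two witnessing neighbourhoods), and it lets one define addition and scalar multiplication of classes by restricting representatives to a common neighbourhood. This makes $(\F_{x})_{k}$ a real vector space and $\F_{x} := \{(\F_{x})_{k}\}_{k \in \Z}$ a graded vector space; writing $\pi_{U,x} \colon \F(U) \to \F_{x}$ for the graded linear map sending $s$ to its class $[s]_{x}$, one checks as in the classical case that $\{\pi_{U,x}\}_{U}$ is a colimiting cone for $\F_{k}$ in $\Vect$ for every $k$, hence for $\F$ in $\gVect$.

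Next I would equip $\F_{x}$ with an algebra structure. On germs, set $[s]_{x} \cdot [t]_{x} := [\,s|_{W} \cdot t|_{W}\,]_{x}$ where $s \in \F(U)$, $t \in \F(V)$ and $W \subseteq U \cap V$ is any neighbourhood of $x$; independence of the choice of $W$ and of the chosen representatives follows by passing to a smaller common neighbourhood and using that every restriction morphism $\F^{U}_{V}$ is a graded algebra morphism. This produces a graded linear map $\mu_{x} \colon \F_{x} \otimes_{\R} \F_{x} \to \F_{x}$. Associativity and graded commutativity of $\mu_{x}$, and the fact that $1_{x} := [\,1\,]_{x}$ (the germ of the algebra unit of $\F(W)$, the same for every $W$ because restrictions preserve units) is a two-sided unit, are all obtained by choosing a single neighbourhood on which all the sections involved are defined and transporting the corresponding identities in $\F(W)$ forward along $\pi_{W,x}$. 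Hence $(\F_{x},\mu_{x},1_{x}) \in \gcAs$, and each $\pi_{U,x}$ is a graded algebra morphism.

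It remains to verify the universal property. Given $A \in \gcAs$ together with a cocone $\{f_{U}\colon \F(U) \to A\}_{U \in \Op_{x}(X)}$, i.e.\ $f_{V} \circ \F^{U}_{V} = f_{U}$ whenever $V \subseteq U$, I would define $f \colon \F_{x} \to A$ by $f([s]_{x}) := f_{U}(s)$ for $s \in \F(U)$. This is well defined by the cocone relations together with filteredness, it is graded linear, and it is a graded algebra morphism since multiplicativity can again be checked after restricting two germs to a common neighbourhood. Uniqueness is immediate because the $\pi_{U,x}$ are jointly epimorphic --- every element of $\F_{x}$ is of the form $[s]_{x}$. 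This exhibits $\F_{x}$ together with the cone $\{\pi_{U,x}\}$ as the filtered colimit $\colim_{U \in \Op_{x}(X)} \F(U)$ in $\gcAs$.

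The routine-but-delicate ingredient, and the step I would watch most carefully, is the repeated well-definedness argument: each of the constructions above (the relation $\sim_{k}$, the vector-space operations, the product, the unit, the induced morphism $f$) reduces to the observation that finitely many sections and finitely many equations among them can be realised simultaneously on one sufficiently small neighbourhood of $x$. Conceptually this is precisely the statement that $\otimes_{\R}$ commutes with filtered colimits --- equivalently, that the diagonal $\Op_{x}(X)^{\op} \to \Op_{x}(X)^{\op} \times \Op_{x}(X)^{\op}$ is cofinal --- which is what allows the multiplications $\mu_{\F(U)}$ to descend to $\F_{x}$; in the elementary presentation it is hidden inside the ``pass to a common $W$'' manoeuvre. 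If one prefers a shorter route, one may instead argue that the degreewise construction shows $\gVect$ has filtered colimits, that $\otimes_{\R}$ preserves them, and therefore that the forgetful functor $\square \colon \gcAs \to \gVect$ --- identifying $\gcAs$ with the commutative monoids in $(\gVect,\otimes_{\R},\tau)$ --- creates filtered colimits, which yields the existence of $\F_{x}$ at once.
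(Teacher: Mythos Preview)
Your proof is correct and follows essentially the same explicit germ construction as the paper: build $(\F_{x})_{k}$ as a quotient of the disjoint union by the usual equivalence, equip it with the vector space and algebra structure by restricting to common neighbourhoods, and verify the universal property by the obvious formula $\varphi([s]_{x}) = \tau_{U}(s)$. Your additional remarks on filteredness and the alternative categorical argument (that $\gcAs \to \gVect$ creates filtered colimits) go beyond what the paper spells out, but the core argument is the same.
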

\begin{proof}
By unfolding the definition of a filtered colimit, we must find a graded commutative associative algebra $\F_{x}$ together with a collection $\{ \pi_{U,x} \}_{U \in \Op_{x}(X)}$ of graded algebra morphisms $\pi_{U,x}: \F(U) \rightarrow \F_{x}$ having the following two properties:
\begin{enumerate}[(i)]
\item For any $U,V \in \Op_{x}(X)$, such that $V \subseteq U$, one has $\pi_{U,x} = \pi_{V,x} \circ \F^{U}_{V}$.
\item For any $A \in \gcAs$ and any other collection $\{ \tau_{U} \}_{U \in \Op_{x}(X)}$ of graded algebra morphisms $\tau_{U}: \F(U) \rightarrow A$ having the property (i), there exists a unique graded algebra morphism $\varphi: \F_{x} \rightarrow A$, such that $\varphi \circ \pi_{U,x} = \tau_{U}$ for all $U \in \Op_{x}(X)$. 
\end{enumerate}
First, let us construct the graded associative commutative algebra $\F_{x}$. For each $k \in \Z$, one defines
\begin{equation}
(\F_{x})_{k} := ( \hspace{-3mm} \bigsqcup_{U \in \Op_{x}(X)} \hspace{-3mm} \F(U)_{k} ) / \sim_{k},
\end{equation}
where $\sim_{k}$ is the equivalence relation defined as follows: for all $U,V \in \Op_{x}(X)$ and $s \in \F(U)_{k}$, $t \in \F(V)_{k}$, define
$s \sim_{k} t \Leftrightarrow$ there exists  $W \in \Op_{x}(X)$ such that $W \subseteq U \cap V$ and $s|_{W} = t|_{W}$. It is an easy exercise to prove that $\sim_{k}$ is reflexive, symmetric and transitive. Let $[s]_{x}$ denote the equivalence class of $s \in \F(U)_{k}$. The structure of a vector space on $(\F_{x})_{k}$ is defined by
\begin{equation}
[s]_{x} + \lambda [t]_{x} := [s|_{U \cap V} + \lambda t|_{U \cap V}]_{x}, 
\end{equation}
for all $s \in \F(U)_{k}$ and $t \in \F(V)_{k}$. Finally the multiplication $\mu: \F_{x} \otimes_{\R} \F_{x} \rightarrow \F_{x}$ is given as
\begin{equation}
\mu( [s]_{x} \otimes [t]_{x}) := [ \mu_{U \cap V}( s|_{U \cap V} \otimes t|_{U \cap V}) ]_{x},
\end{equation}
for all $s \in \F(U)_{k}$ and $t \in \F(V)_{k}$, where $\mu_{U}$ denotes the multiplication in $\F(U)$ for every $U \in \Op_{x}(X)$. The role of the algebra unit is played by the germ $[1_{U}]_{x}$ of the algebra unit $1_{U} \in \F(U)$, for any $U \in \Op_{x}(X)$. It is straightforward to verify that these operations are well defined and make $\F_{x}$ into a graded commutative associative algebra. 

Next, we set $\pi_{U,x}(s) := [s]_{x}$. Obviously, it is a graded algebra morphism and $\pi_{V,x} \circ \F^{U}_{V} = \pi_{U,x}$. This proves the property (i). Finally, for any $A \in \gcAs$ and a collection $\{ \tau_{U} \}_{U \in \Op_{x}(X)}$ of graded morphisms $\tau_{U}: \F(U) \rightarrow A$ having the property (i), define 
\begin{equation}
\varphi([s]_{x}) = \tau_{U}(s), \end{equation}
for all $s \in \F(U)$. Clearly, this is the unique graded algebra morphism $\varphi: \F_{x} \rightarrow A$ such that $\varphi \circ \pi_{U,x} = \tau_{U}$. This verifies the property (ii) and the proof is finished.
\end{proof}
A filtered colimit $\F_{x}$ and its universal colimiting cone $\{ \pi_{U,x} \}_{U \in \Op_{x}(X)}$ are uniquely defined by properties (i) and (ii), up to a graded algebra isomorphism. Note that for each $k \in \Z$, one has $(\F_{x})_{k} = (\F_{k})_{x}$, where $\F_{k} \in \PSh(X,\Vect)$ is the $k$-th component presheaf. Observe that for a general $\F$, $\pi_{U,x}$ is not necessarily an epimorphism. 
\begin{cor} \label{cor_inducedstalkmap}
Let $\F,\G \in \PSh(X,\gcAs)$ and $\varphi: \F \rightarrow \G$ a presheaf morphism. For any $x \in X$, there exists a unique graded algebra morphism $\varphi_{x}: \F_{x} \rightarrow \G_{x}$ fitting for every $U \in \Op_{x}(X)$ into the commutative diagram 
\begin{equation}
\begin{tikzcd}
\F(U) \arrow{d}{\pi^{\F}_{U,x}} \arrow{r}{\varphi_{U}} & \G(U) \arrow{d}{\pi^{\G}_{U,x}} \\
\F_{x} \arrow[dashed]{r}{\varphi_{x}} & \G_{x}
\end{tikzcd}.
\end{equation}
In particular, if $\psi: \G \rightarrow \H$ is another presheaf morphism, then $(\psi \circ \varphi)_{x} = \psi_{x} \circ \varphi_{x}$. If $\varphi$ is a presheaf isomorphism, then $\varphi_{x}$ is a graded algebra isomorphism. 
\end{cor}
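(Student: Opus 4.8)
The plan is to invoke the universal property of the filtered colimit $\F_x$ established in Proposition \ref{tvrz_stalk}, so that $\varphi_x$ is never constructed by hand but simply extracted from the colimit's universal property. First I would check that the collection $\{ \pi^{\G}_{U,x} \circ \varphi_U \}_{U \in \Op_x(X)}$ of graded algebra morphisms $\F(U) \to \G_x$ forms a cocone over the restricted diagram $\F|_{\Op_x(X)^{\op}}$, i.e. satisfies property (i) from the proof of Proposition \ref{tvrz_stalk}. Indeed, for $V \subseteq U$ with $U,V \in \Op_x(X)$, naturality of the presheaf morphism $\varphi$ gives $\varphi_V \circ \F^U_V = \G^U_V \circ \varphi_U$, and composing with $\pi^{\G}_{V,x}$ and using $\pi^{\G}_{V,x} \circ \G^U_V = \pi^{\G}_{U,x}$ yields $(\pi^{\G}_{V,x} \circ \varphi_V) \circ \F^U_V = \pi^{\G}_{U,x} \circ \varphi_U$, as required. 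Applying property (ii) of the colimit with $A = \G_x$ and $\tau_U := \pi^{\G}_{U,x} \circ \varphi_U$ then produces a unique graded algebra morphism $\varphi_x : \F_x \to \G_x$ with $\varphi_x \circ \pi^{\F}_{U,x} = \pi^{\G}_{U,x} \circ \varphi_U$ for all $U \in \Op_x(X)$, which is precisely the asserted commuting square together with its uniqueness.

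Next I would deduce functoriality purely formally from this uniqueness. Given a further presheaf morphism $\psi : \G \to \H$, both $(\psi \circ \varphi)_x$ and $\psi_x \circ \varphi_x$ satisfy the defining relation $\,\cdot\, \circ \pi^{\F}_{U,x} = \pi^{\H}_{U,x} \circ (\psi \circ \varphi)_U$ for every $U \in \Op_x(X)$, the latter because $\psi_x \circ \varphi_x \circ \pi^{\F}_{U,x} = \psi_x \circ \pi^{\G}_{U,x} \circ \varphi_U = \pi^{\H}_{U,x} \circ \psi_U \circ \varphi_U$; hence the uniqueness part forces $(\psi \circ \varphi)_x = \psi_x \circ \varphi_x$. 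The same argument with $\varphi = \1_{\F}$ gives $(\1_{\F})_x = \1_{\F_x}$. Consequently, if $\varphi$ is a presheaf isomorphism with inverse $\varphi^{-1}$, then $\varphi_x \circ (\varphi^{-1})_x = (\varphi \circ \varphi^{-1})_x = (\1_{\G})_x = \1_{\G_x}$ and symmetrically $(\varphi^{-1})_x \circ \varphi_x = \1_{\F_x}$, so $\varphi_x$ is a graded algebra isomorphism with inverse $(\varphi^{-1})_x$.

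I do not expect any genuine obstacle here; the only point requiring a little care is to resist constructing $\varphi_x$ through the explicit model of the stalk. One could equally well set $\varphi_x([s]_x) := [\varphi_U(s)]_x$ for $s \in \F(U)$ and verify well-definedness from the fact that $\varphi_U$ intertwines the restriction morphisms of $\F$ and $\G$, but routing everything through the abstract universal property makes both the existence-uniqueness statement and the compatibility with composition essentially immediate. Either way, the content of the corollary is entirely formal.
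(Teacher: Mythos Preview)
Your proposal is correct and follows essentially the same approach as the paper: define $\tau_U := \pi^{\G}_{U,x} \circ \varphi_U$ and invoke the universal property (ii) of the colimiting cone from Proposition \ref{tvrz_stalk}. The paper's proof is terser and leaves the verification of the cocone condition, the functoriality, and the isomorphism case implicit, but the argument is identical in substance.
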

\begin{proof}
For each $U \in \Op_{x}(X)$, choose $\tau_{U} := \pi^{\G}_{U,x} \circ \eta_{U}$ and use the universality of the colimiting cone $\{ \pi^{\F}_{U,x} \}_{U \in \Op_{x}(X)}$, that is the property (ii) in the proof above, to define $\eta_{x}$. In other words, $\eta_{x}$ is uniquely determined by the requirement $\eta_{x}([s]_{x}) = [ \eta_{U}(s)]_{x}$ for all $s \in \F(U)$. 
\end{proof}

\begin{rem}\label{rem_inducedmapsheafiso}
Let $\F,\G \in \Sh(X,\gcAs)$ and let $\varphi: \F \rightarrow \G$ be a sheaf morphism. Then $\varphi$ is an isomorphism, \textit{if and only if} $\varphi_{x}: \F_{x} \rightarrow \G_{x}$ is an isomorphism for all $x \in X$. One direction is a part of the above corollary. The proof of the other direction uses both the monopresheaf and gluing properties of $\F$ and the monopresheaf property of $\G$. However, it is straightforward and we leave the details to the reader. 
\end{rem}
\begin{example} \label{ex_bodyofasection}
Let $\O_{X} \in \Sh(X,\gcAs)$ be the sheaf from Example \ref{ex_thesheaf} for a given $K \in \gVect$ and $\F \in \Sh(X,\As)$. Let $n$ be the total dimension of $K$. One can view $\F$ as a sheaf of graded commutative associative algebras. There is then a canonical sheaf morphism $\beta: \O_{X} \rightarrow \F$, where for each $U \in \Op(X)$, one defines 
\begin{equation}
\beta_{U}(s) := s_{\mathbf{0}} \text{ for $|s| = 0$}, \; \; \beta_{U}(s) = 0 \text{ for $|s| \neq 0$},
\end{equation}
where $\mathbf{0} = (0,\dots,0) \in \N^{n}_{0}$. It follows from (\ref{eq_formalseriesproduct}) that this is a graded algebra morphism. For any $s \in \O_{X}(U)$, we will write just $\ul{s} := \beta_{U}(s)$ and call it the \textbf{body of the section $s$}. It follows from the previous corollary that for each $x \in X$, there is a well-defined graded algebra morphism $\beta_{x}: \O_{X,x} \rightarrow \F_{x}$ satisfying $\beta_{x}([s]_{x}) := [\ul{s}]_{x}$ for all $s \in \O_{X}(U)$. 
\end{example}

\begin{tvrz} \label{tvrz_sheafification}
There exists a \textbf{sheafification functor} $\Sff: \PSh(X,\gcAs) \rightarrow \Sh(X,\gcAs)$ with the following properties:
\begin{enumerate}[(i)]
\item For each $\F \in \PSh(X,\gcAs)$, there is a presheaf morphism $\eta_{\F}: \F \rightarrow \Sff(\F)$. It is a presheaf isomorphism, iff $\F$ is a sheaf. 
\item There exists a canonical identification $\Sff(\F)_{x} \cong \F_{x}$ for every $x \in X$. 
\item For every $x \in X$ and with respect to this identification, the induced graded algebra morphism $(\eta_{\F})_{x}: \F_{x} \rightarrow \F_{x}$ is the identity. 
\item For $\G \in \Sh(X,\gcAs)$ and any presheaf morphism $\psi: \F \rightarrow \G$, there exists a unique sheaf morphism $\hat{\psi}: \Sff(\F) \rightarrow \G$ such that $\hat{\psi} \circ \eta_{\F} = \psi$. In fact, $\Sff$ is left adjoint to the embedding of the full subcategory $\Sh(X,\gcAs)$ into $\PSh(X,\gcAs)$ and $\eta := \{ \eta_{\F} \}_{\F}$ becomes the unit of this adjunction. 
\end{enumerate}
\end{tvrz}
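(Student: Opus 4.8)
The plan is to construct $\Sff(\F)$ concretely as the sheaf of \emph{compatible germ selections} (the graded analogue of the classical espace étalé construction, cf. \cite{maclane2012sheaves,serre1955faisceaux}), and then verify the four listed properties essentially by hand, deducing property (i) from (ii) and (iii) at the end. For $U \in \Op(X)$ and $k \in \Z$, I would let $\Sff(\F)_{k}(U)$ be the set of maps $s$ assigning to each $x \in U$ an element $s(x) \in (\F_{x})_{k} = (\F_{k})_{x}$, subject to the locality condition: for every $x \in U$ there are $V \in \Op_{x}(X)$ with $V \subseteq U$ and $t \in \F(V)_{k}$ such that $s(y) = [t]_{y}$ for all $y \in V$. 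The graded commutative associative algebra operations on $\Sff(\F)(U) = \{\Sff(\F)_{k}(U)\}_{k \in \Z}$ are then defined pointwise, using the stalk-wise algebra structure from Proposition \ref{tvrz_stalk}, and the restriction morphisms are honest restrictions of maps; that this yields an object of $\PSh(X,\gcAs)$ is immediate. The two sheaf axioms of Definition \ref{def_sheaf} follow easily: the monopresheaf property because maps agreeing on an open cover agree pointwise, and the gluing property because a pointwise-glued germ selection still satisfies the locality condition (near any point it coincides with one of the pieces, which is locally a section). The morphism $\eta_{\F}$ is defined by $(\eta_{\F})_{U}(s) := (x \mapsto [s]_{x})$, with naturality and the algebra-morphism property read off from Proposition \ref{tvrz_stalk}.

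For property (ii) I would exhibit mutually inverse graded algebra morphisms between $\Sff(\F)_{x}$ and $\F_{x}$: a germ $[s]_{x}$, with $s \in \Sff(\F)(U)$, goes to $s(x) \in \F_{x}$; conversely $[t]_{x}$, with $t \in \F(V)$, goes to the germ at $x$ of the section $y \mapsto [t]_{y}$ of $\Sff(\F)$ over $V$. Well-definedness of the first map is clear, and of the second uses that $[t]_{x} = [t']_{x}$ forces $t,t'$ to agree on a smaller neighborhood, hence so do their associated germ selections; the locality condition defining $\Sff(\F)$ is precisely what makes the two assignments mutually inverse. Property (iii) is then a one-line computation: tracing $(\eta_{\F})_{x}([s]_{x}) = [(y \mapsto [s]_{y})]_{x}$ through the identification of (ii) lands back on $[s]_{x}$.

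For property (iv), given $\G \in \Sh(X,\gcAs)$ and a presheaf morphism $\psi: \F \to \G$, I would define $\hat{\psi}_{U}(s)$ for $s \in \Sff(\F)(U)$ by choosing an open cover $\{V_{i}\}$ of $U$ and witnesses $t_{i} \in \F(V_{i})$ for the locality condition; the sections $\psi_{V_{i}}(t_{i}) \in \G(V_{i})$ have, at every point $x \in V_{i} \cap V_{j}$, equal germ $\psi_{x}(s(x))$ (since $\psi$ commutes with passing to germs by Corollary \ref{cor_inducedstalkmap}), hence agree on $V_{i} \cap V_{j}$ by the monopresheaf property of $\G$, and so glue to a unique $\hat{\psi}_{U}(s) \in \G(U)$. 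One then checks this is independent of the chosen cover and witnesses (two choices produce sections of $\G(U)$ with the same germs everywhere), is a natural transformation, is a graded algebra morphism (since stalk-wise it is $\psi_{x}$ and the operations are germ-wise), and satisfies $\hat{\psi}\circ\eta_{\F} = \psi$; uniqueness follows because $(\eta_{\F})_{x}$ is an isomorphism by (iii), so the stalk maps of any such $\hat\psi$ are forced, and a section of $\G$ is determined by its germs. Functoriality of $\Sff$, with $\Sff(\psi)$ defined germ-wise by the stalk maps $\psi_{x}$ (and $\Sff(\psi)$ sending germ selections to germ selections because $\psi$ is a presheaf morphism), and the naturality square for $\eta$, are verified the same way, and the statement that $\Sff$ is left adjoint to the inclusion $\Sh(X,\gcAs)\hookrightarrow\PSh(X,\gcAs)$ is just a repackaging of (iv). Finally, property (i): $\Sff(\F)$ is always a sheaf, and by (ii)–(iii) the morphism $\eta_{\F}$ is the identity on every stalk; if $\F$ is itself a sheaf, Remark \ref{rem_inducedmapsheafiso} forces $\eta_{\F}$ to be a sheaf isomorphism, while conversely if $\eta_{\F}$ is an isomorphism then $\F \cong \Sff(\F)$ is a sheaf.

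I expect the real work to sit in two places. First, confirming that the pointwise operations genuinely make each $\Sff(\F)(U)$ a graded commutative associative algebra and that all restriction maps are algebra morphisms; this is routine but must be carried out degree by degree. Second, and more substantively, the overlap-compatibility step in property (iv): checking that the locally defined sections $\psi_{V_{i}}(t_{i})$ agree on $V_{i}\cap V_{j}$. That is the only point where one must simultaneously use the germ-compatibility built into $\Sff(\F)$, the compatibility of $\psi$ with germs (Corollary \ref{cor_inducedstalkmap}), and the monopresheaf property of the target $\G$; everything else runs exactly as in the classical ungraded case.
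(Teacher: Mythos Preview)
Your proposal is correct and takes essentially the same approach as the paper: the paper's sketch constructs the \'etal\'e space $E_{k} = \sqcup_{x}(\F_{k})_{x}$ with its standard topology and defines $\Sff(\F)_{k}(U)$ as continuous sections, while your ``compatible germ selections'' are exactly those continuous sections (your locality condition is precisely continuity in the \'etal\'e topology), and both define $\eta_{\F}$ by $s \mapsto (x \mapsto [s]_{x})$. You supply considerably more detail than the paper's sketch, in particular the explicit verification of (ii)--(iv) and the deduction of (i) from Remark \ref{rem_inducedmapsheafiso}, but the underlying construction is identical.
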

\begin{proof}
We will only make a quick sketch of the proof. It is a direct generalization of the procedure described e.g. in Chapter II of \cite{maclane2012sheaves}. One constructs a graded set $E = \{ E_{k} \}_{k \in \Z}$ by collecting all the stalks, $E_{k} := \sqcup_{x \in X} (\F_{k})_{x}$. For each $k \in \Z$, there is also a canonical projection $p_{k}: E_{k} \rightarrow X$ and a topology on $E_{k}$ making it into a local homeomorphism. One than defines
\begin{equation}
\Sff(\F)_{k}(U) \equiv \Gamma_{U}(E,p)_{k} := \{ \sigma: U \rightarrow E_{k} \; | \; \sigma \text{ is continuous}, \; p_{k} \circ \sigma = 1_{U} \},
\end{equation}
for all $k \in \Z$ and $U \in \Op(X)$. As $E_{x} := \{ p_{k}^{-1}(x) \}_{k \in \Z}$ is equal to $\F_{x} \in \gcAs$, one can make $\Sff(\F)(U)$ into a graded commutative associative algebra. By declaring $\Sff(\F)^{U}_{V}(\sigma) := \sigma|_{V}$ for every open $V \subseteq U$ and $\sigma \in \Gamma_{U}(E,p)$, we make $\Sff(\F)$ into a sheaf of graded commutative associative algebras on $X$. The presheaf morphism $\eta_{\F}$ is for any $U \in \Op(X)$ and $s \in \F(U)$ given by
\begin{equation}
((\eta_{\F})_{U}(s))(x) := [s]_{x},
\end{equation}
defining a graded algebra morphism $(\eta_{\F})_{U}: \F(U) \rightarrow \Sff(\F)(U)$. 
\end{proof}
\begin{rem}
In fact, as the above proof suggests, one can construct a category $\Et(X,\gcAs)$ of \textbf{étalé spaces\footnote{Often called also \textit{stalk spaces} or \textit{sheaf spaces}.} of graded commutative associative algebras over $X$}, equivalent to the category $\Sh(X,\gcAs)$. Its objects are pairs $(E,p)$, where $E = \{ E_{k} \}_{k \in \Z}$ is a sequence of topological spaces and $p = \{ p_{k} \}_{k \in \Z}$ is a sequence of local homeomorphisms $p_{k}: E_{k} \rightarrow X$. Moreover, each fiber $E_{x} := \{ p_{k}^{-1}(x) \}_{k \in \Z}$ must be an object in $\gcAs$ with some continuity conditions imposed on the algebraic operations. See Chapter 10 of excellent lecture notes \cite{gallier2016gentle}.
\end{rem}
\begin{rem}
In general, the sheaf $\O_{X}$ in Example \ref{ex_thesheaf} is \textit{not} a sheafification of the presheaf $\O_{X}^{\text{pol}}$ in Example \ref{ex_notasheaf}. We will show in Example \ref{ex_twogLRS} that $\O_{X,x}$ is a local graded ring for every $x \in X$. On the other hand, consider $\O^{\text{pol}}_{\R}$ for $\F = \C^{0}_{\R}$ and $K$ as in Example \ref{ex_notasheaf}. For any $x \in \R$, the germs $[1 \pm \xi_{1} \xi_{2}]_{x}$ are not invertible and their sum $[2]_{x}$ is. It thus follows from Proposition \ref{tvrz_grlocal}-$(v)$ that $\O^{\text{pol}}_{\R,x}$ is not a local graded ring. Hence $\O_{\R} \neq \Sff(\O_{\R}^{\text{pol}})$ by Proposition \ref{tvrz_sheafification}-$(ii)$ and Corollary \ref{cor_isoringtolocalislocal}. 
\end{rem}
%\begin{example}
%Let $\eta: \F \rightarrow \G$ be a morphism of two sheaves $\F,\G \in \gcAs$. Then we have a kernel sheaf $\ker(\eta) \subseteq \F$ of ideals. Let us now argue that for each $x \in X$, there is a canonical isomorphism $\ker(\eta)_{x} \cong \ker( \eta_{x})$, where $\eta_{x}: \F_{x} \rightarrow \G_{x}$ is the induced algebra morphism of stalks. 
%
%Let $\iota: \ker(\eta) \rightarrow \F$ denote the obvious inclusion. This is a sheaf morphism, whence for each $x \in X$, there is an induced graded algebra morphism $\iota_{x}: \ker(\eta)_{x} \rightarrow \F_{x}$. It is easy to see that it is injective and its image coincides with $\ker(\eta_{x})$. We can thus view it as a graded algebra isomorphism $\iota_{x}: \ker(\eta_{x}) \rightarrow \ker(\eta_{x})$.
%\end{example}
\subsection{Graded locally ringed spaces}
Let $X,Y$ be two topological spaces and let $\ul{\varphi}: X \rightarrow Y$ be a continuous map. 

For any $\F \in \PSh(X,\gcAs)$, the \textbf{pushforward presheaf} $\ul{\varphi}_{\ast}\F \in \PSh(Y,\gcAs)$ is for each $U \in \Op(Y)$ defined as 
\begin{equation} (\ul{\varphi}_{\ast} \F)(U) := \F(\ul{\varphi}^{-1}(U)). \end{equation}
The restriction morphisms are for any $V \subseteq U$ obtained as 
\begin{equation}
(\ul{\varphi}_{\ast} \F)^{U}_{V} := \F^{\ul{\varphi}^{-1}(U)}_{\ul{\varphi}^{-1}(V)}.
\end{equation}
It is easy to see that if $\F \in \Sh(X,\gcAs)$, then $\ul{\varphi}_{\ast}\F \in \Sh(Y,\gcAs)$. 

\begin{tvrz} \label{tvrz_inducedstalkpushsheaf}
Let $\F \in \PSh(X,\gcAs)$ and let $\ul{\varphi}: X \rightarrow Y$ be a continuous map. For each $x \in X$, there is an induced graded algebra morphism $\ul{\varphi}^{\ast}_{x}: (\ul{\varphi}_{\ast}\F)_{\ul{\varphi}(x)} \rightarrow \F_{x}$. If $\ul{\varphi}$ is a homeomorphism, $\ul{\varphi}^{\ast}_{x}$ is a graded algebra isomorphism. 
\end{tvrz}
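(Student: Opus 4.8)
The plan is to obtain $\ul\varphi^\ast_x$ directly from the universal property of the filtered colimit defining $(\ul\varphi_\ast\F)_{\ul\varphi(x)}$, which is exactly the content of Proposition \ref{tvrz_stalk}. Write $y := \ul\varphi(x)$. The elementary observation that makes everything work is that continuity of $\ul\varphi$ gives $\ul\varphi^{-1}(U) \in \Op_x(X)$ for every $U \in \Op_y(Y)$, since $x \in \ul\varphi^{-1}(U)$ precisely because $\ul\varphi(x) = y \in U$. Hence, for each such $U$, we have at our disposal the graded algebra morphism $\tau_U := \pi_{\ul\varphi^{-1}(U),x} : \F(\ul\varphi^{-1}(U)) = (\ul\varphi_\ast\F)(U) \to \F_x$ coming from the universal colimiting cone of $\F_x$.

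Next I would check that the family $\{\tau_U\}_{U \in \Op_y(Y)}$ is a cocone over the filtered diagram $\Op_y(Y)^{\op} \to \gcAs$ defining $(\ul\varphi_\ast\F)_y$, i.e. that $\tau_U = \tau_V \circ (\ul\varphi_\ast\F)^U_V$ whenever $V \subseteq U$ are in $\Op_y(Y)$. By definition of the pushforward presheaf, $(\ul\varphi_\ast\F)^U_V = \F^{\ul\varphi^{-1}(U)}_{\ul\varphi^{-1}(V)}$, and since $\ul\varphi^{-1}(V) \subseteq \ul\varphi^{-1}(U)$ this identity is exactly property (i) of the colimiting cone $\{\pi_{W,x}\}_{W \in \Op_x(X)}$ of $\F_x$ from Proposition \ref{tvrz_stalk}. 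Applying property (ii) of that proposition (the universal property of the colimit, now for the diagram defining $(\ul\varphi_\ast\F)_y$) yields a unique graded algebra morphism $\ul\varphi^\ast_x : (\ul\varphi_\ast\F)_y \to \F_x$ with $\ul\varphi^\ast_x \circ \pi^{\ul\varphi_\ast\F}_{U,y} = \tau_U$ for all $U \in \Op_y(Y)$; equivalently, $\ul\varphi^\ast_x([s]_y) = [s]_x$ for every $s \in \F(\ul\varphi^{-1}(U))$.

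For the second claim, suppose $\ul\varphi$ is a homeomorphism and put $\ul\psi := \ul\varphi^{-1} : Y \to X$. A check on objects shows $\ul\psi_\ast\ul\varphi_\ast\F = \F$: indeed $(\ul\psi_\ast\ul\varphi_\ast\F)(W) = \F(\ul\varphi^{-1}(\ul\psi^{-1}(W))) = \F((\ul\psi\circ\ul\varphi)^{-1}(W)) = \F(W)$ since $\ul\psi \circ \ul\varphi = \1_X$, and likewise for the restriction morphisms. Applying the first part of the proposition to the continuous map $\ul\psi$, the presheaf $\ul\varphi_\ast\F \in \PSh(Y,\gcAs)$, and the point $y$ (noting $\ul\psi(y) = x$) produces a graded algebra morphism $\ul\psi^\ast_y : (\ul\psi_\ast\ul\varphi_\ast\F)_x = \F_x \to (\ul\varphi_\ast\F)_y$ with $\ul\psi^\ast_y([t]_x) = [t]_y$ for $t \in \F(W)$, $W \in \Op_x(X)$. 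Since every element of a stalk is the germ of a section over some open neighborhood (by the explicit construction in Proposition \ref{tvrz_stalk}), the two formulas $\ul\varphi^\ast_x([s]_y) = [s]_x$ and $\ul\psi^\ast_y([t]_x) = [t]_y$ show at once that $\ul\varphi^\ast_x$ and $\ul\psi^\ast_y$ are mutually inverse, so $\ul\varphi^\ast_x$ is a graded algebra isomorphism.

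There is no genuinely hard step here; the only thing to be careful about is keeping straight which open sets are neighborhoods of which point in which space, and that the pushforward's restriction morphisms are literally those of $\F$, so that the cocone condition collapses to a property of $\F_x$ that is already proved. The homeomorphism case is purely formal once one notes the identity $\ul\psi_\ast\ul\varphi_\ast\F = \F$ and that stalks are generated by germs, so no inverse has to be built by hand beyond re-invoking the first part.
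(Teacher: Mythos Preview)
Your proof is correct and follows essentially the same approach as the paper: both obtain $\ul\varphi^\ast_x$ from the universal property of the colimit defining $(\ul\varphi_\ast\F)_{\ul\varphi(x)}$, arriving at the formula $\ul\varphi^\ast_x([s]_{\ul\varphi(x)}) = [s]_x$. For the homeomorphism case, the paper constructs the inverse directly by noting that any $t \in \F(U)$ can be regarded as a section of $\ul\varphi_\ast\F$ over $\ul\varphi(U)$ and sending $[t]_x \mapsto [t]_{\ul\varphi(x)}$; your version packages this same map more systematically by re-invoking the first part for $\ul\psi = \ul\varphi^{-1}$ and the presheaf $\ul\varphi_\ast\F$, exploiting $\ul\psi_\ast\ul\varphi_\ast\F = \F$ --- a cosmetic difference only.
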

\begin{proof}
Let $x \in X$ and consider $s \in (\ul{\varphi}_{\ast}\F)(V) \equiv \F(\ul{\varphi}^{-1}(V))$ for $V \in \Op_{\ul{\varphi}(x)}(X)$. The universality property of colimits then ensures that the formula
\begin{equation}
\ul{\varphi}^{\ast}_{x}([s]_{\ul{\varphi}(x)}) := [s]_{x}
\end{equation}
defines a unique graded algebra morphism from $(\ul{\varphi}_{\ast}\F)_{\ul{\varphi}(x)}$ to $\F_{x}$, where on the right hand side, $s$ is viewed as a section of $\F$ over $\ul{\varphi}^{-1}(V) \in \Op_{x}(X)$. When $\ul{\varphi}$ is a homeomorphism, each $t \in \F(U)$ for $U \in \Op_{x}(X)$ can be viewed as a section of $\ul{\varphi}_{\ast}\F$ over $\ul{\varphi}(U) \in \Op_{\ul{\varphi}(x)}(Y)$. Hence it makes sense to map $[t]_{x}$ to $[t]_{\ul{\varphi}(x)}$. It is easy to show that this is the inverse to $\ul{\varphi}^{\ast}_{x}$. 
\end{proof}

\begin{definice} \label{def_gLRS}
A pair $(X,\O_{X})$ of a topological space $X$ and a sheaf $\O_{X} \in \Sh(X,\gcAs)$ is called a \textbf{graded locally ringed space} with the \textbf{structure sheaf $\O_{X}$}, if for each $x \in X$, the stalk $\O_{X,x}$ is a local graded ring (see Remark \ref{rem_tensorproductofgcAs}-(iii)). 

Let $(X,\O_{X})$ and $(Y,\O_{Y})$ be a pair of graded locally ringed spaces. A pair $\varphi = (\ul{\varphi}, \varphi^{\ast})$ is called a \textbf{morphism of graded locally ringed spaces}, if 
\begin{enumerate}[(i)]
\item $\ul{\varphi}: X \rightarrow Y$ is a continuous map;
\item $\varphi^{\ast}: \O_{Y} \rightarrow \ul{\varphi}_{\ast}\O_{X}$ is a sheaf morphism;
\item for each $x \in X$, the graded algebra morphism $\varphi_{(x)}: \O_{Y,\ul{\varphi}(x)} \rightarrow \O_{X,x}$ defined as a composition 
\begin{equation} \label{eq_vartphixmap} \varphi_{(x)} := \ul{\varphi}^{\ast}_{x} \circ (\varphi^{\ast})_{\ul{\varphi}(x)}, \end{equation}
is a local graded ring morphism (see Corollary \ref{cor_inducedstalkmap} and Proposition \ref{tvrz_inducedstalkpushsheaf}).

In other words, the map defined for each $V \in \Op_{\ul{\varphi}(x)}(Y)$ and $s \in \O_{Y}(V)$ by 
\begin{equation} \label{eq_vartphixmap2} \varphi_{(x)}([s]_{\ul{\varphi}(x)}) := [\varphi^{\ast}_{V}(s)]_{x} \end{equation}
has to satisfy $\varphi_{(x)}( \frJ( \O_{Y,\ul{\varphi}(x)})) \subseteq \frJ( \O_{X,x})$. 
\end{enumerate}
We write $\varphi: (X,\O_{X}) \rightarrow (Y,\O_{Y})$. If $\psi: (Y,\O_{Y}) \rightarrow (Z,\O_{Z})$ is another morphism, their composition $\psi \circ \varphi$ is defined as a pair $(\ul{\psi} \circ \ul{\varphi}, (\psi \circ \varphi)^{\ast})$, where we declare
\begin{equation}
(\psi \circ \varphi)^{\ast}_{W} := \varphi^{\ast}_{\ul{\psi}^{-1}(W)} \circ \psi^{\ast}_{W}: \O_{Z}(W) \rightarrow \O_{X}( (\ul{\psi} \circ \ul{\varphi})^{-1}(W))
\end{equation}
for all $W \in \Op(Z)$. It follows that this defines a sheaf morphism $(\psi \circ \varphi)^{\ast}: \O_{Z} \rightarrow (\ul{\psi} \circ \ul{\varphi})_{\ast}\O_{X}$. Consequently, graded locally ringed spaces together with their morphisms form the category $\gLRS$. 
\end{definice}
\begin{tvrz} \label{tvrz_isoingLRS}
Let $(X,\O_{X})$ and $(Y,\O_{Y})$ be a pair of graded locally ringed spaces. Let $\varphi = (\ul{\varphi},\varphi^{\ast})$ be a pair having the properties (i) and (ii) in Definition \ref{def_gLRS}. 

Then $\varphi$ is an isomorphism in $\gLRS$, iff $\ul{\varphi}$ is a homeomorphism and $\varphi^{\ast} $ is a sheaf isomorphism. 
\end{tvrz}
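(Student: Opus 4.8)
The plan is to prove both implications of the equivalence, the substantive one being that $\ul{\varphi}$ a homeomorphism and $\varphi^{\ast}$ a sheaf isomorphism together force $\varphi$ to be an isomorphism in $\gLRS$; this requires producing a two‑sided inverse. The other implication is essentially bookkeeping.

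For the forward direction I would start from an inverse $\psi = (\ul{\psi},\psi^{\ast}) : (Y,\O_{Y}) \to (X,\O_{X})$ with $\psi \circ \varphi = \mathrm{id}_{(X,\O_{X})}$ and $\varphi \circ \psi = \mathrm{id}_{(Y,\O_{Y})}$, and read off the two components of each identity. The underlying‑space parts give $\ul{\psi} \circ \ul{\varphi} = \1_{X}$ and $\ul{\varphi} \circ \ul{\psi} = \1_{Y}$, so $\ul{\varphi}$ is a homeomorphism with $\ul{\varphi}^{-1} = \ul{\psi}$. Unwinding the sheaf parts through the composition rule of Definition~\ref{def_gLRS} and using $\ul{\psi}^{-1}(W) = \ul{\varphi}(W)$, one gets $\varphi^{\ast}_{W} \circ \psi^{\ast}_{\ul{\varphi}^{-1}(W)} = \1$ and $\psi^{\ast}_{\ul{\varphi}^{-1}(W)} \circ \varphi^{\ast}_{W} = \1$ for every $W \in \Op(Y)$, so every component $\varphi^{\ast}_{W}$ is a graded algebra isomorphism. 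Since a presheaf morphism all of whose components are invertible is a presheaf isomorphism (the componentwise inverses commute with restrictions by naturality) and $\Sh(X,\gcAs)$ is a full subcategory of $\PSh(X,\gcAs)$, it follows that $\varphi^{\ast} : \O_{Y} \to \ul{\varphi}_{\ast}\O_{X}$ is a sheaf isomorphism.

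For the converse I would build the inverse explicitly. Put $\ul{\psi} := \ul{\varphi}^{-1}$, let $(\varphi^{\ast})^{-1} : \ul{\varphi}_{\ast}\O_{X} \to \O_{Y}$ be the inverse sheaf morphism, and set $\psi^{\ast}_{W} := \big((\varphi^{\ast})^{-1}\big)_{\ul{\varphi}(W)}$ for $W \in \Op(X)$; bijectivity of $\ul{\varphi}$ gives $\ul{\varphi}^{-1}(\ul{\varphi}(W)) = W$, so this is a graded algebra morphism $\O_{X}(W) = (\ul{\varphi}_{\ast}\O_{X})(\ul{\varphi}(W)) \to \O_{Y}(\ul{\varphi}(W)) = (\ul{\psi}_{\ast}\O_{Y})(W)$, and naturality of $(\varphi^{\ast})^{-1}$ makes $\psi^{\ast} : \O_{X} \to \ul{\psi}_{\ast}\O_{Y}$ a sheaf morphism. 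Next I would check that $\psi := (\ul{\psi},\psi^{\ast})$ is a morphism in $\gLRS$: for condition~(iii), for $x \in X$ the map $\varphi_{(x)} = \ul{\varphi}^{\ast}_{x} \circ (\varphi^{\ast})_{\ul{\varphi}(x)}$ is a composite of graded algebra isomorphisms — $(\varphi^{\ast})_{\ul{\varphi}(x)}$ by Corollary~\ref{cor_inducedstalkmap} and $\ul{\varphi}^{\ast}_{x}$ by Proposition~\ref{tvrz_inducedstalkpushsheaf} — hence a graded ring isomorphism between the local graded rings $\O_{Y,\ul{\varphi}(x)}$ and $\O_{X,x}$, which is automatically local by Corollary~\ref{cor_isomorphismarelocal}; the same argument applies verbatim to $\psi$, since $\ul{\psi}$ is a homeomorphism and $\psi^{\ast}$ a sheaf isomorphism. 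Finally, a short computation with the composition rule, using $\varphi^{\ast}_{\ul{\varphi}(W)} \circ \big((\varphi^{\ast})^{-1}\big)_{\ul{\varphi}(W)} = \1$, gives $(\psi \circ \varphi)^{\ast}_{W} = \1_{\O_{X}(W)}$ for all $W \in \Op(X)$, hence $\psi \circ \varphi = \mathrm{id}_{(X,\O_{X})}$, and symmetrically $\varphi \circ \psi = \mathrm{id}_{(Y,\O_{Y})}$, so $\varphi$ is an isomorphism in $\gLRS$.

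The only genuinely delicate point is the pushforward bookkeeping: keeping straight which open set indexes which component of $\varphi^{\ast}$ and of $(\varphi^{\ast})^{-1}$ under $\ul{\varphi}_{\ast}$, and invoking bijectivity of $\ul{\varphi}$ at the right moments so that $\ul{\varphi}^{-1}(\ul{\varphi}(W)) = W$ and $\ul{\varphi}(\ul{\varphi}^{-1}(W)) = W$. Conceptually nothing is hard — the forward direction is pure unwinding, and the converse rests on two already‑available facts, namely that a sectionwise‑invertible presheaf morphism is invertible and that graded ring isomorphisms between local graded rings are automatically local (Corollary~\ref{cor_isomorphismarelocal}).
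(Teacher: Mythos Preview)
Your proof is correct and follows essentially the same approach as the paper's own proof: both directions are handled identically, with the key ingredients in the converse being Corollary~\ref{cor_inducedstalkmap}, Proposition~\ref{tvrz_inducedstalkpushsheaf}, and Corollary~\ref{cor_isomorphismarelocal} to verify condition~(iii), followed by the explicit construction of the inverse via $\ul{\psi} := \ul{\varphi}^{-1}$ and $\psi^{\ast}_{U} := (\varphi^{\ast})^{-1}_{\ul{\varphi}(U)}$. Your write-up is somewhat more detailed (e.g., you explicitly note that the same locality argument applies to $\psi$), but the substance is the same.
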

\begin{proof}
First, let $\varphi$ be an isomorphism $\gLRS$. Let $\psi = (\ul{\psi}, \psi^{\ast})$ be its inverse. One has $\ul{\psi} \circ \ul{\varphi} = \1_{X}$ and $\ul{\varphi} \circ \ul{\psi} = \1_{Y}$, hence $\ul{\varphi}$ is a homeomorphism. The inverse to $\varphi^{\ast}: \O_{Y} \rightarrow \ul{\varphi}_{\ast}\O_{X}$ is for each $V \in \Op(Y)$ given by $(\varphi^{\ast})^{-1}_{V} := \psi^{\ast}_{\ul{\psi}(V)}$, hence $\varphi^{\ast}$ is a sheaf isomorphism. 

Conversely, one must verify that $\varphi = (\ul{\varphi}, \varphi^{\ast})$ has the property (iii) of Definition \ref{def_gLRS}. By Corollary \ref{cor_inducedstalkmap} and Proposition \ref{tvrz_inducedstalkpushsheaf}, the graded algebra morphism (\ref{eq_vartphixmap}) is an isomorphism for all $x \in X$. Hence by Corollary \ref{cor_isomorphismarelocal}, it is a \textit{local} graded ring morphism. Hence $\varphi$ is a morphism of graded locally ringed spaces. Letting $\ul{\psi} := \ul{\varphi}^{-1}$ and $\psi^{\ast}_{U} := (\varphi^{\ast})^{-1}_{\ul{\varphi}(U)}$ for all $U \in \Op(X)$, we obtain its inverse $\psi = (\ul{\psi}, \psi^{\ast})$. Hence $\varphi$ is an isomorphism in $\gLRS$. 
\end{proof}
\begin{tvrz} \label{tvrz_restrictedgLRS}
Let $(X,\O_{X}) \in \gLRS$. Let $U \in \Op(X)$. Then $(U,\O_{X}|_{U}) \in \gLRS$ and there is a canonical morphism $i: (U,\O_{X}|_{U}) \rightarrow (X, \O_{X})$ in $\gLRS$, where $\ul{i}: U \rightarrow X$ is the obvious inclusion. For each $x \in U$, the induced map $i_{(x)}: \O_{X,\ul{i}(x)} \rightarrow (\O_{X}|_{U})_{x}$ is a graded ring isomorphism.
\end{tvrz}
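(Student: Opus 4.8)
The plan is to produce the candidate morphism $i$, compute its induced stalk maps explicitly, recognise them as isomorphisms, and then invoke the two corollaries on local graded rings to get both assertions at once. Throughout I will use that $\O_X|_U \in \Sh(U,\gcAs)$, since restrictions of sheaves are sheaves.

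First I would set up $i$. For the inclusion $\ul{i}: U \hookrightarrow X$ one has $(\ul{i}_\ast(\O_X|_U))(V) = (\O_X|_U)(\ul{i}^{-1}(V)) = \O_X(V \cap U)$ for every $V \in \Op(X)$, so I can define $i^\ast: \O_X \to \ul{i}_\ast(\O_X|_U)$ by $i^\ast_V := (\O_X)^V_{V \cap U}: \O_X(V) \to \O_X(V \cap U)$. Naturality of $i^\ast$ is immediate from the composition law $\F^V_W \circ \F^U_V = \F^U_W$ for the restriction morphisms of $\O_X$, so $(\ul{i}, i^\ast)$ satisfies conditions (i) and (ii) of Definition \ref{def_gLRS}; only condition (iii) and the locality of $(U,\O_X|_U)$ remain.

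Next I would fix $x \in U$ and unwind the definition (\ref{eq_vartphixmap2}) of $i_{(x)} := \ul{i}^\ast_x \circ (i^\ast)_{\ul{i}(x)}: \O_{X,x} \to (\O_X|_U)_x$ using Corollary \ref{cor_inducedstalkmap} and Proposition \ref{tvrz_inducedstalkpushsheaf}. One finds that for $s \in \O_X(V)$ with $V \in \Op_x(X)$ it sends $[s]_x$ to $[s|_{V \cap U}]_x$, the germ now taken in $(\O_X|_U)_x$. Since the open neighbourhoods of $x$ contained in $U$ are cofinal in $\Op_x(X)$ (replace $V$ by $V \cap U$), I can write down an explicit two-sided inverse: a germ in $(\O_X|_U)_x$ is represented by some $t \in (\O_X|_U)(W) = \O_X(W)$ with $W \in \Op_x(U) \subseteq \Op_x(X)$, and sending $[t]_x \in (\O_X|_U)_x$ to $[t]_x \in \O_{X,x}$ is well defined (the equivalence relation defining each stalk only compares restrictions to smaller opens, and these agree for $\O_X$ and $\O_X|_U$) and is inverse to $i_{(x)}$, both composites being identities directly from the description of germs. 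Hence $i_{(x)}$ is a graded algebra isomorphism, in particular a graded ring isomorphism; since $\ul{i}(x) = x$, this is exactly the final claim of the proposition.

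Finally I would assemble the pieces. As $(X,\O_X) \in \gLRS$, the ring $\O_{X,x}$ is a local graded ring, and since $i_{(x)}: \O_{X,x} \to (\O_X|_U)_x$ is a graded ring isomorphism, Corollary \ref{cor_isoringtolocalislocal} gives that $(\O_X|_U)_x$ is local, for every $x \in U$; thus $(U,\O_X|_U) \in \gLRS$. Moreover $i_{(x)}$ being an isomorphism of local graded rings, Corollary \ref{cor_isomorphismarelocal} shows it is a local graded ring morphism, so $(\ul{i},i^\ast)$ also satisfies condition (iii) and defines a morphism $i: (U,\O_X|_U) \to (X,\O_X)$ in $\gLRS$. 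The only step that needs any care is the bookkeeping in the stalk computation — keeping the three stalks $\O_{X,x}$, $(\ul{i}_\ast(\O_X|_U))_x$ and $(\O_X|_U)_x$ apart and running the cofinality argument that identifies the first and the third — but no real difficulty arises.
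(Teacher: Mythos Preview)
Your proof is correct and follows essentially the same route as the paper: define $i^\ast_V$ as the restriction morphism $(\O_X)^V_{V\cap U}$, compute $i_{(x)}$ explicitly on germs, exhibit its inverse via the cofinality of $\Op_x(U)$ in $\Op_x(X)$, and then apply Corollaries \ref{cor_isoringtolocalislocal} and \ref{cor_isomorphismarelocal}. If anything you are slightly more explicit than the paper in invoking Corollary \ref{cor_isomorphismarelocal} to verify condition (iii) of Definition \ref{def_gLRS}.
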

\begin{proof}
For each $V \in \Op(X)$, we have to define a graded algebra morphism 
\begin{equation}
i^{\ast}_{V}: \O_{X}(V) \rightarrow (\ul{i}_{\ast} \O_{X}|_{U})(V) = \O_{X}|_{U}( \ul{i}^{\ast}(V)) = \O_{X}(U \cap V).
\end{equation}
There is thus a canonical choice $i^{\ast}_{V} := (\O_{X})^{V}_{U \cap V}$. Its naturality in $V$ is straightforward, whence $i^{\ast}: \O_{X} \rightarrow \ul{i}_{\ast} \O_{X}|_{U}$ is a sheaf morphism. For each $x \in U$, $V \in \Op_{\ul{i}(x)}(X)$ and $s \in \O_{X}(V)$, one has
\begin{equation}
i_{(x)}( [s]_{\ul{i}(x)}) = [ s|_{U \cap V} ]_{x}.
\end{equation}
For each $V \in \Op_{x}(U)$, we have $\O_{X}|_{U}(V) = \O_{X}(V)$. For each $t \in \O_{X}|_{U}(V)$, we may thus send $[t]_{x}$ to $[t]_{\ul{i}(x)}$. This defines the inverse to $i_{(x)}$, so it is a graded algebra isomorphism. Moreover, by Corollary \ref{cor_isoringtolocalislocal}, this implies that $(\O_{X}|_{U})_{x}$ is local for every $x \in U$. Thus $(U,\O_{X}|_{U}) \in \gLRS$. 
\end{proof}
\begin{tvrz} 
Let $\varphi: (X,\O_{X}) \rightarrow (Y,\O_{Y})$ be a morphism in $\gLRS$. Let $U \in \Op(X)$. Then by $\varphi|_{U}: (U,\O_{X}|_{U}) \rightarrow (Y,\O_{Y})$, we denote the composition $\varphi \circ i$, where $i: (U,\O_{X}|_{U}) \rightarrow (X,\O_{X})$ is the inclusion from the previous proposition. 

If $\ul{\varphi}(U) \subseteq V$ for some $V \in \Op(Y)$, there is a unique morphism $\varphi': (U,\O_{X}|_{U}) \rightarrow (V,\O_{Y}|_{V})$, such that $j \circ \varphi' = \varphi|_{U}$, where $j: (V,\O_{Y}|_{V}) \rightarrow (Y,\O_{Y})$ is the inclusion as above. Usually, we will not distinguish between $\varphi'$ and $\varphi|_{U}$. 
\end{tvrz}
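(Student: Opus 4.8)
The plan is to produce $\varphi'=(\ul{\varphi'},(\varphi')^{\ast})$ by hand, check it is a morphism in $\gLRS$, verify $j\circ\varphi'=\varphi|_{U}$, and then settle uniqueness; throughout I use Proposition \ref{tvrz_restrictedgLRS} for the inclusion morphisms $i$ and $j$. For the underlying map, $\ul{\varphi|_{U}}=\ul{\varphi}\circ\ul{i}=\ul{\varphi}|_{U}\colon U\to Y$ takes values in $V$ by hypothesis, so it corestricts to a continuous $\ul{\varphi'}\colon U\to V$, and it is the unique map with $\ul{j}\circ\ul{\varphi'}=\ul{\varphi|_{U}}$ since $\ul{j}\colon V\hookrightarrow Y$ is injective. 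Note that for $W\in\Op(V)$ one has $(\ul{\varphi'})^{-1}(W)=\ul{\varphi}^{-1}(W)\cap U$, hence $\O_{Y}|_{V}(W)=\O_{Y}(W)$ and $(\ul{\varphi'}_{\ast}\O_{X}|_{U})(W)=\O_{X}(\ul{\varphi}^{-1}(W)\cap U)$, which is precisely the codomain of $(\varphi|_{U})^{\ast}_{W}=(\varphi\circ i)^{\ast}_{W}$. So I define $(\varphi')^{\ast}_{W}:=(\varphi|_{U})^{\ast}_{W}$ for $W\in\Op(V)$. Naturality of $(\varphi')^{\ast}$ in $W\in\Op(V)$ is inherited from that of the sheaf morphism $(\varphi|_{U})^{\ast}$, the restriction morphisms of $\O_{Y}|_{V}$ and $\O_{X}|_{U}$ being the inherited ones; this gives properties (i) and (ii) of Definition \ref{def_gLRS}.

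Next I verify $j\circ\varphi'=\varphi|_{U}$. On underlying spaces this is immediate from the construction of $\ul{\varphi'}$. On sheaves, for $W\in\Op(Y)$ the composition rule of Definition \ref{def_gLRS} gives $(j\circ\varphi')^{\ast}_{W}=(\varphi')^{\ast}_{V\cap W}\circ j^{\ast}_{W}=(\varphi|_{U})^{\ast}_{V\cap W}\circ(\O_{Y})^{W}_{V\cap W}$, using $j^{\ast}_{W}=(\O_{Y})^{W}_{V\cap W}$ from Proposition \ref{tvrz_restrictedgLRS}. Now apply naturality of $(\varphi|_{U})^{\ast}\colon\O_{Y}\to(\ul{\varphi}\circ\ul{i})_{\ast}(\O_{X}|_{U})$ to the inclusion $V\cap W\subseteq W$: the above equals $\big((\ul{\varphi}\circ\ul{i})_{\ast}(\O_{X}|_{U})\big)^{W}_{V\cap W}\circ(\varphi|_{U})^{\ast}_{W}$. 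But $(\ul{\varphi}\circ\ul{i})^{-1}(V\cap W)=\ul{\varphi}^{-1}(W)\cap U=(\ul{\varphi}\circ\ul{i})^{-1}(W)$ because $U\subseteq\ul{\varphi}^{-1}(V)$, so this pushforward restriction morphism is the identity; hence $(j\circ\varphi')^{\ast}_{W}=(\varphi|_{U})^{\ast}_{W}$ for all $W\in\Op(Y)$, as required.

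To finish showing $\varphi'$ is a morphism in $\gLRS$, I check property (iii): for each $x\in U$, $(\varphi')_{(x)}$ is a local graded ring morphism. Put $y:=\ul{\varphi}(x)=\ul{j}(\ul{\varphi'}(x))$. A direct computation of germs via formula (\ref{eq_vartphixmap2}) — using that $(\varphi')^{\ast}_{W}=(\varphi|_{U})^{\ast}_{W}=(\O_{X})^{\ul{\varphi}^{-1}(W)}_{\ul{\varphi}^{-1}(W)\cap U}\circ\varphi^{\ast}_{W}$, and that, by Proposition \ref{tvrz_restrictedgLRS}, the inclusion-induced isomorphisms $i_{(x)}\colon\O_{X,x}\to(\O_{X}|_{U})_{x}$ and $j_{(\ul{\varphi'}(x))}\colon\O_{Y,y}\to(\O_{Y}|_{V})_{\ul{\varphi'}(x)}$ just reinterpret germs — shows that $(\varphi')_{(x)}=i_{(x)}\circ\varphi_{(x)}\circ\big(j_{(\ul{\varphi'}(x))}\big)^{-1}$. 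Here $\varphi_{(x)}$ is a local graded ring morphism since $\varphi$ is a morphism in $\gLRS$, and $i_{(x)}$, $j_{(\ul{\varphi'}(x))}$ and the latter's inverse are local graded ring morphisms by Corollary \ref{cor_isomorphismarelocal}; a composite of such is again one, so $(\varphi')_{(x)}$ is local and $\varphi'\in\gLRS\big((U,\O_{X}|_{U}),(V,\O_{Y}|_{V})\big)$.

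Finally, uniqueness: if $\varphi''\colon(U,\O_{X}|_{U})\to(V,\O_{Y}|_{V})$ also satisfies $j\circ\varphi''=\varphi|_{U}$, then $\ul{j}\circ\ul{\varphi''}=\ul{j}\circ\ul{\varphi'}$ forces $\ul{\varphi''}=\ul{\varphi'}$ by injectivity of $\ul{j}$, and for $W\in\Op(V)$ the composition rule together with $\ul{j}^{-1}(W)=W$ and $j^{\ast}_{W}=\1_{\O_{Y}(W)}$ gives $(\varphi'')^{\ast}_{W}=(j\circ\varphi'')^{\ast}_{W}=(\varphi|_{U})^{\ast}_{W}=(j\circ\varphi')^{\ast}_{W}=(\varphi')^{\ast}_{W}$, so $\varphi''=\varphi'$. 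I expect the one genuinely delicate point to be the sheaf-level identity in the second paragraph, where one must notice that the relevant pushforward restriction morphism collapses to the identity precisely because $\ul{\varphi}(U)\subseteq V$; everything else — the locality check in particular — is a formal consequence of Proposition \ref{tvrz_restrictedgLRS} and Corollary \ref{cor_isomorphismarelocal}.
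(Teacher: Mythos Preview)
Your proof is correct and complete. The paper itself leaves this statement as ``an easy exercise'' without providing any argument, so your direct construction of $\varphi'$ together with the verification of $j\circ\varphi'=\varphi|_{U}$, the locality check via $(\varphi')_{(x)}=i_{(x)}\circ\varphi_{(x)}\circ(j_{(\ul{\varphi'}(x))})^{-1}$, and the uniqueness argument is exactly the kind of straightforward verification the author had in mind.
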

\begin{proof}
This is an easy exercise. 
\end{proof}
Sometimes it is useful to glue together a collection of locally defined morphisms.
\begin{tvrz} \label{tvrz_gLRSgluing}
Let $(X,\O_{X})$ and $(Y,\O_{Y})$ be a pair of graded locally ringed spaces. Let $\{ U_{\alpha} \}_{\alpha \in I}$ be an open cover of $X$. Suppose one has a collection $\{ \phi_{\alpha} \}_{\alpha \in I}$, where $\phi_{\alpha}: (U_{\alpha},\O_{X}|_{U_{\alpha}}) \rightarrow (Y,\O_{Y})$ is a $\gLRS$ morphism for each $\alpha \in I$. Moreover, for every $(\alpha,\beta) \in I^{2}$, one has $\phi_{\alpha}|_{U_{\alpha \beta}} = \phi_{\beta}|_{U_{\alpha \beta}}$. 

Then there is a unique morphism $\phi: (X,\O_{X}) \rightarrow (Y,\O_{Y})$, such that $\phi|_{U_{\alpha}} = \phi_{\alpha}$ for all $\alpha \in I$. 
\end{tvrz}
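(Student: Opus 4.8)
The plan is to construct the candidate morphism $\phi=(\ul{\phi},\phi^{\ast})$ by hand, verify it is a $\gLRS$ morphism with $\phi|_{U_{\alpha}}=\phi_{\alpha}$, and then obtain uniqueness from the monopresheaf property of $\O_{X}$. \emph{Step 1 (underlying map).} The hypothesis $\phi_{\alpha}|_{U_{\alpha\beta}}=\phi_{\beta}|_{U_{\alpha\beta}}$ forces in particular $\ul{\phi_{\alpha}}|_{U_{\alpha\beta}}=\ul{\phi_{\beta}}|_{U_{\alpha\beta}}$, so since $\{U_{\alpha}\}_{\alpha\in I}$ is an open cover of $X$, the usual pasting lemma for continuous maps gives a unique continuous $\ul{\phi}\colon X\to Y$ with $\ul{\phi}|_{U_{\alpha}}=\ul{\phi_{\alpha}}$ for all $\alpha$. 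Consequently $\ul{\phi}^{-1}(W)\cap U_{\alpha}=\ul{\phi_{\alpha}}^{-1}(W)$ for every $W\in\Op(Y)$, and $\{\ul{\phi_{\alpha}}^{-1}(W)\}_{\alpha\in I}$ is an open cover of $\ul{\phi}^{-1}(W)$. \emph{Step 2 (the pullback).} Fix $W\in\Op(Y)$ and $s\in\O_{Y}(W)$. Each $\phi_{\alpha}^{\ast}$ produces $(\phi_{\alpha})^{\ast}_{W}(s)\in(\O_{X}|_{U_{\alpha}})(\ul{\phi_{\alpha}}^{-1}(W))=\O_{X}(\ul{\phi_{\alpha}}^{-1}(W))$. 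Unpacking the equality $\phi_{\alpha}\circ i_{\alpha\beta}=\phi_{\beta}\circ i_{\alpha\beta}$ (for $i_{\alpha\beta}$ the inclusion of $U_{\alpha\beta}$) at the level of pullbacks, and using that the pullback attached to such an inclusion is just a restriction morphism of $\O_{X}$ by Proposition \ref{tvrz_restrictedgLRS}, one finds that $(\phi_{\alpha})^{\ast}_{W}(s)$ and $(\phi_{\beta})^{\ast}_{W}(s)$ restrict to the same section over $\ul{\phi}^{-1}(W)\cap U_{\alpha\beta}$. By the gluing property (s2) of the sheaf $\O_{X}$ there is a unique $\phi^{\ast}_{W}(s)\in\O_{X}(\ul{\phi}^{-1}(W))$ with $\phi^{\ast}_{W}(s)|_{\ul{\phi_{\alpha}}^{-1}(W)}=(\phi_{\alpha})^{\ast}_{W}(s)$ for all $\alpha$.

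\emph{Step 3 (it is a sheaf morphism and restricts correctly).} That each $\phi^{\ast}_{W}$ is a graded algebra morphism, and that the family $\{\phi^{\ast}_{W}\}_{W}$ is natural in $W$ (hence defines a sheaf morphism $\phi^{\ast}\colon\O_{Y}\to\ul{\phi}_{\ast}\O_{X}$), are both checked locally: one restricts the identities in question to each $\ul{\phi_{\alpha}}^{-1}(W)$, where they hold because each $\phi_{\alpha}^{\ast}$ is a morphism of sheaves of graded commutative associative algebras, and then invokes the monopresheaf property (s1) of $\O_{X}$. Setting $\phi:=(\ul{\phi},\phi^{\ast})$, the restriction $\phi|_{U_{\alpha}}=\phi\circ i_{\alpha}$ has underlying map $\ul{\phi}|_{U_{\alpha}}=\ul{\phi_{\alpha}}$ and pullback $(i_{\alpha})^{\ast}_{\ul{\phi}^{-1}(W)}\circ\phi^{\ast}_{W}=(\O_{X})^{\ul{\phi}^{-1}(W)}_{\ul{\phi_{\alpha}}^{-1}(W)}\circ\phi^{\ast}_{W}=(\phi_{\alpha})^{\ast}_{W}$ by the defining property of $\phi^{\ast}_{W}$; hence $\phi|_{U_{\alpha}}=\phi_{\alpha}$.

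\emph{Step 4 (local graded ring condition).} Given $x\in X$, pick $\alpha$ with $x\in U_{\alpha}$. From $\phi_{\alpha}=\phi\circ i_{\alpha}$ and the definition of composition in $\gLRS$ together with formula (\ref{eq_vartphixmap2}) (and Corollary \ref{cor_inducedstalkmap}), the induced stalk maps satisfy $(\phi_{\alpha})_{(x)}=(i_{\alpha})_{(x)}\circ\phi_{(x)}$. By Proposition \ref{tvrz_restrictedgLRS} the map $(i_{\alpha})_{(x)}$ is a graded ring isomorphism, so by Corollary \ref{cor_isomorphismarelocal} both it and its inverse are local graded ring morphisms. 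Since $\phi_{\alpha}$ is a $\gLRS$ morphism, $(\phi_{\alpha})_{(x)}$ is local, and therefore $\phi_{(x)}=(i_{\alpha})_{(x)}^{-1}\circ(\phi_{\alpha})_{(x)}$ is a composition of local graded ring morphisms, hence local. Thus $\phi$ is a morphism of graded locally ringed spaces. \emph{Step 5 (uniqueness).} If $\phi'$ is another morphism with $\phi'|_{U_{\alpha}}=\phi_{\alpha}$ for all $\alpha$, then $\ul{\phi'}|_{U_{\alpha}}=\ul{\phi_{\alpha}}=\ul{\phi}|_{U_{\alpha}}$ forces $\ul{\phi'}=\ul{\phi}$, and for every $W$ and $s$ the sections $(\phi')^{\ast}_{W}(s)$ and $\phi^{\ast}_{W}(s)$ both restrict to $(\phi_{\alpha})^{\ast}_{W}(s)$ on each $\ul{\phi_{\alpha}}^{-1}(W)$, so they coincide by (s1); hence $\phi'=\phi$.

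The only genuinely non-formal point is Step 2: making precise that the pointwise identity $\phi_{\alpha}|_{U_{\alpha\beta}}=\phi_{\beta}|_{U_{\alpha\beta}}$ of $\gLRS$ morphisms translates into the overlap compatibility of the sections $(\phi_{\alpha})^{\ast}_{W}(s)$ that is needed to apply the gluing property. Once the bookkeeping with the inclusion morphisms $i_{\alpha\beta}$ and their pullbacks (Proposition \ref{tvrz_restrictedgLRS}) is carried out, everything else is a routine application of the monopresheaf and gluing properties of $\O_{X}$ together with Corollaries \ref{cor_inducedstalkmap} and \ref{cor_isomorphismarelocal}.
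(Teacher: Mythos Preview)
Your proof is correct and follows essentially the same approach as the paper's. The only cosmetic difference is that the paper packages Steps 2--3 via the equalizer diagram (\ref{eq_sheafdiagram}) and its universal property, whereas you invoke the sheaf axioms (s1) and (s2) directly; these are equivalent formulations by the proposition immediately following Definition \ref{def_sheaf}, and the paper's treatment of the local ring condition is the same as your Step 4 but stated more tersely.
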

\begin{proof}
For each $\alpha \in I$, we have $\ul{\phi_{\alpha}}: U_{\alpha} \rightarrow Y$, such that $\ul{\phi_{\alpha}}|_{U_{\alpha \beta}} = \ul{\phi_{\beta}}|_{U_{\alpha \beta}}$ for every $(\alpha,\beta) \in I^{2}$. There is thus a unique continuous map $\ul{\phi}: X \rightarrow Y$, such that $\ul{\phi}|_{U_{\alpha}} = \ul{\phi_{\alpha}}$ for each $\alpha \in I$. Next, for each $V \in \Op(Y)$ and $\alpha \in I$, one has a graded algebra morphism 
\begin{equation}
(\phi_{\alpha})^{\ast}_{V}: \O_{Y}(V) \rightarrow \O_{X}( \ul{\phi_{\alpha}}^{-1}(V)) = O_{X}(\ul{\phi}^{-1}(V) \cap U_{\alpha}).
\end{equation}
Recall that one can now look at the equalizer diagram (\ref{eq_sheafdiagram}) for the sheaf $\O_{X}$, the open set $\ul{\phi}^{-1}(V)$ and its open cover $\{ \ul{\phi}^{-1}(V) \cap U_{\alpha} \}_{\alpha \in I}$. Define a graded algebra morphism 
\begin{equation}
\chi: \O_{Y}(V) \rightarrow \prod_{\alpha \in I} \O_{X}(\ul{\phi}^{-1}(V) \cap U_{\alpha}), \; \; p_{\alpha} \circ \chi := (\phi_{\alpha})^{\ast}_{V} \text{ for all } \alpha \in I.
\end{equation}
The assumption $\phi_{\alpha}|_{U_{\alpha \beta}} = \phi_{\beta}|_{U_{\alpha \beta}}$ ensures that $\psi_{1} \circ \chi = \psi_{2} \circ \chi$. By the universal property of equalizers, there is a unique graded algebra morphism $\phi^{\ast}_{V}: \O_{Y}(V) \rightarrow \O_{X}(\ul{\phi}^{-1}(V))$ such that $\phi^{\ast}_{V}(f)|_{U_{\alpha}} = (\phi_{\alpha})^{\ast}_{V}(f)$ for all $f \in \O_{Y}(V)$ and $\alpha \in I$. Uniqueness assertion of the universal property of equalizers can be also used to prove the naturality, whence $\phi^{\ast}: \O_{Y} \rightarrow \ul{\phi}_{\ast}\O_{X}$ is a sheaf morphism. Finally, it follows that $\phi = (\ul{\phi}, \phi^{\ast})$ has the property (iii) of Definition \ref{def_gLRS}, as $\phi|_{U_{\alpha}} = \phi_{\alpha}$ has it for all $\alpha \in I$. Hence $\phi$ is a unique morphism in $\gLRS$, such that $\phi|_{U_{\alpha}} = \phi_{\alpha}$. 
\end{proof}
\begin{example} \label{ex_twogLRS}
Let $\F \in \Sh(X,\As)$ be a sheaf of commutative associative algebras. Let $\O_{X} \in \Sh(X,\gcAs)$ be the sheaf constructed in Example \ref{ex_thesheaf} using $\F$ and a finite-dimensional graded vector space $K \in \gVect$. It is now useful to view $\F$ as a sheaf of (trivially) graded commutative associative algebras. Let us argue that $(X,\O_{X}) \in \gLRS$, if and only if $(X,\F) \in \gLRS$. 

We will use the formalism of Example \ref{ex_formalpower}. Let $(\xi_{\mu})_{\mu=1}^{n}$ be a fixed total basis of $K$. Let $x \in X$ be fixed. Let us examine the group of units $\frU( \O_{X,x})$ of the graded ring $\O_{X,x}$. We claim that it can be written as an inverse image (defined degree-wise)
\begin{equation} \label{eq_unitsOXxFxrel}
\frU(\O_{X,x}) = \beta_{x}^{-1}( \frU(\F_{x})),
\end{equation}
where $\beta_{x}: \O_{X,x} \rightarrow \F_{x}$ is the graded algebra morphism from Example \ref{ex_bodyofasection} and we view $\F$ as a sheaf of (trivially) graded commutative  associative algebras. The inclusion $\subseteq$ is true for any graded algebra morphism. To prove the inclusion $\supseteq$, first note that $\frU(\F_{x})_{k} = \emptyset$ for any $k \neq 0$, so it suffices to deal with elements of degree zero. Let $[s]_{x} \in (\O_{X,x})_{0}$ be an element represented by some $s \in \O_{X}(U)_{0}$, such that $\beta_{x}([s]_{x}) \equiv [s_{\mathbf{0}}]_{x} \in \frU(\F_{x})$. By restricting $s$ to some subset of $U$ if necessary, we may assume that $s_{\mathbf{0}} \in \F(U)$ is invertible. This allows one to write
\begin{equation} \label{eq_suminverse1}
s = s_{\mathbf{0}}( 1 + s'), 
\end{equation}
for the unique $s' \in \O_{X}(U)_{0}$. Note that $s'_{\mathbf{0}} = 0$. Let us now define $t \in \O_{X}(U)_{0}$ by
\begin{equation} \label{eq_suminverse2}
t = s_{\mathbf{0}}^{-1} \cdot \sum_{q=0}^{\infty} (-1)^{q} s'^{q}.
\end{equation}
To be more precise, one has $t = \sum_{\fp \in \N^{n}_{0}} t_{\fp} \xi^{\fp}$, where $t_{\fp}$ is given by the \textit{finite sum} 
\begin{equation} \label{eq_suminverse3}
t_{\fp} = s_{\mathbf{0}}^{-1} \cdot \sum_{q=0}^{w(\fp)} (-1)^{q} (s'^{q})_{\fp},
\end{equation}
where $w(\fp) = \sum_{\mu =1}^{n} p_{\mu}$. We can replace the upper bound by $\infty$ as $(s'^{q})_{\fp} = 0$ for $q > w(\fp)$. The section $t$ is easily seen to be the inverse to $s$, hence $s \in \frU(\O_{X}(U))_{0}$ and in particular, $[s]_{x} \in \frU(\O_{X,x})_{0}$. This concludes the proof of the relation (\ref{eq_unitsOXxFxrel}). As $\beta_{x}$ is a graded algebra epimorphism, it now follows from Proposition \ref{tvrz_grlocal}-$(v)$ that $\O_{X,x}$ is a graded local ring, if and only if $\F_{x}$ is a (trivially) graded local ring. As $x \in X$ was arbitrary, the statement follows. 
\end{example}
\subsection{Sheaves of graded modules} \label{subsec_sheavesofgradedmodules}
Following paragraphs can be viewed as a sheaf side of Subsection \ref{subsec_gmods}. 
\begin{definice} \label{def_sheavesofAmodules}
Let $X$ be a topological space, $\A \in \Sh(X,\gcAs)$ and $\F \in \Sh(X,\gVect)$. We say that $\F$ is a \textbf{sheaf of graded $\A$-modules}, if 
\end{definice}
\begin{enumerate}[(i)]
\item for each $U \in \Op(X)$, $\F(U) \in \gVect$ is a graded $\A(U)$-module;
\item restrictions of $\F$ and $\A$ are compatible with graded module structures, that is for all $U,V \in \Op(X)$ such that $V \subseteq U$, one has $\F^{U}_{V}( a \tr s) = \A^{U}_{V}(s) \tr \F^{U}_{V}(s)$ for all $a \in \A(U)$ and $s \in \F(U)$. 
\end{enumerate}
If $\G \in \Sh(X,\gVect)$ is another sheaf of graded $\A$-modules, a sheaf morphism $\eta: \F \rightarrow \G$ is called \textbf{$\A$-linear}, if for each $U \in \Op(X)$, the graded linear map $\eta_{U}: \F(U) \rightarrow \G(U)$ is \textbf{$\A(U)$-linear}, that is $\eta_{U}(a \tr s) = a \tr \eta_{U}(s)$ for all $a \in \A(U)$ and $s \in \F(U)$. Sheaves of graded $\A$-modules and their $\A$-linear morphisms form a subcategory $\Sh^{\A}(X,\gVect) \subseteq \Sh(X,\gVect)$. 

\begin{example}
$\A$ itself can be viewed as a sheaf of graded $\A$-modules. See Example \ref{ex_AisAmodule}. 
\end{example}
\begin{rem} \label{rem_degreeshiftedsheafofmodules}
For every $\F \in \Sh^{\A}(X,\gVect)$ and any $\ell \in \Z$, one can define the \textbf{degree shifted sheaf} $\F[\ell] \in \Sh^{\A}(X,\gVect)$ by declaring $(\F[\ell])(U) := \F(U)[\ell]$ for all $U \in \Op(X)$, see Definition \ref{def_degreeshifted}. The graded $\A(U)$-module structure on $\F(U)[\ell]$ is the one described in Remark \ref{rem_shiftedmodule}. Restriction morphisms are inherited from $\F$, making $\F[\ell]$ into a sheaf of graded $\A$-modules. 
\end{rem}

We would like to obtain a sheaf analogue of Proposition \ref{tvrz_LinAmodule}. In particular, to any $\F,\G \in \Sh^{\A}(X,\gVect)$, one would like to assign a sheaf of ``maps of all degrees from $\F$ to $\G$''. It turns out that this is not so straightforward.
\begin{definice} \label{def_ShAFG}
For each $\ell \in \Z$, we say that a sheaf morphism $\eta: \F \rightarrow \G[\ell]$ is an \textbf{$\A$-linear sheaf morphism of degree $\ell$}, if for every $U \in \Op(X)$, $\eta_{U} \in \gVect(\F(U),\G(U)[\ell]) \cong \Lin_{\ell}(\F(U),\G(U))$ is a graded $\A(U)$-linear map of degree $\ell$, see Definition \ref{def_Alinearmaps}. $\A$-linear sheaf morphisms of degree $\ell$ form a vector space which we denote as $\Sh_{\ell}^{\A}(\F,\G)$. Altogether, we obtain a graded vector space $\Sh^{\A}(\F,\G) = \{ \Sh^{\A}_{\ell}(\F,\G) \}_{\ell \in \Z}$. $\Sh_{0}^{\A}(\F,\G)$ is the set or morphisms from $\F$ to $\G$ in $\Sh^{\A}(X,\gVect)$. 
\end{definice}
We are now ready to make the following observation:
\begin{tvrz} \label{tvrz_uSh}
Let $\F,\G \in \Sh^{\A}(X,\gVect)$. For each $U \in \Op(X)$, define 
\begin{equation}
\ul{\Sh}^{\A}(\F,\G)(U) := \Sh^{\A|_{U}}( \F|_{U}, \G|_{U}) \in \gVect.
\end{equation}
Then there is a canonical way making $\ul{\Sh}^{\A}(\F,\G)$ into a sheaf of graded $\A$-modules. 

In particular, as $\A \in \Sh^{\A}(X,\gVect)$, to each $\F \in \Sh^{\A}(X,\gVect)$ one can associate its \textbf{dual sheaf of $\A$-modules} $\F^{\ast} := \ul{\Sh}^{\A}(\F,\A)$. 
\end{tvrz}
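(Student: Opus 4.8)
The plan is to check, in order, that $\ul{\Sh}^{\A}(\F,\G)$ is a presheaf of graded vector spaces, that it carries a graded $\A$-module structure compatible with its restrictions, and that it satisfies the two sheaf axioms of Definition \ref{def_sheaf}; the claim about $\F^{\ast}$ is then immediate because $\A \in \Sh^{\A}(X,\gVect)$. First I would introduce the restriction morphisms: for $V \subseteq U$ open, using that $(\F|_{U})|_{V} = \F|_{V}$, $(\G|_{U})|_{V} = \G|_{V}$ and $(\A|_{U})|_{V} = \A|_{V}$, one sends $\eta \in \Sh^{\A|_{U}}_{\ell}(\F|_{U},\G|_{U})$ to $\eta|_{V} \in \Sh^{\A|_{V}}_{\ell}(\F|_{V},\G|_{V})$; restricting an $\A|_{U}$-linear sheaf morphism of degree $\ell$ visibly stays $\A|_{V}$-linear of degree $\ell$. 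Functoriality (the identity over $U$ restricts to the identity, and composition of inclusions is respected) is immediate, and each $\Sh^{\A|_{U}}_{\ell}(\F|_{U},\G|_{U})$ is a vector space with linear restriction maps (Definition \ref{def_ShAFG}), so $\ul{\Sh}^{\A}(\F,\G) \in \PSh(X,\gVect)$.

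Next, the module structure, modelled on Proposition \ref{tvrz_LinAmodule}: for $a \in \A(U)$ and $\eta \in \Sh^{\A|_{U}}(\F|_{U},\G|_{U})$, define $a \tr \eta$ componentwise by $(a \tr \eta)_{W}(s) := a|_{W} \tr \eta_{W}(s)$ for every open $W \subseteq U$ and every $s \in \F(W)$. I would verify: (i) $a \tr \eta$ is a morphism of sheaves $\F|_{U} \to (\G[|a|+|\eta|])|_{U}$, where $\G[\,\cdot\,]$ is the degree-shifted sheaf of Remark \ref{rem_degreeshiftedsheafofmodules} with the shifted $\A$-action of Remark \ref{rem_shiftedmodule} — naturality follows from naturality of $\eta$ together with the module-compatibility of the restrictions of $\A$ and $\G$; (ii) $a \tr \eta$ is $\A|_{U}$-linear of degree $|a|+|\eta|$, where the sign in (\ref{eq_Alinearity}) is exactly what makes the computation close, just as in the proof of Proposition \ref{tvrz_LinAmodule}; (iii) the axioms (\ref{eq_moduleaxioms}) hold; (iv) $(a \tr \eta)|_{V} = a|_{V} \tr (\eta|_{V})$, so the restriction morphisms are module morphisms. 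This makes $\ul{\Sh}^{\A}(\F,\G)$ into a presheaf of graded $\A$-modules.

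The remaining content is the sheaf property; fix $U$ and an open cover $\{U_{\alpha}\}_{\alpha \in I}$. The monopresheaf property (s1) follows at once from the $\gVect$-analogue of Proposition \ref{tvrz_gluingmorphisms}-(i) applied to $\F|_{U}$ and the sheaf $(\G[\ell])|_{U}$. For gluing (s2), given $\eta_{\alpha} \in \Sh^{\A|_{U_{\alpha}}}_{\ell}(\F|_{U_{\alpha}},\G|_{U_{\alpha}})$ of a common degree $\ell$ agreeing on the overlaps $U_{\alpha\beta}$, the analogue of Proposition \ref{tvrz_gluingmorphisms}-(ii) yields a unique sheaf morphism $\eta: \F|_{U} \to (\G[\ell])|_{U}$ with $\eta|_{U_{\alpha}} = \eta_{\alpha}$, and it only remains to check that $\eta$ is $\A|_{U}$-linear of degree $\ell$. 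This is a local condition: for open $W \subseteq U$, $a \in \A(W)$ and $s \in \F(W)$, the sections $\eta_{W}(a \tr s)$ and $(-1)^{|a|\ell} a \tr \eta_{W}(s)$ of $\G(W)$ restrict, on the cover $\{W \cap U_{\alpha}\}_{\alpha \in I}$ of $W$, to the same section because $\eta|_{U_{\alpha}} = \eta_{\alpha}$ is $\A|_{U_{\alpha}}$-linear of degree $\ell$; the monopresheaf property of $\G$ then forces $\eta_{W}(a \tr s) = (-1)^{|a|\ell} a \tr \eta_{W}(s)$. Hence $\eta \in \ul{\Sh}^{\A}(\F,\G)(U)$ and $\ul{\Sh}^{\A}(\F,\G) \in \Sh^{\A}(X,\gVect)$; taking $\G = \A$ yields the dual sheaf $\F^{\ast}$. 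The one step requiring genuine care is this last verification that the glued $\eta$ is globally $\A$-linear of the right degree (handled by the locality argument above), with the degree-shift bookkeeping and its Koszul sign as the secondary nuisance; everything else is routine.
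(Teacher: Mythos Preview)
Your proof is correct and follows essentially the same approach as the paper: define the $\A(U)$-action by $(a \tr \eta)_{W} = a|_{W} \tr \eta_{W}$, check naturality and compatibility with restrictions, and invoke Proposition \ref{tvrz_gluingmorphisms} for the sheaf axioms. You are in fact slightly more careful than the paper on one point: you explicitly verify that the glued morphism $\eta$ obtained in (s2) is $\A|_{U}$-linear of degree $\ell$ via a locality argument and the monopresheaf property of $\G$, whereas the paper simply asserts that the sheaf property ``follows immediately from Proposition \ref{tvrz_gluingmorphisms}'' without flagging this extra check.
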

\begin{proof}
First, for any $U \in \Op(X)$, we must equip $\Sh^{\A|_{U}}(\F|_{U},\G|_{U})$ with a structure of a graded $\A(U)$-module. Let $a \in \A(U)$ and $\eta \in \Sh^{\A|_{U}}( \F|_{U}, \G|_{U})$. For every $V \in \Op(U)$, we thus have $\A(V)$-linear $\eta_{V}: \F(V) \rightarrow \G(V)$. By Proposition \ref{tvrz_LinAmodule}, it makes sense to define
\begin{equation}
(a \tr \eta)_{V} := \A^{U}_{V}(a) \tr \eta_{V}.
\end{equation}
Let us check the naturality. Let $W \subseteq V$ be an open subset and let $s \in \F(V)$. Then
\begin{equation}
\begin{split}
\G^{V}_{W}( (a \tr \eta)_{V}(s)) = & \ \G^{V}_{W}( \A^{U}_{V}(a) \tr \eta_{V}(s)) = \A^{U}_{W}(a) \tr \G^{V}_{W}( \eta_{V}(s)) \\
= & \ \A^{U}_{W}(a) \tr \eta_{W}( \F^{V}_{W}(s)) = (a \tr \eta)_{W}( \F^{V}_{W}(s)). 
\end{split}
\end{equation}
Hence $a \tr \eta \in \Sh^{\A|_{U}}(\F|_{U},\G|_{U})$. It is easy to check the axioms (\ref{eq_moduleaxioms}). 

Next, for each $U,V \in \Op(X)$ with $V \subseteq U$, let $(\ul{\Sh}^{\A})^{U}_{V}(\eta) := \eta|_{V}$ for any $\eta \in \Sh^{\A|_{U}}(\F|_{U},\G|_{U})$. It easy to check that $\eta|_{V} \in \Sh^{\A|_{V}}(\F|_{V},\G|_{V})$ and $(a \tr \eta)|_{V} = \A^{U}_{V}(a) \tr \eta|_{V}$. This makes $\ul{\Sh}^{\A}(\F,\G)$ into a presheaf of $\A$-modules. Finally, it follows immediately from Proposition \ref{tvrz_gluingmorphisms} that $\ul{\Sh}^{\A}(\F,\G)$ has the monopresheaf and gluing properties, and the proof is finished. 
\end{proof}

\begin{rem} \label{rem_ulShcat}
The sheaf of modules $\ul{\Sh}^{\A}(\F,\G)$ is interesting from the categorical point of view. Indeed, one can turn $\Sh^{\A}(X,\gVect)$ into a symmetric monoidal cateogory with a tensor product $\otimes_{\A}$ and $\A$ playing the role of its unit. The definition of $\otimes_{\A}$ is a bit complicated, as it requires one to use the sheafification. It turns out that $\ul{\Sh}^{\A}$ becomes the internal hom in this monoidal category, that is for all $\F,\G,\H \in \Sh^{\A}(X,\gVect)$, there is a bijection
\begin{equation} \label{eq_tensorinternalhom}
\Sh^{\A}_{0}(\F \otimes_{\A} \G, \H) \cong \Sh^{\A}_{0}(\F, \ul{\Sh}^{\A}(\G,\H)),
\end{equation}
natural in all three variables. See e.g. \cite{kelly1982basic} for details. 
\end{rem}

\begin{rem} \label{rem_uShdegshifted} The functor $\ul{\Sh}^{\A}$ behaves naturally with respect to the degree shifts. For any $\F,\G \in \Sh^{\A}(X, \gVect)$ and any $\ell \in \Z$, there are canonical $\A$-linear sheaf isomorphisms 
\begin{equation}
\ul{\Sh}^{\A}(\F, \G[\ell]) \cong \ul{\Sh}^{\A}( \F[-\ell], \G) \cong \ul{\Sh}^{\A}(\F,\G)[\ell].
\end{equation}
In particular, this proves that $(\F[\ell])^{\ast} \cong \F^{\ast}[-\ell]$. One can easily prove this ``point-wise'' by using the isomorphisms obtained in Remark \ref{rem_LinAofshifted}. 
\end{rem}
\begin{example} \label{ex_modelfreelygensheaf}
Let $\A \in \Sh(X,\gcAs)$ and let $K \in \gVect$ be any graded vector space. For each $U \in \Op(X)$, consider the graded vector space
\begin{equation}
\A[K](U) := \A(U) \otimes_{\R} K.
\end{equation}
For each $U,V \in \Op(X)$ such that $V \subseteq X$, one defines a graded linear map $\A[K]^{U}_{V} := \A^{U}_{V} \otimes \1_{K}$. This makes $\A[K]$ into a \textit{presheaf} of graded vector spaces. For general $K$, this is not a sheaf. However, suppose that $m = \dim(K) < \infty$ and let $(\vartheta_{\lambda})_{\lambda=1}^{m}$ be its total basis. Then any $s \in \A[K](U)$ can be written as a finite sum 
\begin{equation} \label{eq_suniquesumAK}
s = a^{\lambda} \otimes \vartheta_{\lambda}
\end{equation}
for unique sections $a^{\lambda} \in \A(U)$ satisfying $|s| = |a^{\lambda}| + |\vartheta_{\lambda}|$ for every $\lambda \in \{1,\dots,m\}$. Then
\begin{equation}
\A[K]^{U}_{V}(s) = \A^{U}_{V}(a^{\lambda}) \otimes \vartheta_{\lambda},
\end{equation}
and it is now easy to see that $\A[K]$ is a sheaf due to the fact that $\A$ is a sheaf. 

For each $U \in \Op(X)$, one can make $\A(U) \otimes_{\R} K$ into an $\A(U)$-module by declaring $a \tr (b \otimes x) := (a \cdot b) \otimes x$ for all $a,b \in \A(U)$ and $x \in K$. It is easy to see that these are compatible with the restrictions, hence $\A[K] \in \Sh^{\A}(X,\gVect)$.
\end{example}

\begin{definice}
Let $\F \in \Sh^{\A}(X,\gVect)$. We say that $\F$ is \textbf{freely and finitely generated}, if it is isomorphic to $\A[K]$ in the category $\Sh^{\A}(X,\gVect)$ for some finite-dimensional $K \in \gVect$. Freely and finitely generated sheaves of $\A$-modules form a full subcategory $\Sh^{\A}_{\fin}(X,\gVect)$. 
\end{definice}

\begin{definice}
Let $\F \in \Sh^{\A}(X,\gVect)$ be a non-trivial sheaf. Let $( \Phi_{\lambda} )_{\lambda = 1}^{m}$ be a collection of sections of $\F(X)$. We say that $( \Phi_{\lambda} )_{\lambda = 1}^{m}$ is a \textbf{global frame for $\F$}, if for every $U \in \Op(X)$, every section $s \in \F(U)$ can be written as 
\begin{equation}
s = a^{\lambda} \tr \Phi_{\lambda}|_{U},
\end{equation}
for \textit{unique} sections $a^{\lambda} \in \A(U)$ with $|s| = |a^{\lambda}| + |\Phi_{\lambda}|$, $\lambda \in \{1,\dots,m\}$. For the trivial $\F$, a global frame for $\F$ is \textit{declared} to be an empty collection of elements of $\F(X)$. 
\end{definice} 
\begin{tvrz} \label{tvrz_globalframe}
Let $\F \in \Sh^{\A}(X,\gVect)$. Then $\F$ is freely and finitely generated, if and only if there exists some global frame for $\F$. 
\end{tvrz}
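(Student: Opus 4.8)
The plan is to prove both implications by unwinding the definitions and using the characterization of freely and finitely generated sheaves via the model sheaf $\A[K]$ from Example \ref{ex_modelfreelygensheaf}. For the ``only if'' direction, suppose $\F$ is freely and finitely generated, so there is an $\A$-linear sheaf isomorphism $\Psi: \A[K] \to \F$ for some finite-dimensional $K \in \gVect$. Fix a total basis $(\vartheta_{\lambda})_{\lambda=1}^{m}$ of $K$ and set $\Phi_{\lambda} := \Psi_{X}(1 \otimes \vartheta_{\lambda}) \in \F(X)$. I would then check this is a global frame: given $s \in \F(U)$, apply $(\Psi|_{U})^{-1}_{U} = (\Psi^{-1})_{U}$ to get an element of $\A[K](U) = \A(U) \otimes_{\R} K$, write it uniquely as $a^{\lambda} \otimes \vartheta_{\lambda}$ using (\ref{eq_suniquesumAK}), apply $\Psi_{U}$, and use $\A$-linearity together with naturality of $\Psi$ with respect to the restriction $\A[K]^{X}_{U}$ to obtain $s = a^{\lambda} \tr \Phi_{\lambda}|_{U}$. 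Uniqueness of the $a^{\lambda}$ transfers from the uniqueness in (\ref{eq_suniquesumAK}) because $\Psi_{U}$ is a bijection; the degree condition $|s| = |a^{\lambda}| + |\Phi_{\lambda}|$ is immediate since $\Psi$ has degree zero and $|\Phi_{\lambda}| = |\vartheta_{\lambda}|$. The trivial case is handled by the explicit convention in both definitions.

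For the ``if'' direction, suppose $(\Phi_{\lambda})_{\lambda=1}^{m}$ is a global frame for $\F$. Let $K \in \gVect$ be the finite-dimensional graded vector space with total basis $(\vartheta_{\lambda})_{\lambda=1}^{m}$ chosen so that $|\vartheta_{\lambda}| = |\Phi_{\lambda}|$ for each $\lambda$ (concretely, $K = \R^{(n_{j})}$ with $n_{j} = \#\{\lambda : |\Phi_{\lambda}| = j\}$, cf.\ Example \ref{ex_Rtosequence}). I would then define a sheaf morphism $\Psi: \A[K] \to \F$ by setting, for each $U \in \Op(X)$,
\begin{equation}
\Psi_{U}(a^{\lambda} \otimes \vartheta_{\lambda}) := a^{\lambda} \tr \Phi_{\lambda}|_{U},
\end{equation}
which is well defined and graded linear because every element of $\A[K](U)$ has a \emph{unique} expression $a^{\lambda} \otimes \vartheta_{\lambda}$ by (\ref{eq_suniquesumAK}). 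Naturality in $U$ follows from the compatibility of the module structure of $\F$ with restrictions (Definition \ref{def_sheavesofAmodules}(ii)) and the identity $\Phi_{\lambda}|_{V} = (\Phi_{\lambda}|_{U})|_{V}$. That $\Psi_{U}$ is $\A(U)$-linear is a direct check from $a \tr (b \tr \Phi_{\lambda}|_{U}) = (a \cdot b) \tr \Phi_{\lambda}|_{U}$, the module axiom (\ref{eq_moduleaxioms}). Finally, $\Psi_{U}$ is bijective for every $U$: surjectivity is exactly the defining property of a global frame, and injectivity is exactly the uniqueness clause in that definition. Hence each $\Psi_{U}$ is an isomorphism of graded $\A(U)$-modules, so $\Psi$ is an isomorphism in $\Sh^{\A}(X,\gVect)$, and $\F$ is freely and finitely generated.

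I do not expect a serious obstacle here; the statement is essentially a repackaging of the ``universal property of a free module'' at the level of sheaves. The one point requiring a little care is making sure the degree bookkeeping is consistent on both sides — that the $K$ built from the frame has the right graded dimension and that $\Psi$ genuinely has degree zero — and that naturality is verified using precisely the axiom in Definition \ref{def_sheavesofAmodules}(ii) rather than taken for granted. The trivial-sheaf edge case should be mentioned explicitly since both definitions single it out by fiat, but it poses no difficulty.
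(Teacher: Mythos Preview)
Your proof is correct and takes essentially the same approach as the paper's own proof: both construct the correspondence between a global frame $(\Phi_\lambda)$ and an isomorphism with $\A[K]$ via $\Phi_\lambda \leftrightarrow 1 \otimes \vartheta_\lambda$, with the only cosmetic difference that the paper writes the isomorphism as $\varphi: \F \to \A[K]$ while you write it as $\Psi: \A[K] \to \F$. Your version is slightly more explicit about verifying naturality, $\A$-linearity, and the trivial case, which the paper leaves as ``easy to see.''
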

\begin{proof}
The trivial $\F$ is always freely and finitely generated, since it is isomorphic to $\A[0]$. By definition, there also always exists an (empty) global frame for $\F$. Hence assume that $\F$ is non-trivial. Let $\F$ be freely and finitely generated. Let $\varphi: \F \rightarrow \A[K]$ be an isomorphism for some finite-dimensional $K \in \gVect$. Fix a total basis $( \vartheta_{\lambda})_{\lambda=1}^{m}$ of $K$. Note that $K \neq 0$, otherwise $\F$ would be trivial. Hence $m \geq 1$, and one can define
\begin{equation}
\Phi_{\lambda} := \varphi^{-1}_{X}( 1 \otimes \vartheta_{\lambda}),
\end{equation}
for each $\lambda \in \{1,\dots,m\}$. Let $s \in \F(U)$. We can write $\varphi_{U}(s) = a^{\lambda} \otimes \vartheta_{\lambda} = a^{\lambda} \tr (1 \otimes \vartheta_{\lambda})$ for unique $a^{\lambda} \in \A(U)$ with $|s| = |a^{\lambda}| + |\vartheta_{\lambda}|$, for each $\lambda \in \{1,\dots,m\}$. Whence
\begin{equation}
s = \varphi_{U}^{-1}( a^{\lambda} \tr (1 \otimes \vartheta_{\lambda})) = a^{\lambda} \tr \varphi_{U}^{-1}(1 \otimes \vartheta_{\lambda}) = a^{\lambda} \tr \Phi_{\lambda}|_{U},
\end{equation}
where we have used the $\A$-linearity and the naturality of $\varphi$. The uniqueness of the decomposition follows from the one of the decomposition of $\varphi_{U}(s)$. Hence $(\Phi_{\lambda})_{\lambda=1}^{m}$ is a global frame for $\F$. 

Conversely, if $(\Phi_{\lambda})_{\lambda=1}^{m}$ is a global frame, consider any graded vector space $K$ with a total basis $(\vartheta_{\lambda})_{\lambda=1}^{m}$ satisfying $|\vartheta_{\lambda}| = |\Phi_{\lambda}|$ for all $\lambda \in \{1,\dots,m\}$. By assumption, each $s \in \F(U)$ can be uniquely decomposed as $s = a^{\lambda} \tr \Phi_{\lambda}|_{U}$. For each $U \in \Op(X)$, define
\begin{equation} \varphi_{U}(s) := a^{\lambda} \otimes \vartheta_{\lambda}.
\end{equation}
It is easy to see that this gives an $\A$-linear sheaf isomorphism $\varphi: \F \rightarrow \A[K]$.
\end{proof}

\begin{rem}
For general $\A$ and $\F \in \Sh^{\A}_{\fin}(X,\gVect)$, the graded dimension of $K$ is not determined uniquely. Indeed, it suffices to find an invertible element $a \in \A(X)$ with $|a| \neq 0$. Let $(\Phi_{\lambda})_{\lambda=1}^{m}$ be a global frame for $\F$. Then $(a \tr \Phi_{\lambda})_{\lambda=1}^{m}$ is also a global frame for $\F$. As in the proof of the above proposition, one can use them to find isomorphisms of $\F$ with $\A[K]$ and $\A[K']$, where $K$ and $K'$ are of different graded dimensions. In other words, for general $\A$, there is no well-defined notion of a \textit{graded rank} of $\F$. 
\end{rem}

\begin{example} \label{ex_divnejsheaf}
Let us argue that there actually exists some sheaf of graded commutative associative algebras with an invertible global section of a non-zero degree. 

First, consider $V \in \gVect$ with $V_{-2} = V_{2} = \R$ and $V_{k} = \{0\}$ for $k \notin \{-2,2\}$. Let $(\xi_{1},\xi_{2})$ be its total basis with $|\xi_{1}| = -2$ and $|\xi_{2}| = 2$. Let $P \subseteq S(V)$ be a graded subset of the corresponding symmetric algebra, where $P_{0} := \{ 1 - \xi_{1}\xi_{2} \}$ and $P_{k} := \emptyset$ for $k \neq 0$. It follows that the graded commutative associative algebra $A = S(V) / \< P \>$ has invertible elements of a non-zero degree, e.g. $\hat{\xi}_{1} := q(\xi_{1}) \in A_{-2}$, where $q: S(V) \rightarrow A$ is the quotient map. Indeed, if $\hat{\xi}_{2} := q(\xi_{2}) \in A_{2}$, one finds 
\begin{equation} \hat{\xi}_{1} \cdot \hat{\xi}_{2} = q( \xi_{1} \xi_{2}) = q(1) = 1_{A}. \end{equation}
Next, to each $A \in \gcAs$, one can assign the corresponding \textbf{constant sheaf} $A_{X}$: For each $U \in \Op(X)$, one defines $A_{X}(U) = \{ \C^{\lc}(U, A_{k}) \}_{k \in \Z}$, where $\C^{\lc}(U,A_{k})$ denotes the set of locally constant functions from $U$ to $A_{k}$. All operations on $A_{X}(U)$ are defined point-wise using the graded algebra structure on $A$. Restriction morphisms $(A_{X})^{U}_{V}: A_{X}(U) \rightarrow A_{X}(V)$ are defined to be the actual restrictions. Whence $A_{X} \in \Sh(X,\gcAs)$. 

Finally, each element of $A$ can be viewed as a constant function, hence a section of $A_{X}(U)$ for any $U \in \Op(X)$. In particular, the invertible element $\hat{\xi}_{1} \in A_{-2}$ defines an invertible section of degree $-2$ in $A_{X}(X)$. This shows that finitely and freely generated sheaves of graded $A_{X}$-modules do not have a well-defined graded rank. 
\end{example}

Fortunately, there is an important class of examples, where sheaves of graded $\A$-modules \textit{do have} a well-defined graded rank. 
\begin{tvrz} \label{tvrz_thereisrank}
Let $\A \in \Sh(X,\gcAs)$. Suppose there is $U \in \Op(X)$ and an ideal $J \subseteq \A(U)$, such that $\A(U) / J \cong \R$. Suppose $\A[K] \cong \A[K']$ for some finite-dimensional $K,K' \in \gVect$. Then $K$ and $K'$ are isomorphic graded vector spaces. 

In particular, for any $\F \in \Sh^{\A}_{\fin}(X,\gVect)$, one can define its \textbf{graded rank} as $\grk(\F) := \gdim(K)$, where $K \in \gVect$ is any of the graded vector spaces such that $\F \cong \A[K]$. Two finitely and freely generated sheaves of $\A$-modules of the same graded rank are isomorphic. 
\end{tvrz}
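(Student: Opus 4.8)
The plan is to extract from the isomorphism $\A[K] \cong \A[K']$ its component over $U$, which by Example~\ref{ex_modelfreelygensheaf} is a graded $\A(U)$-linear isomorphism $\varphi_{U} \colon \A(U) \otimes_{\R} K \to \A(U) \otimes_{\R} K'$ of degree $0$, and then to ``kill'' the algebra $\A(U)$ by passing to the quotient along $J$, landing in $\gVect$, where graded dimension is a complete invariant. First I would record that $J$ is automatically a graded ideal (since the quotient $\A(U)/J \cong \R$ lives in $\gcAs$) and that $\R$ here means the field concentrated in degree $0$, so that $\A(U)/J$ has trivial components in all nonzero degrees.

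Next, for any graded $\A(U)$-module $M$ the submodule $J \tr M$ is a graded submodule, and $M/(J \tr M)$ is a graded $(\A(U)/J)$-module, i.e.\ a graded vector space. Taking $M = \A(U) \otimes_{\R} K$ with a total basis $(\vartheta_{\lambda})_{\lambda=1}^{m}$ of $K$, I would check that $J \tr M = \{ \sum_{\lambda} c_{\lambda} \otimes \vartheta_{\lambda} \mid c_{\lambda} \in J \}$, using that $J$ is an ideal and that $c_{\lambda} \otimes \vartheta_{\lambda} = c_{\lambda} \tr (1 \otimes \vartheta_{\lambda})$; hence the natural map $(\A(U)/J) \otimes_{\R} K \to M/(J \tr M)$ is a graded linear isomorphism, and composing it with $\A(U)/J \cong \R$ and the canonical graded isomorphism $\R \otimes_{\R} K \cong K$ (which respects grading precisely because $\R$ sits in degree $0$) yields a graded linear isomorphism $\theta_{K} \colon M/(J \tr M) \to K$, and likewise $\theta_{K'}$.

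Now, because $\varphi_{U}$ is $\A(U)$-linear of degree $0$, it carries $J \tr (\A(U) \otimes_{\R} K)$ into $J \tr (\A(U) \otimes_{\R} K')$, and $\varphi_{U}^{-1}$ does the reverse; hence $\varphi_{U}$ descends to a graded linear isomorphism of the quotients, and conjugating by $\theta_{K}$ and $\theta_{K'}$ produces a graded linear isomorphism $K \cong K'$. This proves the first assertion. For the ``in particular'' part, any $\F \in \Sh^{\A}_{\fin}(X,\gVect)$ is isomorphic to $\A[K]$ for some finite-dimensional $K$ by definition, and any two such $K$ are isomorphic by what was just shown, so $\gdim(K)$ depends only on $\F$ and $\grk(\F)$ is well defined. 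Finally, if $\grk(\F) = \grk(\G)$, pick $K,K'$ with $\F \cong \A[K]$, $\G \cong \A[K']$ and $\gdim(K) = \gdim(K')$; by Example~\ref{ex_Rtosequence} one gets $K \cong K'$, which induces an $\A$-linear sheaf isomorphism $\A[K] \cong \A[K']$ (apply $\1_{\A(U)} \otimes -$ over each $U$), and therefore $\F \cong \G$.

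I do not anticipate a genuine obstacle; the only point requiring care is the bookkeeping of degrees — ensuring that $\R \otimes_{\R} K \cong K$ and $M/(J \tr M) \cong K$ as \emph{graded} vector spaces, which is exactly where one must use that $\A(U)/J$ is concentrated in degree $0$ — together with the (minor) observation that one should argue with the local component $\varphi_{U}$ rather than attempting anything at the level of stalks or of global sections on all of $X$.
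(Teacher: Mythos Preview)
Your proof is correct and follows essentially the same approach as the paper: pass to the $U$-component $\varphi_{U}$, quotient both sides by the submodule generated by $J$ (the paper writes this as $J \otimes_{\R} K$, which is exactly your $J \tr M$), use $\A(U)$-linearity to descend to an isomorphism $(\A(U)/J) \otimes_{\R} K \cong (\A(U)/J) \otimes_{\R} K'$, and conclude via $\A(U)/J \cong \R$ being the unit in $(\gVect,\otimes_{\R})$. Your write-up is in fact a bit more careful about the graded bookkeeping than the paper's terse version.
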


\begin{proof}
Let $\varphi: \A[K] \rightarrow \A[K']$. For any ideal $J \subseteq \A(U)$, we have a graded submodule $J \otimes_{\R} K \subseteq \A(U) \otimes_{\R} K$. It is then easy to see that that the $\A$-linearity of $\varphi$ implies $\varphi_{U}(J \otimes_{\R} K) \subseteq J \otimes_{\R} K'$. There is also a graded vector space isomorphism $(\A(U) \otimes_{\R} K) / (J \otimes_{\R} K) \cong (\A(U) / J) \otimes_{\R} K$. It follows there is an induced isomorphism $\hat{\varphi}_{U}: (\A(U) / J) \otimes_{\R} K \rightarrow (\A(U) / J) \otimes_{\R} K'$. The rest follows from the fact that $\R$ is the unit in the monoidal category $(\gVect,\otimes_{\R})$. 
\end{proof}

\begin{rem}
It can happen that $\A$ itself is a trivial sheaf. It certainly cannot satisfy the assumptions of the above proposition. However, all $\F \in \Sh^{\A}_{\fin}(X,\gVect)$ are then also trivial sheaves and we can simply \textit{declare} $\grk(\F) := (0)_{j \in \Z}$. 
\end{rem}

\begin{example}
One can view any $\A \in \gcAs$ as a sheaf of graded $\A$-modules. If $\A$ is non-trivial, one can define a global frame to be the unit $1 \in \A(X)$, hence $\A \cong \A[\R]$. If the graded rank is well-defined, one has $\grk(\A) = (m_{j})_{j \in \Z}$, where $m_{0} = 1$ and $m_{j} = 0$ for $j \neq 0$. If $\A$ is trivial, we have $\grk(\A) = (0)_{j \in \Z}$ by definition. 
\end{example}

\begin{tvrz} \label{tvrz_dualfinfree}
Let $\F \in \Sh^{\A}_{\fin}(X,\gVect)$. Then its dual $\F^{\ast}$ is also freely and finitely generated. 
\end{tvrz}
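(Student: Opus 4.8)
The plan is to produce an explicit global frame for the dual sheaf $\F^{\ast}=\ul{\Sh}^{\A}(\F,\A)$ and then conclude by Proposition~\ref{tvrz_globalframe}. If $\F$ is the trivial sheaf then $\F^{\ast}$ is trivial too (the only $\A|_{U}$-linear sheaf morphism out of the zero sheaf is the zero one) and the empty collection is a global frame, so assume $\F$ is non-trivial and fix, via Proposition~\ref{tvrz_globalframe}, a global frame $(\Phi_{\lambda})_{\lambda=1}^{m}$ for $\F$ with $m\geq 1$.

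First I would build the \emph{dual frame} $(\Phi^{\lambda})_{\lambda=1}^{m}\subseteq\F^{\ast}(X)=\Sh^{\A}(\F,\A)$. For $U\in\Op(X)$ and $s\in\F(U)$, write the unique decomposition $s=a^{\mu}\tr\Phi_{\mu}|_{U}$ and set \[(\Phi^{\lambda})_{U}(s):=(-1)^{|a^{\lambda}||\Phi_{\lambda}|}\,a^{\lambda}\in\A(U).\] This is graded linear of degree $-|\Phi_{\lambda}|$, and the Koszul sign is exactly what makes it $\A(U)$-linear of that degree in the sense of Definition~\ref{def_Alinearmaps}: decomposing $b\tr s=(b\cdot a^{\mu})\tr\Phi_{\mu}|_{U}$ and comparing exponents gives $(\Phi^{\lambda})_{U}(b\tr s)=(-1)^{|b|(-|\Phi_{\lambda}|)}\,b\tr(\Phi^{\lambda})_{U}(s)$. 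Naturality in $U$ holds because restrictions are compatible with the $\A$-module structure (Definition~\ref{def_sheavesofAmodules}(ii)) and the $\Phi_{\mu}$ are global, so the coefficients $a^{\mu}$ of $s$ restrict to those of $s|_{V}$; uniqueness of the decomposition then yields $(\Phi^{\lambda})_{U}(s)|_{V}=(\Phi^{\lambda})_{V}(s|_{V})$. Thus each $\Phi^{\lambda}$ is a well-defined element of $\F^{\ast}(X)_{-|\Phi_{\lambda}|}$, and on frame elements it satisfies the clean relation $(\Phi^{\lambda})_{U}(\Phi_{\mu}|_{U})=\delta^{\lambda}_{\mu}\cdot 1$.

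Next I would check that $(\Phi^{\lambda})_{\lambda=1}^{m}$ is a global frame for $\F^{\ast}$. Given $\eta\in\F^{\ast}(U)=\Sh^{\A|_{U}}(\F|_{U},\A|_{U})$, put $b_{\lambda}:=\eta_{U}(\Phi_{\lambda}|_{U})\in\A(U)$; homogeneity of $\eta$ gives $|b_{\lambda}|=|\eta|+|\Phi_{\lambda}|$, so $|b_{\lambda}|+|\Phi^{\lambda}|=|\eta|$ as required. To see $\eta=b_{\lambda}\tr\Phi^{\lambda}|_{U}$ I would evaluate both sides on an arbitrary $s=a^{\mu}\tr\Phi_{\mu}|_{V}\in\F(V)$, $V\subseteq U$: the $\A$-linearity and naturality of $\eta$ give $\eta_{V}(s)=\sum_{\mu}(-1)^{|a^{\mu}||\eta|}a^{\mu}\cdot b_{\mu}|_{V}$, while the $\A$-action on $\ul{\Sh}^{\A}$ (Proposition~\ref{tvrz_uSh}) and the formula for $\Phi^{\lambda}$ give $\sum_{\lambda}(-1)^{|a^{\lambda}||\Phi_{\lambda}|}b_{\lambda}|_{V}\cdot a^{\lambda}$; graded commutativity of $\A$ together with $|b_{\lambda}|=|\eta|+|\Phi_{\lambda}|$ reconciles the two. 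Uniqueness of the $b_{\lambda}$ follows by applying the identity to $\Phi_{\mu}|_{U}$ and using $(\Phi^{\lambda})_{U}(\Phi_{\mu}|_{U})=\delta^{\lambda}_{\mu}$. Hence $\F^{\ast}$ has a global frame, so it is freely and finitely generated by Proposition~\ref{tvrz_globalframe}.

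The only genuine work is the Koszul sign bookkeeping in the two compatibility checks; everything else is formal. An alternative, more abstract route fixes $K$ with $\F\cong\A[K]$, decomposes $\A[K]$ into a finite sum of degree shifts of $\A$ according to a total basis of $K$, and applies Remark~\ref{rem_uShdegshifted} together with $\ul{\Sh}^{\A}(\A,\A)\cong\A$ to get $\F^{\ast}\cong\A[\Lin(K,\R)]$ with $\Lin(K,\R)$ finite-dimensional; but this requires first setting up finite direct sums of sheaves of $\A$-modules and their interaction with $\ul{\Sh}^{\A}$, so the direct construction above is the cleaner option.
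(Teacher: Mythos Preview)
Your argument is correct and is essentially the same as the paper's, just packaged differently: the paper constructs the isomorphism $\varphi^{\vee}:\F^{\ast}\to\A[K^{\ast}]$ by $\varphi^{\vee}_{U}(\alpha)=\alpha_{U}(\Phi_{\lambda}|_{U})\otimes\vartheta^{\lambda}$ and checks it is bijective, whereas you build the dual frame elements $\Phi^{\lambda}$ first and invoke Proposition~\ref{tvrz_globalframe}. The core computation---reading off the coefficients of $\eta\in\F^{\ast}(U)$ by evaluating on the frame $\Phi_{\mu}|_{U}$ and tracking the Koszul signs---is identical in both.
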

\begin{proof}
Let $\F \in \Sh^{\A}_{\fin}(X,\gVect)$. The case of trivial $\F$ is obvious, hence let $\F$ be non-trivial. Fix an isomorphism $\varphi: \F \rightarrow \A[K]$ and a total basis $(\vartheta_{\lambda})_{\lambda=1}^{m}$ of $K$, so we have the corresponding global frame $( \Phi_{\lambda} )_{\lambda=1}^{m}$ for $\F$. 

Let us construct a sheaf isomorphism $\varphi^{\vee}: \F^{\ast} \rightarrow \A[K^{\ast}]$. Let $(\vartheta^{\lambda})_{\lambda =1}^{m}$ be the total basis of $K^{\ast}$ dual to $(\vartheta_{\lambda})_{\lambda=1}^{m}$. Note that $|\vartheta^{\lambda}| = - |\vartheta_{\lambda}|$ for each $\lambda \in \{1,\dots,m\}$. Let $U \in \Op(X)$ and $\alpha \in \F^{\ast}(U) \equiv \Sh^{\A|_{U}}(\F|_{U},\A|_{U})$. Then define $\varphi^{\vee}_{U}(\alpha) \in \A(U) \otimes_{\R} K^{\ast}$ as 
\begin{equation} \label{eq_varphivee}
\varphi^{\vee}_{U}( \alpha) := \alpha_{U}( \Phi_{\lambda}|_{U}) \otimes \vartheta^{\lambda}. 
\end{equation}
It is easy to see that $\varphi^{\vee}_{U}: \F^{\ast}(U) \rightarrow \A[K^{\ast}](U)$ defines a graded $\A(U)$-linear map. The naturality in $U$ is also clear. Hence $\varphi^{\vee}: \F^{\ast} \rightarrow \A[K^{\ast}]$ is a morphism of sheaves of graded $\A$-modules. 

Let us prove that each $\varphi^{\vee}_{U}$ is injective. Let $\varphi^{\vee}_{U}(\alpha) = 0$. But then $\alpha_{U}(\Phi_{\lambda}|_{U}) = 0$ for every $\lambda$. The naturality of $\alpha$ implies $\alpha_{V}(\Phi_{\lambda}|_{V}) = \alpha_{U}(\Phi_{\lambda}|_{U})|_{V} = 0$ for all $V \in \Op(U)$ and every $\lambda$. The fact that each $\alpha_{V}$ is $\A(V)$-linear and $(\Phi_{\lambda})_{\lambda=1}^{m}$ is a global frame implies $\alpha_{V} = 0$, for each $V \in \Op(X)$. Thus $\alpha = 0$ and the injectivity of $\varphi^{\vee}_{U}$ follows. 

Let us prove the surjectivity of $\varphi^{\vee}_{U}$. A given element $t \in \A(U) \otimes K^{\ast}$ can be decomposed as $t = t_{\lambda} \otimes \vartheta^{\lambda}$ for unique sections $t_{\lambda} \in \A(U)$ satisfying $|t_{\lambda}| + |\vartheta^{\lambda}| = |t|$ for each $\lambda$. For any $V \in \Op(U)$ and any $s = a^{\lambda} \tr \Phi_{\lambda}|_{V} \in \F(V)$, define 
\begin{equation}
\alpha_{V}(s) = \alpha_{V}( a^{\lambda} \tr \Phi_{\lambda}|_{V}) = (-1)^{|a^{\lambda}||t|} a^{\lambda} \cdot \alpha_{V}( \Phi_{\lambda}|_{V}) := (-1)^{|a^{\lambda}||t|} a^{\lambda} \cdot t_{\lambda}|_{V}.
\end{equation}
It is easy to verify that $\alpha_{V}$ are graded $\A(V)$-linear maps of degree $|t|$, natural in $V$. They thus define an element $\alpha \in \Sh^{\A|_{U}}(\F|_{U},\A|_{U}) \equiv \F^{\ast}(U)$ of degree $|t|$. One finds that
\begin{equation}
\varphi^{\vee}_{U}(\alpha) = \alpha_{U}(\Phi_{\lambda}|_{U}) \otimes \vartheta^{\lambda} = t_{\lambda} \otimes \vartheta^{\lambda} = t,
\end{equation}
proving that $\varphi^{\vee}_{U}$ is surjective. Hence $\varphi^{\vee}: \F^{\ast} \rightarrow \A[K^{\ast}]$ is an isomorphism. 
\end{proof}
Let $(\Phi^{\lambda})_{\lambda = 1}^{m}$ denote the global frame for $\F^{\ast}$ corresponding to the isomorphism $\varphi^{\vee}$ and the dual total basis $(\vartheta^{\lambda})_{\lambda=1}^{m}$ of $K^{\ast}$. It is called the \textbf{dual global frame to $(\Phi_{\lambda})_{\lambda = 1}^{m}$}. For each $\lambda \in \{1, \dots, m\}$ and $U \in \Op(X)$, the action of $\Phi^{\lambda}_{U}: \F(U) \rightarrow \A(U)$ on $s = a^{\kappa} \tr \Phi_{\kappa}|_{U} \in \F(U)$ is given by
\begin{equation}
\Phi^{\lambda}_{U}(s) = \Phi^{\lambda}_{U}( a^{\kappa} \tr \Phi_{\kappa}|_{U} ) = (-1)^{|a^{\kappa}||\vartheta^{\lambda}|} a^{\kappa} \delta_{\kappa}^{\lambda} = (-1)^{|a^{\lambda}||\vartheta^{\lambda}|} a^{\lambda}.
\end{equation}
Note that $|a^{\lambda}| = |s| - |\vartheta_{\lambda}| = |s| + |\vartheta^{\lambda}|$, so one can write $\Phi^{\lambda}_{U}(s) = (-1)^{(|s| + 1)(|\vartheta^{\lambda})} a^{\lambda}$. In particular,
\begin{equation}
\Phi^{\lambda}_{U}( \Phi_{\kappa}|_{U}) = \delta^{\lambda}_{\kappa}.
\end{equation}
One can use this notation to prove the following proposition:
\begin{tvrz} \label{tvrz_doubledual}
For $\F \in \Sh^{\A}_{\fin}(X,\gVect)$, there exists a canonical isomorphism $\F \cong (\F^{\ast})^{\ast}$. Moreover, if the graded rank is well-defined and $\grk(\F) = (m_{j})_{j \in \Z}$, then $\grk(\F^{\ast}) = (m_{-j})_{j \in \Z}$. 
\end{tvrz}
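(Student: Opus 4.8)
The plan is to take the canonical isomorphism to be the \emph{evaluation morphism} $\ev\colon \F \to (\F^{\ast})^{\ast}$ and to check that it is an isomorphism by a section-wise computation with a global frame; the graded rank statement is then an immediate bookkeeping consequence of Proposition \ref{tvrz_dualfinfree}. If $\F$ is the trivial sheaf, then $\F^{\ast}$ and hence $(\F^{\ast})^{\ast}$ are trivial (the only $\A$-linear sheaf morphism out of the zero sheaf is zero), the zero morphism is an isomorphism, and $\grk(\F) = \grk(\F^{\ast}) = (0)_{j \in \Z}$ has the asserted form. So assume $\F$ is non-trivial, fix an isomorphism $\varphi\colon \F \to \A[K]$ with $\dim(K) < \infty$, and let $(\Phi_{\lambda})_{\lambda=1}^{m}$ be the induced global frame, with dual global frame $(\Phi^{\lambda})_{\lambda=1}^{m}$ for $\F^{\ast}$ as in Proposition \ref{tvrz_dualfinfree}, so that $|\Phi^{\lambda}| = -|\Phi_{\lambda}|$ and $\Phi^{\lambda}_{V}(\Phi_{\kappa}|_{V}) = \delta^{\lambda}_{\kappa}$. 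For the rank: the proof of Proposition \ref{tvrz_dualfinfree} gives $\F^{\ast} \cong \A[K^{\ast}]$, so if the graded rank is well-defined then $\grk(\F^{\ast}) = \gdim(K^{\ast})$; since $(K^{\ast})_{j} = (K_{-j})^{\ast}$ and all components of $K$ are finite-dimensional, $\dim((K^{\ast})_{j}) = \dim(K_{-j}) = m_{-j}$, whence $\grk(\F^{\ast}) = (m_{-j})_{j \in \Z}$.

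Now define $\ev$. For $U \in \Op(X)$ and $s \in \F(U)$, let $\ev_{U}(s) \in (\F^{\ast})^{\ast}(U) = \Sh^{\A|_{U}}(\F^{\ast}|_{U}, \A|_{U})$ be the $\A|_{U}$-linear sheaf morphism of degree $|s|$ with components
\begin{equation}
(\ev_{U}(s))_{V}(\alpha) := (-1)^{|s||\alpha|} \alpha_{V}(s|_{V}), \quad V \subseteq U, \ \alpha \in \F^{\ast}(V).
\end{equation}
A direct check, using only the defining sign of graded $\A$-linearity (Definition \ref{def_Alinearmaps}) and the $\A$-module structure on $\ul{\Sh}^{\A}$ constructed in the proof of Proposition \ref{tvrz_uSh}, shows that each $(\ev_{U}(s))_{V}$ is $\A(V)$-linear of degree $|s|$, that it is natural in $V$, that $\ev_{U}$ is natural in $U$, and that $\ev_{U}$ is $\A(U)$-linear of degree $0$; the sign $(-1)^{|s||\alpha|}$ is precisely what makes these identities balance. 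Thus $\ev$ is a morphism in $\Sh^{\A}(X,\gVect)$, and it is canonical since no choices enter its definition.

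It remains to prove that each $\ev_{U}$ is bijective, for then the inverse maps $(\ev_{U})^{-1}$ are automatically natural and $\A$-linear and assemble into an inverse sheaf morphism. For injectivity, if $\ev_{U}(s) = 0$ then $(\ev_{U}(s))_{U}(\Phi^{\lambda}|_{U}) = (-1)^{|s||\Phi^{\lambda}|}\Phi^{\lambda}_{U}(s) = 0$, so $\Phi^{\lambda}_{U}(s) = 0$ for every $\lambda$; writing $s = a^{\lambda} \tr \Phi_{\lambda}|_{U}$ and invoking the formula for $\Phi^{\lambda}_{U}$ established after Proposition \ref{tvrz_dualfinfree} forces $a^{\lambda} = 0$ for all $\lambda$, hence $s = 0$. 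For surjectivity, given $T \in (\F^{\ast})^{\ast}(U)$ of degree $|T|$, put
\begin{equation}
s := \sum_{\lambda=1}^{m} (-1)^{|\Phi_{\lambda}|} T_{U}(\Phi^{\lambda}|_{U}) \tr \Phi_{\lambda}|_{U} \in \F(U),
\end{equation}
which has degree $|T|$; since $(\Phi^{\lambda})_{\lambda=1}^{m}$ is a global frame for $\F^{\ast}$ and both $T$ and $\ev_{U}(s)$ are $\A|_{U}$-linear and natural, it suffices to verify $(\ev_{U}(s))_{U}(\Phi^{\kappa}|_{U}) = T_{U}(\Phi^{\kappa}|_{U})$ for each $\kappa$, which is a short sign computation from the definition of $\ev$ and the formula for $\Phi^{\kappa}_{U}$. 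This proves $\ev$ is the desired canonical isomorphism. The main obstacle is purely bookkeeping: $(\F^{\ast})^{\ast}$ is a sheaf of \emph{maps of maps}, so one must chase Koszul signs through two nested applications of $\ul{\Sh}^{\A}$, and because $\ul{\Sh}^{\A}$ need not commute with stalks one cannot shortcut via Remark \ref{rem_inducedmapsheafiso} but must argue directly on sections.
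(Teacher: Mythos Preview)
Your proof is correct and follows essentially the same approach as the paper: both construct the evaluation morphism $\eta_U(s)_V(\alpha) := (-1)^{|s||\alpha|}\alpha_V(s|_V)$, verify it is a well-defined $\A$-linear sheaf morphism, and use a global frame to check it is an isomorphism. The only cosmetic difference is that the paper computes $\eta_U(\Phi_\lambda|_U) = (-1)^{|\vartheta_\lambda|}\Phi^{\sharp}_\lambda|_U$ directly (showing a global frame maps to a sign-twist of the double-dual frame), whereas you argue injectivity and surjectivity separately; these amount to the same verification.
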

\begin{proof}
We will construct an $\A$-linear isomorphism $\eta: \F \rightarrow (\F^{\ast})^{\ast}$. For each $U \in \Op(X)$, we need a graded $\A(U)$-linear map $\eta_{U}: \F(U) \rightarrow (\F^{\ast})^{\ast}(U) \equiv \Sh^{\A|_{U}}(\F^{\ast}|_{U}, \A|_{U})$. Let $s \in \F(U)$. 

For each $V \in \Op(U)$, we must thus define an $\A(V)$-linear graded map $(\eta_{U}(s))_{V}: \F^{\ast}(V) \rightarrow \A(V)$ of degree $|s|$. Let $\alpha \in \F^{\ast}(V) \equiv \Sh^{\A|_{V}}(\F|_{V}, \A|_{V})$. Set 
\begin{equation}
(\eta_{U}(s))_{V}(\alpha) := (-1)^{|s||\alpha|} \alpha_{V}(s|_{V}).
\end{equation}
The sign is in accordance with the ``Koszul convention''. Moreover, it ensures that everything fits together. First, one has $|\alpha_{V}(s)| = |\alpha| + |s|$ and for any $b \in \A(V)$, one finds 
\begin{equation} \begin{split}
(\eta_{U}(s))_{V}(b \tr \alpha) = & \ (-1)^{|s|(|\alpha| + |b|)} (b \tr \alpha)_{V}(s|_{V}) = (-1)^{|s||b|} b \tr (-1)^{|s||\alpha|} \alpha_{V}(s|_{V}) \\
= & \ (-1)^{|s||b|} b \tr (\eta_{U}(s))_{V}(\alpha).
\end{split}
\end{equation}
This proves that $(\eta_{U}(s))_{V}$ is graded $\A(V)$-linear of degree $|s|$. The naturality of $\alpha$ then implies that the collection $\{ (\eta_{U}(s))_{V} \}_{V \in \Op(U)}$ defines $\eta_{U}(s) \in \Sh^{\A|_{U}}(\F^{\ast}|_{U}, \A|_{U})$. Next, one has
\begin{equation}
\begin{split}
(\eta_{U}(b \tr s))_{V}(\alpha) = & \ (-1)^{|b \tr s||\alpha|} \alpha_{V}( (b \tr s)|_{V}) = (-1)^{(|b| + |s|)|\alpha|} \alpha_{V}( b|_{V} \tr s|_{V}) \\
= & \  b|_{V} \tr (-1)^{|s||\alpha|} \alpha_{V}(s|_{V}) = b|_{V} \tr (\eta_{U}(s))_{V}(\alpha) = (b \tr \eta_{U}(s))_{V}(\alpha),
\end{split}
\end{equation}
for all $b \in \A(U)$, $V \in \Op(U)$, and $\alpha \in \F^{\ast}(V)$. This proves that $\eta_{U}$ is $\A(U)$-linear. The naturality in $U$ is trivial, whence $\eta: \F \rightarrow (\F^{\ast})^{\ast}$ is an $\A$-linear sheaf morphism. 

To prove that it is actually an isomorphism, we shall use the global frames. Pick a global frame $(\Phi_{\lambda})_{\lambda=1}^{m}$ for $\F$, its dual $(\Phi^{\lambda})_{\lambda=1}^{m}$ for $\F^{\ast}$, and the dual of the dual $(\Phi_{\lambda}^{\sharp})_{\lambda=1}^{m}$ for $(\F^{\ast})^{\ast}$. Then for each $\lambda, \kappa \in \{1,\dots,m\}$, one can write 
\begin{equation}
(\eta_{U}(\Phi_{\lambda}|_{U}))_{V}( \Phi^{\kappa}|_{V}) = (-1)^{|\vartheta_{\lambda}||\vartheta^{\kappa}|} \Phi^{\kappa}_{V}( \Phi_{\lambda}|_{V}) = (-1)^{|\vartheta_{\lambda}||\vartheta^{\kappa}|} \delta^{\kappa}_{\lambda} = (-1)^{|\vartheta_{\lambda}|} \delta_{\lambda}^{\kappa}. 
\end{equation}
But this means that $\eta_{U}( \Phi_{\lambda}|_{U}) = (-1)^{|\vartheta_{\lambda}|} \Phi_{\lambda}^{\sharp}|_{U}$. This certainly proves that $\eta_{U}$ is an isomorphism. The claim about graded ranks follows from the fact that $|\vartheta_{\lambda}| = -|\vartheta^{\lambda}|$ for all $\lambda \in \{1,\dots,m\}$. 
\end{proof}
\begin{rem} \label{rem_peculiardoubledual}
In the above proof, we have discovered a certain peculiarity. Under the correspondence $\F \cong (\F^{\ast})^{\ast}$, a given global frame $(\Phi_{\lambda})_{\lambda=1}^{m}$ is not mapped precisely onto the dual of the dual frame $(\Phi_{\lambda}^{\sharp})_{\lambda=1}^{m}$, but some signs appear. One has to be a bit careful about that. 
\end{rem}
A lot of the above properties are in some sense compatible with sheaf restrictions.
\begin{tvrz} \label{tvrz_finfreegenlocal}
Let $\F \in \Sh^{\A}_{\fin}(X,\gVect)$ be a non-trivial freely and finitely generated sheaf of graded $\A$-modules and let $U \in \Op(X)$ be arbitrary. Then the following observations are true:
\begin{enumerate}[(i)]
\item $\F|_{U} \in \Sh^{\A|_{U}}_{\fin}(U,\gVect)$.
\item Let $(\Phi_{\lambda})_{\lambda=1}^{m}$ be a global frame for $\F$. Suppose $\F|_{U}$ is non-trivial. Then $(\Phi_{\lambda}|_{U})_{\lambda=1}^{m}$ forms a global frame for $\F|_{U}$. 
\item If the graded rank is well-defined and $\F|_{U}$ is non-trivial, one has $\grk(\F|_{U}) = \grk(\F)$.
\item One has $\ul{\Sh}^{\A}(\F,\G)|_{U} = \ul{\Sh}^{\A|_{U}}(\F|_{U},\G|_{U})$. In particular, one has $(\F|_{U})^{\ast} = \F^{\ast}|_{U}$. 
\end{enumerate}
\end{tvrz}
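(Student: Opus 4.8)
The strategy is to reduce all four claims to the behaviour of global frames under restriction, using Proposition~\ref{tvrz_globalframe} and the transitivity of the restriction operation $(-)|_U$ on (pre)sheaves.

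For parts (i) and (ii) I would start from a global frame $(\Phi_\lambda)_{\lambda=1}^m$ for $\F$, which exists by Proposition~\ref{tvrz_globalframe} since $\F$ is non-trivial by hypothesis. I claim $(\Phi_\lambda|_U)_{\lambda=1}^m$ is a global frame for $\F|_U$ whenever $\F|_U$ is non-trivial. The key observation is that every $V \in \Op(U)$ is itself an object of $\Op(X)$, that $\F|_U(V) = \F(V)$ and $\A|_U(V) = \A(V)$, and that $(\Phi_\lambda|_U)|_V = \Phi_\lambda|_V$; hence the existence and uniqueness of a decomposition $s = a^\lambda \tr \Phi_\lambda|_V$ for $s \in \F|_U(V)$ is literally the defining property of the global frame $(\Phi_\lambda)$ for $\F$ evaluated at the open set $V$. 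This establishes (ii), and then (i) follows: if $\F|_U$ is trivial it is isomorphic to $\A|_U[0]$ and we are done, while if it is non-trivial Proposition~\ref{tvrz_globalframe} turns the global frame just produced into an isomorphism $\F|_U \cong \A|_U[K]$, so $\F|_U \in \Sh^{\A|_U}_{\fin}(U,\gVect)$.

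For part (iii) I would track degrees through the above. A global frame $(\Phi_\lambda)_{\lambda=1}^m$ for $\F$ yields $\F \cong \A[K]$ for a finite-dimensional $K$ with a total basis $(\vartheta_\lambda)_{\lambda=1}^m$ of degrees $|\vartheta_\lambda| = |\Phi_\lambda|$; by (ii) the restricted frame $(\Phi_\lambda|_U)_{\lambda=1}^m$ has the same degrees, since $|\Phi_\lambda|_U| = |\Phi_\lambda|$, so it produces an isomorphism $\F|_U \cong \A|_U[K]$ with the \emph{same} $K$. Consequently $\grk(\F|_U) = \gdim(K) = \grk(\F)$ as soon as the graded rank is well-defined. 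I would flag explicitly the one delicate point: ``the graded rank is well-defined'' was phrased in Proposition~\ref{tvrz_thereisrank} via the existence of an open set on which $\A$ admits a quotient isomorphic to $\R$, and a priori this open set need not be contained in $U$; one therefore either assumes the analogous property for $\A|_U$, or --- as will hold for structure sheaves of graded manifolds --- observes that such quotients exist over every open subset. This is the only real obstacle; everything else is bookkeeping.

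For part (iv) I would simply unravel Proposition~\ref{tvrz_uSh}. For $V \in \Op(U)$, transitivity of restriction gives $(\A|_U)|_V = \A|_V$, $(\F|_U)|_V = \F|_V$ and $(\G|_U)|_V = \G|_V$, whence $\ul{\Sh}^{\A|_U}(\F|_U,\G|_U)(V) = \Sh^{\A|_V}(\F|_V,\G|_V) = \ul{\Sh}^{\A}(\F,\G)(V) = \big(\ul{\Sh}^{\A}(\F,\G)|_U\big)(V)$, and the restriction morphisms and the $\A|_U$-module structures on the two sides are given by the same formulas, so the two sheaves of graded $\A|_U$-modules coincide on the nose. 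Setting $\G = \A$ gives $(\F|_U)^{\ast} = \F^{\ast}|_U$.
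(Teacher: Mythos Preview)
Your proof is correct and follows essentially the same approach as the paper: both reduce everything to restricting the isomorphism $\varphi: \F \rightarrow \A[K]$ (equivalently, the global frame) to $U$, using $\A[K]|_U = \A|_U[K]$. The paper's argument is terser—it simply restricts $\varphi$ rather than tracking the frame explicitly—but the content is the same; your flagged subtlety in (iii) about whether the graded rank is well-defined for $\A|_U$ is a fair observation that the paper glosses over.
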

\begin{proof}
Let $\varphi: \F \rightarrow \A[K]$ be an isomorphism of sheaves of graded $\A$-modules. It is obvious that $\varphi|_{U}: \F|_{U} \rightarrow \A[K]|_{U} = \A|_{U}[K]$ is an isomorphism for any $U \in \Op(X)$. This proves $(i)$. The rest of the statements is obvious, except that in $(ii)$ and $(iii)$, one has to exclude the case when $\F|_{U}$ is trivial. Indeed, in this case $(\Phi_{\lambda}|_{U})_{\lambda = 1}^{m}$ no longer is a global frame for $\F|_{U}$ and, by definition, one has $\grk(\F|_{U}) = 0$. 
\end{proof}
\begin{definice}
We say that $\F \in \Sh^{\A}(X,\gVect)$ is \textbf{locally freely and finitely generated}, if for each $x \in X$, there is $U \in \Op_{x}(X)$, such that $\F|_{U} \in \Sh^{\A|_{U}}_{\fin}(U,\gVect)$. Any global frame $(\Phi_{\lambda})_{\lambda=1}^{m}$ for $\F|_{U}$ is called a \textbf{local frame for $\F$ over $U$}. Locally freely and finitely generated sheaves of graded $\A$-modules form a full subcategory $\Sh^{\A}_{\lfin}(X,\gVect) \subseteq \Sh^{\A}_{\fin}(X,\gVect)$. 
\end{definice}
\begin{rem} \label{rem_nontrivialmodulesif}
There is always a little nuisance with trivial graded modules. Let $\F \in \Sh^{\A}_{\fin}(X,\gVect)$ be non-trivial. As noted in the proof of Proposition \ref{tvrz_finfreegenlocal}, it can happen that $\F|_{U} = 0$ for some $U \in \Op(X)$. However, this can only be the case when also $\A|_{U} = 0$. As $\F$ is non-trivial, we have $\F|_{U} \cong \A|_{U}[K]$ for $K \neq 0$. For $\A|_{U} \neq 0$, one has $A(U) \neq 0$ by Remark \ref{rem_ShgcAs}-(ii). But then for any $k \neq 0$, the element $1 \otimes k \in \A(U) \otimes_{\R} K$ is non-zero and $\F|_{U}$ has to be non-trivial. 
\end{rem}

\begin{tvrz} \label{tvrz_locfinfreegensheaves}
Let $\F \in \Sh^{\A}_{\lfin}(X,\gVect)$.  Let $x \in X$. Suppose $\A(U) \neq 0$ for all $U \in \Op_{x}(X)$. Assume that the graded rank of all sheaves of graded $\A|_{U}$-modules is well-defined for any $U \in \Op_{x}(X)$. Then one can define the \textbf{graded rank of $\F$ at $x$} as $\grk_{x}(\F) := \grk(\F|_{U})$, where $U \in \Op_{x}(X)$ is any neighborhood such that $\F|_{U} \in \Sh_{\fin}^{\A|_{U}}(U,\gVect)$. 

Suppose that the graded rank of $\F$ at $x$ is well defined for all $x \in X$. Then it is constant on every path-connected component of $X$.
\end{tvrz}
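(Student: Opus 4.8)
The plan is to reduce everything to the assertion that the function $r \colon X \to \prod_{j \in \Z} \N_{0}$, $r(x) := \grk_{x}(\F)$, is locally constant, and then to invoke the elementary fact that a locally constant function is constant along paths. First I would dispose of the degenerate case: if $\F$ is trivial, then $\F|_{U} = 0$ for every $U \in \Op(X)$, so $\grk_{x}(\F) = (0)_{j \in \Z}$ by convention for each $x$, and there is nothing to prove. Assume $\F$ non-trivial henceforth.

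For local constancy, fix $x \in X$. By definition of $\Sh^{\A}_{\lfin}(X,\gVect)$ there is $U \in \Op_{x}(X)$ with $\F|_{U} \in \Sh^{\A|_{U}}_{\fin}(U,\gVect)$. The hypothesis that $\grk_{x}(\F)$ is well defined supplies in particular $\A(V) \neq 0$ for all $V \in \Op_{x}(X)$, so $\A(U) \neq 0$; combined with Remark \ref{rem_nontrivialmodulesif} this shows $\F|_{U}$ is non-trivial. Now take any $y \in U$. Then $U$ is itself an open neighborhood of $y$ over which $\F$ restricts to a freely and finitely generated sheaf; since $\grk_{y}(\F)$ is assumed well defined, hence independent of the chosen finite-generation neighborhood (this independence is precisely what Proposition \ref{tvrz_finfreegenlocal}-$(iii)$ and the discussion around the definition of $\grk_{\bullet}$ establish), we may evaluate it using $U$, obtaining $\grk_{y}(\F) = \grk(\F|_{U})$. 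The right-hand side is independent of $y$, so $r$ is constant on $U$ with value $\grk_{x}(\F)$. Hence $r$ is locally constant.

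Finally, given a path $\gamma \colon [0,1] \to X$, the composite $r \circ \gamma$ is locally constant on the connected space $[0,1]$, therefore constant; thus $r$ takes equal values at the endpoints of $\gamma$. Since any two points in a path-connected component are joined by a path, $r$ is constant on each path-connected component of $X$, which is the claim.

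I do not anticipate any real obstacle: the substance of the statement is just ``locally constant $\Rightarrow$ constant on path-connected components''. The only thing to watch is the interplay of the trivial-sheaf and well-definedness conventions — concretely, ensuring $\F|_{U}$ does not become trivial on the chosen neighborhood (where Remark \ref{rem_nontrivialmodulesif} together with $\A(U) \neq 0$ are used) and that $\grk_{y}(\F)$ for $y \in U$ is legitimately computed from $U$ itself rather than only from smaller neighborhoods.
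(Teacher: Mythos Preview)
Your argument for the second claim (constancy on path components) is correct and is essentially the paper's argument at a higher level of abstraction: where you invoke ``locally constant on $[0,1]$ implies constant'', the paper explicitly extracts a finite chain $0 = t_0 < \dots < t_n = 1$ via compactness of $[0,1]$ so that the trivializing neighborhoods $U_{t_k}$ along $\gamma$ overlap consecutively, and then compares ranks step by step through $U_{t_k} \cap U_{t_{k+1}}$.

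What is missing is a proof of the first claim: that $\grk_x(\F) := \grk(\F|_U)$ is independent of the choice of trivializing neighborhood $U \in \Op_x(X)$. You cite ``Proposition~\ref{tvrz_finfreegenlocal}-$(iii)$ and the discussion around the definition of $\grk_{\bullet}$'', but the second reference is circular --- the present proposition \emph{is} that discussion. Proposition~\ref{tvrz_finfreegenlocal}-$(iii)$ is indeed the right ingredient, but it must be applied through the intersection: given two trivializing neighborhoods $U, U' \in \Op_x(X)$, one has $U \cap U' \in \Op_x(X)$, so $\A(U \cap U') \neq 0$; hence (Remark~\ref{rem_nontrivialmodulesif}) $\F|_{U \cap U'}$ is non-trivial whenever $\F|_U$ is, and then $(iii)$ applied once to $\F|_U$ and once to $\F|_{U'}$ yields $\grk(\F|_U) = \grk(\F|_{U \cap U'}) = \grk(\F|_{U'})$. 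The paper also treats the case $\F|_U = 0$ separately, arguing that then $\F|_{U \cap U'} = 0$, and since $\F|_{U'} \cong \A|_{U'}[K]$ with $\A|_{U \cap U'}[K] = 0$ and $\A(U \cap U') \neq 0$, one must have $K = 0$, whence $\F|_{U'} = 0$ too. Your closing caveat (``legitimately computed from $U$ itself'') correctly flags this as the delicate point but does not supply the argument; the intersection comparison just described is what is needed, and it is also precisely the mechanism underlying the ``locally constant'' step you rely on for the second claim.
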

\begin{proof}
First, note that for each $x \in X$, $\grk_{x}(\F)$ is well-defined. Indeed, let $U' \in \Op_{x}(X)$ be another neighborhood of $x$, such that $\F|_{U'}$ is freely and finitely generated. By Proposition \ref{tvrz_finfreegenlocal}-(i), $\F|_{U \cap U'}$ is also finitely and finitely generated. If $\F|_{U} \neq 0$, then also $\F|_{U \cap U'} \neq 0$ by Remark \ref{rem_nontrivialmodulesif} and the fact that $U \cap U' \neq \emptyset$ by assumption implies $\A(U \cap U') \neq 0$. It then follows from Proposition \ref{tvrz_finfreegenlocal}-(iii) that $\grk(\F|_{U}) = \grk(\F|_{U \cap U'}) = \grk(\F|_{U'})$. If $\F|_{U} = 0$, then also $\F|_{U \cap U'} = 0$. Suppose $\F|_{U'} \cong \A|_{U'}(K)$ for some finite-dimensional $K \in \gVect$. But then $0 = \F|_{U \cap U'} \cong \A|_{U \cap U'}[K]$ implies $K = 0$, whence $\F|_{U'} = 0$.  In this case thus $\grk(\F|_{U}) = \grk(\F|_{U'}) = (0)_{j \in \Z}$. 

Next, let $x$ and $y$ be in the same path connected component. There is thus some continuous path $\gamma: [0,1] \rightarrow X$ connecting $x$ to $y$. For each $t \in [0,1]$, one can find $U_{t} \in \Op_{\gamma(t)}(X)$, such that $\F|_{U_{t}}$ is freely and finitely generated. Using the usual compactness argument, one can find a finite subdivision $0 = t_{0} < \dots < t_{n} = 1$ such that $U_{t_{k}} \cap U_{t_{k+1}} \neq \emptyset$ for all $k \in \{0,\dots,n-1\}$. Using the same argument as in the previous paragraph, we have $\grk(\F|_{U_{t_k}}) = \grk(\F|_{U_{t_{k+1}}})$ for all $k \in \{0,\dots,n-1\}$. Hence $\grk_{x}(\F) = \grk_{y}(\F)$ and the proof is finished. 
\end{proof}

Let us continue this subsection by looking at the stalks of sheaves of graded $\A$-modules. 
\begin{tvrz} \label{tvrz_shofAmodulesstalks}
Let $\F \in \Sh^{\A}(X,\gVect)$ and fix $x \in X$. 
\begin{enumerate}[(i)]
\item Its stalk $\F_{x}$ at $x$ has a natural structure of a graded $\A_{x}$-module, such that for each $U \in \Op_{x}(X)$, $a \in \A(U)$ and $s \in \F(U)$, one has $[a]_{x} \tr [s]_{x} = [a \tr s]_{x}$. 

Moreover, the universal property can be stated as follows: Let $K$ be any graded $\A_{x}$-module and suppose $\{ \tau_{U} \}_{U \in \Op_{x}(X)}$ is a collection of graded linear maps $\tau_{U}: \F(U) \rightarrow K$ satisfying $\tau_{U} = \tau_{V} \circ \F^{U}_{V}$ for all $U,V \in \Op_{x}(X)$, such that $V \subseteq U$. Moreover, suppose that for any $U \in \Op_{x}(X)$, $s \in \F(U)$ and $a \in \F(U)$, one has $\tau_{U}(a \tr s) = [a]_{x} \tr \tau_{U}(s)$. 

Then there is a unique graded $\A_{x}$-linear map $\varphi: \F_{x} \rightarrow K$ such that $\varphi \circ \pi_{U,x} = \tau_{U}$. 
\item Suppose $\F$ is locally freely and finitely generated. Then $\F_{x}$ is freely and finitely generated. If the graded rank is well defined and $\A_{x} \neq 0$, then $\grk(\F_{x}) = \grk_{x}(\F)$. 

Moreover, for any graded local frame $(\Phi_{\lambda})_{\lambda=1}^{m}$ for $\F$ over $U \in \Op_{x}(X)$, the collection $([\Phi_{\lambda}]_{x})_{\lambda=1}^{m}$ forms a frame for $\F_{x}$. 
\end{enumerate}
\end{tvrz}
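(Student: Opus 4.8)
The plan is to treat (i) by copying the construction of the stalk multiplication from the proof of Proposition~\ref{tvrz_stalk}, and (ii) by reducing to the free model sheaf $\A[K]$ and commuting the filtered colimit past the finite-dimensional tensor factor $K$. Recall first that, since $\A\in\Sh(X,\gcAs)$, the stalk $\A_{x}$ is a graded commutative associative algebra by (the $\gcAs$-case of) Proposition~\ref{tvrz_stalk}, and $\F_{x}$ together with $\{\pi_{U,x}\}$ is the filtered colimit of $\F$ over $\Op_{x}(X)$ in $\gVect$ by the $\gVect$-analogue of that proposition.

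For (i), I would first define the candidate action: given $[a]_{x}\in\A_{x}$ represented by $a\in\A(U)$ and $[s]_{x}\in\F_{x}$ represented by $s\in\F(V)$, set $[a]_{x}\tr[s]_{x}:=[\,a|_{U\cap V}\tr s|_{U\cap V}\,]_{x}$. Well-definedness is the familiar germ-level check: if $a|_{W}=a'|_{W}$ and $s|_{W'}=s'|_{W'}$ for neighborhoods $W,W'$ of $x$, then compatibility of restrictions with the module actions (axiom (ii) of Definition~\ref{def_sheavesofAmodules}) forces agreement on $W\cap W'$. Representing all germs over a common neighborhood and invoking the module axioms (\ref{eq_moduleaxioms}) there, together with the defining relations of the colimit, gives that $\tr$ satisfies (\ref{eq_moduleaxioms}) and that $|[a]_{x}\tr[s]_{x}|=|[a]_{x}|+|[s]_{x}|$. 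For the universal property, any compatible family $\{\tau_{U}\}$ of graded linear maps $\F(U)\to K$ factors uniquely through a graded linear $\varphi:\F_{x}\to K$ with $\varphi\circ\pi_{U,x}=\tau_{U}$, by the colimit universal property of $\F_{x}$ in $\gVect$; the extra hypothesis $\tau_{U}(a\tr s)=[a]_{x}\tr\tau_{U}(s)$ then yields $\A_{x}$-linearity of $\varphi$, since on germs $\varphi([a]_{x}\tr[s]_{x})=\varphi([a\tr s]_{x})=\tau_{U}(a\tr s)=[a]_{x}\tr\tau_{U}(s)=[a]_{x}\tr\varphi([s]_{x})$ and germs span $\F_{x}$. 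Uniqueness is inherited from the colimit.

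For (ii), the key observation is that for any finite-dimensional $K\in\gVect$ and $x\in X$ there is a canonical $\A_{x}$-linear isomorphism $\A[K]_{x}\cong\A_{x}\otimes_{\R}K$: fixing a total basis $(\vartheta_{\lambda})_{\lambda=1}^{m}$ of $K$, every section of $\A[K](V)$ is uniquely $a^{\lambda}\otimes\vartheta_{\lambda}$ (Example~\ref{ex_modelfreelygensheaf}), hence every germ is uniquely $[a^{\lambda}]_{x}\otimes\vartheta_{\lambda}$, so the germs $([1\otimes\vartheta_{\lambda}]_{x})_{\lambda=1}^{m}$ freely generate $\A[K]_{x}$ over $\A_{x}$. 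Now if $\F$ is locally freely and finitely generated, choose $U\in\Op_{x}(X)$ with an isomorphism $\varphi:\F|_{U}\to\A|_{U}[K]=\A[K]|_{U}$ for some finite-dimensional $K$, with associated local frame $\Phi_{\lambda}=\varphi_{U}^{-1}(1\otimes\vartheta_{\lambda})$. Since $\Op_{x}(U)$ is cofinal in $\Op_{x}(X)$ there is a canonical identification $\F_{x}\cong(\F|_{U})_{x}$ (as in Proposition~\ref{tvrz_restrictedgLRS}), and $\varphi$ induces (by (i), or directly from $\varphi_{x}([s]_{x})=[\varphi_{U}(s)]_{x}$) an $\A_{x}$-linear isomorphism $\varphi_{x}:\F_{x}\to\A[K]_{x}\cong\A_{x}\otimes_{\R}K$ sending $[\Phi_{\lambda}]_{x}$ to $[1\otimes\vartheta_{\lambda}]_{x}$. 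Thus $\F_{x}$ is freely and finitely generated and $([\Phi_{\lambda}]_{x})_{\lambda=1}^{m}$ is a frame for it; tracking $\gdim(K)$ through the isomorphisms gives $\grk(\F_{x})=\gdim(K)=\grk_{x}(\F)$ under the stated extra hypotheses ($\A_{x}\neq0$ and the relevant graded ranks well-defined). The only genuinely non-clerical points are the canonical $\A_{x}$-linear identification $\A[K]_{x}\cong\A_{x}\otimes_{\R}K$, i.e.\ that the filtered colimit commutes with the finite-dimensional tensor factor, and the cofinality identification $\F_{x}\cong(\F|_{U})_{x}$; everything about frames and ranks in (ii) is then read off from these two isomorphisms, so that is where I would concentrate the care.
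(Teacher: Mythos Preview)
Your proof is correct and follows essentially the same approach as the paper's: for (i) you define the action on representatives over a common neighborhood and invoke the $\gVect$-colimit universal property to obtain $\varphi$, then verify $\A_{x}$-linearity on germs; for (ii) you establish the canonical identification $\A[K]_{x}\cong\A_{x}\otimes_{\R}K$ and transport the local trivialization $\varphi$ to an $\A_{x}$-linear isomorphism $\varphi_{x}$, exactly as the paper does. The only cosmetic difference is that the paper justifies $\A[K]_{x}\cong\A_{x}[K]$ by exhibiting $\{\pi^{\A}_{U,x}\otimes\1_{K}\}$ as a universal colimiting cone, whereas you argue via the unique decomposition along a total basis; these are equivalent.
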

\begin{proof}
Let $[a]_{x} \in \A_{x}$ and $[s]_{x} \in \F_{x}$. We may assume that $a \in \A(U)$ and $s \in \F(U)$ for the same $U \in \Op_{x}(X)$. We define $[a]_{x} \tr [s]_{x} := [a \tr s]_{x}$, where we use the $\A(U)$-module structure of $\F(U)$. It is easy to see that the properties of $\F$ described in Definition \ref{def_sheavesofAmodules} ensure that this is well-defined and makes $\F_{x}$ into a graded $\A_{x}$-module. The universal property of $\F_{x}$ can be obtained as an easy modification of the colimit property (ii) in the proof of Proposition \ref{tvrz_stalk}. This proves $(i)$. 

To prove $(ii)$, note that for any $K \in \gVect$, one can identify the $\A_{x}$-module $(\A[K])_{x}$ with $\A_{x}[K] := \A_{x} \otimes_{\R} K$. Indeed, it is easy to see that the collection $\{ \pi_{U,x}^{\A} \otimes \1_{K} \}_{U \in \Op_{x}(X)}$ is the universal colimiting cone, where $\pi^{\A}_{U,x}: \A(U) \rightarrow \A_{x}$ are the graded algebra morphisms of the universal colimiting cone for $\A$. In other words, one has 
\begin{equation}
[a \otimes k]_{x} := [a]_{x} \otimes k,
\end{equation}
for all $a \in \A(U)$, $k \in K$ and $U \in \Op_{x}(X)$. The graded $\A_{x}$-module structure obtained in the part $(i)$ coincides with the obvious one on $\A_{x}[K]$. 

Now, we say that $V \in \AgMod$ is freely and finitely generated, if it is isomorphic to $A[K] = A \otimes_{\R} K$ in $\AgMod$ for some finite-dimensional $K \in \gVect$.
The frame for a freely and finitely generated $A$-module $V$ is a collection $(\phi_{\lambda})_{\lambda = 1}^{m}$ of elements of $V$, such that every $v \in V$ can be written as $v = a^{\lambda} \tr \phi_{\lambda}$ for unique elements $a^{\lambda} \in A$ satisfying $|a^{\lambda}| + |\phi_{\lambda}| = |v|$. Trivial $A$-modules have a frame consisting of no elements. Similarly to the proof of Proposition \ref{tvrz_globalframe}, every isomorphism $V \cong A[K]$ and a choice of a total basis $(\vartheta_{\lambda})_{\lambda = 1}^{m}$ of $K$ gives raise to a unique frame $(\phi_{\lambda})_{\lambda = 1}^{m}$, where $\phi_{\lambda}$ are isomorphic images of $1 \otimes \vartheta_{\lambda} \in A[K]$. 

 Now, suppose that $\F \in \Sh^{\A}_{\lfin}(X,\gVect)$. There is thus some $U \in \Op_{x}(X)$, such that $\F|_{U}$ is finitely and freely generated. In other words, one finds a finite-dimensional $K \in \gVect$ and an $\A|_{U}$-linear sheaf morphism $\varphi: \F|_{U} \rightarrow \A|_{U}[K]$. Let $\varphi_{x}: (\F|_{U})_{x} \rightarrow (\A|_{U}[K])_{x}$ be the induced isomorphism of stalks. But $(\F|_{U})_{x} = \F_{x}$, and $(\A|_{U}[K])_{x} = \A_{x}[K]$ by the previous paragraph. It is easy to see that $\varphi_{x}$ is $\A_{x}$-linear, hence $\F_{x}$ is freely and finitely generated. 

Next, observe that the assumption $\A_{x} \neq 0$ is equivalent to $\A(U) \neq 0$ for all $U \in \Op_{x}(X)$. Indeed, one has $\A_{x} \neq 0$, iff $[1_{U}]_{x} \neq 0$ for all $U \in \Op_{x}(X)$, which is equivalent to $1_{U} \neq 0$ for all $U \in \Op_{x}(X)$. Note that similarly to $\Sh^{\A}_{\fin}(X,\gVect)$, the notion of graded rank is in general not well-defined in $\AgMod$. However, suppose that it is well-defined. Let $U \in \Op_{x}(X)$, such that $\F|_{U}$ is finitely and freely generated, and fix an isomorphism $\varphi: \F|_{U} \rightarrow \A|_{U}[K]$. Note that $\grk_{x}$ is well-defined (see the proof of Proposition \ref{tvrz_locfinfreegensheaves}) and since $\A_{x} \neq 0$, it follows immediately from the previous paragraph that $\grk_{x}(\F) = \grk(\F_{x}) := \dim(K)$. 

Finally, let $(\Phi_{\lambda})_{\lambda = 1}^{m}$ be a local frame for $\F$ over $U$. Let us first assume that $\F|_{U} \neq 0$. There is thus an induced $\A|_{U}$-linear sheaf isomorphism $\varphi: \F|_{U} \rightarrow \A|_{U}[K]$, where $K$ is some \textit{non-zero} finite-dimensional graded vector space. By construction, one has $\varphi_{U}( \Phi_{\lambda}) = 1 \otimes \vartheta_{\lambda}$, where $(\vartheta_{\lambda})_{\lambda = 1}^{m}$ is some total basis of $K$. Let $\varphi_{x}: \F_{x} \rightarrow \A_{x}[K]$ be the isomorphism of \textit{non-trivial} $\A_{x}$-modules as constructed above. We can then construct a frame for $\F_{x}$ as $\phi_{\lambda} := \varphi_{x}^{-1}(1 \otimes \vartheta_{\lambda}) \equiv [\varphi_{U}^{-1}(1 \otimes \vartheta_{\lambda})]_{x} = [\Phi_{\lambda}]_{x}$, for all $\lambda \in \{1,\dots,m\}$. For $\F|_{U} = 0$, the statement is trivial. This concludes the proof.\end{proof}

\begin{tvrz}
Let $\F \in \Sh^{\A}_{\lfin}(X,\gVect)$. Then $(\F^{\ast})_{x} \cong (\F_{x})^{\ast}$ for all $x \in X$. Moreover, this implies that $\eta: \F \rightarrow (\F^{\ast})^{\ast}$ constructed in the proof of Proposition \ref{tvrz_doubledual} is a sheaf isomorphism even for locally freely and finitely generated sheaves of graded $\A$-modules. 
\end{tvrz}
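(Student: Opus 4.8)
The plan is to reduce the statement to the stalk-wise computation of the duals and then apply the criterion for a sheaf morphism to be an isomorphism, namely Remark \ref{rem_inducedmapsheafiso}. First I would prove the isomorphism $(\F^{\ast})_{x} \cong (\F_{x})^{\ast}$ for every $x \in X$. Fix $x$ and choose $U \in \Op_{x}(X)$ such that $\F|_{U} \in \Sh^{\A|_{U}}_{\fin}(U,\gVect)$, with a global frame $(\Phi_{\lambda})_{\lambda=1}^{m}$ for $\F|_{U}$. By Proposition \ref{tvrz_finfreegenlocal}-(iv) we have $(\F|_{U})^{\ast} = \F^{\ast}|_{U}$, so $(\F^{\ast})_{x} = (\F^{\ast}|_{U})_{x} = ((\F|_{U})^{\ast})_{x}$; thus without loss of generality I may assume $\F = \A[K]$ is globally freely and finitely generated with a total basis $(\vartheta_{\lambda})_{\lambda=1}^{m}$ of $K$. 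On the one hand, Proposition \ref{tvrz_dualfinfree} gives $\F^{\ast} \cong \A[K^{\ast}]$, and by Proposition \ref{tvrz_shofAmodulesstalks}-(ii), $(\F^{\ast})_{x} \cong \A_{x}[K^{\ast}] = \A_{x} \otimes_{\R} K^{\ast}$. On the other hand, $\F_{x} \cong \A_{x}[K] = \A_{x} \otimes_{\R} K$, and since $\A_{x}$ is a graded commutative associative algebra and $K$ is finite-dimensional, the standard point-wise computation (the analogue of Proposition \ref{tvrz_dualfinfree} for a single algebra rather than a sheaf) gives $(\F_{x})^{\ast} = \Lin^{\A_{x}}(\A_{x}[K], \A_{x}) \cong \A_{x}[K^{\ast}]$. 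Composing these identifications produces the desired isomorphism $(\F^{\ast})_{x} \cong (\F_{x})^{\ast}$.

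The slightly delicate point is to check that this stalk identification is compatible with the canonical map $\eta$ from Proposition \ref{tvrz_doubledual} and with the induced stalk map $\eta_{x}$ from Corollary \ref{cor_inducedstalkmap}. For the second claim, I would argue as follows. By Proposition \ref{tvrz_doubledual} there is a canonical $\A$-linear sheaf morphism $\eta: \F \rightarrow (\F^{\ast})^{\ast}$, so by Corollary \ref{cor_inducedstalkmap} there is an induced graded $\A_{x}$-linear map $\eta_{x}: \F_{x} \rightarrow ((\F^{\ast})^{\ast})_{x}$. Applying the stalk-duality isomorphism twice, $((\F^{\ast})^{\ast})_{x} \cong ((\F^{\ast})_{x})^{\ast} \cong ((\F_{x})^{\ast})^{\ast}$. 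Under these identifications, I claim $\eta_{x}$ coincides (up to the harmless signs discussed in Remark \ref{rem_peculiardoubledual}) with the canonical evaluation map $\F_{x} \rightarrow ((\F_{x})^{\ast})^{\ast}$ of the freely and finitely generated $\A_{x}$-module $\F_{x}$; the latter is an isomorphism, again by the single-algebra analogue of Proposition \ref{tvrz_doubledual}. To verify the claim it suffices to evaluate both maps on the frame germs $[\Phi_{\lambda}]_{x}$, which by Proposition \ref{tvrz_shofAmodulesstalks}-(ii) form a frame for $\F_{x}$, and to trace through the defining formula $(\eta_{U}(s))_{V}(\alpha) = (-1)^{|s||\alpha|}\alpha_{V}(s|_{V})$ together with $[\alpha]_{x}([s]_{x}) = [\alpha_{U}(s)]_{x}$; this is a direct sign-bookkeeping computation and presents no conceptual obstacle.

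Having shown $\eta_{x}$ is an isomorphism for every $x \in X$, the conclusion follows immediately from Remark \ref{rem_inducedmapsheafiso}: a sheaf morphism inducing isomorphisms on all stalks is an isomorphism. The main obstacle, such as it is, is purely bookkeeping: making sure the three different identifications ($(\F^{\ast})_{x} \cong (\F_{x})^{\ast}$, the compatibility of $\eta$ with stalks, and the single-algebra double-dual isomorphism) are carried out with consistent sign conventions, so that the composite one gets is actually the map $\eta_{x}$ and not some twist of it. Since Proposition \ref{tvrz_doubledual} already isolates exactly which signs appear, and Proposition \ref{tvrz_shofAmodulesstalks} already tells us how frames behave under passage to stalks, the verification reduces to evaluating on frame elements, which is finite and mechanical. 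I would relegate the detailed sign computation to a short paragraph or omit it as "a straightforward verification," in keeping with the style of the surrounding text.
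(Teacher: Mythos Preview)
Your proposal is correct and follows the same overall architecture as the paper: establish $(\F^{\ast})_{x} \cong (\F_{x})^{\ast}$ stalk-wise, identify $\eta_{x}$ with the canonical double-dual evaluation on the freely and finitely generated $\A_{x}$-module $\F_{x}$, and conclude via Remark~\ref{rem_inducedmapsheafiso}.

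The difference is one of organization rather than substance. You reduce to $\F = \A[K]$ and compute both $(\F^{\ast})_{x}$ and $(\F_{x})^{\ast}$ as $\A_{x}[K^{\ast}]$, obtaining the isomorphism by composing trivialization identifications; afterwards you must track $\eta_{x}$ through this chain. The paper instead builds a \emph{canonical} map $\varphi: (\F^{\ast})_{x} \rightarrow (\F_{x})^{\ast}$ directly from the universal property of the stalk, by sending $[\alpha]_{x}$ to the induced stalk map $\alpha_{x}: \F_{x} \rightarrow \A_{x}$; only then is a local frame invoked to check that $\varphi$ is bijective. The payoff is that under this particular identification the computation of $\eta_{x}$ becomes a one-line unwinding of the defining formula, giving $(\eta_{x}([s]_{x}))([\alpha]_{x}) = (-1)^{|s||\alpha|}\alpha_{x}([s]_{x})$ with no further bookkeeping. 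Your version works, but the ``delicate point'' you flag (consistency of the three identifications) is precisely what the paper's choice of $\varphi$ sidesteps.
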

\begin{proof}
Let $x \in X$ by fixed, but otherwise arbitrary. Let $U \in \Op_{x}(X)$ and $\alpha \in \F^{\ast}(U) \equiv \Sh^{\A|_{U}}(\F|_{U},\A|_{U})$. We thus have the induced graded linear map $\alpha_{x}: \F_{x} \rightarrow \A_{x}$ of degree $|\alpha|$, obtained in the same way as in Corollary \ref{cor_inducedstalkmap}. It is easy to see that it is $\A_{x}$-linear, that is $\alpha_{x} \in \Lin^{\A_{x}}(\F_{x},\A_{x}) \equiv (\F_{x})^{\ast}$. We thus have a graded linear map $\tau_{U}: \F^{\ast}(U) \rightarrow (\F_{x})^{\ast}$ given by $\tau_{U}(\alpha) := \alpha_{x}$. For any $a \in \A(U)$, one has $\tau_{U}(a \tr \alpha) = [a]_{x} \tr \tau_{U}(\alpha)$ and for any $V \in \Op_{x}(U)$, we find $\tau_{V}(\alpha|_{V}) = \tau_{U}(\alpha)$. By the universal property in Proposition \ref{tvrz_shofAmodulesstalks}-$(i)$, there is thus a unique graded $\A_{x}$-linear map $\varphi: (\F^{\ast})_{x} \rightarrow (\F_{x})^{\ast}$ such that $\varphi([\alpha]_{x}) = \tau_{U}(\alpha) \equiv \alpha_{x}$. 

Now, let $(\Phi_{\lambda})_{\lambda = 1}^{m}$ be a local frame for $\F$ over some $U \in \Op_{x}(X)$. Let $(\Phi^{\lambda})_{\lambda=1}^{m}$ be its dual. For each $\lambda \in \{1,\dots,m\}$, one has $\varphi([\Phi^{\lambda}]_{x})( [\Phi_{\kappa}]_{x}) = \Phi^{\lambda}_{x}([\Phi_{\kappa}]_{x}) = [\Phi^{\lambda}_{U}(\Phi_{\kappa})]_{x} = \delta^{\lambda}_{\kappa}$. But this proves that $(\varphi([\Phi^{\lambda}]_{x}))_{\lambda=1}^{m}$ is a frame for $(\F_{x})^{\ast}$ dual to $([\Phi_{\lambda}]_{x})_{\lambda=1}^{m}$, see Proposition \ref{tvrz_shofAmodulesstalks}-$(ii)$. As $([\Phi^{\lambda}]_{x})_{\lambda=1}^{m}$ is a frame for $(\F^{\ast})_{x}$, this proves that $\varphi: (\F^{\ast})_{x} \rightarrow (\F_{x})^{\ast}$ is an isomorphism of graded $\A_{x}$-modules. This proves the first part of the statement and shows that one can canonically identify the germ $[\alpha]_{x} \in (\F^{\ast})_{x}$ of $\alpha \in \F^{\ast}(U)$ with the induced map $\alpha_{x} \in (\F_{x})^{\ast}$. 

Now, let $\eta: \F \rightarrow (\F^{\ast})^{\ast}$ be an $\A$-linear sheaf morphism from Proposition \ref{tvrz_doubledual}. Choose $s \in \F(U)$ and $\alpha \in \F^{\ast}(V)$ for $U \in \Op_{x}(X)$ and $V \in \Op_{x}(U)$. Then 
\begin{equation}
\begin{split}
(\eta_{x}([s]_{x}))([\alpha]_{x}) = & \ ( \eta_{U}(s))_{x}([\alpha]_{x}) = [(\eta_{U}(s))_{V}(\alpha)]_{x} =  [ (-1)^{|s||\alpha|} \alpha_{V}(s|_{V})]_{x} \\
= & \ (-1)^{|s||\alpha|} \alpha_{x}([s]_{x}).
\end{split}
\end{equation}
But this proves that the induced map $\eta_{x}: \F_{x} \rightarrow ((\F_{x})^{\ast})^{\ast}$ is precisely the canonical map into the graded double dual. As $\F_{x}$ is finitely and freely generated, it is an isomorphism of graded $\A_{x}$-modules by the same arguments as in the proof of Proposition \ref{tvrz_doubledual}. As $x \in X$ was arbitrary, this proves that $\eta$ is an isomorphism of sheaves of graded $\A$-modules, see Remark \ref{rem_inducedmapsheafiso}. 
\end{proof}
Having the canonical isomorphism with the double dual, one has a notion of a transpose sheaf morphism with all the usual properties. 
\begin{tvrz} \label{tvrz_transpose}
Let $\F,\G \in \Sh^{\A}_{\lfin}(X,\gVect)$. Let $\varphi \in \Sh^{\A}(\F,\G)$, see Definition \ref{def_ShAFG}. Then there is its \textbf{transpose} $\varphi^{T} \in \Sh^{\A}(\G^{\ast},\F^{\ast})$ having the following properties: 
\begin{enumerate}[(i)]
\item If $\psi \in \Sh^{\A}(\G,\H)$ for some $\H \in \Sh^{\A}(X,\gVect)$. Then $(\psi \circ \varphi)^{T}  = \varphi^{T} \circ \psi^{T}$ and $(\1_{\F})^{T} = \1_{\F^{\ast}}$.
\item Let $\eta^{\F}: \F \rightarrow (\F^{\ast})^{\ast}$ denote the canonical sheaf isomorphism (see Proposition \ref{tvrz_doubledual}). Then for any $\varphi \in \Sh^{\A}(\F,\G)$, the double transpose $(\varphi^{T})^{T}$ fits into the commutative diagram
\begin{equation}
\begin{tikzcd}
(\F^{\ast})^{\ast} \arrow{r}{(\varphi^{T})^{T}} & (\G^{\ast})^{\ast} \\
\F \arrow{r}{\varphi} \arrow{u}{\eta^{\F}} & \G \arrow{u}{\eta^{\G}}
\end{tikzcd}
\end{equation}
In other words, with respect to the identification $(\F^{\ast})^{\ast} \cong \F$, the double transpose $(\varphi^{T})^{T}$ corresponds to the original morphism $\varphi$. 
\item Let $U \in \Op(X)$. Then $(\varphi|_{U})^{T} = \varphi^{T}|_{U}$. 
\item The assignment $\varphi \mapsto \varphi^{T}$ defines a graded vector space isomorphism. 
\end{enumerate}
\end{tvrz}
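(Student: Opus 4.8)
The plan is to define $\varphi^{T}$ by precomposition with $\varphi$, decorated by the Koszul sign that is \emph{forced} by $\A$-linearity, and then to read off all four properties. Concretely, for $U \in \Op(X)$ and $\alpha \in \G^{\ast}(U) = \Sh^{\A|_{U}}(\G|_{U},\A|_{U})$ I would set
\begin{equation} \label{eq_transposedefn}
(\varphi^{T})_{U}(\alpha) := (-1)^{|\alpha||\varphi|}\,\alpha \circ \varphi|_{U}.
\end{equation}
First one checks that the composite $\alpha \circ \varphi|_{U}$ of the $\A|_{U}$-linear sheaf morphism $\varphi|_{U}$ of degree $|\varphi|$ with the $\A|_{U}$-linear sheaf morphism $\alpha$ of degree $|\alpha|$ is again $\A|_{U}$-linear of degree $|\alpha|+|\varphi|$ --- this is the one-line identity $(\alpha\circ\varphi)(a\tr v) = (-1)^{|a|(|\alpha|+|\varphi|)}\,a\tr(\alpha\circ\varphi)(v)$ --- so $(\varphi^{T})_{U}(\alpha)$ genuinely lies in $\F^{\ast}(U)$ and has degree $|\alpha|+|\varphi|$. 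A short check against the $\A(U)$-module structure on $\ul{\Sh}$ from the proof of Proposition~\ref{tvrz_uSh} then shows that the sign in (\ref{eq_transposedefn}) is exactly what makes $(\varphi^{T})_{U}$ itself graded $\A(U)$-linear of degree $|\varphi|$ (without the sign it would fail (\ref{eq_Alinearity})); naturality of $\{(\varphi^{T})_{U}\}_{U\in\Op(X)}$ in $U$ is immediate since restriction of a sheaf morphism restricts its components and commutes with composition. Hence $\varphi^{T} \in \Sh^{\A}(\G^{\ast},\F^{\ast})$ with $|\varphi^{T}|=|\varphi|$. Claim (iii) then drops out of the same formula: all ingredients are defined locally and $(\F|_{U})^{\ast}=\F^{\ast}|_{U}$ by Proposition~\ref{tvrz_finfreegenlocal}-(iv), so $(\varphi|_{U})^{T}$ and $\varphi^{T}|_{U}$ are literally (\ref{eq_transposedefn}) read over $\Op(U)$.

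For (i) I would unwind both sides on a section $\gamma\in\H^{\ast}(U)$, using (\ref{eq_transposedefn}) three times, and read off $\gamma\circ\psi\circ\varphi$; together with the obvious $(\1_{\F})^{T}=\1_{\F^{\ast}}$ this gives contravariant functoriality of $\F\mapsto\F^{\ast}$, $\varphi\mapsto\varphi^{T}$ on the morphisms of $\Sh^{\A}(X,\gVect)$, i.e. on degree-zero maps. For maps of arbitrary degree the honest statement carries the Koszul sign $(-1)^{|\psi||\varphi|}$ relating $(\psi\circ\varphi)^{T}$ and $\varphi^{T}\circ\psi^{T}$, which the forced sign in (\ref{eq_transposedefn}) cannot remove; I would flag this rather than hide it. For (ii), the key step is to evaluate the double transpose on the image of the canonical isomorphism $\eta^{\F}$ of Proposition~\ref{tvrz_doubledual}: for $s\in\F(U)$ and $\beta\in\G^{\ast}(V)$ with $V\subseteq U$,
\begin{equation}
\big((\varphi^{T})^{T}(\eta^{\F}_{U}(s))\big)_{V}(\beta) = (-1)^{|s||\varphi|}\,\big(\eta^{\F}_{U}(s)\big)_{V}\big((\varphi^{T})_{V}(\beta)\big) = (-1)^{|s||\beta|+|\beta||\varphi|}\,\beta_{V}(\varphi(s)|_{V}) = \big(\eta^{\G}_{U}(\varphi(s))\big)_{V}(\beta),
\end{equation}
where the middle equality collects the three signs $(-1)^{|s||\varphi|}$, $(-1)^{|s|(|\beta|+|\varphi|)}$ and $(-1)^{|\beta||\varphi|}$ contributed by $(\varphi^{T})^{T}$, $\eta^{\F}$ and $(\varphi^{T})_{V}$. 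Since sections of the locally freely and finitely generated sheaf $(\G^{\ast})^{\ast}$ are determined by such pairings, this yields $(\varphi^{T})^{T}\circ\eta^{\F}=\eta^{\G}\circ\varphi$, which is the assertion of (ii). Alternatively one can verify it on a local frame $(\Phi_{\lambda})$, its dual $(\Phi^{\lambda})$ and its double dual $(\Phi^{\sharp}_{\lambda})$, computing the action of $\varphi^{T}$ on $\Phi^{\lambda}$ from the frame formulas of Proposition~\ref{tvrz_dualfinfree}; the frame signs of Remark~\ref{rem_peculiardoubledual} resurface and must be tracked.

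Finally (iv): the map $\varphi\mapsto\varphi^{T}$ is clearly $\R$-linear and degree-preserving, so it is a graded linear map $\Sh^{\A}(\F,\G)\to\Sh^{\A}(\G^{\ast},\F^{\ast})$; it is injective because $(\varphi^{T})^{T}=\eta^{\G}\circ\varphi\circ(\eta^{\F})^{-1}$ by (ii) and $\eta^{\G}$ is an isomorphism, and it is surjective because for $\psi\in\Sh^{\A}(\G^{\ast},\F^{\ast})$ the morphism $\varphi:=(\eta^{\G})^{-1}\circ\psi^{T}\circ\eta^{\F}\in\Sh^{\A}(\F,\G)$ satisfies $\varphi^{T}=\psi$ after one more use of (i) and (ii); here local free finiteness of $\F$ and $\G$ is exactly what is needed for the double-dual isomorphisms to exist. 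The main obstacle throughout is purely the sign bookkeeping: there is essentially \emph{one} admissible decorating sign in (\ref{eq_transposedefn}) (dictated by $\A$-linearity of $(\varphi^{T})_{U}$), and the work is to confirm that this same sign makes (ii) commute on the nose while being upfront about the residual $(-1)^{|\psi||\varphi|}$ appearing in (i); the remaining verifications are routine and carried out pointwise over open sets.
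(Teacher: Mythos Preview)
Your approach is essentially identical to the paper's: the same definition $(\varphi^{T}_{U}(\alpha))_{V}(s) = (-1)^{|\varphi||\alpha|}\alpha_{V}(\varphi_{V}(s))$, the same verification strategy for (i)--(iii), and the same construction $\varphi := (\eta^{\G})^{-1}\circ\psi^{T}\circ\eta^{\F}$ for surjectivity in (iv). The paper merely says ``it is now easy to verify the claims (i)--(iii)'' where you spell out the sign chase for (ii) explicitly; your computation there is correct.

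One point worth highlighting: your observation about the residual Koszul sign $(-1)^{|\psi||\varphi|}$ in (i) is \emph{correct} and the paper does not address it. With the chosen sign convention one genuinely has $(\psi\circ\varphi)^{T} = (-1)^{|\psi||\varphi|}\,\varphi^{T}\circ\psi^{T}$ for maps of arbitrary degree, so the paper's unsigned formula holds literally only when at least one of $|\psi|,|\varphi|$ is even (in particular on the degree-zero category where transpose is used as a functor, e.g.\ in Remark~\ref{rem_VBmorphismsaremorphisms}). You are right to flag this rather than suppress it; it does not affect (ii)--(iv), and in (iv) the auxiliary identity the paper invokes, $\eta^{\F^{\ast}} = (\eta^{\F})^{-T}$, involves only degree-zero maps so no extra sign appears there either.
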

\begin{proof}
Let $\varphi \in \Sh^{\A}(\F,\G)$. For each $U \in \Op(X)$ and $\alpha \in \G^{\ast}(U)$, we have to define an $\A|_{U}$-linear sheaf morphism $\varphi^{T}_{U}(\alpha): \F|_{U} \rightarrow \A|_{U}$ of degree $|\varphi| + |\alpha|$. For all $V \in \Op(U)$ and $s \in \F(V)$, define
\begin{equation}
(\varphi^{T}_{U}(\alpha))_{V}(s) := (-1)^{|\varphi||\alpha|} \alpha_{V}( \varphi_{V}(s)). 
\end{equation}
It is straightforward to verify the following properties:
\begin{enumerate}[(a)]
\item The map $(\varphi^{T}_{U}(\alpha))_{V}: \F(V) \rightarrow \A(V)$ is graded $\A(V)$-linear of degree $|\varphi| + |\alpha|$ and natural in $V$. Hence it defines $\varphi^{T}_{U}(\alpha) \in \Sh^{\A|_{U}}_{|\varphi|+|\alpha|}(\F|_{U},\A|_{U}) \equiv \F^{\ast}_{|\varphi|+|\alpha|}(U)$. 
\item $\varphi^{T}_{U}(\alpha)$ is graded $\A(U)$-linear in $\alpha$, hence it defines a graded $\A(U)$-linear map $\varphi^{T}_{U}: \G^{\ast}(U) \rightarrow \F^{\ast}(U)$ of degree $|\varphi|$. It is natural in $U$, hence it defines a $\A$-linear sheaf morphism $\varphi^{T}: \G^{\ast} \rightarrow \F^{\ast}$ of degree $|\varphi|$, $\varphi^{T} \in \Sh^{\A}_{|\varphi|}(\G^{\ast},\F^{\ast})$. 
\end{enumerate}
It is now easy to verify the claims $(i) - (iii)$. It remains to prove $(iv)$. Clearly, the assignment $\varphi \mapsto \varphi^{T}$ is graded linear. We only have to show that it is bijective. Let $\psi \in \Sh^{\A}(\G^{\ast},\F^{\ast})$. Then $\psi^{T} \in \Sh^{\A}( (\F^{\ast})^{\ast}, (\G^{\ast})^{\ast})$, so one can define $\varphi := (\eta^{\G})^{-1} \circ \psi^{T} \circ \eta^{\F}$. We claim that $\varphi^{T} = \psi$. To show that, apply $(i)$ and $(ii)$, together with the observation that $\eta^{\F^{\ast}} = (\eta^{\F})^{-T}$.
\end{proof}
\section{Graded manifolds} \label{sec_grman}
\subsection{Graded domains}
Let $(n_{j})_{j \in \Z}$ be a sequence of non-negative integers, such that $\sum_{j \in \Z} n_{j} < \infty$. Using the notation of Example \ref{ex_Rtosequence}, let us define a presheaf $\C^{\infty}_{(n_{j})} \in \PSh( \R^{n_{0}}, \gcAs)$ for each $U \subseteq \R^{n_{0}}$ as 
\begin{equation} \label{eq_grdomain}
\C^{\infty}_{(n_{j})}(U) := \bar{S}( \R^{(n_{j})}_{\ast}, \C^{\infty}_{n_{0}}(U)),
\end{equation}
where $\C^{\infty}_{n_{0}} \in \Sh(\R^{n_{0}}, \As)$ denotes the sheaf of ordinary smooth functions on $\R^{n_{0}}$. Assume that we have fixed a total basis $(\xi_{\mu})_{\mu=1}^{n_{\ast}}$ of $\R^{(n_{j})}_{\ast}$.

\begin{tvrz} \label{tvrz_gradeddomainisgLRS}
$(\R^{n_{0}}, \C^{\infty}_{(n_{j})}) \in \gLRS$. For each $U \subseteq \R^{n_{0}}$, its restriction $U^{(n_{j})} := (U, \C^{\infty}_{(n_{j})}|_{U})$ is called a \textbf{graded domain over $U$}. We will henceforth write just $\C^{\infty}_{(n_{j})}$ for the restricted sheaf. 
\end{tvrz}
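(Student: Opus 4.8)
The plan is to recognize $\C^{\infty}_{(n_j)}$ as an instance of the sheaf constructed in Example \ref{ex_thesheaf}, and then to reduce the condition on stalks to the classical fact that germs of smooth functions form a local ring, using Example \ref{ex_twogLRS}. First I would note that, since $\sum_{j \in \Z} n_{j} < \infty$, the graded vector space $K := \R^{(n_{j})}_{\ast}$ is finite-dimensional, and $\F := \C^{\infty}_{n_{0}}$ is a sheaf of commutative associative algebras on $\R^{n_{0}}$. Then (\ref{eq_grdomain}) is literally $\O_{X}(U) = \bar{S}(K,\F(U))$ with restriction morphisms $\bar{S}(\1_{K},\F^{U}_{V})$, so Example \ref{ex_thesheaf} applies verbatim and yields $\C^{\infty}_{(n_{j})} \in \Sh(\R^{n_{0}},\gcAs)$. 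It remains to verify that each stalk $\C^{\infty}_{(n_{j}),x}$ is a local graded ring.

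For this, I would invoke Example \ref{ex_twogLRS} with the same $\F$ and $K$: it gives $(\R^{n_{0}},\C^{\infty}_{(n_{j})}) \in \gLRS$ \emph{if and only if} $(\R^{n_{0}},\C^{\infty}_{n_{0}}) \in \gLRS$, where $\C^{\infty}_{n_{0}}$ is regarded as a sheaf of trivially graded commutative associative algebras. So I would fix $x \in \R^{n_{0}}$ and examine $R := (\C^{\infty}_{n_{0}})_{x}$, which is concentrated in degree $0$ and, as an ordinary ring, is the algebra of germs at $x$ of smooth functions defined near $x$. I would show that $R - \frU(R)$ is exactly the set $\frm_{x}$ of germs $[f]_{x}$ with $f(x) = 0$: if $f(x) \neq 0$ then $f$ is nonzero on some neighborhood of $x$, so $[1/f]_{x}$ is a two-sided inverse of $[f]_{x}$; conversely a germ with $f(x) = 0$ cannot be a unit, since evaluation at $x$ is a ring homomorphism to $\R$. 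As $\frm_{x}$ is closed under addition and $R$ lives entirely in degree $0$, $R - \frU(R)$ is a graded subgroup of $R$, so Proposition \ref{tvrz_grlocal}-$(v)$ shows $R$ is a local graded ring. Since $x$ was arbitrary, $(\R^{n_{0}},\C^{\infty}_{n_{0}}) \in \gLRS$, hence $(\R^{n_{0}},\C^{\infty}_{(n_{j})}) \in \gLRS$. Finally, for open $U \subseteq \R^{n_{0}}$, Proposition \ref{tvrz_restrictedgLRS} immediately gives $(U,\C^{\infty}_{(n_{j})}|_{U}) \in \gLRS$, which justifies the terminology ``graded domain over $U$''.

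I do not anticipate a genuine obstacle: every nontrivial ingredient is already in place — Example \ref{ex_thesheaf} supplies the sheaf property, Example \ref{ex_twogLRS} collapses the stalk condition to the ungraded case, and the locality of the ring of germs of smooth functions is standard, its only ``analytic'' input (a function nonzero at a point stays nonzero nearby) being elementary. The single point needing a little care is the bookkeeping around the trivially graded convention: one must keep in mind that for a graded ring concentrated in degree $0$, being a local graded ring in the sense of Proposition \ref{tvrz_grlocal} is the same as being a local ring in the usual sense, and this is precisely what the degree-wise formulation of Proposition \ref{tvrz_grlocal}-$(v)$ delivers.
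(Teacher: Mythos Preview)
Your proposal is correct and follows essentially the same route as the paper: identify $\C^{\infty}_{(n_j)}$ as the sheaf of Example~\ref{ex_thesheaf}, reduce via Example~\ref{ex_twogLRS} to the ungraded case, and then check that germs of smooth functions form a local ring by describing the non-units as the germs vanishing at $x$. The only cosmetic difference is that the paper invokes Proposition~\ref{tvrz_grlocal}-$(iv)$ (the set of non-units is an ideal) rather than $(v)$, and does not spell out the restriction to $U$ since that is just the content of Proposition~\ref{tvrz_restrictedgLRS}.
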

\begin{proof}
We have $\C^{\infty}_{(n_{j})} \in \Sh(\R^{n_{0}}, \gcAs)$ as it is precisely the sheaf from Example \ref{ex_thesheaf} for $\F = \C^{\infty}_{n_{0}}$ and $K = \R^{(n_{j})}_{\ast}$. Moreover, we have shown in Example \ref{ex_twogLRS} that $(\R^{n_{0}}, \C^{\infty}_{(n_{j})}) \in \gLRS$, if and only if $(\R^{n_{0}}, \C^{\infty}_{n_{0}}) \in \gLRS$. It is easy to see that for each $x \in \R^{n_{0}}$, one has 
\begin{equation}
\C^{\infty}_{n_{0},x} - \frU( \C^{\infty}_{n_{0},x}) = \{ [f]_{x} \in \C^{\infty}_{n_{0},x} \; | \; f(x) = 0 \},
\end{equation}
which clearly forms an ideal in in $\C^{\infty}_{n_{0},x}$. The claim then follows from Proposition \ref{tvrz_grlocal}-$(iv)$.  
\end{proof}
Before we proceed, let us establish some nomenclature. Sections of $\C^{\infty}_{(n_{j})}(U)$ are usually called \textbf{functions on a graded domain}. To each function $f \in \C^{\infty}_{(n_{j})}(U)$, there is the ordinary smooth function $\ul{f} := \beta_{U}(f) \in \C^{\infty}_{n_{0}}(U)$ called the \textbf{body of the function $f$}. See Example \ref{ex_bodyofasection} for details. Note that $\ul{f} = 0$ for $|f| \neq 0$. For any $f \in \C^{\infty}_{(n_{j})}(U)$, the \textbf{value of $f$ at $x \in U$} is defined as $f(x) := \ul{f}(x)$. Hence always $f(x) = 0$ for $|f| \neq 0$. 
\begin{cor} \label{cor_JRgrdomain}
For each $x \in \R^{n_{0}}$, the Jacobson radical of the graded ring $\C^{\infty}_{(n_{j}),x}$ has the form 
\begin{equation}
\frJ( \C^{\infty}_{(n_{j}),x}) = \{ [f]_{x} \in \C^{\infty}_{(n_{j}),x}  \; | \; f(x) = 0 \}.
\end{equation}
\end{cor}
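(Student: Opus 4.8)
The plan is to combine three facts already in place: that the stalk $\C^{\infty}_{(n_{j}),x}$ is a local graded ring, that for a local graded ring $R$ one has $\frJ(R) = R - \frU(R)$, and the description of $\frU$ of this particular stalk via the body map. So the whole corollary reduces to identifying the non-units in the stalk.

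First I would invoke Proposition \ref{tvrz_gradeddomainisgLRS}: since $(\R^{n_{0}},\C^{\infty}_{(n_{j})}) \in \gLRS$, the stalk $\C^{\infty}_{(n_{j}),x}$ is a local graded ring for every $x \in \R^{n_{0}}$. The final assertion of Proposition \ref{tvrz_grlocal} then gives
\begin{equation}
\frJ(\C^{\infty}_{(n_{j}),x}) = \C^{\infty}_{(n_{j}),x} - \frU(\C^{\infty}_{(n_{j}),x}),
\end{equation}
so it remains only to describe the group of units. Recall that $\C^{\infty}_{(n_{j})}$ is exactly the sheaf of Example \ref{ex_thesheaf} for $\F = \C^{\infty}_{n_{0}}$ and $K = \R^{(n_{j})}_{\ast}$, hence relation (\ref{eq_unitsOXxFxrel}) of Example \ref{ex_twogLRS} applies and reads $\frU(\C^{\infty}_{(n_{j}),x}) = \beta_{x}^{-1}(\frU(\C^{\infty}_{n_{0},x}))$, where $\beta_{x}: \C^{\infty}_{(n_{j}),x} \rightarrow \C^{\infty}_{n_{0},x}$ is the stalk body map of Example \ref{ex_bodyofasection} with $\beta_{x}([f]_{x}) = [\ul{f}]_{x}$.

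Next I would plug in the computation from the proof of Proposition \ref{tvrz_gradeddomainisgLRS}, namely $\C^{\infty}_{n_{0},x} - \frU(\C^{\infty}_{n_{0},x}) = \{ [g]_{x} \mid g(x) = 0 \}$. Chaining these, $[f]_{x} \notin \frU(\C^{\infty}_{(n_{j}),x})$ iff $[\ul{f}]_{x} \notin \frU(\C^{\infty}_{n_{0},x})$ iff $\ul{f}(x) = 0$, which by the definition $f(x) := \ul{f}(x)$ is precisely $f(x) = 0$. This is the asserted formula. The degree bookkeeping is consistent: for $|f| \neq 0$ one has $\ul{f} = 0$, so the condition $f(x) = 0$ holds automatically, matching the fact that $\frU(\C^{\infty}_{(n_{j}),x})_{k} = \emptyset$ for $k \neq 0$ and hence that $\frJ(\C^{\infty}_{(n_{j}),x})$ contains all of $(\C^{\infty}_{(n_{j}),x})_{k}$ in those degrees. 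There is no real obstacle here; the only point needing a little care is verifying that the preimage $\beta_{x}^{-1}(\frU(\C^{\infty}_{n_{0},x}))$ indeed translates uniformly across degrees into the single condition $f(x) = 0$, which the remarks above settle.
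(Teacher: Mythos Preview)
Your proof is correct and follows essentially the same approach as the paper: the paper's proof is the one-liner ``This follows immediately from the equation (\ref{eq_unitsOXxFxrel}) discussed in Example \ref{ex_twogLRS},'' and you have simply unpacked that sentence into its constituent pieces (locality of the stalk, $\frJ(R) = R - \frU(R)$, the description of units via $\beta_{x}$, and the known form of $\frU(\C^{\infty}_{n_{0},x})$).
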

\begin{proof}
This follows immediately from the equation (\ref{eq_unitsOXxFxrel}) discussed in Example \ref{ex_twogLRS}. 
\end{proof}
Let us now introduce some two important shaves of ideals. First, recall that we have a sheaf morphism $\beta: \C^{\infty}_{(n_{j})} \rightarrow \C^{\infty}_{n_{0}}$, see Example \ref{ex_bodyofasection}. We can thus form its kernel sheaf
\begin{equation}
\J^{\pg}_{(n_{j})} := \ker(\beta) \subseteq \C^{\infty}_{(n_{j})},
\end{equation}
called the sheaf of ideals of \textbf{purely graded functions} of $\C^{\infty}_{(n_{j})}$. Note that there is an induced sheaf isomorphism $\hat{\beta}: \C^{\infty}_{(n_{j})} / \J^{\pg}_{(n_{j})} \rightarrow \C^{\infty}_{n_{0}}$. 
%For each $x \in \R^{n_{0}}$, we find
%\begin{equation}
%\J^{\pg}_{(n_{j}),x} = \ker(\beta_{x}) \equiv \{ [f]_{x} \in \C^{\infty}_{(n_{j}),x} \; | \; [\ul{f}]_{x} = 0 \},
%\end{equation}

Next, for each $x \in \R^{n_{0}}$, define the sheaf of ideals of \textbf{functions vanishing at $x$}. For $U \in \Op(X)$ such that $x \notin U$, let $\J^{x}_{(n_{j})}(U) := \C^{\infty}_{(n_{j})}(U)$. For $U \in \Op_{x}(\R^{n_{0}})$, set
\begin{equation}
\J^{x}_{(n_{j})}(U) := \{ f \in \C^{\infty}_{(n_{j})}(U) \; | \; f(x) = 0 \}. 
\end{equation}

It is straightforward to prove that $\J^{x}_{(n_{j})}$ forms a sheaf of ideals of $\C^{\infty}_{(n_{j})}$. Moreover, one has $\J^{\pg}_{(n_{j})} \subseteq \J^{x}_{(n_{j})}$, and  Corollary \ref{cor_JRgrdomain} implies that $\J^{x}_{(n_{j})}(U) = \pi_{U,x}^{-1}( \frJ( \C^{\infty}_{(n_{j}),x}))$ for every $U \in \Op_{x}(\R^{n_{0}})$.

We will now prove an important graded version of a similar statement from classical calculus, allowing us in some sense to approximate the functions (infinite formal power series) in the algebra $\C^{\infty}_{(n_{j})}(U)$ by finite polynomials. Let us first recall the classical result. 
\begin{lemma}[\textbf{Classical Hadamard}] \label{lem_Hadimrdclassic}
Let $f \in \C^{\infty}_{n_{0}}(U)$ for a given $U \in \Op(\R^{n_{0}})$. Then to any $a \in U$ and any $q \in \N_{0}$, there is a smooth map $h_{a}^{(q)}: U \rightarrow \Lin^{(q+1)}(\R^{n},\R)$, such that 
\begin{equation}
f(x) = f(a) + \sum_{k=1}^{q} \frac{1}{k!} [\fD^{k}f(a)](x-a)^{k} + [h_{a}^{(q)}(x)](x-a)^{q+1},
\end{equation} 
for all $x \in U$, where $\fD^{k}f: U \rightarrow \Lin^{(k)}(\R^{n},\R)$ is the $k$-th total derivative of $f$ and we use the shorthand notation $[h](x)^{k} := h(x,\dots,x)$ for any $x \in \R^{n}$ and any $k$-linear map $h \in \Lin^{(k)}(\R^{n},\R)$. 

We can thus write $f = t^{q}_{a}(f) + r^{q}_{a}(f)$, where $t^{q}_{a}(f)(x) := f(a) + \sum_{k=1}^{q} \frac{1}{k!} [\fD^{k}f(a)](x-a)^{k}$ is called the \textbf{degree $q$ Taylor polynomial of $f$ at $a$}.
\end{lemma}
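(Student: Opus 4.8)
The plan is to reduce the statement to a ``smooth division'' problem for the remainder and then to solve that problem separately on a ball around $a$ and on the complement of $a$, gluing the two solutions with a bump function; the only point where care is needed is that $U$ need not be convex or star--shaped about $a$, so the familiar one--line argument via integration along the segment $[a,x]$ is not available on all of $U$. Concretely, I would set $g := f - t^{q}_{a}(f) \in \C^{\infty}_{n_{0}}(U)$, so that the assertion becomes: there exists a smooth $h^{(q)}_{a}: U \rightarrow \Lin^{(q+1)}(\R^{n_{0}},\R)$ with $[h^{(q)}_{a}(x)](x-a)^{q+1} = g(x)$ for all $x \in U$. On a small open ball $B := B(a,r) \subseteq U$, which is convex, the classical Taylor theorem with integral remainder gives, for $x \in B$,
\begin{equation}
g(x) = \Big[ \tfrac{1}{q!} \int_{0}^{1} (1-t)^{q}\, \fD^{q+1}f(a + t(x-a))\, dt \Big] (x-a)^{q+1},
\end{equation}
since the terms $\fD^{k}f(a)$ for $k \leq q$ are exactly those removed by passing to $t^{q}_{a}(f)$. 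Differentiation under the integral sign then shows that $h^{B}(x) := \tfrac{1}{q!}\int_{0}^{1}(1-t)^{q}\,\fD^{q+1}f(a+t(x-a))\,dt$ is a smooth map $B \rightarrow \Lin^{(q+1)}(\R^{n_{0}},\R)$ solving the division problem on $B$.

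On the open set $U \setminus \{a\}$ one has $|x-a| \neq 0$, and there the explicit formula
\begin{equation}
h^{\ast}(x)(v_{1},\dots,v_{q+1}) := \frac{g(x)}{|x-a|^{2(q+1)}}\, \langle x-a, v_{1}\rangle \cdots \langle x-a, v_{q+1}\rangle
\end{equation}
defines a smooth $\Lin^{(q+1)}(\R^{n_{0}},\R)$--valued map with $[h^{\ast}(x)](x-a)^{q+1} = g(x)$. Finally I would pick $\rho \in \C^{\infty}_{n_{0}}(U)$ with $\rho \equiv 1$ on a neighbourhood of $a$ and $\supp(\rho) \subseteq B$, and set $h^{(q)}_{a} := \rho\, h^{B} + (1-\rho)\, h^{\ast}$, the first summand being understood to vanish outside $\supp(\rho)$ and the second near $a$. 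This is smooth on all of $U$; since $h^{B}$ and $h^{\ast}$ both solve the division problem on the overlap $B \setminus \{a\}$, so does the $\rho$--weighted combination, and evaluating on $(x-a)^{q+1}$ gives $g(x) = f(x) - t^{q}_{a}(f)(x)$, which after moving $t^{q}_{a}(f)$ back to the left is the claimed formula.

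The main obstacle, and essentially the only non--routine point, is precisely the non--convexity of $U$: without it one would simply write $g(x) = \int_{0}^{1}\tfrac{d}{dt} g(a+t(x-a))\,dt$ and iterate, but that requires the whole segment $[a,x]$ to lie in $U$. The device of splitting into the ball $B$ (where the segment argument is legitimate) and the punctured set $U \setminus \{a\}$ (where a purely algebraic division works), reconciled by the cutoff $\rho$, is what repairs this. Everything else — existence of $\rho$ as a standard bump function on $\R^{n_{0}}$, smoothness of the parameter--dependent integral, and the bookkeeping of the $(q+1)$--linear maps — is elementary and I would treat it briefly.
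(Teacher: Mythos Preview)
Your proposal is correct and follows essentially the same strategy as the paper: use the integral Taylor remainder on a ball $B \subseteq U$ around $a$, use an explicit algebraic division on the complement of $a$ (the paper uses $U \setminus \ol{B_{r'}(a)}$, you use $U \setminus \{a\}$), and glue with a cutoff. The one notable difference is that the paper first treats $q=0$ and then obtains general $q$ by iterating the $q=0$ result applied to $h_{a}^{(q-1)}$, which forces an auxiliary computation of $h_{a}^{(q-1)}(a) = \tfrac{1}{q!}\fD^{q}f(a)$ at each step; you bypass this by invoking the integral remainder of order $q$ directly, which is cleaner and shorter.
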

\begin{proof}
Note that unlike in many calculus textbooks, $U$ is not assumed to be star-shaped. There is a version for arbitrary $U$ proved in \cite{duistermaat2004multidimensional}, but their $h_{a}^{(q)}$ is in general not differentiable at $a$. The version presented here is used without any reference in \cite{leites1980introduction}, \cite{carmeli2011mathematical} and \cite{fairon2017introduction}. To fill in the gap, the interested reader can find the proof of this lemma in the appendix, see Lemma \ref{lem_ap_Hadimrdclassic}. 
\end{proof}

One can now find a similar statement for functions on a graded domain. Its proof is a subtle modification of the one in \cite{fairon2017introduction}. 
\begin{lemma}[\textbf{Graded Hadamard}] \label{lem_Hadamard} Let $f \in \C^{\infty}_{(n_{j})}(U)$ for a given $U \in \Op(\R^{n_{0}})$. Let $(x^{1},\dots,x^{n_{0}})$ be the coordinates on $U$. Then to any $a \in U$ and any $q \in \N_{0}$, there is a decomposition 
\begin{equation}
f = T^{q}_{a}(f) + R^{q}_{a}(f),
\end{equation}
where $T^{q}_{a}(f)$ is a polynomial in variables $\{ (x^{i} - x^{i}(a)) \}_{i=1}^{n_{0}}$ and $\{ \xi_{\mu} \}_{\mu=1}^{n_{\ast}}$ of degree $q$, and $R_{a}^{q}(f) \in (\J^{a}_{(n_{j})}(U))^{q+1} \subseteq \C^{\infty}_{(n_{j})}(U)$. $T^{q}_{a}(f)$ is called a \textbf{degree $q$ Taylor polynomial of $f$ at $a$}. 
\end{lemma}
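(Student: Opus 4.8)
The plan is to combine the classical Hadamard lemma with the "finite truncation" structure of the extended symmetric algebra $\bar{S}(\R^{(n_j)}_\ast, \C^\infty_{n_0}(U))$. Recall that every $f \in \C^\infty_{(n_j)}(U)$ of degree $k$ can be written uniquely as a formal power series $f = \sum_{\fp \in \N^{n_\ast}_k} f_\fp \xi^\fp$ with $f_\fp \in \C^\infty_{n_0}(U)$, as explained in Example \ref{ex_formalpower}. First I would apply the classical Hadamard lemma (Lemma \ref{lem_Hadimrdclassic}) to each coefficient $f_\fp$ at the point $a$, with a truncation order chosen so that the Taylor polynomial of $f_\fp$ together with the weight $w(\fp)$ of the monomial $\xi^\fp$ has total degree $\leq q$: concretely, split $f_\fp = t^{q - w(\fp)}_a(f_\fp) + r^{q-w(\fp)}_a(f_\fp)$ whenever $w(\fp) \leq q$, and for $w(\fp) > q$ put the entire $f_\fp$ into the remainder. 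Then I would define $T^q_a(f) := \sum_{\fp : w(\fp) \leq q} t^{q-w(\fp)}_a(f_\fp)\, \xi^\fp$ and $R^q_a(f) := f - T^q_a(f)$.

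The next step is to check the two asserted properties. That $T^q_a(f)$ is a polynomial of degree $q$ in the variables $(x^i - x^i(a))$ and $\xi_\mu$ is immediate from the construction: each summand $t^{q-w(\fp)}_a(f_\fp)$ is a polynomial of degree $\leq q - w(\fp)$ in the $(x^i - x^i(a))$, and multiplying by $\xi^\fp$ (which has weight $w(\fp)$ in the $\xi_\mu$) gives total degree $\leq q$; there are only finitely many $\fp$ with $w(\fp) \leq q$, so this is a genuine polynomial. The substantive claim is $R^q_a(f) \in (\J^a_{(n_j)}(U))^{q+1}$. Here I would argue that $R^q_a(f)$ is a (possibly infinite, but coefficient-wise finite) sum of terms of two kinds: remainder terms $r^{q-w(\fp)}_a(f_\fp)\,\xi^\fp$ with $w(\fp) \leq q$, and terms $f_\fp \xi^\fp$ with $w(\fp) \geq q+1$. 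Each term of the second kind is a product of $\geq q+1$ factors $\xi_\mu$, and since $\xi_\mu \in \J^a_{(n_j)}(U)$ (its body vanishes identically, in particular at $a$), such a term lies in $(\J^a_{(n_j)}(U))^{q+1}$ by Proposition \ref{tvrz_idealpower}. For a term of the first kind, the classical remainder $r^{q-w(\fp)}_a(f_\fp)$ is, by Lemma \ref{lem_Hadimrdclassic}, of the form $[h](x-a)^{q-w(\fp)+1}$, i.e.\ a sum of products of $q - w(\fp) + 1$ functions each vanishing at $a$ (namely the coordinate differences $x^i - x^i(a)$, which lie in $\J^a_{(n_j)}(U)$), times a smooth function; multiplying by $\xi^\fp$, which contributes $w(\fp)$ further factors in $\J^a_{(n_j)}(U)$, produces an element of $(\J^a_{(n_j)}(U))^{q+1}$.

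The point needing a little care is that $R^q_a(f)$ is an \emph{infinite} sum of such terms (over all $\fp$ with coefficient contributing), so I cannot simply invoke Proposition \ref{tvrz_idealpower} for the whole sum at once; I need to know that $(\J^a_{(n_j)}(U))^{q+1}$, as defined in Proposition \ref{tvrz_idealpower}, is closed under the convergent sums that occur inside $\bar{S}$. The clean way to handle this is to observe that the sum defining $R^q_a(f)$ is locally finite in the formal-power-series sense: for each fixed multi-index $\fr \in \N^{n_\ast}$, only finitely many of the listed terms contribute to the coefficient of $\xi^\fr$, and each such term, expressed in the coordinates, is a finite $\C^\infty_{n_0}(U)$-linear combination of products of at least $q+1$ elements of $\J^a_{(n_j)}(U)$ of the special forms above (coordinate differences and $\xi_\mu$'s). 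I expect this bookkeeping — verifying that the ideal $(\J^a_{(n_j)}(U))^{q+1}$ genuinely contains $R^q_a(f)$ despite the infinite sum, by exhibiting $R^q_a(f)$ as an honest element of the linear hull in the sense of Proposition \ref{tvrz_idealpower} after grouping by weight — to be the main obstacle, and I would resolve it by writing $R^q_a(f) = \sum_{\mu_1,\dots,\mu_{q+1}} g_{\mu_1 \cdots \mu_{q+1}} \cdot \eta_{\mu_1} \cdots \eta_{\mu_{q+1}}$ explicitly, where each $\eta_{\mu_i}$ is either a coordinate difference or one of the $\xi_\mu$, and the coefficients $g_{\mu_1 \cdots \mu_{q+1}} \in \C^\infty_{(n_j)}(U)$ absorb the smooth "$h$"-factors and the leftover $\xi$-monomials; convergence of each such coefficient as a formal power series is automatic since the $\xi$-part has bounded weight contribution. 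Everything else is routine.
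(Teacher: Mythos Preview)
Your proposal is correct and follows essentially the same route as the paper's proof. The paper splits $R^{q}_{a}(f)$ into the same two pieces you describe, handles the finite sum of classical remainders exactly as you suggest, and resolves the ``infinite sum'' issue for the tail $\sum_{w(\fp)>q} f_{\fp}\xi^{\fp}$ by the regrouping $\sum_{\fq \in \N^{n_{\ast}}(q+1)} g_{(\fq)}\cdot \xi^{\fq}$ over the \emph{finite} set of monomials of weight exactly $q+1$, with each $g_{(\fq)} \in \C^{\infty}_{(n_{j})}(U)$ absorbing the remaining formal series --- precisely the factorization you anticipate in your final paragraph.
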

\begin{proof}
We have $f = \sum_{\fp \in \N^{n_{\ast}}_{|f|}} f_{\fp} \xi^{\fp}$. Observe that $\xi^{\fp}$ is a polynomial in variables $\{ \xi_{\mu} \}_{\mu=1}^{n_{\ast}}$ of degree $w(\fp) = \sum_{\mu=1}^{n_{\ast}} p_{\mu}$. For $\fp \in \N^{n_{\ast}}_{|f|}$ with $w(\fp) \leq q$, one use the classical Hadamard lemma to write each function $f_{\fp} \in \C^{\infty}_{n_{0}}(U)$ as 
\begin{equation}
f_{\fp} = t_{a}^{q - w(\fp)}(f_{\fp}) + r_{a}^{q - w(\fp)}(f_{\fp}),
\end{equation}
where $t^{q-w(\fp)}_{q}(f_{\fp})$ is a polynomial in $\{ (x^{i} - x^{i}(a)) \}_{i=1}^{n_{0}}$ of degree $q - w(\fp)$. Hence define
\begin{equation}
T_{a}^{q}(f) := \sum_{\substack{\fp \in \N_{|f|}^{n_{\ast}} \\ w(\fp) \leq q}} t_{a}^{q - w(\fp)}(f_{\fp}) \xi^{\fp}, \; \; R^{q}_{a}(f) := \sum_{\substack{\fp \in \N_{|f|}^{n_{\ast}} \\ w(\fp) \leq q}} r^{q-w(\fp)}_{a}(f_{\fp}) \xi^{\fp} + \sum_{\substack{\fp \in \N_{|f|}^{n_{\ast}} \\ w(\fp) > q}} f_{\fp} \xi^{\fp}.
\end{equation}
Observe that the sum defining $T^{q}_{a}(f)$ is finite, so it is indeed a polynomial of degree $q$ in variables $\{ (x^{i} - x^{i}(a)) \}_{i=1}^{n_{0}}$ and $\{ \xi_{\mu} \}_{\mu=1}^{n_{\ast}}$. Next, observe that functions $x^{i} - x^{i}(a)$ and $\xi_{\mu}$ are elements of $\J^{a}_{(n_{j})}(U)$ and each summand of the first (finite) sum defining $R^{q}_{a}(f)$ is a product of at least $q+1$ of them, so it is an element of $(\J^{a}_{(n_{j})}(U))^{q+1}$. The second sum can be rearranged as
\begin{equation}
\sum_{\substack{\fp \in \N_{|f|}^{n_{\ast}} \\ w(\fp) > q}} f_{\fp} \xi^{\fp} = \hspace{-2mm} \sum_{\fq \in \N^{n_{\ast}}(q+1)} \hspace{-2mm} g_{(\fq)} \cdot \xi^{\fq},
\end{equation}
where $\N^{n_{\ast}}(q+1)$ is a finite set defined under (\ref{eq_Nnkpset}) and $g_{(\fq)} \in (\C^{\infty}_{(n_{j})}(U))_{|f|-|\xi^{\fq}|}$. By construction, $\xi^{\fq} \in (\J^{a}_{(n_{j})}(U))^{q+1}$. As the sum over $\fq$ is finite, it is also the element of the same ideal.
\end{proof}
Hadamard lemma is most useful in cooperation with the following observation:
\begin{tvrz} \label{tvrz_inallvanishingideals}
Let $h \in \C^{\infty}_{(n_{j})}(U)$. Then $h = 0$, if and only if $h \in (\J^{a}_{(n_{j})}(U))^{q}$ for all $a \in U$ and $q \in \N$. Equivalently, one has
\begin{equation}
\bigcap_{q \in \N} \bigcap_{a \in U} (\J^{a}_{(n_{j})}(U))^{q} = 0.
\end{equation}
\end{tvrz}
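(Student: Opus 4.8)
The plan is to reduce everything to the coefficient functions $h_{\fp} \in \C^{\infty}_{n_{0}}(U)$ in the expansion $h = \sum_{\fp \in \N^{n_{\ast}}_{|h|}} h_{\fp} \xi^{\fp}$, and show that the hypothesis forces each $h_{\fp}$ to vanish on $U$. Since a section of $\C^{\infty}_{(n_{j})}(U)$ is zero iff all its coefficient functions are zero (Example \ref{ex_formalpower}), this suffices. The inclusion ``$h = 0 \implies h \in (\J^{a}_{(n_{j})}(U))^{q}$ for all $a,q$'' is trivial since $0$ lies in every ideal, so only the converse needs work.

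First I would fix $a \in U$ and analyze what membership $h \in (\J^{a}_{(n_{j})}(U))^{q}$ says about the coefficients $h_{\fp}$. By Proposition \ref{tvrz_idealpower} an element of $(\J^{a}_{(n_{j})}(U))^{q}$ is a finite $\R$-linear combination of products $g_{1} \cdots g_{q}$ with each $g_{i} \in \J^{a}_{(n_{j})}(U)$, i.e. each $g_{i}$ has vanishing body at $a$ (or body identically zero if $|g_i|\neq 0$). The key quantitative claim is: if $h \in (\J^{a}_{(n_{j})}(U))^{q}$, then for every $\fp$ with $w(\fp) \le q$ one has that the coefficient $h_{\fp}$, viewed as a smooth function, vanishes to order at least $q - w(\fp)$ at $a$ — more precisely $h_{\fp} \in (\frm_{a})^{q - w(\fp)}$, where $\frm_{a} = \{f \in \C^{\infty}_{n_{0}}(U) : f(a) = 0\}$, and for $w(\fp) > q$ there is no constraint. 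This follows by expanding each factor $g_i = \sum_{\fq} (g_i)_{\fq}\,\xi^{\fq}$ and multiplying out using the product rule (\ref{eq_formalseriesproduct}): a monomial $\xi^{\fp}$ of weight $w(\fp)$ can only be produced by choosing, from the $q$ factors, contributions $\xi^{\fq_1},\dots,\xi^{\fq_q}$ with $\sum w(\fq_i) = w(\fp)$, so at least $q - w(\fp)$ of the factors must contribute their weight-zero (body) part, each of which lies in $\frm_a$; the remaining at most $w(\fp)$ factors contribute arbitrary smooth functions. Hence $h_{\fp}$ is a finite sum of products of smooth functions, at least $q - w(\fp)$ of which lie in $\frm_a$, giving $h_{\fp} \in (\frm_a)^{q-w(\fp)}$.

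Now intersect over $q$: for a fixed $\fp$, letting $q \to \infty$ shows $h_{\fp} \in (\frm_a)^{r}$ for all $r \in \N$, i.e. the Taylor expansion of $h_{\fp}$ at $a$ vanishes to all orders. Taking the intersection also over all $a \in U$: $h_{\fp}$ is a smooth function whose full Taylor series vanishes at every point of $U$. This is exactly the classical statement $\bigcap_{q}\bigcap_{a}(\frm_a)^q = 0$ in $\C^{\infty}_{n_{0}}(U)$, which I would derive from the classical Hadamard lemma (Lemma \ref{lem_Hadimrdclassic}): if all derivatives $\fD^k f(a)$ vanish for all $k$ and all $a$, then in particular $f(a) = 0$ for every $a$, so $f = 0$. (Alternatively, $f \in (\frm_a)^{k+1}$ for all $k$ forces $\fD^j f(a) = 0$ for $j \le k$ by differentiating a product of $k+1$ functions vanishing at $a$; combined with $f(a)=0$ from $(\frm_a)^1$, one gets $f \equiv 0$ directly.) Therefore $h_{\fp} = 0$ for every $\fp$, hence $h = 0$.

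The main obstacle is the bookkeeping in the quantitative claim of the second paragraph — tracking, across a $q$-fold product in the formal-power-series algebra, exactly how many factors are forced to contribute a body term lying in $\frm_a$ when one isolates a monomial of given weight. One has to be careful that the signs $\epsilon^{\fq,\fq'}$ from (\ref{eq_epsilonpqexplicit}) are irrelevant (they are units, so they don't affect membership in $(\frm_a)^r$), and that the odd-variable relations $\xi_\mu^2 = 0$ only kill terms, never create new ones, so they cause no trouble. Everything else is routine: the reduction to coefficients, the passage to the classical statement, and the classical vanishing-Taylor-series argument.
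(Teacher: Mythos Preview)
Your proof is correct and follows essentially the same combinatorial argument as the paper: in a $q$-fold product of elements of $\J^{a}_{(n_{j})}(U)$, the $\xi^{\fp}$-coefficient is a sum of products in which at least $q - w(\fp)$ factors are bodies vanishing at $a$. The only difference is that you extract the stronger conclusion $h_{\fp} \in (\frm_{a})^{q - w(\fp)}$ and then pass through vanishing Taylor series, whereas the paper uses only the weaker consequence $h_{\fp}(a) = 0$ for $w(\fp) < q$, which already suffices (as you yourself note parenthetically) once one varies $a$ over all of $U$.
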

\begin{proof}
Suppose $h \in (\J^{a}_{(n_{j})}(U))^{q}$ for a given $a \in U$ and $q \in \N$. Write $h = \sum_{\fp \in \N^{n_{\ast}}_{|h|}} h_{\fp} \xi^{\fp}$. We claim that $h_{\fp}(a) = 0$ whenever $w(\fp) < q$. It suffices to consider $h$ in the form of  a $q$-fold product
\begin{equation}
h = f^{(1)} \cdots f^{(q)}, \; \; f^{(s)} \in \J^{a}_{(n_{j})}(U) \text{ for all } s \in \{1,\dots,q\}.
\end{equation}
For each $\fp$ with $w(\fp) < q$, $h_{\fp}$ is a sum of $q$-fold products (up to a sign) of functions in the form $f^{(1)}_{\fq^{(1)}} \cdots f^{(q)}_{\fq^{(q)}}$ where $\fq^{(s)} \in \N^{n_{\ast}}_{|f^{(s)}|}$ for all $s \in \{1,\dots,q\}$, and $\sum_{s=1}^{q} \fq^{(s)} = \fp$. But $\sum_{s=1}^{q} w(\fq^{(s)}) = w(\fp) < q$. There is thus necessarily some $s \in \{1,\dots,q\}$ with $w(\fq^{(s)}) = 0$, that is $f^{(s)}_{\mathbf{0}}$ appears in the product. But $f^{(s)} \in \J^{a}_{(n_{j})}(U)$, whence $f^{(s)}_{\mathbf{0}}(a) = 0$. This shows that $h_{\fp}(a) = 0$. 

If $h \in (\J^{a}_{(n_{j})}(U))^{q}$ for \textit{all} $a \in U$ and $q \in \N$, we can use the previous paragraph to argue that $h = 0$. This proves the if part of the statement, the only if part is trivial. 
\end{proof}
To conclude this subsection, let us observe that the aforementioned ideals can be written also using their generating graded sets. Let us start with the ideal of purely graded functions:
\begin{tvrz} \label{tvrz_Jpggeneratingset}
For every $U \in \Op(X)$, the ideal $\J^{\pg}_{(n_{j})}(U)$ is generated by the graded set $S^{\pg}_{(n_{j})}(U)$, where for each $k \in \Z$, we define 
\begin{equation}
S^{\pg}_{(n_{j})}(U)_{k} := \{ \xi_{\mu} \; | \; \mu \in \{1,\dots,n_{\ast}\} \text{ such that } |\xi_{\mu}| = k \}.
\end{equation}
Note that we view each $\xi_{\mu}$ as a function in $\C^{\infty}_{(n_{j})}(U)$. 
\end{tvrz}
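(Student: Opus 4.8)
The plan is to prove $\J^{\pg}_{(n_{j})}(U) = \<S^{\pg}_{(n_{j})}(U)\>$, where $\<\cdot\>$ denotes the smallest ideal containing a graded subset, as in Proposition \ref{tvrz_idealgenerated}, by establishing the two inclusions. The inclusion $\<S^{\pg}_{(n_{j})}(U)\> \subseteq \J^{\pg}_{(n_{j})}(U)$ is immediate: by the construction of $\R^{(n_{j})}_{\ast}$ in Example \ref{ex_Rtosequence} every basis element $\xi_{\mu}$ has nonzero degree, so $\ul{\xi_{\mu}} = \beta_{U}(\xi_{\mu}) = 0$ and hence $S^{\pg}_{(n_{j})}(U) \subseteq \ker(\beta_{U}) = \J^{\pg}_{(n_{j})}(U)$; since the right-hand side is an ideal, minimality of $\<\cdot\>$ gives the inclusion.

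For the reverse inclusion I would work in the formal power series presentation of Example \ref{ex_formalpower}. Write $f \in \J^{\pg}_{(n_{j})}(U)$ as $f = \sum_{\fp \in \N^{n_{\ast}}_{|f|}} f_{\fp} \xi^{\fp}$ with $f_{\fp} \in \C^{\infty}_{n_{0}}(U)$; the condition $\beta_{U}(f) = 0$ forces the coefficient $f_{\mathbf{0}} = 0$ (and the index $\fp = \mathbf{0}$ occurs at all only when $|f| = 0$), so the sum effectively runs over multi-indices $\fp$ with $w(\fp) \geq 1$. For such a $\fp$, let $\mu(\fp)$ be the smallest index with $p_{\mu(\fp)} \geq 1$ and let $e_{\mu}$ denote the multi-index with a single $1$ in position $\mu$. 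Because $\xi_{\mu(\fp)}$ is the leftmost factor of the ordered monomial $\xi^{\fp}$, we get $\xi^{\fp} = \xi_{\mu(\fp)} \cdot \xi^{\fp - e_{\mu(\fp)}}$ with no extra sign, and $\fp - e_{\mu(\fp)}$ is a legitimate multi-index (if $|\xi_{\mu(\fp)}|$ is odd then $p_{\mu(\fp)} = 1$, so there is no clash with $\xi_{\mu}^{2} = 0$). Grouping by the value of $\mu(\fp)$, I would put, for $\mu \in \{1,\dots,n_{\ast}\}$,
\begin{equation}
g^{\mu} := \sum_{\substack{\fp\,:\,w(\fp) \geq 1 \\ \mu(\fp) = \mu}} f_{\fp}\, \xi^{\fp - e_{\mu}} \in \C^{\infty}_{(n_{j})}(U),
\end{equation}
which is a well-defined homogeneous section of degree $|f| - |\xi_{\mu}|$ — the coefficient of a monomial $\xi^{\fq}$ in $g^{\mu}$ is simply $f_{\fq + e_{\mu}}$ when $\mu(\fq + e_{\mu}) = \mu$ and $0$ otherwise. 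Using that $f_{\fp}$ has degree zero and hence commutes with $\xi_{\mu}$, together with $\xi_{\mu}\,\xi^{\fp - e_{\mu}} = \xi^{\fp}$ for $\mu(\fp) = \mu$, one obtains $\xi_{\mu} g^{\mu} = \sum_{\mu(\fp) = \mu} f_{\fp} \xi^{\fp}$; summing over $\mu$ then gives $f = \sum_{\mu=1}^{n_{\ast}} \xi_{\mu} g^{\mu}$. Each $\xi_{\mu} g^{\mu}$ lies in $\<S^{\pg}_{(n_{j})}(U)\>$, and this is a \emph{finite} sum, so $f \in \<S^{\pg}_{(n_{j})}(U)\>$.

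The point where care is needed — and the main obstacle to a naive argument — is exactly this last finiteness issue: one may not conclude $f \in \<S^{\pg}_{(n_{j})}(U)\>$ from the fact that each separate monomial $f_{\fp}\xi^{\fp}$ lies in that ideal, since $f$ is typically an infinite formal power series whereas $\<S^{\pg}_{(n_{j})}(U)\>$ consists of \emph{finite} $\R$-linear combinations of elements $a\,\xi_{\mu}\,b$. Rewriting $f$ as the finite sum $\sum_{\mu=1}^{n_{\ast}} \xi_{\mu} g^{\mu}$, with each $g^{\mu}$ a genuine element of $\C^{\infty}_{(n_{j})}(U)$, is precisely the manoeuvre that resolves this; everything else in the argument is bookkeeping with the monomial basis of Proposition \ref{tvrz_totalbasisSV}.
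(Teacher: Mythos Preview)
Your proof is correct and follows essentially the same approach as the paper: both show the nontrivial inclusion by writing $f$ as a finite sum $\sum_{\mu} \xi_{\mu}\, g^{\mu}$ (the paper writes it as $f^{\mu} \cdot \xi_{\mu}$ without spelling out the construction of $f^{\mu}$). Your version is simply more explicit about how to build the coefficients $g^{\mu}$ and, usefully, about why finiteness of the sum is the crux of the matter.
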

\begin{proof}
Obviously $\< S^{\pg}_{(n_{j})}(U)\> \subseteq \J^{\pg}_{(n_{j})}(U)$. For the other inclusion, let $f \in \J^{\pg}_{(n_{j})}(U)$. Since $\ul{f} = 0$, it can be decomposed as a finite sum $f = f^{\mu} \cdot \xi_{\mu}$, for some $f^{\mu} \in \C^{\infty}_{(n_{j})}(U)$. Thus $f \in \< S^{\pg}_{(n_{j})}(U)\>$.
\end{proof}
To obtain the ideal of functions vanishing at $a \in \R^{n_{0}}$, we have to throw in more generators:
\begin{tvrz} \label{tvrz_vanishingidealgen}
For every $a \in \R^{n_{0}}$ and $U \in \Op_{a}(X)$, the ideal $\J^{a}_{(n_{j})}(U)$ is generated by the graded set $S^{a}_{(n_{j})}(U)$, where we define
\begin{equation}
S^{a}_{(n_{j})}(U)_{0} := \{ x^{i} - x^{i}(a) \; | \; i \in \{1,\dots,n_{0}\} \}, \; \; S^{a}_{(n_{j})}(U)_{k} := S^{\pg}_{(n_{j})}(U)_{k} \text{ for } k \neq 0.
\end{equation}
Moreover, there is a canonical decomposition $\C^{\infty}_{(n_{j})}(U) = \R \oplus \J^{a}_{(n_{j})}(U)$ of graded vector spaces, where $\R$ corresponds to the graded subspace of all scalar multiples of the algebra unit in $\C^{\infty}_{(n_{j})}(U)$.
\end{tvrz}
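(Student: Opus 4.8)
The plan is to prove the two assertions in turn, both by reducing to statements about finite generation and direct sum decompositions that are already available. For the first assertion, the inclusion $\langle S^{a}_{(n_{j})}(U)\rangle \subseteq \J^{a}_{(n_{j})}(U)$ is immediate: each generator $x^{i}-x^{i}(a)$ has body vanishing at $a$, and each $\xi_{\mu}$ lies in $\J^{\pg}_{(n_{j})}(U) \subseteq \J^{a}_{(n_{j})}(U)$, so every element of the generated ideal has value $0$ at $a$. For the reverse inclusion, I would take $f \in \J^{a}_{(n_{j})}(U)$ and apply the graded Hadamard lemma (Lemma \ref{lem_Hadamard}) with $q = 0$, writing $f = T^{0}_{a}(f) + R^{0}_{a}(f)$ where $T^{0}_{a}(f)$ is a polynomial of degree $0$ — that is, a constant, namely $\ul{f}(a) = f(a)$ — and $R^{0}_{a}(f) \in (\J^{a}_{(n_{j})}(U))^{1} = \J^{a}_{(n_{j})}(U)$. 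Since $f(a) = 0$ we get $f = R^{0}_{a}(f)$. Then I would inspect the explicit form of $R^{0}_{a}(f)$ produced in the proof of Lemma \ref{lem_Hadamard}: it is a finite sum whose summands are either $r^{-w(\fp)}_{a}(f_{\fp})\xi^{\fp}$ with $w(\fp)\le 0$, i.e.\ $w(\fp)=0$, so $r^{0}_{a}(f_{\fp})$ is a $\C^{\infty}$-combination of the $x^{i}-x^{i}(a)$ (this is the remainder in the classical Hadamard lemma at order $q=0$), or terms $g_{(\fq)}\xi^{\fq}$ with $w(\fq)\ge 1$, each divisible by some $\xi_{\mu}$. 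In both cases the summand lies in $\langle S^{a}_{(n_{j})}(U)\rangle$, so $f \in \langle S^{a}_{(n_{j})}(U)\rangle$ as desired.

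For the direct sum decomposition, the claim is that $\C^{\infty}_{(n_{j})}(U) = \R \oplus \J^{a}_{(n_{j})}(U)$ as graded vector spaces, where $\R$ denotes the scalar multiples of the algebra unit sitting in degree $0$. The map to use is the value-at-$a$ functional $\ev_{a}\colon \C^{\infty}_{(n_{j})}(U) \to \R$, $f \mapsto f(a) = \ul{f}(a)$, which is graded linear of degree $0$ (it kills everything of nonzero degree by definition of the body). Its kernel is precisely $\J^{a}_{(n_{j})}(U)$ by definition, and it is surjective since $\ev_{a}(c\cdot 1) = c$ for $c\in\R$. The splitting $\R \to \C^{\infty}_{(n_{j})}(U)$, $c \mapsto c\cdot 1$, is a section, and $\R\cdot 1 \cap \J^{a}_{(n_{j})}(U) = 0$ because a constant function $c\cdot 1$ has value $c$ at $a$. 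Hence $\C^{\infty}_{(n_{j})}(U) = \R\cdot 1 \oplus \ker(\ev_{a}) = \R \oplus \J^{a}_{(n_{j})}(U)$, which is exactly the assertion.

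I do not anticipate a genuine obstacle here: both parts are essentially bookkeeping once the graded Hadamard lemma is in hand. The one point requiring a little care is the reverse inclusion in the generation statement — specifically, making sure that the degree-$0$ remainder terms $r^{0}_{a}(f_{\fp})$ really are expressible as finite $\C^{\infty}_{(n_{j})}(U)$-linear combinations of the coordinate functions $x^{i}-x^{i}(a)$, which follows from the classical Hadamard lemma (Lemma \ref{lem_Hadimrdclassic}) at order $q=0$, stating $g = g(a) + [h^{(0)}_{a}(x)](x-a)$ for a smooth $\R^{n_{0}}$-valued $h^{(0)}_{a}$, i.e.\ $g - g(a) = \sum_{i} h^{(0),i}_{a}\cdot(x^{i}-x^{i}(a))$. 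I would also remark that the decomposition is canonical precisely because $\ev_{a}$ and the unit section are canonical, and that it is compatible with the grading since $\J^{a}_{(n_{j})}(U)_{k} = \C^{\infty}_{(n_{j})}(U)_{k}$ for $k\ne 0$ while in degree $0$ one has the honest codimension-one splitting.
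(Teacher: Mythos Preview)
Your proposal is correct and takes essentially the same approach as the paper: both reduce to writing $f = f_{\mathbf{0}} + (\text{something divisible by some }\xi_{\mu})$ and then applying the classical Hadamard lemma to $f_{\mathbf{0}}$, and both handle the decomposition via the evaluation-at-$a$ splitting $f = f(a) + (f - f(a))$. The only cosmetic difference is that you route through the graded Hadamard lemma at $q=0$ and then unpack its proof to extract the explicit form of $R^{0}_{a}(f)$, whereas the paper writes the decomposition $f = f_{\mathbf{0}} + f^{\mu}\xi_{\mu}$ directly (via the preceding Proposition~\ref{tvrz_Jpggeneratingset}) and invokes only the classical Hadamard lemma; the underlying content is identical.
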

\begin{proof}
The inclusion $\< S^{a}_{(n_{j})}(U) \> \subseteq \J^{a}_{(n_{j})}(U)$ is obvious. Let $f \in \J^{a}_{(n_{j})}(U)$. For $|f| \neq 0$, this reduces to the scenario in the previous proof. It thus suffices to consider $|f| = 0$. Write $f = f_{\mathbf{0}} + f^{\mu} \cdot \xi_{\mu}$ for some $f^{\mu} \in \C^{\infty}_{(n_{j})}(U)$. We must only argue that $f_{\mathbf{0}} \in \< S^{a}_{(n_{j})}(U) \>$. As $f_{\mathbf{0}}(a) = 0$, the classical Hadamard lemma \ref{lem_Hadimrdclassic} allows us to find $\{ h_{i} \}_{i=1}^{n_{0}} \subseteq \C^{\infty}_{n_{0}}(U)$ such that
\begin{equation}
f_{\mathbf{0}} = \sum_{i=1}^{n_{0}} h_{i} \cdot (x^{i} - x^{i}(a)),
\end{equation}
which is obviously an element of $\< S^{a}_{(n_{j})}(U) \>$. Let us prove the decomposition. For $k \neq 0$, one has $\R_{k} = \{0\}$ and $\J^{a}_{(n_{j})}(U)_{k} = \C^{\infty}_{(n_{j})}(U)_{k}$, so the statement is trivial. For any $f \in \C^{\infty}_{(n_{j})}(U)_{0}$, one can write $f = f_{\mathbf{0}}(a) + (f - f_{\mathbf{0}}(a))$. This is the required unique decomposition into $\R \oplus \J^{a}_{(n_{j})}(U)_{0}$.
\end{proof}
\subsection{Morphisms of graded domains}
Let $U^{(n_{j})} = (U, \C^{\infty}_{(n_{j})})$ and $V^{(m_{j})} = (V, \C^{\infty}_{(m_{j})})$ be two graded domains. Let $\varphi: U^{(n_{j})} \rightarrow V^{(m_{j})}$ be a morphism in $\gLRS$, see Definition \ref{def_gLRS}. In this subsection, we will show that every such $\varphi = (\ul{\varphi}, \varphi^{\ast})$ has a relatively simple structure. 

Let us start by describing the morphisms of ordinary domains.
\begin{lemma} \label{lem_itispullback}
$U^{n_{0}} = (U, \C^{\infty}_{n_{0}})$ and $V^{m_{0}} = (V, \C^{\infty}_{m_{0}})$ be two ordinary domains, viewed as (trivially) graded locally ringed spaces. Let $\varphi = (\ul{\varphi}, \varphi^{\ast}): U^{n_{0}} \rightarrow V^{m_{0}}$ be a morphism in $\gLRS$. 

Then $\ul{\varphi}: U \rightarrow V$ is a smooth map and the sheaf morphism $\varphi^{\ast}$ is just a pullback by $\ul{\varphi}$, that is for any $W \in \Op(V)$ and $f \in \C^{\infty}_{m_{0}}(W)$, one has $\varphi^{\ast}_{W}(f) = f \circ \ul{\varphi}$. 
\end{lemma}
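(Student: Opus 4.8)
The plan is to extract the smoothness of $\ul{\varphi}$ and the pullback formula for $\varphi^{\ast}$ from the two classical structural facts about the algebra of smooth functions: smooth maps are detected by their composition with smooth functions, and points of $U$ correspond to characters (or maximal ideals) of $\C^{\infty}_{n_0}(U)$. Concretely, I would first use the coordinate functions $y^1,\dots,y^{m_0} \in \C^{\infty}_{m_0}(V)$ and set $g^a := \varphi^{\ast}_V(y^a) \in \C^{\infty}_{n_0}(U)$. The claim will be that $\ul{\varphi}(x) = (g^1(x),\dots,g^{m_0}(x))$ for all $x \in U$, which simultaneously shows $\ul{\varphi}$ is smooth (its components are smooth functions) and is the key to the pullback formula.

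The main step is to prove that evaluation identity, and here is where property (iii) of a $\gLRS$ morphism (the locality condition) does the work. Fix $x \in U$ and let $p := \ul{\varphi}(x) \in V$. Property (iii) says the induced stalk map $\varphi_{(x)} : \C^{\infty}_{m_0,p} \to \C^{\infty}_{n_0,x}$ is a local ring morphism, i.e. $\varphi_{(x)}(\frJ(\C^{\infty}_{m_0,p})) \subseteq \frJ(\C^{\infty}_{n_0,x})$. By the description of the Jacobson radical of the stalk of smooth functions (germs vanishing at the point, cf. the computation in Proposition \ref{tvrz_gradeddomainisgLRS} specialized to the trivially graded case), this means: if $f \in \C^{\infty}_{m_0}(W)$ with $f(p) = 0$ for $W \in \Op_p(V)$, then $\varphi^{\ast}_W(f)$ vanishes at $x$. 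Apply this to $f = y^a - y^a(p) = y^a - p^a$: we get $\varphi^{\ast}_W(y^a - p^a)(x) = 0$. Since $\varphi^{\ast}_W$ is an algebra morphism it sends the constant $p^a$ to the constant $p^a$, so $g^a(x) = y^a(p) = p^a$, i.e. $\ul{\varphi}(x) = (g^1(x),\dots,g^{m_0}(x))$, as desired. (One small point: $W$ should be shrunk around $p$ so that $y^a$ makes sense, and one uses that germ-wise vanishing at $x$ is equivalent to the value at $x$ being zero; both are routine.) This gives that $\ul{\varphi}$ is smooth.

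It remains to show $\varphi^{\ast}_W(f) = f \circ \ul{\varphi}$ for every $W \in \Op(V)$ and $f \in \C^{\infty}_{m_0}(W)$. Both sides are elements of $\C^{\infty}_{n_0}(\ul{\varphi}^{-1}(W))$, and by the monopresheaf property and the fact that smooth functions separate points it suffices to check they agree at each $x \in \ul{\varphi}^{-1}(W)$. Let $p = \ul{\varphi}(x)$. By the classical Hadamard lemma (Lemma \ref{lem_Hadimrdclassic}) applied at $p$ with $q = 0$, write $f = f(p) + \sum_{a=1}^{m_0} h_a \cdot (y^a - p^a)$ on a neighborhood of $p$, with $h_a$ smooth there. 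Applying $\varphi^{\ast}$ (an algebra morphism, compatible with restrictions) and evaluating at $x$: the constant term gives $f(p)$, and each term $\varphi^{\ast}(h_a)(x)\cdot \varphi^{\ast}(y^a - p^a)(x)$ vanishes because $\varphi^{\ast}(y^a - p^a)(x) = g^a(x) - p^a = 0$ by the previous paragraph. Hence $\varphi^{\ast}_W(f)(x) = f(p) = f(\ul{\varphi}(x)) = (f\circ\ul{\varphi})(x)$. Since $x$ was arbitrary, $\varphi^{\ast}_W(f) = f\circ\ul{\varphi}$.

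I do not expect a serious obstacle here; the only thing to be careful about is the bookkeeping of restrictions (Hadamard's decomposition only holds on a possibly smaller neighborhood of $p$, so one should phrase the pointwise argument using germs, or restrict $W$ appropriately and invoke the monopresheaf property to conclude globally) and making explicit the identification $\frJ(\C^{\infty}_{n_0,x}) = \{[f]_x : f(x) = 0\}$, which was essentially already recorded in the proof of Proposition \ref{tvrz_gradeddomainisgLRS}. The real content is entirely in unpacking the locality axiom (iii); everything else is the standard "smooth functions know the map" argument.
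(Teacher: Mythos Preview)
Your proof is correct and rests on the same idea as the paper, namely the locality axiom (iii) applied to the identification $\frJ(\C^{\infty}_{n_0,x}) = \{[f]_x : f(x) = 0\}$. However, the paper's argument is more direct: instead of first establishing the identity for the coordinate functions $y^a$ and then invoking Hadamard's lemma to extend to arbitrary $f$, the paper simply applies locality straight to $f - f(\ul{\varphi}(x))$ for an \emph{arbitrary} $f \in \C^{\infty}_{m_0}(W)$. Since this function vanishes at $\ul{\varphi}(x)$, locality gives $(\varphi^{\ast}_W(f) - f(\ul{\varphi}(x)))(x) = 0$, i.e. $\varphi^{\ast}_W(f)(x) = f(\ul{\varphi}(x))$, and the pullback formula follows immediately. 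Your detour through coordinate functions and the Hadamard decomposition is harmless but unnecessary: the very argument you use for $y^a - p^a$ already works verbatim for $f - f(p)$, so step~2 of your plan is subsumed by step~1 applied more boldly. The smoothness of $\ul{\varphi}$ then follows a posteriori (as in both arguments) from the fact that its components are the smooth functions $\varphi^{\ast}_V(y^a)$.
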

\begin{proof}
Let $f \in \C^{\infty}_{m_{0}}(W)$. Let $x \in \ul{\varphi}^{-1}(W)$ be an arbitrary point. It follows that $[f - f(\ul{\varphi}(x))]_{\ul{\varphi}(x)} \in \frJ( \C^{\infty}_{m_{0}, \ul{\varphi}(x)})$, see the proof of Proposition \ref{tvrz_gradeddomainisgLRS}. As $\varphi$ is a morphism in $\gLRS$, we see that
\begin{equation}
\varphi_{(x)}( [f - f(\ul{\varphi}(x))]_{\ul{\varphi}(x)}) = [ \varphi^{\ast}_{W}( f - f(\ul{\varphi}(x))) ]_{x} = [\varphi^{\ast}_{W}(f) - f(\ul{\varphi}(x))]_{x} \in \frJ( \C^{\infty}_{n_{0},x}).
\end{equation}
But this implies $(\varphi^{\ast}_{W}(f))(x) - f(\ul{\varphi}(x)) = 0$. As $x \in \ul{\varphi}^{-1}(W)$ was arbitrary, this proves the claim about $\varphi^{\ast}$. Moreover, if a pullback of every smooth function on $V$ by $\ul{\varphi}$ is to be a smooth function on $U$, $\ul{\varphi}$ has to be a smooth map. 
\end{proof}
\begin{lemma} \label{lem_inducedandsmooth}
Let $\varphi = (\ul{\varphi},\varphi^{\ast}): (U,\C^{\infty}_{(n_{j})}) \rightarrow (V, \C^{\infty}_{(m_{j})})$ be a morphism of graded domains. Then 
\begin{enumerate}[(i)]
\item $\varphi^{\ast}$ preserves the sheaf of ideals of purely graded functions. For any $W \in \Op(V)$, one has 
\begin{equation}
\varphi^{\ast}_{W}(\J^{\pg}_{(m_{j})}(W)) \subseteq \J^{\pg}_{(n_{j})}(\ul{\varphi}^{-1}(W)).
\end{equation}
\item $\varphi^{\ast}$ preserves the ideals of functions vanishing at a point. For any $W \in \Op(V)$ and $x \in \ul{\varphi}^{-1}(W)$, there holds the the inclusion
\begin{equation}
\varphi^{\ast}_{W}(\J^{\ul{\varphi}(x)}_{(m_{j})}(W)) \subseteq \J^{x}_{(n_{j})}( \ul{\varphi}^{-1}(W)). 
\end{equation}
\end{enumerate}There is thus an induced morphism $\hat{\varphi} = (\ul{\varphi}, \hat{\varphi}^{\ast}): (U, \C^{\infty}_{n_{0}}) \rightarrow (V, \C^{\infty}_{m_{0}})$. In particular, $\ul{\varphi}: U \rightarrow V$ is smooth. 
\end{lemma}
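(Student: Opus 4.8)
The plan is to prove the two containment statements $(i)$ and $(ii)$ using the characterization of morphisms in $\gLRS$ via local graded ring morphisms, together with the explicit descriptions of $\J^{\pg}_{(n_{j})}$ and $\J^{x}_{(n_{j})}$ obtained in Proposition~\ref{tvrz_Jpggeneratingset}, Proposition~\ref{tvrz_vanishingidealgen} and Corollary~\ref{cor_JRgrdomain}. The key observation is that $\J^{x}_{(n_{j})}(U) = \pi_{U,x}^{-1}(\frJ(\C^{\infty}_{(n_{j}),x}))$, which was recorded right after Corollary~\ref{cor_JRgrdomain}; this translates the sheaf-level statement $(ii)$ into the defining property (iii) of a $\gLRS$ morphism, namely $\varphi_{(x)}(\frJ(\C^{\infty}_{(m_{j}),\ul{\varphi}(x)})) \subseteq \frJ(\C^{\infty}_{(n_{j}),x})$.

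First I would prove $(ii)$. Fix $W \in \Op(V)$ and $x \in \ul{\varphi}^{-1}(W)$, and let $f \in \J^{\ul{\varphi}(x)}_{(m_{j})}(W)$, i.e. $f(\ul{\varphi}(x)) = 0$. By Corollary~\ref{cor_JRgrdomain}, $[f]_{\ul{\varphi}(x)} \in \frJ(\C^{\infty}_{(m_{j}),\ul{\varphi}(x)})$. Since $\varphi$ is a morphism in $\gLRS$, property (iii) of Definition~\ref{def_gLRS} gives $\varphi_{(x)}([f]_{\ul{\varphi}(x)}) = [\varphi^{\ast}_{W}(f)]_{x} \in \frJ(\C^{\infty}_{(n_{j}),x})$, which by Corollary~\ref{cor_JRgrdomain} again means $(\varphi^{\ast}_{W}(f))(x) = 0$, i.e. $\varphi^{\ast}_{W}(f) \in \J^{x}_{(n_{j})}(\ul{\varphi}^{-1}(W))$. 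This establishes $(ii)$.

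Next I would deduce $(i)$. By Proposition~\ref{tvrz_Jpggeneratingset}, $\J^{\pg}_{(m_{j})}(W)$ is generated by the purely graded coordinates $\{\xi_{\mu}\}$, which all have nonzero degree; since $\varphi^{\ast}_{W}$ is a graded algebra morphism it preserves degrees, so it suffices to show $\varphi^{\ast}_{W}(f) \in \J^{\pg}_{(n_{j})}(\ul{\varphi}^{-1}(W))$ for $f$ of nonzero degree — but any such $f$ automatically has vanishing body, hence lies in $\J^{\pg}$. Alternatively, and more cleanly, one notes $\J^{\pg}_{(m_{j})}(W) \subseteq \bigcap_{x} \J^{\ul{\varphi}(x)}_{(m_{j})}(W) = \bigcap_{x} \{ g : g(\ul\varphi(x)) = 0\}$ taken over $x \in \ul{\varphi}^{-1}(W)$ together with the degree argument, and applies $(ii)$ pointwise: for $f \in \J^{\pg}_{(m_{j})}(W)$ and every $x \in \ul{\varphi}^{-1}(W)$ we get $(\varphi^{\ast}_{W}(f))(x) = 0$, so the body of $\varphi^{\ast}_{W}(f)$ vanishes identically on $\ul{\varphi}^{-1}(W)$, giving $\varphi^{\ast}_{W}(f) \in \J^{\pg}_{(n_{j})}(\ul{\varphi}^{-1}(W))$.

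Finally, for the induced morphism: $(i)$ says $\varphi^{\ast}$ restricts/descends to a sheaf morphism between the quotient sheaves $\C^{\infty}_{(n_{j})}/\J^{\pg}_{(n_{j})}$ and $\C^{\infty}_{(m_{j})}/\J^{\pg}_{(m_{j})}$, which via the isomorphism $\hat\beta$ (stated just before Corollary~\ref{cor_JRgrdomain}) is a sheaf morphism $\hat{\varphi}^{\ast}: \C^{\infty}_{m_{0}} \to \ul{\varphi}_{\ast}\C^{\infty}_{n_{0}}$; property $(ii)$ (in the trivially graded case) ensures $\hat{\varphi} = (\ul\varphi, \hat{\varphi}^{\ast})$ still satisfies the local condition (iii), so it is a $\gLRS$ morphism between ordinary domains, and Lemma~\ref{lem_itispullback} then yields that $\ul{\varphi}$ is smooth and $\hat{\varphi}^{\ast}$ is the pullback. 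I do not expect a serious obstacle here; the only point requiring a little care is the bookkeeping that $\varphi^{\ast}$ preserving $\J^{\pg}$ is exactly what is needed for the induced map on quotients to be well defined, and that the identification $\beta$ intertwines $\varphi^{\ast}$ with $\hat{\varphi}^{\ast}$ — this is a diagram chase using Example~\ref{ex_bodyofasection} and the naturality of $\beta$.
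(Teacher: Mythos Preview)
Your proposal is correct and follows essentially the same approach as the paper: the paper proves $(i)$ by the generator-and-degree argument (images of the $\xi_\mu$ have nonzero degree, hence vanishing body), proves $(ii)$ as a direct restatement of the local ring morphism property (iii) of Definition~\ref{def_gLRS}, and then passes to the quotient via $\hat{\beta}$ and invokes Lemma~\ref{lem_itispullback}. The only cosmetic differences are that you reverse the order of $(i)$ and $(ii)$ and offer a second, pointwise route for $(i)$ via $(ii)$; both are fine and the paper simply takes the direct degree argument you give first.
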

\begin{proof}
Let us prove $(i)$ first. As $\varphi^{\ast}_{W}$ is a graded algebra morphism, it suffices to prove that 
\begin{equation}
\varphi^{\ast}_{W}(S^{\pg}_{(m_{j})}(W)) \subseteq \J^{\pg}_{(n_{j})}(\ul{\varphi}^{-1}(W)),
\end{equation}
 see Proposition \ref{tvrz_Jpggeneratingset}. But that is obvious, as the statement is non-trivial only in degree zero and $(S^{\pg}_{(m_{j})}(W))_{0} = \emptyset$. The claim $(ii)$ is a just a restatement of the the property (iii) of Definition \ref{def_gLRS} for $\varphi$. We have already constructed a sheaf isomorphism $\hat{\beta}: \C^{\infty}_{(n_{j})} / \J^{\pg}_{(n_{j})} \rightarrow \C^{\infty}_{n_{0}}$. There is thus the induced sheaf morphism $\hat{\varphi}^{\ast}: \C^{\infty}_{m_{0}} \rightarrow \ul{\varphi}_{\ast}\C^{\infty}_{n_{0}}$. It is easy to see that $\hat{\varphi} = (\ul{\varphi}, \hat{\varphi}^{\ast})$ has the property (iii) of Definition \ref{def_gLRS}. It follows from Lemma \ref{lem_itispullback} that $\ul{\varphi}: U \rightarrow V$ is smooth. 
\end{proof}
\begin{cor} \label{cor_domainmapsbodymaps}
Every morphism $\varphi: U^{(n_{j})} \rightarrow V^{(m_{j})}$ of graded domains behaves naturally with respect to the respective body maps. For any $W \in \Op(V)$ and any $f \in \C^{\infty}_{(m_{j})}(W)$, one has
\begin{equation}
\ul{\varphi^{\ast}_{W}(f)} = \ul{f} \circ \ul{\varphi}.
\end{equation}
\end{cor}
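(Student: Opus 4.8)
The statement is a compatibility of a morphism of graded domains with the body maps $\beta$ on both sides. Since the body map $\beta$ is exactly the sheaf morphism $\beta: \C^{\infty}_{(m_{j})} \to \C^{\infty}_{m_{0}}$ of Example \ref{ex_bodyofasection} (and likewise on the source), the cleanest route is to reinterpret the claimed identity $\ul{\varphi^{\ast}_{W}(f)} = \ul{f} \circ \ul{\varphi}$ as the commutativity of the square built from $\varphi^{\ast}$, the two body maps, and the ordinary pullback $\hat{\varphi}^{\ast}$. Concretely, I would first invoke Lemma \ref{lem_inducedandsmooth}: it already gives that $\varphi^{\ast}$ preserves the sheaf of ideals $\J^{\pg}$ of purely graded functions, that $\ul{\varphi}$ is smooth, and that there is an induced morphism $\hat{\varphi} = (\ul{\varphi}, \hat{\varphi}^{\ast})$ of ordinary domains obtained by passing to the quotients via the isomorphism $\hat{\beta}: \C^{\infty}_{(n_{j})}/\J^{\pg}_{(n_{j})} \xrightarrow{\sim} \C^{\infty}_{n_{0}}$.

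Next I would unwind what $\hat{\varphi}^{\ast}$ is: it is the sheaf morphism on quotient sheaves induced by $\varphi^{\ast}$, which under the identifications $\hat{\beta}$ on both sides means precisely that for every $W \in \Op(V)$ the square
\begin{equation}
\begin{tikzcd}
\C^{\infty}_{(m_{j})}(W) \arrow{r}{\varphi^{\ast}_{W}} \arrow{d}{\beta_{W}} & \C^{\infty}_{(n_{j})}(\ul{\varphi}^{-1}(W)) \arrow{d}{\beta_{\ul{\varphi}^{-1}(W)}} \\
\C^{\infty}_{m_{0}}(W) \arrow{r}{\hat{\varphi}^{\ast}_{W}} & \C^{\infty}_{n_{0}}(\ul{\varphi}^{-1}(W))
\end{tikzcd}
\end{equation}
commutes, i.e. $\ul{\varphi^{\ast}_{W}(f)} = \hat{\varphi}^{\ast}_{W}(\ul{f})$ for all $f \in \C^{\infty}_{(m_{j})}(W)$. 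This is essentially the definition of the induced morphism on quotients, so it needs only a line of justification: $\J^{\pg}_{(m_{j})} = \ker(\beta)$, $\varphi^{\ast}_{W}(\ker \beta_W) \subseteq \ker \beta_{\ul{\varphi}^{-1}(W)}$ by Lemma \ref{lem_inducedandsmooth}(i), hence $\varphi^{\ast}$ descends to the quotients and, transported along $\hat{\beta}$, becomes $\hat{\varphi}^{\ast}$.

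Finally I would identify $\hat{\varphi}^{\ast}_{W}$ with the ordinary pullback by $\ul{\varphi}$: since $(U,\C^{\infty}_{n_{0}})$ and $(V,\C^{\infty}_{m_{0}})$ are ordinary domains and $\hat{\varphi}$ is a $\gLRS$-morphism between them, Lemma \ref{lem_itispullback} gives $\hat{\varphi}^{\ast}_{W}(g) = g \circ \ul{\varphi}$ for every $g \in \C^{\infty}_{m_{0}}(W)$. Applying this with $g = \ul{f}$ yields $\ul{\varphi^{\ast}_{W}(f)} = \hat{\varphi}^{\ast}_{W}(\ul{f}) = \ul{f} \circ \ul{\varphi}$, which is the claim. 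I do not anticipate a serious obstacle here; the only mildly delicate point is making sure the identification of $\hat{\varphi}^{\ast}$ (defined abstractly via quotient sheaves and $\hat{\beta}$) with the concrete map $\ul{f} \mapsto \ul{f}\circ\ul{\varphi}$ is spelled out, rather than left implicit — that is where Lemma \ref{lem_itispullback} does the real work. Everything else is bookkeeping with the definitions of $\beta$ and of induced morphisms on quotients.
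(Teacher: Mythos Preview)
Your proof is correct and follows essentially the same approach as the paper: invoke Lemma \ref{lem_inducedandsmooth} to get the induced morphism $\hat{\varphi}^{\ast}$ on quotients satisfying $\ul{\varphi^{\ast}_{W}(f)} = \hat{\varphi}^{\ast}_{W}(\ul{f})$, then apply Lemma \ref{lem_itispullback} to identify $\hat{\varphi}^{\ast}_{W}(\ul{f}) = \ul{f}\circ\ul{\varphi}$. The paper's proof is the two-sentence version of exactly this; you have simply spelled out the commutative-square bookkeeping more explicitly.
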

\begin{proof}
Lemma \ref{lem_itispullback} claims that $\hat{\varphi}^{\ast}_{W}(g) = g \circ \ul{\varphi}$ for any $g \in \C^{\infty}_{m_{0}}(W)$. But $\hat{\varphi}^{\ast}$ was defined to satisfy the equation $\ul{\varphi^{\ast}_{W}(f)} = \hat{\varphi}^{\ast}_{W}(\ul{f})$ for every $f \in \C^{\infty}_{(m_{j})}(W)$. This finishes the proof. 
\end{proof}

Next, let us construct a particular example of a morphism from a graded domain $U^{(n_{j})}$ to the ordinary domain $V^{m_{0}}$. It will be important for the main statement of this subsection. 
\begin{lemma} \label{lem_olphumorphism}
Let $\ul{\varphi}: U \rightarrow V$ be a smooth map. Let $\{ \bar{y}^{j} \}_{j=1}^{m_{0}}$ be a collection of functions in $\J^{\pg}_{(n_{j})}(U)_{0}$, that is $\bar{y}^{j}_{\mathbf{0}} = 0$ for every $j \in \{1,\dots,m_{0}\}$. For $W \in \Op(V)$ and $f \in \C^{\infty}_{m_{0}}(W)$, define 
\begin{equation}
\ol{\varphi}^{\ast}_{W}(f) := \sum_{k=0}^{\infty} \frac{1}{k!} (\frac{\partial^{k} f}{\partial y^{j_{1}} \dots \partial y^{j_{k}}} \circ \ul{\varphi}) \cdot (\bar{y}^{j_{1}} \cdots \bar{y}^{j_{k}})|_{\ul{\varphi}^{-1}(W)},
\end{equation}
where $(y^{1},\dots,y^{m_{0}})$ are the standard coordinates on $\R^{m_{0}}$. Then $\ol{\varphi} := (\ul{\varphi}, \ol{\varphi}^{\ast})$ defines a morphism $\ol{\varphi}: U^{(n_{j})} \rightarrow V^{m_{0}}$ of graded locally ringed spaces. Moreover, $\{ \bar{y}^{j} \}_{j=1}^{m_{0}}$ can be obtained as
\begin{equation} \label{eq_baryjobtained}
\bar{y}^{j} = \ol{\varphi}^{\ast}_{V}(y^{j}) - y^{j} \circ \ul{\varphi},
\end{equation}
where $y^{j} \in \C^{\infty}_{m_{0}}(V)$ are the coordinate functions corresponding to $(y^{1},\dots,y^{m_{0}})$.
\end{lemma}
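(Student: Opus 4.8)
The plan is to verify the three defining properties of a morphism of graded locally ringed spaces from Definition \ref{def_gLRS}, and then to check the formula (\ref{eq_baryjobtained}). First I would argue that the infinite sum defining $\ol{\varphi}^{\ast}_{W}(f)$ actually makes sense as an element of $\C^{\infty}_{(n_{j})}(\ul{\varphi}^{-1}(W))$. Using the formalism of Example \ref{ex_formalpower}, each $\bar{y}^{j}$ has vanishing body, so $\bar{y}^{j} = (\bar{y}^{j})^{\mu} \xi_{\mu}$ lies in $\J^{\pg}_{(n_{j})}$; hence $\bar{y}^{j_{1}} \cdots \bar{y}^{j_{k}} \in (\J^{\pg}_{(n_{j})})^{k}$, which by Proposition \ref{tvrz_idealpower} (and the explicit description of $\C^{\infty}_{(n_{j})}$ as formal power series in $n_{\ast}$ variables $\xi_{\mu}$) has all its terms of $\xi$-weight $\geq k$. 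Since for a fixed multi-index $\fp$ only finitely many $k$ contribute to the coefficient of $\xi^{\fp}$, the sum is a well-defined formal power series, and each coefficient is a finite $\R$-linear combination of smooth functions, hence smooth. Naturality of $\ol{\varphi}^{\ast}$ in $W$ is immediate from the naturality of the restriction maps and of the ordinary pullback $f \mapsto f \circ \ul{\varphi}$.

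Next I would check that each $\ol{\varphi}^{\ast}_{W}$ is a graded algebra morphism. It is clearly graded (every $\bar{y}^{j}$ has degree $0$, and the body-zero coefficients are of degree $0$, so $\ol{\varphi}^{\ast}_{W}(f)$ has degree $0$ whenever $f$ does) and $\R$-linear, and it sends $1$ to $1$ (only the $k=0$ term survives). Multiplicativity $\ol{\varphi}^{\ast}_{W}(fg) = \ol{\varphi}^{\ast}_{W}(f) \cdot \ol{\varphi}^{\ast}_{W}(g)$ is the crucial algebraic identity: expanding the right-hand side and collecting terms of a fixed total $\xi$-weight, it reduces to the Leibniz formula
\[
\frac{\partial^{k}(fg)}{\partial y^{j_{1}} \cdots \partial y^{j_{k}}} = \sum_{I \sqcup J = \{1,\dots,k\}} \frac{\partial^{|I|} f}{\partial y^{I}} \cdot \frac{\partial^{|J|} g}{\partial y^{J}},
\]
combined with the binomial bookkeeping hidden in the $\tfrac{1}{k!}$ factors; since all the $\bar{y}^{j}$ are of degree $0$ they commute without signs, so no Koszul signs intervene here. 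I would present this as a direct computation, noting that it is formally the same identity that makes the usual Taylor expansion of a composite well behaved.

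Then I would verify property (iii) of Definition \ref{def_gLRS}: for $x \in \ul{\varphi}^{-1}(W)$ and $f \in \C^{\infty}_{m_{0}}(W)$ with $f(\ul{\varphi}(x)) = 0$, I must show $\ol{\varphi}^{\ast}_{W}(f)(x) = 0$, i.e. that the body of $\ol{\varphi}^{\ast}_{W}(f)$ vanishes at $x$. But the body of $\ol{\varphi}^{\ast}_{W}(f)$ is exactly the $k=0$ term $f \circ \ul{\varphi}$, since all the $k \geq 1$ terms lie in $\J^{\pg}_{(n_{j})}$ and hence have zero body; thus $\ul{\ol{\varphi}^{\ast}_{W}(f)}(x) = f(\ul{\varphi}(x)) = 0$, and by Corollary \ref{cor_JRgrdomain} this says precisely that $\ol{\varphi}_{(x)}$ maps $\frJ(\C^{\infty}_{m_{0},\ul{\varphi}(x)})$ into $\frJ(\C^{\infty}_{(n_{j}),x})$. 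Together with the already-established facts that $\ul{\varphi}$ is continuous (indeed smooth) and $\ol{\varphi}^{\ast}$ is a sheaf morphism, this shows $\ol{\varphi} = (\ul{\varphi}, \ol{\varphi}^{\ast})$ is a morphism in $\gLRS$. Finally, (\ref{eq_baryjobtained}) is obtained by applying the definition to $f = y^{j}$: then $\tfrac{\partial y^{j}}{\partial y^{i}} = \delta^{j}_{i}$ and all higher derivatives vanish, so $\ol{\varphi}^{\ast}_{V}(y^{j}) = y^{j} \circ \ul{\varphi} + \delta^{j}_{i} \bar{y}^{i} = y^{j} \circ \ul{\varphi} + \bar{y}^{j}$, giving the claim.

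The main obstacle I anticipate is the multiplicativity check: one must handle an infinite sum of infinite sums and be careful that rearrangement is legitimate (it is, because for each fixed $\fp$ only finitely many terms contribute), and then match the combinatorics of the higher-order chain/Leibniz rule against the $1/k!$ weights. Everything else is bookkeeping with bodies, degrees, and the $\J^{\pg}$-filtration, for which the $\xi$-weight estimate established at the outset does all the heavy lifting.
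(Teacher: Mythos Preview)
Your proposal is correct and follows essentially the same route as the paper: well-definedness via the $\xi$-weight filtration (the paper phrases this as $(\bar{y}^{j_{1}}\cdots\bar{y}^{j_{k}})_{\fp}=0$ for $k>w(\fp)$), multiplicativity via the Leibniz rule with the $1/k!$ binomial bookkeeping, the local-ring property via $(\ol{\varphi}^{\ast}_{W}(f))_{\mathbf{0}}=f\circ\ul{\varphi}$, and (\ref{eq_baryjobtained}) by direct evaluation on $y^{j}$. The only difference is presentational: the paper moves the multiplicativity check to the appendix and carries it out at the level of individual $\xi^{\fp}$-coefficients with explicit sign tracking, whereas you argue at the level of degree-zero elements using commutativity of the $\bar{y}^{j}$; both amount to the same computation.
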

\begin{proof}
Let us first make sense of the infinite sum over $k$. For every $\fp \in \N^{n_{\ast}}_{0}$ we find $(\bar{y}^{j_{1}} \dots \bar{y}^{j_{k}})_{\fp} = 0$ for $k > w(\fp)$, since $\bar{y}^{j}_{\mathbf{0}} = 0$. Each component function of $\ol{\varphi}^{\ast}_{W}(f)$ is thus a finite sum
\begin{equation}
(\ol{\varphi}^{\ast}_{W}(f))_{\fp} = \sum_{k=0}^{w(\fp)} \frac{1}{k!} (\frac{\partial^{k} f}{\partial y^{j_{1}} \dots \partial y^{j_{k}}} \circ \ul{\varphi}) (\bar{y}^{j_{1}} \cdots \bar{y}^{j_{k}})_{\fp}|_{\ul{\varphi}^{-1}(W)}.
\end{equation}
It is now easy to see that $\ol{\varphi}^{\ast}_{W}$ is linear in $f$, $\ol{\varphi}_{W}^{\ast}(1) = 1$ and it is natural in $W$. We only have to prove that it preserves the products. This is straightforward but a bit tedious, so we have moved the proof into the appendix, see Lemma \ref{lem_ap_olphumorphism}. Hence $\ol{\varphi}^{\ast}_{W}$ is the algebra morphism. The validity of the equation (\ref{eq_baryjobtained}) is obvious. 

To finish the proof, we must argue that $\ol{\varphi} = (\ul{\varphi}, \ol{\varphi}^{\ast})$ has the property (iii) of Definition \ref{def_gLRS}. Since we know the explicit form of the Jacobson radical, see Corollary \ref{cor_JRgrdomain}, it suffices to prove that for any $x \in U$ and $W \in \Op_{\ul{\varphi}(x)}(V)$, the equation $f(\ul{\varphi}(x)) = 0$ implies $(\ol{\varphi}^{\ast}_{W}(f))(x) = 0$. But that is obvious as $(\ol{\varphi}^{\ast}_{W}(f))_{\mathbf{0}} = f \circ \ul{\varphi}$. 
\end{proof}
We can now use this result to construct a particular example of a morphism from the graded domain $U^{(n_{j})}$ to the graded domain $V^{(m_{j})}$. 
\begin{lemma} \label{lem_themorphism}
Let $\ul{\varphi}: U \rightarrow V$ be a smooth map. Let $\{ \bar{y}^{j} \}_{j=1}^{m_{0}}$ be a collection of functions in $\J^{\pg}_{(n_{j})}(U)_{0}$. Suppose $( \theta_{\nu} )_{\nu=1}^{m_{\ast}}$ is a fixed total basis of a graded vector space $\R^{(m_{j})}_{\ast}$ and let $\{ \bar{\theta}_{\nu} \}_{\nu=1}^{m_{\ast}}$ be a collection of functions in $\C^{\infty}_{(n_{j})}(U)$, such that $|\bar{\theta}_{\nu}| = |\theta_{\nu}|$. For every $W \in \Op(V)$ and $f \in \C^{\infty}_{(m_{j})}(W)$, one can write $f = \sum_{\fr \in \N^{m_{\ast}}_{|f|}} f_{\fr} \theta^{\fr}$ for unique functions $f_{\fr} \in \C^{\infty}_{m_{0}}(W)$. Define
\begin{equation} \label{eq_themorphismformula}
\varphi^{\ast}_{W}(f) := \sum_{\fr \in \N^{m_{\ast}}_{|f|}} \ol{\varphi}^{\ast}_{W}(f_{\fr}) \cdot \bar{\theta}^{\fr}|_{\ul{\varphi}^{-1}(W)}, 
\end{equation}
where $\ol{\varphi} = (\ul{\varphi}, \ol{\varphi}^{\ast})$ is defined in Lemma \ref{lem_olphumorphism}, and $\bar{\theta}^{\fr}$ is a shorthand for the expression
\begin{equation}
\bar{\theta}^{\fr} := (\bar{\theta}_{1})^{r_{1}} \cdots (\bar{\theta}_{m_{\ast}})^{r_{m_{\ast}}}. 
\end{equation}

Then $\varphi = (\ul{\varphi}, \varphi^{\ast})$ defines a morphism $\varphi: U^{(n_{j})} \rightarrow V^{(m_{j})}$ of graded locally ringed spaces.  Moreover, the collections $\{\bar{y}^{j} \}_{j=1}^{m_{0}}$ and $ \{\bar{\theta}_{\nu}\}_{\nu=1}^{m_{\ast}}$ can be obtained as 
\begin{equation} \label{eq_barvariablesobtained}
\bar{y}^{j} = \varphi^{\ast}_{V}(y^{j}) - y^{j} \circ \ul{\varphi}, \;  \; \bar{\theta}_{\nu} = \varphi^{\ast}_{V}(\theta_{\nu}),
\end{equation}
where $y^{j} \in \C^{\infty}_{(m_{j})}(V)$ and $\theta_{\nu} \in \C^{\infty}_{(m_{j})}(V)$ are the functions corresponding to the coordinates $(y^{j})_{j=1}^{m_{0}}$ and the total basis $(\theta_{\nu})_{\nu=1}^{m_{\ast}}$. 
\end{lemma}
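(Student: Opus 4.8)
The plan is to check directly that for each $W \in \Op(V)$ the map $\varphi^{\ast}_{W}$ is a well-defined graded algebra morphism landing in $(\ul{\varphi}_{\ast}\C^{\infty}_{(n_{j})})(W)$, that it is natural in $W$, and that the resulting pair $\varphi = (\ul{\varphi},\varphi^{\ast})$ has property (iii) of Definition \ref{def_gLRS}; the two identities in (\ref{eq_barvariablesobtained}) will then come for free. To begin with, since each $\theta_{\nu}$, hence each $\bar{\theta}_{\nu}$, has nonzero degree, we have $\ul{\bar{\theta}_{\nu}} = \beta_{U}(\bar{\theta}_{\nu}) = 0$; applying the argument of Proposition \ref{tvrz_inallvanishingideals} to the $w(\fr)$-fold product $\bar{\theta}^{\fr}$ shows $(\bar{\theta}^{\fr})_{\fp} = 0$ whenever $w(\fp) < w(\fr)$. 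Consequently, for each fixed $\fp$ only the finitely many $\fr$ with $w(\fr) \le w(\fp)$ contribute to the $\fp$-th coefficient of (\ref{eq_themorphismformula}), so $\varphi^{\ast}_{W}(f)$ is a genuine element of $\C^{\infty}_{(n_{j})}(\ul{\varphi}^{-1}(W))$. Since $|f_{\fr}| = 0$, $|\ol{\varphi}^{\ast}_{W}(f_{\fr})| = 0$ and $|\bar{\theta}^{\fr}| = \sum_{\nu} r_{\nu}|\theta_{\nu}| = |f|$ for $\fr \in \N^{m_{\ast}}_{|f|}$, the map preserves degrees; linearity in $f$ is clear; $\varphi^{\ast}_{W}(1) = \ol{\varphi}^{\ast}_{W}(1)\bar{\theta}^{\mathbf{0}} = 1$; and naturality in $W$ follows because restriction merely restricts the $f_{\fr}$ while $\ol{\varphi}^{\ast}$ is already natural.

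The essential point is multiplicativity, $\varphi^{\ast}_{W}(f \cdot g) = \varphi^{\ast}_{W}(f)\cdot\varphi^{\ast}_{W}(g)$. Writing $f = \sum_{\fr} f_{\fr}\theta^{\fr}$, $g = \sum_{\fs} g_{\fs}\theta^{\fs}$, formula (\ref{eq_formalseriesproduct}) — applied with coefficient algebra $\C^{\infty}_{m_{0}}(W)$, every element of which has degree $0$, so that no Koszul sign is attached to the coefficients — gives $f \cdot g = \sum_{\fr,\fs} \epsilon^{\fr,\fs} (f_{\fr}g_{\fs})\,\theta^{\fr+\fs}$. Now $\ol{\varphi}^{\ast}_{W}(f_{\fr}g_{\fs}) = \ol{\varphi}^{\ast}_{W}(f_{\fr})\ol{\varphi}^{\ast}_{W}(g_{\fs})$ because $\ol{\varphi}^{\ast}_{W}$ is an algebra morphism (Lemma \ref{lem_olphumorphism}); and since $|\bar{\theta}_{\nu}| = |\theta_{\nu}|$ and $\C^{\infty}_{(n_{j})}(\ul{\varphi}^{-1}(W))$ is graded commutative, the $\bar{\theta}_{\nu}$ obey exactly the reordering rule (\ref{eq_variablesrelation}) of the $\theta_{\nu}$, so $\bar{\theta}^{\fr}\bar{\theta}^{\fs} = \epsilon^{\fr,\fs}\bar{\theta}^{\fr+\fs}$ with the \emph{same} sign, while $\ol{\varphi}^{\ast}_{W}(g_{\fs})$ has degree $0$ so commuting it past $\bar{\theta}^{\fr}$ costs nothing. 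Expanding $\varphi^{\ast}_{W}(f)\cdot\varphi^{\ast}_{W}(g)$ and matching it term by term against $\varphi^{\ast}_{W}(f\cdot g) = \sum_{\fr,\fs}\epsilon^{\fr,\fs}\ol{\varphi}^{\ast}_{W}(f_{\fr})\ol{\varphi}^{\ast}_{W}(g_{\fs})\bar{\theta}^{\fr+\fs}$ then yields the claim. I expect this sign-bookkeeping to be the main obstacle; as with Lemma \ref{lem_olphumorphism}, I would carry out the explicit term-by-term comparison in the appendix rather than in the main text.

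With the explicit Jacobson radical of Corollary \ref{cor_JRgrdomain} in hand, property (iii) of Definition \ref{def_gLRS} for $\varphi$ reduces to the assertion that $g(\ul{\varphi}(x)) = 0$ implies $(\varphi^{\ast}_{W}(g))(x) = 0$ for $x \in \ul{\varphi}^{-1}(W)$. For $|g| \neq 0$ both sides vanish by degree; for $|g| = 0$, apply the body morphism $\beta$ to (\ref{eq_themorphismformula}): $\ul{\bar{\theta}^{\fr}} = 0$ for $\fr \neq \mathbf{0}$ (again because $|\bar{\theta}_{\nu}| \neq 0$), while for $\fr = \mathbf{0}$ one has $\ul{\ol{\varphi}^{\ast}_{W}(g_{\mathbf{0}})} = g_{\mathbf{0}}\circ\ul{\varphi}$ (Corollary \ref{cor_domainmapsbodymaps} applied to $\ol{\varphi}$, or directly from $(\ol{\varphi}^{\ast}_{W}(g_{\mathbf{0}}))_{\mathbf{0}} = g_{\mathbf{0}}\circ\ul{\varphi}$), whence $\ul{\varphi^{\ast}_{W}(g)} = \ul{g}\circ\ul{\varphi}$ and $(\varphi^{\ast}_{W}(g))(x) = \ul{g}(\ul{\varphi}(x)) = g(\ul{\varphi}(x)) = 0$. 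Thus $\varphi$ is a morphism of graded locally ringed spaces.

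Finally, the recovery formulas: the coordinate function $y^{j} \in \C^{\infty}_{(m_{j})}(V)$ has degree $0$ and only its $\mathbf{0}$-th $\theta$-coefficient (the function $y^{j}$ itself) nonzero, so $\varphi^{\ast}_{V}(y^{j}) = \ol{\varphi}^{\ast}_{V}(y^{j}) = y^{j}\circ\ul{\varphi} + \bar{y}^{j}$ by (\ref{eq_baryjobtained}), which is the first identity of (\ref{eq_barvariablesobtained}); and $\theta_{\nu}$ has constant coefficient $1$ at the single multi-index with a $1$ in slot $\nu$ and zero elsewhere, so $\varphi^{\ast}_{V}(\theta_{\nu}) = \ol{\varphi}^{\ast}_{V}(1)\,\bar{\theta}_{\nu} = \bar{\theta}_{\nu}$, which is the second. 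This completes the argument modulo the appendix verification of multiplicativity.
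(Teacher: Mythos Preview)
Your argument is correct and follows essentially the same route as the paper's proof: establish that only finitely many $\fr$ contribute to each $\fp$-coefficient (via the observation that $(\bar{\theta}^{\fr})_{\fp} = 0$ for $w(\fr) > w(\fp)$), check the easy algebra properties and naturality, defer the multiplicativity bookkeeping to an appendix computation, verify locality via $(\varphi^{\ast}_{W}(f))_{\mathbf{0}} = f_{\mathbf{0}} \circ \ul{\varphi}$, and read off (\ref{eq_barvariablesobtained}). Your conceptual summary of why multiplicativity holds --- that $\bar{\theta}^{\fr}\bar{\theta}^{\fs} = \epsilon^{\fr,\fs}\bar{\theta}^{\fr+\fs}$ with the \emph{same} sign because the $\bar{\theta}_{\nu}$ carry the same degrees as the $\theta_{\nu}$, while $\ol{\varphi}^{\ast}_{W}(g_{\fs})$ has degree $0$ --- is a clean way to organize the computation that the paper carries out more explicitly coefficient-by-coefficient in Lemma~\ref{lem_ap_themorphism}; but the underlying verification is the same.
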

\begin{proof}
First, let us make sense of the infinite sum. Let $\fp \in \N^{n_{\ast}}_{|f|}$ be arbitrary. Observe that $(\bar{\theta}^{\fr})_{\fp} = 0$ whenever $w(\fr) > w(\fp)$. Indeed, $(\bar{\theta}^{\fr})_{\fp}$ is an alternating sum of products of the form
\begin{equation}
(\bar{\theta}_{1})_{\fq_{1}^{(1)}} \dots (\bar{\theta}_{1})_{\fq_{r_{1}}^{(1)}} \dots (\bar{\theta}_{m_{\ast}})_{\fq_{1}^{(m_{\ast})}} \dots (\bar{\theta}_{m_{\ast}})_{\fq_{r_{m_{\ast}}}^{(m_{\ast})}},
\end{equation}
where $\sum_{\nu=1}^{m_{\ast}} \sum_{a=1}^{r_{\nu}} \fq^{(\nu)}_{a} = \fp$. Since $|\theta_{\nu}| \neq 0$, we have $w(\fq_{a}^{(\nu)}) \geq 1$, so the number of the summands in this sum cannot exceed $w(\fp)$. But this number is precisely $w(\fr)$. Consequently, 
\begin{equation}
( \ul{\varphi}^{\ast}_{W}(f_{\fr}) \cdot \bar{\theta}^{\fr}|_{\ul{\varphi}^{-1}(W)})_{\fp} = 0, \text{ whenever } w(\fr) > w(\fp).
\end{equation}
In other words, we should have \textit{defined} $\varphi^{\ast}_{W}(f)$ for each $\fp \in \N^{n_{\ast}}_{|f|}$ by 
\begin{equation}
(\varphi^{\ast}_{W}(f))_{\fp} := \hspace{-3mm} \sum_{\substack{\fr \in \N^{m_{\ast}}_{|f|} \\ w(\fr) \leq w(\fp)}} \hspace{-3mm} ( \ol{\varphi}^{\ast}_{W}(f_{\fr}) \cdot \bar{\theta}^{\fr}|_{\ul{\varphi}^{-1}(W)})_{\fp}.
\end{equation}
It is obvious that $\varphi^{\ast}_{W}$ defines a linear map natural in $W$, and $\varphi^{\ast}_{W}(1) = 1$. One only has to show that it also preserves the products. Again, we have moved the slightly tedious verification into the appendix, see Lemma \ref{lem_ap_themorphism}. Hence $\varphi^{\ast}_{W}$ is the graded algebra morphism. 

It is easy to obtain (\ref{eq_barvariablesobtained}). It remains to prove that the induced graded algebra morphism $\varphi_{(x)}: \C^{\infty}_{(m_{j}), \ul{\varphi}(x)} \rightarrow \C^{\infty}_{(n_{j}),x}$ preserves the Jacobson radicals. To this amount, one has to show that the equation $f(\ul{\varphi}(x)) = 0$ implies $(\varphi^{\ast}_{W}(f))(x) = 0$. This statement is non-trivial only for $|f| = 0$ and it follows immediately from the fact  that $(\varphi^{\ast}_{W}(f))_{\mathbf{0}} = f_{\mathbf{0}} \circ \ul{\varphi}|_{\ul{\varphi}^{-1}(W)}$. 
\end{proof}

Let us now prove the main statement of this subsection. It shows that every morphism of graded domains is in fact one of those described in the previous lemma. 
\begin{theorem}[\textbf{Graded domain theorem}] \label{thm_gradedomaintheorem}
Let $U^{(n_{j})}$ and $V^{(m_{j})}$ be two graded domains. Let $(\theta_{\nu})_{\nu=1}^{m_{\ast}}$ be a total basis of $\R^{(m_{j})}_{\ast}$, and let $(y^{1},\dots,y^{m_{0}})$ be the standard coordinates on $V \subseteq \R^{m_{0}}$. 

Then any morphism of graded domains $\varphi: U^{(n_{j})} \rightarrow V^{(m_{j})}$ uniquely determines and is uniquely determined by the following data:
\begin{enumerate}[(i)]
\item A smooth map $\ul{\varphi}: U \rightarrow V$.
\item A collection $\{ \bar{y}^{j} \}_{j=1}^{m_{0}}$, where $\bar{y}^{j} \in \J^{\pg}_{(n_{j})}(U)_{0}$ for each $j \in \{1,\dots,m_{0}\}$.
\item A collection $\{ \bar{\theta}_{\nu} \}_{\nu=1}^{m_{\ast}}$, where $\bar{\theta}_{\nu} \in \C^{\infty}_{(n_{j})}(U)_{|\theta_{\nu}|}$ for each $\nu \in \{1,\dots,m_{\ast} \}$.
\end{enumerate}
The relation of those data to $\varphi$ is the one described in Lemma \ref{lem_themorphism}.
\end{theorem}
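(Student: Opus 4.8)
The strategy is to establish a bijection between morphisms $\varphi\colon U^{(n_j)}\to V^{(m_j)}$ and triples $(\ul\varphi,\{\bar y^j\},\{\bar\theta_\nu\})$ as in (i)--(iii). Lemma~\ref{lem_themorphism} already provides the map in one direction: given such a triple, it produces a morphism $\varphi$ via formula (\ref{eq_themorphismformula}), and (\ref{eq_barvariablesobtained}) shows that this $\varphi$ recovers the original triple. So the content of the theorem is the converse: every morphism $\varphi$ arises this way, and the triple is uniquely determined by $\varphi$. The plan is therefore: (1) extract the triple from an arbitrary $\varphi$; (2) show the extracted triple is admissible, i.e. lies in the sets described in (ii)--(iii); (3) show that feeding this triple into Lemma~\ref{lem_themorphism} reproduces $\varphi$; (4) note uniqueness is then automatic.

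\emph{Step 1 (extraction).} Given $\varphi=(\ul\varphi,\varphi^\ast)$, set $\ul\varphi$ to be its underlying continuous map, and define $\bar y^j:=\varphi^\ast_V(y^j)-y^j\circ\ul\varphi$ and $\bar\theta_\nu:=\varphi^\ast_V(\theta_\nu)$, exactly as dictated by (\ref{eq_barvariablesobtained}). By Lemma~\ref{lem_inducedandsmooth}, $\ul\varphi$ is smooth, so (i) holds. For (iii), $\bar\theta_\nu=\varphi^\ast_V(\theta_\nu)\in\C^\infty_{(n_j)}(U)_{|\theta_\nu|}$ since $\varphi^\ast_V$ is a graded algebra morphism (degree-preserving) and $|\theta_\nu|\neq 0$. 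For (ii), I need $\bar y^j\in\J^{\pg}_{(n_j)}(U)_0$, i.e. $\ul{\bar y^j}=0$: by Corollary~\ref{cor_domainmapsbodymaps}, $\ul{\varphi^\ast_V(y^j)}=\ul{y^j}\circ\ul\varphi=y^j\circ\ul\varphi$, hence $\ul{\bar y^j}=y^j\circ\ul\varphi-y^j\circ\ul\varphi=0$. So the extracted triple is admissible.

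\emph{Step 2 (reconstruction).} Let $\varphi'=(\ul\varphi,\varphi'^\ast)$ be the morphism built from the extracted triple via Lemma~\ref{lem_themorphism}. I must show $\varphi'=\varphi$, i.e. $\varphi'^\ast_W=\varphi^\ast_W$ for all $W\in\Op(V)$. The key point is that both sides are graded algebra morphisms $\C^\infty_{(m_j)}(W)\to\C^\infty_{(n_j)}(\ul\varphi^{-1}(W))$ that agree on a set of generators which is \emph{topologically dense} in the appropriate sense. Concretely: every $f\in\C^\infty_{(m_j)}(W)$ is an infinite formal power series $\sum_{\fr}f_\fr\theta^\fr$ with $f_\fr\in\C^\infty_{m_0}(W)$, and each coefficient $f_\fr$, being an ordinary smooth function, can be approximated by its Taylor polynomials via the classical Hadamard Lemma~\ref{lem_Hadimrdclassic}. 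Both $\varphi^\ast_W$ and $\varphi'^\ast_W$ send $y^j\mapsto (y^j\circ\ul\varphi)+\bar y^j$ and $\theta_\nu\mapsto\bar\theta_\nu$ (for $\varphi$ this is precisely the definition of the extracted triple; for $\varphi'$ it is (\ref{eq_barvariablesobtained})). Since they are algebra morphisms, they agree on every polynomial in the $y^j$ and $\theta_\nu$ with constant coefficients — in particular on every Taylor polynomial $T^q_a(f)$ of $f$ (in the sense of the graded Hadamard Lemma~\ref{lem_Hadamard}). Therefore $\varphi^\ast_W(f)-\varphi'^\ast_W(f)=\varphi^\ast_W(R^q_a(f))-\varphi'^\ast_W(R^q_a(f))$ lies in $(\J^{\ul\varphi(a)}_{(n_j)}(\cdots))^{q+1}$... more precisely, using Lemma~\ref{lem_inducedandsmooth}(ii) that $\varphi^\ast$ and $\varphi'^\ast$ carry $\J^{a}$ into $\J^{x}$, and since $R^q_a(f)\in(\J^a_{(m_j)}(W))^{q+1}$, we get $\varphi^\ast_W(R^q_a(f)),\varphi'^\ast_W(R^q_a(f))\in(\J^{x}_{(n_j)}(\ul\varphi^{-1}(W)))^{q+1}$ for every $x\in\ul\varphi^{-1}(\{a\})$. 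Running $a$ over all of $\ul\varphi^{-1}(W)$ (equivalently, running $x$ over all points and $a=\ul\varphi(x)$) and $q$ over all of $\N$, Proposition~\ref{tvrz_inallvanishingideals} forces $\varphi^\ast_W(f)-\varphi'^\ast_W(f)=0$. Hence $\varphi'=\varphi$.

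\emph{Step 3 (uniqueness) and obstacle.} Uniqueness of the triple is immediate: (\ref{eq_barvariablesobtained}) expresses each of $\ul\varphi$, $\bar y^j$, $\bar\theta_\nu$ in terms of $\varphi$ alone, so two triples yielding the same $\varphi$ coincide; and by Step~2 two morphisms with the same triple coincide. I expect the main obstacle to be Step~2 — specifically, handling the interaction of the \emph{infinite} sum over $\fr$ with the approximation argument. One must be careful that for a fixed $\fp\in\N^{n_\ast}_{|f|}$ only finitely many $\fr$ (those with $w(\fr)\le w(\fp)$) contribute to the $\xi^\fp$-component of $\varphi^\ast_W(f)$, so that truncating $f$ to a polynomial affects only high-degree components, and that the Hadamard remainder estimate can then be applied componentwise and uniformly in the point $a$. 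Apart from this bookkeeping, the argument is a routine "morphisms agreeing on generators, plus a density/separation lemma" pattern, with Proposition~\ref{tvrz_inallvanishingideals} playing the role of the separation statement.
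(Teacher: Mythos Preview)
Your proposal is correct and follows essentially the same route as the paper: extract the triple via (\ref{eq_barvariablesobtained}), build the competing morphism $\varphi'$ from Lemma~\ref{lem_themorphism}, observe that $\varphi^\ast$ and $\varphi'^\ast$ agree on polynomials in the coordinates, then use the graded Hadamard Lemma~\ref{lem_Hadamard} together with Lemma~\ref{lem_inducedandsmooth}(ii) and Proposition~\ref{tvrz_inallvanishingideals} to force equality. The only cosmetic wrinkle is your notation in Step~2, where you briefly write ``running $a$ over all of $\ul\varphi^{-1}(W)$'' when $a$ is your point in $W$; your parenthetical fixes this, and the obstacle you flag about the infinite sum over $\fr$ is already absorbed by the graded Hadamard lemma, so no extra bookkeeping is actually needed.
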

\begin{proof}
Let $\varphi = (\ul{\varphi}, \varphi^{\ast}): U^{(n_{j})} \rightarrow V^{(m_{j})}$ be a morphism of graded domains. By Lemma \ref{lem_inducedandsmooth}, the continuous map $\ul{\varphi}: U \rightarrow V$ is smooth. Let us define the collections $(ii)$ and $(iii)$ using the formulas (\ref{eq_barvariablesobtained}). We claim that $\varphi^{\ast}$ is then necessarily given by the formula (\ref{eq_themorphismformula}).

Let $W \in \Op(V)$ and let $\psi_{W}^{\ast}: \C^{\infty}_{(m_{j})}(W) \rightarrow \C^{\infty}_{(n_{j})}(\ul{\varphi}^{-1}(W))$ denote the graded algebra morphism defined by (\ref{eq_themorphismformula}). Let $a \in \ul{\varphi}^{-1}(W)$ be an arbitrary point. Suppose $f \in \C^{\infty}_{(m_{j})}(W)$ is a polynomial in variables $\{ (y^{j} - y^{j}(\ul{\varphi}(a)) \}_{j=1}^{m_{0}}$ and $\{ \theta_{\nu} \}_{\nu=1}^{m_{\ast}}$. Then certainly $\varphi^{\ast}_{W}(f) = \psi^{\ast}_{W}(f)$. 

Let $f \in \C^{\infty}_{(m_{j})}(W)$ is an arbitrary function. For any $a \in \ul{\varphi}^{-1}(W)$ and any $q \in \N_{0}$, we may write 
\begin{equation}
f = T^{q}_{\ul{\varphi}(a)}(f) + R^{q}_{\ul{\varphi}(a)}(f),
\end{equation}
where $T^{q}_{\ul{\varphi}(a)}(f)$ is a polynomial of degree $q$ in variables $\{ (y^{j} - y^{j}(\ul{\varphi}(a)) \}_{j=1}^{m_{0}}$ and $\{ \theta_{\nu} \}_{\nu=1}^{m_{\ast}}$, and 
\begin{equation}
R^{q}_{\ul{\varphi}(a)}(f) \in (\J^{\ul{\varphi}(a)}_{(m_{j})}(W))^{q+1}.
\end{equation}
This follows from Lemma \ref{lem_Hadamard}. It follows from Lemma \ref{lem_inducedandsmooth}-(ii) that
\begin{equation}
(\varphi^{\ast}_{W} - \psi^{\ast}_{W})(f) = (\varphi^{\ast}_{W} - \psi^{\ast}_{W})( R^{q}_{\ul{\varphi}(a)}(f)) \in (\J^{a}_{(n_{j})}(\ul{\varphi}^{-1}(W)))^{q+1}.
\end{equation}
As $a \in \ul{\varphi}^{-1}(W)$ and $q \in \N_{0}$ were arbitrary, Proposition \ref{tvrz_inallvanishingideals} shows that $(\varphi^{\ast}_{W} - \psi^{\ast}_{W})(f) = 0$. 

This shows that every morphism of graded domains $\varphi: U^{(n_{j})} \rightarrow V^{(m_{j})}$ determines and it is uniquely determined by the data $(i)$-$(iii)$, as was to be proved. 
\end{proof}
\begin{rem} \label{rem_itsufficestocalculatepullbacks}
The graded domain theorem is absolutely vital for the rest of this paper. It shows that every morphism of graded domains can be uniquely defined just by declaring the pullbacks of global coordinate functions. Note that the underlying map $\ul{\varphi}: U \rightarrow V$ can itself by recovered from the pullbacks, as we have $y^{j} \circ \ul{\varphi} = \varphi^{\ast}_{V}(y^{j})_{\mathbf{0}}$ for all $j \in \{1,\dots,m_{0}\}$. 
\end{rem}

\subsection{Definition and basic properties}
\begin{definice}
Let $\M = (M,\C^{\infty}_{\M})$ be a graded locally ringed space over a topological space $M$. A pair $(U,\varphi)$ is called a \textbf{graded local chart for $\M$}, if
\begin{enumerate}[(i)]
\item $U \in \Op(M)$;
\item $\varphi: (U,\C^{\infty}_{\M}|_{U}) \rightarrow \hat{U}^{(n_{j})}$ is an isomorphism in $\gLRS$, where $(n_{j})_{j \in \Z}$ is a sequence of non-negative integers satisfying $\sum_{j \in \Z} n_{j} < \infty$, and $\hat{U} \in \Op(\R^{n_{0}})$.
\end{enumerate}
In other words, $\varphi = (\ul{\varphi}, \varphi^{\ast})$, where $\ul{\varphi}: U \rightarrow \hat{U}$ is a homeomorphism and $\varphi^{\ast}: \C^{\infty}_{(n_{j})} \rightarrow \ul{\varphi}_{\ast} \C^{\infty}_{\M}$ is a sheaf isomorphism. See Proposition \ref{tvrz_isoingLRS} for details. 
\end{definice}

\begin{definice}
Let $\M = (M,\C^{\infty}_{\M})$ be a graded locally ringed space over a topological space $M$.
A collection $\A = \{ (U_{\alpha}, \varphi_{\alpha}) \}_{\alpha \in I}$ of graded local charts for $\M$ is called a \textbf{graded smooth atlas for $\M$}, if $M = \cup_{\alpha \in I} U_{\alpha}$ and $\varphi_{\alpha}: (U_{\alpha},\C^{\infty}_{\M}|_{U_{\alpha}}) \rightarrow \hat{U}^{(n_{j})}_{\alpha}$ for all $\alpha \in I$ and a \textit{single sequence} $(n_{j})_{j \in \Z}$. This sequence is called the \textbf{graded dimension} of the atlas $\A$.
\end{definice}

We can now finally define our main object of interest in this paper. 
 
\begin{definice}
Let $\M = (M, \C^{\infty}_{\M})$ be a graded locally ringed space over a \textit{second countable Hausdorff} topological space $M$. 

We say that $\M$ is a \textbf{graded (smooth) manifold} of a \textbf{graded dimension $\gdim(\M) = (n_{j})_{j \in \Z}$}, if there exists a graded smooth atlas $\A$ for $\M$ of a graded dimension $(n_{j})_{j \in \Z}$. Sections of the \textbf{structure sheaf} $\C^{\infty}_{\M}$ are called \textbf{functions on the graded manifold $\M$}.
\end{definice}

\begin{rem}
Note that the particular choice of the atlas for $\M$ is not important. In fact, it is not even necessary to specify the graded dimension as there are never two atlases of a different graded dimension (we will show this later on). For any two graded smooth atlases $\A$ and $\A'$ for $\M$, their union $\A \cup \A'$ is again a graded smooth atlas for $\M$. In particular, there is a unique maximal graded smooth atlas for $\M$ containing all the others.
\end{rem}

\begin{example} \label{ex_coordfunctions}
Let $(n_{j})_{j \in \Z}$ be a graded dimension of $\M$. Let $\varphi: \M|_{U} \rightarrow \hat{U}$ be a graded local chart for $\M$. In the rest of the paper, let us slightly abuse the notation and use the same symbol for the functions $\xi_{\mu} \in \C^{\infty}_{(n_{j})}(\hat{U})$, induced by the choice of the total basis $(\xi_{\mu})_{\mu =1}^{n_{\ast}}$ for $\R^{(n_{j})}_{\ast}$, and their pullbacks $\xi_{\mu} := \varphi^{\ast}_{\hat{U}}(\xi_{\mu}) \in \C^{\infty}_{\M}(U)$. In the same way, we will write $x^{i} := \varphi^{\ast}_{\hat{U}}(x^{i}) \in \C^{\infty}_{\M}(U)$ for the pullbacks of coordinate functions corresponding to the standard coordinates $(x^{1},\dots,x^{n_{0}})$ on $\hat{U} \subseteq \R^{n_{0}}$. In accordance with the usual terminology, we shall call them \textbf{coordinate functions corresponding to the graded local chart $(U,\varphi)$}. 
\end{example}

\begin{definice}
Let $\M$ and $\cN$ be two graded manifolds. A morphism $\varphi: \M \rightarrow \cN$ in $\gLRS$ is usually called a \textbf{graded smooth map}. Graded manifolds and graded smooth maps form a full subcategory $\gMan^{\infty}$. Isomorphisms in $\gMan^{\infty}$ are called \textbf{graded diffeomorphisms}.
\end{definice}
It follows that every graded manifold $\M = (M,\C^{\infty}_{\M})$ induces a canonical smooth structure on its underlying topological space $M$.
\begin{tvrz} \label{tvrz_transitionmapsunderlyingmanifold}
Let $\M = (M, \C^{\infty}_{\M})$ be a graded manifold and let $\A = \{ (U_{\alpha},\varphi_{\alpha}) \}_{\alpha \in I}$ be a graded smooth atlas for $\M$. For each $(\alpha,\beta) \in I^{2}$, there is a graded diffeomorphism
\begin{equation}
\varphi_{\alpha \beta}: \ul{\varphi_{\beta}}(U_{\alpha \beta})^{(n_{j})} \rightarrow \ul{\varphi_{\alpha}}(U_{\alpha \beta})^{(n_{j})}, 
\end{equation}
obtained as a composition $\varphi_{\alpha \beta} = \varphi_{\alpha}|_{U_{\alpha \beta}} \circ \varphi_{\beta}^{-1}|_{\ul{\varphi_{\beta}}(U_{\alpha \beta})}$, where $U_{\alpha \beta} := U_{\alpha} \cap U_{\beta}$.  

In particular, the maps $\ul{\varphi_{\alpha \beta}}= \ul{\varphi_{\alpha}} \circ \ul{\varphi_{\beta}}^{-1}: \ul{\varphi_{\beta}}(U_{\alpha \beta}) \rightarrow \ul{\varphi_{\alpha}}(U_{\alpha \beta})$ are smooth. It follows that $\ul{\A} := \{ (U_{\alpha}, \ul{\varphi_{\alpha}}) \}_{\alpha \in I}$ forms an ordinary smooth atlas for $M$, making it into an ordinary smooth manifold of dimension $n_{0}$. Together with the smooth structure defined by $\ul{\A}$, one calls $M$ the \textbf{underlying manifold of $\M$}. 
\end{tvrz}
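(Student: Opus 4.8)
The plan is to build $\varphi_{\alpha\beta}$ as a composition of graded diffeomorphisms and then apply the graded domain theorem (Theorem \ref{thm_gradedomaintheorem}) to extract the underlying smooth transition maps. First I would note that for each $\alpha \in I$, the restricted isomorphism $\varphi_\alpha|_{U_{\alpha\beta}} : (U_{\alpha\beta}, \C^\infty_\M|_{U_{\alpha\beta}}) \to \ul{\varphi_\alpha}(U_{\alpha\beta})^{(n_j)}$ is again an isomorphism in $\gLRS$: this is because $\ul{\varphi_\alpha}$ is a homeomorphism, so $\ul{\varphi_\alpha}(U_{\alpha\beta})$ is open in $\hat U_\alpha$ and hence a graded domain over an open subset of $\R^{n_0}$, and restriction of an isomorphism to an open subset is an isomorphism (using Proposition \ref{tvrz_restrictedgLRS} and the discussion of restricted morphisms following it). Likewise $\varphi_\beta^{-1}|_{\ul{\varphi_\beta}(U_{\alpha\beta})}$ makes sense as the inverse of $\varphi_\beta|_{U_{\alpha\beta}}$. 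The composition $\varphi_{\alpha\beta} := \varphi_\alpha|_{U_{\alpha\beta}} \circ \varphi_\beta^{-1}|_{\ul{\varphi_\beta}(U_{\alpha\beta})}$ is then an isomorphism in $\gLRS$ between two graded domains, i.e. a graded diffeomorphism; its underlying continuous map is $\ul{\varphi_{\alpha\beta}} = \ul{\varphi_\alpha} \circ \ul{\varphi_\beta}^{-1}$, which is a homeomorphism between the open sets $\ul{\varphi_\beta}(U_{\alpha\beta})$ and $\ul{\varphi_\alpha}(U_{\alpha\beta})$ of $\R^{n_0}$.

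Next I would invoke the graded domain theorem: since $\varphi_{\alpha\beta}$ is in particular a morphism of graded domains, Theorem \ref{thm_gradedomaintheorem} (or more directly Lemma \ref{lem_inducedandsmooth}) tells us that its underlying map $\ul{\varphi_{\alpha\beta}}$ is smooth. Applying the same reasoning to $\varphi_{\alpha\beta}^{-1} = \varphi_{\beta\alpha}$, its underlying map $\ul{\varphi_\beta} \circ \ul{\varphi_\alpha}^{-1}$ is also smooth, so $\ul{\varphi_{\alpha\beta}}$ is in fact a diffeomorphism of open subsets of $\R^{n_0}$. This is exactly the statement that the transition maps $\ul{\varphi_\alpha} \circ \ul{\varphi_\beta}^{-1}$ of the collection $\ul{\A} = \{(U_\alpha, \ul{\varphi_\alpha})\}_{\alpha\in I}$ are smooth.

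Finally I would assemble $\ul{\A}$ into an ordinary smooth atlas. The sets $U_\alpha$ cover $M$ by the atlas axiom, each $\ul{\varphi_\alpha} : U_\alpha \to \hat U_\alpha \subseteq \R^{n_0}$ is a homeomorphism onto an open set, and the transition maps are smooth by the previous paragraph; hence $\ul{\A}$ is a smooth atlas on $M$ of dimension $n_0$. Since $M$ is assumed second countable and Hausdorff (part of the definition of a graded manifold), this makes $M$ into an ordinary smooth manifold. I would close by remarking that this smooth structure is independent of the chosen atlas for $\M$: any graded local chart $(V,\psi)$ for $\M$ is $\gLRS$-compatible with every $(U_\alpha,\varphi_\alpha)$ by the same composition argument, so $\ul{\psi}$ is smoothly compatible with $\ul{\A}$, and thus all atlases $\ul{\A}$ arising this way determine the same maximal smooth atlas.

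The only mildly delicate point, rather than a genuine obstacle, is bookkeeping the restrictions: one must be careful that $\varphi_\alpha|_{U_{\alpha\beta}}$ is interpreted as a morphism onto the graded domain $\ul{\varphi_\alpha}(U_{\alpha\beta})^{(n_j)}$ (not onto $\hat U_\alpha^{(n_j)}$), which is legitimate precisely because $\ul{\varphi_\alpha}$ is a homeomorphism and by the corestriction statement for $\gLRS$ morphisms following Proposition \ref{tvrz_restrictedgLRS}. Once this is set up correctly, everything reduces to Lemma \ref{lem_inducedandsmooth} applied twice and the elementary fact that a smooth bijection with smooth inverse between open subsets of $\R^{n_0}$ is a diffeomorphism.
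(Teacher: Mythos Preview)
Your proposal is correct and follows essentially the same approach as the paper: the paper's proof is a single line invoking Lemma \ref{lem_inducedandsmooth} for the smoothness of $\ul{\varphi_{\alpha\beta}}$ and declaring the rest straightforward, and you have simply filled in those straightforward details (restrictions, the inverse, assembling the atlas, and independence of the choice of atlas).
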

\begin{proof}
The smoothness of $\ul{\varphi_{\alpha \beta}}$ follows from Lemma \ref{lem_inducedandsmooth}, the rest is straightforward. 
\end{proof}
\begin{rem}
Let us introduce the following convention. Graded manifolds are denoted by capital calligraphic letters ($\M, \cN, \cS, \dots$). Their respective underlying manifolds are always denoted by \textit{the same} letter, but non-calligraphic ($M, N, S, \dots$). 
\end{rem}
\begin{example} \label{ex_gVectasGM}
Every graded domain $U^{(n_{j})} = (U, \C^{\infty}_{(n_{j})})$ is a graded manifold of a graded dimension $(n_{j})_{j \in \Z}$. Its underlying manifold is $U := (U,\C^{\infty}_{n_{0}})$. 

Similarly, for every finite-dimensional graded vector space $X \in \gVect$, let $Y \in \gVect$ be a graded vector space $(Y)_{0} := \{0\}$ and $(Y)_{k} := (X^{\ast})_{k} = (X_{-k})^{\ast}$ for $k \neq 0$. As $X_{0}$ is a finite-dimensional vector space, it can be viewed as a smooth manifold with a sheaf $\C^{\infty}_{X_{0}}$ of its smooth functions. For each $U \in \Op(X_{0})$, define $\C^{\infty}_{X}(U) := \bar{S}(Y, \C_{X_{0}}^{\infty}(U))$. As discussed in Example \ref{ex_thesheaf} and Example \ref{ex_twogLRS}, we find $\M_{X} := (X_{0}, \C^{\infty}_{X}) \in \gLRS$. 

Let $(n_{j})_{j \in \Z}$ be the graded dimension of $X$. Fix a basis $(t_{i})_{i=1}^{n_{0}}$ of $X_{0}$. By using its dual $(t^{i})_{i=1}^{n_{0}}$ as coordinate functions, we obtain a global coordinate chart $\ul{\varphi}: X_{0} \rightarrow \R^{n_{0}}$. Now, fix a total basis $(\xi_{\mu})_{\mu=1}^{n_{\ast}}$ of $Y$. This choice corresponds to the unique isomorphism $\psi: \R^{(n_{-j})}_{\ast} \rightarrow Y$ in $\gVect$. For each $\hat{U} \in \Op(\R^{n_{0}})$, let us consider a graded algebra morphism
\begin{equation}
\varphi^{\ast}_{\hat{U}} := \bar{S}(\psi, \ul{\varphi}^{\ast}): \C^{\infty}_{(n_{-j})}(U) \rightarrow \C^{\infty}_{X}( \ul{\varphi}^{-1}(\hat{U})). 
\end{equation}
See Proposition \ref{tvrz_SVAisfunctor} for details. It follows that $\varphi := (\ul{\varphi}, \varphi^{\ast}): \M_{X} \rightarrow (\R^{n_{0}})^{(n_{-j})}$ defines a graded global chart $(X_{0},\varphi)$ for $\M_{X}$, thus making it into a graded manifold of a graded dimension $(n_{-j})_{j \in \Z}$. Its underlying manifold is the vector space $X_{0}$. 

There are several remarks in order:
\begin{enumerate}[(i)]
\item The intuitive idea behind the construction is the following. We choose a total basis for $X$, its degree zero elements form a basis $(t_{i})_{i=1}^{n_{0}}$ of $X_{0}$, the remaining ones form a total basis $(\xi^{\mu})_{\mu=1}^{n_{\ast}}$ of $Y^{\ast}$. We want to declare elements of the corresponding \textit{dual total basis} to be coordinate functions on $\M_{X}$ of the same degree.
\item The most common example of this construction is $X = V[k]$, where $V \in \Vect$ is an ordinary finite-dimensional vector space and $k \in \Z - \{0\}$. See Definition \ref{def_degreeshifted}. Then $X_{-k} = V$ and all the other components are trivial. In particular, $X_{0} = \{ 0 \}$. A total basis $(\xi_{\mu})_{\mu=1}^{n_{\ast}}$ is then just a basis of $(X_{-k})^{\ast} = V^{\ast}$ and one finds $|\xi_{\mu}| = k$. $\M_{X}$ thus becomes a graded manifold over the singleton manifold. 
\item The construction of $\M_{X}$ does not depend on any particular choices. In fact, the assignment $X \mapsto \M_{X}$ can be viewed as a functor from the category $\gVect_{\fin}$ of finite-dimensional graded vector spaces to the category $\gMan^{\infty}$. 
\end{enumerate}
In the following, we shall usually write just $X$ for $\M_{X}$. 
\end{example}
Let $\M = (M,\C^{\infty}_{\M})$ be a graded manifold. For each $U \in \Op(M)$, we write $\M|_{U} := (U, \C^{\infty}_{\M}|_{U})$. By Proposition \ref{tvrz_restrictedgLRS}, this is a graded locally ringed space. It is not difficult to see that it is also a graded manifold. There is a canonical graded smooth map $i: \M|_{U} \rightarrow \M$. 

For any $f \in \C^{\infty}_{\M}(U)$, its \textbf{local representative} with respect to the graded local chart $(U_{\alpha},\varphi_{\alpha})$ is a function $\hat{f}_{\alpha} \in \C^{\infty}_{(n_{j})}( \ul{\varphi_{\alpha}}(U \cap U_{\alpha}))$ defined as $\hat{f}_{\alpha} := (\varphi^{-1}_{\alpha})^{\ast}_{U \cap U_{\alpha}}(f|_{U \cap U_{\alpha}})$. 

Similarly, let $\phi: \M \rightarrow \cN$ be a graded smooth map over a continuous map $\ul{\phi}: M \rightarrow N$. For each $m \in M$, one can find a graded local chart $(U_{\alpha},\varphi_{\alpha})$ for $\M$ and $(V_{\mu},\psi_{\mu})$ for $\cN$, such that $m \in U_{\alpha}$ and $\ul{\phi}(U_{\alpha}) \subseteq V_{\mu}$. One can thus consider a composition 
\begin{equation} \label{eq_hatphialphamu}
\hat{\phi}_{\alpha \mu} := \psi_{\mu} \circ \phi|_{U_{\alpha}} \circ \varphi_{\alpha}^{-1}: \hat{U}_{\alpha}^{(n_{j})} \rightarrow \hat{V}_{\mu}^{(m_{j})}. \end{equation}
This is a morphism of graded domains. In particular, its underlying map is smooth by Lemma \ref{lem_inducedandsmooth}. We have just proved the following important observation:
\begin{tvrz}
Let $\M = (M,\C^{\infty}_{\M})$ and $\cN = (N,\C^{\infty}_{\cN})$ be two graded manifolds. Let $\phi: \M \rightarrow \cN$ be a graded smooth map. Then the underlying continuous map $\ul{\phi}: M \rightarrow N$ is smooth. 

Consequently, the assignment $\M \mapsto M$ can be viewed as a functor from $\gMan^{\infty}$ to $\Man^{\infty}$. 
\end{tvrz}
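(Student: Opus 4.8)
The plan is to show that the assignment $\M \mapsto M$ on objects, together with $\phi \mapsto \ul{\phi}$ on morphisms, respects identities and composition, and that $\ul{\phi}$ is always smooth; the smoothness is the substantive content and everything else is formal. First I would observe that the statement ``$\ul{\phi}: M \to N$ is smooth'' is a \emph{local} assertion, so it suffices to check it in the neighbourhood of an arbitrary point $m \in M$. Fix such an $m$. Since $\A = \{(U_\alpha, \varphi_\alpha)\}$ is a graded smooth atlas for $\M$ with $\ul{\A} = \{(U_\alpha, \ul{\varphi_\alpha})\}$ the induced ordinary atlas (Proposition \ref{tvrz_transitionmapsunderlyingmanifold}), pick $\alpha$ with $m \in U_\alpha$. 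Likewise, using the continuity of $\ul{\phi}$ and the fact that $\cN$ is covered by the domains of its charts $(V_\mu, \psi_\mu)$, choose $\mu$ with $\ul{\phi}(m) \in V_\mu$; by continuity we may shrink $U_\alpha$ (replacing it by $U_\alpha \cap \ul{\phi}^{-1}(V_\mu)$, which is still open and still a chart domain after restricting $\varphi_\alpha$) so that $\ul{\phi}(U_\alpha) \subseteq V_\mu$.

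Next I would form the composite graded smooth map $\hat{\phi}_{\alpha\mu} := \psi_\mu \circ \phi|_{U_\alpha} \circ \varphi_\alpha^{-1} : \hat{U}_\alpha^{(n_j)} \to \hat{V}_\mu^{(m_j)}$ displayed in \eqref{eq_hatphialphamu}. This is a morphism of graded domains (a composite of $\gLRS$-morphisms between graded domains), so Lemma \ref{lem_inducedandsmooth} applies directly: its underlying continuous map is smooth. But the underlying map of $\hat{\phi}_{\alpha\mu}$ is precisely $\ul{\psi_\mu} \circ \ul{\phi}|_{U_\alpha} \circ \ul{\varphi_\alpha}^{-1}$, which is exactly the coordinate representative of $\ul{\phi}$ with respect to the charts $(U_\alpha, \ul{\varphi_\alpha})$ and $(V_\mu, \ul{\psi_\mu})$ of the underlying manifolds. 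Since this holds around every $m \in M$, $\ul{\phi}$ is smooth by the usual characterisation of smoothness via charts.

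Finally, for the functoriality claim: the assignment sends $\1_{\M} = (\1_M, \text{id})$ to $\1_M$, and given $\phi: \M \to \cN$ and $\psi: \cN \to \cS$, the definition of composition in $\gLRS$ (Definition \ref{def_gLRS}) gives $\underline{\psi \circ \phi} = \ul{\psi} \circ \ul{\phi}$ on underlying spaces; combined with the smoothness just established, each $\ul{\phi}$ is a morphism in $\Man^\infty$ and composition is preserved, so $\M \mapsto M$, $\phi \mapsto \ul{\phi}$ is a functor $\gMan^\infty \to \Man^\infty$.

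I do not expect any genuine obstacle here: the only non-formal ingredient is smoothness of $\ul{\phi}$, and that has effectively already been done — the hard work sits in Lemma \ref{lem_inducedandsmooth} (and, before it, Lemma \ref{lem_itispullback}, which identifies morphisms of ordinary domains with pullbacks along smooth maps). The one point requiring a little care is the reduction to the local picture: one must make sure the shrunk open set $U_\alpha \cap \ul{\phi}^{-1}(V_\mu)$ still carries a graded chart, which it does because restrictions of graded local charts to open subsets are again graded local charts (Proposition \ref{tvrz_restrictedgLRS} and the remark that $\M|_U$ is a graded manifold). Everything else is bookkeeping.
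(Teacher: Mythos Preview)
Your proposal is correct and follows essentially the same approach as the paper: reduce to a local representative $\hat{\phi}_{\alpha\mu}$ between graded domains and invoke Lemma \ref{lem_inducedandsmooth} to conclude that its underlying map, hence the chart representative of $\ul{\phi}$, is smooth. The paper is slightly terser (the construction of $\hat{\phi}_{\alpha\mu}$ and the appeal to Lemma \ref{lem_inducedandsmooth} are made in the text immediately preceding the proposition, which is then stated without a separate proof), but your added care about shrinking the chart domain and the functoriality check are exactly the bookkeeping the paper leaves implicit.
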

\begin{tvrz} \label{tvrz_locrepgluing}
Let $f \in \C^{\infty}_{\M}(U)$. Let $\A = \{ (U_{\alpha},\varphi_{\alpha}) \}_{\alpha \in I}$ be a graded smooth atlas for $\M$. Then for each $(\alpha,\beta) \in I^{2}$, there holds a relation
\begin{equation} \label{eq_locreprelation}
\hat{f}_{\beta}|_{\ul{\varphi_{\beta}}(U \cap U_{\alpha \beta})} = (\varphi_{\alpha \beta})^{\ast}_{\ul{\varphi_{\alpha}}(U \cap U_{\alpha \beta})}( \hat{f}_{\alpha}|_{\ul{\varphi_{\alpha}}(U \cap U_{\alpha \beta})}).
\end{equation}
Conversely, given a collection $\{ \hat{f}_{\alpha} \}_{\alpha \in I}$, where $\hat{f}_{\alpha} \in \C^{\infty}_{(n_{j})}( \ul{\varphi_{\alpha}}(U \cap U_{\alpha}))$ satisfy the relation (\ref{eq_locreprelation}) for each $(\alpha,\beta) \in I^{2}$, there exists a unique $f \in \C^{\infty}_{\M}(U)$, such that $\hat{f}_{\alpha}$ is its local representative with respect to $(U_{\alpha},\varphi_{\alpha})$ for every $\alpha \in I$. 
\end{tvrz}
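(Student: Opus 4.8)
The plan is to treat the two halves separately: the forward implication by a direct manipulation of pullback sheaf morphisms, and the converse by invoking the gluing property of the structure sheaf $\C^{\infty}_{\M}$.

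For the forward direction, I would first note that since $\varphi_{\alpha}$ is an isomorphism in $\gLRS$, the sheaf morphism $\varphi_{\alpha}^{\ast}$ is invertible with inverse $(\varphi_{\alpha}^{-1})^{\ast}$ (Proposition \ref{tvrz_isoingLRS}), and the same holds for the restrictions to $U\cap U_{\alpha}$; hence $f|_{U\cap U_{\alpha}} = (\varphi_{\alpha})^{\ast}_{\ul{\varphi_{\alpha}}(U\cap U_{\alpha})}(\hat{f}_{\alpha})$, and likewise for $\beta$. Restricting both identities to $U\cap U_{\alpha\beta}$ gives $(\varphi_{\alpha})^{\ast}(\hat{f}_{\alpha}|_{\dots}) = (\varphi_{\beta})^{\ast}(\hat{f}_{\beta}|_{\dots})$ as sections of $\C^{\infty}_{\M}(U\cap U_{\alpha\beta})$. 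Applying $(\varphi_{\beta}^{-1})^{\ast}$ to both sides and using the composition rule for pullbacks $(\psi\circ\varphi)^{\ast}_{W} = \varphi^{\ast}_{\ul{\psi}^{-1}(W)}\circ\psi^{\ast}_{W}$ from Definition \ref{def_gLRS}, together with the definition $\varphi_{\alpha\beta} = \varphi_{\alpha}|_{U_{\alpha\beta}}\circ\varphi_{\beta}^{-1}|_{\ul{\varphi_{\beta}}(U_{\alpha\beta})}$ of the transition map from Proposition \ref{tvrz_transitionmapsunderlyingmanifold}, one finds $(\varphi_{\beta}^{-1})^{\ast}\circ(\varphi_{\alpha})^{\ast} = (\varphi_{\alpha\beta})^{\ast}$ on the relevant open set, which is exactly the asserted relation (\ref{eq_locreprelation}). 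The only thing to keep straight is that each restriction and pullback morphism must be read on the correct (triple-indexed) open subset, e.g.\ $\ul{\varphi_{\alpha}}(U\cap U_{\alpha\beta})$.

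For the converse, given a compatible family $\{\hat{f}_{\alpha}\}_{\alpha\in I}$ I would set $f_{\alpha} := (\varphi_{\alpha})^{\ast}_{\ul{\varphi_{\alpha}}(U\cap U_{\alpha})}(\hat{f}_{\alpha})\in\C^{\infty}_{\M}(U\cap U_{\alpha})$. Applying $(\varphi_{\beta})^{\ast}$ to hypothesis (\ref{eq_locreprelation}) and again using $(\varphi_{\alpha\beta})^{\ast} = (\varphi_{\beta}^{-1})^{\ast}\circ(\varphi_{\alpha})^{\ast}$ yields $f_{\alpha}|_{U\cap U_{\alpha\beta}} = f_{\beta}|_{U\cap U_{\alpha\beta}}$ for all $(\alpha,\beta)\in I^{2}$. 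Since $\{U\cap U_{\alpha}\}_{\alpha\in I}$ is an open cover of $U$ and $\C^{\infty}_{\M}|_{U}$ is a sheaf, the gluing property of $\C^{\infty}_{\M}$ gives a unique $f\in\C^{\infty}_{\M}(U)$ with $f|_{U\cap U_{\alpha}} = f_{\alpha}$. Then $(\varphi_{\alpha}^{-1})^{\ast}_{U\cap U_{\alpha}}(f|_{U\cap U_{\alpha}}) = (\varphi_{\alpha}^{-1})^{\ast}(\varphi_{\alpha})^{\ast}(\hat{f}_{\alpha}) = \hat{f}_{\alpha}$, so $\hat{f}_{\alpha}$ is the local representative of $f$ relative to $(U_{\alpha},\varphi_{\alpha})$; uniqueness of $f$ is precisely the monopresheaf property applied to the cover $\{U\cap U_{\alpha}\}$, since any two candidates restrict to $f_{\alpha}$ on each $U\cap U_{\alpha}$.

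I do not anticipate a real obstacle here: the argument is essentially bookkeeping with restriction morphisms, resting on the sheaf axioms for $\C^{\infty}_{\M}$, the functoriality of pullbacks recorded in Definition \ref{def_gLRS}, and the definition of the transition maps. The mild care points are that the restriction of a sheaf isomorphism is again a sheaf isomorphism (so that $(\varphi_{\alpha})^{\ast}$ and $(\varphi_{\alpha}^{-1})^{\ast}$ are genuinely mutually inverse after restricting to $U\cap U_{\alpha}$), and that all of the pullback identities are evaluated on the matching open sets $\ul{\varphi_{\beta}}(U\cap U_{\alpha\beta}) = \ul{\varphi_{\alpha\beta}}^{-1}\bigl(\ul{\varphi_{\alpha}}(U\cap U_{\alpha\beta})\bigr)$.
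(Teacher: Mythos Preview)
Your proposal is correct and follows precisely the approach the paper intends: the paper's own proof merely states that the first part ``follows immediately from definitions'' and that the second part ``is just the gluing property of the sheaf $\C^{\infty}_{\M}$ in disguise.'' Your writeup is a faithful and careful expansion of exactly these two remarks, with the bookkeeping of restrictions and the composition rule $(\psi\circ\varphi)^{\ast}_{W} = \varphi^{\ast}_{\ul{\psi}^{-1}(W)}\circ\psi^{\ast}_{W}$ handled correctly.
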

\begin{proof}
The first part follows immediately from definitions. The second part is just the gluing property of the sheaf $\C^{\infty}_{\M}$ in disguise. 
\end{proof}

Recall that for graded domains, we have constructed the body map, a sheaf morphism $\beta: \C^{\infty}_{(n_{j})} \rightarrow \C^{\infty}_{n_{0}}$. It turns out that a similar map can be constructed for a general graded manifold. 
\begin{tvrz} \label{tvrz_bodymap}
Let $\M = (M,\C^{\infty}_{\M})$ be a graded manifold. Then there exists a canonical sheaf morphism $i^{\ast}_{M}: \C^{\infty}_{\M} \rightarrow \C^{\infty}_{M}$, called the \textbf{body map}. For each $f \in \C^{\infty}_{\M}(U)$, $\ul{f} := (i_{M})^{\ast}_{U}(f)$ is called the \textbf{body of the function $f$}. Note that $\ul{f} = 0$ for $|f| \neq 0$. 

In fact, we may use it do define a graded smooth map $i_{M}: M \rightarrow \M$, where $i_{M} = (\1_{M}, i^{\ast}_{M})$. Let $\cN = (N, \C^{\infty}_{\cN})$ be another graded manifold. For each graded smooth map $\phi: \M \rightarrow \cN$, we find a commutative diagram
\begin{equation} \label{eq_cdbodymaps}
\begin{tikzcd}
\M \arrow{r}{\phi} & \cN \\
M \arrow{u}{i_{M}} \arrow{r}{\ul{\phi}} & N \arrow{u}{i_{N}} 
\end{tikzcd}
\end{equation}
In other words, for each $V \in \Op(N)$ and $f \in \C^{\infty}_{\cN}(V)$, we find $\ul{ \phi^{\ast}_{V}(f)} = \ul{f} \circ \ul{\phi}$. 
\end{tvrz}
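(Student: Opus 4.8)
The plan is to construct the body map locally using the body maps on graded domains already available from Example \ref{ex_bodyofasection}, then glue. First I would fix a graded smooth atlas $\A = \{(U_\alpha,\varphi_\alpha)\}_{\alpha\in I}$ for $\M$. For a function $f\in \C^\infty_\M(U)$, take its local representative $\hat f_\alpha \in \C^\infty_{(n_j)}(\ul{\varphi_\alpha}(U\cap U_\alpha))$ and apply the graded-domain body map $\beta$ to obtain $\ul{\hat f_\alpha} \in \C^\infty_{n_0}(\ul{\varphi_\alpha}(U\cap U_\alpha))$. Pulling back along $\ul{\varphi_\alpha}$ gives an ordinary smooth function $\ul f_\alpha$ on $U\cap U_\alpha$; I would define $(i_M)^\ast_U(f)$ by gluing these. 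The key compatibility to check is that $\ul f_\alpha$ and $\ul f_\beta$ agree on $U\cap U_{\alpha\beta}$, which reduces — via Proposition \ref{tvrz_locrepgluing} (the cocycle relation \eqref{eq_locreprelation}) — to the statement that the graded-domain body map commutes with transition morphisms $\varphi_{\alpha\beta}$; and this is exactly Corollary \ref{cor_domainmapsbodymaps}. Once the functions agree on overlaps, the gluing property of the sheaf $\C^\infty_M$ produces a unique $\ul f \in \C^\infty_M(U)$. Naturality in $U$ (compatibility with restrictions) is immediate since all the constituent operations — taking local representatives, applying $\beta$, pulling back — commute with restriction.

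Next I would verify that $(i_M)^\ast = \{(i_M)^\ast_U\}_U$ is a sheaf morphism $\C^\infty_\M \to \C^\infty_M$: it is a graded algebra morphism in each degree because $\beta$ is (Example \ref{ex_bodyofasection}), pullback by a smooth map is an algebra morphism, and these properties are preserved under gluing; naturality was just noted. The remark that $\ul f = 0$ for $|f|\neq 0$ follows degree-wise from the corresponding property of $\beta$. Then $i_M := (\1_M, i^\ast_M)$ is a candidate morphism in $\gLRS$; I must check property (iii) of Definition \ref{def_gLRS}, i.e. that the induced stalk map $(i_M)_{(m)}: \C^\infty_{\M,m}\to \C^\infty_{M,m}$ is a local graded ring morphism. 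This can be checked in a chart: on a graded domain it says $\beta_x(\frJ(\C^\infty_{(n_j),x})) \subseteq \frJ(\C^\infty_{n_0,x})$, which follows from Corollary \ref{cor_JRgrdomain} together with the explicit description of the Jacobson radical of $\C^\infty_{n_0,x}$ (functions vanishing at $x$) — indeed $\ul f(x) = f(x)$, so $f(m)=0$ forces $\ul f(m)=0$. Hence $i_M$ is a graded smooth map $M\to\M$.

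Finally, for the commutative diagram \eqref{eq_cdbodymaps}, the claim is that for $\phi:\M\to\cN$, $V\in\Op(N)$, $f\in\C^\infty_\cN(V)$ one has $\ul{\phi^\ast_V(f)} = \ul f\circ\ul\phi$. By the monopresheaf property of $\C^\infty_M$ and the naturality already established, it suffices to check this after restricting to the domain of an arbitrary chart $(U_\alpha,\varphi_\alpha)$ for $\M$ with $\ul\phi(U_\alpha)$ contained in the domain of a chart $(V_\mu,\psi_\mu)$ for $\cN$. There it becomes the corresponding statement for the morphism of graded domains $\hat\phi_{\alpha\mu}$ of \eqref{eq_hatphialphamu}, which is precisely Corollary \ref{cor_domainmapsbodymaps}. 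Chasing the definitions of the local representatives through this identity yields the claim.

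I expect the main obstacle to be purely bookkeeping: carefully tracking how the body map interacts with the two different "pullback" operations involved (transition morphisms of $\M$ versus the chart isomorphisms used to define local representatives) so that the overlap-compatibility genuinely reduces to Corollary \ref{cor_domainmapsbodymaps} with no sign or indexing slip, and similarly organizing the final diagram chase. No step is conceptually deep — every ingredient (graded Hadamard is not even needed here, only the body map on domains, Proposition \ref{tvrz_locrepgluing}, Corollary \ref{cor_domainmapsbodymaps}, and the sheaf gluing axioms) is already in place.
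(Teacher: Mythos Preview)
Your proposal is correct and follows essentially the same approach as the paper: define the body map locally via the graded-domain body map $\beta$ pulled back through the chart, use Corollary \ref{cor_domainmapsbodymaps} on the transition morphisms to get overlap compatibility, glue, and then reduce the commutativity of \eqref{eq_cdbodymaps} to the same corollary applied to the local representative $\hat\phi_{\alpha\mu}$. The only cosmetic difference is that the paper glues at the level of sheaf morphisms (via Proposition \ref{tvrz_gluingmorphisms}-(ii)) rather than function-by-function, but the content is identical.
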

\begin{proof}
Let $\A = \{ (U_{\alpha},\varphi_{\alpha}) \}_{\alpha \in I}$ be an atlas for $\M$. We will define a collection of sheaf morphisms $i^{\ast}_{\alpha}: \C^{\infty}_{\M}|_{U_{\alpha}} \rightarrow \C^{\infty}_{M}|_{U_{\alpha}}$ such that $i^{\ast}_{\alpha}|_{U_{\alpha \beta}} = i^{\ast}_{\beta}|_{U_{\alpha \beta}}$. The sheaf morphism $i^{\ast}_{M}: \C^{\infty}_{\M} \rightarrow \C^{\infty}_{M}$ is then obtained by Proposition \ref{tvrz_gluingmorphisms}-(ii). For every $\alpha \in I$, $V \in \Op(U_{\alpha})$ and $f \in \C^{\infty}_{\M}(V)$, define
\begin{equation}
(i^{\ast}_{\alpha})_{V}(f) := \ul{\hat{f}_{\alpha}} \circ \ul{\varphi_{\alpha}},
\end{equation}
where $\hat{f}_{\alpha} \in \C^{\infty}_{(n_{j})}( \ul{\varphi_{\alpha}}(V))$ is the local representative of $f$ with respect to the graded local chart $(U_{\alpha},\varphi_{\alpha})$. It is obvious that $i^{\ast}_{\alpha}$ is a sheaf morphism. In fact, $i_{\alpha} := (\1_{U_{\alpha}}, i^{\ast}_{\alpha})$ defines a graded smooth map $U_{\alpha} \rightarrow \M|_{U_{\alpha}}$. To see that they agree on overlaps, apply the body map to the equation (\ref{eq_locreprelation}) and use Corollary \ref{cor_domainmapsbodymaps} to obtain the relation
\begin{equation}
\ul{\hat{f}_{\beta}}|_{\ul{\varphi_{\beta}}(V)} = \ul{\hat{f}_{\alpha}}|_{\ul{\varphi_{\alpha}}(V)} \circ \ul{\varphi_{\alpha \beta}},
\end{equation}
for all $V \in \Op(U_{\alpha \beta})$ and $f \in \C^{\infty}_{\M}(V)$. But this can be rewritten as $\ul{\hat{f}_{\alpha}} \circ \ul{\varphi_{\alpha}} = \ul{\hat{f}_{\beta}} \circ \ul{\varphi_{\beta}}$. 

By Proposition \ref{tvrz_gluingmorphisms}-(ii), there is thus a unique sheaf morphism $i^{\ast}_{M}: \C^{\infty}_{\M} \rightarrow \C^{\infty}_{M}$, such that $i^{\ast}_{M}|_{U_{\alpha}} = i^{\ast}_{\alpha}$. Moreover, note that $i_{M} = (\1_{M}, i^{\ast}_{M})$ has the property (iii) of Definition \ref{def_gLRS}, as $i_{\alpha} = (\1_{U_{\alpha}}, i^{\ast}_{\alpha})$ does have it for each $\alpha \in I$. It is easy to see that this construction does not depend on the particular choice of the atlas $\A$, and for any graded local chart $(U,\varphi)$ and any $V \in \Op(U)$, we have the formula 
\begin{equation} (i_{M})^{\ast}_{V}(f) = \ul{\hat{f}} \circ \ul{\varphi}, \end{equation}
where $\hat{f} \in \C^{\infty}_{(n_{j})}( \ul{\varphi}(V))$ is the corresponding local representative. 

It remains to verify the commutativity of (\ref{eq_cdbodymaps}). Let $V \in \Op(N)$ and $f \in \C^{\infty}_{\cN}(V)$ be arbitrary. We may assume that $U := \ul{\varphi}^{-1}(V) \neq \emptyset$, otherwise the statement is trivial. For any $m \in U$, one can find a graded local chart $(U_{\alpha},\varphi_{\alpha})$ for $\M$ and $(V_{\mu},\psi_{\mu})$ for $\cN$, such that $U_{\alpha} \in \Op_{m}(U)$, $V_{\mu} \in \Op(V)$ and $\ul{\phi}(U_{\alpha}) \subseteq V_{\mu}$. Let $\hat{\phi}_{\alpha \mu}: \hat{U}_{\alpha}^{(n_{j})} \rightarrow \hat{V}_{\mu}^{(m_{j})}$ be the composition (\ref{eq_hatphialphamu}). One finds
\begin{equation}
\ul{ \phi^{\ast}_{V}(f)}|_{U_{\alpha}} = \ul{ (\hat{\phi}_{\alpha \mu})^{\ast}_{\hat{V}_{\mu}}( \hat{f}_{\mu})} \circ \ul{\varphi_{\alpha}} = (\ul{\hat{f}_{\mu}} \circ \ul{\hat{\phi}_{\alpha \mu}}) \circ \ul{\varphi_{\alpha}} = (\ul{\hat{f}_{\mu}} \circ \ul{\psi_{\mu}}) \circ \ul{\phi}|_{U_{\alpha}} = (\ul{f} \circ \ul{\phi})|_{U_{\alpha}}.
\end{equation}
We have used Corollary \ref{cor_domainmapsbodymaps} for $\hat{\phi}_{\alpha \mu}$ in the process. As $m \in U$ was arbitrary, this shows that $\ul{\phi^{\ast}_{V}(f)} = \ul{f} \circ \ul{\phi}$. This is true for any $V \in \Op(N)$ and $f \in \C^{\infty}_{\cN}(V)$, which is equivalent to the commutativity of (\ref{eq_cdbodymaps}). This finishes the proof.
\end{proof}

As we have just constructed a sheaf morphism $i^{\ast}_{M}: \C^{\infty}_{\M} \rightarrow \C^{\infty}_{M}$, we can form its kernel sheaf
\begin{equation} \label{eq_sheaofidealspurelygraded}
\J^{\pg}_{\M} := \ker(i^{\ast}_{M}) \subseteq \C^{\infty}_{\M}
\end{equation}
called the sheaf of ideals of \textbf{purely graded functions} of $\C^{\infty}_{\M}$. Note that at this moment, we cannot identify the quotient $\C^{\infty}_{\M} / \J^{\pg}_{\M}$ with the sheaf $\C^{\infty}_{M}$. To do so, we need to argue that $i^{\ast}_{M}$ is surjective. Although it is true, the necessary tools will be introduced later on. Let $f \in \C^{\infty}_{\M}(U)$. It is not difficult to obtain the following local criterion:
\begin{equation}
f \in \J^{\pg}_{\M}(U) \Leftrightarrow \hat{f}_{\alpha} \in \J^{\pg}_{(n_{j})}(\ul{\varphi_{\alpha}}(U \cap U_{\alpha})) \text{ for all } \alpha \in I.
\end{equation}

For any $U \in \Op(X)$, any $f \in \C^{\infty}_{\M}(U)$ and any $m \in U$, the \textbf{value of $f$ at $m \in U$} is defined as $f(m) := \ul{f}(m)$. Note that always $f(m) = 0$ for $|f| \neq 0$. 

For each $m \in M$, one can define the sheaf of ideals of \textbf{functions vanishing at $m$}. For $U \in \Op(M)$ such that $m \notin U$, let $\J^{m}_{\M}(U) := \C^{\infty}_{\M}(U)$. For $U \in \Op_{m}(M)$, let
\begin{equation} \label{eq_sheaofidealsvanishing}
\J^{m}_{\M}(U) := \{ f \in \C^{\infty}_{\M}(U) \; | \; f(m) = 0 \}. 
\end{equation}
It is straightforward to see that this is a sheaf of ideals and $\J^{\pg}_{\M} \subseteq \J^{m}_{\M}$. Let $f \in \C^{\infty}_{\M}(U)$ and let $(U_{\alpha},\varphi_{\alpha})$ be any graded local chart for $\M$, $m \in U_{\alpha}$. Then
\begin{equation}
f \in \J^{m}_{\M}(U) \Leftrightarrow \hat{f}_{\alpha} \in \J^{\ul{\varphi_{\alpha}}(m)}_{(n_{j})}( \ul{\varphi_{\alpha}}(U \cap U_{\alpha})).
\end{equation}
Similarly to the graded domain case (see Proposition \ref{tvrz_vanishingidealgen}), for each $m \in M$ and $U \in \Op_{m}(M)$, there is a canonical decomposition
\begin{equation} \label{eq_CinftyMUdecomp}
\C^{\infty}_{\M}(U) = \R \oplus \J^{m}_{\M}(U),
\end{equation}
where $\R$ corresponds to the scalar multiples of the algebra unit in $\C^{\infty}_{\M}(U)$. 

Functions on a graded manifold $\M$ have some properties very similar to the ordinary case. We can also find a simple description of Jacobson radicals of the stalks of the structure sheaf.
\begin{tvrz} \label{tvrz_invertibility}
Let $f \in \C^{\infty}_{\M}(U)$. Then the following statements are true:
\begin{enumerate}[(i)]
\item $f$ has a multiplicative inverse, if and only if $f(m) \neq 0$ for all $m \in U$. 
\item If $f(m) \neq 0$ at some $m \in U$, there is $V \in \Op_{m}(U)$ such that $f(m') \neq 0$ for all $m' \in V$. 
\item For each $m \in M$, the Jacobson radical $\frJ(\C^{\infty}_{\M,m}) \subseteq \C^{\infty}_{\M,m}$ can be written as 
\begin{equation} \label{eq_Jacobsonradicalgradedmafolds}
\frJ(\C^{\infty}_{\M,m}) = \{ [f]_{m} \; | \; f(m) = 0 \}.
\end{equation}
\end{enumerate}
\end{tvrz}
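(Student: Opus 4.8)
The plan is to reduce everything to the corresponding statements for graded domains, which have already been established. First I would fix a graded local chart $(U_\alpha,\varphi_\alpha)$ for $\M$ with $m \in U_\alpha$ (for parts (ii) and (iii)) or cover $U$ by such charts (for part (i)). Under $\varphi_\alpha$, a function $f \in \C^\infty_\M(U)$ has, on each $U \cap U_\alpha$, a local representative $\hat f_\alpha \in \C^\infty_{(n_j)}(\ul{\varphi_\alpha}(U \cap U_\alpha))$, and by Corollary \ref{cor_domainmapsbodymaps} (applied to $\varphi_\alpha$, which is a graded diffeomorphism) one has $\ul{\hat f_\alpha} = \ul f \circ \ul{\varphi_\alpha}^{-1}$ on the relevant open set; in particular the value $f(m)$ equals $\hat f_\alpha(\ul{\varphi_\alpha}(m))$.

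For (i): if $f(m) \neq 0$ for all $m \in U$, then for each $\alpha$ the body $\ul{\hat f_\alpha}$ is nowhere vanishing on $\ul{\varphi_\alpha}(U \cap U_\alpha)$. Running the inversion argument of Example \ref{ex_twogLRS} verbatim --- writing $\hat f_\alpha = \hat f_{\alpha,\mathbf 0}(1 + s')$ with $\hat f_{\alpha,\mathbf 0}$ invertible in $\C^\infty_{n_0}$ and $s'_{\mathbf 0} = 0$, then forming the finite-per-coefficient geometric series as in equations (\ref{eq_suminverse1})--(\ref{eq_suminverse3}) --- produces a local inverse $g_\alpha$ of $\hat f_\alpha$. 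Pulling back, $(\varphi_\alpha^{-1})^\ast$ of $g_\alpha$ gives an inverse of $f|_{U\cap U_\alpha}$ on each chart. By uniqueness of inverses these local inverses agree on overlaps, so by the gluing property of $\C^\infty_\M$ they assemble into a global inverse of $f$ on $U$. Conversely, if $f$ has a multiplicative inverse $g$, then $\ul f \cdot \ul g = \ul{fg} = \ul 1 = 1$ by the body map being an algebra morphism, forcing $f(m) = \ul f(m) \neq 0$ for all $m \in U$; note $|f| = 0$ is automatic here since an invertible element must have a two-sided inverse of degree $-|f|$, and in the structure sheaf $\frU(\C^\infty_\M(U))_k$ is empty for $k \neq 0$ (the body map detects invertibility by (\ref{eq_unitsOXxFxrel})).

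For (ii): this is immediate, since $\ul f$ is a continuous (indeed smooth) function on $U$, so $\{m' \in U : \ul f(m') \neq 0\}$ is open and contains $m$; take $V$ to be this set. For (iii): work in a chart $(U_\alpha, \varphi_\alpha)$ around $m$. By Proposition \ref{tvrz_inducedstalkpushsheaf} and Corollary \ref{cor_inducedstalkmap}, the stalk $\C^\infty_{\M,m}$ is graded-ring isomorphic to $\C^\infty_{(n_j),\ul{\varphi_\alpha}(m)}$, and this isomorphism is induced by pulling back germs along $\varphi_\alpha^{-1}$; under it the value $f(m)$ corresponds to $\hat f_\alpha(\ul{\varphi_\alpha}(m))$. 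Now Corollary \ref{cor_JRgrdomain} gives $\frJ(\C^\infty_{(n_j),\ul{\varphi_\alpha}(m)}) = \{[h]_{\ul{\varphi_\alpha}(m)} : h(\ul{\varphi_\alpha}(m)) = 0\}$, and transporting this along the isomorphism (which, being a graded ring isomorphism, carries Jacobson radical to Jacobson radical by Corollary \ref{cor_isomorphismarelocal}) yields exactly (\ref{eq_Jacobsonradicalgradedmafolds}). I do not anticipate a serious obstacle; the only care needed is bookkeeping --- checking that the chart-transported inverses in (i) glue (which follows from uniqueness of two-sided inverses and the monopresheaf property) and verifying that the identifications of values and Jacobson radicals in (iii) are compatible with the stalk isomorphism, which is routine from the already-proven functoriality of stalks.
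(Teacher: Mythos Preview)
Your proposal is correct and, for parts (i) and (ii), follows essentially the same route as the paper: locally invert via the geometric series of Example~\ref{ex_twogLRS}, then glue (the paper verifies the overlap condition (\ref{eq_locreprelation}) directly rather than invoking uniqueness of inverses, but this is the same content), and (ii) is just continuity of the body.

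For part (iii) you take a slightly different path: you transport Corollary~\ref{cor_JRgrdomain} along the stalk isomorphism induced by a graded local chart, invoking Corollary~\ref{cor_isomorphismarelocal} to carry the Jacobson radical across. The paper instead uses the already-established (i) and (ii) to characterize $\frU(\C^{\infty}_{\M,m})$ directly and then applies $\frJ = R \setminus \frU(R)$ from Proposition~\ref{tvrz_grlocal}. Both arguments are short and valid; yours is a clean ``transport of structure'' argument independent of (i) and (ii), while the paper's has the small virtue of making (iii) an internal consequence of (i)--(ii) without revisiting the chart identification of stalks.
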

\begin{proof}
Let us start by proving (i). If $f$ has a multiplicative inverse $f^{-1} \in \C^{\infty}_{\M}(U)$, we have $1 = (f \cdot f^{-1})(m) = f(m) \cdot f^{-1}(m)$, hence $f(m) \neq 0$ for all $m \in U$. 

Conversely, suppose that $f(m) \neq 0$ for all $m \in U$. Let $\A = \{ (U_{\alpha},\varphi_{\alpha}) \}_{\alpha \in I}$ be a graded smooth atlas for $\M$. We claim that each local representative $\hat{f}_{\alpha}$ of $f$ is invertible. We may assume $U \cap U_{\alpha} \neq \emptyset$, otherwise the statement is trivial. By assumption, each ordinary function $(\hat{f}_{\alpha})_{\mathbf{0}}$ is non-zero everywhere on $\ul{\varphi_{\alpha}}(U \cap U_{\alpha})$, hence invertible in $\C^{\infty}_{n_{0}}( \ul{\varphi_{\alpha}}(U \cap U_{\alpha}))$. As in Example \ref{ex_twogLRS}, one may now use the formulas (\ref{eq_suminverse1} - \ref{eq_suminverse3}) to construct the inverse $\hat{f}_{\alpha}^{-1}$. Clearly, these must be the local representatives of the inverse $f^{-1}$. One thus only has to check that they obey (\ref{eq_locreprelation}). But as $\varphi^{\ast}_{\alpha \beta}$ is a sheaf morphism, it follows easily that  $(\varphi_{\alpha \beta})^{\ast}_{\ul{\varphi_{\alpha}}(U \cap U_{\alpha \beta})}( \hat{f}_{\alpha}^{-1}|_{\ul{\varphi_{\alpha}}(U \cap U_{\alpha \beta})})$ inverts $\hat{f}_{\beta}|_{\ul{\varphi_{\beta}}(U \cap U_{\alpha \beta})}$. By Proposition \ref{tvrz_locrepgluing}, there is thus a unique $f^{-1} \in \C^{\infty}_{\M}(U)$, whose local representatives are $\hat{f}_{\alpha}^{-1}$. It is easy to prove that $f \cdot f^{-1} = 1$. 

The claim (ii) follows trivially from the same property of ordinary continuous functions. 

To prove (iii), suppose $[f]_{m} \in \frU( \C^{\infty}_{\M,m})$. We may choose the representative $f \in \C^{\infty}_{\M}(U)$ having a multiplicative inverse. It follows from (i) that $f(m) \neq 0$. Conversely, if $[f]_{m}$ is represented by some $f \in \C^{\infty}_{\M}(U)$ with $f(m) \neq 0$, we may use (ii) to find $V \in \Op_{m}(U)$, such that $f(m') \neq 0$ for all $m' \in V$. Hence $f|_{V}$ has a multiplicative inverse by (i). Thus $[f]_{m} \in \frU(\C^{\infty}_{\M,m})$. The equation (\ref{eq_Jacobsonradicalgradedmafolds}) follows from the fact that $\frJ(\C^{\infty}_{\M,m}) = \C^{\infty}_{\M,m} - \frU(\C^{\infty}_{\M,m})$, see Proposition \ref{tvrz_grlocal}.
\end{proof}
\begin{example} \label{ex_constantmapping}
Let $\M = (M,\C^{\infty}_{\M})$ and $\cN = (N, \C^{\infty}_{\cN})$ be two graded manifolds. For each $n \in N$, let $\ul{\kappa_{n}}: M \rightarrow N$ to be the corresponding constant mapping, that is $\ul{\kappa_{n}}(m) = n$ for all $m \in M$. Let us now define a graded smooth map $\kappa_{n}: \M \rightarrow \cN$ as follows. 

For each $V \in \Op_{n}(N)$ and $f \in \C^{\infty}_{\cN}(V)$, we set
\begin{equation}
(\kappa_{n})^{\ast}_{V}(f) := \ul{f}(n) \in \C^{\infty}_{\M}(M) = (\ul{\kappa_{n}}_{\ast} \C^{\infty}_{\M})(V),
\end{equation}
where we have used the fact that $\ul{\kappa_{n}}^{-1}(V) = M$. For $V \in \Op(N)$ such that $n \notin V$, one has $\ul{\kappa_{n}}^{-1}(V) = \emptyset$, whence $(\ul{\kappa_{n}}_{\ast} \C^{\infty}_{\M})(V) = 0$. We can thus set $(\kappa_{n})^{\ast}_{V} := 0$. 

It is easy to verify that this defines a sheaf morphism $\kappa_{n}^{\ast}: \C^{\infty}_{\cN} \rightarrow \ul{\kappa_{n}}_{\ast} \C^{\infty}_{\M}$. One only has to verify that for each $m \in M$, the induced map $(\kappa_{n})_{(m)}: \C^{\infty}_{\cN,n} \rightarrow \C^{\infty}_{\M,m}$ preserves the Jacobson radicals. We have $[f]_{n} \in \frJ(\C^{\infty}_{\cN,n})$, iff $\ul{f}(n) = 0$. Whence $(\kappa_{n})_{(m)}([f]_{n}) = [0]_{m} \in \frJ(\C^{\infty}_{\M,m})$. We conclude that $\kappa_{n}: \M \rightarrow \cN$ is a graded smooth map, called the \textbf{constant mapping valued at $n$}.
\end{example}

We conclude this subsection with a global analogue of Theorem \ref{thm_gradedomaintheorem}.
\begin{theorem} \label{thm_globaldomain}.
Let $\M = (M,\C^{\infty}_{\M})$ be a graded manifold. Let $\hat{V}^{(m_{j})}$ be a graded domain. Let $(\theta_{\nu})_{\nu=1}^{m_{\ast}}$ be a total basis of $\R^{(m_{j})}_{\ast}$, and let $(y^{1},\dots,y^{m_{0}})$ be the standard coordinates on $\hat{V} \subseteq \R^{m_{0}}$. 

Then any graded smooth map $\phi: \M \rightarrow \hat{V}^{(m_{j})}$ uniquely determines and is uniquely determined by the following data:
\begin{enumerate}[(i)]
\item A collection $\{ \hat{y}^{j} \}_{j=1}^{m_{0}}$, where $\hat{y}^{j} \in \C^{\infty}_{\M}(M)_{0}$ for each $j \in \{1,\dots,m_{0}\}$, where the smooth map $(\ul{\hat{y}}^{1}, \dots, \ul{\hat{y}}^{m_{0}}): M \rightarrow \R^{m_{0}}$ takes values in $\hat{V}$. 
\item A collection $\{ \hat{\theta}_{\nu} \}_{\nu=1}^{m_{\ast}}$, where $\hat{\theta}_{\nu} \in \C^{\infty}_{\M}(M)_{|\theta_{\nu}|}$ for each $\nu \in \{1,\dots,m_{\ast} \}$. 
\end{enumerate}
The relation of these data to $\phi = (\ul{\phi}, \phi^{\ast})$ is 
\begin{equation} \label{eq_hatvariablesglobaldomainthm}
(\ul{\hat{y}}^{1}, \dots, \ul{\hat{y}}^{m_{0}}) = \ul{\phi}, \; \; 
\hat{y}^{j} = \phi^{\ast}_{\hat{V}}(y^{j}), \; \; \hat{\theta}_{\nu} = \phi^{\ast}_{\hat{V}}(\theta_{\nu}),\end{equation}
 where $y^{j} \in \C^{\infty}_{(m_{j})}(\hat{V})$ and $\theta_{\nu} \in \C^{\infty}_{(m_{j})}(\hat{V})$ are the functions corresponding to the coordinates $(y^{j})_{j=1}^{m_{0}}$ and the total basis $(\theta_{\nu})_{\nu=1}^{m_{\ast}}$. 
\end{theorem}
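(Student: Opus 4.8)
The plan is to reduce the global statement to the local one, Theorem \ref{thm_gradedomaintheorem}, by gluing, exactly as was done for the body map in Proposition \ref{tvrz_bodymap}. First I would fix a graded smooth atlas $\A = \{ (U_{\alpha}, \varphi_{\alpha}) \}_{\alpha \in I}$ for $\M$ with $\varphi_{\alpha}: \M|_{U_{\alpha}} \to \hat{U}_{\alpha}^{(n_{j})}$. Given a graded smooth map $\phi: \M \to \hat{V}^{(m_{j})}$, for each $\alpha$ we get a morphism of graded domains
\begin{equation}
\hat{\phi}_{\alpha} := \phi|_{U_{\alpha}} \circ \varphi_{\alpha}^{-1}: \hat{U}_{\alpha}^{(n_{j})} \rightarrow \hat{V}^{(m_{j})},
\end{equation}
to which Theorem \ref{thm_gradedomaintheorem} applies: $\hat{\phi}_{\alpha}$ is determined by its underlying smooth map and by the pullbacks $(\hat{\phi}_{\alpha})^{\ast}_{\hat{V}}(y^{j})$ and $(\hat{\phi}_{\alpha})^{\ast}_{\hat{V}}(\theta_{\nu})$ in $\C^{\infty}_{(n_{j})}(\hat{U}_{\alpha})$. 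These are precisely the local representatives, relative to the chart $(U_{\alpha},\varphi_{\alpha})$, of the global functions $\phi^{\ast}_{\hat{V}}(y^{j})$ and $\phi^{\ast}_{\hat{V}}(\theta_{\nu})$ on $\M$; so defining $\hat{y}^{j} := \phi^{\ast}_{\hat{V}}(y^{j}) \in \C^{\infty}_{\M}(M)_{0}$ and $\hat{\theta}_{\nu} := \phi^{\ast}_{\hat{V}}(\theta_{\nu}) \in \C^{\infty}_{\M}(M)_{|\theta_{\nu}|}$ gives the forward direction, with $\ul{\phi} = (\ul{\hat{y}}^{1}, \dots, \ul{\hat{y}}^{m_{0}})$ by Remark \ref{rem_itsufficestocalculatepullbacks}, and the body constraint that this map take values in $\hat V$ is automatic.

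For the converse direction, suppose we are given the data $(i)$ and $(ii)$. The underlying map $\ul{\phi} := (\ul{\hat y}^1,\dots,\ul{\hat y}^{m_0}): M \to \hat V$ is smooth. On each chart domain $U_\alpha$, the functions $\hat y^j|_{U_\alpha}$ and $\hat\theta_\nu|_{U_\alpha}$ have local representatives $\widehat{(\hat y^j)}_\alpha \in \C^{\infty}_{(n_j)}(\ul{\varphi_\alpha}(U_\alpha))_0$ and $\widehat{(\hat\theta_\nu)}_\alpha$ of degree $|\theta_\nu|$; note $\widehat{(\hat y^j)}_\alpha$ need not lie in $\J^{\pg}_{(n_j)}$, but subtracting off the body we obtain $\bar y^j_\alpha := \widehat{(\hat y^j)}_\alpha - y^j\circ\ul{\hat\phi_\alpha} \in \J^{\pg}_{(n_j)}(\ul{\varphi_\alpha}(U_\alpha))_0$ where $\ul{\hat\phi_\alpha} = \ul{\varphi_\alpha}\circ\ul\phi|_{U_\alpha}^{-1}$ (using that $(\widehat{(\hat y^j)}_\alpha)_{\mathbf 0} = \ul{\hat y^j}\circ\ul{\varphi_\alpha}^{-1} = y^j\circ\ul{\hat\phi_\alpha}$). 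Lemma \ref{lem_themorphism} then produces a morphism of graded domains $\hat\phi_\alpha: \hat U_\alpha^{(n_j)} \to \hat V^{(m_j)}$ with underlying map $\ul{\hat\phi_\alpha}$, and hence a graded smooth map $\phi_\alpha := \hat\phi_\alpha\circ\varphi_\alpha: \M|_{U_\alpha} \to \hat V^{(m_j)}$.

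The crux is then to check that $\phi_\alpha|_{U_{\alpha\beta}} = \phi_\beta|_{U_{\alpha\beta}}$, so that Proposition \ref{tvrz_gLRSgluing} yields a unique $\phi: \M \to \hat V^{(m_j)}$ with $\phi|_{U_\alpha} = \phi_\alpha$. For this, observe that on $U_{\alpha\beta}$ both $\phi_\alpha$ and $\phi_\beta$ send $y^j$ to (the restriction of) $\hat y^j$ and $\theta_\nu$ to $\hat\theta_\nu$ — this is the content of the relations \eqref{eq_barvariablesobtained} in Lemma \ref{lem_themorphism}, pulled back along $\varphi_\alpha$ resp. $\varphi_\beta$. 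Two graded smooth maps from $\M|_{U_{\alpha\beta}}$ into a graded domain agreeing on the pullbacks of all the global coordinate functions $y^j, \theta_\nu$ must coincide; this is precisely the uniqueness clause of Theorem \ref{thm_gradedomaintheorem} applied to the compositions with the charts, i.e. it reduces to the local uniqueness already proven. I expect this gluing/consistency verification to be the main obstacle, but it is genuinely routine once one notes everything descends to the local statement. Finally, uniqueness of $\phi$ given the data $(i), (ii)$ follows the same way: if $\phi, \phi'$ induce the same collections, then $\phi|_{U_\alpha}$ and $\phi'|_{U_\alpha}$ agree by Theorem \ref{thm_gradedomaintheorem}, hence $\phi = \phi'$ by the monopresheaf property, completing the proof.
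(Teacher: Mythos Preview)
Your proposal is correct and follows essentially the same route as the paper's proof (Theorem \ref{thm_ap_globaldomain}): pass to a graded atlas, build the local maps $\hat\phi_\alpha$ from the data via Lemma \ref{lem_themorphism}/Theorem \ref{thm_gradedomaintheorem}, verify the overlap compatibility by comparing pullbacks of the coordinate functions, and glue via Proposition \ref{tvrz_gLRSgluing}. One small slip: you wrote $\ul{\hat\phi_\alpha} = \ul{\varphi_\alpha}\circ\ul\phi|_{U_\alpha}^{-1}$, but the correct composition is $\ul{\hat\phi_\alpha} = \ul\phi|_{U_\alpha}\circ\ul{\varphi_\alpha}^{-1}$ (your subsequent body computation is consistent with the correct formula).
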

\begin{proof}
The interested reader can find the detailed proof in the appendix, see Theorem \ref{thm_ap_globaldomain}.
\end{proof}
This theorem allows one to identify the global sections of the structure sheaf with actual graded smooth maps to a graded manifold.
\begin{cor} \label{cor_functionsasmaps}
For each $k \in \Z$, we have a pair of functors from $(\gMan^{\infty})^{\op}$ to $\Set$, namely
\begin{equation}
\M \mapsto \C^{\infty}_{\M}(M)_{k}, \; \; \M \mapsto \gMan^{\infty}(\M,\R[k])
\end{equation}
Then there is a natural isomorphism of these functors. 

For $f \in \C^{\infty}_{\M}(M)_{k}$, let $\phi_{f} \in \gMan^{\infty}(\M,\R[k])$ denote the corresponding graded smooth map. 
\end{cor}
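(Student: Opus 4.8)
The plan is to unpack what the corollary is actually asserting and then assemble it from the graded domain theorem of the previous subsection. A single-object graded vector space $\R[k]$ is, by Example \ref{ex_gVectasGM}, the graded manifold whose underlying manifold is the singleton $\{0\} = \R^{0}$ and whose structure sheaf is $\C^{\infty}_{(m_j)}$ with $m_0 = 0$, $m_{-k} = 1$ (if $k \neq 0$) and all other $m_j = 0$; it carries a single global coordinate function $\theta$ of degree $k$ coming from the chosen basis of $(\R[k])^{\ast}$. So a graded smooth map $\phi \colon \M \to \R[k]$ is a graded smooth map into the graded domain $(\R^0)^{(m_j)}$, and Theorem \ref{thm_globaldomain} applies verbatim. (The degenerate case $k = 0$, where $\R[0] = \R$ is an ordinary $1$-dimensional manifold with global coordinate $y$ of degree $0$, is handled the same way.)

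First I would fix, once and for all, the graded manifold $\R[k]$ together with its canonical global coordinate function — call it $\theta^{(k)} \in \C^\infty_{\R[k]}(\R[k])_k$ (for $k=0$ this is $y \in \C^\infty_{\R}(\R)_0$). Then Theorem \ref{thm_globaldomain}, specialized to $\cN = \R[k]$ (so $m_0 = 0$ when $k\neq 0$, hence there are no $\hat y^j$'s and the underlying map $\ul\phi \colon M \to \{0\}$ is forced and carries no data; and $m_\ast = 1$ with a single basis element $\theta^{(k)}$ of degree $k$), says precisely that a graded smooth map $\phi \colon \M \to \R[k]$ is uniquely determined by and uniquely determines a single function $\hat\theta \in \C^\infty_\M(M)_k$, via $\hat\theta = \phi^\ast_{\R[k]}(\theta^{(k)})$. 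This is exactly a bijection $\gMan^\infty(\M, \R[k]) \xrightarrow{\ \sim\ } \C^\infty_\M(M)_k$ sending $\phi \mapsto \phi^\ast_{\R[k]}(\theta^{(k)})$; I will take its inverse as the definition of $f \mapsto \phi_f$.

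Next I would check naturality in $\M$, i.e. that for a graded smooth map $\psi \colon \M \to \cN$ the square relating the two presheaf-to-$\Set$ functors commutes. On the $\C^\infty_{(-)}(-)_k$ side, $\psi$ acts by $g \mapsto \psi^\ast_N(g)$; on the $\gMan^\infty(-, \R[k])$ side it acts by precomposition $\phi \mapsto \phi \circ \psi$. So I must verify that $(\phi \circ \psi)^\ast_{\R[k]}(\theta^{(k)}) = \psi^\ast_N(\phi^\ast_{\R[k]}(\theta^{(k)}))$ as elements of $\C^\infty_\M(M)_k$. But this is immediate from the definition of composition in $\gLRS$ in Definition \ref{def_gLRS}, namely $(\phi\circ\psi)^\ast_W = \psi^\ast_{\ul\phi^{-1}(W)} \circ \phi^\ast_W$, applied with $W = \R[k]$ (its own underlying space) and noting $\ul\phi^{-1}(\R[k]) = N$. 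Hence the collection $\{ \phi \mapsto \phi^\ast_{\R[k]}(\theta^{(k)}) \}_\M$ is a natural transformation, and since each component is a bijection by the previous paragraph, it is a natural isomorphism of functors $(\gMan^\infty)^{\op} \to \Set$. I do not expect any serious obstacle: the entire content has been front-loaded into Theorem \ref{thm_globaldomain}, and the only mild subtlety is bookkeeping the degenerate $k = 0$ case and being careful that ``$\R[k]$'' really denotes the graded manifold $\M_{\R[k]}$ of Example \ref{ex_gVectasGM} rather than a bare graded vector space; I would state this identification explicitly at the outset so the invocation of the graded domain theorem is unambiguous.
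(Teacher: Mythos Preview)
Your proposal is correct and follows the same approach as the paper: identify $\R[k]$ with a graded domain and invoke Theorem \ref{thm_globaldomain}. The paper's own proof is two sentences (noting that $\R[k]$ coincides with the graded domain $\R^{(m_j)}$ for $k=0$ and $\{0\}^{(m_j)}$ for $k\neq 0$, with $m_k = 1$), whereas you spell out the bijection and the naturality check explicitly; this is fine and arguably more complete.

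One small slip: you write that the structure sheaf is $\C^\infty_{(m_j)}$ with $m_{-k}=1$, but in the paper's conventions (cf.\ Example \ref{ex_gVectasGM} and the proof given there) the graded dimension of $\M_{\R[k]}$ is $(m_j)$ with $m_k = 1$, not $m_{-k}=1$. Your subsequent claim that the single coordinate $\theta^{(k)}$ has degree $k$ is the correct one, and that is what the argument actually uses, so this is only a bookkeeping inconsistency and does not affect the proof.
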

\begin{proof}
Recall that $\R[k]$ is a graded manifold from Example \ref{ex_gVectasGM}. It has a graded dimension $(m_{j})_{j \in \Z}$ where $m_{k} = 1$ and zero otherwise. It coincides with the graded domain $\R^{(m_{j})}$ for $k = 0$ and with $\{ 0 \}^{(m_{j})}$ for $k \neq 0$. The rest follows easily from the previous theorem. 
\end{proof}
\subsection{Collation and gluing theorems}
In this subsection, we will formulate and prove the sequence of technical propositions of extreme importance for the actual construction of graded manifolds. 

\begin{tvrz}[\textbf{Collation of graded manifolds I}] \label{tvrz_gmcollation1}
Let $M$ be a second countable Hausdorff topological space and let $\{ U_{\alpha} \}_{\alpha \in I}$ be its open cover. Suppose we are given the following data:
\begin{enumerate}[(i)]
\item a collection $\{ \M_{\alpha} \}_{\alpha \in I}$, where each $\M_{\alpha} = (U_{\alpha}, \C^{\infty}_{\M_{\alpha}})$ is a graded manifold of the same given graded dimension $(n_{j})_{j \in \Z}$;
\item a collection $\{ \phi_{\alpha \beta} \}_{(\alpha,\beta) \in I^{2}}$, where $\phi_{\alpha \beta}: \M_{\beta}|_{U_{\alpha \beta}} \rightarrow \M_{\alpha}|_{U_{\alpha \beta}}$ are graded diffeomorphisms such that $\ul{\phi_{\alpha \beta}} = \1_{U_{\alpha \beta}}$ and for each $(\alpha,\beta,\gamma) \in I^{3}$, there holds the cocycle condition
\begin{equation} \label{eq_gmcollationcocycle}
\phi_{\alpha \gamma} = \phi_{\alpha \beta} \circ \phi_{\beta \gamma}
\end{equation}
where all maps are assumed to be restricted to the open subset $U_{\alpha \beta \gamma}$. 
\end{enumerate}
Then there exists a graded manifold $\M = (M, \C^{\infty}_{\M})$ of a graded dimension $(n_{j})_{j \in \Z}$ together with a collection $\{ \lambda_{\alpha} \}_{\alpha \in I}$, where each $\lambda_{\alpha}: \M_{\alpha} \rightarrow \M|_{U_{\alpha}}$ is a graded diffeomorphism such that $\ul{\lambda_{\alpha}} = \1_{U_{\alpha}}$, and for all $(\alpha,\beta) \in I^{2}$, one has $\lambda_{\alpha}|_{U_{\alpha \beta}} \circ \phi_{\alpha \beta} = \lambda_{\beta}|_{U_{\alpha \beta}}$. 

If $\M'$ and $\{\lambda'_{\alpha} \}_{\alpha \in I}$ are another data having these properties, there exists a unique graded diffeomorphism $\varphi: \M \rightarrow \M'$, such that $\ul{\varphi} = \1_{M}$ and $\varphi|_{U_{\alpha}} \circ \lambda_{\alpha} = \lambda'_{\alpha}$ for every $\alpha \in I$. 
\end{tvrz}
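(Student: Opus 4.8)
The plan is to reduce the statement to the already-established collation theorem for sheaves of graded commutative associative algebras (Proposition \ref{tvrz_shcollation1}) together with its refinement (Proposition \ref{tvrz_shcollation2}), and then to verify that the collated sheaf is in fact the structure sheaf of a graded manifold. First I would observe that since $\ul{\phi_{\alpha\beta}} = \1_{U_{\alpha\beta}}$, the sheaf part $\phi^{\ast}_{\alpha\beta}: \C^{\infty}_{\M_{\alpha}}|_{U_{\alpha\beta}} \to \C^{\infty}_{\M_{\beta}}|_{U_{\alpha\beta}}$ of each $\phi_{\alpha\beta}$ is a sheaf isomorphism over the identity map, and the cocycle condition (\ref{eq_gmcollationcocycle}) together with the composition law for morphisms in $\gLRS$ translates into the cocycle condition (\ref{eq_shcollcocycle}) for the sheaf isomorphisms $\phi^{\ast}_{\beta\alpha}$ (note the reversal of arrows: $\phi_{\alpha\beta}: \M_\beta|_{U_{\alpha\beta}} \to \M_\alpha|_{U_{\alpha\beta}}$ has sheaf component going from $\C^\infty_{\M_\alpha}$ to $\C^\infty_{\M_\beta}$). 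Applying Proposition \ref{tvrz_shcollation1} to $\{\C^{\infty}_{\M_{\alpha}}\}_{\alpha\in I}$ with these gluing isomorphisms yields a sheaf $\C^{\infty}_{\M} \in \Sh(M,\gcAs)$ together with sheaf isomorphisms $\lambda^{\ast}_{\alpha}: \C^{\infty}_{\M}|_{U_{\alpha}} \to \C^{\infty}_{\M_{\alpha}}$ satisfying the compatibility with the $\phi^{\ast}_{\alpha\beta}$. I would then set $\lambda_{\alpha} := (\1_{U_{\alpha}}, \lambda^{\ast}_{\alpha})$, which by Proposition \ref{tvrz_isoingLRS} is an isomorphism in $\gLRS$ once we know both sides are graded locally ringed spaces.

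Next I would check that $\M := (M, \C^{\infty}_{\M})$ is a graded manifold. For each $x \in M$, the stalk $(\C^{\infty}_{\M})_{x}$ is, via $\lambda_{\alpha}$ for any $\alpha$ with $x \in U_{\alpha}$, isomorphic as a graded ring to $(\C^{\infty}_{\M_{\alpha}})_{x}$ (by Corollary \ref{cor_inducedstalkmap}), which is a local graded ring since $\M_\alpha$ is a graded manifold; hence by Corollary \ref{cor_isoringtolocalislocal} the stalk $(\C^{\infty}_{\M})_{x}$ is local and $\M \in \gLRS$. The underlying space $M$ is second countable Hausdorff by hypothesis. To produce a graded smooth atlas of graded dimension $(n_{j})_{j\in\Z}$: each $\M_{\alpha}$ has such an atlas $\{(U^{\alpha}_{i},\varphi^{\alpha}_{i})\}_{i}$ with $U^{\alpha}_{i}\subseteq U_{\alpha}$; composing the charts $\varphi^{\alpha}_{i}$ with the restrictions of $\lambda_{\alpha}^{-1}$ gives graded local charts $(U^{\alpha}_{i}, \varphi^{\alpha}_{i}\circ\lambda_{\alpha}^{-1}|_{\cdots})$ for $\M$, and their union over all $\alpha, i$ covers $M$ and has constant graded dimension $(n_{j})_{j\in\Z}$. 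This establishes existence.

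For uniqueness, suppose $\M'$ and $\{\lambda'_{\alpha}\}_{\alpha\in I}$ satisfy the same properties. Then $\C^{\infty}_{\M'}$ together with the sheaf isomorphisms $(\lambda'_{\alpha})^{\ast}$ constitute a second collation of the same data $\{\C^{\infty}_{\M_{\alpha}}\}$, $\{\phi^{\ast}_{\alpha\beta}\}$, so by the uniqueness clause of Proposition \ref{tvrz_shcollation1} there is a unique sheaf isomorphism $\varphi^{\ast}: \C^{\infty}_{\M'} \to \C^{\infty}_{\M}$ with $(\lambda'_{\alpha})^{\ast}\circ\varphi^{\ast}|_{U_{\alpha}} = \lambda_{\alpha}^{\ast}$ (again minding arrow directions). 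Setting $\varphi := (\1_{M}, \varphi^{\ast})$, Proposition \ref{tvrz_isoingLRS} shows $\varphi$ is an isomorphism in $\gLRS$, hence a graded diffeomorphism $\M \to \M'$ with $\ul{\varphi} = \1_{M}$, and the local compatibility $\varphi|_{U_{\alpha}}\circ\lambda_{\alpha} = \lambda'_{\alpha}$ is just the translation of the sheaf-level identity. Uniqueness of $\varphi$ among graded diffeomorphisms with these properties follows from the uniqueness of $\varphi^{\ast}$, since a graded diffeomorphism over $\1_M$ is determined by its sheaf component.

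The main obstacle, and the only point requiring genuine care, is the bookkeeping of arrow directions: the $\gLRS$ morphisms $\phi_{\alpha\beta}$ point from $\M_\beta$ to $\M_\alpha$ while their sheaf components point the opposite way, so one must check that the cocycle condition (\ref{eq_gmcollationcocycle}), once dualized, matches exactly the form (\ref{eq_shcollcocycle}) demanded by Proposition \ref{tvrz_shcollation1} — this is where a sign error in indices would propagate. Everything else is a routine transport of structure along sheaf isomorphisms, using that the properties ``stalks are local'' and ``admits a graded smooth atlas of dimension $(n_j)$'' are manifestly invariant under isomorphism in $\gLRS$.
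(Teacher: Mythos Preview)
Your proposal is correct and follows essentially the same approach as the paper: reduce to the sheaf collation theorem (Proposition \ref{tvrz_shcollation1}), use Corollary \ref{cor_isoringtolocalislocal} to conclude the collated sheaf has local stalks, pull back charts from the $\M_\alpha$ via the $\lambda_\alpha^{-1}$ to build the atlas, and invoke the uniqueness clause of Proposition \ref{tvrz_shcollation1} for the last part. Your explicit remark about the arrow-direction bookkeeping is the one subtlety the paper handles silently; otherwise the arguments are identical.
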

\begin{proof}
By definition, we have a collection of sheaves $\{ \C^{\infty}_{\M_{\alpha}} \}_{\alpha \in I}$ where $\C^{\infty}_{\M_{\alpha}} \in \Sh(U_{\alpha},\gcAs)$. Moreover, we have a collection of sheaf morphisms $\{ \phi^{\ast}_{\alpha \beta} \}_{(\alpha,\beta)\in I^{2}}$, where $\phi^{\ast}_{\alpha \beta}: \C^{\infty}_{\M_{\alpha}}|_{U_{\alpha \beta}} \rightarrow \C^{\infty}_{\M_{\beta}}|_{U_{\alpha \beta}}$. Together, they form a collation data $(i)$ and $(ii)$ of Proposition \ref{tvrz_shcollation1}. There is thus a sheaf $\C^{\infty}_{\M} \in \Sh(M,\gcAs)$ together with a collection of sheaf isomorphisms $\{ \lambda^{\ast}_{\alpha} \}_{\alpha \in I}$ where $\lambda^{\ast}_{\alpha}: \C^{\infty}_{\M}|_{U_{\alpha}} \rightarrow \C^{\infty}_{\M_{\alpha}}$ satisfy $\phi^{\ast}_{\alpha \beta} \circ \lambda^{\ast}_{\alpha}|_{U_{\alpha \beta}} = \lambda^{\ast}_{\beta}|_{U_{\alpha \beta}}$ for every $(\alpha,\beta) \in I^{2}$. 

It now follows from Corollary \ref{cor_isoringtolocalislocal} and Proposition \ref{tvrz_restrictedgLRS} that $\M = (M,\C^{\infty}_{\M})$ is a graded locally ringed space and $\lambda_{\alpha} := (\1_{U_{\alpha}}, \lambda^{\ast}_{\alpha})$ become isomorphisms in $\gLRS$, $\lambda_{\alpha}: \M_{\alpha} \rightarrow \M|_{U_{\alpha}}$. 
 Moreover, for any graded local chart $(U,\varphi)$ for $\M_{\alpha}$, the pair $(U, \varphi \circ \lambda_{\alpha}^{-1}|_{U})$ becomes a graded local chart for $\M$. In this manner, we may define a graded smooth atlas for $\M$ of a graded dimension $(n_{j})_{j \in \Z}$, thus making it into a graded smooth manifold. By construction, graded smooth maps $\lambda_{\alpha}$ for all $(\alpha,\beta) \in I^{2}$ satisfy $\lambda_{\alpha}|_{U_{\alpha \beta}} \circ \phi_{\alpha \beta} = \lambda_{\beta}|_{U_{\alpha \beta}}$. 
 
Finally, if $\M'$ and $\{ \lambda'_{\alpha} \}_{\alpha \in I}$ are another data having the required properties, Proposition \ref{tvrz_shcollation1} gives us a unique sheaf isomorphism $\varphi^{\ast}: \C^{\infty}_{\M'} \rightarrow \C^{\infty}_{\M}$, such that $\lambda^{\ast}_{\alpha} \circ \varphi^{\ast}|_{U_{\alpha}} = \lambda'^{\ast}_{\alpha}$ for each $\alpha \in I$. It follows that $\varphi := (\1_{M}, \varphi^{\ast})$ is the required unique graded diffeomorphism. 
\end{proof}
\begin{tvrz}[\textbf{Collation of graded manifolds II}] \label{tvrz_gmcollation2}
Let $M$ be a second countable Hausdorff topological space and let $\{U_{\alpha}\}_{\alpha \in I}$ be its open cover. Suppose $\{\M_{\alpha} \}_{\alpha \in I}$ and $\{ \phi_{\alpha\beta} \}_{(\alpha,\beta)\in I^{2}}$ are the data $(i)$ and $(ii)$ as in the previous proposition. Suppose $\M$ and $\{ \lambda_{\alpha} \}_{\alpha \in I}$ are obtained by the collation of those data. Then the following observations are true:
\begin{enumerate}[(i)]
\item Let an open cover $\{ V_{\mu} \}_{\mu \in J}$ of $M$ be a refinement of $\{ U_{\alpha} \}_{\alpha \in I}$, that is there is a map $\zeta: J \rightarrow I$, such that $V_{\mu} \subseteq U_{\zeta(\mu)}$ for every $\mu \in J$. For every $\mu,\nu \in J$, define
\begin{equation} \M'_{\mu} := \M_{\zeta(\mu)}|_{V_{\mu}}, \; \; \phi'_{\mu \nu} := \phi_{\zeta(\mu)\zeta(\nu)}|_{V_{\mu \nu}}: \M'_{\nu}|_{V_{\mu\nu}} \rightarrow \M'_{\mu}|_{V_{\mu \nu}}.
\end{equation}
Then $\{ \M'_{\mu} \}_{\mu \in J}$ and $\{ \phi'_{\mu \nu} \}_{(\mu,\nu) \in J^{2}}$ form data $(i)$ and $(ii)$ as in the previous proposition, corresponding to the open cover $\{V_{\mu}\}_{\mu \in J}$. 

Suppose $\M'$ and $\{ \lambda'_{\mu} \}_{\mu \in J}$ are obtained by the collation of those data. Then there is a unique graded diffeomorphism $\varphi: \M \rightarrow \M'$, such that $\ul{\varphi} = \1_{M}$ and for every $\mu \in J$, one has $\varphi|_{V_{\mu}} \circ \lambda_{\zeta(\mu)}|_{V_{\mu}} = \lambda'_{\mu}$.

\item Let $\{ \M'_{\alpha} \}_{\alpha \in I}$ and $\{\phi'_{\alpha \beta} \}_{(\alpha,\beta) \in I^{2}}$ be another data (i) and (ii) in the previous proposition corresponding to the same open cover $\{ U_{\alpha} \}_{\alpha \in I}$ of $M$. Suppose $\M'$ and $\{ \lambda'_{\alpha} \}_{\alpha \in I}$ are obtained by the collation of those data. 

Let $\{ \psi_{\alpha} \}_{\alpha \in I}$ be a collection of graded diffeomorphisms $\psi_{\alpha}: \M_{\alpha} \rightarrow \M'_{\alpha}$, such that $\ul{\psi_{\alpha}} = \1_{U_{\alpha}}$ and for all $(\alpha,\beta) \in I^{2}$, one has 
\begin{equation} \label{eq_collationisos2} \phi'_{\alpha \beta} \circ \psi_{\beta}|_{U_{\alpha \beta}} = \psi_{\alpha}|_{U_{\alpha \beta}} \circ \phi_{\alpha \beta}. \end{equation}
Then there exists a unique graded diffeomorphism $\psi: \M \rightarrow \M'$, such that $\lambda'_{\alpha} \circ \psi_{\alpha} = \psi|_{U_{\alpha}} \circ \lambda_{\alpha}$ for all $\alpha \in I$. Conversely, every graded diffeomorphism induces a collection $\{\psi_{\alpha}\}_{\alpha \in I}$ of graded diffeomorphisms satisfying (\ref{eq_collationisos}). 
\end{enumerate}
\end{tvrz}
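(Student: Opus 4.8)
The plan is to reduce both parts of the statement to the corresponding facts about collations of sheaves, namely Proposition \ref{tvrz_shcollation2}, by passing through the sheaf-theoretic data underlying the graded manifolds and the graded diffeomorphisms involved. The key observation is that every graded diffeomorphism whose underlying map is the identity is, by Proposition \ref{tvrz_isoingLRS}, completely encoded by a sheaf isomorphism of the respective structure sheaves, and composition of such graded diffeomorphisms corresponds (up to the order conventions already fixed in Definition \ref{def_gLRS}) to composition of sheaf isomorphisms. So the entire statement is really Proposition \ref{tvrz_shcollation2} dressed up in the language of graded manifolds, plus the routine check that the objects produced are again graded manifolds (which is already contained in the proof of Proposition \ref{tvrz_gmcollation1}).

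For part $(i)$, I would first verify that the refined data $\{\M'_{\mu}\}_{\mu \in J}$ and $\{\phi'_{\mu\nu}\}_{(\mu,\nu)\in J^{2}}$ genuinely satisfy conditions $(i)$ and $(ii)$ of Proposition \ref{tvrz_gmcollation1}: each $\M'_{\mu} = \M_{\zeta(\mu)}|_{V_{\mu}}$ is a graded manifold of graded dimension $(n_{j})_{j\in\Z}$ by Proposition \ref{tvrz_restrictedgLRS} and the remark following it, each $\phi'_{\mu\nu}$ is a graded diffeomorphism with underlying map $\1_{V_{\mu\nu}}$ since it is a restriction of $\phi_{\zeta(\mu)\zeta(\nu)}$, and the cocycle condition (\ref{eq_gmcollationcocycle}) for $\phi'$ follows by restricting the one for $\phi$. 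Then I pass to the structure sheaves: $\{\C^{\infty}_{\M_{\alpha}}\}_{\alpha\in I}$ with $\{\phi^{\ast}_{\alpha\beta}\}$ and $\{\C^{\infty}_{\M'_{\mu}}\}_{\mu\in J}$ with $\{(\phi'_{\mu\nu})^{\ast}\}$ are precisely the data $(i)$, $(ii)$ of Proposition \ref{tvrz_shcollation1} for the cover $\{U_{\alpha}\}$ and its refinement $\{V_{\mu}\}$ respectively, and $\C^{\infty}_{\M}$, $\{\lambda^{\ast}_{\alpha}\}$ and $\C^{\infty}_{\M'}$, $\{(\lambda'_{\mu})^{\ast}\}$ are the collation sheaves from the proof of Proposition \ref{tvrz_gmcollation1}. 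Proposition \ref{tvrz_shcollation2}-$(i)$ then yields a unique sheaf isomorphism $\varphi^{\ast}:\C^{\infty}_{\M'}\rightarrow\C^{\infty}_{\M}$ with $\lambda^{\ast}_{\zeta(\mu)}|_{V_{\mu}}\circ\varphi^{\ast}|_{V_{\mu}} = (\lambda'_{\mu})^{\ast}$, and $\varphi := (\1_{M},\varphi^{\ast})$ is the desired graded diffeomorphism; the identity $\varphi|_{V_{\mu}}\circ\lambda_{\zeta(\mu)}|_{V_{\mu}} = \lambda'_{\mu}$ translates the sheaf-level equation, and uniqueness is inherited from Proposition \ref{tvrz_shcollation2}.

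Part $(ii)$ is handled the same way, now invoking Proposition \ref{tvrz_shcollation2}-$(ii)$: from the collection $\{\psi_{\alpha}\}$ of graded diffeomorphisms with $\ul{\psi_{\alpha}} = \1_{U_{\alpha}}$ one extracts the sheaf isomorphisms $\{\psi^{\ast}_{\alpha}\}$, $\psi^{\ast}_{\alpha}:\C^{\infty}_{\M'_{\alpha}}\rightarrow\C^{\infty}_{\M_{\alpha}}$, and condition (\ref{eq_collationisos2}) becomes exactly condition (\ref{eq_collationisos}) for these sheaf morphisms (one must be slightly careful that the contravariance of $U\mapsto\C^{\infty}(U)$ reverses the direction and the order of composition, so the compatibility diagram at the manifold level matches the one at the sheaf level after dualizing — this is the one bookkeeping point to get right). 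Proposition \ref{tvrz_shcollation2}-$(ii)$ then supplies a unique sheaf isomorphism $\psi^{\ast}:\C^{\infty}_{\M'}\rightarrow\C^{\infty}_{\M}$ with the appropriate intertwining property, and $\psi := (\1_{M},\psi^{\ast})$ is the required graded diffeomorphism, the converse direction being the converse part of Proposition \ref{tvrz_shcollation2}-$(ii)$.

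The main obstacle, such as it is, is not conceptual but notational: keeping the direction of arrows and the order of compositions consistent when translating between graded diffeomorphisms $\phi_{\alpha\beta}:\M_{\beta}|_{U_{\alpha\beta}}\rightarrow\M_{\alpha}|_{U_{\alpha\beta}}$ and their pullback sheaf morphisms $\phi^{\ast}_{\alpha\beta}:\C^{\infty}_{\M_{\alpha}}|_{U_{\alpha\beta}}\rightarrow\C^{\infty}_{\M_{\beta}}|_{U_{\alpha\beta}}$, since the cocycle condition (\ref{eq_gmcollationcocycle}) is written $\phi_{\alpha\gamma} = \phi_{\alpha\beta}\circ\phi_{\beta\gamma}$ whereas the sheaf cocycle condition (\ref{eq_shcollcocycle}) reads $\phi_{\beta\gamma}\circ\phi_{\alpha\beta} = \phi_{\alpha\gamma}$ — one is genuinely the other after applying the contravariant global-sections-of-a-restriction functor, and this must be checked once, cleanly, at the start. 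Everything after that is a mechanical transcription, so I would state the dictionary between the two layers explicitly and then simply cite Proposition \ref{tvrz_shcollation2} for each of the two parts.
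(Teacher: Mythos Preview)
Your proposal is correct and follows essentially the same approach as the paper: the paper's proof simply states that everything is a straightforward consequence of Proposition \ref{tvrz_shcollation2}, which is precisely the reduction you describe. Your additional remarks on the contravariance bookkeeping (translating between graded diffeomorphisms and their pullback sheaf morphisms) are helpful elaborations of what the paper leaves implicit.
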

\begin{proof}
The proof is very similar to the one of the above proposition. This time, everything is a straightforward consequence of Proposition \ref{tvrz_shcollation2}.
\end{proof}
We now arrive to a very important statement for the actual construction of graded manifolds. It shows that one can start with a smooth manifold $M$ and specify graded smooth transition maps between graded domains.
\begin{tvrz}[\textbf{Gluing of graded manifolds I}] \label{tvrz_gmgluing1}
Let $(n_{j})_{j \in \Z}$ be a given sequence. Let $M$ be a given $n_{0}$-dimensional smooth manifold with a smooth atlas $\ul{\A} = \{ (U_{\alpha}, \ul{\varphi_{\alpha}}) \}_{\alpha \in I}$. 

Suppose that for each $(\alpha,\beta) \in I^{2}$, one has a graded smooth diffeomorphism 
\begin{equation}
\varphi_{\alpha \beta}: \ul{\varphi_{\beta}}(U_{\alpha \beta})^{(n_{j})} \rightarrow \ul{\varphi_{\alpha}}( U_{\alpha \beta})^{(n_{j})},
\end{equation}
such that $\ul{\varphi_{\alpha \beta}}$ is the smooth transition map between the ordinary local charts $(U_{\alpha},\ul{\varphi_{\alpha}})$ and $(U_{\beta}, \ul{\varphi_{\beta}})$. Moreover, suppose that for each $(\alpha,\beta,\gamma) \in I^{3}$, there holds the cocycle condition
\begin{equation} \label{eq_cocyclegmgluing}
\varphi_{\alpha \gamma} = \varphi_{\alpha \beta} \circ \varphi_{\beta \gamma},
\end{equation}
where all maps are assumed to be restricted to the homeomorphic images of the open subset $U_{\alpha \beta \gamma}$. 

Then there is a graded manifold $\M = (M, \C^{\infty}_{\M})$ together with a graded smooth atlas $\A = \{ (U_{\alpha},\varphi_{\alpha}) \}_{\alpha \in I}$, such that the smooth atlas $\ul{\A}$ is induced from $\A$ as in Proposition \ref{tvrz_transitionmapsunderlyingmanifold}, and $\varphi_{\alpha \beta}$ become its transition maps.

If $\M'$ and $\A' = \{(U_{\alpha},\varphi'_{\alpha})\}_{\alpha \in I}$ are another such data, there exists a unique graded diffeomorphism $\phi: \M \rightarrow \M'$, such that $\ul{\phi} = \1_{M}$ and $\varphi'_{\alpha} \circ \phi|_{U_{\alpha}} = \varphi_{\alpha}$ for each $\alpha \in I$. 
\end{tvrz}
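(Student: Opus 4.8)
The plan is to reduce the statement to the collation theorems already proved (Proposition \ref{tvrz_gmcollation1} and Proposition \ref{tvrz_gmcollation2}) by manufacturing, out of the gluing data $\{\varphi_{\alpha\beta}\}$, a system of graded manifolds over the pieces $U_\alpha$ together with transition diffeomorphisms over the $U_{\alpha\beta}$. First I would define, for each $\alpha\in I$, a graded manifold $\M_\alpha$ on $U_\alpha$ by transporting the graded domain structure: set $\C^\infty_{\M_\alpha} := (\ul{\varphi_\alpha})^{-1}_\ast\,\C^\infty_{(n_j)}|_{\ul{\varphi_\alpha}(U_\alpha)}$, i.e.\ push the structure sheaf of the graded domain $\ul{\varphi_\alpha}(U_\alpha)^{(n_j)}$ forward along the homeomorphism $\ul{\varphi_\alpha}^{-1}$. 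By Proposition \ref{tvrz_inducedstalkpushsheaf} and Corollary \ref{cor_isoringtolocalislocal} this is a sheaf of graded commutative associative algebras whose stalks are local graded rings, so $\M_\alpha\in\gLRS$; and by construction there is a tautological graded diffeomorphism $\psi_\alpha:\M_\alpha\to\ul{\varphi_\alpha}(U_\alpha)^{(n_j)}$ with $\ul{\psi_\alpha}=\ul{\varphi_\alpha}$, exhibiting $\M_\alpha$ as a graded manifold of graded dimension $(n_j)_{j\in\Z}$ (a one-chart atlas $\{(U_\alpha,\psi_\alpha)\}$).

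Next I would assemble the collation isomorphisms. For $(\alpha,\beta)\in I^2$ define
\begin{equation}
\phi_{\alpha\beta} := \psi_\alpha^{-1}|_{U_{\alpha\beta}} \circ \varphi_{\alpha\beta} \circ \psi_\beta|_{U_{\alpha\beta}} : \M_\beta|_{U_{\alpha\beta}} \longrightarrow \M_\alpha|_{U_{\alpha\beta}},
\end{equation}
which is a graded diffeomorphism; its underlying map is $\ul{\varphi_\alpha}^{-1}\circ\ul{\varphi_{\alpha\beta}}\circ\ul{\varphi_\beta}=\1_{U_{\alpha\beta}}$ since $\ul{\varphi_{\alpha\beta}}$ is the ordinary transition map. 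The cocycle condition (\ref{eq_cocyclegmgluing}) for the $\varphi_{\alpha\beta}$, combined with the fact that $\psi_\alpha$ is a fixed isomorphism independent of $\beta$, yields $\phi_{\alpha\gamma}=\phi_{\alpha\beta}\circ\phi_{\beta\gamma}$ on $U_{\alpha\beta\gamma}$ — this is a direct, if slightly bookkeeping-heavy, verification: the middle $\psi_\beta|\circ\psi_\beta^{-1}|$ and $\psi_\gamma$-terms cancel after restricting to $U_{\alpha\beta\gamma}$. Thus $\{\M_\alpha\}$ and $\{\phi_{\alpha\beta}\}$ satisfy hypotheses (i) and (ii) of Proposition \ref{tvrz_gmcollation1}.

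Applying Proposition \ref{tvrz_gmcollation1} produces a graded manifold $\M=(M,\C^\infty_\M)$ of graded dimension $(n_j)_{j\in\Z}$ and graded diffeomorphisms $\lambda_\alpha:\M_\alpha\to\M|_{U_\alpha}$ with $\ul{\lambda_\alpha}=\1_{U_\alpha}$ and $\lambda_\alpha|_{U_{\alpha\beta}}\circ\phi_{\alpha\beta}=\lambda_\beta|_{U_{\alpha\beta}}$. I then set $\varphi_\alpha := \psi_\alpha \circ \lambda_\alpha^{-1} : \M|_{U_\alpha}\to \ul{\varphi_\alpha}(U_\alpha)^{(n_j)}$, so that $(U_\alpha,\varphi_\alpha)$ is a graded local chart with $\ul{\varphi_\alpha}$ as its underlying chart; these form a graded smooth atlas $\A=\{(U_\alpha,\varphi_\alpha)\}$. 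It remains to check that the transition maps of $\A$ are exactly the given $\varphi_{\alpha\beta}$: by Proposition \ref{tvrz_transitionmapsunderlyingmanifold} the transition map is $\varphi_\alpha|_{U_{\alpha\beta}}\circ\varphi_\beta^{-1}|_{\ul{\varphi_\beta}(U_{\alpha\beta})} = \psi_\alpha\lambda_\alpha^{-1}|\circ\lambda_\beta\psi_\beta^{-1}| = \psi_\alpha\circ\phi_{\alpha\beta}^{-1}\cdots$ — unwinding the definition of $\phi_{\alpha\beta}$ gives precisely $\varphi_{\alpha\beta}$. Similarly $\ul{\A}$ is the atlas induced by $\A$. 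For uniqueness, given another $\M'$ with atlas $\A'=\{(U_\alpha,\varphi'_\alpha)\}$ having the same properties, I would build $\M'_\alpha := \M'|_{U_\alpha}$ with $\psi'_\alpha:=\varphi'_\alpha$ and observe that the associated collation data $\{\phi'_{\alpha\beta}\}$ coincides with $\{\phi_{\alpha\beta}\}$ (both equal $\psi^{-1}\varphi_{\alpha\beta}\psi$ after transporting, because the transition maps of $\A'$ are also the $\varphi_{\alpha\beta}$); then the uniqueness clause of Proposition \ref{tvrz_gmcollation1}, or more precisely Proposition \ref{tvrz_gmcollation2}(ii) with $\psi_\alpha:=\1$, delivers a unique graded diffeomorphism $\phi:\M\to\M'$ with $\ul{\phi}=\1_M$ and $\varphi'_\alpha\circ\phi|_{U_\alpha}=\varphi_\alpha$.

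The routine parts are the sheaf-theoretic transport in the first step and the atlas verifications; the one genuinely fiddly point — the main obstacle — is the careful cocycle computation for $\{\phi_{\alpha\beta}\}$ and the matching of the resulting transition maps with the prescribed $\varphi_{\alpha\beta}$, where one must be scrupulous about which open set each morphism is restricted to and about the order of composition (the collation convention has $\phi_{\alpha\beta}:\M_\beta\to\M_\alpha$, whereas the gluing convention has $\varphi_{\alpha\beta}$ going from the $\beta$-chart image to the $\alpha$-chart image, so the bookkeeping of domains and inverses has to be done consistently).
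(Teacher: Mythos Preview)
Your proposal is correct and follows essentially the same approach as the paper: you transport the graded domain structure along $\ul{\varphi_\alpha}^{-1}$ to obtain $\M_\alpha$ with the tautological chart $\psi_\alpha$, conjugate the given $\varphi_{\alpha\beta}$ by the $\psi$'s to produce collation data $\phi_{\alpha\beta}$ over the identity, invoke Proposition~\ref{tvrz_gmcollation1}, and then set $\varphi_\alpha:=\psi_\alpha\circ\lambda_\alpha^{-1}$. The paper's uniqueness argument is marginally more direct (it defines $\lambda'_\alpha:=\varphi'^{-1}_\alpha\circ\psi_\alpha$ and applies the uniqueness clause of Proposition~\ref{tvrz_gmcollation1} immediately, without passing through Proposition~\ref{tvrz_gmcollation2}), but your version is equivalent.
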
 
\begin{proof}
For each $\alpha \in I$, let $\C^{\infty}_{\M_{\alpha}} := (\ul{\varphi_{\alpha}})^{-1}_{\ast} \C^{\infty}_{(n_{j})} \in \Sh(U_{\alpha},\gcAs)$. 

Then $\M_{\alpha} := (U_{\alpha}, \C^{\infty}_{\M_{\alpha}})$ is a graded locally ringed space. We also have the obvious isomorphism $
\psi_{\alpha}: \M_{\alpha} \rightarrow \hat{U}_{\alpha}^{(n_{j})}$ in $\gLRS$, such that $\ul{\psi_{\alpha}} = \ul{\varphi_{\alpha}}$. It follows that $(U_{\alpha},\psi_{\alpha})$ forms a graded global chart for $\M_{\alpha}$, hence making it into a graded manifold of a graded dimension $(n_{j})_{j \in \Z}$. 

For each $(\alpha,\beta) \in I^{2}$, let $\phi_{\alpha \beta} := (\psi_{\alpha}|_{U_{\alpha \beta}})^{-1} \circ \varphi_{\alpha \beta} \circ \psi_{\beta	}|_{U_{\alpha \beta}}$. It now follows easily that $\{ \M_{\alpha} \}_{\alpha \in I}$ and $\{ \phi_{\alpha \beta} \}_{(\alpha,\beta) \in I^{2}}$ form the collation data $(i)$ and $(ii)$ of Proposition \ref{tvrz_gmcollation1}. There is thus s graded manifold $\M = (M, \C^{\infty}_{\M})$ together with a collection of graded diffeomorphisms $\{ \lambda_{\alpha} \}_{\alpha \in I}$, where $\lambda_{\alpha}: \M_{\alpha} \rightarrow \M|_{U_{\alpha}}$ satisfy $\ul{\lambda_{\alpha}} = \1_{U_{\alpha}}$ and $\lambda_{\alpha}|_{U_{\alpha \beta}} \circ \phi_{\alpha \beta} = \lambda_{\beta}|_{U_{\alpha \beta}}$ for all $(\alpha,\beta) \in I^{2}$. 

Let $\varphi_{\alpha} := \psi_{\alpha} \circ \lambda_{\alpha}^{-1}: \M|_{U_{\alpha}} \rightarrow \hat{U}_{\alpha}^{(n_{j})}$. These are graded diffeomorphisms, thus making $\A = \{ (U_{\alpha}, \varphi_{\alpha}) \}_{\alpha \in I}$ into the graded smooth atlas for $\M$. It follows that $\ul{\A}$ is indeed induced by $\A$ as in Proposition \ref{tvrz_transitionmapsunderlyingmanifold}. Finally, for each $(\alpha,\beta) \in I^{2}$, one finds 
\begin{equation}
\varphi_{\alpha}|_{U_{\alpha \beta}} \circ \varphi_{\beta}^{-1}|_{\ul{\varphi_{\beta}}(U_{\alpha \beta})} = \psi_{\alpha}|_{U_{\alpha \beta}} \circ \phi_{\alpha \beta} \circ (\psi_{\beta}|_{U_{\alpha \beta}})^{-1} = \varphi_{\alpha \beta}.
\end{equation}
This proves that $\varphi_{\alpha \beta}$ form the transition maps of $\A = \{ (U_{\alpha},\varphi_{\alpha}) \}_{\alpha \in I}$. 

For the second part of the statement, let $\M'$ and $\A' = \{ (U_{\alpha}, \varphi'_{\alpha}) \}_{\alpha \in I}$ by another such data. For each $\alpha \in I$, set $\lambda'_{\alpha} := \varphi'^{-1}_{\alpha} \circ \psi_{\alpha}: \M_{\alpha} \rightarrow \M'|_{U_{\alpha}}$. They are graded diffeomorphisms such that $\ul{\lambda'_{\alpha}} = \1_{U_{\alpha}}$ and $\lambda'_{\alpha}|_{U_{\alpha \beta}} \circ \phi_{\alpha \beta} = \lambda'_{\beta}|_{U_{\alpha \beta}}$ for all $(\alpha,\beta) \in I^{2}$. By Proposition \ref{tvrz_shcollation1}, there exists a unique graded smooth diffeomorphism $\phi: \M \rightarrow \M'$, such that $\ul{\phi} = \1_{M}$ and $\phi|_{U_{\alpha}} \circ \lambda_{\alpha} = \lambda'_{\alpha}$ for each $\alpha \in I$. This is equivalent to $\varphi'_{\alpha} \circ \phi|_{U_{\alpha}} = \varphi_{\alpha}$ for each $\alpha \in I$. This finishes the proof.
\end{proof}
Finally, we obtain the proposition dealing with refinements and equivalent transition maps.
\begin{tvrz}[\textbf{Gluing of graded manifolds II}] \label{tvrz_gmgluing2}
Let $(n_{j})_{j \in \Z}$ be a given sequence. Let $M$ be a given $n_{0}$-dimensional smooth manifold with a smooth atlas $\ul{\A} = \{ (U_{\alpha},\ul{\varphi_{\alpha}}) \}_{\alpha \in I}$. Suppose $\{ \varphi_{\alpha \beta} \}_{(\alpha,\beta) \in I^{2}}$ is a collection as in the previous proposition. Suppose $\M$ and $\A = \{ (U_{\alpha},\varphi_{\alpha}) \}_{\alpha \in I}$ are obtained as in above proposition. Then the following observations are true:
\begin{enumerate}[(i)]
\item Let an open cover $\{ V_{\mu} \}_{\mu \in J}$ of $M$ be a refinement of $\{ U_{\alpha} \}_{\alpha \in I}$, that is there is a map $\zeta: J \rightarrow I$, such that $V_{\mu} \subseteq U_{\zeta(\mu)}$ for every $\mu \in J$. Let $\ul{\A}' = \{ (V_{\mu}, \varphi'_{\mu}) \}_{\mu \in J}$ be the corresponding refinement of $\ul{\A}$, that is $\ul{\varphi'_{\mu}} := \ul{\varphi_{\zeta(\mu)}}|_{V_{\mu}}$ for each $\mu \in J$. Let $\varphi'_{\mu \nu} := \varphi_{\zeta(\mu)\zeta(\nu)}|_{\ul{\varphi'_{\nu}}(V_{\mu \nu})}$. 

Then $\ul{\A}'$ and $\{ \varphi'_{\mu \nu} \}_{(\mu,\nu) \in J^{2}}$ satisfy the assumptions of the previous proposition. Suppose $\M'$ and $\A' = \{ (V_{\mu},\varphi'_{\mu}) \}_{\mu \in J}$ are obtained by the above gluing procedure. Then there exists a unique graded diffeomorphism $\phi: \M \rightarrow \M'$, such that $\varphi'_{\mu} \circ \phi|_{V_{\mu}} = \varphi_{\zeta(\mu)}|_{V_{\mu}}$ for all $\mu \in J$. 

\item Let $\ul{\A}' = \{ (U_{\alpha}, \ul{\varphi'_{\alpha}}) \}_{\alpha \in I}$ and $\{ \varphi'_{\alpha \beta} \}_{(\alpha,\beta) \in J^{2}}$ satisfy the assumptions of the previous proposition for the same open cover $\{ U_{\alpha} \}_{\alpha \in I}$ of $M$. Suppose $\M'$ and $\A' = \{ (U_{\alpha}, \varphi'_{\alpha}) \}_{\alpha \in I}$ are obtained by the above gluing procedure. 

Let $\{ \eta_{\alpha} \}_{\alpha \in I}$ be a collection of graded diffeomorphisms $\eta_{\alpha}: \ul{\varphi_{\alpha}}(U_{\alpha})^{(n_{j})} \rightarrow \ul{\varphi'_{\alpha}}(U_{\alpha})^{(n_{j})}$, such that $\ul{\eta_{\alpha}} \circ \ul{\varphi_{\alpha}} = \ul{\varphi'_{\alpha}}$ for all $\alpha \in I$, and for all $(\alpha,\beta) \in I^{2}$, one has 
\begin{equation} \label{eq_gluingisos2} \varphi'_{\alpha \beta} \circ \eta_{\beta}|_{\ul{\varphi_{\beta}}(U_{\alpha \beta})} = \eta_{\alpha}|_{\ul{\varphi_{\alpha}}(U_{\alpha\beta})} \circ \varphi_{\alpha \beta}. \end{equation}
Then there exists a unique graded diffeomorphism $\eta: \M \rightarrow \M'$, such that $\ul{\eta} = \1_{M}$ and $\varphi'_{\alpha} \circ \eta|_{U_{\alpha}} = \eta_{\alpha} \circ \varphi_{\alpha}$ for all $\alpha \in I$. Conversely, every graded diffeomorphism induces a collection $\{\eta_{\alpha}\}_{\alpha \in I}$ of graded diffeomorphisms satisfying (\ref{eq_gluingisos2}). 
\end{enumerate}
\end{tvrz}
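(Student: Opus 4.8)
The plan is to reduce both statements to the already-proven collation results for graded manifolds, Propositions~\ref{tvrz_gmcollation2}, exactly as the first gluing proposition (Proposition~\ref{tvrz_gmgluing1}) was reduced to Proposition~\ref{tvrz_gmcollation1}. Recall that in the proof of Proposition~\ref{tvrz_gmgluing1} one sets $\C^{\infty}_{\M_{\alpha}} := (\ul{\varphi_{\alpha}})^{-1}_{\ast}\C^{\infty}_{(n_{j})}$, obtains graded global charts $\psi_{\alpha}: \M_{\alpha} \to \hat{U}_{\alpha}^{(n_{j})}$ with $\ul{\psi_{\alpha}} = \ul{\varphi_{\alpha}}$, and defines $\phi_{\alpha\beta} := (\psi_{\alpha}|_{U_{\alpha\beta}})^{-1} \circ \varphi_{\alpha\beta} \circ \psi_{\beta}|_{U_{\alpha\beta}}$, so that $\{\M_{\alpha}\}$ and $\{\phi_{\alpha\beta}\}$ are precisely the collation data of Proposition~\ref{tvrz_gmcollation1}, producing $\M$ and the diffeomorphisms $\lambda_{\alpha}$ with $\varphi_{\alpha} = \psi_{\alpha} \circ \lambda_{\alpha}^{-1}$. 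The key observation is that part~(i) of the present proposition corresponds term-by-term to part~(i) of Proposition~\ref{tvrz_gmcollation2} under this dictionary, and part~(ii) to part~(ii) of Proposition~\ref{tvrz_gmcollation2}.

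For part~(i), I would first check that the refined data $\ul{\A}'$ and $\{\varphi'_{\mu\nu}\}$ again satisfy the hypotheses of Proposition~\ref{tvrz_gmgluing1}: $\ul{\varphi'_{\mu\nu}} = \ul{\varphi_{\zeta(\mu)}} \circ \ul{\varphi_{\zeta(\nu)}}^{-1}$ restricted appropriately is the transition map of the refined ordinary atlas $\ul{\A}'$, and the cocycle condition (\ref{eq_cocyclegmgluing}) for $\{\varphi'_{\mu\nu}\}$ follows by restricting (\ref{eq_cocyclegmgluing}) for $\{\varphi_{\alpha\beta}\}$ to $\ul{\varphi'}$-images of $V_{\mu\nu\rho}$. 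Then, applying the gluing construction to these refined data yields $\M'$ with charts $\varphi'_{\mu}$, and on the sheaf level $\{\C^{\infty}_{\M'_{\mu}}\}$, $\{\phi'_{\mu\nu}\}$ are exactly the restricted collation data $\M'_{\mu} = \M_{\zeta(\mu)}|_{V_{\mu}}$, $\phi'_{\mu\nu} = \phi_{\zeta(\mu)\zeta(\nu)}|_{V_{\mu\nu}}$ appearing in Proposition~\ref{tvrz_gmcollation2}-(i). That proposition gives a unique graded diffeomorphism $\varphi: \M \to \M'$ with $\ul{\varphi} = \1_{M}$ and $\varphi|_{V_{\mu}} \circ \lambda_{\zeta(\mu)}|_{V_{\mu}} = \lambda'_{\mu}$; composing with $\varphi_{\zeta(\mu)} = \psi_{\zeta(\mu)} \circ \lambda_{\zeta(\mu)}^{-1}$ and $\varphi'_{\mu} = \psi'_{\mu} \circ \lambda'^{-1}_{\mu}$ (noting $\psi'_{\mu} = \psi_{\zeta(\mu)}|_{V_{\mu}}$) translates this into $\varphi'_{\mu} \circ \varphi|_{V_{\mu}} = \varphi_{\zeta(\mu)}|_{V_{\mu}}$, which is the desired relation. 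Uniqueness is inherited from Proposition~\ref{tvrz_gmcollation2}-(i) since the charts $\psi_{\alpha}$ are fixed isomorphisms.

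For part~(ii), the second set of data $\ul{\A}'$, $\{\varphi'_{\alpha\beta}\}$ produces, via the gluing construction, a graded manifold $\M'$ with charts $\psi'_{\alpha}: \M'_{\alpha} \to \ul{\varphi'_{\alpha}}(U_{\alpha})^{(n_{j})}$ and collation data $\phi'_{\alpha\beta} = (\psi'_{\alpha}|_{U_{\alpha\beta}})^{-1} \circ \varphi'_{\alpha\beta} \circ \psi'_{\beta}|_{U_{\alpha\beta}}$. Given the $\eta_{\alpha}$, I would set $\psi_{\alpha}' \circ \psi_{\alpha}'$-conjugates, more precisely define $\tilde{\psi}_{\alpha} := (\psi'_{\alpha})^{-1} \circ \eta_{\alpha} \circ \psi_{\alpha}: \M_{\alpha} \to \M'_{\alpha}$, which is a graded diffeomorphism with $\ul{\tilde\psi_{\alpha}} = \1_{U_{\alpha}}$ because $\ul{\eta_{\alpha}} \circ \ul{\varphi_{\alpha}} = \ul{\varphi'_{\alpha}}$. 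Then the compatibility (\ref{eq_gluingisos2}) translates directly into $\phi'_{\alpha\beta} \circ \tilde\psi_{\beta}|_{U_{\alpha\beta}} = \tilde\psi_{\alpha}|_{U_{\alpha\beta}} \circ \phi_{\alpha\beta}$, which is precisely (\ref{eq_collationisos2}) in Proposition~\ref{tvrz_gmcollation2}-(ii). Applying that proposition yields a unique graded diffeomorphism $\psi: \M \to \M'$ with $\lambda'_{\alpha} \circ \tilde\psi_{\alpha} = \psi|_{U_{\alpha}} \circ \lambda_{\alpha}$; unravelling via $\varphi_{\alpha} = \psi_{\alpha} \circ \lambda_{\alpha}^{-1}$, $\varphi'_{\alpha} = \psi'_{\alpha} \circ \lambda'^{-1}_{\alpha}$ gives $\varphi'_{\alpha} \circ \psi|_{U_{\alpha}} = \eta_{\alpha} \circ \varphi_{\alpha}$. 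The converse direction (every graded diffeomorphism $\eta$ induces such an $\eta_{\alpha}$, namely $\eta_{\alpha} := \varphi'_{\alpha} \circ \eta|_{U_{\alpha}} \circ \varphi_{\alpha}^{-1}$) is the same bookkeeping run in reverse, using the converse clause of Proposition~\ref{tvrz_gmcollation2}-(ii).

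The main obstacle is purely notational rather than mathematical: keeping straight the three layers of objects (ordinary atlases $\ul{\varphi_{\alpha}}$, the abstract intermediate graded manifolds $\M_{\alpha}$ with their canonical charts $\psi_{\alpha}$, and the resulting glued $\M$ with charts $\varphi_{\alpha}$) and verifying that each stated compatibility condition in this proposition is, after conjugating by the appropriate $\psi_{\alpha}$'s, literally one of the compatibility conditions of Proposition~\ref{tvrz_gmcollation2}. No new analysis is required; the content is entirely a translation along the dictionary established in the proof of Proposition~\ref{tvrz_gmgluing1}, so the write-up can be brief, and indeed the paper's stated proof (``everything is a straightforward consequence of Proposition~\ref{tvrz_shcollation2}'', here Proposition~\ref{tvrz_gmcollation2}) is appropriate.
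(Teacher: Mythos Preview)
Your proposal is correct and takes essentially the same approach as the paper, which simply states that the result follows in a straightforward manner from Proposition~\ref{tvrz_gmcollation2}. Your write-up makes explicit the dictionary (via the $\psi_{\alpha}$'s from the proof of Proposition~\ref{tvrz_gmgluing1}) that translates the gluing data into collation data, which is exactly the intended reduction; the only minor slip is your parenthetical quotation at the end, which quotes the proof of Proposition~\ref{tvrz_gmcollation2} rather than of the present proposition, but this is immaterial.
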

\begin{proof}
This follows in a straightforward manner from Proposition \ref{tvrz_gmcollation2}.
\end{proof}

\begin{example}  \label{ex_degreeshiftedordvect}
Let us now demonstrate the importance of gluing theorems. Let $q: E \rightarrow M$ be an ordinary vector bundle of a finite rank and denote $n_{0} := \dim(M)$, $n_{\ast} := \rk(E)$. 

Let us consider a local trivialization $\{ (U_{\alpha}, \phi_{\alpha}^{\vee}) \}_{\alpha \in I}$ of the \textit{dual} vector bundle $E^{\ast}$ and assume that we have a smooth atlas $\ul{\A} = \{ (U_{\alpha}, \ul{\varphi_{\alpha}}) \}_{\alpha \in I}$ for $M$. 

We have a collection of transition maps $g^{\vee}_{\alpha \beta}: U_{\alpha \beta} \rightarrow \GL(n_{\ast},\R)$ given by the equation 
\begin{equation}
\phi_{\beta}^{\vee}(m,x) = \phi_{\alpha}^{\vee}(m, g_{\alpha \beta}^{\vee}(m)x),
\end{equation}
for all $(m,x) \in U_{\alpha \beta} \times \R^{n_{\ast}}$. When restricted to $U_{\alpha \beta \gamma}$, they satisfy the cocycle condition 
\begin{equation} \label{eq_Estarcocycletrivialization}
g^{\vee}_{\alpha \gamma} = g^{\vee}_{\alpha \beta} \circ g^{\vee}_{\beta \gamma}.
\end{equation}
Now, let us fix a \textit{non-zero} integer $k \in \Z$, and let $(n_{j})_{j \in \Z}$ be a sequence where $n_{k} := n_{\ast}$ and $n_{j} = 0$ for $j \notin \{0,k\}$. We will now define graded diffeomorphisms $\varphi_{\alpha \beta}: \ul{\varphi_{\beta}}(U_{\alpha \beta})^{(n_{j})} \rightarrow \ul{\varphi_{\alpha}}(U_{\alpha \beta})^{(n_{j})}$. 

Let us fix a total basis $(\xi_{\mu})_{\mu=1}^{n_{\ast}}$ of $\R^{(n_{j})}_{\ast}$. Note that $|\xi_{\mu}| = k$ and $(\xi_{\mu})_{\mu=1}^{n_{\ast}}$ is actually an ordinary basis of $(\R^{(n_{j})}_{\ast})_{k} = \R^{n_{\ast}}$. Let $(x^{1},\dots,x^{n_{0}})$ be the coordinates on $\ul{\varphi_{\alpha}}(U_{\alpha \beta}) \subseteq \R^{n_{0}}$. Let $(\alpha,\beta) \in I^{2}$ be fixed. By Theorem \ref{thm_gradedomaintheorem}, we must specify the underlying smooth map $\ul{\varphi_{\alpha \beta}}$ and two collections $\{ \bar{x}^{i} \}_{i=1}^{n_{0}}$ and $\{ \bar{\xi}_{\mu} \}_{\mu=1}^{n_{\ast}}$. Let us set
\begin{equation} \ul{\varphi_{\alpha \beta}} := \ul{\varphi_{\alpha}} \circ \ul{\varphi_{\beta}}^{-1}|_{\ul{\varphi_{\beta}}(U_{\alpha \beta})}, \end{equation}
that is precisely the transition map between the graded local charts $(U_{\alpha},\varphi_{\alpha})$ and $(U_{\beta}, \varphi_{\beta})$. For each $i \in \{1,\dots,n_{0}\}$, define $\bar{x}^{i} := 0$ and for each $\mu \in \{1, \dots, n_{\ast} \}$, define 
\begin{equation}
\bar{\xi}_{\mu} \equiv (\varphi_{\alpha \beta})^{\ast}_{\ul{\varphi_{\alpha}}(U_{\alpha \beta})}(\xi_{\mu}) := ((g_{\beta \alpha}^{\vee})^{\nu}{}_{\mu} \circ \ul{\varphi_{\beta}}^{-1}) \xi_{\nu},
\end{equation}
where $(g^{\vee}_{\beta \alpha})^{\nu}{}_{\mu}$ are the matrix elements of the linear map $g^{\vee}_{\alpha \beta}$ with respect to the above basis of $\R^{n_{\ast}}$. This defines a graded diffeomorphism $\varphi_{\alpha \beta}$, as $\varphi_{\beta \alpha}$ can be easily shown to be its inverse. Moreover, by comparing the pullbacks of the coordinate functions $x^{i}$ and $\xi_{\mu}$, it is straightforward to show that the cocycle condition (\ref{eq_cocyclegmgluing}) follows from (\ref{eq_Estarcocycletrivialization}).

We thus have the data $\ul{\A}$ and $\{ \varphi_{\alpha \beta} \}_{(\alpha,\beta) \in I^{2}}$ as in Proposition \ref{tvrz_gmgluing1}. There is then a graded manifold $E[k] := (M, \C^{\infty}_{E[k]})$ together with a graded smooth atlas $\A = \{ (U_{\alpha}, \varphi_{\alpha}) \}$ for $E[k]$ making $\varphi_{\alpha \beta}$ into its transition maps. $E[k]$ is called the \textbf{degree $k$ shifted vector bundle}.

Let us tackle the question of the independence of this construction on the chosen local trivialization of $E$. First, if we refine both the atlas $\ul{\A}$ and the local trivialization $\{ (U_{\alpha}, \phi_{\alpha}^{\vee}) \}_{\alpha \in I}$, the resulting graded manifold will be diffeomorphic to the original one by Proposition \ref{tvrz_gmgluing2}-(ii). By the common refinement argument, it thus suffices to consider another atlas $\ul{\A}' = \{ (U_{\alpha}, \ul{\varphi'_{\alpha}}) \}_{\alpha \in I}$ and a different local trivialization $\{ (U_{\alpha}, \phi'^{\vee}_{\alpha}) \}_{\alpha \in I}$ of $E^{\ast}$ corresponding to the same open cover $\{ U_{\alpha} \}_{\alpha \in I}$. For each $\alpha \in I$, there is thus a unique $\lambda_{\alpha}: U_{\alpha} \rightarrow \GL(n_{\ast},\R)$ such that
\begin{equation}
\phi'^{\vee}_{\alpha}(m,x) = \phi^{\vee}_{\alpha}(m, \lambda_{\alpha}(m)x),
\end{equation}
for all $m \in U_{\alpha}$ and $x \in \R^{m_{\ast}}$. Let us use it to define a graded diffeomorphism $\eta_{\alpha}: \ul{\varphi_{\alpha}}(U_{\alpha})^{(n_{j})} \rightarrow \ul{\varphi'_{\alpha}}(U_{\alpha})^{(n_{j})}$. Set $\ul{\eta_{\alpha}} := \ul{\varphi'_{\alpha}} \circ \ul{\varphi_{\alpha}}^{-1}$, $\bar{x}^{i} := 0$ for all $i \in \{1,\dots,n_{0}\}$, and let 
\begin{equation}
\bar{\xi}_{\mu} \equiv (\eta_{\alpha})^{\ast}_{\ul{\varphi'_{\alpha}}(U_{\alpha})}(\xi_{\mu}) := ((\lambda_{\alpha})^{\nu}{}_{\mu} \circ \ul{\varphi_{\alpha}}^{-1}) \xi_{\nu},
\end{equation}
for all $\mu \in \{1,\dots,n_{\ast}\}$. By Theorem \ref{thm_gradedomaintheorem}, this defines a graded smooth map $\eta_{\alpha}$. By comparing the pullbacks of coordinate functions, it is now straightforward to verify (\ref{eq_gluingisos2}). Proposition \ref{tvrz_gmgluing2}-(ii) then shows that the resulting graded manifolds are diffeomorphic. 

The intuitive idea behind the construction of $E[k]$ is the following. Each local trivialization chart $(U_{\alpha},\phi_{\alpha}^{\vee})$ for $E^{\ast}$ and a local chart $(U_{\alpha},\ul{\varphi_{\alpha}})$ for $M$ can be used to construct a set of local coordinates for the total space manifold $E$ on $q^{-1}(U_{\alpha})$. We obtain the ``base'' coordinates $(x^{1},\dots,x^{n})$ corresponding to the chart $\ul{\varphi_{\alpha}}$ and the ``fiber coordinates'' $(\xi_{1},\dots,\xi_{n_{\ast}})$ defined by $\phi_{\alpha}^{\vee}$. Sections of the sheaf $\C^{\infty}_{E[k]}|_{U_{\alpha}}$ now correspond to functions on $q^{-1}(U_{\alpha}) \subseteq E$, polynomial in the ``fiber coordinates''. But now, each variable $\xi_{\mu}$ has its \textit{degree shifted by $k$}, that is $|\xi_{\mu}| = k$. Transition functions of the graded smooth atlas for $E[k]$ are defined so that all the coordinate functions transform in the same way as they did on the original manifold $E$. 
\end{example}
\subsection{Partition of unity and consequences}
In the realm of smooth manifolds, the existence of partitions of unity is without a doubt one of the most important properties with far-reaching implications. It is thus very desirable to have such a tool in  the shed of graded geometry. First, let us introduce some terminology. In the entire subsection, let $\M = (M,\C^{\infty}_{\M})$ be a graded manifold. First, one needs a suitable definition of a support of a function.
\begin{definice}
Let $f \in \C^{\infty}_{\M}(U)$ for some $U \in \Op(M)$. The \textbf{support $\supp(f)$ of the function $f$} is a subset 
\begin{equation}
\supp(f) := \{ m \in U \; | \; [f]_{m} \neq 0 \} \subseteq U.
\end{equation}
\end{definice}
It is easy to see that $\supp(f)$ is a closed subset of $U$ and $f$ vanishes when restricted to its complement $\supp(f)^{c} \subseteq U$. 
\begin{lemma}
Let $M$ be an ordinary manifold and let $f \in \C^{\infty}_{M}(U)$ for some $U \in \Op(M)$. Then 
\begin{equation}
\supp(f) = \ol{\{ m \in U \; | \; f(m) \neq 0 \}}.
\end{equation}
For any graded manifold $\M = (M, \C^{\infty}_{\M})$ and $f \in \C^{\infty}_{\M}(U)$ for $U \in \Op(M)$, one has $\supp(\ul{f}) \subseteq \supp(f)$. The inclusion $\supseteq$ is in general not true.  
\end{lemma}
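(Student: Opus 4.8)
The statement has three parts. The first is a purely classical fact about ordinary smooth functions: $\supp(f) = \overline{\{m \in U \mid f(m) \neq 0\}}$. For the classical case, the germ $[f]_m$ is nonzero iff $f$ does not vanish identically on some neighborhood of $m$. If $f(m) \neq 0$ then certainly $[f]_m \neq 0$, so $\{f \neq 0\} \subseteq \supp(f)$, and since $\supp(f)$ is closed (a general fact noted just before the lemma) we get $\overline{\{f \neq 0\}} \subseteq \supp(f)$. For the reverse inclusion, if $m \notin \overline{\{f \neq 0\}}$ then there is an open neighborhood $V \in \Op_m(U)$ on which $f$ vanishes identically, hence $[f]_m = 0$, i.e. $m \notin \supp(f)$. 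This uses nothing beyond the definition of stalks and that $\C^\infty_M$ is a sheaf (so a function vanishing on a neighborhood has zero germ, and conversely).

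The second part is the inclusion $\supp(\ul f) \subseteq \supp(f)$ for a general graded manifold. I would argue by contraposition on germs: if $[f]_m = 0$, I want $[\ul f]_m = 0$. Since the body map $i^\ast_M : \C^\infty_\M \to \C^\infty_M$ is a sheaf morphism (Proposition \ref{tvrz_bodymap}), it induces a graded algebra morphism on stalks $(i^\ast_M)_m : \C^\infty_{\M,m} \to \C^\infty_{M,m}$ with $(i^\ast_M)_m([f]_m) = [\ul f]_m$ (Corollary \ref{cor_inducedstalkmap}). Hence $[f]_m = 0$ immediately gives $[\ul f]_m = 0$. Using the first part of the lemma, $\supp(\ul f) = \overline{\{m \mid \ul f(m) \neq 0\}}$; and if $\ul f(m) \neq 0$ then $[\ul f]_m \neq 0$, so $[f]_m \neq 0$, i.e. $m \in \supp(f)$. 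Since $\supp(f)$ is closed this yields $\supp(\ul f) \subseteq \supp(f)$.

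For the third part — that the reverse inclusion can fail — I would exhibit a concrete counterexample. Take $\M = \R^{(n_j)}$ with a single purely graded coordinate $\xi$ of nonzero degree (say $n_k = 1$ for some $k \neq 0$ and $n_0 = 1$, so $M = \R$), and let $f = \chi \cdot \xi \in \C^\infty_\M(\R)$ where $\chi \in \C^\infty_\R(\R)$ is an ordinary bump function with, say, $\supp(\chi) = [0,1]$. Then $\ul f = 0$ (as $|f| = k \neq 0$, or directly since $f$ has no degree-zero component), so $\supp(\ul f) = \emptyset$. On the other hand $[f]_m = [\chi]_m \cdot [\xi]_m$, and for $m$ in the interior of $\supp(\chi)$ one has $[\chi]_m \neq 0$ and $\xi$ is not a zero divisor on such a neighborhood (in $\bar S(\R^{(n_j)}_\ast, \C^\infty_{n_0}(V))$ with $V$ a small interval, $\chi|_V$ is a unit times a nonzero function, and multiplication by $\xi$ is injective there by inspection of the formal power series description in Example \ref{ex_formalpower}), so $[f]_m \neq 0$. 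Thus $\supp(f) \supseteq [0,1] \neq \emptyset = \supp(\ul f)$, showing the inclusion is strict.

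\textbf{Main obstacle.} The only place requiring a little care is checking $[f]_m \neq 0$ in the counterexample, i.e. that $f = \chi\cdot\xi$ has nonzero germ at an interior point of $\supp(\chi)$; this amounts to the observation that on a small enough interval $V$ the coefficient $\chi|_V$ is a nonzero section of $\C^\infty_{n_0}$ and the map $g \mapsto g\xi$ on $\C^\infty_{(n_j)}(V)$ is injective, which follows directly from the uniqueness of the formal-power-series coefficients in Example \ref{ex_formalpower}. The first two parts are routine sheaf-theoretic bookkeeping.
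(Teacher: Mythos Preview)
Your proposal is correct and follows essentially the same approach as the paper. For the second part, the paper uses the map $(i_M)_{(m)}$ of Definition \ref{def_gLRS}-(iii) rather than the induced stalk map $(i^\ast_M)_m$ of Corollary \ref{cor_inducedstalkmap}, but since $\ul{i_M} = \1_M$ these coincide; for the counterexample the paper simply takes $f = \xi_\mu$ on any graded domain $\hat{U}^{(n_j)}$, which has $\supp(\xi_\mu) = \hat{U}$ and $\ul{\xi_\mu} = 0$ --- your bump-function variant works but is more than needed, and your zero-divisor discussion can be replaced by the direct observation that the $\xi$-coefficient of $\chi\xi$ on a small neighborhood is $\chi|_V \neq 0$.
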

\begin{proof}
The first claim is an easy exercise. For the second one, note that for every $m \in U$, one has $[\ul{f}]_{m} = (i_{M})_{(m)}([f]_{m})$, where $(i_{M})_{(m)}$ is the graded algebra morphism (\ref{eq_vartphixmap}) induced by the graded smooth map $i_{M}$ obtained in Proposition \ref{tvrz_bodymap}. Hence $[\ul{f}]_{m} \neq 0$ implies $[f]_{m} = 0$. However, on every graded domain $\hat{U}^{(n_{j})}$, each coordinate function $\xi_{\mu}$ satisfies $\supp(\xi_{\mu}) = \hat{U}$ and $\ul{\xi_{\mu}} = 0$. 
\end{proof}
Next, recall that any collection $\{ X_{\mu} \}_{\mu \in J}$ of subsets of $M$ is called \textbf{locally finite}, if for every point $m \in M$, there exists a \textit{finite} subset $J_{0} \subseteq J$ and a neighborhood $U \in \Op_{m}(M)$, such that $X_{\mu} \cap U \neq \emptyset$ only if $\mu \in J_{0}$. 

\begin{rem}
If $M$ is locally compact, $\{ X_{\mu} \}_{\mu \in J}$ is locally finite, iff for every compact subset $K \subseteq M$, there exists a finite subset $J_{0} \subseteq J$, such that $X_{\mu} \cap K \neq \emptyset$ only if $\mu \in J_{0}$. Note that every smooth manifold is locally compact. 
\end{rem}

\begin{rem} \label{rem_sumoflocallyfinitesupportedfctions}
Let $U \in \Op(M)$ and let $\{ f_{\mu} \}_{\mu \in J}$ be any collection of functions in $\C^{\infty}_{\M}(U)$ of the same degree. Suppose that the collection $\{ \supp(f_{\mu}) \}_{\mu \in J}$ is locally finite. Then we can define a single function $f \in \C^{\infty}_{\M}(U)$ as follows: 

For each $m \in U$, there is $V_{m} \in \Op_{m}(U)$ and a finite subset $J_{m} \subseteq J$, such that $\supp(f_{\mu}) \cap V_{m} \neq \emptyset$ only if $\mu \in J_{m}$. Let $f_{m} := \sum_{\mu \in J_{m}} f_{\mu}|_{V_{m}}$. In this way, we obtain an open cover $\{ V_{m} \}_{m \in U}$ of $U$, and a collection of sections $\{ f_{m} \}_{m \in M}$ where $f_{m} \in \C^{\infty}_{\M}(V_{m})$ agree on the overlaps. By the gluing and monopresheaf property of $\C^{\infty}_{\M}$, there is thus a unique $f \in \C^{\infty}_{\M}(U)$ such that $f|_{V_{m}} = f_{m}$. 

We shall henceforth denote $f$ simply as $\sum_{\mu \in J} f_{\mu}$. 
\end{rem}
We are now ready to formulate the most important statement of this subsection. Its proof follows the lines of its version for ordinary smooth manifolds, see e.g. Theorem 2.23 of \cite{lee2012introduction}. For the sake of completeness, one can find the proof in the appendix, see Proposition \ref{tvrz_ap_partition}. 
\begin{tvrz}[\textbf{Partitions of unity}] \label{tvrz_partition}
Let $\{ U_{\mu} \}_{\mu \in J}$ be any open cover of $M$. Then there exists a family of functions $\{ \lambda_{\mu} \}_{\mu \in J}$, having the following properties:
\begin{enumerate}[(i)]
\item $\lambda_{\mu} \in \C^{\infty}_{\M}(M)_{0}$ for each $\mu \in J$;
\item the collection $\{ \supp(\lambda_{\mu}) \}_{\mu \in J}$ is locally finite and $\supp(\lambda_{\mu}) \subseteq U_{\mu}$ for each $\mu \in J$;
\item $\sum_{\mu \in J} \lambda_{\mu} = 1$ and $\ul{\lambda_{\mu}} \geq 0$ for each $\mu \in J$.
\end{enumerate}
We say that $\{ \lambda_{\mu} \}_{\mu \in J}$ is a \textbf{partition of unity subordinate to} $\{ U_{\mu} \}_{\mu \in J}$. Alternatively, there exists a family of functions $\{ \lambda'_{\nu} \}_{\nu \in J'}$, having the following properties:
\begin{enumerate}[(i)]
\item $\lambda'_{\nu} \in \C^{\infty}_{\M}(M)_{0}$ for each $\nu \in J'$;
\item $\{ \supp(\lambda'_{\nu}) \}_{\nu \in J'}$ is a locally finite collection of compact sets, and for each $\nu \in J'$, $\supp(\lambda'_{\nu}) \subseteq U_{\mu}$ for some $\mu \in J$. 
\item $\sum_{\nu \in J'} \lambda'_{\nu} = 1$ and $\ul{\lambda'_{\nu}} \geq 0$ for each $\nu \in J'$.
\end{enumerate}
We say that $\{ \lambda'_{\nu} \}_{\nu \in J'}$ is a \textbf{compactly supported partition of unity subordinate to $\{U_{\mu}\}_{\mu \in J}$}.
\end{tvrz}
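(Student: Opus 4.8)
The plan is to transcribe the classical proof of existence of partitions of unity (e.g.\ Theorem 2.23 of \cite{lee2012introduction}) into the graded setting, using the fact that the underlying space $M$ is a second countable Hausdorff topological space which, by Proposition \ref{tvrz_transitionmapsunderlyingmanifold}, carries a canonical smooth manifold structure. The single new tool one needs beyond the classical argument is a supply of graded bump functions: for each $m \in M$ and each $U \in \Op_{m}(M)$ one wants a function $\lambda \in \C^{\infty}_{\M}(M)_{0}$ with $\ul{\lambda} \geq 0$, $\lambda \equiv 1$ on some neighborhood of $m$, and $\supp(\lambda) \subseteq U$. Such a $\lambda$ is produced purely on the body: take an ordinary smooth bump function $\ul{\lambda} \in \C^{\infty}_{M}(M)_{0}$ (which exists by the classical theory on the smooth manifold $M$), and regard it as a graded function of degree $0$ via a graded local chart — in a graded domain $\hat{U}^{(n_{j})}$ a smooth function on the base $\hat{U} \subseteq \R^{n_{0}}$ is the constant (in the $\xi_{\mu}$) element of $\C^{\infty}_{(n_{j})}(\hat{U})_{0}$, and these glue by Proposition \ref{tvrz_locrepgluing} since the ordinary smooth transition maps only rescramble the base coordinates. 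One checks $\supp(\lambda) = \supp(\ul{\lambda})$ for such ``body-only'' functions, so all the support bookkeeping happens at the level of ordinary smooth functions.

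Given this, I would carry out the construction exactly as in the classical case. First, using second countability and local compactness of $M$, choose a countable, locally finite open refinement $\{ V_{i} \}_{i \in \N}$ of the given cover $\{ U_{\mu} \}_{\mu \in J}$ with each $\ol{V_{i}}$ compact and contained in some chart domain, together with an open cover $\{ W_{i} \}_{i \in \N}$ with $\ol{W_{i}} \subseteq V_{i}$ (a shrinking lemma, valid on any paracompact Hausdorff space). For each $i$ produce, by the graded bump function construction above, a function $\psi_{i} \in \C^{\infty}_{\M}(M)_{0}$ with $\ul{\psi_{i}} \geq 0$, $\ul{\psi_{i}} > 0$ on $\ol{W_{i}}$, and $\supp(\psi_{i}) \subseteq V_{i}$. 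Since $\{\supp(\psi_i)\}_{i\in\N}$ is locally finite, Remark \ref{rem_sumoflocallyfinitesupportedfctions} gives a well-defined $\psi := \sum_{i \in \N} \psi_{i} \in \C^{\infty}_{\M}(M)_{0}$; its body $\ul{\psi} = \sum_{i} \ul{\psi_{i}}$ is strictly positive everywhere because the $\ol{W_{i}}$ cover $M$, hence $\psi$ is invertible by Proposition \ref{tvrz_invertibility}-(i). Set $\tilde{\lambda}_{i} := \psi^{-1} \cdot \psi_{i}$; then $\ul{\tilde\lambda_i} = \ul\psi^{-1}\ul{\psi_i} \geq 0$, $\supp(\tilde{\lambda}_{i}) = \supp(\psi_{i}) \subseteq V_{i}$, the collection is locally finite, and $\sum_{i} \tilde{\lambda}_{i} = \psi^{-1} \cdot \sum_{i}\psi_{i} = 1$ (the scalar $\psi^{-1}$ pulls out of the locally finite sum, which one verifies locally on each $V_m$ of Remark \ref{rem_sumoflocallyfinitesupportedfctions}). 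This already yields the compactly supported partition of unity $\{\lambda'_\nu\}_{\nu\in J'}$ with $J' = \N$. To obtain the version indexed by $J$ and subordinate to $\{U_\mu\}$, pick a map $r : \N \to J$ with $V_{i} \subseteq U_{r(i)}$ and set $\lambda_{\mu} := \sum_{i \in r^{-1}(\mu)} \tilde{\lambda}_{i}$ (again a locally finite sum via Remark \ref{rem_sumoflocallyfinitesupportedfctions}, with $\supp(\lambda_\mu) \subseteq U_\mu$ since locally only finitely many terms contribute so the support is the closure of a finite union contained in $U_\mu$), which has the three required properties.

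The main obstacle, such as it is, is not conceptual but bookkeeping: one must be careful that the infinite sums of Remark \ref{rem_sumoflocallyfinitesupportedfctions} behave as expected under multiplication by a fixed function (in particular $\psi^{-1} \cdot \sum_i \psi_i = \sum_i \psi^{-1}\psi_i$ and $\sum_{i} \tilde\lambda_i = 1$), and that taking supports commutes with these sums in the locally finite setting — both are checked by passing to the cover $\{V_m\}$ where each sum reduces to a finite one, and invoking the monopresheaf property of $\C^{\infty}_{\M}$. The other point requiring a line of care is that the ``body-only'' functions used as bump functions genuinely satisfy $\supp(\lambda) = \supp(\ul\lambda)$; this follows because in a graded domain a function of the form $g \in \C^{\infty}_{n_0}(\hat U) \subseteq \C^{\infty}_{(n_j)}(\hat U)$ has $[g]_m = 0$ iff $[\ul g]_m = 0$, as the stalk $\C^{\infty}_{(n_j),x}$ contains $\C^{\infty}_{n_0,x}$ as the degree-preserving image under which the body map restricts to the identity. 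Everything else is a verbatim import of the classical manifold argument, justified at each step by the results already established in this section. Since the detailed verification is lengthy but routine, I would state the proposition here and relegate the full proof to the appendix as Proposition \ref{tvrz_ap_partition}.
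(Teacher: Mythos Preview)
Your approach is essentially the same as the paper's: refine to a countable locally finite cover by sets whose closures sit inside chart domains, build body-only bump functions in each chart and extend by zero, sum, invert using Proposition \ref{tvrz_invertibility}, normalize, and then regroup via the refinement map. The paper organizes this via \emph{regular coordinate balls} (Lemma \ref{lem_balls}) and the paracompactness Lemma \ref{lem_paracompact}, but the skeleton is identical.

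There is, however, one genuinely incorrect justification in your sketch. You write that an ordinary smooth bump function $\ul{\lambda} \in \C^{\infty}_{M}(M)$ can be regarded as a graded function ``via a graded local chart \dots\ and these glue by Proposition \ref{tvrz_locrepgluing} since the ordinary smooth transition maps only rescramble the base coordinates.'' This is false: the \emph{graded} transition maps $\varphi_{\alpha\beta}$ are governed by Theorem \ref{thm_gradedomaintheorem}, and in general carry nonzero purely graded data $\bar{y}^{j}$. Concretely, a body-only function $g \in \C^{\infty}_{n_{0}}(\hat{U}_{\alpha}) \subseteq \C^{\infty}_{(n_{j})}(\hat{U}_{\alpha})$ pulls back under $\varphi_{\alpha\beta}$ to $\ol{\varphi}^{\ast}(g) = \sum_{k} \tfrac{1}{k!}(\partial^{k} g \circ \ul{\varphi_{\alpha\beta}})\cdot \bar{y}^{j_{1}}\cdots\bar{y}^{j_{k}}$, which acquires higher $\xi$-terms whenever some $\bar{y}^{j} \neq 0$. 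So the chartwise lifts of a global $\ul{\lambda}$ do \emph{not} glue in general, and indeed the surjectivity of the body map (Proposition \ref{tvrz_bodymapsurj}) is proved only \emph{after} partitions of unity. Fortunately your actual construction never needs this: since each $\ol{V_{i}}$ lies in a single chart domain, you only ever lift a bump function in one chart and extend by zero via the sheaf property on the cover $\{V_{i}, \supp(\psi_{i})^{c}\}$, exactly as the paper does in (\ref{eq_hnufunkce}). Just drop the misleading ``glue across charts'' remark and the argument is clean.
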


In the spirit of Remark \ref{rem_sumoflocallyfinitesupportedfctions}, let us now introduce the following notation. Let $\{ U_{\mu} \}_{\mu \in J}$ be an open cover together with a collection $\{ f_{\mu} \}_{\mu \in J}$ of functions of the same degree, where $f_{\mu} \in \C^{\infty}_{\M}(U_{\mu})$ for each $\mu \in J$. Let $\{ \lambda_{\mu} \}_{\mu \in J}$ be a partition of unity subordinate to $\{ U_{\mu} \}_{\mu \in J}$. First, for each $\mu \in J$, it makes sense to define a function $\lambda_{\mu} \cdot f_{\mu} \in \C^{\infty}_{\M}(M)$ by its restrictions
\begin{equation} \label{eq_skoroproduct}
(\lambda_{\mu} \cdot f_{\mu})|_{U_{\mu}} := \lambda_{\mu}|_{U_{\mu}} \cdot f_{\mu}, \; \; (\lambda_{\mu} \cdot f_{\mu})|_{\supp(\lambda_{\mu})^{c}} := 0
\end{equation}
to the sets of the open cover $\{ U_{\mu}, \supp(\lambda_{\mu})^{c} \}$ of $M$. As $\C^{\infty}_{\M}$ is a sheaf, this determines $\lambda_{\mu} \cdot f_{\mu}$ uniquely. Note that $|\lambda_{\mu} \cdot f_{\mu}| = |f_{\mu}|$ for all $\mu \in J$. Next, using the notation of Remark \ref{rem_sumoflocallyfinitesupportedfctions}, one can then define a global function $\sum_{\mu \in J} \lambda_{\mu} \cdot f_{\mu} \in \C^{\infty}_{\M}(M)$. This shows that similarly to the ordinary differential geometry, partitions of unity are useful for the gluing of local sections which do not necessarily agree on the overlaps. Let us now summarize some basic properties of this procedure.
\begin{tvrz} \label{tvrz_partitionsnaturalops}
Let $\M = (M,\C^{\infty}_{\M})$ be a graded manifold, $\{ U_{\mu} \}_{\mu \in J}$ some open cover of $M$ and $\{ \lambda_{\mu} \}_{\mu \in J}$ a partition of unity subordinate to this open cover. Using the notation introduced in the above paragraph, one observes the following facts:
\begin{enumerate}[(i)]
\item Let $f \in \C^{\infty}_{\M}(M)$. Then $\sum_{\mu \in J} \lambda_{\mu} \cdot f|_{U_{\mu}} = f$. One has $\lambda_{\mu} \cdot f|_{U_{\mu}} = \lambda_{\mu} \cdot f$ for each $\mu \in J$. This legalizes the infamous ``insertion of the unit'' written usually with a lot of courage as 
\begin{equation} f = 1 \cdot f = (\sum_{\mu \in J} \lambda_{\mu}) \cdot f = \sum_{\mu \in J} \lambda_{\mu} \cdot f. \end{equation}
\item Let $\phi: \cN \rightarrow \M$ be a graded smooth map, where $\cN = (N,\C^{\infty}_{\cN})$ is an arbitrary graded manifold. Then $\{ \ul{\phi}^{-1}(U_{\mu}) \}_{\mu \in J}$ forms an open cover of $N$ and $\{ \phi^{\ast}_{M}(\lambda_{\mu}) \}_{\mu \in J}$ forms a partition of unity subordinate to this open cover. Moreover, for any collection $\{ f_{\mu} \}_{\mu \in J}$ of functions of the same degree, where $f_{\mu} \in \C^{\infty}_{\M}(U_{\mu})$ for each $\mu \in J$, one has the formula
\begin{equation} \label{eq_pullbackofasumpartition}
\phi^{\ast}_{M}( \sum_{\mu \in J} \lambda_{\mu} \cdot f_{\mu}) = \sum_{\mu \in J} \phi^{\ast}_{M}(\lambda_{\mu}) \cdot \phi^{\ast}_{U_{\mu}}(f_{\mu}). 
\end{equation}
This shows that the gluing procedure behaves naturally with respect to pullbacks. 
\end{enumerate}
\end{tvrz}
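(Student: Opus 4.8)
The proof is a matter of careful sheaf-theoretic bookkeeping, with no single deep step; I would organize it around the three things needing justification: the two identities in (i), and the three assertions in (ii), namely that $\{\ul{\phi}^{-1}(U_{\mu})\}_{\mu \in J}$ is an open cover carrying the partition of unity $\{\phi^{\ast}_{M}(\lambda_{\mu})\}_{\mu \in J}$, and the pullback formula (\ref{eq_pullbackofasumpartition}).

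For (i), the plan is to first prove $\lambda_{\mu} \cdot f|_{U_{\mu}} = \lambda_{\mu} \cdot f$ directly from the defining property (\ref{eq_skoroproduct}) and the monopresheaf property of $\C^{\infty}_{\M}$: the product $\lambda_{\mu} \cdot f \in \C^{\infty}_{\M}(M)$ restricts to $\lambda_{\mu}|_{U_{\mu}} \cdot f|_{U_{\mu}}$ on $U_{\mu}$ (restrictions are graded algebra morphisms) and to $0$ on $\supp(\lambda_{\mu})^{c}$ (since $\lambda_{\mu}$ vanishes there), so it agrees with $\lambda_{\mu} \cdot f|_{U_{\mu}}$ on the open cover $\{ U_{\mu}, \supp(\lambda_{\mu})^{c} \}$ of $M$. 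For $\sum_{\mu \in J} \lambda_{\mu} \cdot f|_{U_{\mu}} = f$ I would work locally: around any $m \in M$ pick a neighbourhood $V_{m}$ meeting only finitely many of the $\supp(\lambda_{\mu})$, say for $\mu \in J_{m}$; then by the construction in Remark \ref{rem_sumoflocallyfinitesupportedfctions} the left-hand side restricts on $V_{m}$ to $\sum_{\mu \in J_{m}} \lambda_{\mu}|_{V_{m}} \cdot f|_{V_{m}} = \bigl( \sum_{\mu \in J_{m}} \lambda_{\mu}|_{V_{m}} \bigr) \cdot f|_{V_{m}}$, and since the remaining $\lambda_{\mu}$ vanish on $V_{m}$ while $\sum_{\mu \in J} \lambda_{\mu} = 1$, the bracket equals $1|_{V_{m}}$; the monopresheaf property of $\C^{\infty}_{\M}$ then gives the global identity.

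For (ii), first $\{ \ul{\phi}^{-1}(U_{\mu}) \}_{\mu \in J}$ covers $N$ because $\ul{\phi}$ is continuous, and $\phi^{\ast}_{M}(\lambda_{\mu}) \in \C^{\infty}_{\cN}(N)_{0}$ since $\phi^{\ast}$ preserves degree. The key preliminary estimate is $\supp(\phi^{\ast}_{M}(\lambda_{\mu})) \subseteq \ul{\phi}^{-1}(\supp(\lambda_{\mu}))$: if $[\phi^{\ast}_{M}(\lambda_{\mu})]_{n} \neq 0$ then, since $\phi_{(n)}([\lambda_{\mu}]_{\ul{\phi}(n)}) = [\phi^{\ast}_{M}(\lambda_{\mu})]_{n}$ by formula (\ref{eq_vartphixmap2}), necessarily $[\lambda_{\mu}]_{\ul{\phi}(n)} \neq 0$, i.e. $\ul{\phi}(n) \in \supp(\lambda_{\mu})$. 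From this, local finiteness of $\{ \supp(\phi^{\ast}_{M}(\lambda_{\mu})) \}_{\mu \in J}$ and the inclusions $\supp(\phi^{\ast}_{M}(\lambda_{\mu})) \subseteq \ul{\phi}^{-1}(U_{\mu})$ follow by pulling back along $\ul{\phi}$ the corresponding data for $\{ \supp(\lambda_{\mu}) \}_{\mu \in J}$. The identity $\sum_{\mu \in J} \phi^{\ast}_{M}(\lambda_{\mu}) = 1$ is then the special case $f_{\mu} = 1$ of the pullback formula together with $\phi^{\ast}_{M}(1) = 1$, and $\ul{\phi^{\ast}_{M}(\lambda_{\mu})} = \ul{\lambda_{\mu}} \circ \ul{\phi} \geq 0$ is immediate from the commutative diagram (\ref{eq_cdbodymaps}) of Proposition \ref{tvrz_bodymap}.

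Finally, for the pullback formula (\ref{eq_pullbackofasumpartition}) I would again argue locally, and this is the step requiring the most care. Around $n \in N$ with $m = \ul{\phi}(n)$, I would choose a neighbourhood $U \in \Op_{m}(M)$ meeting only finitely many $\supp(\lambda_{\mu})$, say $\mu \in J_{0}$; shrinking $U$ finitely many times, arrange that for each $\mu \in J_{0}$ either $U \subseteq U_{\mu}$ or $U \subseteq \supp(\lambda_{\mu})^{c}$, and let $J_{1} \subseteq J_{0}$ be the set of indices of the first kind. On $U$ the argument of the left-hand side reduces to the finite sum $\sum_{\mu \in J_{1}} \lambda_{\mu}|_{U} \cdot f_{\mu}|_{U}$, and applying $\phi^{\ast}_{U}$ — using naturality $\phi^{\ast}_{M}(g)|_{\ul{\phi}^{-1}(U)} = \phi^{\ast}_{U}(g|_{U})$ together with the defining restrictions (\ref{eq_skoroproduct}) of the terms $\phi^{\ast}_{M}(\lambda_{\mu}) \cdot \phi^{\ast}_{U_{\mu}}(f_{\mu})$ and the support estimate above — shows that both sides of (\ref{eq_pullbackofasumpartition}) restrict on $\ul{\phi}^{-1}(U)$ to $\sum_{\mu \in J_{1}} \phi^{\ast}_{U}(\lambda_{\mu}|_{U}) \cdot \phi^{\ast}_{U}(f_{\mu}|_{U})$; the monopresheaf property of $\C^{\infty}_{\cN}$ then yields the global equality. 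The main obstacle, such as it is, lies exactly in this bookkeeping: matching the locally-finite-sum construction of Remark \ref{rem_sumoflocallyfinitesupportedfctions} on both sides of $\phi^{\ast}$ and choosing neighbourhoods on which all relevant sums become finite and all supports are controlled.
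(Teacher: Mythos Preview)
Your proposal is correct and follows essentially the same approach as the paper: both arguments use the germ relation $[\phi^{\ast}_{M}(\lambda_{\mu})]_{n} = \phi_{(n)}([\lambda_{\mu}]_{\ul{\phi}(n)})$ to obtain the support estimate, then reduce the pullback formula to a local computation on preimages of neighbourhoods meeting only finitely many supports, invoking naturality of $\phi^{\ast}$ and the monopresheaf property. The only organizational differences are that the paper first isolates the termwise identity $\phi^{\ast}_{M}(\lambda_{\mu} \cdot f_{\mu}) = \phi^{\ast}_{M}(\lambda_{\mu}) \cdot \phi^{\ast}_{U_{\mu}}(f_{\mu})$ (checked on the two-set cover $\{\ul{\phi}^{-1}(U_{\mu}), \ul{\phi}^{-1}(\supp(\lambda_{\mu})^{c})\}$) and proves $\sum_{\mu} \phi^{\ast}_{M}(\lambda_{\mu}) = 1$ directly before the pullback formula, whereas you derive the latter as the special case $f_{\mu} = 1$; both orderings are valid since your proof of (\ref{eq_pullbackofasumpartition}) relies only on the support estimate and local finiteness, not on the partition-of-unity identity itself.
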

\begin{proof}
The proof of $(i)$ follows easily from the definitions. Let us prove $(ii)$. First, note that for any locally finite collection $\{ X_{\mu} \}_{\mu \in J}$ of subsets of $M$, the collection $\{ \ul{\phi}^{-1}(X_{\mu}) \}_{\mu \in J}$ is also locally finite. For each $\mu \in J$ and $n \in N$, one has $[\phi^{\ast}_{M}(\lambda_{\mu})]_{n} = \phi_{(n)}( [\lambda_{\mu}]_{\ul{\phi}(n)})$, see (\ref{eq_vartphixmap}). As $\phi_{(n)}$ is a graded algebra morphism, this proves that $\supp( \phi^{\ast}_{M}(\lambda_{\mu})) \subseteq \ul{\phi}^{-1}( \supp(\lambda_{\mu})) \subseteq \ul{\phi}^{-1}(U_{\mu})$. It also follows from the preceding remark that $\{ \supp(\phi^{\ast}_{M}(\lambda_{\mu})) \}_{\mu \in J}$ is a locally finite collection. Now, for each $n \in N$, one can choose $V_{n} := \ul{\phi}^{-1}(U_{\ul{\phi}(n)})$, where $U_{\ul{\phi}(n)} \cap \supp(\lambda_{\mu}) \neq \emptyset$ only if $\mu \in J_{0}$, where $J_{0} \subseteq J$ is a finite subset. Then
\begin{equation}
(\sum_{\mu \in J} \phi^{\ast}_{M}(\lambda_{\mu}))|_{V_{n}} \equiv \sum_{\mu \in J_{0}} \phi^{\ast}_{M}(\lambda_{\mu})|_{V_{n}} = \phi^{\ast}_{U_{\ul{\phi}(n)}}( \sum_{\mu \in J_{0}} \lambda_{\mu}|_{U_{\ul{\phi}(n)}}) = \phi^{\ast}_{U_{\ul{\phi}(n)}}( 1) = 1.
\end{equation}
Hence $\sum_{\mu \in J} \phi^{\ast}_{M}(\lambda_{\mu}) = 1$. Moreover, one has $\ul{ \phi^{\ast}_{M}(\lambda_{\mu})} = \ul{\lambda_{\mu}} \circ \ul{\phi} \geq 0$ for all $\mu \in J$. This proves that $\{ \phi^{\ast}_{M}(\lambda_{\mu}) \}_{\mu \in J}$ is a partition of unity subordinate to $\{ \ul{\phi}^{-1}(U_{\mu}) \}_{\mu \in J}$. To prove (\ref{eq_pullbackofasumpartition}), note that 
\begin{equation}
\phi^{\ast}_{M}( \lambda_{\mu} \cdot f_{\mu}) = \phi^{\ast}_{M}(\lambda_{\mu}) \cdot \phi^{\ast}_{U_{\mu}}(f_{\mu}),
\end{equation}
for all $\mu \in J$. This can be shown easily by comparing the restrictions of both sides to the open subsets $\ul{\phi}^{-1}(U_{\mu})$ and $\ul{\phi}^{-1}(\supp(\lambda_{\mu})^{c})$ covering $N$. Let $V_{n} = \ul{\phi}^{-1}(U_{\ul{\phi}(n)})$ be as above. Then
\begin{equation}
\begin{split}
(\phi^{\ast}_{M}( \sum_{\mu \in J} \lambda_{\mu} \cdot f_{\mu}))|_{V_{n}} = & \ \phi^{\ast}_{U_{\ul{\phi}(n)}}( \sum_{\mu \in J_{0}} (\lambda_{\mu} \cdot f_{\mu})|_{U_{\ul{\phi}(n)}}) = \sum_{\mu \in J_{0}} \phi^{\ast}_{U_{\ul{\phi}(n)}}( (\lambda \cdot f_{\mu})|_{U_{\ul{\phi}(n)}}) \\
= & \ \sum_{\mu \in J_{0}} (\phi^{\ast}_{M}(\lambda_{\mu} \cdot f_{\mu}))|_{V_{n}} = \sum_{\mu \in J_{0}} ( \phi^{\ast}_{M}(\lambda_{\mu}) \cdot \phi^{\ast}_{U_{\mu}}(f_{\mu}))|_{V_{n}} \\
= & \ ( \sum_{\mu \in J} \phi^{\ast}_{M}(\lambda_{\mu}) \cdot \phi^{\ast}_{U_{\mu}}(f_{\mu}))|_{V_{n}}.
\end{split}
\end{equation}
We have used the naturality $\phi^{\ast}$. As $n \in N$ was arbitrary, the equation (\ref{eq_pullbackofasumpartition}) follows. 
\end{proof}

\begin{tvrz}[\textbf{Graded bump functions}] \label{tvrz_bumpfunctions}
Let $U,V \in \Op(M)$ such that $V \subseteq \ol{V} \subseteq U$. 

Then there exists $\lambda \in \C^{\infty}_{\M}(M)_{0}$, such that $0 \leq \ul{\lambda} \leq 1$, $\supp(\lambda) \subseteq U$ and $\lambda|_{V} = 1$. Such $\lambda$ is called a \textbf{graded bump function supported on $U$}.
\end{tvrz}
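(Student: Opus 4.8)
The plan is to reduce the statement to the analogous fact for ordinary smooth manifolds combined with the existence of partitions of unity on graded manifolds established in Proposition \ref{tvrz_partition}. The key point is that a graded bump function can be manufactured from its body together with the trivial observation that the body map is surjective on the relevant functions; but since surjectivity of $i^{\ast}_{M}$ has not yet been established at this point of the text, I would instead go through a partition of unity argument directly, which automatically produces a genuine element of $\C^{\infty}_{\M}(M)_{0}$.

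First I would cover $M$ by the open set $U$ together with the open set $\ol{V}^{c}$ (the complement of the closure of $V$); since $V \subseteq \ol{V} \subseteq U$, the two sets $\{U, \ol{V}^{c}\}$ indeed form an open cover of $M$. Apply Proposition \ref{tvrz_partition} to obtain a partition of unity $\{\lambda_{U}, \lambda_{c}\}$ subordinate to this cover, so $\lambda_{U}, \lambda_{c} \in \C^{\infty}_{\M}(M)_{0}$ with $\supp(\lambda_{U}) \subseteq U$, $\supp(\lambda_{c}) \subseteq \ol{V}^{c}$, $\ul{\lambda_{U}}, \ul{\lambda_{c}} \geq 0$, and $\lambda_{U} + \lambda_{c} = 1$. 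Set $\lambda := \lambda_{U}$. Then $\supp(\lambda) \subseteq U$ is immediate, and $0 \leq \ul{\lambda}$ follows from $\ul{\lambda_{U}} \geq 0$; the bound $\ul{\lambda} \leq 1$ follows by applying the body map $i^{\ast}_{M}$ to $\lambda_{U} + \lambda_{c} = 1$, giving $\ul{\lambda_{U}} + \ul{\lambda_{c}} = 1$ with both summands non-negative, hence $\ul{\lambda_{U}} \leq 1$. Finally, on $V$ we have $\supp(\lambda_{c}) \subseteq \ol{V}^{c} \subseteq V^{c}$, so $\lambda_{c}$ vanishes on $V$ by the remark following the definition of support (a function vanishes when restricted to the complement of its support); consequently $\lambda|_{V} = (1 - \lambda_{c})|_{V} = 1|_{V} - 0 = 1$.

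The only mildly delicate step is the claim that $\lambda_{c}|_{V} = 0$, which requires $\supp(\lambda_{c}) \cap V = \emptyset$; this holds precisely because $\supp(\lambda_{c}) \subseteq \ol{V}^{c}$ and $V \subseteq \ol{V}$, so $V \cap \ol{V}^{c} = \emptyset$. Here the hypothesis $V \subseteq \ol{V} \subseteq U$ is used in full: the inclusion $\ol{V} \subseteq U$ guarantees $\{U, \ol{V}^{c}\}$ covers $M$, and $V \subseteq \ol{V}$ guarantees the non-$U$ piece of the partition of unity is supported away from $V$. I do not anticipate a genuine obstacle; the entire content is bookkeeping with supports and the body map, and everything needed is already available from Proposition \ref{tvrz_partition}, Proposition \ref{tvrz_bodymap}, and the elementary properties of $\supp$.
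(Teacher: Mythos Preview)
Your proof is correct and follows essentially the same approach as the paper: both take the partition of unity $\{\lambda,\lambda'\}$ subordinate to the open cover $\{U,\ol{V}^{c}\}$ and set the bump function to be the piece with support in $U$. Your version is simply more explicit about why $\ul{\lambda}\leq 1$ and why $\lambda'|_{V}=0$, but the argument is the same.
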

\begin{proof}
Consider a partition of unity $\{\lambda,\lambda'\}$ subordinate to the open cover $\{U, \ol{V}^{c} \}$. Clearly $0 \leq \ul{\lambda} \leq 1$ and $\supp(\lambda) \subseteq U$. Moreover, one has $V \subseteq \ol{V} \subseteq \supp(\lambda')^{c}$. Whence $\lambda|_{V} = (\lambda + \lambda')|_{V} = 1$. 
\end{proof}
\begin{tvrz}[\textbf{Extension lemma}] \label{tvrz_extensionlemma}
Let $U \in \Op(M)$ and $f \in \C^{\infty}_{\M}(U)$. 

Then for any $V \in \Op(M)$, such that $V \subseteq \ol{V} \subseteq U$, there exists $g \in \C^{\infty}_{\M}(M)$, such that $f|_{V} = g|_{V}$. 
\end{tvrz}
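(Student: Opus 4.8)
The plan is to use a graded bump function to patch together $f$ on (a neighborhood of) $\ol{V}$ with the zero function away from $\ol{V}$. First I would apply Proposition \ref{tvrz_bumpfunctions}: since $V \subseteq \ol{V} \subseteq U$, there exists $\lambda \in \C^{\infty}_{\M}(M)_{0}$ with $0 \le \ul{\lambda} \le 1$, $\supp(\lambda) \subseteq U$ and $\lambda|_{V} = 1$. The idea is that $\lambda \cdot f$ makes sense as a global function even though $f$ is only defined on $U$, because $\lambda$ vanishes outside a closed subset of $U$.

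Concretely, I would define $g \in \C^{\infty}_{\M}(M)$ by gluing over the open cover $\{ U, \supp(\lambda)^{c} \}$ of $M$ (this is an open cover since $\supp(\lambda) \subseteq U$). On $U$ set $g|_{U} := \lambda|_{U} \cdot f$, and on $\supp(\lambda)^{c}$ set $g|_{\supp(\lambda)^{c}} := 0$. To invoke the gluing property (s2) of the sheaf $\C^{\infty}_{\M}$, I must check these agree on the overlap $U \cap \supp(\lambda)^{c}$: there $\lambda$ restricts to $0$ (indeed $[\lambda]_{m} = 0$ for every $m \notin \supp(\lambda)$, and by the monopresheaf property a section with all germs zero on an open set vanishes there), hence $(\lambda|_{U} \cdot f)|_{U \cap \supp(\lambda)^{c}} = 0|_{U \cap \supp(\lambda)^{c}}$. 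By (s1) and (s2) this determines a unique $g \in \C^{\infty}_{\M}(M)_{|f|}$; note this is exactly the construction $\lambda \cdot f$ of equation (\ref{eq_skoroproduct}).

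Finally I would verify $g|_{V} = f|_{V}$. Since $V \subseteq U$, one has $g|_{V} = (g|_{U})|_{V} = (\lambda|_{U} \cdot f)|_{V} = \lambda|_{V} \cdot f|_{V} = 1 \cdot f|_{V} = f|_{V}$, using that $\lambda|_{V} = 1$ is the algebra unit of $\C^{\infty}_{\M}(V)$ and that restriction morphisms are graded algebra morphisms commuting appropriately. This completes the argument.

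I do not expect any serious obstacle here; the only mild subtlety is making sure $\{U, \supp(\lambda)^{c}\}$ really covers $M$ (immediate from $\supp(\lambda)\subseteq U$) and that the two local definitions of $g$ agree on the overlap, which reduces to the elementary fact that a section vanishes on the complement of its support — already recorded in the text right after the definition of $\supp(f)$. Everything else is a routine application of the sheaf axioms and the existence of bump functions from Proposition \ref{tvrz_bumpfunctions}.
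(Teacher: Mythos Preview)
Your proposal is correct and follows essentially the same approach as the paper: pick a graded bump function $\lambda$ supported in $U$ with $\lambda|_{V}=1$, define $g$ by gluing $\lambda|_{U}\cdot f$ on $U$ with $0$ on $\supp(\lambda)^{c}$, and conclude $g|_{V}=f|_{V}$. The only difference is that you spell out the overlap check and the reference to (\ref{eq_skoroproduct}) more explicitly than the paper does.
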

\begin{proof}
Let $\lambda \in \C^{\infty}_{\M}(M)$ be a graded bump function supported on $U$, such that $\lambda|_{V} = 1$. Define $g \in \C^{\infty}_{\M}(M)$ by its restrictions to the sets of the open cover $\{U, \supp(\lambda)^{c} \}$ as 
\begin{equation} \label{eq_lambdabumpf}
g|_{U} := \lambda|_{U} \cdot f, \; \; g|_{\supp(\lambda)^{c}} := 0. 
\end{equation}
We usually write just $g = \lambda \cdot f$. Then $g|_{V} = \lambda|_{V} \cdot f|_{V} = f|_{V}$ and the proof is finished. 
\end{proof}
This observation has vital consequences for the structure sheaf $\C^{\infty}_{\M}$. The following propositions should be considered as one the most important statements in the entire paper. 
\begin{cor} \label{cor_piUmsurjective}
For every $m \in M$ and $U \in \Op_{m}(M)$, the canonical map $\pi_{U,m}: \C^{\infty}_{\M}(U) \rightarrow \C^{\infty}_{\M,m}$ is a graded algebra epimorphism. 
In other words, for every element $[f]_{m} \in \C^{\infty}_{\M,m}$, one can pick any $U \in \Op_{m}(M)$ and assume that $f \in \C^{\infty}_{\M}(U)$. 
\end{cor}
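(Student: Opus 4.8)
The plan is to reduce the statement to the Extension lemma (Proposition \ref{tvrz_extensionlemma}) proved just above. Since $\pi_{U,m}$ is already a graded algebra morphism by construction (see Proposition \ref{tvrz_stalk}), it remains only to verify surjectivity in every degree; this is exactly what makes it an epimorphism in $\gcAs$.

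First I would take an arbitrary germ $[f]_{m} \in \C^{\infty}_{\M,m}$ of some degree $k$ and choose any representative $h \in \C^{\infty}_{\M}(W)_{k}$ with $W \in \Op_{m}(M)$; a priori $W$ has nothing to do with the prescribed neighborhood $U$. Next I would produce an open set $V$ with $m \in V \subseteq \ol{V} \subseteq W \cap U$. This is the one place where the manifold hypothesis enters: the underlying space $M$ is a smooth manifold, hence locally compact Hausdorff, so one simply takes a coordinate ball around $m$ contained in $W \cap U$ and then a smaller concentric ball. Applying the Extension lemma to $h$ and the pair $V \subseteq \ol{V} \subseteq W$ then yields a global function $g \in \C^{\infty}_{\M}(M)$, necessarily of degree $k$, agreeing with $h$ on $V$.

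Finally I would restrict $g$ to $U$ and observe that $g|_{U}$ and $h$ agree on the neighborhood $V$ of $m$, so they define the same germ: $\pi_{U,m}(g|_{U}) = [g|_{U}]_{m} = [g]_{m} = [h]_{m} = [f]_{m}$. Hence $[f]_{m}$ lies in the image of $\pi_{U,m}$, and as $[f]_{m}$ was arbitrary this gives surjectivity in each degree, i.e. $\pi_{U,m}$ is an epimorphism; the final rephrasing (``one may assume $f \in \C^{\infty}_{\M}(U)$'') is then immediate. I do not expect a real obstacle here: the only points needing a word of care are the existence of $V$ with $\ol{V} \subseteq W \cap U$, which rests on local compactness of $M$ rather than on anything sheaf-theoretic, and the bookkeeping of degrees, which is automatic once one recalls that in this paper an equality of restrictions forces an equality of degrees.
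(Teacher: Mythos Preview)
Your proposal is correct and follows essentially the same route as the paper's own proof: pick a representative on some neighborhood $W$, use local compactness of $M$ to find $V$ with $m\in V\subseteq\ol V\subseteq W\cap U$, apply the Extension lemma, and compare germs. The only cosmetic difference is that the paper first shrinks $W$ to lie inside $U$ and then extends within $\M|_{U}$, whereas you extend to a global section and then restrict to $U$; these are interchangeable.
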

\begin{proof}
Let $[f]_{m} \in \C^{\infty}_{\M,m}$, represented by some $f \in \C^{\infty}_{\M}(W)$ where $W \in \Op_{m}(M)$. We may assume that $W \subseteq U$. As $M$ is locally Euclidean, one may find $V \in \Op_{m}(M)$ such that $\ol{V} \subseteq W$. By Proposition \ref{tvrz_extensionlemma}, there is $g \in \C^{\infty}_{\M}(U)$ such that $g|_{V} = f|_{V}$. Thus $[f]_{m} = [g]_{m} \equiv \pi_{U,m}(g)$. 
\end{proof}
We can now tie one of the loose ends regarding the body map.
\begin{tvrz} \label{tvrz_bodymapsurj}
The body map $i^{\ast}_{M}: \C^{\infty}_{\M} \rightarrow \C^{\infty}_{M}$ is surjective. Consequently, there is a canonical sheaf isomorphism $\C^{\infty}_{\M} / \J^{\pg}_{\M} \cong \C^{\infty}_{M}$. 
\end{tvrz}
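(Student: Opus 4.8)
The plan is to prove surjectivity of the body map $i^{\ast}_{M}$ by reducing to the already-established local situation on graded domains. Recall that on a graded domain $\hat{U}^{(n_{j})}$, the body map $\beta: \C^{\infty}_{(n_{j})} \to \C^{\infty}_{n_{0}}$ is tautologically surjective: every ordinary smooth function $g \in \C^{\infty}_{n_{0}}(W)$ is the body of itself, viewed as the degree-zero ``constant in the $\xi$'s'' section $g \in \C^{\infty}_{(n_{j})}(W)$ via $\bar{S}(\R^{(n_{j})}_{\ast}, \C^{\infty}_{n_{0}}(W))$. So the only real issue is globalizing this: since surjectivity of a sheaf morphism is \emph{not} a local-to-global-free property (it is a statement about the map on sections, not on stalks), I cannot simply say ``it's surjective on each $U_{\alpha}$ hence surjective''. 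Instead I would use a partition of unity.

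First I would fix $U \in \Op(M)$ and $g \in \C^{\infty}_{M}(U)$, and show $g$ lies in the image of $(i_{M})^{\ast}_{U}$. Choose a graded smooth atlas $\A = \{(U_{\alpha},\varphi_{\alpha})\}_{\alpha \in I}$ for $\M$; we may assume (refining if necessary, using that $M$ is second countable) that $\{U \cap U_{\alpha}\}_{\alpha \in I}$ is a locally finite open cover of $U$. On each $U \cap U_{\alpha}$, the restriction $g|_{U \cap U_{\alpha}}$ pushes to a smooth function $\hat{g}_{\alpha}$ on $\ul{\varphi_{\alpha}}(U \cap U_{\alpha})$; since the graded-domain body map is surjective, there is $f_{\alpha} \in \C^{\infty}_{\M}(U \cap U_{\alpha})_{0}$ with $\ul{f_{\alpha}} = g|_{U \cap U_{\alpha}}$ (take the local representative $\hat{g}_{\alpha}$, viewed as a purely-$x$ function, and pull it back by $\varphi_{\alpha}^{\ast}$; by Corollary \ref{cor_domainmapsbodymaps} / Proposition \ref{tvrz_bodymap} its body is $\hat{g}_{\alpha} \circ \ul{\varphi_{\alpha}} = g|_{U \cap U_{\alpha}}$). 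These $f_{\alpha}$ need not agree on overlaps, so I would take a partition of unity $\{\lambda_{\alpha}\}_{\alpha \in I}$ subordinate to $\{U \cap U_{\alpha}\}$ (Proposition \ref{tvrz_partition}), form $f := \sum_{\alpha \in I} \lambda_{\alpha} \cdot f_{\alpha} \in \C^{\infty}_{\M}(U)_{0}$ in the sense of Remark \ref{rem_sumoflocallyfinitesupportedfctions} and the paragraph after Proposition \ref{tvrz_partition}, and check $\ul{f} = g$.

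The verification $\ul{f} = g$ is where I would spend the care: the body map $i^{\ast}_{M}$ is a sheaf morphism, in particular $\R$-linear and compatible with the locally finite sums of Remark \ref{rem_sumoflocallyfinitesupportedfctions} (because both constructions are characterized by their restrictions to a common cover, and $i^{\ast}_{M}$ commutes with restrictions). Hence $\ul{f} = \sum_{\alpha} \ul{\lambda_{\alpha} \cdot f_{\alpha}}$. Now $i^{\ast}_{M}$ is an algebra morphism and $\lambda_{\alpha}, f_{\alpha}$ have degree $0$, so $\ul{\lambda_{\alpha} \cdot f_{\alpha}} = \ul{\lambda_{\alpha}} \cdot \ul{f_{\alpha}}$ on $U \cap U_{\alpha}$ and $0$ on $\supp(\lambda_{\alpha})^{c}$, i.e. $\ul{\lambda_{\alpha} \cdot f_{\alpha}} = \ul{\lambda_{\alpha}} \cdot g|_{U \cap U_{\alpha}}$ (in the same ``restriction then extend by zero'' sense), because $\ul{\lambda_{\alpha}}$ is a genuine smooth function supported in $U \cap U_{\alpha}$ and $\ul{f_{\alpha}} = g|_{U \cap U_{\alpha}}$. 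Therefore $\ul{f} = \sum_{\alpha} \ul{\lambda_{\alpha}} \cdot g|_{U \cap U_{\alpha}} = \big(\sum_{\alpha} \ul{\lambda_{\alpha}}\big) \cdot g = g$, using $\sum_{\alpha} \ul{\lambda_{\alpha}} = \ul{\sum_{\alpha}\lambda_{\alpha}} = \ul{1} = 1$ and part (i) of Proposition \ref{tvrz_partitionsnaturalops} applied to the ordinary manifold $M$. This proves $(i_{M})^{\ast}_{U}$ is surjective for every $U$, i.e. $i^{\ast}_{M}$ is a surjective sheaf morphism.

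Finally, for the consequence: $\J^{\pg}_{\M} = \ker(i^{\ast}_{M})$ by definition \eqref{eq_sheaofidealspurelygraded}, so the first isomorphism theorem gives an injective sheaf morphism $\C^{\infty}_{\M}/\J^{\pg}_{\M} \to \C^{\infty}_{M}$ which is surjective on each $U$ by the surjectivity just proved; hence it is an isomorphism of presheaves, and in particular an isomorphism of sheaves $\C^{\infty}_{\M}/\J^{\pg}_{\M} \cong \C^{\infty}_{M}$. (One should note the quotient presheaf here happens to already be a sheaf precisely because it is isomorphic to the sheaf $\C^{\infty}_{M}$, sidestepping the caveat of Remark \ref{rem_quotientsheaf}.) The main obstacle is purely the globalization step — making sure the partition-of-unity sum is legitimate and that $i^{\ast}_{M}$ genuinely commutes with it — since the pointwise/stalkwise statement alone does not deliver surjectivity on sections.
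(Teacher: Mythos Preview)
Your proposal is correct and follows essentially the same approach as the paper: construct local lifts $f_{\alpha}$ via the charts, glue them using a partition of unity, and verify that the body of the result is the original function. The only cosmetic difference is that the paper packages the verification step by invoking Proposition~\ref{tvrz_partitionsnaturalops}-$(ii)$ with $\phi = i_{M}$ (so that formula~\eqref{eq_pullbackofasumpartition} directly yields $\ul{f} = \sum_{\alpha} \ul{\lambda_{\alpha}} \cdot \ul{f_{\alpha}}$ and that $\{\ul{\lambda_{\alpha}}\}$ is a partition of unity), whereas you spell out the same compatibility of $i^{\ast}_{M}$ with locally finite sums and products by hand.
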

\begin{proof}
Let $\A = \{ (U_{\alpha}, \varphi_{\alpha}) \}_{\alpha \in I}$ be a graded smooth atlas for $\M$. Let $f_{0} \in \C^{\infty}_{M}(U)$ be a given smooth function. For each $\alpha \in I$, one can use the graded local chart $\varphi_{\alpha}$ to define $f_{\alpha} \in \C^{\infty}_{\M}(U \cap U_{\alpha})$ such that $\ul{f_{\alpha}} = f_{0}|_{U \cap U_{\alpha}}$. Now, let $\{ \lambda_{\alpha} \}_{\alpha \in I}$ be the partition of unity on $\M|_{U}$ subordinate to the open cover $\{ U \cap U_{\alpha} \}_{\alpha \in I}$. Set $f := \sum_{\alpha \in I} \lambda_{\alpha} \cdot f_{\alpha}$. It follows from Proposition \ref{tvrz_partitionsnaturalops}-$(ii)$ that $\{ \ul{\lambda_{\alpha}} \}_{\alpha \in I}$ is the partition of unity on $M|_{U}$ subordinate to $\{ U \cap U_{\alpha} \}_{\alpha \in I}$. Moreover, the formula (\ref{eq_pullbackofasumpartition}) reads
\begin{equation}
\ul{f} \equiv (i^{\ast}_{M})_{U}( \sum_{\alpha \in I} \lambda_{\alpha} \cdot f_{\alpha}) =  \sum_{\alpha \in I} \ul{\lambda_{\alpha}} \cdot \ul{f_{\alpha}} = \sum_{\alpha \in I} \ul{\lambda_{\alpha}} \cdot f_{0}|_{U \cap U_{\alpha}} = f_{0},
\end{equation}
where the last equality follows from Proposition \ref{tvrz_partitionsnaturalops}-$(i)$. This finishes the proof.
\end{proof}
Let $\J \subseteq \C^{\infty}_{\M}$ be a given sheaf of ideals of $\C^{\infty}_{\M}$. As we have noted in Remark \ref{rem_quotientsheaf}, there is the quotient presheaf $\C^{\infty}_{\M} / \J$. It turns out that in this case, one can say a bit more.
\begin{tvrz} \label{eq_quotientpresheafissheaf}
For any sheaf of ideals $\J \subseteq \C^{\infty}_{\M}$, the quotient presheaf $\C^{\infty}_{\M} / \J$ is a sheaf. 
\end{tvrz}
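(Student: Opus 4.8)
The presheaf $\C^{\infty}_{\M}/\J$ already has the monopresheaf and gluing maps induced from those of $\C^{\infty}_{\M}$; since $\C^{\infty}_{\M}$ is a sheaf, the only thing that can fail is surjectivity of the gluing for $\C^{\infty}_{\M}/\J$, together with the monopresheaf property of the quotient. I would verify both using partitions of unity, exactly as in the proof of Proposition \ref{tvrz_bodymapsurj}. Throughout, fix $U \in \Op(M)$ and an open cover $\{U_{\mu}\}_{\mu \in J}$ of $U$.

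\textit{Monopresheaf property.} Suppose $[s], [t] \in (\C^{\infty}_{\M}/\J)(U)$ are represented by $s,t \in \C^{\infty}_{\M}(U)$ of the same degree, with $[s]|_{U_{\mu}} = [t]|_{U_{\mu}}$ for all $\mu$, i.e.\ $(s-t)|_{U_{\mu}} \in \J(U_{\mu})$. Setting $f := s - t$, I need $f \in \J(U)$. Let $\{\lambda_{\mu}\}_{\mu \in J}$ be a partition of unity subordinate to $\{U_{\mu}\}_{\mu \in J}$ on $\M|_{U}$ (Proposition \ref{tvrz_partition}). For each $\mu$, the function $\lambda_{\mu}\cdot f \in \C^{\infty}_{\M}(U)$ (defined via (\ref{eq_skoroproduct})) restricts to $\lambda_{\mu}|_{U_{\mu}}\cdot f|_{U_{\mu}} \in \J(U_{\mu})$ on $U_{\mu}$ and to $0$ on $\supp(\lambda_{\mu})^{c}$; since $\J$ is a sheaf of ideals, gluing these shows $\lambda_{\mu}\cdot f \in \J(U)$. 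Then $f = \sum_{\mu \in J} \lambda_{\mu}\cdot f$ by Proposition \ref{tvrz_partitionsnaturalops}-$(i)$, and because $\J(U)$ is an ideal closed under the locally finite sums of Remark \ref{rem_sumoflocallyfinitesupportedfctions} (each point of $U$ has a neighbourhood on which the sum is finite, hence lies in $\J$ of that neighbourhood, and the gluing property of $\J$ applies), we get $f \in \J(U)$.

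\textit{Gluing property.} Suppose we are given $[s_{\mu}] \in (\C^{\infty}_{\M}/\J)(U_{\mu})$, represented by $s_{\mu} \in \C^{\infty}_{\M}(U_{\mu})$ of a common degree $k$, with $[s_{\mu}]|_{U_{\mu\nu}} = [s_{\nu}]|_{U_{\mu\nu}}$, i.e.\ $(s_{\mu} - s_{\nu})|_{U_{\mu\nu}} \in \J(U_{\mu\nu})$ for all $\mu,\nu$. Again pick a partition of unity $\{\lambda_{\mu}\}_{\mu\in J}$ subordinate to $\{U_{\mu}\}_{\mu\in J}$ and set $s := \sum_{\mu \in J} \lambda_{\mu}\cdot s_{\mu} \in \C^{\infty}_{\M}(U)$, using (\ref{eq_skoroproduct}) and Remark \ref{rem_sumoflocallyfinitesupportedfctions}. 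I claim $[s]|_{U_{\nu}} = [s_{\nu}]$ for every $\nu$, which finishes the proof. Working locally around a point $m \in U_{\nu}$, choose a neighbourhood $V_{m} \subseteq U_{\nu}$ meeting only finitely many $\supp(\lambda_{\mu})$, say $\mu \in J_{0}$; then on $V_{m}$,
\begin{equation}
s|_{V_{m}} - s_{\nu}|_{V_{m}} = \sum_{\mu \in J_{0}} \lambda_{\mu}|_{V_{m}}\cdot (s_{\mu} - s_{\nu})|_{V_{m}},
\end{equation}
using $\sum_{\mu \in J_{0}} \lambda_{\mu}|_{V_{m}} = 1$. Each term $\lambda_{\mu}|_{V_{m}}\cdot (s_{\mu}-s_{\nu})|_{V_{m}}$ lies in $\J(V_{m})$ because $(s_{\mu}-s_{\nu})|_{U_{\mu\nu}} \in \J(U_{\mu\nu})$ and $V_{m} \cap \supp(\lambda_{\mu}) \subseteq U_{\mu}$ forces the product to extend by $0$ into $\J(V_{m})$ (the same sheaf-of-ideals gluing as above). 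Hence $(s - s_{\nu})|_{V_{m}} \in \J(V_{m})$ for all $m \in U_{\nu}$, and the gluing property of the sheaf of ideals $\J$ gives $(s-s_{\nu})|_{U_{\nu}} \in \J(U_{\nu})$, i.e.\ $[s]|_{U_{\nu}} = [s_{\nu}]$.

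\textit{Main obstacle.} The only genuinely delicate point is bookkeeping with the locally finite sums: one must consistently reduce every assertion (``lies in $\J$'', ``equals $1$'', ``restricts correctly'') to a local statement on a neighbourhood where the relevant sum is finite, and then reassemble via the gluing property of the sheaf of ideals $\J$ and the monopresheaf property of $\C^{\infty}_{\M}$. None of this is hard given Proposition \ref{tvrz_partition}, Proposition \ref{tvrz_partitionsnaturalops} and Remark \ref{rem_sumoflocallyfinitesupportedfctions}; it is essentially the proof of Proposition \ref{tvrz_bodymapsurj} adapted to an arbitrary sheaf of ideals.
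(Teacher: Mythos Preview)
Your proof is correct and, for the gluing property, essentially identical to the paper's: both define $s := \sum_{\mu} \lambda_{\mu}\cdot s_{\mu}$ via a partition of unity, then verify $(s-s_{\nu})|_{U_{\nu}} \in \J(U_{\nu})$ by passing to a neighbourhood where the sum is finite and using that $\J$ is a sheaf of ideals.

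For the monopresheaf property, however, you work harder than necessary. You invoke a partition of unity to write $f = \sum_{\mu} \lambda_{\mu}\cdot f$ and argue each summand lies in $\J(U)$. The paper observes that this step requires no partitions of unity whatsoever: since $f|_{U_{\mu}} \in \J(U_{\mu})$ for every $\mu$ and these local sections agree on overlaps (they are all restrictions of the same $f$), the gluing property of the \emph{sheaf} $\J$ already produces an element of $\J(U)$ restricting to each $f|_{U_{\mu}}$; by the monopresheaf property of $\C^{\infty}_{\M}$, this element is $f$ itself. In fact, the paper remarks that the monopresheaf property of the quotient holds for any sheaf of ideals in any sheaf, with no special properties of $\C^{\infty}_{\M}$ needed. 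Partitions of unity are genuinely required only for the gluing step.
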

\begin{proof}
Let $U \in \Op(M)$ and let $\{ U_{\alpha} \}_{\alpha \in I}$ be its open cover. Let $f,g \in \C^{\infty}_{\M}(U)$ be a pair of functions such that the sections $[f],[g] \in \C^{\infty}_{\M}(U) / \J(U)$ satisfy $[f]|_{U_{\alpha}} = [g]|_{U_{\alpha}}$ for each $\alpha \in I$. We thus have $h_{\alpha} := (f-g)|_{U_{\alpha}} \in \J(U_{\alpha})$ for each $\alpha \in I$. As $\J$ is a sheaf of ideals, there is a unique function $h \in \J(U)$, such that $h|_{U_{\alpha}} = h_{\alpha}$ for all $\alpha \in I$. By the monopresheaf property of $\C^{\infty}_{\M}$, we have $h = f - g$, and thus $[f] = [g]$. This proves that $\C^{\infty}_{\M} / \J$ has the monopresheaf property. Observe that this is true for any sheaf of ideals in \textit{any} sheaf.

Next, suppose that for each $\alpha \in I$, one has $[f_{\alpha}] \in \C^{\infty}_{\M}(U_{\alpha}) / \J(U_{\alpha})$, such that $[f_{\alpha}]|_{U_{\alpha \beta}} = [f_{\beta}]|_{U_{\alpha \beta}}$ for every $(\alpha,\beta) \in I^{2}$. For each $(\alpha,\beta) \in I^{2}$, there is thus $h_{\alpha \beta} := f_{\beta}|_{U_{\alpha \beta}} - f_{\alpha}|_{U_{\alpha \beta}} \in \J(U_{\alpha \beta})$. Let $\{ \lambda_{\alpha} \}_{\alpha \in I}$ be a partition of unity subordinate to $\{ U_{\alpha} \}_{\alpha \in I}$ and define $f := \sum_{\beta \in I} \lambda_{\beta} \cdot f_{\beta} \in \C^{\infty}_{\M}(U)$. 

We claim that $[f]|_{U_{\alpha}} = [f_{\alpha}]$ for every $\alpha \in I$. For each $m \in U_{\alpha}$, pick $V \in \Op_{m}(U_{\alpha})$, such that $V \cap \supp(\lambda_{\beta}) \neq \emptyset$ only if $\beta \in I_{0}$, where $I_{0} \subseteq I$ is a finite subset. For each $\beta \in I$, one can write
\begin{equation}
(\lambda_{\beta} \cdot f_{\beta})|_{V \cap U_{\beta}} = \lambda_{\beta}|_{V \cap U_{\beta}} \cdot f_{\beta}|_{V \cap U_{\beta}} = \lambda_{\beta}|_{V \cap U_{\beta}} \cdot ( f_{\alpha}|_{V \cap U_{\beta}} + h_{\alpha \beta}|_{V \cap U_{\beta}}).
\end{equation}
Next, observe that $\supp(\lambda_{\beta}|_{V})^{c} = \supp(\lambda_{\beta})^{c} \cap V$ and we have $(\lambda_{\beta} \cdot f_{\beta})|_{\supp(\lambda_{\beta}|_{V})^{c}} = 0$. This proves that the restriction $(\lambda_{\beta} \cdot f_{\beta})|_{V} \in \C^{\infty}_{\M}(V)$ can be written as 
\begin{equation} (\lambda_{\beta} \cdot f_{\beta})|_{V} = \lambda_{\beta}|_{V} \cdot (f_{\alpha}|_{V \cap U_{\beta}} + h_{\alpha \beta}|_{V \cap U_{\beta}}). \end{equation}
Observe that on the right-hand side there is not an actual product, but a function defined by (\ref{eq_skoroproduct}). Also note that $\{ \lambda_{\alpha}|_{V} \}_{\alpha \in I}$ is a partition of unity on $\M|_{V}$ subordinate to $\{ V \cap U_{\alpha} \}_{\alpha \in I}$. Then 
\begin{equation}
\begin{split}
f|_{V} = & \  \sum_{\beta \in I_{0}} (\lambda_{\beta} \cdot f_{\beta})|_{V} = \sum_{\beta \in I_{0}} \lambda_{\beta}|_{V} \cdot (f_{\alpha}|_{V \cap U_{\beta}} + h_{\alpha \beta}|_{V \cap U_{\beta}}) \\
= & \ \sum_{\beta \in I} \lambda_{\beta}|_{V} \cdot f_{\alpha}|_{V \cap U_{\beta}} + \sum_{\beta \in I_{0}} \lambda_{\beta}|_{V} \cdot h_{\alpha \beta}|_{V \cap U_{\beta}} \\
= & \ f_{\alpha}|_{V} + \sum_{\beta \in I_{0}} \lambda_{\beta}|_{V} \cdot h_{\alpha \beta}|_{V \cap U_{\beta}}.
\end{split}
\end{equation}
Now, each summand of the finite sum on the right-hand side is an element of $\J(V)$. This can be seen by restricting it to $V \cap U_{\beta}$ and $\supp(\lambda_{\beta}|_{V})^{c}$ and using the fact that $\J$ is a \textit{sheaf} of ideals. Therefore, we have just proved that $[f]|_{V} = [f_{\alpha}|_{V}]$. As $m \in U_{\alpha}$ was arbitrary, this proves that $[f]|_{U_{\alpha}} = [f_{\alpha}]$ and the proof is finished. 
\end{proof}
\subsection{Products of graded manifolds}
One of the most important constructions in differential geometry is the ability to equip a Cartesian product of two manifolds with an essentially unique smooth structure making it it into a product in the category of smooth manifolds. We will now repeat this construction in the graded setting. 

\begin{tvrz} \label{tvrz_products}
Let $\M = (M,\C^{\infty}_{\M})$ and $\cN = (N, \C^{\infty}_{\cN})$ be a pair of graded manifolds. Then:
\begin{enumerate}[(i)]
\item There exists a graded manifold $\M \times \cN = (M \times N, \C^{\infty}_{\M \times \cN})$, such that 
\begin{equation} \gdim(\M \times \cN) = \gdim(\M) + \gdim(\cN).\end{equation}
\item There are two graded smooth maps $\pi_{\M}: \M \times \cN \rightarrow \M$ and $\pi_{\cN}: \M \times \cN \rightarrow \cN$ such that $\ul{\pi_{\M}} = \pi_{M}$ and $\ul{\pi_{\cN}} = \pi_{N}$. Here $\pi_{M}$ and $\pi_{N}$ denote the usual projections.
\item $\M \times \cN$ together with $\pi_{\M}$ and $\pi_{\cN}$ becomes a binary product in $\gMan^{\infty}$. This property determines $\M \times \cN$ uniquely up to a graded diffeomorphism. 
\end{enumerate}
\end{tvrz}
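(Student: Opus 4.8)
The plan is to build $\M \times \cN$ by gluing graded domains, using the gluing theorems of the previous subsection, and then to verify the universal property of the product by a local computation leveraging Theorem \ref{thm_globaldomain}. First I would fix graded smooth atlases $\A = \{(U_\alpha, \varphi_\alpha)\}_{\alpha \in I}$ for $\M$ of graded dimension $(n_j)$ and $\B = \{(V_\mu, \psi_\mu)\}_{\mu \in J}$ for $\cN$ of graded dimension $(m_j)$. The products $\{U_\alpha \times V_\mu\}_{(\alpha,\mu) \in I \times J}$ form an open cover of $M \times N$ (which is second countable Hausdorff since $M$ and $N$ are), and the ordinary product charts $\ul{\varphi_\alpha} \times \ul{\psi_\mu}$ give a smooth atlas $\ul{\A \times \B}$ for $M \times N$. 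On overlaps $(U_\alpha \times V_\mu) \cap (U_\beta \times V_\nu) = U_{\alpha\beta} \times V_{\mu\nu}$, I would define the graded transition maps $\varphi_{(\alpha\mu)(\beta\nu)}$ on the graded domain $\ul{\varphi_\beta}(U_{\alpha\beta})^{(n_j)} \times \ul{\psi_\nu}(V_{\mu\nu})^{(m_j)}$ — itself a graded domain of graded dimension $(n_j + m_j)$ over an open subset of $\R^{n_0 + m_0}$ — by declaring, via Theorem \ref{thm_gradedomaintheorem}, that the pullbacks of the first block of coordinates are the pullbacks coming from $\varphi_{\alpha\beta}$ (extended to ignore the second block) and the pullbacks of the second block are those coming from $\varphi_{\mu\nu}$. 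Here I would need the evident observation that a graded domain $\hat U^{(n_j)} \times \hat V^{(m_j)}$ is canonically a graded domain of graded dimension $(n_j + m_j)$ with $\R^{(n_j + m_j)}_\ast \cong \R^{(n_j)}_\ast \oplus \R^{(m_j)}_\ast$; I would record this as a preliminary lemma or remark and then use $\C^\infty_{(n_j)}(\hat U) \otimes_{\R} \C^\infty_{(m_j)}(\hat V) \hookrightarrow \C^\infty_{(n_j+m_j)}(\hat U \times \hat V)$.

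The cocycle condition (\ref{eq_cocyclegmgluing}) for the collection $\{\varphi_{(\alpha\mu)(\beta\nu)}\}$ then follows, by Remark \ref{rem_itsufficestocalculatepullbacks}, from comparing pullbacks of the individual coordinate functions: these split into a ``$\M$-part'' that obeys the cocycle condition for $\{\varphi_{\alpha\beta}\}$ and an ``$\cN$-part'' that obeys the cocycle condition for $\{\varphi_{\mu\nu}\}$. Applying Proposition \ref{tvrz_gmgluing1} to the data $(\ul{\A \times \B}, \{\varphi_{(\alpha\mu)(\beta\nu)}\})$ produces the graded manifold $\M \times \cN = (M \times N, \C^\infty_{\M \times \cN})$ with a graded smooth atlas whose transition maps are the $\varphi_{(\alpha\mu)(\beta\nu)}$ and whose graded dimension is $(n_j) + (m_j)$; this settles $(i)$. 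For $(ii)$, I would construct $\pi_\M: \M \times \cN \to \M$ by gluing local pieces: on each $U_\alpha \times V_\mu$, the composition of the local chart for $\M\times\cN$ with the projection of graded domains $\hat U_\alpha^{(n_j)} \times \hat V_\mu^{(m_j)} \to \hat U_\alpha^{(n_j)}$ (itself built from Theorem \ref{thm_gradedomaintheorem} by pulling back only the $\M$-coordinates) followed by $\varphi_\alpha^{-1}$ gives a $\gLRS$-morphism into $\M|_{U_\alpha}$; by construction of the transition maps these agree on overlaps, so Proposition \ref{tvrz_gLRSgluing} glues them into $\pi_\M$, with underlying map $\pi_M$. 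Symmetrically for $\pi_\cN$.

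For $(iii)$, given a graded manifold $\Q = (Q, \C^\infty_\Q)$ and graded smooth maps $f: \Q \to \M$, $g: \Q \to \cN$, I need a unique $h: \Q \to \M \times \cN$ with $\pi_\M \circ h = f$ and $\pi_\cN \circ h = g$. Uniqueness and existence are both local on $Q$: choosing a graded chart $(W, \chi)$ on $\Q$ small enough that $\ul{f}(W) \subseteq U_\alpha$ and $\ul{g}(W) \subseteq V_\mu$ for some $\alpha, \mu$, the problem becomes: given morphisms $\hat f: \hat W^{(r_j)} \to \hat U_\alpha^{(n_j)}$ and $\hat g: \hat W^{(r_j)} \to \hat V_\mu^{(m_j)}$ of graded domains, show there is a unique $\hat h: \hat W^{(r_j)} \to \hat U_\alpha^{(n_j)} \times \hat V_\mu^{(m_j)}$ compatible with the two projections. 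By Theorem \ref{thm_globaldomain} (equivalently Theorem \ref{thm_gradedomaintheorem}), a morphism into $\hat U_\alpha^{(n_j)} \times \hat V_\mu^{(m_j)}$ is uniquely specified by the pullbacks of the coordinate functions of that graded domain, which are exactly the $\M$-coordinates together with the $\cN$-coordinates; the projection constraints force these pullbacks to be the ones given by $\hat f$ and by $\hat g$ respectively, and conversely that prescription does define a morphism. This yields $\hat h$ uniquely, hence $h$ on each chart; uniqueness globally then lets the local $h$'s be glued (via Proposition \ref{tvrz_gLRSgluing}, after checking two local solutions agree on overlaps, which again follows from the uniqueness clause of Theorem \ref{thm_globaldomain}). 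Finally, the universal property pins down $\M \times \cN$ up to a unique graded diffeomorphism by the standard categorical argument. The main obstacle I anticipate is purely bookkeeping: setting up cleanly the canonical identification of $\hat U^{(n_j)} \times \hat V^{(m_j)}$ with a single graded domain of graded dimension $(n_j + m_j)$, and tracking the block-coordinate splitting of pullbacks carefully enough that the cocycle condition and the projection compatibilities are genuinely ``componentwise'' — the Koszul signs in the product $\bar S(V \oplus W, A)$ versus $\bar S(V,A)$ and $\bar S(W,A)$ need to be handled with some care, though Remark \ref{rem_tensorproductofgcAs}-(i) and Remark \ref{rem_simplerextsymmetric} do most of the work.
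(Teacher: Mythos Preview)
Your proposal is correct and follows essentially the same approach as the paper: construct $\M \times \cN$ via Proposition \ref{tvrz_gmgluing1} from product charts with block-diagonal transition maps, glue the local projections via Proposition \ref{tvrz_gLRSgluing}, and verify the universal property by reducing to graded domains through Theorem \ref{thm_gradedomaintheorem}/\ref{thm_globaldomain}. The only cosmetic difference is that the paper packages the definition of the transition maps and the cocycle check into commutative diagrams with the canonical projections $\hat{\pi}_1, \hat{\pi}_2$ rather than writing out coordinate pullbacks explicitly, but the content is identical.
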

\begin{proof}
Let $\A = \{ (U_{\alpha},\varphi_{\alpha}) \}_{\alpha \in I}$ and $\B = \{ (V_{\rho}, \psi_{\rho}) \}_{\rho \in J}$ be a graded smooth atlas for $\M$ and $\cN$, respectively. Then $\ul{\A \times \B} := \{ (U_{\alpha} \times V_{\rho}, \ul{\varphi_{\alpha}} \times \ul{\psi_{\rho}}) \}_{(\alpha,\rho) \in I \times J}$ forms an ordinary smooth atlas for $M \times N$ equipped with the product topology. Let $(n_{j})_{j \in \Z} := \gdim(\M)$ and $(m_{j})_{j \in \Z} := \gdim(\cN)$.  

The idea is to use Proposition \ref{tvrz_gmgluing1} to construct a sheaf $\C^{\infty}_{\M \times \cN}$ together with a graded smooth atlas $\A \times \B = \{ (U_{\alpha} \times V_{\rho}, \varphi_{\alpha} \times \psi_{\rho}) \}_{(\alpha,\rho) \in I \times J}$. One only needs to construct the respective transition maps. For any $\hat{U} \in \Op(\R^{n_{0}})$ and $\hat{V} \in \Op(\R^{m_{0}})$, one has canonical graded smooth maps
\begin{equation}
\hat{\pi}_{1}: (\hat{U} \times \hat{V})^{(n_{j} + m_{j})} \rightarrow \hat{U}^{(n_{j})}, \; \; \hat{\pi}_{2}: (\hat{U} \times \hat{V})^{(n_{j} + m_{j})} \rightarrow \hat{V}^{(m_{j})},
\end{equation}
such that $\ul{\hat{\pi}_{1}}: \hat{U} \times \hat{V} \rightarrow \hat{U}$ is the canonical projection, and similarly for $\hat{\pi}_{2}$. Next, note that
\begin{equation}
(\ul{\varphi_{\alpha}} \times \ul{\psi_{\rho}})((U_{\alpha} \times V_{\rho}) \cap (U_{\beta} \times V_{\sigma})) = \ul{\varphi_{\alpha}}(U_{\alpha \beta}) \times \ul{\psi_{\rho}}(V_{\rho \sigma}),
\end{equation}
for all $\alpha,\beta \in I$ and $\rho,\sigma \in J$. Each transition map $(\varphi \times \psi)_{(\alpha \rho)(\beta \sigma)}$ can be then constructed as the unique graded smooth map fitting into the pair of commutative diagrams  
\begin{equation}
\begin{tikzcd}[column sep=huge]
(\ul{\varphi_{\beta}}(U_{\alpha \beta}) \times \ul{\psi_{\sigma}}(V_{\rho \sigma}))^{(n_{j} + m_{j})} \arrow{d}{\hat{\pi}_{1}} \arrow[dashed]{r}{(\varphi \times \psi)_{(\alpha \rho)(\beta \sigma)}}& (\ul{\varphi_{\alpha}}(U_{\alpha \beta}) \times \ul{\psi_{\rho}}(V_{\rho \sigma}))^{(n_{j} + m_{j})} \arrow{d}{\hat{\pi}_{1}} \\
\ul{\varphi_{\beta}}(U_{\alpha \beta})^{(n_{j})} \arrow{r}{\varphi_{\alpha \beta}} & \ul{\varphi_{\alpha}}(U_{\alpha \beta})^{(n_{j})}
\end{tikzcd},
\end{equation}
\begin{equation}
\begin{tikzcd}[column sep=huge]
(\ul{\varphi_{\beta}}(U_{\alpha \beta}) \times \ul{\psi_{\sigma}}(V_{\rho \sigma}))^{(n_{j} + m_{j})} \arrow{d}{\hat{\pi}_{2}} \arrow[dashed]{r}{(\varphi \times \psi)_{(\alpha \rho)(\beta \sigma)}}& (\ul{\varphi_{\alpha}}(U_{\alpha \beta}) \times \ul{\psi_{\rho}}(V_{\rho \sigma}))^{(n_{j} + m_{j})} \arrow{d}{\hat{\pi}_{2}} \\
\ul{\psi_{\sigma}}(V_{\rho \sigma})^{(m_{j})} \arrow{r}{\psi_{\rho \sigma}} & \ul{\psi_{\rho}}(V_{\rho \sigma})^{(m_{j})}
\end{tikzcd}.
\end{equation}
These diagrams can also be used to prove the cocycle condition (\ref{eq_cocyclegmgluing}). Hence by Proposition \ref{tvrz_gmgluing1}, we obtain a graded manifold $\M \times \cN$ and the claim $(i)$ is proved. 

Next, for each $(\alpha,\rho) \in I \times J$, define $\pi_{\M}^{(\alpha,\rho)}: (\M \times \cN)|_{U_{\alpha} \times V_{\rho}} \rightarrow \M$ as 
\begin{equation} \pi^{(\alpha,\rho)}_{\M} := \varphi_{\alpha}^{-1} \circ \hat{\pi}_{1} \circ (\varphi_{\alpha} \times \psi_{\rho}).
\end{equation}
Using the above commutative diagrams, it is easy to check that restrictions of those maps agree on the overlaps of the open cover $\{ U_{\alpha} \times V_{\rho} \}_{(\alpha,\rho) \in I \times J}$ of $M \times N$. By Proposition \ref{tvrz_gLRSgluing}, there is then the unique graded smooth map $\pi_{\M}: \M \times \cN \rightarrow \M$, such that $\pi_{\M}|_{U_{\alpha} \times V_{\rho}} = \pi_{\M}^{(\alpha,\rho)}$. Obviously $\ul{\pi_{\M}} = \pi_{M}$. The construction of $\pi_{\cN}$ is analogous and the claim $(ii)$ is proved. 

To show that $\M \times \cN$ together with $\pi_{\M}$ and $\pi_{\cN}$ forms a binary product in $\gMan^{\infty}$, we have to prove that it has the \textit{universal property}: to any graded manifold $\cS = (S, \C^{\infty}_{\cS})$ and a pair of graded smooth maps $\phi: \cS \rightarrow \M$, $\chi: \cS \rightarrow \cN$, there exists a \textit{unique} graded smooth map $(\phi,\chi): \cS \rightarrow \M \times \cN$, such that $\pi_{\M} \circ (\phi,\psi) = \phi$ and $\pi_{\cN} \circ (\phi,\psi) = \chi$. 

Let $(k_{j})_{j \in \Z} := \gdim(\cS)$. Let $\hat{U} \subseteq \R^{n_{0}}$ and $\hat{V} \subseteq \R^{m_{0}}$. It straightforward to show that to any $\hat{W} \subseteq \R^{k_{0}}$ and a pair of graded smooth maps $\hat{\phi}: \hat{W}^{(k_{j})} \rightarrow \hat{U}^{(n_{j})}$ and $\hat{\chi}: \hat{W}^{(k_{j})} \rightarrow \hat{V}^{(m_{j})}$, there exists a unique graded smooth map $(\hat{\phi},\hat{\chi}): \hat{W}^{(k_{j})} \rightarrow (\hat{U} \times \hat{V})^{(n_{j} + m_{j})}$, such that $\hat{\pi}_{1} \circ (\hat{\phi},\hat{\chi}) = \hat{\phi}$ and $\hat{\pi}_{2} \circ (\hat{\phi},\hat{\psi}) = \hat{\chi}$. 

Now, for each $s \in S$, there is a graded local chart $(U_{\alpha}, \varphi_{\alpha})$ for $\M$ containing $\ul{\phi}(s)$, a graded local chart $(V_{\rho}, \psi_{\rho})$ for $\cN$ containing $\ul{\chi}(s)$, and a graded local chart $(W,\eta)$ for $\cS$ containing $s$, such that $\ul{\phi}(W) \subseteq U_{\alpha}$ and $\ul{\chi}(W) \subseteq V_{\rho}$. If $(\phi,\chi)$ exists, its restriction to $W$ must satisfy the formula
\begin{equation} \label{eq_(phi,chi)restriction}
(\phi,\chi)|_{W} = (\varphi_{\alpha} \circ \psi_{\rho})^{-1} \circ ( \varphi_{\alpha} \circ \phi|_{W} \circ \eta^{-1}, \psi_{\rho} \circ \chi|_{W} \circ \eta^{-1}) \circ \eta.
\end{equation}
This follows immediately from the uniqueness claim in the previous paragraph. Denote the graded smooth map on the right-hand side as $(\phi,\chi)_{(W)}: \cS|_{W} \rightarrow \M \times \cN$. By repeating this procedure for every $s \in S$, we obtain an open cover $\{ W_{s} \}_{s \in S}$ of $S$ together with a collection $\{ (\phi,\psi)_{(W_{s})} \}_{s \in S}$ of graded smooth maps. It is not difficult to see that those maps agree on the overlaps, hence by Proposition \ref{tvrz_gLRSgluing}, they define a unique graded smooth map $(\phi,\chi): \cS \rightarrow \M \times \cN$, such that $(\phi,\chi)|_{W_{s}} = (\phi,\chi)_{(W_{s})}$ for every $s \in S$. Clearly $\pi_{\M} \circ (\phi,\chi) = \phi$, $\pi_{\cN} \circ (\phi,\chi) = \chi$, and it is the unique such graded smooth map. This proves the claim $(iii)$. 
\end{proof}

\begin{cor}
Let $\phi: \M \rightarrow \M'$ and $\psi: \cN \rightarrow \cN'$ be two graded smooth maps of graded smooth manifolds. Then there exists a unique map $\phi \times \psi: \M \times \cN \rightarrow \M' \times \cN'$ satisfying 
\begin{equation}
\pi_{\M'} \circ (\phi \times \psi) = \phi \circ \pi_{\M}, \; \; \pi_{\cN'} \circ (\phi \times \psi) = \psi \circ \pi_{\cN}.
\end{equation}
\end{cor}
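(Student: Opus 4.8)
The plan is to obtain $\phi \times \psi$ directly from the universal property of the product $\M' \times \cN'$ established in Proposition~\ref{tvrz_products}. The point is that a graded smooth map into $\M' \times \cN'$ is the same thing as a pair of graded smooth maps, one into $\M'$ and one into $\cN'$, with compatible sources. So I would set $\cS := \M \times \cN$ and produce the two ``legs'' as composites: take $\phi \circ \pi_{\M} : \M \times \cN \to \M'$ and $\psi \circ \pi_{\cN} : \M \times \cN \to \cN'$, both of which are graded smooth maps since they are composites of such. The universal property of $\M' \times \cN'$ (the claim $(iii)$ in Proposition~\ref{tvrz_products}) then yields a \emph{unique} graded smooth map
\begin{equation}
\phi \times \psi := (\phi \circ \pi_{\M}, \psi \circ \pi_{\cN}) : \M \times \cN \rightarrow \M' \times \cN'
\end{equation}
satisfying $\pi_{\M'} \circ (\phi \times \psi) = \phi \circ \pi_{\M}$ and $\pi_{\cN'} \circ (\phi \times \psi) = \psi \circ \pi_{\cN}$, which is exactly the asserted pair of identities.

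For uniqueness of $\phi \times \psi$ as a map satisfying those two equations, I would simply observe that any graded smooth map $\Theta : \M \times \cN \to \M' \times \cN'$ with $\pi_{\M'} \circ \Theta = \phi \circ \pi_{\M}$ and $\pi_{\cN'} \circ \Theta = \psi \circ \pi_{\cN}$ is by definition a graded smooth map into $\M' \times \cN'$ whose two projections are $\phi \circ \pi_{\M}$ and $\psi \circ \pi_{\cN}$; the uniqueness clause in the universal property of $\M' \times \cN'$ forces $\Theta = (\phi \circ \pi_{\M}, \psi \circ \pi_{\cN}) = \phi \times \psi$. This is the entire argument — it is a routine diagram chase, and there is essentially no obstacle, since all the substantive work (existence of products, their universal property) was already done in Proposition~\ref{tvrz_products}.

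If a slightly more hands-on remark is wanted, one can add that on underlying manifolds $\ul{\phi \times \psi} = \ul{\phi} \times \ul{\psi}$, which follows by applying the body functor $\M \mapsto M$ to the two defining equations and invoking the analogous uniqueness for the ordinary Cartesian product. The only mild subtlety to keep an eye on is purely bookkeeping: one must check that the source objects genuinely match, i.e. that both legs $\phi \circ \pi_{\M}$ and $\psi \circ \pi_{\cN}$ have the \emph{same} domain $\M \times \cN$ before feeding them into the universal property — which is immediate here. Hence $\phi \times \psi$ exists and is unique, completing the proof.
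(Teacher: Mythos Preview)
Your proof is correct and takes essentially the same approach as the paper: the paper's proof is the single line ``Let $\phi \times \psi := (\phi \circ \pi_{\M}, \psi \circ \pi_{\cN})$,'' which is exactly your construction via the universal property of $\M' \times \cN'$. Your version simply spells out the existence and uniqueness arguments in more detail.
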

\begin{proof}
Let $\phi \times \psi := (\phi \circ \pi_{\M}, \psi \circ \pi_{\cN})$.
\end{proof}
\begin{example} \label{ex_productgdomains}
Let $\hat{U}^{(n_{j})}$ and $\hat{V}^{(m_{j})}$ be a pair of graded domains. 

It follows from Theorem \ref{thm_globaldomain} that $(\hat{U} \times \hat{V})^{(n_{j} + m_{j})}$ together with the canonical projections $\hat{\pi}_{1}$ and $\hat{\pi}_{2}$ also form a binary product of $\hat{U}^{(n_{j})}$ and $\hat{V}^{(n_{j})}$ in $\gMan^{\infty}$. There is thus a unique graded diffeomorphism
\begin{equation}
(\hat{U} \times \hat{V})^{(n_{j}+m_{j})} \cong \hat{U}^{(n_{j})} \times \hat{V}^{(n_{j})}. 
\end{equation}
In particular, for \textit{any} graded local charts $(U,\varphi)$ and $(V,\psi)$ for $\M$ and $\cN$, respectively, the graded smooth map $\varphi \times \psi: \M \times \cN|_{U \times V} \rightarrow \hat{U}^{(n_{j})} \times \hat{V}^{(m_{j})} \cong (\hat{U} \times \hat{V})^{(n_{j} + m_{j})}$ defines a graded chart $(U \times V, \varphi \times \psi)$ for $\M \times \cN$. In fact, all the graded local charts we have obtained in the proof of Proposition \ref{tvrz_products} are precisely of this form. 
\end{example}
\section{Infinitesimal theory} \label{sec_infinitesimal}
\subsection{Tangent spaces and induced maps}
Let $\M = (M, \C^{\infty}_{\M})$ be a graded manifold. We shall now define a graded analogue of a tangent space at a given point $m \in M$. 

First, observe that a (trivially) graded vector space $\R$ has a natural structure of a graded $\C^{\infty}_{\M,m}$-module. For $[f]_{m} \in \C^{\infty}_{\M,m}$ represented by $f \in \C^{\infty}_{\M}(U)$ for some $U \in \Op_{m}(M)$, define 
\begin{equation} \label{eq_StalkactiononR}
[f]_{m} \tr \lambda := f(m) \cdot \lambda,
\end{equation}
for all $\lambda \in \R$. This is well-defined as $f(m)$ does not depend on the function representing $[f]_{m}$. It is trivial to verify the properties (\ref{eq_moduleaxioms}). The following definition thus makes sense.

\begin{definice}
Let $\M = (M, \C^{\infty}_{\M})$ be a graded manifold and let $m \in M$. The \textbf{tangent space at $m$ to $\M$} is the graded vector space
\begin{equation}
T_{m}\M := \gDer( \C^{\infty}_{\M,m}, \R),
\end{equation}
where the $\C^{\infty}_{\M,m}$-module structure on $\R$ is described above. 
\end{definice}

\begin{tvrz} \label{tvrz_differential}
Let $\M = (M, \C^{\infty}_{\M})$ and $\cN = (N, \C^{\infty}_{\cN})$ be a pair of graded manifolds. Let $\phi: \M \rightarrow \cN$ be a graded smooth map. Let $m \in M$ be a given point.

Then there is a graded linear map $T_{m}\phi: T_{m}\M \rightarrow T_{\ul{\phi}(m)} \cN$ defined for each  $v \in T_{m}\M$ by 
\begin{equation} \label{eq_differentialdef} (T_{m}\phi)(v) := v \circ \phi_{(m)}, \end{equation}
where $\phi_{(m)}: \C^{\infty}_{\cN,\ul{\phi}(m)} \rightarrow \C^{\infty}_{\M,m}$ is the graded algebra morphism (\ref{eq_vartphixmap}). If $\psi: \cN \rightarrow \cS$ is another graded smooth map for a graded manifold $\cS = (S, \C^{\infty}_{\cS})$, one finds
\begin{equation} \label{tvrz_differentialfunctorial}
T_{m}(\psi \circ \phi) = T_{\ul{\phi}(m)}\psi \circ T_{m}\phi, \; \; T_{m}\1_{\M} = \1_{T_{m}\M}.
\end{equation}
The map $T_{m}\phi$ is called the \textbf{differential of $\phi$ at $m$}. 
\end{tvrz}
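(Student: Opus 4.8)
The plan is to verify directly that the formula \eqref{eq_differentialdef} produces a well-defined graded linear map with the stated properties, and then observe that functoriality is essentially formal. First I would check that for $v \in T_m\M$ of degree $\ell$, the composition $v \circ \phi_{(m)}$ lies in $T_{\ul\phi(m)}\cN$ and has degree $\ell$. Since $\phi_{(m)} \colon \C^\infty_{\cN,\ul\phi(m)} \to \C^\infty_{\M,m}$ is a graded algebra morphism (degree $0$), the composition $v \circ \phi_{(m)}$ is a graded linear map of degree $\ell$ from $\C^\infty_{\cN,\ul\phi(m)}$ to $\R$. The only genuine point to check is the Leibniz rule \eqref{eq_derlleibniz}: one must confirm that the $\C^\infty_{\cN,\ul\phi(m)}$-module structure on $\R$ defined by \eqref{eq_StalkactiononR} is compatible with the $\C^\infty_{\M,m}$-module structure via $\phi_{(m)}$, i.e.\ that $[g]_{\ul\phi(m)} \tr \lambda = \phi_{(m)}([g]_{\ul\phi(m)}) \tr \lambda$ for all $\lambda \in \R$. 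This reduces to the identity $g(\ul\phi(m)) = \ul{\phi^\ast_V(g)}(m)$, which is exactly the compatibility of $\phi$ with body maps recorded in Proposition \ref{tvrz_bodymap} (the commutative diagram \eqref{eq_cdbodymaps}). With that in hand, Proposition \ref{tvrz_derivations}-(ii) applies verbatim: $\phi_{(m)}$ is a graded algebra morphism through which the two module structures are related, hence $v \circ \phi_{(m)} \in \gDer(\C^\infty_{\cN,\ul\phi(m)}, \R) = T_{\ul\phi(m)}\cN$.

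Next I would check graded linearity of $v \mapsto v \circ \phi_{(m)}$ in $v$, which is immediate since precomposition with a fixed map is linear and preserves degree. The functoriality statement \eqref{tvrz_differentialfunctorial} then follows from the corresponding property of the induced stalk maps: by Definition \ref{def_gLRS} and the composition law for $\gLRS$ morphisms, one has $(\psi \circ \phi)_{(m)} = \phi_{(m)} \circ \psi_{(\ul\phi(m))}$, so that
\begin{equation}
T_m(\psi \circ \phi)(v) = v \circ (\psi\circ\phi)_{(m)} = v \circ \phi_{(m)} \circ \psi_{(\ul\phi(m))} = \big(T_{\ul\phi(m)}\psi\big)\big((T_m\phi)(v)\big),
\end{equation}
and $T_m\1_\M(v) = v \circ (\1_\M)_{(m)} = v \circ \1_{\C^\infty_{\M,m}} = v$, using that $(\1_\M)_{(m)} = \1_{\C^\infty_{\M,m}}$, which is clear from \eqref{eq_vartphixmap2}.

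I expect no serious obstacle here; the statement is essentially a formal consequence of earlier results. The one place that requires a moment's care, and which I would present as the crux of the argument, is the verification that \eqref{eq_differentialdef} indeed lands in the space of graded derivations into $\R$ — specifically the module-compatibility point invoking the body-map commutative diagram \eqref{eq_cdbodymaps}. Everything else is a direct unwinding of definitions, so in the write-up I would state the proof compactly: establish the module compatibility via Proposition \ref{tvrz_bodymap}, cite Proposition \ref{tvrz_derivations}-(ii) to conclude $(T_m\phi)(v) \in T_{\ul\phi(m)}\cN$, note degree preservation and linearity, and finish with the two-line computations above for \eqref{tvrz_differentialfunctorial}.
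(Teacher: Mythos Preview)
Your proposal is correct and follows essentially the same approach as the paper's proof: both invoke the body-map commutative diagram \eqref{eq_cdbodymaps} together with Proposition~\ref{tvrz_derivations}-(ii) to verify that $v \circ \phi_{(m)}$ is a derivation, and both derive functoriality from the identity $(\psi \circ \phi)_{(m)} = \phi_{(m)} \circ \psi_{(\ul\phi(m))}$ and $(\1_\M)_{(m)} = \1_{\C^\infty_{\M,m}}$. Your write-up is simply a bit more explicit about why the module-compatibility step is needed.
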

\begin{proof}
One has to argue that $v \circ \phi_{(m)} \in \gDer(\C^{\infty}_{\M,m}, \R)$. But this follows immediately from the commutativity of (\ref{eq_cdbodymaps}), the equation (\ref{eq_vartphixmap2}) and Proposition \ref{tvrz_derivations}-$(ii)$. The composition rule in (\ref{tvrz_differentialfunctorial}) follows from the fact that $(\psi \circ \varphi)_{(m)} = \varphi_{(m)} \circ \psi_{\ul{\varphi}(m)}$. This can be show directly from (\ref{eq_vartphixmap2}). Finally, one clearly has $(\1_{\M})_{(m)} = \1_{\C^{\infty}_{\M,m}}$. This proves the final claim. 
\end{proof}
In the following paragraphs, we are going to prove that $T_{m}\M$ is a finite-dimensional graded vector space and find its convenient total basis. To do so, we need several technical statements. Let $m \in M$ be a fixed point a let $(x^{1},\dots,x^{n},\xi_{1},\dots,\xi_{n_{\ast}})$ be the coordinate functions corresponding to some graded local chart $(U,\varphi)$ around $m$, see Example \ref{ex_coordfunctions}. We can thus consider the germs $[x^{i} - x^{i}(m)]_{m} \in \C^{\infty}_{\M,m}$ and $[\xi_{\mu}]_{m} \in \C^{\infty}_{\M,m}$. They play an important role in the following ``stalk analogue'' of Lemma \ref{lem_Hadamard} and Proposition \ref{tvrz_inallvanishingideals}.
\begin{lemma}[\textbf{Local graded Hadamard}] \label{lem_lochadamard}
With the notation established in the above paragraph, the following statements are true:
\begin{enumerate}[(i)]
\item Let $[f]_{m} \in \C^{\infty}_{\M,m}$. Then to any $q \in \N_{0}$, there is a decomposition
\begin{equation}
[f]_{m} = \hat{T}^{q}[f]_{m} + \hat{R}^{q}[f]_{m},
\end{equation}
where $\hat{T}^{q}[f]_{m}$ is a polynomial in variables $\{ [x^{i} - x^{i}(m)]_{m} \}_{i=1}^{n_{0}}$ and $\{ [\xi_{\mu}]_{m} \}_{\mu=1}^{n_{\ast}}$ of degree $q$, and $\hat{R}^{q}[f]_{m} \in \frJ(\C^{\infty}_{\M,m})^{q+1}$. 
\item Let $[h]_{m} \in \C^{\infty}_{\M,m}$. Then $[h]_{m} = 0$, if and only if there is $V \in \Op_{m}(M)$ and $h \in \C^{\infty}_{\M}(V)$ representing $[h]_{m}$, such that $[h]_{n} \in \frJ( \C^{\infty}_{\M,n})^{q}$ for all $n \in V$ and $q \in \N$.
\end{enumerate}
\end{lemma}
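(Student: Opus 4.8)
The plan is to reduce both statements to their graded domain counterparts, namely Lemma~\ref{lem_Hadamard} and Proposition~\ref{tvrz_inallvanishingideals}, via the graded local chart $(U,\varphi)$ around $m$. The key point that makes this transfer legitimate is Corollary~\ref{cor_piUmsurjective}: every germ $[f]_m \in \C^{\infty}_{\M,m}$ has a representative $f \in \C^{\infty}_{\M}(U)$ on the chart domain $U$ itself, so that its local representative $\hat{f} \in \C^{\infty}_{(n_{j})}(\ul{\varphi}(U))$ is a genuine function on a graded domain, and the chart isomorphism $\varphi^{\ast}$ identifies the relevant stalks and (after passing to germs) the relevant ideals. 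Concretely, I would first record that $\varphi_{(m)}: \C^{\infty}_{(n_{j}),\ul{\varphi}(m)} \to \C^{\infty}_{\M,m}$ is a graded ring isomorphism carrying $[x^i - x^i(m)]_{\ul{\varphi}(m)}$ to $[x^i - x^i(m)]_m$ and $[\xi_\mu]_{\ul{\varphi}(m)}$ to $[\xi_\mu]_m$ (abusing notation for pullbacks as in Example~\ref{ex_coordfunctions}); by Corollary~\ref{cor_isomorphismarelocal} it maps $\frJ(\C^{\infty}_{(n_{j}),\ul{\varphi}(m)})$ isomorphically onto $\frJ(\C^{\infty}_{\M,m})$, hence also preserves the powers $\frJ(-)^{q}$ of these Jacobson radicals (a ring isomorphism preserves products of ideals).

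For part $(i)$: given $[f]_m$, pick a representative $f\in\C^{\infty}_{\M}(U)$ and its local representative $\hat f \in \C^{\infty}_{(n_j)}(\ul\varphi(U))$. Apply Lemma~\ref{lem_Hadamard} at the point $a = \ul{\varphi}(m)$ with the chosen $q$ to write $\hat f = T^{q}_{a}(\hat f) + R^{q}_{a}(\hat f)$, where $T^{q}_{a}(\hat f)$ is a polynomial in $\{x^i - x^i(a)\}$ and $\{\xi_\mu\}$ of degree $q$ and $R^{q}_{a}(\hat f) \in (\J^{a}_{(n_j)}(\ul\varphi(U)))^{q+1}$. Now take germs at $a$ and transport through $\varphi_{(m)}$: set $\hat{T}^{q}[f]_m := \varphi_{(m)}([T^{q}_{a}(\hat f)]_a)$ and $\hat{R}^{q}[f]_m := \varphi_{(m)}([R^{q}_{a}(\hat f)]_a)$. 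The first is visibly a degree-$q$ polynomial in $\{[x^i - x^i(m)]_m\}$ and $\{[\xi_\mu]_m\}$ since $\varphi_{(m)}$ is a graded ring morphism; the second lies in $\frJ(\C^{\infty}_{\M,m})^{q+1}$ because $\pi_{U,m}((\J^{a}_{(n_j)}(\ul\varphi(U)))^{q+1}) \subseteq \frJ(\C^{\infty}_{(n_j),a})^{q+1}$ by Corollary~\ref{cor_JRgrdomain} (the vanishing ideal is exactly the preimage of the Jacobson radical) and $\varphi_{(m)}$ preserves $\frJ(-)^{q+1}$ as noted above. This gives the desired decomposition.

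For part $(ii)$: the "only if" direction is trivial ($0$ lies in every ideal, with any representative). For "if", suppose $[h]_m = 0$ is false, i.e. $[h]_m \neq 0$; pick a representative $h \in \C^{\infty}_{\M}(V)$ for $V \in \Op_m(M)$, shrink $V$ so that $V \subseteq U$ for a chart $(U,\varphi)$ and pass to the local representative $\hat h \in \C^{\infty}_{(n_j)}(\ul\varphi(V))$, which is nonzero since $\varphi$ is an isomorphism. By Proposition~\ref{tvrz_inallvanishingideals} applied to $\hat h$ on the open set $\ul\varphi(V)$, there exist $a' \in \ul\varphi(V)$ and $q \in \N$ with $\hat h \notin (\J^{a'}_{(n_j)}(\ul\varphi(V)))^{q}$; setting $n := \ul\varphi^{-1}(a') \in V$, the fact that $\varphi_{(n)}$ is a ring isomorphism mapping $\J^{a'}_{(n_j)}$-germs to $\J^{n}_{\M}$-germs (equivalently Jacobson-radical to Jacobson-radical) forces $[h]_n \notin \frJ(\C^{\infty}_{\M,n})^{q}$. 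Contrapositively, if for \emph{some} representative $h$ on \emph{some} $V$ one has $[h]_n \in \frJ(\C^{\infty}_{\M,n})^{q}$ for all $n \in V$ and all $q$, then $[h]_m = 0$. The one subtlety to spell out carefully is the compatibility of germwise Jacobson radicals along the chart: $[x]_n \in \frJ(\C^{\infty}_{\M,n})$ iff the value at $n$ vanishes (Proposition~\ref{tvrz_invertibility}-(iii)), which matches Corollary~\ref{cor_JRgrdomain} under the chart, so the identification $\varphi_{(n)}(\frJ(\C^{\infty}_{(n_j),a'})) = \frJ(\C^{\infty}_{\M,n})$ is exactly Corollary~\ref{cor_isomorphismarelocal} applied pointwise. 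I expect this bookkeeping — keeping track that the isomorphism at each point $n$ respects the relevant ideal and its powers — to be the only real obstacle; everything else is a direct translation of the already-proved graded-domain lemmas.
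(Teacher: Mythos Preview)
Your approach matches the paper's: transport everything to the graded domain via the chart $(U,\varphi)$ using Corollary~\ref{cor_piUmsurjective}, then invoke Lemma~\ref{lem_Hadamard} for $(i)$ and Proposition~\ref{tvrz_inallvanishingideals} for $(ii)$. Part $(i)$ is essentially identical to the paper's argument (the paper cites Proposition~\ref{tvrz_invertibility}-(iii) where you cite Corollary~\ref{cor_JRgrdomain}, but these have the same content).

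For part $(ii)$ there is a small unjustified step in your contrapositive. From Proposition~\ref{tvrz_inallvanishingideals} you obtain $a'$ and $q$ with $\hat h \notin (\J^{a'}_{(n_j)}(\ul\varphi(V)))^{q}$, and then assert $[\hat h]_{a'} \notin \frJ(\C^{\infty}_{(n_j),a'})^{q}$. The ring isomorphism $\varphi_{(n)}$ only handles the passage between the two \emph{stalks}; it says nothing about why the \emph{function-level} condition $\hat h \notin (\J^{a'}(\ul\varphi(V)))^q$ forces the \emph{germ-level} condition $[\hat h]_{a'} \notin \frJ^q$. The germ map $\pi_{\ul\varphi(V),a'}$ only gives the inclusion $\pi((\J^{a'})^q) \subseteq \frJ^q$, not its converse. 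The implication you need is in fact true --- if $[\hat h]_{a'} \in \frJ^q$, extend germ representatives to $\ul\varphi(V)$ by bump functions; the resulting error term $r$ vanishes on a neighbourhood of $a'$, whence $r = (1-\lambda)^q r \in (\J^{a'}(\ul\varphi(V)))^q$ for a bump function $\lambda$ equal to $1$ near $a'$ and supported where $r$ vanishes --- but this argument is absent from your proposal. The paper sidesteps the issue by not invoking Proposition~\ref{tvrz_inallvanishingideals} as a black box: it instead reruns the pointwise computation from that proposition's proof at the stalk level, noting that $[\hat h]_{a'} \in \frJ^q$ still implies $\hat h_{\fp}(a') = 0$ whenever $w(\fp) < q$, so if this holds for all $a'$ and $q$ then $\hat h = 0$.
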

\begin{proof}
Let $[f]_{m} \in \C^{\infty}_{\M,m}$. By Corollary \ref{cor_piUmsurjective}, we may assume that $f \in \C^{\infty}_{\M}(U)$, where $(U,\varphi)$ is any given graded local chart around $m$. Let $\hat{f} \in \C^{\infty}_{(n_{j})}(\hat{U})$ be the local representative of $f$ with respect to $(U,\varphi)$. According to Lemma \ref{lem_Hadamard}, one may write $\hat{f} = T^{q}_{\ul{\varphi}(m)}(\hat{f}) + R^{q}_{\ul{\varphi}(m)}(\hat{f})$. Let
\begin{equation}
\hat{T}^{q}[f]_{m} := \varphi_{(m)}( [ T^{q}_{\ul{\varphi}(m)}(\hat{f})]_{\ul{\varphi}(m)}), \; \; \hat{R}^{q}[f]_{m} := \varphi_{(m)}( [R^{q}_{\ul{\varphi}(m)}(\hat{f})]_{\ul{\varphi}(m)}).
\end{equation}
It follows from Proposition \ref{tvrz_invertibility}-(iii) that $\pi_{\hat{U},\ul{\varphi}(m)}( \J^{\ul{\varphi}(m)}_{(n_{j})}(\hat{U})) \subseteq \frJ( \C^{\infty}_{(n_{j}), \ul{\varphi}(m)})$. Consequently, one has $[R^{q}_{\ul{\varphi}(m)}(\hat{f})]_{\ul{\varphi}(m)} \in (\frJ( \C^{\infty}_{(n_{j}), \ul{\varphi}(m)}))^{q+1}$. As $\varphi_{(m)}: \C^{\infty}_{(n_{j}), \ul{\varphi}(m)} \rightarrow \C^{\infty}_{\M,m}$ is a graded local ring morphism, this proves that $\hat{R}^{q}[f]_{m} \in \frJ(\C^{\infty}_{\M,m})^{q+1}$. 

The rest of the claim $(i)$ follows from the equations
\begin{equation}
\varphi_{(m)}( [x^{i} - x^{i}(\ul{\varphi}(m))]_{\ul{\varphi(m)}}) = [x^{i} - x^{i}(m)]_{m}, \; \; \varphi_{(m)}( [\xi_{\mu}]_{\ul{\varphi}(m)}) = [\xi_{\mu}]_{m}.
\end{equation}
Note that on the left-hand sides, there are coordinate functions in $\C^{\infty}_{(n_{j})}(\hat{U})$, whereas on the right-hand sides, there are their pullbacks to $\C^{\infty}_{\M}(U)$, see Example \ref{ex_coordfunctions}. 

Let us proceed with the proof of the claim $(ii)$. The statement is local, so it suffices to prove it for graded domains. But with the help of Proposition \ref{tvrz_invertibility}-(iii), the proof is then completely analogous to the one of Proposition \ref{tvrz_inallvanishingideals}.
\end{proof}
Moreover, we obtain the following local analogue of Proposition \ref{tvrz_vanishingidealgen}:
\begin{tvrz} \label{tvrz_Jacobsradicalgenset}
For any $m \in U$, the Jacobson radical $\frJ( \C^{\infty}_{\M,m})$ is generated by the graded set $\frS_{\M,m} \subseteq \C^{\infty}_{\M,m}$, where we define 
\begin{align}
(\frS_{\M,m})_{0} := & \ \{ [x^{i} - x^{i}(m)]_{m} \; | \; i \in \{1,\dots,n_{0}\} \}, \\
(\frS_{\M,m})_{k} = & \ \{ [\xi_{\mu}]_{m} \; | \; \mu \in \{1, \dots, n_{\ast} \} \text{ such that } |\xi_{\mu}| = k \} \text{ for } k \neq 0.
\end{align}
Moreover, there is a canonical decomposition $\C^{\infty}_{\M,m} = \R \oplus \frJ(\C^{\infty}_{\M,m})$ of graded vector spaces, where $\R$ corresponds to the graded subspace of all scalar multiples of the algebra unit in $\C^{\infty}_{\M,m}$. 
\end{tvrz}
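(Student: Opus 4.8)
The plan is to reduce everything to the graded domain statement already proved in Proposition \ref{tvrz_vanishingidealgen}, transported to the stalk via the graded local chart. Fix a graded local chart $(U,\varphi)$ around $m$ with coordinate functions $(x^{1},\dots,x^{n_{0}},\xi_{1},\dots,\xi_{n_{\ast}})$, so that $\varphi_{(m)}: \C^{\infty}_{(n_{j}),\ul{\varphi}(m)} \rightarrow \C^{\infty}_{\M,m}$ is a graded local ring isomorphism (it is an isomorphism of stalks since $\varphi$ is a graded local chart, see Corollary \ref{cor_inducedstalkmap} applied to the sheaf isomorphism $\varphi^{\ast}$, combined with Proposition \ref{tvrz_inducedstalkpushsheaf}). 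By Corollary \ref{cor_isomorphismarelocal}, $\varphi_{(m)}$ carries $\frJ(\C^{\infty}_{(n_{j}),\ul{\varphi}(m)})$ bijectively onto $\frJ(\C^{\infty}_{\M,m})$, and it carries the germs $[x^{i}-x^{i}(\ul{\varphi}(m))]_{\ul{\varphi}(m)}$ and $[\xi_{\mu}]_{\ul{\varphi}(m)}$ to $[x^{i}-x^{i}(m)]_{m}$ and $[\xi_{\mu}]_{m}$ respectively (this is exactly the computation recorded in the proof of Lemma \ref{lem_lochadamard}(i)).

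So the first step is to prove the statement for a graded domain $\hat{U}^{(n_{j})}$ at a point $a \in \hat{U}$, i.e. that $\frJ(\C^{\infty}_{(n_{j}),a})$ is generated by the germs of $S^{a}_{(n_{j})}(\hat{U})$ and that $\C^{\infty}_{(n_{j}),a} = \R \oplus \frJ(\C^{\infty}_{(n_{j}),a})$. The decomposition is immediate: for any $[f]_{a}$ write $f = f_{\mathbf 0}(a) + (f - f_{\mathbf 0}(a))$ using Proposition \ref{tvrz_vanishingidealgen} at the level of sections on a neighbourhood, and pass to germs; uniqueness follows since a nonzero scalar multiple of the unit is a unit, hence not in the Jacobson radical (Corollary \ref{cor_JRgrdomain}). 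For the generating set: one inclusion is trivial since each generator has vanishing value at $a$ hence lies in $\frJ$ by Corollary \ref{cor_JRgrdomain}; for the other, given $[f]_{a} \in \frJ(\C^{\infty}_{(n_{j}),a})$ pick a representative $f$ on some $V \in \Op_{a}(\hat{U})$, which by Corollary \ref{cor_JRgrdomain} satisfies $f(a) = 0$, so $f \in \J^{a}_{(n_{j})}(V)$; by Proposition \ref{tvrz_vanishingidealgen}, $f = \sum g_{s}\cdot h_{s}$ with $h_{s} \in S^{a}_{(n_{j})}(V)$, and passing to germs expresses $[f]_{a}$ as an element of $\<\frS\>$ where $\frS$ is the image of $S^{a}_{(n_{j})}$ in the stalk. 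One small point to spell out is that $[x^{i}-x^{i}(a)]_{a}$ and $[\xi_{\mu}]_{a}$ really are the germs of the corresponding coordinate sections, so that $\frS$ coincides with the claimed $\frS_{\M,a}$.

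The second step is just transport: apply $\varphi_{(m)}$. Since $\varphi_{(m)}$ is a graded ring isomorphism, it takes the ideal generated by a graded set to the ideal generated by the image of that set, and it takes a direct-sum decomposition of graded vector spaces to a direct-sum decomposition; combined with the three facts from the first paragraph (it preserves $\frJ$, fixes the unit, and sends the coordinate-germs to the coordinate-germs on $\M$), this gives exactly the asserted description of $\frJ(\C^{\infty}_{\M,m})$ and the decomposition $\C^{\infty}_{\M,m} = \R \oplus \frJ(\C^{\infty}_{\M,m})$.

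I do not expect a serious obstacle here; the only mild care needed is the bookkeeping around germs versus sections (using Corollary \ref{cor_piUmsurjective} to lift germs to sections on a fixed neighbourhood, so that Proposition \ref{tvrz_vanishingidealgen} can be applied, and then pushing the resulting finite-sum decomposition back down to the stalk), and being explicit that "generated by a graded set" is preserved under a graded ring isomorphism. If one prefers, the domain step can even be folded into the transport step since Lemma \ref{lem_lochadamard} and the proof of Proposition \ref{tvrz_invertibility}(iii) already set up all the needed correspondences; but stating it in two clean steps makes the argument most transparent.
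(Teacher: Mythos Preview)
Your proposal is correct and takes essentially the same approach as the paper: reduce to the graded domain case via the stalk isomorphism $\varphi_{(m)}$ induced by a graded local chart, then invoke Proposition~\ref{tvrz_vanishingidealgen} for the generating set and the decomposition together with the description of the Jacobson radical from Corollary~\ref{cor_JRgrdomain} (equivalently, Proposition~\ref{tvrz_invertibility}-$(iii)$). The paper's proof is just the one-line version of what you have spelled out.
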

\begin{proof}
It suffices to prove it for graded domains. Use Proposition \ref{tvrz_invertibility}-$(iii)$ and Proposition \ref{tvrz_vanishingidealgen}. 
\end{proof}

\begin{example} \label{ex_coordtangentvectors}
Let us now construct the two most important examples of tangent vectors at a given point $m \in M$. Let $(x^{1},\dots,x^{n_{0}},\xi_{1},\dots,\xi_{n_{\ast}})$ be the coordinate functions corresponding to a given graded local chart $(U,\varphi)$ around $m$. 

For every $i \in \{1,\dots,n_{0}\}$, define a tangent vector $\frac{\partial}{\partial x^{i}}|_{m} \in (T_{m}\M)_{0}$ for each $[f]_{m} \in \C^{\infty}_{\M,m}$ by
\begin{equation}
\frac{\partial}{\partial x^{i}}|_{m}( [f]_{m}) := \frac{\partial}{\partial x^{i}}|_{m}( \ul{f}),
\end{equation}
where we can assume that $f \in \C^{\infty}_{\M}(U)$, and on the right-hand side, there is the ordinary tangent vector $\frac{\partial}{\partial x^{i}}|_{m} \in T_{m}M$ acting on the smooth function $\ul{f} \in \C^{\infty}_{M}(U)$. Recall that 
\begin{equation}
\frac{\partial}{\partial x^{i}}|_{m}( \ul{f}) := \frac{\partial( \ul{f} \circ \ul{\varphi})}{\partial x^{i}}(\ul{\varphi}(m))
\end{equation}
It is obvious that $\frac{\partial}{\partial x^{i}}|_{m}$ is linear and satisfies the Leibniz rule (\ref{eq_derlleibniz}). 

Next, for every $\mu \in \{1, \dots, n_{\ast} \}$, let us define $\frac{\partial}{\partial \xi_{\mu}}|_{m} \in (T_{m}\M)_{-|\xi_{\mu}|}$. Its only non-trivial component is the one acting on $(\C^{\infty}_{\M,m})_{|\xi_{\mu}|}$, hence let $[f]_{m} \in (\C^{\infty}_{\M,m})_{|\xi_{\mu}|}$. We may always represent it by some $f \in \C^{\infty}_{\M}(U)_{|\xi_{\mu}|}$. Let $\hat{f} \in \C^{\infty}_{(n_{j})}(\hat{U})$ be its local representative with respect to $(U,\varphi)$. The formal power series expansion of $\hat{f}$ contains a linear term $\hat{f}_{\mu} \xi_{\mu}$ (no summation assumed) for a unique function $\hat{f}_{\mu} \in \C^{\infty}_{n_{0}}(\hat{U})$. Set 
\begin{equation}
\frac{\partial}{\partial \xi_{\mu}}|_{m}([f]_{m}) := \hat{f}_{\mu}( \ul{\varphi}(m)).
\end{equation}
This does not depend on the particular choice of $f$ and it defines a graded linear map of degree $-|\xi_{\mu}|$. The Leibniz rule (\ref{eq_derlleibniz}) has to be verified only for the case where $|[f]_{m}| = |\xi_{\mu}|$ and $|[g]_{m}| = 0$. All the other cases are trivial or follow from the graded symmetry. We can write $[f]_{m} \cdot [g]_{m} = [f \cdot g]_{m}$ for $f,g \in \C^{\infty}_{\M}(U)$. Then $(\hat{f} \cdot \hat{g})_{\mu} = \hat{f}_{\mu} \cdot \hat{g}_{\mathbf{0}}$, whence 
\begin{equation}
\frac{\partial}{\partial \xi_{\mu}}|_{m}( [f]_{m} \cdot [g]_{m}) = \hat{f}_{\mu}(\ul{\varphi}(m)) \cdot \hat{g}_{\mathbf{0}}(\ul{\varphi}(m)) = \frac{\partial}{\partial \xi_{\mu}}|_{m}([f]_{m}) \tl [g]_{m}.
\end{equation}
This is the Leibniz rule (\ref{eq_derlleibniz}) as $\frac{\partial}{\partial \xi_{\mu}}|_{m}([g]_{m}) = 0$. Hence $\frac{\partial}{\partial \xi_{\mu}}|_{m} \in (T_{m}\M)_{-|\xi_{\mu}|}$. Let us summarize the values of these tangent vectors on germs of coordinate functions:
\begin{equation} \label{eq_basisvectorsongerms}
\frac{\partial}{\partial x^{i}}|_{m}( [x^{j}]_{m}) = \delta^{j}_{i}, \; \; \frac{\partial}{\partial x^{i}}|_{m}( [\xi_{\mu}]_{m}) = 0, \; \; \frac{\partial}{\partial \xi_{\mu}}|_{m}( [\xi_{\nu}]_{m}) = \delta^{\mu}_{\nu}, \; \; \frac{\partial}{\partial \xi_{\mu}}|_{m}([x^{i}]_{m}) = 0,
\end{equation}
for all $i,j \in \{1,\dots,n_{0}\}$ and $\mu,\nu \in \{1,\dots,n_{\ast} \}$.
\end{example}
We are now ready to prove the main result of this subsection.
\begin{tvrz} \label{tvrz_coordtangentvectors}
Let $\M = (M, \C^{\infty}_{\M})$ be a graded manifold of a graded dimension $(n_{j})_{j \in \Z}$. Fix $m \in M$ and a graded coordinate chart $(U,\varphi)$ around $m$. Let $(x^{1},\dots,x^{n_{0}},\xi_{1},\dots,\xi_{n_{\ast}})$ be the coordinate functions corresponding to $(U,\varphi)$. 

Then every tangent vector $v \in T_{m}\M$ is uniquely determined by its values $v([x^{i}]_{m})$ and $v([\xi_{\mu}]_{m})$ on germs of coordinate functions, $i \in \{1,\dots,n_{0} \}$ and $\mu \in \{1,\dots,n_{\ast} \}$. 

Moreover, the collections $\{ \frac{\partial}{\partial x^{i}}|_{m} \}_{i = 1}^{n_{0}}$ and $ \{ \frac{\partial}{\partial \xi_{\mu}}|_{m} \}_{\mu=1}^{n_{\ast}}$ constructed in the above example form a total basis of $T_{m}\M$. In particular, the graded dimension of $T_{m}\M$ is $(n_{-j})_{j \in \Z}$.
\end{tvrz}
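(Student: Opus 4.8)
The plan is to reduce everything to the local graded Hadamard lemma (Lemma \ref{lem_lochadamard}) and the explicit computation of the coordinate tangent vectors in Example \ref{ex_coordtangentvectors}. First I would fix $v \in T_m\M$ and argue that $v$ is determined by its values on the germs of coordinate functions. By Corollary \ref{cor_piUmsurjective} every element of $\C^\infty_{\M,m}$ is represented by some $f \in \C^\infty_{\M}(U)$, and by Proposition \ref{tvrz_Jacobsradicalgenset} we have the decomposition $\C^\infty_{\M,m} = \R \oplus \frJ(\C^\infty_{\M,m})$. Since $v(1) = 0$ by Proposition \ref{tvrz_derivations}-$(i)$, it suffices to know $v$ on $\frJ(\C^\infty_{\M,m})$. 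Apply Lemma \ref{lem_lochadamard}-$(i)$ with $q = 1$ to write $[f]_m = c + \hat{T}^1[f]_m' + \hat{R}^1[f]_m$, where $\hat{T}^1[f]_m'$ is a linear combination (over $\R$) of the germs $[x^i - x^i(m)]_m$ and $[\xi_\mu]_m$, and $\hat{R}^1[f]_m \in \frJ(\C^\infty_{\M,m})^2$. The graded Leibniz rule forces $v$ to annihilate $\frJ(\C^\infty_{\M,m})^2$: indeed if $[g]_m, [h]_m \in \frJ(\C^\infty_{\M,m})$ then $v([g]_m [h]_m) = v([g]_m) \tr [h]_m + (-1)^{|v||g|}[g]_m \tr v([h]_m)$, and both terms vanish because the $\C^\infty_{\M,m}$-action on $\R$ sends elements of $\frJ(\C^\infty_{\M,m})$ to zero (the action is through evaluation at $m$, see (\ref{eq_StalkactiononR})). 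Hence $v([f]_m) = v(\hat{T}^1[f]_m')$ is a fixed linear combination of $v([x^i]_m)$ and $v([\xi_\mu]_m)$, noting $v([x^i - x^i(m)]_m) = v([x^i]_m)$ since $v$ kills constants. This proves the uniqueness claim.

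Next I would show that $\{\frac{\partial}{\partial x^i}|_m\}_{i=1}^{n_0}$ and $\{\frac{\partial}{\partial \xi_\mu}|_m\}_{\mu=1}^{n_\ast}$ span $T_m\M$. Given $v \in T_m\M$, set $v^i := v([x^i]_m)$ and $v^\mu := v([\xi_\mu]_m)$, and form the candidate derivation $w := v^i \frac{\partial}{\partial x^i}|_m + v^\mu \frac{\partial}{\partial \xi_\mu}|_m$ (this is homogeneous of degree $|v|$ after restricting to the appropriate summands, since only the $\xi_\mu$ with $|\xi_\mu| = -|v| + 0$ ... more precisely each summand must match the degree of $v$, so only finitely many terms survive and the sum is a legitimate element of $T_m\M$ of degree $|v|$). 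By (\ref{eq_basisvectorsongerms}), $w$ agrees with $v$ on all germs of coordinate functions, so by the uniqueness just established $v = w$. Thus the collection spans $T_m\M$.

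It then remains to check linear independence, which follows immediately from (\ref{eq_basisvectorsongerms}): if $a^i \frac{\partial}{\partial x^i}|_m + a^\mu \frac{\partial}{\partial \xi_\mu}|_m = 0$, evaluate on $[x^j]_m$ to get $a^j = 0$ and on $[\xi_\nu]_m$ to get $a^\nu = 0$. Finally, to identify the graded dimension: $\frac{\partial}{\partial x^i}|_m$ has degree $0$ (there are $n_0$ of them), and $\frac{\partial}{\partial \xi_\mu}|_m$ has degree $-|\xi_\mu|$; since among the $(\xi_\mu)_{\mu=1}^{n_\ast}$ there are exactly $n_k$ of degree $k$ for each $k \neq 0$, there are $n_k$ basis vectors of degree $-k$, i.e. the dimension in degree $j$ of $T_m\M$ is $n_{-j}$ (and $n_0$ in degree $0$), so $\gdim(T_m\M) = (n_{-j})_{j \in \Z}$. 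The main obstacle — though it is more bookkeeping than genuine difficulty — is making the degree-counting argument for the candidate derivation $w$ fully precise: one must be careful that $w$ is a well-defined graded derivation of a fixed degree, i.e. that the formal sum over $i$ and $\mu$ only involves the generators whose associated derivation lands in the correct degree, and that the graded Leibniz rule for $w$ holds (which it does, being an $\R$-linear combination of graded derivations of the same degree, cf. the structure of $\gDer_\ell$). Everything else is a direct consequence of Lemma \ref{lem_lochadamard}, Proposition \ref{tvrz_Jacobsradicalgenset}, and the relations (\ref{eq_basisvectorsongerms}).
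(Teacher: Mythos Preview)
Your proposal is correct and follows essentially the same approach as the paper: apply the local graded Hadamard lemma with $q=1$, use the Leibniz rule together with the action (\ref{eq_StalkactiononR}) to annihilate $\frJ(\C^{\infty}_{\M,m})^{2}$, and then match on the germs of coordinate functions via (\ref{eq_basisvectorsongerms}). The only organizational difference is that you first isolate the uniqueness claim and then deduce spanning, whereas the paper directly constructs $\hat{v}$ and proves $v=\hat{v}$ in one step; the content is the same.
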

\begin{proof}
Let $v \in T_{m}\M$. Define a vector field $\hat{v} \in T_{m}\M$ to be the linear combination
\begin{equation}
\hat{v} := v([x^{i}]_{m}) \frac{\partial}{\partial x^{i}}|_{m} + v([\xi_{\mu}]_{m}) \frac{\partial}{\partial \xi_{\mu}}|_{m}.
\end{equation}
Note that $|\hat{v}| = |v|$, as $v([f]_{m}) \neq 0$ only for $|f| + |v| = 0$. We have to argue that $v = \hat{v}$. Let $[f]_{m} \in \C^{\infty}_{\M,m}$. Using Lemma \ref{lem_lochadamard}, one can write $[f]_{m} = \hat{T}^{1}[f]_{m} + \hat{R}^{1}[f]_{m}$, where $\hat{T}^{1}[f]_{m}$ is a polynomial of degree $1$ in variables $[x^{i} - x^{i}(m)]_{m}$ and $[\xi_{\mu}]_{m}$ and $\hat{R}^{1}[f]_{m} \in \frJ(\C^{\infty}_{\M,m})^{2}$. As $v([x^{i} - x^{i}(m)]_{m}) = v([x^{i}]_{m})$ thanks to Proposition \ref{tvrz_derivations}-$(i)$, it follows from (\ref{eq_basisvectorsongerms}) that $v(\hat{T}^{1}[f]_{m}) = \hat{v}(\hat{T}^{1}[f]_{m})$. Moreover, the Leibniz rule (\ref{eq_derlleibniz}) and Proposition \ref{tvrz_invertibility}-$(iii)$ imply that $\frJ(\C^{\infty}_{\M,m})^{2}$ is in kernel of any tangent vector, whence $v([f]_{m}) = \hat{v}([f]_{m})$. 

Finally, the collections $\{ \frac{\partial}{\partial x^{i}}|_{m} \}_{i=1}^{n_{0}}$ and $\{ \frac{\partial}{\partial \xi_{\mu}}|_{m} \}_{\mu=1}^{n_{\ast}}$ form a total basis of $T_{m}\M$ since the corresponding coefficients can be recovered by acting on germs of coordinate functions. 
\end{proof}
There is one crucial consequence of this proposition.
\begin{tvrz}[\textbf{Graded dimension is an invariant}] \label{tvrz_gdiminvariant}
Let $\phi: \M \rightarrow \cN$ be a graded smooth map. Then $\phi$ is a graded diffeomorphism, only if $\gdim(\M) = \gdim(\cN)$. 
\end{tvrz}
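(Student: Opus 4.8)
The plan is to show that a graded diffeomorphism $\phi: \M \rightarrow \cN$ induces, at every point $m \in M$, a linear isomorphism $T_m\phi: T_m\M \rightarrow T_{\ul{\phi}(m)}\cN$ of graded vector spaces, and then to read off the claim from the computation of graded dimensions of tangent spaces in Proposition \ref{tvrz_coordtangentvectors}. First I would invoke Proposition \ref{tvrz_differential}: if $\psi = \phi^{-1}$ is the inverse graded diffeomorphism, then the functoriality relations \eqref{tvrz_differentialfunctorial} give $T_{\ul{\phi}(m)}\psi \circ T_m\phi = T_m(\psi \circ \phi) = T_m\1_\M = \1_{T_m\M}$ and similarly $T_m\phi \circ T_{\ul{\phi}(m)}\psi = \1_{T_{\ul{\phi}(m)}\cN}$. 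Hence $T_m\phi$ is an isomorphism in $\gVect$, so $\gdim(T_m\M) = \gdim(T_{\ul{\phi}(m)}\cN)$ as sequences indexed by $\Z$.

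Next I would apply Proposition \ref{tvrz_coordtangentvectors} at both ends: picking any $m \in M$ (the statement is vacuous if $M = \emptyset$, and $M \neq \emptyset$ forces $N \neq \emptyset$ since $\ul{\phi}$ is a homeomorphism), a graded chart around $m$ shows $\gdim(T_m\M) = (n_{-j})_{j \in \Z}$ where $(n_j)_{j\in\Z} = \gdim(\M)$; likewise $\gdim(T_{\ul{\phi}(m)}\cN) = (m_{-j})_{j\in\Z}$ where $(m_j)_{j\in\Z} = \gdim(\cN)$. Combining with the previous paragraph, $(n_{-j})_{j \in \Z} = (m_{-j})_{j \in \Z}$, and re-indexing $j \mapsto -j$ yields $(n_j)_{j\in\Z} = (m_j)_{j\in\Z}$, i.e. $\gdim(\M) = \gdim(\cN)$.

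Honestly, there is no serious obstacle here: the content is entirely front-loaded into Proposition \ref{tvrz_coordtangentvectors} (which already did the hard work of producing a total basis of $T_m\M$ from coordinate functions) and into the functoriality of $T_m(-)$. The only things to be slightly careful about are the two edge cases — that the statement is trivially true when $M$ is empty, and that one must check $N$ is nonempty before choosing a point of it, which is immediate because the underlying map of a graded diffeomorphism is a homeomorphism (Proposition \ref{tvrz_isoingLRS}). I would also note in passing that the independence of $\gdim(T_m\M)$ on the choice of chart is precisely what makes this argument coherent; since graded diffeomorphisms restrict to graded diffeomorphisms of open subsets, this also shows the graded dimension of a graded manifold is well-defined (no two atlases of different graded dimension), tying off the remark made earlier in the text.
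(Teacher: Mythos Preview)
Your proof is correct and follows essentially the same approach as the paper: use the functoriality of the differential from Proposition \ref{tvrz_differential} to see that $T_m\phi$ is a graded linear isomorphism, then invoke Proposition \ref{tvrz_coordtangentvectors} to read off the graded dimensions. Your version is simply more explicit about the re-indexing $(n_{-j}) = (m_{-j}) \Rightarrow (n_j) = (m_j)$ and about the trivial empty case.
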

\begin{proof}
Suppose $\phi$ is a graded diffeomorphism. It follows from (\ref{tvrz_differentialfunctorial}) that $T_{m}\phi: T_{m}\M \rightarrow T_{\ul{\phi}(m)}\cN$ is an isomorphism  in $\gVect$ for each $m \in M$. This is only possible if $\gdim(T_{m}\M) = \gdim(T_{\ul{\phi}(m)} \cN)$ for all $m \in M$. The rest follows from the previous proposition. 
\end{proof}
In ordinary differential geometry, tangent vectors can be defined as linear maps from the algebra of (local) functions to real numbers. This is the case also for the graded manifolds. 
\begin{tvrz} 
Let $\M = (M,\C^{\infty}_{\M})$ be a graded manifold. Let $m \in M$ and $U \in \Op_{m}(M)$ be arbitrary. Then there is a canonical graded vector space isomorphism $T_{m}\M \cong \gDer(\C^{\infty}_{\M}(U),\R)$. We make $\R$ into a graded $\C^{\infty}_{\M}(U)$-module by setting $f \tr \lambda := f(m) \cdot \lambda$ for all $\lambda \in \R$ and $f \in \C^{\infty}_{\M}(U)$. 

Let $\phi: \M \rightarrow \cN$ be a graded smooth map. Then for any $m \in M$, $v \in T_{m}\M$, and $f \in \C^{\infty}_{\cN}(V)$, one finds that (with respect to this identification)
\begin{equation} \label{eq_differentialformula}
[(T_{m}\phi)(v)](f) = v( \phi^{\ast}_{V}(f)),
\end{equation}
where $V \in \Op_{\ul{\phi}(m)}(N)$ is arbitrary.
\end{tvrz}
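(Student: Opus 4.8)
The plan is to establish the isomorphism $T_{m}\M \cong \gDer(\C^{\infty}_{\M}(U),\R)$ by exhibiting mutually inverse graded linear maps, built from the canonical projection $\pi_{U,m}: \C^{\infty}_{\M}(U) \rightarrow \C^{\infty}_{\M,m}$, and then to read off the differential formula \eqref{eq_differentialformula} by unwinding definitions. First I would note that $\pi_{U,m}$ is a graded algebra morphism and that the $\C^{\infty}_{\M}(U)$-module structure on $\R$ defined by $f \tr \lambda := f(m)\cdot\lambda$ is precisely the pullback along $\pi_{U,m}$ of the $\C^{\infty}_{\M,m}$-module structure \eqref{eq_StalkactiononR} on $\R$, since $f(m) = \ul{f}(m)$ depends only on the germ $[f]_{m}$. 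Hence by Proposition \ref{tvrz_derivations}-$(ii)$, precomposition with $\pi_{U,m}$ sends any $v \in T_{m}\M = \gDer(\C^{\infty}_{\M,m},\R)$ to an element $v \circ \pi_{U,m} \in \gDer(\C^{\infty}_{\M}(U),\R)$, and this assignment is clearly graded linear. Call this map $\Phi: T_{m}\M \rightarrow \gDer(\C^{\infty}_{\M}(U),\R)$.

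For the inverse, the key input is Corollary \ref{cor_piUmsurjective}: $\pi_{U,m}$ is a graded algebra \emph{epimorphism}, so every germ $[f]_{m}$ has a representative in $\C^{\infty}_{\M}(U)$. Given $w \in \gDer(\C^{\infty}_{\M}(U),\R)$, I would define $\Psi(w) \in T_{m}\M$ by $\Psi(w)([f]_{m}) := w(f)$ for any $f \in \C^{\infty}_{\M}(U)$ representing $[f]_{m}$. Well-definedness is the one point requiring a short argument: if $f,g \in \C^{\infty}_{\M}(U)$ have the same germ at $m$, then $f - g$ vanishes on some $V \in \Op_{m}(U)$, and one must show $w(f-g) = 0$. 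This follows because $w$ annihilates functions that vanish on a neighborhood of $m$: writing $h := f-g$, choose by Proposition \ref{tvrz_bumpfunctions} a graded bump function $\lambda \in \C^{\infty}_{\M}(M)_{0}$ with $\supp(\lambda) \subseteq V$ and $\lambda(m) = 1$ (shrinking $V$ if necessary so that $\ol{V'} \subseteq V$ for a smaller $V'$); then $\lambda|_{U} \cdot h = 0$ since $h|_{V} = 0$ and $\lambda|_{\supp(\lambda)^{c}} = 0$, so by the Leibniz rule \eqref{eq_derlleibniz}, $0 = w(\lambda|_{U} \cdot h) = w(\lambda|_{U}) \tl h(m) + (-1)^{\cdots}\lambda(m) \tr w(h) = w(h)$, using $h(m) = 0$ and $\lambda(m) = 1$. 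That $\Psi(w)$ satisfies the Leibniz rule and is graded linear is then immediate from the corresponding properties of $w$, using that the product of germs is the germ of the product (and taking representatives over a common $U$). One checks $\Phi \circ \Psi = \mathrm{id}$ and $\Psi \circ \Phi = \mathrm{id}$ directly: $(\Psi(\Phi(v)))([f]_{m}) = (\Phi(v))(f) = v(\pi_{U,m}(f)) = v([f]_{m})$, and $(\Phi(\Psi(w)))(f) = (\Psi(w))([f]_{m}) = w(f)$.

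I expect the well-definedness of $\Psi$ — i.e.\ that a derivation on $\C^{\infty}_{\M}(U)$ kills functions vanishing near $m$ — to be the main (though still routine) obstacle, as it is the one place where the sheaf-theoretic machinery (existence of bump functions, Proposition \ref{tvrz_bumpfunctions}) is genuinely needed rather than pure algebra. Finally, for \eqref{eq_differentialformula}: unwinding \eqref{eq_differentialdef}, for $v \in T_{m}\M$, $f \in \C^{\infty}_{\cN}(V)$ with $V \in \Op_{\ul{\phi}(m)}(N)$, we have $[(T_{m}\phi)(v)](f) = (v \circ \phi_{(m)})([f]_{\ul{\phi}(m)})$ under the identification $\Phi$ on the $\cN$ side, and by \eqref{eq_vartphixmap2} this equals $v([\phi^{\ast}_{V}(f)]_{m})$, which under $\Phi$ on the $\M$ side is exactly $v(\phi^{\ast}_{V}(f))$; independence of the choice of $V$ follows since shrinking $V$ does not change the relevant germs. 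This completes the proof.
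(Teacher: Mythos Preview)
Your proposal is correct and follows essentially the same approach as the paper: both directions of the isomorphism are built from the surjective projection $\pi_{U,m}$, and well-definedness of the inverse is established via a bump function argument showing that a derivation on $\C^{\infty}_{\M}(U)$ annihilates any function vanishing on a neighborhood of $m$. The paper phrases this last step as $f = (1-\lambda)\cdot f$ and applies Leibniz to that, whereas you apply Leibniz directly to $\lambda|_{U}\cdot h = 0$; these are the same computation.
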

\begin{proof}
Let $m \in M$ and $U \in \Op_{m}(M)$ be fixed but arbitrary. Recall that we have a graded algebra morphism $\pi_{U,m}: \C^{\infty}_{\M}(U) \rightarrow \C^{\infty}_{\M,m}$. It certainly intertwines the two involved module structures on $\R$. Hence for any $v \in T_{m}\M = \gDer(\C^{\infty}_{\M,m}, \R)$, the composition $\hat{v} := v \circ \pi_{U,m}$ is in $\gDer( \C^{\infty}_{\M}(U), \R)$ by Proposition \ref{tvrz_derivations}-$(ii)$. It is clear that $v \rightarrow \hat{v}$ defines a graded linear map. 

Let us argue that this is actually an isomorphism in $\gVect$. Let $\hat{v} \in \gDer( \C^{\infty}_{\M}(U), \R)$ be an arbitrary graded derivation. Let $[f]_{m} \in \C^{\infty}_{\M,m}$. By Proposition \ref{cor_piUmsurjective}, we may assume that $f \in \C^{\infty}_{\M}(U)$. We must only argue that the formula
\begin{equation}
v([f]_{m}) := \hat{v}(f)
\end{equation}
defines a tangent vector $v \in T_{m}\M$. This would prove the surjectivity of the map in question. Its injectivity follows immediately from the surjectivity of $\pi_{U,m}$. First, one has to prove that $v$ is well-defined. We have to show that if there is $f \in \C^{\infty}_{\M}(U)$ and $V \in \Op_{m}(U)$ such that $f|_{V} = 0$, then $\hat{v}(f) = 0$. Pick $W \in \Op_{m}(M)$, such that $\ol{W} \subseteq V$ and let $\lambda \in \C^{\infty}_{\M}(U)$ be a graded bump function supported on $V$, such that $\lambda|_{W} = 1$. Then $f = (1-\lambda) \cdot f$ and the Leibniz rule (\ref{eq_derlleibniz}) implies
\begin{equation}
\hat{v}(f) = \hat{v}( (1-\lambda) \cdot f) = \hat{v}(1 - \lambda) \cdot f(m) + (1 - \lambda)(m) \cdot \hat{v}(f) = 0,
\end{equation}
since $f(m) = 0$ and $\lambda(m) = 1$. It is now easy to prove that $v$ is in $\Der_{|\hat{v}|}(\C^{\infty}_{\M}(U),\R)$. The formula (\ref{eq_differentialformula}) follows immediately from the definition of $T_{m}\phi$. This finishes the proof. 
\end{proof}
\begin{example} \label{ex_diffgradeddomains}
Let $\phi: \M \rightarrow \cN$ be a graded smooth map and fix $m \in M$. It is useful to write down the explicit expression for $T_{m}\phi$ in terms of total bases of the respective tangent spaces. 

Let us pick a graded local chart $(U,\varphi)$ for $\M$ around $m$ and a graded local chart $(V,\psi)$ for $\cN$ around $\ul{\phi}(m)$, such that $\ul{\phi}(U) \subseteq V$. One has the corresponding local representative 
\begin{equation}
\hat{\phi}: \hat{U}^{(n_{j})} \rightarrow \hat{V}^{(m_{j})},
\end{equation} 
defined as in (\ref{eq_hatphialphamu}). Let $(x^{1},\dots,x^{n_{0}}, \xi_{1}, \dots, \xi_{n_{\ast}})$ and $(y^{1},\dots,y^{m_{0}}, \theta_{1}, \dots, \theta_{m_{\ast}})$ denote the coordinate functions with respect to the charts $(U,\varphi)$ and $(V,\psi)$, respectively. First, it is straightforward to obtain the following formula for every $i \in \{1, \dots, n_{0} \}$:
\begin{equation}
(T_{m}\phi)( \frac{\partial}{\partial x^{i}}|_{m}) = \frac{\partial (y^{j} \circ \ul{\hat{\phi}})}{\partial x^{i}}(\ul{\varphi}(m)) \frac{\partial}{\partial y^{j}}|_{\ul{\phi}(m)},
\end{equation}
where $\ul{\hat{\phi}}: \hat{U} \rightarrow \hat{V}$ is the local representative of the underlying smooth map $\ul{\phi}: M \rightarrow N$. Observe that this is in fact just the matrix of the linear map $T_{m} \ul{\phi}: T_{m}M \rightarrow T_{\ul{\phi}(m)}N$.

Next, fix $\mu \in \{1,\dots,n_{\ast} \}$. For each $\nu \in \{1,\dots,m_{\ast} \}$ such that $|\theta_{\nu}| = |\xi_{\mu}|$, the formal power series expansion of $\bar{\theta}_{\nu} := (\hat{\phi})^{\ast}_{\hat{V}}(\theta_{\nu}) \in \C^{\infty}_{(n_{j})}(\hat{U})$ then contains a linear term $(\bar{\theta}_{\nu})_{\mu} \xi_{\mu}$ for the unique smooth function $(\bar{\theta}_{\nu})_{\mu} \in \C^{\infty}_{n_{0}}(\hat{U})$. It now follows easily that 
\begin{equation}
(T_{m}\phi)( \frac{\partial}{\partial \xi_{\mu}}|_{m}) = \hspace{-2mm} \sum_{\nu, |\theta_{\nu}| = |\xi_{\mu}|} \hspace{-2mm} (\bar{\theta}_{\nu})_{\mu}( \ul{\varphi}(m)) \frac{\partial}{\partial \theta_{\nu}}|_{\ul{\phi}(m)}.
\end{equation} 
\end{example}
\subsection{Sheaf of vector fields}
For an ordinary manifold, vector fields can be viewed as a space of derivations of the algebra of smooth functions. A graded manifold $\M = (M,\C^{\infty}_{\M})$ is \textit{defined} by its sheaf of graded algebras of functions, hence it is easy to generalize this approach. More precisely, we will define vector fields as sections of a certain sheaf of graded $\C^{\infty}(\M)$-modules.

\begin{rem} \label{rem_gradedCinfmodulesaregood}
Let $m \in M$ and let $U \in \Op_{m}(M)$. As we have a decomposition (\ref{eq_CinftyMUdecomp}), one has $\C^{\infty}_{\M}(U) \neq 0$ and $\C^{\infty}_{\M}(U) / \J^{m}_{\M}(U) \cong \R$. This ensures that locally freely and finitely generated sheaves of $\C^{\infty}_{\M}$-modules have a well-defined graded rank at every $m \in M$. See Proposition \ref{tvrz_thereisrank} and Proposition \ref{tvrz_locfinfreegensheaves}. Moreover, by Proposition \ref{tvrz_Jacobsradicalgenset}, one has $\C^{\infty}_{\M,x} = \R \oplus \frJ(\C^{\infty}_{\M,m})$ . This implies that also all finitely generated graded $\C^{\infty}_{\M,x}$-modules have a well-defined graded rank. 
\end{rem}

\begin{tvrz} \label{tvrz_vf}
Let $\M = (M, \C^{\infty}_{\M})$ be a graded manifold. For each $U \in \Op(M)$, define 
\begin{equation}
\X_{\M}(U) := \gDer( \C^{\infty}_{\M}(U)).
\end{equation}
Then there is a canonical way to make $\X_{\M}$ into a sheaf of graded $\C^{\infty}_{\M}$-modules. Its sections are called \textbf{vector fields on a graded manifold $\M$}.
\end{tvrz}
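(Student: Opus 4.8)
The plan is to construct $\X_{\M}$ as a presheaf first, then upgrade it to a sheaf of graded $\C^{\infty}_{\M}$-modules by checking the two sheaf axioms. Concretely, I would carry out the following steps.

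\textit{Step 1: The presheaf structure.} For each $U \in \Op(M)$, the graded vector space $\X_{\M}(U) = \gDer(\C^{\infty}_{\M}(U))$ is a graded $\C^{\infty}_{\M}(U)$-module by Proposition \ref{tvrz_gDerisAmodule} (applied with $A = V = \C^{\infty}_{\M}(U)$, see Example \ref{ex_AisAmodule}): for $a \in \C^{\infty}_{\M}(U)$ and $X \in \X_{\M}(U)$, one sets $(a \tr X)(b) := a \cdot X(b)$. For $V \subseteq U$ one must define a restriction map $(\X_{\M})^{U}_{V}: \X_{\M}(U) \to \X_{\M}(V)$. Given $X \in \gDer(\C^{\infty}_{\M}(U))$, I want to produce a derivation of $\C^{\infty}_{\M}(V)$. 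This is the one genuinely nontrivial point: a derivation of $\C^{\infty}_{\M}(U)$ does not \emph{a priori} act on sections over the smaller set $V$. The key is locality: using the Extension Lemma (Proposition \ref{tvrz_extensionlemma}) together with graded bump functions (Proposition \ref{tvrz_bumpfunctions}), one shows $X$ is a local operator, i.e. if $s \in \C^{\infty}_{\M}(U)$ satisfies $s|_{W} = 0$ for some open $W \subseteq U$, then $X(s)|_{W} = 0$. Indeed, for $m \in W$ pick a bump function $\lambda$ supported in $W$ with $\lambda|_{W'} = 1$ on a smaller neighbourhood $W'$ of $m$; then $s = (1-\lambda)\cdot s$ near... more carefully, $\lambda \cdot s = 0$ so $0 = X(\lambda \cdot s) = X(\lambda)\cdot s + (-1)^{|X||\lambda|}\lambda \cdot X(s)$, and evaluating germs at $m$ (where $s$ vanishes in a neighbourhood and $\lambda = 1$) forces $[X(s)]_{m} = 0$; since $m \in W$ was arbitrary, $X(s)|_{W}=0$. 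Having locality, for $t \in \C^{\infty}_{\M}(V)$ and $m \in V$ one defines $((\X_{\M})^{U}_{V}X)(t)$ near $m$ by extending $t$ to a global-or-$U$ section via Proposition \ref{tvrz_extensionlemma}, applying $X$, and restricting; locality of $X$ guarantees independence of the chosen extension, and the gluing/monopresheaf properties of $\C^{\infty}_{\M}$ assemble these local answers into a well-defined element of $\C^{\infty}_{\M}(V)$. One then checks this is a graded derivation of degree $|X|$, that $(\X_{\M})^{U}_{U} = \1$, and that $(\X_{\M})^{V}_{W}\circ(\X_{\M})^{U}_{V} = (\X_{\M})^{U}_{W}$, and that restriction is compatible with the module structures, i.e. $(\X_{\M})^{U}_{V}(a \tr X) = a|_{V}\tr (\X_{\M})^{U}_{V}(X)$. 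This makes $\X_{\M}$ a presheaf of graded $\C^{\infty}_{\M}$-modules.

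\textit{Step 2: The monopresheaf property.} Let $\{U_{\alpha}\}_{\alpha\in I}$ be an open cover of $U$, and suppose $X, X' \in \X_{\M}(U)$ have $X|_{U_{\alpha}} = X'|_{U_{\alpha}}$ for all $\alpha$. For any $s \in \C^{\infty}_{\M}(U)$ and any $\alpha$, by construction $(X(s))|_{U_{\alpha}} = (X|_{U_{\alpha}})(s|_{U_{\alpha}}) = (X'|_{U_{\alpha}})(s|_{U_{\alpha}}) = (X'(s))|_{U_{\alpha}}$, so by the monopresheaf property of $\C^{\infty}_{\M}$ we get $X(s) = X'(s)$; as $s$ was arbitrary, $X = X'$.

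\textit{Step 3: The gluing property.} Given $X_{\alpha} \in \X_{\M}(U_{\alpha})$ with $X_{\alpha}|_{U_{\alpha\beta}} = X_{\beta}|_{U_{\alpha\beta}}$ for all $\alpha,\beta$ (all of the same degree $\ell$), define $X \in \gDer_{\ell}(\C^{\infty}_{\M}(U))$ as follows: for $s \in \C^{\infty}_{\M}(U)$, the local sections $X_{\alpha}(s|_{U_{\alpha}}) \in \C^{\infty}_{\M}(U_{\alpha})$ agree on overlaps (since $X_{\alpha}|_{U_{\alpha\beta}} = X_{\beta}|_{U_{\alpha\beta}}$ and restrictions commute), so by the gluing property of $\C^{\infty}_{\M}$ there is a unique $X(s) \in \C^{\infty}_{\M}(U)$ restricting to $X_{\alpha}(s|_{U_{\alpha}})$ on each $U_{\alpha}$. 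Linearity and the graded Leibniz rule (\ref{eq_derlleibniz}) for $X$ follow by checking them on each $U_{\alpha}$ and invoking the monopresheaf property of $\C^{\infty}_{\M}$; for the Leibniz rule one uses that $(s\cdot t)|_{U_{\alpha}} = s|_{U_{\alpha}}\cdot t|_{U_{\alpha}}$ and that $X_{\alpha}$ is a derivation. Finally one verifies $X|_{U_{\alpha}} = X_{\alpha}$ directly from the construction, and uniqueness of $X$ is Step 2. This completes the verification that $\X_{\M} \in \Sh^{\C^{\infty}_{\M}}(M,\gVect)$.

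The main obstacle is Step 1, specifically proving that a global (or $U$-)derivation restricts sensibly to smaller opens — equivalently, that graded derivations of $\C^{\infty}_{\M}(U)$ are \emph{local operators}. Everything else is bookkeeping with the sheaf axioms of $\C^{\infty}_{\M}$. The locality argument is where partitions of unity / bump functions (Proposition \ref{tvrz_bumpfunctions}) and the Extension Lemma (Proposition \ref{tvrz_extensionlemma}) are essential; without them there would be no reason a global derivation could even be evaluated on a section defined only over a small chart. I would present the locality lemma as a short separate claim inside the proof, then use it twice: once to make $(\X_{\M})^{U}_{V}$ well-defined, and implicitly again to see that the glued $X$ in Step 3 is consistent.
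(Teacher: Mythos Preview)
Your proposal is correct and follows essentially the same route as the paper: prove locality of derivations via graded bump functions (your $\lambda\cdot s=0$ argument is equivalent to the paper's $s=(1-\lambda)\cdot s$), use the Extension Lemma to define the restriction $X|_{V}$, and then verify the monopresheaf and gluing axioms by reducing to those of $\C^{\infty}_{\M}$. The only differences are cosmetic (your use of germ language versus the paper's explicit restriction to a smaller $W$), and your identification of locality as ``the main obstacle'' matches the paper's emphasis exactly.
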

\begin{proof}
For each $U \in \Op(M)$, $\gDer(\C^{\infty}_{\M}(U))$ has a structure of $\C^{\infty}_{\M}(U)$-module described in Proposition \ref{tvrz_gDerisAmodule}. For each $X \in \X_{\M}(U)$ and $f \in \C^{\infty}_{\M}(U)$, we will write just $fX$ for $f \tr X$. 

We have to make $\X_{\M}$ into a sheaf. Let $X \in \X_{\M}(U)$ be arbitrary. 

First, suppose that there is $V \in \Op(U)$ and $f \in \C^{\infty}_{\M}(U)$, such that $f|_{V} = 0$. Observe that $X(f)|_{V} = 0$. Indeed, pick any $m \in V$ and $W \in \Op_{m}(V)$ such that $\ol{W} \subseteq V$. Let $\lambda \in \C^{\infty}_{\M}(U)$ be a graded bump function supported on $V$, such that $\lambda|_{W} = 1$. Then one has $f = (1 - \lambda) \cdot f$ and 
\begin{equation}
X(f) = X((1-\lambda) \cdot f) = X(1-\lambda) \cdot f + (1-\lambda) \cdot X(f).
\end{equation}
By restricting the right-hand side to $W$, we find $X(f)|_{W} = 0$. As we may cover $V$ by such open subsets $W$, this proves that $X(f)|_{V} = 0$. 

Next, let $V \in \Op(U)$. We want to define a restricted vector field $X|_{V} \in \X_{\M}(V)$. Let $f \in \C^{\infty}_{\M}(V)$. For each $m \in M$, find $W_{(m)} \in \Op_{m}(V)$ such that $\ol{W}_{(m)} \subseteq V$. By Lemma \ref{tvrz_extensionlemma}, there exists a function $f_{(m)} \in \C^{\infty}_{\M}(U)$, such that $f_{(m)}|_{W_{(m)}} = f|_{W_{(m)}}$. We can now define a function $X|_{V}(f) \in \C^{\infty}_{\M}(V)$ by declaring 
\begin{equation}
X|_{V}(f)|_{W_{(m)}} := X(f_{(m)})|_{W_{(m)}},
\end{equation}
for all $m \in M$. As $\{ W_{(m)} \}_{m \in V}$ forms an open cover of $V$ and $\C^{\infty}_{\M}$ is a sheaf, it suffices to prove that the local sections on the right-hand side agree on the overlaps. But $f_{(m)}|_{W_{(m)} \cap W_{(m')}} = f|_{W_{(m)} \cap W_{(m')}} = f_{(m')}|_{W_{(m)} \cap W_{(m')}}$ and thus $X(f_{(m)})|_{W_{(m)} \cap W_{(m')}} = X(f_{(m')})|_{W_{(m)} \cap W_{(m')}}$ by the previous paragraph. Using a similar argument, one can prove that the definition of $X|_{V}(f)$ does not depend on any choices. 

It is then straightforward to prove that $X|_{V} \in \gDer(\C^{\infty}_{\M}(V))$, the restriction $X \mapsto X|_{V}$ is graded linear and $(fX)|_{V} = f|_{V} X|_{V}$. This proves that $\X_{\M}$ is a presheaf of graded $\C^{\infty}_{\M}$-modules. Note that for any $f \in \C^{\infty}_{\M}(U)$, one has the formula
\begin{equation} \label{eq_restrictedvfonrestrictedf}
X|_{V}(f|_{V}) = X(f)|_{V}. 
\end{equation}
Let us prove that $\X_{\M}$ is a sheaf. Let $\{ U_{\alpha} \}_{\alpha \in I}$ be an open cover of $U$. Suppose $X,Y \in \X_{\M}(U)$ satisfy $X|_{U_{\alpha}} = Y|_{U_{\alpha}}$ for all $\alpha \in I$. For any $f \in \C^{\infty}_{\M}(U)$, the equation (\ref{eq_restrictedvfonrestrictedf}) implies
\begin{equation}
X(f)|_{U_{\alpha}} = Y(f)|_{U_{\alpha}},
\end{equation}
for $\alpha \in I$. As $\C^{\infty}_{\M}$ is a sheaf, it follows that $X(f) = Y(f)$. Since $f$ was arbitrary, this proves that $X = Y$ and $\X_{\M}$ has the monopresheaf property. Let $\{ X_{\alpha} \}_{\alpha \in I}$ be a collection where $X_{\alpha} \in \X_{\M}(U_{\alpha})$ satisfy $X_{\alpha}|_{U_{\alpha \beta}} = X_{\beta}|_{U_{\alpha \beta}}$ for all $(\alpha,\beta) \in I^{2}$. Let $f \in \C^{\infty}_{\M}(U)$. For each $\alpha \in I$, define
\begin{equation}
X(f)|_{U_{\alpha}} := X_{\alpha}(f|_{U_{\alpha}}). 
\end{equation}
It follows from (\ref{eq_restrictedvfonrestrictedf}) that the local sections on the right-hand side agree on the overlaps. As $\C^{\infty}_{\M}$ is a sheaf, this defines a function $X(f) \in \C^{\infty}_{\M}(U)$. It is easy to see that $X \in \gDer(\C^{\infty}_{\M}(U))$ and $X|_{U_{\alpha}} = X_{\alpha}$ for all $\alpha \in I$. Hence $\X_{\M}$ has the gluing property and we conclude that $\X_{\M}$ is a sheaf of graded $\C^{\infty}_{\M}$-modules. 
\end{proof}
In the following paragraphs, we shall prove that $\X_{\M}$ is in fact locally freely and finitely generated. To do so, we will find a suitable local frame for $\X_{\M}$ over $U$, where $(U,\varphi)$ is some graded local chart for $\M$. 

\begin{example}
Consider a graded domain $\hat{U}^{(n_{j})} = (\hat{U}, \C^{\infty}_{(n_{j})})$. We will write $\X_{(n_{j})}$ for its sheaf of vector fields. Let $(\xi_{\mu})_{\mu=1}^{n_{\ast}}$ be a total basis of $\R^{(n_{j})}_{\ast}$ and let $(x^{1},\dots,x^{n_{0}})$ denote the standard coordinates on $\hat{U} \subseteq \R^{n_{0}}$. In this example, we will construct two collections of vector fields $\{ \frac{\partial}{\partial x^{i}} \}_{i=1}^{n_{0}}$ and $\{ \frac{\partial}{\partial \xi_{\mu}} \}_{\mu =1}^{n_{\ast}}$ in $\X_{(n_{j})}(\hat{U})$. It will later turn out that they form a global frame for $\X_{(n_{j})}$. 

Let $f \in \C^{\infty}_{(n_{j})}(\hat{U})$. Write $f$ as a formal power series $f = \sum_{\fp \in \N^{n_{\ast}}_{|f|}} f_{\fp} \xi^{\fp}$ for unique functions $f_{\fp} \in \C^{\infty}_{n_{0}}(\hat{U})$. For each $i \in \{1,\dots,n_{0}\}$, define 
\begin{equation} \label{eq_partialxiformula}
\frac{\partial f}{\partial x^{i}} := \sum_{\fp \in \N^{n_{\ast}}_{|f|}} \frac{\partial f_{\fp}}{\partial x^{i}} \xi^{\fp}.
\end{equation}
We claim that $\frac{\partial}{\partial x^{i}}(f) := \frac{\partial f}{\partial x^{i}}$ defines a vector field $\frac{\partial}{\partial x^{i}} \in \X_{(n_{j})}(\hat{U})$. It is a graded linear map of degree $0$. Let $g \in \C^{\infty}_{(n_{j})}(\hat{U})$. Recall that for each $\fp \in \N^{n_{\ast}}_{|f|+|g|}$, one has 
\begin{equation}
(f \cdot g)_{\fp} = \sum_{\substack{\fq \in \N^{n_{\ast}}_{|f|} \\ \fq \leq \fp}} \epsilon^{\fq, \fp-\fq} f_{\fq} \cdot  g_{\fp - \fq},
\end{equation}
see the equation (\ref{eq_formalseriesproduct}). Consequently, using the Leibniz rule for partial derivatives, one finds
\begin{equation}
\begin{split}
(\frac{\partial(f \cdot g)}{\partial x^{i}})_{\fp} = & \ \frac{\partial (f \cdot g)_{\fp}}{\partial x^{i}} = \sum_{\substack{\fq \in \N^{n_{\ast}}_{|f|} \\ \fq \leq \fp}} \epsilon^{\fq,\fp-\fq} \frac{\partial (f_{\fq} \cdot g_{\fp-\fq})}{\partial x^{i}} \\
= & \ \sum_{\substack{\fq \in \N^{n_{\ast}}_{|f|} \\ \fq \leq \fp}} \epsilon^{\fq,\fp-\fq} \{ \frac{\partial f_{\fq}}{\partial x^{i}} \cdot g_{\fp - \fq} + f_{\fq} \cdot \frac{\partial g_{\fp - \fq}}{\partial x^{i}} \} = ( \frac{\partial f}{\partial x^{i}} \cdot g + f \cdot \frac{\partial g}{\partial x^{i}} )_{\fp},
\end{split}
\end{equation}
for every $\fp \in \N^{n_{\ast}}_{|f|+|g|}$. This proves the Leibniz rule (\ref{eq_derlleibniz}) and the claim follows. 

Let $\mu \in \{1,\dots,n_{\ast}\}$ and $f \in \C^{\infty}_{(n_{j})}(\hat{U})$ be parametrized as above. For each $\fp \in \N^{n_{\ast}}_{|f| - |\xi^{\mu}|}$, define 
\begin{equation} \label{eq_partialximuformula}
(\frac{\partial f}{\partial \xi^{\mu}} )_{\fp} := (p_{\mu} + 1) (-1)^{|\xi_{\mu}|( p_{1} |\xi_{1}| + \dots + p_{\mu-1} |\xi_{\mu -1}|)} f_{p_{1} \dots (p_{\mu}+1) \dots p_{n_{\ast}}}. 
\end{equation}
We will now argue that the formula $\frac{\partial}{\partial \xi^{\mu}}(f) := \frac{\partial f}{\partial \xi_{\mu}} = \sum_{\fp \in \N^{n_{\ast}}_{|f| - |\xi^{\mu}|}} (\frac{\partial f}{\partial \xi^{\mu}})_{\fp} \xi^{\fp}$ defines the action of the vector field $\frac{\partial}{\partial \xi^{\mu}} \in \X_{(n_{j})}(\hat{U})$ of degree $-|\xi^{\mu}|$. 

To understand the formula, note that for each $\fq \in \N^{n_{\ast}}_{|f|}$, it acts on the monomial $\xi^{\fq} = (\xi_{1})^{q_{1}} \dots (\xi_{n_{\ast}})^{q_{n_{\ast}}}$ according to the expected Leibniz rule, that is 
\begin{equation}
\frac{\partial \xi^{\fq}}{\partial \xi^{\mu}} = q_{\mu} (-1)^{|\xi_{\mu}|(q_{1}|\xi_{1}| + \dots + q_{\mu-1} |\xi_{\mu-1}|)} (\xi_{1})^{q_{1}} \cdots (\xi_{\mu})^{q_{\mu}-1} \cdots (\xi_{n_{\ast}})^{q_{n_{\ast}}},
\end{equation}
where the sign is obtained by commuting the degree $-|\xi_{\mu}|$ vector field and $(\xi_{1})^{q_{1}} \dots (\xi_{\mu-1})^{q_{\mu-1}}$ of degree $q_{1}|\xi_{1}| + \dots + q_{\mu-1} |\xi_{\mu-1}|$. Its action on a general function $f = \sum_{\fq \in \N^{n_{\ast}}_{|f|}} f_{\fq} \xi^{\fq}$ can be then written as a term-by-term differentiation of this power series, that is
\begin{equation} 
\frac{\partial f}{\partial \xi_{\mu}} = \sum_{\fq \in \N^{n_{\ast}}_{|f|}} \frac{\partial( f_{\fq} \xi^{\fq})}{\partial \xi_{\mu}} = \sum_{\fq \in \N^{n_{\ast}}_{|f|}} f_{\fq} \frac{\partial \xi^{\fq}}{\partial \xi_{\mu}},
\end{equation}
where the formula (\ref{eq_partialximuformula}) is then obtained as the term proportional to $\xi^{\fp}$ for a given $\fp \in \N^{n_{\ast}}_{|f| - |\xi^{\mu}|}$. Clearly $\frac{\partial}{\partial \xi^{\mu}}$ is a graded linear map of degree $-|\xi_{\mu}|$. Let $g \in \C^{\infty}_{(n_{j})}(\hat{U})$ be another function. Let $\fq \in \N^{n_{\ast}}_{|f|}$ and $\fp \in \N^{n_{\ast}}_{|f|+|g|}$ satisfy $\fq \leq \fp$. We claim that there holds a formula
\begin{equation} \label{eq_partialofxixibyxi}
\frac{\partial( \xi^{\fq} \cdot \xi^{\fp - \fq})}{\partial \xi^{\mu}} = \frac{\partial \xi^{\fq}}{\partial \xi^{\mu}} \cdot \xi^{\fp - \fq} + (-1)^{|\xi_{\mu}||f|} \xi^{\fq} \cdot 
\frac{\partial \xi^{\fp - \fq}}{\partial \xi^{\mu}}. 
\end{equation}
The Leibniz rule (\ref{eq_derlleibniz}) can be then easily proved. Indeed, one can write
\begin{equation}
\begin{split}
\frac{\partial(f \cdot g)}{\partial \xi_{\mu}} = & \hspace{-2mm}\ \sum_{\fp \in \N^{n_{\ast}}_{|f|+|g|}} \sum_{\substack{\fq \in \N^{n_{\ast}}_{|f|} \\ \fq \leq \fp}} f_{\fq} g_{\fp - \fq} \frac{\partial( \xi^{\fq} \cdot \xi^{\fp - \fq})}{\partial \xi^{\mu}} \\
= & \hspace{-2mm} \ \sum_{\fp \in \N^{n_{\ast}}_{|f|+|g|}} \sum_{\substack{\fq \in \N^{n_{\ast}}_{|f|} \\ \fq \leq \fp}} f_{\fq} g_{\fp - \fq} \{  \frac{\partial \xi^{\fq}}{\partial \xi^{\mu}} \cdot \xi^{\fp - \fq} + (-1)^{|\xi_{\mu}||f|} \xi^{\fq} \cdot 
\frac{\partial \xi^{\fp - \fq}}{\partial \xi^{\mu}} \} \\
= & \ \frac{\partial f}{\partial \xi_{\mu}} \cdot g + (-1)^{|\xi_{\mu}||f|} f \cdot \frac{\partial g}{\partial \xi_{\mu}}.
\end{split}
\end{equation}
Both sides of (\ref{eq_partialofxixibyxi}) are proportional to a single monomial, namely to $(\xi_{1})^{p_{1}} \cdots (\xi_{\mu})^{p_{\mu} - 1} \cdots (\xi_{n_{\ast}})^{p_{n_{\ast}}}$. Everything is thus just a matter of scalar factors. For details, see Proposition \ref{tvrz_ap_partialofxixibyxi} in the appendix. 

To conclude this example, we have constructed a collection of degree zero vector fields $\{ \frac{\partial}{\partial x^{i}} \}_{i=1}^{n_{0}}$ in $\X_{(n_{j})}(\hat{U})$ and negative degree vector fields $\{ \frac{\partial}{\partial \xi_{\mu}} \}_{\mu=1}^{n_{\ast}}$ both acting on functions in $\C^{\infty}_{(n_{j})}(\hat{U})$ in a way resembling the partial differentiation. In particular, on coordinate functions, they give
\begin{equation} \label{eq_basisvfsoncoordinatefctions}
\frac{\partial x^{j}}{\partial x^{i}} = \delta^{j}_{i}, \; \; \frac{\partial \xi_{\mu}}{\partial x^{i}} = 0, \; \; \frac{\partial \xi_{\nu}}{\partial \xi_{\mu}} = \delta^{\mu}_{\nu}, \; \; \frac{\partial x^{i}}{\partial \xi_{\mu}} = 0,
\end{equation}
for all $i,j \in \{1,\dots,n_{0} \}$ and $\mu,\nu \in \{1,\dots,n_{\ast}\}$. We write their action as a partial differentiation. 
\end{example}

\begin{rem} \label{rem_coordinatevectorfields}
Now, let $\M = (M,\C^{\infty}_{\M})$ be a graded manifold and let $(U,\varphi)$ be a graded local chart for $\M$. Suppose $\varphi: \M|_{U} \rightarrow \hat{U}^{(n_{j})}$. We thus have the collections of vector fields $\{ \frac{\partial}{\partial x^{i}} \}_{i = 1}^{n_{0}}$ and $\{ \frac{\partial}{\partial \xi_{\mu}} \}_{\mu=1}^{n_{\ast}}$ in $\X_{(n_{j})}(\hat{U})$ constructed in the previous example. 

One slightly abuses the notation and for each $i \in \{1,\dots,n_{0}\}$ and $\mu \in \{1,\dots,n_{\ast} \}$ defines 
\begin{equation}
\frac{\partial}{\partial x^{i}} := \varphi_{\hat{U}}^{\ast} \circ \frac{\partial}{\partial x^{i}} \circ (\varphi^{\ast}_{\hat{U}})^{-1}, \; \; \frac{\partial}{\partial \xi_{\mu}} := \varphi_{\hat{U}}^{\ast} \circ \frac{\partial}{\partial \xi_{\mu}} \circ (\varphi^{\ast}_{\hat{U}})^{-1}
\end{equation}
As $\varphi^{\ast}_{\hat{U}}: \C^{\infty}_{(n_{j})}(\hat{U}) \rightarrow \C^{\infty}_{\M}(U)$ is a graded algebra isomorphism, it follows that this defines vector fields in $\X_{\M}(U)$. Their values on functions are again usually written as partial derivatives. Moreover, it is clear that on coordinate functions (see Example \ref{ex_coordfunctions}), they are given by (\ref{eq_basisvfsoncoordinatefctions}). 

$\{ \frac{\partial}{\partial x^{i}} \}_{i=1}^{n_{0}}$ and $\{ \frac{\partial}{\partial \xi_{\mu}} \}_{\mu=1}^{n_{\ast}}$ are called \textbf{coordinate vector fields} corresponding to $(U,\varphi)$. 
\end{rem}
We have now everything prepared to prove the main proposition of this subsection. 
\begin{tvrz} \label{tvrz_vfields}
Let $\M = (M,\C^{\infty}_{\M})$ be a graded manifold and let $(U,\varphi)$ be a graded local chart for $\M$. Let $(x^{1},\dots,x^{n_{0}}, \xi_{1},\dots,\xi_{n_{\ast}})$ be the corresponding coordinate functions.

Then every vector field $X \in \X_{\M}(U)$ is uniquely determined by values $X(y^{i})$ and $X(\xi_{\mu})$, for all $i \in \{1,\dots,n_{0}\}$ and $\mu \in \{1,\dots,n_{\ast} \}$. It can be uniquely decomposed as 
\begin{equation} \label{eq_vfdecomposition}
X = X(x^{i}) \frac{\partial}{\partial x^{i}} + X(\xi_{\mu}) \frac{\partial}{\partial \xi_{\mu}},
\end{equation}
using the coordinate vector fields defined in Remark \ref{rem_coordinatevectorfields}. 

In particular, $\{ \frac{\partial}{\partial x^{i}} \}_{i = 1}^{n_{0}}$ and $\{ \frac{\partial}{\partial \xi_{\mu}} \}_{\mu=1}^{n_{\ast}}$ form a local frame for $\X_{\M}$ over $U$. Hence $\X_{\M}$ is locally freely and finitely generated and $\grk_{m}( \X_{\M}) = (n_{-j})_{j \in \Z}$ for all $m \in M$, where $(n_{j})_{j \in \Z} := \gdim(\M)$. 
\end{tvrz}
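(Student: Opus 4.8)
The strategy is to reduce everything to the graded domain case via the isomorphism $\varphi^{\ast}_{\hat{U}}: \C^{\infty}_{(n_{j})}(\hat{U}) \to \C^{\infty}_{\M}(U)$, then establish that $\{\partial/\partial x^{i}\}_{i=1}^{n_{0}}$ together with $\{\partial/\partial\xi_{\mu}\}_{\mu=1}^{n_{\ast}}$ is a local frame, and finally read off the consequences about $\Sh^{\A}_{\lfin}$ and the graded rank. First I would prove the decomposition formula (\ref{eq_vfdecomposition}). Given $X \in \X_{\M}(U)$, set $\hat{X} := X(x^{i})\,\partial/\partial x^{i} + X(\xi_{\mu})\,\partial/\partial\xi_{\mu}$; using (\ref{eq_basisvfsoncoordinatefctions}) one checks $\hat{X}$ agrees with $X$ on the coordinate functions $x^{i},\xi_{\mu}$, so it suffices to show that a graded derivation of $\C^{\infty}_{\M}(U)$ is determined by its values on coordinate functions. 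Note also $|\hat X|=|X|$ since $X(f)=0$ unless $|f|+|X|=0$.

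For this uniqueness step, let $Y := X - \hat{X}$, a graded derivation vanishing on all $x^{i}$ and $\xi_{\mu}$; I must show $Y = 0$, i.e. $Y(f) = 0$ for every $f \in \C^{\infty}_{\M}(U)$. By the sheaf property and (\ref{eq_restrictedvfonrestrictedf}) it is enough to show $[Y(f)]_{m} = 0$ for every $m \in U$, equivalently (using that $\C^{\infty}_{\M}$ separates germs) that the germ $[Y(f)]_{m}$ vanishes for all $m$. Fix $m$. Apply the local graded Hadamard Lemma \ref{lem_lochadamard}-(i) to write $[f]_{m} = \hat{T}^{q}[f]_{m} + \hat{R}^{q}[f]_{m}$ for arbitrary $q$, where $\hat{T}^{q}[f]_{m}$ is a polynomial in the germs $[x^{i}-x^{i}(m)]_{m}$, $[\xi_{\mu}]_{m}$ and $\hat{R}^{q}[f]_{m} \in \frJ(\C^{\infty}_{\M,m})^{q+1}$. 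Since $Y$ is a derivation annihilating constants (Proposition \ref{tvrz_derivations}-(i)), it annihilates $[x^{i}-x^{i}(m)]_{m}$ and $[\xi_{\mu}]_{m}$, hence by the Leibniz rule it annihilates the whole polynomial part; and the Leibniz rule together with $Y$ being $\R$-valued (so $Y$ kills any product lying in $\frJ(\C^{\infty}_{\M,m})^{2}$, by the argument already used in the proof of Proposition \ref{tvrz_coordtangentvectors}) shows $Y$ kills $\hat{R}^{1}[f]_{m}$. Taking $q = 1$ gives $[Y(f)]_{m} = 0$. Here one has to be slightly careful: $Y$ does not literally act on germs, but $Y$ descends to $\gDer(\C^{\infty}_{\M,m},\R)$ via the surjection $\pi_{U,m}$ (Corollary \ref{cor_piUmsurjective}, Proposition \ref{tvrz_derivations}-(ii)), and it is this induced tangent vector at $m$ that the Hadamard argument applies to; one then uses that $[Y(f)]_{m}$ evaluated against the body equals the value of that tangent vector on $[f]_{m}$. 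I expect this bookkeeping — relating $Y(f) \in \C^{\infty}_{\M}(U)$ to the induced derivation on the stalk — to be the main (though purely formal) obstacle.

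Once (\ref{eq_vfdecomposition}) is established together with its uniqueness, it states precisely that $(\frac{\partial}{\partial x^{i}})_{i=1}^{n_{0}}$ and $(\frac{\partial}{\partial \xi_{\mu}})_{\mu=1}^{n_{\ast}}$, regarded as elements of $\X_{\M}(U) = (\X_{\M}|_{U})(U)$, form a global frame for $\X_{\M}|_{U}$ in the sense of Definition preceding Proposition \ref{tvrz_globalframe}: every section of $\X_{\M}$ over an arbitrary open $V \subseteq U$ — apply the argument to $\M|_{V}$ with the restricted chart — decomposes uniquely with coefficients in $\C^{\infty}_{\M}(V)$, the coefficient of $\frac{\partial}{\partial x^{i}}$ having degree $|X|$ and that of $\frac{\partial}{\partial\xi_{\mu}}$ having degree $|X|+|\xi_{\mu}|$. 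By Proposition \ref{tvrz_globalframe}, $\X_{\M}|_{U} \in \Sh^{\C^{\infty}_{\M}|_{U}}_{\fin}(U,\gVect)$, so $\X_{\M} \in \Sh^{\C^{\infty}_{\M}}_{\lfin}(M,\gVect)$ since the charts cover $M$. Finally, for the graded rank: by Remark \ref{rem_gradedCinfmodulesaregood} the graded rank of locally freely and finitely generated sheaves of $\C^{\infty}_{\M}$-modules is well-defined at every $m \in M$, and the total basis $\{\vartheta_{\lambda}\}$ associated to the above frame has degrees $|\vartheta_{\lambda}|$ equal to $0$ (for the $x^{i}$, with multiplicity $n_{0}$) and $-|\xi_{\mu}|$ (for the $\xi_{\mu}$); since the number of $\mu$ with $|\xi_{\mu}| = j$ is $n_{j}$, we get $\grk_{m}(\X_{\M}) = (n_{-j})_{j \in \Z}$, matching the graded dimension $(n_{-j})_{j\in\Z}$ of $T_{m}\M$ as it should.
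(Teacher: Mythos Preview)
Your argument has a genuine gap at the crucial step. You correctly reduce to showing that the difference $Y := X - \hat{X}$ vanishes, but your method of showing $Y(f)=0$ does not work. You write that ``$Y$ being $\R$-valued'' lets you kill $\hat{R}^{1}[f]_{m}$, and then clarify that what you really mean is to pass to the induced tangent vector $Y_{m}\in T_{m}\M$ and argue $Y_{m}=0$ via Proposition~\ref{tvrz_coordtangentvectors}. But that only yields $(Y(f))(m)=0$ for every $m$, i.e.\ the \emph{body} of $Y(f)$ vanishes --- it does \emph{not} give $[Y(f)]_{m}=0$ as a germ. And knowing $Y_{m}=0$ for all $m$ does not force $Y=0$: the Euler vector field $E$ (Example~\ref{ex_Euler}, Remark~\ref{rem_notbyvalues}) satisfies $E_{m}=0$ for every $m$ yet $E\neq 0$. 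So the passage from ``all tangent-vector evaluations vanish'' to ``the vector field vanishes'' is exactly the step that fails for graded manifolds, and your argument stalls there.

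Even if you tried to repair this by letting $Y$ act as a stalk-valued derivation $\C^{\infty}_{\M,m}\to\C^{\infty}_{\M,m}$ and using arbitrary $q$ in Lemma~\ref{lem_lochadamard}-(i), you would only obtain $[Y(f)]_{m}\in\bigcap_{q}\frJ(\C^{\infty}_{\M,m})^{q}$, and this intersection is \emph{not} zero (think of $e^{-1/x^{2}}$ already in the purely ungraded case). The paper's proof avoids both pitfalls by working at the level of functions on the graded domain rather than at a single stalk: it uses the function-level Hadamard Lemma~\ref{lem_Hadamard} for every base point $a\in\hat{U}$ and every $q$, observes that a derivation maps $(\J^{a}_{(n_{j})}(\hat{U}))^{q+1}$ into $(\J^{a}_{(n_{j})}(\hat{U}))^{q}$, and then concludes $(\hat{X}-X)(f)\in\bigcap_{a}\bigcap_{q}(\J^{a}_{(n_{j})}(\hat{U}))^{q}=0$ by Proposition~\ref{tvrz_inallvanishingideals}. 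The simultaneous variation over $a$ \emph{and} $q$ is essential; fixing a single point $m$ and passing to stalks loses exactly the information needed. Your remaining steps (deducing the local frame property and computing the graded rank) are fine once the decomposition is established.
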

\begin{proof}
As $\M|_{U} \cong \hat{U}^{(n_{j})}$ where $\hat{U} = \ul{\varphi}(U)$, it suffices to prove the claim for the graded domain $\hat{U}^{(n_{j})}$. Let $X \in \X_{(n_{j})}(\hat{U})$ be a given vector field. Suppose $\hat{X} \in \X_{(n_{j})}(\hat{U})$ is defined using the right-hand side of (\ref{eq_vfdecomposition}). We have to show that $\hat{X} = X$. 

Using (\ref{eq_basisvfsoncoordinatefctions}), it is easy to see that the actions of both vector fields coincide on polynomials in variables $x^{i} - x^{i}(a)$ and $\xi_{\mu}$ for each $a \in \hat{U}$. Moreover, as it is a graded derivation, one has 
\begin{equation}
X( (\J^{a}_{(n_{j})}(\hat{U}))^{q+1}) \subseteq (\J^{a}_{(n_{j})}(\hat{U}))^{q},
\end{equation}
for all $q \in \N$, and the same holds for $\hat{X}$. 

Now, let $f \in \C^{\infty}_{(n_{j})}(\hat{U})$ be arbitrary.  For every $a \in \hat{U}$ and every $q \in \N$, decompose $f$ as in Lemma \ref{lem_Hadamard}, that is  $f = T^{q}_{a}(f) + R^{q}_{a}(f)$. Using the previous paragraph, one finds
\begin{equation}
(\hat{X} - X)(f) = (\hat{X} - X)( R^{q}_{a}(f)) \in (\J^{a}_{(n_{j})}(\hat{U}))^{q}
\end{equation}
Since $a \in \hat{U}$ and $q \in \N$ were arbitrary, Proposition \ref{tvrz_inallvanishingideals} implies that $(\hat{X} - X)(f) = 0$. As $f \in \C^{\infty}_{(n_{j})}(\hat{U})$ was arbitrary, we find that $\hat{X} = X$. Remaining statements follow in the same way as in Proposition \ref{tvrz_coordtangentvectors} and the proof is finished. 
\end{proof}
\begin{example} \label{ex_Euler}
Let $\M = (M,\C^{\infty}_{\M})$. Then there is a canonical vector field $E \in \X_{\M}(M)$ defined for each $f \in \C^{\infty}_{\M}(M)$ as $E(f) = |f| f$. One has $|E| = 0$ and the Leibniz rule is easily verified. 

$E$ is called the \textbf{Euler vector field} on $\M$. For any graded local chart $(U,\varphi)$, one can write $E|_{U} = |\xi_{\mu}| \xi_{\mu} \frac{\partial}{\partial \xi_{\mu}}$. This explains the name. 
\end{example}

Similarly to the ordinary differential geometry, one expects that every vector field can be evaluated at each point of its domain, obtaining thus a tangent vector in the respective tangent space. This is indeed true also in the graded setting.
\begin{tvrz} \label{tvrz_valueofvf}
For each $m \in M$ and $U \in \Op_{m}(M)$, there is a canonical surjective graded linear map $\ev_{U,m}: \X_{\M}(U) \rightarrow T_{m}\M$ commuting with restrictions, that is $\ev_{V,m}(X|_{V}) = \ev_{U,m}(X)$ for all $X \in \X_{\M}(U)$ and $V \in \Op_{m}(U)$. For $X \in \X_{\M}(U)$, one usually writes $X_{m} := \ev_{U,x}(X)$ and calls it the \textbf{value of $X$ at $m$}.

For any graded local chart $(U,\varphi)$ and $X \in \X_{\M}(U)$, one has 
\begin{equation} \label{eq_evUondecomposition}
\ev_{U,x}( X^{i} \frac{\partial}{\partial x^{i}} + X_{\mu} \frac{\partial}{\partial \xi_{\mu}}) = X^{i}(m) \frac{\partial}{\partial x^{i}}|_{m} + X_{\mu}(m) \frac{\partial}{\partial \xi_{\mu}}|_{m},
\end{equation}
where $\frac{\partial}{\partial x^{i}}|_{m}$ and $\frac{\partial}{\partial \xi_{\mu}}|_{m}$ are the tangent vectors constructed in Example \ref{ex_coordtangentvectors}. 
\end{tvrz}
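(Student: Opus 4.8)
The plan is to build $\ev_{U,m}$ locally from the stalk projection and then check it is well-defined and compatible with restrictions. First I would recall that for any $X \in \X_{\M}(U) = \gDer(\C^{\infty}_{\M}(U))$ and $m \in U$, composing the action of $X$ with the evaluation at $m$ gives a map $\C^{\infty}_{\M}(U) \to \R$. Concretely, for $f \in \C^{\infty}_{\M}(U)$ set $\ev_{U,m}(X)([f]_m) := (X(f))(m)$, after first showing this descends to a well-defined map on the stalk $\C^{\infty}_{\M,m}$. To see it descends: if $f|_V = 0$ for some $V \in \Op_m(U)$, then by the argument already used in the proof of Proposition \ref{tvrz_vf} (using a graded bump function $\lambda$ supported in $V$ with $\lambda|_W = 1$ on a smaller neighborhood $W \ni m$, writing $f = (1-\lambda)\cdot f$ and applying the Leibniz rule), one gets $X(f)|_W = 0$, hence $(X(f))(m) = 0$. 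Therefore the value $(X(f))(m)$ depends only on $[f]_m$, and we get a graded linear map $\ev_{U,m}(X) \colon \C^{\infty}_{\M,m} \to \R$ of degree $|X|$.

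Next I would verify that $\ev_{U,m}(X) \in T_m\M = \gDer(\C^{\infty}_{\M,m},\R)$, i.e. it satisfies the Leibniz rule \eqref{eq_derlleibniz} with respect to the $\C^{\infty}_{\M,m}$-module structure \eqref{eq_StalkactiononR} on $\R$. This is immediate: for $f,g \in \C^{\infty}_{\M}(U)$ representing germs at $m$, apply $X(f\cdot g) = X(f)\cdot g + (-1)^{|X||f|} f \cdot X(g)$, evaluate at $m$, and use that evaluation at $m$ is an algebra morphism $\C^{\infty}_{\M}(U) \to \R$ together with $[f]_m \tr \lambda = f(m)\lambda$. The linearity of $X \mapsto \ev_{U,m}(X)$ in $X$ is clear, and it is degree-preserving. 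Compatibility with restrictions, $\ev_{V,m}(X|_V) = \ev_{U,m}(X)$ for $V \in \Op_m(U)$, follows from the formula \eqref{eq_restrictedvfonrestrictedf}, $X|_V(f|_V) = X(f)|_V$, evaluated at $m$; it shows the two maps agree on every germ since $\pi_{V,m}$ is surjective by Corollary \ref{cor_piUmsurjective}.

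For surjectivity, it suffices to work on a graded local chart $(U,\varphi)$, since $\ev$ commutes with restrictions and every $m$ has such a neighborhood. Given the coordinate functions $(x^1,\dots,x^{n_0},\xi_1,\dots,\xi_{n_\ast})$ and the coordinate vector fields $\{\tfrac{\partial}{\partial x^i}\}$, $\{\tfrac{\partial}{\partial \xi_\mu}\}$ from Remark \ref{rem_coordinatevectorfields}, I would compare $\ev_{U,m}(\tfrac{\partial}{\partial x^i})$ and $\ev_{U,m}(\tfrac{\partial}{\partial \xi_\mu})$ with the tangent vectors $\tfrac{\partial}{\partial x^i}|_m$, $\tfrac{\partial}{\partial \xi_\mu}|_m$ of Example \ref{ex_coordtangentvectors}. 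Both families are uniquely determined by their action on germs of coordinate functions: by Proposition \ref{tvrz_coordtangentvectors} this is true for tangent vectors, and the computation \eqref{eq_basisvfsoncoordinatefctions} of the coordinate vector fields on coordinate functions matches \eqref{eq_basisvectorsongerms}. Hence $\ev_{U,m}(\tfrac{\partial}{\partial x^i}) = \tfrac{\partial}{\partial x^i}|_m$ and $\ev_{U,m}(\tfrac{\partial}{\partial \xi_\mu}) = \tfrac{\partial}{\partial \xi_\mu}|_m$. Since by Proposition \ref{tvrz_vfields} any $X \in \X_{\M}(U)$ decomposes as $X = X(x^i)\tfrac{\partial}{\partial x^i} + X(\xi_\mu)\tfrac{\partial}{\partial \xi_\mu}$ and $\ev_{U,m}$ is $\C^{\infty}_{\M}(U)$-linear in the appropriate graded sense (here the coefficients get evaluated at $m$), applying $\ev_{U,m}$ gives exactly \eqref{eq_evUondecomposition}; since $\{\tfrac{\partial}{\partial x^i}|_m\} \cup \{\tfrac{\partial}{\partial \xi_\mu}|_m\}$ is a total basis of $T_m\M$ by Proposition \ref{tvrz_coordtangentvectors}, surjectivity follows. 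The only mildly delicate point — and the one I would be most careful with — is the well-definedness on the stalk (the bump-function argument) and tracking the Koszul signs in the graded Leibniz rule when evaluating a product of germs; everything else is routine bookkeeping with the structures already set up.
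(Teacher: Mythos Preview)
Your approach matches the paper's almost exactly: define $X_m([f]_m) := (X(f))(m)$, use the bump-function argument from Proposition~\ref{tvrz_vf} for well-definedness on the stalk, check the Leibniz rule, use \eqref{eq_restrictedvfonrestrictedf} for compatibility with restrictions, and verify $\ev_{U,m}(\tfrac{\partial}{\partial x^i}) = \tfrac{\partial}{\partial x^i}|_m$ etc.\ on a chart to get \eqref{eq_evUondecomposition}.

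There is one small gap in your surjectivity argument. You write that ``it suffices to work on a graded local chart $(U,\varphi)$, since $\ev$ commutes with restrictions''. But the statement asserts surjectivity of $\ev_{U,m}$ for \emph{every} $U \in \Op_m(M)$, not just chart domains. Compatibility with restrictions goes the wrong way for this reduction: knowing that $\ev_{V,m}$ is surjective for a chart $V \subseteq U$ tells you that every $v \in T_m\M$ is hit by some $Y \in \X_{\M}(V)$, but you still need an $X \in \X_{\M}(U)$ with $X_m = v$. The paper closes this by picking $W \in \Op_m(V)$ with $\ol{W} \subseteq V$, a bump function $\lambda$ supported in $V$ with $\lambda|_W = 1$, and setting $\lambda X \in \X_{\M}(U)$ (extended by zero outside $\supp(\lambda)$); then $(\lambda X)|_W = X|_W$ and restriction compatibility gives $(\lambda X)_m = X_m = v$. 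This is routine once you see it, but it is not a consequence of ``$\ev$ commutes with restrictions'' alone --- you need the extension step.
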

\begin{proof}
Fix $m \in M$. Let $X \in \X_{\M}(U)$. Each $[f]_{m} \in \C^{\infty}_{\M,m}$ can be represented by $f \in \C^{\infty}_{\M}(U)$ by Corollary \ref{cor_piUmsurjective}. Define $X_{m}([f]_{m}) := (X(f))(m)$. We have argued in the proof of Proposition \ref{tvrz_vf} that $f|_{V} = 0$ for some $V \in \Op_{m}(U)$ implies $X(f)|_{V} = 0$. This proves that $X_{m}([f]_{m})$ does not depend on the choice of $f$ representing $[f]_{m}$ and it is thus well-defined. The fact that $(X|_{V})_{m} = X_{m}$ for any $V \in \Op_{m}(M)$ follows easily from the property (\ref{eq_restrictedvfonrestrictedf}). It is straightforward to verify that $X_{m} \in T_{m}\M \equiv \gDer( \C^{\infty}_{\M,m}, \R)$ and $\ev_{U,x}(X) := X_{m}$ obviously defines a graded linear map.

Let $(U,\varphi)$ be a graded chart. One can directly verify that 
\begin{equation}
\ev_{U,m}( \frac{\partial}{\partial x^{i}}) = \frac{\partial}{\partial x^{i}}|_{m}, \; \; \ev_{U,m}( \frac{\partial}{\partial \xi_{\mu}}) = \frac{\partial}{\partial \xi_{\mu}}|_{m},
\end{equation}
for all $i \in \{1, \dots, n_{0} \}$ and $\mu \in \{1, \dots, n_{\ast} \}$. Moreover, clearly $(fX)_{m} = f(m) X_{m}$ for all $X \in \X_{\M}(U)$ and $f \in \C^{\infty}_{\M}(U)$. The equation (\ref{eq_evUondecomposition}) then follows immediately. 

It remains to prove that $\ev_{U,m}: \X_{\M}(U) \rightarrow T_{m}\M$ is surjective. It suffices to consider $U = M$. Let $v \in T_{m}\M$ be a given tangent vector. Pick any graded local chart $(U,\varphi)$ for $\M$ around $m$. One can then use (\ref{eq_evUondecomposition}) to find $X \in \X_{\M}(U)$ with $X_{m} = v$. Consider $V \in \Op_{m}(M)$ satisfying $\ol{V} \subseteq U$ and let $\lambda \in \C^{\infty}_{\M}(M)$ be a graded bump function supported in $U$ satisfying $\lambda|_{V} = 1$. Let $\lambda X \in \X_{\M}(M)$ be a vector field defined by its restrictions to the open cover $\{ U, \supp(\lambda)^{c} \}$:
\begin{equation}
(\lambda X)|_{U} := \lambda|_{U} X, \; \; (\lambda X)|_{\supp(\lambda)^{c}} := 0.
\end{equation}
Both agree on the overlap $U \cap \supp(\lambda)^{c}$ and we use the fact that $\X_{\M}$ is a sheaf. But then
\begin{equation}
\ev_{M,m}(\lambda X) = \ev_{V,m}( (\lambda X)|_{V}) = \ev_{V,m}(X|_{V}) = \ev_{U,m}(X) = v.
\end{equation} 
This shows the surjectivity of $\ev_{M,m}$ and the proof is finished. 
\end{proof}
\begin{rem} \label{rem_notbyvalues}
Unlike for ordinary manifolds, vector fields are \textit{not determined} by their values at all points. Consider for example $n_{\ast} > 0$ and the Euler vector field $E \in \X_{\M}(M)$. Although it is non-trivial, one has $E_{m} = 0$ for all $m \in M$. 
\end{rem}

\begin{rem} \label{rem_unifiedcoordfunct}
For the following paragraphs (and the rest of the paper), it is useful to introduce a unified notation for coordinate functions. Let $(U,\varphi)$ be a graded local chart and let $(x^{1},\dots,x^{n_{0}}, \xi_{1},\dots,\xi_{n_{\ast}})$ be the corresponding coordinate functions. We will write $(\bbz^{A})_{A=1}^{n}$ for functions defined as $\bbz^{i} = x^{i}$ for $i \in \{1,\dots,n_{0}\}$ and $\bbz^{\mu + n_{0}} := \xi_{\mu}$ for $\mu \in \{1,\dots,n_{\ast} \}$. 
\end{rem}

\begin{tvrz} \label{tvrz_coordinatevftransrule}
Now, let $(U_{\alpha},\varphi_{\alpha})$ and $(U_{\beta},\varphi_{\beta})$ be two graded local charts. Let $(\bbz^{A}_{\alpha})_{A=1}^{n}$ and $(\bbz^{A}_{\beta})_{A=1}^{n}$ denote the respective coordinate functions. We thus have the associated coordinate vector fields $\{ \frac{\partial}{\partial \bbz^{A}_{\alpha}} \}_{A=1}^{n}$ in $\X_{\M}(U_{\alpha})$ and $\{ \frac{\partial}{\partial \bbz^{B}_{\beta}} \}_{B = 1}^{n}$ in $\X_{\M}(U_{\beta})$. Then one has 
\begin{equation}
\frac{\partial}{\partial \bbz^{B}_{\beta}}|_{U_{\alpha \beta}} = \frac{\partial \bbz^{A}_{\alpha}}{ \partial \bbz^{B}_{\beta}} \frac{\partial}{\partial \bbz^{A}_{\alpha}}|_{U_{\alpha \beta}},
\end{equation}
for each $B \in \{1,\dots,n\}$. One uses the simplified notation 
\begin{equation}
\frac{\partial \bbz^{A}_{\alpha}}{ \partial \bbz^{B}_{\beta}} := \frac{\partial}{\partial \bbz^{B}_{\beta}}|_{U_{\alpha \beta}}( \bbz^{A}_{\alpha}|_{U_{\alpha \beta}}).
\end{equation}
Note that one usually omits the explicit writing of restrictions to $U_{\alpha \beta}$.
\end{tvrz}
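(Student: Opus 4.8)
The plan is to reduce everything to a single application of the chain rule for coordinate vector fields, which in turn follows from the fact that a vector field is determined by its values on coordinate functions (Proposition \ref{tvrz_vfields}). First I would restrict attention to the open set $U_{\alpha\beta} = U_{\alpha} \cap U_{\beta}$, which is itself (the underlying set of) a graded manifold, and on which both families $\{\frac{\partial}{\partial \bbz^A_\alpha}|_{U_{\alpha\beta}}\}$ and $\{\frac{\partial}{\partial \bbz^B_\beta}|_{U_{\alpha\beta}}\}$ are local frames for $\X_\M$ by Proposition \ref{tvrz_vfields} together with Proposition \ref{tvrz_finfreegenlocal}-(ii) (restriction of a global frame is a global frame). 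In particular $\frac{\partial}{\partial \bbz^B_\beta}|_{U_{\alpha\beta}} \in \X_\M(U_{\alpha\beta})$ can be uniquely decomposed in the frame $\{\frac{\partial}{\partial \bbz^A_\alpha}|_{U_{\alpha\beta}}\}$.

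Next, I invoke the decomposition formula \eqref{eq_vfdecomposition} of Proposition \ref{tvrz_vfields}, applied to the graded local chart $(U_\alpha,\varphi_\alpha)$ restricted to $U_{\alpha\beta}$ and to the vector field $X := \frac{\partial}{\partial \bbz^B_\beta}|_{U_{\alpha\beta}}$. This immediately gives
\begin{equation}
\frac{\partial}{\partial \bbz^B_\beta}\Big|_{U_{\alpha\beta}} = X(\bbz^A_\alpha|_{U_{\alpha\beta}})\, \frac{\partial}{\partial \bbz^A_\alpha}\Big|_{U_{\alpha\beta}},
\end{equation}
where the sum over $A \in \{1,\dots,n\}$ is implicit in the Einstein convention used throughout. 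It then remains only to identify the coefficient $X(\bbz^A_\alpha|_{U_{\alpha\beta}})$ with the quantity denoted $\frac{\partial \bbz^A_\alpha}{\partial \bbz^B_\beta}$, but this is precisely the definition of that symbol stated in the proposition, namely $\frac{\partial \bbz^A_\alpha}{\partial \bbz^B_\beta} := \frac{\partial}{\partial \bbz^B_\beta}|_{U_{\alpha\beta}}(\bbz^A_\alpha|_{U_{\alpha\beta}})$. So there is genuinely nothing left to compute once the decomposition formula is in place.

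There is no real obstacle here: the only subtlety is a bookkeeping one, namely that \eqref{eq_vfdecomposition} in Proposition \ref{tvrz_vfields} is stated with respect to coordinate functions $(x^i,\xi_\mu)$, whereas we want it for the unified coordinates $(\bbz^A)$; but this is just the notational repackaging of Remark \ref{rem_unifiedcoordfunct}, under which \eqref{eq_vfdecomposition} reads $X = X(\bbz^A)\frac{\partial}{\partial \bbz^A}$ verbatim. I would also remark in passing (or leave implicit) that the formula is the graded analogue of the classical chain rule for transition maps of tangent frames, and that the omission of restriction symbols in the final displayed equation is exactly the convention announced at the end of the statement. Hence the proof is a one-line invocation of Proposition \ref{tvrz_vfields}.
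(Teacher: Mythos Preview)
Your proposal is correct and is exactly the approach taken in the paper: the paper's proof is the single sentence ``This follows immediately from Proposition \ref{tvrz_vfields}'', and you have simply unpacked what that means by applying the decomposition formula \eqref{eq_vfdecomposition} to $X = \frac{\partial}{\partial \bbz^B_\beta}|_{U_{\alpha\beta}}$.
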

\begin{proof}
This follows immediately from Proposition \ref{tvrz_vfields}. 
\end{proof}
\begin{tvrz}[\textbf{Chain rule}]
Let $\phi: \M \rightarrow \cN$ be a graded smooth map. Let $(U,\varphi)$ and $(V,\psi)$ be graded local charts for $\M$ and $\cN$, respectively, such that $\ul{\phi}(U) \subseteq V$. Let $(\bbz^{A})_{A=1}^{n}$ and $(\bby^{K})_{K=1}^{m}$ denote the corresponding coordinate functions in $\C^{\infty}_{\M}(U)$ and $\C^{\infty}_{\cN}(V)$, respectively. 

Then for any $f \in \C^{\infty}_{\cN}(V)$ and $A \in \{1,\dots,n\}$, one has 
\begin{equation} \label{eq_chainrule}
\frac{\partial( \phi^{\ast}_{V}(f)|_{U}) }{\partial \bbz^{A}} = \frac{\partial (\phi^{\ast}_{V}(\bby^{K})|_{U})}{\partial \bbz^{A}} \cdot \phi^{\ast}_{V}( \frac{\partial f}{\partial \bby^{K}})|_{U}.
\end{equation}
\end{tvrz}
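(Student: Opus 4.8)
The plan is to reduce the statement to the graded domain case via the local charts $(U,\varphi)$ and $(V,\psi)$, where it becomes a concrete identity about the morphism of graded domains $\hat\phi: \hat{U}^{(n_{j})} \to \hat{V}^{(m_{j})}$, and then to verify that identity by the now-familiar "check on a generating set, then use Hadamard" technique. Concretely, since $\varphi^{\ast}_{\hat U}$ and $\psi^{\ast}_{\hat V}$ are graded algebra isomorphisms intertwining the coordinate vector fields (Remark \ref{rem_coordinatevectorfields}) and the pullback $\phi^{\ast}$ with the local representative $\hat\phi^{\ast}$, it suffices to prove, for each $f \in \C^{\infty}_{(m_{j})}(\hat V)$ and $A \in \{1,\dots,n\}$,
\begin{equation}
\frac{\partial (\hat\phi^{\ast}_{\hat V}(f))}{\partial \bbz^{A}} = \frac{\partial (\hat\phi^{\ast}_{\hat V}(\bby^{K}))}{\partial \bbz^{A}} \cdot \hat\phi^{\ast}_{\hat V}\Big( \frac{\partial f}{\partial \bby^{K}} \Big),
\end{equation}
where now the $\bbz^{A}$, $\bby^{K}$ denote the standard coordinate functions on the graded domains and $\frac{\partial}{\partial \bbz^{A}}$, $\frac{\partial}{\partial \bby^{K}}$ the explicit coordinate vector fields built in the example preceding Proposition \ref{tvrz_vfields}.

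First I would fix $A$ and consider both sides as graded linear maps $\C^{\infty}_{(m_{j})}(\hat V) \to \C^{\infty}_{(n_{j})}(\hat U)$; call their difference $\Delta_{A}$. Using the Leibniz rule (\ref{eq_derlleibniz}) for $\frac{\partial}{\partial \bbz^{A}}$, the fact that $\hat\phi^{\ast}_{\hat V}$ is a graded algebra morphism, and the Leibniz rule for $\frac{\partial}{\partial \bby^{K}}$ (distributing through the pullback), one checks that $\Delta_{A}$ annihilates products: $\Delta_{A}(fg) = \Delta_{A}(f)\,\hat\phi^{\ast}_{\hat V}(g) \pm \hat\phi^{\ast}_{\hat V}(f)\,\Delta_{A}(g)$, i.e. $\Delta_{A}$ is a "twisted derivation" over $\hat\phi^{\ast}$, so it vanishes on a function as soon as it vanishes on the factors. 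The sign bookkeeping here — commuting $\frac{\partial}{\partial \bbz^{A}}$ of degree $-|\bbz^{A}|$ past $\hat\phi^{\ast}_{\hat V}(\bby^{K})$, whose degree equals $|\bby^{K}|$, and matching it against the Koszul sign in the definition of $\partial/\partial\bby^{K}$ — is exactly the place I expect to have to be careful; this is the main obstacle, and I would isolate it in a short lemma rather than inline it. Next I would verify $\Delta_{A}(\bby^{L}) = 0$ directly: the left side is $\frac{\partial(\hat\phi^{\ast}_{\hat V}(\bby^{L}))}{\partial \bbz^{A}}$ and the right side is $\frac{\partial(\hat\phi^{\ast}_{\hat V}(\bby^{K}))}{\partial \bbz^{A}} \cdot \hat\phi^{\ast}_{\hat V}(\delta^{L}_{K}) = \frac{\partial(\hat\phi^{\ast}_{\hat V}(\bby^{L}))}{\partial \bbz^{A}}$ by (\ref{eq_basisvfsoncoordinatefctions}); they agree.

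Having shown $\Delta_{A}$ is a twisted derivation vanishing on all coordinate functions, it vanishes on every polynomial in the $\bby^{L} - \bby^{L}(a)$ (for any $a \in \hat V$), since $\Delta_{A}$ is also $\R$-linear and kills constants (apply the twisted-derivation identity with $g = 1$). To pass from polynomials to arbitrary $f$, I would invoke the graded Hadamard lemma (Lemma \ref{lem_Hadamard}): for each $a \in \hat V$ and $q \in \N_{0}$, write $f = T^{q}_{a}(f) + R^{q}_{a}(f)$ with $R^{q}_{a}(f) \in (\J^{a}_{(m_{j})}(\hat V))^{q+1}$. Both $\frac{\partial}{\partial \bbz^{A}}$ and $\frac{\partial}{\partial \bby^{K}}$ lower the power of a vanishing ideal by at most one, and $\hat\phi^{\ast}_{\hat V}$ maps $\J^{a}_{(m_{j})}$ into $\J^{\hat\phi^{-1}(a)}_{(n_{j})}$ by Lemma \ref{lem_inducedandsmooth}(ii), so $\Delta_{A}(R^{q}_{a}(f)) \in (\J^{b}_{(n_{j})}(\hat\phi^{-1}(W)))^{q}$ for the appropriate point $b$ in the underlying map's preimage of $a$ — hence $\Delta_{A}(f) = \Delta_{A}(R^{q}_{a}(f))$ lies in that power of every vanishing ideal. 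Since $a$ and $q$ were arbitrary (and, letting $a$ range over $\hat V$, the point $b = \ul{\hat\phi}^{-1}$-image ranges over enough of $\hat U$ to reach every point), Proposition \ref{tvrz_inallvanishingideals} forces $\Delta_{A}(f) = 0$. Transporting back through $\varphi^{\ast}$, $\psi^{\ast}$ and using (\ref{eq_restrictedvfonrestrictedf}) for the restrictions to $U_{\alpha\beta}$-type subsets gives (\ref{eq_chainrule}); I would close by remarking that the whole argument is local on $U$, so no global hypothesis is needed beyond $\ul\phi(U)\subseteq V$.
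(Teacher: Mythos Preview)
Your approach matches the paper's: reduce to graded domains, verify the identity on polynomials in the shifted coordinates via the Leibniz rule (you phrase this as the difference $\Delta_{A}$ being a twisted $\hat\phi^{\ast}$-derivation vanishing on the $\bby^{L}$; the paper simply computes both sides directly on monomials $(\bby^{1}-\bby^{1}(\ul{\hat\phi}(a)))^{q_{1}}\cdots(\bby^{m}-\bby^{m}(\ul{\hat\phi}(a)))^{q_{m}}$), then extend via the graded Hadamard lemma and Proposition \ref{tvrz_inallvanishingideals}. The one point to tidy is the Hadamard indexing: center the Taylor expansion at $\ul{\hat\phi}(a)$ for $a \in \hat U$ (as the paper does), so that Lemma \ref{lem_inducedandsmooth}(ii) gives $\Delta_{A}(f) \in (\J^{a}_{(n_{j})}(\hat U))^{q}$ with $a$ automatically ranging over all of $\hat U$, rather than centering at $a \in \hat V$ and chasing preimages.
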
 
\begin{proof}
Since it is a local statement, it suffices to prove it for a graded smooth map $\phi: \hat{U}^{(n_{j})} \rightarrow \hat{V}^{(m_{j})}$ of two graded domains. Let us first prove (\ref{eq_chainrule}) for $f$ in the form 
\begin{equation}
f = (\bby^{1} - \bby^{1}(\ul{\phi}(a)))^{q_{1}} \cdots (\bby^{m} - \bby^{m}(\ul{\phi}(a)))^{q_{m}},
\end{equation}
where $m = \sum_{j \in \Z} m_{j}$, and $a \in \hat{U}$ and $(q_{1},\dots,q_{m}) \in (\N_{0})^{m}$ are arbitrary. Write $\hat{\bby}^{K} := \phi^{\ast}_{\hat{V}}(\bby^{K})$ for all $K \in \{1,\dots,m\}$. We can then use the Leibniz rule and write 
\begin{equation}
\begin{split}
\frac{\partial (\phi^{\ast}_{\hat{V}}(f))}{\partial \bbz^{A}} = & \ \frac{\partial}{\partial \bbz^{A}}( (\hat{\bby}^{1} - \bby^{1}(\ul{\phi}(a)))^{q_{1}} \cdots (\hat{\bby}^{m} - \bby^{m}(\ul{\phi}(a)))^{q_{m}}) \\
= & \ \sum_{K=1}^{m} (-1)^{|\bbz^{A}|(q_{1}|\bby^{1}| + \dots + q_{K-1} |\bby^{K-1}|)} q_{K} (\hat{\bby}^{1} - \bby^{1}(\ul{\phi}(a)))^{q_{1}} \cdots \\
& \cdots \frac{\partial \hat{\bby}^{K}}{\partial \bbz^{A}} \cdot  (\hat{\bby}^{K} - \bby^{K}(\ul{\phi}(a)))^{q_{K}-1} \cdots (\hat{\bby}^{m} - \bby^{m}(\ul{\phi}(a)))^{q_{m}}.
 \\
= & \ \sum_{K=1}^{m} \frac{\partial \hat{\bby}^{k}}{\partial \bbz^{A}} \cdot (-1)^{|\bby^{K}|(q_{1} | \bby^{1}| + \dots + q_{K-1} |\bby^{K-1}|)} q_{K}  (\hat{\bby}^{1} - \bby^{1}(\ul{\phi}(a)))^{q_{1}} \cdots \\
& \cdots (\hat{\bby}^{K} - \bby^{K}(\ul{\phi}(a)))^{q_{K}-1} \cdots (\hat{\bby}^{m} - \bby^{m}(\ul{\phi}(a)))^{q_{m}} \\
= & \ \sum_{K=1}^{m} \frac{\partial \hat{\bby}^{k}}{\partial \bbz^{A}} \cdot \phi^{\ast}_{\hat{V}}( \frac{\partial f}{\partial \bby^{K}} ). 
\end{split}
\end{equation}
This proves the formula (\ref{eq_chainrule}) for all monomials, and by linearity also for all polynomials, in variables $\bby^{K} - \bby^{K}(\ul{\phi}(a))$. 

Finally, observe that for all $a \in \hat{U}$ and $q \in \N$, both sides of (\ref{eq_chainrule}) take $f \in (\J_{(m_{j})}^{\ul{\phi}(a)}(\hat{V}))^{q+1}$ and map it to an element of $(\J^{a}_{(n_{j})}(\hat{U}))^{q}$. We can thus use the same argument as in the proof of Proposition \ref{tvrz_vfields} to show the validity of (\ref{eq_chainrule}) for a general $f \in \C^{\infty}_{(m_{j})}(\hat{V})$. 
\end{proof}

As vector fields are just graded derivations of graded algebras, one can very easily introduce a concept of a graded commutator of two vector fields, see Proposition \ref{tvrz_derivations}-$(iv)$. Moreover, it turns out that it is well-behaved with respect to restrictions. 
\begin{tvrz} \label{tvrz_gcommutator}
Let $\M = (M,\C^{\infty}_{\M})$ be a graded manifold. For every $U \in \Op(M)$ and $X,Y \in \X_{\M}(U)$, their \textbf{graded commutator} $[X,Y] \in \X_{\M}(U)$ is for each $f \in \C^{\infty}_{\M}(U)$ defined as
\begin{equation}
[X,Y](f) := X(Y(f)) - (-1)^{|X||Y|} Y(X(f)). 
\end{equation}
In fact, it makes $\X_{\M}$ into a \textbf{sheaf of graded Lie algebras of degree $0$}. In other words, it has the following properties:
\begin{enumerate}[(i)]
\item It is compatible with restrictions, i.e. for any $V \in \Op(U)$, one has $[X,Y]|_{V} = [X|_{V},Y|_{V}]$. 
\item For every $U \in \Op(M)$, it makes each graded vector space $\X_{\M}(U)$ into a graded Lie algebra of degree $0$, that is for all $X,Y,Z \in \X_{\M}(U)$, it satisfies
\begin{enumerate}[($\ell$1)]
\item $|[X,Y]| = |X| + |Y|$; 
\item $[X,Y] = - (-1)^{|X||Y|} [Y,X]$; 
\item $[X,[Y,Z]] = [[X,Y],Z] + (-1)^{|X||Y|} [Y,[X,Z]]$.
\end{enumerate}
\end{enumerate}
Moreover, for each $U \in \Op(X)$, $X,Y \in \X_{\M}(U)$ and $f \in \C^{\infty}_{\M}(U)$, it satisfies the Leibniz rule
\begin{equation}
[X,fY] = X(f)Y + (-1)^{|X||f|} f [X,Y]. 
\end{equation}
\end{tvrz}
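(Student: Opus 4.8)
The plan is to verify each claim in turn, leaning heavily on results already established for graded derivations. First I would address the well-definedness of $[X,Y]$ as a vector field: $X,Y \in \X_{\M}(U) = \gDer(\C^{\infty}_{\M}(U))$, so this is precisely the graded commutator of two graded derivations, and Proposition \ref{tvrz_derivations}-$(iv)$ already tells us that $[X,Y] \in \gDer(\C^{\infty}_{\M}(U)) = \X_{\M}(U)$ with $|[X,Y]| = |X| + |Y|$. That simultaneously disposes of property $(\ell 1)$.

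Next I would turn to $(i)$, compatibility with restrictions. Given $V \in \Op(U)$ and $f \in \C^{\infty}_{\M}(V)$, I would use the identity $X|_{V}(f|_{V}) = X(f)|_{V}$ from equation (\ref{eq_restrictedvfonrestrictedf}) twice: first to commute the restriction past a single derivation, then again on the nested expression. Concretely, for any $g \in \C^{\infty}_{\M}(U)$ one has $X|_V(Y|_V(g|_V)) = X|_V(Y(g)|_V) = X(Y(g))|_V$, and similarly with $X,Y$ swapped; since every $f \in \C^{\infty}_{\M}(V)$ is locally the restriction of a section over $U$ (by the extension lemma, Proposition \ref{tvrz_extensionlemma}, together with the monopresheaf property of $\C^{\infty}_{\M}$), and both $[X,Y]|_V$ and $[X|_V, Y|_V]$ are derivations, I can check the equality locally on such $f$ and conclude it globally on $V$ by the sheaf property. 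This is the one spot that requires a little care rather than pure bookkeeping, though it is not genuinely hard.

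For $(ii)$, property $(\ell 2)$ (graded antisymmetry) is immediate from the definition: swapping $X$ and $Y$ in $X\circ Y - (-1)^{|X||Y|} Y \circ X$ and factoring out $-(-1)^{|X||Y|}$ gives the claim, using $(-1)^{|X||Y|} = (-1)^{|Y||X|}$. Property $(\ell 3)$, the graded Jacobi identity, is the familiar purely algebraic computation valid in any associative graded algebra: expand all six triple compositions on both sides and match terms, with the signs governed by the degrees $|X|,|Y|,|Z|$ of the operators. Since the statement only asks us to verify it, I would simply perform this expansion (or invoke that graded derivations of any graded associative algebra form a graded Lie algebra under the graded commutator — the same identity used implicitly elsewhere). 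Finally, the Leibniz rule $[X,fY] = X(f)Y + (-1)^{|X||f|} f[X,Y]$ follows by evaluating both sides on an arbitrary $g \in \C^{\infty}_{\M}(U)$: expanding $[X,fY](g) = X(fY(g)) - (-1)^{|X|(|f|+|Y|)}(fY)(X(g))$, applying the Leibniz rule (\ref{eq_derlleibniz}) for $X$ to the product $f \cdot Y(g)$, using graded commutativity of $\C^{\infty}_{\M}(U)$ to move $f$ past $X$-factors with the appropriate sign, and recognizing the leftover as $X(f)Y(g) + (-1)^{|X||f|} f[X,Y](g)$. I expect no serious obstacle anywhere; the only step needing genuine attention is the reduction-to-local-sections argument in $(i)$, and even there all the needed tools (the restriction formula and the extension lemma) are already in hand.
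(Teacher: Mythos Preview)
Your proposal is correct and follows essentially the same approach as the paper: all parts except $(i)$ are direct verifications, and for $(i)$ both you and the paper localize using extensions of $f \in \C^{\infty}_{\M}(V)$ to $U$ and the sheaf property. The only cosmetic difference is that the paper unrolls the definition of the restricted vector field from the proof of Proposition~\ref{tvrz_vf} (working with the sets $W_{(m)}$ and extensions $f_{(m)}$ directly), whereas you package the same computation via formula~(\ref{eq_restrictedvfonrestrictedf}) together with the extension lemma; the arguments are equivalent.
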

\begin{proof}
All statements are verified directly, except for $(i)$. Recall the definition of a restricted vector field in the proof of Proposition \ref{tvrz_vf}, including the notation. For each $f \in \C^{\infty}_{\M}(V)$ and $m \in V$, one has 
\begin{equation}
\begin{split}
[X,Y]|_{V}(f)|_{W_{(m)}} = & \ [X,Y](f_{(m)})|_{W_{(m)}} = (X(Y(f_{(m)})) - (-1)^{|X||Y|} Y(X(f_{(m)})))|_{W_{(m)}} \\
= & \ (X|_{V}(Y|_{V}(f)) - (-1)^{|X||Y|} Y|_{V}(X|_{V}(f)))|_{W_{(m)}} \\
= & \ [X|_{V},Y|_{V}](f)|_{W_{(m)}}. 
\end{split}
\end{equation}
As $\{ W_{(m)} \}_{m \in V}$ forms an open cover of $V$ and $\C^{\infty}_{\M}$ is a sheaf, this proves that $[X,Y]|_{V}(f) = [X|_{V},Y|_{V}](f)$ for all $f \in \C^{\infty}_{\M}(V)$ and the statement $(i)$ follows. 
\end{proof}
\begin{definice} \label{def_relatedvf}
Let $\phi: \M \rightarrow \cN$ be a graded smooth map. Let $X \in \X_{\M}(M)$ and $Y \in \X_{\cN}(N)$. We say that $X$ and $Y$ are \textbf{$\phi$-related} (and write $X \sim_{\phi} Y$), if for all $f \in \C^{\infty}_{\cN}(N)$, one has
\begin{equation}
X( \phi^{\ast}_{N}(f)) = \phi^{\ast}_{N}(Y(f)).
\end{equation}
\end{definice}
Let us summarize some basic properties of this notion.
\begin{tvrz} \label{tvrz_relatedprops}
Let $\phi: \M \rightarrow \cN$ be a graded smooth map. 
\begin{enumerate}[(i)]
\item Let $X \in \X_{\M}(M)$ and $Y \in \X_{\cN}(N)$ and suppose $X \sim_{\varphi} Y$. Then for any $m \in M$, one has $(T_{m}\phi)(X_{m}) = Y_{\ul{\phi}(m)}$. The converse is not true for general graded manifolds.  
\item Suppose $\phi$ is a graded diffeomorphism. Then for every $X \in \X_{\M}(M)$, there is a unique vector field $\phi_{\ast}(X) \in \X_{\cN}(N)$, such that $X \sim_{\phi} \phi_{\ast}(X)$.
\item Let $X,X' \in \X_{\M}(M)$ and $Y,Y' \in \X_{\cN}(N)$ satisfy $X \sim_{\phi} Y$ and $X' \sim_{\varphi} Y'$. Then their graded commutators are also $\phi$-related, $[X,X'] \sim_{\phi} [Y,Y']$. 
\item Let $X \in \X_{\M}(M)$ and $Y \in \X_{\cN}(N)$. 

Suppose $X \sim_{\phi} Y$ and consider $U \in \Op(M)$ and $V \in \Op(N)$, such that $\ul{\phi}(U) \subseteq V$. Then $X|_{U}$ and $Y|_{V}$ are $\phi|_{U}$-related. 

Conversely, let there be an open cover $\{ U_{\alpha} \}_{\alpha \in I}$ of $M$ and a collection of open subsets $\{ V_{\alpha} \}_{\alpha \in I}$ of $N$, such that $\ul{\phi}(U_{\alpha}) \subseteq V_{\alpha}$ and $X|_{U_{\alpha}} \sim_{\phi|_{U_{\alpha}}} Y|_{V_{\alpha}}$ for all $\alpha \in I$. Then $X \sim_{\phi} Y$. 
\end{enumerate}
\end{tvrz}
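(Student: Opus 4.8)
The plan is to prove each of the four statements in turn, using the coordinate descriptions of vector fields and differentials established earlier and reducing everything to the graded domain case via Proposition~\ref{tvrz_relatedprops}-$(iv)$ wherever convenient. For $(i)$, I would simply unwind the definitions: given $X \sim_{\phi} Y$ and $m \in M$, I compute $[(T_{m}\phi)(X_{m})](f) = X_{m}(\phi^{\ast}_{V}(f)) = (X(\phi^{\ast}_{V}(f)))(m)$ using the formula (\ref{eq_differentialformula}) and the definition of the evaluation map $\ev_{U,m}$ from Proposition~\ref{tvrz_valueofvf}; then by $\phi$-relatedness this equals $(\phi^{\ast}_{V}(Y(f)))(m) = (Y(f))(\ul{\phi}(m)) = Y_{\ul{\phi}(m)}(f)$ (here using Proposition~\ref{tvrz_bodymap}, i.e. that the body of a pullback is the composition with $\ul{\phi}$, so values behave correctly). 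The failure of the converse is witnessed by the Euler vector field: take $\M = \cN$, $\phi = \1_{\M}$ with $n_{\ast} > 0$, $X = 0$, $Y = E$; then $X_{m} = 0 = E_{m}$ for all $m$ by Remark~\ref{rem_notbyvalues}, but $X \not\sim_{\phi} Y$ since $E$ is non-trivial.

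For $(ii)$, if $\phi$ is a graded diffeomorphism with inverse $\phi^{-1} = (\ul{\phi}^{-1}, (\phi^{-1})^{\ast})$, I would \emph{define} $\phi_{\ast}(X)$ by the requirement that for all $g \in \C^{\infty}_{\cN}(N)$, $[\phi_{\ast}(X)](g) := (\phi^{-1})^{\ast}_{M}(X(\phi^{\ast}_{N}(g)))$; one checks this is a graded derivation of $\C^{\infty}_{\cN}(N)$ of degree $|X|$ (the Leibniz rule follows since $\phi^{\ast}_{N}$ and $(\phi^{-1})^{\ast}_{M}$ are mutually inverse graded algebra morphisms), and $\phi$-relatedness is immediate by applying $\phi^{\ast}_{N}$. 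Uniqueness follows because $\phi^{\ast}_{N}$ is injective (it is an isomorphism), so $X(\phi^{\ast}_{N}(g)) = \phi^{\ast}_{N}(Y(g))$ determines $Y(g)$ for every $g$.

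For $(iii)$, I would compute directly: for $f \in \C^{\infty}_{\cN}(N)$,
\begin{equation}
[X,X'](\phi^{\ast}_{N}(f)) = X(X'(\phi^{\ast}_{N}(f))) - (-1)^{|X||X'|} X'(X(\phi^{\ast}_{N}(f))) = X(\phi^{\ast}_{N}(Y'(f))) - (-1)^{|X||X'|} X'(\phi^{\ast}_{N}(Y(f))),
\end{equation}
and applying relatedness once more this equals $\phi^{\ast}_{N}(Y(Y'(f))) - (-1)^{|Y||Y'|} \phi^{\ast}_{N}(Y'(Y(f))) = \phi^{\ast}_{N}([Y,Y'](f))$, using $|X| = |Y|$, $|X'| = |Y'|$. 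For $(iv)$, the forward direction is a restriction computation: for $f \in \C^{\infty}_{\cN}(V)$ one has $(X|_{U})((\phi|_{U})^{\ast}_{V}(f)) = (X(\phi^{\ast}_{N}(\tilde f)))|_{U}$ for any local extension $\tilde f$, using the compatibility of pullbacks with restriction and the formula (\ref{eq_restrictedvfonrestrictedf}); relatedness of $X,Y$ then gives $\phi^{\ast}_{N}(Y(\tilde f))|_{U} = (\phi|_{U})^{\ast}_{V}((Y|_{V})(f))$. The converse uses the sheaf property of $\C^{\infty}_{\cN}$: the hypothesis gives $X(\phi^{\ast}_{N}(f))|_{U_{\alpha}} = \phi^{\ast}_{N}(Y(f))|_{U_{\alpha}}$ for all $\alpha$ after checking the pullbacks restrict compatibly, and since the $U_{\alpha}$ cover $M$ the monopresheaf property forces equality globally. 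The main obstacle is purely bookkeeping: making the restriction identities for pullbacks and restricted vector fields line up precisely (in particular that $(\phi|_{U})^{\ast}_{V}(f) = \phi^{\ast}_{N}(\tilde f)|_{U}$ whenever $\tilde f|_{V'} = f$ near the relevant points, and that $Y|_{V}$ acting on $f$ agrees with $Y$ acting on an extension, restricted), but all of this follows mechanically from the sheaf axioms, naturality of $\phi^{\ast}$, and Proposition~\ref{tvrz_valueofvf} together with equation (\ref{eq_restrictedvfonrestrictedf}); no genuinely new idea is needed.
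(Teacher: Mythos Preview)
Your proposal is correct and follows essentially the same route as the paper: part $(i)$ is the same unwinding of definitions with the Euler vector field counterexample, $(ii)$ is the same explicit formula via $(\phi^{\ast}_{N})^{-1}$, $(iii)$ is the same direct Leibniz computation, and $(iv)$ is the same local-extension-plus-sheaf-property argument. The only minor imprecision is in the forward direction of $(iv)$, where you write ``any local extension $\tilde f$'' as if it were a single global extension---the paper handles this by fixing a point $m \in U$, choosing $W \in \Op_{\ul{\phi}(m)}(N)$ with $\ol{W} \subseteq V$, extending $f|_{W}$ to $f' \in \C^{\infty}_{\cN}(N)$, and verifying the identity on $W' = U \cap \ul{\phi}^{-1}(W)$ before covering $U$; you clearly have this in mind, so it is a matter of exposition rather than a gap.
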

\begin{proof}
To prove $(i)$, one can use (\ref{eq_differentialformula}) and definitions. Let $f \in \C^{\infty}_{\cN}(N)$. Then
\begin{equation}
\begin{split}
((T_{m}\phi)(X_{m}))(f) = & \ X_{m}( \phi^{\ast}_{N}(f)) = (X(\phi^{\ast}_{N}(f)))(m) = (\phi^{\ast}_{N}(Y(f)))(m) \\
= & \ (Y(f))(\ul{\phi}(m)) = Y_{\ul{\phi}(m)}(f).
\end{split}
\end{equation}
On the other hand, the Euler vector field $E \in \X_{\M}(M)$ satisfies $E_{m} = 0$ for all $m \in M$, whence $(T_{m}\phi)(E_{m}) = 0$. But this obviously does not imply that $E \sim_{\phi} 0_{\cN}$, where $0_{\cN}$ denotes the zero vector field (of degree $0$) on $\cN$. To prove $(ii)$, define $\phi_{\ast}(f) := (\phi^{\ast}_{N})^{-1}(X(\phi^{\ast}_{N}(f)))$ for all $f \in \C^{\infty}_{\cN}(N)$. The proof of $(iii)$ is a direct verification. It remains to show the property $(iv)$. 

Suppose $X \sim_{\phi} Y$ and consider $U \in \Op(M)$ and $V \in \Op(N)$, such that $\ul{\phi}(U) \subseteq V$. Let $f \in \C^{\infty}_{\cN}(V)$. For each $m \in U$, pick $W \in \Op_{\ul{\phi}(m)}(N)$ such that $\ol{W} \subseteq V$. There is thus $f' \in \C^{\infty}_{\cN}(N)$, such that $f'|_{W} = f|_{W}$ (see Proposition \ref{tvrz_extensionlemma}). Let $W' := U \cap \ul{\phi}^{-1}(W)$. Then one can write 
\begin{equation}
\begin{split}
X|_{U}( (\phi|_{U})^{\ast}_{V}(f))|_{W'} = & \ X|_{W'}( (\phi|_{U})^{\ast}_{V}(f)|_{W'}) = X|_{W'}( \phi^{\ast}_{V}(f)|_{W'}) = X|_{W'}( \phi^{\ast}_{W}(f|_{W})|_{W'}) \\
= & \ X|_{W'}( \phi^{\ast}_{W}(f'|_{W})|_{W'} ) = X|_{W'}( \phi^{\ast}_{N}(f')|_{W'}) = X(\phi^{\ast}_{N}(f'))|_{W'}.
\end{split}
\end{equation}
We have used the naturality of $\phi$ and the property (\ref{eq_restrictedvfonrestrictedf}). Similarly, one can derive the equation
\begin{equation}
(\phi|_{U})^{\ast}_{V}( Y|_{V}(f))|_{W'} = \phi^{\ast}_{N}(Y(f'))|_{W'}. 
\end{equation}
Using the assumption $X \sim_{\phi} Y$ thus gives us 
\begin{equation}
X|_{U}( (\phi|_{U})^{\ast}_{V}(f))|_{W'} = (\phi|_{U})^{\ast}_{V}( Y|_{V}(f))|_{W'}.
\end{equation}
As $m \in U$ was arbitrary, we may cover $U$ by open sets of the form $W'$ and the fact that $X|_{U}$ and $Y|_{V}$ are $\phi|_{U}$-related follows from the monopresheaf property of $\C^{\infty}_{\M}$. 

It remains to prove the last statement. For each $f \in \C^{\infty}_{\cN}(N)$ and every $\alpha \in I$, one has 
\begin{equation}
\begin{split}
X(\phi^{\ast}_{N}(f))|_{U_{\alpha}} = & \  X|_{U_{\alpha}}( \phi^{\ast}_{N}(f)|_{U_{\alpha}}) = X|_{U_{\alpha}}( \phi^{\ast}_{V_{\alpha}}( f|_{V_{\alpha}})|_{V_{\alpha}}) = (\phi|_{U_{\alpha}})^{\ast}_{V_{\alpha}}( Y|_{V_{\alpha}}(f|_{V_{\alpha}})) \\
= & \ (\phi|_{U_{\alpha}})^{\ast}_{V_{\alpha}}( Y(f)|_{V_{\alpha}}) = \phi^{\ast}_{N}( Y(f))|_{U_{\alpha}}. 
\end{split}
\end{equation} 
Since $\{ U_{\alpha} \}_{\alpha \in I}$ is an open cover of $M$ and $f$ was arbitrary, this proves the claim $X \sim_{\varphi} Y$. 
\end{proof}
\begin{example}
Let $\M$ and $\cN$ be two graded manifolds and let $E_{\M}$ and $E_{\cN}$ denote the respective Euler vector fields. Then for any graded smooth map $\phi: \M \rightarrow \cN$, one has $E_{\M} \sim_{\phi} E_{\cN}$. 
\end{example}
\begin{example}
Let $\M = (M, \C^{\infty}_{\M})$ and $\cN = (N, \C^{\infty}_{\cN})$ be a pair of graded manifolds. Let $X \in \X_{\M}(M)$ and $Y \in \X_{\cN}(N)$ be a pair of vector fields. 

Then there are vector fields $X \otimes 1$ and $1 \otimes Y$ on $\M \times \cN$, determined uniquely by equations
\begin{equation}
X \otimes 1 \sim_{\pi_{\M}} X, \; \; X \otimes 1 \sim_{\pi_{\cN}} 0_{\cN}, \; \; 1 \otimes Y \sim_{\pi_{\cN}} Y, \; \; 1 \otimes Y \sim_{\pi_{\M}} 0_{\M},
\end{equation}
where $0_{\M}$ is the zero vector field on $\M$ of degree $|Y|$, $0_{\cN}$ is the zero vector field on $\cN$ of degree $|X|$, and $\pi_{\M}$ and $\pi_{\cN}$ are the projections. 

Let $(U,\varphi)$ and $(V,\psi)$ be the graded local charts for $\M$ and $\cN$, respectively, inducing the corresponding coordinate functions $(\bbz^{A})_{A=1}^{n}$ and $(\bby^{K})_{K=1}^{m}$. We have the induced graded local chart $(U \times V, \varphi \times \psi)$ for $\M \times \cN$. One usually denotes the corresponding $(m+n)$-tuple of coordinate functions using the same symbols. If $X$ and $Y$ can be locally decomposed as 
\begin{equation}
X|_{U} = X^{A} \frac{\partial}{\partial \bbz^{A}}, \; \; Y|_{V} = Y^{K} \frac{\partial}{\partial \bby^{K}}, 
\end{equation}
then the vector fields $X \otimes 1$ and $1 \otimes Y$ have the local form
\begin{equation}
(X \otimes 1)|_{U \times V} = (\pi_{\M}|_{U \times V})^{\ast}_{U}( X^{A}) \frac{\partial}{\partial \bbz^{A}}, \; \; (1 \otimes Y)|_{U \times V} = (\pi_{\cN}|_{U \times V})^{\ast}_{V}(Y^{K}) \frac{\partial}{\partial \bby^{K}}.
\end{equation}
\end{example}
\subsection{Inverse function theorem, etc.}
In calculus and differential geometry, the inverse function theorem is without a doubt one of the most important statements, period. It is thus of vital importance to have its version also in the graded setting. Let $\M = (M, \C^{\infty}_{\M})$ and $\cN = (N, \C^{\infty}_{\cN})$ be graded manifolds of a graded dimension $(n_{j})_{j \in \Z}$ and $(m_{j})_{j \in \Z}$, respectively. Let $\phi: \M \rightarrow \cN$ be a graded smooth map. 

Now, fix $m \in M$ and let $(U,\varphi)$ and $(V,\psi)$ be the graded local charts for $\M$ and $\cN$, respectively, such that $m \in U$ and $\ul{\phi}(U) \subseteq V$. Then one can write 
\begin{equation} \label{eq_Tmphiaschain}
(T_{m}\phi)( \frac{\partial}{\partial \bbz^{A}}|_{m}) = \frac{\partial}{\partial \bbz^{A}}|_{m}( [ \phi^{\ast}_{V}(\bby^{K})|_{U}]_{m}) \frac{\partial}{\partial \bby^{K}}|_{\ul{\phi}(m)},
\end{equation}
where $(\bbz^{A})_{A=1}^{n}$ and $(\bby^{K})_{K=1}^{m}$ are the coordinate functions corresponding to $(U,\varphi)$ and $(V,\psi)$, respectively. We have introduced this unified notation in Remark \ref{rem_unifiedcoordfunct}. To obtain this equation, evaluate (\ref{eq_chainrule}) at $m$, use Proposition \ref{tvrz_valueofvf} and (\ref{eq_differentialformula}). 

Now, for the purposes in this subsection, let us subdivide the coordinate functions $(\bbz^{A})_{A=1}^{n}$ into classes according to their degree. For each $j \in \Z$, let $(\bbz^{A_{j}}_{(j)})_{A_{j}=1}^{n_{j}}$ denote the coordinate functions of degree $j$, that is $|\bbz^{A_{j}}_{(j)}| = j$. Similarly, we write $(\bby^{K_{j}}_{(j)})_{K_{j}=1}^{m_{j}}$. It turns out that for each $j \in \Z$ and $A_{j} \in \{1,\dots,n_{j}\}$, the above equation reads
\begin{equation}
(T_{m}\phi)( \frac{\partial}{\partial \bbz^{A_{j}}_{(j)}}|_{m}) = \frac{\partial}{\partial \bbz^{A_{j}}_{(j)}}|_{m}( [ \phi^{\ast}_{V}(\bby^{K_{j}}_{(j)})|_{U}]_{m}) \frac{\partial}{\partial \bby^{K_{j}}_{(j)}}|_{\ul{\phi}(m)},
\end{equation}
that is the sum over all $K$ reduces to the sum over degree $j$ coordinates, labeled by $K_{j} \in \{1,\dots,m_{j}\}$. This is because the scalar coefficient in front of $\frac{\partial}{\partial \bby^{K}}|_{\ul{\phi}(m)}$ in (\ref{eq_Tmphiaschain}) is zero unless $|\bby^{K}| = |\bbz^{A}|$. For each $j \in \Z$, let us define an $m_{j} \times n_{j}$ matrix $\fD^{(j)}_{m}(\phi)$, elements of which are
\begin{equation}
( \fD^{(j)}_{m}(\phi))^{K_{j}}{}_{A_{j}} := \frac{\partial}{\partial \bbz^{A_{j}}_{(j)}}|_{m}( [ \phi^{\ast}_{V}(\bby^{K_{j}}_{(j)})|_{U}]_{m}),
\end{equation}
for each $K_{j} \in \{1,\dots,m_{j} \}$ and $A_{j} \in \{1,\dots,n_{j}\}$. 

In other words, $\fD^{(j)}_{m}(\phi)$ is nothing but the the matrix of an ordinary linear map $(T_{m}\phi)_{-j}$ with respect to the basis $(\frac{\partial}{\partial \bbz^{A_{j}}_{(j)}}|_{m})_{A_{j}=1}^{n_{j}}$ of $(T_{m}\M)_{-j}$ and $(\frac{\partial}{\partial \bby^{K_{j}}_{(j)}}|_{\ul{\phi}(m)})_{K_{j}=1}^{m_{j}}$ of $(T_{\ul{\phi}(m)}\cN)_{-j}$. 

\begin{definice}
By the \textbf{graded rank of $\phi$ at $m \in M$}, we mean the sequence 
\begin{equation} \grk_{m}(\phi) := ( \rk( (T_{m}\phi)_{-j}) )_{j \in \Z} \equiv ( \rk( \fD^{(j)}_{m}(\phi)) )_{j \in \Z}. \end{equation}
Clearly, one has $\grk_{m}(\phi) \leq \min \{ \gdim(\M), \gdim(\cN) \}$. 
\end{definice}

\begin{definice}
A graded smooth map $\phi: \M \rightarrow \cN$ is called a \textbf{local graded diffeomorphism at $m \in M$}, if $T_{m}\phi: T_{m}\M \rightarrow T_{\ul{\phi}(m)} \cN$ is an isomorphism in $\gVect$. We say that it is a \textbf{local graded diffeomorphism}, if it is a local graded diffeomorphism at every $m \in M$.
\end{definice}

\begin{tvrz}
A graded smooth map $\phi: \M \rightarrow \cN$ is a local graded diffeomorphism at $m \in M$, if and only if $\grk_{m}(\phi) = \gdim(\M) = \gdim(\cN)$. 
\end{tvrz}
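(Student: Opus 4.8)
\textit{Proof.} The statement is the natural combination of Proposition~\ref{tvrz_coordtangentvectors} with the definition of the graded rank of $\phi$ at $m$. The plan is to unwind everything to the ordinary linear-algebraic level and to observe that a graded linear map between finite-dimensional graded vector spaces is an isomorphism precisely when each of its homogeneous components is an isomorphism of ordinary vector spaces.

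First I would recall that $\phi$ is a local graded diffeomorphism at $m$ iff $T_{m}\phi\colon T_{m}\M\to T_{\ul{\phi}(m)}\cN$ is an isomorphism in $\gVect$, which by the definition of morphisms in $\gVect$ means exactly that $(T_{m}\phi)_{-j}\colon (T_{m}\M)_{-j}\to (T_{\ul{\phi}(m)}\cN)_{-j}$ is a linear isomorphism for every $j\in\Z$. By Proposition~\ref{tvrz_coordtangentvectors}, the collection $(\frac{\partial}{\partial \bbz^{A_{j}}_{(j)}}|_{m})_{A_{j}=1}^{n_{j}}$ is a basis of $(T_{m}\M)_{-j}$, hence $\dim (T_{m}\M)_{-j}=n_{j}$, and similarly $\dim (T_{\ul{\phi}(m)}\cN)_{-j}=m_{j}$; moreover the matrix of $(T_{m}\phi)_{-j}$ with respect to these bases is precisely $\fD^{(j)}_{m}(\phi)$, as observed in the paragraphs preceding the statement. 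Thus $(T_{m}\phi)_{-j}$ is an isomorphism iff $\fD^{(j)}_{m}(\phi)$ is a square invertible matrix, i.e. iff $m_{j}=n_{j}$ and $\rk(\fD^{(j)}_{m}(\phi))=n_{j}$.

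Then I would assemble these per-degree equivalences. If $\phi$ is a local graded diffeomorphism at $m$, the above gives $m_{j}=n_{j}=\rk(\fD^{(j)}_{m}(\phi))$ for all $j$, hence $\grk_{m}(\phi)=(n_{j})_{j\in\Z}=\gdim(\M)=\gdim(\cN)$. Conversely, if $\grk_{m}(\phi)=\gdim(\M)=\gdim(\cN)$, then for each $j$ we have $\rk(\fD^{(j)}_{m}(\phi))=n_{j}=m_{j}$, so $\fD^{(j)}_{m}(\phi)$ is an invertible $n_{j}\times n_{j}$ matrix and $(T_{m}\phi)_{-j}$ is an isomorphism; since this holds for every $j$, $T_{m}\phi$ is an isomorphism in $\gVect$ and $\phi$ is a local graded diffeomorphism at $m$.

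There is essentially no obstacle here beyond bookkeeping: the only point requiring a small remark is the standard fact that an $m_{j}\times n_{j}$ matrix of rank $n_{j}$ represents an isomorphism iff additionally $m_{j}=n_{j}$, which is immediate from $\rk\le\min\{m_{j},n_{j}\}$ and the rank–nullity theorem; the total-dimension finiteness built into $\gdim$ guarantees all the $n_{j},m_{j}$ are finite and vanish for all but finitely many $j$, so no convergence or infinitude issues arise. $\blacksquare$
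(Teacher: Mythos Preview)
Your proof is correct and follows essentially the same approach as the paper: both rely on Proposition~\ref{tvrz_coordtangentvectors} to identify $\dim(T_{m}\M)_{-j}=n_{j}$ and $\dim(T_{\ul{\phi}(m)}\cN)_{-j}=m_{j}$, and then reduce the question to the elementary linear-algebraic fact about ranks of matrices. The paper simply states that the result ``follows immediately from definitions'' and this dimension count, whereas you have spelled out the per-degree argument explicitly; your version is more detailed but not conceptually different.
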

\begin{proof}
The statement follows immediately from definitions and the fact that $\gdim(T_{m}\M) = (n_{-j})_{j \in \Z}$, where $(n_{j})_{j \in \Z} := \gdim(\M)$, see Proposition \ref{tvrz_coordtangentvectors}. 
\end{proof}

\begin{lemma} \label{lem_ift}
Let $\phi: \M \rightarrow \cN$ be a local graded diffeomorphism at $m$. Then one can find $U \in \Op_{m}(M)$ and $V \in \Op_{\ul{\phi}(m)}(N)$, such that 
\begin{enumerate}[(i)]
\item $\ul{\phi}|_{U}: U \rightarrow V$ is a diffeomorphism;
\item there exist graded local charts $(U,\varphi)$ and $(V,\psi)$ for $\M$ and $\cN$. 
\end{enumerate}
\end{lemma}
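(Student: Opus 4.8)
The plan is to reduce the statement to an ordinary-manifold fact about the underlying map, using the graded rank machinery already set up. First I would start from an arbitrary pair of graded local charts $(U_0,\varphi_0)$ around $m$ and $(V_0,\psi_0)$ around $\ul{\phi}(m)$ with $\ul{\phi}(U_0)\subseteq V_0$, whose existence is guaranteed since $\M$ and $\cN$ are graded manifolds (shrinking $U_0$ if necessary and using continuity of $\ul{\phi}$). Since $\phi$ is a local graded diffeomorphism at $m$, by the preceding proposition $\grk_m(\phi)=\gdim(\M)=\gdim(\cN)$; in particular $\gdim(\M)_0 = n_0 = m_0 = \gdim(\cN)_0$, and $\rk\big((T_m\phi)_0\big)=n_0$. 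But $(T_m\phi)_0$ is, in the chosen charts, exactly the matrix $\fD^{(0)}_m(\phi)$, and we argued earlier (see Example~\ref{ex_diffgradeddomains} and the discussion after \eqref{eq_Tmphiaschain}) that this matrix coincides with the Jacobian matrix of $\ul{\hat\phi}$ at $\ul{\varphi_0}(m)$, i.e.\ with the matrix of the ordinary differential $T_m\ul{\phi}\colon T_mM\to T_{\ul{\phi}(m)}N$. Hence $T_m\ul{\phi}$ is an isomorphism of ordinary tangent spaces.

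Next I would invoke the classical inverse function theorem for the smooth map $\ul{\phi}\colon M\to N$ (both being ordinary smooth manifolds by Proposition~\ref{tvrz_transitionmapsunderlyingmanifold}): there exist open neighborhoods $U'\in\Op_m(M)$ with $U'\subseteq U_0$ and $V'\in\Op_{\ul{\phi}(m)}(N)$ with $V'\subseteq V_0$ such that $\ul{\phi}|_{U'}\colon U'\to V'$ is a diffeomorphism. This already gives property~(i) once we take $U\subseteq U'$, $V=\ul{\phi}(U)$. For property~(ii), I would then shrink $U$ further so that it sits inside a single chart domain $U_0$ of $\M$; restricting the chart, $(U,\varphi_0|_U)$ is a graded local chart for $\M$ (this is legitimate by Proposition~\ref{tvrz_restrictedgLRS} and the fact that restrictions of graded domains are graded domains). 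Finally, setting $V:=\ul{\phi}(U)$, which is open in $N$ since $\ul{\phi}|_{U'}$ is a diffeomorphism and $U$ is open, and observing $V\subseteq V_0$, the restricted chart $(V,\psi_0|_V)$ is a graded local chart for $\cN$. Thus $U,V$ satisfy both (i) and (ii).

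The only mildly delicate point is making sure all the shrinkings are compatible: we need a single $U$ that is (a) contained in the domain of a chart for $\M$, (b) mapped diffeomorphically by $\ul{\phi}$ onto an open set contained in the domain of a chart for $\cN$. Since intersections of open neighborhoods of $m$ are again such, and a diffeomorphism takes opens to opens, this is just a bookkeeping exercise: take $U := U' \cap \ul{\phi}^{-1}(V')\cap U_0$ (already open, contains $m$), then $V := \ul{\phi}(U)$ is open in $V'\subseteq V_0$, and $\ul{\phi}|_U\colon U\to V$ is a diffeomorphism as a restriction of $\ul{\phi}|_{U'}$. I expect the main (very modest) obstacle to be simply the careful identification of $\fD^{(0)}_m(\phi)$ with the ordinary Jacobian of $\ul{\phi}$ — but this identification is exactly what was recorded in Example~\ref{ex_diffgradeddomains}, so it can be cited rather than re-derived. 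No genuinely new analytic input beyond the classical inverse function theorem is needed here; the graded content of the full inverse function theorem (that $\varphi^\ast$ is also an isomorphism) is postponed and is not part of this lemma.
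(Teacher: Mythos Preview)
Your proposal is correct and follows essentially the same approach as the paper's own proof: identify $\fD^{(0)}_{m}(\phi)$ with the matrix of $T_{m}\ul{\phi}$, apply the classical inverse function theorem to $\ul{\phi}$, and then shrink the resulting neighborhoods to fit inside graded chart domains. Your write-up is merely more explicit about the bookkeeping of the intersections, which the paper summarizes in one sentence.
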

\begin{proof}
It is easy to see that the $n_{0} \times n_{0}$ invertible matrix $\fD^{(0)}_{m}(\phi)$ (constructed using arbitrary graded local charts) is in fact just the matrix of the linear map $T_{m}\ul{\phi}$ with respect the basis of $T_{m}M$ and $T_{\ul{\phi}(m)}N$ obtained from the induced local charts for $M$. It follows that $\ul{\phi}: M \rightarrow N$ is a local diffeomorphism. By the standard inverse function theorem, there is thus $U \in \Op_{m}(M)$ and $V \in \Op_{\ul{\phi}(m)}(N)$, such that $\ul{\phi}|_{U}: U \rightarrow V$ is a diffeomorphism. See e.g. Theorem 4.5 in \cite{lee2012introduction}. By taking suitable intersections, one can assume that $U$ and $V$ are domains for graded local charts for $\M$ and $\cN$, respectively. 
\end{proof}

\begin{theorem}[\textbf{Inverse function theorem}] \label{thm_ift}
Let $\phi: \M \rightarrow \cN$ be be a local graded diffeomorphism at $m \in M$. Then there is $U \in \Op_{m}(M)$ such that 
\begin{enumerate}[(i)] 
\item $V := \ul{\phi}(U)$ is open and $\ul{\phi}: U \rightarrow V$ is a diffeomorphism;
\item $\phi|_{U}: \M|_{U} \rightarrow \cN|_{V}$ is a graded diffeomorphism. 
\end{enumerate}
\end{theorem}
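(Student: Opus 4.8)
The plan is to reduce the statement to the graded domain theorem (Theorem \ref{thm_gradedomaintheorem}) by showing that on a suitable neighborhood, the pullback $\phi^{\ast}$ becomes a graded algebra \emph{isomorphism}, whence $\phi|_{U}$ is an isomorphism in $\gLRS$ and, both source and target being graded manifolds, a graded diffeomorphism. First I would invoke Lemma \ref{lem_ift} to obtain $U \in \Op_{m}(M)$ and $V \in \Op_{\ul{\phi}(m)}(N)$ together with graded local charts $(U,\varphi)$ and $(V,\psi)$ such that $\ul{\phi}|_{U}: U \to V$ is a diffeomorphism; shrinking $U$ if necessary, we may assume $\ul{\phi}(U) = V$. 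Working through these charts, the problem becomes: given a morphism of graded domains $\hat{\phi}: \hat{U}^{(n_{j})} \to \hat{V}^{(n_{j})}$ (note $\gdim(\M) = \gdim(\cN)$ by the remark before Lemma \ref{lem_ift}) whose underlying map $\ul{\hat{\phi}}$ is a diffeomorphism and whose differential $T_{\ul{\varphi}(m)}\hat{\phi}$ is an isomorphism, show that after restricting to a smaller neighborhood of $\ul{\varphi}(m)$, $\hat{\phi}$ is a graded diffeomorphism.

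Next I would set up the candidate inverse. By Theorem \ref{thm_gradedomaintheorem}, $\hat{\phi}$ is encoded by the smooth diffeomorphism $\ul{\hat{\phi}}$, the collection $\{\bar{y}^{j}\}_{j=1}^{m_{0}} \subseteq \J^{\pg}_{(n_{j})}(\hat{U})_{0}$ with $\bar{y}^{j} = \hat{\phi}^{\ast}_{\hat{V}}(y^{j}) - y^{j}\circ\ul{\hat{\phi}}$, and $\{\bar{\theta}_{\nu}\}_{\nu=1}^{n_{\ast}}$ with $\bar{\theta}_{\nu} = \hat{\phi}^{\ast}_{\hat{V}}(\xi_{\nu})$. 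The hypothesis that $T_{\ul{\varphi}(m)}\hat{\phi}$ is an isomorphism says precisely, via the formulas of Example \ref{ex_diffgradeddomains}, that the matrices $\fD^{(0)}_{m}(\hat\phi)$ (the Jacobian of $\ul{\hat{\phi}}$, already invertible) and, for each nonzero degree $k$, the matrix $((\bar{\theta}_{\nu})_{\mu}(\ul{\varphi}(m)))$ of linear parts of the $\bar{\theta}_{\nu}$ in the odd/even coordinates of that degree, are invertible. By continuity of determinants and Proposition \ref{tvrz_invertibility}-(ii), after shrinking to a neighborhood we may assume all these matrices are invertible at every point, so the "linear part matrix" $G = \big(\tfrac{\partial \bar{\theta}_{\nu}}{\partial \xi_{\mu}}\big)_{\mathbf{0}}$ (a block-diagonal matrix of invertible smooth-function-valued blocks, one per degree) is invertible over $\C^{\infty}_{n_{0}}$ on this neighborhood. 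I would then \emph{define} a candidate morphism $\psi: \hat{V}^{(n_{j})} \to \hat{U}^{(n_{j})}$ by prescribing, again via Theorem \ref{thm_gradedomaintheorem}, the smooth map $\ul{\hat{\phi}}^{-1}$ and pullbacks of the coordinate functions: one seeks $\hat{x}^{i} := \psi^{\ast}(x^{i}) \in \C^{\infty}_{(n_{j})}(\hat{V})_{0}$ and $\hat{\xi}_{\mu} := \psi^{\ast}(\xi_{\mu}) \in \C^{\infty}_{(n_{j})}(\hat{V})_{|\xi_{\mu}|}$ solving $\hat{\phi}^{\ast}(\hat{x}^{i}) = x^{i}$ and $\hat{\phi}^{\ast}(\hat{\xi}_{\mu}) = \xi_{\mu}$ on $\hat{U}$ (and symmetrically).

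The main obstacle — and the technical heart of the argument — is solving these fixed-point equations for $\hat{x}^{i}$ and $\hat{\xi}_{\mu}$ inside the formal-power-series algebra. I would do this by an iterative scheme graded by polynomial degree in the $\xi$'s (cf.\ the inversion argument in Example \ref{ex_twogLRS} and the use of Hadamard's lemma): writing $\hat{\phi}^{\ast}$ as "identity plus higher-order terms" after absorbing the invertible linear parts $\fD^{(0)}$ and $G$, one inverts order by order in the weight $w(\fp)$, each step requiring only inverting the already-established matrices $\fD^{(0)}_{m}(\hat\phi)$ and the blocks of $G$ over $\C^{\infty}_{n_{0}}$; Proposition \ref{tvrz_inallvanishingideals} (or rather the fact that every element of $\C^{\infty}_{(n_{j})}(\hat U)$ is a well-defined formal power series with finitely many terms in each $\N^{n_{\ast}}_{k}(p)$) guarantees the resulting formal series is a genuine element of the structure sheaf and that the construction terminates in each fixed degree. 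Once $\psi^{\ast}$ is constructed, one checks $\hat{\phi}^{\ast} \circ \psi^{\ast} = \1$ and $\psi^{\ast}\circ\hat{\phi}^{\ast} = \1$ by verifying agreement on coordinate functions and invoking the uniqueness clause of Theorem \ref{thm_gradedomaintheorem} (a composition of morphisms of graded domains is again one, and is determined by its pullbacks of coordinates). Then $\psi := (\ul{\hat\phi}^{-1}, \psi^{\ast})$ is a two-sided inverse to $\hat\phi$ in $\gLRS$; transporting back through the charts $\varphi,\psi$, and using Proposition \ref{tvrz_isoingLRS}, we conclude that $\phi|_{U}: \M|_{U} \to \cN|_{V}$ is a graded diffeomorphism, which is (ii); part (i) is Lemma \ref{lem_ift}-(i) together with $V = \ul\phi(U)$. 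Finally I would remark that, alternatively, one can avoid the explicit iteration by noting $\phi_{(m)}: \C^{\infty}_{\cN,\ul\phi(m)} \to \C^{\infty}_{\M,m}$ is a graded ring isomorphism at the level of stalks (using the local graded Hadamard lemma, Lemma \ref{lem_lochadamard}, to see surjectivity and injectivity from the isomorphism on $\frJ/\frJ^{2}$), hence $\phi^{\ast}$ induces isomorphisms on all stalks over $U$, so by Remark \ref{rem_inducedmapsheafiso} it is a sheaf isomorphism — but making the stalk-wise surjectivity rigorous still rests on the same order-by-order solvability, so the iteration is unavoidable somewhere.
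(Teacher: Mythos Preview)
Your approach is correct and shares the same underlying mechanism as the paper's, but the paper streamlines it in two ways you do not fully exploit. First, after passing to graded domains via Lemma \ref{lem_ift}, the paper further reduces to the case $\ul{\phi} = \1_{U}$ (by composing with the graded diffeomorphism of graded domains induced by the underlying smooth diffeomorphism); this makes $\fD^{(0)}$ the identity matrix and lets degree zero be treated on the same footing as the other degrees, so you never have to juggle $\ul{\hat\phi}^{-1}$ separately. Second, instead of solving iteratively for the inverse's coordinate pullbacks, the paper pre-composes with an explicit graded diffeomorphism $\psi$ defined by $\psi^{\ast}(\bbz^{A_j}_{(j)}) = ((\fD^{(j)})^{-1})^{A_j}{}_{B_j}\,\bbz^{B_j}_{(j)}$, so that $(\phi\circ\psi)^{\ast} = \1 + \rho$ with $\rho$ strictly raising the polynomial weight $w(\fp)$; the inverse is then the geometric series $\chi = \sum_{q\geq 0}(-1)^{q}\rho^{q}$, which is a finite sum in each component $\fp$ and whose two-sidedness is immediate. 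Your direct iteration is the same idea unpacked by hand, but the factorisation-plus-geometric-series trick replaces an intertwined recursion (one strand for the base coordinates via $\ul{\hat\phi}$, one for the $\xi$'s via $G$) by a single clean formula. Your final stalk-wise remark is also on target but, as you note, ultimately rests on the same weight-filtration argument.
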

\begin{proof}
Due to Lemma \ref{lem_ift}, it suffices to prove the theorem for a local graded diffeomorphism $\phi: U^{(n_{j})} \rightarrow U^{(n_{j})}$ at a given $m \in U$, where $U \in \Op(\R^{n_{0}})$ and $\ul{\phi} = \1_{U}$. 

For each $j \in \Z$, let $\fD^{(j)}: U \rightarrow \R^{n_{j},n_{j}}$ denote the matrix-valued map $\fD^{(j)}(x) := \fD^{(j)}_{x}(\phi)$ for all $x \in U$. By assumption, we have $\fD^{(j)}(m) \in \GL(n_{j},\R)$. For each $j \in \Z$, there is thus $U_{(j)} \in \Op_{m}(U)$, such that $\fD^{(j)}(U_{(j)}) \subseteq \GL(n_{j},\R)$. Now, note $n_{j} \neq 0$ only for finitely many $j \in \Z$, and for $n_{j} = 0$, one can choose $U_{(j)} = U$. It follows that $U' = \cap_{j \in \Z} U_{(j)} \in \Op_{m}(U)$. But this means that we could have started with $U'$ instead of $U$ and we may assume that $\fD^{(j)}: U \rightarrow \GL(n_{j},\R)$ for every $j \in \Z$. Let $(\bbz^{A_{j}}_{(j)})_{A_{j}=1}^{n_{j}}$ denote the (global) coordinate functions on $U^{(n_{j})}$ of degree $j$. 

It follows from the definition of $\fD^{(j)}$ and Example \ref{ex_diffgradeddomains} that for each $j \in \Z$ and $A_{j} \in \{1,\dots,n_{j}\}$, one can write the expression
\begin{equation}
\phi^{\ast}_{U}( \bbz^{A_{j}}_{(j)}) = (\fD^{(j)}){}^{A_{j}}{}_{B_{j}} \bbz^{B_{j}}_{(j)} + f^{A_{j}},
\end{equation}
where $f^{A_{j}} \in \prod_{p>1} \C^{\infty}_{n_{0}}(U) \otimes S^{p}(\R^{(n_{i})}_{\ast})_{j}$. Note that it is crucial that $\fD^{(0)}$ is the unit matrix since we assume $\ul{\phi} = \1_{U}$. Let us now construct a graded diffeomorphism $\psi: U^{n_{j}} \rightarrow U^{n_{j}}$ with $\ul{\psi} = \1_{U}$. By Remark \ref{rem_itsufficestocalculatepullbacks}, it suffices to declare the pullbacks of coordinate functions. Set
\begin{equation}
\psi^{\ast}_{U}(\bbz^{A_{j}}_{(j)}) := ((\fD^{(j)})^{-1}) {}^{A_{j}}{}_{B_{j}} \bbz^{B_{j}}_{(j)},
\end{equation}
for each $j \in \Z$ and every $A_{j} \in \{1,\dots,n_{j}\}$. It is a graded diffeomorphism as one can easily construct its graded smooth inverse. It follows that for each $j \in \Z$ and every $A_{j} \in \{1,\dots,n_{j}\}$, one has
\begin{equation}
(\phi \circ \psi)^{\ast}_{U}(\bbz^{A_{j}}_{(j)}) = \bbz^{A_{j}}_{(j)} + g^{A_{j}},
\end{equation}
where $g^{A_{j}} \in \prod_{p>1} \C^{\infty}_{(n_{0})}(U) \otimes S^{p}(\R^{(n_{i})}_{\ast})_{j}$. But this proves that $(\phi \circ \psi)^{\ast}_{U} = \1_{\C^{\infty}_{(n_{j})}(U)} + \rho$ where $\rho: \C^{\infty}_{(n_{j})}(U) \rightarrow \C^{\infty}_{(n_{j})}(U)$ is a graded linear map which strictly increases the ``polynomial degree''. In other words, for each $f \in \C^{\infty}_{(n_{j})}(U)$ and each $\fp \in \N^{n_{\ast}}_{|f|}$, one has $(\rho^{q}(f))_{\fp} = 0$ for $q > w(\fp)$. 

It thus makes sense to define a graded linear map $\chi := \sum_{q = 0}^{\infty} (-1)^{q} \rho^{q}$. More precisely, for every $f \in \C^{\infty}_{(n_{j})}(U)$ and $\fp \in \N^{n_{\ast}}_{|f|}$, one defines $(\chi(f))_{\fp} = \sum_{q=0}^{w(\fp)} (-1)^{q} (\rho^{q}(f))_{\fp}$. It is now straightforward to check that $\chi$ is precisely the two-sided inverse to $(\phi \circ \psi)^{\ast}_{U}$. Whence $(\phi \circ \psi)^{\ast}_{U}$ is a graded algebra isomorphism. One can repeat these arguments to prove that $(\phi \circ \psi)^{\ast}_{V}$ is a graded algebra isomorphism for every $V \in \Op(U)$ and we conclude that $(\phi \circ \psi)^{\ast}$ is a sheaf isomorphism. From Proposition \ref{tvrz_isoingLRS}, it follows that $\phi \circ \psi: U^{(n_{j})} \rightarrow U^{(n_{j})}$ is a graded diffeomorphism. 

This shows that $\phi: U^{(n_{j})} \rightarrow U^{(n_{j})}$ is a graded diffeomorphism and the proof is finished. 
\end{proof}
Recall that for ordinary manifolds, a bijective local diffeomorphism is a diffeomorphism. A similar (mildly stronger) statement holds for graded manifolds.
\begin{tvrz} \label{tvrz_diffiflocaldiffandbij}
Let $\phi: \M \rightarrow \cN$ be a graded smooth map. Then $\phi$ is a graded diffeomorphism, iff it is a local graded diffeomorphism and $\ul{\phi}$ is bijective. 
\end{tvrz}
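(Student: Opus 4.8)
The plan is to prove both implications. The forward implication is immediate: if $\phi$ is a graded diffeomorphism then its inverse $\phi^{-1}$ is graded smooth, so $\ul{\phi^{-1}}$ is smooth by Proposition on underlying maps, hence $\ul{\phi}$ is a diffeomorphism (in particular bijective); and for each $m \in M$ the functoriality relation (\ref{tvrz_differentialfunctorial}) applied to $\phi^{-1} \circ \phi = \1_{\M}$ and $\phi \circ \phi^{-1} = \1_{\cN}$ shows $T_{m}\phi$ is an isomorphism in $\gVect$ with inverse $T_{\ul{\phi}(m)}(\phi^{-1})$, so $\phi$ is a local graded diffeomorphism.

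For the converse, suppose $\phi$ is a local graded diffeomorphism and $\ul{\phi}$ is bijective. First I would apply the inverse function theorem (Theorem \ref{thm_ift}) at each point $m \in M$: there is $U_{m} \in \Op_{m}(M)$ such that $V_{m} := \ul{\phi}(U_{m})$ is open, $\ul{\phi}|_{U_{m}} : U_{m} \to V_{m}$ is a diffeomorphism, and $\phi|_{U_{m}} : \M|_{U_{m}} \to \cN|_{V_{m}}$ is a graded diffeomorphism. Since $\ul{\phi}$ is a local diffeomorphism everywhere and is also bijective, it is a (global) diffeomorphism $M \to N$; in particular $\ul{\phi}$ is an open map, so $\{ V_{m} \}_{m \in M}$ is an open cover of $N$, and $\ul{\phi}^{-1}(V_{m}) $ need not equal $U_{m}$ in general, but using that $\ul{\phi}$ is injective one checks $\ul{\phi}^{-1}(V_{m}) = U_{m}$, so the restrictions behave well. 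The idea is now to glue the local inverses $(\phi|_{U_{m}})^{-1} : \cN|_{V_{m}} \to \M|_{U_{m}} \hookrightarrow \M$ into a global graded smooth map $\psi : \cN \to \M$ using Proposition \ref{tvrz_gLRSgluing}.

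The key verification is that these local inverses agree on overlaps: for $m, m' \in M$, on $V_{m} \cap V_{m'}$ both $(\phi|_{U_{m}})^{-1}$ and $(\phi|_{U_{m'}})^{-1}$ are two-sided inverses (as $\gLRS$-morphisms into $\M$) of $\phi$ restricted to the appropriate open set, hence coincide by uniqueness of inverses; concretely $(\phi|_{U_{m}})^{-1}|_{V_{m} \cap V_{m'}}$ and $(\phi|_{U_{m'}})^{-1}|_{V_{m} \cap V_{m'}}$ are both left and right inverse to $\phi|_{\ul{\phi}^{-1}(V_{m} \cap V_{m'})}$, so they are equal. By Proposition \ref{tvrz_gLRSgluing} there is a unique graded smooth map $\psi : \cN \to \M$ with $\psi|_{V_{m}} = (\phi|_{U_{m}})^{-1}$ for all $m$. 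Finally one checks $\psi \circ \phi = \1_{\M}$ and $\phi \circ \psi = \1_{\cN}$: these can be verified locally on the covers $\{ U_{m} \}$ and $\{ V_{m} \}$ respectively, where they reduce to the corresponding identities for the local graded diffeomorphisms $\phi|_{U_{m}}$, and then the monopresheaf/gluing uniqueness in $\gLRS$ (again Proposition \ref{tvrz_gLRSgluing}, uniqueness part) gives the global identities. Hence $\phi$ is a graded diffeomorphism.

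The main obstacle is the bookkeeping around the open cover: making sure that $\ul{\phi}^{-1}(V_{m})$ coincides with (or can be replaced by) $U_{m}$ so that $\phi$ genuinely restricts to a morphism $\M|_{U_{m}} \to \cN|_{V_{m}}$ whose inverse can be pulled back along $\ul{\phi}^{-1}$, and then checking the compatibility of local inverses on triple overlaps is automatic once pairwise agreement is established. Nothing here is computationally heavy — the weight is entirely in correctly invoking the gluing proposition for $\gLRS$-morphisms and the uniqueness of two-sided inverses; the injectivity of $\ul{\phi}$ is exactly what is needed to upgrade the purely local statement of Theorem \ref{thm_ift} to a global one.
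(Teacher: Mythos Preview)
Your proposal is correct and follows essentially the same approach as the paper: apply Theorem \ref{thm_ift} at each point to obtain local graded inverses, use the bijectivity of $\ul{\phi}$ to ensure $V_{m} \cap V_{m'} = \ul{\phi}(U_{m} \cap U_{m'})$ so that the local inverses agree on overlaps, and glue via Proposition \ref{tvrz_gLRSgluing}. The paper's write-up is slightly more terse but the structure and the key observations (including the role of injectivity of $\ul{\phi}$ for the overlap bookkeeping and surjectivity for $\{V_{m}\}$ to cover $N$) are identical to yours.
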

\begin{proof}
We have already proved the only if direction in the proof of Proposition \ref{tvrz_gdiminvariant}. Hence suppose that $\phi$ is a local graded diffeomorphism with a bijective underlying map $\ul{\phi}: M \rightarrow N$. 

For each $m \in M$ one has $U_{m} \in \Op_{m}(M)$ and $V_{m} \in \Op_{\ul{\phi}(m)}(N)$, such that $V_{m} = \ul{\phi}(U_{m})$ and $\phi|_{U_{m}}: \M|_{U_{m}} \rightarrow \cN|_{V_{m}}$ is a graded diffeomorphism. This is just the assumption and Theorem \ref{thm_ift}. Let $\psi_{(m)} := (\phi|_{U_{m}})^{-1}$ and view it as a map $\psi_{(m)}: \cN|_{V_{m}} \rightarrow \M$. For any other $m' \in M$, one has
\begin{equation}
\psi_{(m)}|_{V_{m} \cap V_{m'}} = (\phi|_{U_{m} \cap U_{m'}})^{-1} = \psi_{(m')}|_{V_{m} \cap V_{m'}}. 
\end{equation}
It is important that $V_{m} \cap V_{m'} = \ul{\phi}( U_{m} \cap U_{m'})$, since $\ul{\phi}$ is assumed to be bijective. This also implies that $\{ V_{m} \}_{m \in M}$ is an open cover of $N$. Hence by Proposition \ref{tvrz_gLRSgluing}, there is a unique graded smooth map $\psi: \cN \rightarrow \M$, such that $\psi|_{V_{(m)}} = \psi_{(m)}$. It is clear that $\psi$ is the inverse to $\phi$.
\end{proof}

Observe that the graded rank of a graded smooth map is not a particularly useful notion. The same issue arises with supermanifolds and one has to come up with a more subtle notion of a (super)rank, see Definition 5.2.1 in \cite{carmeli2011mathematical}. 

\begin{example} \label{ex_zeroranknonconstant}
Indeed, consider the graded domain $\R^{(n_{j})}$ with $n_{0} = n_{2} = n_{-2} = 1$, and $n_{j} = 0$ otherwise. We thus have the coordinate functions $x, \xi_{1},\xi_{2} \in \C^{\infty}_{(n_{j})}(\R)$ with $|x| = 0$, $|\xi_{1}| = -2$ and $|\xi_{2}| = 2$. Now, the most general graded smooth map $\phi: \R^{(n_{j})} \rightarrow \R^{(n_{j})}$ is determined by pullbacks of the coordinate functions. We can parametrize them as 
\begin{equation}
\phi^{\ast}_{\R}(x) = \ul{\phi} + \sum_{n=1}^{\infty} a_{n} (\xi_{1})^{n} (\xi_{2})^{n} , \; \; \phi^{\ast}_{\R}(\xi_{1}) = \sum_{n=0}^{\infty} b_{n} (\xi_{1})^{n+1} (\xi_{2})^{n}, \; \; \phi^{\ast}_{\R}(\xi_{2}) = \sum_{n=0}^{\infty} c_{n} (\xi_{1})^{n} (\xi_{2})^{n+1},
\end{equation}
where $\ul{\phi}: \R \rightarrow \R$ is the underlying smooth map and $\{ a_{n} \}_{n=1}^{\infty}$, $\{ b_{n} \}_{n=0}^{\infty}$ a $\{ c_{n} \}_{n=0}^{\infty}$ are smooth functions on $\R$. For each $y \in \R$, the matrices $\fD^{(j)}_{y}(\phi)$ are non-zero only for $j \in \{-2,0,2\}$ and
\begin{equation}
\fD^{(0)}_{y}(\phi) = \begin{pmatrix} (\frac{d}{dx} \ul{\phi})(y) \end{pmatrix}, \; \; \fD^{(-2)}_{y}(\phi) = \begin{pmatrix} b_{0}(y) \end{pmatrix}, \; \; \fD^{(2)}_{y}(\phi) = \begin{pmatrix} c_{0}(y) \end{pmatrix}.
\end{equation}
In particular, whenever $\ul{\phi}$ is constant and $b_{0} = c_{0} = 0$, we find that $\grk_{y}(\phi) = (0)_{j \in \Z}$ for every $y \in \R$. The rest of the functions can be completely arbitrary. 
\end{example}

This suggests that there is no graded analogue of the rank theorem (see e.g. Theorem 4.12 in \cite{lee2012introduction}). Fortunately this is not the case of maps with \textit{maximal rank}. 

\begin{definice} \label{def_immersion}
Let $\phi: \M \rightarrow \cN$ be a graded smooth map. 

We say that $\phi$ is an \textbf{immersion at $m \in M$}, if $T_{m}\phi$ is an injective graded linear map. We say that $\phi$ is an \textbf{immersion}, if it is an immersion at every $m \in M$. 

We say that $\phi$ is an \textbf{submersion at $m \in M$}, if $T_{m}\phi$ is a surjective graded linear map. Such $m \in M$ is called a \textbf{regular point} of $\phi$. We say that $\phi$ is an \textbf{submersion}, if it is a submersion at every $m \in M$. A point $n \in N$ is called a \textbf{regular value} of $\phi$, if all $m \in \ul{\phi}^{-1}(n)$ are regular points.
\end{definice}

\begin{theorem}[\textbf{Local immersion theorem}] \label{thm_immersion}
Let $\phi: \M \rightarrow \cN$ be a graded smooth map. Let $(n_{j})_{j \in \Z} = \gdim(\M)$ and $(m_{j})_{j \in \Z} = \gdim(\cN)$. Let $m \in M$. Then the following statements are equivalent:
\begin{enumerate}[(i)]
\item $\phi$ is an immersion at $m$; 
\item $\grk_{m}(\phi) = (n_{j})_{j \in \Z}$;
\item There exist graded local charts $(U,\varphi)$ for $\M$ and $(V,\psi)$ for $\cN$, having the following properties:
\begin{enumerate}[(a)] 
\item $U \in \Op_{m}(M)$, $V \in \Op_{\ul{\phi}(m)}(N)$ and $\ul{\phi}(U) \subseteq V$.
\item $\ul{\varphi}(U) = \hat{U}$, $\ul{\psi}(V) = \hat{U} \times \hat{W}$, where $\hat{U} \subseteq \R^{n_{0}}$ and $\hat{W} \subseteq \R^{m_{0} - n_{0}}$ are open cubes and one has $\ul{\varphi}(m) = 0$, $\ul{\psi}( \ul{\phi}(m)) = (0,0)$.
\item Let $\hat{\phi}: \hat{U}^{(n_{i})} \rightarrow (\hat{U} \times \hat{W})^{(m_{i})}$ be the corresponding local representative. Then for each $j \in \Z$, we may choose the coordinate functions $(\bbz^{A_{j}}_{(j)})_{A_{j}=1}^{n_{j}}$ and $(\bby^{K_{j}}_{(j)})_{K_{j}=1}^{m_{j}}$ so that 
\begin{align}
\label{eq_locrepimm1} \hat{\phi}^{\ast}_{\hat{U} \times \hat{W}}( \bby^{K_{j}}_{(j)}) = & \ \bbz^{K_{j}}_{(j)}, \text{ for } K_{j} \in \{1,\dots,n_{j} \}, \\
\label{eq_locrepimm2} \hat{\phi}^{\ast}_{\hat{U} \times \hat{W}}( \bby^{K_{j}}_{(j)}) = & \ 0, \text{ for } K_{j} \in \{n_{j}+1, \dots, m_{j} \}.
\end{align}
\end{enumerate}
\end{enumerate}
\end{theorem}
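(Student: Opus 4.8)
The plan is to prove the cycle $(i) \Leftrightarrow (ii)$ first, which is immediate, and then $(ii) \Rightarrow (iii) \Rightarrow (i)$. The equivalence $(i) \Leftrightarrow (ii)$ follows directly from the definitions and Proposition \ref{tvrz_coordtangentvectors}: since $\gdim(T_{m}\M) = (n_{-j})_{j \in \Z}$ and $\gdim(T_{\ul{\phi}(m)}\cN) = (m_{-j})_{j \in \Z}$, and since $T_{m}\phi$ is a graded linear map that splits into the ordinary linear maps $(T_{m}\phi)_{-j}$, injectivity of $T_{m}\phi$ is equivalent to $\rk((T_{m}\phi)_{-j}) = n_{j}$ for all $j$, which is precisely $\grk_{m}(\phi) = (n_{j})_{j \in \Z}$. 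The implication $(iii) \Rightarrow (i)$ is also essentially free: if the local representative has the normal form (\ref{eq_locrepimm1})--(\ref{eq_locrepimm2}), then by the explicit formula for the differential in Example \ref{ex_diffgradeddomains} each matrix $\fD^{(j)}_{m}(\phi)$ contains an $n_{j} \times n_{j}$ identity block, hence has rank $n_{j}$, so $\phi$ is an immersion at $m$ by the equivalence already established.

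The substance is $(ii) \Rightarrow (iii)$. First I would apply Lemma \ref{lem_inducedandsmooth} to observe $\ul{\phi}$ is smooth, and note that $\fD^{(0)}_{m}(\phi)$ is exactly the Jacobian of $\ul{\phi}$ in induced charts; since its rank is $n_{0}$, the classical local immersion theorem for smooth manifolds (e.g. the rank theorem, Theorem 4.12 in \cite{lee2012introduction}) gives ordinary charts realizing $\ul{\phi}$ in the form $\ul{\phi}(x) = (x,0)$, with domains that we may shrink to open cubes with $\ul{\varphi}(m) = 0$, $\ul{\psi}(\ul{\phi}(m)) = (0,0)$, and such that they underlie graded local charts $(U,\varphi)$, $(V,\psi)$ (intersect with chart domains as in Lemma \ref{lem_ift}). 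This secures properties (a), (b) and handles the degree-$0$ part of (c). For the nonzero degrees, I would argue degree by degree. Fix $j \neq 0$. By Theorem \ref{thm_globaldomain} applied to the local representative $\hat{\phi}$, the map is determined by the pullbacks $\hat{\phi}^{\ast}(\bby^{K_{j}}_{(j)}) \in \C^{\infty}_{(n_{i})}(\hat{U})_{j}$; writing each such function as $\bar{\theta}^{K_{j}}_{(j)}$, the hypothesis $\rk(\fD^{(j)}_{m}(\phi)) = n_{j}$ says that the $m_{j} \times n_{j}$ matrix whose $(K_{j},A_{j})$ entry is the coefficient of the linear term $\bbz^{A_{j}}_{(j)}$ in $\bar{\theta}^{K_{j}}_{(j)}$, evaluated at $\ul{\varphi}(m)$, has rank $n_{j}$. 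After a constant linear change of the target basis $(\bby^{K_{j}}_{(j)})$ (an invertible linear substitution is a graded diffeomorphism of the fibre, compatible with degrees) we may assume the top $n_{j} \times n_{j}$ block of that linear-coefficient matrix is the identity at $m$, hence invertible on a smaller neighborhood, which we shrink $U$ to while keeping it an open cube; the bottom $m_{j} - n_{j}$ rows then have vanishing linear part there.

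It remains to post-compose $\hat{\phi}$ with a graded diffeomorphism of the target straightening the pullbacks to the exact form (\ref{eq_locrepimm1})--(\ref{eq_locrepimm2}). Here I would construct, using Remark \ref{rem_itsufficestocalculatepullbacks} and Theorem \ref{thm_gradedomaintheorem}, a graded diffeomorphism $\Psi$ of $(\hat{U} \times \hat{W})^{(m_{i})}$ with $\ul{\Psi} = \1$ that, in each degree $j \neq 0$, maps $\bby^{K_{j}}_{(j)}$ for $K_{j} \leq n_{j}$ to a function whose $\hat{\phi}$-pullback is $\bbz^{K_{j}}_{(j)}$, and maps $\bby^{K_{j}}_{(j)}$ for $K_{j} > n_{j}$ to a combination designed to cancel the higher-order tails; this is the graded analogue of solving away the nonlinear corrections, and it works because the obstruction at each polynomial order is controlled by the invertible top block (so one can invert order by order in the polynomial filtration, exactly as in the proof of Theorem \ref{thm_ift} where a $\1 + \rho$ with $\rho$ strictly raising polynomial degree is inverted by a geometric-type series $\sum (-1)^{q}\rho^{q}$, truncated thanks to $(\rho^{q}(f))_{\fp} = 0$ for $q > w(\fp)$). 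The degree-$0$ relations $\hat{\phi}^{\ast}(\bby^{K_{0}}_{(0)}) = \bbz^{K_{0}}_{(0)}$ for $K_{0} \leq n_{0}$ and $=0$ for $K_{0} > n_{0}$ are already in place from the classical step and are preserved. Redefining $\psi$ by composing with $\Psi$ (still a graded local chart for $\cN$, with the same underlying $\ul{\psi}$ up to the identity) yields the charts claimed in (iii). The main obstacle is exactly this last straightening: one must check that the required target diffeomorphism exists with $\ul{\Psi} = \1$ and respects all degrees simultaneously, and that the polynomial-order induction terminates — but the finiteness of $\dim(\R^{(m_{i})}_{\ast})$ and the nilpotency encoded in $w(\fp)$ make each coefficient equation a finite linear system with invertible leading part, so it does.
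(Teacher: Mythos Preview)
Your handling of $(i) \Leftrightarrow (ii)$ and $(iii) \Rightarrow (i)$ matches the paper. For $(i) \Rightarrow (iii)$, however, the paper takes a different and cleaner route. Rather than splitting into ``classical body step $+$ graded corrections'', the paper reorders the target coordinates in each degree so that the top $n_{j}\times n_{j}$ block of $\fD^{(j)}_{0}(\phi)$ is non-singular, decomposes the target as $Y^{(n_i)}\times W^{(m_i-n_i)}$ and writes $\phi=(\phi_1,\phi_2)$, then builds an auxiliary map $\alpha$ on the product $U^{(n_i)}\times(\R^{m_0-n_0})^{(m_i-n_i)}$ by $\alpha^{\ast}(\bby^{K_j}_{(j)})=(\phi\circ\pi_1)^{\ast}(\bby^{K_j}_{(j)})$ for $K_j\le n_j$ and $\alpha^{\ast}(\bby^{K_j}_{(j)})=(\phi\circ\pi_1)^{\ast}(\bby^{K_j}_{(j)})+\bbu^{K_j}_{(j)}$ for $K_j>n_j$. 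The Jacobian of $\alpha$ is block lower triangular with invertible diagonal blocks, so the graded inverse function theorem (Theorem~\ref{thm_ift}) applies directly, and the desired target chart is simply $(\alpha|_{X'})^{-1}$. This is the graded analogue of the standard Abraham--Marsden--Ratiu trick (cited by the paper), and it handles all degrees at once.

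Your approach is in principle workable but has two rough spots. First, your claim that the degree-$0$ relations are ``already in place from the classical step'' is not quite right: the classical immersion theorem normalizes only the bodies $\ul{\hat{\phi}^{\ast}(\bby^{K_0}_{(0)})}$, while $\hat{\phi}^{\ast}(\bby^{K_0}_{(0)})$ may still carry purely-graded degree-$0$ tails that your $\Psi$ must also absorb. Second, and more importantly, your appeal to the $\sum(-1)^{q}\rho^{q}$ argument does not apply directly: in the IFT proof that series inverts an \emph{endomorphism} of one algebra, whereas here $\hat{\phi}^{\ast}$ goes between algebras of different sizes. To find target functions $g^{K_j}$ with $\hat{\phi}^{\ast}(g^{K_j})=\bbz^{K_j}_{(j)}$ you must first restrict to the top-block component $\phi_1\colon \hat{U}^{(n_i)}\to \hat{U}^{(n_i)}$ and invert \emph{that} (which is exactly an IFT application), then lift. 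Once you do this carefully you have essentially reconstructed the paper's $\alpha$ map by hand; the paper's route gets there faster by invoking Theorem~\ref{thm_ift} once on $\alpha$ instead of re-running its proof degree by degree.
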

\begin{proof}
$(i) \Leftrightarrow (ii)$ is just the rephrasing using the fact that $\gdim(T_{m}\M) = (n_{-j})_{j \in \Z}$. The implication $(iii) \Rightarrow (i)$ can be verified directly. It remains to prove $(i) \Rightarrow (iii)$. 

As this is a local statement, we may assume that $\phi: U^{(n_{i})} \rightarrow V^{(m_{i})}$ is an immersion at $0 \in U$, where $U \subseteq \R^{n_{0}}$ and $V \subseteq \R^{m_{0}}$ are open cubes and $\ul{\phi}(0) = 0$. 

Using any coordinate functions, for each $j \in \Z$, the $m_{j} \times n_{j}$ matrix $\fD^{(j)}_{0}(\phi)$ has a maximal rank $n_{j}$. We may thus always relabel the degree $j \in \Z$ coordinate functions so that its submatrix consisting of the first $n_{j}$ rows is non-singular. This gives us a decomposition 
\begin{equation}
V^{(m_{i})} = Y^{(n_{i})} \times W^{(m_{i} - n_{i})},
\end{equation}
where $Y \subseteq \R^{n_{0}}$ and $W \subseteq \R^{m_{0} - n_{0}}$ are open cubes. We can then write $\phi = (\phi_{1},\phi_{2})$, where $\phi_{1}: U^{(n_{i})} \rightarrow Y^{(n_{i})}$ is a local graded diffeomorphism at $0$ and $\phi_{2}: U^{(n_{i})} \rightarrow W^{(m_{i} - n_{i})}$. Let 
\begin{equation}
X := \{ (u,z) \in U \times \R^{m_{0} - n_{0}} \; | \; \ul{\alpha}(u,z) := (\ul{\phi_{1}}(u), \ul{\phi_{2}}(u) + z) \in Y \times W \}.
\end{equation}
This is an open subset of $\R^{m_{0}}$. Let $\pi_{1}: X^{(m_{i})} \rightarrow U^{(n_{j})}$ be a restriction of the canonical projection $U^{(n_{i})} \times (\R^{m_{0}-n_{0}})^{(m_{i}-n_{i})} \rightarrow U^{(n_{i})}$ to $X$. As $(u,0) \in X$ for all $u \in U$, the underlying map $\ul{\pi_{1}}: X \rightarrow U$ is surjective. Let us construct a graded smooth map $\alpha: X^{(m_{i})} \rightarrow V^{(m_{i})}$. Let us denote the graded coordinates of degree $j \in \Z$ on $X^{(m_{i})}$ as $(\bbu^{K_{j}}_{(j)})_{K_{j}=1}^{m_{j}}$ and those on $V^{(m_{i})}$ as $(\bby^{K_{j}}_{(j)})_{K_{j}=1}^{m_{j}}$. Set
\begin{align}
\alpha^{\ast}_{V}( \bby^{K_{j}}_{(j)}) := & \ (\phi \circ \pi_{1})^{\ast}_{V}( \bby^{K_{j}}_{(j)}) \text{ for } K_{j} \in \{1,\dots,n_{j} \}, \\
\alpha^{\ast}_{V}( \bby^{K_{j}}_{(j)}) := & \ (\phi \circ \pi_{1})^{\ast}_{V}( \bby^{K_{j}}_{(j)}) + \bbu^{K_{j}}_{(j)} \text{ for } K_{j} \in \{n_{j}+1, \dots, m_{j} \}.
\end{align}
By Remark \ref{rem_itsufficestocalculatepullbacks}, this determines a unique graded smooth map. Its underlying map is $\ul{\alpha}$ used in the definition of the open set $X$. It is a local graded diffeomorphism at $(0,0) \in X$ as 
\begin{equation}
\fD^{(j)}_{(0,0)}(\alpha) = \begin{pmatrix}
\fD^{(j)}_{0}(\phi_{1}) & 0 \\
\fD^{(j)}_{0}(\phi_{2}) & \mathbf{1}_{m_{j} - n_{j}}.
\end{pmatrix}
\end{equation}
By the inverse function theorem, there is thus an open neighborhood $X' \subseteq X$ of $(0,0)$ and an open neighborhood $V' \subseteq Y \times W$ of $\ul{\alpha}(0,0) = (0,0)$, such that $\alpha|_{X'}: X'^{(n_{i})} \rightarrow V'^{(n_{i})}$ is a graded diffeomorphism. By shrinking it when necessary, we may assume that $X' = U' \times W'$ for open cubes $U' \subseteq U$ and $W' \subseteq \R^{m_{0} - n_{0}}$ around the origin.  

We claim that $(U', \1_{U'^{(n_{j})}})$ and $(V',(\alpha|_{X'})^{-1})$ are the required graded local charts. Observe that $\ul{\phi}(U') \subseteq V'$. This follows from the fact that $\ul{\phi}(u') = \ul{\alpha}(u',0) \in V'$ for every $u' \in U'$. The local representative $\hat{\phi}: U'^{(n_{i})} \rightarrow X'^{(m_{i})}$ is uniquely determined by the equation $\alpha|_{X'} \circ \hat{\phi} = \phi|_{U'}$. It thus suffices to show that it is satisfied by $\hat{\phi}$ given by (\ref{eq_locrepimm1}, \ref{eq_locrepimm2}). For any $f \in \C^{\infty}_{(n_{i})}(U)$, one has 
\begin{equation}
\hat{\phi}^{\ast}_{X'}( (\pi_{1})^{\ast}_{U}(f)|_{X'}) = f|_{U'},
\end{equation}
due to (\ref{eq_locrepimm1}), and for any $j \in \Z$ and $K_{j} \in \{n_{j}+1,\dots,m_{j}\}$, one has $\hat{\phi}^{\ast}_{X'}( \bbu^{K_{j}}_{(j)}) = 0$ due to (\ref{eq_locrepimm2}). But then, for any $j \in \Z$ and $K_{j} \in \{1,\dots,m_{j}\}$, one finds
\begin{equation}
(\hat{\phi}^{\ast}_{X'} \circ (\alpha|_{X'})^{\ast}_{U'})( \bby^{K_{j}}_{(j)}) := \hat{\phi}^{\ast}_{X'}( ((\pi_{1})^{\ast}_{U} \circ \phi^{\ast}_{V})(\bby^{K_{j}}_{(j)})|_{X'}) = \phi^{\ast}_{V}( \bby^{K_{j}}_{(j)})|_{U'} = (\phi|_{U'})^{\ast}_{V}( \bby^{K_{j}}_{(j)}). 
\end{equation}
This proves the claim and the proof is finished. 
\end{proof}
\begin{rem}
The proof is just a graded analogue of the one of Theorem 2.5.12 in \cite{abraham2012manifolds}. See also Proposition 5.2.4 in \cite{carmeli2011mathematical} for its  supermanifold version. Also note that in the construction of the special graded local charts in $(iii)$, the graded local chart $(U,\varphi)$ for $\M$ can be obtained by merely shrinking (and possibly translating the origin) of any given graded local chart for $\M$ around $m$ 
\end{rem}
There is a similar version of this theorem for submersions. 
\begin{theorem}[\textbf{Local submersion theorem}] \label{thm_submersion}
Let $\phi: \M \rightarrow \cN$ be a graded smooth map. Let $(n_{j})_{j \in \Z} = \gdim(\M)$ and $(m_{j})_{j \in \Z} = \gdim(\cN)$. Let $m \in M$. Then the following statements are equivalent:
\begin{enumerate}[(i)]
\item $\phi$ is a submersion at $m$; 
\item $\grk_{m}(\phi) = (m_{j})_{j \in \Z}$;
\item There exist graded local charts $(U,\varphi)$ for $\M$ and $(V,\psi)$ for $\cN$, having the following properties:
\begin{enumerate}[(a)] 
\item $U \in \Op_{m}(M)$, $V \in \Op_{\ul{\phi}(m)}(N)$ and $\ul{\phi}(U) \subseteq V$.
\item $\ul{\varphi}(U) = \hat{V} \times \hat{W}$, $\ul{\psi}(V) = \hat{V}$, where $\hat{V} \subseteq \R^{m_{0}}$ and $\hat{W} \subseteq \R^{n_{0} - m_{0}}$ are open cubes and $\ul{\varphi}(m) = 0$, $\ul{\psi}( \ul{\phi}(m)) = (0,0)$.
\item Let $\hat{\phi}: (\hat{V} \times \hat{W})^{(n_{i})} \rightarrow \hat{V}^{(m_{i})}$ be the corresponding local representative. Then for each $j \in \Z$, we may choose the coordinate functions $(\bbz^{A_{j}}_{(j)})_{A_{j}=1}^{n_{j}}$ and $(\bby^{K_{j}}_{(j)})_{K_{j}=1}^{m_{j}}$ so that 
\begin{equation} \label{eq_locrepsub} \hat{\phi}^{\ast}_{\hat{V}}( \bby^{K_{j}}_{(j)}) = \bbz^{K_{j}}_{(j)}, \end{equation}
for all $K_{j} \in \{1,\dots,m_{j}\}$.  
\end{enumerate}
\end{enumerate}
\end{theorem}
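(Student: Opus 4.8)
The plan is to mirror the structure of the proof of the local immersion theorem (Theorem~\ref{thm_immersion}), since the submersion statement is formally dual. First I would dispense with the easy implications: $(i)\Leftrightarrow(ii)$ is just the translation $\grk_{m}(\phi) = (\rk((T_{m}\phi)_{-j}))_{j\in\Z}$ together with $\gdim(T_{\ul\phi(m)}\cN) = (m_{-j})_{j\in\Z}$, so that surjectivity of each $(T_{m}\phi)_{-j}$ is equivalent to $\rk(\fD^{(j)}_{m}(\phi)) = m_{j}$. The implication $(iii)\Rightarrow(i)$ is a direct verification: in the special coordinates, the matrices $\fD^{(j)}_{m}(\hat\phi)$ are, by \eqref{eq_locrepsub}, the $m_{j}\times n_{j}$ block $\begin{pmatrix}\mathbf{1}_{m_{j}} & 0\end{pmatrix}$, hence of maximal rank $m_{j}$, so $T_{m}\phi$ is surjective. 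All of this is routine and I would state it briefly.

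The substance is $(i)\Rightarrow(iii)$. As the statement is local, I would reduce to a submersion $\phi: U^{(n_{i})}\to V^{(m_{i})}$ at $0\in U$, with $U\subseteq\R^{n_{0}}$, $V\subseteq\R^{m_{0}}$ open cubes and $\ul\phi(0)=0$. The first step is to relabel, within each degree $j$, the coordinate functions $(\bbz^{A_{j}}_{(j)})$ on $U$ so that the $m_{j}$ columns indexed $1,\dots,m_{j}$ of $\fD^{(j)}_{0}(\phi)$ are linearly independent; this gives a splitting of the source graded domain as $U^{(n_{i})} \cong \hat V'^{(m_{i})}\times \hat W^{(n_{i}-m_{i})}$ near the origin (after shrinking), where $\hat W\subseteq\R^{n_{0}-m_{0}}$ collects the ``extra'' coordinates. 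Then I would define an auxiliary graded smooth map $\alpha: U^{(n_{i})}\to V^{(m_{i})}\times \hat W^{(n_{i}-m_{i})}$ by declaring, via Remark~\ref{rem_itsufficestocalculatepullbacks}, $\alpha^{\ast}(\bby^{K_{j}}_{(j)}) := \phi^{\ast}(\bby^{K_{j}}_{(j)})$ on the $V$-factor and $\alpha^{\ast}$ to be the chosen complementary coordinate functions $\bbz$ on the $\hat W$-factor. Its degree-$j$ Jacobian at $0$ is block-triangular of the form $\begin{pmatrix}\fD^{(j)}_{0}(\phi) \\ \ast\end{pmatrix}$ completed by an identity block, hence invertible, so by the inverse function theorem (Theorem~\ref{thm_ift}) $\alpha$ restricts to a graded diffeomorphism $\alpha|_{U'}:\M|_{U'}\to \cN'|_{V'}$ onto a product of open cubes near the origin, after shrinking; taking $(U',\alpha|_{U'})$ as the chart for $\M$ and $(V,\psi)$ (already a chart, possibly shrunk and recentered) for $\cN$, the local representative $\hat\phi$ of $\phi$ is by construction $\hat\phi = \hat\pi_{1}\circ(\text{identity})$, i.e. $\hat\phi^{\ast}(\bby^{K_{j}}_{(j)}) = \bbz^{K_{j}}_{(j)}$, which is exactly \eqref{eq_locrepsub}. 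I would verify this last equality by composing pullbacks and using the defining relation $\psi\circ\phi|_{U'}\circ(\alpha|_{U'})^{-1} = \hat\phi$, i.e. $\hat\phi^{\ast}\circ(\alpha|_{U'})^{\ast} = \phi^{\ast}|_{U'}$ applied to $\bby^{K_{j}}_{(j)}$.

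The main obstacle I anticipate is purely bookkeeping rather than conceptual: keeping the degree-by-degree splitting of coordinate functions consistent across all $j\in\Z$ (only finitely many $n_{j},m_{j}$ nonzero), ensuring the shrinking of domains can be done simultaneously for all degrees so that the resulting charts are still products of open cubes with the origin in the prescribed position, and correctly identifying which projection/section in the product graded domain realizes $\hat\phi$. I would lean on Example~\ref{ex_productgdomains} and Proposition~\ref{tvrz_products} to legitimize manipulating products of graded domains as graded manifolds, and on Example~\ref{ex_diffgradeddomains} for the explicit form of $\fD^{(j)}$ under composition. As in Theorem~\ref{thm_immersion}, the chart $(V,\psi)$ for $\cN$ can be taken to be any given chart around $\ul\phi(m)$ after shrinking and translating, whereas the chart for $\M$ is the genuinely new object produced by the inverse function theorem; I would note this in a concluding remark.
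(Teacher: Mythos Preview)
Your proposal is correct and follows essentially the same approach as the paper: the auxiliary map you call $\alpha$ is precisely the map the paper calls $\beta$, defined by the same pullback prescription, and the chart for $\M$ is obtained from its local inverse via Theorem~\ref{thm_ift} while the chart for $\cN$ is just the original one shrunk and recentered. Your closing remark about which chart is ``new'' also matches the paper's observation.
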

\begin{proof}
The proof is very similar to the one of the immersion theorem. Again, the only non-trivial statement is $(i) \Rightarrow (iii)$. 

As it is a local statement, one can assume that $\phi: U^{(n_{i})} \rightarrow V^{(m_{i})}$ is a submersion at $0 \in U$, where $U \subseteq \R^{n_{0}}$ and $V \subseteq \R^{m_{0}}$ are open cubes and $\ul{\phi}(0) = 0$. 

Using any coordinate functions, for each $j \in \Z$, the $m_{j} \times n_{j}$ matrix $\fD^{(j)}_{0}(\phi)$ has a maximal rank $m_{j}$. We may thus always relabel the degree $j \in \Z$ coordinate functions so that its submatrix consisting of the first $m_{j}$ columns is non-singular. This gives us a decomposition 
\begin{equation}
U^{(n_{i})} = Y^{(m_{i})} \times W^{(n_{i} - m_{i})},
\end{equation}
where $Y \subseteq \R^{m_{0}}$ and $W \subseteq \R^{n_{0} - m_{0}}$ are open cubes. This time, construct a graded smooth map $\beta: U^{(n_{i})} \rightarrow V^{(m_{i})} \times W^{(n_{i} - m_{i})}$. Let us denote the coordinate functions of degree $j \in \Z$ on $U^{(n_{i})}$ as $(\bbz^{A_{j}}_{(j)})_{A_{j}=1}^{n_{j}}$ and those on $V^{(m_{i})} \times W^{(n_{i}-m_{i})}$ as $(\bby^{K_{j}}_{(j)})_{K_{j}=1}^{m_{j}}$ and $(\bbz^{A_{j}}_{(j)})_{A_{j}=m_{j}+1}^{n_{j}}$, corresponding to the equally labeled coordinate functions on $V^{(m_{i})}$ and $W^{(n_{i}-m_{i})}$, respectively. Set 
\begin{align}
\beta^{\ast}_{V \times W}( \bby^{K_{j}}_{(j)}) := & \ \phi^{\ast}_{V}( \bby^{K_{j}}_{(j)}), \text{ for all } K_{j} \in \{1,\dots,m_{j}\}, \\
\beta^{\ast}_{V \times W}( \bbz^{A_{j}}_{(j)}) := & \ \bbz^{A_{j}}_{(j)}, \text{ for all } A_{j} \in \{m_{j}+1, \dots, n_{j} \}.
\end{align}
It follows that for each $j \in \Z$, one finds 
\begin{equation}
\fD_{(0,0)}^{(j)}(\beta) = \begin{pmatrix}
\fD_{(0,0)}^{(j)}(\phi) \\
\begin{array}{cc} 0 & \f1_{n_{j}-m_{j}} \end{array}
\end{pmatrix}.
\end{equation}
By assumption, this is a non-singular matrix, whence $\beta$ is a local diffeomorphism at $(0,0)$. There is thus an open neighborhood $U' \subseteq U$ of $(0,0)$ and an open neighborhood $X' \subseteq V \times Y$, such that $\beta|_{U'}: U'^{(n_{i})} \rightarrow X'^{(n_{i})}$ is a graded diffeomorphism. By shrinking it if necessary, we may assume that $X' = V' \times Y'$ for open cubes $V' \subseteq V \subseteq \R^{m_{0}}$ and $Y' \subseteq Y \subseteq \R^{n_{0}-m_{0}}$. 

We claim that $(U', (\beta|_{U'})^{-1})$ and $(V', \1_{V'^{(m_{i})}})$ are the required graded local charts. Observe that $\ul{\phi}(U') \subseteq V'$, this follows from the definition of $\beta$. It suffices to prove that $\hat{\phi}: (V' \times Y')^{(n_{i})} \rightarrow V'^{(m_{i})}$ given by (\ref{eq_locrepsub}) satisfies the equation $\hat{\phi} \circ \beta|_{U'} = \phi|_{U'}$. But that is an easy check.
\end{proof}

\begin{rem}
The proof is just a graded analogue of the one of Theorem 2.5.13 in \cite{abraham2012manifolds}. See also Proposition 5.2.5 in \cite{carmeli2011mathematical} for its supermanifold version. Observe that the graded local chart $(V,\psi)$ for $\cN$ in $(iii)$ can be in fact obtained just by shrinking (and possibly translating the origin) of any given graded local chart around $\ul{\phi}(m)$. 
\end{rem}

Now, consider a pair of graded smooth manifolds $\M = (M,\C^{\infty}_{\M})$ and $\cN = (N,\C^{\infty}_{\cN})$. For each $(m,n) \in M \times N$, one has a canonical isomorphism
\begin{equation}
T_{(m,n)}(\M \times \cN) \cong T_{m}\M \oplus T_{n}\cN,
\end{equation}
given by the map $(T_{(m,n)}\pi_{\M}, T_{(m,n)} \pi_{\cN})$, where $\pi_{\M}: \M \times \cN \rightarrow \M$ and $\pi_{\cN}: \M \times \cN \rightarrow \cN$ are the canonical projections. With this in mind, one can formulate the following theorem:
\begin{theorem}[\textbf{Implicit function theorem}]
Let $\phi: \M \times \cN \rightarrow \cS$ be a graded smooth map. Let $(m,n) \in M \times N$ and write $s := \ul{\phi}(m,n)$. Suppose that the restriction $(T_{(m,n)}\phi)|_{T_{n}\cN} : T_{n}\cN \rightarrow T_{s}\cS$ is an isomorphism of graded vector spaces.

Then there exists $U \in \Op_{m}(M)$ and a graded smooth map $\psi: \M|_{U} \rightarrow \cN$, such that 
\begin{enumerate}[(i)]
\item $\ul{\psi}(m) = n$;
\item $\phi \circ (i,\psi) = \kappa_{s}$, where $i: \M|_{U} \rightarrow \M$ is the canonical inclusion, and $\kappa_{s}: \M|_{U} \rightarrow \cS$ is the constant mapping valued at $s = \ul{\phi}(m,n)$, see Example \ref{ex_constantmapping}. 
\item For every $m' \in U$, the graded linear map $(T_{(m',\ul{\psi}(m'))}\phi)|_{T_{\ul{\psi}(m')}\cN}$ is an isomorphism, and 
\begin{equation}
(T_{m'}\psi)(v) = - [(T_{(m',\ul{\psi}(m'))}\phi)|_{T_{\ul{\psi}(m')}\cN}]^{-1}( (T_{(m',\ul{\psi}(m'))}\phi)(v,0)),
\end{equation}
for every $v \in T_{m'}\M$. 
\end{enumerate}
\end{theorem}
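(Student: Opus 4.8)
The plan is to reduce the statement to the graded inverse function theorem applied to a suitable auxiliary map, exactly mirroring the classical argument. First I would form the graded smooth map $\Phi := (\pi_{\M}, \phi): \M \times \cN \rightarrow \M \times \cS$, that is $\Phi = (\pi_{\M}, \phi)$ in the sense of the universal property of the product (Proposition \ref{tvrz_products}). Its underlying map is $\ul{\Phi}(p,q) = (p, \ul{\phi}(p,q))$. Using the canonical identification $T_{(m,n)}(\M \times \cN) \cong T_{m}\M \oplus T_{n}\cN$ and likewise for $\M \times \cS$, the differential $T_{(m,n)}\Phi$ has, in block form with respect to these decompositions, the shape $\begin{pmatrix} \1_{T_m\M} & 0 \\ (T_{(m,n)}\phi)|_{T_m\M} & (T_{(m,n)}\phi)|_{T_n\cN} \end{pmatrix}$. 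Since the bottom-right block is assumed to be a graded linear isomorphism $T_n\cN \to T_s\cS$, this block-triangular map is an isomorphism of graded vector spaces; hence $\Phi$ is a local graded diffeomorphism at $(m,n)$.

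Next I would invoke Theorem \ref{thm_ift} (the inverse function theorem) to obtain an open $W \in \Op_{(m,n)}(M \times N)$ such that $W' := \ul{\Phi}(W)$ is open in $M \times S$, $\ul{\Phi}|_W : W \to W'$ is a diffeomorphism, and $\Phi|_W : (\M \times \cN)|_W \to (\M \times \cS)|_{W'}$ is a graded diffeomorphism with inverse $\Psi := (\Phi|_W)^{-1}$. Because the first component of $\Phi$ is $\pi_{\M}$, the inverse $\Psi$ has the form $\ul{\Psi}(p, r) = (p, \ul{\sigma}(p,r))$ for a suitable smooth $\ul{\sigma}$; more precisely $\Psi = (\pi_{\M}|_{W'}, \sigma)$ for a unique graded smooth map $\sigma: (\M \times \cS)|_{W'} \to \cN$, again by the universal property of the product. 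Now I would shrink: pick $U \in \Op_m(M)$ small enough that $U \times \{s\}$ (more precisely, a neighborhood realizing it) lands in $W'$ under the graded inclusion-type map $(\1_{\M|_U}, \kappa_s)$, and define $\psi: \M|_U \to \cN$ as the composition $\psi := \sigma \circ (i', \kappa_s)$, where $i': \M|_U \to \M$ is the inclusion and $\kappa_s: \M|_U \to \cS$ is the constant map valued at $s$ (Example \ref{ex_constantmapping}). Property $(i)$, $\ul{\psi}(m) = n$, follows since $\Psi(\ul{\Phi}(m,n)) = (m,n)$ and $\ul{\Phi}(m,n) = (m,s)$. Property $(ii)$, $\phi \circ (i, \psi) = \kappa_s$, follows by unwinding: $(i,\psi) = \Psi \circ (i', \kappa_s)$ on $\M|_U$ (both sides land in $(\M \times \cN)|_W$ and agree because $\pi_{\M} \circ (i,\psi) = i'$ while $\pi_{\cN} \circ (i,\psi) = \psi = \sigma \circ (i',\kappa_s) = \pi_{\cN} \circ \Psi \circ (i',\kappa_s)$), so $\phi \circ (i,\psi) = \pi_{\cS} \circ \Phi \circ \Psi \circ (i',\kappa_s) = \pi_{\cS} \circ (i', \kappa_s) = \kappa_s$.

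For property $(iii)$, I would differentiate. For $m' \in U$ write $n' := \ul{\psi}(m')$ and $s' = \ul{\phi}(m', n') = s$ (constant by $(ii)$). Since $\Phi|_W$ is a graded diffeomorphism, $T_{(m',n')}\Phi$ is a graded linear isomorphism for every such $m'$; reading off the block-triangular form again shows $(T_{(m',n')}\phi)|_{T_{n'}\cN}$ is an isomorphism. To get the formula, apply $T_{m'}$ to the identity $\phi \circ (i,\psi) = \kappa_s$ and use functoriality of the differential (Proposition \ref{tvrz_differential}) together with the fact that $T_{m'}\kappa_s = 0$ (the constant map has zero differential, since $(\kappa_s)_{(m')}$ sends every germ to a scalar and hence every germ in the Jacobson radical to $0$, so composing with any derivation gives $0$). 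Writing $T_{m'}(i,\psi)(v) = (v, (T_{m'}\psi)(v)) \in T_{m'}\M \oplus T_{n'}\cN$, the chain rule gives $(T_{(m',n')}\phi)(v, 0) + (T_{(m',n')}\phi)|_{T_{n'}\cN}\big((T_{m'}\psi)(v)\big) = 0$, and solving for $(T_{m'}\psi)(v)$ yields exactly the displayed expression. The main obstacle I anticipate is bookkeeping: making sure the various "combined" graded smooth maps ($(i,\psi)$, $(i',\kappa_s)$, $\Psi = (\pi_{\M}, \sigma)$) are correctly identified via the universal property and that domains are shrunk consistently so that all compositions are defined — the graded-geometric content is entirely carried by Theorem \ref{thm_ift} and the block-triangular differential computation, both of which are routine.
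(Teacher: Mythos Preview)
Your proposal is correct and follows essentially the same route as the paper: form the auxiliary map $\Phi = (\pi_{\M},\phi)$ (the paper calls it $\alpha$), observe its block-triangular differential is invertible, apply the graded inverse function theorem, and then define $\psi$ by composing the local inverse with the map $(\1_{\M|_U},\kappa_s)$ and the projection $\pi_{\cN}$. Your verification of properties (i)--(iii), including the identity $(i,\psi) = \Psi \circ (i',\kappa_s)$ and differentiating $\phi\circ(i,\psi)=\kappa_s$ to extract the formula for $T_{m'}\psi$, matches the paper's argument line by line.
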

\begin{proof}
Let $\alpha := (\pi_{\M}, \phi)$, where $\pi_{\M}: \M \times \cN \rightarrow \M$ is the projection. This is a graded smooth map from $\M \times \cN$ to $\M \times \cS$, and $\ul{\alpha}(m',n') = (m', \ul{\phi}(m',n'))$ for all $(m',n') \in M \times N$. With respect to the canonical decompositions
\begin{equation}
T_{(m,n)}(\M \times \cN) = T_{m}\M \oplus T_{n}\cN, \; \; T_{(m,s)}(\M \times \cS) = T_{m}\M \oplus T_{s}\cS,
\end{equation}
the differential $T_{(m,n)} \alpha $ has the block form 
\begin{equation} \label{eq_TmnalphaimplFT}
T_{(m,n)}(\alpha) = \begin{pmatrix}
\1_{T_{m}\M} & 0 \\
(T_{(m,n)}\phi)|_{T_{m}\M} & (T_{(m,n)}\phi)|_{T_{n}\cN} 
\end{pmatrix}
\end{equation}

This shows that $\alpha$ is a local graded diffeomorphism at $(m,n)$. By the inverse function theorem, there are open neighborhoods $X \in \Op_{(m,n)}(M \times N)$ and $U \times V \in \Op_{(m,s)}(M \times S)$, such that $\alpha|_{X}: (\M \times \cN)|_{X} \rightarrow \M|_{U} \times \cS|_{V}$ is a graded diffeomorphism. 

As $s \in V$, we may consider a graded smooth map $\varphi := (\1_{\M|_{U}}, \kappa_{s}): \M|_{U} \rightarrow \M|_{U} \times \cS|_{V}$, where $\kappa_{s}: \M|_{U} \rightarrow \cS|_{V}$ is the constant mapping valued at $s$. Note that $\ul{\varphi}(u) = (u,s)$ for all $u \in U$. Finally, we can define
\begin{equation}
\psi := \pi_{\cN}|_{X} \circ (\alpha|_{X})^{-1} \circ \varphi: \M|_{U} \rightarrow \cN
\end{equation}
We claim that this is a required graded smooth map. First, note that 
\begin{equation}
\ul{\psi}(m) = \ul{\pi_{\cN}}|_{X}( (\ul{\alpha}|_{X})^{-1} (\ul{\varphi}(m))) = \ul{\pi_{\cN}}|_{X}( (\ul{\alpha}|_{X})^{-1} (m,s)) = \ul{\pi_{\cN}}|_{X}(m,n) = n.
\end{equation}
This shows that $\psi$ has the property $(i)$. Next, we claim that 
\begin{equation} \pi_{\M}|_{X} \circ (\alpha|_{X})^{-1} = i \circ \pi_{\M}|_{U \times V}, \end{equation}
where $i: \M|_{U} \rightarrow \M$ is the inclusion. By composing both sides with $\alpha|_{X}$, this is equivalent to the formula $\pi_{\M}|_{X} = i \circ \pi_{\M}|_{U \times V} \circ \alpha|_{X}$. This follows from the definition of $\alpha$. Moreover, we claim that
\begin{equation}
(i,\psi) = j \circ (\alpha|_{X})^{-1} \circ \varphi,
\end{equation}
where $j: (\M \times \cN)|_{X} \rightarrow \M \times \cN$ is the canonical inclusion. Indeed, one finds
\begin{align}
\pi_{\M} \circ j \circ (\alpha|_{X})^{-1} \circ \varphi = & \ \pi_{\M}|_{X} \circ (\alpha|_{X})^{-1} \circ \varphi = i \circ \pi_{\M}|_{U \times V} \circ \varphi = i, \\
\pi_{\cN} \circ j \circ (\alpha|_{X})^{-1} \circ \varphi = & \ \pi_{\cN}|_{X} \circ (\alpha|_{X})^{-1} \circ \varphi = \psi.
\end{align}
Using this equation, we can thus write
\begin{equation}
\phi \circ (i,\psi) = (\pi_{\cS} \circ \alpha) \circ (j \circ (\alpha|_{X})^{-1} \circ \varphi) = \pi_{\cS} \circ \alpha|_{X} \circ (\alpha|_{X})^{-1} \circ \varphi = \kappa_{s}.
\end{equation}
This shows that $\psi$ has the property $(ii)$. 

Finally, note that for each $m' \in U$, one has $\ul{\alpha}(m',\ul{\psi}(m')) = (m',s) \in U \times V$. In particular, one has $(m',\ul{\psi}(m')) \in X$. Consequently $T_{(m',\ul{\psi}(m'))}(\alpha)$ must be a graded vector space isomorphism for every $m' \in U$. It has the block form (\ref{eq_TmnalphaimplFT}) with $(m,n)$ replaced by $(m',\ul{\psi}(m'))$. This implies that $(T_{m',\ul{\psi}(m'))} \phi)|_{T_{\ul{\psi}(m')} \cN}$ is an isomorphism for all $m' \in U$. For any $v \in T_{m'}\M$, one obtains 
\begin{equation}
\begin{split}
0 = & \ (T_{m'} \kappa_{s})(v) = T_{m'}( \phi \circ (i, \psi)) = (T_{(m',\ul{\psi}(m'))} \phi)( v, (T_{m'}\psi)(v)) \\
= & \ (T_{(m',\ul{\psi}(m'))} \phi)(v,0) + (T_{(m',\ul{\psi}(m'))} \phi)|_{T_{\ul{\psi}(m')}\cN}( (T_{m'}\psi)(v)).
\end{split}
\end{equation}
This finishes the proof. 
\end{proof}
\section{Graded vector bundles} \label{sec_vb}
\subsection{Definition and basic properties}
Our intention in this section is to generalize the notion of a vector bundle over a manifold. It is convenient to take a rather algebraic approach. In ordinary differential geometry, every vector bundle $\pi: E \rightarrow M$ is (up to an isomorphism) uniquely determined by its sheaf of sections $\Gamma_{E}$. It is a locally freely and finitely generated  sheaf of graded $C^{\infty}_{M}$-modules.  This is the reason we have in detail explored a theory of sheaves of graded modules in Subsection \ref{subsec_sheavesofgradedmodules}. 

Let $\M = (M,\C^{\infty}_{\M})$ be a graded manifold. As already noted in Remark \ref{rem_gradedCinfmodulesaregood}, locally freely and finitely generated sheaves of $\C^{\infty}_{\M}$-modules have a well-defined graded rank at each $m \in M$. 

\begin{definice} \label{def_vb}
By a \textbf{graded vector bundle} $\E$ over a graded manifold $\M$, we mean a locally freely and finitely generated sheaf of $\C^{\infty}_{\M}$-modules $\Gamma_{\E} \in \Sh^{\C^{\infty}_{\M}}_{\lfin}(M,\gVect)$ of a constant graded rank. 

We can thus define the \textbf{graded rank of $\E$} as $\grk(\E) := \grk_{m}(\Gamma_{\E})$ for any $m \in M$. 
We will call $\Gamma_{\E}$ the \textbf{sheaf of sections of the graded vector bundle $\E$}.
\end{definice}

\begin{rem}
Recall that by Proposition \ref{tvrz_locfinfreegensheaves}, a graded rank of every locally freely and finitely generated sheaf of $\C^{\infty}_{\M}$-modules is automatically constant on each (path) connected component of $M$. The requirement of a constant graded rank is thus just a matter of taste (the same dilemma appears already in the ordinary case).

Let us also apologize for a rather strange nomenclature, as a vector bundle $\E$ is at the same time its own sheaf of sections $\Gamma_{\E}$. This results from our philosophy to mimic the terminology and the notation of ordinary differential geometry. However, we will show in the following that there actually exists a graded manifold $\E$ and a projection $\pi: \E \rightarrow \M$ associated uniquely (up to a diffeomorphism) to the sheaf $\Gamma_{\E}$. 
\end{rem}

\begin{example} \label{ex_trivialVB}
For any finite-dimensional $K \in \gVect$, let $\Gamma_{\E} := \C^{\infty}_{\M}[K]$. See Example \ref{ex_modelfreelygensheaf} for details. We say that $\E$ is a \textbf{trivial graded vector bundle} and write $\E = \M \times K$. Clearly $\grk(\M \times K) = \gdim(K)$. 
\end{example} 

\begin{example}
For any graded manifold $\M = (M,\C^{\infty}_{\M})$, the \textbf{tangent bundle $T\M$ of the graded manifold $\M$} is defined by $\Gamma_{T\M} := \X_{\M}$. Here $\grk(T\M) = (n_{-j})_{j \in \Z}$, where $(n_{j})_{j \in \Z} := \gdim(\M)$.  
\end{example}

For any section $\sigma \in \Gamma_{\E}(U)$ and any $f \in \C^{\infty}_{\M}(U)$, we write just $f \sigma$ for $f \tr \sigma$. 
One can take the advantage of additional tools available on the sheaf $\C^{\infty}_{\M}$. Notably, we have partitions of unity and bump functions. This gives us the following useful tool. 

\begin{tvrz}[\textbf{Extension lemma for sections}]
Let $\E$ be a vector bundle over $\M$. Let $U \in \Op(M)$ and $\sigma \in \Gamma_{\E}(U)$ be arbitrary. 

Then for any $V \in \Op(U)$, such that $\ol{V} \subseteq U$, there is a section $\ol{\sigma} \in \Gamma_{\E}(M)$ satisfying $\ol{\sigma}|_{V} = \sigma|_{V}$. 
\end{tvrz}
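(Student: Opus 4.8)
The plan is to mimic the proof of the Extension Lemma (Proposition \ref{tvrz_extensionlemma}) for functions, replacing the pointwise product $\lambda\cdot f$ by the module action $\lambda\,\sigma$. First I would invoke Proposition \ref{tvrz_bumpfunctions} to obtain a graded bump function $\lambda\in\C^{\infty}_{\M}(M)_{0}$ with $0\leq\ul{\lambda}\leq 1$, $\supp(\lambda)\subseteq U$, and $\lambda|_{V}=1$, using the hypothesis $V\subseteq\ol{V}\subseteq U$.

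The main step is to define a global section $\ol{\sigma}\in\Gamma_{\E}(M)$ by gluing. Since $\Gamma_{\E}$ is a sheaf of $\C^{\infty}_{\M}$-modules, I would work on the open cover $\{U,\supp(\lambda)^{c}\}$ of $M$ and set
\begin{equation}
\ol{\sigma}|_{U}:=\lambda|_{U}\,(\sigma), \qquad \ol{\sigma}|_{\supp(\lambda)^{c}}:=0,
\end{equation}
where $\lambda|_{U}\,(\sigma)\in\Gamma_{\E}(U)$ is the module action of $\lambda|_{U}\in\C^{\infty}_{\M}(U)$ on $\sigma$, which has degree $|\sigma|$ since $|\lambda|=0$. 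These two local sections agree on the overlap $U\cap\supp(\lambda)^{c}$: there $\lambda$ restricts to $0$ (because $\supp(\lambda)$ is exactly the closed set outside which all germs of $\lambda$ vanish, so $\lambda|_{\supp(\lambda)^{c}}=0$ and hence $(\lambda|_{U}\,\sigma)|_{U\cap\supp(\lambda)^{c}}=0|_{U\cap\supp(\lambda)^{c}}\cdot\sigma=0$, using that restriction of the module action is the module action of the restriction, i.e. $(f\tr\sigma)|_{W}=f|_{W}\tr\sigma|_{W}$ from Definition \ref{def_sheavesofAmodules}). By the gluing and monopresheaf properties of $\Gamma_{\E}$, there is a unique $\ol{\sigma}\in\Gamma_{\E}(M)$ with these restrictions.

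Finally I would check $\ol{\sigma}|_{V}=\sigma|_{V}$. Since $V\subseteq\ol{V}\subseteq U$, we have $\ol{\sigma}|_{V}=(\ol{\sigma}|_{U})|_{V}=(\lambda|_{U}\,\sigma)|_{V}=\lambda|_{V}\tr\sigma|_{V}=1\tr\sigma|_{V}=\sigma|_{V}$, using $\lambda|_{V}=1$ and the module axiom $1\tr v=v$ from (\ref{eq_moduleaxioms}). There is no real obstacle here; the only point requiring a little care is the consistency on the overlap, which is handled exactly as in the function case once one remembers that restriction of sections commutes with the module action. This completes the proof.

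\begin{proof}
By Proposition \ref{tvrz_bumpfunctions}, there is a graded bump function $\lambda\in\C^{\infty}_{\M}(M)_{0}$ with $\supp(\lambda)\subseteq U$ and $\lambda|_{V}=1$. Consider the open cover $\{U,\supp(\lambda)^{c}\}$ of $M$. Since $\supp(\lambda)$ is precisely the set of points where the germ of $\lambda$ is non-zero, $\lambda$ vanishes on $\supp(\lambda)^{c}$, hence $(\lambda|_{U}\tr\sigma)|_{U\cap\supp(\lambda)^{c}}=\lambda|_{U\cap\supp(\lambda)^{c}}\tr\sigma|_{U\cap\supp(\lambda)^{c}}=0$. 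Thus the local sections $\lambda|_{U}\tr\sigma\in\Gamma_{\E}(U)$ and $0\in\Gamma_{\E}(\supp(\lambda)^{c})$ agree on the overlap, and by the gluing and monopresheaf properties of the sheaf $\Gamma_{\E}$, there is a unique $\ol{\sigma}\in\Gamma_{\E}(M)$ with
\begin{equation}
\ol{\sigma}|_{U}=\lambda|_{U}\tr\sigma, \qquad \ol{\sigma}|_{\supp(\lambda)^{c}}=0.
\end{equation}
Note $|\ol{\sigma}|=|\sigma|$ since $|\lambda|=0$. As $V\subseteq\ol{V}\subseteq U$, we obtain
\begin{equation}
\ol{\sigma}|_{V}=(\lambda|_{U}\tr\sigma)|_{V}=\lambda|_{V}\tr\sigma|_{V}=1\tr\sigma|_{V}=\sigma|_{V},
\end{equation}
using $\lambda|_{V}=1$ and the module axiom $1\tr v=v$. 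This finishes the proof.
\end{proof}
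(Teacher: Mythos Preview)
Your proof is correct and follows essentially the same approach as the paper: pick a graded bump function $\lambda$ supported in $U$ with $\lambda|_{V}=1$, glue $\lambda|_{U}\tr\sigma$ on $U$ with $0$ on $\supp(\lambda)^{c}$, and verify the restriction to $V$. The paper's proof is slightly terser but uses exactly the same construction and notes, as you do, that the compatibility of restriction with the module action (Definition \ref{def_sheavesofAmodules}(ii)) is the crucial ingredient.
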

\begin{proof}
Pick a graded bump function $\lambda \in \C^{\infty}_{\M}(V)$ supported on $U$, such that $\lambda|_{V} = 1$. We can then define a section $\ol{\sigma}$ by its restrictions to the open cover $\{ U, \supp(\lambda)^{c} \}$, namely 
\begin{equation}
\ol{\sigma}|_{U} := \lambda|_{U} \sigma, \; \; \ol{\sigma}|_{\supp(\lambda)^{c}} := 0. 
\end{equation}
Both definitions agree on $U \cap \supp(\lambda)^{c}$, hence they define a unique section $\ol{\sigma} \in \Gamma_{\E}(M)$, and $\ol{\sigma}|_{V} = \lambda|_{V} \sigma|_{V} = \sigma|_{V}$. Note that it is crucial that $\Gamma_{\E}$ has the property (ii) in Definition \ref{def_sheavesofAmodules}. 
\end{proof}
\begin{definice}
For a given vector bundle $\E$, one can choose a single $K \in \gVect$, such that for each $m \in M$, one can find $U \in \Op_{m}(M)$ together with a $\C^{\infty}_{\M}|_{U}$-linear sheaf isomorphism $\varphi: \Gamma_{\E}|_{U} \rightarrow \C^{\infty}_{\M}|_{U}[K]$. Note that $\grk(\E) = \gdim(K)$. $K$ is called the \textbf{typical fiber of $\E$} and $(U,\varphi)$ is called a \textbf{local trivialization chart}.
\end{definice}

\begin{example}
Let $\E$ be a graded vector bundle over $\M$. It follows from Proposition \ref{tvrz_dualfinfree} and Proposition \ref{tvrz_finfreegenlocal} that the dual $\Gamma_{\E}^{\ast}$ is locally freely and finitely generated of a constant graded rank, hence it forms a sheaf of sections of a graded vector bundle. 

We denote it as $\E^{\ast}$ and call it the \textbf{dual graded vector bundle to $\E$}. If the typical fiber of $\E$ is $K$, we can choose its dual $K^{\ast}$ to be the typical fiber of $\E^{\ast}$. 
\end{example}
Having the extension lemma at hand, one can find an easier description of sections of $\E^{\ast}$.
\begin{tvrz} \label{tvrz_dualsectionseasy}
Let $\E$ be a graded vector bundle over $\M$. For each $U \in \Op(M)$, define
\begin{equation}
\Omega^{1}_{\E}(U) := \Lin^{\C^{\infty}_{\M}(U)}( \Gamma_{\E}(U), \C^{\infty}_{\M}(U)), 
\end{equation}
see Proposition \ref{tvrz_LinAmodule}. Then there is a way to make $\Omega^{1}_{\E}$ into a sheaf of graded $\C^{\infty}_{\M}$-modules. Moreover, there is a canonical $\C^{\infty}_{\M}$-linear sheaf isomorphism of $\Gamma_{\E^{\ast}} \equiv \Gamma^{\ast}_{\E}$ and $\Omega^{1}_{\E}$.
\end{tvrz}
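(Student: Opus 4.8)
The plan is to build the sheaf structure on $\Omega^{1}_{\E}$ essentially by hand, mimicking the construction of $\ul{\Sh}^{\A}(\F,\G)$ from Proposition \ref{tvrz_uSh}, and then to produce the comparison isomorphism by restriction. First I would equip each $\Omega^{1}_{\E}(U) = \Lin^{\C^{\infty}_{\M}(U)}(\Gamma_{\E}(U),\C^{\infty}_{\M}(U))$ with its graded $\C^{\infty}_{\M}(U)$-module structure coming from Proposition \ref{tvrz_LinAmodule} (acting on the target), and define the restriction maps $(\Omega^{1}_{\E})^{U}_{V}$ as follows: given $\alpha \in \Omega^{1}_{\E}(U)$ and a section $\sigma \in \Gamma_{\E}(V)$, use the extension lemma for sections to pick, locally around each $m \in V$, an extension $\ol{\sigma} \in \Gamma_{\E}(U)$ agreeing with $\sigma$ on a neighbourhood $W_{(m)}$, and set $(\alpha|_{V}(\sigma))|_{W_{(m)}} := \alpha(\ol{\sigma})|_{W_{(m)}}$. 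Here I must check well-definedness: if two sections of $\Gamma_{\E}(U)$ agree on an open $W$, their images under $\alpha$ agree on $W$ — this follows by the now-standard bump function argument (write the difference as $(1-\lambda)\cdot(\text{difference})$ with $\lambda$ a bump equal to $1$ near a point of $W$ and supported in $W$, then use $\C^{\infty}_{\M}(U)$-linearity of $\alpha$), exactly as in the proof of Proposition \ref{tvrz_vf}. Gluing these local pieces via the sheaf property of $\C^{\infty}_{\M}$ gives a well-defined $\alpha|_{V} \in \Omega^{1}_{\E}(V)$, and one checks routinely that it is $\C^{\infty}_{\M}(V)$-linear of the same degree, that $(f\alpha)|_{V} = f|_{V}\,\alpha|_{V}$, and that the assignment is functorial in $V$, so $\Omega^{1}_{\E}$ is a presheaf of graded $\C^{\infty}_{\M}$-modules.

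Next I would verify the sheaf axioms for $\Omega^{1}_{\E}$ directly. For the monopresheaf property: if $\alpha,\beta \in \Omega^{1}_{\E}(U)$ restrict equally over an open cover $\{U_{\alpha}\}$, then for any $\sigma \in \Gamma_{\E}(U)$ one gets $\alpha(\sigma)|_{U_{\alpha}} = \alpha|_{U_{\alpha}}(\sigma|_{U_{\alpha}}) = \beta|_{U_{\alpha}}(\sigma|_{U_{\alpha}}) = \beta(\sigma)|_{U_{\alpha}}$, so $\alpha(\sigma) = \beta(\sigma)$ since $\C^{\infty}_{\M}$ is a sheaf, hence $\alpha = \beta$. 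For gluing: given compatible $\alpha_{\alpha} \in \Omega^{1}_{\E}(U_{\alpha})$, define $\alpha(\sigma) \in \C^{\infty}_{\M}(U)$ for $\sigma \in \Gamma_{\E}(U)$ by $\alpha(\sigma)|_{U_{\alpha}} := \alpha_{\alpha}(\sigma|_{U_{\alpha}})$; compatibility on overlaps follows from $\alpha_{\alpha}|_{U_{\alpha\beta}} = \alpha_{\beta}|_{U_{\alpha\beta}}$ together with the restriction formula, and $\C^{\infty}_{\M}$-linearity plus the correct degree are checked restriction-wise. This is the same bookkeeping as in the last part of the proof of Proposition \ref{tvrz_vf} and the proof of Proposition \ref{tvrz_uSh}.

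For the isomorphism, recall $\Gamma_{\E^{\ast}} = \Gamma_{\E}^{\ast} = \ul{\Sh}^{\C^{\infty}_{\M}}(\Gamma_{\E},\C^{\infty}_{\M})$, so by definition $\Gamma_{\E^{\ast}}(U) = \Sh^{\C^{\infty}_{\M}|_{U}}(\Gamma_{\E}|_{U},\C^{\infty}_{\M}|_{U})$. I would define $\eta_{U}: \Gamma_{\E^{\ast}}(U) \rightarrow \Omega^{1}_{\E}(U)$ by $\eta_{U}(\phi) := \phi_{U}$, i.e. evaluate the sheaf morphism $\phi = \{\phi_{V}\}_{V \in \Op(U)}$ at the top open set $U$; this is manifestly $\C^{\infty}_{\M}(U)$-linear, degree-preserving, and natural in $U$ because sheaf-morphism restriction is just $\phi \mapsto \phi|_{V}$. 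Injectivity of $\eta_{U}$: if $\phi_{U} = 0$, then for any $V \in \Op(U)$ and $\sigma \in \Gamma_{\E}(V)$, use the extension lemma to find $\ol{\sigma} \in \Gamma_{\E}(U)$ with $\ol{\sigma}|_{W} = \sigma|_{W}$ near a point, whence $\phi_{V}(\sigma)|_{W} = \phi_{V}(\ol{\sigma}|_{V})|_{W} = \phi_{U}(\ol{\sigma})|_{W} = 0$; covering $V$ by such $W$ and using that $\C^{\infty}_{\M}$ is a sheaf gives $\phi_{V} = 0$, so $\phi = 0$. Surjectivity: given $\alpha \in \Omega^{1}_{\E}(U)$, define a sheaf morphism $\phi$ over $U$ by $\phi_{V} := \alpha|_{V}$ using the restriction maps of $\Omega^{1}_{\E}$ constructed above; the presheaf-morphism axioms for $\phi$ are precisely the naturality of $V \mapsto \alpha|_{V}$, already established, and $\eta_{U}(\phi) = \alpha|_{U} = \alpha$. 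Thus each $\eta_{U}$ is a graded $\C^{\infty}_{\M}(U)$-module isomorphism, and naturality upgrades $\eta = \{\eta_{U}\}$ to a $\C^{\infty}_{\M}$-linear sheaf isomorphism $\Gamma_{\E^{\ast}} \cong \Omega^{1}_{\E}$.

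The only genuinely delicate point is the well-definedness of the restriction maps on $\Omega^{1}_{\E}$ — namely that an extension-lemma choice of $\ol{\sigma}$ does not matter and that the local pieces glue — since everything downstream (the sheaf axioms and both halves of the isomorphism) reduces to it. That point is handled uniformly by the bump-function localization argument already used repeatedly in this section (Propositions \ref{tvrz_vf}, \ref{tvrz_valueofvf}), so the main obstacle is organizational rather than conceptual: one must be careful that the $\C^{\infty}_{\M}(U)$-linearity of a functional in $\Omega^{1}_{\E}(U)$, as opposed to mere $\R$-linearity, is what makes localization legal, exactly as the sign in \eqref{eq_Alinearity} was what made Proposition \ref{tvrz_LinAmodule} work.
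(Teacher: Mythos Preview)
Your proposal is correct and follows essentially the same approach as the paper: build the sheaf structure on $\Omega^{1}_{\E}$ by the bump-function/extension-lemma localization argument exactly as in Proposition \ref{tvrz_vf}, define the comparison map by evaluation at the top open set $\eta_{U}(\phi) = \phi_{U}$, and construct its inverse by $\alpha \mapsto (\alpha|_{V})_{V}$. The paper's proof is terser but makes the identical moves, and you have correctly identified that the $\C^{\infty}_{\M}(U)$-linearity (as opposed to mere $\R$-linearity) is the crucial ingredient enabling the localization.
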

\begin{proof}
$\Omega^{1}_{\E}$ is made into a sheaf of graded $\C^{\infty}_{\M}$-modules using completely the same procedure as the sheaf of vector fields $\X_{M}$, see the proof of Proposition \ref{tvrz_vf}. The $\C^{\infty}_{\M}(U)$-linearity of each $\alpha \in \Omega^{1}_{\E}(U)$ ensures that $\alpha(\sigma)|_{V} = 0$ whenever $\sigma|_{V} = 0$ for some $V \in \Op(U)$. The extension lemma is then used to construct $\alpha|_{V} \in \Omega^{1}_{\E}(V)$ for each $V \in \Op(U)$, satisfying 
\begin{equation} \label{eq_alpharestonrest}
\alpha|_{V'}(\sigma|_{V}) = \alpha|_{V}(\sigma)|_{V'}
\end{equation} 
for any $\sigma \in \Gamma_{\E}(V)$ and $V' \subseteq V \subseteq U$. Let us construct $\varphi: \Gamma^{\ast}_{\E} \rightarrow \Omega^{1}_{\E}$. For each $U \in \Op(M)$ and $\alpha \in \Gamma^{\ast}_{\E}(U)$ set $\varphi_{U}(\alpha) := \alpha_{U}$. It is easy to check that $\varphi$ is a $\C^{\infty}_{\M}$-linear sheaf morphism. Let us find its inverse. For any $\beta \in \Omega^{1}_{\E}(U)$ and $V \in \Op(U)$, let $(\varphi_{U}^{-1}(\beta))_{V} := \beta|_{V}$. The naturality in $V$ follows immediately from (\ref{eq_alpharestonrest}), whence $\varphi_{U}^{-1}(\beta) \in \Gamma^{\ast}_{\E}(U)$. $\varphi_{U}^{-1}$ is obviously the inverse to $\varphi_{U}$, hence it is automatically $\C^{\infty}_{\M}(U)$-linear and natural in $U$. Hence $\varphi$ is a $\C^{\infty}_{\M}$-linear sheaf isomorphism. 
\end{proof}

It is easy to define a category of graded vector bundles over a given graded manifold $\M$. However, one can consider a bigger category of graded vector bundles over \textit{all} graded manifolds. This requires some preliminary work to be done. 
\begin{rem} \label{rem_pushforwardmodules}
For any graded smooth map $\varphi: \M \rightarrow \cN$ and any sheaf of graded $\C^{\infty}_{\M}$-modules $\F$, the pushforward sheaf $\ul{\varphi}_{\ast} \F \in \Sh(N, \gVect)$ can be made into a sheaf of graded $\C^{\infty}_{\cN}$-modules. Indeed, for any $V \in \Op(N)$ and any $g \in \C^{\infty}_{\cN}(V)$, and $s \in (\ul{\varphi}_{\ast} \F)(V) \equiv \F(\ul{\varphi}^{-1}(V))$, set
\begin{equation}
g \tr s := \varphi^{\ast}_{V}(g) \tr s,
\end{equation} 
where on the right-hand side, there is the original graded $\C^{\infty}_{\M}( \ul{\varphi}^{-1}(V))$-module action. We will denote this sheaf of graded $\C^{\infty}_{\cN}$-modules simply as $\varphi_{\ast} \F$. 
\end{rem}
As in the case of ordinary vector bundles, to define vector bundle morphisms over arbitrary smooth maps in terms of sections, one has to ``go dual''. 
\begin{definice} 
Let $\E$ be a graded vector bundle over a graded manifold $\M$ and $\E'$ a graded vector bundle over a graded manifold $\cN$. By a \textbf{graded vector bundle morphism $F: \E \rightarrow \E'$ over $\varphi$}, we mean a pair $F = (\varphi, F^{\dagger})$, where $\varphi: \M \rightarrow \cN$ is a graded smooth map and
\begin{equation}
F^{\dagger} : \Gamma_{\E'}^{\ast} \rightarrow \varphi_{\ast} \Gamma_{\E}^{\ast}
\end{equation}
is a $\C^{\infty}_{\cN}$-linear sheaf morphism. 
\end{definice}
If $\E''$ is another vector bundle over a graded manifold $\cS = (S, \C^{\infty}_{\cS})$ and $G = (\psi,G^{\dagger})$ is a graded vector bundle morphism $G: \E' \rightarrow \E''$, set $G \circ F := (\psi \circ \varphi, (G \circ F)^{\dagger})$, where 
\begin{equation} \label{eq_gvbmorfcomp}
(G \circ F)^{\dagger}_{V} := F^{\dagger}_{\ul{\psi}^{-1}(V)} \circ G^{\dagger}_{V},
\end{equation}
for every $V \in \Op(S)$. It is easy to check that this defines a $\C^{\infty}_{\cN}$-linear sheaf morphism from $\Gamma^{\ast}_{\E''}$ to $(\psi \circ \varphi)_{\ast} \Gamma^{\ast}_{\E}$, hence a graded vector bundle morphism $G \circ F: \E \rightarrow \E''$ over $\psi \circ \varphi$. It is not difficult to see that graded vector bundles together with graded vector bundle morphisms form the \textbf{category of graded vector bundles} denoted as $\gVBun$. 

\begin{rem} \label{rem_VBmorphismsaremorphisms}
Let $\E$ and $\E'$ be two graded vector bundles over the same graded manifold $\M$. Then every graded vector bundle morphism $F: \E \rightarrow \E'$ over the identity $\1_{\M}$ corresponds to the unique $\C^{\infty}_{\M}$-linear sheaf morphism $F: \Gamma_{\E} \rightarrow \Gamma_{\E'}$ by means or Proposition \ref{tvrz_transpose}, that is $F^{\dagger} = F^{T}$.  
\end{rem}
\begin{example} \label{ex_trivasvbunmorf}
Let $\E$ be a graded vector bundle over a graded manifold $\M = (M,\C^{\infty}_{\M})$. For any $U \in \Op(M)$, one can consider a restricted vector bundle $\E|_{U}$ with $\Gamma_{\E|_{U}} := \Gamma_{\E}|_{U}$. It follows from the preceding remark that each local trivialization chart $(U,\varphi)$ for $\E$ can be viewed as a graded vector bundle isomorphism $\varphi: \E|_{U} \rightarrow \M|_{U} \times K$ over $\1_{\M|_{U}}$.
\end{example}
\subsection{Subbundles, quotients and pullbacks}
\begin{definice}
Let $\E$ be a graded vector bundle over $M$. Consider a sheaf $\Gamma_{\cL}$ of graded $\C^{\infty}_{\M}$-submodules of $\Gamma_{\E}$, that is
\begin{enumerate}[(i)]
\item for each $U \in \Op(M)$, $\Gamma_{\cL}(U) \subseteq \Gamma_{\E}(U)$ is a graded $\C^{\infty}_{\M}(U)$-submodule;
\item the restriction morphisms of $\Gamma_{\cL}$ are obtained from those of $\Gamma_{\E}$. 
\end{enumerate}
Finally, assume that for each $m \in M$, there exists a local trivialization chart $(U,\varphi)$ for $\E$ around $m$, such that $\varphi( \Gamma_{\cL}|_{U}) = \C^{\infty}_{\M}|_{U}[L]$ for some fixed graded subspace $L \subseteq K$. Then $\Gamma_{\cL}$ defines a \textbf{graded subbundle} $\cL \subseteq \E$. It is obvious that $\cL$ is itself a graded vector bundle and $\grk(\cL) = \gdim(L)$. 
\end{definice}

Graded subbundles can be used to obtain new graded vector bundles. 
\begin{tvrz} \label{tvrz_quotient}
Let $\E$ be a graded vector bundle and $\cL \subseteq \E$ its graded subbundle. Then the quotient $\Gamma_{\E} / \Gamma_{\cL}$ defines a locally freely and finitely generated sheaf of graded $\C^{\infty}_{\M}$-modules of a constant graded rank. It thus defines a \textbf{quotient graded vector bundle} $\E / \cL$ with $\Gamma_{\E/\cL} := \Gamma_{\E} / \Gamma_{\cL}$. 

If the typical fiber of $\E$ and $\cL$ is $K$ and $L$, respectively, the typical fiber of $\E / \cL$ is the quotient $K / L$. The canonical quotient map $Q: \Gamma_{\E} \rightarrow \Gamma_{\E} / \Gamma_{\cL}$ can be viewed as a graded vector bundle morphism $Q: \E \rightarrow \E/\cL$ over the identity $\1_{\M}$. 
\end{tvrz}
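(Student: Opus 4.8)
The plan is to reduce everything to the already-proved structure theory of locally freely and finitely generated sheaves of $\C^{\infty}_{\M}$-modules, plus the well-definedness of graded rank recorded in Remark \ref{rem_gradedCinfmodulesaregood}. Since all the claims are local, the first step is to fix $m \in M$ and choose a local trivialization chart $(U,\varphi)$ for $\E$ around $m$ adapted to $\cL$, so that $\varphi$ identifies $\Gamma_{\E}|_{U}$ with $\C^{\infty}_{\M}|_{U}[K]$ and $\Gamma_{\cL}|_{U}$ with $\C^{\infty}_{\M}|_{U}[L]$ for a fixed graded subspace $L \subseteq K$. Pick a complementary graded subspace $C \subseteq K$ with $K = L \oplus C$ (componentwise, which is always possible). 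Then I would check the ``pointwise'' statement that $(\C^{\infty}_{\M}(U) \otimes_{\R} K) / (\C^{\infty}_{\M}(U) \otimes_{\R} L) \cong \C^{\infty}_{\M}(U) \otimes_{\R} (K/L)$ as graded $\C^{\infty}_{\M}(U)$-modules — this is the same computation already used in the proof of Proposition \ref{tvrz_thereisrank}, namely that tensoring with $\C^{\infty}_{\M}(U)$ is right exact on the short exact sequence $0 \to L \to K \to K/L \to 0$ of graded vector spaces, which splits. Carrying this out naturally in $U$ gives a $\C^{\infty}_{\M}|_{U}$-linear sheaf isomorphism $\Gamma_{\E}|_{U} / \Gamma_{\cL}|_{U} \cong \C^{\infty}_{\M}|_{U}[K/L]$; since $\Gamma_{\E/\cL}|_{U} = (\Gamma_{\E}/\Gamma_{\cL})|_{U}$ (quotient presheaves restrict to quotient presheaves, and by Proposition \ref{eq_quotientpresheafissheaf} the quotient of $\Gamma_\E$ by the sheaf of ideals-like submodule $\Gamma_\cL$ is already a sheaf, so no sheafification is needed), this exhibits $\Gamma_{\E/\cL}$ as locally freely and finitely generated with local typical fiber $K/L$.

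Next I would address constancy of the graded rank. By the previous paragraph, $\grk_{m}(\Gamma_{\E/\cL}) = \gdim(K/L) = \gdim(K) - \gdim(L) = \grk(\E) - \grk(\cL)$ for every $m$, using that the typical fibers $K$ and $L$ of $\E$ and $\cL$ are fixed graded vector spaces (independent of $m$ by the constant-rank hypothesis on $\E$ and $\cL$, together with Remark \ref{rem_gradedCinfmodulesaregood} guaranteeing the graded rank is a meaningful invariant at each point for $\C^{\infty}_\M$-modules). Hence $\grk_m(\Gamma_{\E/\cL})$ is literally the same sequence at every point, so $\Gamma_{\E/\cL}$ has constant graded rank and therefore defines a graded vector bundle $\E/\cL$ in the sense of Definition \ref{def_vb}, with typical fiber $K/L$ and $\grk(\E/\cL) = \grk(\E) - \grk(\cL)$.

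Finally, for the statement about $Q$: the canonical quotient maps $Q_U \colon \Gamma_\E(U) \to (\Gamma_\E/\Gamma_\cL)(U)$ assemble into a $\C^{\infty}_{\M}$-linear sheaf morphism $Q \colon \Gamma_\E \to \Gamma_{\E/\cL}$ (naturality in $U$ is immediate from the construction of restriction morphisms on the quotient presheaf). By Remark \ref{rem_VBmorphismsaremorphisms}, a $\C^{\infty}_\M$-linear sheaf morphism between sheaves of sections of graded vector bundles over the same base is precisely the data of a graded vector bundle morphism over $\1_\M$ (its associated $F^\dagger$ being the transpose $Q^T$, which makes sense since both $\Gamma_\E$ and $\Gamma_{\E/\cL}$ are now known to be locally freely and finitely generated, so Proposition \ref{tvrz_transpose} applies). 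This gives $Q \colon \E \to \E/\cL$ as a graded vector bundle morphism over the identity.

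The only mild obstacle I anticipate is bookkeeping around the identification $\Gamma_{\E/\cL}|_U \cong (\Gamma_\E|_U)/(\Gamma_\cL|_U)$ and checking that the local isomorphism $\varphi$ really does send $\Gamma_\cL|_U$ to $\C^{\infty}_\M|_U[L]$ for a \emph{fixed} subspace $L$ — but that is exactly what being a graded subbundle provides, and the splitting $K = L \oplus C$ of graded vector spaces is automatic componentwise. Everything else is a direct application of Proposition \ref{tvrz_thereisrank}'s computation, Proposition \ref{eq_quotientpresheafissheaf}, Remark \ref{rem_gradedCinfmodulesaregood}, and Remark \ref{rem_VBmorphismsaremorphisms}; no genuinely new estimate or construction is required.
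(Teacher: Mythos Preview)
Your proposal is correct and follows essentially the same approach as the paper: both arguments pick an adapted local trivialization $(U,\varphi)$ sending $\Gamma_{\cL}|_U$ to $\C^{\infty}_{\M}|_U[L]$, identify the quotient locally with $\C^{\infty}_{\M}|_U[K/L]$ (the paper phrases this via the commutative square with $\1 \otimes q$, you via a splitting $K = L \oplus C$), invoke the argument of Proposition \ref{eq_quotientpresheafissheaf} to see the quotient presheaf is already a sheaf, and then use Remark \ref{rem_VBmorphismsaremorphisms} for $Q$. The only cosmetic point is that Proposition \ref{eq_quotientpresheafissheaf} is stated for sheaves of ideals in $\C^{\infty}_{\M}$ rather than submodules of $\Gamma_{\E}$, so strictly speaking (as the paper also does) you are reusing its \emph{argument} rather than the proposition itself.
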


\begin{proof}
Similarly to Remark \ref{rem_quotientsheaf}, we obtain a quotient presheaf $\Gamma_{\E} / \Gamma_{\cL}$ of graded $\C^{\infty}_{\M}$-modules. Moreover, using the same arguments as in the proof of Proposition \ref{eq_quotientpresheafissheaf}, one can show that $\Gamma_{\E} / \Gamma_{\cL}$ is actually a sheaf. 

Now, let $m \in M$ be arbitrary and let $(U,\varphi)$ be the local trivialization chart for $\E$ around $m$ obtained from the definition of a subbundle $\cL$, that is $\varphi(\Gamma_{\cL}|_{U}) = \C^{\infty}_{\M}|_{U}[L]$, where $L$ is the typical fiber of $\cL$, a graded subspace of the typical fiber $K$ of $\E$. Let $q: K \rightarrow K/L$ and $Q: \Gamma_{\E} \rightarrow \Gamma_{\E} / \Gamma_{\cL}$ denote the canonical quotient maps. It follows that there is a unique $\C^{\infty}_{\M}$-linear sheaf isomorphism $\hat{\varphi}: (\Gamma_{\E} / \Gamma_{\cL})|_{U} \rightarrow \C^{\infty}_{\M}|_{U}[K/L]$ fitting into the commutative diagram 
\begin{equation}
\begin{tikzcd}
\Gamma_{\E}|_{U} \arrow{d}{Q|_{U}} \arrow{r}{\varphi} & \C^{\infty}_{\M}|_{U}[K] \arrow{d}{\1 \otimes q}\\
(\Gamma_{\E} / \Gamma_{\cL})|_{U} \arrow[dashed]{r}{\hat{\varphi}} & \C^{\infty}_{\M}|_{U}[K/L]
\end{tikzcd},
\end{equation}
where $(\1 \otimes q)_{V} := \1_{\C^{\infty}_{\M}(V)} \otimes q$ for all $V \in \Op(U)$. This proves that $\Gamma_{\E} / \Gamma_{\cL}$ is locally freely and finitely generated. Its graded rank is constant and equal to $\grk(\E) - \grk(\cL)$, hence it defines a graded vector bundle $\E/ \cL$ with $\Gamma_{\E/\cL} := \Gamma_{\E} / \Gamma_{\cL}$ with a typical fiber $K/L$. Finally, as $Q: \Gamma_{\E} \rightarrow \Gamma_{\E}/\Gamma_{\cL}$ is a $\C^{\infty}_{\M}$-linear sheaf morphism, it corresponds to a unique sheaf morphism $Q: \E \rightarrow \E/\cL$ over $\1_{\M}$ due to Remark \ref{rem_VBmorphismsaremorphisms}. 
\end{proof}
Now, one of the most vital constructions in ordinary theory of vector bundles is a pullback vector bundle. It turns out that everything works without any hiccups in the graded setting. 

\begin{tvrz} \label{tvrz_pullbackVB}
Let $\E$ be a graded vector bundle over $\M$. Let $\cN$ be any graded manifold together with a graded smooth map $\phi: \cN \rightarrow \M$. 

Then there exists a graded vector bundle $\phi^{\ast} \E$ over $\cN$ having the following properties:
\begin{enumerate}[(i)]
\item It has the same typical fiber (hence also the graded rank) as $\E$.
\item There exists a canonical graded vector bundle morphism $\hat{\phi}: \phi^{\ast}\E \rightarrow \E$ over $\phi$.
\item Let $\E'$ be any graded vector bundle over $\cN$ and let $F: \E' \rightarrow \E$ be any graded vector bundle morphism over $\phi$. Then there exists a unique graded vector bundle morphism $\hat{F}: \E' \rightarrow \phi^{\ast}\E$ over $\1_{\cN}$, such that $F = \hat{\phi} \circ \hat{F}$. 
\item For any $U \in \Op(M)$ and $\sigma \in \Gamma_{\E}(U)$, there exists a \textbf{pullback section} $\sigma^{!} \in \Gamma_{\phi^{\ast}\E}( \ul{\varphi}^{-1}(U))$. If $(\Phi_{\lambda})_{\lambda=1}^{m}$ is a local frame for $\E$ over $U$, then $(\Phi^{!}_{\lambda})_{\lambda=1}^{m}$ is a local frame for $\phi^{\ast}\E$ over $\ul{\phi}^{-1}(U)$. 
\item If $\cS$ is another graded manifold together with a graded smooth map $\psi: \cS \rightarrow \cN$, there is a canonical graded vector bundle isomorphism
\begin{equation} \label{eq_pullbackcomp}
(\phi \circ \psi)^{\ast}\E \cong \psi^{\ast}(\phi^{\ast} \E).
\end{equation}
\end{enumerate}
$\phi^{\ast}\E$ is called the \textbf{pullback graded vector bundle}.
\end{tvrz}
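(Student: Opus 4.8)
The plan is to construct $\phi^{\ast}\E$ by a gluing argument using a local trivialization of $\E$ together with the collation/gluing machinery for sheaves of modules implicit in the earlier material, and then verify the five listed properties. First I would pick a local trivialization $\{(U_{\alpha},\varphi_{\alpha})\}_{\alpha\in I}$ of $\E$ with common typical fiber $K$, so that each $\varphi_{\alpha}\colon\Gamma_{\E}|_{U_{\alpha}}\to\C^{\infty}_{\M}|_{U_{\alpha}}[K]$ is a $\C^{\infty}_{\M}|_{U_{\alpha}}$-linear sheaf isomorphism. On the overlaps $U_{\alpha\beta}$ these give transition isomorphisms $\tau_{\alpha\beta}:=\varphi_{\alpha}|_{U_{\alpha\beta}}\circ(\varphi_{\beta}|_{U_{\alpha\beta}})^{-1}$ of $\C^{\infty}_{\M}|_{U_{\alpha\beta}}[K]$ onto itself, satisfying the cocycle condition $\tau_{\alpha\gamma}=\tau_{\alpha\beta}\circ\tau_{\beta\gamma}$ on $U_{\alpha\beta\gamma}$. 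Setting $V_{\alpha}:=\ul{\phi}^{-1}(U_{\alpha})$, one obtains an open cover of $N$, and pulling back along $\phi$ (using $\phi^{\ast}$ on the level of the structure sheaves, as made precise by Remark~\ref{rem_pushforwardmodules} and the pullback-of-pullbacks identity we are about to establish) produces $\C^{\infty}_{\cN}|_{V_{\alpha}}$-linear sheaf automorphisms $\tilde\tau_{\alpha\beta}$ of $\C^{\infty}_{\cN}|_{V_{\alpha\beta}}[K]$ — concretely, if $\tau_{\alpha\beta}$ is the automorphism determined by the matrix of functions $g_{\alpha\beta}\in\C^{\infty}_{\M}(U_{\alpha\beta})$, then $\tilde\tau_{\alpha\beta}$ is determined by $\phi^{\ast}_{U_{\alpha\beta}}(g_{\alpha\beta})$. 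These still satisfy the cocycle condition because $\phi^{\ast}$ is a sheaf morphism of graded algebras, so by the collation-of-sheaves results (Proposition~\ref{tvrz_shcollation1} applied within $\Sh^{\C^{\infty}_{\cN}}(X,\gVect)$, which works verbatim since the data are sheaf isomorphisms of modules) we obtain a sheaf $\Gamma_{\phi^{\ast}\E}$ of graded $\C^{\infty}_{\cN}$-modules together with isomorphisms $\Gamma_{\phi^{\ast}\E}|_{V_{\alpha}}\cong\C^{\infty}_{\cN}|_{V_{\alpha}}[K]$. This is locally freely and finitely generated of constant graded rank $\gdim(K)$, hence defines a graded vector bundle $\phi^{\ast}\E$, proving $(i)$.

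For $(iv)$, given $\sigma\in\Gamma_{\E}(U)$, I would restrict $\sigma$ to each $U\cap U_{\alpha}$, apply $\varphi_{\alpha}$ to get a section of $\C^{\infty}_{\M}|_{U\cap U_{\alpha}}[K]$, i.e.\ a tuple of functions on $U\cap U_{\alpha}$ with values in $K$; pull these functions back along $\phi^{\ast}$ to sections over $V_{\alpha}\cap\ul{\phi}^{-1}(U)$; and glue the results using the gluing isomorphisms above — the compatibility on overlaps is exactly the statement that $\phi^{\ast}$ intertwines $g_{\alpha\beta}$ with $\phi^{\ast}(g_{\alpha\beta})$, so the monopresheaf and gluing properties of $\Gamma_{\phi^{\ast}\E}$ yield a unique $\sigma^{!}\in\Gamma_{\phi^{\ast}\E}(\ul{\phi}^{-1}(U))$. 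That a local frame pulls back to a local frame is immediate from the fact that in a trivializing chart the pullback operation is, by construction, $\varphi_{\alpha}^{-1}\circ(\phi^{\ast}\otimes\1_K)\circ\varphi_{\alpha}$, which carries the frame $1\otimes\vartheta_{\lambda}$ to $1\otimes\vartheta_{\lambda}$. The canonical morphism $\hat\phi\colon\phi^{\ast}\E\to\E$ over $\phi$ in $(ii)$ is defined, on the dual side, by $\hat\phi^{\dagger}_{U}(\alpha):=$ the section of $\varphi_{\ast}\Gamma^{\ast}_{\phi^{\ast}\E}$ whose components are $\phi^{\ast}_{U}$ applied to the components of $\alpha$ in any trivializing chart; chart-independence follows again from the cocycle compatibility, and $\C^{\infty}_{\M}$-linearity is automatic. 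Equivalently and perhaps more cleanly, $\hat\phi$ is characterized by $\hat\phi^{\dagger}(\alpha)^{!\,\mathrm{dual}}$ sending $\sigma^{!}\mapsto\phi^{\ast}(\alpha(\sigma))$ for all local $\alpha\in\Gamma^{\ast}_{\E}(U)$, $\sigma\in\Gamma_{\E}(U)$; I would adopt whichever formulation makes $(iii)$ smoothest.

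For the universal property $(iii)$: given $F=(\phi,F^{\dagger})\colon\E'\to\E$, I want a unique $\hat F=(\1_{\cN},\hat F^{\dagger})$ with $F=\hat\phi\circ\hat F$. By Remark~\ref{rem_VBmorphismsaremorphisms} such an $\hat F$ corresponds to a $\C^{\infty}_{\cN}$-linear sheaf morphism $\Gamma_{\E'}\to\Gamma_{\phi^{\ast}\E}$, or dually $\Gamma^{\ast}_{\phi^{\ast}\E}\to\Gamma^{\ast}_{\E'}$; I would build this locally over each $V_{\alpha}$ using the trivializations of $\phi^{\ast}\E$ (where $\Gamma^{\ast}_{\phi^{\ast}\E}|_{V_{\alpha}}\cong\C^{\infty}_{\cN}|_{V_{\alpha}}[K^{\ast}]$) and the components of $F^{\dagger}$, check that the composition $\hat\phi\circ\hat F$ reproduces $F$ on each $V_{\alpha}$, check the local pieces agree on overlaps via the cocycle, and invoke Proposition~\ref{tvrz_gluingmorphisms} to glue; uniqueness follows from the monopresheaf property plus the fact that $\hat\phi$ is, fiberwise, the identity on $K$ so it is ``injective enough'' to be cancelled. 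Finally $(v)$, the identity $(\phi\circ\psi)^{\ast}\E\cong\psi^{\ast}(\phi^{\ast}\E)$, follows because both sides are obtained by collating $\C^{\infty}_{\cS}|_{W_{\alpha}}[K]$ (with $W_{\alpha}:=(\ul{\phi}\circ\ul{\psi})^{-1}(U_{\alpha})$) along transition automorphisms determined respectively by $(\phi\circ\psi)^{\ast}(g_{\alpha\beta})$ and $\psi^{\ast}(\phi^{\ast}(g_{\alpha\beta}))$, which coincide since $(\phi\circ\psi)^{\ast}=\psi^{\ast}\circ\phi^{\ast}$ as sheaf morphisms; the second collation-of-sheaves result (Proposition~\ref{tvrz_shcollation2}) then gives the canonical isomorphism, and one checks it is a graded vector bundle isomorphism over $\1_{\cS}$.

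The main obstacle I anticipate is not any single deep point but the bookkeeping: making the pullback construction manifestly independent of the chosen trivialization of $\E$ (so that the various canonical morphisms $\hat\phi$, $\hat F$, and the isomorphism in $(v)$ are genuinely canonical, not chart-dependent), and being careful that the ``pullback of functions'' operation $\phi^{\ast}$ interacts correctly with the degree-shifted module structures on $\C^{\infty}_{\cN}[K]$ described in Remark~\ref{rem_degreeshiftedsheafofmodules}. A clean way to sidestep much of the chart-dependence worry is to prove $(iii)$ first as a universal property statement and then define $\phi^{\ast}\E$ up to unique isomorphism by that property, using the explicit gluing construction only to establish existence; properties $(i)$, $(ii)$, $(iv)$, $(v)$ then follow either from the universal property directly or from a single fixed choice of trivialization. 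I would likely relegate the most tedious overlap computations to the appendix in the style the paper already uses.
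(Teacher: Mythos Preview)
Your approach is correct but genuinely different from the paper's. The paper constructs $\Gamma_{\phi^{\ast}\E}$ as the abstract pullback sheaf from algebraic geometry: first form the inverse image presheaf $\ul{\phi}^{-1}_{P}\Gamma_{\E}$, tensor it over $\ul{\phi}^{-1}_{P}\C^{\infty}_{\M}$ with $\C^{\infty}_{\cN}$, and sheafify. This gives a functor $\phi^{\ast}\colon\Sh^{\C^{\infty}_{\M}}(M,\gVect)\to\Sh^{\C^{\infty}_{\cN}}(N,\gVect)$ that is left adjoint to $\phi_{\ast}$; local trivializations of $\phi^{\ast}\E$ are then obtained by applying this functor to local trivializations of $\E$, and $\hat{\phi}$ arises from a canonical isomorphism $\nu\colon\phi^{\ast}\Gamma^{\ast}_{\E}\to\Gamma^{\ast}_{\phi^{\ast}\E}$ via the adjunction. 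Property $(iii)$ then falls out of the adjunction directly, and $(v)$ follows because $(\phi\circ\psi)^{\ast}$ and $\psi^{\ast}\circ\phi^{\ast}$ are both left adjoint to $(\phi\circ\psi)_{\ast}=\phi_{\ast}\circ\psi_{\ast}$.

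Your gluing-by-transition-functions route is acknowledged by the paper itself (in the remark following Proposition~\ref{tvrz_transffunctionspullback}) as a legitimate alternative. Its advantage is that it avoids the sheafification machinery and is closer to the classical differential-geometric picture; the cost is exactly the bookkeeping you flag, namely verifying independence of the chosen trivialization and checking that the morphisms in $(ii)$, $(iii)$, $(v)$ are genuinely canonical. The paper's functorial approach buys canonicity for free and makes $(iii)$ and $(v)$ essentially one-liners, at the price of setting up the inverse image sheaf and the adjunction. Your suggestion to establish $(iii)$ first as a universal property and use the gluing only for existence is sound and would mitigate most of the chart-dependence worries.
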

\begin{proof}
First, note that $(ii)$ and $(iii)$ determine $\phi^{\ast}\E$ up to an isomorphism in $\gVBun$ over $\1_{\cN}$. In other words, let $\G$ be another graded vector bundle over $\cN$ together with a graded vector bundle morphism $\hat{\phi}': \G \rightarrow \E$ over $\phi$ having the universal property $(iii)$. Then there is a unique graded vector bundle isomorphism $F: \G \rightarrow \phi^{\ast} \E$ over $\1_{\cN}$ such that $\hat{\phi} \circ F = \hat{\phi}'$. This is an easy exercise. 

One only has to construct the suitable sheaf of sections of $\phi^{\ast} \E$. One defines $\Gamma_{\phi^{\ast}\E} := \phi^{\ast} \Gamma_{\E}$, where $\phi^{\ast} \Gamma_{\E}$ is the so called \textit{pullback sheaf}. This is a standard construction in algebraic geometry, see e.g. Chapter II, Section 5 of \cite{hartshorne2013algebraic}. However, it is not so straightforward and we have decided to move the detailed discussion into the appendix, see Proposition \ref{tvrz_ap_pullbackVB}.
\end{proof}

\begin{example} \label{ex_pullbacktrivial}
For any finite-dimensional graded vector space $K$ and any graded smooth map $\phi: \cN \rightarrow \M$, one has $\phi^{\ast}(\M \times K) = \cN \times K$, see Example \ref{ex_trivialVB}. One can see this directly using the actual construction of the pullback sheaf in the proof of Proposition \ref{tvrz_ap_pullbackVB}. 

Note that this example also shows that $\C^{\infty}_{\cN} = \phi^{\ast} \C^{\infty}_{\M}$, choose $K = \R$. 
\end{example}
The rather abstract notion of a pullback graded vector bundle can be utilized to easily construct some interesting new graded vector bundles. 
\begin{example} \label{ex_fiberofgVB}
Let $\E$ be any graded vector bundle. For a given $m \in M$, consider the constant mapping $\kappa_{m}: \{ \ast \} \rightarrow \M$ valued at $m$. Here $\{ \ast \}$ is a singleton manifold, viewed as a graded manifold. We define the \textbf{fiber of $\E$ over $m$} as the pullback 
\begin{equation}
\E_{m} := \kappa_{m}^{\ast} \E. 
\end{equation}
This is a graded vector bundle over $\{ \ast \}$, hence it can be canonically identified with the graded vector space. Any local trivialization chart $(U,\varphi)$ for $\E$ around $m$ then induces a graded vector space isomorphism $\E_{m} \cong K$. This justifies the name ``typical fiber'' for $K$. 

The graded vector space $\E_{m}$ can be written as a tensor product of two graded $\C^{\infty}_{\M,m}$-modules:
\begin{equation}
\E_{m} = \R \otimes_{\C^{\infty}_{\M,m}} \Gamma_{\E,m},
\end{equation}
where the action of $\C^{\infty}_{\M,m}$ on $\R$ is defined by (\ref{eq_StalkactiononR}), whereas the $\Gamma_{\E,m}$ is a graded $\C^{\infty}_{\M,m}$-module by Proposition \ref{tvrz_shofAmodulesstalks}. For any $U \in \Op_{m}(M)$ and any $\sigma \in \Gamma_{\E}(U)$, the pullback section $\sigma^{!}$ corresponds to the element $\sigma_{m} := 1 \otimes [\sigma]_{m} \in \E_{m}$, called the \textbf{value of the section $\sigma$ at $m$}. Note that for each element $e \in \E_{m}$, and any $U \in \Op_{m}(M)$, there exists some section $\sigma \in \Gamma_{U}(E)$, such that $e = \sigma_{m}$.

Let us employ the universal property. We have the induced graded vector bundle morphism $\hat{\kappa}_{m}: \E_{m} \rightarrow \E$ over $\kappa_{m}$. Now, suppose that $\E'$ is a graded vector bundle over a graded manifold $\cN$ and we are given a graded vector bundle morphism $F: \E \rightarrow \E'$ over a graded smooth map $\phi: \M \rightarrow \cN$. Consider the composition $F \circ \hat{\kappa}_{m}: \E_{m} \rightarrow \E'$. This is a graded vector bundle morphism over a constant mapping $\kappa_{\ul{\phi}(m)}: \{ \ast \} \rightarrow N$ valued at $\ul{\phi}(m)$. 

By property $(iii)$ in Proposition \ref{tvrz_pullbackVB}, there is a unique graded vector bundle morphism $F_{m}: \E_{m} \rightarrow \E'_{\ul{\phi}(m)}$, such that $\hat{\kappa}_{\ul{\phi}(m)} \circ F_{m} = F \circ \hat{\kappa}_{m}$. We have just shown that for every $m \in M$, any graded vector bundle morphism induces the graded linear map of the fibers $\E_{m}$ and $\E'_{\ul{\phi}(m)}$. 
\end{example}
\begin{example} \label{ex_fiberoftangentistangent}
Let $\M$ be any graded manifold. For each $m \in M$, the fiber $(T\M)_{m}$ of the tangent bundle $T\M$ can be canonically identified with the tangent space $T_{m}\M$.  

Indeed, for each $\lambda \in \R$ and $[X]_{m} \in \X_{M,m}$ represented by $X \in \X_{\M}(U)$ for some $U \in \Op_{m}(M)$, define a graded linear map $\psi: (T\M)_{m} \rightarrow T_{m}\M$ by
\begin{equation}
\psi(\lambda \otimes [X]_{m}) := \lambda X_{m},
\end{equation}
where $X_{m}$ is the value of a vector field $X$ at $m$ defined in Proposition \ref{tvrz_valueofvf}. It is easy to check that this is a well-defined graded linear map from $\R \otimes_{\C^{\infty}_{\M,m}} \X_{\M,m}$ to $T_{m}\M$. Let us prove that $\psi$ is surjective. Let $v \in T_{m} \M$ be a given tangent vector. Let $U \in \Op_{m}(M)$ be arbitrary. By Proposition \ref{tvrz_valueofvf}, there is a vector field $X \in \X_{\M}(U)$ such that $X_{m} = v$. Thus $\psi(1 \otimes [X]_{m}) = v$. 
As $(T\M)_{m}$ and $T_{m}\M$ have the same graded dimension, this proves that $\psi$ is a graded isomorphism. 

Finally, note that with respect to this isomorphism, the value of a section $X \in \X_{\M}(U)$ at $m \in U$ coincides with the value of the vector field $X_{m}$ defined in Proposition \ref{tvrz_valueofvf}. 
\end{example}
Next, every graded vector bundle $\E$ over a graded manifold $\M = (M,\C^{\infty}_{\M})$ induces a canonical graded vector bundle over its underlying manifold $M$. 
\begin{example} \label{ex_pullbackbyiM}
Let $i_{M}: M \rightarrow \M$ be a graded smooth map constructed in Proposition \ref{tvrz_bodymap}. We can thus form a pullback $\ul{\E} := i^{\ast}_{M} \E$, a graded vector bundle over $M$ of the same graded rank as $\E$. 

For each $k \in \Z$, we may view the $k$-th component sheaf $(\Gamma_{\ul{\E}})_{k}$ as a sheaf of graded $\C^{\infty}_{M}$-submodules of $\Gamma_{\ul{\E}}$. For any local trivialization chart $(U,\varphi)$ for $\ul{\E}$, one has $\varphi( (\Gamma_{\ul{\E}})_{k}|_{U}) = \C^{\infty}_{M}|_{U}[K_{k}]$. This proves that $(\Gamma_{\ul{\E}})_{k}$ defines a subbundle $\ul{\E}_{k} \subseteq \ul{\E}$ with a typical fiber $K_{k}$. One can view each of these subbundles as an ordinary vector bundle. In particular, the total space of the vector bundle $\ul{\E}_{0}$ will serve as a underlying manifold for the total space of $\E$, which will be constructed later. 
\end{example}

\begin{example} 
Let $\M = (M,\C^{\infty}_{\M})$ be any graded manifold. Then there is a vector bundle isomorphism $\ul{T\M}_{0} \cong TM$ over $\1_{M}$.

Let $U \in \Op(M)$ and let $X \in \X_{\M}(U)_{0}$ be any degree zero vector field. We claim that there is an associated \textbf{underlying vector field} $\ul{X} \in \X_{M}(U)$. We define it by the formula
\begin{equation}
\ul{X}( \ul{f}) := \ul{X(f)},
\end{equation}
for any $f \in \C^{\infty}_{\M}(U)_{0}$. As the body map is surjective by Proposition \ref{tvrz_bodymapsurj}, this determines $\ul{X}$. It is well defined, only if $\ul{f} = 0$ implies $\ul{X(f)} = 0$. It suffices to prove this locally, which is straightforward. It is obvious that it satisfies the Leibniz rule, hence it defines a vector field $\ul{X} \in \X_{M}(U)$. One can also argue that for any $U' \in \Op(U)$, one has $\ul{X}|_{U'} = \ul{X|_{U'}}$. Moreover, for every $f \in \C^{\infty}_{\M}(U)_{0}$, one finds $\ul{f X} = \ul{f} \ul{X}$. 

To finish the proof, we will construct a $\C^{\infty}_{M}$-linear sheaf isomorphism $\psi: \Gamma_{\ul{T\M}_{0}} \rightarrow \X_{M} \equiv \Gamma_{TM}$. For each $U \in \Op(U)$ and any $X \in \X_{\M}(U)_{0}$, we have the pullback section $X^{!} \in \Gamma_{\ul{T\M}}(U)_{0} \equiv \Gamma_{\ul{T\M}_{0}}(U)$. Define $\psi_{U}(X^{!}) := \ul{X}$. It is a straightforward exercise to prove that this defines a $\C^{\infty}_{M}$-linear sheaf isomorphism $\psi$. 
\end{example}

\begin{rem} \label{rem_VBunjinak}
Pullback graded vector bundle provides another useful viewpoint on the category $\gVBun^{\infty}$. Let $F: \E \rightarrow \E'$ and $G: \E' \rightarrow \E''$ be a pair of graded vector bundle morphisms over $\phi: \M \rightarrow \cN$ and $\psi: \cN \rightarrow \cS$, respectively. 

By Proposition \ref{tvrz_pullbackVB}-$(iii)$, they correspond uniquely to a pair of graded vector bundle morphisms $\hat{F}: \E \rightarrow \phi^{\ast} \E'$ and $\hat{G}: \E' \rightarrow \psi^{\ast} \E''$ over $\1_{\M}$ and $\1_{\cN}$, respectively. By Remark \ref{rem_VBmorphismsaremorphisms}, they can be viewed as $\hat{F} \in \Sh_{0}^{\C^{\infty}_{\M}}( \Gamma_{\E}, \phi^{\ast} \Gamma_{\E'})$ and $\hat{G} \in \Sh_{0}^{\C^{\infty}_{\cN}}(\E', \psi^{\ast} \E'')$. We have the functor $\phi^{\ast}: \Sh^{\C^{\infty}_{\cN}}(N,\gVect) \rightarrow \Sh^{\C^{\infty}_{\M}}(M, \gVect)$, see (\ref{eq_pullbackfunctor}). In particular, one finds
\begin{equation}
\phi^{\ast}\hat{G}: \Sh_{0}^{\C^{\infty}_{\M}}( \phi^{\ast}\Gamma_{\E'}, \phi^{\ast}(\psi^{\ast} \Gamma_{\E''}) \cong \Sh_{0}^{\C^{\infty}_{\M}}( \phi^{\ast} \Gamma_{\E'}, (\psi \circ \phi)^{\ast} \Gamma_{\E''}),
\end{equation} 
where we have used the identification in Proposition \ref{tvrz_pullbackVB}-$(v)$. 

One can thus form the composition $\phi^{\ast} \hat{G} \circ \hat{F} \in \Sh^{\C^{\infty}_{\M}}_{0}( \Gamma_{\E}, (\psi \circ \phi)^{\ast} \Gamma_{\E''})$, which by Remark \ref{rem_VBmorphismsaremorphisms} and Proposition \ref{tvrz_pullbackVB}-$(iii)$ corresponds to the unique graded vector bundle morphism from $\E$ to $\E''$ over $\psi \circ \varphi$. Not surprisingly, this is precisely the composition $G \circ F$, defined by (\ref{eq_gvbmorfcomp}). The actual proof of this claim just requires one to track the proof of Proposition \ref{tvrz_ap_pullbackVB} and we omit it here. 
\end{rem}
\subsection{Sums, products, and degree shifts}
In this subsection, we will provide a few standard constructions of new graded vector bundles. 
\begin{tvrz}
Let $\E,\E'$ be a pair of graded vector bundles over a graded manifold $\M = (M,\C^{\infty}_{\M})$. Let $K$ and $K'$ be a typical fiber of $\E$ and $\E'$, respectively.

Then the direct sum $\Gamma_{\E} \oplus \Gamma_{\E'}$ defines a locally freely and finitely generated sheaf of graded $\C^{\infty}_{\M}$-modules of a constant graded rank $\grk(\E) + \grk(\E')$. It thus defines a graded vector bundle $\E \oplus \E'$ over $\M$ with $\Gamma_{\E \oplus \E'} := \Gamma_{\E} \oplus \Gamma_{\E'}$, called the \textbf{Whitney sum of $\E$ and $\E'$}. Its typical fiber is $K \oplus K'$ and for any graded smooth map $\phi: \cN \rightarrow \M$, one has the identifications
\begin{equation} \label{eq_dualpullbacksum}
(\E \oplus \E)^{\ast} \cong \E^{\ast} \oplus \E'^{\ast}, \; \; \phi^{\ast}(\E \oplus \E') \cong \phi^{\ast}\E \oplus \phi^{\ast}\E'.
\end{equation}
In particular, for every $m \in M$, one has $(\E \oplus \E')_{m} = \E_{m} \oplus \E'_{m}$. 
\end{tvrz}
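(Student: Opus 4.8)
The plan is to check, in order, that $\Gamma_{\E} \oplus \Gamma_{\E'}$ (with component-wise graded $\C^{\infty}_{\M}$-module structure, cf.\ the definition of direct products of sheaves of graded $\A$-modules in Subsection \ref{subsec_sheavesofgradedmodules}) is a sheaf of graded $\C^{\infty}_{\M}$-modules, that it is locally freely and finitely generated, and that its graded rank is constant and equal to $\grk(\E) + \grk(\E')$. The first point is essentially formal: a finite direct product in $\Sh^{\C^{\infty}_{\M}}(M,\gVect)$ is computed component-wise, and the monopresheaf and gluing properties for $\Gamma_{\E} \oplus \Gamma_{\E'}$ follow immediately from those of $\Gamma_{\E}$ and $\Gamma_{\E'}$, with the graded $\C^{\infty}_{\M}(U)$-module structure being $a \tr (\sigma, \sigma') := (a \tr \sigma, a \tr \sigma')$ and restrictions inherited in the obvious way.

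For the local triviality, I would take an arbitrary $m \in M$ and choose, using the definition of a graded vector bundle, a single $U \in \Op_{m}(M)$ on which \emph{both} $\E$ and $\E'$ trivialize; concretely, pick local trivialization charts $(U_{1},\varphi_{1})$ for $\E$ and $(U_{2},\varphi_{2})$ for $\E'$ around $m$ and set $U := U_{1} \cap U_{2}$, using Proposition \ref{tvrz_finfreegenlocal}-(i) to restrict both isomorphisms to $U$. This gives $\C^{\infty}_{\M}|_{U}$-linear sheaf isomorphisms $\varphi_{1} \colon \Gamma_{\E}|_{U} \to \C^{\infty}_{\M}|_{U}[K]$ and $\varphi_{2} \colon \Gamma_{\E'}|_{U} \to \C^{\infty}_{\M}|_{U}[K']$. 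Composing with the canonical identification $\C^{\infty}_{\M}|_{U}[K] \oplus \C^{\infty}_{\M}|_{U}[K'] \cong \C^{\infty}_{\M}|_{U}[K \oplus K']$ (which holds because $\A(V) \otimes_{\R} K \oplus \A(V) \otimes_{\R} K' \cong \A(V) \otimes_{\R} (K \oplus K')$ naturally in $V$, using $\oplus$-$\otimes$ distributivity for finite-dimensional $K, K'$) yields $\varphi_{1} \oplus \varphi_{2} \colon (\Gamma_{\E} \oplus \Gamma_{\E'})|_{U} \xrightarrow{\ \sim\ } \C^{\infty}_{\M}|_{U}[K \oplus K']$. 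Hence $\Gamma_{\E} \oplus \Gamma_{\E'}$ is locally freely and finitely generated with typical fiber $K \oplus K'$, and since $\gdim(K \oplus K') = \gdim(K) + \gdim(K')$ is a constant sequence independent of $m$ (both summands being constant), the graded rank is constant and equals $\grk(\E) + \grk(\E')$; this defines $\E \oplus \E'$.

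The two identifications in \eqref{eq_dualpullbacksum} I would treat separately. For $(\E \oplus \E')^{\ast} \cong \E^{\ast} \oplus \E'^{\ast}$, the point is a canonical $\C^{\infty}_{\M}$-linear sheaf isomorphism $\ul{\Sh}^{\C^{\infty}_{\M}}(\Gamma_{\E} \oplus \Gamma_{\E'}, \C^{\infty}_{\M}) \cong \ul{\Sh}^{\C^{\infty}_{\M}}(\Gamma_{\E}, \C^{\infty}_{\M}) \oplus \ul{\Sh}^{\C^{\infty}_{\M}}(\Gamma_{\E'}, \C^{\infty}_{\M})$, which one builds section-wise from the analogous statement for graded $A$-modules, $\Lin^{A}(V \oplus V', A) \cong \Lin^{A}(V,A) \oplus \Lin^{A}(V',A)$ (split a linear functional via the two inclusions and projections), checking naturality in the open set so that the restrictions match, and then noting the right-hand side is freely and finitely generated by Proposition \ref{tvrz_dualfinfree}. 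For $\phi^{\ast}(\E \oplus \E') \cong \phi^{\ast}\E \oplus \phi^{\ast}\E'$, I would invoke the universal property of the pullback: $\phi^{\ast}\E \oplus \phi^{\ast}\E'$ comes with the evident graded vector bundle morphism to $\E \oplus \E'$ over $\phi$ (namely $\hat{\phi}_{\E} \oplus \hat{\phi}_{\E'}$), and one checks it has the universal property (iii) of Proposition \ref{tvrz_pullbackVB} — any $F \colon \E'' \to \E \oplus \E'$ over $\phi$ decomposes as $(Q_{\E} \circ F, Q_{\E'} \circ F)$, each factoring uniquely through the respective pullback — whence uniqueness up to isomorphism over $\1_{\cN}$ gives the claim. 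Finally, the fiber statement $(\E \oplus \E')_{m} = \E_{m} \oplus \E'_{m}$ is the special case $\phi = \kappa_{m}$ of the second identification, combined with Example \ref{ex_fiberofgVB}. The main obstacle, such as it is, will be bookkeeping the naturality and $\C^{\infty}_{\M}$-linearity of the dual-splitting isomorphism carefully enough that it genuinely descends to a sheaf isomorphism rather than just a collection of module isomorphisms; everything else is a direct transcription of the finite-dimensional linear algebra facts through the sheaf-theoretic machinery already set up.
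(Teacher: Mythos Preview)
Your proposal is correct and follows essentially the same approach as the paper: construct the local trivialization $\varphi_{1} \oplus \varphi_{2}$ over a common neighborhood, note that the dual splits over a direct sum, and invoke the universal property of the pullback for the second identification. The paper's own proof is considerably more terse (it simply asserts the dual claim is ``obvious'' and the pullback claim follows ``easily'' from the universal property), but your elaboration fills in exactly the details one would expect and introduces no new ideas or detours.
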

\begin{proof}
For each $m \in M$, one can choose a local trivialization chart $(U,\varphi)$ and $(U,\varphi')$ for $\E$ and $\E'$ around $m$, respectively. An obvious $\C^{\infty}_{\M}|_{U}$-linear sheaf isomorphism 
\begin{equation}
\varphi \oplus \varphi' : \Gamma_{\E \oplus \E'}|_{U} \rightarrow \C^{\infty}_{\M}|_{U}[K \oplus K'] \cong \C^{\infty}_{\M}|_{U}[K] \oplus \C^{\infty}_{\M}|_{U}[K']
\end{equation}
defines a local trivialization chart $(U, \varphi \oplus \varphi')$ for $\E \oplus \E'$ around $m$. The claim about duals in (\ref{eq_dualpullbacksum}) is obvious. The claim about pullbacks can be easily proved using the universal property.
\end{proof}

\begin{tvrz}
Let $\E,\E'$ be a pair of graded vector bundles over a graded manifold $\M = (M,\C^{\infty}_{\M})$. Let $K$ and $K'$ be a typical fiber of $\E$ and $\E'$, respectively.

Then the tensor product $\Gamma_{\E} \otimes_{\C^{\infty}_{\M}} \Gamma_{\E'}$ defines a locally freely and finitely generated sheaf of graded $\C^{\infty}_{\M}$-modules of a constant graded rank. It thus defines a graded vector bundle $\E \otimes \E'$ over $\M$ with $\Gamma_{\E \otimes \E'} := \Gamma_{\E} \otimes_{\C^{\infty}_{\M}} \Gamma_{\E'}$, called the \textbf{tensor product of $\E$ and $\E'$}. Its typical fiber is $K \otimes_{\R} K'$ and for any graded smooth map $\phi: \cN \rightarrow \M$, one has the identifications 
\begin{equation} \label{eq_dualpullbacktproduct}
(\E \otimes \E')^{\ast} \cong \E^{\ast} \otimes \E'^{\ast}, \; \; \phi^{\ast}(\E \otimes \E') \cong \phi^{\ast} \E \otimes \phi^{\ast}\E'. 
\end{equation}
In particular, for every $m \in M$, one has $(\E \otimes \E')_{m} = \E_{m} \otimes_{\R} \E'_{m}$.
\end{tvrz}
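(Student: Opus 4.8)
The plan is to mimic the proof of the Whitney sum statement, since the tensor product is the other standard bilinear construction on sheaves of modules. The key point is purely local: since $\Gamma_{\E}$ and $\Gamma_{\E'}$ are locally freely and finitely generated of constant graded rank, it suffices to trivialize both simultaneously on a common neighborhood and check that the tensor product of the local models is again a local model.

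First I would fix $m \in M$ and, by shrinking, choose a single $U \in \Op_{m}(M)$ carrying local trivialization charts $\varphi: \Gamma_{\E}|_{U} \rightarrow \C^{\infty}_{\M}|_{U}[K]$ and $\varphi': \Gamma_{\E'}|_{U} \rightarrow \C^{\infty}_{\M}|_{U}[K']$; this is possible because one can intersect the two neighborhoods supplied by local freeness and restrict the charts via Proposition \ref{tvrz_finfreegenlocal}-(i). Next I would establish the key algebraic identity: for any $A \in \gcAs$ and finite-dimensional $K, K' \in \gVect$, there is a canonical isomorphism of graded $A$-modules $A[K] \otimes_{A} A[K'] \cong A[K \otimes_{\R} K']$, which follows from the associativity and unit properties of $\otimes_{\R}$ together with the fact that $A \otimes_{A} A \cong A$. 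Passing to the sheaf level over $U$, and using the isomorphisms $\varphi \otimes \varphi'$ induced on the tensor product (here one invokes that $\otimes_{\C^{\infty}_{\M}}$ is functorial, as indicated in Remark \ref{rem_ulShcat}), I obtain a $\C^{\infty}_{\M}|_{U}$-linear sheaf isomorphism $\Gamma_{\E \otimes \E'}|_{U} \cong \C^{\infty}_{\M}|_{U}[K \otimes_{\R} K']$. Since $m$ was arbitrary, $\Gamma_{\E \otimes \E'}$ is locally freely and finitely generated; its graded rank at each point equals $\gdim(K \otimes_{\R} K')$ by Proposition \ref{tvrz_thereisrank}, which is constant because $K$ and $K'$ are fixed (using that $\E, \E'$ have constant rank and Remark \ref{rem_gradedCinfmodulesaregood}). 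Hence it defines a graded vector bundle $\E \otimes \E'$ with typical fiber $K \otimes_{\R} K'$.

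For the identifications in (\ref{eq_dualpullbacktproduct}), the claim about duals is the statement that $(\E \otimes \E')^{\ast} \cong \E^{\ast} \otimes \E'^{\ast}$; I would verify this locally using the trivialization charts above, where it reduces to the natural isomorphism $(K \otimes_{\R} K')^{\ast} \cong K^{\ast} \otimes_{\R} K'^{\ast}$ for finite-dimensional graded vector spaces together with the behavior of duals of freely generated sheaves from Proposition \ref{tvrz_dualfinfree}, and then I would glue the local isomorphisms using Proposition \ref{tvrz_gluingmorphisms}. The claim about pullbacks, $\phi^{\ast}(\E \otimes \E') \cong \phi^{\ast}\E \otimes \phi^{\ast}\E'$, is best handled via the universal property of the pullback in Proposition \ref{tvrz_pullbackVB}-(iii): one constructs the natural graded vector bundle morphism $\phi^{\ast}\E \otimes \phi^{\ast}\E' \rightarrow \E \otimes \E'$ over $\phi$ from the tensor product of the canonical morphisms $\hat{\phi}: \phi^{\ast}\E \to \E$ and $\hat{\phi}: \phi^{\ast}\E' \to \E'$, checks it satisfies the universal property characterizing $\phi^{\ast}(\E \otimes \E')$, and invokes uniqueness; the statement for fibers $(\E \otimes \E')_{m} = \E_{m} \otimes_{\R} \E'_{m}$ is then the special case $\phi = \kappa_{m}$ combined with Example \ref{ex_fiberofgVB} and the identification of the fiber tensor product with $\R \otimes_{\C^{\infty}_{\M,m}} (\Gamma_{\E,m} \otimes_{\C^{\infty}_{\M,m}} \Gamma_{\E',m})$.

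The main obstacle I anticipate is the construction of the tensor product sheaf $\otimes_{\C^{\infty}_{\M}}$ itself and verifying its functoriality and compatibility with pullbacks, since (as flagged in Remark \ref{rem_ulShcat}) the definition of $\otimes_{\A}$ in $\Sh^{\A}(X,\gVect)$ requires sheafification and is not spelled out in detail here; in a fully rigorous write-up one would either carry along the sheafification bookkeeping or — more cleanly — observe that on the trivializing cover the presheaf tensor product is already a sheaf (being freely generated), so sheafification is invisible locally and all the identifications can be checked on the cover and glued. Everything else is a routine transcription of the Whitney sum argument with $\oplus$ replaced by $\otimes$.
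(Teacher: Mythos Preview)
Your argument for the locally-freely-and-finitely-generated part is essentially the same as the paper's: pick a common trivializing neighborhood, use the algebraic isomorphism $A[K]\otimes_A A[K']\cong A[K\otimes_{\R}K']$, and transport along $\varphi\otimes\varphi'$. The paper is slightly more explicit about the sheafification bookkeeping: it first defines $\varphi\otimes^{P}\varphi'$ on the \emph{presheaf} tensor product (with the Koszul sign $(-1)^{|\varphi'||\sigma|}$ on generators), notes that the isomorphism $\C^{\infty}_{\M}[K]\otimes^{P}_{\C^{\infty}_{\M}}\C^{\infty}_{\M}[K']\cong \C^{\infty}_{\M}[K\otimes_{\R}K']$ already holds before sheafification, and then passes to the sheafified version by the universal property. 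This is exactly the point you flag in your last paragraph, so you have identified the only real subtlety.

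Where you genuinely diverge from the paper is the pullback identification $\phi^{\ast}(\E\otimes\E')\cong\phi^{\ast}\E\otimes\phi^{\ast}\E'$. You propose to build a morphism $\phi^{\ast}\E\otimes\phi^{\ast}\E'\to\E\otimes\E'$ over $\phi$ as ``the tensor product of the canonical morphisms $\hat\phi$'' and then verify the universal property of Proposition~\ref{tvrz_pullbackVB}-(iii). This can be made to work, but note that the two $\hat\phi$'s are morphisms over $\phi$ between bundles on different bases, so ``tensoring'' them is not literally an instance of the bifunctor $\otimes_{\C^{\infty}_{\cN}}$ or $\otimes_{\C^{\infty}_{\M}}$; you would have to construct the morphism by hand (e.g.\ via its $F^{\dagger}$, or locally-and-glue), and then checking the universal property for arbitrary test bundles $\E''$ is not quite routine. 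The paper instead works purely at the sheaf level and runs a Yoneda argument (Proposition~\ref{tvrz_ap_pullbacktensor}): it chains the adjunction $\phi^{\ast}\dashv\phi_{\ast}$, the tensor--internal-hom adjunction (\ref{eq_tensorinternalhom}), and the projection-type formula $\ul{\Sh}^{\C^{\infty}_{\M}}(\G,\phi_{\ast}\H)\cong\phi_{\ast}\ul{\Sh}^{\C^{\infty}_{\cN}}(\phi^{\ast}\G,\H)$ to produce a natural bijection $\Sh_0^{\C^{\infty}_{\cN}}(\phi^{\ast}(\F\otimes\G),\H)\cong\Sh_0^{\C^{\infty}_{\cN}}(\phi^{\ast}\F\otimes\phi^{\ast}\G,\H)$, from which the isomorphism follows by Yoneda. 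This route avoids constructing any explicit morphism over $\phi$ and gives naturality for free; your route is more geometric but requires more care to write down.
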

\begin{proof}
First, let us recall how the tensor product $\Gamma_{\E} \otimes_{\C^{\infty}_{\M}} \Gamma_{\E'}$ is defined. This is a general construction, see also Remark \ref{rem_ulShcat}. For each $U \in \Op(M)$, consider a graded $\C^{\infty}_{\M}(U)$-module
\begin{equation} 
(\Gamma_{\E} \otimes^{P}_{\C^{\infty}_{\M}} \Gamma_{\E'})(U) := \Gamma_{\E}(U) \otimes_{\C^{\infty}_{\M}(U)} \Gamma_{\E'}(U).
\end{equation} 
It is easy to see that defines a presheaf $\Gamma_{\E} \otimes_{\C^{\infty}_{\M}}^{P} \Gamma_{\E'}$ of graded $\C^{\infty}_{\M}$-modules. The actual tensor product is then defined as a sheafification of this presheaf:
\begin{equation}
\Gamma_{\E} \otimes_{\C^{\infty}_{\M}} \Gamma_{\E'} := \Sff( \Gamma_{\E} \otimes^{P}_{\C^{\infty}_{\M}} \Gamma_{\E'}).  
\end{equation}
Observe that $\C^{\infty}_{\M}[K] \otimes_{\C^{\infty}_{\M}} \C^{\infty}_{\M}[K'] \cong \C^{\infty}_{\M}[K \otimes_{\R} K']$. This is true already before the sheafification, that is there is acanonical $\C^{\infty}_{\M}$-linear presheaf isomorphism 
\begin{equation} \label{eq_pretensoroffreemodules}
\C^{\infty}_{\M}[K] \otimes_{\C^{\infty}_{\M}}^{P} \C^{\infty}_{\M}[K'] \cong \C^{\infty}_{\M}[K \otimes_{\R} K'].
\end{equation}
We leave the details for the reader. Now, let $m \in M$ be arbitrary and let $(U,\varphi)$ and $(U,\varphi')$ be local trivialization charts for $\E$ and $\E'$ around $m$, respectively. Let 
\begin{equation}
\varphi \otimes^{P} \varphi': \Gamma_{\E}|_{U} \otimes_{\C^{\infty}_{\M}|_{U}}^{P} \Gamma_{\E'}|_{U} \rightarrow \C^{\infty}_{\M}|_{U}[K] \otimes_{\C^{\infty}_{\M}|_{U}}^{P} \C^{\infty}_{\M}|_{U}[K']
\end{equation}
be the $\C^{\infty}_{\M}|_{U}$-linear presheaf isomorphism given for each $V \in \Op(U)$, $\sigma \in \Gamma_{\E}(V)$, $\sigma' \in \Gamma_{\E'}(V)$ by 
\begin{equation}
(\varphi \otimes^{P} \varphi')_{V}( \sigma \otimes \sigma') := (-1)^{|\varphi'||\sigma|} \varphi_{V}(\sigma) \otimes \varphi'_{V}(\sigma').
\end{equation}
One has to verify that it is well-defined and natural in $V$, but that is straightforward. By the universal property of the sheafification, it induces a $\C^{\infty}_{\M}|_{U}$-linear sheaf isomorphism
\begin{equation}
\varphi \otimes \varphi': \Gamma_{\E}|_{U} \otimes_{\C^{\infty}_{\M}|_{U}} \Gamma_{\E'}|_{U} \rightarrow \C^{\infty}_{\M}|_{U}[K] \otimes_{\C^{\infty}_{\M}|_{U}}^{P} \C^{\infty}_{\M}|_{U}[K'].
\end{equation}
Using (\ref{eq_pretensoroffreemodules}) and the canonical identification $(\Gamma_{\E} \otimes_{\C^{\infty}_{\M}} \Gamma_{\E'})|_{U} \cong \Gamma_{\E}|_{U} \otimes_{\C^{\infty}_{\M}|_{U}} \Gamma_{\E'}|_{U}$, we can view it as $\C^{\infty}_{\M}|_{U}$-linear sheaf isomorphism $\varphi \otimes \varphi': \Gamma_{\E \otimes \E'}|_{U} \rightarrow \C^{\infty}_{\M}|_{U}[K \otimes_{\R} K']$. We have just proved that $\Gamma_{\E \otimes \E'}$ is locally freely and finitely generated and it has a constant graded rank $\gdim(K \otimes K')$. It thus defines a graded vector bundle $\E \otimes \E'$. 

Next, the proof of the identification of duals in (\ref{eq_dualpullbacktproduct}) is a bit tedious as one has to work with sheafifications and their universal property. However, it is straightforward and we omit the proof. The identification of pullbacks in (\ref{eq_dualpullbacktproduct}) is an easy application of the Yoneda lemma\footnote{In an unlikely case of the reader's dissatisfaction with this hint, see Proposition \ref{tvrz_ap_pullbacktensor} in the appendix.}.
\end{proof}

It turns out that the category of graded vector bundles has binary products.

\begin{tvrz}
Let $\M$ and $\cN$ be a pair of graded manifolds. Let $\E$ be a graded vector bundle over $\M$ and $\E'$ a graded vector bundle over $\cN$. Let $K$ and $K'$ be a typical fiber of $\E$ and $\E'$, respectively. Let $\pi_{\M}: \M \times \cN \rightarrow \M$ and $\pi_{\cN}: \M \times \cN \rightarrow \cN$ be the canonical projections.

Then the graded vector bundle $\E \times \E' := \pi_{\M}^{\ast}\E \oplus \pi_{\cN}^{\ast}\E'$ over $\M \times \cN$ defines a product of $\E$ and $\E'$ in the category $\gVBun^{\infty}$. For each $(m,n) \in M \times N$, there is a canonical identification
\begin{equation} \label{eq_productfibers}
(\E \times \E')_{(m,n)} \cong \E_{m} \oplus \E'_{n}.
\end{equation}
\end{tvrz}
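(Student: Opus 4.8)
The plan is to verify the universal property of a binary product in $\gVBun^{\infty}$ directly, reducing everything to the universal properties already established for $\M \times \cN$ (Proposition \ref{tvrz_products}), for Whitney sums, and for pullback graded vector bundles (Proposition \ref{tvrz_pullbackVB}). First I would record the structural data: setting $\E \times \E' := \pi_{\M}^{\ast}\E \oplus \pi_{\cN}^{\ast}\E'$, there are canonical graded vector bundle morphisms over $\1_{\M \times \cN}$ from $\E \times \E'$ onto each summand, followed by the canonical morphisms $\widehat{\pi_{\M}}: \pi_{\M}^{\ast}\E \to \E$ over $\pi_{\M}$ and $\widehat{\pi_{\cN}}: \pi_{\cN}^{\ast}\E' \to \E'$ over $\pi_{\cN}$ from Proposition \ref{tvrz_pullbackVB}-(ii). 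Composing, one obtains graded vector bundle morphisms $P_{\E}: \E \times \E' \to \E$ over $\pi_{\M}$ and $P_{\E'}: \E \times \E' \to \E'$ over $\pi_{\cN}$. These will be the product projections.

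Next I would check the universal property. Let $\G$ be a graded vector bundle over a graded manifold $\cS$, and let $F: \G \to \E$, $G: \G \to \E'$ be graded vector bundle morphisms over graded smooth maps $\phi: \cS \to \M$ and $\chi: \cS \to \cN$ respectively. By Proposition \ref{tvrz_products}-(iii) there is a unique graded smooth map $(\phi,\chi): \cS \to \M \times \cN$ with $\pi_{\M} \circ (\phi,\chi) = \phi$ and $\pi_{\cN} \circ (\phi,\chi) = \chi$. Then $F$ is a morphism over $\phi = \pi_{\M} \circ (\phi,\chi)$, so by the universal property of the pullback $(\phi,\chi)^{\ast}\pi_{\M}^{\ast}\E \cong (\phi,\chi)^{\ast}\E$ — here I would invoke Proposition \ref{tvrz_pullbackVB}-(v) to identify $(\phi,\chi)^{\ast}(\pi_{\M}^{\ast}\E)$ with $(\pi_{\M} \circ (\phi,\chi))^{\ast}\E = \phi^{\ast}\E$ — there is a unique graded vector bundle morphism $\widehat{F}: \G \to (\phi,\chi)^{\ast}(\pi_{\M}^{\ast}\E)$ over $\1_{\cS}$ factoring $F$ through $\widehat{\pi_{\M}} \circ \widehat{(\phi,\chi)}$. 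Similarly one gets $\widehat{G}: \G \to (\phi,\chi)^{\ast}(\pi_{\cN}^{\ast}\E')$ over $\1_{\cS}$. The pair $(\widehat{F},\widehat{G})$ assembles, via the universal property of the Whitney sum applied to the pullback $(\phi,\chi)^{\ast}(\E \times \E') \cong (\phi,\chi)^{\ast}(\pi_{\M}^{\ast}\E) \oplus (\phi,\chi)^{\ast}(\pi_{\cN}^{\ast}\E')$ (second identification of (\ref{eq_dualpullbacksum})), into a single morphism $\G \to (\phi,\chi)^{\ast}(\E \times \E')$ over $\1_{\cS}$, which composed with the canonical morphism $(\phi,\chi)^{\ast}(\E \times \E') \to \E \times \E'$ over $(\phi,\chi)$ yields the desired $(F,G): \G \to \E \times \E'$ over $(\phi,\chi)$. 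Uniqueness of $(F,G)$ follows by running the same chain of universal properties backwards: any morphism $H: \G \to \E \times \E'$ with $P_{\E} \circ H = F$ and $P_{\E'} \circ H = G$ has underlying graded smooth map forced to be $(\phi,\chi)$ by Proposition \ref{tvrz_products}, and then $\widehat{H}$ is pinned down componentwise by the uniqueness clauses of Proposition \ref{tvrz_pullbackVB}-(iii) and of the Whitney sum.

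For the fiber identification (\ref{eq_productfibers}), I would apply the fiber functor of Example \ref{ex_fiberofgVB}: for $(m,n) \in M \times N$ the constant map $\kappa_{(m,n)}: \{\ast\} \to \M \times \cN$ factors as $\kappa_{(m,n)} = (\kappa_m, \kappa_n)$ in the sense that $\pi_{\M} \circ \kappa_{(m,n)} = \kappa_m$ and $\pi_{\cN} \circ \kappa_{(m,n)} = \kappa_n$. Hence, using (\ref{eq_dualpullbacksum}) and Proposition \ref{tvrz_pullbackVB}-(v),
\begin{equation}
(\E \times \E')_{(m,n)} = \kappa_{(m,n)}^{\ast}(\pi_{\M}^{\ast}\E \oplus \pi_{\cN}^{\ast}\E') \cong \kappa_{(m,n)}^{\ast}\pi_{\M}^{\ast}\E \oplus \kappa_{(m,n)}^{\ast}\pi_{\cN}^{\ast}\E' \cong \kappa_m^{\ast}\E \oplus \kappa_n^{\ast}\E' = \E_m \oplus \E'_n,
\end{equation}
which is the claim.

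I expect the main obstacle to be purely bookkeeping: threading the several layers of universal-property-induced morphisms together while keeping the identifications (\ref{eq_pullbackcomp}) and (\ref{eq_dualpullbacksum}) coherent — in particular verifying that the canonical isomorphisms are natural enough that the composite factorizations agree on the nose, not merely up to further isomorphism. There is no genuinely new analytic or algebraic content here; everything is formal, and a cleaner alternative would be to observe that $\phi \mapsto \phi^{\ast}$ is functorial (cf. Remark \ref{rem_VBunjinak}) and that products in a category of modules over a product object are computed summand-wise, but I would still spell out the projections and the factorization map explicitly to match the style of the surrounding text.
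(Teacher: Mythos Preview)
Your proposal is correct and follows essentially the same approach as the paper: construct the projections from the Whitney-sum projections composed with the canonical pullback morphisms, verify the universal property by first using Proposition \ref{tvrz_products} to obtain $(\phi,\chi)$ and then assembling the induced map via the identifications $(\phi,\chi)^{\ast}(\pi_{\M}^{\ast}\E \oplus \pi_{\cN}^{\ast}\E') \cong \phi^{\ast}\E \oplus \chi^{\ast}\E'$, and compute the fiber exactly as you do. The only cosmetic difference is that the paper phrases the factorization step directly at the level of sheaf morphisms via Remark \ref{rem_VBunjinak} (writing $\hat{F}_{U}(\sigma) = ((\hat{F}_{1})_{U}(\sigma),(\hat{F}_{2})_{U}(\sigma))$), whereas you invoke the universal properties of Proposition \ref{tvrz_pullbackVB}-(iii) and the Whitney sum; these are equivalent formulations of the same argument.
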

\begin{proof}
First, we need to construct a pair of graded vector bundle morphisms $P_{\E}: \E \times \E' \rightarrow \E$ and $P_{\E'}: \E \times \E' \rightarrow \E'$ over $\pi_{\M}$ and $\pi_{\cN}$, respectively. In view of Remark \ref{rem_VBunjinak}, it suffices to define two $\C^{\infty}_{\M \times \cN}$-linear sheaf morphisms 
\begin{equation} \hat{P}_{\E}: \pi_{\M}^{\ast} \Gamma_{\E} \oplus \pi_{\cN}^{\ast} \Gamma_{\E'} \rightarrow \pi_{\M}^{\ast} \Gamma_{\E}, \; \; \hat{P}_{\E'}: \pi_{\M}^{\ast} \Gamma_{\E} \oplus \pi_{\cN}^{\ast} \Gamma_{\E'} \rightarrow \pi_{\cN} \Gamma_{E'}. 
\end{equation}
Clearly, we choose $\hat{P}_{\E}$ and $\hat{P}_{\E'}$ as projections on the first and second summand, respectively. Let us prove the universal property of the product. Let $\G$ be any graded vector bundle over $\cS$ and let $F_{1}: \G \rightarrow \E$ and $F_{2}: \G \rightarrow \E'$ be a pair of graded vector bundle morphisms over $\phi_{1}: \cS \rightarrow \M$ and $\phi_{2}: \cS \rightarrow \cN$, respectively. We must argue that there is a unique graded vector bundle morphism $F: \G \rightarrow \E \times \E'$, such that $P_{\E} \circ F = F_{1}$ and $P_{\E'} \circ F = F_{2}$. 

By the universality of the product $\M \times \cN$, $F$ must be a graded vector bundle morphism over $(\phi_{1},\phi_{2}): \cS \rightarrow \M \times \cN$, see Proposition \ref{tvrz_products}. In view of Remark \ref{rem_VBunjinak}, it suffices to define a $\C^{\infty}_{\cS}$-linear sheaf morphism 
\begin{equation}
\hat{F}: \Gamma_{\G} \rightarrow (\phi_{1},\phi_{2})^{\ast} \Gamma_{\E \times \E'} \equiv (\phi_{1},\phi_{2})^{\ast}( \pi_{\M}^{\ast} \Gamma_{\E} \oplus \pi_{\cN}^{\ast} \Gamma_{\E'}) \cong \phi_{1}^{\ast} \Gamma_{\E} \oplus \phi_{2}^{\ast} \Gamma_{\E'},
\end{equation}
such that $(\phi_{1},\phi_{2})^{\ast} \hat{P}_{\E} \circ \hat{F} = \hat{F}_{1}$ and $(\phi_{1},\phi_{2})^{\ast} \hat{\P}_{\E'} \circ \hat{F} = \hat{F}_{2}$. It is not difficult to see that $(\phi_{1},\phi_{2})^{\ast} \hat{P}_{\E}: \phi_{1}^{\ast} \Gamma_{\E} \oplus \phi_{2}^{\ast} \Gamma_{\E'} \rightarrow (\phi_{1},\phi_{2})^{\ast} (\pi_{\M}^{\ast} \Gamma_{\E}) \cong \phi_{1}^{\ast} \Gamma_{\E}$ is just the projection onto the first summand. Similarly, $(\phi_{1},\phi_{2})^{\ast} \hat{P}_{\E'}$ is the projection onto the second summand. This means that for each $U \in \Op(S)$ and $\sigma \in \Gamma_{\G}(U)$, $\hat{F}_{U}$ has to be given by the formula
\begin{equation}
\hat{F}_{U}(\sigma) = ( (\hat{F}_{1})_{U}(\sigma), (\hat{F}_{2})_{U}(\sigma)).
\end{equation}
Clearly, this defines a $\C^{\infty}_{\cS}$-linear sheaf morphism $\hat{F}: \Gamma_{\G} \rightarrow \phi_{1}^{\ast} \Gamma_{\E} \oplus \phi^{\ast}_{2} \Gamma_{\E'}$. The statement (\ref{eq_productfibers}) follows from the fact that the constant mapping $\kappa_{(m,n)}: \{ \ast \} \rightarrow \M \times \cN$ satisfies $\pi_{\M} \circ \kappa_{(m,n)} = \kappa_{m}$ and $\pi_{\cN} \circ \kappa_{(m,n)} = \kappa_{n}$. Indeed, (\ref{eq_dualpullbacksum}) together with Proposition \ref{tvrz_pullbackVB}-$(v)$ imply
\begin{equation}
\begin{split}
(\E \times \E')_{(m,n)} \equiv & \ \kappa_{(m,n)}^{\ast}( \pi_{\M}^{\ast} \E \oplus \pi_{\cN}^{\ast} \E') \cong \kappa_{(m,n)}^{\ast}( \pi_{\M}^{\ast} \E) \oplus \kappa_{(m,n)}^{\ast}( \pi_{\cN}^{\ast} \E') \\
\cong & \  \kappa_{m}^{\ast}\E \oplus \kappa_{n}^{\ast} \E' \equiv \E_{m} \oplus \E'_{n}. 
\end{split}
\end{equation}
This finishes the proof. 
\end{proof}
Furthermore, in the graded setting, one has the following algebraic construction with no analogue in ordinary differential geometry. 
\begin{tvrz} \label{tvrz_degreeshiftedvector}
Let $\E$ be a graded vector bundle over a graded manifold $\M = (M, \C^{\infty}_{\M})$. Let $K$ be a typical fiber of $\E$. Let $\ell \in \Z$ be an arbitrary integer. 

Then the degree shifted sheaf $\Gamma_{\E}[\ell]$ (see Remark \ref{rem_shiftedmodule} and Remark \ref{rem_degreeshiftedsheafofmodules}) is locally freely and finitely generated sheaf of graded $\C^{\infty}_{\M}$-modules of a constant graded rank. It thus defines a graded vector bundle $\E[\ell]$ over $\M$ with $\Gamma_{\E[\ell]} := \Gamma_{\E}[\ell]$, called the \textbf{degree shift of a graded vector bundle $\E$}. Its typical fiber is the degree shifted graded vector space $K[\ell]$ and for any graded smooth map $\phi: \cN \rightarrow \M$, one has the identifications
\begin{equation} \label{eq_shiftedifentifications}
(\E[\ell])^{\ast} \cong \E^{\ast}[-\ell], \; \; (\phi^{\ast} \E)[\ell] \cong \phi^{\ast}(\E[\ell]). 
\end{equation}
In particular, for every $m \in M$, one has $(\E[\ell])_{m} \cong \E_{m}[\ell]$. 
\end{tvrz}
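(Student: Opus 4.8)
The plan is to mimic the proofs of the Whitney sum and tensor product propositions: reduce everything to a local statement via trivialization charts, then upgrade the local identifications to global ones. By Remark \ref{rem_degreeshiftedsheafofmodules} the sheaf $\Gamma_{\E}[\ell]$ is already a well-defined sheaf of graded $\C^{\infty}_{\M}$-modules, with the module structure of Remark \ref{rem_shiftedmodule}; so the genuine content is local triviality of constant rank together with the three displayed identifications.

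First I would isolate the algebraic lemma that for any $\A \in \Sh(X,\gcAs)$ and any finite-dimensional $K \in \gVect$ there is a canonical $\A$-linear sheaf isomorphism $\A[K][\ell] \cong \A[K[\ell]]$. Pointwise this reads $(\A(U) \otimes_{\R} K)[\ell] \cong \A(U) \otimes_{\R} K[\ell]$; both sides have $k$-th component $\bigoplus_{i} \A(U)_{i} \otimes_{\R} K_{k+\ell-i}$, so the work is to check that the two graded $\A(U)$-module structures coincide. This is precisely where the Koszul sign $(-1)^{|a|\ell}$ of (\ref{eq_shiftedmodule}) enters, through the degree shifting operator $\delta[\ell]$, and verifying its consistency is the one real sign computation in the argument. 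Granting the lemma, for a local trivialization chart $(U,\varphi)$ of $\E$ the degree-shifted morphism $\varphi[\ell] \colon \Gamma_{\E}|_{U}[\ell] \to \C^{\infty}_{\M}|_{U}[K][\ell] \cong \C^{\infty}_{\M}|_{U}[K[\ell]]$ is again a $\C^{\infty}_{\M}|_{U}$-linear sheaf isomorphism — it has degree zero, so no further signs appear in its $\C^{\infty}_{\M}$-linearity — which exhibits $(U,\varphi[\ell])$ as a local trivialization chart for $\E[\ell]$ with typical fiber $K[\ell]$. Since $\grk(\E[\ell]) = \gdim(K[\ell]) = (\gdim(K)_{j+\ell})_{j \in \Z}$ does not depend on the point, $\E[\ell]$ has constant graded rank, with $\grk(\E[\ell])_{j} = \grk(\E)_{j+\ell}$.

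The identification $(\E[\ell])^{\ast} \cong \E^{\ast}[-\ell]$ is then immediate from Remark \ref{rem_uShdegshifted}, which supplies a canonical $\C^{\infty}_{\M}$-linear sheaf isomorphism $(\F[\ell])^{\ast} \cong \F^{\ast}[-\ell]$; applied to $\F = \Gamma_{\E}$ and combined with the defining equalities $\Gamma_{(\E[\ell])^{\ast}} = (\Gamma_{\E}[\ell])^{\ast}$ and $\Gamma_{\E^{\ast}[-\ell]} = \Gamma_{\E}^{\ast}[-\ell]$, it is exactly the claim. For compatibility with pullbacks I would prove the more general assertion that $\phi^{\ast}(\F[\ell]) \cong (\phi^{\ast}\F)[\ell]$ naturally in $\F \in \Sh^{\C^{\infty}_{\M}}(M,\gVect)$: tracking the construction of the pullback sheaf (Proposition \ref{tvrz_ap_pullbackVB}), degree shift commutes up to canonical isomorphism with the inverse image functor, with the relevant tensor product over $\ul{\phi}^{-1}\C^{\infty}_{\M}$, and with sheafification, so the isomorphism propagates through the whole construction; alternatively, on a common refinement of trivializing covers both sides restrict to $\C^{\infty}_{\cN}|_{V}[K[\ell]]$ with matching gluing data.

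I expect this pullback step to be the principal obstacle, precisely because a degree shift is not itself an arrow of $\gVBun$, so one cannot invoke the universal property of the pullback directly and must instead unwind the explicit construction in the appendix. Finally, specializing the previous identification to $\phi = \kappa_{m} \colon \{\ast\} \to \M$ and using $\E_{m} := \kappa_{m}^{\ast}\E$ from Example \ref{ex_fiberofgVB} yields $(\E[\ell])_{m} = \kappa_{m}^{\ast}(\E[\ell]) \cong (\kappa_{m}^{\ast}\E)[\ell] = \E_{m}[\ell]$, completing the proof.
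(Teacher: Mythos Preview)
Your argument is correct and matches the paper's for local triviality and the dual identification: both isolate the same lemma $\A[K][\ell] \cong \A[K[\ell]]$ (the paper calls this isomorphism $\psi^{K}$ and writes out the sign $(-1)^{|s|'\ell}$ explicitly), then compose it with $\varphi[\ell]$ to get the local trivialization chart, and both cite Remark \ref{rem_uShdegshifted} for $(\E[\ell])^{\ast} \cong \E^{\ast}[-\ell]$.

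The one genuine divergence is the pullback compatibility $\phi^{\ast}(\F[\ell]) \cong (\phi^{\ast}\F)[\ell]$. You anticipate this as the principal obstacle and propose to unwind the explicit construction of $\phi^{\ast}$ from the appendix, arguing that degree shift commutes with inverse image, tensor product, and sheafification. That would work, but the paper avoids it entirely with a Yoneda argument: using Remark \ref{rem_uShdegshifted} to pass between $\Sh_{0}^{\C^{\infty}_{\cN}}((\phi^{\ast}\F)[\ell],\G)$ and $\Sh_{-\ell}^{\C^{\infty}_{\cN}}(\phi^{\ast}\F,\G)$, then the push--pull adjunction (\ref{eq_uShpulllpush}) to move to $\Sh_{-\ell}^{\C^{\infty}_{\cN}}(\F,\phi_{\ast}\G)$, then back via the degree-shift identification and the adjunction again to land at $\Sh_{0}^{\C^{\infty}_{\cN}}(\phi^{\ast}(\F[\ell]),\G)$, all naturally in $\G$. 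So your intuition that ``one cannot invoke the universal property of the pullback directly'' is right at the level of $\gVBun$, but the adjunction at the level of sheaves of modules, combined with the degree-shift bijections on hom-sets, does the job without touching the appendix. Your route is more hands-on; the paper's is shorter and more conceptual.
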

\begin{proof}
For each $\F \in \Sh^{\C^{\infty}_{\M}}(M,\gVect)$, we have a canonical \textbf{degree shifting operator} $\delta_{\F}[\ell] \in \Sh_{\ell}^{\C^{\infty}_{\M}}( \F[\ell], \F)$, defined for each $U \in \Op(M)$ as the degree shifting operator $(\delta_{\F}[\ell])_{U}: \F(U)[\ell] \rightarrow \F(U)$ defined in Remark \ref{rem_shiftedmodule}. By construction, it is $\C^{\infty}_{\M}(U)$-linear and natural in $U$. 

Let $\F,\G \in \Sh^{\C^{\infty}_{\M}}(M, \gVect)$. Let $\varphi \in \Sh^{\C^{\infty}_{\M}}_{k}(\F,\G)$. There is always a unique map $\varphi[\ell] \in \Sh^{\C^{\infty}_{\M}}_{k}( \F[\ell],\G[\ell])$ making the following diagram commutative:
\begin{equation} \label{eq_varphiellshifted}
\begin{tikzcd}
\F[\ell] \arrow{d}{\delta_{\F}[\ell]} \arrow[dashed]{r}{\varphi[\ell]} & \G[\ell] \arrow{d}{\delta_{\G}[\ell]} \\
\F \arrow{r}{\varphi}& \G.
\end{tikzcd}
\end{equation}
In particular, this shows that one can view the assignment $\F \mapsto \F[\ell]$ as a functor. Note that for each $j \in \Z$, the component $\varphi[\ell]_{j}: \F[\ell]_{j} \rightarrow \G[\ell]_{j}$ is just the component $\varphi_{j+\ell}: \F_{j+\ell} \rightarrow \G_{j+\ell}$.

Next, observe that there is a canonical $\C^{\infty}_{\M}$-linear sheaf isomorphism 
\begin{equation} \label{eq_psiKiso} \psi^{K}: (\C^{\infty}_{\M}[K])[\ell] \rightarrow \C^{\infty}_{\M}[K[\ell]]. \end{equation}
Let $(\vartheta_{\lambda})_{\lambda =1}^{m}$ be a total basis for $K$. For each $U \in \Op(X)$ and every section $s = f^{\lambda} \otimes \vartheta_{\lambda} \in ((\C^{\infty}_{\M}[K])[\ell])(U) \equiv (\C^{\infty}_{\M}(U) \otimes_{\R} K)[\ell]$, set $\psi^{K}_{U}(s) := (-1)^{|s|'\ell} f^{\lambda} \otimes \vartheta_{\lambda} \in (\C^{\infty}_{\M}(U) \otimes_{\R} K[\ell])$, where $|s|'$ denotes the degree of $s$ in the degree shifted graded $\C^{\infty}_{\M}(U)$-module, and on the right-hand side, each $k \in K$ of degree $|k|$ is viewed as an element of $K[\ell]$ of degree $|k'| = |k|-\ell$. For each $\lambda \in \{1,\dots,m\}$, one has  $|f^{\lambda}| + |\vartheta_{\lambda}| = |s|' + \ell$. Also note that $|\psi_{U}^{K}(s)| = |f^{\lambda}| + |\vartheta_{\lambda}|' = |s|'$.

The sign is important for $\psi^{K}_{U}$ to be $\C^{\infty}_{\M}$-linear, as the action $\tr'$ of $\C^{\infty}_{\M}(U)$ on the degree shifted graded module is given by (\ref{eq_shiftedmodule}). For each $g \in \C^{\infty}_{\M}(U)$, we have
\begin{equation}
\begin{split}
\psi_{U}^{K}( g \tr' (f^{\lambda} \otimes \vartheta_{\lambda})) = & \  \psi^{K}_{U}( (-1)^{|g|\ell} (g \cdot f^{\lambda}) \otimes \vartheta_{\lambda}) \\
= & \ (-1)^{|g|\ell} (-1)^{(|g| + |s|')\ell} (g \cdot f^{\lambda}) \otimes \vartheta_{\lambda}  \\
= & \ g \tr ( (-1)^{|s|'\ell} f^{\lambda} \otimes \vartheta_{\lambda} ) \\
= & \ g \tr \psi_{U}^{K}( f^{\lambda} \otimes \vartheta_{\lambda}). 
\end{split}
\end{equation}
$\psi_{U}^{K}$ is clearly invertible and natural in $U$, hence it defines a $\C^{\infty}_{\M}$-linear sheaf isomorphism $\psi^{K}$. 

Let us get back to graded vector bundles. Let $(U,\varphi)$ be a local trivialization chart for $\E$. One can consider the composition $\varphi' := \psi^{K}|_{U} \circ \varphi[\ell]: \Gamma_{\E}[\ell]|_{U} \rightarrow \C^{\infty}_{\M}|_{U}[K[\ell]]$. This proves that $\Gamma_{\E}[\ell]$ is locally finitely and freely generated of a constant graded rank $\gdim(K[\ell])$, hence it defines a graded vector bundle $\E[\ell]$ with $\Gamma_{\E[\ell]} := \Gamma_{\E}[\ell]$, and $(U,\varphi')$ becomes the local trivialization chart for $\E[\ell]$. Its typical fiber is indeed $K[\ell]$. 

The first of the two identifications in (\ref{eq_shiftedifentifications}) is an immediate consequence of Remark \ref{rem_uShdegshifted}. The second identification is true for any $\F \in \Sh^{\C^{\infty}_{\M}}(M, \gVect)$, that is there is a canonical isomorphism $(\phi^{\ast} \F)[\ell] = \phi^{\ast}( \F[\ell])$. Indeed, for any $\G \in \Sh^{\C^{\infty}_{\cN}}(N, \gVect)$, one can write 
\begin{equation}
\begin{split}
\Sh_{0}^{\C^{\infty}_{\cN}}( (\phi^{\ast} \F)[\ell], \G) \cong & \ \Sh_{-\ell}^{\C^{\infty}_{\cN}}( \phi^{\ast} \F, \G) \cong \Sh^{\C^{\infty}_{\cN}}_{-\ell}( \F, \phi_{\ast} \G) \\
\cong & \ \Sh^{\C^{\infty}_{\cN}}_{0}( \F[\ell], \phi_{\ast} \G) \cong \Sh^{\C^{\infty}_{\cN}}_{0}( \phi^{\ast}( \F[\ell]), \G),
\end{split}
\end{equation}
where all bijections are natural in $\G$ (in fact, they are natural also in $\F$). We have used Remark \ref{rem_uShdegshifted} together with (\ref{eq_uShpulllpush}). By applying the Yoneda lemma in the same way as in the proof of Proposition \ref{tvrz_ap_pullbacktensor}, we obtain the canonical $\C^{\infty}_{\cN}$-linear sheaf isomorphism $(\phi^{\ast} \F)[\ell] \cong \phi^{\ast}(\F[\ell])$. For $\F = \Gamma_{\E}$, we obtain precisely the second identification in (\ref{eq_shiftedifentifications}), and the proof is finished. 
\end{proof}
\subsection{Transition maps and functions}
Let $\E$ be a graded vector bundle over a graded manifold $\M = (M,\C^{\infty}_{\M})$ with a typical fiber $K$. 

By definition, we can find a collection $\{ (U_{\alpha}, \varphi_{\alpha}) \}_{\alpha \in I}$ of local trivialization charts, where $\{ U_{\alpha} \}_{\alpha \in I}$ is an open cover of $M$ and for each $\alpha \in I$, $\varphi_{\alpha}: \Gamma_{\E}|_{U_{\alpha}} \rightarrow \C^{\infty}_{\M}|_{U_{\alpha}}[K]$ is a $\C^{\infty}_{\M}|_{U_{\alpha}}$-linear sheaf isomorphism. Every such collection is called a \textbf{local trivialization of $\E$}.

For each $(\alpha,\beta) \in I^{2}$, define the $\C^{\infty}_{\M}|_{U_{\alpha \beta}}$-linear sheaf isomorphism 
\begin{equation}
G_{\alpha \beta} := \varphi_{\alpha}|_{U_{\alpha \beta}} \circ \varphi_{\beta}^{-1}|_{U_{\alpha \beta}}: \C^{\infty}_{\M}|_{U_{\alpha \beta}}[K] \rightarrow \C^{\infty}_{\M}|_{U_{\alpha \beta}}[K].
\end{equation}
$G_{\alpha \beta}$ are called the \textbf{transition maps} corresponding to the local trivialization $\{ (U_{\alpha},\varphi_{\alpha}) \}_{\alpha \in I}$. By construction, one finds the condition
\begin{equation} \label{eq_VBtransmapcocycle}
G_{\alpha \beta} \circ G_{\beta \gamma} = G_{\alpha \gamma},
\end{equation}
when restricted to $U_{\alpha \beta \gamma}$, for all $(\alpha,\beta,\gamma) \in I^{3}$. In particular, one has $G_{\alpha \alpha} = \1_{\C^{\infty}_{\M}|_{U_{\alpha}}[K]}$ and $G_{\beta \alpha} = G_{\alpha \beta}^{-1}$ for all $(\alpha,\beta) \in I^{2}$. 

Fix a total basis $(\vartheta_{\lambda})_{\lambda=1}^{m}$. For each $\lambda \in \{1,\dots,m\}$ One can now write 
\begin{equation}
(G_{\alpha \beta})_{U_{\alpha \beta}}(1 \otimes \vartheta_{\lambda}) = (G_{\alpha \beta})^{\kappa}{}_{\lambda} \otimes \vartheta_{\kappa},
\end{equation}
for unique \textbf{transition functions} $(G_{\alpha \beta})^{\kappa}{}_{\lambda} \in \C^{\infty}_{\M}(U_{\alpha \beta})$. They completely determine $G_{\alpha \beta}$. Indeed, for any $V \in \Op(U_{\alpha \beta})$ and any $f^{\lambda} \otimes \vartheta_{\lambda} \in \C^{\infty}_{\M}(V) \otimes_{\R} K$, one has 
\begin{equation} \label{eq_transmapusingtransff}
\begin{split}
(G_{\alpha \beta})_{V}(f^{\lambda} \otimes \vartheta_{\lambda}) = & \  f^{\lambda} \tr (G_{\alpha \beta})_{V}( 1 \otimes \vartheta_{\lambda}) = f^{\lambda} \tr (G_{\alpha \beta})_{U_{\alpha \beta}}(1 \otimes \vartheta_{\lambda})|_{V} \\
= & \ f^{\lambda} \tr ((G_{\alpha \beta})^{\kappa}{}_{\lambda}|_{V} \otimes \vartheta_{\kappa}) \\
= & \ (f^{\lambda} \cdot (G_{\alpha \beta})^{\kappa}{}_{\lambda}|_{V}) \otimes \vartheta_{\kappa}.
\end{split}
\end{equation}
When restricted to $U_{\alpha \beta \gamma}$, for all $(\alpha,\beta,\gamma) \in I^{3}$, they satisfy the \textbf{cocycle condition}:
\begin{equation} \label{eq_VBtransfunctcocycle}
(G_{\beta \gamma})^{\rho}{}_{\lambda} \cdot (G_{\alpha \beta})^{\kappa}{}_{\rho} = (G_{\alpha \gamma})^{\kappa}{}_{\lambda},
\end{equation}
for all $\lambda,\kappa \in \{1,\dots,m\}$. Note that the order is important as the functions do not commute. Note that one has for each $(\alpha,\beta) \in I^{2}$ and $\lambda,\kappa \in \{1,\dots,m\}$, one has 
\begin{equation} \label{eq_VBtransfunctother}
|(G_{\alpha \beta})^{\kappa}{}_{\lambda}| = |\vartheta_{\lambda}| - |\vartheta_{\kappa}|, \; \; (G_{\alpha \alpha})^{\kappa}{}_{\lambda} = \delta^{\kappa}_{\lambda}. 
\end{equation}
As in the ordinary case, graded vector bundles can be uniquely (up to the isomorphism) recovered from their transition functions. 

\begin{tvrz}[\textbf{Collation of graded vector bundles}] \label{tvrz_collationVB}
Let $\M = (M, \C^{\infty}_{\M})$ be a graded manifold. Let $K$ be a finite-dimensional graded vector space and choose its total basis $(\vartheta_{\lambda})_{\lambda=1}^{m}$. 

Let $\{ U_{\alpha} \}_{\alpha \in I}$ be an open cover of $M$. Suppose that for each $(\alpha,\beta) \in I^{2}$ and $\lambda,\kappa \in \{1,\dots,m\}$, one has a function $(G_{\alpha \beta})^{\kappa}{}_{\lambda} \in \C^{\infty}_{\M}(U_{\alpha \beta})$, such that (\ref{eq_VBtransfunctcocycle}) and (\ref{eq_VBtransfunctother}) are satisfied.  

Then there is a graded vector bundle $\E$ together with a local trivialization $\{ (U_{\alpha}, \varphi_{\alpha}) \}_{\alpha \in I}$, such that the given collection of functions is the collections of its transition functions. If $\E'$ is another graded vector bundle together with a local trivialization $\{ (U'_{\alpha}, \varphi'_{\alpha}) \}_{\alpha \in I}$ having this property, there is a unique graded vector bundle isomorphism $F: \E \rightarrow \E'$ over $\1_{\M}$, such that $\varphi'_{\alpha} \circ F|_{U_{\alpha}} = \varphi_{\alpha}$. We interpret the local trivialization maps as morphisms of graded vector bundles, see Example \ref{ex_trivasvbunmorf}.
\end{tvrz}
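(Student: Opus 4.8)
The plan is to reduce the statement to the sheaf-theoretic collation results already at our disposal, namely Proposition~\ref{tvrz_shcollation1} and Proposition~\ref{tvrz_shcollation2}, applied to the sheaves of sections. First I would observe that the transition functions $(G_{\alpha\beta})^{\kappa}{}_{\lambda} \in \C^{\infty}_{\M}(U_{\alpha\beta})$ determine $\C^{\infty}_{\M}|_{U_{\alpha\beta}}$-linear sheaf isomorphisms $G_{\alpha\beta}: \C^{\infty}_{\M}|_{U_{\alpha\beta}}[K] \rightarrow \C^{\infty}_{\M}|_{U_{\alpha\beta}}[K]$ via formula~(\ref{eq_transmapusingtransff}): concretely, set $(G_{\alpha\beta})_{V}(f^{\lambda}\otimes\vartheta_{\lambda}) := (f^{\lambda}\cdot(G_{\alpha\beta})^{\kappa}{}_{\lambda}|_{V})\otimes\vartheta_{\kappa}$ for all $V\in\Op(U_{\alpha\beta})$. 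One checks this is well-defined, $\C^{\infty}_{\M}(V)$-linear of degree $0$ (using $|(G_{\alpha\beta})^{\kappa}{}_{\lambda}| = |\vartheta_{\lambda}|-|\vartheta_{\kappa}|$ from~(\ref{eq_VBtransfunctother})), natural in $V$, and that the cocycle condition~(\ref{eq_VBtransfunctcocycle}) for the functions translates precisely into $G_{\alpha\beta}\circ G_{\beta\gamma} = G_{\alpha\gamma}$ on $U_{\alpha\beta\gamma}$. In particular $G_{\alpha\alpha} = \1$ and $G_{\beta\alpha} = G_{\alpha\beta}^{-1}$, so each $G_{\alpha\beta}$ is an isomorphism.

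Next I would set $\F_{\alpha} := \C^{\infty}_{\M}|_{U_{\alpha}}[K]$, which is an object of $\Sh^{\C^{\infty}_{\M}|_{U_{\alpha}}}_{\fin}(U_{\alpha},\gVect)$ by Example~\ref{ex_modelfreelygensheaf}, and feed $\{\F_{\alpha}\}_{\alpha\in I}$ together with $\{G_{\alpha\beta}\}_{(\alpha,\beta)\in I^{2}}$ into Proposition~\ref{tvrz_shcollation1}. (Strictly, that proposition is stated for $\Sh(X,\gcAs)$, but the analogous statement holds verbatim for $\Sh^{\A}(X,\gVect)$, or one can apply it componentwise to the $k$-th component sheaves of vector spaces; the $\C^{\infty}_{\M}$-linearity of the $G_{\alpha\beta}$ and of the gluing isomorphisms then assembles into a $\C^{\infty}_{\M}$-module structure on the glued sheaf, using the gluing of module structures just as in Proposition~\ref{tvrz_gmcollation1}.) This yields $\Gamma_{\E}\in\Sh^{\C^{\infty}_{\M}}(M,\gVect)$ with $\C^{\infty}_{\M}|_{U_{\alpha}}$-linear sheaf isomorphisms $\lambda_{\alpha}: \Gamma_{\E}|_{U_{\alpha}}\rightarrow\F_{\alpha}$ satisfying $G_{\alpha\beta}\circ\lambda_{\alpha}|_{U_{\alpha\beta}} = \lambda_{\beta}|_{U_{\alpha\beta}}$. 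Since $\Gamma_{\E}|_{U_{\alpha}}\cong\C^{\infty}_{\M}|_{U_{\alpha}}[K]$, the sheaf $\Gamma_{\E}$ is locally freely and finitely generated; its graded rank is $\gdim(K)$ on each $U_{\alpha}$, hence constant, so $\Gamma_{\E}$ does define a graded vector bundle $\E$. Setting $\varphi_{\alpha} := \lambda_{\alpha}$ gives a local trivialization $\{(U_{\alpha},\varphi_{\alpha})\}_{\alpha\in I}$, and from $\varphi_{\alpha}|_{U_{\alpha\beta}}\circ\varphi_{\beta}^{-1}|_{U_{\alpha\beta}} = G_{\alpha\beta}$ together with the definition~(\ref{eq_transmapusingtransff}) one reads off that the given functions $(G_{\alpha\beta})^{\kappa}{}_{\lambda}$ are exactly the transition functions of this trivialization.

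For uniqueness, given another graded vector bundle $\E'$ with a local trivialization $\{(U'_{\alpha},\varphi'_{\alpha})\}_{\alpha\in I}$ — here I would assume the open cover is the same $\{U_{\alpha}\}_{\alpha\in I}$, as is implicit — realizing the same transition functions, I would apply Proposition~\ref{tvrz_shcollation2}(ii) with the identity collection $\psi_{\alpha} := \1_{\F_{\alpha}}$; the hypothesis~(\ref{eq_collationisos}) there is precisely the equality of transition maps $G_{\alpha\beta} = G'_{\alpha\beta}$. This produces a unique $\C^{\infty}_{\M}$-linear sheaf isomorphism $\Gamma_{\E}\rightarrow\Gamma_{\E'}$ compatible with the $\lambda_{\alpha}$ and $\lambda'_{\alpha}$, which by Remark~\ref{rem_VBmorphismsaremorphisms} is the same thing as a graded vector bundle isomorphism $F:\E\rightarrow\E'$ over $\1_{\M}$ with $\varphi'_{\alpha}\circ F|_{U_{\alpha}} = \varphi_{\alpha}$. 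I expect the main obstacle to be purely bureaucratic: carefully checking that $G_{\alpha\beta}$ defined by~(\ref{eq_transmapusingtransff}) is a well-defined $\C^{\infty}_{\M}$-linear \emph{sheaf} morphism (natural in $V$, compatible with restrictions) and that the module-structure bookkeeping in the collation goes through $\C^{\infty}_{\M}$-linearly — i.e.\ upgrading Proposition~\ref{tvrz_shcollation1} and~\ref{tvrz_shcollation2} from $\gcAs$-valued sheaves to sheaves of graded $\C^{\infty}_{\M}$-modules — rather than any genuinely new difficulty.
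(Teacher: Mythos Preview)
Your proposal is correct and follows essentially the same route as the paper: construct the $\C^{\infty}_{\M}|_{U_{\alpha\beta}}$-linear sheaf isomorphisms $G_{\alpha\beta}$ from the transition functions via~(\ref{eq_transmapusingtransff}), feed the collection $\F_{\alpha} = \C^{\infty}_{\M}|_{U_{\alpha}}[K]$ into the (module-valued analogue of) Proposition~\ref{tvrz_shcollation1}, and invoke Remark~\ref{rem_VBmorphismsaremorphisms} for the uniqueness clause. The only slip is a harmless index swap: to match the cocycle convention of Proposition~\ref{tvrz_shcollation1} one must set $\phi_{\alpha\beta} := G_{\beta\alpha}$ (as the paper does), since the $G_{\alpha\beta}$ satisfy $G_{\alpha\beta}\circ G_{\beta\gamma} = G_{\alpha\gamma}$ rather than $G_{\beta\gamma}\circ G_{\alpha\beta} = G_{\alpha\gamma}$; with that adjustment your deduction $\varphi_{\alpha}\circ\varphi_{\beta}^{-1} = G_{\alpha\beta}$ becomes correct.
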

\begin{proof}
For each $(\alpha,\beta) \in I^{2}$, one can use (\ref{eq_transmapusingtransff}) to construct a $\C^{\infty}_{\M}|_{U_{\alpha \beta}}$-linear sheaf isomorphism 
\begin{equation}
G_{\alpha \beta}: \C^{\infty}_{\M}|_{U_{\alpha \beta}}[K] \rightarrow \C^{\infty}_{\M}|_{U_{\alpha \beta}}[K].
\end{equation}
By assumption, their collection satisfies the cocycle condition (\ref{eq_VBtransmapcocycle}). 

For each $\alpha \in I$, consider a sheaf $\F_{\alpha} := \C^{\infty}_{\M}|_{U_{\alpha}}[K]$ of graded $\C^{\infty}_{\M}|_{U_{\alpha}}$-modules. For each $(\alpha,\beta) \in I^{2}$, we have a $\C^{\infty}_{\M}|_{U_{\alpha \beta}}$-linear sheaf isomorphism $\phi_{\alpha \beta} := G_{\beta \alpha}: \F_{\alpha}|_{U_{\alpha \beta}} \rightarrow \F_{\beta}|_{U_{\alpha \beta}}$. It follows from (\ref{eq_VBtransmapcocycle}) that their collection $\{ \phi_{\alpha \beta} \}_{(\alpha,\beta) \in I^{2}}$ satisfies the cocycle condition (\ref{eq_shcollcocycle}). Using the analogue of Proposition \ref{tvrz_shcollation1} for sheaves valued in the category $\gVect$, there is a sheaf $\F \in \Sh(M, \gVect)$ together with a collection $\{ \varphi_{\alpha} \}_{\alpha \in I}$, where $\varphi_{\alpha}: \F|_{U_{\alpha}} \rightarrow \F_{\alpha}$ are sheaf isomorphisms satisfying $\phi_{\alpha \beta} \circ \varphi_{\alpha}|_{U_{\alpha \beta}} = \varphi_{\beta}|_{U_{\alpha \beta}}$ for all $(\alpha,\beta) \in I^{2}$. In fact, one can show that $\F$ can be made into a sheaf of graded $\C^{\infty}_{\M}$-modules, such that for each $\alpha \in I$, $\varphi_{\alpha}: \F|_{U_{\alpha}} \rightarrow \F_{\alpha} \equiv \C^{\infty}_{\M}|_{U_{\alpha}}[K]$ is $\C^{\infty}_{\M}|_{U_{\alpha}}$-linear. 

In other words, it follows that $\F$ is locally finitely and freely generated of a constant graded rank, hence it defines a graded vector bundle $\E$ over $M$ with $\Gamma_{\E} := \F$. The collection $\{ (U_{\alpha}, \varphi_{\alpha}) \}_{\alpha \in I}$ becomes its local trivialization and $\varphi_{\alpha}|_{U_{\alpha \beta}} \circ \varphi_{\beta}^{-1}|_{U_{\alpha \beta}} = G_{\alpha \beta}$. 

Let $\E'$ be another graded vector together with a local trivialization $\{ (U_{\alpha}, \varphi'_{\alpha})\}_{\alpha \in I}$ having this property. By Proposition \ref{tvrz_shcollation1}, there is then a unique sheaf morphism $F: \Gamma_{\E} \rightarrow \Gamma_{\E'}$, such that $\varphi'_{\alpha} \circ F|_{U_{\alpha}} = \varphi_{\alpha}$ for all $\alpha \in I$. By Remark \ref{rem_VBmorphismsaremorphisms}, this corresponds to the unique graded vector bundle morphism $F: \E \rightarrow \E'$ over $\1_{\M}$, such that $\varphi'_{\alpha} \circ \F|_{U_{\alpha}} = \varphi_{\alpha}$ for all $\alpha \in I$. 
\end{proof}

If $( \Phi_{\lambda}^{(\alpha)} )_{\lambda =1}^{m}$ is the local frame corresponding to the local trivialization chart $(U_{\alpha}, \varphi_{\alpha})$ and the total basis $(\vartheta_{\lambda})_{\lambda=1}^{m}$ of $K$, transition functions can be used (or equivalently defined) to relate the local frames corresponding to different local trivialization charts and the same total basis:
\begin{equation} \label{eq_locframerel}
\Phi^{(\beta)}_{\lambda}|_{U_{\alpha \beta}} = (G_{\alpha \beta})^{\kappa}{}_{\lambda} \Phi_{\kappa}^{(\alpha)}|_{U_{\alpha \beta}},
\end{equation}
for all $(\alpha,\beta) \in I^{2}$. In the following subsection, we will make use of the transition functions corresponding to the induced local trivialization of the dual bundle.
\begin{tvrz} \label{tvrz_dualtrans}
Let $\E$ be a graded vector bundle over a graded manifold $\M = (M, \C^{\infty}_{\M})$ with a typical fiber $K$. Let $\{ (U_{\alpha},\varphi_{\alpha}) \}_{\alpha \in I}$ be a local trivialization of $\E$. Fix a total basis $(\vartheta_{\lambda})_{\lambda=1}^{m}$ of $K$. 

Let $\{ (U_{\alpha}, \varphi_{\alpha}^{\vee}) \}_{\alpha \in I}$ denote the induced local trivialization of the dual vector bundle $\E^{\ast}$, see Proposition \ref{tvrz_dualfinfree}. Let $G_{\alpha \beta}^{\vee}$ be the corresponding transition maps. Define the transition functions corresponding to the dual total basis $(\vartheta^{\lambda})_{\lambda=1}^{m}$ of $K^{\ast}$ as 
\begin{equation}
(G^{\vee}_{\alpha \beta})_{U_{\alpha \beta}}(1 \otimes \vartheta^{\lambda}) =: (G^{\vee}_{\alpha \beta})_{\kappa}{}^{\lambda} \otimes \vartheta^{\kappa},
\end{equation}
for all $(\alpha,\beta) \in I^{2}$ and $\lambda,\kappa \in \{1,\dots,m\}$. 

Then, with respect to the identification $\C^{\infty}_{\M}|_{U_{\alpha \beta}}[K^{\ast}] \cong (\C^{\infty}_{\M}|_{U_{\alpha \beta}}[K])^{\ast}$, one has $G_{\alpha \beta}^{\vee} = G_{\alpha \beta}^{-T}$. Moreover, the transition functions have the form
\begin{equation} \label{eq_GalphabetaVeecomps}
(G^{\vee}_{\alpha \beta})_{\kappa}{}^{\lambda} = (-1)^{|\vartheta_{\lambda}|( |\vartheta_{\kappa}| - |\vartheta_{\lambda}|)} (G_{\beta \alpha})^{\lambda}{}_{\kappa},
\end{equation}
where on the right-hand side, there are the transition functions corresponding to the local trivialization $\{ (U_{\alpha},\varphi_{\alpha}) \}_{\alpha \in I}$ of $\E$ and the total basis $(\vartheta_{\lambda})_{\lambda=1}^{m}$. 
\end{tvrz}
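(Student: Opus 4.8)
The plan is to unwind the construction of the dual local trivialization $\{(U_\alpha,\varphi_\alpha^\vee)\}_{\alpha\in I}$ from Proposition \ref{tvrz_dualfinfree} and simply read off both claims. Recall that in that proof, given the isomorphism $\varphi_\alpha\colon\Gamma_\E|_{U_\alpha}\to\C^\infty_\M|_{U_\alpha}[K]$ with corresponding local frame $(\Phi^{(\alpha)}_\lambda)_{\lambda=1}^m$, the dual isomorphism $\varphi_\alpha^\vee\colon\Gamma_{\E^\ast}|_{U_\alpha}\to\C^\infty_\M|_{U_\alpha}[K^\ast]$ is defined on a section $\alpha'$ over $V\subseteq U_\alpha$ by $(\varphi_\alpha^\vee)_V(\alpha')=\alpha'_V(\Phi^{(\alpha)}_\lambda|_V)\otimes\vartheta^\lambda$, and the associated dual global frame $(\Phi^\lambda_{(\alpha)})_{\lambda=1}^m$ for $\Gamma_{\E^\ast}|_{U_\alpha}$ satisfies $\Phi^\lambda_{(\alpha)}(\Phi^{(\alpha)}_\kappa)=\delta^\lambda_\kappa$ and $\Phi^\lambda_{(\alpha)}(a^\kappa\tr\Phi^{(\alpha)}_\kappa)=(-1)^{|a^\lambda||\vartheta^\lambda|}a^\lambda$. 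So the transition map $G^\vee_{\alpha\beta}=\varphi^\vee_\alpha|_{U_{\alpha\beta}}\circ(\varphi^\vee_\beta)^{-1}|_{U_{\alpha\beta}}$ is governed by the frame change $\Phi^\lambda_{(\beta)}|_{U_{\alpha\beta}}=(G^\vee_{\alpha\beta})_\kappa{}^\lambda\,\Phi^\kappa_{(\alpha)}|_{U_{\alpha\beta}}$, exactly as in (\ref{eq_locframerel}) read for $\E^\ast$.

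First I would establish $G^\vee_{\alpha\beta}=G^{-T}_{\alpha\beta}$. The cleanest route is to use the canonical identification $\C^\infty_\M|_{U_{\alpha\beta}}[K^\ast]\cong(\C^\infty_\M|_{U_{\alpha\beta}}[K])^\ast$ (from Proposition \ref{tvrz_dualfinfree}, which gives $\Gamma_{\E^\ast}\cong\Gamma^\ast_\E$) together with the observation that under this identification $\varphi^\vee_\alpha$ corresponds to $(\varphi_\alpha^{-1})^T=(\varphi_\alpha^T)^{-1}$ — this is essentially the statement that "dualizing an isomorphism inverts and transposes it", which is Proposition \ref{tvrz_transpose}(i),(ii) combined with the fact that $\varphi^\vee$ is built precisely to be the transpose-inverse. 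Then $G^\vee_{\alpha\beta}=\varphi^\vee_\alpha\circ(\varphi^\vee_\beta)^{-1}=(\varphi_\alpha^{-T})\circ(\varphi_\beta^{-T})^{-1}=(\varphi_\alpha^{-1})^T\circ(\varphi_\beta^T)=\big(\varphi_\beta^{-1}\circ\varphi_\alpha\big)^{-T}=\big((\varphi_\alpha\circ\varphi_\beta^{-1})^{-1}\big)^{-T}=\big(G_{\alpha\beta}^{-1}\big)^{-T}=G_{\alpha\beta}^{-T}$, using Proposition \ref{tvrz_transpose}(i) for the contravariance of $(-)^T$ on compositions. I would write this out carefully since the order-reversal bookkeeping is where a sign or inversion error would creep in.

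Next I would compute the transition functions explicitly. From the defining equation $(G^\vee_{\alpha\beta})_{U_{\alpha\beta}}(1\otimes\vartheta^\lambda)=(G^\vee_{\alpha\beta})_\kappa{}^\lambda\otimes\vartheta^\kappa$ and the relation $G^\vee_{\alpha\beta}=G^{-T}_{\alpha\beta}=G^T_{\beta\alpha}$, I evaluate the transpose of $G_{\beta\alpha}$ on basis covectors. Using the explicit form of a transpose acting on a dual basis — which unwinds via the formula $\varphi^T_U(\alpha')_V(s)=(-1)^{|\varphi||\alpha'|}\alpha'_V(\varphi_V(s))$ from the proof of Proposition \ref{tvrz_transpose}, specialized to $\varphi=G_{\beta\alpha}$ (degree $0$ as a sheaf morphism, though its matrix entries have nonzero degree), $\alpha'=1\otimes\vartheta^\lambda$, and $s=1\otimes\vartheta_\kappa$ — one gets $(G^T_{\beta\alpha})_\kappa{}^\lambda$ equal to the $(\lambda,\kappa)$ matrix entry of $G_{\beta\alpha}$ up to the Koszul sign arising from swapping the covector $\vartheta^\lambda$ of degree $-|\vartheta_\lambda|$ past the function $(G_{\beta\alpha})^\lambda{}_\kappa$ of degree $|\vartheta_\kappa|-|\vartheta_\lambda|$ (see (\ref{eq_VBtransfunctother})). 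That sign is $(-1)^{|\vartheta_\lambda|(|\vartheta_\kappa|-|\vartheta_\lambda|)}$, giving (\ref{eq_GalphabetaVeecomps}). As a consistency check I would verify that the right-hand side of (\ref{eq_GalphabetaVeecomps}) indeed satisfies the dual cocycle condition and has degree $|\vartheta^\lambda|-|\vartheta^\kappa|=|\vartheta_\kappa|-|\vartheta_\lambda|$, matching (\ref{eq_VBtransfunctother}) for $\E^\ast$.

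The main obstacle is purely the sign: correctly tracking the Koszul conventions through the transpose formula of Proposition \ref{tvrz_transpose} and through the identification $\C^\infty_\M[K^\ast]\cong(\C^\infty_\M[K])^\ast$, which itself carries the signs recorded in the action $\Phi^\lambda_U(a^\kappa\tr\Phi_\kappa)=(-1)^{|a^\lambda||\vartheta^\lambda|}a^\lambda$ appearing just before Proposition \ref{tvrz_doubledual}. Everything else — that $\varphi^\vee$ is an isomorphism, naturality, the cocycle condition — is either already proved or a routine check, so I would present those briefly and spend the bulk of the argument pinning down the sign in (\ref{eq_GalphabetaVeecomps}).
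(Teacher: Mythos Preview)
Your approach is essentially the paper's: first identify $\varphi_\alpha^\vee$ with $\varphi_\alpha^{-T}$ under the canonical identification $\C^\infty_\M[K^\ast]\cong(\C^\infty_\M[K])^\ast$, deduce $G^\vee_{\alpha\beta}=G_{\alpha\beta}^{-T}$, and then extract the matrix entries. Two remarks are worth making.

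First, your displayed chain for $G^\vee_{\alpha\beta}=G_{\alpha\beta}^{-T}$ contains order-reversal slips (for instance $(\varphi_\alpha^{-1})^T\circ\varphi_\beta^T$ equals $(\varphi_\beta\circ\varphi_\alpha^{-1})^T=G_{\beta\alpha}^T$, not $(\varphi_\beta^{-1}\circ\varphi_\alpha)^{-T}$); you already flagged this as the danger zone, and indeed it needs to be redone. The endpoint is correct.

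Second, for the explicit formula the paper takes a slightly more direct route than your proposed unwinding of the abstract transpose: it simply evaluates the dual frame element $\Phi^\lambda_{(\beta)}$ on $\Phi^{(\alpha)}_\kappa$, rewriting the latter via the frame relation $\Phi^{(\alpha)}_\kappa=(G_{\beta\alpha})^\rho{}_\kappa\,\Phi^{(\beta)}_\rho$ and using the graded $\C^\infty_\M$-linearity $\Phi^\lambda_{(\beta)}(a\tr\Phi^{(\beta)}_\rho)=(-1)^{|a||\vartheta^\lambda|}a\,\delta^\lambda_\rho$. This yields $(-1)^{|\vartheta_\lambda|(|\vartheta_\kappa|-|\vartheta_\lambda|)}(G_{\beta\alpha})^\lambda{}_\kappa$ in one line and avoids having to chase the sign through the transpose construction of Proposition~\ref{tvrz_transpose}. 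Your route via the transpose formula works too and produces the same sign, but the dual-frame pairing is cleaner.
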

\begin{proof}
First, note that for any $U \in \Op(M)$, the identification $\C^{\infty}_{\M}|_{U}[K^{\ast}] \cong (\C^{\infty}_{\M}|_{U}[K])^{\ast}$ goes as follows. For any $V \in \Op(M)$, any element $f_{\lambda} \otimes \vartheta^{\lambda} \in \C^{\infty}_{\M}(V) \otimes_{\R} K^{\ast}$ can be viewed as graded $\C^{\infty}_{\M}(V)$-linear map from $\C^{\infty}_{\M}(V) \otimes K$ to $\C^{\infty}_{\M}(V)$, defined as
\begin{equation} \label{eq_CKastisCKast}
[f_{\lambda} \otimes \vartheta^{\lambda}](g^{\kappa} \otimes \vartheta_{\kappa}) := (-1)^{|\vartheta^{\lambda}||g^{\lambda}|} f_{\lambda} \cdot g^{\lambda}. 
\end{equation}
In this way, we define an element of $(\C^{\infty}_{\M}|_{U}[K])^{\ast}(V) = \Omega^{1}_{\M|_{U} \times K}(U)$ by Proposition \ref{tvrz_dualsectionseasy}. It is not difficult to see that this actually defines a $\C^{\infty}_{\M}|_{U}$-linear sheaf isomorphism $\C^{\infty}_{\M}|_{U}[K^{\ast}] \cong (\C^{\infty}_{\M}|_{U}[K])^{\ast}$.

Now, for each local trivialization chart $(U,\varphi)$, recall the definition (\ref{eq_varphivee}) of $\varphi^{\vee}: \Gamma^{\ast}_{\E}|_{U} \rightarrow \C^{\infty}_{\M}|_{U}[K^{\ast}]$. For each $V \in \Op(U)$ and $\alpha \in \Omega^{1}_{\E}(V)$, the action (\ref{eq_CKastisCKast}) of $\varphi^{\vee}_{V}(\alpha)$ on an element $g^{\kappa} \otimes \vartheta_{\kappa} \in \C^{\infty}_{\M}(V) \otimes K$ takes the form 
\begin{equation}
\begin{split}
[\varphi^{\vee}_{V}(\alpha)]( g^{\kappa} \otimes \vartheta_{\kappa}) = & \  [\alpha( \Phi_{\lambda}|_{V}) \otimes \vartheta^{\lambda}]( g^{\kappa} \otimes \vartheta_{\kappa}) = (-1)^{|\vartheta^{\lambda}||g^{\lambda}|} \alpha(\Phi_{\lambda}|_{V}) \cdot g^{\lambda} \\
= & \ (-1)^{|\alpha||g^{\lambda}|} g^{\lambda} \cdot \alpha(\Phi_{\lambda}|_{V}) = \alpha( g^{\lambda} \Phi_{\lambda}|_{V}) \\
= & \ \alpha( \varphi_{V}^{-1}( g^{\kappa} \otimes \vartheta_{\kappa})). 
\end{split}
\end{equation}
But this proves with respect to the identification $\C^{\infty}_{\M}|_{U}[K^{\ast}] \cong (\C^{\infty}_{\M}|_{U}[K])^{\ast}$, $\varphi^{\vee}$ can be identified with the transpose sheaf morphism $(\varphi^{-1})^{T} =: \varphi^{-T}$, see Proposition \ref{tvrz_transpose}. The claim $G^{\vee}_{\alpha \beta} := G_{\alpha \beta}^{-T}$ now follows easily from the definitions and the properties of transpose maps in Proposition \ref{tvrz_transpose}. 

The equation (\ref{eq_GalphabetaVeecomps}) can be obtained immediately from (\ref{eq_locframerel}). Indeed, one finds 
\begin{equation}
\begin{split}
\Phi^{\lambda}_{(\beta)}|_{U_{\alpha \beta}}( \Phi_{\kappa}^{(\alpha)}|_{U_{\alpha \beta}}) = &\ \Phi^{\lambda}_{(\beta)}|_{U_{\alpha \beta}}( (G_{\beta \alpha})^{\rho}{}_{\kappa} \Phi^{(\beta)}_{\rho}|_{U_{\alpha \beta}}) \\
= & \ (-1)^{|\vartheta^{\lambda}|(|\vartheta_{\kappa}| - |\vartheta_{\rho}|)} (G_{\beta \alpha})^{\rho}{}_{\kappa} \Phi^{\lambda}_{(\beta)}( \Phi^{(\beta)}_{\rho})|_{U_{\alpha \beta}} \\
= & \ (-1)^{|\vartheta_{\lambda}|( |\vartheta_{\kappa}| - |\vartheta_{\lambda}|)} (G_{\beta \alpha})^{\lambda}{}_{\kappa}.
\end{split}
\end{equation}
We have also used the fact that $|\vartheta_{\lambda}| = -|\vartheta^{\lambda}|$ in the last step. This proves that
\begin{equation}
\Phi^{\lambda}_{(\beta)}|_{U_{\alpha \beta}} = (-1)^{|\vartheta_{\lambda}|( |\vartheta_{\kappa}| - |\vartheta_{\lambda}|)} (G_{\beta \alpha})^{\lambda}{}_{\kappa} \Phi^{\kappa}_{(\alpha)}|_{U_{\alpha \beta}}.
\end{equation}
The claim (\ref{eq_GalphabetaVeecomps}) now follows from (\ref{eq_locframerel}). 

Note that the peculiar signs are essential, otherwise the cocycle conditions (\ref{eq_VBtransfunctcocycle}) would contradict each other. They are also consistent with Remark \ref{rem_peculiardoubledual}. 
\end{proof}
Moreover, for the purpose of the next section, we will also need the transition functions for degree shifted graded vector bundle. 
\begin{tvrz} \label{tvrz_shiftedtrans}
Let $\E$ be a graded vector bundle over a graded manifold $\M = (M, \C^{\infty}_{\M})$ with a typical fiber $K$. Let $\{ (U_{\alpha},\varphi_{\alpha}) \}_{\alpha \in I}$ be a local trivialization of $\E$. Fix a total basis $(\vartheta_{\lambda})_{\lambda=1}^{m}$ of $K$. Let $\ell \in \Z$ be a fixed integer. 

Let $\{ (U_{\alpha}, \varphi'_{\alpha}) \}_{\alpha \in I}$ be the induced local trivialization of the degree shift $\E[\ell]$ of $\E$, see Proposition \ref{tvrz_degreeshiftedvector}. Let $G'_{\alpha \beta}$ be the corresponding transition maps. As $(\vartheta_{\lambda})_{\lambda=1}^{m}$ can be viewed as a total basis of $K[\ell]$, we may define the respective transition functions as 

\begin{equation}
(G'_{\alpha \beta})_{U_{\alpha \beta}}(1 \otimes \vartheta^{\lambda}) =: (G'_{\alpha \beta})^{\kappa}{}_{\lambda} \otimes \vartheta^{\kappa},
\end{equation}
for all $(\alpha,\beta) \in I^{2}$ and $\lambda,\kappa \in \{1,\dots,m\}$. 

Then each $G'_{\alpha \beta}$ fits into the commutative diagram 
\begin{equation}  \label{eq_commdiagsihftedtrans}
\begin{tikzcd}[column sep=large]
(\C^{\infty}_{\M}|_{U_{\alpha \beta}}[K])[\ell] \arrow{d}{\psi^{K}|_{U_{\alpha \beta}}} \arrow{r}{G_{\alpha \beta}[\ell]} & (\C^{\infty}_{\M}|_{U_{\alpha \beta}}[K])[\ell] \arrow{d}{\psi^{K}|_{U_{\alpha \beta}}} \\
\C^{\infty}_{\M}|_{U_{\alpha \beta}}[K[\ell]] \arrow[dashed]{r}{G'_{\alpha \beta}} & \C^{\infty}_{\M}|_{U_{\alpha \beta}}[K[\ell]]
\end{tikzcd},
\end{equation}
where $G_{\alpha \beta}[\ell]$ is defined by (\ref{eq_varphiellshifted}) and $\psi^{K}$ is the sheaf isomorphism  defined under (\ref{eq_psiKiso}). Moreover, the transition functions take the form
\begin{equation} \label{eq_shiftedtrans}
(G'_{\alpha \beta})^{\kappa}{}_{\lambda} = (G_{\alpha \beta})^{\kappa}{}_{\lambda},
\end{equation}
where on the right-hand side, there are the transition functions corresponding to the local trivialization $\{ (U_{\alpha},\varphi_{\alpha})\}_{\alpha \in I}$ of $\E$ and the total basis $(\vartheta_{\lambda})_{\lambda=1}^{m}$. 
\end{tvrz}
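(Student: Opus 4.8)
The plan is to unwind the definitions and track the Koszul signs carried by the sheaf isomorphism $\psi^{K}$ defined under (\ref{eq_psiKiso}). Recall from the proof of Proposition \ref{tvrz_degreeshiftedvector} that the induced local trivialization chart for $\E[\ell]$ is $\varphi'_{\alpha} = \psi^{K}|_{U_{\alpha}} \circ \varphi_{\alpha}[\ell]$, where $\varphi_{\alpha}[\ell]$ is the degree-shifted morphism defined by the commutative square (\ref{eq_varphiellshifted}). First I would establish the commutative diagram (\ref{eq_commdiagsihftedtrans}): by definition $G'_{\alpha \beta} = \varphi'_{\alpha}|_{U_{\alpha \beta}} \circ (\varphi'_{\beta})^{-1}|_{U_{\alpha \beta}}$, and substituting the expressions for $\varphi'_{\alpha}$ and $\varphi'_{\beta}$ yields
\begin{equation}
G'_{\alpha \beta} = \psi^{K}|_{U_{\alpha \beta}} \circ \varphi_{\alpha}[\ell]|_{U_{\alpha \beta}} \circ (\varphi_{\beta}[\ell])^{-1}|_{U_{\alpha \beta}} \circ (\psi^{K}|_{U_{\alpha \beta}})^{-1}.
\end{equation}
Since the assignment $\F \mapsto \F[\ell]$ is a functor (the remark following (\ref{eq_varphiellshifted})), one has $(\varphi_{\beta}[\ell])^{-1} = (\varphi_{\beta}^{-1})[\ell]$ and hence $\varphi_{\alpha}[\ell] \circ (\varphi_{\beta}^{-1})[\ell] = (\varphi_{\alpha} \circ \varphi_{\beta}^{-1})[\ell] = G_{\alpha \beta}[\ell]$, which rewrites the displayed identity as precisely the statement encoded by the diagram (\ref{eq_commdiagsihftedtrans}).

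Next I would read off the transition functions by evaluating $G'_{\alpha \beta}$ on $1 \otimes \vartheta_{\lambda}$, i.e. applying in order $(\psi^{K}|_{U_{\alpha \beta}})^{-1}$, then $G_{\alpha \beta}[\ell]$, then $\psi^{K}|_{U_{\alpha \beta}}$. Two observations do the job. First, by the defining square (\ref{eq_varphiellshifted}) of $\varphi[\ell]$, together with the fact that the degree-shifting operator $\delta[\ell]$ is the identity on underlying elements, $G_{\alpha \beta}[\ell]$ acts on an element exactly as $G_{\alpha \beta}$ does, carrying $(G_{\alpha \beta})^{\kappa}{}_{\lambda}$ untouched. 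Second, $\psi^{K}$ (resp. its inverse) introduces a sign $(-1)^{|s|'\ell}$ depending only on the degree $|s|'$ of the section in the shifted module, and the three inputs that successively pass through these isomorphisms all carry the same shifted degree $|\vartheta_{\lambda}| - \ell$ — here one uses $|(G_{\alpha \beta})^{\kappa}{}_{\lambda}| = |\vartheta_{\lambda}| - |\vartheta_{\kappa}|$ from (\ref{eq_VBtransfunctother}) to see that $(G_{\alpha \beta})^{\kappa}{}_{\lambda} \otimes \vartheta_{\kappa}$ has the same degree as $1 \otimes \vartheta_{\lambda}$. Thus the sign $(-1)^{(|\vartheta_{\lambda}| - \ell)\ell}$ inserted by $(\psi^{K})^{-1}$ is cancelled by the one inserted by $\psi^{K}$, and one obtains $G'_{\alpha \beta}(1 \otimes \vartheta_{\lambda}) = (G_{\alpha \beta})^{\kappa}{}_{\lambda} \otimes \vartheta_{\kappa}$, which is (\ref{eq_shiftedtrans}). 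In particular the cocycle condition (\ref{eq_VBtransfunctcocycle}) and the degree/normalization relations (\ref{eq_VBtransfunctother}) for $(G'_{\alpha \beta})^{\kappa}{}_{\lambda}$ are then immediate from those of $\E$.

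The only step demanding genuine care — and hence the main obstacle — is the sign bookkeeping in the second part: one must be precise about what "degree in the shifted module" means for each section as it travels through $\psi^{K}$ and $(\psi^{K})^{-1}$, and verify that the two factors $(-1)^{(|\vartheta_{\lambda}| - \ell)\ell}$ really are attached to one and the same element so that they annihilate. Everything else reduces to a direct substitution and the functoriality of the degree shift.
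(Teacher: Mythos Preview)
Your proof is correct and essentially parallel to the paper's, differing only in where you do the computation. You chase the element $1\otimes\vartheta_\lambda$ around the diagram (\ref{eq_commdiagsihftedtrans}) directly, observing that the sign $(-1)^{(|\vartheta_\lambda|-\ell)\ell}$ introduced by $(\psi^K)^{-1}$ is cancelled by the same sign from $\psi^K$ because $G_{\alpha\beta}[\ell]$ preserves the shifted degree. The paper instead passes to the local frames: it computes $\Phi'^{(\alpha)}_\lambda = (\varphi'_\alpha)^{-1}_{U_\alpha}(1\otimes\vartheta_\lambda) = (-1)^{\ell(|\vartheta_\lambda|+\ell)}\Phi^{(\alpha)}_\lambda$, then applies the frame-transformation rule (\ref{eq_locframerel}) and observes that the residual sign $(-1)^{\ell(|\vartheta_\lambda|-|\vartheta_\kappa|)}$ is exactly what is absorbed into the shifted module action $\tr'$ of Remark \ref{rem_shiftedmodule}. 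Your route is arguably more direct once the diagram is established; the paper's route has the advantage of making explicit the role of the modified action $\tr'$ on $\Gamma_\E[\ell]$, which is where the signs ultimately live.
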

\begin{proof}
The commutative diagram (\ref{eq_commdiagsihftedtrans}) follows immediately from the definition of the induced local trivialization $\{ (U_{\alpha}, \varphi'_{\alpha}) \}_{\alpha \in I}$. Now, for each $\alpha \in I$, we have the local frame $( \Phi_{\lambda}^{(\alpha)})_{\lambda=1}^{m}$ corresponding to the local trivialization chart $(U_{\alpha},\varphi_{\alpha})$ for $\E$ and the total basis $(\vartheta_{\lambda})_{\lambda=1}^{m}$ of $K$. Let $(\Phi'^{(\alpha)}_{\lambda})_{\lambda=1}^{m}$ be the local frame for $\E[\ell]$ corresponding to $(U_{\alpha},\varphi'_{\alpha})$ and $(\vartheta_{\lambda})_{\lambda=1}^{m}$, viewed as a total basis of $K[\ell]$. It follows from the definitions that $\Phi'^{(\alpha)}_{\lambda} = (-1)^{\ell(|\vartheta_{\lambda}| + \ell)} \Phi^{(\alpha)}_{\lambda}$. Using (\ref{eq_locframerel}), we thus find
\begin{equation}
\begin{split}
\Phi'^{(\beta)}_{\lambda} = & \ (-1)^{\ell(|\vartheta_{\lambda}| + \ell)} \Phi^{(\beta)}_{\lambda} = (-1)^{\ell(|\vartheta_{\lambda}| + \ell)} (G_{\alpha \beta})^{\kappa}{}_{\lambda} \Phi_{\kappa}^{(\alpha)} \\
= & \ (-1)^{\ell( |\vartheta_{\lambda}| - |\vartheta_{\kappa}|)} (G_{\alpha \beta})^{\kappa}{}_{\lambda} \Phi'^{(\alpha)}_{\kappa} \\
= & \ (G_{\alpha \beta})^{\kappa}{}_{\lambda} \tr' \Phi'^{(\alpha)}_{\kappa},
\end{split}
\end{equation}
where $\tr'$ denotes the action of $\C^{\infty}_{\M}$ on $\Gamma_{\E}[\ell]$ which differs by a sign from the original one, see Remark \ref{rem_shiftedmodule}. The claim (\ref{eq_shiftedtrans}) now follows from (\ref{eq_locframerel}). 
\end{proof}
Finally, let us examine the transition functions of the pullback graded vector bundle. We omit the proof, but one can easily deduce the statements using part (b) of the proof of Proposition \ref{tvrz_ap_pullbackVB}. 

\begin{tvrz} \label{tvrz_transffunctionspullback}
Let $\E$ be a graded vector bundle over a graded manifold $\M = (M, \C^{\infty}_{\M})$ with a typical fiber $K$. Let $\{ (U_{\alpha},\varphi_{\alpha}) \}_{\alpha \in I}$ be a local trivialization of $\E$. Fix a total basis $(\vartheta_{\lambda})_{\lambda=1}^{m}$ of $K$. Let $\phi: \cN \rightarrow \M$ be a graded smooth map. 

Let $\{ (\ul{\phi}^{-1}(U_{\alpha}), \phi'^{\ast}_{\alpha}(\varphi_{\alpha})) \}_{\alpha \in I}$ be the induced local trivialization of the pullback $\phi^{\ast} \E$, see the part $(b)$ of the proof of Proposition \ref{tvrz_ap_pullbackVB}. Let $G'_{\alpha \beta}$ be the corresponding transition maps. Let $(G'_{\alpha \beta})^{\kappa}{}_{\lambda}$ be the transition functions corresponding to the total basis $(\vartheta_{\lambda})_{\lambda=1}^{m}$. For each $(\alpha,\beta) \in I^{2}$, define
\begin{equation}
\phi'_{\alpha \beta} := \phi|_{\ul{\phi}^{-1}(U_{\alpha \beta})}: \cN|_{\ul{\phi}^{-1}(U_{\alpha \beta})} \rightarrow \M|_{U_{\alpha \beta}}.
\end{equation}
Then, with respect to the identification $\C^{\infty}_{\cN}|_{\ul{\phi}^{-1}(U_{\alpha \beta})}[K] \cong \phi'^{\ast}_{\alpha \beta}( \C^{\infty}_{\M}|_{U_{\alpha \beta}}[K])$, one has 
\begin{equation}
G'_{\alpha \beta} = \phi'^{\ast}_{\alpha \beta}( G_{\alpha \beta}),
\end{equation}
where on the right-hand side, there is the pullback functor $\phi'^{\ast}_{\alpha \beta}$ acting on the morphism in the category $\Sh^{\C^{\infty}_{\M}|_{U_{\alpha \beta}}}(U_{\alpha \beta}, \gVect)$, see the part $(a)$ of the proof of Proposition \ref{tvrz_ap_pullbackVB}. Moreover, the transition functions have the form
\begin{equation} \label{eq_transffunctionspullback}
(G'_{\alpha \beta})^{\kappa}{}_{\lambda} = \phi^{\ast}_{U_{\alpha \beta}}( (G_{\alpha \beta})^{\kappa}{}_{\lambda}),
\end{equation}
where on the right-hand side, there are the transition functions corresponding to the local trivialization $\{ (U_{\alpha},\varphi_{\alpha})\}_{\alpha \in I}$ of $\E$ and the total basis $(\vartheta_{\lambda})_{\lambda=1}^{m}$. 
\end{tvrz}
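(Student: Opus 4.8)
The statement is a routine but notation-heavy verification, so the plan is to set everything up so that the proof reduces to unwinding the construction of the pullback sheaf carried out in the appendix (Proposition \ref{tvrz_ap_pullbackVB}, part (b)), together with the definition of transition maps and functions from the start of this subsection. I would not reprove how the pullback local trivialization $\{ (\ul{\phi}^{-1}(U_{\alpha}), \phi'^{\ast}_{\alpha}(\varphi_{\alpha})) \}_{\alpha \in I}$ is obtained — this is assumed from the appendix — but rather recall just the single fact I need: for each $\alpha \in I$, the local trivialization map of $\phi^{\ast}\E$ over $\ul{\phi}^{-1}(U_{\alpha})$ is, under the canonical identification $\C^{\infty}_{\cN}|_{\ul{\phi}^{-1}(U_{\alpha})}[K] \cong \phi'^{\ast}_{\alpha}( \C^{\infty}_{\M}|_{U_{\alpha}}[K])$ (which is part of Example \ref{ex_pullbacktrivial}, specialized to the restricted bundle $\E|_{U_\alpha} = \M|_{U_\alpha} \times K$), precisely the image of $\varphi_{\alpha}$ under the pullback functor $\phi'^{\ast}_{\alpha}: \Sh^{\C^{\infty}_{\M}|_{U_{\alpha}}}(U_{\alpha},\gVect) \rightarrow \Sh^{\C^{\infty}_{\cN}|_{\ul{\phi}^{-1}(U_{\alpha})}}(\ul{\phi}^{-1}(U_{\alpha}),\gVect)$.

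Granting this, the first claim is immediate functoriality bookkeeping. By definition $G'_{\alpha\beta} = \phi'^{\ast}_{\alpha}(\varphi_{\alpha})|_{\ul{\phi}^{-1}(U_{\alpha\beta})} \circ (\phi'^{\ast}_{\beta}(\varphi_{\beta}))^{-1}|_{\ul{\phi}^{-1}(U_{\alpha\beta})}$. Since the pullback functor commutes with restriction to open subsets — $\phi'^{\ast}_{\alpha}(-)|_{\ul{\phi}^{-1}(U_{\alpha\beta})} = \phi'^{\ast}_{\alpha\beta}(-|_{U_{\alpha\beta}})$ up to the canonical identifications — and with composition and inversion of sheaf morphisms (it is a functor), one gets $G'_{\alpha\beta} = \phi'^{\ast}_{\alpha\beta}(\varphi_{\alpha}|_{U_{\alpha\beta}} \circ \varphi_{\beta}^{-1}|_{U_{\alpha\beta}}) = \phi'^{\ast}_{\alpha\beta}(G_{\alpha\beta})$, modulo the identification $\C^{\infty}_{\cN}|_{\ul{\phi}^{-1}(U_{\alpha\beta})}[K] \cong \phi'^{\ast}_{\alpha\beta}(\C^{\infty}_{\M}|_{U_{\alpha\beta}}[K])$ asserted in the statement. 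Here I would spell out that the relevant identification is compatible with restriction from $\ul{\phi}^{-1}(U_{\alpha})$ to $\ul{\phi}^{-1}(U_{\alpha\beta})$, which is again part of the appendix construction; this is the one point where I would invite the reader to ``track the proof of Proposition \ref{tvrz_ap_pullbackVB}'', exactly as the paper already does for Remark \ref{rem_VBunjinak}.

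For the formula \eqref{eq_transffunctionspullback}, I would compute $(G'_{\alpha\beta})_{\ul{\phi}^{-1}(U_{\alpha\beta})}(1 \otimes \vartheta_{\lambda})$ directly. On $\C^{\infty}_{\M}|_{U_{\alpha\beta}}[K]$ one has $(G_{\alpha\beta})_{U_{\alpha\beta}}(1 \otimes \vartheta_{\lambda}) = (G_{\alpha\beta})^{\kappa}{}_{\lambda} \otimes \vartheta_{\kappa}$ by definition of the transition functions. The pullback of a sheaf morphism acts on the ``coefficient functions'' by the pullback $\phi^{\ast}_{U_{\alpha\beta}}: \C^{\infty}_{\M}(U_{\alpha\beta}) \rightarrow \C^{\infty}_{\cN}(\ul{\phi}^{-1}(U_{\alpha\beta}))$ and sends the generator $1 \otimes \vartheta_{\lambda}$ to $1 \otimes \vartheta_{\lambda}$ (this is exactly how the identification $\phi^{\ast}(\M\times K) = \cN \times K$ of Example \ref{ex_pullbacktrivial} is normalized, and it is what makes the induced local frame be the pullback frame $(\Phi^{!}_{\lambda})_{\lambda}$ of Proposition \ref{tvrz_pullbackVB}-(iv)). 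Hence applying $\phi'^{\ast}_{\alpha\beta}(G_{\alpha\beta}) = G'_{\alpha\beta}$ to $1 \otimes \vartheta_{\lambda}$ yields $\phi^{\ast}_{U_{\alpha\beta}}((G_{\alpha\beta})^{\kappa}{}_{\lambda}) \otimes \vartheta_{\kappa}$, and comparing with the defining equation $(G'_{\alpha\beta})_{\ul{\phi}^{-1}(U_{\alpha\beta})}(1 \otimes \vartheta_{\lambda}) = (G'_{\alpha\beta})^{\kappa}{}_{\lambda} \otimes \vartheta_{\kappa}$ gives \eqref{eq_transffunctionspullback}. As a consistency check I would note that $\phi^{\ast}_{U_{\alpha\beta}}$ is a graded algebra morphism, so the degree constraint $|(G'_{\alpha\beta})^{\kappa}{}_{\lambda}| = |\vartheta_{\lambda}| - |\vartheta_{\kappa}|$ and the cocycle condition \eqref{eq_VBtransfunctcocycle} for $(G'_{\alpha\beta})^{\kappa}{}_{\lambda}$ follow by applying $\phi^{\ast}$ to \eqref{eq_VBtransfunctother} and \eqref{eq_VBtransfunctcocycle} for $\E$ — confirming that the collation Proposition \ref{tvrz_collationVB} recovers $\phi^{\ast}\E$ from these functions.

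The only genuine obstacle is purely organizational rather than mathematical: making precise, without rederiving it, the chain of canonical identifications $\C^{\infty}_{\cN}|_{\ul{\phi}^{-1}(U_{\alpha\beta})}[K] \cong \phi'^{\ast}_{\alpha\beta}(\C^{\infty}_{\M}|_{U_{\alpha\beta}}[K])$ and its compatibility with restrictions and with the pullback of morphisms, so that ``$G'_{\alpha\beta} = \phi'^{\ast}_{\alpha\beta}(G_{\alpha\beta})$'' is an equality of honest maps and not merely a statement up to unspecified isomorphism. Since the paper has already chosen to relegate the pullback construction to the appendix and to argue the analogous Remark \ref{rem_VBunjinak} by ``tracking the proof'', I would do the same here: present the two displayed conclusions with the short functoriality argument above, and explicitly defer the verification of naturality of the identifications to Proposition \ref{tvrz_ap_pullbackVB}.
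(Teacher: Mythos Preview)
Your proposal is correct and is precisely the argument the paper has in mind: the paper actually omits the proof entirely, saying only that ``one can easily deduce the statements using part (b) of the proof of Proposition \ref{tvrz_ap_pullbackVB}.'' Your write-up is a faithful expansion of that deferral, using functoriality of $\phi'^{\ast}$ for the first displayed equation and the explicit description of $\phi^{\ast}(\C^{\infty}_{\M}[K]) \cong \C^{\infty}_{\cN}[K]$ (Example \ref{ex_pullbacktrivial}) for the transition-function formula.
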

\begin{rem}
Note that the above proposition gives an alternative way to construct the pullback vector bundle $\phi^{\ast} \E$. One can simply \textit{define} its transition functions by (\ref{eq_transffunctionspullback}) and then use Proposition \ref{tvrz_collationVB}. Using the modification of Proposition \ref{tvrz_shcollation2}, it can be proved that this construction does not (up to an isomorphism) depend on the used local trivialization of $\E$. 
\end{rem}
\subsection{Constructing the total space} \label{subsec_totalspace}
Let $\E$ be a graded vector bundle over a graded manifold $\M = (M,\C^{\infty}_{\M})$. As promised under Definition \ref{tvrz_shcollation2}, we will now construct an actual graded manifold $\E$ together with a graded smooth projection $\pi: \E \rightarrow \M$. We will do so using the gluing procedure of Proposition \ref{tvrz_gmgluing1}. One proceeds in several steps. Note that the procedure is in fact very similar to the ordinary case. An uninterested reader can thus skip directly to the resulting Proposition \ref{tvrz_totalspace}.

\begin{enumerate}[(a)]
\item \textbf{Preliminaries, the underlying manifold}:
One has to start with the underlying manifold $E$. We have already given a hint in Example \ref{ex_pullbackbyiM}. Namely, one constructs the pullback $\ul{\E} := i^{\ast}_{M} \E$ by the canonical inclusion $i_{M}: M \rightarrow \M$, obtaining thus a graded vector bundle over $M$. For each $k \in \Z$, one obtains a graded subbundle $\ul{\E}_{k} \subseteq \ul{\E}$ with a typical fiber $K_{k}$, where $\Gamma_{\ul{\E}_{k}} := (\Gamma_{\ul{\E}})_{k}$ In particular, $E := \ul{\E}_{0}$ is an ordinary vector bundle over $M$, described by its sheaf of sections $\Gamma_{E} := (\Gamma_{\ul{\E}})_{0} \in \Sh^{\C^{\infty}_{M}}(M,\Vect)$.  

Next, let $\{ (U_{\alpha}, \varphi_{\alpha}) \}_{\alpha \in I}$ be a local trivialization for $\E$. Without a loss of generality, we may assume that we have a graded smooth atlas $\A = \{ (U_{\alpha}, \phi_{\alpha}) \}$ for $\M$. Moreover, we may assume that the open cover $\{ U_{\alpha} \}_{\alpha \in I}$ is at most countable. 

Let $(\vartheta_{\lambda})_{\lambda = 1}^{m}$ be a total basis of $K$. Let $(\vartheta^{(0)}_{\lambda_{0}} )_{\lambda_{0}=1}^{m_{0}}$ denote its elements of degree zero, forming the basis of the vector space $K_{0}$. Let $(G_{\alpha \beta})^{\kappa}{}_{\lambda}$ be the transition functions corresponding to $\{ (U_{\alpha}, \varphi_{\alpha}) \}_{\alpha \in I}$ and $(\vartheta_{\lambda})_{\lambda=1}^{m}$. Let $\{ (U_{\alpha}, \ul{\varphi_{\alpha}}) \}_{\alpha \in I}$ denote the induced local trivialization of $E$. Each of the $\C^{\infty}_{M}|_{U_{\alpha}}$-linear sheaf morphisms $\ul{\varphi_{\alpha}}: \Gamma_{\E}|_{U_{\alpha}} \rightarrow \C^{\infty}_{M}|_{U_{\alpha}}[K_{0}]$ is obtained as the zero component of the induced local trivialization map of the pullback vector bundle $\ul{\E}$. Its transition functions with respect to the basis $(\vartheta_{\lambda_{0}}^{(0)})_{\lambda_{0}=1}^{m}$ take the form 
\begin{equation}
(g_{\alpha \beta})^{\kappa_{0}}{}_{\lambda_{0}} := (i_{M})^{\ast}_{U_{\alpha \beta}}( (G_{\alpha \beta})^{\kappa_{0}}{}_{\lambda_{0}}) = \ul{ (G_{\alpha \beta})^{\kappa}{}_{\lambda_{0}}}, 
\end{equation}
where we have used Proposition \ref{tvrz_transffunctionspullback} and the definition of $i_{M}$, see Proposition \ref{tvrz_bodymap}. In other words, the transition functions of $E$ are the bodies of transition functions of $\E$ corresponding to the basis elements of a degree zero. It follows that each transition map $g_{\alpha \beta}$ can be viewed as a smooth function $g_{\alpha \beta}: U_{\alpha \beta} \rightarrow \GL(K_{0})$. We shall now employ the following standard theorem to construct the total space for the vector bundle $E$. 
\begin{tvrz} \label{tvrz_totalspaceE}
Let $E$ be a vector bundle over $M$ with a typical fiber $K_{0}$, defined by its sheaf of sections $\Gamma_{E}$, that is as in Definition \ref{def_vb}. Let $\{ (U_{\alpha}, \ul{\varphi_{\alpha}}) \}_{\alpha \in I}$ be its local trivialization and suppose we have a smooth atlas $\ul{\A} = \{ (U_{\alpha}, \ul{\phi_{\alpha}}) \}_{\alpha \in I}$ for $M$, defined on the same and at most countable cover $\{ U_{\alpha} \}_{\alpha \in I}$ of $M$. 

Then there exists a smooth manifold $E$ and a surjective smooth map $\ul{\pi}: E \rightarrow M$, such that:
\begin{enumerate}[(i)] 
\item For each $\alpha \in I$, there is a diffeomorphism $\ul{\varphi_{\alpha}}: \ul{\pi}^{-1}(U_{\alpha}) \rightarrow U_{\alpha} \times K_{0}$, such that $\pi_{1} \circ \ul{\varphi_{\alpha}} = \ul{\pi}$. For each $(\alpha,\beta) \in I^{2}$ and $(m,k) \in U_{\alpha \beta} \times K$, one has $\ul{\varphi_{\alpha}} \circ \ul{\varphi_{\beta}}^{-1}(m,k) = (m, g_{\alpha \beta}(m)k)$, where $g_{\alpha \beta}$ are the transition maps corresponding to the given local trivialization of $E$. 
\item $\ul{\pi}: E \rightarrow M$ becomes a usual vector bundle with a local trivialization $\{ (U_{\alpha}, \ul{\varphi_{\alpha}}) \}_{\alpha \in I}$ obtained in $(i)$. Its sheaf of smooth sections is isomorphic to $\Gamma_{E}$. 
\item There is an induced smooth atlas $\ul{\B} = \{ ( \ul{\pi}^{-1}(U_{\alpha}), \ul{\rho_{\alpha}}) \}$ for $E$, where $\ul{\rho_{\alpha}}: \pi^{-1}(U_{\alpha}) \rightarrow \ul{\phi_{\alpha}}(U_{\alpha}) \times \R^{m_{0}}$ is defined by $\ul{\rho_{\alpha}}(e) := ( \ul{\phi_{\alpha}}( \ul{\pi}(e)), \psi_{0}^{-1}( \pi_{2}(\ul{\varphi_{\alpha}}(e)))$, where $\psi_{0}: \R^{m_{0}} \rightarrow K_{0}$ is the isomorphism induced by the choice of the basis $(\vartheta_{\lambda_{0}})_{\lambda_{0}=1}^{m_{0}}$. 
\end{enumerate}
\end{tvrz}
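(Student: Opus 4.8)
The plan is to run the classical ``vector bundle chart lemma'' construction (cf. Lemma 5.5 in \cite{lee2012introduction}) in the present notation: first build $E$ as a set with a projection, then topologize it through the prospective trivializations, upgrade the $\ul{\varphi_{\alpha}}$ and $\ul{\rho_{\alpha}}$ to a smooth atlas, and finally identify the sheaf of smooth sections of the resulting vector bundle with $\Gamma_{E}$.

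First I would set $E := \big( \bigsqcup_{\alpha \in I} U_{\alpha} \times K_{0} \big) / {\sim}$, where $(m,k)_{\alpha} \sim (m',k')_{\beta}$ iff $m = m' \in U_{\alpha \beta}$ and $k' = g_{\beta \alpha}(m)k$. Since the transition maps $g_{\alpha \beta}: U_{\alpha \beta} \to \GL(K_{0})$ are the bodies of the transition functions of $\E$ (see the paragraph preceding the statement and Proposition \ref{tvrz_transffunctionspullback}), they satisfy $g_{\alpha \alpha} = \1_{K_{0}}$, $g_{\beta \alpha} = g_{\alpha \beta}^{-1}$ and the cocycle identity on triple overlaps, which makes $\sim$ an equivalence relation. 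The map $[(m,k)_{\alpha}] \mapsto m$ then gives a surjection $\ul{\pi}: E \to M$, and for each $\alpha$ the composite $U_{\alpha} \times K_{0} \hookrightarrow \bigsqcup_{\beta} U_{\beta} \times K_{0} \to E$ restricts to a bijection onto $\ul{\pi}^{-1}(U_{\alpha})$; I take $\ul{\varphi_{\alpha}}$ to be its inverse. By construction $\pi_{1} \circ \ul{\varphi_{\alpha}} = \ul{\pi}$ and $\ul{\varphi_{\alpha}} \circ \ul{\varphi_{\beta}}^{-1}(m,k) = (m, g_{\alpha \beta}(m)k)$ on $U_{\alpha \beta} \times K_{0}$.

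Next I would topologize $E$ by declaring $V$ open iff $\ul{\varphi_{\alpha}}(V \cap \ul{\pi}^{-1}(U_{\alpha}))$ is open in $U_{\alpha} \times K_{0}$ for every $\alpha$; because the overlap maps just computed are homeomorphisms, this is a well-defined topology in which each $\ul{\pi}^{-1}(U_{\alpha})$ is open, each $\ul{\varphi_{\alpha}}$ is a homeomorphism onto $U_{\alpha} \times K_{0}$, and $\ul{\pi}$ is continuous and open. One then checks $E$ is Hausdorff (separating two points of a common $\ul{\pi}^{-1}(U_{\alpha})$ inside $U_{\alpha} \times K_{0}$, and two points with distinct images by pulling back disjoint neighbourhoods in $M$) and second countable (as $I$ is at most countable and each $U_{\alpha} \times K_{0}$ is second countable). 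Setting $\ul{\rho_{\alpha}} := (\ul{\phi_{\alpha}} \times \psi_{0}^{-1}) \circ \ul{\varphi_{\alpha}}$, the change-of-chart map $\ul{\rho_{\alpha}} \circ \ul{\rho_{\beta}}^{-1}$ equals $(x,z) \mapsto \big( \ul{\phi_{\alpha \beta}}(x),\, \psi_{0}^{-1}(g_{\alpha \beta}(\ul{\phi_{\beta}}^{-1}(x)) \psi_{0}(z)) \big)$, which is smooth because $\ul{\phi_{\alpha \beta}}$ is a smooth transition map of $M$, $g_{\alpha \beta}$ is smooth, and the linear action is smooth; hence $\ul{\B} = \{ (\ul{\pi}^{-1}(U_{\alpha}), \ul{\rho_{\alpha}}) \}$ is a smooth atlas, giving $(iii)$. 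In these charts $\ul{\pi}$ becomes a coordinate projection, so it is a surjective submersion and each $\ul{\varphi_{\alpha}}$ a diffeomorphism, giving $(i)$. Finally, $\ul{\pi}: E \to M$ with the trivialization $\{(U_{\alpha}, \ul{\varphi_{\alpha}})\}$ and fibrewise-linear transition functions $g_{\alpha \beta}$ is an ordinary vector bundle; its sheaf of smooth sections over $U$ consists of families $\{s_{\alpha}: U \cap U_{\alpha} \to K_{0}\}$ transforming by the $g_{\alpha \beta}$, which is exactly the description of $\Gamma_{E}(U)$ afforded by its local trivialization, the identification being natural in $U$ (this is the $\Vect$-valued instance of the collation statements, Proposition \ref{tvrz_shcollation1} and Proposition \ref{tvrz_collationVB}), proving $(ii)$.

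The only genuinely non-formal part is the point-set topology in the third step --- that the quotient topology is Hausdorff, second countable, and compatible with the maps $\ul{\varphi_{\alpha}}$; every other step is the same bookkeeping as in the construction of an ordinary vector bundle from a cocycle, so the hard part is merely to carry it out carefully rather than to find any new idea.
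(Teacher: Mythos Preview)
Your proposal is correct and is exactly the standard vector bundle chart lemma construction. The paper itself does not prove this proposition at all: it introduces it with ``We shall now employ the following standard theorem'' and after the statement only remarks that certain uniqueness claims are deferred to Proposition \ref{tvrz_ap_totalspaceunique} in the appendix. So there is nothing to compare against; you have supplied the omitted standard argument, and your reference to Lemma 5.5 in \cite{lee2012introduction} is precisely the kind of citation the paper is implicitly invoking.
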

There are certain uniqueness claims about $E$, which we will only discuss in the appendix, see the proof of Proposition \ref{tvrz_ap_totalspaceunique}. 

\item \textbf{The total space:} Now, let us construct the graded manifold $\E$. For each $\alpha \in I$, write $\U_{\alpha} := \ul{\pi}^{-1}(U_{\alpha})$. Let $(n_{j})_{j \in \Z} := \gdim(\M)$ and $(m_{j})_{j \in \Z} := \grk(\E)$. The graded dimension of $\E$ will be $(N_{j})_{j \in \Z}$, where $N_{j} := n_{j} + m_{-j}$. Its underlying manifold will be $E$ provided to us by Proposition \ref{tvrz_totalspaceE}. Note that $N_{0} = n_{0} + m_{0} = \dim(E)$. We also have a smooth atlas $\ul{\B} = \{ ( \U_{\alpha}, \ul{\rho_{\alpha}}) \}_{\alpha \in I}$ for $E$. To satisfy the assumptions of Proposition \ref{tvrz_gmgluing1}, for each $(\alpha,\beta) \in I^{2}$, one has to construct a graded smooth diffeomorphism 
\begin{equation}
\rho_{\alpha \beta}: \ul{\rho_{\beta}}( \U_{\alpha \beta})^{(N_{j})} \rightarrow \ul{\rho_{\alpha}}(\U_{\alpha \beta})^{(N_{j})},
\end{equation}
such that $\ul{\rho_{\alpha \beta}}$ are the transition maps of $\ul{\B}$, and there holds the cocycle condition (\ref{eq_cocyclegmgluing}). 

Since $\ul{\rho_{\beta}}( \U_{\alpha \beta}) = \ul{\phi_{\beta}}(U_{\alpha \beta}) \times \R^{m_{0}}$, there is a canonical isomorphism (see Example \ref{ex_productgdomains}):
\begin{equation}
\ul{\rho_{\beta}}( \U_{\alpha \beta})^{(N_{j})} \cong \ul{\phi_{\beta}}(U_{\alpha \beta})^{(n_{j})} \times (\R^{m_{0}})^{(m_{-j})}.
\end{equation}
Let $(\bbz^{A})_{A=1}^{n}$ denote the graded coordinate functions corresponding to the first factor (the ``\textit{base manifold coordinates}''), and $(\Xi^{\lambda})_{\lambda=1}^{m}$ those corresponding to the second factor (the ``\textit{fiber coordinates}''). We choose them so that $|\Xi^{\lambda}| = |\vartheta^{\lambda}|$, where $(\vartheta^{\lambda})_{\lambda =1}^{m}$ is the total basis of $K^{\ast}$, dual to the total basis $(\vartheta_{\lambda})_{\lambda=1}^{m}$ we have used in part $(a)$ to construct the transition functions. We also have a graded smooth projection $\hat{\pi}_{1}: \ul{\rho_{\beta}}(\U_{\alpha \beta})^{(N_{j})} \rightarrow \ul{\phi_{\beta}}(U_{\alpha \beta})^{(n_{j})}$. 

In order to define $\rho_{\alpha \beta}$, it suffices to define the pullbacks of coordinate functions, see Remark \ref{rem_itsufficestocalculatepullbacks} together with Theorem \ref{thm_gradedomaintheorem}. First, we declare 
\begin{equation} \label{eq_transmaptot1}
(\rho_{\alpha \beta})^{\ast}_{\ul{\rho_{\alpha}}(\U_{\alpha \beta})}( \bbz^{A}) := ((\hat{\pi}_{1})^{\ast}_{\ul{\phi_{\beta}}(U_{\alpha \beta})} \circ (\phi_{\alpha \beta})^{\ast}_{\ul{\phi_{\alpha}}(U_{\alpha \beta})})( \bbz^{A}),
\end{equation}
where on the right-hand side, $\bbz^{A}$ are viewed as coordinate functions on $\ul{\phi_{\alpha}}(U_{\alpha})^{(n_{j})}$. For the fiber coordinates, we declare 
\begin{equation} \label{eq_transmaptot2}
(\rho_{\alpha \beta})^{\ast}_{\ul{\rho_{\alpha}}(\U_{\alpha \beta})}( \Xi^{\lambda}) := ((\hat{\pi}_{1})^{\ast}_{\ul{\phi_{\beta}}(U_{\alpha \beta})} \circ (\phi^{-1}_{\beta})^{\ast}_{U_{\alpha \beta}})( (G^{\vee}_{\beta \alpha})_{\kappa}{}^{\lambda}) \cdot \Xi^{\kappa},
\end{equation}
where $(G^{\vee}_{\beta \alpha})_{\kappa}{}^{\lambda}$ are the transition functions of the dual $\E^{\ast}$, see Proposition \ref{tvrz_dualtrans}. To understand this expression, we just find the local representatives of  $(G^{\vee}_{\beta \alpha})_{\kappa}{}^{\lambda} \in \C^{\infty}_{\M}(U_{\alpha \beta})$ with respect to the graded local chart $(U_{\beta}, \phi_{\beta})$, pull them upstairs to the graded domain  $\ul{\rho_{\beta}}(\U_{\alpha \beta})^{(N_{j})}$, and multiply them with coordinate functions $\Xi^{\kappa}$ already living there. 

First, one has to argue that the underlying smooth map of $\rho_{\alpha \beta}$ is indeed the transition map $\ul{\rho_{\alpha \beta}}$ between the  two ordinary charts $(U_{\alpha}, \ul{\rho_{\alpha}})$ and $(U_{\beta}, \ul{\rho_{\beta}})$ for $E$. By Remark \ref{rem_itsufficestocalculatepullbacks}, it can be obtained from the bodies of the pullbacks of degree zero coordinate functions. 

For $|\bbz^{A}| = 0$, applying the body map on both sides of (\ref{eq_transmaptot1}) and using (\ref{eq_cdbodymaps}), one finds
\begin{equation}
\ul{(\rho_{\alpha \beta})^{\ast}_{\ul{\rho_{\alpha}}(\U_{\alpha \beta})}( \bbz^{A})} = \bbz^{A} \circ \ul{\phi_{\alpha \beta}} \circ \ul{\pi_{1}},
\end{equation}
where on the right-hand side, $\bbz^{A}$ is viewed as an ordinary coordinate function on $\ul{\phi_{\alpha}}(U_{\alpha \beta}) \subseteq \R^{n_{0}}$, and $\ul{\pi_{1}}: \ul{\phi_{\beta}}(U_{\alpha \beta}) \times \R^{m_{0}} \rightarrow \ul{\phi_{\beta}}(U_{\alpha\beta})$ is the projection. It follows from Proposition \ref{tvrz_totalspaceE}-$(iii)$ that the right-hand side is indeed $\ul{\rho_{\alpha \beta}}^{\ast}(\bbz^{A})$. 

The degree zero fiber coordinates can be relabeled as $(\Xi^{\lambda_{0}})_{\lambda_{0}=1}^{m_{0}}$, as they correspond to the basis $(\vartheta^{\lambda_{0}})_{\lambda_{0}=1}^{m_{0}}$, dual to the one $(\vartheta_{\lambda_{0}})_{\lambda_{0}=1}^{m_{0}}$ of $K_{0}$ (which we have used to construct $E$). They can be also viewed as ordinary coordinate functions on $\ul{\rho_{\alpha}}(\U_{\alpha \beta}) = \ul{\phi_{\alpha}}(U_{\alpha \beta}) \times \R^{m_{0}} \subseteq \R^{N_{0}}$. Applying the body map on both sides of (\ref{eq_transmaptot2}) and using (\ref{eq_cdbodymaps}), one finds
\begin{equation}
\ul{(\rho_{\alpha \beta})^{\ast}_{\ul{\rho_{\alpha}}(\U_{\alpha \beta})}( \Xi^{\lambda_{0}})} = ( (g^{\vee}_{\beta \alpha})_{\kappa_{0}}{}^{\lambda_{0}} \circ \ul{\phi_{\beta}^{-1}} \circ \ul{\pi_{1}}) \cdot \Xi^{\kappa_{0}},
\end{equation}
where $(g^{\vee}_{\beta \alpha})_{\kappa_{0}}{}^{\lambda_{0}} = (g_{\alpha \beta})^{\lambda_{0}}{}_{\kappa_{0}}$ are the transition functions of the dual vector bundle $E^{\ast}$. Again, it follows from Proposition \ref{tvrz_totalspaceE}-$(iii)$ that this is exactly how the transition functions $\ul{\rho_{\alpha \beta}}$ pull back the fiber coordinate functions $\Xi^{\lambda_{0}}$. Note that $\rho_{\alpha \beta}$ defines a graded diffeomorphism as its inverse is $\rho_{\beta \alpha}$. 

It remains to verify the cocycle condition (\ref{eq_cocyclegmgluing}). For each $(\alpha,\beta,\gamma) \in I^{3}$, let us use the shorthand notation $V := U_{\alpha \beta \gamma}$, $\hat{V}_{\alpha} := \ul{\phi_{\alpha}}(U_{\alpha \beta \gamma})$ and define $\hat{V}_{\beta}$ and $\hat{V}_{\gamma}$ similarly. It suffices to compare the pullbacks of coordinate functions. For each $A \in \{1,\dots,n\}$, one obtains 
\begin{equation}
\begin{split}
((\rho_{\beta \gamma})^{\ast}_{\hat{V}_{\beta}} \circ (\rho_{\alpha \beta})^{\ast}_{\hat{V}_{\alpha}})(\bbz^{A}) = & \ (\rho_{\beta \gamma})^{\ast}_{\hat{V}_{\beta}}[((\hat{\pi}_{1})^{\ast}_{\hat{V}_{\beta}} \circ (\phi_{\alpha \beta})^{\ast}_{\hat{V}_{\alpha}})(\bbz^{A})]  \\
= & \ (\hat{\pi}_{1})^{\ast}_{\hat{V}_{\gamma}}[ ((\phi_{\beta \gamma})^{\ast}_{\hat{V}_{\beta}} \circ (\phi_{\alpha \beta})^{\ast}_{\hat{V}_{\alpha}})(\bbz^{A})] \\
= & \ ((\hat{\pi}_{1})^{\ast}_{\hat{V}_{\gamma}} \circ (\phi_{\alpha \gamma})^{\ast}_{\hat{V}_{\alpha}})(\bbz^{A}) = (\rho_{\alpha \gamma})^{\ast}_{\hat{V}_{\alpha}}( \bbz^{A}). 
\end{split}
\end{equation}
We have used the cocycle condition (\ref{eq_cocyclegmgluing}) for the transition maps $\phi_{\alpha \beta}$. Next, for each $\lambda \in \{1, \dots, m\}$, one finds the expression 
\begin{equation}
\begin{split}
((\rho_{\beta \gamma})^{\ast}_{\hat{V}_{\beta}} \circ (\rho_{\alpha \beta})^{\ast}_{\hat{V}_{\alpha}})(\Xi^{\lambda}) = & (\rho_{\beta \gamma})^{\ast}_{\hat{V}_{\beta}} [ ( (\hat{\pi}_{1})^{\ast}_{\hat{V}_{\beta}} \circ (\phi^{-1}_{\beta})^{\ast}_{V})( (G^{\vee}_{\beta \alpha})_{\kappa}{}^{\lambda}|_{V}) \cdot \Xi^{\kappa} ] \\
= & \ (\hat{\pi}_{1})^{\ast}_{\hat{V}_{\gamma}}[ ((\phi_{\beta \gamma})^{\ast}_{\hat{V}_{\beta}} \circ (\phi_{\beta}^{-1})^{\ast}_{V})( (G^{\vee}_{\beta \alpha})_{\kappa}{}^{\lambda}|_{V})] \cdot (\rho_{\beta \gamma})^{\ast}_{\hat{V}_{\beta}}( \Xi^{\kappa}) \\
= & \ (\hat{\pi}_{1})^{\ast}_{\hat{V}_{\gamma}}[ (\phi_{\gamma}^{-1})^{\ast}_{V}( (G^{\vee}_{\beta \alpha})_{\kappa}{}^{\lambda}|_{V})] \cdot (\rho_{\beta \gamma})^{\ast}_{\hat{V}_{\beta}}( \Xi^{\kappa}) \\
= & \ ((\hat{\pi}_{1})^{\ast}_{\hat{V}_{\gamma}} \circ (\phi_{\gamma}^{-1})^{\ast}_{V})( (G^{\vee}_{\beta \alpha})_{\kappa}{}^{\lambda}|_{V} \cdot (G^{\vee}_{\gamma \beta})_{\chi}{}^{\kappa}|_{V}) \cdot \Xi^{\chi} \\
= & \ ((\hat{\pi}_{1})^{\ast}_{\hat{V}_{\gamma}} \circ (\phi_{\gamma}^{-1})^{\ast}_{V})( (G^{\vee}_{\gamma \alpha})_{\chi}{}^{\lambda}|_{V}) \cdot \Xi^{\chi} \\
= & \ (\rho_{\alpha \gamma})^{\ast}_{\hat{V}_{\alpha}}(\Xi^{\lambda}).
\end{split}
\end{equation}
As expected, we have used the cocycle condition (\ref{eq_VBtransfunctcocycle}) for the transition functions $(G_{\alpha \beta}^{\vee})_{\kappa}{}^{\lambda}$. 

By Proposition \ref{tvrz_gmgluing1}, there is thus a graded manifold $\E = (E, \C^{\infty}_{\E})$ together with a graded smooth atlas $\B = \{ (\U_{\alpha}, \rho_{\alpha}) \}_{\alpha \in I}$, such that the smooth atlas $\ul{\B}$ is induced from $\B$ and $\rho_{\alpha \beta}$ become its transition maps. Moreover, such $\E$ is unique up to an isomorphism. 
\item \textbf{The projection:} Let us now construct a graded smooth map $\pi: \E \rightarrow \M$ over the ordinary vector bundle projection $\ul{\pi}: E \rightarrow M$ provided by Proposition \ref{tvrz_totalspaceE}. 

Let $f \in \C^{\infty}_{\M}(U)$ for a given $U \in \Op(M)$. For each $\alpha \in I$, we have its local representative $\hat{f}_{\alpha} \in \C^{\infty}_{(n_{j})}( \ul{\phi_{\alpha}}(U \cap U_{\alpha}))$ with respect to the graded chart $(U_{\alpha}, \phi_{\alpha})$. It follows that functions
\begin{equation}
\hat{F}_{\alpha} := (\hat{\pi}_{1})^{\ast}_{\ul{\phi_{\alpha}}(U \cap U_{\alpha)}}( \hat{f}_{\alpha}) \in \C^{\infty}_{(N_{j})}( \ul{\rho_{\alpha}}( \ul{\pi}^{-1}(U) \cap \U_{\alpha}))
\end{equation}
for each $(\alpha,\beta) \in I^{2}$ satisfy the condition (\ref{eq_locreprelation}), if we consider the open set $\U := \ul{\pi}^{-1}(U) \subseteq E$ and the graded smooth atlas $\B = \{ (\U_{\alpha}, \rho_{\alpha}) \}_{\alpha \in I}$ for $\E$. Indeed, one finds 
\begin{equation}
\begin{split}
(\rho_{\alpha \beta})^{\ast}_{\ul{\rho_{\alpha}}(\U \cap \U_{\alpha \beta})}( \hat{F}_{\alpha}|_{\ul{\rho_{\alpha}}(\U \cap \U_{\alpha \beta})}) = & \  [(\rho_{\alpha \beta})^{\ast}_{\ul{\phi_{\alpha}}(U \cap U_{\alpha \beta}) \times \R^{m_{0}}} \circ (\hat{\pi}_{1})^{\ast}_{\ul{\phi_{\alpha}}(U \cap U_{\alpha \beta}}]( \hat{f}_{\alpha}|_{\ul{\phi_{\alpha}}(U \cap U_{\alpha \beta})}) \\
= & \ [(\hat{\pi}_{1})^{\ast}_{\ul{\phi_{\beta}}(U \cap U_{\alpha \beta})} \circ (\phi_{\alpha \beta})^{\ast}_{\ul{\phi_{\alpha}}(U \cap U_{\alpha \beta})}]( \hat{f}_{\alpha}|_{\ul{\phi_{\alpha}}(U \cap U_{\alpha \beta})}) \\
= & \ (\hat{\pi}_{1})^{\ast}_{\ul{\phi_{\beta}}(U \cap U_{\alpha \beta})}( \hat{f}_{\beta}|_{\ul{\phi_{\beta}}(U \cap U_{\alpha \beta})}) \\
= & \ \hat{F}_{\beta}|_{\ul{\rho_{\beta}}(\U \cap \U_{\alpha \beta})}.
\end{split}
\end{equation}
By Proposition \ref{tvrz_locrepgluing}, there is a unique function $\pi^{\ast}_{U}(f) \in \C^{\infty}_{\E}(\ul{\pi}^{-1}(U))$, such that each $\hat{F}_{\alpha}$ becomes its local representative with respect to the graded chart $(\U_{\alpha}, \rho_{\alpha})$. It is easy to check that $\pi^{\ast}_{U}$ defines a graded algebra morphism natural in $U$, hence a sheaf morphism $\pi^{\ast}: \C^{\infty}_{\M} \rightarrow \ul{\pi}_{\ast} \C^{\infty}_{\E}$. One can check that for each $U \in \Op(M)$, $f \in \C^{\infty}_{\M}(U)$ and $e \in \ul{\phi}^{-1}(U)$, one has 
\begin{equation}
(\pi^{\ast}_{U}(f))(e) = f( \ul{\pi}(e)).
\end{equation}
Consequently, $f( \ul{\pi}(e)) = 0$ implies $(\pi^{\ast}_{U}(f))(e) = 0$, hence due to (\ref{eq_Jacobsonradicalgradedmafolds}), $\pi = (\ul{\phi}, \pi^{\ast})$ defines a graded smooth map $\pi: \E \rightarrow \M$. 
\item \textbf{$\E$ is locally trivial}: Recall that we have started with a local trivialization $(U_{\alpha}, \varphi_{\alpha})$ of $\E$, a collection of sheaf isomorphisms $\varphi_{\alpha}: \Gamma_{\E}|_{U_{\alpha}} \rightarrow \C^{\infty}_{\M}|_{U_{\alpha}}$. 

Having a graded manifold $\E$, let us now construct a collection of ``actual'' graded diffeomorphisms $\varphi_{\alpha}: \E|_{\U_{\alpha}} \rightarrow \M|_{U_{\alpha}} \times K$, where $K$ is viewed as a graded manifold over $K_{0}$, as discussed in Example \ref{ex_gVectasGM}. Let $\pi_{1}: \M|_{U_{\alpha}} \times K \rightarrow \M|_{U_{\alpha}}$ and $\pi_{2}: \M|_{U_{\alpha}} \times K \rightarrow K$ be the canonical projections. We impose the following requirements:
\begin{enumerate}[(i)]
\item For each $\alpha \in I$, the underlying smooth map of $\varphi_{\alpha}$ has to be $\ul{\varphi_{\alpha}}: \U_{\alpha} \rightarrow U_{\alpha} \times K_{0}$, obtained in Proposition \ref{tvrz_totalspaceE}-$(i)$.
\item For each $\alpha \in I$, one has $\pi_{1} \circ \varphi_{\alpha} = \pi|_{\U_{\alpha}}: \E|_{\U_{\alpha}} \rightarrow \M|_{U_{\alpha}}$.
\end{enumerate}
By the universal property of products, $\varphi_{\alpha}$ is uniquely determined by its compositions with the projections $\pi_{1}$ and $\pi_{2}$. The first composition is already fixed by the property (ii). We thus only have to define a graded smooth map $\pi_{2} \circ \varphi_{\alpha}: \E|_{\U_{\alpha}} \rightarrow K$. As we have the graded local chart $\rho_{\alpha}: \E|_{\U_{\alpha}} \rightarrow \ul{\rho_{\alpha}}(\U_{\alpha})^{(N_{j})}$, it suffices to specify the composition
\begin{equation}
\hat{\varphi}_{\alpha} := \pi_{2} \circ \varphi_{\alpha} \circ \rho_{\alpha}^{-1}: \ul{\rho_{\alpha}}(\U_{\alpha})^{(N_{j})} \rightarrow K.
\end{equation}
It follows from the requirement (i) and the construction of $\ul{\rho_{\alpha}}$ in Proposition \ref{tvrz_totalspaceE}-$(iii)$ that the underlying smooth map $\ul{\hat{\varphi}_{\alpha}}: \ul{\phi_{\alpha}}(U_{\alpha}) \times \R^{m_{0}} \rightarrow K$ must take the form $\ul{\hat{\varphi}_{\alpha}} = \psi_{0} \circ \ul{\hat{\pi}_{2}}$, where $\ul{\hat{\pi}_{2}}: \ul{\phi_{\alpha}}(U_{\alpha}) \times \R^{m_{0}} \rightarrow \R^{m_{0}}$ is the projection and $\psi_{0}: \R^{m_{0}} \rightarrow K_{0}$ is the diffeomorphism induced by the choice of the basis $(\vartheta_{\lambda_{0}})_{\lambda_{0}=1}^{m_{0}}$. 

Now, as noted in Example \ref{ex_gVectasGM}-(i), we may regard the dual total basis $(\vartheta^{\lambda})_{\lambda=1}^{m}$ of $K^{\ast}$ as global coordinate functions on $K$. To define $\hat{\varphi}_{\alpha}$, it thus suffices to define their pullbacks. We declare
\begin{equation}
(\hat{\varphi}_{\alpha})^{\ast}_{K_{0}}( \vartheta^{\lambda}) := \Xi^{\lambda} \in \C^{\infty}_{(N_{j})}( \ul{\hat{\varphi}_{\alpha}}^{-1}(K_{0})). 
\end{equation}
Degree zero coordinate functions can be relabeled as $(\vartheta^{\lambda_{0}})_{\lambda_{0} = 1}^{m_{0}}$ and it follows from the definition of the coordinate functions $\Xi^{\lambda_{0}}$ on $\ul{\rho_{\alpha}}(\U_{\alpha})$ that 
\begin{equation}
\ul{(\hat{\varphi}_{\alpha})^{\ast}_{K_{0}}( \vartheta^{\lambda})} = \vartheta^{\lambda_{0}} \circ (\psi_{0} \circ \ul{\pi_{2}}),
\end{equation}
proving that one indeed has $\ul{\hat{\varphi}_{\alpha}} = \psi_{0} \circ \ul{\pi_{2}}$.

Finally, note that it is easy to see that $\varphi_{\alpha}$ is a local graded diffeomorphism, e.g. using the local coordinates. Its underlying map $\ul{\varphi_{\alpha}}: E|_{\U_{\alpha}} \rightarrow U_{\alpha} \times K_{0}$ is a bijection, hence by Proposition \ref{tvrz_diffiflocaldiffandbij}, $\varphi_{\alpha}$ is a graded diffeomorphism. 

Finally, if we denote the global graded chart for $K$ as $\varphi_{K}$, the graded chart $\rho_{\alpha}$ can be written as a composition $\rho_{\alpha} := (\phi_{\alpha} \times \varphi_{K}) \circ \varphi_{\alpha}$, if we identify $\ul{\rho_{\alpha}}(\U_{\alpha})^{(N_{j})} \cong \ul{\phi_{\alpha}}(U_{\alpha})^{(n_{j})} \times (\R^{m_{0}})^{(m_{-j})}$. 
\end{enumerate}
Let us summarize the results obtained in parts (a) - (d) in the form of a proposition. It should be viewed as a graded version of Proposition \ref{tvrz_totalspaceE}.
\begin{tvrz}[\textbf{The total space}] \label{tvrz_totalspace}
Let $\E$ be a graded vector bundle over $\M$ with a typical fiber $K$. Let $\{ (U_{\alpha}, \varphi_{\alpha}) \}_{\alpha \in I}$ be its local trivialization and suppose we have a graded smooth atlas $\A = \{ (U_{\alpha}, \phi_{\alpha}) \}_{\alpha \in I}$ for $\M$, defined on the same and at most countable cover $\{ U_{\alpha} \}_{\alpha \in I}$ of $M$.

Then there exists a graded manifold $\E = (E,\C^{\infty}_{\E})$, where $\ul{\pi}: E \rightarrow M$ is an ordinary vector bundle, and a graded smooth map $\pi = (\ul{\pi},\pi^{\ast}): \E \rightarrow \M$, such that:
\begin{enumerate}[(i)]
\item For each $\alpha \in I$, there is a diffeomorphism $\varphi_{\alpha}: \E|_{\U_{\alpha}} \rightarrow \M|_{U_{\alpha}} \times K$, such that $\pi_{1} \circ \varphi_{\alpha} = \pi$, where $\pi_{1}$ denotes the canonical projection and $\U_{\alpha} := \ul{\pi}^{-1}(U_{\alpha})$. Moreover, the collection $\{ (U_{\alpha}, \ul{\varphi_{\alpha}}) \}_{\alpha \in I}$ is a local trivialization of $\ul{\pi}: E \rightarrow M$. 
\item There is an induced graded smooth atlas $\B = \{ (\U_{\alpha}, \rho_{\alpha}) \}_{\alpha \in I}$ for $\E$, where each $\rho_{\alpha}: \E|_{\U_{\alpha}} \rightarrow \ul{\rho_{\alpha}}(\U_{\alpha})^{(N_{j})} \cong \ul{\phi_{\alpha}}(U_{\alpha})^{(n_{j})} \times (\R^{m_{0}})^{(m_{-j})}$ can be written as a composition
\begin{equation}
\rho_{\alpha} = (\phi_{\alpha} \times \varphi_{K}) \circ \varphi_{\alpha},
\end{equation}
where $\varphi_{K}: K \rightarrow (\R^{m_{0}})^{(m_{-j})}$ is the global graded chart for $K$ induced by a choice of a total basis $(\vartheta_{\lambda})_{\lambda=1}^{m}$ of $K$, see Example \ref{ex_gVectasGM}.  
\end{enumerate}
This construction does not depend on the particular choice of a local trivialization $\{(U_{\alpha},\varphi_{\alpha})\}_{\alpha \in I}$ or of a graded smooth atlas $\A$. See Proposition \ref{tvrz_ap_totalspaceunique} for details.

The graded manifold $\E$ is called the \textbf{total space} of the graded vector bundle $\E$.
\end{tvrz}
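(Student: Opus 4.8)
The plan is to assemble the statement from the four construction steps (a)--(d), which have already done the hard work; the proposition is essentially a bookkeeping summary, so the proof will mostly consist of verifying that the objects produced in (a)--(d) satisfy the listed properties and that the whole construction is independent of choices. First I would fix a local trivialization $\{(U_{\alpha},\varphi_{\alpha})\}_{\alpha\in I}$ of $\E$ and, shrinking/refining if necessary, arrange that it is indexed by the same at most countable open cover carrying a graded smooth atlas $\A=\{(U_{\alpha},\phi_{\alpha})\}$ of $\M$ (this is possible since $M$ is second countable, so every open cover has a countable subcover, and intersecting local trivialization domains with chart domains only refines). Then I would invoke step (a): applying Proposition~\ref{tvrz_totalspaceE} to the ordinary vector bundle $E=\ul{\E}_0$ with its local trivialization $\{(U_{\alpha},\ul{\varphi_{\alpha}})\}$ and the induced smooth atlas $\ul{\A}$ yields the underlying manifold $E$, the smooth surjection $\ul{\pi}\colon E\to M$, the diffeomorphisms $\ul{\varphi_{\alpha}}\colon\ul{\pi}^{-1}(U_{\alpha})\to U_{\alpha}\times K_{0}$, and the smooth atlas $\ul{\B}=\{(\U_{\alpha},\ul{\rho_{\alpha}})\}$ for $E$.

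Next I would run step (b): for each $(\alpha,\beta)\in I^{2}$ the formulas (\ref{eq_transmaptot1}) and (\ref{eq_transmaptot2}) define, via Remark~\ref{rem_itsufficestocalculatepullbacks} together with Theorem~\ref{thm_gradedomaintheorem}, a morphism of graded domains $\rho_{\alpha\beta}\colon\ul{\rho_{\beta}}(\U_{\alpha\beta})^{(N_j)}\to\ul{\rho_{\alpha}}(\U_{\alpha\beta})^{(N_j)}$; I would recall from step (b) that its underlying map is precisely the transition map $\ul{\rho_{\alpha\beta}}$ of $\ul{\B}$ (checked by applying the body map and using (\ref{eq_cdbodymaps}) together with Proposition~\ref{tvrz_totalspaceE}-(iii)), that it is a graded diffeomorphism with inverse $\rho_{\beta\alpha}$, and that the cocycle condition (\ref{eq_cocyclegmgluing}) holds --- this being the one genuinely computational point, where one pulls back $\bbz^{A}$ using the cocycle condition (\ref{eq_cocyclegmgluing}) for the $\phi_{\alpha\beta}$ and pulls back $\Xi^{\lambda}$ using the cocycle condition (\ref{eq_VBtransfunctcocycle}) for the dual transition functions $(G^{\vee}_{\alpha\beta})_{\kappa}{}^{\lambda}$. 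Proposition~\ref{tvrz_gmgluing1} then produces the graded manifold $\E=(E,\C^{\infty}_{\E})$ together with the graded smooth atlas $\B=\{(\U_{\alpha},\rho_{\alpha})\}$ realizing $\ul{\B}$ and having the $\rho_{\alpha\beta}$ as transition maps, unique up to graded diffeomorphism by the same proposition; this gives $\gdim(\E)=(N_j)_{j\in\Z}$ with $N_j=n_j+m_{-j}$.

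For the projection I would invoke step (c): the functions $\hat{F}_{\alpha}:=(\hat{\pi}_1)^{\ast}(\hat{f}_{\alpha})$ satisfy the gluing relation (\ref{eq_locreprelation}) for the atlas $\B$ (verified using the cocycle condition for $\phi_{\alpha\beta}$ exactly as displayed in step (c)), so Proposition~\ref{tvrz_locrepgluing} produces a unique $\pi^{\ast}_{U}(f)\in\C^{\infty}_{\E}(\ul{\pi}^{-1}(U))$; naturality in $U$ and the graded algebra morphism property are local, hence immediate, and the pointwise identity $(\pi^{\ast}_{U}(f))(e)=f(\ul{\pi}(e))$ together with (\ref{eq_Jacobsonradicalgradedmafolds}) gives that $\pi=(\ul{\pi},\pi^{\ast})$ is a graded smooth map. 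Property (i) of the proposition is then step (d): the graded diffeomorphisms $\varphi_{\alpha}\colon\E|_{\U_{\alpha}}\to\M|_{U_{\alpha}}\times K$ are defined by their compositions with the two projections --- $\pi_1\circ\varphi_{\alpha}=\pi|_{\U_{\alpha}}$ by fiat and $\pi_2\circ\varphi_{\alpha}$ via the pullbacks $(\hat\varphi_{\alpha})^{\ast}_{K_0}(\vartheta^{\lambda}):=\Xi^{\lambda}$ --- using the universal property of the product (Proposition~\ref{tvrz_products}); that each $\varphi_{\alpha}$ is a graded diffeomorphism follows because it is a local graded diffeomorphism (clear in the coordinates $(\bbz^A,\Xi^{\lambda})$) with bijective underlying map, hence Proposition~\ref{tvrz_diffiflocaldiffandbij} applies, and $\{(U_{\alpha},\ul{\varphi_{\alpha}})\}$ is a local trivialization of $\ul{\pi}\colon E\to M$ by Proposition~\ref{tvrz_totalspaceE}-(i). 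Property (ii) is the identity $\rho_{\alpha}=(\phi_{\alpha}\times\varphi_{K})\circ\varphi_{\alpha}$ verified by comparing pullbacks of the coordinate functions $\bbz^{A},\Xi^{\lambda}$ on both sides (both equal $(\hat\pi_1)^{\ast}(\phi_{\alpha}$-coordinates$)$ and $\Xi^{\lambda}$ respectively). Finally, for the independence claim I would simply refer the reader to Proposition~\ref{tvrz_ap_totalspaceunique}, noting that any two choices of local trivialization and atlas admit a common refinement, and that Proposition~\ref{tvrz_gmgluing2} and the modification of Proposition~\ref{tvrz_shcollation2} guarantee that refinements and equivalent gluing data yield graded diffeomorphic total spaces over $\1_{M}$ commuting with the projections. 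The main obstacle, such as it is, is purely organizational: keeping the index bookkeeping (base vs.\ fiber coordinates, dual vs.\ original transition functions, the degree shift $m_j\mapsto m_{-j}$) consistent throughout, since no new mathematical idea is required beyond what steps (a)--(d) already contain.
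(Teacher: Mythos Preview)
Your proposal is correct and follows exactly the paper's approach: the proposition is explicitly presented as a summary of the preceding construction steps (a)--(d) in Subsection~\ref{subsec_totalspace}, and you have accurately traced through each of those steps and the propositions they invoke. The paper itself offers no separate proof beyond ``Let us summarize the results obtained in parts (a)--(d) in the form of a proposition,'' so your write-up is in fact more detailed than what appears there.
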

\begin{example}
We can finally do some justice to the trivial graded vector bundle defined in Example \ref{ex_trivialVB}. Its sheaf of sections is $\C^{\infty}_{\M}[K]$, hence $M$ can be covered by a single ``local'' trivialization chart $(M, \1_{\C^{\infty}_{\M}[K]})$. It follows from Proposition \ref{tvrz_totalspace}-$(i)$ that its total space is diffeomorphic to the product manifold $\M \times K$. This explains the notation. 
\end{example}

\begin{example}
Proposition \ref{tvrz_totalspace} is by far the most common way to construct graded manifolds. Indeed, one often starts with an ordinary vector bundle $q: E \rightarrow M$, viewing it as a (trivially) graded vector bundle. The graded manifold $E[k]$ constructed in Example \ref{ex_degreeshiftedordvect} is then precisely the total space corresponding to the degree shift of $E$. One can then construct a tangent bundle of $E[k]$ followed by another degree shift, that is $(T E[k])[\ell]$, which is usually denoted as $T[\ell]E[k]$, and look at the corresponding total space to obtain a new graded manifold over $M$ (if $k \neq 0$, $\ell \neq 0$ and $k + \ell \neq 0$). Similarly, one write $T^{\ast}[\ell]E[k]$ for $(T^{\ast} E[k])[\ell]$ (if $k \neq 0$, $\ell \neq 0$ and $k - \ell \neq 0$). 

Let us briefly discuss the degrees of local coordinates. Let $n := \dim(M)$ and $m := \rk(E)$. On $E[k]$, we have the local coordinates $(x^{i})_{i=1}^{n}$ of degree $0$ and $(\xi_{\mu})_{\mu=1}^{m}$ of degree $k$. On $T[\ell]E[k]$ we obtain the additional ``conjugate velocities'' $(v^{i})_{i=1}^{n}$ and $(w_{\mu})_{\mu=1}^{m}$ of degree $\ell$ and $k + \ell$, respectively. For the shifted cotangent case $T^{\ast}[\ell]E[k]$, we obtain the additional ``conjugate momenta'' $(p_{i})_{i=1}^{n}$ and $(r^{\mu})_{\mu=1}^{k}$ of degree $\ell$ and $\ell - k$. 

For example, the graded manifold corresponding to the exact Courant algebroid is $\M = T^{\ast}[2]T[1]M$, where $M$ is an ordinary manifold, see \cite{roytenberg2002structure}. Here we obtain the coordinates $(x^{i})_{i=1}^{n}$ of degree $0$, $(\xi_{i})_{i=1}^{n}$ of degree $1$ and the corresponding conjugate momenta $(p_{i})_{i=1}^{n}$ of degree $2$ and $(r^{i})_{i=1}^{n}$ of degree $1$. 
\end{example}
\begin{rem} \label{rem_gradedvectorbundlesdifferently}
Let us very briefly elaborate on the following two questions:
\begin{enumerate}[(1)] 
\item Can the sheaf $\Gamma_{\E}$ be obtained from the sheaf $\C^{\infty}_{\E}$?
\item Is there a way to define graded vector bundles in a ``standard way'', that is using graded manifolds and graded smooth maps?
\end{enumerate}
In both cases, the answer is affirmative. First, observe that $\C^{\infty}_{\E}$ can be viewed as a sheaf of graded $\C^{\infty}_{\E}$-modules. Hence by Remark \ref{rem_pushforwardmodules}, we have a pushforward sheaf $\pi_{\ast} \C^{\infty}_{\E}$ of graded $\C^{\infty}_{\M}$-modules. Let $\{ (U_{\alpha},\varphi_{\alpha}) \}_{\alpha \in I}$ be a local trivialization of $\E$. In particular, we have the corresponding graded diffeomorphisms $\varphi_{\alpha}: \E|_{\U_{\alpha}} \rightarrow \M|_{U_{\alpha}} \times K$, where $\U_{\alpha} := \ul{\pi}^{-1}(U_{\alpha})$. 

Let $f \in (\pi_{\ast} \C^{\infty}_{\E})(U) \equiv \C^{\infty}_{\E}(\ul{\pi}^{-1}(U))$. Write $\U := \ul{\pi}^{-1}(U)$. We say that $f$ \textbf{linear in the fiber}, if for every $\alpha \in I$, the function 
\begin{equation} 
(\varphi^{-1}_{\alpha})^{\ast}_{\U \cap \U_{\alpha}}( f|_{\U \cap \U_{\alpha}}) \in \C^{\infty}_{\M|_{U_{\alpha}} \times K}( (U \cap U_{\alpha}) \times K_{0})
\end{equation}
can be written as a linear combination $(\pi_{1})^{\ast}_{U \cap U_{\alpha}}(f_{\lambda}^{(\alpha)}) \cdot \vartheta^{\lambda}$ for functions $f_{\lambda}^{(\alpha)} \in \C^{\infty}_{\M}(U \cap U_{\alpha})$, where $\pi_{1}: \M|_{U_{\alpha}} \times K \rightarrow \M|_{U_{\alpha}}$ is the projection and $\vartheta^{\lambda}$ are global coordinate functions on $\M|_{U_{\alpha}} \times K$ induced from a total basis $(\vartheta^{\lambda})_{\lambda=1}^{m}$ of $K^{\ast}$. 

One can show that functions linear in fiber form a graded $\C^{\infty}_{\M}(U)$-submodule 
\begin{equation}
\C^{\lin}_{\E}(U) \subseteq (\pi_{\ast} \C^{\infty}_{\E})(U).
\end{equation}
In fact, they form a subsheaf $\C^{\lin}_{\E} \subseteq \pi_{\ast} \C^{\infty}_{\E}$ of graded $\C^{\infty}_{\M}$-modules. This sheaf can be seen isomorphic to the dual sheaf $\Gamma_{\E}^{\ast}$. More precisely, every $f \in \C^{\lin}_{\E}(U)$ is mapped to a section in $\Gamma_{\E}^{\ast}(U)$ which can be on $U \cap U_{\alpha}$ decomposed as $f_{\lambda}^{(\alpha)} \Phi^{\lambda}_{(\alpha)}|_{U \cap U_{\alpha}}$, where $(\Phi^{\lambda}_{(\alpha)})_{\lambda=1}^{m}$ is the local frame corresponding to the local trivialization chart $(U, \varphi_{\alpha}^{\vee})$ of $\E^{\ast}$ and $(\vartheta^{\lambda})_{\lambda=1}^{m}$. This answers the question (1). 

Now, let $\E$ and $\M$ be a pair of graded manifolds together with a graded smooth map $\pi: \E \rightarrow \M$. Assume that $\ul{\pi}: E \rightarrow M$ is surjective. Fix a finite-dimensional graded vector space $K$ and suppose we are given a collection $\{ (U_{\alpha}, \varphi_{\alpha}) \}_{\alpha \in I}$, such that $\{ U_{\alpha} \}_{\alpha \in I}$ is an open cover of $M$, and each $\varphi_{\alpha}: \E|_{\U_{\alpha}} \rightarrow \M|_{U_{\alpha}} \times K$ is a graded diffeomorphism satisfying $\pi_{1} \circ \varphi_{\alpha} = \pi|_{\U_{\alpha}}$. We write $\U_{\alpha} := \ul{\pi}^{-1}(U_{\alpha})$. In the same way as above, one would like to define a subsheaf $\C^{\lin}_{\E} \subseteq \pi_{\ast} \C^{\infty}_{\E}$ of functions linear in the fiber. To do so, form the transition maps 
\begin{equation} \label{eq_transdiffeos1}
\varphi_{\alpha \beta} := \varphi_{\alpha}|_{\U_{\alpha \beta}} \circ (\varphi_{\beta}|_{\U_{\alpha \beta}})^{-1}: \M|_{U_{\alpha \beta}} \times K \rightarrow \M|_{U_{\alpha \beta}} \times K.
\end{equation}
It turns out $\C^{\lin}_{\E}$ is well-defined, iff for each $(\alpha,\beta) \in I^{2}$ and $\lambda \in \{1,\dots,m\}$, one has 
\begin{equation} \label{eq_transdiffeos2}
(\pi_{2} \circ \varphi_{\alpha \beta})^{\ast}_{K_{0}}(\vartheta^{\lambda}) = (\pi_{1})^{\ast}_{U_{\alpha \beta}}( (G_{\beta \alpha}^{\vee})_{\kappa}{}^{\lambda} ) \cdot \vartheta^{\kappa},
\end{equation}
for unique functions $(G^{\vee}_{\beta \alpha})_{\kappa}{}^{\lambda} \in \C^{\infty}_{\M}(U_{\alpha \beta})$. Not surprisingly, they satisfy the cocycle condition (\ref{eq_VBtransfunctcocycle}). It follows that $\Gamma_{\E} := (\C_{\E}^{\lin})^{\ast}$ is locally freely and finitely generated sheaf of $\C^{\infty}_{\M}$-modules of a constant rank, hence it defines a graded vector bundle over $\M$ in the sense of Definition \ref{def_vb}. 

This shows graded vector bundles can be equivalently defined in a more ``traditional way'' as triples $(\E,\M,\pi)$ together with a local trivialization $\{ (U_{\alpha}, \varphi_{\alpha}) \}_{\alpha \in I}$, where each $\varphi_{\alpha}: \E|_{\U_{\alpha}} \rightarrow \M|_{U_{\alpha}} \times K$ is a graded diffeomorphism and the transition maps (\ref{eq_transdiffeos1}) have the form (\ref{eq_transdiffeos2}). This is the approach to graded vector bundles which was taken e.g. in \cite{mehta2006supergroupoids}. 
\end{rem}
\section{Exterior algebra} \label{sec_exterior}
\subsection{Sheaf of differential forms} \label{subsec_diffforms}
Let $\M = (M,\C^{\infty}_{\M})$ be a graded manifold. In this section, our main goal is to construct a \textbf{sheaf $\Omega_{\M}$ of differential forms on $\M$}, having the following properties:
\begin{enumerate}[(i)]
\item It is a sheaf of graded commutative algebras over $M$, $\Omega_{\M} \in \Sh(M,\gcAs)$.
\item It is a sheaf of graded $\C^{\infty}_{\M}$-modules over $M$, $\Omega_{\M} \in \Sh^{\C^{\infty}_{\M}}(M, \gVect)$. 
\item These two structures are mutually compatible.
\item One obtains a graded analogue of Cartan calculus, that is one has a Lie derivative, exterior derivative and interior product satisfying ``Cartan magic formulas''. 
\end{enumerate}
We use the approach that appeared already in \cite{bernshtein1977integration} for supermanifolds. The idea is to find $\Omega_{\M}$ as a sheaf of functions on a certain graded manifold over $M$. The requirements $(i)$-$(iii)$ will be satisfied automatically by construction. Finally, all operations in the Cartan calculus are in fact graded derivations, which implies that they can be defined as vector fields. In the $\Z$-graded setting, this was used to some extent in \cite{mehta2006supergroupoids} and it is a common knowledge in the area. 

In Proposition \ref{tvrz_vf}, we have constructed a sheaf $\X_{\M}$ of vector fields, which in turn defines the tangent bundle $T\M$ with $\Gamma_{T\M} := \X_{\M}$. Its typical fiber is $\R^{(n_{-j})}$. For each integer $\ell \in \Z$, one can consider its degree shift $T\M[\ell]$ (see Proposition \ref{tvrz_degreeshiftedvector}), which is usually denoted as $T[\ell]\M$. Let $(n_{j})_{j \in \Z} := \gdim(\M)$. It follows that the graded rank of $T[\ell]\M$ is $(n_{\ell - j})_{j \in \Z}$. 

By Proposition \ref{tvrz_totalspace}, one can view $T[\ell]\M$ as its total space graded manifold of the graded dimension $(n_{j} + n_{j-\ell})_{j \in \Z}$, whose underlying manifold is an ordinary vector bundle $E$, where
\begin{equation}
E := \ul{T[\ell]\M}_{0} \cong \ul{T\M}_{\ell},
\end{equation}
see Example \ref{ex_pullbackbyiM} for the definition. Its rank is $n_{-\ell}$. 

In the literature, people have usually considered the case $\ell = 1$. See e.g. \cite{mehta2006supergroupoids, Kontsevich:1997vb, 2011RvMaP..23..669C}. This works flawlessly for non-negatively graded manifolds. However, in general, $\Omega_{\M} := \C^{\infty}_{T[1]\M}$ is not a sheaf over manifold $M$, as $E$ can have a non-zero rank. To get out of the trouble, we resort to a dirty little trick: simply choose $\ell$ large enough to avoid it entirely. 

\begin{definice}  \label{def_diffforms}
Let $\M = (M,\C^{\infty}_{\M})$ be a graded manifold of the graded dimension $(n_{j})_{j \in \Z}$. Let $s \in \N_{0}$ be the smallest non-negative \textit{even} integer, such that $n_{-s'} = 0$ for all $s' > s$. 
Since $\sum_{j \in \Z} n_{j} < \infty$, $s$ is well-defined and unique. Define the \textbf{sheaf $\Omega_{\M}$ of differential forms on $\M$} as the algebra of functions on the degree shifted tangent bundle $T[1+s]\M$, that is 
\begin{equation}
\Omega_{\M} := \C^{\infty}_{T[1+s]\M}.
\end{equation}
\end{definice}
\begin{tvrz}
$\Omega_{\M}$ has the properties (i) - (iii) listed above.
\end{tvrz}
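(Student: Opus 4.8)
The plan is to unwind the definition of $\Omega_{\M}$ and transfer the required structures through the identification $\Omega_{\M} = \C^{\infty}_{T[1+s]\M}$. First I would verify that $s$ as defined in Definition \ref{def_diffforms} guarantees that the underlying manifold $E = \ul{T[1+s]\M}_{0}$ of the total space is the base manifold $M$ itself. Indeed, by Proposition \ref{tvrz_degreeshiftedvector} the graded rank of $T[1+s]\M$ is $(n_{(1+s)-j})_{j\in\Z}$ evaluated after the degree shift of $\grk(T\M) = (n_{-j})_{j\in\Z}$; the degree-zero component of this rank is $n_{-(1+s)}$, which vanishes by the choice of $s$ (since $1+s > s$). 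Hence by Proposition \ref{tvrz_totalspace}, the underlying manifold of the total space $T[1+s]\M$ is the ordinary vector bundle $\ul{T[1+s]\M}_{0}$ over $M$ of rank $n_{-(1+s)} = 0$, which is canonically $M$. Therefore $\Omega_{\M} = \C^{\infty}_{T[1+s]\M}$ is genuinely a sheaf \emph{on the topological space $M$}, not merely on some auxiliary total space.

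With this in place, property (i) is immediate: $\C^{\infty}_{\cN} \in \Sh(N,\gcAs)$ for any graded manifold $\cN$, so taking $\cN = T[1+s]\M$ (which has underlying space $M$) gives $\Omega_{\M} \in \Sh(M,\gcAs)$ directly. For property (ii), I would use the projection $\pi\colon T[1+s]\M \to \M$ constructed in Proposition \ref{tvrz_totalspace}(d). By Remark \ref{rem_pushforwardmodules}, the pushforward $\pi_{\ast}\C^{\infty}_{T[1+s]\M}$ is a sheaf of graded $\C^{\infty}_{\M}$-modules on $M$; but since $\ul{\pi}\colon M \to M$ is the identity (the underlying manifold of the total space being $M$), one has $\pi_{\ast}\C^{\infty}_{T[1+s]\M} = \C^{\infty}_{T[1+s]\M} = \Omega_{\M}$ as sheaves on $M$, with the $\C^{\infty}_{\M}(U)$-module action on $\Omega_{\M}(U)$ given by $g \tr \omega := \pi^{\ast}_{U}(g) \cdot \omega$. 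Thus $\Omega_{\M} \in \Sh^{\C^{\infty}_{\M}}(M,\gVect)$.

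Property (iii), the mutual compatibility of the graded-algebra and graded-module structures, follows because both come from a single object. Concretely, for $U \in \Op(M)$, $g \in \C^{\infty}_{\M}(U)$, and $\omega,\omega' \in \Omega_{\M}(U)$, the module action is $g \tr \omega = \pi^{\ast}_U(g)\cdot\omega$, where $\pi^{\ast}_U$ is a graded algebra morphism; hence $g \tr (\omega \cdot \omega') = \pi^{\ast}_U(g)\cdot(\omega\cdot\omega') = (\pi^{\ast}_U(g)\cdot\omega)\cdot\omega' = (g\tr\omega)\cdot\omega'$ by associativity of the product in $\Omega_{\M}(U)$, and similarly $g\tr(\omega\cdot\omega') = (-1)^{|g||\omega|}\omega\cdot(g\tr\omega')$ using graded commutativity. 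This is precisely the statement that $\Omega_{\M}$ is a sheaf of graded commutative $\C^{\infty}_{\M}$-algebras. I would spell out only that one line and leave the rest as direct verification. The only genuinely delicate point — and the one I would emphasize — is the very first step: checking that the evenness of $s$ and the strict inequality $1+s>s$ really do force $\ul{T[1+s]\M}_{0} = M$; the evenness is needed so that degree-shift conventions (Koszul signs in Remark \ref{rem_shiftedmodule}) do not interfere, and everything downstream is then formal consequence of results already established. Property (iv), the Cartan calculus, is not part of this proposition's claim and is deferred to the subsequent development.
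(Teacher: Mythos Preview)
Your proof is correct and follows essentially the same route as the paper: first argue that the choice of $s$ forces the underlying manifold of $T[1+s]\M$ to be $M$ (so $\Omega_{\M}$ is a sheaf on $M$), then use the projection $\pi$ with $\ul{\pi}=\1_M$ to transport the $\C^{\infty}_{\M}$-module structure via pushforward, and finally check compatibility via associativity and graded commutativity of the product. The only remark worth adjusting is your aside about evenness of $s$: for \emph{this} proposition all that is used is $n_{-(1+s)}=0$, which follows from $1+s>s$ and the defining property of $s$; the evenness of $s$ is relevant later (for the sign in the commutation rule $\dr\bbz^{A}\cdot\dr\bbz^{B}=(-1)^{(|\bbz^{A}|+1)(|\bbz^{B}|+1)}\dr\bbz^{B}\cdot\dr\bbz^{A}$), not here.
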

\begin{proof}
The underlying vector bundle of $T[1+s]\M$ is canonically identified with $\1_{M}: M \rightarrow M$, whence $\Omega_{\M} \in \Sh(M,\gcAs)$. This fulfills the requirement (i). 

Next, we have a graded smooth map $\pi: T[1+s]\M \rightarrow \M$. As noted in Remark \ref{rem_gradedvectorbundlesdifferently}, the pushforward sheaf $\pi_{\ast} \Omega_{\M}$ has a natural structure of a sheaf of graded $\C^{\infty}_{\M}$-modules. But as $\ul{\pi} = \1_{M}$, we have $\pi_{\ast} \Omega_{\M} = \Omega_{\M}$. This shows that $\Omega_{\M}$ has the required property (ii). More precisely, the action of $f \in \C^{\infty}_{\M}(U)$ on $\omega \in \Omega_{\M}(U)$ is $f \tr \omega := \pi^{\ast}_{U}(f) \cdot \omega$. 

Finally, for any $f \in \C^{\infty}_{\M}(U)$ and $\omega,\omega' \in \Omega_{\M}(U)$, one finds

\begin{equation}
f \tr (\omega \cdot \omega') = (f \tr \omega) \cdot \omega' = (-1)^{|f||\omega|} \omega \cdot (f \tr \omega').
\end{equation}
This shows that the the structure of graded commutative associative algebra on $\Omega_{\M}(U)$ is compatible with the structure of a graded $\C^{\infty}_{\M}(U)$-module and $\Omega_{\M}$ has the property (iii). 
\end{proof}

Next, let us talk about suitable coordinate functions on $T[1+s]\M$. Let $\A = \{ (U_{\alpha}, \varphi_{\alpha}) \}$ be a graded smooth atlas for $\M$. For each $\alpha \in I$, it is convenient to use the unified notation $(\bbz^{A}_{\alpha})_{A=1}^{n}$ for all the coordinate functions in $\C^{\infty}_{\M}(U_{\alpha})$ corresponding to the standard coordinate functions $(\bbz^{A})_{A=1}^{n}$ in the graded domain, see Remark \ref{rem_unifiedcoordfunct}. We have shown in Proposition \ref{tvrz_vfields} that the collection of coordinate vector fields $( \frac{\partial}{\partial \bbz^{A}_{\alpha}} )_{A=1}^{n}$ forms a local frame for $T\M$ over $U_{\alpha}$. Let $(\vartheta_{A})_{A=1}^{n}$ be the standard total basis of $\R^{(n_{-j})}$, ordered so that $|\vartheta_{A}| = | \frac{\partial}{\partial \bbz^{A}_{\alpha}}| = -|\bbz^{A}|$. 

Define the local trivialization $\{ (U_{\alpha}, \phi_{\alpha}) \}_{\alpha \in I}$ of $T\M$ to satisfy the equation
\begin{equation} (-1)^{|\bbz^{A}|} \frac{\partial}{\partial \bbz^{A}_{\alpha}} = (\phi_{\alpha})^{-1}_{U_{\alpha}}(1 \otimes \vartheta_{A}),
\end{equation}
for each $A \in \{1,\dots,n\}$. Note that this local trivialization corresponds to the slightly modified local frame $((-1)^{|\bbz^{A}|} \frac{\partial}{\partial \bbz^{A}_{\alpha}})_{A=1}^{n}$, see Proposition \ref{tvrz_globalframe}. This choice will be justified later. What are its transition functions? We have answered this question in Proposition \ref{tvrz_coordinatevftransrule}, hence
\begin{equation}
(G_{\alpha \beta})^{A}{}_{B} = (-1)^{|\bbz^{B}| - |\bbz^{A}|} \frac{\partial \bbz^{A}_{\alpha}}{\partial \bbz^{B}_{\beta}}.
\end{equation}
One can view these as transition functions corresponding to the local trivialization $\{ (U_{\alpha}, \phi'_{\alpha}) \}_{\alpha \in I}$ of $T[1+s]\M$ and $(\vartheta_{A})_{A=1}^{n}$, viewed as a total basis of $\R^{(n_{-j})}[1+s] \equiv \R^{(n_{- (j + 1 + s)})}$, see Proposition \ref{tvrz_shiftedtrans}. Now, we have the induced graded smooth atlas $\B = \{ (U_{\alpha}, \rho_{\alpha}) \}_{\alpha \in I}$ on $T[1+s]\M$ (recall that $\ul{\pi} = \1_{M}$). For each $\alpha \in I$, it gives us the base manifold coordinate functions $(\bbz^{A}_{\alpha})_{A=1}^{n}$ and the fiber coordinate functions $(\Xi^{A}_{\alpha})_{A=1}^{n}$, see Subsection \ref{subsec_totalspace}. It follows from (\ref{eq_transmaptot2}) that they transform as 
\begin{equation}
\Xi^{A}_{\beta}|_{U_{\alpha \beta}} = \pi^{\ast}_{U_{\alpha \beta}}( (G^{\vee}_{\alpha \beta})_{B}{}^{A}) \cdot \Xi^{B}_{\alpha}|_{U_{\alpha \beta}},
\end{equation}
where $(G^{\vee}_{\alpha \beta})_{B}{}^{A}$ are the transition functions corresponding to the induced local trivialization $\{ (U_{\alpha}, \phi'^{\vee}_{\alpha})\}_{\alpha \in I}$ of $(T[1+s]\M)^{\ast}$. It follows from (\ref{eq_GalphabetaVeecomps}) that 
\begin{equation}
(G^{\vee}_{\alpha \beta})_{B}{}^{A} = (-1)^{|\vartheta_{A}|'( |\vartheta_{B}|' - |\vartheta_{A}|')} (G_{\beta \alpha})^{A}{}_{B},
\end{equation}
where $|\vartheta_{A}|' = |\vartheta_{A}| - (1 + s) = -(|\bbz^{A}| + 1 + s)$ is the degree of the element $\vartheta_{A}$ in the shifted graded vector space $\R^{(n_{-j})}[1+s]$. Altogether, we find 
\begin{equation} \label{eq_Xistransformshiftedtangnet}
\Xi^{A}_{\beta}|_{U_{\alpha \beta}} = (-1)^{|\bbz^{A}|( |\bbz^{B}| - |\bbz^{A}|)} \frac{\partial \bbz^{A}_{\beta}}{\partial \bbz^{B}_{\alpha}} \tr \Xi^{B}_{\alpha}|_{U_{\alpha \beta}},
\end{equation}
where $\tr$ is the action of $\C^{\infty}_{\M}(U_{\alpha \beta})$ on $\Omega_{\M}(U_{\alpha \beta})$. In other words, we have fiddled with the signs so that $(\Xi^{A}_{\alpha})_{A=1}^{n}$ transforms as the local frame of $T^{\ast}\M$ dual to $( \frac{\partial}{\partial \bbz^{A}_{\alpha}})_{A=1}^{n}$. Consequently, we introduce the notation $\dr \bbz^{A}_{\alpha} := \Xi^{A}_{\alpha}$. It is also convenient to rewrite (\ref{eq_Xistransformshiftedtangnet}) using the corresponding right graded module action $\tl$, see the text under (\ref{eq_moduleaxioms}), to obtain 
\begin{equation} \label{eq_ddztransform}
\dr \bbz^{A}_{\beta}|_{U_{\alpha \beta}} = \dr \bbz^{B}_{\alpha}|_{U_{\alpha \beta}} \tl \frac{\partial \bbz^{A}_{\beta}}{\partial \bbz^{B}_{\alpha}}.
\end{equation} 
This allows us to get rid of the annoying signs. For each $\alpha \in I$, the coordinate functions $\dr \bbz^{A}_{\alpha}$ are called the \textbf{coordinate $1$-forms on $\M$} corresponding to the graded local chart $(U_{\alpha},\varphi_{\alpha})$. In the following, we will usually replace both $\tr$ and $\tl$ by the ordinary multiplication sign $\cdot$ (this will be justified later) and omit the explicit writing of the restrictions of the coordinate $1$-forms. Observe that the coordinate $1$-forms commute as it is expected of them (this explains the degree shift):
\begin{equation}
\dr \bbz^{A}_{\alpha} \cdot \dr \bbz^{B}_{\alpha} = (-1)^{(|\bbz^{A}|+1)(|\bbz^{B}|+1)} \dr \bbz^{B}_{\alpha} \cdot \dr \bbz^{A}_{\alpha}.
\end{equation}
Note that the integer $s$ was chosen to ensure that $|\dr{\bbz}^{A}_{\alpha}| = |\bbz_{\alpha}^{A}| + 1 + s> 0$ for all $A \in \{1,\dots,n\}$. 
\subsection{Subsheaves of p-forms}
For the purpose of the next definition, let us consider the following set:
\begin{equation}
\ul{\N}^{n}_{p} := \{ (q_{1},\dots,q_{n}) \in (\N_{0})^{n} \; | \; q_{1} + \dots + q_{n} = p \text{ and } q_{A} \in \{0,1\} \text{ for } |\bbz^{A}| \text{ even }\}.
\end{equation}
It is defined so that for each $(q_{1},\dots,q_{n}) \in \ul{\N}^{n}_{p}$, the expression $(\dr \bbz^{1}_{\alpha})^{q_{1}} \cdots (\dr \bbz^{n}_{\alpha})^{q_{n}}$ contains precisely $p$ coordinate $1$-forms and it is non-zero. 
\begin{definice}
Let $\M = (M, \C^{\infty}_{\M})$ be a graded manifold with a graded smooth atlas $\A = \{ (U_{\alpha}, \varphi_{\alpha}) \}_{\alpha \in I}$ inducing the coordinate $1$-forms $(\dr \bbz^{A}_{\alpha})_{A=1}^{n}$. Let $U \in \Op(M)$ and $p \in \N_{0}$ be arbitrary. We say that $\omega \in \Omega_{\M}(U)$ is a \textbf{differential $p$-form on $\M$}, if for every $\alpha \in I$, $\omega|_{U \cap U_{\alpha}}$ can be written as a finite sum
\begin{equation} \label{eq_pformdefinition}
\omega|_{U \cap U_{\alpha}} = \hspace{-3mm} \sum_{(q_{1},\dots,q_{n}) \in \ul{\N}^{n}_{p}} \hspace{-3mm} \omega^{(\alpha)}_{q_{1} \dots q_{n}} \cdot  (\dr \bbz^{1}_{\alpha})^{q_{1}} \cdots (\dr \bbz^{n}_{\alpha})^{q_{n}},
\end{equation}
where $\omega^{(\alpha)}_{q_{1} \dots q_{n}} \in \C^{\infty}_{\M}(U \cap U_{\alpha})$ are (unique) \textbf{component functions} of $\omega$. We write $\omega \in \Omega^{p}_{\M}(U)$. 
\end{definice}
It follows from (\ref{eq_ddztransform}) that the concept of a $p$-form is well-defined. Let us summarize some expected properties of $\Omega^{p}_{\M}$ in the form of a proposition.
\begin{tvrz} \label{tvrz_pforms}
For any $p \in \N_{0}$, $\Omega^{p}_{\M} \subseteq \Omega_{\M}$ is a subsheaf of graded $\C^{\infty}_{\M}$-submodules, called the \textbf{sheaf of differential $p$-forms}.
Moreover, it has the following properties:
\begin{enumerate}[(i)]
\item For $\omega \in \Omega^{p}_{\M}(U)$ and $\omega' \in \Omega^{q}_{\M}(U)$, one has $\omega \cdot \omega' \in \Omega^{p+q}_{\M}(U)$. 
\item There is a convenient alternative grading on $\Omega^{p}_{\M}$. For any $\omega \in \Omega^{p}_{\M}(U)$, define
\begin{equation} \label{eq_formdegree}
\deg(\omega) := |\omega| - p(1+s).
\end{equation}
If $\omega' \in \Omega^{q}_{\M}(U)$, one has $\deg(\omega \cdot \omega') = \deg(\omega) + \deg(\omega')$, and 
\begin{equation}
\omega \cdot \omega' = (-1)^{(\deg(\omega)+p)(\deg(\omega')+q)} \omega' \cdot \omega. 
\end{equation} 
\item There are canonical $\C^{\infty}_{\M}$-linear sheaf isomorphisms $\Omega^{0}_{\M} \cong \C^{\infty}_{\M}$ and $\Omega^{1}_{\M} \cong \X_{\M}^{\ast} ( \cong \Omega^{1}_{T\M})$. In the latter case, we assume the alternative grading from $(ii)$.
\item There is a canonical $\C^{\infty}_{\M}$-linear sheaf isomorphism $\Omega_{\M} \cong \prod_{p=0}^{\infty} \Omega_{\M}^{p}$. 

In other words, for each $U \in \Op(M)$, $\omega \in \Omega_{\M}(U)$ can be written as a unique sequence $\omega = (\omega_{p})_{p=0}^{\infty}$, where $\omega_{p} \in \Omega^{p}_{\M}(U)$. Moreover, if $f \in \C^{\infty}_{\M}(U)$, then $f \cdot \omega = (f \cdot \omega_{p})_{p=0}^{\infty}$. For $V \subseteq U$, one has $\omega|_{V} = (\omega_{p}|_{V})$, and if $\omega' \in \Omega_{\M}(U)$ is another differential form, one has $\omega \cdot \omega' = ( \sum_{k=0}^{p} \omega_{k} \cdot \omega'_{p-k})_{p=0}^{\infty}$ 
\end{enumerate}
\end{tvrz}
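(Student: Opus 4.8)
The plan is to reduce everything to the already-developed theory of vector bundles and their total spaces, applied to the specific vector bundle $T[1+s]\M$. Recall that $\Omega_{\M} = \C^{\infty}_{T[1+s]\M}$, and by Proposition \ref{tvrz_totalspace} this is the sheaf of functions on the total space of the graded vector bundle $T[1+s]\M$, whose typical fiber is $\R^{(n_{-j})}[1+s]$ and whose underlying vector bundle is $\1_M : M \to M$. The coordinate $1$-forms $\dr\bbz^A_\alpha$ are precisely the fiber coordinate functions $\Xi^A_\alpha$ constructed in Subsection \ref{subsec_totalspace}, transforming via \eqref{eq_ddztransform}. First I would observe that $\Omega^p_{\M}$ is a \emph{presheaf} of graded $\C^{\infty}_{\M}$-submodules: the defining condition \eqref{eq_pformdefinition} is local (imposed chart-by-chart), compatible with restriction (restricting the local expression \eqref{eq_pformdefinition} to a smaller chart domain keeps it of the same form, using that restriction is a graded algebra morphism), and stable under the action $f \tr \omega = \pi^\ast_U(f)\cdot\omega$ (since multiplying \eqref{eq_pformdefinition} by $\pi^\ast(f)$ just rescales the component functions $\omega^{(\alpha)}_{q_1\dots q_n}$, which remain in $\C^{\infty}_{\M}$). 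To check it is a \emph{sheaf}, one uses that $\Omega_{\M}$ is already a sheaf together with the monopresheaf property: the defining condition is checked on a cover, and if a section of $\Omega_{\M}$ restricts to $p$-forms on each member of a cover, gluing the local component functions (which agree on overlaps by uniqueness and the monopresheaf property of $\C^{\infty}_{\M}$) produces global component functions exhibiting $\omega$ as a $p$-form. The well-definedness — independence of the chosen atlas and uniqueness of the $\omega^{(\alpha)}_{q_1\dots q_n}$ — follows from \eqref{eq_ddztransform}: a change of chart expresses each $\dr\bbz^A_\beta$ as a $\C^{\infty}_{\M}$-combination of the $\dr\bbz^A_\alpha$, so a degree-$p$ polynomial in $1$-forms of one chart is a degree-$p$ polynomial in the other; uniqueness comes from Proposition \ref{tvrz_totalbasisSV} (the monomials $(\dr\bbz^1_\alpha)^{q_1}\cdots(\dr\bbz^n_\alpha)^{q_n}$ with $(q_1,\dots,q_n)\in\ul{\N}^n_p$ form part of a total basis of the relevant symmetric algebra, hence are linearly independent over $\C^{\infty}_{\M}$).

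For the listed properties: (i) follows immediately from the local description — the product of a degree-$p$ monomial and a degree-$q$ monomial in the $1$-forms is a degree-$(p+q)$ monomial (possibly zero, which is harmless), so multiplying \eqref{eq_pformdefinition} for $\omega$ and $\omega'$ and collecting terms yields an expression of the form \eqref{eq_pformdefinition} with $p+q$. For (ii), the formula $\deg(\omega) = |\omega| - p(1+s)$ on homogeneous $p$-forms is additive under multiplication by (i) and additivity of $|\cdot|$; the commutation sign is obtained by specializing the graded-commutativity of $\Omega_{\M}$ (as a graded commutative algebra) to $p$-forms and rewriting $|\omega| = \deg(\omega) + p(1+s)$, $|\omega'| = \deg(\omega') + q(1+s)$, using that $s$ is even so $(1+s)$ is odd and $(1+s)^2$ is odd, which collapses the sign exponent to $(\deg(\omega)+p)(\deg(\omega')+q)$ modulo $2$ after a short computation. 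For (iii), the isomorphism $\Omega^0_{\M} \cong \C^{\infty}_{\M}$ is just the observation that a $0$-form has no $1$-forms in its local expression, hence is $\pi^\ast$ of a function, and $\pi^\ast$ is an isomorphism onto $\Omega^0_{\M}$ since $\ul\pi = \1_M$; for $\Omega^1_{\M} \cong \X_{\M}^\ast$, a $1$-form locally reads $\omega^{(\alpha)}_A \dr\bbz^A_\alpha$ and \eqref{eq_ddztransform} shows the $\dr\bbz^A_\alpha$ transform exactly as the dual local frame to the coordinate vector fields $\partial/\partial\bbz^A_\alpha$ (Proposition \ref{tvrz_dualtrans} and \ref{tvrz_coordinatevftransrule}), so the collation of these local frames (Proposition \ref{tvrz_collationVB} applied to $\X_{\M}^\ast = \Omega^1_{\X_{\M}}$, cf.\ Proposition \ref{tvrz_dualsectionseasy}) gives the claimed $\C^{\infty}_{\M}$-linear sheaf isomorphism, with the alternative grading of (ii) matching because the degree shift by $1+s$ in $T[1+s]\M$ is undone by the subtraction of $(1+s)$ in \eqref{eq_formdegree}. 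Finally (iv): by Proposition \ref{tvrz_totalspace} and the local model of Subsection \ref{subsec_totalspace}, on each chart $\Omega_{\M}(U\cap U_\alpha) = \bar{S}(\,\cdot\,, \C^{\infty}_{\M}(U\cap U_\alpha))$ expressed as a formal power series in the $\dr\bbz^A_\alpha$ (Example \ref{ex_formalpower}), and grouping the series by total polynomial degree $p$ in the $1$-forms gives the sequence $(\omega_p)_{p=0}^\infty$; this decomposition is manifestly compatible with restriction, with the $\C^{\infty}_{\M}$-action, and with products (the Cauchy-product formula for formal power series, cf.\ \eqref{eq_formalseriesproduct}), and the chart-independence of the grading-by-$p$ follows again from \eqref{eq_ddztransform}. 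Packaging this as a $\C^{\infty}_{\M}$-linear sheaf isomorphism $\Omega_{\M}\cong\prod_{p=0}^\infty\Omega^p_{\M}$ requires the target to be a sheaf, which holds since products of sheaves of modules are sheaves.

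The main obstacle I anticipate is purely bookkeeping: tracking the degree shifts and signs carefully enough in (ii) and (iii) to confirm that the ``alternative grading'' $\deg$ really makes $\Omega^1_{\M}$ isomorphic to $\X_{\M}^\ast$ \emph{as graded modules} (not just up to an overall shift), and that the commutation sign in (ii) comes out exactly as stated — this is where the evenness of $s$ is used, and it is easy to be off by a sign if one is not methodical. Everything else is a routine transcription of the total-space and vector-bundle machinery already in place, so I would state those parts crisply and spend the bulk of the written proof on the local-to-global gluing for the sheaf property and on the sign computation in (ii).
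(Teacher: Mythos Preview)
Your proposal is correct and follows essentially the same approach as the paper: both verify the subpresheaf property directly, check the sheaf condition by gluing the local component functions $\omega^{(\alpha)}_{q_1\dots q_n}$, treat (i) and (ii) as direct computations (the paper simply says ``direct verification'' where you spell out the role of the evenness of $s$), identify $\Omega^0_{\M}$ with the image of $\pi^\ast$, and obtain (iv) by reordering the formal power series in the fiber coordinates according to polynomial degree. The only minor difference is in (iii) for $\Omega^1_{\M}\cong\X_{\M}^\ast$: the paper writes down the isomorphism $\fPsi$ explicitly by the formula $\fPsi_U(\omega)|_{U\cap U_\alpha}:=\omega^{(\alpha)}_A\,\Phi^A_{(\alpha)}|_{U\cap U_\alpha}$ and checks well-definedness directly, whereas you phrase it as a collation argument via Proposition~\ref{tvrz_collationVB}; both are the same idea, and your version is arguably cleaner once the collation machinery is in place.
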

\begin{proof}
It is easy to see that $\Omega^{p}_{\M}$ is a subpresheaf of graded $\C^{\infty}_{\M}$-submodules. To prove that it is a subsheaf, we must prove that for any open cover $\{ V_{\mu} \}_{\mu \in J}$ of $U \in \Op(M)$ and $\omega \in \Omega_{\M}(U)$, the property $\omega|_{V_{\mu}} \in \Omega^{p}_{\M}(V_{\mu})$ for all $\mu \in J$ implies $\omega \in \Omega^{p}_{\M}(U)$. For each $\alpha \in I$, one thus has 
\begin{equation}
\omega|_{V_{\mu} \cap U_{\alpha}} = \hspace{-3mm} \sum_{(q_{1},\dots,q_{n}) \in \ul{\N}^{n}_{p}} \hspace{-3mm} \omega^{(\alpha,\mu)}_{q_{1} \dots q_{n}} \cdot (\dr \bbz^{1}_{\alpha})^{q_{1}} \cdots (\dr \bbz^{n}_{\alpha})^{q_{n}}
\end{equation}
It follows that for each $(q_{1},\dots,q_{n}) \in \ul{\N}^{n}_{p}$, the of functions $\{ \omega^{(\alpha,\mu)}_{q_{1},\dots,q_{n}} \}_{\mu \in J}$ agree on the overlaps of the open cover $\{ V_{\mu} \cap U_{\alpha} \}_{\mu \in J}$ of $U_{\alpha}$, Consequently, there exists a unique function $\omega^{(\alpha)}_{q_{1},\dots,q_{n}} \in \C^{\infty}_{\M}(U_{\alpha})$, such that $\omega^{(\alpha)}_{q_{1} \dots q_{n}}|_{V_{\mu} \cap U_{\alpha}} = \omega^{(\alpha,\mu)}_{q_{1} \dots q_{n}}$ for each $\mu \in J$. It now follows from the monopresheaf property of $\Omega_{\M}$ that $\omega|_{U \cap U_{\alpha}}$ must be given by the sum
\begin{equation}
\omega|_{U \cap U_{\alpha}} = \hspace{-3mm} \sum_{(q_{1},\dots,q_{n}) \in \ul{\N}^{n}_{p}} \hspace{-3mm} \omega^{(\alpha)}_{q_{1} \dots q_{n}} \cdot (\dr \bbz^{1}_{\alpha})^{q_{1}} \cdots (\dr \bbz^{n}_{\alpha})^{q_{n}}.
\end{equation}
As $\alpha \in I$ was arbitrary, this proves that $\omega \in \Omega^{p}_{\M}(U)$. Hence $\Omega^{p}_{\M}$ is a subsheaf of graded $\C^{\infty}_{\M}$-submodules. The proof of properties $(i)$ and $(ii)$ is just a direct verification. 

To see (iii), note that $\Omega^{0}_{\M}$ is precisely the image of the injective $\C^{\infty}_{\M}$-linear sheaf morphism $\pi^{\ast}: \C^{\infty}_{\M} \rightarrow \Omega_{\M}$, defined by the projection $\pi: T[1+s]\M \rightarrow \M$. This also justifies why we can treat functions on $\M$ as $0$-forms and identify the actions $\tr$ and $\tl$ with the multiplication in $\Omega_{\M}$. Next, recall that we have already noted that $( \dr \bbz^{A}_{\alpha})_{A=1}^{n}$ transform on the overlaps in the same way as the local frame $(\Phi^{A}_{(\alpha)})_{A=1}^{n}$ for $T^{\ast}\M$ dual to $(\frac{\partial}{\partial \bbz^{A}_{\alpha}})_{A=1}^{n}$. Let $\omega \in \Omega^{1}_{\M}(U)$. We can write
\begin{equation}
\omega|_{U \cap U_{\alpha}} := \omega^{(\alpha)}_{A} \cdot \dr \bbz^{A}_{\alpha},
\end{equation}
for each $\alpha \in I$, where $\omega_{A}^{(\alpha)} \in \C^{\infty}_{\M}(U \cap U_{\alpha})$ are unique. The idea presents itself, we define 
\begin{equation} \label{eq_fPsi1formsdualiso}
\fPsi_{U}(\omega)|_{U \cap U_{\alpha}} := \omega^{(\alpha)}_{A} \Phi^{A}_{(\alpha)}|_{U \cap U_{\alpha}} \in \X^{\ast}_{\M}(U \cap U_{\alpha}). 
\end{equation}
Note that $| \fPsi_{U}(\omega)| = |\omega_{A}^{(\alpha)}| + |\Phi^{A}_{(\alpha)}| = |\omega_{A}^{(\alpha)}| + |\bbz^{A}| = \deg(\omega)$. It is easy to see that the sections on the right-hand side agree on the overlaps, whence they define a unique element $\fPsi_{U}(\omega) \in \X^{\ast}_{\M}(U)$. The $\C^{\infty}_{\M}$-linearity and naturality in $U$ is obvious, and we can define the inverse $\fPsi_{U}^{-1}$ in the same way, this time using the gluing property of the sheaf $\Omega^{1}_{\M}$. Note that one can also consider the original grading on $\Omega^{1}_{\M}$, which makes $\fPsi_{U}$ into a sheaf morphism of degree $-(1+s)$. However, one then has to add a sign $(-1)^{|\omega|}$ to the formula (\ref{eq_fPsi1formsdualiso}) to make it $\C^{\infty}_{\M}$-linear. 

Finally, let $\omega \in \Omega_{\M}(U)$. We will now assign a unique sequence $(\omega_{p})_{p=0}^{\infty}$ to it, where $\omega_{p} \in \Omega^{p}_{\M}(U)$ for each $p \in \N_{0}$. Let $\B = \{ (U_{\alpha},\rho_{\alpha}) \}_{\alpha \in I}$ be the graded smooth atlas on $T[1+s]\M$ induced by the graded smooth atlas $( U_{\alpha}, \varphi_{\alpha})_{\alpha \in I}$ on $\M$. It follows that each local representative $\hat{\omega}_{\alpha} \in \C^{\infty}_{(N_{j})}(\ul{\rho_{\alpha}}(U \cap U_{\alpha}))$ is a formal power series in graded coordinates which can certainly be reordered as 
\begin{equation}
\hat{\omega}_{\alpha} = \sum_{p=0}^{\infty} \sum_{(q_{1},\dots,q_{n}) \in \ul{\N}^{n}_{p}} \hspace{-3mm} \hat{\omega}^{(\alpha,p)}_{q_{1} \dots q_{n}} \cdot (\dr \bbz^{1})^{q_{1}} \cdots (\dr \bbz^{n})^{q_{n}},
\end{equation}
for unique functions $\hat{\omega}^{(\alpha,p)}_{q_{1} \dots q_{n}} \in \C^{\infty}_{(n_{j})}( \ul{\phi_{\alpha}}(U \cap U_{\alpha}))$. We write $\dr \bbz^{A}$ for the fiber coordinates and again identify functions on the base with their pullback upstairs by the projection. Here $N_{j} = n_{j} + n_{j-(1+s)}$. It follows from the definition of the transition maps of $\B$ that the functions
\begin{equation}
\omega_{p}|_{U \cap U_{\alpha}} := \hspace{-3mm} \sum_{(q_{1},\dots,q_{n}) \in \ul{\N}^{n}_{p}} \hspace{-3mm} (\phi_{\alpha})^{\ast}_{\ul{\phi_{\alpha}}(U \cap U_{\alpha})}(\hat{\omega}^{(\alpha,p)}_{q_{1} \dots q_{n}}) \cdot ( \dr \bbz^{1})^{q_{1}} \cdots  (\dr \bbz^{n})^{q_{n}} \in \Omega^{p}_{\M}(U \cap U_{\alpha}).
\end{equation}
agree on the overlaps, hence they define a unique $p$-form $\omega_{p} \in \Omega^{p}_{\M}(U)$. It is now straightforward to verify that $\omega \mapsto (\omega_{p})_{p \in \Z}$ is a bijection having all the properties described in $(iv)$. 
\end{proof}
\subsection{Cartan calculus}
The definition of $\Omega_{\M}$ as a sheaf of functions on the graded manifold $T[1+s]\M$ allows one to easily introduce all the standard operations on differential forms. In the entire subsection, we will use the graded smooth atlas $\B = \{ (U_{\alpha}, \rho_{\alpha}) \}_{\alpha \in I}$ on $T[1+s]\M$ induced by a graded smooth atlas $\A = \{ (U_{\alpha}, \varphi_{\alpha}) \}_{\alpha \in I}$ on $\M$. For each $\alpha \in I$, we thus have the base manifold coordinate functions $(\bbz^{A}_{\alpha})_{A=1}^{n}$ and the fiber coordinate functions $(\dr \bbz^{A}_{\alpha})_{A=1}^{n}$, see Subsection \ref{subsec_diffforms}. 

\begin{tvrz} \label{tvrz_deRham}
There exists a canonical degree $1 + s$ vector field $\dr \in \X_{T[1+s]\M}$. It is homological, that is $\dr^{2} = \frac{1}{2}[\dr, \dr] = 0$. For each $p \in \N_{0}$, it restricts to a map $\dr: \Omega^{p}_{\M}(M) \rightarrow \Omega^{p+1}_{\M}(M)$ and for each $\omega \in \Omega^{p}_{\M}(M)$, one has $\deg( \dr \omega) = \deg(\omega)$. $\dr$ is called the \textbf{exterior derivative} or the \textbf{de Rham differential}. We will use the same symbol $\dr$ for any restriction $\dr|_{U}$, $U \in \Op(M)$. 
\end{tvrz}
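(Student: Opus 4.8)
The plan is to obtain $\dr$ as a canonical vector field on $T[1+s]\M$ by defining it in each graded local chart and gluing. Keeping the notation of Subsection \ref{subsec_diffforms}, let $\A = \{(U_{\alpha},\varphi_{\alpha})\}_{\alpha\in I}$ be a graded smooth atlas for $\M$ and let $\B = \{(U_{\alpha},\rho_{\alpha})\}_{\alpha\in I}$ be the induced atlas for $T[1+s]\M$ (here $\ul{\pi}=\1_{M}$), giving base coordinate functions $(\bbz^{A}_{\alpha})_{A=1}^{n}$ and fiber coordinate functions $(\dr\bbz^{A}_{\alpha})_{A=1}^{n}$ together with their coordinate vector fields, see Remark \ref{rem_coordinatevectorfields}. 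I would set
\begin{equation}
\dr_{\alpha} := \dr\bbz^{A}_{\alpha}\,\frac{\partial}{\partial \bbz^{A}_{\alpha}} \in \X_{T[1+s]\M}(U_{\alpha}).
\end{equation}
Since $|\dr\bbz^{A}_{\alpha}| = |\bbz^{A}|+1+s$ and $|\tfrac{\partial}{\partial\bbz^{A}_{\alpha}}| = -|\bbz^{A}|$, every summand has degree $1+s$, so $\dr_{\alpha}$ is a homogeneous vector field of degree $1+s$; crucially $1+s$ is \emph{odd}, because $s$ is defined to be even. By Proposition \ref{tvrz_vfields}, $\dr_{\alpha}$ is determined by $\dr_{\alpha}(\bbz^{B}_{\alpha}) = \dr\bbz^{B}_{\alpha}$ and $\dr_{\alpha}(\dr\bbz^{B}_{\alpha}) = 0$.

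Next I would check that $\dr_{\alpha}|_{U_{\alpha\beta}} = \dr_{\beta}|_{U_{\alpha\beta}}$ for all $(\alpha,\beta)\in I^{2}$. By Proposition \ref{tvrz_vfields} it is enough to compare both sides on the coordinate functions $\bbz^{B}_{\beta}$ and $\dr\bbz^{B}_{\beta}$ of the chart $\beta$. Since $\bbz^{B}_{\beta}$ (viewed on $T[1+s]\M$) depends only on the base coordinates, $\dr_{\alpha}(\bbz^{B}_{\beta}) = \dr\bbz^{A}_{\alpha}\cdot\frac{\partial\bbz^{B}_{\beta}}{\partial\bbz^{A}_{\alpha}}$, which equals $\dr\bbz^{B}_{\beta}$ by the transformation rule (\ref{eq_ddztransform}); for $\dr\bbz^{B}_{\beta}$ one uses (\ref{eq_ddztransform}), the graded Leibniz rule, $\dr_{\alpha}(\dr\bbz^{A}_{\alpha})=0$ and the chain rule (\ref{eq_chainrule}) to reduce $\dr_{\alpha}(\dr\bbz^{B}_{\beta})$ to a sum $\pm\,\dr\bbz^{A}_{\alpha}\,\dr\bbz^{C}_{\alpha}\,\frac{\partial^{2}\bbz^{B}_{\beta}}{\partial\bbz^{C}_{\alpha}\partial\bbz^{A}_{\alpha}}$, which vanishes after the substitution $A\leftrightarrow C$ thanks to the graded commutativity of the coordinate $1$-forms and of the second-order coordinate derivatives ($[\tfrac{\partial}{\partial\bbz^{A}_{\alpha}},\tfrac{\partial}{\partial\bbz^{C}_{\alpha}}]=0$, Proposition \ref{tvrz_gcommutator}). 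This last computation is precisely of ``$\dr^{2}=0$'' type and concentrates all the sign bookkeeping; I expect it to be the main obstacle. Once the $\dr_{\alpha}$ agree on overlaps, the sheaf property of $\X_{T[1+s]\M}$ (Proposition \ref{tvrz_vf}) yields a unique $\dr\in\X_{T[1+s]\M}(M)$ with $\dr|_{U_{\alpha}}=\dr_{\alpha}$.

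To see that $\dr$ is homological, note that since $|\dr|=1+s$ is odd, $[\dr,\dr] = 2\,\dr\circ\dr$, so $\dr^{2}=\tfrac12[\dr,\dr]$, and $[\dr,\dr]\in\X_{T[1+s]\M}(M)$ is a graded derivation by Proposition \ref{tvrz_derivations}-$(iv)$. Working in a chart, $[\dr,\dr]$ is determined by its values on the $\bbz^{B}_{\alpha}$ and $\dr\bbz^{B}_{\alpha}$ (Proposition \ref{tvrz_vfields}); but $[\dr,\dr](\bbz^{B}_{\alpha}) = 2\,\dr(\dr\bbz^{B}_{\alpha}) = 0$ and $[\dr,\dr](\dr\bbz^{B}_{\alpha}) = 2\,\dr(0) = 0$, so $[\dr,\dr]$ vanishes on each $U_{\alpha}$ and hence globally by the monopresheaf property of $\X_{T[1+s]\M}$. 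Thus $\dr^{2}=0$.

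Finally, the statements about $p$-forms and degrees are local. For $\omega\in\Omega^{p}_{\M}(U)$ write $\omega|_{U\cap U_{\alpha}} = \sum_{(q_{1},\dots,q_{n})\in\ul{\N}^{n}_{p}}\omega^{(\alpha)}_{q_{1}\dots q_{n}}(\dr\bbz^{1}_{\alpha})^{q_{1}}\cdots(\dr\bbz^{n}_{\alpha})^{q_{n}}$. Applying the graded derivation $\dr$ and using $\dr(\dr\bbz^{A}_{\alpha})=0$ annihilates every term in which $\dr$ meets a coordinate $1$-form, leaving $\dr\omega|_{U\cap U_{\alpha}} = \sum\dr(\omega^{(\alpha)}_{q_{1}\dots q_{n}})(\dr\bbz^{1}_{\alpha})^{q_{1}}\cdots$; since $\omega^{(\alpha)}_{q_{1}\dots q_{n}}\in\C^{\infty}_{\M}(U\cap U_{\alpha})\cong\Omega^{0}_{\M}(U\cap U_{\alpha})$ and, by the coordinate formula, $\dr$ sends a $0$-form $f$ to $\dr\bbz^{A}_{\alpha}\cdot\frac{\partial f}{\partial\bbz^{A}_{\alpha}}\in\Omega^{1}_{\M}(U\cap U_{\alpha})$, Proposition \ref{tvrz_pforms}-$(i)$ gives $\dr\omega\in\Omega^{p+1}_{\M}(U)$; this is compatible with restrictions, so one gets $\dr\colon\Omega^{p}_{\M}(U)\to\Omega^{p+1}_{\M}(U)$ for every $U\in\Op(M)$. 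For the degree, $|\dr\omega| = |\omega|+1+s$ because $|\dr|=1+s$, whence by (\ref{eq_formdegree}) $\deg(\dr\omega) = |\dr\omega|-(p+1)(1+s) = |\omega|-p(1+s) = \deg(\omega)$.
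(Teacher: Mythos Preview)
Your proof is correct and follows essentially the same approach as the paper: define $\dr$ locally by $\dr|_{U_{\alpha}} := \dr\bbz^{A}_{\alpha}\cdot\frac{\partial}{\partial\bbz^{A}_{\alpha}}$, verify the local definitions agree on overlaps, and glue. The paper's proof is much terser, simply citing Proposition \ref{tvrz_coordinatevftransrule} and (\ref{eq_ddztransform}) for the overlap compatibility and declaring everything else ``trivial to verify''; you have carried out those verifications explicitly, including the $\dr_{\alpha}(\dr\bbz^{B}_{\beta})=0$ computation and the $\deg$ formula, which the paper omits.
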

\begin{proof}
For each $\alpha \in I$, we will define $\dr$ by its restriction to $U_{\alpha}$, namely set 
\begin{equation}
\dr|_{U_{\alpha}} := \dr \bbz^{A}_{\alpha} \cdot \frac{\partial}{\partial \bbz^{A}_{\alpha}}. 
\end{equation}
It follows immediately from Proposition \ref{tvrz_coordinatevftransrule} and (\ref{eq_ddztransform}) that the vector fields on the right-hand side agree on the overlaps, hence they define a global vector field $\dr \in \X_{T[1+s]\M}(M)$. All the other statements are trivial to verify. 
\end{proof}

\begin{cor}
For each $U \in \Op(M)$ and $f \in \C^{\infty}_{\M}(U)$, one can define its \textbf{differential} $\dr{f} \in \Omega^{1}_{\M}(U)$. One has $\deg(\dr{f}) = |f|$. Explicitly, for each $\alpha \in I$, one has 
\begin{equation} \label{eq_differentiallocal}
\dr{f}|_{U \cap U_{\alpha}} = \dr \bbz^{A}_{\alpha} \cdot \frac{\partial f}{\partial \bbz^{A}_{\alpha}}.
\end{equation}
Note that the coordinate $1$-forms $(\dr \bbz^{A}_{\alpha})_{A=1}^{n}$ are then precisely the differentials of the coordinate functions $(\bbz^{A}_{\alpha})_{A=1}^{n}$ (although this statement is admittedly a bit tautological).
\end{cor}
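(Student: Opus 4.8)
The plan is to \emph{define} $\dr f$ as the image of $f$ under the de Rham differential $\dr \in \X_{T[1+s]\M}(M)$ obtained in Proposition \ref{tvrz_deRham}, where $f$ is regarded as a section of $\Omega^{0}_{\M}$ via the injective $\C^{\infty}_{\M}$-linear sheaf morphism $\pi^{\ast}\colon \C^{\infty}_{\M}\to\Omega_{\M}$ induced by the projection $\pi\colon T[1+s]\M\to\M$ (see the proof of Proposition \ref{tvrz_pforms}-(iii)). Concretely I set $\dr f := \dr|_{U}(\pi^{\ast}_{U}(f)) \in \Omega_{\M}(U)$ and then must check three things: that this lands in $\Omega^{1}_{\M}(U)$, that it has the claimed degree, and that it is given by the stated local formula.

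First I would establish the local formula, from which everything else follows. Fix $\alpha\in I$. By Proposition \ref{tvrz_deRham} one has $\dr|_{U\cap U_{\alpha}} = \dr\bbz^{B}_{\alpha}\cdot\frac{\partial}{\partial\bbz^{B}_{\alpha}}$, hence $\dr f|_{U\cap U_{\alpha}} = \dr\bbz^{B}_{\alpha}\cdot\frac{\partial}{\partial\bbz^{B}_{\alpha}}(\pi^{\ast}_{U\cap U_{\alpha}}(f))$. The key computation is that the base–manifold coordinate vector field merely differentiates the ``coefficient'', namely $\frac{\partial}{\partial\bbz^{B}_{\alpha}}(\pi^{\ast}(f)) = \pi^{\ast}(\frac{\partial f}{\partial\bbz^{B}_{\alpha}})$, while $\frac{\partial}{\partial\dr\bbz^{B}_{\alpha}}(\pi^{\ast}(f)) = 0$. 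This is precisely an application of the Chain rule proposition to the graded smooth map $\pi$ with the induced graded local charts $(\U_{\alpha},\rho_{\alpha})$ upstairs and $(U_{\alpha},\varphi_{\alpha})$ downstairs: since $\ul{\pi}|_{\U_{\alpha}}$ maps into $U_{\alpha}$ and $\pi^{\ast}_{U_{\alpha}}(\bbz^{C}_{\alpha}) = \bbz^{C}_{\alpha}$ is a base coordinate function, one gets $\frac{\partial(\pi^{\ast}(\bbz^{C}_{\alpha}))}{\partial\bbz^{B}_{\alpha}} = \delta^{C}_{B}$ and $\frac{\partial(\pi^{\ast}(\bbz^{C}_{\alpha}))}{\partial\dr\bbz^{B}_{\alpha}} = 0$, and the chain rule collapses to the asserted identities. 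Substituting and dropping $\pi^{\ast}$ under the identification $\Omega^{0}_{\M}\cong\C^{\infty}_{\M}$ (under which the actions $\tr$ and $\tl$ become the multiplication in $\Omega_{\M}$, as noted after Proposition \ref{tvrz_pforms}) gives exactly $\dr f|_{U\cap U_{\alpha}} = \dr\bbz^{A}_{\alpha}\cdot\frac{\partial f}{\partial\bbz^{A}_{\alpha}}$.

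With the local formula in hand, membership in $\Omega^{1}_{\M}(U)$ is immediate: on each $U\cap U_{\alpha}$ the element $\dr f$ is a finite sum, indexed by $\ul{\N}^{n}_{1}$, of component functions times a single coordinate $1$-form, which is the definition of a $1$-form; since $\Omega^{1}_{\M}$ is a subsheaf (Proposition \ref{tvrz_pforms}) and being a $1$-form is a local condition, $\dr f\in\Omega^{1}_{\M}(U)$. For the degree, $\dr$ has degree $1+s$, so $|\dr f| = |f| + (1+s)$; applying the alternative grading (\ref{eq_formdegree}) with $p=1$ yields $\deg(\dr f) = |\dr f| - (1+s) = |f|$.

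I do not expect any genuine obstacle; the only point needing care is the bookkeeping in the chain-rule step, i.e. making sure the fiber-coordinate directions drop out and that no spurious sign appears when passing between $\pi^{\ast}(f)$ and $f$ under $\Omega^{0}_{\M}\cong\C^{\infty}_{\M}$. Finally, the concluding remark that $\dr\bbz^{A}_{\alpha}$ is the differential of $\bbz^{A}_{\alpha}$ is just the special case $f=\bbz^{A}_{\alpha}$ of the local formula, using $\frac{\partial\bbz^{A}_{\alpha}}{\partial\bbz^{B}_{\alpha}} = \delta^{A}_{B}$ from (\ref{eq_basisvfsoncoordinatefctions}).
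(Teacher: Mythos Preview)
Your proposal is correct and follows the same approach the paper intends: the Corollary has no separate proof in the paper because it is immediate from Proposition \ref{tvrz_deRham}, which already states that $\dr$ maps $\Omega^{p}_{\M}\to\Omega^{p+1}_{\M}$ with $\deg(\dr\omega)=\deg(\omega)$, and gives the local expression $\dr|_{U_{\alpha}}=\dr\bbz^{A}_{\alpha}\cdot\frac{\partial}{\partial\bbz^{A}_{\alpha}}$. Your chain-rule justification that $\frac{\partial}{\partial\bbz^{B}_{\alpha}}(\pi^{\ast}(f))=\pi^{\ast}(\frac{\partial f}{\partial\bbz^{B}_{\alpha}})$ is valid but more elaborate than needed; this identity is essentially built into the construction of the induced atlas $\B$ on $T[1+s]\M$ (the base-coordinate vector fields act on pullbacks of base functions just as on $\M$), so the paper tacitly treats it as obvious.
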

Functions with vanishing differential have to be quite special in ordinary differential geometry. The same is true in the graded setting. 
\begin{tvrz} \label{tvrz_dfis0consequences}
Let $f \in \C^{\infty}_{\M}(U)$ satisfy $\dr{f} = 0$. Then for any connected component $U_{0}$ of $U$, $f|_{U_{0}} = \lambda$ for some constant $\lambda \in \R$. In particular, for $|f| \neq 0$, one has $f = 0$. 
\end{tvrz}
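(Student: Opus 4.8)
## Plan for the proof

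The statement to prove: if $f \in \C^\infty_\M(U)$ satisfies $\dr f = 0$, then on each connected component $U_0$ of $U$ the function $f|_{U_0}$ equals a constant $\lambda \in \R$, and in particular $f = 0$ if $|f| \neq 0$.

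The plan is to reduce to a local statement and use the explicit coordinate formula (\ref{eq_differentiallocal}) for the differential together with the structure of $\C^\infty_\M$ on a graded domain. First I would work on a single graded local chart $(U_\alpha, \varphi_\alpha)$ with coordinate functions $(\bbz^A_\alpha)_{A=1}^n$; it suffices to prove that $\dr f|_{U \cap U_\alpha} = 0$ forces $f|_{U \cap U_\alpha}$ to be locally constant, since a section of $\C^\infty_\M$ agreeing with a constant $\lambda$ on each member of an open cover of a connected set must equal $\lambda$ globally there by the monopresheaf property (here one must check that the ``locally constant'' values glue to a single constant on a connected open set, which is the usual elementary topology argument applied to the body map: the set where $\ul f$ takes a given value is clopen). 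Passing through $\varphi_\alpha$, I may assume $\M|_{U} = \hat U^{(n_j)}$ is a graded domain and $f \in \C^\infty_{(n_j)}(\hat U)$.

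On the graded domain, (\ref{eq_differentiallocal}) gives $\dr f = \dr \bbz^A \cdot \frac{\partial f}{\partial \bbz^A}$, and since $(\dr \bbz^A)_{A=1}^n$ is part of a local frame for $\Omega^1_\M$ (Proposition \ref{tvrz_pforms}), the vanishing $\dr f = 0$ is equivalent to $\frac{\partial f}{\partial \bbz^A} = 0$ for every $A \in \{1,\dots,n\}$, i.e. to $\frac{\partial f}{\partial x^i} = 0$ for all $i \in \{1,\dots,n_0\}$ and $\frac{\partial f}{\partial \xi_\mu} = 0$ for all $\mu \in \{1,\dots,n_\ast\}$. Writing $f = \sum_{\fp \in \N^{n_\ast}_{|f|}} f_\fp \xi^\fp$ as a formal power series (Example \ref{ex_formalpower}), the condition $\frac{\partial f}{\partial x^i} = 0$ says $\frac{\partial f_\fp}{\partial x^i} = 0$ for all $\fp$ and $i$ by (\ref{eq_partialxiformula}), so each coefficient $f_\fp \in \C^\infty_{n_0}(\hat U)$ is an ordinary locally constant function by the classical fact about smooth functions with vanishing gradient on a connected open set. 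The condition $\frac{\partial f}{\partial \xi_\mu} = 0$ then forces, via formula (\ref{eq_partialximuformula}), that $f_\fp = 0$ whenever some $p_\mu \geq 1$; hence only $f_{\mathbf 0}$ survives, and $f = f_{\mathbf 0}$ with $f_{\mathbf 0}$ locally constant. Thus on a connected $\hat U$ we get $f = \lambda$ for a constant $\lambda \in \R$, and if $|f| \neq 0$ then $\lambda \in \R_{|f|} = \{0\}$, so $f = 0$.

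The main obstacle is not any single hard computation but the bookkeeping of the reduction from the global chart-free statement to the local graded-domain picture: one must be careful that ``locally constant'' on the chart pieces actually assembles into a single global constant $\lambda$ on a connected component $U_0$. I would handle this by noting that $\dr f = 0$ implies $\dr \ul f = 0$ for the body (compatibility of $\dr$ with the body map, which follows from (\ref{eq_differentiallocal}) and $\ul{\partial f/\partial \bbz^A} = \partial \ul f / \partial \bbz^A$ in degree zero), so $\ul f$ is a genuine locally constant — hence, on $U_0$, globally constant — smooth function; call its value $\lambda$. Then $f - \lambda$ has vanishing body and vanishing differential, and the local analysis above (now with $|f - \lambda|$ possibly zero but body zero, forcing $f_{\mathbf 0} = 0$ as well) shows $f - \lambda = 0$ on each chart piece, whence $f|_{U_0} = \lambda$ by the monopresheaf property. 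This is essentially routine once the local computation is in hand, so I do not anticipate real difficulty, only the need for care with the gluing.
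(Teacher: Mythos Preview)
Your proposal is correct and follows essentially the same route as the paper: reduce to a graded domain via a local chart, use the explicit formulas (\ref{eq_partialxiformula}) and (\ref{eq_partialximuformula}) to deduce that all coefficients $f_{\fp}$ with $\fp \neq \mathbf{0}$ vanish while $f_{\mathbf{0}}$ is locally constant, and then glue. The only cosmetic difference is in the gluing step: the paper directly applies the ``connecting path is compact'' argument to the locally constant values $\lambda_m$, whereas you first pass to the body $\ul f$ to pin down the single constant $\lambda$ on $U_0$ and then show $f - \lambda = 0$ chartwise; both are equally valid ways to handle the same elementary point.
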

\begin{proof}
For each $\alpha \in I$, let $\hat{f}_{\alpha} \in \C^{\infty}_{(n_{j})}( \ul{\phi_{\alpha}}(U \cap U_{\alpha}))$ be the local representative of $f$. By assumption, if $(x^{1},\dots,x^{n_{0}})$ and $(\xi_{1},\dots,\xi_{n_{\ast}})$ are the coordinate functions on the graded domain, we have
\begin{equation}
\frac{\partial \hat{f}_{\alpha}}{\partial x_{i}} = 0, \; \; \frac{\partial \hat{f}_{\alpha}}{\partial \xi_{\mu}} = 0, 
\end{equation}
for all $i \in \{1,\dots,n_{0}\}$ and all $\mu \in \{1, \dots, n_{\ast} \}$. The formula (\ref{eq_partialximuformula}) now implies that $(\hat{f}_{\alpha})_{\fp} = 0$ for all $\fp \in \N^{n_{\ast}}_{|f|} - \{ \mathbf{0} \}$, whereas (\ref{eq_partialxiformula}) shows that $(\hat{f}_{\alpha})_{\mathbf{0}}$ is locally constant. For each $m \in U_{\alpha}$, there is thus a connected $V_{m} \in \Op_{m}(U \cap U_{\alpha})$, such that $\hat{f}_{\alpha}|_{\ul{\phi_{\alpha}}(V_{m})} = \lambda_{m}$ for some constant $\lambda_{m} \in \R$. As $\alpha \in I$ was arbitrary, we conclude that each $m \in U$ has a connected neighborhood $V_{m} \in \Op_{m}(U)$, such that $f|_{V_{m}} = \lambda_{m}$ for some constant $\lambda_{m} \in \R$. If $U_{0}$ is any connected component of $U$, $\lambda_{m} =: \lambda$ is the same for all $m \in U_{0}$ (using the standard ``connecting path is compact'' argument). As $\{ V_{m} \}_{m \in U_{0}}$ forms the open cover of $U_{0}$, we have $f|_{U_{0}} = \lambda$ by the monopresheaf property. 
\end{proof}
Let us continue with the next standard operation.
\begin{tvrz} \label{tvrz_interior}
For every vector field $X \in \X_{\M}(M)$, there exists a canonical a canonical degree $|X|-(1+s)$ vector field $i_{X} \in \X_{T[1+s]\M}(M)$. For any other $Y \in \X_{\M}(M)$, it satisfies $[i_{X},i_{Y}] = 0$. For each $p \in \N_{0}$, it restricts to a map $i_{X}: \Omega^{p}_{\M}(M) \rightarrow \Omega^{p-1}_{\M}(M)$ and for each $\omega \in \Omega^{p}_{\M}(M)$, one has $\deg(i_{X}\omega) = \deg(\omega) + |X|$. For each $f \in \C^{\infty}_{\M}(M)$, one has $i_{f X} = f i_{X}$ and for any $U \in \Op(X)$, one has $i_{X}|_{U} = i_{X|_{U}}$. The vector field $i_{X}$ is called the \textbf{interior product (with $X$)}.
\end{tvrz}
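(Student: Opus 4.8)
The plan is to follow exactly the pattern used for the exterior derivative in Proposition~\ref{tvrz_deRham}: define $i_{X}$ locally on the total space $T[1+s]\M$ by an explicit coordinate formula, check that the local vector fields agree on overlaps, and glue. Let $\A = \{(U_{\alpha},\varphi_{\alpha})\}_{\alpha \in I}$ be a graded smooth atlas for $\M$ with induced graded smooth atlas $\B = \{(U_{\alpha},\rho_{\alpha})\}_{\alpha \in I}$ on $T[1+s]\M$, so that on each $U_{\alpha}$ one has the base coordinate functions $(\bbz^{A}_{\alpha})_{A=1}^{n}$ and the coordinate $1$-forms $(\dr \bbz^{A}_{\alpha})_{A=1}^{n}$, the latter being fiber coordinates for the total space, and let $\pi : T[1+s]\M \rightarrow \M$ be the projection. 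Writing $X|_{U_{\alpha}} = X^{A}_{\alpha}\,\frac{\partial}{\partial \bbz^{A}_{\alpha}}$ via Proposition~\ref{tvrz_vfields}, I would set
\begin{equation}
i_{X}|_{U_{\alpha}} := \epsilon\, \pi^{\ast}_{U_{\alpha}}(X^{A}_{\alpha})\cdot \frac{\partial}{\partial (\dr \bbz^{A}_{\alpha})},
\end{equation}
where $\frac{\partial}{\partial (\dr \bbz^{A}_{\alpha})}$ is the coordinate vector field on $T[1+s]\M$ attached to the fiber coordinate $\dr \bbz^{A}_{\alpha}$, and $\epsilon$ is a Koszul sign depending on $|X^{A}_{\alpha}|$, to be fixed below. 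Since $|X^{A}_{\alpha}| = |X| + |\bbz^{A}|$ for every nonzero summand and $|\dr \bbz^{A}_{\alpha}| = |\bbz^{A}| + 1 + s$, a degree count gives $|i_{X}|_{U_{\alpha}}| = |X| - (1+s)$, independently of $A$, as claimed. The sign $\epsilon$ I would pin down by demanding the invariant characterization $i_{X}(\pi^{\ast}_{U}f) = 0$ and $i_{X}(\dr f) = \pi^{\ast}_{U}(X(f))$ for all $f \in \C^{\infty}_{\M}(U)$, using $\dr f|_{U\cap U_{\alpha}} = \dr \bbz^{A}_{\alpha}\cdot \frac{\partial f}{\partial \bbz^{A}_{\alpha}}$ from (\ref{eq_differentiallocal}); this is also the condition that will later make the Cartan relations hold.

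Next I would verify that $i_{X}|_{U_{\alpha}}$ and $i_{X}|_{U_{\beta}}$ coincide on $U_{\alpha\beta}$. This is where the real computation lives: one combines the transformation rule for the components $X^{A}_{\alpha}$ (obtained from Proposition~\ref{tvrz_coordinatevftransrule} and the chain rule (\ref{eq_chainrule})), the transformation rule (\ref{eq_ddztransform}) for the coordinate $1$-forms, and the resulting transformation rule for the fiber coordinate vector fields $\frac{\partial}{\partial (\dr \bbz^{A}_{\alpha})}$ --- which, being coordinate vector fields on $T[1+s]\M$ for the fiber coordinates, transform by the inverse-transpose Jacobian according to Proposition~\ref{tvrz_coordinatevftransrule} applied there. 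Tracking the Koszul signs, the two Jacobian factors cancel and the local vector fields patch together. By the monopresheaf and gluing properties of $\X_{T[1+s]\M}$ established in Proposition~\ref{tvrz_vf}, they define a unique $i_{X} \in \X_{T[1+s]\M}(M)$; uniqueness also yields $i_{X}|_{U} = i_{X|_{U}}$ for any $U \in \Op(M)$ directly from the construction.

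The remaining assertions are then local and essentially routine. For $[i_{X},i_{Y}] = 0$: in the chart $U_{\alpha}$ the coefficients of both $i_{X}$ and $i_{Y}$ are pullbacks along $\pi$, hence annihilated by every $\frac{\partial}{\partial (\dr \bbz^{B}_{\alpha})}$, and the fiber coordinate vector fields $\frac{\partial}{\partial (\dr \bbz^{A}_{\alpha})}$, $\frac{\partial}{\partial (\dr \bbz^{B}_{\alpha})}$ graded-commute (the elementary fact that distinct coordinate vector fields of a single graded chart graded-commute, worth one explicit line); so $[i_{X},i_{Y}]|_{U_{\alpha}} = 0$, and Proposition~\ref{tvrz_gcommutator}-$(i)$ promotes this to $[i_{X},i_{Y}] = 0$ globally. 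That $i_{X}$ maps $\Omega^{p}_{\M}(M)$ into $\Omega^{p-1}_{\M}(M)$ is visible from the local form (\ref{eq_pformdefinition}), since $\frac{\partial}{\partial (\dr \bbz^{A}_{\alpha})}$ lowers the number of coordinate $1$-forms by one and leaves the pulled-back coefficient functions untouched; the degree statement $\deg(i_{X}\omega) = \deg(\omega) + |X|$ is then bookkeeping: $|i_{X}\omega| = |\omega| + |X| - (1+s)$ together with the definition (\ref{eq_formdegree}) and the shift $p \mapsto p-1$. Finally $i_{fX} = f\, i_{X}$ follows from $\pi^{\ast}_{U_{\alpha}}(fX^{A}_{\alpha}) = \pi^{\ast}_{U_{\alpha}}(f)\cdot \pi^{\ast}_{U_{\alpha}}(X^{A}_{\alpha})$ and the definition $f \tr i_{X} := \pi^{\ast}(f)\cdot i_{X}$ of the $\C^{\infty}_{\M}$-module structure on $\Omega_{\M}$.

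The main obstacle will be the sign bookkeeping in the overlap check: reconciling the transformation behaviour of $X^{A}_{\alpha}$ as a vector-field component with that of $\frac{\partial}{\partial (\dr \bbz^{A}_{\alpha})}$ as a fiber coordinate vector field, and accordingly choosing $\epsilon$ so that the local formulas are consistent and compatible with the later Cartan magic formulas; everything else is a direct transcription of the template set by Proposition~\ref{tvrz_deRham}.
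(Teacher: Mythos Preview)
Your proposal is correct and follows essentially the same route as the paper. The paper defines $i_{X}|_{U_{\alpha}} := X^{A}_{(\alpha)}\,\frac{\partial}{\partial(\dr\bbz^{A}_{\alpha})}$ with no extra Koszul sign and observes that, thanks to the transformation law (\ref{eq_ddztransform}) arranged in Subsection~\ref{subsec_diffforms}, the fiber coordinate vector fields satisfy $\frac{\partial}{\partial(\dr\bbz^{B}_{\beta})} = \frac{\partial\bbz^{A}_{\alpha}}{\partial\bbz^{B}_{\beta}}\,\frac{\partial}{\partial(\dr\bbz^{A}_{\alpha})}$, i.e.\ they transform \emph{identically} to the base coordinate vector fields $\frac{\partial}{\partial\bbz^{A}_{\alpha}}$; hence the overlap check is immediate and your sign $\epsilon$ may simply be taken equal to $1$. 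The remaining verifications you sketch (commutator vanishing, degree shift, $\C^{\infty}_{\M}$-linearity, compatibility with restriction) are exactly what the paper dismisses as ``straightforward to verify''.
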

\begin{proof}
For each $\alpha \in I$, one can write $X|_{U_{\alpha}} = X^{A}_{(\alpha)} \frac{\partial}{\partial \bbz^{A}_{\alpha}}$ for unique functions $X^{A}_{(\alpha)} \in \C^{\infty}_{\M}(U_{\alpha})$. Let 
\begin{equation} \label{eq_interiorproduct}
i_{X}|_{U_{\alpha}} := X^{A}_{(\alpha)} \frac{\partial}{\partial (\dr \bbz^{A}_{\alpha})}.
\end{equation}
Note that due to (\ref{eq_ddztransform}), the coordinate vector fields on the right-hand side transform as 
\begin{equation}
\frac{\partial}{\partial( \dr \bbz^{B}_{\beta})}|_{U_{\alpha \beta}} = \frac{\partial \bbz^{A}_{\alpha}}{\partial \bbz^{B}_{\beta}} \frac{\partial}{\partial( \dr \bbz^{A}_{\alpha})}|_{U_{\alpha \beta}}.
\end{equation}
It follows that the vector fields on the right-hand side of (\ref{eq_interiorproduct}) agree on the overlaps, hence they define a unique vector field $i_{X} \in \X_{T[1+s]\M}(M)$. It is straightforward to verify the remaining properties and the proof is finished. 
\end{proof}
\begin{cor}
The sheaf isomorphism $\fPsi: \Omega^{1}_{\M} \rightarrow \X^{\ast}_{\M}$ obtained in Proposition \ref{tvrz_pforms}-$(iii)$ can be equivalently described as follows. For each $\omega \in \Omega^{1}_{\M}(U)$, $\fPsi_{U}(\omega) \in \X^{\ast}_{\M}(U) \cong \Omega^{1}_{T\M}(U)$ is a $\C^{\infty}_{\M}(U)$-linear map from $\X_{\M}(U)$ to $\C^{\infty}_{\M}(U)$. Then for each $X \in \X_{\M}(U)$, we have
\begin{equation}
[ \fPsi_{U}(\omega)](X) = (-1)^{\deg(\omega)(|X|-1)} i_{X}\omega
\end{equation}
\end{cor}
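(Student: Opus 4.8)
The plan is to verify the identity locally, on the members $\{U\cap U_{\alpha}\}_{\alpha\in I}$ of a graded smooth atlas $\A=\{(U_{\alpha},\varphi_{\alpha})\}_{\alpha\in I}$ for $\M$, and then conclude by the monopresheaf property of $\C^{\infty}_{\M}$. This reduction is legitimate since all the ingredients are compatible with restriction: $\fPsi$ is a $\C^{\infty}_{\M}$-linear sheaf isomorphism, the evaluation of a section of $\X^{\ast}_{\M}=\ul{\Sh}^{\C^{\infty}_{\M}}(\X_{\M},\C^{\infty}_{\M})$ on a vector field commutes with restrictions (by the construction in the proof of Proposition~\ref{tvrz_uSh}), and $i_{X}|_{V}=i_{X|_{V}}$ by Proposition~\ref{tvrz_interior}; hence $[\fPsi_{U}(\omega)](X)|_{U\cap U_{\alpha}}=[\fPsi_{U\cap U_{\alpha}}(\omega|_{U\cap U_{\alpha}})](X|_{U\cap U_{\alpha}})$ and $(i_{X}\omega)|_{U\cap U_{\alpha}}=i_{X|_{U\cap U_{\alpha}}}(\omega|_{U\cap U_{\alpha}})$. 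From now on one works over a single chart, writing $\omega|_{U\cap U_{\alpha}}=\omega^{(\alpha)}_{A}\cdot\dr\bbz^{A}_{\alpha}$ and $X|_{U\cap U_{\alpha}}=X^{A}_{(\alpha)}\,\tfrac{\partial}{\partial\bbz^{A}_{\alpha}}$, and using the grading convention of Proposition~\ref{tvrz_pforms}-$(iii)$ together with the degree identities $|\omega^{(\alpha)}_{A}|=\deg(\omega)-|\bbz^{A}|$ and $|X^{A}_{(\alpha)}|=|X|+|\bbz^{A}|$.

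For the left-hand side I would use the explicit description (\ref{eq_fPsi1formsdualiso}) of $\fPsi$, namely $\fPsi_{U\cap U_{\alpha}}(\omega|_{U\cap U_{\alpha}})=\omega^{(\alpha)}_{A}\,\Phi^{A}_{(\alpha)}$ with $(\Phi^{A}_{(\alpha)})_{A=1}^{n}$ the local frame of $\X^{\ast}_{\M}$ dual to $(\tfrac{\partial}{\partial\bbz^{A}_{\alpha}})_{A=1}^{n}$, so that $|\Phi^{A}_{(\alpha)}|=|\bbz^{A}|$. Evaluating this $\C^{\infty}_{\M}$-linear combination on $X$ and applying the dual-global-frame formula preceding Proposition~\ref{tvrz_doubledual} (with $s=a^{\kappa}\tr\Phi_{\kappa}$ one has $\Phi^{\lambda}_{U}(s)=(-1)^{(|s|+1)|\vartheta^{\lambda}|}a^{\lambda}$) gives $\Phi^{A}_{(\alpha)}(X|_{U\cap U_{\alpha}})=(-1)^{(|X|+1)|\bbz^{A}|}X^{A}_{(\alpha)}$, hence
\[
[\fPsi_{U}(\omega)](X)|_{U\cap U_{\alpha}}=(-1)^{(|X|+1)|\bbz^{A}|}\,\omega^{(\alpha)}_{A}\cdot X^{A}_{(\alpha)}.
\]
For the right-hand side I would use the local form (\ref{eq_interiorproduct}), $i_{X}|_{U_{\alpha}}=X^{A}_{(\alpha)}\,\tfrac{\partial}{\partial(\dr\bbz^{A}_{\alpha})}$, and apply the graded Leibniz rule for the coordinate vector field $\tfrac{\partial}{\partial(\dr\bbz^{A}_{\alpha})}$ to $\omega^{(\alpha)}_{B}\cdot\dr\bbz^{B}_{\alpha}$: the derivative annihilates the base-function factor $\omega^{(\alpha)}_{B}$ and $\tfrac{\partial(\dr\bbz^{B}_{\alpha})}{\partial(\dr\bbz^{A}_{\alpha})}=\delta^{B}_{A}$. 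Using $|\tfrac{\partial}{\partial(\dr\bbz^{A}_{\alpha})}|=-(|\bbz^{A}|+1+s)$, that $s$ is even, and $|\bbz^{A}|^{2}\equiv|\bbz^{A}|$, the resulting sign simplifies and one obtains
\[
(i_{X}\omega)|_{U\cap U_{\alpha}}=(-1)^{\deg(\omega)(|\bbz^{A}|+1)}\,X^{A}_{(\alpha)}\cdot\omega^{(\alpha)}_{A}.
\]

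It then remains to compare the two expressions on $U\cap U_{\alpha}$. Moving $X^{A}_{(\alpha)}$ past $\omega^{(\alpha)}_{A}$ by graded commutativity of $\C^{\infty}_{\M}(U\cap U_{\alpha})$ costs $(-1)^{(|X|+|\bbz^{A}|)(\deg(\omega)-|\bbz^{A}|)}$, and a short computation modulo $2$ (once more using $|\bbz^{A}|^{2}\equiv|\bbz^{A}|$ and that $s$ is even) shows
\[
(-1)^{\deg(\omega)(|X|-1)}\,(-1)^{\deg(\omega)(|\bbz^{A}|+1)}\,(-1)^{(|X|+|\bbz^{A}|)(\deg(\omega)-|\bbz^{A}|)}=(-1)^{(|X|+1)|\bbz^{A}|},
\]
which is exactly the sign appearing in $[\fPsi_{U}(\omega)](X)|_{U\cap U_{\alpha}}$. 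Hence $[\fPsi_{U}(\omega)](X)=(-1)^{\deg(\omega)(|X|-1)}i_{X}\omega$ on each $U\cap U_{\alpha}$, and therefore on $U$. The only genuine difficulty is the sign bookkeeping: one must keep careful track of the left versus right module actions $\tr$ and $\tl$ (as in the Leibniz rule (\ref{eq_derlleibniz})), of the degree shift $1+s$ hidden inside $\deg$, and of the parity of $s$; mishandling any one of these shifts the sign by a factor depending on $|\bbz^{A}|$.
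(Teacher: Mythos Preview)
Your proof is correct and follows exactly the approach indicated in the paper: the paper's own proof is the single sentence ``For each $\alpha \in I$, restrict both sides to $U \cap U_{\alpha}$, and compute,'' and you have carried out that computation in full detail, with the sign bookkeeping handled correctly.
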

\begin{proof}
For each $\alpha \in I$, restrict both sides to $U \cap U_{\alpha}$, and compute. 
\end{proof}
Finally, let us introduce the last player of the Cartan calculus ensemble. 
\begin{tvrz} \label{tvrz_Lieder}
For every vector field $X \in \X_{\M}(M)$, there exists a canonical degree $|X|$ vector field $\Li{X} \in \X_{T[1+s]\M}(M)$. For any other $Y \in \X_{\M}(M)$, it satisfies the relations
\begin{equation} \label{eq_cartanrels}
\Li{X} = [i_{X},\dr], \; \; [\Li{X},\Li{Y}] = \Li{[X,Y]}, \; \; [\Li{X},\dr] = 0, \; \; [\Li{X},i_{Y}] = i_{[X,Y]}.
\end{equation}
For each $p \in \N_{0}$, it restricts to a map $\Li{X}: \Omega^{p}_{\M}(M) \rightarrow \Omega^{p}_{\M}(M)$ and for each $\omega \in \Omega^{p}_{\M}(M)$, one has $\deg(\Li{X}\omega) = \deg(\omega) + |X|$. For any $U \in \Op(X)$, one has $\Li{X}|_{U} = \Li{X|_{U}}$. The vector field $\Li{X}$ is called the \textbf{Lie derivative (along $X$)}.
\end{tvrz}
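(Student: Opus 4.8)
The plan is to \emph{define} the Lie derivative by the first Cartan relation, $\Li{X} := [i_{X}, \dr]$, where the bracket is the graded commutator of vector fields on $T[1+s]\M$ from Proposition \ref{tvrz_gcommutator} and $i_{X}$, $\dr$ are the vector fields produced by Proposition \ref{tvrz_interior} and Proposition \ref{tvrz_deRham}. Then $\Li{X} \in \X_{T[1+s]\M}(M)$ automatically, $|\Li{X}| = |i_{X}| + |\dr| = (|X| - (1+s)) + (1+s) = |X|$, and the relation $\Li{X} = [i_{X},\dr]$ holds by construction. Compatibility with restrictions, $\Li{X}|_{U} = \Li{X|_{U}}$, follows from the analogous properties of $i_{X}$ and $\dr$ together with Proposition \ref{tvrz_gcommutator}-(i). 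Since $\dr$ raises and $i_{X}$ lowers the form degree by one (Proposition \ref{tvrz_deRham}, Proposition \ref{tvrz_interior}), both $i_{X}\circ\dr$ and $\dr\circ i_{X}$ preserve $\Omega^{p}_{\M}$, hence so does $\Li{X}$, and $\deg(\Li{X}\omega) = |\Li{X}\omega| - p(1+s) = |X| + |\omega| - p(1+s) = |X| + \deg(\omega)$.

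Next I would dispose of $[\Li{X}, \dr] = 0$ purely formally. Applying the graded Jacobi identity (Proposition \ref{tvrz_gcommutator}-($\ell$3)) to $[\dr, [i_{X}, \dr]]$ and using $[\dr,\dr] = 2\dr^{2} = 0$ gives $[\dr, \Li{X}] = [[\dr, i_{X}], \dr] = -(-1)^{|\dr||i_{X}|}[\Li{X}, \dr]$, while on the other hand $[\dr, \Li{X}] = -(-1)^{|\dr||\Li{X}|}[\Li{X}, \dr]$. Since $s$ is even, $1+s$ is odd, so $(-1)^{|\dr||i_{X}|} = (-1)^{(1+s)|X| - (1+s)^{2}} = -(-1)^{|X|}$ whereas $(-1)^{|\dr||\Li{X}|} = (-1)^{(1+s)|X|} = (-1)^{|X|}$; comparing the two expressions forces $2(-1)^{|X|}[\Li{X},\dr] = 0$, hence $[\Li{X},\dr] = 0$. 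With this in hand the fourth relation is again formal: by Jacobi, $[\Li{X}, \Li{Y}] = [\Li{X}, [i_{Y},\dr]] = [[\Li{X}, i_{Y}], \dr] + (-1)^{|\Li{X}||i_{Y}|}[i_{Y}, [\Li{X},\dr]]$, and the last term vanishes, so $[\Li{X},\Li{Y}] = [[\Li{X},i_{Y}],\dr]$; once $[\Li{X}, i_{Y}] = i_{[X,Y]}$ is established this equals $[i_{[X,Y]}, \dr] = \Li{[X,Y]}$.

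The substantive step, and the one I expect to be the main obstacle, is $[\Li{X}, i_{Y}] = i_{[X,Y]}$, since the abstract Jacobi manipulations close up only circularly; instead I would verify it by evaluating both sides on the coordinate functions of $T[1+s]\M$. By Proposition \ref{tvrz_vfields} a vector field on $T[1+s]\M$ is determined by its values on the coordinate functions $(\bbz^{A}_{\alpha})_{A=1}^{n}$ and $(\dr\bbz^{A}_{\alpha})_{A=1}^{n}$ of the induced chart $(U_{\alpha},\rho_{\alpha})$, so it suffices to work locally with $X|_{U_{\alpha}} = X^{A}\frac{\partial}{\partial\bbz^{A}_{\alpha}}$ and similarly for $Y$. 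On $\bbz^{A}_{\alpha}$ both sides vanish, interior products annihilating $0$-forms. On $\dr\bbz^{A}_{\alpha}$ one has $i_{[X,Y]}\dr\bbz^{A}_{\alpha} = [X,Y](\bbz^{A}_{\alpha})$, while expanding $[\Li{X},i_{Y}]\dr\bbz^{A}_{\alpha}$ and using $\Li{X}|_{\Omega^{0}_{\M}} = X$ (which follows from $\Li{X}f = i_{X}\dr f$ together with $i_{X}\dr\bbz^{B}_{\alpha} = X^{B}$), the identity $\Li{X}\dr f = (-1)^{|X|}\dr(\Li{X}f)$ coming from $[\Li{X},\dr]=0$, and the local formula (\ref{eq_interiorproduct}) for $i_{Y}$, one obtains $X(Y^{A}) - (-1)^{|\Li{X}||i_{Y}|}(-1)^{|X|}Y(X^{A})$; a short check that $(-1)^{|\Li{X}||i_{Y}|}(-1)^{|X|} = (-1)^{|X||Y|}$ (again using that $1+s$ is odd) identifies this with $X(Y(\bbz^{A}_{\alpha})) - (-1)^{|X||Y|}Y(X(\bbz^{A}_{\alpha})) = [X,Y](\bbz^{A}_{\alpha})$. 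The only genuine care needed is in tracking the Koszul signs through these coordinate computations; once $[\Li{X},i_{Y}] = i_{[X,Y]}$ is secured, the remaining Cartan relations follow as above and the proof is complete.
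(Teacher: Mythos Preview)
Your proposal is correct and follows essentially the same approach as the paper: define $\Li{X}:=[i_X,\dr]$, derive $[\Li{X},\dr]=0$ and $[\Li{X},\Li{Y}]=\Li{[X,Y]}$ from the graded Jacobi identity, and establish the substantive relation $[\Li{X},i_Y]=i_{[X,Y]}$ by a local coordinate computation. The only organizational difference is that the paper first writes out the full local expression $\Li{X}|_{U_\alpha} = X^{A}_{(\alpha)}\tfrac{\partial}{\partial\bbz^{A}_{\alpha}} + (-1)^{|X|}\dr\bbz^{B}_{\alpha}\tfrac{\partial X^{A}_{(\alpha)}}{\partial\bbz^{B}_{\alpha}}\tfrac{\partial}{\partial(\dr\bbz^{A}_{\alpha})}$ and then computes the bracket $[\Li{X},i_Y]$ directly from it (so it does not need $[\Li{X},\dr]=0$ for that step), whereas you evaluate both sides on the coordinate functions and use $[\Li{X},\dr]=0$ along the way; both routes amount to the same local check.
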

\begin{proof}
We define $\Li{X}$ be the first of the relations in (\ref{eq_cartanrels}), that is $\Li{X} := [i_{X}, \dr]$. We choose the order in the graded commutator to ensure that for any $f \in \C^{\infty}_{\M}(M) \cong \Omega^{0}_{\M}(M)$, one has 
\begin{equation}
\Li{X}(f) := i_{X}(\dr{f}) = X(f).
\end{equation}
This can be easily verified in local coordinates. Except for the relations (\ref{eq_cartanrels}), all of the properties of $\Li{X}$ follow immediately from the properties of $\dr$ and $i_{X}$, together with Proposition \ref{tvrz_gcommutator}. The coordinate expression for $\Li{X}$ can be calculated from the local expressions for  $\dr$ and $i_{X}$, giving
\begin{equation}
\Li{X}|_{U_{\alpha}} = X^{A}_{(\alpha)} \frac{\partial}{\partial \bbz^{A}_{\alpha}} + (-1)^{|X|} \dr{ \bbz^{B}_{\alpha}} \frac{\partial X^{A}_{(\alpha)}}{\partial \bbz^{B}_{\alpha}} \frac{\partial}{\partial( \dr \bbz^{A}_{\alpha})}. 
\end{equation}
It is easy to calculate the commutator of this vector field with $i_{Y}|_{U_{\alpha}}$, finding 
\begin{equation}
[\Li{X},i_{Y}]|_{U_{\alpha}} = (X^{B}_{(\alpha)} \frac{\partial Y^{A}_{(\alpha)}}{\partial \bbz^{B}_{\alpha}} - (-1)^{|X||Y|} Y^{B}_{(\alpha)} \frac{\partial X^{A}_{(\alpha)}}{\partial \bbz^{B}_{\alpha}}) \frac{\partial}{\partial( \dr \bbz^{A}_{\alpha})} = i_{[X,Y]}|_{U_{\alpha}}
\end{equation}
The remaining equations can be now easily proved using the graded Jacobi identity for the graded commutator, see Proposition \ref{tvrz_gcommutator}-$(ii)$-\textit{($\ell$3)}. Indeed, one has 
\begin{equation}
[\Li{X},\dr] = [[i_{X},\dr], \dr] = [i_{X}, [\dr,\dr]] - [[i_{X},\dr],\dr] = -[\Li{X},\dr],
\end{equation}
where we have used the fact that $\dr^{2} = \frac{1}{2}[\dr,\dr] = 0$. Hence $[\Li{X},\dr] = 0$. Finally, one finds
\begin{equation}
[\Li{X},\Li{Y}] = [\Li{X},[i_{Y},\dr]] = [[\Li{X},i_{Y}],\dr] + (-1)^{|X|(|Y|-1)} [i_{Y}, [\Li{X},\dr]] = [i_{[X,Y]}, \dr] = \Li{[X,Y]}. 
\end{equation}
This concludes the proof. 
\end{proof}
Note that using the alternative grading $\deg(\omega)$, one eliminates the ``auxiliary'' shift $s$ from all of the formulas. There is another way to introduce this grading.
\begin{example} \label{ex_Eulervfonforms}
Recall that on every graded manifold $\M$, we have the canonical Euler vector field $E \in \X_{\M}(M)$, see Example \ref{ex_Euler}. The corresponding Lie derivative has the local form
\begin{equation}
\Li{E}|_{U_{\alpha}} = |\bbz^{A}| \bbz^{A}_{\alpha} \frac{\partial}{\partial \bbz^{A}_{\alpha}} + |\bbz^{A}| \dr{ \bbz^{A}_{\alpha}} \frac{\partial}{\partial( \dr \bbz^{A}_{\alpha})}.
\end{equation}
This is \textit{not} the Euler field on $T[1+s]\M$. Instead, as $\deg(\dr \bbz^{A}_{\alpha}) = |\bbz^{A}|$, it is easy to see that 
\begin{equation} \label{eq_Eulervfonforms}
\Li{E} \omega = \deg(\omega) \omega,
\end{equation}
for any $\omega \in \Omega^{p}_{\M}(M)$. This can be viewed as an alternate definition of $\deg(\omega)$. 
\end{example}
\subsection{Morphisms of forms}
One of the main features of differential forms on ordinary manifolds is the notion of their pullback along smooth maps. In this subsection, we will make it possible also in the graded setting.
\begin{tvrz} \label{tvrz_formspullback}
Let $\phi: \cN \rightarrow \M$ be a graded smooth map. 

Then for each $p \in \N_{0}$, there is a canonical $\C^{\infty}_{\M}$-linear sheaf morphism $\phi^{\ast}: \Omega^{p}_{\M} \rightarrow \phi_{\ast} \Omega^{p}_{\cN}$, having the following properties:
\begin{enumerate}[(i)]
\item It preserves the degrees, that is $\deg( \phi^{\ast}_{U}(\omega)) = \deg(\omega)$ for any $U \in \Op(M)$ and $\omega \in \Omega^{p}_{\M}(U)$.
\item For $p = 0$, it coincides with the original pullback $\phi^{\ast}: \C^{\infty}_{\M} \rightarrow \ul{\phi}_{\ast} \C^{\infty}_{\cN}$.
\item It preserves the products. For each $U \in \Op(M)$, $\omega \in \Omega^{p}_{\M}(U)$ and $\omega' \in \Omega^{q}_{\M}(U)$, one has 
\begin{equation}
\phi^{\ast}_{U}(\omega \cdot \omega') = \phi^{\ast}_{U}(\omega) \cdot \phi^{\ast}_{U}(\omega'). 
\end{equation}
\item If $\psi: \cS \rightarrow \cN$ is another graded smooth map. Let $U \in \Op(M)$ be arbitrary. Then 
\begin{equation} (\phi \circ \psi)^{\ast}_{U} = \psi^{\ast}_{\ul{\phi}^{-1}(U)} \circ \phi^{\ast}_{U}, \; \; \1_{\M}^{\ast} = \1_{\Omega^{p}_{\M}}. \end{equation}
\item It commutes with $\dr$, that is for any $U \in \Op(M)$ and $\omega \in \Omega^{p}_{\M}(U)$, one has 
\begin{equation} \label{eq_pullbackcommuteswithd}
\dr( \phi^{\ast}_{U}(\omega)) = \phi^{\ast}_{U}( \dr \omega).
\end{equation}
\end{enumerate}
\end{tvrz}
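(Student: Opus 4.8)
The plan is to construct $\phi^{\ast}$ on $p$-forms by realizing it, once again, as a pullback of \emph{functions} along a suitable graded smooth map between the degree shifted tangent bundles. More precisely, the differential $T\phi$ of a graded smooth map $\phi\colon\cN\to\M$ should induce a graded smooth map $\phi_{\ast}\colon T\cN\to T\M$ over $\phi$, and hence (after degree shifting) a graded smooth map $T[1+s]\cN\to T[1+s_{\M}]\M$. However, there is an immediate complication: the auxiliary shift $s$ depends on the graded manifold, and $s_{\cN}$ need not equal $s_{\M}$. So first I would address this mismatch. The clean way is to observe that the definition $\Omega^{p}_{\M}$ is intrinsic (Proposition \ref{tvrz_pforms}): a $p$-form is a section of $\Omega_{\M}$ which in every chart is a polynomial of degree $p$ in the coordinate $1$-forms, with the alternative grading $\deg$. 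I would therefore define $\phi^{\ast}$ on $p$-forms directly in local coordinates: given a graded smooth atlas $\A=\{(U_{\alpha},\varphi_{\alpha})\}$ for $\M$ and $\B=\{(V_{\rho},\psi_{\rho})\}$ for $\cN$ with $\ul{\phi}(V_{\rho})\subseteq U_{\alpha(\rho)}$, and $\omega\in\Omega^{p}_{\M}(U)$ written locally as in (\ref{eq_pformdefinition}), declare
\begin{equation*}
\phi^{\ast}_{U}(\omega)|_{\ul{\phi}^{-1}(U)\cap V_{\rho}} := \sum_{(q_{1},\dots,q_{n})\in\ul{\N}^{n}_{p}} \phi^{\ast}(\omega^{(\alpha)}_{q_{1}\dots q_{n}})\cdot (\dr(\phi^{\ast}\bbz^{1}_{\alpha}))^{q_{1}}\cdots(\dr(\phi^{\ast}\bbz^{n}_{\alpha}))^{q_{n}},
\end{equation*}
where $\phi^{\ast}$ on the right is the function pullback and $\dr$ is the exterior derivative on $\cN$ from Proposition \ref{tvrz_deRham}. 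Note $\dr(\phi^{\ast}\bbz^{A}_{\alpha})\in\Omega^{1}_{\cN}$, so each summand is a $p$-form; the alternative grading makes the degree bookkeeping consistent since $\deg(\dr f)=|f|$.

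The key steps, in order: (1) show this local formula is well-defined, i.e.\ independent of the choice of charts $(U_{\alpha},\varphi_{\alpha})$, $(V_{\rho},\psi_{\rho})$ — this uses the chain rule (\ref{eq_chainrule}) applied to $\phi$, together with the transformation rule (\ref{eq_ddztransform}) for coordinate $1$-forms, and the fact that $\dr$ is a graded derivation so that $\dr((\phi^{\ast}\bbz^{A})\circ\text{transition})$ expands correctly; (2) check the local sections glue, i.e.\ agree on overlaps $V_{\rho}\cap V_{\sigma}$, which follows from step (1); this gives a well-defined section of $\phi_{\ast}\Omega^{p}_{\cN}$ and the monopresheaf/gluing properties of $\Omega^{p}_{\cN}$; (3) verify naturality in $U$ (immediate from naturality of function pullback and compatibility of $\dr$ with restrictions); (4) verify $\C^{\infty}_{\M}$-linearity and property (i) on degrees (direct from the formula and $\deg(\dr f)=|f|$); (5) property (ii) is the case $p=0$ where $\ul{\N}^{n}_{0}=\{(0,\dots,0)\}$ and the formula collapses to the function pullback; (6) property (iii), multiplicativity, follows since both sides are, locally, determined by how they act on component functions and coordinate $1$-forms, and $\phi^{\ast}$ of functions is an algebra morphism while the coordinate $1$-forms on $\cN$ commute appropriately; (7) functoriality (iv) follows from $(\phi\circ\psi)^{\ast}=\psi^{\ast}\circ\phi^{\ast}$ on functions plus the observation that $\dr(\psi^{\ast}\phi^{\ast}\bbz^{A})=\psi^{\ast}(\dr\phi^{\ast}\bbz^{A})$, which is exactly property (v) for the simplest forms; (8) property (v), commutation with $\dr$, which I would prove by first checking it on $0$-forms $f$ — there $\dr\phi^{\ast}f = \dr\bby^{K}\cdot\frac{\partial(\phi^{\ast}f)}{\partial\bby^{K}} = \dr\bby^{K}\cdot\frac{\partial(\phi^{\ast}\bbz^{A})}{\partial\bby^{K}}\cdot\phi^{\ast}(\frac{\partial f}{\partial\bbz^{A}}) = \dr(\phi^{\ast}\bbz^{A})\cdot\phi^{\ast}(\frac{\partial f}{\partial\bbz^{A}}) = \phi^{\ast}(\dr f)$ using the chain rule (\ref{eq_chainrule}) — and then extending to all $p$-forms since every form is locally a sum of products of functions and exact $1$-forms $\dr\bbz^{A}$, on which $\dr\circ\phi^{\ast}=\phi^{\ast}\circ\dr$ by $\dr^{2}=0$ and the already-established multiplicativity and the Leibniz rule.

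The main obstacle I anticipate is step (1), the coordinate-independence, because it is the point where the chain rule for $\phi$ and the cocycle-type transformation of coordinate $1$-forms have to interlock cleanly, and one must be careful that the signs coming from the alternative grading $\deg$ versus the intrinsic degree $|\cdot|$ (the shift $s$ appears in $|\dr\bbz^{A}|=|\bbz^{A}|+1+s$) do not spoil the computation. A slicker alternative, which I might pursue to avoid the bare-hands sign chase, is to genuinely construct the graded smooth map $\Phi\colon T[1+s]\cN\to T[1+s]\M$ extending $T[1+s]\phi$ — concretely by declaring its pullbacks on the base coordinates $\bbz^{A}_{\alpha}$ to be $\phi^{\ast}\bbz^{A}_{\alpha}$ (pulled up by the projection) and on the fiber coordinates $\dr\bbz^{A}_{\alpha}$ to be $\dr(\phi^{\ast}\bbz^{A}_{\alpha})$, invoking Theorem \ref{thm_globaldomain}/Remark \ref{rem_itsufficestocalculatepullbacks} locally and Proposition \ref{tvrz_gLRSgluing} to glue — and then simply \emph{set} $\phi^{\ast}:=\Phi^{\ast}$ on $\Omega_{\M}=\C^{\infty}_{T[1+s]\M}$. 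With this definition, properties (ii)--(v) become near-automatic: multiplicativity and naturality are inherited from being a pullback of functions, functoriality reduces to $(\Phi\circ\Psi)=\Phi_{T\phi}\circ\Psi_{T\psi}$ which one checks on coordinate functions, and commutation with $\dr$ reduces to $\Phi$ being $\dr$-related to $\dr$ (i.e.\ $\dr\sim_{\Phi}\dr$), which again is checked on the generating coordinate functions $\bbz^{A}$ and $\dr\bbz^{A}$ using the chain rule. The remaining work is then just to confirm that $\Phi^{\ast}$ sends $p$-forms to $p$-forms, which is exactly the local formula above. I would likely present the construction via $\Phi$ and relegate the explicit sign verifications to a lemma in the appendix, mirroring the style used earlier in the paper.
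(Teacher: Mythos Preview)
Your proposal is correct and follows essentially the same route as the paper: define $\phi^{\ast}$ by the local formula replacing each coordinate $1$-form $\dr\bbz^{A}_{\alpha}$ with $\dr(\phi^{\ast}\bbz^{A}_{\alpha})$, verify overlap-consistency via the chain rule (\ref{eq_chainrule}), and then check properties $(i)$--$(v)$ in the same order (the paper also proves $(v)$ before $(iv)$, using it in the derivation of functoriality). Your ``slicker alternative'' of building a genuine graded smooth map $T[1+s]\cN\to T[1+s]\M$ is exactly the viewpoint the paper records in the remark immediately following the proof, including the observation that the mismatch of shifts $s_{\M}\neq s_{\cN}$ can be harmlessly resolved by choosing a common $s$; the paper's actual argument, like yours, works directly with the local formula and the alternative grading $\deg$, so the shift never enters the computation.
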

\begin{proof}
Let $\A = \{ (U_{\alpha}, \varphi_{\alpha}) \}_{\alpha \in I}$ be a graded smooth atlas on $\M$, inducing for each $\alpha \in I$ the coordinate functions $(\bbz^{A}_{\alpha})_{A=1}^{n}$ and coordinate $1$-forms $\dr{ \bbz^{A}_{\alpha}}$ Let $U \in \Op(M)$ and $\omega \in \Omega^{p}_{\M}(U)$. For each $\alpha \in I$, we can thus write
\begin{equation}
\omega|_{U \cap U_{\alpha}} = \hspace{-3mm} \sum_{(q_{1},\dots,q_{n}) \in \ul{\N}^{n}_{p}} \hspace{-3mm} \omega^{(\alpha)}_{q_{1} \dots q_{p}} \cdot (\dr \bbz^{1}_{\alpha})^{q_{1}} \cdots (\dr \bbz^{n}_{\alpha})^{q_{n}},
\end{equation}
for unique functions $\omega^{(\alpha)}_{q_{1} \dots q_{n}} \in \C^{\infty}_{\M}(U \cap U_{\alpha})$. Let us write $\ol{\bbz}^{A}_{\alpha} := \phi^{\ast}_{U \cap U_{\alpha}}( \bbz^{A}_{\alpha}) \in \C^{\infty}_{\cN}( V_{\alpha})$, where $V_{\alpha} := \ul{\phi}^{-1}(U \cap U_{\alpha})$. For each $\alpha \in I$, we now define
\begin{equation} \label{eq_pullbackofform}
\phi^{\ast}_{U}(\omega)|_{V_{\alpha}} := \hspace{-3mm} \sum_{(q_{1},\dots,q_{n}) \in \ul{\N}^{n}_{p}} \hspace{-3mm} \phi^{\ast}_{U \cap U_{\alpha}}(\omega^{(\alpha)}_{q_{1} \dots q_{n}}) \cdot (\dr \ol{\bbz}^{1}_{\alpha})^{q_{1}} \cdots (\dr \ol{\bbz}^{n}_{\alpha})^{q_{n}} \in \Omega_{\cN}(V_{\alpha})
\end{equation} 
We claim that for each $(\alpha,\beta) \in I^{2}$, there holds the equation
\begin{equation}
\dr{\ol{\bbz}^{A}_{\beta}}|_{V_{\alpha \beta}} = \dr{ \ol{\bbz}^{B}_{\alpha}}|_{V_{\alpha \beta}} \cdot \phi^{\ast}_{U \cap U_{\alpha \beta}}( \frac{\partial \bbz^{A}_{\beta}}{\partial \bbz^{B}_{\alpha}}|_{U \cap U_{\alpha \beta}}). 
\end{equation}
But this can be proved easily by writing the differentials locally as (\ref{eq_differentiallocal}) using a graded smooth atlas on $\cN$, and then employing the chain rule (\ref{eq_chainrule}). As the functions $\dr \ol{\bbz}^{A}_{\alpha}$ commute in the same way as $\dr \bbz^{A}_{\alpha}$, it is now easy to see that the functions on the right-hand side of (\ref{eq_pullbackofform}) agree on the overlaps $V_{\alpha \beta}$, hence they define the function $\phi^{\ast}_{U}(\omega) \in \Omega_{\cN}(\ul{\phi}^{-1}(U))$. Note that it is obviously a $p$-form. It is easy to verify that $\phi^{\ast}_{U}$ is $\C^{\infty}_{\M}(U)$-linear (with respect to the induced action of $\C^{\infty}_{\M}$ on $\phi_{\ast} \Omega^{p}_{\cN}$, see Remark \ref{rem_pushforwardmodules}) and natural in $U$, hence it defines a $\C^{\infty}_{\M}$-linear sheaf morphism $\phi^{\ast}: \Omega^{p}_{\M} \rightarrow \phi_{\ast} \Omega^{p}_{\cN}$. It is straightforward to verify the properties $(i) - (iii)$. Finally, one can again use the chain rule (\ref{eq_chainrule}) to show that every $f \in \C^{\infty}_{\M}(U \cap U_{\alpha})$ satisfies
\begin{equation}
\dr( \phi^{\ast}_{U \cap U_{\alpha}}(f)) = \dr{\ol{\bbz}^{A}_{\alpha}} \cdot \phi^{\ast}_{U \cap U_{\alpha}}( \frac{\partial f}{\partial \bbz^{A}_{\alpha}}).
\end{equation}
Using this and (\ref{eq_pullbackofform}), for each $\alpha \in I$ we find
\begin{equation}
\begin{split}
\dr( \phi^{\ast}_{U}(\omega))|_{V_{\alpha}} = & \ \hspace{-3mm} \sum_{(q_{1},\dots,q_{n}) \in \ul{\N}^{n}_{p}} \hspace{-3mm} \dr(\phi^{\ast}_{U \cap U_{\alpha}}(\omega^{(\alpha)}_{q_{1} \dots q_{n}})) \cdot (\dr \ol{\bbz}^{1}_{\alpha})^{q_{1}} \cdots (\dr \ol{\bbz}^{n}_{\alpha})^{q_{n}} \\
= & \ \hspace{-3mm} \sum_{(q_{1},\dots,q_{n}) \in \ul{\N}^{n}_{p}} \hspace{-3mm} \dr{\ol{\bbz}^{A}_{\alpha}} \cdot \phi^{\ast}_{U \cap U_{\alpha}}( \frac{\partial \omega^{(\alpha)}_{q_{1} \dots q_{n}}}{\partial \bbz^{A}_{\alpha}}) \cdot (\dr \ol{\bbz}^{1}_{\alpha})^{q_{1}} \cdots (\dr \ol{\bbz}^{n}_{\alpha})^{q_{n}} \\
= & \ \phi^{\ast}_{U \cap U_{\alpha}}( \hspace{-3mm} \sum_{(q_{1},\dots,q_{n}) \in \ul{\N}^{n}_{p}} \hspace{-3mm} \dr \bbz^{A}_{\alpha} \cdot \frac{\partial \omega^{(\alpha)}_{q_{1} \dots q_{n}}}{\partial \bbz^{A}_{\alpha}} \cdot  (\dr \bbz^{1}_{\alpha})^{q_{1}} \cdots (\dr \bbz^{n}_{\alpha})^{q_{n}} ) \\
= & \ \phi^{\ast}_{U \cap U_{\alpha}}( \dr{ \omega|_{U \cap U_{\alpha}} }) = \phi^{\ast}_{U}( \dr \omega)|_{V_{\alpha}}.
\end{split}
\end{equation}
This proves the claim $(v)$. Finally, let $\psi: \cS \rightarrow \cN$ be another graded smooth map. For each $\alpha \in I$, let $W_{\alpha} := \ul{\psi}^{-1}(V_{\alpha}) = (\ul{\phi} \circ \ul{\psi})^{-1}(U \cap U_{\alpha})$. Using the naturality of $\psi^{\ast}$ and (\ref{eq_pullbackofform}), we find
\begin{equation}
\begin{split}
\psi^{\ast}_{\ul{\psi}^{-1}(U)}( \phi^{\ast}_{U}(\omega))|_{W_{\alpha}} = & \ \psi^{\ast}_{V_{\alpha}}( \hspace{-3mm} \sum_{(q_{1},\dots,q_{n}) \in \ul{\N}^{n}_{p}} \hspace{-3mm} \phi^{\ast}_{U \cap U_{\alpha}}(\omega^{(\alpha)}_{q_{1} \dots q_{n}}) \cdot (\dr \ol{\bbz}^{1}_{\alpha})^{q_{1}} \cdots (\dr \ol{\bbz}^{n}_{\alpha})^{q_{n}}) \\
= & \ \hspace{-3mm} \sum_{(q_{1},\dots,q_{n}) \in \ul{\N}^{n}_{p}} \hspace{-3mm} (\phi \circ \psi)^{\ast}_{U \cap U_{\alpha}}( \omega^{(\alpha)}_{q_{1} \dots q_{n}}) \cdot ( \psi^{\ast}_{V_{\alpha}}( \dr \ol{\bbz}^{1}_{\alpha}))^{q_{1}} \cdots ( \psi^{\ast}_{V_{\alpha}}( \dr \ol{\bbz}^{n}_{\alpha}))^{q_{n}} \\
= & \ \hspace{-3mm} \sum_{(q_{1},\dots,q_{n}) \in \ul{\N}^{n}_{p}} \hspace{-3mm} (\phi \circ \psi)^{\ast}_{U \cap U_{\alpha}}( \omega^{(\alpha)}_{q_{1} \dots q_{n}}) \cdot  ( \dr (\psi^{\ast}_{V_{\alpha}}( \ol{\bbz}^{1}_{\alpha})))^{q_{1}} \cdots ( \dr (\psi^{\ast}_{V_{\alpha}}( \ol{\bbz}^{n}_{\alpha})))^{q_{n}} \\
= & \ (\phi \circ \psi)^{\ast}_{U}(\omega)|_{W_{\alpha}}. 
\end{split}
\end{equation}
We have used the properties $(i) - (iii)$ and $(v)$. In the very last step, observe that $\psi^{\ast}_{V_{\alpha}}( \ol{\bbz}^{A}_{\alpha}) = (\phi \circ \psi)^{\ast}_{U \cap U_{\alpha}}( \bbz^{A}_{\alpha})$, so we have just used the definition (\ref{eq_pullbackofform}). This verifies the first claim of $(v)$. The fact that $\1_{\M}^{\ast} = \1_{\Omega^{p}_{\M}}$ is obvious and the proof is finished. 
\end{proof}
\begin{rem}
Note that in general, $\phi^{\ast}$ does not define a morphism of sheaves of graded algebras $\phi^{\ast}: \Omega_{\M} \rightarrow \ul{\phi}_{\ast} \Omega_{\cN}$, as it does not preserve the ``functional degree''. This is because $\Omega_{\M} = \C^{\infty}_{T[1+s]\M}$ and $\Omega_{\cN} = \C^{\infty}_{T[1+s'] \cN}$, where in general $s \neq s'$. However, the definition of the auxiliary degree shift in Definition \ref{def_diffforms} was only a matter of taste. For a given pair of manifolds $\M$ and $\cN$, we may always make choose $s = s'$ for all practical purposes. It then follows that $T[1+s]\phi := (\ul{\phi}, \phi^{\ast}): T[1+s]\cN \rightarrow T[1+s]\M$ defines a graded smooth map. The condition (\ref{eq_pullbackcommuteswithd}) then simply means that the two de Rham differentials (as vector fields) are $T[1+s]\phi$-related, see Definition \ref{def_relatedvf}.
\end{rem}

\begin{tvrz} \label{tvrz_inducedtangent}
\begin{enumerate}[(i)]
\item For every graded smooth map $\phi: \cN \rightarrow \M$, we obtain a canonical graded vector bundle morphism $T\phi: T\cN \rightarrow T\M$ over $\phi$. 
\item Let $n \in N$ be arbitrary. By $(i)$ and Example \ref{ex_fiberofgVB}, there is the induced graded linear map 
\begin{equation}
(T\phi)_{n}: (T\cN)_{n} \rightarrow (T\M)_{\ul{\phi}(n)}.
\end{equation}
Then $(T\phi)_{n}$ coincides with the differential $T_{n}\phi: T_{n}\cN \rightarrow T_{\ul{\phi}(n)}\M$, see Proposition \ref{tvrz_differential}, where we assume the identifications $(T\cN)_{n} \cong T_{n} \cN$ and $(T\M)_{\ul{\phi}(n)} \cong T_{\ul{\phi}(n)}\M$ as in Example \ref{ex_fiberoftangentistangent}.
\end{enumerate}
\end{tvrz}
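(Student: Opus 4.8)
\emph{Part (i).} The plan is to obtain $T\phi$ by dualising the pullback of $1$-forms. By Proposition \ref{tvrz_pforms}-(iii), with the alternative grading $\deg$ there is a canonical $\C^{\infty}_{\M}$-linear sheaf isomorphism $\fPsi^{\M}\colon \Omega^{1}_{\M}\xrightarrow{\ \sim\ }\X_{\M}^{\ast}=\Gamma_{T\M}^{\ast}$, and likewise $\fPsi^{\cN}\colon\Omega^{1}_{\cN}\xrightarrow{\ \sim\ }\Gamma_{T\cN}^{\ast}$; it is important here that we use $\deg$, so that these are genuine (degree $0$) isomorphisms of sheaves of graded $\C^{\infty}$-modules. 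Proposition \ref{tvrz_formspullback} furnishes a $\C^{\infty}_{\M}$-linear, degree-preserving sheaf morphism $\phi^{\ast}\colon\Omega^{1}_{\M}\to\phi_{\ast}\Omega^{1}_{\cN}$, where $\phi_{\ast}$ is the pushforward of modules from Remark \ref{rem_pushforwardmodules}. I would then simply set
\[
(T\phi)^{\dagger}\ :=\ (\phi_{\ast}\fPsi^{\cN})\circ\phi^{\ast}\circ(\fPsi^{\M})^{-1}\ \colon\ \Gamma_{T\M}^{\ast}\ \longrightarrow\ \phi_{\ast}\Gamma_{T\cN}^{\ast},
\]
a $\C^{\infty}_{\M}$-linear sheaf morphism, so that $T\phi:=(\phi,(T\phi)^{\dagger})$ is by definition a graded vector bundle morphism $T\cN\to T\M$ over $\phi$. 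Functoriality ($T(\phi\circ\psi)=T\phi\circ T\psi$, $T\1_{\cN}=\1_{T\cN}$) follows at once from Proposition \ref{tvrz_formspullback}-(iv) together with the composition rule \ref{eq_gvbmorfcomp}. In local charts this $(T\phi)^{\dagger}$ sends the dual coordinate frame element $\dr\bbz^{A}$ to $\phi^{\ast}(\dr\bbz^{A})=\dr(\phi^{\ast}_{U}\bbz^{A})=\dr\bby^{K}\cdot\partial(\phi^{\ast}_{U}\bbz^{A})/\partial\bby^{K}$ by Proposition \ref{tvrz_formspullback}-(v) and \ref{eq_differentiallocal}, which is the expected local description.

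\emph{Part (ii).} Both $(T\phi)_{n}$ and $T_{n}\phi$ are graded linear maps $T_{n}\cN\to T_{\ul{\phi}(n)}\M$, after the identifications $(T\cN)_{n}\cong T_{n}\cN$ and $(T\M)_{\ul{\phi}(n)}\cong T_{\ul{\phi}(n)}\M$ of Example \ref{ex_fiberoftangentistangent}, so it suffices to check they coincide on the coordinate total basis of $T_{n}\cN$. Choose graded charts $(V,\psi)$ for $\cN$ around $n$ and $(U,\varphi)$ for $\M$ around $\ul{\phi}(n)$ with $\ul{\phi}(V)\subseteq U$, with coordinate functions $(\bby^{K})$, $(\bbz^{A})$. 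On one side, Example \ref{ex_diffgradeddomains} (equivalently \ref{eq_Tmphiaschain}) together with the chain rule \ref{eq_chainrule} and Proposition \ref{tvrz_valueofvf} give
\[
(T_{n}\phi)\Big(\tfrac{\partial}{\partial\bby^{K}}\big|_{n}\Big)\ =\ \tfrac{\partial}{\partial\bby^{K}}\big|_{n}\big([\phi^{\ast}_{U}(\bbz^{A})|_{V}]_{n}\big)\,\tfrac{\partial}{\partial\bbz^{A}}\big|_{\ul{\phi}(n)}\ =\ \Big(\tfrac{\partial(\phi^{\ast}_{U}\bbz^{A})}{\partial\bby^{K}}\Big)(n)\,\tfrac{\partial}{\partial\bbz^{A}}\big|_{\ul{\phi}(n)}.
\]
On the other side, $(T\phi)_{n}$ is the fibre map of $T\phi$ from Example \ref{ex_fiberofgVB}; since $(T\phi)^{\dagger}=\phi^{\ast}$ on $1$-forms, transposing the local description from Part (i) and evaluating at $n$ (again via Example \ref{ex_fiberofgVB} and Proposition \ref{tvrz_valueofvf}) yields exactly the same formula. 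Alternatively, one can phrase this invariantly: using \ref{eq_differentialformula}, a tangent vector $w$ at $\ul\phi(n)$ is pinned down by the numbers $w([\bbz^{A}]_{\ul\phi(n)})$ (Proposition \ref{tvrz_coordtangentvectors}), and for both maps the pairing of the image with the germ $[\dr\bbz^{A}]_{\ul\phi(n)}$ reduces — via the corollary following Proposition \ref{tvrz_interior} and $\phi^{\ast}(\dr\bbz^{A})=\dr(\phi^{\ast}_{U}\bbz^{A})$ — to the value at $n$ of the tangent vector acting on $\phi^{\ast}_{U}\bbz^{A}$.

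\emph{Expected main obstacle.} No step is conceptually hard; the one genuinely delicate point is sign bookkeeping. Passing through $(T\phi)^{\dagger}=\phi^{\ast}$ forces one to keep the $\deg$-grading on $\Omega^{1}_{\M}$ consistent with the grading on $\X_{\M}^{\ast}$, and the identifications $(T\cN)_{n}\cong T_{n}\cN$ (Example \ref{ex_fiberoftangentistangent}) and the dual-frame conventions (the Koszul signs flagged in Remark \ref{rem_peculiardoubledual}) each contribute signs that must be shown to cancel. Once a single coherent set of conventions is fixed — e.g. the ones already used to define $\fPsi$ and the interior product — these signs match on both sides and the equality $(T\phi)_{n}=T_{n}\phi$ drops out; I would do the verification in local coordinates precisely to make this cancellation transparent.
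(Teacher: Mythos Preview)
Your proposal is correct and follows essentially the same approach as the paper: define $(T\phi)^{\dagger}$ by transporting the pullback $\phi^{\ast}$ on $1$-forms through the isomorphism $\Omega^{1}\cong\Gamma_{T}^{\ast}$ of Proposition~\ref{tvrz_pforms}-(iii), and then verify part (ii) by computing both maps on the coordinate basis $\tfrac{\partial}{\partial\bby^{K}}|_{n}$ via \ref{eq_differentiallocal} and \ref{eq_Tmphiaschain}. The paper makes the sign $(-1)^{|\bbz^{A}|(|\bby^{K}|-|\bbz^{A}|)}$ explicit when rewriting $\dr\bby^{K}\cdot\partial(\phi^{\ast}_{U}\bbz^{A})/\partial\bby^{K}$ with the coefficient on the left, which is exactly the bookkeeping you flagged as the main obstacle.
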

\begin{proof}
Recall that we have a canonical sheaf isomorphism $\Omega^{1}_{\M} \cong \Gamma^{\ast}_{T\M}$, see Proposition \ref{tvrz_pforms}-$(iii)$. By Proposition \ref{tvrz_formspullback}, we thus have a $\C^{\infty}_{\M}$-linear sheaf morphism $\phi^{\ast}: \Gamma^{\ast}_{T\M} \rightarrow \phi_{\ast} \Gamma^{\ast}_{T\cN}$. Consequently, $T \phi := (\phi, \phi^{\ast})$ then defines a graded vector bundle morphism $T\phi: T\cN \rightarrow T\M$ (over $\phi)$. 

To prove the second claim, pick graded local charts $(V,\varphi)$ for $\cN$ and $(U, \psi)$ for $\M$, such that $n \in V$ and $\ul{\varphi}(V) \subseteq U$. Let $(\bby^{K})_{k=1}^{m}$ and $(\bbz^{A})_{A=1}^{n}$ be the corresponding coordinate functions. By definition and (\ref{eq_differentiallocal}), one obtains
\begin{equation} \label{eq_phiastonccord1forms}
\phi^{\ast}_{U}( \dr{\bbz}^{A})|_{V} = \dr{\bby^{K}} \cdot \frac{\partial( \phi^{\ast}_{U}(\bbz^{A})|_{V})}{\partial \bby^{K}} = (-1)^{|\bbz^{A}|( |\bby^{K}| - |\bbz^{A}|)} \frac{\partial( \phi^{\ast}_{U}(\bbz^{A})|_{V})}{\partial \bby^{K}} \cdot \dr{\bby}^{K}. 
\end{equation}
As noted in the proof of Proposition \ref{tvrz_pforms}-$(iii)$, the isomorphism $\Omega^{1}_{\M} \cong \Gamma^{\ast}_{T\M}$ identifies the coordinate $1$-forms $(\dr{\bbz}^{A})_{A=1}^{n}$ with the local frame for $\Gamma^{\ast}_{TM}$ dual to $( \frac{\partial}{\partial \bbz^{A}})_{A=1}^{n}$, hence (\ref{eq_phiastonccord1forms}) is precisely the local expression of the sheaf morphism $\phi^{\ast}: \Gamma^{\ast}_{T\M} \rightarrow \phi_{\ast} \Gamma^{\ast}_{T\cN}$ using the local frames. 

By tracking the definition of the fiber in Example \ref{ex_fiberofgVB} and the identifications $(T\cN)_{n}  \cong  T_{n}\cN$ and $(T\M)_{\ul{\phi}(n)} \cong T_{\ul{\phi}(n)} \M$ as in Example \ref{ex_fiberoftangentistangent}, one can use (\ref{eq_phiastonccord1forms}) to prove that 
\begin{equation}
(T\phi)_{n}( \frac{\partial}{\partial \bby^{K}}|_{n}) = \frac{\partial( \phi^{\ast}_{U}(\bbz^{A})|_{V})}{\partial \bby^{K}}(n) \frac{\partial}{\partial \bbz^{A}}|_{\ul{\phi}(n)},
\end{equation}
for all $J \in \{1,\dots,n\}$. But this is precisely the expression (\ref{eq_Tmphiaschain}) for $T_{n}\phi$. 
\end{proof}
\subsection{De Rham cohomology, Poincaré lemma}
For each $p \in \N_{0}$ and $k \in \Z$, one can consider a sheaf $\Omega^{p(k)}_{\M} \in \Sh(M,\Vect)$ of $p$-forms of degree $k$, defined for each $U \in \Op(M)$ as 
\begin{equation}
\Omega^{p(k)}_{\M}(U) := \{ \omega \in \Omega^{p}_{\M}(U) \; | \; \deg(\omega) = k \}. 
\end{equation}
With the original grading, this is nothing but the component sheaf $(\Omega^{p}_{\M})_{k+p(1+s)}$. It follows from Proposition \ref{tvrz_deRham} that for each $p \in \N_{0}$, the exterior derivative restricts to the linear map 
\begin{equation}
\dr: \Omega^{p(k)}_{\M}(M) \rightarrow \Omega^{p+1(k)}_{\M}(M).
\end{equation}
In this way, we obtain a cochain complex $(\Omega^{\bullet(k)}_{\M}(M), \dr)$ for each $k \in \Z$. 
\begin{definice}
For each $k \in \Z$, by the \textbf{degree $k$ de Rham cohomology $H^{\bullet(k)}_{\M}$ of $\M$}, we mean the cohomology of the cochain complex $(\Omega^{\bullet(k)}_{\M}(M), \dr)$. One  can thus form a single graded vector space $H^{\bullet}_{\M} := \{ H^{\bullet(k)}_{\M} \}_{k \in \Z}$ called the \textbf{graded de Rham cohomology of $\M$}.
\end{definice}

It follows immediately from Proposition \ref{tvrz_formspullback} that the assignment $\M \mapsto H^{\bullet}_{\M}$ defines a functor from $(\gMan^{\infty})^{\op}$ to $\gVect$. In particular, graded de Rham cohomology is possibly a good invariant. However, in \cite{roytenberg2002structure} it was observed that for $k \neq 0$, $H^{\bullet(k)}_{\M}$ happens to be ``rather'' trivial. 
\begin{tvrz} \label{tvrz_degreekdeRhamtrivial}
For every $k \neq 0$ and $p \in \N_{0}$, one has $H^{p(k)}_{\M} = 0$. 
\end{tvrz}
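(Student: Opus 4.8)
\emph{Proof proposal.} The plan is to produce an explicit contracting homotopy on the complex $(\Omega^{\bullet(k)}_{\M}(M),\dr)$ built from the Euler vector field $E \in \X_{\M}(M)$, in the spirit of the classical homotopy argument for differential forms of nonzero weight. The two ingredients are already available: the first Cartan relation $\Li{E} = [i_{E},\dr]$ from Proposition \ref{tvrz_Lieder}, and the identity $\Li{E}\omega = \deg(\omega)\,\omega$ valid for every $p$-form $\omega$, recorded in Example \ref{ex_Eulervfonforms}.

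First I would unwind the graded commutator. Since $|\dr| = 1+s$ and $|i_{E}| = |E|-(1+s) = -(1+s)$, and since $s$ is even so that $1+s$ is odd, the sign $(-1)^{|i_{E}||\dr|} = (-1)^{-(1+s)^{2}}$ equals $-1$; hence $\Li{E} = i_{E}\circ\dr + \dr\circ i_{E}$, the graded Cartan homotopy formula. Combined with $\Li{E}\omega = \deg(\omega)\,\omega$, this yields $\deg(\omega)\,\omega = i_{E}(\dr\omega) + \dr(i_{E}\omega)$ for all $\omega \in \Omega^{p}_{\M}(M)$. Next I would record the degree bookkeeping: by Proposition \ref{tvrz_interior} the interior product sends $\Omega^{p}_{\M}(M)$ into $\Omega^{p-1}_{\M}(M)$ and preserves the alternative degree (as $|E| = 0$), so it restricts to a linear map $i_{E}\colon \Omega^{p(k)}_{\M}(M) \to \Omega^{p-1(k)}_{\M}(M)$ for every $k \in \Z$ and $p \geq 1$, while on $0$-forms $i_{E}$ vanishes identically because $\Omega^{-1}_{\M} = 0$.

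With these in place the conclusion is immediate: if $\omega \in \Omega^{p(k)}_{\M}(M)$ is closed, then $k\,\omega = \dr(i_{E}\omega)$, so for $k \neq 0$ we obtain $\omega = \dr\!\left(\tfrac{1}{k}\, i_{E}\omega\right)$ with $\tfrac{1}{k}\,i_{E}\omega \in \Omega^{p-1(k)}_{\M}(M)$, exhibiting $\omega$ as exact; for $p=0$ the same identity reads $k\omega = 0$, hence $\omega = 0$, in agreement with Proposition \ref{tvrz_dfis0consequences}. Therefore $H^{p(k)}_{\M} = 0$ for all $p \in \N_{0}$ whenever $k \neq 0$. I do not anticipate a genuine obstacle here: the only point needing care is the sign in $[i_{E},\dr]$, where one must use that the auxiliary shift $1+s$ is odd so that the homotopy formula acquires the ``$+$'' sign, together with the harmless observation that $i_{E}$ annihilates $0$-forms, so that the $p=0$ case is not an exception but a degenerate instance of the same argument.
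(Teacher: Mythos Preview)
Your proof is correct and follows essentially the same argument as the paper: use the Cartan formula $\Li{E} = i_{E}\dr + \dr i_{E}$ together with $\Li{E}\omega = \deg(\omega)\,\omega$ to exhibit $\tfrac{1}{k}\,i_{E}$ as a contracting homotopy on the complex in degree $k \neq 0$. The only minor differences are cosmetic: you make the sign in the graded commutator $[i_{E},\dr]$ explicit, and you treat the case $p=0$ as a degenerate instance of the homotopy identity (via $i_{E}\omega = 0$), whereas the paper dispatches $p=0$ by a direct appeal to Proposition~\ref{tvrz_dfis0consequences}.
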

\begin{proof}
Let $k \neq 0$ be fixed. For $f \in \Omega^{0(k)}_{\M}(M) \cong \C^{\infty}_{\M}(M)_{k}$, the condition $\dr{f} = 0$ implies $f = 0$, see Proposition \ref{tvrz_dfis0consequences}. This shows that $H^{0(k)}_{\M} = 0$. Next, let $p > 0$ and suppose that $\omega \in \Omega^{p(k)}_{\M}(M)$ satisfies $\dr{\omega} = 0$. Using (\ref{eq_Eulervfonforms}), we can write
\begin{equation}
\omega = \frac{1}{\deg(\omega)} \Li{E}\omega = \frac{1}{\deg(\omega)}(i_{E}(\dr{\omega}) + \dr( i_{E}\omega)) = \dr( \frac{1}{\deg(\omega)} i_{E}\omega).
\end{equation}
But this shows that every closed $p$-form of degree $k$ is exact, hence $H^{p(k)}_{\M} = 0$. 
\end{proof}
It follows that $H^{\bullet(0)}_{\M}$ remains the only candidate for a non-trivial cohomology. However, as we shall now argue, it is also not particularly interesting. First, we obtain a statement for graded domains. Its proof follows the standard one, see e.g. Chapter I, $\mathsection$4 of \cite{bott2013differential}. 
\begin{tvrz}[\textbf{graded Poincaré lemma}] \label{tvrz_Poincare}
Let $(n_{j})_{j \in \Z}$ be any sequence, such that $\sum_{j \in \Z} n_{j} < \infty$. Let $U \in \Op(\R^{n_{0}})$ be arbitrary. Then there is a vector space isomorphism
\begin{equation} \label{eq_Poincare}
H^{\bullet(0)}_{U^{(n_{j})}} \cong H^{\bullet}_{U},
\end{equation}
where on the right-hand side, there is the ordinary de Rham cohomology of the open subset $U \subseteq \R^{n_{0}}$. In particular, if $U$ is contractible, then $H^{\bullet(0)}_{U^{(n_{j})}} = 0$. 
\end{tvrz}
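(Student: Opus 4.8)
The plan is to exhibit the degree-zero graded de Rham complex $\Omega^{\bullet(0)}_{U^{(n_j)}}(U)$ as a deformation retract of the ordinary de Rham complex of $U$, mimicking the classical Poincaré homotopy argument. There are two structural graded smooth maps to play against each other: the body map $i_U\colon U\to U^{(n_j)}$ of Proposition \ref{tvrz_bodymap} (here $U$ is regarded as the ordinary graded domain with $n_0$ unchanged and all other entries zero), and a projection $p\colon U^{(n_j)}\to U$, which exists and is unique by Theorem \ref{thm_globaldomain}: it is the graded smooth map with underlying map $\1_U$ and $p^{\ast}(x^i)=x^i$. One has $p\circ i_U=\1_U$, so by Proposition \ref{tvrz_formspullback}(iv) the induced maps $p^{\ast}\colon\Omega^{\bullet(0)}_U(U)\to\Omega^{\bullet(0)}_{U^{(n_j)}}(U)$ and $i_U^{\ast}$ in the opposite direction satisfy $i_U^{\ast}\circ p^{\ast}=\1$ already at the level of complexes, and both commute with $\dr$ by Proposition \ref{tvrz_formspullback}(v). (Since every form on the ordinary domain $U$ has degree zero, $H^{\bullet(0)}_U=H^{\bullet}_U$ in the sense of this paper, which is the classical de Rham cohomology of $U$.) It therefore remains to show that $\Phi^{\ast}:=p^{\ast}\circ i_U^{\ast}$ is chain-homotopic to the identity on $\Omega^{\bullet(0)}_{U^{(n_j)}}(U)$, where $\Phi:=i_U\circ p\colon U^{(n_j)}\to U^{(n_j)}$ is the ``projection to the body'', characterized by $\Phi^{\ast}(x^i)=x^i$ and $\Phi^{\ast}(\xi_\mu)=0$.

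To build the homotopy I would first use Theorem \ref{thm_globaldomain} again to construct a graded smooth map $H\colon U^{(n_j)}\times\R\to U^{(n_j)}$ determined by $H^{\ast}(x^i)=\pi_1^{\ast}(x^i)$ and $H^{\ast}(\xi_\mu)=\pi_1^{\ast}(\xi_\mu)\cdot\pi_2^{\ast}(t)$, where $t$ is the coordinate on $\R$ and $\pi_1,\pi_2$ are the projections of the product graded domain (Example \ref{ex_productgdomains}); the underlying map is $(x,t)\mapsto x$, so $H$ is well defined. For $t_0\in\R$ let $\iota_{t_0}=(\1_{U^{(n_j)}},\kappa_{t_0})\colon U^{(n_j)}\to U^{(n_j)}\times\R$ (Example \ref{ex_constantmapping}); one checks $H\circ\iota_1=\1_{U^{(n_j)}}$ and $H\circ\iota_0=\Phi$. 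Given $\omega\in\Omega^{p(0)}_{U^{(n_j)}}(U)$, Proposition \ref{tvrz_formspullback} gives $H^{\ast}\omega\in\Omega^{p(0)}_{U^{(n_j)}\times\R}(U\times\R)$, and the explicit coordinate description of forms from Proposition \ref{tvrz_pforms} yields a unique decomposition $H^{\ast}\omega=\eta+\dr{t}\cdot\zeta$ with $\eta$ and the $(p-1)$-form $\zeta$ not involving $\dr{t}$. I would then set $h\omega:=\int_0^1\zeta\,dt$, the integral taken coefficientwise on the formal power series in the $\xi$'s of Example \ref{ex_formalpower}; one must check this produces a genuine degree-zero $(p-1)$-form on $U^{(n_j)}$ compatible with restrictions, which reduces to the elementary fact that integrating a smooth $\R$-family of smooth functions on $U$ over $[0,1]$ gives a smooth function on $U$.

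With $h$ in hand, the usual computation — pull $\dr\omega$ through $H^{\ast}$ via Proposition \ref{tvrz_formspullback}(v), split off the $\dr{t}$-component, and apply the fundamental theorem of calculus in the $t$-variable together with $\iota_1^{\ast}H^{\ast}=\1^{\ast}$ and $\iota_0^{\ast}H^{\ast}=\Phi^{\ast}$ (Proposition \ref{tvrz_formspullback}(iv)) — gives $\dr(h\omega)+h(\dr\omega)=\omega-\Phi^{\ast}\omega$. Hence $\Phi^{\ast}$ induces the identity on $H^{\bullet(0)}_{U^{(n_j)}}$, and combined with $i_U^{\ast}\circ p^{\ast}=\1$ this shows that $p^{\ast}$ and $i_U^{\ast}$ are mutually inverse isomorphisms realizing (\ref{eq_Poincare}). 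The contractible case follows immediately, since the ordinary de Rham cohomology $H^p_U$ vanishes for $p\geq1$ when $U$ is contractible. The main obstacle I anticipate is not conceptual but a matter of bookkeeping: keeping the Koszul signs straight while decomposing $H^{\ast}\omega$ along $\dr{t}$ and differentiating, and making precise within the formal-power-series model that the fibre integration $h$ indeed lands in $\Omega^{p-1(0)}_{U^{(n_j)}}(U)$ and is natural with respect to restrictions.
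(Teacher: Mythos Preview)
Your argument is correct and takes a genuinely different route from the paper's. The paper proceeds by induction on $n_{\ast}$: it strips off one graded coordinate $\xi_{1}$ at a time, building for each step a cochain homotopy $K_{p}$ between $\1$ and $\pi^{\ast}\circ s^{\ast}$ (with $\pi$ the projection and $s$ the zero section in the $\xi_{1}$-direction), and the construction splits into two cases according to the parity of $|\xi_{1}|$ --- for even $|\xi_{1}|$ one formally antidifferentiates in $\xi_{1}$, while for odd $|\xi_{1}|$ one exploits that $(\dr{\xi}_{1})^{q}$ can appear to arbitrary power but $\xi_{1}^{2}=0$. Your scaling homotopy $H^{\ast}(\xi_{\mu})=t\,\xi_{\mu}$ handles all graded coordinates simultaneously and avoids the parity case analysis entirely; the price is that you must bring in the genuine product $U^{(n_{j})}\times\R$ and perform an honest fibre integration $\int_{0}^{1}(\cdot)\,dt$ coefficientwise (which is harmless here, since each coefficient in the formal power series is an ordinary smooth function on $U\times\R$). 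The paper's approach stays purely within the algebra of formal power series and never leaves the graded domain, which is in keeping with its stated aim of avoiding analytic arguments; yours is closer to the classical homotopy-invariance proof and is arguably more conceptual. Both yield the same homotopy equation $\dr\circ h + h\circ\dr = \1 - \Phi^{\ast}$, and your anticipated sign bookkeeping is indeed routine once one notes that $s$ is even (so $|\dr{t}|=1+s$ is odd and $(\dr{t})^{2}=0$) and that the auxiliary shift $s$ is unchanged upon taking the product with $\R$.
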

\begin{proof}
Let $(x^{1},\dots,x^{n_{0}})$ and $(\xi_{1},\dots,\xi_{n_{\ast}})$ be the standard coordinates on $U^{(n_{j})}$. We may assume $n_{\ast} > 0$, otherwise the proof is already finished. And consider the graded domain $U^{(m_{j})}$, where $m_{|\xi_{1}|} = n_{|\xi_{1}|} - 1$ and $m_{j} = n_{j}$ for $j \neq |\xi_{1}|$. Let us write its standard coordinates as $(x^{1},\dots,x^{n_{0}})$ and $(\xi_{2},\dots,\xi_{n_{\ast}})$, that is we ``threw away'' the degree $|\xi_{1}|$ coordinate $\xi_{1}$. 

We can construct a pair of graded smooth maps $\pi: U^{(n_{j})} \rightarrow U^{(m_{j})}$ and $s: U^{(m_{j})} \rightarrow U^{(n_{j})}$, such that $\pi \circ s = \1_{U^{(m_{j})}}$. $\pi$ is the projection and $s$ is the ``zero section'', that is $s = (\1_{U}, s^{\ast})$, where
\begin{equation}
s^{\ast}_{U}(\xi_{1}) = 0, \; \; s^{\ast}_{U}(\xi_{\mu}) = \xi_{\mu} \text{ for } \mu \in \{2,\dots,n_{\ast} \}. 
\end{equation}
Let us write $\Omega_{(n_{j})}(U) := \Omega_{U^{(n_{j})}}(U)$. By Proposition \ref{tvrz_formspullback}, they induce a pair of $\C^{\infty}_{\M}(U)$-linear maps 
\begin{equation}
\pi^{\ast}: \Omega^{p(0)}_{(m_{j})}(U) \rightarrow \Omega^{p(0)}_{(n_{j})}(U), \; \; s^{\ast}: \Omega^{p(0)}_{(n_{j})}(U) \rightarrow \Omega^{p(0)}_{(m_{j})}(U),
\end{equation}
such that $s^{\ast} \circ \pi^{\ast} = \1$ and they commute with the exterior derivatives. The idea is to find a collection of linear maps $K_{p}: \Omega^{p(0)}_{(n_{j})}(U) \rightarrow \Omega^{p-1(0)}_{(n_{j})}(U)$, such that for each $p \in \N_{0}$, one has 
\begin{equation} \label{eq_Kpchainhomotopy}
\1 - \pi^{\ast} \circ s^{\ast} = K_{p+1} \circ \dr + \dr \circ K_{p}
\end{equation}
The construction of $K_{p}$ now depends quite significantly on the parity of $|\xi_{1}|$. 
\begin{enumerate}[(i)]
\item \textbf{$|\xi_{1}|$ is even}: First, let us establish the following notation. For any $f \in \C^{\infty}_{(n_{j})}(U)$, there is always a function $F \in \C^{\infty}_{(n_{j})}(U)$, such that $f = \frac{\partial F}{\partial \xi_{1}}$. Note that this is true only for even $|\xi_{1}|$. $F$ is not determined uniquely, but we can define the unique ``definite integral'' 
\begin{equation} \label{eq_integralofoverxi1}
\int_{0}^{\xi_{1}} f \dr{\xi}_{1} := (\1 - \pi^{\ast} \circ s^{\ast})(F). 
\end{equation}
By construction, one has $s^{\ast}( \int_{0}^{\xi_{1}} f \dr{\xi}_{1})  = 0$, $\frac{\partial}{\partial \xi_{1}}( \int_{0}^{\xi_{1}} f \dr{\xi}_{1}) = f$ and for $\mu > 1$, one has
\begin{equation}
\frac{\partial}{\partial \xi_{\mu}} \int_{0}^{\xi_{1}} f \dr{\xi}_{1} = (-1)^{|\xi_{\mu}||\xi_{1}|} \int_{0}^{\xi_{1}} \frac{\partial f}{\partial \xi_{\mu}} \dr{\xi}_{1}. 
\end{equation}
Every $p$-form in $\Omega^{p(0)}_{(n_{j})}(U)$ can be written as a finite sum of forms in the following two classes:
\begin{enumerate}[(t1)]
\item $\omega = f \cdot \pi^{\ast}(\hat{\omega})$ for $f \in \C^{\infty}_{(n_{j})}(U)$ and $\hat{\omega} \in \Omega^{p}_{(m_{j})}(U)$. Note that $|f| + \deg(\hat{\omega}) = 0$. 
\item $\omega = \dr{\xi}_{1} \cdot f \cdot \pi^{\ast}(\hat{\omega})$ for $f \in \C^{\infty}_{(n_{j})}(U)$ and $\hat{\omega} \in \Omega^{p-1}_{(m_{j})}(U)$. Note that $|f| + \deg(\hat{\omega}) + |\xi_{1}| = 0$. 
\end{enumerate}
It is vital that $|\dr{\xi}_{1}|$ is odd. One declares $K_{p}(\omega) = 0$ for $\omega$ of type $(\text{t}1)$. For type $(\text{t}2)$, set 
\begin{equation}
K_{p}( \dr{\xi}_{1} \cdot f \cdot \pi^{\ast}(\hat{\omega})) := (\int_{0}^{\xi_{1}} f \dr{\xi}_{1}) \cdot \pi^{\ast}(\hat{\omega}). 
\end{equation}
One now has to verify the condition (\ref{eq_Kpchainhomotopy}). It suffices to consider forms of type (t$1$) and (t$2$). The rest is a straightforward verification using the properties of the exterior derivative $\dr$ and those of the definite integral listed below (\ref{eq_integralofoverxi1}). 
\item \textbf{$|\xi_{1}|$ is odd}: First, note that every $p$-form in $\Omega^{p(0)}_{(n_{j})}(U)$ can be written as a finite sum of forms in the following two classes:
\begin{enumerate}[(t1)]
\item $\omega = (\xi_{1})^{r} \cdot \pi^{\ast}(\hat{\omega})$ for $r \in \{0,1\}$ and $\hat{\omega} \in \Omega^{p}_{(m_{j})}(U)$. Note that $r|\xi_{1}| + \deg(\hat{\omega}) = 0$.
\item $\omega = (\dr{\xi}_{1})^{q} \cdot (\xi_{1})^{r} \cdot \pi^{\ast}(\hat{\omega})$ for $q \in \{1,\dots,p\}$, $r \in \{0,1\}$ and $\hat{\omega} \in \Omega^{p-q}_{(m_{j})}(U)$. Note that the degree of $\hat{\omega}$ has to satisfy $(q+r)|\xi_{1}| + \deg(\hat{\omega}) = 0$.
\end{enumerate}
Now, on forms of type (t$1$), we declare $K_{p}(\omega) = 0$. On forms of type (t$2$), set 
\begin{equation}
K_{p}( (\dr{\xi}_{1})^{q} \cdot (\xi_{1})^{r} \cdot \pi^{\ast}(\hat{\omega})) := (\dr{\xi}_{1})^{q-1} \cdot (\xi_{1})^{r+1} \cdot \pi^{\ast}(\hat{\omega}). 
\end{equation}
It is straightforward to verify (\ref{eq_Kpchainhomotopy}). 
\end{enumerate}
The existence of a cochain homotopy (\ref{eq_Kpchainhomotopy}) ensures that the induced linear map $\hat{\pi}^{\ast}: H^{p(0)}_{U^{(m_{j})}} \rightarrow H^{p(0)}_{U^{(n_{j})}}$ defined by $\hat{\pi}^{\ast}([\omega]) := [ \pi^{\ast}(\omega)]$ is an isomorphism. In other words, we have just proved that 
\begin{equation}
H^{\bullet(0)}_{U^{(n_{j})}} \cong H^{\bullet(0)}_{U^{(m_{j})}}.
\end{equation}
By iterating this procedure, we obtain the isomorphism (\ref{eq_Poincare}). The claim for contractible $U$ follows from the Poincaré lemma for ordinary manifolds. 
\end{proof}
\begin{cor}[\textbf{Closed forms are locally exact}]
Let $\omega \in \Omega^{p(0)}_{\M}(M)$, such that $\dr{\omega} = 0$. Then for any $m \in M$, there exists $U \in \Op_{m}(M)$ and $\alpha \in \Omega^{p-1(0)}_{\M}(U)$, such that $\omega|_{U} = \dr{\alpha}$. 
\end{cor}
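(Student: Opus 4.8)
The plan is to reduce the statement, via a graded chart, to the graded Poincar\'e lemma (Proposition~\ref{tvrz_Poincare}) on a contractible graded domain. Since the conclusion is local, I would first fix $m \in M$ and choose a graded local chart $(U,\varphi)$ with $m \in U$ and $\varphi \colon \M|_{U} \rightarrow \hat{U}^{(n_{j})}$, shrinking $U$ if necessary so that $\hat{U} \subseteq \R^{n_{0}}$ is an open cube (hence contractible). Here $(n_{j})_{j \in \Z} = \gdim(\M) = \gdim(\M|_{U})$, so the auxiliary even shift $s$ of Definition~\ref{def_diffforms} is the same for $\M$, $\M|_{U}$ and $\hat{U}^{(n_{j})}$; consequently $\Omega_{\M}|_{U}$ may be identified with $\Omega_{\M|_{U}}$ (the tangent bundle and its degree shift restrict to $\ul{\pi}^{-1}(U)$, consistently with the total-space construction of Proposition~\ref{tvrz_totalspace}). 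Throughout I assume $p \geq 1$, which is implicit in the statement since $\Omega^{p-1}_{\M}$ must make sense; for $p = 0$ there is nothing to prove, as a closed $0$-form of degree $0$ is merely locally constant by Proposition~\ref{tvrz_dfis0consequences}.

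Next I would transport $\omega|_{U}$ to the graded domain. As $\varphi$ is a graded diffeomorphism, $\varphi^{-1} \colon \hat{U}^{(n_{j})} \rightarrow \M|_{U}$ is graded smooth, and Proposition~\ref{tvrz_formspullback} furnishes a linear map $(\varphi^{-1})^{\ast}_{U} \colon \Omega^{p(0)}_{\M}(U) \rightarrow \Omega^{p(0)}_{\hat{U}^{(n_{j})}}(\hat{U})$ that preserves $\deg$ and commutes with $\dr$; hence $\hat{\omega} := (\varphi^{-1})^{\ast}_{U}(\omega|_{U})$ is a closed $p$-form of degree $0$ on $\hat{U}^{(n_{j})}$. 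Since $\hat{U}$ is contractible, Proposition~\ref{tvrz_Poincare} gives $H^{p(0)}_{\hat{U}^{(n_{j})}} \cong H^{p}_{\hat{U}} = 0$, so there exists $\hat{\alpha} \in \Omega^{p-1(0)}_{\hat{U}^{(n_{j})}}(\hat{U})$ with $\dr\hat{\alpha} = \hat{\omega}$. Finally I would push $\hat{\alpha}$ back along $\varphi$: set $\alpha := \varphi^{\ast}_{\hat{U}}(\hat{\alpha}) \in \Omega^{p-1(0)}_{\M}(U)$. Using that $\varphi^{\ast}$ preserves $\deg$ and commutes with $\dr$, and the functoriality $(\varphi^{-1} \circ \varphi)^{\ast}_{U} = \varphi^{\ast}_{\ul{\varphi^{-1}}^{-1}(U)} \circ (\varphi^{-1})^{\ast}_{U} = \varphi^{\ast}_{\hat{U}} \circ (\varphi^{-1})^{\ast}_{U}$ from Proposition~\ref{tvrz_formspullback}-$(iv)$, $(v)$ (note $\ul{\varphi^{-1}}^{-1}(U) = \ul{\varphi}(U) = \hat{U}$), one computes $\dr\alpha = \varphi^{\ast}_{\hat{U}}(\dr\hat{\alpha}) = \varphi^{\ast}_{\hat{U}}((\varphi^{-1})^{\ast}_{U}(\omega|_{U})) = (\1_{\M|_{U}})^{\ast}(\omega|_{U}) = \omega|_{U}$, which is the desired identity.

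I do not expect a genuine obstacle here: the corollary is a direct consequence of the graded Poincar\'e lemma together with the pullback calculus of Proposition~\ref{tvrz_formspullback}. The only two points I would be careful to spell out are the identification $\Omega_{\M}|_{U} \cong \Omega_{\M|_{U}}$ (so that restricting $\omega$ and then working inside the chart is legitimate, which uses that $\gdim$ and hence $s$ are unchanged) and the correct matching of open sets in the composition rule $(\phi \circ \psi)^{\ast}_{U} = \psi^{\ast}_{\ul{\phi}^{-1}(U)} \circ \phi^{\ast}_{U}$ when applied with $\phi = \varphi^{-1}$ and $\psi = \varphi$; both are immediate once unwound.
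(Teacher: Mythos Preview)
Your proposal is correct and follows essentially the same approach as the paper: choose a graded local chart with contractible image and apply the graded Poincar\'e lemma. The paper's proof is simply terser, phrasing the argument in terms of the induced isomorphism $H^{\bullet(0)}_{\M|_{U}} \cong H^{\bullet(0)}_{\hat{U}^{(n_{j})}}$ rather than explicitly transporting forms back and forth via $\varphi^{\ast}$ and $(\varphi^{-1})^{\ast}$, but the content is identical.
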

\begin{proof}
Pick a local chart $(U,\varphi)$ with contractible $U$. It induces the isomorphism $H^{\bullet(0)}_{\M|_{U}} \cong H^{\bullet(0)}_{\hat{U}^{(n_{j})}}$, where $\hat{U} := \ul{\varphi}(U)$. By the graded Poincaré lemma, one has $H^{\bullet(0)}_{\M|_{U}} \cong H^{\bullet}_{\hat{U}} = 0$. 
\end{proof}
In fact, we have proved that for any graded local chart $(U,\varphi)$, there is a vector space isomorphism $H^{\bullet(0)}_{\M|_{U}} \cong H^{\bullet}_{U}$. One suspects that globally, this is also true.
\begin{theorem} \label{thm_degreezerogrdeRhamisdeRham}
For any graded manifold $\M$, one has $H^{\bullet(0)}_{\M} \cong H^{\bullet}_{M}$. On the right-hand side, there is the ordinary de Rham cohomology of the smooth manifold $M$. 
\end{theorem}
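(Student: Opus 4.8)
The plan is to recognize the degree-zero de Rham complex $(\Omega^{\bullet(0)}_{\M},\dr)$ as a \emph{fine resolution} of the constant sheaf $\R_{M}$ on the underlying manifold $M$ (the constant sheaf in the sense of Example \ref{ex_divnejsheaf}), run the same argument for the ordinary de Rham complex, and thereby identify both cohomologies with $\czH^{\bullet}(M;\R_{M})$. First I would verify that for each $p\in\N_{0}$ the sheaf-level sequence
\[
0 \longrightarrow \R_{M} \longrightarrow \Omega^{0(0)}_{\M} \xrightarrow{\ \dr\ } \Omega^{1(0)}_{\M} \xrightarrow{\ \dr\ } \Omega^{2(0)}_{\M} \longrightarrow \cdots
\]
is exact as a complex of sheaves of $\R$-vector spaces on $M$. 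Exactness at $\Omega^{0(0)}_{\M}$, together with the identification $\ker(\dr\colon\Omega^{0(0)}_{\M}\to\Omega^{1(0)}_{\M})\cong\R_{M}$, is precisely the content of Proposition \ref{tvrz_dfis0consequences} (a function with vanishing differential is locally constant). Exactness at $\Omega^{p(0)}_{\M}$ for $p\geq1$ is the statement that closed degree-zero $p$-forms are locally exact, which follows from the graded Poincar\'e lemma (Proposition \ref{tvrz_Poincare}): on a graded chart $\M|_{U}\cong\hat U^{(n_{j})}$ with contractible $\hat U$ one has $H^{\bullet(0)}_{\M|_{U}}\cong H^{\bullet}_{\hat U}$, which vanishes in positive degree.

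Next I would show that each $\Omega^{p(0)}_{\M}$ is a fine sheaf, hence acyclic for the global-sections functor on the paracompact space $M$ (recall $M$ is a second countable Hausdorff smooth manifold, so paracompact and admitting good covers). Given a locally finite open cover, a partition of unity $\{\lambda_{\mu}\}$ as in Proposition \ref{tvrz_partition} consists of global degree-zero functions $\lambda_{\mu}\in\C^{\infty}_{\M}(M)_{0}$ with $\supp(\lambda_{\mu})$ subordinate to the cover; multiplication by $\lambda_{\mu}$ (using the $\C^{\infty}_{\M}$-module structure of $\Omega^{p}_{\M}$ from Proposition \ref{tvrz_pforms}) gives endomorphisms of $\Omega^{p(0)}_{\M}$ whose supports are subordinate to the cover and which sum to $\1$, by Proposition \ref{tvrz_partitionsnaturalops}. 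The same holds classically for the sheaf $\Omega^{p}_{M}$ of ordinary $p$-forms. By the abstract de Rham theorem a fine (hence acyclic) resolution of $\R_{M}$ computes $\czH^{\bullet}(M;\R_{M})$, so
\[
H^{\bullet(0)}_{\M}\;\cong\;\czH^{\bullet}(M;\R_{M})\;\cong\;H^{\bullet}_{M},
\]
the last isomorphism being the classical de Rham theorem for $M$. Moreover, the body map $i_{M}\colon M\to\M$ of Proposition \ref{tvrz_bodymap} induces, via Proposition \ref{tvrz_formspullback}, a cochain map $i^{\ast}_{M}\colon\Omega^{\bullet(0)}_{\M}\to\Omega^{\bullet}_{M}$ that is the identity on the augmentations $\R_{M}$, so by the comparison theorem for resolutions it realizes the isomorphism above; thus the identification is canonical. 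If one prefers to bypass abstract sheaf cohomology, the same conclusion comes from the Čech--de Rham double complex $\czC^{\bullet}(\mathcal{U},\Omega^{\bullet(0)}_{\M})$ for a good open cover $\mathcal{U}$ of $M$: the rows are exact by the graded Poincar\'e lemma applied to the contractible intersections $U_{\alpha_{0}\cdots\alpha_{p}}$, the columns have vanishing positive Čech cohomology by the partition-of-unity argument, and a zig-zag comparison gives $H^{\bullet(0)}_{\M}\cong\czH^{\bullet}(\mathcal{U};\R_{M})\cong H^{\bullet}_{M}$.

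The main obstacle is bookkeeping rather than conceptual: verifying cleanly that $\Omega^{p(0)}_{\M}$ carries the partition-of-unity structure in the exact form needed for fineness/softness, and setting up whichever homological machinery — an abstract de Rham theorem, or the Čech--de Rham double complex over a good cover — one wishes to invoke. A minor but necessary point to record is that the paper's construction of $\Omega^{\bullet(0)}$ applied to $M$ regarded as a trivially graded manifold ($s=0$) recovers the classical de Rham complex of $M$, which is what makes the comparison cochain map $i^{\ast}_{M}$ available and shows the two sides of the theorem are being compared by a genuine morphism of complexes.
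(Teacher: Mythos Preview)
Your proposal is correct; your Čech–de Rham double-complex alternative is exactly the argument the paper gives (following Bott–Tu's Theorem 8.9 for the zig-zag comparison), and your fine-resolution framing is the standard abstract repackaging of the same two ingredients—graded Poincaré lemma for local exactness and partitions of unity for acyclicity. The observation that $i_M^{\ast}$ realizes the isomorphism canonically via the comparison theorem for resolutions is a nice addition the paper does not make explicit.
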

\begin{proof}
Fix an open cover $\U := \{U_{\alpha} \}_{\alpha \in I}$ of $M$. For each $p,q \in \N_{0}$, one defines the vector space 
\begin{equation} \label{eq_Czechcocycles}
\czC^{q}( \Omega^{p(0)}_{\M}, \U) := \hspace{-5mm} \prod_{(\alpha_{0},\dots,\alpha_{q}) \in I^{q+1}} \hspace{-3mm} \Omega^{p(0)}_{\M}(U_{\alpha_{0} \dots \alpha_{q}}). 
\end{equation}
For each $p \in \N_{0}$, one obtains the \textit{Čech cochain complex} $( \czC^{\bullet}(\Omega^{p(0)}_{\M}, \U), \delta)$ corresponding to the sheaf $\Omega^{p(0)}_{\M}$ and the open cover $\U$. The Čech differential $\delta: \czC^{q}(\Omega^{p(0)}_{\M},\U) \rightarrow \czC^{q+1}( \Omega^{p(0)}_{\M},\U)$ is given by
\begin{equation} \label{eq_Cechdifferential}
(\delta \omega)_{\alpha_{0} \dots \alpha_{q+1}} := \sum_{i=0}^{q} (-1)^{i+1} {\omega_{\alpha_{0} \dots \hat{\alpha}_{i} \dots \alpha_{q}}}|_{U_{\alpha_{0} \dots \alpha_{q+1}}},
\end{equation} 
for all $(\alpha_{0},\dots,\alpha_{q+1}) \in I^{q+1}$ and $\omega \in \czC^{q}(\Omega^{p(0)}_{\M},\U)$. One also has a linear map $r: \Omega^{p(0)}_{\M}(M) \rightarrow \czC^{0}(\Omega^{p(0)}_{\M}, \U)$, given by $(r(\omega))_{\alpha} := r|_{U_{\alpha}}$, for each $\omega \in \Omega^{p(0)}_{\M}(M)$ and $\alpha \in I$. Since $\Omega^{p(0)}_{\M}$ is a sheaf and we have a partition of unity subordinate to $\U$, one can show that the sequence
\begin{equation}
\begin{tikzcd} \label{eq_firstSESMayerVietoris}
0 \arrow{r} & \Omega^{p(0)}_{\M}(M) \arrow{r}{r} & \czC^{0}( \Omega^{p(0)}_{\M}, \U) \arrow{r}{\delta} & \czC^{1}( \Omega^{p(0)}_{\M}, \U) \arrow{r}{\delta} & \cdots 
\end{tikzcd}
\end{equation}
is exact for each $p \in \N_{0}$. The proof is completely the same as the one of Proposition 8.5 in \cite{bott2013differential}. On the other hand, for each $q \in \N_{0}$, one can construct the sequence
\begin{equation} \label{eq_secondSESMayerVietoris}
\begin{tikzcd}
0 \arrow{r} & \czC^{q}( \R_{M}, \U) \arrow{r}{i} & \czC^{q}( \Omega^{0(0)}_{\M}, \U) \arrow{r}{\dr} & \czC^{q}( \Omega^{1(0)}_{\M}, \U) \arrow{r}{\dr} & \cdots,
\end{tikzcd}
\end{equation}
where $\dr$ is just the exterior derivative acting component-wise, and $\czC^{q}(\R_{M},\U)$ is the space of Čech $q$-cochains corresponding to the constant sheaf $\R_{M}$ and the open cover $\U$. Recall that the constant sheaf $\R_{M} \in \Sh(M,\Vect)$ is for each $U \in \Op(M)$ defined as $\R_{M}(U) := \C^{\lc}(U,\R)$, a space of locally constant functions on $U$, see Example \ref{ex_divnejsheaf}. $\czC^{q}(\R_{\M},\U)$ is then given by (\ref{eq_Czechcocycles}) with $\Omega_{\M}^{p(0)}$ replaced by $\R_{M}$. Due to Proposition \ref{tvrz_dfis0consequences}, we have the obvious identification
\begin{equation}
\czC^{q}(\R_{M},\U) \cong \ker( \dr) \subseteq \czC^{q}( \Omega^{0(0)}_{\M}, \U), 
\end{equation}
and $i$ in the sequence (\ref{eq_secondSESMayerVietoris}) is just the inclusion. 

Now, $\U = \{ U_{\alpha} \}_{\alpha \in I}$ is called a \textit{good open cover}, if for all $q \in \N_{0}$ and all $(\alpha_{0}, \dots, \alpha_{q}) \in I^{q+1}$, the open set $U_{\alpha_{0} \dots \alpha_{q}}$ is diffeomorphic to $\R^{n_{0}}$. Every cover of a smooth manifold $M$ can be refined to obtain a good open cover, see Theorem 5.1 in \cite{bott2013differential}. We may thus assume that we have a graded smooth atlas $\A = \{ (U_{\alpha},\varphi_{\alpha}) \}_{\alpha \in I}$ for $\M$, such that $\U = \{ U_{\alpha} \}_{\alpha \in I}$ is a good open cover of $M$. It follows from Proposition \ref{tvrz_Poincare} that for such a $\U$, the sequence (\ref{eq_secondSESMayerVietoris}) is exact for every $q \in \N_{0}$. 

Finally, whenever (\ref{eq_firstSESMayerVietoris}) is exact for every $p \in \N_{0}$ and (\ref{eq_secondSESMayerVietoris}) is exact for every $q \in \N_{0}$, there is a canonical isomorphism $H^{\bullet(0)}_{\M} \cong \czH^{\bullet}( \R_{M}, \U)$, where $\czH^{\bullet}(\R_{\M}, \U)$ is the cohomology of the cochain complex $(\czC^{\bullet}(\R_{M},\U), \delta)$ with $\delta$ given by the formula (\ref{eq_Cechdifferential}). This observation follows using the same arguments as Theorem 8.9 in \cite{bott2013differential}. In fact, the very same theorem says that $\czH^{\bullet}(\R_{M}, \U) \cong H^{\bullet}_{M}$. The combination of these two statements finishes the proof. 
\end{proof}

\begin{rem} This shows that the graded de Rham cohomology of graded manifolds is not particularly interesting. In fact, it cannot distinguish two graded manifolds with the same underlying manifold. The same issue appears for (Berezin--Leites) supermanifolds, see Chapter V of \cite{bartocci2012geometry}. 
\end{rem}
\section{Submanifolds} \label{sec_submanifolds}
\subsection{Definitions and basic properties}
Recall that we have already introduced the concept of an immersion, see Definition \ref{def_immersion}.
\begin{definice}
Let $\M$ be a graded manifold. A pair $(\cS,\iota)$ is called an \textbf{immersed submanifold of $\M$}, if $\cS$ is graded manifold, $\iota: \cS \rightarrow \M$ is an immersion, and the underlying map $\ul{\iota}: S \rightarrow M$ is injective. Note that $(S, \ul{\iota})$ is then an immersed submanifold of $M$. 

We say that $(\cS, \iota)$ is an \textbf{embedded submanifold of $\M$}, if it is an immersed submanifold and the underlying map $\ul{\iota}: S \rightarrow M$ is an embedding. We say that it is a \textbf{closed embedded submanifold}, if $\ul{\iota}(S) \subseteq M$ is a closed subset. 
\end{definice}
\begin{example}
For any $U \in \Op(M)$, the pair $(\M|_{U}, i)$, where $i: \M|_{U} \rightarrow \M$ is the obvious inclusion, is an embedded submanifold of $\M$, called an \textbf{open submanifold of $\M$}.
\end{example}
The following example shows that the underlying manifold of any graded manifold can be viewed as a closed embedded submanifold. 
\begin{example} \label{ex_underlyingissubmanifold}
For every graded manifold $\M$, we have a graded smooth map $i_{M}: M \rightarrow \M$, see Proposition \ref{tvrz_bodymap}. It is easy to see that $i_{M}$ is an immersion and as $\ul{i_{M}} = \1_{M}$, it follows that $(M,i_{M})$ is a closed embedded submanifold of $\M$. 
\end{example}
Now, there is a useful characterization of (closed) embedded submanifolds in terms of the sheaf morphism $\iota^{\ast}: \C^{\infty}_{\M} \rightarrow \ul{\iota}_{\ast} \C^{\infty}_{\cS}$. First, the existence of partitions of unity implies the following:
\begin{lemma} \label{lem_surjectivity}
Let $\iota: \cS \rightarrow \M$ be any graded smooth map. Let $U \in \Op(M)$. 

Then the graded algebra morphism $\iota^{\ast}_{U}: \C^{\infty}_{\M}(U) \rightarrow \C^{\infty}_{\cS}(\ul{\iota}^{-1}(U))$ is surjective, iff $\iota^{\ast}_{V}: \C^{\infty}_{\M}(V) \rightarrow \C^{\infty}_{\cS}(\ul{\iota}^{-1}(V))$ is surjective for all $V \in \Op(U)$. Moreover, let $\{ U_{\alpha} \}_{\alpha \in I}$ be any open cover of $U$, such that $\iota^{\ast}_{U_{\alpha}}$ is surjective for all $\alpha \in I$. Then $\iota^{\ast}_{U}$ is surjective. 
\end{lemma}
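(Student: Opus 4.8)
The plan is to isolate a single gluing principle: if $\{U_\alpha\}_{\alpha\in I}$ is an open cover of $W\in\Op(M)$ and a section $g\in\C^{\infty}_{\cS}(\ul{\iota}^{-1}(W))$ has the property that each restriction $g|_{\ul{\iota}^{-1}(U_\alpha)}$ lies in $\im(\iota^{\ast}_{U_\alpha})$, then $g\in\im(\iota^{\ast}_{W})$. Granting this, the ``moreover'' clause is immediate, and the $\Leftarrow$ half of the equivalence is trivial (take $V=U$, noting $U\in\Op(U)$); the $\Rightarrow$ half then reduces to producing, from surjectivity of $\iota^{\ast}_{U}$, enough local preimages over small opens, which I would do with the extension lemma on $\cS$.

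For the gluing principle I would use partitions of unity. Assume each $g|_{\ul{\iota}^{-1}(U_\alpha)}=\iota^{\ast}_{U_\alpha}(f_\alpha)$ for some $f_\alpha\in\C^{\infty}_{\M}(U_\alpha)$; since $\iota^{\ast}$ preserves degrees we may take all $f_\alpha$ of degree $|g|$ (the case $g=0$ being lifted by $0$). Choose a partition of unity $\{\lambda_\alpha\}_{\alpha\in I}$ on $\M|_{W}$ subordinate to $\{U_\alpha\}_{\alpha\in I}$ (Proposition \ref{tvrz_partition}) and set $f:=\sum_{\alpha\in I}\lambda_\alpha\cdot f_\alpha\in\C^{\infty}_{\M}(W)$, using the construction recalled around (\ref{eq_skoroproduct}) and in Remark \ref{rem_sumoflocallyfinitesupportedfctions}. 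Applying Proposition \ref{tvrz_partitionsnaturalops} to $\iota|_{\ul{\iota}^{-1}(W)}\colon\cS|_{\ul{\iota}^{-1}(W)}\to\M|_{W}$ gives
\[
\iota^{\ast}_{W}(f)=\sum_{\alpha\in I}\iota^{\ast}_{W}(\lambda_\alpha)\cdot\iota^{\ast}_{U_\alpha}(f_\alpha)=\sum_{\alpha\in I}\iota^{\ast}_{W}(\lambda_\alpha)\cdot g|_{\ul{\iota}^{-1}(U_\alpha)}=\sum_{\alpha\in I}\iota^{\ast}_{W}(\lambda_\alpha)\cdot g=\Big(\sum_{\alpha\in I}\iota^{\ast}_{W}(\lambda_\alpha)\Big)\cdot g=g,
\]
where the third equality is part $(i)$ of Proposition \ref{tvrz_partitionsnaturalops} and the last uses that $\{\iota^{\ast}_{W}(\lambda_\alpha)\}_{\alpha\in I}$ is a partition of unity on $\cS|_{\ul{\iota}^{-1}(W)}$ (part $(ii)$ of the same proposition), hence sums to $1$. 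This establishes the gluing principle, and with $\iota^{\ast}_{U_\alpha}$ surjective for all $\alpha$ it yields the ``moreover'' clause directly.

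For the $\Rightarrow$ direction I would fix $V\in\Op(U)$ and $g\in\C^{\infty}_{\cS}(\ul{\iota}^{-1}(V))$ and manufacture the hypotheses of the gluing principle for an open cover of $V$. For each $m\in V$, using that $M$ is locally Euclidean, pick $W_m\in\Op_m(V)$ with $\ol{W_m}\subseteq V$; continuity of $\ul{\iota}$ gives $\ol{\ul{\iota}^{-1}(W_m)}\subseteq\ul{\iota}^{-1}(\ol{W_m})\subseteq\ul{\iota}^{-1}(V)\subseteq\ul{\iota}^{-1}(U)$. Applying the extension lemma (Proposition \ref{tvrz_extensionlemma}) on the graded manifold $\cS|_{\ul{\iota}^{-1}(U)}$ to $g$ and the open set $\ul{\iota}^{-1}(W_m)$ produces $\tilde{g}_m\in\C^{\infty}_{\cS}(\ul{\iota}^{-1}(U))$ with $\tilde{g}_m|_{\ul{\iota}^{-1}(W_m)}=g|_{\ul{\iota}^{-1}(W_m)}$; surjectivity of $\iota^{\ast}_{U}$ then gives $\tilde{f}_m\in\C^{\infty}_{\M}(U)$ with $\iota^{\ast}_{U}(\tilde{f}_m)=\tilde{g}_m$, and naturality of the sheaf morphism $\iota^{\ast}$ yields $\iota^{\ast}_{W_m}(\tilde{f}_m|_{W_m})=\iota^{\ast}_{U}(\tilde{f}_m)|_{\ul{\iota}^{-1}(W_m)}=g|_{\ul{\iota}^{-1}(W_m)}$. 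Thus $g|_{\ul{\iota}^{-1}(W_m)}\in\im(\iota^{\ast}_{W_m})$ for the cover $\{W_m\}_{m\in V}$ of $V$, and the gluing principle gives $g\in\im(\iota^{\ast}_{V})$, i.e. $\iota^{\ast}_{V}$ is surjective. I do not expect any genuine conceptual obstacle here; the only points requiring care are checking the closure containments needed to invoke Proposition \ref{tvrz_extensionlemma} and keeping the restriction/pushforward identifications $(\ul{\iota}_\ast\C^{\infty}_{\cS})(W)=\C^{\infty}_{\cS}(\ul{\iota}^{-1}(W))$ straight throughout.
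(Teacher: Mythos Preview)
Your proof is correct and follows essentially the same approach as the paper: both the ``moreover'' clause and the nontrivial $\Rightarrow$ direction are handled by producing local lifts and gluing them via a partition of unity on $\M$, invoking Proposition \ref{tvrz_partitionsnaturalops} to push the computation through $\iota^{\ast}$. Your organization is slightly cleaner in two respects: you isolate the gluing step as a single reusable principle, and you index the local-lift construction over points $m\in V$ (in $M$), whereas the paper indexes over points $s\in\ul{\iota}^{-1}(V)$ (in $S$) and uses bump functions on $\M$ pulled back to $\cS$ rather than the extension lemma on $\cS$ directly---but these are cosmetic differences, not substantive ones.
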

\begin{proof}
One only has to prove the only if part of the first statement. Hence assume that $i^{\ast}_{U}$ is surjective and let $V \in \Op(U)$ be arbitrary. Let $f \in \C^{\infty}_{\cS}( \ul{\iota}^{-1}(V))$. For each $s \in \ul{\iota}^{-1}(V)$, choose some $V_{s} \in \Op_{s}(U)$, such that $\ol{V}_{s} \subseteq V$. Let $\eta_{s} \in \C^{\infty}_{\M}(U)$ be a graded bump function supported on $V$, such that $\eta_{s}|_{V_{s}} = 1$. It follows that $\eta'_{s} := \iota^{\ast}_{U}(\lambda_{s}) \in \C^{\infty}_{\cS}(\ul{\iota}^{-1}(U))$ is a graded bump function supported on $\ul{\iota}^{-1}(V)$, such that $\eta'_{s}|_{\ul{\iota}^{-1}(V_{s})} = 1$. Let $f_{s} := \eta'_{s} \cdot f \in \C^{\infty}_{\cS}(\ul{\iota}^{-1}(U))$, see (\ref{eq_lambdabumpf}). By the assumed surjectivity of $i^{\ast}_{U}$, there is thus $g_{s} \in \C^{\infty}_{\M}(U)$ such that $f_{s} = i^{\ast}_{U}(g_{s})$. 

Let $\{ \lambda_{s} \}_{s \in S}$ be the partition of unity subordinate to the open cover $\{ V_{s} \}_{s \in S}$ of $V$, and define $g := \sum_{s \in S} \lambda_{s} \cdot g_{s}|_{V_{s}}$. By Proposition \ref{tvrz_partitionsnaturalops}-$(ii)$, $\{ \iota^{\ast}_{V}(\lambda_{s}) \}_{s \in S}$ is the partition of unity subordinate to the open cover $\{ \ul{\iota}^{-1}(V_{s}) \}_{s \in S}$ of $\ul{\iota}^{-1}(V)$, and one can use the formula (\ref{eq_pullbackofasumpartition}) to write 
\begin{equation}
\begin{split}
\iota^{\ast}_{V}(g) = & \ \iota^{\ast}_{V}( \sum_{s \in S} \lambda_{s} \cdot g_{s}|_{V_{s}}) = \sum_{s \in S} \iota^{\ast}_{V}(\lambda_{s}) \cdot \iota^{\ast}_{V_{s}}(g_{s}|_{V_{s}}) = \sum_{s \in S} \iota^{\ast}_{V}(\lambda_{s}) \cdot \iota^{\ast}_{U}(g_{s})|_{V_{s}} \\
= & \ \sum_{s \in S} \iota^{\ast}_{V}(\lambda_{s}) \cdot f_{s}|_{V_{s}} = \sum_{s \in S} \iota^{\ast}_{V}(\lambda_{s}) \cdot f|_{V_{s}} = f,
\end{split}
\end{equation}
where in the last step, we have used Proposition \ref{tvrz_partitionsnaturalops}-$(i)$. This finishes the first part of the proof. Next, let $\{ U_{\alpha} \}_{\alpha \in I}$ be an open cover of $U$, such that $\iota^{\ast}_{U_{\alpha}}$ is surjective for all $\alpha \in I$. Let $f \in \C^{\infty}_{\cS}(\ul{\iota}^{-1}(U))$ be arbitrary. For each $\alpha \in I$, there is $g_{\alpha} \in \C^{\infty}_{\M}(U_{\alpha})$, such that $f|_{\ul{\iota}^{-1}(U_{\alpha})} = i^{\ast}_{U_{\alpha}}(g_{\alpha})$. Let $\{ \lambda_{\alpha} \}_{\alpha \in I}$ be the partition of unity subordinate to $\{ U_{\alpha} \}_{\alpha \in I}$ and define $g := \sum_{\alpha \in I} \lambda_{\alpha} \cdot g_{\alpha}$. Using the formula (\ref{eq_pullbackofasumpartition}), one obtains 
\begin{equation}
\iota^{\ast}_{U}(g) = \sum_{\alpha \in I} \iota^{\ast}_{U}(\lambda_{\alpha}) \cdot i^{\ast}_{U_{\alpha}}(g_{\alpha}) = \sum_{\alpha \in I} i^{\ast}_{U}(\lambda_{\alpha}) \cdot f|_{\ul{\iota}^{-1}(U_{\alpha})} = f, 
\end{equation}
where we have used Proposition \ref{tvrz_partitionsnaturalops}-$(i)$. This finishes the proof. 
\end{proof}
We now obtain the following simple characterization of (closed) embeddings:
\begin{tvrz} \label{tvrz_embeddinghassurjectivepullback}
Let $\iota: \cS \rightarrow \M$ be a graded smooth map. 

Then $(\cS,\iota)$ is an embedded submanifold of $\cN$, iff there exists $Y \in \Op(M)$, such that $\ul{\iota}(S) \subseteq Y$ and $\iota^{\ast}_{Y}: \C^{\infty}_{\M}(Y) \rightarrow \C^{\infty}_{\cS}(S)$ is surjective. One can choose $Y = M$, if and only if $(\cS,\iota)$ is a closed embedded submanifold of $\M$. 
\end{tvrz}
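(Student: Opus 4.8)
The statement is a biconditional with a closed‑submanifold addendum, so the plan is to prove the two implications separately and to treat the addendum in tandem with each. Throughout I would fix $(n_{j})_{j\in\Z}:=\gdim(\cS)$ and $(m_{j})_{j\in\Z}:=\gdim(\M)$, and note at the outset that whenever $\ul{\iota}(S)\subseteq Y$ one has $\ul{\iota}^{-1}(Y)=S$, so the map $\iota^{\ast}_{Y}\colon\C^{\infty}_{\M}(Y)\rightarrow\C^{\infty}_{\cS}(S)$ in the statement is well defined.

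\emph{Forward direction.} Assume $(\cS,\iota)$ is an embedded submanifold. For each $s\in S$ I would invoke the local immersion theorem \ref{thm_immersion} together with its standard embedded refinement (shrinking the target chart, which is legitimate because $\ul{\iota}$ is a topological embedding) to obtain adapted graded charts $(U_{s},\varphi_{s})$ for $\cS$ and $(V_{s},\psi_{s})$ for $\M$ with $\ul{\iota}^{-1}(V_{s})=U_{s}$ and local representative $\hat{\iota}_{s}$ given by (\ref{eq_locrepimm1})--(\ref{eq_locrepimm2}). Using Example \ref{ex_productgdomains} to identify the codomain with $\hat{U}_{s}^{(n_{i})}\times\hat{W}_{s}^{(m_{i}-n_{i})}$, the projection $\hat{\pi}_{1}$ onto the first factor satisfies $\hat{\pi}_{1}\circ\hat{\iota}_{s}=\1$ by Remark \ref{rem_itsufficestocalculatepullbacks} (both sides pull the coordinate functions back identically); hence $\hat{\iota}_{s}^{\ast}\circ\hat{\pi}_{1}^{\ast}=\1$ and therefore $\hat{\iota}_{s}^{\ast}$, and thus $\iota^{\ast}_{V_{s}}\colon\C^{\infty}_{\M}(V_{s})\rightarrow\C^{\infty}_{\cS}(U_{s})$, is surjective. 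Putting $Y:=\bigcup_{s}V_{s}$ gives $\ul{\iota}(S)\subseteq Y$ and an open cover $\{V_{s}\}$ of $Y$ on whose members $\iota^{\ast}$ is surjective, so Lemma \ref{lem_surjectivity} yields surjectivity of $\iota^{\ast}_{Y}$. If moreover $\ul{\iota}(S)$ is closed in $M$, then $M\setminus\ul{\iota}(S)$ is open with $\ul{\iota}^{-1}\big(M\setminus\ul{\iota}(S)\big)=\varnothing$, so $\iota^{\ast}$ is trivially surjective there (target $0$); adjoining this set to $\{V_{s}\}$ to cover all of $M$ and applying Lemma \ref{lem_surjectivity} again shows one may take $Y=M$.

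\emph{Converse direction.} Assume $Y\in\Op(M)$ with $\ul{\iota}(S)\subseteq Y$ and $\iota^{\ast}_{Y}$ surjective. First I would descend to bodies: since $\iota^{\ast}_{Y}$ preserves degrees, the body maps are surjective (Proposition \ref{tvrz_bodymapsurj}), and (\ref{eq_cdbodymaps}) of Proposition \ref{tvrz_bodymap} holds, it follows that $\ul{\iota}^{\ast}\colon\C^{\infty}_{M}(Y)\rightarrow\C^{\infty}_{S}(S)$ is surjective; the classical fact that a smooth map with surjective pullback on global functions is a topological embedding (shown by taking a proper Whitney embedding $j\colon S\hookrightarrow\R^{k}$, writing $j^{a}=\ul{\iota}^{\ast}(f^{a})$, and transporting the slice property of $j$ through $f=(f^{1},\dots,f^{k})$) then makes $\ul{\iota}$ an embedding. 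Next, $\iota$ is an immersion: for $0\neq v\in T_{s}\cS\cong\gDer(\C^{\infty}_{\cS}(S),\R)$ pick $g\in\C^{\infty}_{\cS}(S)$ with $v(g)\neq0$, write $g=\iota^{\ast}_{Y}(f)$, and use (\ref{eq_differentialformula}) to get $[(T_{s}\iota)(v)](f)=v(\iota^{\ast}_{Y}(f))=v(g)\neq0$; applying this to the germs of coordinate functions shows $T_{s}\iota$ is injective. Hence $(\cS,\iota)$ is an embedded submanifold. For the addendum, if $Y=M$ is admissible but $\ul{\iota}(S)$ were not closed, pick $p\in\overline{\ul{\iota}(S)}\setminus\ul{\iota}(S)$; since $\ul{\iota}$ is an embedding into a Hausdorff space, the preimages $\{s_{n}\}$ of a sequence $\ul{\iota}(s_{n})\rightarrow p$ form a closed discrete subset of $S$, so there is $g\in\C^{\infty}_{\cS}(S)_{0}$ with $\ul{g}(s_{n})=n$; writing $g=\iota^{\ast}_{M}(f)$ with $f\in\C^{\infty}_{\M}(M)_{0}$ gives $\ul{f}\circ\ul{\iota}=\ul{g}$, hence $n=\ul{f}(\ul{\iota}(s_{n}))\rightarrow\ul{f}(p)<\infty$, a contradiction; so $\ul{\iota}(S)$ is closed.

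\emph{Anticipated obstacle.} The slice‑chart computation and the tangent‑vector computation are routine given the earlier results. The genuine difficulty is the converse: establishing that surjectivity of the (body) pullback forces $\ul{\iota}$ to be a topological embedding — this is exactly the classical lemma above and relies on the proper‑Whitney‑embedding argument rather than on any graded input, after which the graded statements reduce to the differential formula (\ref{eq_differentialformula}), the body‑map diagram of Proposition \ref{tvrz_bodymap}, and Lemma \ref{lem_surjectivity}. A secondary point requiring care is the embedded refinement of Theorem \ref{thm_immersion} (that the chart $V$ may be shrunk so $\ul{\iota}^{-1}(V)=U$), which is what makes $\iota^{\ast}_{V_{s}}$ land precisely on $\C^{\infty}_{\cS}(U_{s})$.
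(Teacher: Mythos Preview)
Your proposal is correct and follows the same overall architecture as the paper. The forward direction is argued identically (adapted immersion charts from Theorem \ref{thm_immersion}, shrunk via the embedding property so that $\ul{\iota}^{-1}(V_{s})=U_{s}$; local surjectivity of $\hat{\iota}_{s}^{\ast}$ read off from the slice form; then Lemma \ref{lem_surjectivity} and, for the closed case, adjoining the complement of $\ul{\iota}(S)$ to the cover). The converse direction has the same skeleton---descend to bodies, show $\ul{\iota}$ is an embedding, show $\iota$ is an immersion via (\ref{eq_differentialformula}), and derive closedness by a contradiction with an unbounded pullback---but differs in two tactical choices. For the topological-embedding step, the paper stays self-contained and uses Lemma \ref{lem_balls} on regular coordinate balls: for each basis ball $B$ it produces a bump function $f_{B}$ with $f_{B}^{-1}(\R\setminus\{0\})=B$, lifts it through $\ul{\iota}^{\ast}$, and concludes $\ul{\iota}(B)$ is relatively open. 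You instead import a proper Whitney embedding $j\colon S\hookrightarrow\R^{k}$ and factor $j=f\circ\ul{\iota}$; this is shorter and gives injectivity for free, at the cost of an external theorem. For closedness, the paper constructs the single local function $1/\lVert\ul{\varphi}(\ul{\iota}(\cdot))-\ul{\varphi}(m)\rVert^{2}$ near a boundary point $m$, whereas you prescribe values $n$ on the closed discrete set $\{s_{n}\}$; both exploit the same continuity obstruction and are interchangeable.
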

\begin{proof}
Suppose $(\cS, \iota)$ is an embedded submanifold of $\cS$. In particular, $\iota$ is an immersion. For each $s \in S$, we have graded local charts $(U,\varphi)$ for $\cS$ and $(V',\psi)$ for $\M$ having the properties as in Theorem \ref{thm_immersion}-$(iii)$. Let $(n_{j})_{j \in \Z} := \gdim(\M)$ and $(m_{j})_{j \in \Z} := \gdim(\cS)$. Let $\hat{\iota}: \ul{\varphi}(U)^{(m_{j})} \rightarrow \ul{\psi}(V')^{(n_{j})}$ be the corresponding local representative of $\iota$. It is easy to see that the graded algebra morphism $\hat{i}^{\ast}_{\ul{\psi}(V')}: \C^{\infty}_{(n_{j})}(\ul{\psi}(V')) \rightarrow \C^{\infty}_{(m_{j})}(\ul{\varphi}(U))$ is surjective. 

Since $\ul{\iota}$ is an embedding, there is $W \in \Op(M)$ such that $\ul{\iota}(U) = W \cap \ul{\iota}(S)$. As $\ul{\iota}(U) \subseteq V'$, one can write $\ul{\iota}(U) = V \cap \ul{\iota}(S)$, where $V := V' \cap W$. Moreover, one has $\ul{\iota}(U) \subseteq V$. It follows that 
\begin{equation}
\iota^{\ast}_{V} = \varphi^{\ast}_{\ul{\varphi}(U)} \circ \hat{i}^{\ast}_{\ul{\psi}(V)} \circ (\psi^{-1})^{\ast}_{V}
\end{equation}
As $\ul{\psi}(V) \subseteq \ul{\psi}(V')$, $\hat{i}^{\ast}_{\ul{\psi}(V)}$ is surjective by Lemma \ref{lem_surjectivity}. Consequently, also $\iota^{\ast}_{V}: \C^{\infty}_{\M}(V) \rightarrow \C^{\infty}_{\cS}(U)$ is surjective. Note that by construction, one has $U = \ul{\iota}^{-1}(V)$. 

To summarize, to each point $s \in S$, one can find $U_{s} \in \Op_{s}(S)$ and $V_{s} \in \Op_{\ul{\iota}(s)}(M)$, such that $\ul{\iota}(U_{s}) = V_{s} \cap \ul{\iota}(S)$ and $i^{\ast}_{V_{s}}: \C^{\infty}_{\cS}(V_{s}) \rightarrow \C^{\infty}_{\cS}(U_{s})$ is surjective. Let $Y := \cup_{s \in S} V_{s}$. Clearly $\ul{\iota}(S) \subseteq Y$ and $\{ V_{s} \}_{s \in S}$ is its open cover. By Lemma \ref{lem_surjectivity}, this proves that $i^{\ast}_{Y}: \C^{\infty}_{\M}(U) \rightarrow \C^{\infty}_{\cS}(S)$ is surjective. If $\ul{\iota}(S) \subseteq M$ is closed, we can consider another open subset $V_{0} := M - \ul{\iota}(S)$. Moreover, the graded algebra morphism $\iota^{\ast}_{V_{0}}: \C^{\infty}_{\M}(V_{0}) \rightarrow \C^{\infty}_{\cS}(\emptyset) = 0$ is (trivially) surjective. Applying Lemma \ref{lem_surjectivity} on the open cover $\{ V_{s} \}_{s \in S} \cup \{ V_{0} \}$ of $M$ proves that $\iota^{\ast}_{M}: \C^{\infty}_{\M}(M) \rightarrow \C^{\infty}_{\cS}(S)$ is surjective. 

Conversely, let $\iota: \cS \rightarrow \M$ be any graded smooth map and suppose that there is $Y \in \Op(M)$, such that $\ul{\iota}(S) \subseteq Y$ and $\iota^{\ast}_{Y}: \C^{\infty}_{\M}(Y) \rightarrow \C^{\infty}_{\cS}(S)$ is surjective. 

Consequently, the pullback map $\ul{\iota}^{\ast}: \C^{\infty}_{M}(Y) \rightarrow \C^{\infty}_{S}(S)$ is also surjective. Indeed, for any $f_{0} \in \C^{\infty}_{S}(S)$, there is a $f \in \C^{\infty}_{\cS}(S)$, such that $f_{0} = \ul{f}$, see Proposition \ref{tvrz_bodymapsurj} (in fact, this also follows from Example \ref{ex_underlyingissubmanifold} and already proved only if part). There is thus some $g \in \C^{\infty}_{\M}(Y)$, such that $f = \iota^{\ast}_{Y}(g)$. Hence $f_{0} = \ul{\iota^{\ast}_{Y}(g)} = \ul{\iota}^{\ast}(\ul{g})$, see Proposition \ref{tvrz_bodymap}. 

Let us argue that $\ul{\iota}: S \rightarrow M$ must be an embedding. First, suppose that $\ul{\iota}(s) = \ul{\iota}(s')$ for $s \neq s'$. There certainly exists a smooth function $f \in \C^{\infty}_{S}(S)$, such that $f(s) \neq f(s')$. By assumption, there is $g \in \C^{\infty}_{M}(Y)$, such that $\ul{i}^{\ast}(g) = f$. In particular, one has $f(s) = g(\ul{\iota}(s)) = g(\ul{\iota}(s')) = f(s')$, which is a contradiction. This proves that $\ul{\iota}$ is injective. Next, for any $s \in S$, let  $x \in T_{s}S$ satisfy $(T_{s} \ul{\iota})(x) = 0$. For arbitrary smooth function $f \in \C^{\infty}_{\M}(U)$, one can find $g \in \C^{\infty}_{\cS}(Y)$, such that $f = \ul{\iota}^{\ast}(g)$. But then
\begin{equation}
x(f) = x( \ul{\iota}^{\ast}(g)) = [(T_{s} \ul{\iota})(x)](g) = 0. 
\end{equation}
But this proves that $x = 0$ and we conclude that $\ul{\iota}$ is an immersion. It remains to prove that $\ul{\iota}$ is a homeomorphism onto its image. By Lemma \ref{lem_balls}, there exists a countable basis $\B$ for the topology of $S$ consisting of regular coordinate balls. For each $B \in \B$, there exists a smooth function $f_{B} \in \C^{\infty}_{S}(S)$ strictly positive on $B$ and zero everywhere else, see the discussion above (\ref{eq_hnufunkce}). Consequently, one has $f_{B}^{-1}( \R - \{0\}) = B$. By assumption, there exists $g_{B} \in \C^{\infty}_{\M}(Y)$, such that $f_{B} = \ul{\iota}^{\ast}(g_{B})$. Then
\begin{equation}
\ul{\iota}(B) = g_{B}^{-1}(\R - \{0\}) \cap \ul{\iota}(S).
\end{equation}
But this shows that $\ul{i}(B)$ is open in the subspace topology of $\ul{\iota}(S)$. As $B \in \B$ was arbitrary and $\B$ is the basis for the topology of $S$, this proves the claim.

Next, for each $s \in S$, the injectivity of $T_{s} \iota$ can be proved in completely the same way as the one of $T_{s}\ul{\iota}$ in the previous paragraph, this time using the formula (\ref{eq_differentialformula}). We conclude that $\iota$ is an immersion. Altogether, $(\cS, \iota)$ is an embedded manifold of $\M$. 

It remains to prove that if we can choose $Y = M$, $\ul{\iota}(S) \subseteq M$ must be closed. Let $m \in M$ be a limit point of $\ul{\iota}(S)$. Let $(U,\ul{\varphi})$ be any local chart around $m$. By definition, $U \cap \ul{\iota}(S) \neq \emptyset$. There is thus a non-empty $V \in \Op(S)$, such that $\ul{\iota}(V) = U \cap \ul{\iota}(S)$. Suppose $m \notin \ul{\iota}(S)$. Let $f \in \C^{\infty}_{S}(V)$ be a smooth function defined for each $s \in V$ as 
\begin{equation}
f(s) := \frac{1}{\| \ul{\varphi}(\ul{\iota}(s)) - \ul{\varphi}(m) \|^{2}}.
\end{equation}
As $\iota^{\ast}_{M}$ is surjective by assumption, so is $\iota^{\ast}_{U}$ by Lemma \ref{lem_surjectivity}. Consequently, also the pullback $\ul{\iota}^{\ast}: \C^{\infty}_{M}(U) \rightarrow \C^{\infty}_{S}(V)$ is surjective (we have proved this in one of the preceding paragraphs). There is thus some $g \in \C^{\infty}_{M}(U)$, such that $f = \ul{\iota}^{\ast}(g)$. But $m = \lim_{k \rightarrow \infty} \ul{\iota}(s_{k})$ for some sequence $\{ s_{k} \}_{k = 1}^{\infty} \subseteq V$. Hence $g(m) = \lim_{k \rightarrow \infty} g( \ul{\iota}(s_{k})) = \lim_{k \rightarrow \infty} f(s_{k})$. But this is a contradiction since the sequence $\{ f(s_{k}) \}_{k = 1}^{\infty}$ diverges horribly. This finishes the proof. 
\end{proof}
This proposition implies the following extension property:
\begin{cor} \label{cor_extensionsubmanifolds}
Let $(\cS,\iota)$ be an embedded submanifold of $\M$. Let $Y \in \Op(M)$ be the open subset, such that $\ul{\iota}(S) \subseteq Y$ and $\iota^{\ast}_{Y}$ is surjective. 

Let $U \in \Op(S)$ and $f \in \C^{\infty}_{\cS}(U)$. Then for any $V \in \Op(Y)$, such that $\ul{\iota}(U) = V \cap \ul{\iota}(S)$, there exists $g \in \C^{\infty}_{\M}(V)$, such that $f = \iota^{\ast}_{V}(g)$. 
\end{cor}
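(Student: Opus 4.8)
The plan is to deduce the statement from Proposition \ref{tvrz_embeddinghassurjectivepullback} applied to an appropriate restriction of $\iota$. First I would record the elementary identity $\ul{\iota}^{-1}(V) = U$. Indeed, if $\ul{\iota}(s) \in V$ then $\ul{\iota}(s) \in V \cap \ul{\iota}(S) = \ul{\iota}(U)$, and since $\ul{\iota}$ is injective this forces $s \in U$; conversely $s \in U$ gives $\ul{\iota}(s) \in \ul{\iota}(U) = V \cap \ul{\iota}(S) \subseteq V$. Hence $\iota^{\ast}_{V}$ is a graded algebra morphism $\C^{\infty}_{\M}(V) \to \C^{\infty}_{\cS}(U)$, and the assertion is precisely that $f$ lies in its image; so it suffices to prove that $\iota^{\ast}_{V}$ is surjective.

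Next I would upgrade the hypothesis on $Y$ into a closedness statement. Since $\ul{\iota}(S) \subseteq Y$ and $\M|_{Y}$ is itself a graded manifold, $\iota$ may be regarded as a graded smooth map $\iota : \cS \to \M|_{Y}$ (restriction of the target), and its pullback over $Y$ is again $\iota^{\ast}_{Y}$, which is surjective by hypothesis. Applying the ``$Y = M$'' half of Proposition \ref{tvrz_embeddinghassurjectivepullback} to the ambient graded manifold $\M|_{Y}$ shows that $(\cS,\iota)$ is a \emph{closed} embedded submanifold of $\M|_{Y}$, so $\ul{\iota}(S)$ is closed in $Y$. Consequently, for the given $V \in \Op(Y)$, the subset $\ul{\iota}(U) = V \cap \ul{\iota}(S)$ is closed in $V$.

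Now restrict once more: let $\iota' : \cS|_{U} \to \M|_{V}$ be the graded smooth map obtained from $\iota|_{U}$ by restricting the target (legitimate since $\ul{\iota}(U) \subseteq V$). Then $\iota'$ is an immersion (its differentials coincide with those of $\iota$), its underlying map $\ul{\iota}|_{U} : U \to V$ is injective, and since $\ul{\iota}(U)$ is closed in $V$ and $\ul{\iota}$ is an embedding, $\ul{\iota}|_{U}$ is a closed embedding onto $\ul{\iota}(U)$ with the subspace topology. Thus $(\cS|_{U},\iota')$ is a closed embedded submanifold of $\M|_{V}$, and Proposition \ref{tvrz_embeddinghassurjectivepullback} (again in its ``$Y = M$'' form, with ambient $\M|_{V}$) gives that $(\iota')^{\ast}_{V} : \C^{\infty}_{\M}(V) \to \C^{\infty}_{\cS}(U)$ is surjective. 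Since $(\iota')^{\ast}_{V} = \iota^{\ast}_{V}$, there exists $g \in \C^{\infty}_{\M}(V)$ with $\iota^{\ast}_{V}(g) = f$, as desired.

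The only genuinely substantive point — everything else being bookkeeping of restrictions and topologies — is the middle step: that surjectivity of $\iota^{\ast}_{Y}$ already forces $\ul{\iota}(S)$ to be closed in $Y$. This is exactly the argument in the last paragraph of the proof of Proposition \ref{tvrz_embeddinghassurjectivepullback}, read with $M$ replaced by $Y$, and it is what lets us move from the merely ``embedded'' hypothesis to the ``closed embedded'' situation in which surjectivity of the pullback over the whole (restricted) base is available.
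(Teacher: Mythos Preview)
Your proof is correct, but it takes a considerably more circuitous route than the paper's. The paper's proof is a one-liner: since $V \in \Op(Y)$ and $\iota^{\ast}_{Y}$ is surjective, Lemma~\ref{lem_surjectivity} immediately gives that $\iota^{\ast}_{V}$ is surjective, and (after the identity $\ul{\iota}^{-1}(V) = U$ that you also record) this is exactly the claim.

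Your argument instead goes through Proposition~\ref{tvrz_embeddinghassurjectivepullback} twice: first to deduce that $\ul{\iota}(S)$ is closed in $Y$, then topology gives that $\ul{\iota}(U)$ is closed in $V$, and finally the proposition again yields surjectivity of $\iota^{\ast}_{V}$. This is valid, but note that the surjectivity direction of Proposition~\ref{tvrz_embeddinghassurjectivepullback} is itself proved via Lemma~\ref{lem_surjectivity}, so you are effectively invoking that lemma under an extra layer of wrapping. The closedness detour buys you nothing here; the direct appeal to Lemma~\ref{lem_surjectivity} is all that is needed.
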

\begin{proof}
Use the previous proposition together with Lemma \ref{lem_surjectivity}. 
\end{proof}
\begin{rem}
It is a well-known fact from standard differential geometry that Proposition \ref{tvrz_embeddinghassurjectivepullback} fails for general immersed submanifolds. 
\end{rem}
\subsection{Tangent space and vector fields}
Let $(\M,\iota)$ be an immersed submanifold of $\M$. For each $s \in S$, we have the injective graded linear map $T_{s}\iota: T_{s}\cS \rightarrow T_{\ul{\iota}(s)} \M$. This allows one to view $T_{s} \cS$ as a graded linear subspace of $T_{\ul{\iota}(s)} \M$. It is a standard convention to identify $S$ with its image $\ul{\iota}(S)$ and write
\begin{equation}
T_{s} \cS \subseteq T_{s} \M,
\end{equation} 
whenever there is no possibility for a confusion. We thus say that a given tangent vector $v \in T_{s}\M$ is \textbf{tangent to $\cS$}, if it lies in the subspace $T_{s}\cS$. 

Next, let $U \in \Op(M)$ and $X \in \X_{\M}(U)$. We say that $X$ is \textbf{tangent to $\cS$}, if there exists a vector field $X' \in \X_{\cS}(U \cap S)$, such that for all $f \in \C^{\infty}_{\M}(U)$, one has 
\begin{equation} \label{eq_vftangenttoN}
\iota^{\ast}_{U}( X(f)) = X'( \iota^{\ast}_{U}(f)).
\end{equation}
This is equivalent to $X' \sim_{\iota|_{U \cap S}} X$. These two notions are compatible:
\begin{tvrz}
Let $U \in \Op(M)$ and let $X \in \X_{\M}(U)$ be tangent to $\cS$. 

Then for every $s \in S$, the value of $X$ at $s$ is tangent to $\cS$, $X_{s} \in T_{s}\cS$. In general, the converse is not true (in contrast to ordinary manifolds).
\end{tvrz}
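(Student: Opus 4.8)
The plan is to unwind the two definitions and connect them through the relation $X' \sim_{\iota|_{U \cap S}} X$ together with Proposition \ref{tvrz_relatedprops}-$(i)$. By hypothesis there is a vector field $X' \in \X_{\cS}(U \cap S)$ satisfying (\ref{eq_vftangenttoN}) for all $f \in \C^{\infty}_{\M}(U)$, which is exactly the statement $X' \sim_{\iota|_{U \cap S}} X$. Fix $s \in S$. First I would invoke Proposition \ref{tvrz_relatedprops}-$(i)$ for the graded smooth map $\iota|_{U \cap S}: \cS|_{U \cap S} \rightarrow \M|_{U}$ (recall that being tangent to $\cS$ means $X'$ and $X|_{U}$ are $\iota|_{U \cap S}$-related) to conclude that $(T_{s}\iota)(X'_{s}) = X_{s}$, where on the left $T_{s}\iota: T_{s}\cS \rightarrow T_{\ul{\iota}(s)}\M$ is the differential of $\iota$ at $s$. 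One small point to check is compatibility of values with restrictions: $(X|_{U})_{s} = X_{s}$ and $(X'|_{U \cap S})_{s} = X'_{s}$, which follows from the commuting-with-restrictions property of $\ev$ in Proposition \ref{tvrz_valueofvf}; this lets me pass freely between $X$, $X|_{U}$ and their values at $s$.

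Having $X_{s} = (T_{s}\iota)(X'_{s})$, the conclusion is immediate: $X_{s}$ lies in the image of $T_{s}\iota$, which is precisely the subspace $T_{s}\cS \subseteq T_{s}\M$ under the standard identification discussed just above the statement. Hence $X_{s}$ is tangent to $\cS$ for every $s \in S$, as claimed. This is the entire content of the positive assertion, so the ``proof'' is really just the composition of two earlier results.

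For the counterexample showing the converse fails, I would reuse the Euler vector field phenomenon from Remark \ref{rem_notbyvalues}. Take $\M$ to be any graded manifold with $n_{\ast} > 0$ and let $\cS = \M$ with $\iota = \1_{\M}$ (a trivial, but legitimate, immersed submanifold). Consider the Euler vector field $E \in \X_{\M}(M)$, which is non-zero yet satisfies $E_{m} = 0$ for all $m \in M$ by Example \ref{ex_Euler} and Remark \ref{rem_notbyvalues}. Then $E_{m} = 0 \in T_{m}\cS$ trivially for every $m$, so ``$E_{m}$ tangent to $\cS$ at every point'' holds; but to conclude $E$ is tangent to $\cS$ in the sense of (\ref{eq_vftangenttoN}) one would need a vector field $E' \in \X_{\cS}(M)$ with $E' \sim_{\1_{\M}} E$, which forces $E' = E$ — this is fine here. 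A genuinely nontrivial counterexample requires a proper submanifold: I would instead take $\cS = (M, i_{M})$, the underlying manifold of $\M$ embedded via $i_{M}$ as in Example \ref{ex_underlyingissubmanifold}, and observe that a degree-$\neq 0$ vector field whose values all vanish (again built from $E$, or from $\xi_{\mu}\tfrac{\partial}{\partial\xi_{\mu}}$-type terms) cannot be $i_{M}$-related to any vector field on $M$, since $\X_{M}(U)$ carries only degree-zero data while the value-vanishing fails to detect the obstruction; one checks directly on a graded chart that (\ref{eq_vftangenttoN}) has no solution $X'$.

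\textbf{Main obstacle.} The positive direction is essentially free. The real work is producing a clean, fully correct counterexample for the converse: one must exhibit an $X$ with $X_{s} \in T_{s}\cS$ for all $s$ yet no $X'$ satisfying (\ref{eq_vftangenttoN}), and verify the non-existence of $X'$ honestly in local coordinates adapted to the immersion (Theorem \ref{thm_immersion}). Getting the degrees and the vanishing-of-values bookkeeping exactly right — so that the candidate $X'$ provably cannot exist rather than merely ``looks unlikely'' — is the step I expect to demand the most care.
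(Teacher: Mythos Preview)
Your positive direction is correct and is exactly the paper's argument: invoke Proposition \ref{tvrz_relatedprops}-$(i)$ for the relation $X' \sim_{\iota|_{U\cap S}} X$ to get $(T_s\iota)(X'_s)=X_s$, hence $X_s\in T_s\cS$.

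The counterexample, however, has a genuine gap. Your first attempt ($\cS=\M$, $\iota=\1_\M$) you already noticed is vacuous. Your second attempt, taking $\cS=(M,i_M)$, actually \emph{fails as a counterexample}: for this particular submanifold the converse \emph{does} hold. Indeed, $T_mM$ sits inside $T_m\M$ as the degree-zero part, so for a vector field $X$ of degree $k$ one has $X_m\in T_mM$ for all $m$ iff either $k=0$ (automatic) or $X_m=0$ for all $m$. But $X_m=0$ for all $m$ means $(X(f))(m)=\underline{X(f)}(m)=0$ for all $f,m$, i.e.\ $(i_M)^\ast(X(f))=0$, which is precisely (\ref{eq_vftangenttoN}) with $X'=0$. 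And for $k=0$, the ``underlying vector field'' construction in the paper (the isomorphism $\underline{T\M}_0\cong TM$) supplies the required $X'$. So every $X$ with $X_m\in T_mM$ for all $m$ \emph{is} tangent to $M$; the body map kills exactly the right thing. Your claimed obstruction ``$\X_M(U)$ carries only degree-zero data'' is not an obstruction here, because the degree-$k$ slot of $\X_M$ being zero just forces $X'=0$, and that choice works.

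The paper instead takes a genuinely ``transverse'' submanifold: $\M=\R^2[1]$ with odd coordinates $\xi_1,\xi_2$, $\cS=\R[1]$ embedded by $\iota^\ast(\xi_1)=\xi_1$, $\iota^\ast(\xi_2)=0$, and $X=\xi_1\tfrac{\partial}{\partial\xi_2}$. Then $X_\ast=0$ trivially, but testing (\ref{eq_vftangenttoN}) on $f=\xi_2$ gives $\iota^\ast(X(\xi_2))=\iota^\ast(\xi_1)=\xi_1$, while the right-hand side is $X'(\iota^\ast(\xi_2))=X'(0)=0$; no $X'$ can fix this. The key feature your attempt lacks is a coordinate \emph{normal} to $\cS$ that $X$ can move into with a coefficient that survives $\iota^\ast$.
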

\begin{proof}
This follows immediately from definitions and Proposition \ref{tvrz_relatedprops}-$(i)$. To find a counterexample to the converse statement, consider graded manifolds $\M := \R^{2}[1]$ and $\cS := \R[1]$. The respective underlying manifolds are $M = S = \{ \ast \}$. 

Let $\xi_{1},\xi_{2} \in \C^{\infty}_{\M}(\{ \ast \})$ be the coordinate functions of degree $1$, and denote the single coordinate function on $\cS$ as $\xi_{1} \in \C^{\infty}_{\cS}(\{\ast\})$. Let $\iota = (\1_{\{\ast \}}, \iota^{\ast})$ be defined by
\begin{equation}
\iota^{\ast}_{\{\ast\}}(\xi_{1}) := \xi_{1}, \; \; \iota^{\ast}_{\{ \ast \}}(\xi_{2}) := 0.
\end{equation}
This makes $(\cS,\iota)$ into a closed embedded submanifold of $\cS$. Consider $X \in \X_{\M}(\{\ast \})$ given by
\begin{equation}
X = \xi_{1} \frac{\partial}{\partial \xi_{2}}.
\end{equation}
We have $X_{\ast} = 0$, so $X_{\ast}$ is tangent to $\cS$. For $f := \xi_{2}$, the equation (\ref{eq_vftangenttoN}) reads $\xi_{1} = X'(0)$. As there is no such $X' \in \X_{\cS}(\{\ast \})$, we conclude that $X$ is \textit{not tangent} to $\cS$. 
\end{proof}
\begin{example}
Let $(M,i_{M})$ be the closed embedded submanifold of Example \ref{ex_underlyingissubmanifold}. Then the Euler vector field $E \in \X_{\M}(M)$ is tangent to $M$. Indeed, for any $f \in \C^{\infty}_{\M}(M)$, one has $(i_{M})^{\ast}_{M}(E(f)) = 0$. This is (\ref{eq_vftangenttoN}) for $X' = 0$ and the conclusion follows. 
\end{example}
For embedded submanifolds, there is the following useful criterion:
\begin{tvrz}
Let $(\cS,\iota)$ be an embedded submanifold of $\M$. Let $U \in \Op(M)$ be arbitrary and let $X \in \X_{\M}(U)$ be a vector field.

Then $X$ is tangent to $\cS$, iff for every $f \in \C^{\infty}_{\M}(U)$ satisfying $\iota^{\ast}_{U}(f) = 0$, one has $\iota^{\ast}_{U}(X(f)) = 0$. 
\end{tvrz}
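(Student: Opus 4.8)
The plan is to prove the ``only if'' direction by a one-line computation and, for the converse, to build the vector field $X' \in \X_{\cS}(U \cap S)$ witnessing tangency by patching together local candidates, using the sheaf property of $\X_{\cS}$ and the fact (Proposition \ref{tvrz_relatedprops}-$(iv)$) that $\phi$-relatedness is compatible with restrictions and with gluing along open covers. For the easy direction: if $X$ is tangent to $\cS$ there is $X' \in \X_{\cS}(U \cap S)$ with $\iota^{\ast}_{U}(X(f)) = X'(\iota^{\ast}_{U}(f))$ for all $f \in \C^{\infty}_{\M}(U)$, so $\iota^{\ast}_{U}(f) = 0$ forces $\iota^{\ast}_{U}(X(f)) = X'(0) = 0$ by linearity of $X'$.

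For the converse, the obstruction is that $\iota^{\ast}_{U}$ need not be surjective. However, by the proof of Proposition \ref{tvrz_embeddinghassurjectivepullback} (combined with Lemma \ref{lem_surjectivity} to intersect with $U$), every $s \in U \cap S$ has a neighbourhood $\ul{\iota}^{-1}(V_{s})$ with $V_{s} \in \Op(M)$, $V_{s} \subseteq U$, on which $\iota^{\ast}_{V_{s}} \colon \C^{\infty}_{\M}(V_{s}) \rightarrow \C^{\infty}_{\cS}(\ul{\iota}^{-1}(V_{s}))$ is surjective. First I would upgrade the hypothesis: for \emph{any} $V \in \Op(M)$ with $V \subseteq U$, if $h \in \C^{\infty}_{\M}(V)$ satisfies $\iota^{\ast}_{V}(h) = 0$ then $\iota^{\ast}_{V}(X|_{V}(h)) = 0$. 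This is local on $\ul{\iota}^{-1}(V)$; near a point I would pick a graded bump function $\lambda \in \C^{\infty}_{\M}(U)$ supported in $V$ with $\lambda \equiv 1$ on a smaller neighbourhood, note $\iota^{\ast}_{U}(\lambda \cdot f) = 0$ (using that $\iota^{\ast}$ is an algebra morphism and the defining restrictions of $\lambda \cdot f$), apply the hypothesis, and expand $X(\lambda \cdot f)$ by the Leibniz rule, where $X(\lambda)$ vanishes on the locus $\lambda \equiv 1$; the monopresheaf property of $\C^{\infty}_{\cS}$ then gives the claim. With this in hand I would define $X'_{s} \in \X_{\cS}(\ul{\iota}^{-1}(V_{s}))$ by $X'_{s}(\iota^{\ast}_{V_{s}}(h)) := \iota^{\ast}_{V_{s}}(X|_{V_{s}}(h))$; the upgraded hypothesis makes this well defined independently of the preimage $h$, graded linearity and the Leibniz rule are inherited from $X$ since $\iota^{\ast}$ is a graded algebra morphism, and $|X'_{s}| = |X|$. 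By construction $X'_{s} \sim_{\iota|_{\ul{\iota}^{-1}(V_{s})}} X|_{V_{s}}$.

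It remains to glue. On an overlap $\ul{\iota}^{-1}(V_{s}) \cap \ul{\iota}^{-1}(V_{s'})$ I would, near each point, choose $V \subseteq V_{s} \cap V_{s'}$ with $\iota^{\ast}_{V}$ surjective; restricting the two $\iota$-related fields via Proposition \ref{tvrz_relatedprops}-$(iv)$ shows that both $X'_{s}|_{\ul{\iota}^{-1}(V)}$ and $X'_{s'}|_{\ul{\iota}^{-1}(V)}$ send $\iota^{\ast}_{V}(h)$ to $\iota^{\ast}_{V}(X|_{V}(h))$, hence agree on all of $\C^{\infty}_{\cS}(\ul{\iota}^{-1}(V))$ by surjectivity, so they agree on the overlap by the monopresheaf property of $\X_{\cS}$. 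The gluing property of $\X_{\cS}$ then yields $X' \in \X_{\cS}(U \cap S)$ with $X'|_{\ul{\iota}^{-1}(V_{s})} = X'_{s}$, and the ``converse'' part of Proposition \ref{tvrz_relatedprops}-$(iv)$, applied to the cover $\{\ul{\iota}^{-1}(V_{s})\}$ of $U \cap S$ and the sets $\{V_{s}\}$ in $U$, gives $X' \sim_{\iota|_{U \cap S}} X|_{U}$, i.e. $X$ is tangent to $\cS$. The main obstacle is precisely the well-definedness of the local derivations $X'_{s}$: this is where embeddedness (local surjectivity of $\iota^{\ast}$) and the existence of graded bump functions are essential, since they are what let the tangency condition propagate from $U$ down to arbitrarily small open subsets.
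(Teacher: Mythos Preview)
Your proof is correct. Both your argument and the paper's share the ``only if'' direction and the bump-function step that propagates the hypothesis from $U$ to every smaller open $V \subseteq U$.

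The difference lies in how $X'$ is built. You construct \emph{local} vector fields $X'_{s}$ on the patches $\ul{\iota}^{-1}(V_{s})$, then glue via the sheaf property of $\X_{\cS}$ and invoke Proposition~\ref{tvrz_relatedprops}-$(iv)$ to conclude $X' \sim_{\iota|_{U \cap S}} X$. The paper instead uses Proposition~\ref{tvrz_embeddinghassurjectivepullback} to obtain a \emph{single} open set $Y \supseteq \ul{\iota}(S)$ with $\iota^{\ast}_{Y}$ surjective; by Lemma~\ref{lem_surjectivity} also $\iota^{\ast}_{U \cap Y}$ is surjective, and $\ul{\iota}^{-1}(U \cap Y) = U \cap S$, so one defines $X'(f) := \iota^{\ast}_{U \cap Y}(X|_{U \cap Y}(g))$ directly on all of $U \cap S$, for any $g \in \C^{\infty}_{\M}(U \cap Y)$ with $\iota^{\ast}_{U \cap Y}(g) = f$. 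This eliminates the gluing step entirely. Your approach buys you a construction that appeals only to the pointwise local surjectivity established in the proof of Proposition~\ref{tvrz_embeddinghassurjectivepullback}, which is arguably more elementary; the paper's approach buys brevity, since the existence of the global $Y$ is already on the shelf. A minor simplification in your argument: on an overlap you can take $V = V_{s} \cap V_{s'}$ directly (surjectivity there follows from Lemma~\ref{lem_surjectivity}), so the pointwise refinement in the gluing step is unnecessary.
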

\begin{proof}
One direction follows immediately from (\ref{eq_vftangenttoN}). Hence assume that for every $f \in \C^{\infty}_{\M}(U)$, the equation $\iota^{\ast}_{U}(f) = 0$ implies $\iota^{\ast}_{U}(X(f)) = 0$. 

First, let us argue that for any $V \in \Op(U)$, $X|_{V} \in \X_{\M}(V)$ has the same property. Suppose that $g \in \C^{\infty}_{\M}(V)$ satisfies $\iota^{\ast}_{V}(g) = 0$. Pick any $m \in V$ and $W \in \Op_{m}(V)$, such that $\ol{W} \subseteq V$. Let $\lambda \in \C^{\infty}_{\M}(U)$ be the graded bump function supported on $V$, such that $\lambda|_{W} = 1$. It is easy to see that $\iota^{\ast}_{U}( \lambda \cdot g) = 0$. By assumption, we have $\iota^{\ast}_{U}(X( \lambda \cdot g)) = 0$. Using the naturality and definitions, we find the expression
\begin{equation}
0 = \iota^{\ast}_{U}(X(\lambda \cdot g))|_{W \cap S} = \iota^{\ast}_{W}( X|_{W}(g|_{W})) = \iota^{\ast}_{V}( X|_{V}(g))|_{W \cap S}. 
\end{equation}
As $m \in V$ was arbitrary, the equation $\iota^{\ast}_{V}(X|_{V}(g)) = 0$ follows. 

Now, let us return to the actual proof. Let $Y \in \Op(M)$ be the open subset, such that $\ul{\iota}(S) \subseteq Y$ and $\iota^{\ast}_{Y}$ is surjective. As $(\cS,\iota)$ is embedded, it exists due to Proposition \ref{tvrz_embeddinghassurjectivepullback}. It follows from Corollary \ref{cor_extensionsubmanifolds} that for every $f \in \C^{\infty}_{\cS}(U \cap S)$, there exists a function $g \in \C^{\infty}_{\M}(U \cap Y)$, such that $f = \iota^{\ast}_{U \cap Y}(g)$. Let us thus define $X' \in \X_{\cS}(U \cap S)$ by 
\begin{equation}
X'(f) := \iota^{\ast}_{U \cap Y}( X|_{U \cap Y}(g)).
\end{equation}
The previous paragraph ensures that the right-hand side does not depend on the particular choice of $g$, hence $X'$ is well-defined. It is easy to check that it is a vector field. Finally, by choosing $f = i^{\ast}_{U}(f')$ for $f' \in \C^{\infty}_{\M}(U)$, we can choose $g := f'|_{U \cap Y}$ to obtain (\ref{eq_vftangenttoN}). We conclude that $X$ is tangent to $\cS$, as was to be proved. 
\end{proof}

Now, let $\E$ be a graded vector bundle over $\M$ and suppose $(\cS,\iota)$ is an immersed submanifold of $\M$. The \textbf{restricted vector bundle $\E_{\cS}$} is then defined as a pullback $\E_{\cS} := \iota^{\ast} \E$. For each $U \in \Op(M)$ and every section $\sigma \in \Gamma_{\E}(U)$, by $\sigma_{\cS} \in \Gamma_{\E_{\cS}}(U \cap S)$, we mean the corresponding pullback section $\sigma^{!}$, see Proposition \ref{tvrz_pullbackVB}-$(iv)$. For embedded submanifolds, we obtain the following extension property:

\begin{tvrz}
Let $(\cS,\iota)$ be an embedded submanifold. Let $Y \in \Op(M)$ be an open subset, such that $\ul{\iota}(S) \subseteq Y$ and $\iota^{\ast}_{Y}$ is surjective. Let $U \in \Op(Y)$ be arbitrary.

Then to any $\sigma' \in \Gamma_{\E_{\cS}}(U \cap S)$, there exists $\sigma \in \Gamma_{\E}(U)$, such that $\sigma' = \sigma_{\cS}$. 
\end{tvrz}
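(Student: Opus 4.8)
The plan is to show that the sheaf morphism $\Gamma_{\E}\to\ul{\iota}_{\ast}\Gamma_{\E_{\cS}}$ given by $\sigma\mapsto\sigma_{\cS}$ is surjective on sections over $U$, by solving the problem locally on a trivializing cover of $U$ and then assembling the pieces with a partition of unity, in the same spirit as Lemma \ref{lem_surjectivity} and the Extension lemma for sections. Two structural facts will be used throughout: first, $\sigma\mapsto\sigma_{\cS}$ (the pullback-section operation of Proposition \ref{tvrz_pullbackVB}-$(iv)$) is natural in the open set, so it commutes with restrictions; second, it is $\C^{\infty}_{\M}$-linear in the sense that $(f\tr\sigma)_{\cS}=\iota^{\ast}_{U}(f)\tr\sigma_{\cS}$ for all $f\in\C^{\infty}_{\M}(U)$ and $\sigma\in\Gamma_{\E}(U)$, where $\Gamma_{\E_{\cS}}=\iota^{\ast}\Gamma_{\E}$ is regarded as a $\C^{\infty}_{\M}$-module via $\iota^{\ast}$ as in Remark \ref{rem_pushforwardmodules}. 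Both are immediate from the construction of the pullback sheaf.

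For the local step, cover $U$ by open sets $\{U_{\alpha}\}_{\alpha\in I}$, each the domain of a local trivialization chart $(U_{\alpha},\varphi_{\alpha})$ for $\E$, with associated local frame $(\Phi^{(\alpha)}_{\lambda})_{\lambda=1}^{m}$ for $\E$ over $U_{\alpha}$. By Proposition \ref{tvrz_pullbackVB}-$(iv)$, the tuple $((\Phi^{(\alpha)}_{\lambda})_{\cS})_{\lambda=1}^{m}$ is a local frame for $\E_{\cS}$ over $\ul{\iota}^{-1}(U_{\alpha})=U_{\alpha}\cap S$. If $U_{\alpha}\cap S=\emptyset$, put $\sigma_{\alpha}:=0\in\Gamma_{\E}(U_{\alpha})$; otherwise write $\sigma'|_{U_{\alpha}\cap S}=a'^{\lambda}\tr(\Phi^{(\alpha)}_{\lambda})_{\cS}$ for the unique sections $a'^{\lambda}\in\C^{\infty}_{\cS}(U_{\alpha}\cap S)$ with $|a'^{\lambda}|=|\sigma'|-|\vartheta_{\lambda}|$. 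Since $U_{\alpha}\in\Op(Y)$ and $\iota^{\ast}_{Y}$ is surjective, Lemma \ref{lem_surjectivity} gives that the (degree-preserving) graded algebra morphism $\iota^{\ast}_{U_{\alpha}}\colon\C^{\infty}_{\M}(U_{\alpha})\to\C^{\infty}_{\cS}(U_{\alpha}\cap S)$ is surjective, so we may choose $a^{\lambda}\in\C^{\infty}_{\M}(U_{\alpha})$ with $\iota^{\ast}_{U_{\alpha}}(a^{\lambda})=a'^{\lambda}$. Setting $\sigma_{\alpha}:=a^{\lambda}\tr\Phi^{(\alpha)}_{\lambda}\in\Gamma_{\E}(U_{\alpha})$, the $\C^{\infty}_{\M}$-linearity noted above yields $(\sigma_{\alpha})_{\cS}=\iota^{\ast}_{U_{\alpha}}(a^{\lambda})\tr(\Phi^{(\alpha)}_{\lambda})_{\cS}=\sigma'|_{U_{\alpha}\cap S}$.

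To globalize, choose a partition of unity $\{\lambda_{\alpha}\}_{\alpha\in I}$ on $\M|_{U}$ subordinate to $\{U_{\alpha}\}_{\alpha\in I}$ (Proposition \ref{tvrz_partition}) and set $\sigma:=\sum_{\alpha\in I}\lambda_{\alpha}\cdot\sigma_{\alpha}\in\Gamma_{\E}(U)$, where $\lambda_{\alpha}\cdot\sigma_{\alpha}$ and the locally finite sum are defined for sections of the sheaf $\Gamma_{\E}$ exactly as the corresponding operations on functions in $(\ref{eq_skoroproduct})$, Remark \ref{rem_sumoflocallyfinitesupportedfctions} and the Extension lemma for sections. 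Applying Proposition \ref{tvrz_partitionsnaturalops}-$(ii)$ to the restricted graded smooth map $\iota|_{U\cap S}\colon\cS|_{U\cap S}\to\M|_{U}$, the family $\{\iota^{\ast}_{U}(\lambda_{\alpha})\}_{\alpha\in I}$ is a partition of unity on $\cS|_{U\cap S}$ subordinate to $\{U_{\alpha}\cap S\}_{\alpha\in I}$; and because $\sigma\mapsto\sigma_{\cS}$ commutes with restrictions, is $\C^{\infty}_{\M}$-linear, and sends $0$ to $0$, the computation of Proposition \ref{tvrz_partitionsnaturalops} (now carried out for sections rather than functions) gives $\sigma_{\cS}=\sum_{\alpha\in I}\iota^{\ast}_{U}(\lambda_{\alpha})\tr(\sigma_{\alpha})_{\cS}=\sum_{\alpha\in I}\iota^{\ast}_{U}(\lambda_{\alpha})\tr\sigma'|_{U_{\alpha}\cap S}=\sigma'$, the last equality being the partition-of-unity "insertion of the unit" of Proposition \ref{tvrz_partitionsnaturalops}-$(i)$ applied to $\sigma'\in\Gamma_{\E_{\cS}}(U\cap S)$. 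Thus $\sigma$ is the required extension.

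The main obstacle is not in the logical outline but in the bookkeeping of the last paragraph: one must record the sheaf-of-modules analogue of Proposition \ref{tvrz_partitionsnaturalops}, namely that $\lambda_{\alpha}\cdot\sigma_{\alpha}$ is well defined by its restrictions to $U_{\alpha}$ and to $\supp(\lambda_{\alpha})^{c}$, that locally finite sums of such sections make sense, and that the pullback-section operation — being natural and $\C^{\infty}_{\M}$-linear — intertwines these sums precisely as $\phi^{\ast}$ does for functions in $(\ref{eq_pullbackofasumpartition})$. Everything else is a direct transcription of the scalar case established earlier.
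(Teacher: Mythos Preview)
Your proof is correct and follows essentially the same approach as the paper: solve the problem locally using a trivializing cover and the surjectivity of $\iota^{\ast}$ on each piece (the paper cites Corollary \ref{cor_extensionsubmanifolds}, which is itself a consequence of Lemma \ref{lem_surjectivity}), then glue via a partition of unity and the section-valued analogue of Proposition \ref{tvrz_partitionsnaturalops}. The only cosmetic difference is that the paper indexes its cover by points $m\in U$ rather than by an abstract index set, and it flags the same bookkeeping caveat you raise in your last paragraph with a brief ``there holds the analogue of Proposition \ref{tvrz_partitionsnaturalops}''.
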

\begin{proof}
Let $m \in U$. There is thus a local frame $(\Phi_{\lambda})_{\lambda=1}^{k}$ for $\E$ over some $V_{m} \in \Op_{m}(U)$. It follows from Proposition \ref{tvrz_pullbackVB}-$(iv)$ that $\sigma'|_{V_{m} \cap N}$ can be decomposed as 
\begin{equation}
\sigma'|_{V_{m} \cap N} = f^{\lambda} \Phi_{\lambda}^{!},
\end{equation}
for unique functions $f^{\lambda} \in \C^{\infty}_{\cS}(V_{m} \cap S)$. By Corollary \ref{cor_extensionsubmanifolds}, there are functions $g^{\lambda} \in \C^{\infty}_{\M}(V_{m})$, such that $f^{\lambda} = \iota^{\ast}_{V_{m}}(g^{\lambda})$ for each $\lambda \in \{1,\dots,k\}$. Define $\sigma_{m} \in \Gamma_{\E}(V_{m})$ by $\sigma_{m} := g^{\lambda} \Phi_{\lambda}$. It follows that $(\sigma_{m})^{!} = \sigma'|_{V_{m} \cap S}$. Let $\{ \lambda_{m} \}_{m \in U}$ be the partition of unity subordinate to the open cover $\{ V_{m} \}_{m \in U}$ of $U$, and define $\sigma := \sum_{m \in U} \lambda_{m} \sigma_{m}$. Now, there holds the analogue of Proposition \ref{tvrz_partitionsnaturalops} and we find the expression
\begin{equation}
\sigma_{\cS} \equiv \sigma^{!} = \sum_{m \in U} \iota^{\ast}_{U}(\lambda_{m}) \cdot (\sigma_{m})^{!} = \sum_{m \in U} \iota^{\ast}_{U \cap Y}(\lambda_{m}) \cdot \sigma'|_{V_{m} \cap N} = \sigma'. 
\end{equation}
This finishes the proof. 
\end{proof}
The notion of restricted vector bundles allows one to relate the respective tangent bundles. 
\begin{tvrz}
Let $(\cS,\iota)$ be an immersed submanifold of $\M$. Then the tangent bundle $T\cS$ can be viewed as a subbundle of the restricted tangent bundle $T\M_{\cS}$. 
\end{tvrz}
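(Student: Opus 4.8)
The plan is to exhibit $T\cS$ as a subbundle of $T\M_{\cS} := \iota^{\ast}(T\M)$ by combining the canonical induced tangent morphism with the adapted graded local charts furnished by the local immersion theorem. The whole statement is local over $S$, so no use will be made of the injectivity of $\ul{\iota}$; only the immersion property of $\iota$ matters.

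First I would invoke Proposition \ref{tvrz_inducedtangent}-(i) to obtain the canonical graded vector bundle morphism $T\iota : T\cS \rightarrow T\M$ over $\iota$, and then apply the universal property of the pullback, Proposition \ref{tvrz_pullbackVB}-(iii), to factor it uniquely as $T\iota = \hat{\iota} \circ \widehat{T\iota}$, where $\hat{\iota} : T\M_{\cS} \rightarrow T\M$ is the canonical morphism over $\iota$ and $\widehat{T\iota} : T\cS \rightarrow T\M_{\cS}$ is a graded vector bundle morphism over $\1_{\cS}$. By Remark \ref{rem_VBmorphismsaremorphisms}, $\widehat{T\iota}$ corresponds to a $\C^{\infty}_{\cS}$-linear sheaf morphism $\Gamma_{T\cS} \rightarrow \Gamma_{T\M_{\cS}}$, and the goal becomes to show that its image is a subbundle isomorphic to $T\cS$.

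The key computation is local. Fix $s \in S$; since $\iota$ is an immersion, Theorem \ref{thm_immersion} supplies graded local charts $(U,\varphi)$ for $\cS$ and $(V,\psi)$ for $\M$ with $\ul{\iota}(U) \subseteq V$ and a local representative whose pullbacks satisfy $\hat{\iota}^{\ast}(\bby^{K_{j}}_{(j)}) = \bbz^{K_{j}}_{(j)}$ for $K_{j} \le m_{j}$ and $\hat{\iota}^{\ast}(\bby^{K_{j}}_{(j)}) = 0$ for $K_{j} > m_{j}$, where $(n_{j})_{j\in\Z} = \gdim(\M)$ and $(m_{j})_{j\in\Z} = \gdim(\cS)$. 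Writing $(\tfrac{\partial}{\partial\bbz^{A}})_{A=1}^{m}$ for the coordinate vector fields on $U$ (a local frame for $T\cS$) and $(\tfrac{\partial}{\partial\bby^{K}})_{K=1}^{n}$ for those on $V$, the pullback sections $\bigl((\tfrac{\partial}{\partial\bby^{K}})^{!}\bigr)_{K=1}^{n}$ form a local frame for $T\M_{\cS}$ over $\ul{\iota}^{-1}(V) \supseteq U$ by Proposition \ref{tvrz_pullbackVB}-(iv), with corresponding total basis $(\vartheta_{K})_{K=1}^{n}$ of the typical fiber $\R^{(n_{-j})}$. Using the coordinate expression of $T_{s}\iota$ from Example \ref{ex_diffgradeddomains} together with the normal form above, one checks that $\widehat{T\iota}$ sends $\tfrac{\partial}{\partial\bbz^{A_{j}}_{(j)}}$ to $(\tfrac{\partial}{\partial\bby^{A_{j}}_{(j)}})^{!}$ for each degree $j$ and each $A_{j} \le m_{j}$; equivalently, under the local trivialization chart of $T\M_{\cS}$ determined by $\bigl((\tfrac{\partial}{\partial\bby^{K}})^{!}\bigr)_{K=1}^{n}$, the morphism $\widehat{T\iota}$ identifies $\Gamma_{T\cS}|_{U}$ with the free submodule $\C^{\infty}_{\cS}|_{U}[L]$, where $L \subseteq \R^{(n_{-j})}$ is the graded subspace spanned by the $\vartheta_{K_{j}}$ with $K_{j} \le m_{j}$ (so $L \cong \R^{(m_{-j})}$).

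From this local picture I would conclude the statement: $\widehat{T\iota}$ is a monomorphism, since over each chart of an open cover of $S$ it carries a local frame injectively onto part of a local frame, whence injectivity follows from the monopresheaf property; therefore its image $\Gamma_{\cL}$ is a sheaf of graded $\C^{\infty}_{\cS}$-submodules of $\Gamma_{T\M_{\cS}}$ isomorphic to $\Gamma_{T\cS}$, and the local trivialization charts just produced show that $\Gamma_{\cL}$ satisfies the defining condition of a subbundle with typical fiber $L$. Hence $T\cS \cong \cL \subseteq T\M_{\cS}$ as graded vector bundles. I expect the main obstacle to be the bookkeeping in the local step: $\widehat{T\iota}$ is defined only implicitly through the universal property of $\iota^{\ast}(T\M)$, so one must unwind that definition against the explicit charts — keeping track of the degree-wise splitting of the coordinates, of the identifications $(T\M_{\cS})_{s} \cong (T\M)_{\ul{\iota}(s)} \cong T_{\ul{\iota}(s)}\M$ (via Example \ref{ex_fiberofgVB} and Example \ref{ex_fiberoftangentistangent}) and $(T\cS)_{s} \cong T_{s}\cS$, and of the Koszul signs hidden in the pullback frame — rather than simply quoting the formula of Example \ref{ex_diffgradeddomains} for $T_{s}\iota$.
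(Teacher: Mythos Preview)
Your proposal is correct and follows essentially the same route as the paper: construct $T\iota$ via Proposition~\ref{tvrz_inducedtangent}, factor it through the pullback using Proposition~\ref{tvrz_pullbackVB}-(iii) to obtain $\widehat{T\iota}:T\cS\to T\M_{\cS}$ over $\1_{\cS}$, and then argue that the image is a subbundle.

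The one genuine difference is in how the last step is justified. The paper identifies the fiber map $(\widehat{T\iota})_{s}$ with $T_{s}\iota$ (via Examples~\ref{ex_fiberofgVB} and~\ref{ex_fiberoftangentistangent}), observes it is injective because $\iota$ is an immersion, and then simply asserts that any fiber-wise injective graded vector bundle morphism over the identity has image a subbundle isomorphic to its source. That general lemma is not proved in the paper. Your approach instead carries out the explicit local computation using the adapted charts of Theorem~\ref{thm_immersion}, exhibiting directly that $\widehat{T\iota}$ sends the coordinate frame of $T\cS$ onto a sub-frame of the pullback frame of $T\M_{\cS}$, hence lands in $\C^{\infty}_{\cS}|_{U}[L]$ for a fixed $L\subseteq \R^{(n_{-j})}$. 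This is precisely how one would verify the paper's unproven lemma in this case, so your version is more self-contained; the paper's version is terser but leaves real work to the reader. Your caution about unwinding the universal property against the explicit charts is warranted, but the computation you sketch (via the dual picture $\iota^{\ast}(\dr\bby^{K})$ in adapted coordinates) goes through without surprises.
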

\begin{proof}
It follows from Proposition \ref{tvrz_inducedtangent}, that we have the induced graded vector bundle morphism $T \iota: T\cS \rightarrow T\M$ over $\iota$. By Proposition \ref{tvrz_pullbackVB}-$(iii)$, there is then a unique graded vector bundle morphism $\hat{T} \iota: T\cS \rightarrow T\M_{\cS}$, such that $T\iota = \hat{\iota} \circ \hat{T}\iota$. For each $s \in S$, one has $(T\cS)_{s} \cong T_{s} \cS$, $(T\M_{\cS})_{s} \cong T_{\iota(s)}\M$, and the induced graded linear map $(\hat{T}\iota)_{s}$ corresponds to the injective graded linear map $T_{s} \iota: T_{s} \cS \rightarrow T_{\iota(s)} \M$. 

Finally, one can prove that for any ``fiber-wise injective'' graded vector bundle morphism over $\1_{\cS}$, the image sheaf $\im( \hat{T} \iota) \subseteq \Gamma_{T\M_{\cS}}$ defines a subbundle of $T\M_{\cS}$ isomorphic to $T\cS$. 
\end{proof}
\begin{cor}
One can define the \textbf{normal bundle} of an immersed submanifold $(\cS,\iota)$ as
\begin{equation}
N \cS := T \M_{\cS} / T \cS,
\end{equation}
see Proposition \ref{tvrz_quotient}. Note that in general, $N\cS$ cannot be identified with a subbundle of $T\M_{\cS}$.
\end{cor}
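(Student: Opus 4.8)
The plan is to derive both assertions directly from results already in hand, the first essentially by citation and the second by an averaging argument together with a remark on canonicity.

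For the definition of $N\cS$: by the preceding proposition the induced morphism $\hat{T}\iota\colon T\cS \to T\M_{\cS}$ over $\1_{\cS}$ is fibre-wise injective, and its image sheaf $\im(\hat{T}\iota) \subseteq \Gamma_{T\M_{\cS}}$ defines a subbundle of $T\M_{\cS}$ canonically isomorphic to $T\cS$. Identifying $T\cS$ with this subbundle, Proposition \ref{tvrz_quotient} applies verbatim: the quotient sheaf $\Gamma_{T\M_{\cS}}/\Gamma_{T\cS}$ is a locally freely and finitely generated sheaf of graded $\C^{\infty}_{\cS}$-modules of a constant graded rank, hence defines a graded vector bundle $N\cS$ over $\cS$ with $\Gamma_{N\cS} := \Gamma_{T\M_{\cS}}/\Gamma_{T\cS}$, whose typical fibre is $K/L$ if $K$ and $L$ are typical fibres of $T\M_{\cS}$ and $T\cS$. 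I would record that $\grk(T\M_{\cS}) = \grk(T\M) = (n_{-j})_{j\in\Z}$ by Proposition \ref{tvrz_vfields} and the invariance of the typical fibre under pullback in Proposition \ref{tvrz_pullbackVB}, and $\grk(T\cS) = (m_{-j})_{j\in\Z}$, so that $\grk(N\cS) = (n_{-j}-m_{-j})_{j\in\Z}$, where $(n_j)_{j\in\Z} := \gdim(\M)$ and $(m_j)_{j\in\Z} := \gdim(\cS)$. Fibrewise, applying Example \ref{ex_fiberofgVB} to the quotient map gives $(N\cS)_s \cong T_{\ul{\iota}(s)}\M / T_s\cS$ for every $s \in S$ (with the identifications $(T\M_{\cS})_s \cong T_{\ul{\iota}(s)}\M$ and $(T\cS)_s \cong T_s\cS$ of Example \ref{ex_fiberoftangentistangent}).

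For the concluding remark, I would first point out that a splitting of $0 \to \Gamma_{T\cS} \to \Gamma_{T\M_{\cS}} \to \Gamma_{N\cS} \to 0$ always exists by the usual averaging procedure: over a trivialising chart in which $T\cS$ corresponds to a constant graded subspace $L \subseteq K$, choose a graded complement $L' \subseteq K$, extend the associated projection $\C^{\infty}_{\cS}$-linearly, cut it off with a bump function, and glue the resulting local projections with a partition of unity subordinate to the trivialising cover (Proposition \ref{tvrz_partition}); the outcome is an idempotent $\C^{\infty}_{\cS}$-linear endomorphism $p$ of $\Gamma_{T\M_{\cS}}$ with image $\Gamma_{T\cS}$, so that $\ker(p)$ is a subbundle of $T\M_{\cS}$ isomorphic to $N\cS$. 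Thus $N\cS$ is always \emph{isomorphic} to some subbundle; what genuinely fails is the existence of a \emph{canonical} such identification — exactly as in ordinary differential geometry, where a Riemannian metric (here a nondegenerate graded fibre pairing or a connection) is needed to single one out — and I would make this concrete by exhibiting a small $(\cS,\iota)$, in the spirit of the counterexample to ``tangent vector fields are determined by their values'' used just above, for which the only natural candidate complement (the one coming from the degree decomposition when $\cS$ is the zero locus of purely graded coordinates) is not $\iota$-invariant.

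The main obstacle is therefore not the construction of $N\cS$, which is immediate, but attaching a precise meaning to ``cannot be identified with a subbundle'': it must be read as ``cannot be \emph{canonically} identified'', and a fully rigorous version would require formalising naturality of the identification with respect to morphisms of submanifolds. Consistently with the treatment of the analogous point for ordinary and super manifolds, I would present this last part at the level of an illustrative counterexample rather than as a theorem.
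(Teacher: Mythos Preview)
The paper offers no proof for this corollary; it is recorded as an immediate consequence of the preceding proposition (which realises $T\cS$ as a subbundle of $T\M_{\cS}$) together with Proposition~\ref{tvrz_quotient}. Your first paragraph does exactly this, with the additional (correct) computations of graded rank and fibres that the paper does not spell out.

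Your treatment of the concluding remark goes well beyond what the paper says. The paper leaves the sentence ``in general, $N\cS$ cannot be identified with a subbundle of $T\M_{\cS}$'' unelaborated. Your partition-of-unity argument is sound: choosing local graded complements $L'\subseteq K$ gives local $\C^{\infty}_{\cS}$-linear projections $p_\alpha$ onto $\Gamma_{T\cS}|_{U_\alpha}$, and since the $\lambda_\alpha$ have degree~$0$ the average $p=\sum_\alpha \lambda_\alpha p_\alpha$ is again a degree-zero $\C^{\infty}_{\cS}$-linear endomorphism; it has image in $\Gamma_{T\cS}$ and restricts to the identity there, hence $p^2=p$ and $\ker(p)$ is a complementary subbundle isomorphic to $N\cS$. (The separate ``cut off with a bump function'' step is superfluous; the support condition $\supp(\lambda_\alpha)\subseteq U_\alpha$ already extends $\lambda_\alpha p_\alpha$ by zero.) Your conclusion that the remark must be read as ``no \emph{canonical} identification'' is therefore the correct interpretation, and your suggestion to illustrate this with an explicit example in the spirit of the earlier tangent-vector counterexample is appropriate, though the paper itself provides none.
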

\subsection{Sheaves of regular ideals}
In this subsection, we will briefly examine a certain class of sheaves of ideals in the structure sheaf $\C^{\infty}_{\M}$ of a given graded manifold $\M$. It will play an crucial role in the more algebraic description of closed embedded submanifolds in the following subsections. 

First, observe that to each $f \in \C^{\infty}_{\M}(U)$, we may assign its differential $\dr{f} \in \Omega^{1}_{\M}(U) \cong \X_{\M}^{\ast}(U)$. Its value $(\dr{f})_{m}$ at $m \in U$ is thus an element of the fiber $(T^{\ast}\M)_{m}$, which can be canonically identified with the dual $T^{\ast}_{m}\M$ to the tangent space $T_{m}\M$. We call $(\dr{f})_{m} \in T^{\ast}_{m}\M$ a \textbf{differential of $f$ at $m$}. Let us summarize its basic properties:
\begin{tvrz} \label{tvrz_differentialatm}
Let $f \in \C^{\infty}_{\M}(U)$ and let $m \in U$. Then we have:
\begin{enumerate}[(i)]
\item For any $V \in \Op_{m}(U)$, one has $(\dr f|_{V})_{m} = (\dr{f})_{m}$. In other words, the differential of $f$ at $m$ depends only on the germ $[f]_{m} \in \C^{\infty}_{\M,m}$. 
\item It behaves naturally with respect to pullbacks, that is for any smooth map $\phi: \cN \rightarrow \M$ and any $n \in \ul{\phi}^{-1}(U)$, one has 
\begin{equation} \label{eq_differentialpullback}
(\dr{ [\phi^{\ast}_{U}(f)]})_{n} := (T_{n} \phi)^{T}( (\dr{f})_{\ul{\phi}(n)}).
\end{equation}
\item Suppose we have a graded local chart $(U,\varphi)$ inducing the coordinate functions $(\bbz^{A})_{A=1}^{n}$. Then $((\dr{\bbz}^{A})_{m})_{A=1}^{n}$ forms the total basis of $T^{\ast}_{m}\M$ and one can write 
\begin{equation}
(\dr{f})_{m} = \frac{\partial f}{\partial \bbz^{A}}(m) (\dr{\bbz}^{A})_{m}. 
\end{equation}
Moreover, any $v \in T_{m}\M$ can be uniquely decomposed as $v = v^{A} \frac{\partial}{\partial \bbz^{A}}|_{m}$. Then 
\begin{equation} \label{eq_diffonv}
(\dr{f})_{m}(v) = v^{A} \frac{\partial f}{\partial \bbz^{A}}(m) = v([f]_{m}).
\end{equation}
\end{enumerate}
\end{tvrz}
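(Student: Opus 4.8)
The plan is to unwind the definitions and reduce everything to the local model, where the claims become immediate consequences of earlier results. The statement has three parts, and I would treat them in the order (i), (iii), (ii), since the explicit coordinate formula in (iii) is the most concrete and will feed the naturality statement in (ii).

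For part (i): the differential $(\dr f)_m \in T^\ast_m\M$ is by definition the value at $m$ of the section $\dr f \in \Omega^1_\M(U) \cong \X^\ast_\M(U) = \Gamma^\ast_{T\M}(U)$, in the sense of the ``value of a section at a point'' from Example \ref{ex_fiberofgVB}. That construction is manifestly local: it factors through the germ $[\dr f]_m$, and by Proposition \ref{tvrz_deRham} (and the fact that $\dr$ commutes with restrictions, which is built into it being a vector field on $T[1+s]\M$) one has $(\dr f)|_V = \dr(f|_V)$. Hence $(\dr f|_V)_m = (\dr f)_m$. That $(\dr f)_m$ depends only on $[f]_m$ then follows because $[f]_m = [g]_m$ means $f|_V = g|_V$ for some $V \in \Op_m(U)$, so $(\dr f)_m = (\dr f|_V)_m = (\dr g|_V)_m = (\dr g)_m$.

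For part (iii): pick the graded local chart $(U,\varphi)$ with coordinate functions $(\bbz^A)_{A=1}^n$ and corresponding coordinate $1$-forms $(\dr\bbz^A)_{A=1}^n$, which under $\Omega^1_\M \cong \X^\ast_\M$ are identified with the local frame dual to $(\partial/\partial\bbz^A)$, as established in the proof of Proposition \ref{tvrz_pforms}-(iii). The local formula $\dr f|_{U\cap U_\alpha} = \dr\bbz^A \cdot \partial f/\partial\bbz^A$ from \eqref{eq_differentiallocal}, evaluated at $m$ using Proposition \ref{tvrz_valueofvf} (values of a section distribute over $\C^\infty_\M$-linear combinations and $(g\sigma)_m = g(m)\sigma_m$), gives $(\dr f)_m = \tfrac{\partial f}{\partial\bbz^A}(m)\,(\dr\bbz^A)_m$. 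That $((\dr\bbz^A)_m)_{A=1}^n$ is the total basis of $T^\ast_m\M$ dual to $(\partial/\partial\bbz^A|_m)$ follows from Proposition \ref{tvrz_shofAmodulesstalks}-(ii) applied to the local frames, together with the identification $(T\M)_m \cong T_m\M$ of Example \ref{ex_fiberoftangentistangent} and the dual-frame relation $\Phi^\lambda_U(\Phi_\kappa|_U) = \delta^\lambda_\kappa$. Then for $v = v^A \partial/\partial\bbz^A|_m$, pairing against the dual basis gives $(\dr f)_m(v) = v^A \tfrac{\partial f}{\partial\bbz^A}(m)$, and comparing with the local expression $\frac{\partial}{\partial\bbz^A}|_m([f]_m) = \frac{\partial f}{\partial\bbz^A}(m)$ from \eqref{eq_basisvectorsongerms}--Example \ref{ex_coordtangentvectors} (more precisely the fact that $\frac{\partial}{\partial\bbz^A}|_m([g]_m)$ is the coefficient read off from the local representative, which for the $x$-type coordinates is the partial derivative of the body and for $\xi$-type is the linear coefficient) yields $(\dr f)_m(v) = v([f]_m)$.

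For part (ii): this is the naturality of the differential under pullback, and it is cleanest to deduce it from Proposition \ref{tvrz_inducedtangent} and Proposition \ref{tvrz_formspullback}-(v). We have the graded vector bundle morphism $T\phi: T\cN \to T\M$ over $\phi$, whose transpose $(T\phi)^\dagger = \phi^\ast$ on $\Omega^1 \cong \Gamma^\ast_{T\bullet}$ is exactly the pullback of $1$-forms. Evaluating the identity $\dr(\phi^\ast_U(f)) = \phi^\ast_U(\dr f)$ (Proposition \ref{tvrz_formspullback}-(v)) at a point $n \in \ul\phi^{-1}(U)$ and passing to fibers, the left side becomes $(\dr[\phi^\ast_U(f)])_n$ and the right side becomes the fiber map $(T\phi)^\dagger_n$ applied to $(\dr f)_{\ul\phi(n)}$; by Proposition \ref{tvrz_inducedtangent}-(ii) the fiber map $(T\phi)_n$ is $T_n\phi$ under the canonical identifications, and $(T\phi)^\dagger_n$ is its transpose $(T_n\phi)^T$ acting $T^\ast_{\ul\phi(n)}\M \to T^\ast_n\cN$. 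Alternatively, if one prefers a direct check, work in adapted charts, use the chain rule \eqref{eq_chainrule} evaluated at $n$ together with \eqref{eq_Tmphiaschain}, and match coefficients against the dual bases from part (iii).

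\textbf{Main obstacle.} The routine-looking but genuinely delicate point is bookkeeping the identifications $(T\M)_m \cong T_m\M$ (Example \ref{ex_fiberoftangentistangent}) and $\Omega^1_\M \cong \X^\ast_\M \cong \Gamma^\ast_{T\M}$ (Proposition \ref{tvrz_pforms}-(iii)) consistently, since the latter carries nontrivial signs (the ``Koszul'' sign in \eqref{eq_fPsi1formsdualiso} and the peculiarity noted in Remark \ref{rem_peculiardoubledual}), and one must be sure that when these are composed the signs in the final formulas of (iii) come out as stated — i.e. with no stray $(-1)$ factors. I would handle this by working throughout with the alternative grading $\deg$ on $\Omega^1_\M$, for which the isomorphism $\fPsi$ is $\C^\infty_\M$-linear without extra signs, so that $(\dr f)_m(v) = v([f]_m)$ holds on the nose.
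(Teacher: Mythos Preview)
Your proposal is correct and follows essentially the same approach as the paper: both reduce everything to the machinery of ``values of sections at a point'' (Example \ref{ex_fiberofgVB} and Example \ref{ex_fiberoftangentistangent}) together with the identification $(T^{\ast}\M)_{m} \cong T^{\ast}_{m}\M$ via the canonical isomorphism $\nu$ of (\ref{eq_pullbackofdual}). The paper's proof is in fact extremely terse (essentially a single sentence pointing at these references), whereas you have spelled out the individual steps and the relevant cross-references in detail; your concern about sign bookkeeping is well-placed and is exactly the content of the paper's remark that the identification goes through $\nu$.
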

\begin{proof}
All properties follow easily from properties of a value of a section at the given point, see Example \ref{ex_fiberoftangentistangent}. For example, since $( \dr{\bbz}^{A})_{\lambda=1}^{n}$ form the local frame for $T^{\ast}\M$ over $U$, the collection of values $((\dr{\bbz}^{A})_{m})_{A=1}^{n}$ forms the total basis of the fiber. Note that the identification $(T^{\ast} \M)_{m} \cong T^{\ast}_{m} \M$ is precisely the canonical sheaf isomorphism $\nu: \kappa_{m}^{\ast} \Gamma^{\ast}_{T\M} \rightarrow \Gamma^{\ast}_{\kappa_{m}^{\ast}(T\M)}$, see (\ref{eq_pullbackofdual}). 
\end{proof}
\begin{rem}
Equivalently, to each $f \in \C^{\infty}_{\M}(U)$, we may assign a graded smooth map $\phi_{f}: \M|_{U} \rightarrow \R[|f|]$, see Corollary \ref{cor_functionsasmaps}. Its differential at $m \in M$ is $T_{m} \phi_{f}: T_{m} \M \rightarrow T_{\ul{\phi_{f}}(m)} \R[|f|] \cong \R[-|f|]$. But $T_{m}\phi_{f} \in \Lin_{0}( T_{m}\M, \R[-|f|]) \cong \Lin_{-|f|}(T_{m}\M, \R) \cong (T^{\ast}_{m} \M)_{|f|}$. 

It turns out that under these identifications, one has $T_{m} \phi_{f} = (\dr{f})_{m}$.  
\end{rem}
\begin{definice}
We say that $f_{1}, \dots, f_{k} \in \C^{\infty}_{\M}(U)$ are \textbf{functions independent at $m \in U$}, if their differentials $(\dr{f}_{1})_{m}, \dots, (\dr{f}_{k})_{m}$ are independent vectors in $T_{m}^{\ast} \M$. 
\end{definice}
\begin{rem}
Here, one should be a bit careful with the meaning of a linear independence for graded vector spaces. Let $V \in \gVect$ be a graded vector space. We say that $v_{1}, \dots, v_{k} \in V$ are \textbf{independent vectors} in $V$, if for each $j \in \Z$, the set $S_{j} := \{ v_{\mu} \in \{v_{1},\dots,v_{k} \} \; | \; |v_{\mu}| = j \}$ is linearly independent in $V_{j} \in \Vect$. 
\end{rem}

Now, for any $U \in \Op(M)$, let $J \subseteq \C^{\infty}_{\M}(U)$ be an ideal. For any $m \in U$, we can then define a subspace $J_{(m)} := \pi_{U,m}(J) \subseteq \C^{\infty}_{\M,m}$. As $\pi_{U,m}: \C^{\infty}_{\M}(U) \rightarrow \C^{\infty}_{\M,m}$ is surjective by Corollary \ref{cor_piUmsurjective}, $J_{(m)}$ forms an ideal in $\C^{\infty}_{\M,m}$. One can now make the following definition.
\begin{definice} \label{def_regularideal}
Let $U \in \Op(M)$ be arbitrary, and let $J \subseteq \C^{\infty}_{\M}(U)$ be an ideal. We say that $J$ is a \textbf{regular ideal}, if it has the following properties:
\begin{enumerate}[(i)]
\item For any $m \in U$, such that $J \subseteq \J^{m}_{\M}(U)$, there exists a finite graded subset $\G \subseteq J$ consisting of functions independent at $m$ and $V \in \Op_{m}(U)$, such that for each $m' \in V$, the graded subset $[\G]_{m'} := \{ [f]_{m'} \; | \; f \in \G \}$ generates the ideal $J_{(m')} \subseteq \C^{\infty}_{\M,m'}$. 
\item Let $\{ f_{\mu} \}_{\mu \in K}$ be any collection of elements of $J$, such that the collection $\{ \supp(f_{\mu}) \}_{\mu \in K}$ is locally finite. Then the sum $\sum_{\mu \in K} f_{\mu}$ is also an element of $J$. 
\end{enumerate}
\end{definice}
\begin{rem}
The condition (i) is stronger than the condition (1) in Definition 5.3.6 in \cite{carmeli2011mathematical}, where one only assumes that $\G_{(m)}$ generates $\J_{(m)}$. However, we believe that this is not enough for purposes in this section. Also compare it to definitions in Chapter III, $\mathsection$2 of \cite{leites1980introduction}. 
\end{rem}

In the following, we will exclusively deal with ideals $J$ which come as spaces of sections of some sheaf of ideals. Let us make some observations in this case.
\begin{lemma} \label{lem_regularideals}
Suppose there is a sheaf of ideals $\J \subseteq \C^{\infty}_{\M}$, such that $J = \J(U)$. Then the following statements are true:
\begin{enumerate}[1)]
\item For each $m \in M$, $J_{(m)}$ can be canonically identified with the stalk $\J_{m}$ of the sheaf $\J$.
\item The property $(i)$ is equivalent to the following one:

For any $m \in U$, such that $J \subseteq \J^{m}_{\M}(U)$, there exists a finite graded subset $\G \subseteq J$ consisting of functions independent at $m$ and $V \in \Op_{m}(U)$, such that for all $V' \in \Op(V)$, the graded subset $\G|_{V'} = \{ f|_{V'} \; | \; f \in \G \}$ generates the ideal $\J(V')$. 
\item The condition (ii) is satisfied automatically. 
\item For any $m \in U$, one has $J \subseteq \J^{m}_{\M}(U)$, if and only if $\J_{m} \subseteq \frJ(\C^{\infty}_{\M,m})$, if and only if $\J_{m} \neq \C^{\infty}_{\M,m}$. 
\end{enumerate}
\end{lemma}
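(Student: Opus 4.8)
The plan is to treat the four assertions in turn, leaning on three tools already available: the surjectivity of the canonical stalk maps (Corollary \ref{cor_piUmsurjective}), the extension lemma together with graded bump functions (Proposition \ref{tvrz_extensionlemma}, Proposition \ref{tvrz_bumpfunctions}), and partitions of unity (Proposition \ref{tvrz_partition}) in the form of the gluing-of-local-sections construction of Remark \ref{rem_sumoflocallyfinitesupportedfctions} and Proposition \ref{tvrz_partitionsnaturalops}. Throughout, $J = \J(U)$ for a sheaf of ideals $\J \subseteq \C^{\infty}_{\M}$, and we may assume $m \in U$, since $J_{(m)}$ is only defined there.

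For 1), I would define a map $J_{(m)} \to \J_{m}$ by $\pi_{U,m}(f) \mapsto [f]_{m}$, where $f \in \J(U)$ is read as a germ in the stalk of the subsheaf $\J$. Well-definedness and injectivity are the same elementary computation run in the two directions: $\pi_{U,m}(f) = \pi_{U,m}(g)$ means $(f-g)|_{V} = 0$ on some $V \in \Op_{m}(U)$, hence $[f]_{m} = [g]_{m}$ already in $\J_{m}$, and conversely (using that the stalk of a subsheaf injects into $\C^{\infty}_{\M,m}$). Surjectivity is where the sheaf-of-ideals hypothesis enters: given $[h]_{m} \in \J_{m}$ with representative $h \in \J(W)$, choose $V$ with $\ol{V} \subseteq W \cap U$ and a graded bump function $\lambda$ supported on $W \cap U$ with $\lambda|_{V} = 1$, and form $\lambda \cdot h$ as in (\ref{eq_skoroproduct}); its restriction to $W \cap U$ lies in the ideal $\J(W \cap U)$ and to $\supp(\lambda)^{c}$ is zero, so by the gluing property of the \emph{sheaf} $\J$ one gets $\lambda \cdot h \in \J(U)$, with $[\lambda \cdot h]_{m} = [h]_{m}$. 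This yields $J_{(m)} \cong \J_{m}$ canonically, and in particular transfers the ideal structure.

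For 4), Proposition \ref{tvrz_invertibility}-(iii) gives $\J^{m}_{\M}(U) = \pi_{U,m}^{-1}(\frJ(\C^{\infty}_{\M,m}))$, so $J \subseteq \J^{m}_{\M}(U)$ iff $J_{(m)} \subseteq \frJ(\C^{\infty}_{\M,m})$, which by 1) is the same as $\J_{m} \subseteq \frJ(\C^{\infty}_{\M,m})$; since $\C^{\infty}_{\M,m}$ is a local graded ring, $\frJ(\C^{\infty}_{\M,m})$ is its unique maximal left ideal, and every proper ideal of $\C^{\infty}_{\M,m}$ is contained in a maximal left ideal (Zorn), hence in $\frJ(\C^{\infty}_{\M,m})$, whereas $\C^{\infty}_{\M,m}$ itself is not, giving the last equivalence. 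For 3), if $\{f_{\mu}\}_{\mu \in K} \subseteq J$ has locally finite supports, then $\sum_{\mu \in K} f_{\mu}$ is built by gluing local \emph{finite} sums (Remark \ref{rem_sumoflocallyfinitesupportedfctions}); each local finite sum lies in some $\J(V_{m})$ because $\J(V_{m})$ is an ideal, and by the gluing property of the sheaf $\J$ plus the monopresheaf property of $\C^{\infty}_{\M}$ the glued section lies in $\J(U) = J$; so condition (ii) of Definition \ref{def_regularideal} holds automatically.

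For 2), using 1) to replace $J_{(m')}$ by $\J_{m'}$, the content is the equivalence of ``$[\G]_{m'}$ generates $\J_{m'}$ for all $m' \in V$'' with ``$\G|_{V'}$ generates $\J(V')$ for all $V' \in \Op(V)$''. The implication from sections to stalks is immediate: take germs of a decomposition $h = \sum_{\lambda} a_{\lambda}\, f_{\lambda}|_{V'}$. The reverse implication is the patching step and, I expect, the main obstacle: given $h \in \J(V')$, for each $m' \in V'$ lift a stalk decomposition $[h]_{m'} = \sum_{\lambda} [a^{(m')}_{\lambda}]_{m'} [f_{\lambda}]_{m'}$ to an identity $h|_{W_{m'}} = \sum_{\lambda} a^{(m')}_{\lambda} f_{\lambda}|_{W_{m'}}$ on some $W_{m'} \in \Op_{m'}(V')$, choose a partition of unity $\{\mu_{m'}\}$ subordinate to $\{W_{m'}\}_{m' \in V'}$, and set $a_{\lambda} := \sum_{m'} \mu_{m'} \cdot a^{(m')}_{\lambda}$ (well-defined by local finiteness, Remark \ref{rem_sumoflocallyfinitesupportedfctions}); then $h = \sum_{m'} \mu_{m'} \cdot h = \sum_{\lambda} a_{\lambda}\, f_{\lambda}|_{V'}$ after reassociating $\mu_{m'} \cdot (a^{(m')}_{\lambda} f_{\lambda}|_{W_{m'}})$ as $(\mu_{m'} a^{(m')}_{\lambda}) \cdot f_{\lambda}$, which is legitimate because both sides agree on $W_{m'}$ and vanish on $\supp(\mu_{m'})^{c}$ (same mechanism as (\ref{eq_skoroproduct})). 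Everything else is bookkeeping around the identification from 1).
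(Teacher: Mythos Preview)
Your proposal is correct and follows essentially the same route as the paper: part 1) is handled via bump functions and the sheaf-of-ideals gluing property, part 2) via patching local stalk decompositions with a partition of unity, part 3) via the sheaf gluing of local finite sums, and part 4) via the characterization of $\frJ(\C^{\infty}_{\M,m})$ together with the fact that in a local graded ring every proper ideal sits inside the Jacobson radical. The only cosmetic difference is that the paper phrases 1) as showing the two subsets $J_{(m)}$ and $\J_{m}$ of $\C^{\infty}_{\M,m}$ coincide rather than constructing an explicit bijection, but the content is identical.
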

\begin{proof}
Let us start by proving \textit{1)}. Since $\J$ is a sheaf of ideals, there is a canonical sheaf morphism $j: \J \rightarrow \C^{\infty}_{\M}$. For each $m \in M$, it induces an injective graded algebra morphism $j_{m}: \J_{m} \rightarrow \C^{\infty}_{\M,m}$ and we usually identify $\J_{m}$ with its image $j_{m}(\J_{m}) \subseteq \C^{\infty}_{\M,m}$. The inclusion $J_{(m)} \subseteq \J_{m}$ is obvious. The converse follows from the fact that any $[f]_{m} \in \J_{m}$ can be represented by $f \in J$. Indeed, every $[f]_{m} \in \J_{m}$ can be by definition represented by $f \in \C^{\infty}_{\M}(V)$ for some $V \in \Op_{m}(U)$. Find some $W \in \Op_{m}(V)$ with $\ol{W} \subseteq V$, and consider a graded bump function $\lambda \in \C^{\infty}_{M}(U)$ supported in $V$, such that $\lambda|_{W} = 1$. Then $\lambda \cdot f \in J$ since $\J$ is a sheaf of ideals, and $[f]_{m} = [\lambda \cdot f]_{m}$. 

Next, let us prove \textit{2)}. Let $m \in U$ and $\G = \{ f_{1}, \dots, f_{k} \} \subseteq \J(U)$ be any finite graded set. Let $V \in \Op_{m}(M)$ be arbitrary. It suffices to prove that $[\G]_{m'}$ generates $\J_{m'}$ for all $m' \in V$, if and only if $\G|_{V'}$ generates $\J(V')$ for any $V' \in \Op(V)$. 

Suppose $[\G]_{m'}$ generates $\J_{m'}$ for all $m' \in V$.  Let $V' \in \Op(V)$ and $f \in \J(V')$ be arbitrary. For every $m' \in V'$, we have $[f]_{m'} \in \J_{m'}$. As $V' \subseteq V$, $\J_{m'}$ is generated by $[\G]_{m'}$. Consequently, there exists $V_{(m')} \in \Op_{m'}(V')$ and a collection $\{ h^{i}_{(m')} \}_{i=1}^{k} \subseteq \C^{\infty}_{\M}(V_{(m')})$, such that $f|_{V_{(m')}}$ can be written as a finite sum
\begin{equation}
f|_{V_{(m')}} = \sum_{i=1}^{k} h^{i}_{(m')} \cdot f_{i}|_{V_{(m')}}. 
\end{equation}
In this way, we obtain an open cover $\{ V_{(m')} \}_{m' \in V'}$ of $V'$. Let $\{ \lambda_{(m')} \}_{m' \in V'}$ be a partition of unity subordinate to this open cover. For each $i \in \{1,\dots,k\}$, let 
\begin{equation}
h^{i} := \sum_{m' \in V'} h^{i}_{(m')} \in \C^{\infty}_{\M}(V').
\end{equation}
It is now straightforward to verify that $f = \sum_{i=1}^{k} h^{i} \cdot f_{i}|_{V'}$. This shows that $\G|_{V'}$ indeed generates $\J(V')$ for any $V' \in \Op(V)$. 

Conversely, suppose that $\G|_{V'}$ generates $\J(V')$ for any $V' \in \Op(V)$. Let $m' \in V$ be arbitrary. Any given $[f]_{m'} \in \J_{m'}$ can be represented by $f \in \J(V)$. Since $\G|_{V}$ generates $\J(V)$, there is a collection of functions $\{ h^{i} \}_{i=1}^{k} \subseteq \C^{\infty}_{\M}(V)$, such that $\sum_{i=1}^{k} f = h^{i} \cdot f_{i}|_{V}$. Thus 
\begin{equation}
[f]_{m'} = \sum_{i=1}^{k} [h^{i}]_{m'} \cdot [f_{i}]_{m'}.
\end{equation}
This proves that $[\G]_{m'}$ generates $\J_{m'}$ for any $m' \in V$. This finishes the proof of \textit{2)}. 

Let us proceed with the proof of \textit{3)}. Let $\{ f_{\mu} \}_{\mu \in K} \subseteq J$, such that $\{ \supp(f_{\mu}) \}_{\mu \in K}$ is locally finite. This means that for each $m \in U$, there is $V_{m} \in \Op_{m}(U)$, such that $\supp(f_{\mu}) \cap V_{m} \neq \emptyset$ only for $\mu$ in some finite subset $K_{m} \subseteq K$. The function $\sum_{\mu \in K} f_{\mu}$ is then defined by declaring
\begin{equation}
(\sum_{\mu \in K} f_{\mu})|_{V_{m}} := \sum_{\mu \in K_{m}} f_{\mu}|_{V_{m}},
\end{equation}
for each $m \in U$. But the right-hand side is in $\J(V_{m})$ for every $m \in U$ and as $\J$ is assumed to be a \textit{sheaf} of ideals, this implies that $\sum_{\mu \in K} f_{\mu} \in \J(U)$. 

Finally, to see \textit{4)}, let $m \in U$ be any point. The equivalence of the inclusions $J \subseteq \J^{m}_{\M}(U)$ and $\J_{m} \subseteq \frJ(\C^{\infty}_{\M,m})$ follows immediately from (\ref{eq_Jacobsonradicalgradedmafolds}) and the fact that every $[f]_{m} \in \J_{m}$ can be represented by $f \in J$. The equivalence of the inclusion $\J_{m} \subseteq \frJ(\C^{\infty}_{\M,m})$ with $\J_{m} \neq \C^{\infty}_{\M,m}$ follows from the fact that every proper ideal of a local graded ring is contained in the Jacobson radical. 
\end{proof}
\begin{rem} \label{rem_generatesonUonsmaller}
Let $\G \subseteq \J(U)$ be any finite graded set. Observe that in the proof of \textit{2)}, we have actually shown that if $\G|_{V}$ generates $\J(V)$ for some $V \in \Op(U)$, then $\G|_{W}$ generates $\J(W)$ for all $W \in \Op(V)$. 
\end{rem}

\begin{definice}
Let $\J \subseteq \C^{\infty}_{\M}$ be a sheaf of ideals. We call it a \textbf{sheaf of regular ideals}, if $\J(M)$ is a regular ideal. 
\end{definice}
\begin{rem}
This is an analogue of Definition 5.3.9 in \cite{carmeli2011mathematical}, where they call $\J$ a \textit{quasi-coherent sheaf of ideals}. Their intention is to emphasize the analogy with quasi-coherent sheaves of ideals in algebraic geometry. Nevertheless, we stick to the more straightforward nomenclature.
\end{rem}
\begin{tvrz}
Let $\J$ be a sheaf of regular ideals and let $U \in \Op(M)$ be arbitrary. Then $\J(U)$ is a regular ideal. 
\end{tvrz}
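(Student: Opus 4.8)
The plan is to reduce the statement entirely to the stalk-level characterisations collected in Lemma \ref{lem_regularideals}, all parts of which apply verbatim to $\J(U)$ for an arbitrary open $U$, since $\J$ is a sheaf of ideals. Condition $(ii)$ of Definition \ref{def_regularideal} for $\J(U)$ is immediate: by Lemma \ref{lem_regularideals}, part 3), it holds automatically for the space of sections of any sheaf of ideals. So the only thing to verify is condition $(i)$ of Definition \ref{def_regularideal} for $J := \J(U)$, and by Lemma \ref{lem_regularideals}, part 2), it suffices to establish the equivalent ``restricted'' form of that condition.

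First I would fix $m \in U$ with $\J(U) \subseteq \J^{m}_{\M}(U)$. By Lemma \ref{lem_regularideals}, part 4), this is equivalent to $\J_{m} \neq \C^{\infty}_{\M,m}$, which is a purely stalk-level condition; applying the same equivalence with $M$ in place of $U$, it is also equivalent to $\J(M) \subseteq \J^{m}_{\M}(M)$. Since $\J(M)$ is by hypothesis a regular ideal, condition $(i)$ holds for it at the point $m$, so by Lemma \ref{lem_regularideals}, part 2) (applied to $M$), there are a finite graded subset $\G_{0} \subseteq \J(M)$ of functions independent at $m$ and $V_{0} \in \Op_{m}(M)$ such that $\G_{0}|_{V'}$ generates $\J(V')$ for every $V' \in \Op(V_{0})$.

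Next I would transport this datum to $U$. Set $V := V_{0} \cap U \in \Op_{m}(U)$ and $\G := \{\, g|_{U} : g \in \G_{0} \,\} \subseteq \J(U)$; restriction lands in $\J(U)$ because $\J$ is a sheaf of ideals, and it preserves degrees, so $\G$ is again a finite graded subset. The functions in $\G$ remain independent at $m$: by Proposition \ref{tvrz_differentialatm}-$(i)$ the differential at $m$ depends only on the germ at $m$, and since $m \in U$ the germ of $g|_{U}$ at $m$ coincides with that of $g$, so $(\dr(g|_{U}))_{m} = (\dr g)_{m}$ for each $g \in \G_{0}$. Finally, for any $V' \in \Op(V)$ we have $V' \in \Op(V_{0})$ and $V' \subseteq U$, whence $\G|_{V'} = \G_{0}|_{V'}$ generates $\J(V')$. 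By Lemma \ref{lem_regularideals}, part 2), this is precisely the condition $(i)$ of Definition \ref{def_regularideal} for $\J(U)$ at the point $m$; as $m$ was arbitrary, $\J(U)$ is a regular ideal.

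There is essentially no serious obstacle: the argument is bookkeeping around the fact that ``independence at $m$'' and ``generation of the stalk (equivalently, of the sections on small opens)'' are genuinely germ/stalk conditions, so restricting the generators from $M$ to $U$ does not spoil them. The only points requiring care — that restriction preserves independence at $m$, and that $\G_{0}|_{V'}$ still generates $\J(V')$ for $V' \subseteq V \subseteq V_{0}$ — are handled respectively by Proposition \ref{tvrz_differentialatm}-$(i)$ and by Remark \ref{rem_generatesonUonsmaller} together with Lemma \ref{lem_regularideals}, part 2).
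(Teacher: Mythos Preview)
Your proof is correct and follows essentially the same approach as the paper: use Lemma \ref{lem_regularideals}-\textit{4)} to pass from $\J(U)\subseteq\J^{m}_{\M}(U)$ to $\J(M)\subseteq\J^{m}_{\M}(M)$, take the generating set $\G$ from regularity of $\J(M)$, restrict to $U$, and invoke Proposition \ref{tvrz_differentialatm}-$(i)$ to preserve independence at $m$. The only cosmetic difference is that you phrase the generation condition via the equivalent ``$\G|_{V'}$ generates $\J(V')$'' form of Lemma \ref{lem_regularideals}-\textit{2)}, whereas the paper works directly with the stalk form $[\G]_{m'}$ generates $\J_{m'}$; you are also more explicit about condition $(ii)$, which the paper leaves implicit.
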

\begin{proof}
Let $m \in U$, such that $\J(U) \subseteq \J^{m}_{\M}(U)$. But then also $\J(M) \subseteq \J^{m}_{\M}(M)$ by Lemma \ref{lem_regularideals}-\textit{4)}. As $\J(M)$ is regular, there is thus a finite graded set $\G \subseteq \J(M)$ consisting of functions independent at $m \in M$, such that $[\G]_{m'}$ generates $\J_{m'}$ for all $m'$ in some $V \in \Op_{m}(M)$. 

It follows from Proposition \ref{tvrz_differentialatm}-$(i)$ that $\G|_{U} \subseteq \J(U)$ is a finite graded set consisting of functions independent at $m$, and $[\G|_{U}]_{m'} = [\G]_{m'}$ obviously generates $\J_{m'}$ for all $m' \in U \cap V$. Hence $\J(U)$ is a regular ideal. 
\end{proof}

\begin{example} \label{ex_regid1}
On any graded manifold $\M$, we have a sheaf of ideals $\J^{\pg}_{\M}$ of purely graded functions defined by (\ref{eq_sheaofidealspurelygraded}). We claim that it is a sheaf of regular ideals. Clearly $\J^{\pg}_{\M}(M) \subseteq \J^{m}_{\M}(M)$ for all $m \in M$. 

For a given $m \in M$, let $(U,\varphi)$ be a graded local chart around $m$. Then the graded set $\{ \xi_{\mu} \}_{\mu=1}^{n_{\ast}}$ consisting of the corresponding purely graded coordinate functions generates $\J^{\pg}_{\M}(U)$, see Proposition \ref{tvrz_Jpggeneratingset}. Choose $V \in \Op_{m}(U)$, such that $\ol{V} \subseteq U$, and find a graded bump function $\lambda \in \C^{\infty}_{\M}(M)$ supported in $U$, such that $\lambda|_{V} = 1$. Then $\G = \{ \lambda \cdot \xi_{\mu} \}_{\mu=1}^{n_{\ast}} \subseteq \J^{\pg}_{\M}(M)$ is a finite graded subset, such that $[\G]_{m'} = \{ [\xi_{\mu}]_{m'} \}_{\mu=1}^{n_{\ast}}$ generates $(\J^{\pg}_{\M})_{m'}$ for all $m' \in V$. By Proposition \ref{tvrz_differentialatm}-$(iii)$, $\G$ consists of functions independent at $m$. Hence $\J^{\pg}_{\M}(M)$ is a regular ideal. 
\end{example}

\begin{example} \label{ex_regid2}
On any graded manifold $\M$ and for each $a \in M$, we have a sheaf of ideals $\J^{a}_{\M}$ of functions vanishing at $a$. This is also a sheaf of regular ideals. 

We have $(\frJ^{a}_{\M})_{a} = \frJ(\C^{\infty}_{\M,a})$ and $(\J^{a}_{\M})_{m} = \C^{\infty}_{\M,m}$ for any $m \neq a$. Pick any graded local chart $(U,\varphi)$ around $a$, such that $\ul{\varphi}(a) = 0$. Then the graded set $\{ \bbz^{A} \}_{A=1}^{n}$ consisting of \textit{all} the corresponding coordinate functions generates $\J^{a}_{\M}(U)$, see Proposition \ref{tvrz_vanishingidealgen}. Pick $V \in \Op_{a}(U)$ and $\lambda \in \C^{\infty}_{\M}(M)$ as in the previous example. Let $\G := \{ \lambda \cdot \bbz^{A} \}_{A=1}^{n} \subseteq \J^{a}_{\M}(M)$. It follows from Remark \ref{rem_generatesonUonsmaller} that $\G|_{V'}$ generates $\J^{a}_{\M}(V')$ for any $V' \in \Op(V)$. 
\end{example}
\subsection{Algebraic viewpoint on submanifolds}
Suppose $(\cS,\iota)$ is an immersed submanifold of a graded manifold $\M$. We have a sheaf morphism $\iota^{\ast}: \C^{\infty}_{\M} \rightarrow \ul{\iota}_{\ast} \C^{\infty}_{\cS}$, hence we can consider its kernel sheaf
\begin{equation}
\J_{\cS} := \ker(\iota^{\ast}). 
\end{equation}
Let us now argue that for embedded submanifolds, a lot of interesting observations can be made using this sheaf. We shall start with a very important property of embedded manifolds.
\begin{definice} \label{def_weakembedding}
Let $(\cS,\iota)$ be be an immersed submanifold. Let $\phi: \cN \rightarrow \M$ be a graded smooth map, such that 
\begin{enumerate}[(i)]
\item $\ul{\phi}(N) \subseteq \ul{\iota}(S)$;
\item $\phi^{\ast}( \J_{\cS}) = 0$;
\end{enumerate}
We say that $(\cS,\iota)$ is \textbf{weakly embedded}, if for any such $\phi$ there exists a unique graded smooth map $\phi': \cN \rightarrow \cS$ satisfying $\iota \circ \phi' = \phi$. Note that the conditions $(i)$ and $(ii)$ on $\phi$ are necessary. 
\end{definice}
\begin{tvrz} \label{tvrz_weakembedd}
Let $(\cS,\iota)$ be an embedded submanifold. Then it is weakly embedded. 
\end{tvrz}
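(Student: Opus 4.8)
The plan is to build the factorization $\phi': \cN \to \cS$ explicitly and then check its properties. First, since $(\cS,\iota)$ is embedded, $\ul{\iota}: S \to \ul{\iota}(S)$ is a homeomorphism, so hypothesis (i) of Definition \ref{def_weakembedding} lets me define the continuous underlying map $\ul{\phi'} := \ul{\iota}^{-1} \circ \ul{\phi}: N \to S$. By Proposition \ref{tvrz_embeddinghassurjectivepullback} there is $Y \in \Op(M)$ with $\ul{\iota}(S) \subseteq Y$ and $\iota^{\ast}_Y$ surjective. For any $U \in \Op(S)$, the set $\ul{\iota}(U)$ is open in the subspace topology of $\ul{\iota}(S)$, hence of the form $V \cap \ul{\iota}(S)$ for some $V \in \Op(Y)$; for any such $V$ one checks, using injectivity of $\ul{\iota}$ and $\ul{\phi}(N)\subseteq\ul{\iota}(S)$, that $\ul{\iota}^{-1}(V) = U$ and $\ul{\phi}^{-1}(V) = \ul{\phi'}^{-1}(U)$. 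These topological identities are the bookkeeping that makes everything below fit together.

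Next I would construct the sheaf morphism $\phi'^{\ast}: \C^{\infty}_{\cS} \to \ul{\phi'}_{\ast}\C^{\infty}_{\cN}$. Given $U \in \Op(S)$ and $f \in \C^{\infty}_{\cS}(U)$, choose $V \in \Op(Y)$ as above; by the extension Corollary \ref{cor_extensionsubmanifolds} there is $g \in \C^{\infty}_{\M}(V)$ with $\iota^{\ast}_V(g) = f$, and I set $\phi'^{\ast}_U(f) := \phi^{\ast}_V(g)$, an element of $\C^{\infty}_{\cN}(\ul{\phi}^{-1}(V)) = \C^{\infty}_{\cN}(\ul{\phi'}^{-1}(U))$. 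The crucial point is well-definedness. If $\iota^{\ast}_V(g) = \iota^{\ast}_V(g') = f$ then $g - g' \in \ker(\iota^{\ast}_V) = \J_{\cS}(V)$ (Example \ref{ex_kernelsheaf}), so $\phi^{\ast}_V(g-g') = 0$ by hypothesis (ii), $\phi^{\ast}(\J_{\cS}) = 0$; hence $\phi'^{\ast}_U(f)$ is independent of the lift $g$. Independence of $V$ follows by passing to $V_1 \cap V_2$, restricting the lifts there, using naturality of $\iota^{\ast}$ and $\phi^{\ast}$ together with $\ul{\iota}^{-1}(V)=U$, and invoking the lift-independence just shown. The same restriction argument shows $\phi'^{\ast}$ is natural in $U$, and it is a graded algebra morphism because $\iota^{\ast}_V$ and $\phi^{\ast}_V$ are.

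It then remains to see that $\phi' := (\ul{\phi'},\phi'^{\ast})$ is a morphism in $\gLRS$, i.e. property (iii) of Definition \ref{def_gLRS}. Writing $f = \iota^{\ast}_V(g)$, the body-map compatibility of $\iota$ (Proposition \ref{tvrz_bodymap}) gives $\ul{f} = \ul{g}\circ\ul{\iota}$, so if $f(\ul{\phi'}(n)) = 0$ then $g(\ul{\phi}(n)) = g(\ul{\iota}(\ul{\phi'}(n))) = f(\ul{\phi'}(n)) = 0$; since $\phi$ is a $\gLRS$-morphism, $(\phi^{\ast}_V(g))(n) = 0$, i.e. $(\phi'^{\ast}_U(f))(n) = 0$, which by Proposition \ref{tvrz_invertibility}-(iii) is exactly the required inclusion of Jacobson radicals. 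Thus $\phi'$ is a graded smooth map (its underlying map is then automatically smooth). The equation $\iota\circ\phi' = \phi$ holds on underlying maps by construction and on structure sheaves it says $\phi'^{\ast}_{\ul{\iota}^{-1}(W)}\circ\iota^{\ast}_W = \phi^{\ast}_W$, which is the definition of $\phi'^{\ast}$. For uniqueness, any $\phi'_1,\phi'_2$ with $\iota\circ\phi'_i = \phi$ satisfy $\ul{\iota}\circ\ul{\phi'_i} = \ul{\phi}$, forcing $\ul{\phi'_1} = \ul{\phi'_2}$ by injectivity of $\ul{\iota}$, and on sections $(\phi'_i)^{\ast}_U(f) = ((\phi'_i)^{\ast}_{\ul{\iota}^{-1}(V)}\circ\iota^{\ast}_V)(g) = \phi^{\ast}_V(g)$ is independent of $i$.

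I expect the main obstacle to be precisely the well-definedness of $\phi'^{\ast}$ — independence of both the lift $g$ and the chart $V$ — since that is where both hypotheses on $\phi$ are genuinely used and where the embedding hypothesis is needed (through $\ul{\iota}^{-1}(V) = U$ and the extension corollary). The remaining verifications (algebra morphism, naturality, Jacobson radicals, uniqueness) are routine sheaf-theoretic diagram chasing.
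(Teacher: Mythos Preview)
Your proof is correct and follows essentially the same approach as the paper: the paper packages the construction via the induced isomorphism $\hat{\iota}^{\ast}: \C^{\infty}_{\M}/\J_{\cS} \to \ul{\iota}_{\ast}\C^{\infty}_{\cS}$ and sets $\phi'^{\ast}_V := \hat{\phi}^{\ast}_U \circ (\hat{\iota}^{\ast}_U)^{-1}$, whereas you unwind this at the level of elements by choosing lifts $g$; the paper also first treats the case where $\iota^{\ast}_U$ is surjective for all $U$ and only at the end restricts to $\M|_Y$, while you carry $Y$ along throughout. These are purely presentational differences.
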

\begin{proof}
Let $\phi: \cN \rightarrow \M$ be any graded smooth map satisfying $(i)$ and $(ii)$ as in Definition \ref{def_weakembedding}. Since $\ul{\phi}(N) \subseteq \ul{\iota}(S)$ and $(S,\ul{\iota})$ is an embedded submanifold of $M$, there is a unique smooth map $\ul{\phi}': N \rightarrow S$, such that $\ul{\iota} \circ \ul{\phi}' = \ul{\phi}$. This is a standard statement, see e.g. Theorem 5.29 in \cite{lee2012introduction}. 

First, assume that $\iota^{\ast}_{U}: \C^{\infty}_{\M}(U) \rightarrow \C^{\infty}_{\cN}(U \cap S)$ is surjective for any $U \in \Op(M)$. Consequently, there is an induced sheaf isomorphism $\hat{\iota}^{\ast}: \C^{\infty}_{\M} / \J_{\cS} \rightarrow \ul{\iota}_{\ast} \C^{\infty}_{\cS}$. On the other hand, the assumption (ii) ensures that $\phi^{\ast}$ induces a sheaf morphism $\hat{\phi}^{\ast}: \C^{\infty}_{\M} / \J_{\cS} \rightarrow \ul{\phi}_{\ast} \C^{\infty}_{\cN}$. 

Now, for any $V \in \Op(S)$, one can find $U \in \Op(M)$, such that $V = U \cap S$. Define
\begin{equation}
\phi'^{\ast}_{V} := \hat{\phi}^{\ast}_{U} \circ ( \hat{\iota}^{\ast}_{U})^{-1}: \C^{\infty}_{\cS}(V) \rightarrow \C^{\infty}_{\cN}( \ul{\phi}'^{-1}(V)) \equiv (\ul{\phi}'_{\ast} \C^{\infty}_{\cN})(V)
\end{equation}
where one uses the fact $\ul{\phi}^{-1}(U) = \ul{\phi}'^{-1}(V)$. By construction, this is a graded algebra morphism. It is easy to see that the definition does not depend on the particular choice of $U$ and $\phi'^{\ast}_{V}$ is natural in $V$. To prove that $\phi' = (\ul{\phi}', \phi'^{\ast})$ defines a graded smooth map, it remains to verify the property (iii) of Definition \ref{def_gLRS}. For any $n \in N$, $V \in \Op_{\ul{\phi}'(n)}(S)$ and $f \in \C^{\infty}_{\cS}(V)$, the condition $f(\ul{\phi}'(n)) = 0$ must imply $(\phi'^{\ast}_{V}(f))(n) = 0$. By construction, we find $g \in \C^{\infty}_{\M}(U)$, such that $V = S \cap U$ and $f = \iota^{\ast}_{U}(g)$. In particular, one has $g(\ul{\phi}(n)) = g( \ul{\iota}( \ul{\phi}'(n))) = f( \ul{\phi}'(n)) = 0$. Consequently, one has 
\begin{equation}
(\phi'^{\ast}_{V}(f))(n) = (\phi^{\ast}_{U}(g))(n) = g( \ul{\phi}(n)) = 0.
\end{equation}
We conclude that $\phi' = (\ul{\phi}', \phi'^{\ast})$ defines a graded smooth map from $\cN$ to $\cS$. Moreover, it is obvious from the construction that $\phi'$ is the unique map satisfying $\phi = \iota \circ \phi'$. 

Now, let us go back to the general embedded submanifold $(\cS, \iota)$. By Proposition \ref{tvrz_embeddinghassurjectivepullback}, there is $Y \in \Op(M)$, such that $\ul{\iota}(S) \subseteq Y$ and $\iota^{\ast}_{Y}: \C^{\infty}_{\M}(Y) \rightarrow \C^{\infty}_{\cS}(S)$ is surjective. Since we may view $(\cS,\iota)$ as an embedded submanifold of $\M|_{Y}$ and $\phi$ as a graded smooth map $\phi: \cN \rightarrow \M|_{Y}$ satisfying the assumptions $(i)$ and $(ii)$, we obtain $\phi'$ from the previous paragraph together with Lemma \ref{lem_surjectivity}. This finishes the proof. 
\end{proof}
\begin{rem} \label{rem_weakweakembedd}
For any graded smooth map $\phi: \cN \rightarrow \M$, the properties $(i)+(ii)$ of Definition \ref{def_weakembedding} are equivalent to $(i)+(ii)'$, where:

$(ii)'$ There exists $U \in \Op(M)$, such that $\ul{\iota}(S) \subseteq U$ and $\phi^{\ast}_{U}( \J_{\cS}(U)) = 0$. 

Indeed, assume that $\phi: \cN \rightarrow \M$ satisfies $(i)$ and $(ii)'$. Using Proposition \ref{tvrz_extensionlemma}, one can show that $\phi^{\ast}_{V}( \J_{\cS}(V)) = 0$ for all $V \in \Op(U)$. But the assumption $(i)$ ensures that for \textit{any} $V \in \Op(M)$ and $f \in \C^{\infty}_{\M}(V)$, one has $\phi^{\ast}_{V}(f) = \phi^{\ast}_{V \cap U}(f|_{V \cap U})$. In particular, one has $\phi^{\ast}_{V}( \J_{\cS}(V)) = 0$ for any $V \in \Op(M)$ and we see that $\phi$ has the property $(ii)$. 
\end{rem}

We find the following characteristic properties of the sheaf of ideals $\J_{\cS}$ for closed embeddings.
\begin{tvrz} \label{tvrz_JSprops}
Let $(\cS, \iota)$ be an embedded submanifold of $\M$. Then the sheaf of ideals $\J_{\cS} := \ker(\iota)$ has the following properties:
\begin{enumerate}[(i)]
\item Let $(\cS,\iota)$ be a closed embedded submanifold. Then $\J_{\cS}(M) \subseteq \J_{\M}^{m}(M)$, if and only if $m \in \ul{\iota}(S)$. Recall that $\J_{\M}^{m}$ is the sheaf of ideals of functions vanishing at $m$, see (\ref{eq_sheaofidealsvanishing}). 
\item For each $m \in \ul{\iota}(S)$, there exists a finite graded set $\G \subseteq \J_{\cS}(M)$ consisting of functions independent at $m$ and $U \in \Op_{m}(M)$, such that for all $U' \in \Op(U)$, the graded subset $\G|_{U'} = \{ f|_{U'} \; | \; f \in \G \}$ generates the ideal $\J_{\cS}(U')$.
\item $\C^{\infty}_{\cS}$ is isomorphic to the inverse image sheaf $\ul{\iota}^{-1}( \C^{\infty}_{\M} / \J_{\cS})$. 
\end{enumerate}
\end{tvrz}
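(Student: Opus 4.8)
The plan is to prove the three items in order, reusing the heavy machinery already developed. For \emph{(i)}, I would use the characterization of the Jacobson radical in Proposition~\ref{tvrz_invertibility}-(iii), namely $\J_{\cS}(M) \subseteq \J^{m}_{\M}(M)$ iff $(\J_{\cS})_{m} \subseteq \frJ(\C^{\infty}_{\M,m})$ iff $(\J_{\cS})_{m} \neq \C^{\infty}_{\M,m}$ (this last equivalence is exactly Lemma~\ref{lem_regularideals}-\textit{4)}, which applies since $\J_{\cS}$ is a sheaf of ideals). If $m \in \ul{\iota}(S)$, say $m = \ul{\iota}(s)$, then for any $f \in \J_{\cS}(M)$ one has $0 = \iota^{\ast}_{M}(f) = \iota^{\ast}_{M}(f)$, whence $f(m) = \ul{f}(m) = \ul{\iota^{\ast}_{M}(f)}(s) = 0$ by Proposition~\ref{tvrz_bodymap}, so $\J_{\cS}(M) \subseteq \J^{m}_{\M}(M)$. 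Conversely, if $m \notin \ul{\iota}(S)$, I would use closedness: $V_{0} := M - \ul{\iota}(S)$ is an open neighborhood of $m$, and the constant function $1 \in \C^{\infty}_{\M}(V_{0})$ restricts $\iota^{\ast}$ to zero trivially; using a graded bump function supported in $V_{0}$ and equal to $1$ near $m$ (Proposition~\ref{tvrz_bumpfunctions}), one produces $f \in \J_{\cS}(M)$ with $f(m) = 1$, so $\J_{\cS}(M) \not\subseteq \J^{m}_{\M}(M)$.

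For \emph{(ii)}, the strategy is to work locally with the immersion normal form. Fix $m \in \ul{\iota}(S)$, $m = \ul{\iota}(s)$. By Theorem~\ref{thm_immersion}-(iii) there are graded local charts $(U_{\cS},\varphi)$ for $\cS$ around $s$ and $(V,\psi)$ for $\M$ around $m$ with $\ul{\iota}(U_{\cS}) \subseteq V$, in which the local representative $\hat{\iota}$ pulls back the coordinate functions $(\bby^{K_{j}}_{(j)})$ on $V$ to $\bbz^{K_{j}}_{(j)}$ for $K_{j} \leq m_{j}$ (the dimensions of $\cS$) and to $0$ for $K_{j} > m_{j}$. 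Since $(\cS,\iota)$ is embedded, after shrinking $V$ we may assume $\ul{\iota}(U_{\cS}) = V \cap \ul{\iota}(S)$. I would then check that on $V$, the ideal $\J_{\cS}(V)$ is generated precisely by the coordinate functions $\{ \bby^{K_{j}}_{(j)} : K_{j} > m_{j} \}$: a function $f \in \C^{\infty}_{\M}(V)$ lies in the kernel of $\iota^{\ast}_{V}$ iff its local representative lies in the kernel of $\hat{\iota}^{\ast}$, and using the graded Hadamard lemma (Lemma~\ref{lem_Hadamard}) together with the fact that $\hat{\iota}^{\ast}$ kills exactly those generators, this reduces to the analogous statement for graded domains. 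These generating functions are independent at $m$ by Proposition~\ref{tvrz_differentialatm}-(iii). Finally, as in Examples~\ref{ex_regid1} and~\ref{ex_regid2}, I would multiply by a graded bump function $\lambda \in \C^{\infty}_{\M}(M)$ supported in $V$ and equal to $1$ on a smaller $U \in \Op_{m}(M)$ with $\ol{U} \subseteq V$, obtaining a finite graded set $\G \subseteq \J_{\cS}(M)$ whose restrictions $\G|_{U'}$ generate $\J_{\cS}(U')$ for all $U' \in \Op(U)$; the "generates on all smaller opens" upgrade is Remark~\ref{rem_generatesonUonsmaller} combined with Lemma~\ref{lem_regularideals}-\textit{2)}. (As a corollary, once~(i) and~(ii) are in place, $\J_{\cS}$ is a sheaf of regular ideals in the sense of Definition~\ref{def_regularideal}, with condition~(ii) of that definition coming for free from Lemma~\ref{lem_regularideals}-\textit{3)}.)

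For \emph{(iii)}, I would first recall that $\iota^{\ast}_{U} : \C^{\infty}_{\M}(U) \to \C^{\infty}_{\cS}(U \cap S)$ is surjective for all $U$ contained in the relevant $Y \in \Op(M)$ with $\ul{\iota}(S) \subseteq Y$ (Proposition~\ref{tvrz_embeddinghassurjectivepullback} and Lemma~\ref{lem_surjectivity}); shrinking to $\M|_{Y}$ costs nothing. Surjectivity of $\iota^{\ast}$ together with $\ker(\iota^{\ast}) = \J_{\cS}$ gives a canonical sheaf isomorphism $\hat{\iota}^{\ast} : \C^{\infty}_{\M}/\J_{\cS} \xrightarrow{\ \cong\ } \ul{\iota}_{\ast} \C^{\infty}_{\cS}$ of sheaves on $M$ (note $\C^{\infty}_{\M}/\J_{\cS}$ is genuinely a sheaf by Proposition~\ref{eq_quotientpresheafissheaf}). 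Since $\ul{\iota} : S \to \ul{\iota}(S)$ is a homeomorphism onto a (locally closed) subspace, the inverse image functor $\ul{\iota}^{-1}$ is adjoint to $\ul{\iota}_{\ast}$ and one has the standard identity $\ul{\iota}^{-1}\ul{\iota}_{\ast}\F \cong \F$ for any sheaf $\F$ on $S$ whenever $\ul{\iota}$ is an embedding; applying this with $\F = \C^{\infty}_{\cS}$ and using the isomorphism $\hat{\iota}^{\ast}$ yields $\ul{\iota}^{-1}(\C^{\infty}_{\M}/\J_{\cS}) \cong \ul{\iota}^{-1}\ul{\iota}_{\ast}\C^{\infty}_{\cS} \cong \C^{\infty}_{\cS}$, and one checks this is compatible with the graded algebra structures.

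The main obstacle I anticipate is step~\emph{(ii)}: precisely identifying $\J_{\cS}(V)$ on the adapted chart and then propagating "generates" from $V$ down to the globally-supported set $\G$ and to all smaller opens. The kernel computation in the chart needs the graded Hadamard lemma applied carefully — one must argue that if $\hat{\iota}^{\ast}(\hat f) = 0$ then every Taylor coefficient of $\hat f$ in the "transverse" directions vanishes along the submanifold locus, so $\hat f$ lies in the ideal generated by the transverse coordinates, and then pass to the limit via Proposition~\ref{tvrz_inallvanishingideals}. The globalization via bump functions is routine (it mirrors Examples~\ref{ex_regid1}–\ref{ex_regid2} verbatim) but the independence-at-$m$ claim must be stated with the correct graded notion of linear independence, degree by degree. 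Steps~\emph{(i)} and~\emph{(iii)} should be comparatively quick given the tools already assembled.
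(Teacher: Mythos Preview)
Your proposal is correct and follows essentially the same route as the paper for all three parts. Two minor deviations worth noting: for \emph{(ii)}, the paper does not invoke the graded Hadamard lemma directly but packages the kernel computation as a separate appendix lemma (Lemma~\ref{lem_fdecompasgenerators}), which shows by an elementary argument (peel off non-zero-degree transverse coordinates, then use classical Hadamard plus a partition of unity for the degree-zero ones) that any $\hat f$ with $\hat{\iota}_{0}^{\ast}(\hat f)=0$ lies in the ideal generated by the transverse coordinates --- this avoids the ``Taylor at each point then pass to the limit via Proposition~\ref{tvrz_inallvanishingideals}'' detour you sketch, which would be harder to make precise since pointwise Hadamard gives only information modulo powers of $\frJ$ at a single point rather than modulo the transverse ideal. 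For \emph{(iii)}, the paper constructs the isomorphism by hand at the level of the inverse-image \emph{presheaf} $\ul{\iota}_{P}^{-1}(\C^{\infty}_{\M}/\J_{\cS}) \to \C^{\infty}_{\cS}$ (sending $[\natural_{U}(f)]_{V} \mapsto \iota^{\ast}_{U}(f)$ and checking surjectivity via Corollary~\ref{cor_extensionsubmanifolds}), whereas you route through the adjunction and the general fact $\ul{\iota}^{-1}\ul{\iota}_{\ast}\F \cong \F$ for an embedding --- both are fine, yours is slicker, the paper's is more self-contained.
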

\begin{proof}
Let us start by proving $(i)$. Suppose that $\J_{\cS}(M) \subseteq \J^{m}_{\M}(M)$, such that $m \notin \ul{\iota}(S)$. Since $M - \ul{\iota}(S)$ is open, there exists $U \in \Op_{m}(M - \ul{\iota}(S))$. Choose $V \in \Op_{m}(U)$, such that $\ol{V} \subseteq U$, and pick a smooth bump function $\lambda \in \C^{\infty}_{\M}(M)$ supported in $U$, such that $\lambda|_{V} = 1$. One can write $S = \ul{\iota}^{-1}( \supp(\lambda)^{c})$, whence $\iota^{\ast}_{M}(\lambda) = \iota^{\ast}_{\supp(\lambda)^{c}}( \lambda|_{\supp(\lambda)^{c}}) = 0$. This shows that $\lambda \in \J_{\cS}(M) \subseteq \J^{m}_{\M}(M)$. But $\lambda(m) = \lambda|_{V}(m) = 1$, which is a contradiction.  

Conversely, let $m \in \ul{\iota}(S)$. Consider $f \in \J_{\cS}(M)$. We can write $m = \ul{\iota}(s)$ for a unique $s \in S$, whence $0 = (\iota^{\ast}_{M}f)(s) = f( \ul{\iota}(s)) = f(m)$. This proves that $f \in \J^{m}_{\M}(M)$ and the proof of $(i)$ is finished. 

Next, let us prove $(ii)$. Suppose $m = \ul{\iota}(s)$ for some $s \in S$. Since $\iota: \cS \rightarrow \M$ is an immersion at $s$, there exist graded local charts $(V,\psi)$ for $\cS$ around $s$ and $(W,\varphi)$ for $\M$ around $m$, as in Theorem \ref{thm_immersion}-$(iii)$. We can shrink $W$ so that $\ul{\iota}(V) = W \cap \ul{\iota}(S)$, see the proof of Proposition \ref{tvrz_embeddinghassurjectivepullback}. Let $(n_{j})_{j \in \Z} := \gdim(\M)$ and $(s_{j})_{j \in \Z} := \gdim(\cS)$. 

For each $j \in \Z$, we thus have the coordinate functions $(\bbz^{A_{j}}_{(j)})_{A_{j}=1}^{n_{j}}$, such that $|\bbz^{A_{j}}_{(j)}| = j$ and $\iota^{\ast}_{W}( \bbz^{A_{j}}_{(j)}) = 0$ for all $s_{j} < A_{j} \leq n_{j}$. We see that $\bbz^{A_{j}}_{(j)} \in \J_{\cS}(W)$ for any $j \in \Z$ and $s_{j} < A_{j} \leq n_{j}$. Pick any $U \in \Op_{m}(W)$, such that $\ol{U} \subseteq W$. Let $\lambda \in \C^{\infty}_{\M}(M)$ be a graded bump function supported in $W$, such that $\lambda|_{U} = 1$. For each $j \in \Z$ and $s_{j} < A_{j} \leq n_{j}$, let 
\begin{equation}
\bbu^{A_{j}}_{(j)} := \lambda \cdot \bbz^{A_{j}}_{(j)} \in \C^{\infty}_{\M}(M).
\end{equation}
As $\J_{\cS}$ is a sheaf of ideals, we see that $\bbu^{A_{j}}_{(j)} \in \J_{\cS}(M)$. Let $\G_{j} := \{ \bbu^{A_{j}}_{(j)} \}_{s_{j} < A_{j} \leq n_{j}}$. We have to prove that $\J_{\cS}(V)$ is for any $V \in \Op(U)$ generated by the graded set $\G|_{V}$, where $(\G|_{V})_{j} := \{ \bbu^{A_{j}}_{(j)}|_{V} \}_{s_{j} < A_{j} \leq n_{j}}$. Pick any $f \in \J_{\cS}(V)$. It follows that its local representative $\hat{f} \in \C^{\infty}_{(n_{j})}(\ul{\varphi}(V))$ satisfies the assumptions of Lemma \ref{lem_fdecompasgenerators}. We can thus write it as 
\begin{equation}
\hat{f} = \sum_{j \in \Z} \sum_{A_{j}=s_{j}+1}^{n_{j}} \hat{f}^{A_{j}}_{(j)} \cdot \bbz^{A_{j}}_{(j)},
\end{equation}
for some functions $\hat{f}^{A_{j}}_{(j)} \in \C^{\infty}_{(n_{j})}(\ul{\varphi}(V))$. It follows that $f$ can be now decomposed as 
\begin{equation}
f = \sum_{j \in \Z} \sum_{A_{j}=s_{j}+1}^{n_{j}} \varphi^{\ast}_{\ul{\varphi}(V)}(\hat{f}^{A_{j}}_{(j)}) \cdot \bbz^{A_{j}}_{(j)}|_{V} = \sum_{j \in \Z} \sum_{A_{j}=s_{j}+1}^{n_{j}} \varphi^{\ast}_{\ul{\varphi}(U)}(\hat{f}^{A_{j}}_{(j)}) \cdot \bbu^{A_{j}}_{(j)}|_{V}.
\end{equation} 

This proves that $\G|_{V}$ generates $\J_{\cS}(V)$ for any $V \in \Op(U)$. Finally, we have to argue that the functions in $\G$ are independent at $m$. For any $j \in \Z$ and $s_{j} < A_{j} \leq n_{j}$, one has $(\dr{\bbu}^{A_{j}}_{(j)})_{m} = ( \dr( \bbu^{A_{j}}_{(j)}|_{U}))_{m} = (\dr{\bbz}^{A_{j}}_{(j)})_{m}$, where we have used Proposition \ref{tvrz_differentialatm}-$(i)$. The claim then follows from Proposition \ref{tvrz_differentialatm}-$(iii)$. This finishes the proof of the property $(ii)$. 

Finally, let us prove $(iii)$. For the construction of an inverse image sheaf, see the part (a) of the proof of Proposition \ref{tvrz_ap_pullbackVB}. In fact, we will now define a presheaf isomorphism 
\begin{equation}
\psi: \iota_{P}^{-1}( \C^{\infty}_{\M} / \J_{\cS}) \rightarrow \C^{\infty}_{\cS}.
\end{equation}
Let $V \in \Op(S)$. Let $[\natural_{U}(f)]_{V} \in (\iota_{P}^{-1}( \C^{\infty}_{\M}/\J_{\cS}))(V)$, represented by a class $\natural_{U}(f)$ in $(\C^{\infty}_{\M} / \J_{\cS})(U)$, which is in turn represented by some $f \in \C^{\infty}_{\M}(U)$. $U \in \Op(M)$ satisfies $\ul{\iota}(V) \subseteq U$. As $\ul{\iota}$ is an embedding, we may shrink it so that $\ul{\iota}(V) = U \cap \ul{\iota}(S)$. In particular, we have $V = \ul{\iota}^{-1}(U)$. Define 
\begin{equation}
\psi_{V}( [\natural f]_{V}) := \iota^{\ast}_{U}(f) \in \C^{\infty}_{\M}(V).
\end{equation}
It is easy to check that $\psi_{V}$ is a well-defined graded algebra morphism natural in $V$. It is injective, as $\iota^{\ast}_{U}(f) = 0$ implies $\natural_{U}(f) = 0$ and thus also $[\natural_{U}(f)]_{V} = 0$. Finally, the surjectivity of $\psi_{V}$ follows immediately from Corollary \ref{cor_extensionsubmanifolds}. This proves that $\psi$ is a presheaf isomorphism. In particular, this shows that $\iota_{P}^{-1}(\C^{\infty}_{\M}/\J_{\cS})$ is already a sheaf, hence canonically isomorphic to $\iota^{-1}(\C^{\infty}_{\M}/\J_{\cS})$. 
\end{proof}
\begin{rem}
The if part of the claim $(i)$ is not true for general embedded submanifolds. Consider the ordinary manifold $\R$ and its open submanifold $S := \R - \{0\}$. If $f \in \J_{S}(U)$ for some $U \in \Op_{0}(\R)$, we have $f|_{U - \{0\}} = 0$. But by continuity, one has also $f(0) = 0$. Thus $f \in \J^{0}_{\R}(U)$ and we see that $\J_{S} \subseteq \J_{\R}^{0}$. But this certainly does not imply $0 \in S$. 

Moreover, it is interesting to observe where exactly the proof of the claim $(ii)$ fails for general immersed submanifolds. Although one can still choose the charts $(V,\psi)$ for $\cS$ and $(W,\varphi)$ for $\M$ as in Theorem \ref{thm_immersion}-$(iii)$, one cannot assume that $\ul{\iota}(V) = W \cap \ul{\iota}(S)$. We can thus only conclude that $\iota^{\ast}_{W}( \bbz^{A_{j}}_{(j)})|_{V} = 0$. This is not enough to prove that $\bbz^{A}_{(j)} \in \J_{\cS}(W)$.  
\end{rem}
Note that for closed embedded submanifolds, Proposition \ref{tvrz_weakembedd} can be modified a bit.
\begin{tvrz} \label{tvrz_clweakembedd}
Let $(\cS,\iota)$ be a closed embedded submanifold. Let $\phi: \cN \rightarrow \M$ be any graded smooth map, such that $\phi^{\ast}_{M}(\J_{\cS}(M)) = 0$. 

Then there exists a unique graded smooth map $\phi': \cN \rightarrow \cS$, such that $\iota \circ \phi' = \phi$. 

This statement is not true for general embedded submanifolds. 
\end{tvrz}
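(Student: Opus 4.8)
The plan is to reduce this statement to Proposition \ref{tvrz_weakembedd} (or rather, its proof) by showing that the single hypothesis $\phi^{\ast}_{M}(\J_{\cS}(M)) = 0$ already forces the two conditions $(i)$ and $(ii)$ appearing in Definition \ref{def_weakembedding}. Once those are established, uniqueness and existence of $\phi'$ follow verbatim from the fact that $(\cS,\iota)$ is (weakly) embedded, together with the surjectivity machinery of Lemma \ref{lem_surjectivity} and Proposition \ref{tvrz_embeddinghassurjectivepullback}, and we do not need to repeat that argument.

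First I would show that $\phi^{\ast}_{M}(\J_{\cS}(M)) = 0$ implies $\ul{\phi}(N) \subseteq \ul{\iota}(S)$. Suppose not, so there is $n \in N$ with $m := \ul{\phi}(n) \notin \ul{\iota}(S)$. Since $\ul{\iota}(S)$ is closed (this is where we use that the submanifold is \emph{closed} embedded), $M - \ul{\iota}(S)$ is an open neighborhood of $m$. Exactly as in the proof of Proposition \ref{tvrz_JSprops}-$(i)$, one picks nested opens $V \subseteq \ol{V} \subseteq U \subseteq M - \ul{\iota}(S)$ with $m \in V$ and a graded bump function $\lambda \in \C^{\infty}_{\M}(M)$ supported in $U$ with $\lambda|_{V} = 1$. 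Then $S = \ul{\iota}^{-1}(\supp(\lambda)^{c})$, so $\iota^{\ast}_{M}(\lambda) = 0$, i.e. $\lambda \in \J_{\cS}(M)$. By hypothesis $\phi^{\ast}_{M}(\lambda) = 0$, but evaluating at $n$ via the commutativity of (\ref{eq_cdbodymaps}) gives $(\phi^{\ast}_{M}(\lambda))(n) = \lambda(\ul{\phi}(n)) = \lambda(m) = 1$, a contradiction. Hence $\ul{\phi}(N) \subseteq \ul{\iota}(S)$, which is condition $(i)$.

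Next I would verify condition $(ii)'$ of Remark \ref{rem_weakweakembedd}, namely that there is $U \in \Op(M)$ with $\ul{\iota}(S) \subseteq U$ and $\phi^{\ast}_{U}(\J_{\cS}(U)) = 0$. For a closed embedded submanifold one can simply take $U = M$: then $(ii)'$ is literally the hypothesis $\phi^{\ast}_{M}(\J_{\cS}(M)) = 0$. By Remark \ref{rem_weakweakembedd}, conditions $(i)$ and $(ii)'$ together are equivalent to $(i)$ and $(ii)$ of Definition \ref{def_weakembedding}. Since a closed embedded submanifold is in particular an embedded submanifold, Proposition \ref{tvrz_weakembedd} applies and yields the unique graded smooth map $\phi': \cN \rightarrow \cS$ with $\iota \circ \phi' = \phi$. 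This completes the existence and uniqueness part.

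Finally, for the last sentence (failure for general embedded submanifolds) I would reuse the counterexample already discussed in the remark following Proposition \ref{tvrz_JSprops}: take $\M = \R$, $\cS = S := \R - \{0\}$ with $\iota$ the open inclusion, which is an embedded but not closed embedded submanifold. As observed there, $\J_{\cS}(M)$ turns out to consist only of functions vanishing on all of $M$ — indeed if $f \in \J_{S}(U)$ for $U$ an open neighborhood of $0$ then $f|_{U - \{0\}} = 0$ hence $f = 0$ on $U$ by the monopresheaf property, and globally any $f \in \J_{\cS}(\R)$ vanishes on $\R - \{0\}$, hence $f = 0$; more precisely $\J_{\cS}(\R)$ is the sheaf of ideals with $\J_{\cS}(U) = 0$ whenever $0 \in U$. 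Thus the condition $\phi^{\ast}_{\R}(\J_{\cS}(\R)) = 0$ is vacuous, so take $\phi := \kappa_{0}: \{\ast\} \to \R$ the constant map at $0$ (Example \ref{ex_constantmapping}); it satisfies the hypothesis trivially but $\ul{\phi}(\{\ast\}) = \{0\} \not\subseteq S$, so no $\phi'$ can exist. The expected main obstacle is organizational rather than technical: making sure that the equivalence in Remark \ref{rem_weakweakembedd} is being invoked correctly with $U = M$, and checking that the closedness of $\ul{\iota}(S)$ is genuinely what drives the argument in the first step — everything else is a direct citation of earlier results.
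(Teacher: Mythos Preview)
Your proof is correct and follows essentially the same route as the paper: verify condition $(i)$ of Definition \ref{def_weakembedding} using closedness of $\ul{\iota}(S)$, observe that $(ii)'$ with $U=M$ is the hypothesis itself, and invoke Proposition \ref{tvrz_weakembedd} via Remark \ref{rem_weakweakembedd}. The only cosmetic differences are that the paper obtains $(i)$ by citing Proposition \ref{tvrz_JSprops}-$(i)$ (whose proof is precisely your bump-function argument) rather than rerunning it, and for the counterexample the paper uses $S=(-1,1)\subseteq\R$ with $\phi(x)=\sin(x)$ instead of your $S=\R\setminus\{0\}$ with $\phi=\kappa_{0}$; both counterexamples work.
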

\begin{proof}
Thanks to Proposition \ref{tvrz_weakembedd} and Remark \ref{rem_weakweakembedd}, it suffices to prove that for a closed embedded submanifold $(\cS,\iota)$, the assumption on $\phi$ already implies the property (i) of Definition \ref{def_weakembedding}. 

Let $n \in N$ be arbitrary. By assumption, every $f \in \J_{\cS}(M)$ satisfies $\phi^{\ast}_{M}(f) = 0$. In particular, we have $f(\ul{\phi}(n)) = 0$. Hence $\J_{\cS}(M) \subseteq \J^{\ul{\phi}(n)}_{\M}(M)$. But then $\ul{\phi}(n) \in \ul{\iota}(S)$ by Proposition \ref{tvrz_JSprops}-$(i)$. We conclude that $\ul{\phi}(N) \subseteq \ul{\iota}(S)$ and the statement follows. 

To show that this is not true for general embedded submanifolds, consider the ordinary manifold $M = \R$ and its embedded submanifold $S = (-1,1)$. Let $N = \R$ and let $\phi: N \rightarrow M$ be a smooth map $\phi(x) = \sin(x)$. Suppose $f \in \C^{\infty}_{M}(\R)$ vanishes on $S$. Then $(\phi^{\ast}_{M}(f))(x) = 0$ for all $x \in \R - \{ (2k+1) \frac{\pi}{2}  \; | \; k \in \Z \}$. But $\phi^{\ast}_{M}(f)$ is continuous on entire $\R$, hence $(\phi^{\ast}_{M}(f))(x) = 0$ for all $x \in \R$. We see that $\phi^{\ast}_{M}( \J_{S}(M)) = 0$. On the other hand, we have $\phi(N) = [-1,1]$, which fails to be a subset of $S$. 
\end{proof}

\begin{lemma} \label{lem_equivalentsubmafolds}
Let $(\cS,\iota)$ and $(\cS',\iota')$ be two embedded submanifolds of $\M$. We say that they are \textbf{equivalent}, if there exists a unique graded diffeomorphism $\varphi: \cS \rightarrow \cS'$, such that $\iota' \circ \varphi = \iota$. Suppose $(\cS,\iota)$ and $(\cS',\iota')$ are two \textit{closed} embedded submanifolds.

Then $(\cS,\iota)$ and $(\cS',\iota')$ are equivalent, if and only if $\J_{\cS} = \J_{\cS'}$. 
\end{lemma}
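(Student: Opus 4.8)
The plan is to prove both implications, with the ``if'' direction resting entirely on Proposition \ref{tvrz_clweakembedd}, the weak embedding property for closed embedded submanifolds, and its built-in uniqueness clause.

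For the ``only if'' direction I would argue as follows. Suppose $\varphi: \cS \to \cS'$ is a graded diffeomorphism with $\iota' \circ \varphi = \iota$. Then $\ul{\iota'} \circ \ul{\varphi} = \ul{\iota}$ with $\ul{\varphi}$ a homeomorphism, and for every $U \in \Op(M)$ the composition rule for $\gLRS$ morphisms in Definition \ref{def_gLRS} gives $\iota^{\ast}_{U} = \varphi^{\ast} \circ (\iota')^{\ast}_{U}$, where $\varphi^{\ast}$ is evaluated on the appropriate open subset of $S'$. Since $\varphi$ is a graded diffeomorphism, $\varphi^{\ast}$ is a sheaf isomorphism by Proposition \ref{tvrz_isoingLRS}, hence each of its components is injective; therefore $\ker(\iota^{\ast}_{U}) = \ker((\iota')^{\ast}_{U})$, i.e. $\J_{\cS}(U) = \J_{\cS'}(U)$. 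Because the restriction morphisms of both $\J_{\cS}$ and $\J_{\cS'}$ are inherited from $\C^{\infty}_{\M}$, this upgrades to the sheaf equality $\J_{\cS} = \J_{\cS'}$.

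For the ``if'' direction, assume $\J_{\cS} = \J_{\cS'}$; in particular $\J_{\cS}(M) = \J_{\cS'}(M) = \ker(\iota^{\ast}_{M}) = \ker((\iota')^{\ast}_{M})$. First I would note $\iota^{\ast}_{M}(\J_{\cS'}(M)) = \iota^{\ast}_{M}(\ker(\iota^{\ast}_{M})) = 0$, so Proposition \ref{tvrz_clweakembedd}, applied to the closed embedded submanifold $(\cS',\iota')$ and the graded smooth map $\iota: \cS \to \M$, yields a unique graded smooth map $\varphi: \cS \to \cS'$ with $\iota' \circ \varphi = \iota$. Symmetrically, $(\iota')^{\ast}_{M}(\J_{\cS}(M)) = 0$, so the same proposition applied to $(\cS,\iota)$ and $\iota': \cS' \to \M$ produces a unique graded smooth map $\psi: \cS' \to \cS$ with $\iota \circ \psi = \iota'$.

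The last step is to show $\varphi$ and $\psi$ are mutually inverse, and this is where the uniqueness clause of Proposition \ref{tvrz_clweakembedd} does the work. I would compute $\iota \circ (\psi \circ \varphi) = (\iota \circ \psi) \circ \varphi = \iota' \circ \varphi = \iota = \iota \circ \1_{\cS}$; since $\iota^{\ast}_{M}(\J_{\cS}(M)) = 0$, the graded smooth map from $\cS$ to $\cS$ lying over $\iota$ is unique, forcing $\psi \circ \varphi = \1_{\cS}$, and the mirror-image computation gives $\varphi \circ \psi = \1_{\cS'}$. Thus $\varphi$ is a graded diffeomorphism, and its uniqueness among graded smooth maps satisfying $\iota' \circ \varphi = \iota$ is already part of Proposition \ref{tvrz_clweakembedd}; hence $(\cS,\iota)$ and $(\cS',\iota')$ are equivalent. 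I do not expect a genuine obstacle here: the only points needing care are keeping straight which sheaf of ideals is fed into each application of Proposition \ref{tvrz_clweakembedd}, and observing that the whole argument consumes only the global-section equality $\J_{\cS}(M) = \J_{\cS'}(M)$.
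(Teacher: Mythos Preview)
Your proof is correct and follows essentially the same approach as the paper: for the nontrivial direction, both you and the paper apply Proposition \ref{tvrz_clweakembedd} twice to produce $\varphi$ and $\psi$, then invoke its uniqueness clause to conclude $\psi\circ\varphi = \1_{\cS}$ and $\varphi\circ\psi = \1_{\cS'}$. The paper dismisses the ``only if'' direction as trivial, whereas you spell it out via the injectivity of $\varphi^{\ast}$; this is a minor expository difference, not a different strategy.
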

\begin{proof}
One direction is trivial. Hence assume that $\J_{\cS} = \J_{\cS'}$. Clearly $\iota^{\ast}_{M}(\J_{\cS'}(M)) = 0$. By Proposition \ref{tvrz_clweakembedd}, there thus exists a unique graded smooth map $\varphi: \cS \rightarrow \cS'$, such that $\iota' \circ \varphi = \iota$. Replacing the role of $\cS$ and $\cS'$, one obtains $\psi: \cS' \rightarrow \cS$, such that $\iota \circ \psi = \iota'$. Finally, we find $\iota' \circ (\varphi \circ \psi) = \iota'$, and the uniqueness claim of Proposition \ref{tvrz_clweakembedd} implies $\varphi \circ \psi = \1_{\cS'}$. The claim $\psi \circ \varphi = \1_{\cS}$ is proved in the same way. This proves that $(\cS,\iota)$ and $(\cS',\iota')$ are equivalent. 
\end{proof}

One can ask which sheaves of ideals in $\C^{\infty}_{\M}$ correspond to kernel subsheaves of closed embedded submanifolds. Not surprisingly, as Proposition \ref{tvrz_JSprops} suggests, these are sheaves of regular ideals discussed in the previous subsection. 

\begin{rem} \label{rem_vardimnuisance}
There is a minor technical nuisance with the following statement. Throughout this paper, we usually only consider graded manifolds of a given graded dimension. In order to avoid unnecessary assumptions, we have to relax this requirement. It turns out that the resulting graded dimension of the submanifold $(\cS,\iota)$ can be different for each connected component of the underlying topological (sub)space $S$. 
\end{rem}
\begin{theorem} \label{thm_idealissubmanifold}
Let $(\cS,\iota)$ be a closed embedded submanifold. Then $\J_{\cS} = \ker(\iota^{\ast})$ is a sheaf of regular ideals. Conversely, to any sheaf of regular ideals $\I$, there exists a closed embedded submanifold $(\cS,\iota)$, such that $\I = \J_{\cS}$. By Lemma \ref{lem_equivalentsubmafolds}, it is unique up to an equivalence.
\end{theorem}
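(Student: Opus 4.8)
This is essentially already done. By Lemma \ref{lem_regularideals}-\textit{3)} the ideal $\J_{\cS}(M)$ automatically satisfies condition $(ii)$ of Definition \ref{def_regularideal}, since $\J_{\cS}$ is a sheaf of ideals. For condition $(i)$, let $m \in M$ with $\J_{\cS}(M) \subseteq \J^{m}_{\M}(M)$. By Proposition \ref{tvrz_JSprops}-$(i)$ this forces $m \in \ul{\iota}(S)$, and then Proposition \ref{tvrz_JSprops}-$(ii)$ provides a finite graded set $\G \subseteq \J_{\cS}(M)$ of functions independent at $m$ together with $U \in \Op_{m}(M)$ such that $\G|_{U'}$ generates $\J_{\cS}(U')$ for all $U' \in \Op(U)$. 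By the equivalence in Lemma \ref{lem_regularideals}-\textit{2)} this is precisely condition $(i)$. Hence $\J_{\cS}(M)$ is a regular ideal, i.e. $\J_{\cS}$ is a sheaf of regular ideals.

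\textbf{Converse direction: the underlying space and structure sheaf.} Given a sheaf of regular ideals $\I \subseteq \C^{\infty}_{\M}$, I would define the \emph{vanishing locus}
\[
S := \{\, m \in M \;|\; \I_{m} \neq \C^{\infty}_{\M,m} \,\} = \{\, m \in M \;|\; \I_{m} \subseteq \frJ(\C^{\infty}_{\M,m}) \,\},
\]
the two descriptions agreeing by Lemma \ref{lem_regularideals}-\textit{4)}. Its complement is open: if $\I_{m} = \C^{\infty}_{\M,m}$ then $1 \in \I_{m}$, so a representative $f \in \I(W)$ satisfies $f = 1$ on some smaller neighbourhood, whence $\I_{m'} = \C^{\infty}_{\M,m'}$ there. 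Thus $S$ is closed, and with the subspace topology it is second countable Hausdorff. Let $\ul{\iota} \colon S \hookrightarrow M$ be the inclusion. The quotient presheaf $\C^{\infty}_{\M}/\I$ is a sheaf by Proposition \ref{eq_quotientpresheafissheaf}, and I set $\C^{\infty}_{\cS} := \ul{\iota}^{-1}(\C^{\infty}_{\M}/\I)$. Its stalk at $s \in S$ is $\C^{\infty}_{\M,\ul{\iota}(s)}/\I_{\ul{\iota}(s)}$, a quotient of a local graded ring by an ideal contained in its Jacobson radical, hence again a local graded ring (compare Corollary \ref{cor_isoringtolocalislocal}); so $\cS := (S,\C^{\infty}_{\cS}) \in \gLRS$. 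The morphism $\iota = (\ul{\iota},\iota^{\ast})$ is built from the quotient map $\C^{\infty}_{\M} \to \C^{\infty}_{\M}/\I$ followed by the adjunction unit $\C^{\infty}_{\M}/\I \to \ul{\iota}_{\ast}\ul{\iota}^{-1}(\C^{\infty}_{\M}/\I)$.

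\textbf{Local normal form.} The crux is to show $\cS$ is a graded manifold and $\iota$ a closed embedding by producing adapted charts. Fix $m \in S$; then $\I(M) \subseteq \J^{m}_{\M}(M)$, and regularity gives a finite graded set $\G = \{g_{1},\dots,g_{k}\} \subseteq \I(M)$ of functions independent at $m$ with $\G|_{U'}$ generating $\I(U')$ for all $U' \in \Op(U)$, some $U \in \Op_{m}(M)$. Since $\I_{m}\subseteq\frJ(\C^{\infty}_{\M,m})$, each $g_{i}$ vanishes at $m$. Via Corollary \ref{cor_functionsasmaps} the $g_{i}$ assemble into a graded smooth map $\psi \colon \M|_{U} \to \hat{V}^{(k_{j})}$ into a graded domain whose coordinate functions are the $\bby^{i}$ corresponding to the $g_{i}$, and $\ul{\psi}(m)=0$; independence of the $g_{i}$ at $m$ says exactly that $(\dr g_{i})_{m}$ are independent, i.e. $\grk_{m}(\psi)$ is maximal and $\psi$ is a submersion at $m$ (Theorem \ref{thm_submersion}). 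The graded submersion theorem then yields graded local charts $(U',\varphi)$ for $\M$ with coordinate functions $(\bbz^{A})$ in which $\psi^{\ast}(\bby^{i}) = \bbz^{i}$ for the appropriate indices; relabelling, a subset of the coordinate functions of $\M$ generates $\I$ on $U'$ and on every open subset of $U'$ (Remark \ref{rem_generatesonUonsmaller}). Consequently $S \cap U'$ is the common zero locus of those coordinate functions, $\C^{\infty}_{\M}/\I$ restricted to $U'$ is the structure sheaf of the graded subdomain obtained by deleting them, and $\C^{\infty}_{\cS}|_{S\cap U'}$ is exactly the structure sheaf of that graded domain — so $(S\cap U', \varphi|_{\dots})$ is a graded local chart for $\cS$, and $\iota$ in these charts is the standard embedding of Theorem \ref{thm_immersion}-$(iii)$. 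This shows $\cS$ is a graded manifold (of possibly varying graded dimension across connected components of $S$, cf. Remark \ref{rem_vardimnuisance}) and $\iota$ is an immersion; that $\ul{\iota}$ is a closed topological embedding was established above, so $(\cS,\iota)$ is a closed embedded submanifold.

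\textbf{Identifying the kernel, and uniqueness.} It remains to check $\ker(\iota^{\ast}) = \I$. The inclusion $\I \subseteq \ker(\iota^{\ast})$ is immediate from the construction. For the reverse, let $U \in \Op(M)$ and $f \in \C^{\infty}_{\M}(U)$ with $\iota^{\ast}_{U}(f) = 0$. Near a point $m \notin S$ one has $\I_{m} = \C^{\infty}_{\M,m}$ so $[f]_{m} \in \I_{m}$ trivially; near $m \in S$ the local normal form above shows $f$ lies in the ideal generated by the deleted coordinate functions, i.e. $[f]_{m} \in \I_{m}$. Hence $f$ is locally in $\I$, and since $\I$ is a \emph{sheaf} of ideals, $f \in \I(U)$. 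Thus $\I = \J_{\cS}$. Finally, if $(\cS,\iota)$ and $(\cS',\iota')$ are two closed embedded submanifolds with $\J_{\cS} = \I = \J_{\cS'}$, then Lemma \ref{lem_equivalentsubmafolds} gives a unique graded diffeomorphism intertwining them, so the submanifold produced from $\I$ is unique up to equivalence. The main obstacle is the local normal form paragraph — extracting the standard embedded shape from the submersion theorem and then verifying precisely that the kernel of "quotient-then-restrict" is $\I$ and not something larger; once that is in place the rest is bookkeeping with the tools already developed.
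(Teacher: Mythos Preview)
Your proposal is correct and follows essentially the same approach as the paper: define $S$ as the vanishing locus of $\I$, set $\C^{\infty}_{\cS} := \ul{\iota}^{-1}(\C^{\infty}_{\M}/\I)$, use the submersion theorem on the map assembled from the generating set $\G$ to obtain adapted charts, and then verify $\ker(\iota^{\ast}) = \I$ locally. The paper fills in exactly the technical details you flag at the end --- the slice-chart argument for $S \cap U'$, the explicit construction of the chart inverse via a factorization lemma (Lemma \ref{lem_factorthroughI}), and the use of Lemma \ref{lem_fdecompasgenerators} to show the kernel is no larger than $\I$ --- but your outline captures all the essential steps.
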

\begin{proof}
When $(\cS,\iota)$ is a closed embedded submanifold, $\J_{\cS}$ is regular by Proposition \ref{tvrz_JSprops}.

Hence assume  that $\I$ is a regular sheaf of ideals. We will prove the statement in several steps. Some technical details were moved to the Appendix, see Theorem \ref{thm_ap_idealissubmanifold}.
\begin{enumerate}[(a)]
\item \textbf{Underlying embedded submanifold}: Let us define the subset $S$ of $M$ as 
\begin{equation}
S := \{ m \in M \; | \; \I(M) \subseteq \J^{m}_{\M}(M) \} \subseteq M.
\end{equation}
This is a closed subset of $M$, since it can be equivalently defined as 
\begin{equation}
S = \hspace{-3mm} \bigcap_{f \in \I(M)} \ul{f}^{-1}(0). 
\end{equation}
Let $\ul{\iota}: S \rightarrow M$ be the canonical inclusion. Let $s \in S$. By assumption, there exists $U \in \Op_{s}(M)$ and a finite graded set $\G \subseteq \I(M)$, such that $\I(V) = \<\G|_{V}\>$ for all $V \in \Op(U)$, and $\G$ consists of functions independent at $s$. For each $j \in \Z$, write 
\begin{equation}
\G_{j}:= \{ \bbu^{A_{j}}_{(j)} \}_{A_{j} = s_{j}+1}^{n_{j}},
\end{equation}
where $(n_{j})_{j} := \gdim(\M)$ and $s_{j} := n_{j} - \# \G_{j}$. Consider the graded domain $(\R^{n_{0}-s_{0}})^{(n_{j}-s_{j})}$ with standard coordinate functions $(\bby^{K_{j}}_{(j)})_{K_{j}=1}^{n_{j}-s_{j}}$, for each degree $j \in \Z$. Set
\begin{equation}
\phi^{\ast}_{\R^{n_{0}- s_{0}}}( \bby^{K_{j}}_{(j)}) := \bbu^{K_{j} + s_{j}}_{(j)},
\end{equation}
for each $j \in \Z$ and $K_{j} \in \{1,\dots,n_{j}-s_{j}\}$. By Theorem \ref{thm_globaldomain}, this uniquely determines the graded smooth map $\phi: \M \rightarrow (\R^{n_{0}-s_{0}})^{(n_{j}-s_{j})}$. Note that its underlying map takes the form 
\begin{equation}
\ul{\phi}(m) = ( \bbu^{s_{0}+1}_{(0)}(m), \dots, \bbu^{n_{0}}_{(0)}(m)),
\end{equation}
for all $m \in M$. In particular, observe that $\ul{\phi}(s) = (0,\dots,0)$. This follows from the fact that $\I(M) \subseteq \J^{s}_{\M}(M)$. Using the formula (\ref{eq_differentialpullback}) and the assumed independence of functions in $\G$ at $s$, we see that $\phi$ is a submersion at $s$. By Proposition \ref{thm_submersion}, there exists a graded local chart $(U',\varphi)$ around $s$, together with open cubes $\hat{V} \subseteq \R^{n_{0}-s_{0}}$ and $\hat{W} \subseteq \R^{s_{0}}$, such that 
\begin{enumerate}[(i)]
\item $\ul{\phi}(U') \subseteq \hat{V}$, $\ul{\varphi}(U') = \hat{W} \times \hat{V}$, and $\ul{\varphi}(s) = (0,0)$.
\item For each $j \in \Z$, one can reorder the degree $j \in \Z$ coordinate functions $(\bbz^{A_{j}}_{(j)})_{A_{j}=1}^{n_{j}}$ induced by $(U',\varphi)$, so that for each $K_{j} \in \{1,\dots,n_{j}-s_{j}\}$, one has 
\begin{equation}
\phi^{\ast}_{\hat{V}}( \bby^{K_{j}}_{(j)})|_{U'} = \bbz^{K_{j}+s_{j}}_{(j)}.
\end{equation}
\end{enumerate}
Without the loss of generality, we may assume that $U = U'$. For each $V \in \Op(U)$, we have 
\begin{equation}
\bbz^{A_{j}}_{(j)}|_{V} = \phi^{\ast}_{\hat{V}}( \bby^{A_{j}-s_{j}}_{(j)})|_{V} = \bbu^{A_{j}}_{(j)}|_{V},
\end{equation}
for all $j \in \Z$ and $A_{j} \in \{s_{j}+1, \dots, n_{j}\}$. We claim that $(U,\ul{\varphi})$ is a $s_{0}$-slice chart for $S$, that is 
\begin{equation} \label{eq_SbyIslicechart}
\ul{\varphi}(U \cap S) = \ul{\varphi}(U) \cap (\R^{s_{0}} \times \{0\}).
\end{equation}
First, let $s' \in U \cap S$. Then $\I(M) \subseteq \J^{s'}_{\M}(M)$ and thus
\begin{equation}
\begin{split}
\ul{\varphi}(s') = & \ ( \bbz^{1}_{(0)}(s'), \dots, \bbz^{s_{0}}_{(0)}(s'), \bbz^{s_{0}+1}_{(0)}(s'), \dots, \bbz^{n_{0}}_{(0)}(s')) \\
= & \ ( \bbz^{1}_{(0)}(s'), \dots, \bbz^{s_{0}}_{(0)}(s'), \bbu^{s_{0}+1}_{(0)}(s'), \dots, \bbu^{n_{0}}_{(0)}(s')) \\
= & \ ( \bbz^{1}_{(0)}(s'), \dots, \bbz^{s_{0}}_{(0)}(s'), 0, \dots, 0).
\end{split}
\end{equation}
This proves the inclusion $\subseteq$. Conversely, suppose $s' \in U$ is a point, such that $\ul{\varphi}(s') \in \R^{s_{0}} \times \{0\}$, that is $\bbu^{A_{0}}_{(0)}(s') = 0$ for all $A_{0} \in \{s_{0}+1,\dots,n_{0}\}$. To show that $s' \in S$, we must argue that $\I(M) \subseteq \J^{s'}_{\M}(M)$. Let $f \in \I(M)$ be arbitrary. Then $f|_{U} \in \< \G|_{U} \>$, whence 
\begin{equation}
f|_{U} = \sum_{j \in \Z} \sum_{A_{j}=s_{j}+1}^{n_{j}} f^{A_{j}}_{(j)} \cdot \bbu^{A_{j}}_{(j)}|_{U},
\end{equation}
for some functions $f^{A_{j}}_{(j)} \in \C^{\infty}_{\M}(U)$. Evaluating this at $s'$ gives $f(s') = 0$, that is $f \in \J^{s'}_{\M}(M)$. This proves the inclusion $\supseteq$ of (\ref{eq_SbyIslicechart}). 

For each $s \in S$, we have thus found an $s_{0}$-slice chart $(U,\ul{\varphi})$ for $S$ around $s$. Note that the dimension $s_{0}$ can be different for each $s$, but it is the same for all $s$ from the single connected component of $S$. With this and Remark \ref{rem_vardimnuisance} in mind, we conclude that $(S,\ul{\iota})$ becomes a closed embedded submanifold of $M$. 
\item \textbf{Structure sheaf and the embedding}: Having the underlying closed embedded submanifold $(S,\ul{\iota})$, one may look at Proposition \ref{tvrz_JSprops}-$(iii)$. This leads us to \textit{define} $\C^{\infty}_{\cS}$ to be the sheaf
\begin{equation}
\C^{\infty}_{\cS} := \ul{\iota}^{-1}( \C^{\infty}_{\M} / \I),
\end{equation}
see the part (a) of the proof of Proposition \ref{tvrz_ap_pullbackVB}. In fact, the assignment $\F \mapsto \ul{\iota}^{-1}\F$ defines a functor from $\Sh(M,\gcAs)$ to $\Sh(S,\gcAs)$. This functor is adjoint to the pushforward sheaf functor $\G \mapsto \ul{\iota}_{\ast} \G$. In other words, there is a bijection 
\begin{equation}
\mu_{\F,\G}: \Sh_{0}( \ul{\iota}^{-1}\F, \G) \rightarrow \Sh_{0}(\F, \ul{\iota}_{\ast} \G),
\end{equation}
natural in $\F$ and $\G$, where $\Sh_{0}$ denotes the collection of all sheaf morphisms between the respective sheaves. Now, it can be shown that if $(M,\C^{\infty}_{\M})$ is a graded locally ringed space, then so is $(M, \C^{\infty}_{\M}/\I)$, and consequently also $(S, \C^{\infty}_{\cS})$. In fact, for each $s \in S$, there are canonical identifications $\C^{\infty}_{\cS,s} \cong (\C^{\infty}_{\M} / \I)_{s} \cong \C^{\infty}_{\M,s} / \I_{s}$ and the Jacobson radical $\frJ(\C^{\infty}_{\cS,s})$ then corresponds to $\frJ(\C^{\infty}_{\M,s}) / \I_{s}$. For details, see Theorem \ref{thm_ap_idealissubmanifold}. 

Recall that for any sheaf of ideals $\I$, the quotient $\C^{\infty}_{\M} / \I$ is a sheaf, see Proposition \ref{eq_quotientpresheafissheaf}. The canonical quotient map can be viewed as a sheaf morphism $\natural: \C^{\infty}_{\M} \rightarrow \C^{\infty}_{\M} / \I$. Applying the functor $\ul{\iota}^{-1}$, we obtain $\ul{\iota}^{-1}(\natural): \ul{\iota}^{-1}(\C^{\infty}_{\M}) \rightarrow \C^{\infty}_{\cS}$. Finally, let 
\begin{equation} \label{eq_iotaast}
\iota^{\ast} := \mu_{ \C^{\infty}_{\M}, \C^{\infty}_{\cS}}( \ul{\iota}^{-1}(\natural)): \C^{\infty}_{\M} \rightarrow \ul{\iota}_{\ast} \C^{\infty}_{\cS}.
\end{equation}
We claim that $\iota := (\ul{\iota}, \iota^{\ast})$ becomes a morphism of graded locally ringed spaces. Indeed, one only has to verify the property (iii) in Definition \ref{def_gLRS}. But for each $s \in S$, with respect to the above identifications, the graded algebra morphism $\iota_{(s)}: \C^{\infty}_{\M,s} \rightarrow \C^{\infty}_{\cS,s} \cong \C^{\infty}_{\M,s} / \I_{s}$ corresponds to the canonical quotient map, hence $\iota_{(s)}( \frJ(\C^{\infty}_{\M,s}) = \frJ( \C^{\infty}_{\M,s}) / \I_{s} \cong \frJ(\C^{\infty}_{\cS,s})$. Again, the details are discussed in Theorem \ref{thm_ap_idealissubmanifold}. 

In conclusion, we have constructed a graded locally ringed space $(S, \C^{\infty}_{\cS})$ together with a morphism of graded locally ringed spaces $\iota: (S, \C^{\infty}_{\cS}) \rightarrow (M, \C^{\infty}_{\M})$. 
\item \textbf{Graded smooth atlas for $\cS$}: Let $s \in S$ be fixed. In the part $(a)$, we have constructed a particular graded local chart $(U,\varphi)$ for $\M$ around $s$. We have 
\begin{equation}
\varphi: \M|_{U} \rightarrow (\hat{W} \times \hat{V})^{(n_{j})}.
\end{equation}
Let $\hat{\pi}_{1}: (\hat{W} \times \hat{V})^{(n_{j})} \rightarrow \hat{W}^{(s_{j})}$ be the canonical projection. Let us consider a composition
\begin{equation}
\psi := \hat{\pi}_{1} \circ \varphi \circ \iota|_{U \cap S}: \cS|_{U \cap S} \rightarrow \hat{W}^{(s_{j})}
\end{equation}
of morphisms of graded locally ringed spaces. We claim that this is an isomorphism, that is $(U \cap S, \psi)$ becomes a graded local chart for $\cS$ around $s$. 

We do so by constructing its inverse $\chi: \hat{W}^{(s_{j})} \rightarrow \cS|_{U \cap S}$. Let $\hat{\iota}_{0}: \hat{W}^{(s_{j})} \rightarrow (\hat{W} \times \hat{V})^{(n_{j})}$ denote the ``zero section'', a graded smooth map defined as in Lemma \ref{lem_fdecompasgenerators}. Let 
\begin{equation}
\chi_{0} := \varphi^{-1} \circ \hat{\iota}_{0}: \hat{W}^{(s_{j})} \rightarrow \M|_{U}.
\end{equation}
We claim that $\ul{\chi_{0}}( \hat{W}) \subseteq U \cap S$ and $\chi_{0}^{\ast}(\I|_{U}) = 0$. For each $w \in \hat{W}$, we have $\ul{\hat{\iota}_{0}}(w) = (w,0) \in \ul{\varphi}(U) \cap (\R^{s_{0}} \times \{0\}) = \ul{\varphi}(U \cap S)$. Hence $\ul{\chi_{0}}(w) \in U \cap S$. Next, for any $V \in \Op(U)$, we have $\I(V) = \< \G|_{V} \>$ and it follows from the construction of the graded local chart $(U,\varphi)$ that $(\varphi^{-1})^{\ast}_{V}(\<\G|_{V}\>) \subseteq \ker( (\hat{\iota}_{0})^{\ast}_{\ul{\varphi}(V)})$. Thus $(\chi_{0})^{\ast}_{V}( \I(V)) = 0$. 

Now, we shall employ the following observation:
\begin{lemma} \label{lem_factorthroughI}
Let $\cN = (N, \O_{N})$ be any graded locally ringed space. Suppose $\chi_{0}: \cN \rightarrow \M|_{U}$ is a morphism of graded locally ringed spaces, such that $\ul{\chi_{0}}(N) \subseteq U \cap S$ and $\chi_{0}^{\ast}(\I|_{U}) = 0$. 

Then there exists a unique morphism of graded locally ringed spaces $\chi: \cN \rightarrow \cS|_{U \cap S}$, such that 
\begin{equation} \iota|_{U \cap S} \circ \chi = \chi_{0}. \end{equation}
\end{lemma}
For the proof, see Theorem \ref{thm_ap_idealissubmanifold}. Applying the lemma to $\chi_{0}: \hat{W}^{(s_{j})} \rightarrow \M|_{U}$, we obtain a unique morphism of graded locally ringed spaces $\chi: \hat{W}^{(s_{j})} \rightarrow \cS|_{U \cap S}$ satisfying $\iota|_{U \cap S} \circ \chi = \chi_{0}$. It remains to prove that $\chi$ is the inverse to $\psi$. One direction is immediate, one finds
\begin{equation}
\psi \circ \chi = (\hat{\pi}_{1} \circ \varphi \circ \iota|_{U \cap S}) \circ \chi = \hat{\pi}_{1} \circ \varphi \circ (\varphi^{-1} \circ \hat{\iota}_{0}) = \hat{\pi}_{1} \circ \hat{\iota}_{0} = \1_{\hat{W}^{(s_{j})}}. 
\end{equation}
The other direction is a bit more involved. It suffices to prove that $\iota|_{U \cap S} \circ (\chi \circ \psi) = \iota|_{U \cap S}$. Indeed, the uniqueness claim of Lemma \ref{lem_factorthroughI} then ensures that $\chi \circ \psi = \1_{\cS|_{U \cap S}}$. We have
\begin{equation}
\iota|_{U \cap S} \circ (\chi \circ \psi) = \chi_{0} \circ \psi = \varphi^{-1} \circ \hat{\iota}_{0} \circ \hat{\pi}_{1} \circ \varphi \circ \iota|_{U \cap S}. 
\end{equation} 
Consequently, we must prove that $(\hat{\iota}_{0} \circ \hat{\pi}_{1}) \circ \varphi \circ \iota|_{U \cap S} = \varphi \circ \iota|_{U \cap S}$. 

Let $\hat{X} \in \Op(\hat{V} \times \hat{W})$ and $f \in \C^{\infty}_{(n_{j})}(\hat{X})$ be arbitrary. Let 
\begin{equation}
\hat{X}' := (\ul{\hat{\iota}_{0}} \circ \ul{ \hat{\pi}_{1}})^{-1}(\hat{X}) = \{ (w,v) \in \hat{W} \times \hat{V} \; | \; (w,0) \in \hat{X} \}.
\end{equation}
Write $X := \ul{\varphi}^{-1}(\hat{X})$ and $X' := \ul{\varphi}^{-1}(\hat{X}')$. We thus need to prove that 
\begin{equation}
\iota^{\ast}_{X}( \varphi^{\ast}_{\hat{X}}(f)) = \iota^{\ast}_{X'}( \varphi^{\ast}_{\hat{X}'}( (\hat{\iota}_{0} \circ \hat{\pi}_{1})^{\ast}_{\hat{X}}(f)). 
\end{equation}
Now, let $Y := X \cap X'$ and $\hat{Y} := \hat{X} \cap \hat{X}'$. Observe that $X \cap S = Y \cap S = X' \cap S$. This follows from the fact that $(U,\ul{\varphi})$ is an $s_{0}$-slice local chart for $S$. By (trivially) restricting the both sides to $Y \cap S$ and using the naturality of the involved morphisms, we obtain the equation 
\begin{equation}
(\iota^{\ast}_{Y} \circ \varphi^{\ast}_{\hat{Y}})( f|_{\hat{Y}} - (\hat{\iota}_{0} \circ \hat{\pi}_{1})^{\ast}_{\hat{X}}(f)|_{\hat{Y}}) = 0. 
\end{equation}
Let $g := f|_{\hat{Y}} - (\hat{\iota}_{0} \circ \hat{\pi}_{1})^{\ast}_{\hat{X}}(f)|_{\hat{Y}} \in \C^{\infty}_{(n_{j})}(\hat{Y})$. It is easy to see that $(\hat{\iota}_{0})^{\ast}_{\hat{Y}}(g) = 0$. But it then follows from Lemma \ref{lem_fdecompasgenerators} that $g$ can be written as a finite sum
\begin{equation}
g = \sum_{j \in \Z} \sum_{A_{j} = s_{j}+1}^{n_{j}} f^{A_{j}}_{(j)} \cdot \bbz^{A_{j}}_{(j)},
\end{equation}
for some functions $f^{A_{j}}_{(j)} \in \C^{\infty}_{(n_{j})}(\hat{Y})$. But this proves that $\varphi^{\ast}_{\hat{Y}}(g) \in \< \G|_{Y} \> = \I(Y)$ and thus $(\iota^{\ast}_{Y} \circ \varphi^{\ast}_{\hat{Y}})(g) = 0$, as was to  be proved. Hence $\chi \circ \psi = \1_{\cS|_{U \cap S}}$ and we have just constructed a graded local chart $(U \cap S, \psi)$ for $\cS$ around $s$. Taking Remark \ref{rem_vardimnuisance} into account, this proves that $\cS := (S, \C^{\infty}_{\cS})$ is a graded manifold. 

We can now prove that $\iota: \cS \rightarrow \M$ is an immersion. For each $s \in S$, we have the local charts $(U,\varphi)$ for $\M$ and $(U \cap S, \psi)$ for $\cS$, constructed as above. It follows that the local representative of $\iota$ with these two graded charts is precisely the zero section $\hat{\iota}_{0}: \hat{W}^{(s_{j})} \rightarrow (\hat{V} \times \hat{W})^{(n_{j})}$, which is obviously an immersion. 

\item \textbf{Comparing the two sheaves of ideals}: We have to show that $\I = \J_{\cS}$. The inclusion $\I \subseteq \J_{\cS}$ follows from the construction. As both $\I$ and $\J_{\cS}$ are sheaves of ideals, it suffices to prove that to each $m \in M$, there is $U \in \Op_{m}(M)$, such that $\J_{\cS}|_{U} \subseteq \I|_{U}$. 

First, let $m \in M - S$. This means that there is some $U \in \Op_{m}(M)$ and $f \in \I(U)$, such that $f(m) \neq 0$. By shrinking $U$ if necessary, we may assume that $f$ has the multiplicative inverse, see Proposition \ref{tvrz_invertibility}. But then $\I|_{U} = \C^{\infty}_{\M}|_{U}$ and the inclusion $\J_{\cS}|_{U} \subseteq \I|_{U}$ is obvious. 

Hence let $s \in S$. Let $(U,\varphi)$ be the graded local chart constructed as above. For every $V \in \Op(U)$ and any $f \in \J_{\cS}(V)$, its local representative $\hat{f} \in \C^{\infty}_{(n_{j})}(\ul{\varphi}(V))$ satisfies $(\hat{\iota}_{0})^{\ast}_{\ul{\varphi}(V)}(\hat{f}) = 0$. It follows from Lemma \ref{lem_fdecompasgenerators} that $\hat{f}$ can be written as a finite sum 
\begin{equation} \hat{f} = \sum_{j \in \Z} \sum_{A_{j}=s_{j}+1}^{n_{j}} f^{A_{j}}_{(j)} \cdot \bbz^{A_{j}}_{(j)},
\end{equation}
for some functions $f^{A_{j}}_{(j)} \in \C^{\infty}_{(n_{j})}( \ul{\varphi}(V))$. Consequently, $f = \varphi^{\ast}_{\ul{\varphi}(V)}(\hat{f}) \in \< \G|_{V} \> = \I(V)$. 
\end{enumerate}
This finishes the proof. 
\end{proof}
Let us now examine a few examples of closed embedded submanifolds and their corresponding regular sheaves of ideals. 
\begin{example} \label{ex_singlepointsubmafold}
In Example \ref{ex_underlyingissubmanifold}, we have argued that for every graded manifold $\M$, we may view the underlying manifold as a closed embedded submanifold $(M,i_{M})$. By definition, the corresponding sheaf of regular ideals is $\J^{\pg}_{\M}$, see (\ref{eq_sheaofidealspurelygraded}). See also Example \ref{ex_regid1}. 
\end{example}
\begin{example}
Let $\M$ be any graded manifold, and fix $a \in M$. In Example \ref{ex_regid2}, we have shown that $\J^{a}_{\M}$ is a sheaf of regular ideals. We have also shown that $S = \{ m \in M \; | \; \J^{a}_{\M}(M) \subseteq \J^{m}_{\M}(M) \} = \{a \}$. It is easy to see that the corresponding embedded submanifold is $(\{ a \}, \kappa_{a} )$, where $\{ a \}$ is a singleton manifold, viewed as a graded manifold, and $\kappa_{a}: \{ a \} \rightarrow \M$ is the constant mapping valued at $a \in M$, see Example \ref{ex_constantmapping}.
\end{example}

\begin{example} \label{ex_evenpart}
First, let us consider a graded domain $U^{(n_{j})}$, for some $U \in \Op(\R^{n_{0}})$. For every $V \in \Op(U)$, consider an ideal $\J^{\odd}_{(n_{j})}(V) := \< \{ \xi_{\mu} \; | \; |\xi_{\mu}| \text{ is odd} \} \>$. In other words, $f \in \J^{\odd}_{(n_{j})}(V)$, if each of its terms in formal power series in variables $(\xi_{\mu})_{\mu=1}^{n_{\ast}}$ contains some variable of odd degree. 

It is easy to see that $\J^{\odd}_{(n_{j})} \subseteq \C^{\infty}_{(n_{j})}$ is a sheaf of ideals. Moreover, for any graded morphism $\varphi: U^{(n_{j})} \rightarrow W^{(m_{j})}$, and any $V \in \Op(W)$, one has $\varphi^{\ast}_{V}( \J^{\odd}_{(m_{j})}(V)) \subseteq \J^{\odd}_{(n_{j})}( \ul{\varphi}^{-1}(V))$. 

Now, let $\M$ be any graded manifold. Let $\A = \{ (U_{\alpha}, \varphi_{\alpha}) \}_{\alpha \in I}$ be a graded smooth atlas on $\M$. For each $U \in \Op(M)$, define the graded subset
\begin{equation}
\J^{\odd}_{\M}(U) := \{ f \in \C^{\infty}_{\M}(U) \; | \; \hat{f}_{\alpha} \in \J^{\odd}_{(n_{j})}( \ul{\varphi}_{\alpha}(U \cap U_{\alpha})) \text{ for all } \alpha \in I \}.
\end{equation}
It is easy to see that this defines a sheaf of ideals $\J^{\odd}_{\M}$ in $\C^{\infty}_{\M}$.

Now, one has $\J^{\odd}_{\M} \subseteq \J^{m}_{\M}$ for any $m \in M$. For each $m \in M$, the open set $U \in \Op_{m}(M)$ and the generating set $\G$ are constructed as in Example \ref{ex_regid1}, except that we only use the coordinate functions of odd degree. This shows that $\J^{\odd}_{\M} \subseteq \C^{\infty}_{\M}$ is a regular sheaf of ideals. 

Consequently, we obtain a closed embedded submanifold $(\M_{0}, \iota_{0})$, called the \textbf{even part of $\M$}. Its underlying manifold is $M$, one has $\C^{\infty}_{\M_{0}} = \C^{\infty}_{\M} / \J^{\odd}_{\M}$ and $\iota_{0} = (\1_{M}, \natural)$, where $\natural: \C^{\infty}_{\M} \rightarrow \C^{\infty}_{\M}/\J^{\odd}_{\M}$ is the canonical quotient map. 
\end{example}
\subsection{Transversality and inverse images}
It is useful to introduce the concept of two transversal graded smooth maps, allowing us to construct ``inverse images'' of submanifolds. Most of the definitions mimic standard notions in differential geometry. 

\begin{definice}
Let $\M$, $\cN$ and $\cS$ be graded manifolds. Let $\phi: \cN \rightarrow \M$ and $\psi: \cS \rightarrow \M$ be graded smooth maps. We say that $\phi$ and $\psi$ are \textbf{transversal maps}, if for every $m \in \ul{\phi}(N) \cap \ul{\psi}(S)$ and every $(n,s) \in \ul{\phi}^{-1}(m) \times \ul{\psi}^{-1}(m)$, one has 
\begin{equation}
T_{m}\M = (T_{n}\phi)( T_{n}\cN) + (T_{s} \psi)(T_{s} \cS). 
\end{equation}
We write $\phi \pitchfork \psi$. If $(\cS,\iota)$ is an immersed submanifold of $\cN$, we say that $\phi: \cN \rightarrow \M$ is \textbf{transversal to the submanifold $\cS$} and write $\phi \pitchfork \cS$, if $\phi \pitchfork \iota$. Finally, we say that $(\cS,\iota)$ and $(\cS',\iota')$ are \textbf{transversal submanifolds} and write $\cS \pitchfork \cS'$, if $\iota \pitchfork \iota'$. 
\end{definice}

\begin{rem}
Note that $\phi \pitchfork \psi$ implies $|\phi| \pitchfork |\psi|$. This follows from the fact that $(T_{m}\M)_{0} \cong T_{m}M$ and the linear map $(T_{m}\phi)_{0}$ coincides with $T_{m} \ul{\phi}$, see Example \ref{ex_diffgradeddomains}. 
\end{rem}

\begin{tvrz} \label{tvrz_tangenttosubmafold}
Let $(\cS,\iota)$ be an embedded submanifold of $\M$. Let $\J_{\cS} = \ker(\iota^{\ast})$. Then 
\begin{equation}
T_{s} \cS = \{ v \in T_{s}\M \; | \; v([f]_{s}) = 0 \text{ for all } [f]_{s} \in (\J_{\cS})_{s} \},
\end{equation}
for every $s \in S$. 
\end{tvrz}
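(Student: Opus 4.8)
The plan is to prove the two inclusions separately, using the local immersion theorem to reduce everything to a computation in a slice chart. Write $\iota: \cS \rightarrow \M$ and fix $s \in S$. For the inclusion $\subseteq$, let $v \in T_{s}\cS$, so $v = (T_{s}\iota)(w)$ for some $w \in T_{s}\cS$ (abusing notation: I mean that under the identification $T_{s}\cS \subseteq T_{s}\M$ the vector $v$ is the image of some genuine tangent vector $w$ to $\cS$ at $s$). For any $[f]_{s} \in (\J_{\cS})_{s}$, represent it by $f \in \J_{\cS}(U)$ for some $U \in \Op_{\ul{\iota}(s)}(M)$; then $\iota^{\ast}_{U}(f) = 0$ by definition of $\J_{\cS} = \ker(\iota^{\ast})$, and using the formula $v([f]_{s}) = [(T_{s}\iota)(w)]([f]_{s}) = w([\iota^{\ast}_{U}(f)]_{s})$ — this is exactly (\ref{eq_differentialformula}) together with the definition (\ref{eq_differentialdef}) of the differential — we get $v([f]_{s}) = w(0) = 0$. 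This is the easy direction and it does not even need the embedded hypothesis.

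For the reverse inclusion I would argue by a dimension count after producing enough elements of $(\J_{\cS})_{s}$. Since $\iota$ is an immersion, apply Theorem \ref{thm_immersion}-$(iii)$: there are graded local charts $(V,\varphi)$ for $\cS$ around $s$ and $(W,\psi)$ for $\M$ around $\ul{\iota}(s)$, with coordinate functions $(\bbz^{A})_{A=1}^{n}$ on $\M$ (organized by degree as $(\bbz^{A_{j}}_{(j)})_{A_{j}=1}^{n_j}$) and $(\bby^{K})$ on $\cS$, such that the local representative $\hat{\iota}$ satisfies $\hat{\iota}^{\ast}(\bbz^{A_j}_{(j)}) = \bby^{A_j}_{(j)}$ for $A_j \le s_j$ and $\hat{\iota}^{\ast}(\bbz^{A_j}_{(j)}) = 0$ for $A_j > s_j$, where $(s_j)_{j} = \gdim(\cS)$. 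Because $(\cS,\iota)$ is \emph{embedded}, I can (as in the proof of Proposition \ref{tvrz_embeddinghassurjectivepullback}) shrink $W$ so that $\ul{\iota}(V) = W \cap \ul{\iota}(S)$; then the coordinate functions $\bbz^{A_j}_{(j)}$ with $A_j > s_j$ are genuinely elements of $\J_{\cS}(W)$ — this is precisely the mechanism used in the proof of Proposition \ref{tvrz_JSprops}-$(ii)$, and it is where embeddedness is essential (for a merely immersed submanifold one only gets that their restrictions to $V$ vanish). Now let $v \in T_{s}\M$ satisfy $v([f]_{s}) = 0$ for all $[f]_{s} \in (\J_{\cS})_{s}$. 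Decompose $v = v^{A}\frac{\partial}{\partial \bbz^{A}}|_{s}$ in the total basis of $T_{s}\M$ from Proposition \ref{tvrz_coordtangentvectors}. Testing against the germs $[\bbz^{A_j}_{(j)} - \bbz^{A_j}_{(j)}(s)]_s \in (\J_{\cS})_s$ for $A_j > s_j$ (these lie in $(\J_{\cS})_s$ after possibly shrinking $W$ and using a bump function as in Proposition \ref{tvrz_JSprops}-$(ii)$, to make them restrictions of global sections; by Proposition \ref{tvrz_derivations}-$(i)$ the constant shift is irrelevant) gives $v^{A_j}_{(j)} = 0$ for all $A_j > s_j$. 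Hence $v$ lies in the span of $\{\frac{\partial}{\partial \bbz^{A_j}_{(j)}}|_s : A_j \le s_j\}$, which by the explicit form of $\hat{\iota}$ and Example \ref{ex_diffgradeddomains} is exactly the image $(T_{s}\iota)(T_{s}\cS)$. This proves $\supseteq$.

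The main obstacle I anticipate is the bookkeeping around ``which germs in $(\J_{\cS})_s$ are available,'' i.e. making sure the local coordinate functions of index $A_j > s_j$ really do define elements of the stalk $(\J_{\cS})_s$ — this is exactly the subtlety that distinguishes embedded from immersed submanifolds, and it must be imported carefully from Proposition \ref{tvrz_JSprops}-$(ii)$ (together with the slice-chart construction in the proof of Proposition \ref{tvrz_embeddinghassurjectivepullback} and the extension/bump-function argument, e.g. Proposition \ref{tvrz_extensionlemma}). Once those germs are in hand, the rest is the routine linear algebra of evaluating a tangent vector on coordinate germs via (\ref{eq_basisvectorsongerms}) and matching dimensions using $\gdim(T_s\cS) = (s_{-j})_j$ and $\gdim(T_{\ul{\iota}(s)}\M) = (n_{-j})_j$. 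A clean alternative for $\supseteq$ that avoids the dimension count entirely: for $v$ annihilating $(\J_{\cS})_s$, one can directly build a derivation $w$ on $\C^{\infty}_{\cS,s}$ by $w([\iota^{\ast}_U(f)]_s) := v([f]_s)$, well-defined precisely because $v$ kills $(\J_{\cS})_s$ and because $\iota^{\ast}$ is locally surjective on germs (Corollary \ref{cor_extensionsubmanifolds}, using embeddedness), and then check $(T_s\iota)(w) = v$; I would present whichever of the two is shorter, but both rest on the same embeddedness input.
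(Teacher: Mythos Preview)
Your proposal is correct. For the easy inclusion $\subseteq$ you argue exactly as the paper does. For the reverse inclusion you offer two routes, and your ``clean alternative'' --- defining $w([\iota^{\ast}_{U}(f)]_{s}) := v([f]_{s})$ using surjectivity of $\iota^{\ast}$ and checking well-definedness via the hypothesis that $v$ annihilates $(\J_{\cS})_{s}$ --- is precisely the paper's proof (the paper invokes Proposition \ref{tvrz_embeddinghassurjectivepullback} rather than Corollary \ref{cor_extensionsubmanifolds} to get the surjectivity, but these are equivalent inputs here).

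Your primary route via the slice chart is a genuinely different argument. It trades the abstract well-definedness check for an explicit coordinate computation: once the functions $\bbz^{A_{j}}_{(j)}$ with $A_{j} > s_{j}$ are known to lie in $\J_{\cS}(W)$ (the step imported from Proposition \ref{tvrz_JSprops}-$(ii)$, which is exactly where embeddedness enters), testing $v$ against their germs kills the corresponding components, and what remains is visibly in the image of $T_{s}\iota$. This approach is perhaps more concrete and makes the role of the slice chart explicit, while the paper's approach is chart-free and slightly shorter. Both rest on the same embeddedness input; the only difference is whether that input is packaged as ``enough generators of $(\J_{\cS})_{s}$'' or as ``surjectivity of $\iota^{\ast}$ at the level of germs.'' A minor remark: in your slice-chart argument the subtraction of $\bbz^{A_{j}}_{(j)}(s)$ and the bump-function extension are both unnecessary --- the slice chart already has $\ul{\psi}(\ul{\iota}(s)) = 0$, and germs of local sections suffice for the stalk.
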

\begin{proof}
Let $v \in T_{s}\cS \subseteq T_{s}\M$. But this means that $v = (T_{s}\iota)(v')$ for $v' \in T_{s}\cS$. For any $[f]_{s} \in (\J_{\cS})_{s}$ represented by $f \in \J_{\cS}(U)$ for some $U \in \Op_{s}(M)$, we thus have 
\begin{equation}
v([f]_{s}) = ((T_{s}\iota)(v'))([f]_{s}) = v'( \iota_{(s)}([f]_{s})) = v'( [ \iota^{\ast}_{U}(f)]_{s}) = 0.
\end{equation}
Conversely, suppose $v \in T_{s}\M$ satisfies $v([f]_{s}) = 0$ for all $[f]_{s} \in (\J_{\cS})_{m}$. By Proposition \ref{tvrz_embeddinghassurjectivepullback}, there exists $Y \in \Op(M)$, such that $\ul{\iota}(S) \subseteq Y$ and $\iota^{\ast}_{Y}: \C^{\infty}_{\M}(Y) \rightarrow \C^{\infty}_{\cS}(S)$ is surjective. 

Let $[f']_{s} \in \C^{\infty}_{\cS,s}$. We may represent it by $f' \in \C^{\infty}_{\cS}(S)$. There is thus some $f \in \C^{\infty}_{\M}(Y)$, such that $f' = \iota^{\ast}_{Y}(f)$. Define $v'([f']_{s}) := v([f]_{s})$. 

Suppose $[f']_{s} = [g']_{s}$ for some other $g' \in \C^{\infty}_{\cS}(S)$. There is thus some $W \in \Op_{s}(N)$, such that $f'|_{W} = g'|_{W}$. As $\iota$ is an embedding, we may find $V \in \Op_{s}(Y)$, such that $\ul{\iota}(W) = V \cap \ul{\iota}(S)$. Let $g \in \C^{\infty}_{\M}(Y)$ satisfy $g' = \iota^{\ast}_{Y}(g)$. But then $\iota^{\ast}_{V}(f|_{V} - g|_{V}) = f'|_{W} - g'|_{W} = 0$ and we conclude that $[f]_{s} - [g]_{s} \in (\J_{\cS})_{s}$. This proves that $v'$ is well-defined and obviously $v' \in \gDer( \C^{\infty}_{\cS,s}, \R) \equiv T_{s} \cS$. By construction, we have $v = (T_{s} \iota)(v')$ and thus $v \in T_{s} \cS \subseteq T_{s} \M$. 
\end{proof}

We arrive to the main statement of this subsection.
\begin{theorem} \label{thm_inverseimage}
Let $(\cS,\iota)$ be a closed embedded submanifold of $\M$. Suppose $\phi: \cN \rightarrow \M$ is a graded smooth map transversal to $(\cS,\iota)$. 

Then there exists a closed embedded submanifold $(\phi^{-1}(\cS), \iota')$ over the subset $\ul{\phi}^{-1}(S)$, and a graded smooth map $\phi': \phi^{-1}(\cS) \rightarrow \cS$ fitting into the commutative diagram
\begin{equation} \label{eq_inverseimagediagrampullback}
\begin{tikzcd}
\phi^{-1}(\cS) \arrow{r}{\phi'} \arrow{d}{\iota'} & \cS \arrow{d}{\iota} \\
\cN \arrow{r}{\phi} & \M
\end{tikzcd}.
\end{equation}
Moreover, this diagram is a categorical pullback in $\gMan^{\infty}$: To any graded manifold $\Q$ and a pair of graded smooth maps $\mu: \Q \rightarrow \cN$ and $\chi: \Q \rightarrow \cS$ satisfying $\phi \circ \mu = \iota \circ \chi$, there exists a unique graded smooth map $\varphi: \Q \rightarrow \phi^{-1}(\cS)$, such that $\iota' \circ \varphi = \mu$ and $\phi' \circ \varphi = \chi$. In particular, this property determines $(\phi^{-1}(\cS),\iota')$ up to the equivalence, see Lemma \ref{lem_equivalentsubmafolds}. 

Finally, for each $n \in \ul{\phi}^{-1}(S)$, one has 
\begin{equation} \label{eq_inversetangent}
T_{n}(\phi^{-1}(\cS)) = (T_{n} \phi)^{-1}( T_{\ul{\phi}(n)} \cS). 
\end{equation}
\end{theorem}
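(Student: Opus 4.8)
The plan is to build $\phi^{-1}(\cS)$ from the sheaf of regular ideals $\J_{\cS} = \ker(\iota^{\ast})$ associated to $\cS$ by Theorem \ref{thm_idealissubmanifold}, then exploit transversality to show that its pullback along $\phi$ is again regular. Concretely, first I would define a candidate sheaf of ideals $\I \subseteq \C^{\infty}_{\cN}$ locally: around a point $n \in \ul{\phi}^{-1}(S)$ with image $m = \ul{\phi}(n)$, use Proposition \ref{tvrz_JSprops}-$(ii)$ to pick a finite graded set $\G = \{\bbu^{A_{j}}_{(j)}\}$ of global functions generating $\J_{\cS}$ near $m$ and independent at $m$, and set $\I$ to be generated locally by $\{\phi^{\ast}(\bbu^{A_{j}}_{(j)})\}$; for $n \notin \ul{\phi}^{-1}(S)$, where some generator has non-vanishing pullback at $n$, one has $\I$ equal to $\C^{\infty}_{\cN}$ near $n$ by Proposition \ref{tvrz_invertibility}. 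The key point is that $\I$ is a \emph{sheaf} of regular ideals: this is where transversality enters. Using (\ref{eq_differentialpullback}), $(\dr[\phi^{\ast}(\bbu^{A_{j}}_{(j)})])_{n} = (T_{n}\phi)^{T}((\dr\bbu^{A_{j}}_{(j)})_{m})$, and transversality $T_{m}\M = (T_{n}\phi)(T_{n}\cN) + T_{m}\cS$ is precisely the dual statement that $(T_{n}\phi)^{T}$ is injective on the annihilator of $T_{m}\cS$; by Proposition \ref{tvrz_tangenttosubmafold} the differentials $(\dr\bbu^{A_{j}}_{(j)})_{m}$ span that annihilator, so the pulled-back functions are independent at $n$. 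Then I would invoke Theorem \ref{thm_idealissubmanifold} to produce a closed embedded submanifold $(\phi^{-1}(\cS),\iota')$ with $\J_{\phi^{-1}(\cS)} = \I$, whose underlying space is $\ul{\phi}^{-1}(S) = \bigcap_{f \in \J_{\cS}(M)} \ul{\phi^{\ast}_{M}(f)}^{-1}(0)$ by Corollary \ref{cor_domainmapsbodymaps}.

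\textbf{The map $\phi'$ and the pullback property.} Next I would construct $\phi': \phi^{-1}(\cS) \to \cS$. Consider the composition $\phi \circ \iota': \phi^{-1}(\cS) \to \M$. By construction every $f \in \J_{\cS}(M)$ satisfies $(\phi \circ \iota')^{\ast}_{M}(f) = \iota'^{\ast}_{M}(\phi^{\ast}_{M}(f)) = 0$, since $\phi^{\ast}_{M}(f) \in \I(M) = \J_{\phi^{-1}(\cS)}(M) = \ker(\iota'^{\ast})$. Since $(\cS,\iota)$ is a \emph{closed} embedded submanifold, Proposition \ref{tvrz_clweakembedd} applies and yields a unique graded smooth map $\phi': \phi^{-1}(\cS) \to \cS$ with $\iota \circ \phi' = \phi \circ \iota'$, which is exactly the commutativity of (\ref{eq_inverseimagediagrampullback}). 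For the universal property, given $\mu: \Q \to \cN$ and $\chi: \Q \to \cS$ with $\phi \circ \mu = \iota \circ \chi$, I would first check $\mu^{\ast}_{\cN}(\I(N)) = 0$: locally $\I$ is generated by $\phi^{\ast}(\bbu^{A_{j}}_{(j)})$, and $\mu^{\ast}(\phi^{\ast}(\bbu^{A_{j}}_{(j)})) = \chi^{\ast}(\iota^{\ast}(\bbu^{A_{j}}_{(j)})) = 0$ since $\bbu^{A_{j}}_{(j)} \in \J_{\cS}$; hence again by Proposition \ref{tvrz_clweakembedd} (applied to $\phi^{-1}(\cS)$) there is a unique $\varphi: \Q \to \phi^{-1}(\cS)$ with $\iota' \circ \varphi = \mu$. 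That $\phi' \circ \varphi = \chi$ then follows from $\iota \circ (\phi' \circ \varphi) = \phi \circ \iota' \circ \varphi = \phi \circ \mu = \iota \circ \chi$ and the uniqueness clause of Proposition \ref{tvrz_clweakembedd}, since $\iota$ is a monomorphism in the relevant sense. Uniqueness up to equivalence is then Lemma \ref{lem_equivalentsubmafolds}.

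\textbf{The tangent space formula.} Finally I would prove (\ref{eq_inversetangent}). Fix $n \in \ul{\phi}^{-1}(S)$, $m = \ul{\phi}(n)$. By Proposition \ref{tvrz_tangenttosubmafold} applied to $\phi^{-1}(\cS)$, $T_{n}(\phi^{-1}(\cS))$ is the set of $v \in T_{n}\cN$ annihilating $(\J_{\phi^{-1}(\cS)})_{n} = \I_{n}$, which is generated by the germs $[\phi^{\ast}_{M}(\bbu^{A_{j}}_{(j)})]_{n}$. By the Leibniz rule a derivation annihilates an ideal iff it annihilates a generating set, so $v \in T_{n}(\phi^{-1}(\cS))$ iff $v([\phi^{\ast}_{M}(\bbu^{A_{j}}_{(j)})]_{n}) = 0$ for all generators. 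Using (\ref{eq_differentialformula}) this reads $[(T_{n}\phi)(v)]([\bbu^{A_{j}}_{(j)}]_{m}) = 0$, i.e. $(T_{n}\phi)(v)$ annihilates the generators of $(\J_{\cS})_{m}$, which by Proposition \ref{tvrz_tangenttosubmafold} for $\cS$ is equivalent to $(T_{n}\phi)(v) \in T_{m}\cS$. That is exactly $v \in (T_{n}\phi)^{-1}(T_{m}\cS)$.

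\textbf{Main obstacle.} The delicate point is verifying that the locally-defined ideal $\I$ is genuinely a sheaf of \emph{regular} ideals — in particular that its local generators are independent at each $n \in \ul{\phi}^{-1}(S)$ and that the different local descriptions patch. Independence is the crux and rests on translating the transversality condition into the injectivity of $(T_{n}\phi)^{T}$ restricted to $(T_{m}\cS)^{\circ}$, combined with the identification of $(T_{m}\cS)^{\circ}$ with the span of the $(\dr\bbu^{A_{j}}_{(j)})_{m}$ via Proposition \ref{tvrz_tangenttosubmafold} and the dimension count $\#\G = \gdim(\M) - \gdim(\cS)$ componentwise. One must also be careful with the graded notion of linear independence (as flagged in the remark after Definition \ref{def_regularideal}) and with the fact that the slice dimension may vary over connected components of $\ul{\phi}^{-1}(S)$, as in Remark \ref{rem_vardimnuisance}; invoking Theorem \ref{thm_idealissubmanifold} in its stated generality absorbs this.
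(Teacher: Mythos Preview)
Your proposal is correct and follows essentially the same route as the paper: construct a sheaf of regular ideals in $\C^{\infty}_{\cN}$ from $\J_{\cS}$, verify regularity via the transversality argument (the injectivity of $(T_{n}\phi)^{T}$ on the annihilator of $T_{\ul{\phi}(n)}\cS$), invoke Theorem \ref{thm_idealissubmanifold}, and then use Proposition \ref{tvrz_clweakembedd} for $\phi'$, the universal property, and the tangent space formula.

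The one noteworthy difference is in how the ideal sheaf is defined. You build $\I$ \emph{locally} from pulled-back generators and then flag the patching of these local descriptions as your main obstacle. The paper instead defines $\I'$ \emph{globally} as the image sheaf $\I' := \im(\phi^{\ast}(j))$, where $j: \J_{\cS} \hookrightarrow \C^{\infty}_{\M}$ and $\phi^{\ast}$ is the pullback-of-modules functor; this is automatically a sheaf (as a quotient of $\phi^{\ast}\J_{\cS}$ by a subsheaf), and the local generation by $\{\phi^{\ast}_{M}(f) : f \in \G\}$ then follows from the stalk computation $\I'_{n} = \langle \phi_{(n)}((\J_{\cS})_{\ul{\phi}(n)}) \rangle$. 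This global definition cleanly sidesteps the patching issue you worry about, at the cost of a bit of abstract sheaf machinery. Your local approach works too, but you would need to argue that two different choices of generating sets $\G$, $\G'$ near the same point yield the same locally generated ideal; this follows because both generate $\J_{\cS}$ on overlaps, so their pullbacks generate the same ideal there.
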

\begin{proof}
Let $\J_{\cS}$ be the sheaf of regular ideals corresponding to $(\cS,\iota)$. The idea is to produce a sheaf of regular ideals $\I' \subseteq \C^{\infty}_{\cN}$ and use Theorem \ref{thm_idealissubmanifold} to construct $(\phi^{-1}(\cS), \iota')$. Let us divide the proof into several parts.
\begin{enumerate}[a)]
\item \textbf{Constructing the sheaf of ideals $\I'$}: The sheaf of ideals $\J_{\cS} \subseteq \C^{\infty}_{\M}$ can be viewed as a sheaf of graded $\C^{\infty}_{\M}$-submodules of $\C^{\infty}_{\M}$. Let $j: \J_{\cS} \rightarrow \C^{\infty}_{\M}$ be the inclusion viewed as a $\C^{\infty}_{\M}$-linear sheaf morphism. Having a graded smooth map $\phi: \cN \rightarrow \M$, we have the corresponding pullback functor $\phi^{\ast}: \Sh^{\C^{\infty}_{\M}}(M,\gVect) \rightarrow \Sh^{\C^{\infty}_{\cN}}(N,\gVect)$, see the part $(a)$ of the proof of Proposition \ref{tvrz_ap_pullbackVB}. We thus get a $\C^{\infty}_{\cN}$-linear sheaf morphism 
\begin{equation}
\phi^{\ast}(j): \phi^{\ast}\J_{\cS} \rightarrow \phi^{\ast} \C^{\infty}_{\M} \cong \C^{\infty}_{\cN},
\end{equation}
where we have used the observation from Example \ref{ex_pullbacktrivial}. Now, note that the presheaf of graded $\C^{\infty}_{\cN}$-submodules defined for each $V \in \Op(N)$ as 
\begin{equation}
\im( \phi^{\ast}(j))(V) := \im( (\phi^{\ast}(j))_{V}) \subseteq \C^{\infty}_{\cN}(V).
\end{equation}
actually forms a sheaf. This is because there a canonical $\C^{\infty}_{\cN}$-linear presheaf isomorphism 
\begin{equation}
\im( \phi^{\ast}(j)) \cong \phi^{\ast} \J_{\cS} / \ker( \phi^{\ast}(j)),
\end{equation}
and every quotient of a sheaf of graded $\C^{\infty}_{\cN}$-modules by a sheaf of graded $\C^{\infty}_{\cN}$-submodules is always a sheaf. This is proved using the same arguments as Proposition \ref{eq_quotientpresheafissheaf}. In other words, we have just constructed a sheaf of ideals
\begin{equation}
\I' := \im( \phi^{\ast}(j)) \subseteq \C^{\infty}_{\cN}. 
\end{equation}
Recall that $\phi^{\ast}(j): \phi^{\ast}\J_{\cS} \rightarrow \C^{\infty}_{\cN}$ is obtained by the universal property of sheafification from the $\C^{\infty}_{\cN}$-linear presheaf morphism $\phi^{\ast}_{P}(j): \phi^{\ast}_{P} \J_{\cS} \rightarrow \C^{\infty}_{\cN}$. Explicitly, one has 
\begin{equation} \label{eq_phiPjmap}
(\phi^{\ast}_{P}(j))_{V}(g \otimes [f]_{V}) = g \cdot \phi^{\ast}_{U}(f)|_{V},
\end{equation}
for any $g \in \C^{\infty}_{\cN}(V)$ and $[f]_{V} \in (\ul{\phi}^{-1}_{P} \J_{\cS})(V)$ represented by $f \in \J_{\cS}(U)$ for some $U \supseteq \ul{\phi}(V)$. 

Now, for each $n \in N$, we are interested in the stalk $\I'_{n} \subseteq \C^{\infty}_{\cN,n}$. Recall that we have a graded algebra morphism $\phi_{(n)}: \C^{\infty}_{\M,\ul{\phi}(n)} \rightarrow \C^{\infty}_{\cN,n}$ defined as in Definition \ref{def_gLRS}-(iii). We claim that 
\begin{equation} \label{eq_stalkinverseideal}
\I'_{n} = \< \phi_{(n)}( (\J_{\cS})_{\ul{\phi}(n)}) \>.
\end{equation}
In other words, $\I'_{n}$ is the ideal generated by the image of the ideal $(\J_{\cS})_{\ul{\phi}(n)}$ under $\phi_{(n)}$. We have moved a detailed argument into the appendix, see Theorem \ref{thm_ap_inverseimage}. 

\item \textbf{$\I'$ is a sheaf of regular ideals}: First let us us show that 
\begin{equation} \label{eq_inverseimagesubmafoldunderlying}
\ul{\phi}^{-1}(S) = \{ n \in N \; | \; \I'_{n} \subseteq \frJ(\C^{\infty}_{\cN,n}) \}.
\end{equation}
To see this, we use the expression (\ref{eq_stalkinverseideal}). For any $n \in \ul{\phi}^{-1}(S)$, we have $(\J_{\cS})_{\ul{\phi}(n)} \subseteq \frJ(\C^{\infty}_{\M,\ul{\phi}(n)})$ by Proposition \ref{tvrz_JSprops}-$(i)$ and Lemma \ref{lem_regularideals}-\textit{4)}. Since $\phi_{(n)}$ is a morphism of local graded rings, we have $\phi_{(n)}( (\J_{\cS})_{\ul{\phi}(n)}) \subseteq \frJ( \C^{\infty}_{\cN,n})$ and thus also $\I'_{n} \subseteq \frJ(\C^{\infty}_{\cN,n})$. 

Conversely, suppose $\I'_{n} \subseteq \frJ(\C^{\infty}_{\cN,n})$. Suppose $\ul{\phi}(n) \notin S$. Hence $(\J_{\cS})_{\ul{\phi}(n)} = \C^{\infty}_{\M,\ul{\phi}(n)}$ and thus $[1]_{\ul{\phi}(n)} \in (\J_{\cS})_{\ul{\phi}(n)}$. But then $[1]_{n} = \phi_{(n)}( [1]_{\ul{\phi}(n)}) \in \I'_{n}$, which contradicts the assumption. 

Now, let $n \in \ul{\phi}^{-1}(S)$ be fixed. As $\J_{\cS}(M)$ is a regular ideal and $\J_{\cS}(M) \subseteq \J^{\ul{\phi}(n)}_{\M}(M)$, there exists a finite graded set $\G \subseteq \J_{\cS}(M)$ consisting of functions independent at $\ul{\phi}(n)$ and $U \in \Op_{\ul{\phi}(n)}(M)$, such that $\G_{(m)} = \{ [f]_{m} \; | \; f \in \G \}$ generates $(\J_{\cS})_{m}$ for all $m \in U$. 

Recall that for each $f \in \J_{\cS}(M)$, we have the corresponding pullback section $f^{!} \in (\phi^{\ast} \J_{\cS})(N)$, defined as $f^{!} = (\eta_{\phi^{\ast}_{P} \J_{\cS}})_{N}( 1 \otimes [f]_{N})$, where $\eta_{ \phi^{\ast}_{P} \J_{\cS}}: \phi^{\ast}_{P} \J_{\cS} \rightarrow \phi^{\ast} \J_{\cS}$ is the canonical map obtained by the sheafification procedure. Consider the finite graded set
\begin{equation}
\G' = \{ (\phi^{\ast}(j))_{N}(f^{!}) \; | \; f \in \G \} \subseteq \I'(N).
\end{equation}
Observe that explicitly, one has $(\phi^{\ast}(j))_{N}(f^{!}) = (\phi^{\ast}_{P}(j))_{N}(1 \otimes [f]_{N}) = \phi^{\ast}_{M}(f)$. Hence
\begin{equation}
[ (\phi^{\ast}(j))_{N}(f^{!}) ]_{n'} = [ \phi^{\ast}_{M}(f)]_{n'} = \phi_{(n')}( [f]_{\ul{\phi}(n')}). 
\end{equation}
But this proves that $\G'_{(n')}$ generates $\I'_{n'}$ for all $n' \in \ul{\phi}^{-1}(U)$. 

Note that this also proves that for each $V \in \Op( \ul{\phi}^{-1}(U))$, the ideal $\I'(V)$ is generated by the finite graded set $\G'|_{V} = \{ \phi^{\ast}_{M}(f)|_{V} \; | \; f \in \G \}$, see the proof of Lemma \ref{lem_regularideals}-\textit{2)}. 

We only have to argue that $\G'$ consists of functions independent at $n$. We have 
\begin{equation} \label{eq_differentialsgensets}
(\dr (\phi^{\ast}_{M}(f)))_{n} = (T_{n} \phi)^{T}( (\dr{f})_{\ul{\phi}(n)}). 
\end{equation}
It follows from Proposition \ref{tvrz_tangenttosubmafold} that for each $f \in \G$ and $s \in S$, one has $(\dr{f})_{s} \in \an( T_{s} \cS)$. It is now easy to see that the assumed transversality $\phi \pitchfork \cS$ ensures that the restriction of $(T_{n}\phi)^{T}$ to $\an(T_{\ul{\phi}(n)} \cS)$ is injective. The conclusion then follows from (\ref{eq_differentialsgensets}). 

This proves that $\I'$ is a sheaf of regular ideals. 
\item \textbf{Pullback diagram and universality}:
We can thus define the the submanifold $(\phi^{-1}(\I), \iota')$ using the sheaf of regular ideals $\I'$ and Theorem \ref{thm_idealissubmanifold}. It follows from (\ref{eq_inverseimagesubmafoldunderlying}) that its underlying submanifold is indeed $\ul{\phi}^{-1}(S)$. 

To find a graded smooth map $\phi': \phi^{-1}(\cS) \rightarrow \cS$, we can employ Proposition \ref{tvrz_clweakembedd}. It suffices to prove that $(\phi \circ \iota')^{\ast}_{M}( \J_{\cS}(M)) = 0$. As $\I' = \ker( \iota'^{\ast})$, we only have to show that $\phi^{\ast}_{M}( \J_{\cS}(M)) \subseteq \I'(N)$. But by construction, we have
\begin{equation}
\I'(N) = \im( (\phi^{\ast}(j))_{N}) \supseteq \im( (\phi^{\ast}_{P}(j))_{N}) = \< \phi^{\ast}_{M}( \J_{\cS}(M)) \>, 
\end{equation}
where the last equation can be seen easily from (\ref{eq_phiPjmap}). By Proposition \ref{tvrz_clweakembedd}, there is thus the unique graded smooth map $\phi': \phi^{-1}(\cS) \rightarrow \cS$, such that $\iota \circ \phi' = \phi \circ \iota'$. 

Let us argue that (\ref{eq_inverseimagediagrampullback}) is in fact a pullback square in the category $\gMan^{\infty}$. To any graded manifold $\Q$ and a pair of graded smooth maps $\mu: \Q \rightarrow \cN$ and $\chi: \Q \rightarrow \cS$ satisfying $\phi \circ \mu = \iota \circ \chi$, we must find a unique graded smooth map $\varphi: \Q \rightarrow \phi^{-1}(\cS)$ fitting into the diagram
\begin{equation} \label{eq_inverseimagediagrampullback2}
\begin{tikzcd}
\Q \arrow[bend left=20]{rrd}{\chi} \arrow[bend right=20]{ddr}{\mu} \arrow[dashed]{rd}{\varphi} & & \\
& \phi^{-1}(\cS) \arrow{r}{\phi'} \arrow{d}{\iota'} & \cS \arrow{d}{\iota} \\
& \cN \arrow{r}{\phi} & \M
\end{tikzcd},
\end{equation}
and making both the triangles commute, that is $\phi' \circ \varphi = \chi$ and $\iota' \circ \varphi = \mu$. As $(\phi^{-1}(\cS'), \iota')$ is a closed embedded submanifold, we will construct $\varphi$ using the Proposition \ref{tvrz_clweakembedd}. This also ensures its uniqueness. We thus only have to argue that $\mu^{\ast}(\I') = 0$. 

Let $\hat{\mu}^{\ast} := \mu^{\ast}|_{\I'}$. We thus have to show that the sheaf morphism $\hat{\mu}^{\ast}: \I' \rightarrow \ul{\mu}_{\ast} \Q$ is trivial. For each $n \in N$, it suffices to find $V \in \Op_{n}(N)$, such that $\hat{\mu}^{\ast}|_{V}: \I'|_{V} \rightarrow (\ul{\mu}_{\ast} \Q)|_{V}$ is trivial. 
First observe that $\ul{\phi} \circ \ul{\mu} = \ul{\iota} \circ \ul{\chi}$, hence $\ul{\mu}(Q) \subseteq \ul{\phi}^{-1}(S)$. We now have two distinct cases:
\begin{enumerate}[(i)]
\item $n \in N - \ul{\phi}^{-1}(S)$: We may find a subset $V \in \Op_{n}(N - \ul{\phi}^{-1}(S))$. Then $\ul{\mu}^{-1}(V) = \emptyset$ and thus obviously $\hat{\mu}^{\ast}|_{V} = 0$.
\item $n \in \ul{\phi}^{-1}(S)$: We have shown in the part $(b)$ of this proof that there is a finite graded subset $\G \subseteq \J_{\cS}(M)$ and $U \in \Op_{\ul{\phi}(n)}(M)$, such that for any $V \in \Op(\ul{\phi}^{-1}(U))$, the ideal $\I'(V)$ is generated by $\G'|_{V} = \{ \phi^{\ast}_{M}(f)|_{V} \; | \; f \in \G \}$. But for any $f \in \G$, one has
\begin{equation}
\hat{\mu}^{\ast}_{V}( \phi^{\ast}_{M}(f)|_{V}) = \mu^{\ast}_{N}( \phi^{\ast}_{M}(f))|_{V} = \chi^{\ast}_{S}( \iota^{\ast}_{M}(f))|_{V} = 0,
\end{equation}
since $f \in \J_{S}(M) = \ker(\iota^{\ast}_{M})$. But this proves that $\hat{\mu}^{\ast}_{V} = 0$ for any $V \subseteq \ul{\phi}^{-1}(U)$ and we conclude that $\hat{\mu}^{\ast}|_{\ul{\phi}^{-1}(U)} = 0$.
\end{enumerate}
The fact that $\hat{\mu}^{\ast} = 0$ now follows from Proposition \ref{tvrz_gluingmorphisms}-(i) and have just proved that $\mu^{\ast}(\I') = 0$. By Proposition \ref{tvrz_clweakembedd}, there is thus a unique graded smooth map $\varphi: \Q \rightarrow \phi^{-1}(\cS)$, such that $\iota' \circ \varphi = \mu$. We only have to argue that $\phi' \circ \varphi = \chi$. But that is trivial, since we find
\begin{equation}
\iota \circ (\phi' \circ \varphi) = \phi \circ (\iota' \circ \varphi) = \phi \circ \mu = \iota \circ \chi.
\end{equation}
Since $(\cS,\iota)$ is a closed embedded submanifold, the equality $\phi' \circ \varphi = \chi$ follows from the uniqueness assertion of Proposition \ref{tvrz_clweakembedd}. This proves the universal property for the pullback square (\ref{eq_inverseimagediagrampullback}). 
\item \textbf{Tangent space}: To finish the proof, it remains to verify the formula (\ref{eq_inversetangent}). We will employ Proposition \ref{tvrz_tangenttosubmafold}. Also recall the equation (\ref{eq_stalkinverseideal}). Let $n \in \ul{\phi}^{-1}(S)$. 

We have $v \in T_{n}( \phi^{-1}(\cS))$, if and only if $v([f]_{n}) = 0$ for all $[f]_{n} \in \I'_{n}$. But since $\I'_{n}$ is generated by $\phi_{(n)}((\J_{\cS})_{\ul{\phi}(n)})$, and $v$ is a graded derivation, this is equivalent to $v( \phi_{(n)}([g]_{\ul{\phi}(n)})) = 0$ for all $[g]_{\ul{\phi}(n)} \in (\J_{\cS})_{\ul{\phi}(n)}$. It follows from the definition of $T_{n}\phi$, see (\ref{eq_differentialdef}), that this is equivalent to $((T_{n}\phi)(v))([g]_{\ul{\phi}(n)}) = 0$ for all $[g]_{\ul{\phi}(n)} \in (\J_{\cS})_{\ul{\phi}(n)}$. But by Proposition \ref{tvrz_tangenttosubmafold}, this is equivalent to $(T_{n}\phi)(v) \in T_{\ul{\phi}(n)} \cS$. By following this chain of equivalent statements, we arrive to (\ref{eq_inversetangent}). 
\end{enumerate}
This finishes the proof. 
\end{proof}
In fact, the assumption that $(\cS,\iota)$ is a \textit{closed} embedded submanifold is unnecessary. However, the arguments are less straightforward as we cannot employ Theorem \ref{thm_idealissubmanifold}. We have thus decided to move the complete proof into the appendix.
\begin{theorem} \label{thm_inverseimagegen}
All claims of Theorem \ref{thm_inverseimage} remain true for a general embedded submanifold $(\cS,\iota)$, except that $(\phi^{-1}(\cS), \iota')$ is not necessarily closed. 
\end{theorem}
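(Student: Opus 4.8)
The plan is to reduce the general case to the closed case, Theorem \ref{thm_inverseimage}, by restricting everything to an open neighbourhood of $\ul{\iota}(S)$. Since $(\cS,\iota)$ is an embedded submanifold of $\M$, Proposition \ref{tvrz_embeddinghassurjectivepullback} provides $Y \in \Op(M)$ with $\ul{\iota}(S) \subseteq Y$ such that $\iota^{\ast}_{Y}$ is surjective; hence $(\cS,\iota)$ is a \emph{closed} embedded submanifold of $\M|_{Y}$. Put $N' := \ul{\phi}^{-1}(Y)$ and let $\phi_{Y} : \cN|_{N'} \rightarrow \M|_{Y}$ be the corestriction of $\phi|_{N'}$. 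Because tangent spaces, differentials and fibres of the underlying map are unchanged under passage to open submanifolds, and because $\ul{\iota}(S) \subseteq Y$ forces $\ul{\phi_{Y}}(N') \cap \ul{\iota}(S) = \ul{\phi}(N) \cap \ul{\iota}(S)$, the hypothesis $\phi \pitchfork (\cS,\iota)$ yields $\phi_{Y} \pitchfork (\cS,\iota)$ inside $\M|_{Y}$. Theorem \ref{thm_inverseimage} then produces a closed embedded submanifold $(\phi_{Y}^{-1}(\cS),\iota_{Y})$ of $\cN|_{N'}$ over $\ul{\phi}^{-1}(S)$ (using $\ul{\iota}(S) \subseteq Y$), a graded smooth map $\phi_{Y}': \phi_{Y}^{-1}(\cS) \rightarrow \cS$, the pullback square over $\M|_{Y}$, and the tangent-space formula (\ref{eq_inversetangent}) for $\phi_{Y}$.

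I would then set $\phi^{-1}(\cS) := \phi_{Y}^{-1}(\cS)$, $\phi' := \phi_{Y}'$ and $\iota' := i_{N'} \circ \iota_{Y}$, where $i_{N'} : \cN|_{N'} \rightarrow \cN$ is the open inclusion. Since $\ul{\phi}^{-1}(S)$ is closed in the open subset $N'$ it is locally closed in $N$, so $\iota'$ is again an immersion whose underlying map is an embedding; thus $(\phi^{-1}(\cS),\iota')$ is an embedded submanifold of $\cN$, but not necessarily closed, because $\ul{\phi}^{-1}(S)$ need not be closed in $N$. The identity $\iota \circ \phi' = \phi_{Y} \circ \iota_{Y} = \phi \circ \iota'$ gives the commutative square (\ref{eq_inverseimagediagrampullback}) over $\M$. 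For the universal property, let $\Q$, $\mu : \Q \rightarrow \cN$, $\chi : \Q \rightarrow \cS$ satisfy $\phi \circ \mu = \iota \circ \chi$. From $\ul{\phi}\circ\ul{\mu} = \ul{\iota}\circ\ul{\chi}$ and $\ul{\iota}(S) \subseteq Y$ we get $\ul{\mu}(Q) \subseteq N'$, so $\mu = i_{N'} \circ \mu'$ for a unique $\mu' : \Q \rightarrow \cN|_{N'}$, and $\phi_{Y} \circ \mu' = \iota \circ \chi$. Applying the pullback property of $(\phi_{Y}^{-1}(\cS),\iota_{Y})$ inside $\cN|_{N'}$ yields a unique $\varphi : \Q \rightarrow \phi^{-1}(\cS)$ with $\iota_{Y} \circ \varphi = \mu'$ and $\phi' \circ \varphi = \chi$; since $i_{N'}$ is a monomorphism in $\gMan^{\infty}$, the conditions $\iota_{Y}\circ\varphi = \mu'$ and $\iota'\circ\varphi = \mu$ are equivalent, so $\varphi$ is also the unique such map over $\cN$. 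Finally, (\ref{eq_inversetangent}) for $\phi$ follows from the same formula for $\phi_{Y}$ under the canonical identifications of tangent spaces and differentials of open submanifolds.

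The points that require genuine care are the following. First, one must check that passing to $\M|_{Y}$ and $\cN|_{N'}$ really preserves transversality and the tangent-space statement, i.e.\ track the identifications $T_{n}(\cN|_{N'}) = T_{n}\cN$, $T_{n}\phi_{Y} = T_{n}\phi$, $T_{\ul{\phi}(n)}(\M|_{Y}) = T_{\ul{\phi}(n)}\M$, and that $(T_{m}\phi)_{0}$ recovers $T_{m}\ul{\phi}$. Second, and this is the main obstacle, the ``determined up to equivalence'' part cannot be obtained from Lemma \ref{lem_equivalentsubmafolds} or Proposition \ref{tvrz_clweakembedd}, which are stated only for closed embeddings. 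Here I would instead use that an embedded submanifold is weakly embedded, Proposition \ref{tvrz_weakembedd}: any two realizations of the pullback are isomorphic by a unique isomorphism compatible with \emph{both} projections (a purely categorical fact), and weak embeddedness upgrades this to uniqueness of the isomorphism compatible with the projection to $\cN$ alone, which is precisely the equivalence of Lemma \ref{lem_equivalentsubmafolds}; the same reasoning shows the construction is independent of the auxiliary choice of $Y$. The remaining verifications — that $i_{N'}\circ\iota_{Y}$ is an immersion, that $\iota'$ has embedded underlying map, and commutativity of the diagrams — are routine and I would only sketch them, as they reduce to standard facts about open submanifolds together with the already-established closed case.
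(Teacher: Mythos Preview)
Your proposal is correct and takes a genuinely different route from the paper's proof. The paper does \emph{not} reduce to the closed case; instead it proves (Lemma~\ref{lem_embeddedislocallylevel}) that every embedded submanifold is locally the level set of a submersion, then for each $n \in \ul{\phi}^{-1}(S)$ constructs a local piece $\cN'_{(n)} := (\psi_{(n)} \circ \phi)^{-1}(0)$ as a regular level set in $\cN|_{V_{(n)}}$, checks that the pieces agree on overlaps via their regular ideal sheaves, and glues them using the collation machinery of Proposition~\ref{tvrz_gmcollation1}. The embedding $\iota'$, the map $\phi'$, the universal property, and (\ref{eq_inversetangent}) are then established by hand from the local description.

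Your argument is considerably shorter and more conceptual: the single observation that Proposition~\ref{tvrz_embeddinghassurjectivepullback} already furnishes an open $Y$ in which $(\cS,\iota)$ is closed lets you invoke Theorem~\ref{thm_inverseimage} once and then transport the result along the open inclusion $\cN|_{N'} \hookrightarrow \cN$. What the paper's approach buys is an explicit local description of $\phi^{-1}(\cS)$ as level sets of the composed submersions $\psi_{(n)} \circ \phi$, which is sometimes useful in its own right; your approach buys economy and makes the dependence on the closed case transparent. Your handling of the delicate points is also sound: the transversality check is immediate since $T_{n}(\cN|_{N'}) = T_{n}\cN$ and $T_{n}\phi_{Y} = T_{n}\phi$ under the identification of Proposition~\ref{tvrz_restrictedgLRS}; the factorisation $\mu = i_{N'} \circ \mu'$ in the universal property is legitimate because $\ul{\mu}(Q) \subseteq \ul{\phi}^{-1}(S) \subseteq N'$; and your replacement of Lemma~\ref{lem_equivalentsubmafolds} by weak embeddedness (Proposition~\ref{tvrz_weakembedd}) for the uniqueness-up-to-equivalence is exactly right, since weak embeddedness makes $\iota'$ a monomorphism in $\gMan^{\infty}$ in the relevant sense.
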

\begin{proof}
See Theorem \ref{thm_ap_inverseimagegen} in the appendix. 
\end{proof}
\begin{rem}
If $(\cS,\iota)$ is a \textit{closed} embedded submanifold, the universality of the pullback diagram (\ref{eq_inverseimagediagrampullback}) ensures that the submanifolds  constructed in Theorem \ref{thm_inverseimage} and in Theorem \ref{thm_inverseimagegen} are equivalent. Also note that if $(n_{j})_{j \in \Z} := \gdim(\M)$, $(m_{j})_{j \in \Z} := \gdim(\cN)$ and $(s_{j})_{j \in \Z} := \gdim(\cS)$, then $\gdim( \phi^{-1}(\cS)) = (s_{j} + m_{j} - n_{j})_{j \in \Z}$. This is observed in the part c) of the proof of Theorem \ref{thm_ap_inverseimagegen}.   
\end{rem}
Having these statements in hand, we arrive to the following general definition. 

\begin{definice} \label{def_inverseimage}
Let $(\cS,\iota)$ be an embedded submanifold of $\M$. Suppose $\phi: \cN \rightarrow \M$ is a graded smooth map. We say that an embedded submanifold $(\phi^{-1}(\cS),\iota')$ of $\cN$ is an \textbf{inverse image submanifold of $\cS$ by $\phi$}, if it has the following properties:
\begin{enumerate}[(i)]
\item Its underlying submanifold is the set $\ul{\phi}^{-1}(S)$.
\item There exists a graded smooth map $\phi': \phi^{-1}(\cS) \rightarrow \cS'$ making (\ref{eq_inverseimagediagrampullback}) into a pullback diagram in $\gMan^{\infty}$: One has $\iota \circ \phi' = \phi \circ \iota'$ and it has the universal property described under (\ref{eq_inverseimagediagrampullback}). 
\end{enumerate}
Note that if an inverse image submanifold exists, it is unique up to an equivalence. 
\end{definice}
\begin{rem}
Theorem \ref{thm_inverseimagegen} shows that if $(\cS,\iota)$ is an embedded submanifold and $\phi \pitchfork \cS$, the inverse image submanifold $(\phi^{-1}(\cS),\iota')$ exists. However, it is well known fact that the transversality condition is not necessary. Also note that in general, only the inclusion $\subseteq$ in  (\ref{eq_inversetangent}) is true. 
\end{rem}

Recall that for each $m \in M$, we have a closed embedded submanifold $(\{m\}, \kappa_{m})$, see Example \ref{ex_singlepointsubmafold}. The corresponding sheaf of regular ideals is the sheaf $\J^{m}_{\M}$ of functions vanishing at $m$. 

\begin{tvrz} \label{tvrz_levelset}
Let $\phi: \cN \rightarrow \M$ be a graded smooth map. Let $m \in M$. Then $m$ is a regular value of $\phi$, if and only if $\phi$ is transversal to the submanifold $(\{m\}, \kappa_{m})$. 

For every regular value $m$ of $\phi$, there is thus a closed embedded submanifold $(\phi^{-1}(m), \iota')$ of $\cN$ over the submanifold $\ul{\phi}^{-1}(m) \subseteq N$ called the \textbf{regular level set submanifold}. Moreover, for every $n \in \ul{\phi}^{-1}(m)$, one has $T_{n}( \phi^{-1}(m)) = \ker( T_{n} \phi)$. In particular, if $\gdim(\cN) = (m_{j})_{j \in \Z}$ and $\gdim(\M) = (n_{j})_{j \in \Z}$, then $\gdim(\phi^{-1}(m)) = (m_{j} - n_{j})_{j \in \Z}$. 
\end{tvrz}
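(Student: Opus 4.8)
The plan is to deduce everything from the inverse image machinery of Theorem \ref{thm_inverseimage} together with the explicit computation of the tangent space of a single-point submanifold. First I would recall that, by Example \ref{ex_singlepointsubmafold} (more precisely the example following it), the singleton $\{m\}$ equipped with the constant map $\kappa_{m}\colon \{m\}\to\M$ is a closed embedded submanifold of $\M$ whose sheaf of regular ideals is exactly $\J^{m}_{\M}$. Its tangent space at the unique point is trivial: $T_{\ast}(\{m\}) = \gDer(\R,\R) = 0$, since $\{m\}$ has graded dimension $(0)_{j\in\Z}$, and under the differential $T_{\ast}\kappa_{m}$ this sits inside $T_{m}\M$ as the zero subspace. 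Hence for any $n\in\ul{\phi}^{-1}(m)$, the transversality condition
\begin{equation}
T_{m}\M = (T_{n}\phi)(T_{n}\cN) + (T_{\ast}\kappa_{m})(T_{\ast}\{m\}) = (T_{n}\phi)(T_{n}\cN)
\end{equation}
is precisely the statement that $T_{n}\phi$ is surjective, i.e. that $n$ is a regular point of $\phi$. Running over all $n\in\ul{\phi}^{-1}(m)$ gives the claimed equivalence: $m$ is a regular value of $\phi$ iff $\phi \pitchfork (\{m\},\kappa_{m})$.

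Next I would invoke Theorem \ref{thm_inverseimage} directly. Since $(\{m\},\kappa_{m})$ is a \emph{closed} embedded submanifold and, by the equivalence just established, $\phi$ is transversal to it whenever $m$ is a regular value, Theorem \ref{thm_inverseimage} produces a closed embedded submanifold $(\phi^{-1}(m),\iota')$ of $\cN$ over the set $\ul{\phi}^{-1}(m)$, together with the pullback square and its universal property. This is the object we christen the regular level set submanifold; I would simply remark that it is unique up to equivalence by Lemma \ref{lem_equivalentsubmafolds}, as already noted in Theorem \ref{thm_inverseimage}.

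For the tangent space formula I would specialize the general identity \eqref{eq_inversetangent}, namely $T_{n}(\phi^{-1}(\cS)) = (T_{n}\phi)^{-1}(T_{\ul{\phi}(n)}\cS)$, to the case $\cS = \{m\}$ and $\ul{\phi}(n) = m$. Since $T_{\ast}(\{m\}) \subseteq T_{m}\M$ is the zero subspace (as computed above), the preimage $(T_{n}\phi)^{-1}(0)$ is exactly $\ker(T_{n}\phi)$, giving $T_{n}(\phi^{-1}(m)) = \ker(T_{n}\phi)$. Finally, for the graded dimension count I would use Proposition \ref{tvrz_coordtangentvectors}: if $\gdim(\cN) = (m_{j})_{j\in\Z}$ and $\gdim(\M) = (n_{j})_{j\in\Z}$, then $\gdim(T_{n}\cN) = (m_{-j})_{j\in\Z}$ and $\gdim(T_{m}\M) = (n_{-j})_{j\in\Z}$; since $n$ is a regular point, $(T_{n}\phi)_{-j}\colon (T_{n}\cN)_{-j}\to(T_{m}\M)_{-j}$ is surjective in every degree, so by rank–nullity $\dim\ker(T_{n}\phi)_{-j} = m_{j} - n_{j}$ for all $j$. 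Hence $\gdim(T_{n}(\phi^{-1}(m))) = (m_{-j} - n_{-j})_{j\in\Z}$, which by Proposition \ref{tvrz_coordtangentvectors} applied on $\phi^{-1}(m)$ forces $\gdim(\phi^{-1}(m)) = (m_{j} - n_{j})_{j\in\Z}$ on the connected component containing $n$; one notes that this is independent of the choice of regular point in the fibre.

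I do not anticipate a serious obstacle here, since the statement is essentially a corollary of Theorem \ref{thm_inverseimage} combined with bookkeeping. The one point requiring mild care is the graded dimension claim: strictly speaking Theorem \ref{thm_inverseimage} (via Remark \ref{rem_vardimnuisance}) only guarantees a well-defined graded dimension on each connected component of the underlying manifold $\ul{\phi}^{-1}(m)$, so I would phrase the dimension conclusion accordingly, observing that the formula $(m_{j}-n_{j})_{j\in\Z}$ is the common value on every component because the rank–nullity computation is identical at every regular point. The other mild subtlety is simply making explicit that $T_{\ast}\kappa_{m}$ has image $\{0\}$ in $T_{m}\M$, which is immediate from $\gdim(\{m\}) = (0)_{j\in\Z}$.
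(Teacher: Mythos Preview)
Your proposal is correct and follows essentially the same approach as the paper's proof: both reduce transversality to surjectivity of $T_{n}\phi$ via $T_{m}(\{m\}) = 0$, invoke Theorem \ref{thm_inverseimage} for the existence of the closed embedded submanifold, and specialize \eqref{eq_inversetangent} to obtain the kernel description of the tangent space. You supply more detail than the paper (in particular the rank--nullity argument for the graded dimension, which the paper omits), but there is no substantive difference in strategy.
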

\begin{proof}
Recall that $m \in M$ is a regular value of $\phi$, if $\phi$ is a submersion at every $n \in \ul{\phi}^{-1}(m)$. The first claim is obvious, since $T_{m}( \{ m \})$ is a trivial subspace of $T_{m}\M$. The regular level set submanifold $\phi^{-1}(m)$ is obtained by Theorem \ref{thm_inverseimage}. The claim about tangent spaces follows immediately from (\ref{eq_inversetangent}) using the fact that $T_{\ul{\phi}(n)} \{ m\} = 0$ for all $n \in \ul{\phi}^{-1}(\{m\})$. 
\end{proof}
\subsection{Fiber products and intersections}
Having a good notion of an inverse image of an embedded submanifold is vital for several standard geometric constructions. Let us start with a very important example of a closed embedded submanifold.
\begin{example}
Let $\varphi: \M \rightarrow \cN$ be a graded smooth map. Define a graded smooth map $\iota_{\varphi}: \M \rightarrow \M \times \cN$ by declaring $\pi_{\M} \circ \iota_{\varphi} := \1_{\M}$ and $\pi_{\cN} \circ \iota_{\varphi} := \varphi$. 

We claim that $(\M,\iota_{\varphi})$ is a closed embedded submanifold of $\M \times \cN$. It is usually denoted simply as $\gr(\varphi) \subseteq \M \times \cN$ and called the \textbf{graph of $\varphi$}.

First, its underlying map is $\ul{\iota_{\varphi}}(m) = (m, \ul{\varphi}(m))$. It is a well-known fact that this makes $(M, \ul{\iota_{\varphi}})$ into a closed embedded submanifold of $M \times N$, see e.g. Proposition 5.4 in \cite{lee2012introduction}. It is usually identified with its image $\ul{\iota_{\varphi}}(M) = \gr(\ul{\varphi})$. 

It thus remains to show that $\iota_{\varphi}$ is an immersion. Hence suppose that $(T_{m} \iota_{\varphi})(v) = 0$ for some $v \in T_{m}\M$. But using the fact that $\pi_{\M} \circ \iota_{\varphi} = \1_{\M}$, we find 
\begin{equation} v = (T_{m}(\pi_{\M} \circ \iota_{\varphi}))(v) = (T_{(m,\ul{\varphi}(m))} \pi_{\M})(0) = 0. \end{equation}

For the special case $\varphi = \1_{\M}$, we use the notation $\Delta_{\M} := \gr(\1_{\M}) \subseteq \M \times \M$, $\Delta := \iota_{\1_{\M}}$, and call $\Delta$ the \textbf{diagonal embedding of $\M$ into $\M \times \M$}.
\end{example}

We will later make use of the following simple observation:
\begin{lemma} \label{lem_mapfactorizesthroughgraph}
Let $\varphi: \M \rightarrow \cN$ and $\chi: \Q \rightarrow \M \times \cN$ be arbitrary graded smooth maps. Suppose that they satisfy the condition
\begin{equation} \label{eq_mapfactorizesthroughgraph}
\varphi \circ \pi_{\M} \circ \chi = \pi_{\cN} \circ \chi.
\end{equation}
Then there exists a unique graded smooth map $\chi': \Q \rightarrow \M$, such that $\iota_{\varphi} \circ \chi' = \chi$. In fact, the condition (\ref{eq_mapfactorizesthroughgraph}) is necessary. 
\end{lemma}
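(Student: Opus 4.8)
The statement is a standard "map into a product factors through a graph" fact, and the natural way to obtain it is to recognize $\gr(\varphi)$ as an inverse image submanifold, or — more directly — to exploit the universal property of the product $\M \times \cN$ together with the universal property of a pullback. Since $(\M, \iota_\varphi)$ is a closed embedded submanifold, the plan is to invoke Proposition \ref{tvrz_clweakembedd} (the factorization-through-a-closed-embedded-submanifold criterion): it suffices to produce the underlying set-theoretic condition $\ul{\chi}(Q) \subseteq \ul{\iota_\varphi}(M) = \gr(\ul{\varphi})$ and the sheaf-level condition $\chi^\ast_{M \times N}(\J_{\gr(\varphi)}(M\times N)) = 0$, and then $\chi'$ comes out uniquely with $\iota_\varphi \circ \chi' = \chi$.

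First I would establish the necessity of (\ref{eq_mapfactorizesthroughgraph}): if $\chi = \iota_\varphi \circ \chi'$, then $\pi_\M \circ \chi = (\pi_\M \circ \iota_\varphi) \circ \chi' = \chi'$ and $\pi_\cN \circ \chi = (\pi_\cN \circ \iota_\varphi)\circ \chi' = \varphi \circ \chi' = \varphi \circ \pi_\M \circ \chi$, which is exactly (\ref{eq_mapfactorizesthroughgraph}). For sufficiency, I would actually prefer the cleaner route that avoids computing $\J_{\gr(\varphi)}$ explicitly: set $\chi' := \pi_\M \circ \chi : \Q \to \M$. Then I must check that $\iota_\varphi \circ \chi' = \chi$. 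By the universal property of the product $\M \times \cN$ (Proposition \ref{tvrz_products}-(iii)), a graded smooth map into $\M \times \cN$ is determined by its two components $\pi_\M \circ (-)$ and $\pi_\cN \circ (-)$; so it is enough to verify $\pi_\M \circ (\iota_\varphi \circ \chi') = \pi_\M \circ \chi$ and $\pi_\cN \circ (\iota_\varphi \circ \chi') = \pi_\cN \circ \chi$. The first reads $(\pi_\M \circ \iota_\varphi)\circ \chi' = \1_\M \circ \chi' = \pi_\M \circ \chi$, which holds by definition of $\chi'$. The second reads $(\pi_\cN \circ \iota_\varphi) \circ \chi' = \varphi \circ \chi' = \varphi \circ \pi_\M \circ \chi$, and by hypothesis (\ref{eq_mapfactorizesthroughgraph}) this equals $\pi_\cN \circ \chi$. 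Hence $\iota_\varphi \circ \chi' = \chi$.

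Finally I would address uniqueness of $\chi'$. Suppose $\chi''$ is another graded smooth map with $\iota_\varphi \circ \chi'' = \chi$. Composing with $\pi_\M$ gives $\chi'' = (\pi_\M \circ \iota_\varphi) \circ \chi'' = \pi_\M \circ (\iota_\varphi \circ \chi'') = \pi_\M \circ \chi = \chi'$. This is immediate and does not even need $\iota_\varphi$ to be an embedding — only the identity $\pi_\M \circ \iota_\varphi = \1_\M$.

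I do not anticipate a genuine obstacle here: the whole argument is a diagram chase using only the universal property of binary products established in Proposition \ref{tvrz_products}. The one point to be slightly careful about is that Proposition \ref{tvrz_products}-(iii) gives \emph{uniqueness} of the map with prescribed components, so to conclude $\iota_\varphi \circ \chi' = \chi$ I am really observing that both sides have the same two components and invoking that uniqueness; I would phrase this explicitly rather than leaving it implicit. No appeal to Proposition \ref{tvrz_clweakembedd} or to the theory of regular ideals is actually needed, which makes the proof short and self-contained.
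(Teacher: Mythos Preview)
Your proposal is correct and takes essentially the same approach as the paper: both arguments use the universal property of the product $\M \times \cN$ to reduce $\iota_{\varphi} \circ \chi' = \chi$ to the pair of component equations $\chi' = \pi_{\M} \circ \chi$ and $\varphi \circ \chi' = \pi_{\cN} \circ \chi$, from which uniqueness, existence, and necessity of (\ref{eq_mapfactorizesthroughgraph}) follow immediately. The detour you mention via Proposition~\ref{tvrz_clweakembedd} is not taken in the paper either, and you are right to discard it in favor of the direct argument.
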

\begin{proof}
Suppose there exists some $\chi'$ satisfying the equation $\iota_{\varphi} \circ \chi' = \chi$. Due to the universality of the product, it is equivalent to the pair of equations obtained by composing it with the projections $\pi_{\M}$ and $\pi_{\cN}$. But this gives a pair of conditions
\begin{equation}
\chi' = \pi_{\M} \circ \chi, \; \; \varphi \circ \chi' = \pi_{\cN} \circ \chi. 
\end{equation}
The first equation shows that we have to define $\chi' := \pi_{\M} \circ \chi$, hence it is unique and smooth. By plugging this into the second equation, we obtain precisely the assumption (\ref{eq_mapfactorizesthroughgraph}). This also shows that it is necessary. 
\end{proof}
The diagonal embedding $\Delta$ is useful to encode the transversality of graded smooth maps.
\begin{tvrz} \label{tvrz_fiberedproduct}
Let $\phi: \cN \rightarrow \M$ and $\psi: \cS \rightarrow \M$ be graded smooth maps. 

Then $\phi \pitchfork \psi$, if and only if $(\phi \times \psi) \pitchfork \Delta_{\M}$. 

Consequently, for $\phi \pitchfork \psi$, there exists a closed embedded submanifold $(\cN \times_{\M} \cS, \iota)$ of $\cN \times \cS$ and a graded smooth map $\phi \cap \psi: \cN \times_{\M} \cS \rightarrow \M$ fitting into the commutative diagram
\begin{equation} \label{eq_fiberproductfirstpullback}
\begin{tikzcd}
\cN \times_{\M} \cS \arrow{r}{\phi \cap \psi} \arrow{d}{\iota} & \M \arrow{d}{\Delta} \\
\cN \times \cS \arrow{r}{\phi \times \psi} & \M \times \M
\end{tikzcd}
\end{equation} 
The graded manifold $\cN \times_{\M} \cS$ is called the \textbf{fiber product of $\cN$ and $\cS$ over $\M$}. Its underlying manifold is the ordinary fiber product 
\begin{equation} N \times_{M} S = \{ (n,s) \in N \times S \; | \; \ul{\phi}(n) = \ul{\psi}(s) \},
\end{equation} 
and $\ul{\iota}$ is the inclusion of this subset into $N \times S$. Finally, for each $(n,s) \in N \times_{M} S$, one has 
\begin{equation} \label{eq_fiberproducttangent}
T_{(n,s)}(\cN \times_{\M} \cS) = \{ (v,w) \in T_{n}\cN \oplus T_{s}\cS \; | \; (T_{n}\phi)(v) = (T_{s}\psi)(w) \}. 
\end{equation}
\end{tvrz}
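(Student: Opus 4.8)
The strategy is to reduce everything about the fiber product to the inverse image construction of Theorem \ref{thm_inverseimage}, applied to the diagonal. First I would establish the equivalence $\phi \pitchfork \psi \Leftrightarrow (\phi \times \psi) \pitchfork \Delta_{\M}$. For a point $(m,m) \in \Delta_{\M}$ lying in the image of $\phi \times \psi$, a preimage is a pair $(n,s) \in (\ul{\phi} \times \ul{\psi})^{-1}(m,m)$, i.e. $\ul{\phi}(n) = m = \ul{\psi}(s)$. Using the canonical identification $T_{(n,s)}(\cN \times \cS) \cong T_{n}\cN \oplus T_{s}\cS$ and $T_{(m,m)}(\M \times \M) \cong T_{m}\M \oplus T_{m}\M$, the differential $T_{(n,s)}(\phi \times \psi)$ is the block map $(v,w) \mapsto ((T_{n}\phi)(v), (T_{s}\psi)(w))$, while the image of $T_{(m,m)}\Delta$ is the diagonal $\{(u,u) \mid u \in T_{m}\M\}$. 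A straightforward linear algebra lemma then says: the sum of the image of the block map and the diagonal is all of $T_{m}\M \oplus T_{m}\M$ if and only if $(T_{n}\phi)(T_{n}\cN) + (T_{s}\psi)(T_{s}\cS) = T_{m}\M$. This gives the transversality equivalence, and simultaneously will be reused at the end for the tangent space formula.

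Next, having $\phi \pitchfork \psi$, hence $(\phi \times \psi) \pitchfork \Delta_{\M}$, and knowing that $(\M, \Delta)$ is a closed embedded submanifold of $\M \times \M$, I would apply Theorem \ref{thm_inverseimage} with the graded smooth map $\phi \times \psi : \cN \times \cS \to \M \times \M$ and the submanifold $(\M,\Delta)$. This produces a closed embedded submanifold $(\cN \times_{\M} \cS, \iota) := ((\phi \times \psi)^{-1}(\M), \iota')$ of $\cN \times \cS$, together with a graded smooth map $(\phi \times \psi)' : \cN \times_{\M} \cS \to \M$ fitting into the pullback square
\begin{equation}
\begin{tikzcd}
\cN \times_{\M} \cS \arrow{r}{(\phi \times \psi)'} \arrow{d}{\iota} & \M \arrow{d}{\Delta} \\
\cN \times \cS \arrow{r}{\phi \times \psi} & \M \times \M
\end{tikzcd}
\end{equation}
in $\gMan^{\infty}$. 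I set $\phi \cap \psi := (\phi \times \psi)'$, which gives exactly the commutative diagram (\ref{eq_fiberproductfirstpullback}). The underlying manifold of the submanifold is by Theorem \ref{thm_inverseimage} the set $(\ul{\phi} \times \ul{\psi})^{-1}(\Delta_{M})$, which is precisely $N \times_{M} S = \{(n,s) \mid \ul{\phi}(n) = \ul{\psi}(s)\}$, and $\ul{\iota}$ is the corresponding inclusion — I would note this is the standard description of the ordinary fiber product.

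For the tangent space, I would invoke the formula (\ref{eq_inversetangent}) of Theorem \ref{thm_inverseimage}: for $(n,s) \in N \times_{M} S$,
\begin{equation}
T_{(n,s)}(\cN \times_{\M} \cS) = (T_{(n,s)}(\phi \times \psi))^{-1}\big( T_{(m,m)}\Delta_{\M} \big),
\end{equation}
where $m := \ul{\phi}(n) = \ul{\psi}(s)$ and $T_{(m,m)}\Delta_{\M} \subseteq T_{m}\M \oplus T_{m}\M$ is the diagonal subspace (using $T(\M \times \M) \cong T\M \oplus T\M$, identified via Proposition \ref{tvrz_inducedtangent} and the product projections, and the fact that $T_{m}\Delta$ is the graph of $\1_{T_{m}\M}$). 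Unwinding the block form of $T_{(n,s)}(\phi \times \psi)$ established in the first paragraph, a pair $(v,w) \in T_{n}\cN \oplus T_{s}\cS$ lies in the preimage iff $((T_{n}\phi)(v),(T_{s}\psi)(w))$ is on the diagonal, i.e. iff $(T_{n}\phi)(v) = (T_{s}\psi)(w)$, which is exactly (\ref{eq_fiberproducttangent}).

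The only genuinely delicate point is the identification of the differential of $\phi \times \psi$ with the block diagonal map and of $T\Delta$ with the diagonal inclusion, under the canonical isomorphism $T_{(n,s)}(\cN \times \cS) \cong T_{n}\cN \oplus T_{s}\cS$. This is essentially bookkeeping with Proposition \ref{tvrz_differential}, the universal property of the product (Proposition \ref{tvrz_products}) and the definition of $\phi \times \psi$ as $(\phi \circ \pi_{\cN}, \psi \circ \pi_{\cS})$, together with $\pi_{\M} \circ \Delta = \pi_{\M'} \circ \Delta = \1_{\M}$; I expect it to be routine but is the step requiring the most care. Everything else is a direct appeal to Theorem \ref{thm_inverseimage} and the linear algebra equivalence proved at the outset.
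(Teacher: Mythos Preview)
Your proposal is correct and follows essentially the same approach as the paper: reduce the transversality equivalence to a linear-algebra statement about the block map $T_n\phi \times T_s\psi$ and the diagonal subspace, then define $\cN \times_{\M} \cS := (\phi \times \psi)^{-1}(\Delta_{\M})$ via Theorem \ref{thm_inverseimage} and read off the underlying set and the tangent space from (\ref{eq_inversetangent}). The paper leaves the linear-algebra equivalence and the tangent-space unwinding as exercises, whereas you spell them out; otherwise the arguments coincide.
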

\begin{proof}
For each $(n,s) \in N \times S$, we have the canonical identification $T_{(n,s)}(\cN \times \cS) \cong T_{n}\cN \oplus T_{s}\cS$. With respect to this identification, one has $T_{(n,s)}(\phi \times \psi) \cong T_{n}\phi \times T_{s}\psi$. 

On the other hand, for each $m \in M$, one has $T_{(m,m)} \Delta_{\M} = \Delta(T_{m}\M)$, where
\begin{equation}
\Delta(T_{m}\M) = \{ (v,v) \; | \; v \in T_{m}\M \}.
\end{equation}
We thus have to show that to any $m \in \ul{\phi}(N) \cap \ul{\psi}(S)$ and any $(n,s) \in \ul{\phi}^{-1}(m) \times \ul{\psi}^{-1}(m)$, one has 
\begin{equation} \label{eq_transequivalence1}
T_{m}\M = (T_{n}\phi)(T_{n}\cN) + (T_{s} \psi)(T_{s}\cS), 
\end{equation}
if and only if there holds the condition
\begin{equation} \label{eq_transequivalence2}
T_{(m,m)}(\M \times \M) = (T_{n}\phi \times T_{s}\psi)(T_{n}\cN \oplus T_{s}\cS) + \Delta(T_{m}\M). 
\end{equation}
But this is an easy exercise a we leave it to the reader. Thus $\phi \pitchfork \psi$, if and only if $\phi \times \psi \pitchfork \Delta_{\M}$. Hence if $\phi$ and $\psi$ are transversal, we can employ Theorem \ref{thm_inverseimage} and define 
\begin{equation}
\cN \times_{\M} \cS := (\phi \times \psi)^{-1}(\Delta_{\M}).
\end{equation}
The rest of the claims follows immediately from the assertions of Theorem \ref{thm_inverseimage}. 
\end{proof}
It is useful to obtain some characterization of the fiber product independent of the transversality. The answer is given by the following proposition.
\begin{tvrz}
Let $\phi: \cN \rightarrow \M$ and $\psi: \cS \rightarrow \M$ be two graded smooth maps, such that $\phi \pitchfork \psi$. Let $(\cN \times_{\M} \cS, \iota)$ be the corresponding fiber product. Consider the graded smooth maps $\pi'_{\M} := \pi_{\M} \circ \iota$ and $\pi'_{\cN} := \pi_{\cN} \circ \iota$. Then the diagram 
\begin{equation} \label{eq_fiberproductpullback}
\begin{tikzcd}
\cN \times_{\M} \cS \arrow{r}{\pi'_{\cS}} \arrow{d}{\pi'_{\cN}} & \cS \arrow{d}{\psi} \\
\cN \arrow{r}{\phi}& \M 
\end{tikzcd}
\end{equation}
is a pullback square in the category $\gMan^{\infty}$.
\end{tvrz}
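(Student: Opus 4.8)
The plan is to show that the diagram \eqref{eq_fiberproductpullback} satisfies the universal property of a pullback by reducing it to the universal property of the pullback square \eqref{eq_fiberproductfirstpullback} proved in Proposition \ref{tvrz_fiberedproduct}, combined with the universal property of the binary product $\cN \times \cS$ from Proposition \ref{tvrz_products}. So suppose $\Q$ is a graded manifold together with graded smooth maps $\mu: \Q \to \cN$ and $\nu: \Q \to \cS$ satisfying $\phi \circ \mu = \psi \circ \nu$. I must produce a unique $\varphi: \Q \to \cN \times_{\M} \cS$ with $\pi'_{\cN} \circ \varphi = \mu$ and $\pi'_{\cS} \circ \varphi = \nu$.

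First I would form the graded smooth map $(\mu,\nu): \Q \to \cN \times \cS$ using the universal property of the product, so that $\pi_{\cN} \circ (\mu,\nu) = \mu$ and $\pi_{\cS} \circ (\mu,\nu) = \nu$. Next I would check the compatibility needed to feed this into the universal property of \eqref{eq_fiberproductfirstpullback}: I must exhibit a map $\Q \to \M$ (playing the role of ``$\chi$'' there) such that $(\phi \times \psi) \circ (\mu,\nu) = \Delta \circ \chi$. The natural candidate is $\chi := \phi \circ \mu = \psi \circ \nu$. Composing $(\phi\times\psi)\circ(\mu,\nu)$ with $\pi_{\M}$ and $\pi_{\M}'$ respectively and using $\pi_{\M} \circ (\phi\times\psi) = \phi\circ\pi_{\cN}$ and $\pi_{\cN'}\circ(\phi\times\psi) = \psi\circ\pi_{\cS}$ (from the Corollary after Proposition \ref{tvrz_products}, with $\cN' = \M$) shows both projections of $(\phi\times\psi)\circ(\mu,\nu)$ equal $\chi$, hence $(\phi\times\psi)\circ(\mu,\nu) = \Delta\circ\chi$ by the universal property of $\M\times\M$. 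Then Theorem \ref{thm_inverseimage} (via Proposition \ref{tvrz_fiberedproduct}) gives a unique $\varphi: \Q \to \cN\times_{\M}\cS$ with $\iota \circ \varphi = (\mu,\nu)$ and $(\phi\cap\psi)\circ\varphi = \chi$.

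It then remains to verify $\pi'_{\cN}\circ\varphi = \mu$ and $\pi'_{\cS}\circ\varphi = \nu$, which is immediate: $\pi'_{\cN}\circ\varphi = \pi_{\cN}\circ\iota\circ\varphi = \pi_{\cN}\circ(\mu,\nu) = \mu$, and similarly for $\cS$. For uniqueness, suppose $\varphi'$ also satisfies $\pi'_{\cN}\circ\varphi' = \mu$ and $\pi'_{\cS}\circ\varphi' = \nu$. Then $\pi_{\cN}\circ(\iota\circ\varphi') = \mu$ and $\pi_{\cS}\circ(\iota\circ\varphi') = \nu$, so $\iota\circ\varphi' = (\mu,\nu)$ by uniqueness in the product's universal property; and $\pi_{\M}\circ(\phi\times\psi)\circ\iota\circ\varphi' = \phi\circ\pi_{\cN}\circ(\mu,\nu) = \phi\circ\mu = \chi$, while on the other side $(\phi\times\psi)\circ\iota = \Delta\circ(\phi\cap\psi)$, so $\Delta\circ(\phi\cap\psi)\circ\varphi' = \Delta\circ\chi$, giving $(\phi\cap\psi)\circ\varphi' = \chi$ since $\Delta$ is a (closed embedded, hence monic) immersion — more cleanly, one invokes the uniqueness clause of the pullback property of \eqref{eq_fiberproductfirstpullback} directly for the pair $((\mu,\nu),\chi)$. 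Hence $\varphi' = \varphi$.

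I do not anticipate a serious obstacle here; the only mildly delicate point is bookkeeping the several instances of the product universal property and making sure the ``third map'' $\chi$ is correctly identified and that its compatibility conditions are exactly the two projection identities. One should also remark at the end (or it follows automatically from uniqueness of pullbacks up to isomorphism together with Lemma \ref{lem_equivalentsubmafolds}) that \eqref{eq_fiberproductpullback} being a pullback pins down $(\cN\times_{\M}\cS,\iota)$ up to the appropriate equivalence, consistent with the characterization already recorded after Proposition \ref{tvrz_fiberedproduct}.
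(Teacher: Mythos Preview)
Your proposal is correct and follows essentially the same route as the paper: form $(\mu,\nu):\Q\to\cN\times\cS$, identify $\chi=\phi\circ\mu=\psi\circ\nu$, verify $(\phi\times\psi)\circ(\mu,\nu)=\Delta\circ\chi$, and then invoke the universal property of the pullback square \eqref{eq_fiberproductfirstpullback} to produce and pin down $\varphi$. The only cosmetic differences are that the paper packages the factorization through $\Delta$ via Lemma~\ref{lem_mapfactorizesthroughgraph} and handles uniqueness by citing Proposition~\ref{tvrz_weakembedd} (i.e.\ $\iota$ is monic) rather than re-deriving the second compatibility $(\phi\cap\psi)\circ\varphi'=\chi$; your argument via $\Delta$ being monic is equally valid.
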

\begin{proof}
Recall that by construction, we have $(\phi \times \psi) \circ \iota = \Delta \circ (\phi \cap \psi)$. But then
\begin{equation}
\phi \circ \pi'_{\cN} = \pi_{\M} \circ ((\phi \times \psi) \circ \iota) = \pi_{\M} \circ (\Delta \circ (\phi \cap \psi)) = \1_{\M} \circ (\phi \cap \psi) = \phi \cap \psi.
\end{equation}
Similarly, one shows that $\psi \circ \pi'_{\cS} = \phi \cap \psi$. Hence (\ref{eq_fiberproductpullback}) commutes. It remains to prove the universal property. We have to show that for any graded manifold $\Q$ and any pair of graded smooth maps $\mu: \Q \rightarrow \cN$ and $\chi: \Q \rightarrow \cS$ satisfying $\phi \circ \mu = \psi \circ \chi$, there exists a unique graded smooth map $\varphi: \Q \rightarrow \cN \times_{\M} \cS$, making the both triangles in the diagram
\begin{equation} \label{eq_fiberproductuniversal}
\begin{tikzcd} 
\Q \arrow[bend right]{ddr}{\mu} \arrow[bend left=20]{rrd}{\chi} \arrow[dashed]{rd}{\varphi} & & \\
& \cN \times_{\M} \cS \arrow{d}{\pi'_{\cN}} \arrow{r}{\pi'_{\cS}} & \cS \arrow{d}{\psi}\\
& \cN \arrow{r}{\phi} & \M
\end{tikzcd}
\end{equation}
commute. The idea is of course to utilize the universal property of the pullback square (\ref{eq_fiberproductfirstpullback}) guaranteed by Theorem \ref{thm_inverseimage}. Let $\mu' := (\mu,\chi): \Q \rightarrow \cN \times \cS$. Consider the graded smooth map $(\phi \times \psi) \circ \mu': \Q \rightarrow \M \times \M$. Then it satisfies 
\begin{equation}
\1_{\M} \circ \pi^{(1)}_{\M} \circ \{ (\phi \times \psi) \circ \mu' \} = \phi \circ \mu = \psi \circ \chi = \pi^{(2)}_{\M} \circ \{ (\phi \times \psi) \circ \mu' \}, 
\end{equation}
where $\pi^{(1)}_{\M}: \M \times \M \rightarrow \M$ and $\pi^{(2)}_{\M}: \M \times \M \rightarrow \M$ are the projections onto the first and second factor, respectively. Since $\Delta_{\M} = \gr(\1_{\M})$, it follows from Lemma \ref{lem_mapfactorizesthroughgraph} that the graded smooth map $\chi': \Q \rightarrow \M$ defined by $\chi' := \pi^{(1)}_{\M} \circ \{ (\phi \times \psi) \circ \mu' \} = \phi \circ \mu$ satisfies 
\begin{equation}
(\phi \times \psi) \circ \mu' = \Delta \circ \chi'. 
\end{equation}
By the universal property of (\ref{eq_fiberproductfirstpullback}), there thus exists a unique graded smooth map $\varphi: \Q \rightarrow \cN \times_{\M} \cS$ making the both triangles of the diagram 
\begin{equation}
\begin{tikzcd}
\Q \arrow[bend right]{ddr}{\mu'} \arrow[bend left=20]{rrd}{\chi'} \arrow[dashed]{rd}{\varphi} & & \\
& \cN \times_{\M} \cS \arrow{d}{\iota} \arrow{r}{\phi \cap \psi} & \M \arrow{d}{\Delta}\\
& \cN \times \cS \arrow{r}{\phi \times \psi} & \M \times \M 
\end{tikzcd}
\end{equation}
commute. It follows that $\pi'_{\cN} \circ \varphi = \pi_{\cN} \circ (\iota \circ \varphi) = \pi_{\cN} \circ \mu' = \mu$, and similarly $\pi'_{\cS} \circ \varphi = \chi$. Hence $\varphi$ makes the both triangles in (\ref{eq_fiberproductuniversal}) commute. 

Finally, every graded smooth map $\varphi: \Q \rightarrow \cN \times_{\M} \cS$ making the both triangles in (\ref{eq_fiberproductuniversal}) commute satisfies $\iota \circ \varphi = \mu'$. But as $(\cN \times_{\M} \cS, \iota)$ is an embedded submanifold, $\varphi$ is unique by Proposition \ref{tvrz_weakembedd}. This finishes the proof. 
\end{proof}

We arrive to the following general definition.

\begin{definice} \label{def_fiberproduct}
Let $\phi: \cN \rightarrow \M$ and $\psi: \cS \rightarrow \M$ be graded smooth maps. We say that an embedded submanifold $(\cN \times_{\M} \cS, \iota)$ of $\M \times \cN$ is a \textbf{fiber product of $\cN$ and $\cS$ over $\M$}, if 
\begin{enumerate}[(i)]
\item its underlying manifold is a subset $N \times_{M} S \subseteq N \times S$; 
\item it makes the diagram (\ref{eq_fiberproductpullback}) into a pullback square in $\gMan^{\infty}$. 
\end{enumerate}
\end{definice}
\begin{rem} \label{rem_productconversestatement}
One can show that if a fiber product $(\cN \times_{\M} \cS,\iota)$ exists, it is equivalent to an inverse image submanifold $(\phi \times \psi)^{-1}(\Delta_{\M})$ in the sense of Definition \ref{def_inverseimage}. In particular, the condition $\phi \pitchfork \psi$ is sufficient but not necessary for its existence. Also note that in general, only the inclusion $\subseteq$ in the equation (\ref{eq_fiberproducttangent}) is true.
\end{rem}
The notion of fiber product also allows one to unambiguously define intersections of submanifolds, as shows the following proposition.
\begin{tvrz}
Let $(\cS,\iota)$ and $(\cS',\iota')$ be two immersed submanifolds of $\M$, such that $\iota \pitchfork \iota'$. Then $(\cS \times_{\M} \cS', \iota \cap \iota')$ defines an immersed submanifold of $\M$. One has 
\begin{equation} \label{eq_intersectionset} 
\ul{\iota \cap \iota'}(S \times_{M} S') = \ul{\iota}(S) \cap \ul{\iota}'(S'). 
\end{equation}
This submanifold is thus usually denoted simply as $\cS \cap \cS'$ and called the \textbf{intersection of $\cS$ and $\cS'$}. With respect to the usual identifications, for each $m \in \ul{\iota}(S) \cap \ul{\iota}'(S')$, one has 
\begin{equation} \label{eq_intersectiontangent}
T_{m}( \cS \cap \cS') = T_{m}\cS \cap T_{m}\cS'. 
\end{equation}
If both $(\cS,\iota)$ and $(\cS',\iota')$ are (closed) embedded submanifolds, then $(\cS \times_{\M} \cS', \iota \cap \iota')$ is a (closed) embedded submanifold. 
\end{tvrz}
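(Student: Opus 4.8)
The plan is to build everything on top of the fiber product machinery just developed, so that the statement becomes a matter of unwinding definitions plus two standard facts about ordinary immersed submanifolds. Since $\iota \pitchfork \iota'$ by assumption, Proposition \ref{tvrz_fiberedproduct} produces the fiber product $(\cS \times_{\M} \cS', \iota_{0})$, where $\iota_{0}: \cS \times_{\M} \cS' \rightarrow \cS \times \cS'$ is the canonical embedding, together with the graded smooth map $\iota \cap \iota' := \iota \circ \pi'_{\cS} = \iota' \circ \pi'_{\cS'}: \cS \times_{\M} \cS' \rightarrow \M$ (the common value of the two compositions, which agree by the commutativity of (\ref{eq_fiberproductpullback})). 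The first task is to show that $\iota \cap \iota'$ is an immersion with injective underlying map, i.e.\ that $(\cS \times_{\M} \cS', \iota \cap \iota')$ is indeed an immersed submanifold of $\M$.

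First I would establish (\ref{eq_intersectionset}). The underlying manifold of $\cS \times_{\M} \cS'$ is the ordinary fiber product $S \times_{M} S' = \{(s,s') \in S \times S' : \ul{\iota}(s) = \ul{\iota}'(s')\}$ by Proposition \ref{tvrz_fiberedproduct}, and the underlying map of $\iota \cap \iota'$ sends $(s,s') \mapsto \ul{\iota}(s) = \ul{\iota}'(s')$. Since $\ul{\iota}$ and $\ul{\iota}'$ are injective (the defining property of immersed submanifolds), this map is injective with image exactly $\ul{\iota}(S) \cap \ul{\iota}'(S')$, giving (\ref{eq_intersectionset}). Next I would verify that $\iota \cap \iota'$ is an immersion at every point $(s,s')$. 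Using (\ref{eq_fiberproducttangent}), the tangent space $T_{(s,s')}(\cS \times_{\M} \cS')$ is identified with $\{(v,w) \in T_{s}\cS \oplus T_{s'}\cS' : (T_{s}\iota)(v) = (T_{s'}\iota')(w)\}$, and under this identification the differential $T_{(s,s')}(\iota \cap \iota')$ sends $(v,w)$ to the common vector $(T_{s}\iota)(v) = (T_{s'}\iota')(w) \in T_{m}\M$. Because $T_{s}\iota$ and $T_{s'}\iota'$ are injective, this is injective: if $(T_{s}\iota)(v) = 0$ then $v = 0$, hence also $w = 0$ since $(T_{s'}\iota')(w) = (T_{s}\iota)(v) = 0$ forces $w=0$. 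Thus $\iota \cap \iota'$ is an immersion, and together with the injectivity of the underlying map we conclude $(\cS \times_{\M} \cS', \iota \cap \iota')$ is an immersed submanifold. For the tangent space formula (\ref{eq_intersectiontangent}), the same identification shows $T_{m}(\cS \cap \cS')$ is the image of the set of pairs $(v,w)$ with $(T_{s}\iota)(v) = (T_{s'}\iota')(w)$, which under the standard identifications $T_{s}\cS \subseteq T_{m}\M$, $T_{s'}\cS' \subseteq T_{m}\M$ is precisely $T_{m}\cS \cap T_{m}\cS'$; injectivity of the two differentials guarantees the image map is a bijection onto this intersection.

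For the last sentence, I would argue that $\iota \cap \iota'$ is an embedding (resp.\ closed embedding) when both $\iota, \iota'$ are. The underlying map $\ul{\iota \cap \iota'}$ is the composition of the embedding $\ul{\iota_{0}}: S \times_{M} S' \hookrightarrow S \times S'$ (an embedding by Theorem \ref{thm_inverseimage}, as $\cS \times_{\M} \cS'$ is there obtained as an inverse image submanifold, indeed closed) with $\ul{\iota} \times \ul{\iota}'|_{S \times_M S'}$; one checks directly that a composition landing in $\ul{\iota}(S) \cap \ul{\iota}'(S')$ of maps whose factors are embeddings is again an embedding onto its image, using that $(s,s') \mapsto \ul{\iota}(s)$ is a homeomorphism onto $\ul{\iota}(S)\cap\ul{\iota}'(S')$ in the subspace topology when $\ul\iota,\ul\iota'$ are embeddings. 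If moreover $\ul{\iota}(S)$ and $\ul{\iota}'(S')$ are closed in $M$, their intersection is closed, so $(\cS \cap \cS', \iota \cap \iota')$ is a closed embedded submanifold. The main obstacle I anticipate is purely bookkeeping: making the identification of $T_{(s,s')}(\iota \cap \iota')$ with the map $(v,w) \mapsto (T_s\iota)(v)$ genuinely precise, since (\ref{eq_fiberproducttangent}) is itself derived from the pullback diagram and one must track how the two projections $\pi'_{\cS}, \pi'_{\cS'}$ act on tangent vectors; once that is pinned down, everything else is immediate. I expect no deeper difficulty because all the heavy lifting — existence of the fiber product, its underlying set, and its tangent spaces — has already been done in Proposition \ref{tvrz_fiberedproduct} and Theorem \ref{thm_inverseimage}.
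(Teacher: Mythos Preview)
Your proposal is correct and follows essentially the same route as the paper: both arguments leverage the fiber product from Proposition~\ref{tvrz_fiberedproduct}, use the tangent space description (\ref{eq_fiberproducttangent}) for the immersion property and for (\ref{eq_intersectiontangent}), and reduce the embedding claim to a statement about the underlying continuous maps. The only noteworthy difference is in the embedding step: the paper works with the relation $\ul{\Delta} \circ \ul{\iota \cap \iota'} = (\ul{\iota} \times \ul{\iota}') \circ \ul{\iota_{0}}$ and explicitly exhibits, for each open $V \subseteq S \times_{M} S'$, an open $U \subseteq M \times M$ with $\ul{\iota \cap \iota'}(V) = \ul{\Delta}^{-1}(U) \cap \ul{\iota \cap \iota'}(S \times_{M} S')$, whereas you argue more abstractly that a composition of embeddings is an embedding and that precomposing an embedding with $\ul{\Delta}$ reflects the embedding property; both are fine, the paper's version is just more hands-on.
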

\begin{proof}
By Proposition \ref{tvrz_fiberedproduct}, we obtain a commutative diagram 
\begin{equation}
\begin{tikzcd} \label{eq_intersectionpullback}
\cS \times_{\M} \cS' \arrow{r}{\iota \cap \iota'} \arrow{d}{\iota_{0}} & \M \arrow{d}{\Delta} \\
\cS \times \cS' \arrow{r}{\iota \times \iota'} & \M \times \M
\end{tikzcd},
\end{equation}
For any $(s,s') \in S \times_{M} S'$ and $m := \iota(s) = \iota(s')$, one has 
\begin{equation}
T_{m}\Delta \circ T_{(s,s')}(\iota \cap \iota') = (T_{s} \iota \times T_{s'} \iota') \circ T_{(s,s')}\iota_{0}. 
\end{equation} 
But since $\Delta$, $\iota$ and $\iota'$ are immersions, this immediately shows that so is $\iota \cap \iota'$. Similarly, one proves that $\ul{\iota \cap \iota'}$ is injective. Hence $(\cS \times_{\M} \cS', \iota \cap \iota')$ is an immersed submanifold of $\M$. (\ref{eq_intersectionset}) is easy to see and (\ref{eq_intersectiontangent}) follows from (\ref{eq_fiberproducttangent}). 

Suppose that $(\cS,\iota)$ and $(\cS',\iota')$ are embedded submanifolds. We have to prove that $\ul{\iota \cap \iota}: S \times_{M} S' \rightarrow M$ is an embedding. Let $V \in \Op(S \times_{M} S')$. Since $\ul{j} := (\ul{\iota} \times \ul{\iota}') \circ \ul{\iota_{0}}$ is an embedding, we have $\ul{j}(V) = U \cap \ul{j}(N \times_{M} N')$  for some $U \in \Op(M \times M)$. We claim that
\begin{equation} \label{eq_iotacapiotaprime}
\ul{\iota \cap \iota'}(V) = \ul{\Delta}^{-1}(U) \cap \ul{\iota \cap \iota'}(S \times_{M} S').
\end{equation}
The inclusion $\subseteq$ follows immediately from the commutativity of (\ref{eq_intersectionpullback}). Hence suppose that $m \in \ul{\Delta}^{-1}(U) \cap \ul{\iota \cap \iota'}(S \times_{M} S')$. Then $\ul{\Delta}(m) \in U$ and $m = \ul{\iota \cap \iota'}(v)$ for some $v \in S \times_{M} S'$. Whence $\ul{\Delta}(m) = \ul{j}(v)$ by the commutativity of (\ref{eq_intersectionpullback}). But this means that $\ul{\Delta}(m) \in U \cap \ul{j}(S \times_{M} S') = \ul{j}(V)$. As $\ul{j}$ is injective, we have $v \in V$ and thus $m \in \ul{\iota \cap \iota'}(V)$. This proves the inclusion $\supseteq$. Since we can find the expression (\ref{eq_iotacapiotaprime}) for any $V \in \Op(S \times_{M} S')$, this proves that $\ul{\iota \cap \iota'}$ is a homeomorphism onto its image, hence an embedding. Finally, if $\ul{\iota}(S)$ and $\ul{\iota}'(S')$ are closed subsets of $M$, then so is their intersection. 
\end{proof}
Again, we can find a more general definition.
\begin{definice}
Let $(\cS,\iota)$ and $(\cS', \iota')$ be two immersed submanifolds of $\M$. We say that $\cS$ and $\cS'$ \textbf{intersect well}, if there exists a fibered product $(\cS \times_{\M} \cS', \iota_{0})$ in the sense of Definition \ref{def_fiberproduct}. 

We say that $\cS$ and $\cS'$ \textbf{intersect cleanly}, if they intersect well and (\ref{eq_intersectiontangent}) is true. 
\end{definice}
\begin{rem}
It follows from Remark \ref{rem_productconversestatement} that if $\cS$ and $\cS'$ intersect well, we still obtain the commutative diagram (\ref{eq_intersectionpullback}), hence $(\cS \times_{\M} S', \iota \cap \iota')$ becomes an immersed submanifold of $\M$, which is (closed) embedded if $(\cS,\iota)$ and $(\cS',\iota)$ are. However, only the inclusion $\subseteq$ in (\ref{eq_intersectiontangent}) is true in general. We have proved that if $\cS \pitchfork \cS'$, then $\cS$ and $\cS'$ intersect cleanly. 
\end{rem}
\section*{Final thoughts, outlooks} \label{sec_final}
\addcontentsline{toc}{section}{\nameref{sec_final}}
Due to the overall length of the paper, there are some important topics we had to leave out. To name a few, one should investigate graded regular distributions and their integrability. In particular, one would like to have a graded version of Frobenius' theorem. Regular (involutive) distributions can be defined as (involutive) subbundles of the tangent bundle. However, one would need to have a notion of maximal flows of vector fields, which turns out to be a bit problematic in the graded case. In fact, to define flows, one needs to understand actions of graded Lie groups.

This brings us to the another white spot on the map worth of exploring, the theory of graded Lie groups. One can study their relation to graded Lie algebras, their actions on graded manifolds and the corresponding theory of graded principal bundles. A very important for applications in physics is the question of integration. This can be a tougher nut to crack since it is an intriguing topic already in supergeometry, see e.g. \cite{bernshtein1977integral, Catenacci:2018xsv}. Last but not least, there is a graded symplectic and Poisson geometry to consider. Our plan is to focus on these topic in the future sequence of shorter papers, building on the foundations constructed in this article. 

There is another important question to answer. What is the relation of graded manifolds to supermanifolds? First, recall that a (Berezein-Leites) supermanifold is a pair $\cS = (S, \C^{\infty}_{\cS})$, where $S$ is a second countable Hausdorff topological space and $\C^{\infty}_{\cS}$ is a sheaf of superalgebras, whose stalks are local commutative superrings. Moreover, it is assumed to be locally isomorphic to the sheaf assigning to each $U \in \Op(\R^{n})$ the superalgebra $\C^{\infty}_{n}(U) \otimes \Lambda(V)$, where $V$ is an ordinary finite-dimensional real vector space and $\Lambda(V)$ is the corresponding Grassmann algebra. 

There is a way how to view $\cS$ as a graded manifold. Indeed, one utilizes the famous Batchelor's theorem \cite{batchelor1979structure}. If $q := \dim(V)$, there is a vector bundle $\pi: A \rightarrow S$ of rank $q$, such that $\cS \cong \Pi A$. $\Pi A$ denotes the ``parity reversed'' vector bundle, where all fiber coordinates on $A$ are declared to be odd. The details of this construction are completely analogous to Example \ref{ex_degreeshiftedordvect}. In fact, one can view $\Pi A$ as a graded manifold $A[1]$. However, the Batchelor's isomorphism is very non-canonical and so is the correspondence $\cS \leftrightarrow A[1]$. 

On the other hand, let $\M = (M, \C^{\infty}_{\M})$ be a graded manifold. Is there a way how to view this as a supermanifold? Recall that in Example \ref{ex_evenpart}, we have constructed the even part submanifold $(\M_{0},\iota_{0})$. Recall that one has $\M_{0} = (M, \C^{\infty}_{\M} / \J^{\odd}_{\M})$. Let $\A$ be any graded smooth atlas for $\M$ and $\A_{0}$ be the induced graded smooth atlas for $\M_{0}$ By construction, it only has local coordinates of even degree. If we assume that $\M_{0}$ is non-negatively (or non-positively) graded, all transition maps are polynomial in the ``purely graded'' coordinates $\xi_{\mu}$, see Remark \ref{rem_simplerextsymmetric}. Under these circumstances, one can use the transition maps to construct an ordinary smooth manifold $S$ together with a smooth surjective submersion $\pi: S \rightarrow M$. 

The transition maps of the graded smooth atlas $\A$ and the smooth atlas on $S$ can be then used to construct a supermanifold $\cS = (S, \C^{\infty}_{\cS})$ in the way completely analogous to Proposition \ref{tvrz_gmgluing1}. If we choose a different graded smooth atlas $\A'$ to construct $\cS' = (S', \C^{\infty}_{\cS'})$, there is a superdiffeomorphism $\varphi: \cS \rightarrow \cS'$ over $\ul{\varphi}: S \rightarrow S'$ satisfying $\pi' \circ \ul{\varphi} = \pi$. 

However, we have no answer for the case where $\M_{0}$ contains even coordinates of arbitrary degrees and transition functions can be infinite formal power series. This leads us to the final question. Is there a Batchelor-like theorem for arbitrary graded manifolds? In \cite{bonavolonta2013category}, there is a version for non-negatively graded manifolds (N-manifolds). However, in a general case, the proof certainly does not carry over. It would be interesting to try to mimic Batchelor's approach using Čech cohomology to obtain some structure theorem for arbitrary graded manifolds.

There is another remark. We have defined graded domain $\C^{\infty}_{(n_{j})}$ by formula (\ref{eq_grdomain}), so that ``purely graded'' coordinates $(\xi_{\mu})_{\mu=1}^{n_{\ast}}$ always have a non-zero degree, $|\xi_{\mu}| \neq 0$ for all $\mu \in \{1,\dots,n_{\ast}\}$. In fact, this is not necessary. For any $m \in \N_{0}$ and a sequence $(n_{j})_{j \in \Z}$ satisfying $\sum_{j \in \Z} n_{j} < \infty$, one can consider a sheaf on $\R^{m}$ assigning to any $U \in \Op(\R^{m})$ a graded associative commutative algebra $\C^{\infty}_{m,(n_{j})}(U) := \bar{S}( \R^{(n_{j})}, \C^{\infty}_{m}(U))$. All of the statements in this paper remain valid for graded manifolds locally isomorphic to this sheaf. This modification allows one to work with functions which are formal power series in some degree zero coordinate (say it is denoted as $\hbar$). Some concepts (e.g. graded dimension) would have to be changed accordingly and there would be some non-canonical choices required (e.g. in Theorem \ref{tvrz_totalspace}). Nevertheless, there is no conceptual obstacle forcing us to use the graded domain $\C^{\infty}_{(n_{j})}$ and not $\C^{\infty}_{m,(n_{j})}$. 

Finally, we are aware that graded manifolds as presented here are a little bit ``topologically trivial''. This is manifested e.g. by the fact that the graded de Rham cohomology is not interesting at all, see Proposition \ref{tvrz_degreekdeRhamtrivial} and Theorem \ref{thm_degreezerogrdeRhamisdeRham}. In the $\Z_{2}$-graded setting, this was remedied by considering the model sheaf on a more complicated space 
\begin{equation}
\Lambda(\R^{k})^{m,n} := \underbrace{\Lambda(\R^{k})_{0} \oplus \dots \oplus \Lambda(\R^{k})_{0}}_{m \text{ times}} \oplus \underbrace{\Lambda(\R^{k})_{1} \oplus \dots \oplus \Lambda(\R^{k}))_{1}}_{n \text{ times}},
\end{equation}
where $\Lambda(\R^{k})$ is the Grassmann algebra of $\R^{k}$ viewed as a vector superspace with the obvious $\Z_{2}$-grading. It is highly non-trivial to choose the correct sheaf of ``supersmooth functions'' on this topological space. See \cite{bartocci2012geometry} for a detailed discussion and references. The obvious choice in the $\Z$-graded setting would be to choose a finite-dimensional graded vector space $V$, pick a sequence $(n_{j})_{j \in \N}$ with $\sum_{j \in \Z} n_{j} < \infty$, and consider the vector space 
\begin{equation}
S(V)^{(n_{j})} := \bigoplus_{j \in \Z} \underbrace{S(V)_{j} \oplus \dots \oplus S(V)_{j}}_{n_{j} \text{ times}}.
\end{equation}
If we ensure that $S(V)_{j}$ is finite-dimensional whenever $n_{j} \neq 0$, this is a finite-dimensional vector space. This is true e.g. if $V$ is non-negatively graded. However, at this moment, we have no idea whether there is some suitable sheaf of ``graded smooth functions'' on $S(V)^{(n_{j})}$. We shall address this question in the future. 

\section*{Acknowledgments}
First and foremost, I would like to express gratitude to my family for their patience and support during the somewhat complicated last year. 

Next, I would like to thank Mark Bugden, Pavel Hájek, Branislav Jurčo, Pavol Ševera, Rudolf Šmolka, Dmitry Roytenberg, Vít Tuček and Fridrich Valach for helpful discussions. 
The author is grateful for a financial support from MŠMT under grant no. RVO 14000. 

\bibliography{bib}	
\appendix
\section{Proofs of certain statements}
\subsection{Graded domains}
\begin{lemma}[\textbf{classical Hadamard lemma}] \label{lem_ap_Hadimrdclassic}
For the statement, see Lemma \ref{lem_Hadimrdclassic}. 
\end{lemma}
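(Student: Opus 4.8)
The statement to be proved is the classical Hadamard lemma: for $f \in \C^\infty_{n_0}(U)$ with $U \in \Op(\R^{n_0})$ (not necessarily star-shaped), any $a \in U$, and any $q \in \N_0$, there is a smooth map $h_a^{(q)} : U \to \Lin^{(q+1)}(\R^{n_0},\R)$ with $f(x) = t^q_a(f)(x) + [h_a^{(q)}(x)](x-a)^{q+1}$ for all $x \in U$. The plan is to reduce to the local (star-shaped) case by a covering/partition-of-unity argument, but with care so that $h_a^{(q)}$ remains smooth \emph{at $a$} — the point that the general-$U$ references gloss over.

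First I would treat the case where $U$ is an open ball centered at $a$ (or any star-shaped-with-respect-to-$a$ open set). Here the standard integral trick works: writing $g(t) := f(a + t(x-a))$ and iterating the fundamental theorem of calculus $q+1$ times, one obtains $f(x) = \sum_{k=0}^q \frac{1}{k!}[\fD^k f(a)](x-a)^k + R(x)$, where the remainder is an explicit iterated integral. After the substitution one gets
\begin{equation}
R(x) = \Big[ \frac{1}{q!}\int_0^1 (1-t)^q \, \fD^{q+1}f(a+t(x-a)) \, dt \Big](x-a)^{q+1},
\end{equation}
and the bracketed expression defines $h_a^{(q)}(x) := \frac{1}{q!}\int_0^1 (1-t)^q \fD^{q+1}f(a+t(x-a))\,dt$, which is smooth in $x$ on the ball (including at $x=a$) by differentiation under the integral sign, since $\fD^{q+1}f$ is smooth on $U$ and $a + t(x-a) \in U$ for all $t \in [0,1]$.

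Next I would extend to general $U$. Choose an open ball $B = B(a,\rho) \subseteq U$ and let $\chi \in \C^\infty_{n_0}(U)$ be a bump function with $\chi \equiv 1$ on $B(a,\rho/2)$ and $\supp(\chi) \subseteq B$. On $B$ we already have $h_a^{(q)}$ from the first step; the remainder function $r^q_a(f) := f - t^q_a(f)$ is globally defined and smooth on $U$, and on $U \setminus \{a\}$ it is divisible by a polynomial of degree $q+1$ in $(x-a)$. The idea is to set $h_a^{(q)}(x) := \chi(x)\, h_{a,B}^{(q)}(x) + (1-\chi(x))\, \rho(x)$ where $h_{a,B}^{(q)}$ is the ball-version and $\rho$ is a globally smooth $\Lin^{(q+1)}$-valued map chosen so that $[\rho(x)](x-a)^{q+1} = r^q_a(f)(x)$ on $\supp(1-\chi)$ — e.g. $\rho(x) := \|x-a\|^{-2(q+1)} r^q_a(f)(x) \cdot (\text{symmetrization of } (x-a)^{\otimes(q+1)})^\sharp$, which is smooth away from $a$, and $1-\chi$ vanishes near $a$, so the product is smooth on all of $U$. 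One checks the two pieces agree where $\chi \in (0,1)$ because both represent the same function $r^q_a(f)$ via the contraction with $(x-a)^{q+1}$, and $(x-a)^{q+1}$ is a nonzero multilinear expression there. The main obstacle, and the only genuinely delicate point, is exactly this gluing: ensuring the resulting $h_a^{(q)}$ is smooth \emph{at the point $a$}, which is why one must arrange that the non-smooth-at-$a$ piece is supported away from $a$ by the bump function, while near $a$ one uses the honest integral formula. I expect the verification that the glued map is well-defined and smooth (and that the identity $f = t^q_a(f) + [h_a^{(q)}](\cdot - a)^{q+1}$ holds globally) to be the bulk of the write-up, with everything else being routine calculus.
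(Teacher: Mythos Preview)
Your approach is essentially the same as the paper's: use the integral formula on a ball around $a$, an explicit algebraic expression on the complement, and glue with a bump function. Two small remarks. First, you go directly to order $q$ via the Taylor integral remainder $h_a^{(q)}(x)=\frac{1}{q!}\int_0^1(1-t)^q\,\fD^{q+1}f(a+t(x-a))\,dt$, whereas the paper proves the case $q=0$ (with the gluing already built in) and then iterates, applying the $q=0$ result to $h_a^{(q-1)}$ to produce $h_a^{(q)}$; your direct route is a bit shorter. Second, your sentence ``the two pieces agree where $\chi\in(0,1)$ because both represent the same function via contraction with $(x-a)^{q+1}$'' is not quite right: $h_{a,B}^{(q)}(x)$ and $\rho(x)$ need \emph{not} coincide as $(q+1)$-linear forms (equal values on a single diagonal tuple do not force equality of multilinear maps). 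What saves you is simply linearity of the contraction: $[\chi\,h_{a,B}^{(q)}+(1-\chi)\rho](x-a)^{q+1}=\chi\,r^q_a(f)+(1-\chi)\,r^q_a(f)=r^q_a(f)$, with no compatibility on the overlap required. So the construction is correct, only the justification needs that one-line fix.
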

\begin{proof}
Let us start with the $q = 0$ case. We look for a smooth map $h_{a}^{(0)}: U \rightarrow 
Lin(\R^{n},\R)$, such that for all $x \in U$, one can write
\begin{equation} \label{eq_hadamardbasicformula}
f(x) = f(a) + [h_{a}^{(0)}(x)](x-a),
\end{equation}
Note that necessarily $h^{(0)}_{a}(a) = \fD{f}(a)$. Let $B_{r}(a) \subseteq U$ be some open ball around $a$ of radius $r$. For each $x \in B_{r}(a)$, we may define a smooth function $g(t) := f((1-t)a + tx): I \rightarrow \R$, where $I \subseteq \R$ is some open interval containing $[0,1]$. Then
\begin{equation}
g(t) = g(0) + \int_{0}^{t} \frac{d}{dt} g(t)\,dt = g(0) + \int_{0}^{t} [(\fD{f}((1-t)a + tx)](x-a)\,dt,
\end{equation}
for all $t \in I$. Evaluating this at $t = 1$ gives 
\begin{equation} \label{eq_hadamardintegralformula}
f(x) = f(a) + \int_{0}^{1} [(\fD{f}((1-t)a + tx)](x-a)\,dt.
\end{equation}
By declaring the second term to be $[h_{1}(x)](x-a)$, we obtain a smooth map $h_{1}: B_{r}(a) \rightarrow \Lin(\R^{n},\R)$, such that $f(x) = f(a) + [h_{1}(x)](x-a)$ for all $x \in B_{r}(a)$. Note that this is the reason why $U$ is often assumed to be star-shaped around $a$. 

Next, fix some $r' < r$ and consider $U' := U - \ol{B_{r'}(a)}$. Let us define $h_{2}: U' \rightarrow \Lin(\R^{n},\R)$ by 
\begin{equation}
[h_{2}(x)](y) := \fD{f}(a) + \frac{\<x-a,y\>}{\| x - a \|^{2}} \epsilon_{a}(x-a),
\end{equation}
where $\epsilon_{a}$ is defined by $\epsilon_{a}(y) := f(a+y) - f(a) - [\fD{f}(a)](y)$ for all $y \in \{ x - a \; | \; x \in U \}$. Clearly $h_{2}: U' \rightarrow \Lin(\R^{n},\R)$ is smooth and for every $x \in U'$, one finds 
\begin{equation}
f(x) = f(a) + \fD{f}(a)(x-a) + \epsilon_{a}(x-a) = f(a) + [h_{2}(x)](x-a).
\end{equation}
In fact, one can attempt to define $h_{2}$ on entire $U$ by setting $h_{2}(a) := \fD{f}(a)$. It fits into the above formula and it is even continuous at $a$. However, in general, it is not differentiable at $a$. 

Finally, observe that $B_{r}(a)$ and $U'$ form an open cover of $U$ and one has the corresponding partition of unity $\{ \rho_{1}, \rho_{2} \}$. We may thus define $h^{(0)}_{a} := \rho_{1} h_{1} + \rho_{2} h_{2}: U \rightarrow \Lin(\R^{n},\R)$. This map is smooth and fits into the formula (\ref{eq_hadamardbasicformula}). On $B_{r'}(a)$, it is given by the integral formula (\ref{eq_hadamardintegralformula}). 

One can now iterate this procedure. By applying (\ref{eq_hadamardbasicformula}) to $h^{(0)}_{a}$, there is a smooth function $h^{(1)}_{a}: U \rightarrow \Lin^{(2)}(\R^{n},\R)$ satisfying
\begin{equation}
[h_{a}^{(0)}(x)](x-a) = [\fD{f}(a)](x-a) + [h_{a}^{(1)}(x)](x-a)^{2},
\end{equation}
for all $x \in U$. To proceed further, one only has to know the value $h_{a}^{(1)}(a)$. To do so, one has to use the integral formula (\ref{eq_hadamardintegralformula}). For $x \in B_{r'}(a)$ and $y_{1},y_{2} \in \R^{n}$, one finds
\begin{equation}
[\fD{h_{a}^{(0)}}(x)](y_{1},y_{2}) = \int_{0}^{1} t \cdot [\fD^{2}{f}((1-t)a + tx)](y_{1},y_{2}) \,dt,
\end{equation}
where $h_{a}^{(0)}: U \rightarrow \Lin(\R^{n},\R)$, so that $\fD{h}_{a}^{(0)}: U \rightarrow \Lin^{(2)}(\R^{n},\R)$. By plugging this back into the integral formula (\ref{eq_hadamardintegralformula}), for every $x \in B_{r'}(a)$ one finds
\begin{equation}
[h_{a}^{(1)}(x)](y_{1},y_{2}) = \int_{0}^{1} ds \int_{0}^{1} dt \; t \cdot [\fD^{2}{f}((1-st)a + stx)](y_{1},y_{2}).
\end{equation}
In particular, for $x = a$, one obtains $h_{a}^{(1)}(a) = \frac{1}{2} \fD^{2}{f}(a)$. One can now repeat this indefinitely and produce a sequence $h^{(q)}_{a}: U \rightarrow \Lin^{(q+1)}(\R^{n},\R)$ defined iteratively by (\ref{eq_hadamardbasicformula}), so that
\begin{equation}
[h_{a}^{(q-1)}(x)](x-a)^{q} = [h_{a}^{(q-1)}(a)](x-a)^{q} + [h_{a}^{(q)}(x)](x-a)^{q+1},
\end{equation}
for all $x \in U$. For $x \in B_{r'}(a)$, it is given by the explicit integral formula
\begin{equation}
h_{a}^{(q)}(x) = \int_{0}^{1} dt_{1} \dots \int_{0}^{1} dt_{q+1} \; (t_{1})^{0} (t_{2})^{1} \dots (t_{q+1})^{q} \fD^{q+1}f( (1 - t_{1} \cdots t_{q+1})a + t_{1} \cdots t_{q+1} x).
\end{equation}
In particular, one has $h_{a}^{(q)}(a) = \frac{1}{(q+1)!} \fD^{q+1}{f}(a)$. Finally, it follows from the construction that 
\begin{equation}
f(x) = f(a) + \sum_{k=1}^{q} \frac{1}{k!} [\fD^{k}f(a)](x-a)^{k} + [h^{(q)}(x)](x-a)^{q+1}. 
\end{equation}
This finishes the proof of the classical Hadamard lemma.
\end{proof}

\begin{lemma} \label{lem_ap_olphumorphism}
This is the complicated bit of the proof of Lemma \ref{lem_olphumorphism}.
\end{lemma}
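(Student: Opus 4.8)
The content of this lemma is precisely what was deferred from the proof of Lemma \ref{lem_olphumorphism}: that the map $\ol{\varphi}^{\ast}_{W}$ preserves products, i.e. $\ol{\varphi}^{\ast}_{W}(f \cdot g) = \ol{\varphi}^{\ast}_{W}(f) \cdot \ol{\varphi}^{\ast}_{W}(g)$ for all $f,g \in \C^{\infty}_{m_{0}}(W)$ (linearity, unitality, naturality in $W$ and the property (iii) of Definition \ref{def_gLRS} were already checked in the main text). My plan is to compare the two sides coefficient by coefficient as formal power series in the variables $(\xi_{\mu})_{\mu=1}^{n_{\ast}}$. Fix $\fp \in \N^{n_{\ast}}_{0}$. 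As recalled in the proof of Lemma \ref{lem_olphumorphism}, $(\bar{y}^{j_{1}} \cdots \bar{y}^{j_{k}})_{\fp} = 0$ as soon as $k > w(\fp)$, since each $\bar{y}^{j}$ has vanishing body; hence, after extracting the $\xi^{\fp}$-coefficient, both $\ol{\varphi}^{\ast}_{W}(f \cdot g)$ and $\ol{\varphi}^{\ast}_{W}(f) \cdot \ol{\varphi}^{\ast}_{W}(g)$ reduce to genuinely finite sums of elements of $\C^{\infty}_{n_{0}}(\ul{\varphi}^{-1}(W))$, so all the rearrangements below are legitimate.

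First I would expand $\ol{\varphi}^{\ast}_{W}(f) \cdot \ol{\varphi}^{\ast}_{W}(g)$ from the definition of $\ol{\varphi}^{\ast}_{W}$. Every factor occurring there — a pulled-back partial derivative $\tfrac{\partial^{p} f}{\partial y^{j_{1}} \cdots \partial y^{j_{p}}} \circ \ul{\varphi}$ or a function $\bar{y}^{j}$ — has degree $0$, hence is central in $\C^{\infty}_{(n_{j})}(\ul{\varphi}^{-1}(W))$ and multiplies without any Koszul sign; in particular the product of the two series may be merged and the monomials $\bar{y}^{j_{1}} \cdots \bar{y}^{j_{p}} \cdot \bar{y}^{l_{1}} \cdots \bar{y}^{l_{q}}$ freely reordered. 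Writing $k := p+q$, summing over all $j$'s and $l$'s independently and relabelling, this gives
\begin{equation}
\ol{\varphi}^{\ast}_{W}(f) \cdot \ol{\varphi}^{\ast}_{W}(g) = \sum_{k=0}^{\infty} \sum_{p=0}^{k} \frac{1}{p!\,(k-p)!} \Big( \frac{\partial^{p} f}{\partial y^{j_{1}} \cdots \partial y^{j_{p}}} \circ \ul{\varphi} \Big) \Big( \frac{\partial^{k-p} g}{\partial y^{j_{p+1}} \cdots \partial y^{j_{k}}} \circ \ul{\varphi} \Big) \bar{y}^{j_{1}} \cdots \bar{y}^{j_{k}}.
\end{equation}
Using $\tfrac{1}{p!(k-p)!} = \tfrac{1}{k!}\binom{k}{p}$ and that $\binom{k}{p}$ counts the subsets $S \subseteq \{1,\dots,k\}$ with $|S|=p$ — over which the above summand is symmetric, again because the $\bar{y}^{j}$ commute — I would rewrite the inner sum as $\tfrac{1}{k!}\sum_{S \subseteq \{1,\dots,k\}}$ and recognize, via the generalized Leibniz rule
\begin{equation}
\frac{\partial^{k}(f\cdot g)}{\partial y^{j_{1}} \cdots \partial y^{j_{k}}} = \sum_{S \subseteq \{1,\dots,k\}} \frac{\partial^{|S|}f}{\prod_{i \in S} \partial y^{j_{i}}} \cdot \frac{\partial^{k-|S|}g}{\prod_{i \notin S} \partial y^{j_{i}}},
\end{equation}
that this equals $\tfrac{1}{k!}\big( \tfrac{\partial^{k}(f g)}{\partial y^{j_{1}} \cdots \partial y^{j_{k}}} \circ \ul{\varphi}\big) \bar{y}^{j_{1}} \cdots \bar{y}^{j_{k}}$. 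Summing over $k$ recovers $\ol{\varphi}^{\ast}_{W}(f \cdot g)$, and reading off the $\xi^{\fp}$-coefficient of both ends (a finite computation) completes the argument.

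The only real work — and the reason this was relegated to the appendix — is the bookkeeping in the two middle steps: writing out the generalized Leibniz rule carefully, checking that the centrality of the degree-zero elements $\bar{y}^{j}$ genuinely removes all signs from the required reorderings, and verifying that the binomial coefficients combine so that the Cauchy-type product of the two defining series reproduces the series for $\ol{\varphi}^{\ast}_{W}(f \cdot g)$ term by term. No analytic subtleties arise, since the whole argument takes place inside $\C^{\infty}_{n_{0}}(\ul{\varphi}^{-1}(W))$ at a fixed $\fp$, where every sum is finite.
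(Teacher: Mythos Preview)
Your argument is correct and in fact cleaner than the paper's. The key observation you make — that every factor $\bar{y}^{j}$ and every pulled-back partial derivative sits in degree $0$ of $\C^{\infty}_{(n_{j})}(\ul{\varphi}^{-1}(W))$, hence is central — lets you carry out the whole computation at the level of full elements of the graded algebra, so no Koszul signs appear and the standard Leibniz/binomial manipulation goes through verbatim. The paper instead extracts the $\xi^{\fp}$-coefficient at the outset: it writes $(\bar{y}^{j_{1}} \cdots \bar{y}^{j_{k}})_{\fp} = \sum_{\vec{\fq} \in \N^{(k)}_{\fp}} \epsilon^{\vec{\fq}}_{\fp} (\bar{y}^{j_{1}})_{\fq_{1}} \cdots (\bar{y}^{j_{k}})_{\fq_{k}}$, expands both $(\ol{\varphi}^{\ast}_{V}(fg))_{\fp}$ and $(\ol{\varphi}^{\ast}_{V}(f) \cdot \ol{\varphi}^{\ast}_{V}(g))_{\fp}$ as finite sums over tuples of multi-indices, and then constructs an explicit bijection between the two indexing sets, checking by hand that the products of $\epsilon$-signs match. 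Your approach trades that combinatorial bookkeeping for a single structural remark (degree-zero elements are central); what it loses is only the explicit coefficient-level description, which is not needed here. The finiteness justification you give — fix $\fp$, then only $k \leq w(\fp)$ contributes — is exactly the right way to make the formal-sum manipulation rigorous.
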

\begin{proof}
We are going to prove that $\ol{\varphi}^{\ast}_{W}: \C^{\infty}_{m_{0}}(W) \rightarrow \C^{\infty}_{(n_{j})}( \ul{\varphi}^{-1}(W))$ preserves the algebra multiplication. Let us prove it for $W = V$. 

First, for each $\fp \in \N^{n_{\ast}}_{0}$, define the following (finite) set:
\begin{equation}
\N^{(k)}_{\fp} := \{ \vec{\fq} := (\fq_{1},\dots,\fq_{k}) \in (\N_{0}^{n_{\ast}})^{k} \; | \; \fq_{1} + \dots + \fq_{k} = \fp \}.
\end{equation}
For each $\vec{\fq} \in \N^{(k)}_{\fp}$, there is a sign $\epsilon_{\fp}^{\vec{\fq}}$ uniquely determined by the formula $\xi^{\fp} = \epsilon_{\fp}^{\vec{\fq}} \xi^{\fq_{1}} \cdots \xi^{\fq_{k}}$. This allows one to write the following $k$-fold product:
\begin{equation}
(\bar{y}^{j_{1}} \cdots \bar{y}^{j_{k}})_{\fp} = \sum_{\vec{\fq} \in \N^{(k)}_{\fp}} \epsilon_{\fp}^{\vec{\fq}} (\bar{y}^{j_{1}})_{\fq_{1}} \dots (\bar{y}^{j_{k}})_{\fq_{k}}. 
\end{equation}
Now, for two functions $f,g \in \C^{\infty}_{m_{0}}(V)$, we obtain the expression
\begin{equation}
\begin{split}
(&\ol{\varphi}^{\ast}_{V}(fg))_{\fp} = \sum_{k=0}^{w(\fp)} \frac{1}{k!} (\frac{\partial^{k} (fg)}{\partial y^{j_{1}} \dots \partial y^{j_{k}}} \circ \ul{\varphi})  \sum_{\vec{\fq} \in \N^{(k)}_{\fp}} \epsilon_{\fp}^{\vec{\fq}} (\bar{y}^{j_{1}})_{\fq_{1}} \dots (\bar{y}^{j_{k}})_{\fq_{k}} \\
= & \ \sum_{k=0}^{w(\fp)} \frac{1}{k!} \sum_{r=0}^{k} \binom{k}{r} (\frac{\partial^{r} f}{\partial y^{j_{1}} \dots \partial y^{j_{r}}} \circ \ul{\varphi}) (\frac{\partial^{k-r} g}{\partial y^{j_{r+1}} \dots \partial y^{j_{k}}} \circ \ul{\varphi}) \sum_{\vec{\fq} \in \N^{(k)}_{\fp}} \epsilon_{\fp}^{\vec{\fq}} (\bar{y}^{j_{1}})_{\fq_{1}} \dots (\bar{y}^{j_{k}})_{\fq_{k}} \\
= & \ \sum_{k=0}^{w(\fp)} \sum_{r=0}^{k} \frac{1}{r!} (\frac{\partial^{r} f}{\partial y^{j_{1}} \dots \partial y^{j_{r}}} \circ \ul{\varphi}) \frac{1}{(k-r)!}(\frac{\partial^{k-r} g}{\partial y^{j_{r+1}} \dots \partial y^{j_{k}}} \circ \ul{\varphi}) \sum_{\vec{\fq} \in \N^{(k)}_{\fp}} \epsilon_{\fp}^{\vec{\fq}} (\bar{y}^{j_{1}})_{\fq_{1}} \dots (\bar{y}^{j_{k}})_{\fq_{k}}. \\
\end{split}
\end{equation}
On the other hand, one finds the expression 
\begin{equation}
\begin{split}
( \ol{\varphi}^{\ast}_{V}(f) \cdot \ol{\varphi}^{\ast}_{V}(g))_{\fp} = & \ \sum_{\vec{\fr} \in \N^{(2)}_{\fp}} \epsilon^{\vec{\fr}}_{\fp} (\ol{\varphi}^{\ast}_{V}(f))_{\fr_{1}} (\ol{\varphi}^{\ast}_{V}(g))_{\fr_{1}}  \\
= & \ \sum_{\vec{\fr} \in \N_{\fp}^{(2)}} \epsilon^{\vec{\fr}}_{\fp} \big( \sum_{k_{1}=0}^{w(\fr_{1})} \frac{1}{k_{1}!} (\frac{\partial^{k_{1}} f}{\partial y^{l_{1}} \dots \partial y^{l_{k_{1}}}} \circ \ul{\varphi}) \sum_{\vec{\mathbf{u}} \in \N_{\fr_{1}}^{(k_{1})}} \epsilon_{\fr_{1}}^{\vec{\mathbf{u}}} (\bar{y}^{l_{1}})_{\mathbf{u}_{1}} \dots (\bar{y}^{l_{k_{1}}})_{\mathbf{u}_{k_{1}}} \big) \\
& \big( \sum_{k_{2}=0}^{w(\fr_{2})} \frac{1}{k_{2}!} (\frac{\partial^{k_{2}} g}{\partial y^{t_{1}} \dots \partial y^{t_{k_{2}}}} \circ \ul{\varphi}) \sum_{\vec{\mathbf{v}} \in \N_{\fr_{2}}^{(k_{2})}} \epsilon_{\fr_{2}}^{\vec{\mathbf{v}}} (\bar{y}^{t_{1}})_{\mathbf{v}_{1}} \dots (\bar{y}^{t_{k_{2}}})_{\mathbf{v}_{k_{2}}} \big).
\end{split}
\end{equation}
We have to argue that these two rather complicated sums are the same. The idea is to choose an arbitrary combination $(k,r,(j_{1},\dots,j_{k}),\vec{\fq})$ of summation indices of the first sum and assign to it the corresponding summand in the second sum.

\begin{enumerate}[(i)]
\item Let $\fr_{1} := \fq_{1} + \dots + \fq_{r}$, $\fr_{2} := \fq_{r+1} + \dots + \fq_{k}$. As $\vec{\fq} \in \N^{(k)}_{\fp}$, we have $\vec{\fr} := (\fr_{1},\fr_{2}) \in \N^{(2)}_{\fp}$. 
\item Let $k_{1} := r$, $k_{2} := k - r$. As $w(\fq_{i}) \geq 1$, one finds $k_{1} \leq w(\fr_{1})$ and $k_{2} \leq w(\fr_{2})$. 
\item Let $(l_{1},\dots,l_{k_{1}}) := (j_{1},\dots,j_{r})$ and $(t_{1},\dots,t_{k_{2}}) := (j_{r+1},\dots,j_{k})$. 
\item Let $\vec{\mathbf{u}} := (\fq_{1},\dots,\fq_{r})$ and $\vec{\mathbf{v}} := (\fq_{r+1}, \dots, \fq_{k})$. Clearly $\vec{\mathbf{u}} \in \N_{\fr_{1}}^{(k_{1})}$ and $\vec{\mathbf{v}} \in \N_{\fr_{2}}^{(k_{2})}$. 
\end{enumerate}
With this choice of indices, the corresponding summand in the second sum takes the form 
\begin{equation}
\frac{1}{r!} (\frac{\partial^{r} f}{\partial y^{j_{1}} \dots \partial y^{j_{r}}} \circ \ul{\varphi}) \frac{1}{(k-r)!}(\frac{\partial^{k-r} g}{\partial y^{j_{r+1}} \dots \partial y^{j_{k}}} \circ \ul{\varphi}) \epsilon_{\fp}^{\vec{\fr}} \epsilon^{\vec{\mathbf{u}}}_{\fr_{1}} \epsilon^{\vec{\mathbf{v}}}_{\fr_{2}} (\bar{y}^{j_{1}})_{\fq_{1}} \dots (\bar{y}^{j_{k}})_{\fq_{k}}.
\end{equation}
We thus only have to show that the product of the three signs is actually $\epsilon^{\vec{\fq}}_{\fp}$. We have
\begin{equation}
\xi^{\fp} = \epsilon_{\fp}^{\vec{\fr}} \xi^{\fr_{1}} \xi^{\fr_{2}} = \epsilon^{\vec{\fr}}_{\fp} ( \epsilon_{\fr_{1}}^{\vec{\mathbf{u}}} \xi^{\mathbf{u}_{1}} \cdots \xi^{\mathbf{u}_{k_{1}}})( \epsilon^{\vec{\mathbf{v}}}_{\fr_{2}} \xi^{\mathbf{v}_{1}} \cdots \xi^{\mathbf{v}_{k_{2}}}) = \epsilon_{\fp}^{\vec{\fr}} \epsilon^{\vec{\mathbf{u}}}_{\fr_{1}} \epsilon^{\vec{\mathbf{v}}}_{\fr_{2}} \xi^{\fq_{1}} \cdots \xi^{\fq_{k}}.
\end{equation}
But this equation also uniquely determines the sign $\epsilon^{\vec{\fq}}_{\fp}$, whence they are the same. It is not difficult to find the unique converse assignment, thus finding a bijection of the two indexing sets. Hence the two finite sums coincide and the map $\ol{\varphi}^{\ast}_{V}: \C^{\infty}_{m_{0}}(V) \rightarrow \C^{\infty}_{(n_{j})}(U)$ preserves the product.
\end{proof}

\begin{lemma} \label{lem_ap_themorphism}
This is the complicated bit of the proof of Lemma \ref{lem_themorphism}.
\end{lemma}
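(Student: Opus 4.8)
The statement to prove is Lemma \ref{lem_ap_themorphism}, which asserts that the graded linear map $\varphi^{\ast}_{W}$ defined by formula (\ref{eq_themorphismformula}) in Lemma \ref{lem_themorphism} preserves products, i.e. $\varphi^{\ast}_{W}(f \cdot g) = \varphi^{\ast}_{W}(f) \cdot \varphi^{\ast}_{W}(g)$ for all $f, g \in \C^{\infty}_{(m_{j})}(W)$. I will prove it for $W = V$; the general case follows by naturality.

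The plan is to reduce everything to the already-established multiplicativity of $\ol{\varphi}^{\ast}_{V}$ (Lemma \ref{lem_ap_olphumorphism}) together with bookkeeping of the signs $\epsilon^{\fr,\fr'}$ governing the reordering of the $\bar{\theta}$-monomials. First I would write $f = \sum_{\fr \in \N^{m_{\ast}}_{|f|}} f_{\fr} \theta^{\fr}$ and $g = \sum_{\fr \in \N^{m_{\ast}}_{|g|}} g_{\fr} \theta^{\fr}$, and expand the product $f \cdot g$ using (\ref{eq_formalseriesproduct}), so that $(f \cdot g)_{\fr} = \sum_{\fr' \leq \fr} \epsilon^{\fr', \fr - \fr'} f_{\fr'} \cdot g_{\fr - \fr'}$, where the sum is over $\fr' \in \N^{m_{\ast}}_{|f|}$. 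Then I would apply the definition (\ref{eq_themorphismformula}), giving
\begin{equation}
\varphi^{\ast}_{V}(f \cdot g) = \sum_{\fr \in \N^{m_{\ast}}_{|f|+|g|}} \ol{\varphi}^{\ast}_{V}\Big( \sum_{\substack{\fr' \in \N^{m_{\ast}}_{|f|} \\ \fr' \leq \fr}} \epsilon^{\fr',\fr-\fr'} f_{\fr'} \cdot g_{\fr-\fr'} \Big) \cdot \bar{\theta}^{\fr}|_{\ul{\varphi}^{-1}(V)}.
\end{equation}
Using the linearity of $\ol{\varphi}^{\ast}_{V}$ and its multiplicativity $\ol{\varphi}^{\ast}_{V}(f_{\fr'} \cdot g_{\fr-\fr'}) = \ol{\varphi}^{\ast}_{V}(f_{\fr'}) \cdot \ol{\varphi}^{\ast}_{V}(g_{\fr-\fr'})$ (this is where Lemma \ref{lem_ap_olphumorphism} enters), this becomes a double sum over pairs $(\fr', \fr'')$ with $\fr' + \fr'' \in \N^{m_{\ast}}_{|f|+|g|}$.

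On the other side, I would expand $\varphi^{\ast}_{V}(f) \cdot \varphi^{\ast}_{V}(g)$ directly: it is
\begin{equation}
\Big( \sum_{\fr'} \ol{\varphi}^{\ast}_{V}(f_{\fr'}) \cdot \bar{\theta}^{\fr'} \Big) \cdot \Big( \sum_{\fr''} \ol{\varphi}^{\ast}_{V}(g_{\fr''}) \cdot \bar{\theta}^{\fr''} \Big),
\end{equation}
and the key point is to commute $\bar{\theta}^{\fr'}$ past $\ol{\varphi}^{\ast}_{V}(g_{\fr''})$. Since $\ol{\varphi}^{\ast}_{V}(g_{\fr''})$ has degree $|g_{\fr''}|$ and $\bar{\theta}^{\fr'}$ has the same degree as $\theta^{\fr'}$ by construction ($|\bar{\theta}_{\nu}| = |\theta_{\nu}|$), the graded commutativity of $\C^{\infty}_{(n_{j})}(\ul{\varphi}^{-1}(V))$ produces a Koszul sign, and then the product $\bar{\theta}^{\fr'} \cdot \bar{\theta}^{\fr''}$ must be reordered into $\bar{\theta}^{\fr'+\fr''}$, producing exactly the sign $\epsilon^{\fr',\fr''}$ — the same sign that governs $\theta^{\fr'} \theta^{\fr''} = \epsilon^{\fr',\fr''} \theta^{\fr'+\fr''}$, since the $\bar{\theta}_{\nu}$ obey the same commutation relations (\ref{eq_variablesrelation}) as the $\theta_{\nu}$. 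The main obstacle, and the only real content beyond routine expansion, is verifying that all the signs match: the Koszul sign from $\bar{\theta}^{\fr'}$ passing $\ol{\varphi}^{\ast}_{V}(g_{\fr''})$ must combine with the reordering sign of $\bar{\theta}^{\fr'}\bar{\theta}^{\fr''}$ to reproduce precisely $\epsilon^{\fr',\fr''}$ appearing in $(f\cdot g)_{\fr}$ after applying $\ol{\varphi}^{\ast}_{V}$; here one uses that $\ol{\varphi}^{\ast}_{V}$ preserves degrees (so $|\ol{\varphi}^{\ast}_{V}(g_{\fr''})| = |g_{\fr''}| = |g| - |\theta^{\fr''}|$) and that the explicit formula (\ref{eq_epsilonpqexplicit}) for $\epsilon^{\fr',\fr''}$ depends only on the degrees $|\theta_{\nu}|$ and the exponent tuples. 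Once the index set of the double sum is identified with $\{(\fr',\fr''): \fr'+\fr'' \in \N^{m_{\ast}}_{|f|+|g|}\}$ on both sides and the coefficient of each $\bar{\theta}^{\fr'+\fr''}$ is checked to agree, the proof is complete. I would also note that the degenerate case where $\fr' + \fr'' \notin \N^{m_{\ast}}_{|f|+|g|}$ (some odd exponent hitting $2$) is handled automatically, as then both $\bar{\theta}^{\fr'+\fr''} = 0$ and $\epsilon^{\fr',\fr''} = 0$, so these terms vanish on both sides.
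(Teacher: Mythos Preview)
Your approach is correct and in fact considerably more streamlined than the paper's. The paper works entirely at the level of $\fp$-components in the \emph{target} algebra $\C^{\infty}_{(n_{j})}(\ul{\varphi}^{-1}(V))$: it expands $(\varphi^{\ast}_{V}(f\cdot g))_{\fp}$ and $(\varphi^{\ast}_{V}(f)\cdot\varphi^{\ast}_{V}(g))_{\fp}$ into multi-index sums (over $\fr,\vec{\fq},\vec{\mathbf{u}},\vec{\mathbf{s}},\vec{\mathbf{x}}$ on one side and $\vec{\mathbf{t}},\mathbf{v},\mathbf{w},\vec{\mathbf{a}},\vec{\mathbf{b}}$ on the other), builds an explicit bijection between the two index sets, and then verifies a sign identity of the form $\epsilon_{\fp}^{\vec{\fq}} \epsilon_{\fq_{1}}^{\vec{\mathbf{s}}} \epsilon^{\vec{\mathbf{x}}}_{\fq_{2}} = \epsilon^{\vec{\mathbf{t}}}_{\fp} \epsilon^{\vec{\mathbf{a}}}_{\mathbf{t}_{1}} \epsilon^{\vec{\mathbf{b}}}_{\mathbf{t}_{2}}$. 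Your argument instead stays at the level of $\fr$-indices in the \emph{source} variables, and the entire combinatorial burden is replaced by the single observation that $|\bar{\theta}_{\nu}| = |\theta_{\nu}|$ forces $\bar{\theta}^{\fr'}\bar{\theta}^{\fr''} = \epsilon^{\fr',\fr''}\bar{\theta}^{\fr'+\fr''}$ in the graded commutative target algebra. What you gain is transparency; what the paper's approach gains is that all sums are manifestly finite, so no justification of distributivity is required.

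Two small points worth tightening. First, the Koszul sign you worry about when commuting $\bar{\theta}^{\fr'}$ past $\ol{\varphi}^{\ast}_{V}(g_{\fr''})$ is trivially $+1$: the coefficient $g_{\fr''}$ lives in $\C^{\infty}_{m_{0}}(V)$ and hence $|\ol{\varphi}^{\ast}_{V}(g_{\fr''})| = 0$ (your own formula $|g_{\fr''}| = |g| - |\theta^{\fr''}|$ confirms this, since $\fr'' \in \N^{m_{\ast}}_{|g|}$). So the only sign that actually appears is $\epsilon^{\fr',\fr''}$. Second, ``comparing coefficients of $\bar{\theta}^{\fr}$'' is slightly dangerous phrasing, since the $\bar{\theta}^{\fr}$ are not a basis of anything; what you really show is that both sides reduce to the \emph{same} formal expression $\sum_{\fr} \ol{\varphi}^{\ast}_{V}((f\cdot g)_{\fr})\cdot\bar{\theta}^{\fr}$, and the distributivity of the product over these infinite sums is legitimate precisely because each $\fp$-component involves only finitely many $\fr$ (the fact $(\bar{\theta}^{\fr})_{\fp} = 0$ for $w(\fr) > w(\fp)$, established in the body of Lemma~\ref{lem_themorphism}). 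With those clarifications your argument is complete.
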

\begin{proof}
We have to show that $\varphi^{\ast}_{W}: \C^{\infty}_{(m_{j})}(W) \rightarrow \C^{\infty}_{(n_{j})}(\ul{\varphi}^{-1}(W))$ preserves the algebra multiplication for any $W \in \Op(V)$. Let us do this for $W = V$. 

First, recall that for $f \in \C^{\infty}_{(n_{j})}(V)$, its components $f_{\fq} \in \C^{\infty}_{n_{0}}(V)$ are defined for each $\fq \in \N^{n_{\ast}}_{|f|}$. To simplify the notation in the rest of the proof, we define $f_{\fq} := 0$ for all $\fq \in \cup_{k \neq |f|} \N^{n_{\ast}}_{k}$. Write $\N^{n_{\ast}}_{\bullet} := \cup_{k \in \Z} \N^{n_{\ast}}_{k}$. For each $k \in \N$ and $\fp \in \N^{n_{\ast}}_{\bullet}$ define
\begin{equation}
\N^{(k)}_{\fp} := \{ \vec{\fq} = (\fq_{1},\dots,\fq_{k}) \in (\N^{n_{\ast}}_{\bullet})^{k} \; | \; \fq_{1} + \dots + \fq_{k} = \fp \},
\end{equation}
These sets are finite and for each $\vec{\fq} \in \N^{(k)}_{\fp}$, there is a unique sign $\epsilon_{\fp}^{\vec{\fq}}$ defined by $\xi^{\fp} = \epsilon_{\fp}^{\vec{\fq}} \xi^{\fq_{1}} \cdots \xi^{\fq_{k}}$. 

Now, for $f,g \in \C^{\infty}_{(m_{j})}(V)$ and each $\fr \in \N^{m_{\ast}}_{\bullet}$, one has $(f \cdot g)_{\fr} = \sum_{\vec{\mathbf{u}} \in \N^{(2)}_{\fr}} \epsilon^{\vec{\mathbf{u}}}_{\fr} f_{\mathbf{u}_{1}} g_{\mathbf{u}_{2}}$, whence 
\begin{equation}
\ol{\varphi}^{\ast}_{V}( (f \cdot g)_{\fr}) = \sum_{\vec{\mathbf{u}} \in \N^{(2)}_{\fr}} \epsilon_{\fr}^{\vec{\mathbf{u}}} \ol{\varphi}^{\ast}_{V}(f_{\mathbf{u}_{1}}) \cdot \ol{\varphi}^{\ast}_{V}(g_{\mathbf{u}_{2}}),
\end{equation}
since we have proved in the previous lemma that $\ol{\varphi}^{\ast}_{V}$ is a graded algebra morphism. Now, let $\fp \in \N^{m_{\ast}}_{|f| + |g|}$ be given. One can then write 
\begin{equation}
\begin{split}
(& \varphi^{\ast}_{V}(f \cdot g))_{\fp} = \sum_{\substack{\fr \in \N^{m_{\ast}}_{\bullet} \\ w(\fr) \leq w(\fp)}} \sum_{\vec{\fq} \in \N^{(2)}_{\fp}} \epsilon_{\fp}^{\vec{\fq}} (( \ol{\varphi}^{\ast}_{V}((f \cdot g)_{\fr}))_{\fq_{1}} ( \bar{\theta}^{\fr})_{\fq_{2}}\\
= & \ \sum_{\substack{\fr \in \N^{m_{\ast}}_{\bullet} \\ w(\fr) \leq w(\fp)}} \sum_{\vec{\fq} \in \N^{(2)}_{\fp}} \sum_{\vec{\mathbf{u}} \in \N^{(2)}_{\fr}} \sum_{\vec{\mathbf{s}} \in \N^{(2)}_{\fq_{1}}} \epsilon_{\fp}^{\vec{\fq}} \epsilon_{\fr}^{\vec{\mathbf{u}}}  \epsilon_{\fq_{1}}^{\vec{\mathbf{s}}} (\ol{\varphi}^{\ast}_{V}(f_{\mathbf{u}_{1}}))_{\mathbf{s}_{1}} (\ol{\varphi}^{\ast}_{V}(f_{\mathbf{u}_{2}}))_{\mathbf{s}_{2}} (\epsilon^{\vec{\mathbf{u}}}_{\fr} \bar{\theta}^{\mathbf{u}_{1}} \bar{\theta}^{\mathbf{u}_{2}})_{\fq_{2}} \\
= & \ \sum_{\substack{\fr \in \N^{m_{\ast}}_{\bullet} \\ w(\fr) \leq w(\fp)}} \sum_{\vec{\fq} \in \N^{(2)}_{\fp}} \sum_{\vec{\mathbf{u}} \in \N^{(2)}_{\fr}} \sum_{\vec{\mathbf{s}} \in \N^{(2)}_{\fq_{1}}} \sum_{\vec{\mathbf{x}} \in \N^{(2)}_{\fq_{2}}} \epsilon_{\fp}^{\vec{\fq}} \epsilon_{\fq_{1}}^{\vec{\mathbf{s}}}  \epsilon^{\vec{\mathbf{x}}}_{\fq_{2}} (\ol{\varphi}^{\ast}_{V}(f_{\mathbf{u}_{1}}))_{\mathbf{s}_{1}} (\ol{\varphi}^{\ast}_{V}(f_{\mathbf{u}_{2}}))_{\mathbf{s}_{2}} (\bar{\theta}^{\mathbf{u}_{1}})_{\mathbf{x}_{1}} (\bar{\theta}^{\mathbf{u}_{2}})_{\mathbf{x}_{2}}. 
\end{split}
\end{equation}
On the other hand, one finds the expression 
\begin{equation}
\begin{split}
(& \varphi^{\ast}_{V}(f) \cdot \varphi^{\ast}_{V}(g))_{\fp} = \sum_{\vec{\mathbf{t}} \in \N^{(2)}_{\fp}} \epsilon^{\vec{\mathbf{t}}}_{\fp} (\varphi^{\ast}_{V}(f))_{\mathbf{t}_{1}} (\varphi^{\ast}_{V}(g))_{\mathbf{t}_{2}} \\
= & \ \sum_{\vec{\mathbf{t}} \in \N^{(2)}_{\fp}} \epsilon^{\vec{\mathbf{t}}}_{\fp} \big( \hspace{-4mm} \sum_{\substack{\mathbf{v} \in \N^{m_{\ast}}_{\bullet} \\ w(\mathbf{v}) \leq w(\mathbf{t}_{1})}} \hspace{-4mm} (\ol{\varphi}^{\ast}_{V}(f_{\mathbf{v}}) \cdot \bar{\theta}^{\mathbf{v}})_{\mathbf{t}_{1}} \big) \cdot \big( \hspace{-4mm} \sum_{\substack{\mathbf{w} \in \N^{m_{\ast}}_{\bullet} \\ w(\mathbf{w}) \leq w(\mathbf{t}_{2})}} \hspace{-4mm} (\ol{\varphi}^{\ast}_{V}(g_{\mathbf{w}}) \cdot \bar{\theta}^{\mathbf{w}})_{\mathbf{t}_{2}} \big) \\
= & \ \sum_{\vec{\mathbf{t}} \in \N^{(2)}_{\fp}} \hspace{-2mm} \sum_{\substack{\mathbf{v} \in \N^{m_{\ast}}_{\bullet} \\ w(\mathbf{v}) \leq w(\mathbf{t}_{1})}}  \sum_{\substack{\mathbf{w} \in \N^{m_{\ast}}_{\bullet} \\ w(\mathbf{w}) \leq w(\mathbf{t}_{2})}} \sum_{\vec{\mathbf{a}} \in \N^{(2)}_{\mathbf{t}_{1}}}  \sum_{\vec{\mathbf{b}} \in \N^{(2)}_{\mathbf{t}_{2}}} \epsilon^{\vec{\mathbf{t}}}_{\fp} \epsilon^{\vec{\mathbf{a}}}_{\mathbf{t}_{1}} \epsilon^{\vec{\mathbf{b}}}_{\mathbf{t}_{2}} (\ol{\varphi}^{\ast}_{V}(f_{\mathbf{v}}))_{\mathbf{a}_{1}} (\ol{\varphi}^{\ast}_{V}(g_{\mathbf{w}}))_{\mathbf{b}_{1}} (\bar{\theta}^{\mathbf{v}})_{\mathbf{a}_{2}} (\bar{\theta}^{\mathbf{w}})_{\mathbf{b}_{2}}.
\end{split}
\end{equation}
Now, the approach is the same as in the proof of the previous lemma. To each combination of summation indices $(\fr, \vec{\mathbf{q}}, \vec{\mathbf{u}}, \vec{\mathbf{s}}, \vec{\mathbf{x}})$ in the first sum, we shall now assign the corresponding term in the second sum. One proceeds from the outermost to the innermost summation.
\begin{enumerate}[(i)]
\item Let $\vec{\mathbf{t}} := \vec{\mathbf{s}} + \vec{\mathbf{x}}$. As $\vec{\mathbf{s}} \in \N^{(2)}_{\fq_{1}}$, $\vec{\mathbf{x}} \in \N^{(2)}_{\fq_{2}}$ and $\vec{\fq} \in \N^{(2)}_{\fp}$, one has $\vec{\mathbf{t}} \in \N^{(2)}_{\fq_{1} + \fq_{2}} = \N^{(2)}_{\fp}$. 
\item Let $\mathbf{v} := \mathbf{u}_{1}$ and $\mathbf{w} := \mathbf{u}_{2}$. In the first sum, we get a non-trivial contribution only if $w(\mathbf{u}_{1}) \leq w(\mathbf{x}_{1})$ and $w(\mathbf{u}_{2}) \leq w(\mathbf{x}_{2})$, so it suffices to consider that this is the case. This ensures that the constraints $w(\mathbf{v}) \leq w(\mathbf{t}_{1})$ and $w(\mathbf{w}) \leq w(\mathbf{t}_{2})$ are satisfied. 
\item Set $\vec{\mathbf{a}} := (\mathbf{s}_{1}, \mathbf{x}_{1})$ and $\vec{\mathbf{b}} := (\mathbf{s}_{2}, \mathbf{x}_{2})$. Clearly $\vec{\mathbf{a}} \in \N_{\mathbf{t}_{1}}^{(2)}$ and $\vec{\mathbf{b}} \in \N_{\mathbf{t}_{2}}^{(2)}$. 
\end{enumerate}
One only has to argue that for this choice of indices, the corresponding summands coincide. This boils down to the comparison of signs, namely we have to prove that
\begin{equation} \label{eq_themorphismsigneequation}
\epsilon_{\fp}^{\vec{\fq}} \epsilon_{\fq_{1}}^{\vec{\mathbf{s}}} \epsilon^{\vec{\mathbf{x}}}_{\fq_{2}} = \epsilon^{\vec{\mathbf{t}}}_{\fp} \epsilon^{\vec{\mathbf{a}}}_{\mathbf{t}_{1}} \epsilon^{\vec{\mathbf{b}}}_{\mathbf{t}_{2}}.
\end{equation}
To this account, observe that one can write 
\begin{equation}
\xi^{\fp} = \epsilon_{\fp}^{\vec{\fq}} \xi^{\fq_{1}} \xi^{\fq_{2}} = \epsilon_{\fp}^{\vec{\fq}} \epsilon_{\fq_{1}}^{\vec{\mathbf{s}}} \epsilon_{\fq_{2}}^{\vec{\mathbf{x}}} \xi^{\mathbf{s}_{1}} \xi^{\mathbf{s}_{2}} \xi^{\mathbf{x}_{1}} \xi^{\mathbf{x}_{2}}, \; \; \xi^{\fp} = \epsilon_{\fp}^{\vec{\mathbf{t}}} \xi^{\mathbf{t}_{1}} \xi^{\mathbf{t}_{2}} = \epsilon_{\fp}^{\vec{\mathbf{t}}} \epsilon_{\mathbf{t}_{1}}^{\vec{\mathbf{a}}} \epsilon_{\mathbf{t}_{2}}^{\vec{\mathbf{b}}} \xi^{\mathbf{s}_{1}} \xi^{\mathbf{x}_{1}} \xi^{\mathbf{s}_{2}} \xi^{\mathbf{x}_{2}}. 
\end{equation}
Now, see that it suffices to consider the case where $\mathbf{s}_{2} \in \N^{m_{\ast}}_{0}$. But then $\xi^{\mathbf{x}_{1}} \xi^{\mathbf{s}_{2}} = \xi^{\mathbf{s}_{2}} \xi^{\mathbf{x}_{1}}$ and we see that (\ref{eq_themorphismsigneequation}) holds. Hence every non-trivial summand of the first sum appears in the second one.

Similarly, one can find the converse assignment and the two sums coincide. As $\fp \in \N^{n_{\ast}}_{|f|+|g|}$ was arbitrary, this proves that $\varphi^{\ast}_{V}(f \cdot g) = \varphi^{\ast}_{V}(f) \cdot \varphi^{\ast}_{V}(g)$. 
\end{proof}
\begin{theorem} \label{thm_ap_globaldomain}
This is a detailed proof of Theorem \ref{thm_globaldomain}.
\end{theorem}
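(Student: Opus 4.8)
\textbf{Proof plan for Theorem \ref{thm_globaldomain} (global domain theorem).}

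The plan is to reduce the global statement to the local graded domain theorem (Theorem \ref{thm_gradedomaintheorem}) by choosing a single graded local chart on the target $\hat{V}^{(m_{j})}$ and a graded smooth atlas $\A = \{ (U_{\alpha}, \varphi_{\alpha}) \}_{\alpha \in I}$ on $\M$. First I would establish the easy direction: given a graded smooth map $\phi = (\ul{\phi}, \phi^{\ast}): \M \rightarrow \hat{V}^{(m_{j})}$, one simply \emph{defines} the data $(i)$ and $(ii)$ by the formulas (\ref{eq_hatvariablesglobaldomainthm}), that is $\hat{y}^{j} := \phi^{\ast}_{\hat{V}}(y^{j}) \in \C^{\infty}_{\M}(M)_{0}$ and $\hat{\theta}_{\nu} := \phi^{\ast}_{\hat{V}}(\theta_{\nu}) \in \C^{\infty}_{\M}(M)_{|\theta_{\nu}|}$. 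The fact that $(\ul{\hat{y}}^{1}, \dots, \ul{\hat{y}}^{m_{0}}) = \ul{\phi}$ follows from Proposition \ref{tvrz_bodymap} (commutativity of (\ref{eq_cdbodymaps})) applied to the coordinate functions $y^{j}$, exactly as in Remark \ref{rem_itsufficestocalculatepullbacks}; in particular the body map sends $\hat{y}^{j}$ to $y^{j} \circ \ul{\phi}$, so the smooth map $(\ul{\hat{y}}^{1}, \dots, \ul{\hat{y}}^{m_{0}})$ indeed takes values in $\hat{V}$.

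The substantive direction is to show that these data determine $\phi$ uniquely, and conversely that \emph{any} such data arise from some $\phi$. For uniqueness: suppose $\phi, \phi'$ induce the same data. Their underlying maps agree since $\ul{\phi} = (\ul{\hat{y}}^{1}, \dots, \ul{\hat{y}}^{m_{0}}) = \ul{\phi'}$. To compare $\phi^{\ast}$ and $\phi'^{\ast}$, I would work chart by chart on $\M$: for each $\alpha \in I$, the composition $\hat{\phi}_{\alpha} := \phi|_{U_{\alpha}} \circ \varphi_{\alpha}^{-1} : \hat{U}_{\alpha}^{(n_{j})} \rightarrow \hat{V}^{(m_{j})}$ is a morphism of graded domains, hence by Theorem \ref{thm_gradedomaintheorem} it is uniquely determined by the pullbacks $\hat{\phi}_{\alpha}{}^{\ast}_{\hat{V}}(y^{j}) = \widehat{(\hat{y}^{j})}_{\alpha}$ and $\hat{\phi}_{\alpha}{}^{\ast}_{\hat{V}}(\theta_{\nu}) = \widehat{(\hat{\theta}_{\nu})}_{\alpha}$, i.e.\ by the local representatives (with respect to $(U_{\alpha},\varphi_{\alpha})$) of the global functions $\hat{y}^{j}, \hat{\theta}_{\nu}$. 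These depend only on the prescribed data, so $\hat{\phi}_{\alpha} = \hat{\phi}'_{\alpha}$ for all $\alpha$, whence $\phi|_{U_{\alpha}} = \phi'|_{U_{\alpha}}$; since $\{U_{\alpha}\}$ covers $M$, the monopresheaf/gluing argument (Proposition \ref{tvrz_gluingmorphisms} or Proposition \ref{tvrz_gLRSgluing}) gives $\phi = \phi'$.

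For existence: given data $(i)$ and $(ii)$ satisfying the stated constraints, set $\ul{\phi} := (\ul{\hat{y}}^{1}, \dots, \ul{\hat{y}}^{m_{0}}): M \rightarrow \hat{V}$, which is smooth and takes values in $\hat{V}$ by hypothesis. For each $\alpha \in I$, let $\hat{y}^{j}_{\alpha}, \hat{\theta}_{\nu,\alpha}$ denote the local representatives of $\hat{y}^{j}, \hat{\theta}_{\nu}$ with respect to $(U_{\alpha},\varphi_{\alpha})$; observe $\hat{y}^{j}_{\alpha} - y^{j} \circ \ul{\varphi_{\alpha}} \circ \ul{\ldots}$ lies in the ideal of purely graded functions (its body is zero after subtracting the coordinate map), so one can apply Lemma \ref{lem_themorphism} to build a morphism of graded domains $\hat{\phi}_{\alpha}: \hat{U}_{\alpha}^{(n_{j})} \rightarrow \hat{V}^{(m_{j})}$ whose defining data are exactly $\{\hat{y}^{j}_{\alpha} - y^{j}\circ\ul{\varphi_{\alpha \beta}}\text{-type terms}\}$ and $\{\hat{\theta}_{\nu,\alpha}\}$. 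Define $\phi_{\alpha} := \hat{\phi}_{\alpha} \circ \varphi_{\alpha}: \M|_{U_{\alpha}} \rightarrow \hat{V}^{(m_{j})}$. On overlaps $U_{\alpha\beta}$, the two morphisms $\phi_{\alpha}|_{U_{\alpha\beta}}$ and $\phi_{\beta}|_{U_{\alpha\beta}}$ have the same underlying map and the same pullbacks of the global coordinate functions $y^{j}, \theta_{\nu}$ (since the local representatives of $\hat{y}^{j}, \hat{\theta}_{\nu}$ transform correctly by Proposition \ref{tvrz_locrepgluing}), so by the uniqueness part of Theorem \ref{thm_gradedomaintheorem} they coincide; then Proposition \ref{tvrz_gLRSgluing} glues the $\phi_{\alpha}$ into a single graded smooth map $\phi: \M \rightarrow \hat{V}^{(m_{j})}$. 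Finally one checks (\ref{eq_hatvariablesglobaldomainthm}): $\phi^{\ast}_{\hat{V}}(y^{j})$ restricted to each $U_{\alpha}$ has local representative $\hat{y}^{j}_{\alpha}$ by construction, so $\phi^{\ast}_{\hat{V}}(y^{j}) = \hat{y}^{j}$ by the monopresheaf property, and similarly for $\theta_{\nu}$; the body-map relation for $\ul{\phi}$ is then automatic. The main obstacle will be bookkeeping: correctly matching the ``shifted'' generating data $\bar{y}^{j} = \varphi^{\ast}_{V}(y^{j}) - y^{j}\circ\ul{\varphi}$ appearing in Lemma \ref{lem_themorphism} with the genuinely global functions $\hat{y}^{j}$, i.e.\ being careful that it is $\hat{y}^{j}$ itself (degree $0$, body $= y^{j}\circ\ul{\phi}$) rather than its ``purely graded part'' that is the natural global object, and verifying compatibility of all these local representatives on triple overlaps — though this is routine given Proposition \ref{tvrz_locrepgluing} and Corollary \ref{cor_domainmapsbodymaps}.
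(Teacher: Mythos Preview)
Your proposal is correct and follows essentially the same approach as the paper's proof: reduce to local charts via a graded smooth atlas on $\M$, use the local graded domain theorem (Theorem \ref{thm_gradedomaintheorem} / Lemma \ref{lem_themorphism}) to build the morphisms $\hat{\phi}_{\alpha}$ from the local representatives of the prescribed data, verify that the resulting $\phi_{\alpha} := \hat{\phi}_{\alpha} \circ \varphi_{\alpha}$ agree on overlaps by comparing pullbacks of coordinate functions, and glue via Proposition \ref{tvrz_gLRSgluing}. The only cosmetic differences are that the paper writes the purely-graded shift explicitly as $\bar{y}^{j}_{(\alpha)} := (\varphi_{\alpha}^{-1})^{\ast}_{U_{\alpha}}(\hat{y}^{j}|_{U_{\alpha}}) - y^{j} \circ \ul{\hat{\phi}_{\alpha}}$ (where $\ul{\hat{\phi}_{\alpha}} = \ul{\phi}|_{U_{\alpha}} \circ \ul{\varphi_{\alpha}}^{-1}$), and that only \emph{pairwise} overlap agreement is needed for Proposition \ref{tvrz_gLRSgluing} --- no triple-overlap check is required here.
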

\begin{proof}
Let $\phi: \M \rightarrow \hat{V}^{(m_{j})}$ be a graded smooth map. Define the functions $\hat{y}^{j}$ and $\hat{\theta}_{\nu}$ by (\ref{eq_hatvariablesglobaldomainthm}). The map $(\ul{\hat{y}}^{1},\dots,\ul{\hat{y}}^{m_{0}}): M \rightarrow \R^{m_{0}}$ is then just $\ul{\phi}$, which by definition takes values in $\hat{V}$. We thus indeed obtain the data $(i)$ and $(ii)$. 

Conversely, let us start with the data $(i)$ and $(ii)$. We are going to construct a unique graded smooth map $\phi: \M \rightarrow \hat{V}^{(m_{j})}$ related to these data by (\ref{eq_hatvariablesglobaldomainthm}). Let $\A = \{ (U_{\alpha}, \varphi_{\alpha}) \}_{\alpha \in I}$ be a graded smooth atlas for $\M$. We will construct a collection $\{ \phi_{\alpha} \}_{\alpha \in I}$, where $\phi_{\alpha}: \M|_{U_{\alpha}} \rightarrow \hat{V}^{(m_{j})}$ satisfy $\phi_{\alpha}|_{U_{\alpha \beta}} = \phi_{\beta}|_{U_{\alpha \beta}}$ for all $(\alpha,\beta) \in I^{2}$. By Proposition \ref{tvrz_gLRSgluing}, there is then a unique graded smooth map $\phi: \M \rightarrow \hat{V}^{(m_{j})}$, such that $\phi|_{U_{\alpha}} = \phi_{\alpha}$ for each $\alpha \in I$. 

Let $\ul{\phi} := (\ul{\hat{y}}^{1}, \dots, \ul{\hat{y}}^{m_{0}}): M \rightarrow \hat{V}$ be the smooth map obtained from the data in $(i)$. Fix $\alpha \in I$ and let $\ul{\phi_{\alpha}} := \ul{\phi}|_{U_{\alpha}}$. For each $j \in \{1,\dots,m_{0}\}$, define $\bar{y}^{j}_{(\alpha)} \in \C^{\infty}_{(n_{j})}( \hat{U}_{\alpha})_{0}$ as
\begin{equation}
\bar{y}^{j}_{(\alpha)} := (\varphi^{-1}_{\alpha})^{\ast}_{U_{\alpha}}( \hat{y}^{j}|_{U_{\alpha}}) - y^{j} \circ \ul{\hat{\phi}_{\alpha}},
\end{equation}
where $\ul{\hat{\phi}_{\alpha}}: \hat{U}_{\alpha} \rightarrow \hat{V}$ is the composition $\ul{\hat{\phi}_{\alpha}} = \ul{\phi_{\alpha}} \circ \ul{\varphi_{\alpha}}^{-1}$. Similarly, for each $\nu \in \{1,\dots,m_{\ast}\}$, let 
\begin{equation}
\bar{\theta}_{\nu}^{(\alpha)} := (\varphi_{\alpha}^{-1})^{\ast}_{U_{\alpha}}( \hat{\theta}_{\nu}|_{U_{\alpha}} ) \in \C^{\infty}_{(n_{j})}(\hat{U}_{\alpha})_{|\theta_{\mu}|}.
\end{equation}
It is obvious that $\bar{y}^{j}_{(\alpha)} \in \J^{\pg}_{(n_{j})}(\hat{U}_{\alpha})_{0}$. Consequently, the smooth map $\ul{\hat{\phi}_{\alpha}}: \hat{U}_{\alpha} \rightarrow
 \hat{V}$ together with the collections $\{ \bar{y}^{j}_{(\alpha)} \}_{j=1}^{m_{0}}$ and $\{ \bar{\theta}^{(\alpha)}_{\nu} \}_{\nu=1}^{m_{\ast}}$ form the data $(i)$ - $(iii)$ in Theorem \ref{thm_gradedomaintheorem}. There is thus a unique morphism of graded domains $\hat{\phi}_{\alpha}: \hat{U}_{\alpha}^{(n_{j})} \rightarrow \hat{V}^{(m_{j})}$ satisfying 
\begin{equation}
(\hat{\phi}_{\alpha})^{\ast}_{\hat{V}}(y^{j}) = (\varphi^{-1}_{\alpha})^{\ast}_{U_{\alpha}}( \hat{y}^{j}|_{U_{\alpha}}), \; \; (\hat{\phi}_{\alpha})^{\ast}_{V}(\theta_{\nu}) = (\varphi_{\alpha}^{-1})^{\ast}_{U_{\alpha}}( \hat{\theta}_{\nu}|_{U_{\alpha}}),
\end{equation}
for every $j \in \{1,\dots,m_{0}\}$ and $\nu \in \{1,\dots,m_{\ast}\}$. Consequently, the graded smooth map $\phi_{\alpha} := \hat{\phi}_{\alpha} \circ \varphi_{\alpha}: \M|_{U_{\alpha}} \rightarrow \hat{V}^{(m_{j})}$ for each $j \in \{1,\dots,m_{0}\}$ and $\nu \in \{1,\dots,m_{\ast}\}$ satisfies
\begin{equation}
(\phi_{\alpha})^{\ast}_{\hat{V}}(y^{j}) = \hat{y}^{j}|_{U_{\alpha}}, \; \; (\phi_{\alpha})^{\ast}_{\hat{V}}(\theta_{\nu}) = \hat{\theta}_{\nu}|_{U_{\alpha}}.
\end{equation}
Now, it remains to prove that $\phi_{\alpha}|_{U_{\alpha \beta}} = \phi_{\beta}|_{U_{\alpha \beta}}$ for all $(\alpha,\beta) \in I^{2}$. This can be rephrased as 
\begin{equation}
\hat{\phi}_{\alpha}|_{\ul{\varphi_{\alpha}}(U_{\alpha \beta})} \circ \varphi_{\alpha \beta} = \hat{\phi}_{\beta}|_{\ul{\varphi_{\beta}}(U_{\alpha \beta})}
\end{equation}
Both sides are morphisms of graded domains, hence by Remark \ref{rem_itsufficestocalculatepullbacks}, it suffices to compare the pullbacks of global coordinate functions. For each $j \in \{1,\dots,m_{0}\}$, one finds
\begin{equation}
( \hat{\phi}_{\alpha}|_{\ul{\varphi_{\alpha}}(U_{\alpha \beta})})^{\ast}_{\hat{V}}(y^{j}) = (\hat{\phi}_{\alpha})^{\ast}_{\hat{V}}(y^{j})|_{\ul{\varphi_{\alpha}}(U_{\alpha \beta})}  = (\varphi_{\alpha}^{-1})^{\ast}_{U_{\alpha \beta}}( \hat{y}^{j}|_{U_{\alpha \beta}}).
\end{equation}
Consequently, for each $j \in \{1,\dots,m_{0}\}$, we obtain 
\begin{equation}
\begin{split}
(\hat{\phi}_{\alpha}|_{\ul{\varphi_{\alpha}}(U_{\alpha \beta})} \circ \varphi_{\alpha \beta})^{\ast}_{\hat{V}}(y^{j}) = & \ (\varphi_{\alpha \beta})^{\ast}_{\ul{\varphi_{\alpha}}(U_{\alpha \beta})}( (\varphi_{\alpha}^{-1})^{\ast}_{U_{\alpha \beta}}( \hat{y}^{j}|_{U_{\alpha \beta}}) ) \\
= & \ (\varphi_{\beta}^{-1})^{\ast}_{U_{\alpha \beta}}( \hat{y}^{j}|_{U_{\alpha \beta}}) = ( \hat{\phi}_{\beta}|_{\ul{\varphi_{\beta}}(U_{\alpha \beta})})^{\ast}_{\hat{V}}(y^{j}).
\end{split}
\end{equation}
The proof for the coordinate functions $\theta_{\nu}$ is completely the same and the conclusion follows.  

We can thus construct $\phi$ by gluing the graded smooth maps $\{ \phi_{\alpha} \}_{\alpha \in I}$ as in Proposition \ref{tvrz_gLRSgluing}. The equations (\ref{eq_hatvariablesglobaldomainthm}) and the uniqueness of $\phi$ satisfying this equations can be now easily verified by restricting $\phi$ to the open cover $\{ U_{\alpha} \}_{\alpha \in I}$. This finishes the proof.
\end{proof}
\subsection{Partitions of unity}
\begin{tvrz} \label{tvrz_ap_partition}
This is the proof of Proposition \ref{tvrz_partition}.
\end{tvrz}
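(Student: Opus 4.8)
The statement to prove is the existence of partitions of unity subordinate to an arbitrary open cover $\{U_{\mu}\}_{\mu \in J}$ of a graded manifold $\M = (M,\C^{\infty}_{\M})$, in both the ordinary and the compactly supported versions. The plan is to reduce everything to the corresponding classical statement for the underlying smooth manifold $M$, which we may invoke freely (e.g.\ Theorem~2.23 of \cite{lee2012introduction}), and then ``lift'' the resulting smooth functions to functions on $\M$ by composing with a graded local chart. The key point making this work is that $M$ is second countable and Hausdorff, hence paracompact, and that local triviality of $\M$ lets us push scalar bump functions upstairs while keeping control of supports.

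First I would record the basic bookkeeping. Given the open cover $\{U_{\mu}\}_{\mu \in J}$, refine it by a graded smooth atlas: for each $m \in M$ pick $\mu(m) \in J$ with $m \in U_{\mu(m)}$ and a graded local chart $(V_{m},\varphi_{m})$ with $V_{m} \subseteq U_{\mu(m)}$ and $\ul{\varphi_{m}}(V_{m})$ an open cube (or ball) in $\R^{n_{0}}$. This produces an open cover $\{V_{m}\}_{m \in M}$ of $M$ refining $\{U_{\mu}\}_{\mu \in J}$. By the classical partition of unity theorem applied to $\{V_{m}\}_{m \in M}$ (in the version producing a locally finite partition indexed by a countable set $J'$ with each support compact and contained in some $V_{m}$, or in the version with the same index set), there is a family $\{\ul{\lambda}'_{\nu}\}_{\nu \in J'}$ of smooth functions on $M$ with $\ul{\lambda}'_{\nu} \geq 0$, $\{\supp(\ul{\lambda}'_{\nu})\}_{\nu \in J'}$ locally finite with each member compact and contained in some $V_{m(\nu)}$, and $\sum_{\nu} \ul{\lambda}'_{\nu} = 1$. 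Now for each $\nu \in J'$ use the graded local chart $(V_{m(\nu)},\varphi_{m(\nu)})$: the smooth function $\ul{\lambda}'_{\nu}|_{V_{m(\nu)}}$ has a unique lift to $\lambda'_{\nu} \in \C^{\infty}_{\M}(V_{m(\nu)})_{0}$ with $\ul{\lambda'_{\nu}} = \ul{\lambda}'_{\nu}|_{V_{m(\nu)}}$ (take the pullback of the body function under the chart, i.e.\ set $\hat{\lambda}'_{\nu} := \ul{\lambda}'_{\nu}|_{\ul{\varphi_{m(\nu)}}(V_{m(\nu)})}$ viewed as a $0$-form with no purely graded part, and pull it back). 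Since $\supp(\ul{\lambda}'_{\nu})$ is a compact subset of $V_{m(\nu)}$, one checks $\supp(\lambda'_{\nu}) = \supp(\ul{\lambda}'_{\nu}) \subseteq V_{m(\nu)} \subseteq U_{\mu(m(\nu))}$, using that a germ $[\lambda'_{\nu}]_{m}$ is nonzero iff $\ul{\lambda}'_{\nu}$ does not vanish near $m$ here (the lift is ``purely body''), so in particular $\lambda'_{\nu}$ extends by zero to a function in $\C^{\infty}_{\M}(M)_{0}$ via Remark~\ref{rem_sumoflocallyfinitesupportedfctions}. Local finiteness of the supports $\{\supp(\lambda'_{\nu})\}_{\nu \in J'}$ is inherited from the classical side, so by Remark~\ref{rem_sumoflocallyfinitesupportedfctions} the sum $\sum_{\nu \in J'} \lambda'_{\nu} \in \C^{\infty}_{\M}(M)_{0}$ is well defined, and applying the body map (a sheaf morphism, Proposition~\ref{tvrz_bodymap}) together with the fact that it commutes with locally finite sums gives $\ul{\sum_{\nu} \lambda'_{\nu}} = \sum_{\nu} \ul{\lambda'_{\nu}} = 1$; since the body of the constant function $1$ is $1$ and both lie in degree $0$, Proposition~\ref{tvrz_invertibility} (or directly the monopresheaf property combined with the local structure) forces $\sum_{\nu} \lambda'_{\nu} = 1$. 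This proves the compactly supported version. For the first (not-necessarily-compact) version indexed by $J$ itself, I would take the compactly supported partition $\{\lambda'_{\nu}\}_{\nu \in J'}$ just constructed, choose for each $\nu$ an index $\tau(\nu) \in J$ with $\supp(\lambda'_{\nu}) \subseteq U_{\tau(\nu)}$, and set $\lambda_{\mu} := \sum_{\nu \in \tau^{-1}(\mu)} \lambda'_{\nu}$; this is again a legitimate locally finite sum in $\C^{\infty}_{\M}(M)_{0}$, has $\supp(\lambda_{\mu}) \subseteq \ol{\bigcup_{\nu \in \tau^{-1}(\mu)} \supp(\lambda'_{\nu})} \subseteq U_{\mu}$ by local finiteness (the union of a locally finite family of closed sets is closed), $\ul{\lambda_{\mu}} \geq 0$, and $\sum_{\mu \in J} \lambda_{\mu} = \sum_{\nu \in J'} \lambda'_{\nu} = 1$ by a reindexing of the locally finite sum.

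The main obstacle, and the only place requiring genuine care, is verifying that the lift of a compactly-supported smooth bump function through a graded local chart really has support contained in the chart domain and hence genuinely extends by zero — i.e.\ that $\supp(\lambda'_{\nu})$ does not ``leak'' to the boundary of $V_{m(\nu)}$. This is where one uses that the lift was chosen to have vanishing purely graded part, so that its germ at a point $m$ is zero exactly when $\ul{\lambda}'_{\nu}$ vanishes on a neighbourhood of $m$ (the purely graded coordinate functions $\xi_{\mu}$ have full support by the Lemma under Definition~\ref{def_sheaf}'s neighbourhood, but a lift with zero $\xi$-part does not). The secondary technical point is confirming that the body map commutes with the locally-finite-sum construction of Remark~\ref{rem_sumoflocallyfinitesupportedfctions}; this is immediate from the naturality of restrictions and the fact that the sum is defined locally by finite sums, where $\ul{(\,\cdot\,)}$ is an algebra morphism. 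Everything else — paracompactness of $M$, countability of the refining cover, local finiteness — is purely classical and carried over verbatim. I would present the argument in roughly this order: (1) set up the refining chart cover; (2) invoke the classical compactly supported partition of unity on $M$; (3) lift each bump function through its chart and check supports; (4) form the locally finite sum and normalize using the body map; (5) deduce the non-compact version by grouping. The proof thus genuinely mirrors Theorem~2.23 of \cite{lee2012introduction}, with the single extra ingredient being the support-preserving lift afforded by graded local triviality.
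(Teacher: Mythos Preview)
There is a genuine gap at the step where you conclude $\sum_{\nu} \lambda'_{\nu} = 1$ from $\ul{\sum_{\nu} \lambda'_{\nu}} = 1$. The body map $i_{M}^{\ast}: \C^{\infty}_{\M} \to \C^{\infty}_{M}$ is surjective (Proposition~\ref{tvrz_bodymapsurj}) but \emph{not injective}: a degree-zero function can have body $1$ and still differ from $1$ by a purely graded part (e.g.\ $1 + \xi_{1}\xi_{2}$ with $|\xi_{1}| = -2$, $|\xi_{2}| = 2$). Neither Proposition~\ref{tvrz_invertibility} nor the monopresheaf property gives injectivity. The reason this actually bites here is that your lifts $\lambda'_{\nu}$ are ``purely body'' only in their \emph{own} defining charts $\varphi_{m(\nu)}$; when you restrict a given $\lambda'_{\nu}$ to an overlap and read it in a \emph{different} chart, the transition map (which in degree zero can look like $x \mapsto x + \text{(purely graded)}$) will in general produce nonzero $\xi$-terms. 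Since a neighbourhood of any point meets the supports of several $\lambda'_{\nu}$ lifted through several distinct charts, the local sum need not be $1$.

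The fix is immediate and is exactly what the paper does: set $h := \sum_{\nu} \lambda'_{\nu}$; its body is $1$, so $h(m) \neq 0$ for all $m$ and $h$ is invertible by Proposition~\ref{tvrz_invertibility}-$(i)$. Then $\lambda''_{\nu} := h^{-1} \cdot \lambda'_{\nu}$ satisfies $\sum_{\nu} \lambda''_{\nu} = 1$ \emph{by construction}, while $\ul{\lambda''_{\nu}} = \ul{h}^{-1}\,\ul{\lambda'_{\nu}} = \ul{\lambda}'_{\nu} \geq 0$ and $\supp(\lambda''_{\nu}) \subseteq \supp(\lambda'_{\nu})$. The paper's proof follows this route from the start: it lifts bump functions $h_{\nu}$ (not a classical partition of unity) through charts, only ever uses that the sum has strictly positive body, and then normalizes. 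Your approach of importing a classical partition of unity first is perfectly viable, but you must still normalize afterwards; the shortcut you attempted does not exist.
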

Before we can actually prove the statement, we need some terminology and auxiliary statements. 

Let $\M = (M,\C^{\infty}_{\M})$ be a graded manifold of a graded dimension $(n_{j})_{j \in \Z}$. $B \subseteq M$ is called a \textbf{regular coordinate ball} on $\M$, if there is $B' \subseteq M$ such that $\ol{B} \subseteq B'$ and there is a graded local chart $(B',\varphi)$ such that for some positive real numbers $r < r'$, one has 
\begin{equation} \ul{\varphi}(B) = B_{r}(0), \; \; \ul{\varphi}(\ol{B}) = \ol{B}_{r}(0) \text{ and } \ul{\varphi}(B') = B_{r'}(0). \end{equation}
 By $B_{r}(x) \subseteq \R^{n_{0}}$ we denote the open ball of radius $r$ around $x \in \R^{n_{0}}$.
 
\begin{lemma} \label{lem_balls}
For every graded manifold $\M = (M,\C^{\infty}_{\M})$, there exists a countable basis $\B$ for the topology on $M$ consisting of regular coordinate balls.
\end{lemma}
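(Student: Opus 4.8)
The plan is to mimic the standard construction of a basis of regular coordinate balls for an ordinary smooth manifold (see e.g.\ Lemma~1.10 in Lee's book), adapting it to the graded setting using the fact that the underlying space $M$ of a graded manifold is by hypothesis second countable, Hausdorff, and (via the atlas) locally Euclidean. The key observation is that a regular coordinate ball $B$ on $\M$ is a purely topological-plus-chart notion: it is a ball in the coordinates of some graded local chart $(B',\varphi)$, and the conditions $\ul{\varphi}(B)=B_r(0)$, $\ul{\varphi}(\ol B)=\ol B_r(0)$, $\ul{\varphi}(B')=B_{r'}(0)$ only involve the underlying homeomorphism $\ul{\varphi}$. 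So the construction reduces almost entirely to the ordinary manifold statement for the underlying smooth manifold $M$, with the extra bookkeeping that whenever we shrink a chart domain we must check it is still a graded chart domain --- but this is immediate from Proposition~\ref{tvrz_restrictedgLRS}, since the restriction of a graded chart to an open subset is again a graded chart.

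First I would fix a graded smooth atlas $\A=\{(U_\alpha,\varphi_\alpha)\}_{\alpha\in I}$ for $\M$; since $M$ is second countable, I may assume $I$ is countable. For each $\alpha$, $\ul{\varphi_\alpha}\colon U_\alpha\to\hat U_\alpha\subseteq\R^{n_0}$ is a homeomorphism onto an open subset of $\R^{n_0}$. Consider the countable collection $\Q_\alpha$ of all open balls $B_r(x)\subseteq\R^{n_0}$ with $x$ having rational coordinates and $r$ rational, such that $\ol{B_r(x)}\subseteq \hat U_\alpha$; for each such ball choose a rational $r'>r$ with $\ol{B_{r'}(x)}\subseteq\hat U_\alpha$ as well. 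Then pull these back: set $B':=\ul{\varphi_\alpha}^{-1}(B_{r'}(x))$, which is open in $U_\alpha$ hence open in $M$, and equip it with the restricted graded chart, namely $\varphi_\alpha|_{B'}$ composed with the translation-and-nothing-else identification so that the relevant images are centered appropriately; by Proposition~\ref{tvrz_restrictedgLRS} this is a legitimate graded local chart for $\M$. Setting $B:=\ul{\varphi_\alpha}^{-1}(B_r(x))$ gives a regular coordinate ball. Let $\B$ be the union over $\alpha\in I$ of all such $B$; as a countable union of countable collections, $\B$ is countable.

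Next I would verify that $\B$ is a basis for the topology of $M$. Let $W\in\Op(M)$ and $m\in W$. Pick $\alpha$ with $m\in U_\alpha$, so $\ul{\varphi_\alpha}(m)\in\hat U_\alpha\cap\ul{\varphi_\alpha}(W\cap U_\alpha)$, an open subset of $\R^{n_0}$. By density of rational points and radii in $\R^{n_0}$ one finds a rational-center, rational-radius ball $B_r(x)$ with $\ul{\varphi_\alpha}(m)\in B_r(x)$ and $\ol{B_r(x)}\subseteq \ul{\varphi_\alpha}(W\cap U_\alpha)$; shrinking slightly, we also get a rational $r'>r$ with $\ol{B_{r'}(x)}\subseteq\ul{\varphi_\alpha}(W\cap U_\alpha)\subseteq\hat U_\alpha$. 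The corresponding $B=\ul{\varphi_\alpha}^{-1}(B_r(x))\in\B$ then satisfies $m\in B\subseteq W$, which is exactly the basis condition. This completes the proof.

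The only mild subtlety --- and the step I would watch most carefully --- is the translation normalization in the definition: the paper's definition of a regular coordinate ball requires $\ul{\varphi}(B)=B_r(0)$, a ball \emph{centered at the origin}, whereas the pulled-back balls above are centered at an arbitrary rational point $x$. This is harmless: composing the chart $\varphi_\alpha|_{B'}$ with the affine graded diffeomorphism of the graded domain induced by the translation $y\mapsto y-x$ of $\R^{n_0}$ (which is a graded diffeomorphism by Theorem~\ref{thm_gradedomaintheorem}, having trivial $\bar y^j=-x^j$-type data in degree zero and identity on purely graded coordinates) yields a graded local chart whose underlying map sends $B$ to $B_r(0)$, $\ol B$ to $\ol B_r(0)$ and $B'$ to $B_{r'}(0)$, as required. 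No functional-analytic input is needed anywhere; everything is either a topological fact about $\R^{n_0}$ or an invocation of already-established graded-chart machinery.
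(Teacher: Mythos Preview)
Your proposal is correct and follows essentially the same route as the paper's proof: both take a countable graded atlas, produce in each chart the countable family of rational-center, rational-radius balls admitting a slightly larger concentric ball still inside the chart image, recenter via a translation graded diffeomorphism, and take the union. The only cosmetic difference is that the paper first isolates the single-global-chart case before unioning, whereas you work directly with the atlas; also, your phrase ``shrinking slightly'' for producing $r'>r$ should read ``enlarging slightly'' (using compactness of $\ol{B_r(x)}$ inside the open set), but the argument is sound.
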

\begin{proof}
This is a generalization of the proof of Lemma 1.10 in \cite{lee2012introduction}. First, suppose that there is a single global chart $(M,\varphi)$ for $\M$. Hence $\ul{\varphi}: M \rightarrow \hat{U} \in \Op(\R^{n_{0}})$. 

Let $\hat{\B}$ be a collection of all open balls $B_{r}(x)$ with rational $r > 0$ and $x \in \mathbb{Q}^{n_{0}}$, such that there exists $r' > r$ satisfying $B_{r'}(x) \subseteq \hat{U}$. We claim that $\hat{\B}$ forms a basis for the topology of $\hat{U}$. We have to argue that for any $\hat{W} \in \Op(\hat{U})$ and any $y \in \hat{W}$, there is $B_{r}(x) \in \hat{\B}$, such that $y \in B_{r}(x) \subseteq \hat{W}$. Fix $q > 0$ so that $B_{q}(y) \subseteq \hat{W}$. It thus suffices to find $x \in \mathbb{Q}^{n_{0}}$ and $0 < r < r'$ with rational $r$, such that $B_{r'}(x) \subseteq B_{q}(y)$ and $y \in B_{r}(x)$. For any $z \in B_{r'}(x)$, one has $\| z - y \| < r' + \| x - y \|$. We thus need to satisfy $r' + \| x - y \| < q$. On the other hand, we also need $x$ to satisfy $\| x - y \| < r$. Altogether, we have to make $\| x - y \|$ small enough so that there exist $r \in \mathbb{Q}$ and $r'$ so that
\begin{equation}
\| x - y \| < r < r' < q - \| x - y \|.
\end{equation}
This is possible as soon as $\| x - y \| < \frac{q}{2}$. But such $x \in \mathbb{Q}^{n_{0}}$ can certainly be found as $\mathbb{Q}^{n_{0}}$ is dense in $\R^{n_{0}}$. We see that $\hat{\B}$ is a countable basis for the topology of $\hat{U}$. 

It follows that $\B := \{ \ul{\varphi}^{-1}(\hat{B}) \; | \; \hat{B} \in \hat{\B} \}$ forms a countable basis for the topology on $M$. 

For each $B \in \B$, one has $\ul{\varphi}(B) = B_{r}(x)$ for some $x \in \hat{U}$ and $r > 0$. By construction, there is $r' > r$ such that $B_{r'}(x) \subseteq \hat{U}$, hence one can define $B' := \ul{\varphi}^{-1}(B_{r'}(x))$. Let $\varphi_{B}: \M|_{B'} \rightarrow B_{r'}(0)^{(n_{j})}$ be the restriction of $\varphi$ composed with the graded diffeomorphism $B_{r'}(x)^{(n_{j})} \rightarrow B_{r'}(0)^{(n_{j})}$ induced by the translation $B_{r'}(x) \mapsto B_{r'}(0)$. It is now easy to see that $(B',\varphi_{B})$ is a graded chart making $B$ into a regular coordinate ball.

Finally, let $\M$ be a general graded manifold with a graded smooth atlas $\A = \{ (U_{\alpha}, \varphi_{\alpha}) \}_{\alpha \in I}$. As $M$ is a second countable topological space, one can assume that $I$ is countable. By the previous part, for each $\alpha \in I$, we can find a countable basis $\B_{\alpha}$ for the topology of $U_{\alpha}$ consisting of regular coordinate balls. Then $\B = \cup_{\alpha \in I} \B_{\alpha}$ is a desired countable basis for the topology of $M$.
\end{proof}
Let us recall the following standard statement. See e.g. Theorem 1.15 in \cite{lee2012introduction}. 
\begin{lemma}[\textbf{Smooth manifolds are paracompact}] \label{lem_paracompact}
Let $\{ U_{\mu} \}_{\mu \in J}$ be any open cover of $M$. Let $\B$ be any basis for the topology of $M$. Then there exists a countable and locally finite refinement of this cover, consisting of the sets of $\B$.  
\end{lemma}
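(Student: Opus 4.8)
The plan is to reduce the statement to the standard fact that a second countable, locally compact Hausdorff space admits a compact exhaustion, and then to carry out the usual ``annular covering'' bookkeeping. Recall that $M$ is by hypothesis second countable and Hausdorff, and being the underlying manifold of a graded manifold it is locally Euclidean of dimension $n_{0}$ (Proposition \ref{tvrz_transitionmapsunderlyingmanifold}), hence locally compact. First I would establish the existence of a sequence of compact sets $K_{1} \subseteq K_{2} \subseteq \cdots$ with $K_{j} \subseteq \mathrm{Int}(K_{j+1})$ for every $j$ and $\bigcup_{j} K_{j} = M$. This is standard: fix a countable basis, retain only those basis elements whose closure is compact (local compactness guarantees these still form a basis), and then inductively enlarge finite unions of such relatively compact basis elements so that the closure of each is swallowed by the interior of the next, using compactness at each step to keep the unions finite.

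With the exhaustion in hand, set $K_{0} := K_{-1} := \emptyset$ and, for each $j \geq 1$, consider the compact ``annulus'' $A_{j} := K_{j} \setminus \mathrm{Int}(K_{j-1})$ together with the open set $V_{j} := \mathrm{Int}(K_{j+1}) \setminus K_{j-2}$. Since $K_{j-2} \subseteq \mathrm{Int}(K_{j-1})$, one checks $A_{j} \subseteq V_{j}$, and since every point of $M$ lies in some $K_{j}$ (take the least such $j$) the $A_{j}$ cover $M$. For each point $x \in A_{j}$ I would pick an index $\mu \in J$ with $x \in U_{\mu}$ and, since $\B$ is a basis, a basis element $B \in \B$ with $x \in B \subseteq U_{\mu} \cap V_{j}$; these basis elements cover the compact set $A_{j}$, so finitely many suffice. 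Let $\mathcal{C}_{j} \subseteq \B$ be such a finite subcollection and put $\mathcal{C} := \bigcup_{j \geq 1} \mathcal{C}_{j}$. By construction $\mathcal{C}$ is a countable family of basis elements, each contained in some $U_{\mu}$, and it covers $M$ because the $A_{j}$ do; hence $\mathcal{C}$ is a countable refinement of $\{ U_{\mu} \}_{\mu \in J}$ consisting of elements of $\B$.

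It remains to verify local finiteness. Given $x \in M$, choose $j$ with $x \in \mathrm{Int}(K_{j})$ and take $W := \mathrm{Int}(K_{j})$ as a neighborhood of $x$. Any member of $\mathcal{C}_{k}$ is contained in $V_{k} = \mathrm{Int}(K_{k+1}) \setminus K_{k-2}$, which is disjoint from $K_{k-2}$; since $\mathrm{Int}(K_{j}) \subseteq K_{j} \subseteq K_{k-2}$ whenever $k \geq j+2$, the neighborhood $W$ can meet a member of $\mathcal{C}_{k}$ only for $k \leq j+1$. As each $\mathcal{C}_{k}$ is finite, $W$ intersects only finitely many elements of $\mathcal{C}$, which is exactly local finiteness.

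The only genuinely delicate step is the construction of the compact exhaustion; everything after it is routine set-theoretic bookkeeping. Since $M$ is a topological manifold this exhaustion is entirely standard --- it is precisely the content of the corresponding step in the proof of Theorem 1.15 in \cite{lee2012introduction} --- so in practice I would either reproduce that short inductive argument or cite it directly. I expect the indexing in the local-finiteness check (keeping track of the shifts by $1$ and $2$ in the definitions of $A_{j}$ and $V_{j}$) to be the main place where care is required to avoid off-by-one errors.
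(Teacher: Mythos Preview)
Your proof is correct and follows the standard compact-exhaustion plus annular-covering argument. The paper itself does not give a proof of this lemma at all: it merely states the result and refers to Theorem 1.15 in \cite{lee2012introduction}, which is precisely the argument you have reproduced.
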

Finally, one has the following general lemma regarding the locally finite collections of sets.
\begin{lemma} \label{lem_locallyfinitesets}
Let $\{ X_{\mu} \}_{\mu \in J}$ be a locally finite collection of subsets of any topological space $M$. Then the collection $\{ \ol{X}_{\mu} \}_{\mu \in J}$ is also locally finite and $\ol{\cup_{\mu \in J} X_{\mu}} = \cup_{\mu \in J} \ol{X}_{\mu}$. 
\end{lemma}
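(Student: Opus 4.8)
\textbf{Proof proposal for Lemma \ref{lem_locallyfinitesets}.}

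The statement is purely point-set topological, so the plan is to argue directly from the definition of local finiteness, using only that the closure of a set is contained in every closed set containing it, and that a finite union of closures is closed. First I would prove that $\{\ol{X}_{\mu}\}_{\mu\in J}$ is locally finite. Fix a point $m\in M$. Since $\{X_{\mu}\}_{\mu\in J}$ is locally finite, there is a finite subset $J_{0}\subseteq J$ and an open neighbourhood $U\in\Op_{m}(M)$ such that $X_{\mu}\cap U\neq\emptyset$ only if $\mu\in J_{0}$. I claim the same $U$ and $J_{0}$ witness local finiteness for the closures. Indeed, if $\mu\notin J_{0}$ then $X_{\mu}\cap U=\emptyset$, i.e. $X_{\mu}\subseteq U^{c}$; since $U^{c}$ is closed, $\ol{X}_{\mu}\subseteq U^{c}$, hence $\ol{X}_{\mu}\cap U=\emptyset$. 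Thus $\ol{X}_{\mu}\cap U\neq\emptyset$ forces $\mu\in J_{0}$, which is exactly local finiteness of $\{\ol{X}_{\mu}\}_{\mu\in J}$ at $m$. As $m$ was arbitrary, the collection of closures is locally finite.

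Next I would prove the equality $\ol{\bigcup_{\mu\in J}X_{\mu}}=\bigcup_{\mu\in J}\ol{X}_{\mu}$. The inclusion $\supseteq$ is immediate and needs no hypothesis: each $X_{\mu}\subseteq\bigcup_{\nu}X_{\nu}$, so $\ol{X}_{\mu}\subseteq\ol{\bigcup_{\nu}X_{\nu}}$, and taking the union over $\mu$ gives the claim. For the nontrivial inclusion $\subseteq$, it suffices to show that $\bigcup_{\mu\in J}\ol{X}_{\mu}$ is closed, since it contains $\bigcup_{\mu\in J}X_{\mu}$ and the closure is the smallest closed set with that property. Let $m\notin\bigcup_{\mu\in J}\ol{X}_{\mu}$; I must produce an open neighbourhood of $m$ disjoint from $\bigcup_{\mu\in J}\ol{X}_{\mu}$. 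Using the local finiteness of $\{\ol{X}_{\mu}\}_{\mu\in J}$ just established, pick $U\in\Op_{m}(M)$ and a finite $J_{0}\subseteq J$ with $\ol{X}_{\mu}\cap U\neq\emptyset$ only for $\mu\in J_{0}$. For each $\mu\in J_{0}$ we have $m\notin\ol{X}_{\mu}$, and $\ol{X}_{\mu}$ is closed, so $V_{\mu}:=M\setminus\ol{X}_{\mu}$ is an open neighbourhood of $m$. Then $W:=U\cap\bigcap_{\mu\in J_{0}}V_{\mu}$ is open (finite intersection), contains $m$, and is disjoint from every $\ol{X}_{\mu}$: for $\mu\in J_{0}$ because $W\subseteq V_{\mu}$, and for $\mu\notin J_{0}$ because $W\subseteq U$ and $\ol{X}_{\mu}\cap U=\emptyset$. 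Hence $W\cap\bigcup_{\mu\in J}\ol{X}_{\mu}=\emptyset$, so the complement of $\bigcup_{\mu\in J}\ol{X}_{\mu}$ is open, i.e. $\bigcup_{\mu\in J}\ol{X}_{\mu}$ is closed, completing the proof.

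There is no real obstacle here; this is a standard lemma and the only thing to be careful about is that the proof of the closedness of $\bigcup_{\mu}\ol{X}_{\mu}$ genuinely uses local finiteness (an arbitrary union of closed sets need not be closed), which is why the first part of the lemma is invoked in the second. Everything else is routine manipulation of neighbourhoods, so I would keep the write-up short.
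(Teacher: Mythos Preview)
Your proof is correct. The first part (local finiteness of the closures) is essentially identical to the paper's argument, just phrased in terms of $X_{\mu}\subseteq U^{c}$ rather than the contrapositive ``$U\cap\ol{X}\neq\emptyset\Rightarrow U\cap X\neq\emptyset$''.

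For the second part your route differs from the paper's. The paper takes a point $m\in\ol{\bigcup_{\mu}X_{\mu}}$, chooses a sequence in $\bigcup_{\mu}X_{\mu}$ converging to $m$, uses local finiteness to trap a tail of the sequence in a finite subunion $\bigcup_{\mu\in J_{0}}X_{\mu}$, and then invokes the finite case. You instead prove directly that $\bigcup_{\mu}\ol{X}_{\mu}$ is closed by exhibiting, for each point of its complement, an open neighbourhood disjoint from it. Your argument is slightly cleaner in that it works verbatim in \emph{any} topological space, whereas the paper's sequence argument tacitly uses that closure points are sequential limits, which needs first-countability (harmless here since everything is a manifold, but your version matches the generality stated in the lemma). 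Both proofs are short and standard; yours is the textbook ``a locally finite union of closed sets is closed'' argument.
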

\begin{proof}
The key to the first part is to observe that for any $U \in \Op(M)$ and any subset $X \subseteq M$, one has $U \cap X \neq \emptyset$, iff $U \cap \ol{X} \neq \emptyset$. One direction is obvious, so suppose that $m \in U \cap (\ol{X} - X)$. But then $m$ is a limit point of $X$ and $U$ is its open neighborhood, hence $U \cap X \neq \emptyset$. 

For the second statement, the inclusion $\supseteq$ is true for any collection of subsets. The inclusion $\subseteq$ also holds for any collection of subsets whenever $J$ is finite. Indeed, $\cup_{\mu \in J} \ol{X}_{\mu}$ is then a closed set containing $\cup_{\mu \in J} X_{\mu}$, hence it must contain also its closure. 

Now, suppose that $\{ X_{\mu} \}_{\mu \in J}$ is locally finite and let $m \in \ol{\cup_{\mu \in J} X_{\mu}}$. There is thus a sequence $\{ m_{n} \}_{n = 1}^{\infty} \subseteq \cup_{\mu \in J} X_{\mu}$ converging to $m$. By definition, there is $U \in \Op_{m}(M)$ and a finite subset $J_{0} \subseteq J$ such that $U \cap X_{\mu} \neq \emptyset$ only if $\mu \in J_{0}$. But this implies the existence of some $n_{0} \in \N$, such that $\{ m_{n} \}_{n>n_{0}} \in \cup_{\mu \in J_{0}} X_{\mu}$. Whence $m \in \ol{ \cup_{\mu \in J_{0}} X_{\mu}}$. Since $J_{0}$ is finite, the previous paragraph states that $m \in \cup_{\mu \in J_{0}}  \ol{X}_{\mu} \subseteq \cup_{\mu \in J} \ol{X}_{\mu}$. This finishes the proof.
\end{proof}
These tools allow us to finally prove the proposition itself.
\begin{proof}[Proof of Proposition \ref{tvrz_ap_partition}] This is a slight modification of the proof or Theorem 2.23 in \cite{lee2012introduction}. 

Let $\{ U_{\mu} \}_{\mu \in J}$ be any open cover of $M$. For each $\mu \in J$, $\M|_{U_{\mu}}$ is itself a graded manifold, hence by Lemma \ref{lem_balls}, there exists a countable basis $\B_{\mu}$ for the topology of $U_{\mu}$ consisting of regular coordinate balls. Then $\B = \cup_{\mu \in J} \B_{\mu}$ is a basis for the topology of $M$. By Lemma \ref{lem_paracompact}, there exists a countable and locally finite refinement $\{ B_{\nu} \}_{\nu \in J'}$ of the cover $\{ U_{\mu} \}_{\mu \in J}$, such that $B_{\nu} \in \B$ for each $\nu \in J'$. In particular, we have a set map $\zeta: J' \rightarrow J$ such that $B_{\nu} \subseteq U_{\zeta(\nu)}$. The collection $\{ \ol{B}_{\nu} \}_{\nu \in J'}$ is also locally finite by Lemma \ref{lem_locallyfinitesets}. For every $\nu \in J'$, there exists a graded chart $(B'_{\nu}, \varphi_{\nu})$ for $\M$, such that $B_{\nu} \subseteq \ol{B}_{\nu} \subseteq B'_{\nu} \subseteq U_{\zeta(\nu)}$, and 
\begin{equation}
\ul{\varphi_{\nu}}(B_{\nu}) = B_{r_{\nu}}(0), \; \; \ul{\varphi_{\nu}}( \ol{B}_{\nu}) = \ol{B}_{r_{\nu}}(0), \; \; \ul{\varphi_{\nu}}(B'_{\nu}) = B_{r'_{\nu}}(0),
\end{equation}
for some real numbers $0 < r_{\nu} < r'_{\nu}$. There always exists a smooth function $\hat{h}_{\nu} \in \C^{\infty}_{n_{0}}(\R^{n_{0}})$ which is strictly positive on $B_{r_{\nu}}(0)$ and zero everywhere else. See Lemma 2.22 of \cite{lee2012introduction} for details. We can promote it to a function $\hat{h}_{\nu} \in \C^{\infty}_{(n_{j})}( \R^{n_{0}})_{0}$. For each $\nu \in J'$, define $h_{\nu} \in \C^{\infty}_{\M}(M)_{0}$ by
\begin{equation} \label{eq_hnufunkce}
h_{\nu}|_{B'_{\nu}} := (\varphi^{\ast}_{\nu})_{B_{r'_{\nu}}(0)}( \hat{h}_{\nu}|_{B_{r'_{\nu}}(0)}), \; \; h_{\nu}|_{(\ol{B}_{\nu})^{c}} := 0
\end{equation}
One has to check that the definitions agree on $B'_{\nu} \cap (\ol{B}_{\nu})^{c} = \ul{\varphi}^{-1}( B_{r'_{\nu}}(0) - \ol{B}_{r_{\nu}}(0))$. But
\begin{equation}
((\varphi^{\ast}_{\nu})_{B_{r'_{\nu}}(0)}( \hat{h}_{\nu}|_{B_{r'_{\nu}}(0)}))|_{B'_{\nu} \cap (\ol{B}_{\nu})^{c}} = \hat{h}_{\nu}|_{B_{r'_{\nu}}(0) - \ol{B}_{r_{\nu}}(0)} = 0.
\end{equation} 
One has $h_{\nu}(m) = \hat{h}_{\nu}( \ul{\varphi_{\nu}}(m)) > 0$ for every $m \in B_{\nu}$. Observe that $\supp(h_{\nu}) = \ol{B}_{\nu}$, whence $\{ \supp(h_{\nu}) \}_{\nu \in J'}$ forms a locally finite collection of compact sets. Let $h = \sum_{\nu \in J'} h_{\nu}$. As $\{ B_{\nu} \}_{\nu \in J'}$ forms an open cover of $M$, it follows that $h(m) > 0$ for all $m \in M$. By Proposition \ref{tvrz_invertibility}-$(i)$, it has a multiplicative inverse $h^{-1} \in \C^{\infty}_{\M}(M)$ and one can define $\lambda'_{\nu} := h^{-1} \cdot h_{\nu}$ for each $\nu \in J'$. Note that $\supp(\lambda'_{\nu}) = \supp(h_{\nu}) = \ol{B}_{\nu} \subseteq U_{\zeta(\nu)}$, and $\ul{\lambda'_{\nu}} \geq 0$. Finally, one has $\sum_{\nu \in J'} \lambda'_{\nu} = 1$. In other words, we have constructed a compactly supported partition of unity $\{ \lambda'_{\nu} \}_{\nu \in J'}$ subordinate to $\{ U_{\mu} \}_{\mu \in J}$. This proves the alternative part of the proposition, let us prove the main one.

Note that $\{ B'_{\nu} \}_{\nu \in J'}$ is also a refinement of $\{ U_{\mu} \}_{\mu \in J}$ as $B'_{\nu} \subseteq U_{\zeta(\nu)}$ for every $\nu \in J'$. Let 
\begin{equation}
\lambda_{\mu} := \hspace{-2mm} \sum_{\nu \in \zeta^{-1}(\mu)} \hspace{-2mm} \lambda'_{\nu},
\end{equation}
for each $\mu \in J$. This is well-defined as $\{ \supp(\lambda'_{\nu}) \}_{\nu \in \zeta^{-1}(\mu)}$ is locally finite. If $\zeta^{-1}(\mu) = \emptyset$, declare $\lambda_{\mu} := 0$. Let us examine supports of those functions. We claim that 
\begin{equation} \label{eq_lambdamusupports}
\supp(\lambda_{\mu}) = \cup_{\nu \in \zeta^{-1}(\mu)} \ol{B}_{\nu}.
\end{equation}
For any $m \in M$, we have $U \in \Op_{m}(M)$ and a finite subset $J'_{0} \subseteq J$, such that $U \cap \ol{B}_{\nu} \neq \emptyset$ only if $\nu \in J'_{0}$. Consequently, one has $[\lambda_{\mu}]_{m} = \sum_{\nu \in \zeta^{-1}(\mu) \cap J'_{0}} [\lambda'_{\nu}]_{m}$. This implies the inclusion $\supp(\lambda_{\mu}) \subseteq \cup_{\nu \in \zeta^{-1}(\mu)} \supp(\lambda'_{\nu})$. As $\supp(\lambda'_{\nu}) = \ol{B}_{\nu}$, this proves the inclusion $\subseteq$ of (\ref{eq_lambdamusupports}). Conversely, suppose that there is $\nu_{0} \in \zeta^{-1}(\mu)$, such that $m \in \ol{B}_{\nu_{0}} = \supp(\ul{\lambda'_{\nu_{0}}})$. Clearly $\nu_{0} \in J'_{0}$. Since $\ul{\lambda_{\mu}}|_{U} = \sum_{\nu \in \zeta^{-1}(\mu) \cap J'_{0}} \ul{\lambda'_{\nu}}|_{U}$ and $\ul{\lambda'_{\nu}} \geq 0$ for all $\nu \in J'$, this implies that $m \in \supp( \ul{\lambda_{\mu}})$. Since $\supp(\ul{\lambda_{\mu}}) \subseteq \supp(\lambda_{\mu})$, the inclusion $\supseteq$ of (\ref{eq_lambdamusupports}) follows. 

It now follows that $\supp(\lambda_{\mu}) \subseteq U_{\mu}$, $\ul{\lambda_{\mu}} \geq 0$. For each $m \in M$, let $U \in \Op_{m}(M)$ and $J'_{0} \subseteq J'$ be as in the previous paragraph. It follows from (\ref{eq_lambdamusupports}) that $U \cap \supp(\lambda_{\mu}) \neq \emptyset$ only if $\mu \in \zeta(J'_{0})$, where $\zeta(J'_{0}) \subseteq J$ is finite. Hence $\{ \supp(\lambda_{\mu}) \}_{\mu \in J}$ is locally finite. Finally, one has $\sum_{\mu \in J} \lambda_{\mu} = \sum_{\nu \in J'} \lambda'_{\nu} = 1$. This finishes the proof. 
\end{proof}
\subsection{Vector fields}
\begin{tvrz} \label{tvrz_ap_partialofxixibyxi}
The formula (\ref{eq_partialofxixibyxi}) is true.
\end{tvrz}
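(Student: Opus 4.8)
The plan is to use, as the surrounding text already hints, that after the obvious reductions both sides of (\ref{eq_partialofxixibyxi}) are scalar multiples of one and the same monomial $\vartheta := (\xi_1)^{p_1}\cdots(\xi_\mu)^{p_\mu-1}\cdots(\xi_{n_\ast})^{p_{n_\ast}}$, so the lemma amounts to a numerical identity among signs. I would write $e_\mu$ for the multi-index with a $1$ in slot $\mu$ and zeros elsewhere and introduce the abbreviations $A := \sum_{\nu<\mu} q_\nu|\xi_\nu|$, $B := \sum_{\nu<\mu}(p_\nu-q_\nu)|\xi_\nu|$, $C := \sum_{\nu>\mu} q_\nu|\xi_\nu|$, so that $A+B = \sum_{\nu<\mu} p_\nu|\xi_\nu|$ and, since $\fq \in \N^{n_\ast}_{|f|}$, one has $|f| = |\xi^{\fq}| = A + q_\mu|\xi_\mu| + C$. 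Recall also $\xi^{\fq}\xi^{\fp-\fq} = \epsilon^{\fq,\fp-\fq}\xi^{\fp}$, with $\xi^{\fp} = \xi_\mu\cdot\vartheta$ up to reordering only within slot $\mu$ (so $\xi^{\fp-e_\mu} = \vartheta$).

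First I would expand all three terms. Applying the monomial version of the $\xi$-derivative (the Leibniz-type formula displayed in the example, equivalently the component formula (\ref{eq_partialximuformula})) to $\xi^{\fq}\xi^{\fp-\fq} = \epsilon^{\fq,\fp-\fq}\xi^{\fp}$ gives the left-hand side as $\epsilon^{\fq,\fp-\fq}\,p_\mu\,(-1)^{|\xi_\mu|(A+B)}\,\vartheta$. Using $\tfrac{\partial\xi^{\fq}}{\partial\xi^\mu} = q_\mu(-1)^{|\xi_\mu|A}\xi^{\fq-e_\mu}$, $\tfrac{\partial\xi^{\fp-\fq}}{\partial\xi^\mu} = (p_\mu-q_\mu)(-1)^{|\xi_\mu|B}\xi^{\fp-\fq-e_\mu}$ and multiplying by the remaining factors, the two right-hand terms become $q_\mu(-1)^{|\xi_\mu|A}\,\epsilon^{\fq-e_\mu,\fp-\fq}\,\vartheta$ and $(-1)^{|\xi_\mu||f|}(p_\mu-q_\mu)(-1)^{|\xi_\mu|B}\,\epsilon^{\fq,\fp-\fq-e_\mu}\,\vartheta$. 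Then I would read off from the explicit formula (\ref{eq_epsilonpqexplicit}), i.e. $\epsilon^{\fq,\fp-\fq} = (-1)^{\sum_{\nu<\kappa}(p_\nu-q_\nu)q_\kappa|\xi_\nu||\xi_\kappa|}$, that lowering the first argument by $e_\mu$ changes the exponent by $-|\xi_\mu|B$ and lowering the second argument by $e_\mu$ changes it by $-|\xi_\mu|C$; hence $\epsilon^{\fq-e_\mu,\fp-\fq} = (-1)^{|\xi_\mu|B}\epsilon^{\fq,\fp-\fq}$ and $\epsilon^{\fq,\fp-\fq-e_\mu} = (-1)^{|\xi_\mu|C}\epsilon^{\fq,\fp-\fq}$.

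Substituting, the right-hand side collapses to
\[
\epsilon^{\fq,\fp-\fq}(-1)^{|\xi_\mu|B}\Big[\,q_\mu(-1)^{|\xi_\mu|A} + (p_\mu-q_\mu)(-1)^{|\xi_\mu|(|f|+C)}\,\Big]\vartheta ,
\]
and since $|f|+C = A + q_\mu|\xi_\mu| + 2C$ one has $(-1)^{|\xi_\mu|(|f|+C)} = (-1)^{|\xi_\mu|A}(-1)^{q_\mu|\xi_\mu|^2}$. The only step that is not purely formal is disposing of the factor $(-1)^{q_\mu|\xi_\mu|^2}$: it is $1$ when $|\xi_\mu|$ is even, and when $|\xi_\mu|$ is odd the constraint $p_\mu = q_\mu + (p_\mu-q_\mu) \le 1$ (valid because $\fp \in \N^{n_\ast}_{|f|+|g|}$) forces $q_\mu = 0$ in the only case where the coefficient $p_\mu-q_\mu$ is nonzero, so again the factor is $1$. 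Hence the bracket equals $(-1)^{|\xi_\mu|A}\big(q_\mu + (p_\mu-q_\mu)\big) = (-1)^{|\xi_\mu|A}p_\mu$, the right-hand side becomes $\epsilon^{\fq,\fp-\fq}\,p_\mu\,(-1)^{|\xi_\mu|(A+B)}\,\vartheta$, which is exactly the left-hand side, and the degenerate case $p_\mu = 0$ is trivial since then all terms vanish. I expect the main obstacle to be purely bookkeeping — keeping the several sign exponents and the slots of $\epsilon$ straight — with the odd-degree observation above the one genuinely non-formal ingredient.
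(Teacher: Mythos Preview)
Your proof is correct and follows essentially the same approach as the paper: both reduce to comparing scalar prefactors of the single monomial $\vartheta = \xi^{\fp[\mu]}$, and both dispose of the stray factor $(-1)^{q_\mu|\xi_\mu|^2}$ via the same observation that for odd $|\xi_\mu|$ and $q_\mu = 1$ one necessarily has $p_\mu - q_\mu = 0$. The only cosmetic difference is bookkeeping: the paper tracks the relevant signs by describing how the reordering of $\xi^{\fq - e_\mu}\cdot\xi^{\fp-\fq}$ and $\xi^{\fq}\cdot\xi^{\fp-\fq-e_\mu}$ differs from that of $\xi^{\fq}\cdot\xi^{\fp-\fq}$ by commuting one more or one fewer power of $\xi_\mu$, whereas you read off $\epsilon^{\fq-e_\mu,\fp-\fq}$ and $\epsilon^{\fq,\fp-\fq-e_\mu}$ directly from the explicit formula (\ref{eq_epsilonpqexplicit}) via the abbreviations $A,B,C$; this is arguably a little cleaner but is the same computation.
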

\begin{proof}
For the left-hand side, one has 
\begin{equation}
\frac{\partial(\xi^{\fq} \cdot \xi^{\fp-\fq})}{\partial \xi_{\mu}} = \epsilon^{\fq,\fp-\fq} \frac{\partial \xi^{\fp}}{\partial \xi^{\mu}} = \epsilon^{\fq,\fp-\fq} p_{\mu} (-1)^{|\xi_{\mu}|( p_{1}|\xi_{1}| + \dots + p_{\mu-1} |\xi_{\mu-1}|)} \xi^{\fp[\mu]},
\end{equation}
where $\xi^{\fp[\mu]} := (\xi_{1})^{p_{1}} \dots (\xi_{\mu})^{p_{\mu}-1} \dots (\xi_{n_{\ast}})^{p_{n_{\ast}}}$. Note that the case $p_{\mu} = 0$ is dealt with by the scalar factor. Altogether, the factor in front of $\xi^{\fp[\mu]}$ on the left-hand side of (\ref{eq_partialofxixibyxi}) is 
\begin{equation} \label{eq_xixixiprefactor1}
\epsilon^{\fq,\fp-\fq} (q_{\mu} + (p_{\mu} - q_{\mu})) (-1)^{|\xi_{\mu}|( p_{1}|\xi_{1}| + \dots + p_{\mu-1} |\xi_{\mu-1}|)}.
\end{equation}
For the first term on the right-hand side, one obtains
\begin{equation}
\frac{\partial \xi^{\fq}}{\partial \xi_{\mu}} \cdot \xi^{\fp - \fq} = q_{\mu} (-1)^{|\xi_{\mu}|( q_{1}|\xi_{1}| + \dots + q_{\mu - 1} |\xi_{\mu - 1}|)} (\xi_{1})^{q_{1}} \cdots (\xi_{\mu})^{q_{\mu} - 1} \cdots (\xi_{n_{\ast}})^{q_{n_{\ast}}} \cdot \xi^{\fp - \fq}. 
\end{equation}
Now, one only needs to find the sign acquired by reordering of the monomial on the right-hand side to get $\xi^{\fp[\mu]}$. It can be written using $\epsilon^{\fq,\fp-\fq}$ as in only differs by commuting $(\xi_{\nu})^{p_{\nu} - q_{\nu}}$ through $(\xi_{\mu})^{q_{\mu - 1}}$ instead of $(\xi_{\mu})^{q_{\mu}}$ for all $\nu \in \{1, \dots, \mu-1 \}$. We find
\begin{equation}
(\xi_{1})^{q_{1}} \cdots (\xi_{\mu})^{q_{\mu} - 1} \cdots (\xi_{n_{\ast}})^{q_{n_{\ast}}} \cdot \xi^{\fp - \fq} = (-1)^{|\xi_{\mu}|( (p_{1} - q_{1})|\xi_{1}| + \dots + (p_{\mu-1} - q_{\mu-1}) |\xi_{\mu-1}|)} \epsilon^{\fq,\fp-\fq} \xi^{\fp[\mu]}. 
\end{equation}
We conclude that the factor in front of $\xi^{\fp[\mu]}$ for the first right-hand side term of (\ref{eq_partialofxixibyxi}) reads 
\begin{equation} \label{eq_xixixiprefactor2}
\epsilon^{\fq,\fp-\fq} q_{\mu} (-1)^{|\xi_{\mu}|(p_{1}|\xi_{1}| + \dots + p_{\mu-1} |\xi_{\mu-1}|)} 
\end{equation}
For the second term on the right-hand side, one finds the expression 
\begin{equation}
\begin{split}
(-1)^{|\xi_{\mu}||f|} \xi^{\fq} \frac{\partial \xi^{\fp - \fq}}{\partial \xi_{\mu}} = & \ (p_{\mu} - q_{\mu}) (-1)^{|\xi_{\mu}|(|f| + (p_{1} - q_{1})|\xi_{1}| + \dots + (p_{\mu-1} - q_{\mu-1})|\xi_{\mu-1}|)}  \\
& \times \xi^{\fq} \cdot (\xi_{1})^{p_{1} - q_{1}} \cdots (\xi_{\mu})^{p_{\mu}-q_{\mu}-1} \cdots (\xi_{n_{\ast}})^{p_{n_{\ast}}}. 
\end{split}
\end{equation}
Again, we have to find a sign required to reorder the right-hand side monomial to $\xi^{\fp[\mu]}$. It differs from $\epsilon^{\fq,\fp-\fq}$ by a sign obtained by commuting $\xi_{\mu}$ through $(\xi_{\kappa})^{q_{\kappa}}$  for all $\kappa \in \{ \mu+1, \dots, n_{\ast} \}$, that is 
\begin{equation}
\xi^{\fq} \cdot (\xi_{1})^{p_{1} - q_{1}} \cdots (\xi_{\mu})^{p_{\mu}-q_{\mu}-1} \cdots (\xi_{n_{\ast}})^{p_{n_{\ast}}} = \epsilon^{\fq,\fp-\fq} (-1)^{|\xi_{\mu}|( q_{\mu+1} |\xi_{\mu+1} + \dots +q_{n_{\ast}} |\xi_{n_{\ast}}|)} \xi^{\fp[\mu]}. 
\end{equation}
One now has use the fact that $|f| = q_{1} |\xi_{1}| + \dots + q_{n_{\ast}} |\xi_{n_{\ast}}|$ to conclude that the overall factor in front of $\xi^{\fp[\mu]}$ for the second right-hand side term of (\ref{eq_partialofxixibyxi}) takes the form
\begin{equation} \label{eq_xixixiprefactor3}
\epsilon^{\fq,\fp-\fq} (p_{\mu} - q_{\mu}) (-1)^{|\xi_{\mu}|( p_{1}|\xi_{1}| + \dots + p_{\mu-1} |\xi_{\mu-1}|)} (-1)^{q_{\mu} |\xi_{\mu}|^{2}}.
\end{equation}
Finally, note that the sign $(-1)^{q_{\mu} |\xi_{\mu}|^{2}}$ can be negative only for odd $|\xi_{\mu}|$ and $q_{\mu} = 1$. Then necessarily $p_{\mu} \in \{0,1\}$ as $\fp \in \N^{n_{\ast}}_{|f|+|g|}$. Moreover, as $\fq \leq \fp$, we have $p_{\mu} = 1$. But then $p_{\mu} - q_{\mu} = 0$, so we may safely ignore the additional sign. We see that (\ref{eq_xixixiprefactor1}) is precisely the sum of (\ref{eq_xixixiprefactor2}) and (\ref{eq_xixixiprefactor3}). This proves the equation (\ref{eq_partialofxixibyxi}) and we are finished. 
\end{proof}
\subsection{Graded vector bundles}
\begin{tvrz} \label{tvrz_ap_pullbackVB}
This is the proof of Proposition \ref{tvrz_pullbackVB}.
\end{tvrz}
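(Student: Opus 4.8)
The plan is to construct the pullback graded vector bundle by mimicking the classical construction of the pullback sheaf from algebraic geometry (Hartshorne, Chapter II, Section 5), but carried out in the graded setting with all the sheafification subtleties handled explicitly. I would organize the proof into several parts, each corresponding to a claim of Proposition \ref{tvrz_pullbackVB}.

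Part (a): the pullback functor on sheaves of modules. First I would recall the construction of the \emph{inverse image sheaf}. Given $\phi: \cN \to \M$ and any $\F \in \Sh(M,\gcAs)$ (resp. $\Sh^{\C^\infty_\M}(M,\gVect)$), one defines a presheaf $\ul\phi^{-1}_P \F$ on $N$ by $(\ul\phi^{-1}_P \F)(V) := \colim_{U \supseteq \ul\phi(V)} \F(U)$, the filtered colimit over open $U \in \Op(M)$ containing $\ul\phi(V)$, with the obvious restriction morphisms. Its sheafification is $\ul\phi^{-1}\F$, and I would denote the canonical map $\natural_U: \F(U) \to (\ul\phi^{-1}_P\F)(V)$ for $\ul\phi(V)\subseteq U$. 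Then for a sheaf of graded $\C^\infty_\M$-modules $\F$, the presheaf $\phi^{\ast}_P \F := \ul\phi^{-1}_P\F \otimes^P_{\ul\phi^{-1}_P \C^\infty_\M} \C^\infty_\cN$ is a presheaf of graded $\C^\infty_\cN$-modules (using that $\ul\phi^{-1}_P\C^\infty_\M$ is a presheaf of graded algebras and $\C^\infty_\cN$ is a presheaf of modules over it via $\phi^{\ast}$), and one sets $\phi^{\ast}\F := \Sff(\phi^{\ast}_P\F)$. I would verify this gives a functor $\phi^{\ast}: \Sh^{\C^\infty_\M}(M,\gVect) \to \Sh^{\C^\infty_\cN}(N,\gVect)$ together with the adjunction-type data relating it to the pushforward $\phi_{\ast}$ of Remark \ref{rem_pushforwardmodules}, i.e. a natural bijection $\Sh^{\C^\infty_\cN}_0(\phi^{\ast}\F, \G) \cong \Sh^{\C^\infty_\M}_0(\F, \phi_{\ast}\G)$ (this uses the universal property of sheafification, of the presheaf tensor product, and of the colimit defining $\ul\phi^{-1}_P$). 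I would record the canonical map $\eta_{\phi^{\ast}_P\F}: \phi^{\ast}_P\F \to \phi^{\ast}\F$ and the explicit formula $(\eta)_V(g \otimes [f]_V) $ for sections, which will be used throughout.

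Part (b): $\phi^{\ast}$ preserves freely and finitely generated sheaves and local frames. Here I would show $\phi^{\ast}(\C^\infty_\M[K]) \cong \C^\infty_\cN[K]$ for finite-dimensional $K$; this is already before sheafification: there is a canonical $\C^\infty_\cN$-linear presheaf isomorphism $\phi^{\ast}_P(\C^\infty_\M[K]) \cong \C^\infty_\cN[K]$, since $\ul\phi^{-1}_P(\C^\infty_\M[K]) = \ul\phi^{-1}_P(\C^\infty_\M)[K]$ and tensoring with $\C^\infty_\cN$ over $\ul\phi^{-1}_P\C^\infty_\M$ just base-changes the coefficients; and $\C^\infty_\cN[K]$ is already a sheaf by Example \ref{ex_modelfreelygensheaf}. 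Given a local trivialization $\{(U_\alpha,\varphi_\alpha)\}$ of $\E$ (a collection of $\C^\infty_\M|_{U_\alpha}$-linear sheaf isomorphisms $\Gamma_\E|_{U_\alpha} \to \C^\infty_\M|_{U_\alpha}[K]$), applying the local version of $\phi^{\ast}$ (with $\phi|_{\ul\phi^{-1}(U_\alpha)}$) and the above identification yields $\C^\infty_\cN|_{\ul\phi^{-1}(U_\alpha)}$-linear isomorphisms $(\phi^{\ast}\Gamma_\E)|_{\ul\phi^{-1}(U_\alpha)} \to \C^\infty_\cN|_{\ul\phi^{-1}(U_\alpha)}[K]$, using that sheafification and $\phi^{\ast}$ commute with restriction to open subsets and that $\phi^{\ast}\Gamma_\E$ restricted to $\ul\phi^{-1}(U_\alpha)$ agrees with the pullback under $\phi|_{\ul\phi^{-1}(U_\alpha)}$. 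Thus $\phi^{\ast}\Gamma_\E$ is locally freely and finitely generated of constant graded rank $\gdim(K)$, so it defines a graded vector bundle $\phi^{\ast}\E$ with typical fiber $K$; this proves (i) and part of (iv). The pullback section of $\sigma \in \Gamma_\E(U)$ is $\sigma^! := (\eta_{\phi^{\ast}_P\Gamma_\E})_{\ul\phi^{-1}(U)}(1 \otimes [\sigma]_{\ul\phi^{-1}(U)})$, and checking that a local frame $(\Phi_\lambda)$ maps to a local frame $(\Phi_\lambda^!)$ is a computation with this formula inside the local trivialization just constructed.

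Part (c): the morphism $\hat\phi$ and the universal property (ii), (iii). I would construct $\hat\phi: \phi^{\ast}\E \to \E$ over $\phi$ by specifying $\hat\phi^{\dagger}: \Gamma^{\ast}_\E \to \phi_{\ast}\Gamma^{\ast}_{\phi^{\ast}\E}$. Using Proposition \ref{tvrz_dualfinfree} and Part (b), $\Gamma^{\ast}_{\phi^{\ast}\E} \cong \phi^{\ast}\Gamma^{\ast}_\E$ (this identification of duals needs a short argument with the local trivializations and the formula (\ref{eq_varphivee}) for $\varphi^\vee$; I would isolate it as a sub-lemma $\phi^{\ast}(\F^{\ast}) \cong (\phi^{\ast}\F)^{\ast}$ for $\F$ freely and finitely generated, proved pointwise on a trivialization), so it suffices to give $\Gamma^{\ast}_\E \to \phi_{\ast}\phi^{\ast}\Gamma^{\ast}_\E$, and by the adjunction of Part (a) this is the unit, i.e. the image of $\1_{\phi^{\ast}\Gamma^{\ast}_\E}$ under $\Sh_0^{\C^\infty_\cN}(\phi^{\ast}\Gamma^{\ast}_\E, \phi^{\ast}\Gamma^{\ast}_\E) \cong \Sh_0^{\C^\infty_\M}(\Gamma^{\ast}_\E, \phi_{\ast}\phi^{\ast}\Gamma^{\ast}_\E)$. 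For (iii), given $F = (\phi, F^{\dagger}): \E' \to \E$ with $F^{\dagger}: \Gamma^{\ast}_\E \to \phi_{\ast}\Gamma^{\ast}_{\E'}$, I apply the adjunction bijection $\Sh_0^{\C^\infty_\M}(\Gamma^{\ast}_\E, \phi_{\ast}\Gamma^{\ast}_{\E'}) \cong \Sh_0^{\C^\infty_\cN}(\phi^{\ast}\Gamma^{\ast}_\E, \Gamma^{\ast}_{\E'}) \cong \Sh_0^{\C^\infty_\cN}(\Gamma^{\ast}_{\phi^{\ast}\E}, \Gamma^{\ast}_{\E'})$, which by Remark \ref{rem_VBmorphismsaremorphisms} and Proposition \ref{tvrz_transpose} corresponds to a unique $\hat F: \E' \to \phi^{\ast}\E$ over $\1_{\cN}$; the identity $F = \hat\phi \circ \hat F$ and uniqueness of $\hat F$ follow from the triangle identities of the adjunction. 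Finally (v), the compatibility $(\phi\circ\psi)^{\ast}\E \cong \psi^{\ast}(\phi^{\ast}\E)$, follows from the canonical isomorphism of functors $(\ul\phi\circ\ul\psi)^{-1} \cong \ul\psi^{-1}\circ\ul\phi^{-1}$ on presheaves/sheaves (which is standard, proved via the colimit description) together with associativity of the presheaf tensor product, passed through sheafification by its universal property; alternatively, one can deduce it from uniqueness in (iii).

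I expect the main obstacle to be the bookkeeping around sheafification in Part (a) and Part (c): one must carefully distinguish the presheaf-level maps $\phi^{\ast}_P(j)$, $\eta$, etc. from their sheafifications, and verify that all the universal properties (of colimits, presheaf tensor products, and sheafification) compose correctly to yield the stated adjunction and the uniqueness in (iii) — none of the individual steps is deep, but the combination is where errors creep in. A secondary technical point is the sub-lemma $\phi^{\ast}(\F^{\ast}) \cong (\phi^{\ast}\F)^{\ast}$ for freely and finitely generated $\F$, which is needed to even phrase $\hat\phi$ and $\hat F$ as morphisms in $\gVBun$ (since those are defined via the dual sheaves); I would prove it by a local computation using the dual frames and the sign conventions from Proposition \ref{tvrz_dualfinfree} and Remark \ref{rem_peculiardoubledual}. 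Because of the length of this argument I would, consistent with the paper's style, state Proposition \ref{tvrz_pullbackVB} in the main text and defer this full appendix proof, grinding through the routine verifications there.
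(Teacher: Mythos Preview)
Your proposal is correct and follows essentially the same approach as the paper's appendix proof: the same decomposition into (a) construction of the pullback sheaf via inverse image presheaf, tensor product, and sheafification together with the push--pull adjunction; (b) preservation of $\C^{\infty}_{\M}[K]$ and local trivializations, plus the definition of pullback sections; (c) the isomorphism $\phi^{\ast}(\Gamma_{\E}^{\ast}) \cong (\phi^{\ast}\Gamma_{\E})^{\ast}$ (your sub-lemma is the paper's explicit construction of $\nu$); (d) the definition of $\hat\phi$ and the universal property via the adjunction; and (e) the compatibility with composition, where the paper opts for the adjoint-uniqueness argument rather than your direct colimit/tensor-associativity route, but both are standard.
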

\begin{proof}
Let $\E$ be a graded vector bundle over $\M$ and let $\phi: \cN \rightarrow \M$ be a graded smooth map. Let us start by constructing the pullback sheaf $\phi^{\ast} \Gamma_{\E}$. This is done in several steps and we will leave some details to the interested reader. 

\begin{enumerate}[(a)]

\item \textbf{Construction of the pullback sheaf:} First, we have the underlying smooth map $\ul{\phi}: N \rightarrow M$. For each $V \in \Op(N)$, one defines
\begin{equation}
(\ul{\phi}^{-1}_{P}\C^{\infty}_{\M})(V) := \colim_{U \supseteq \ul{\phi}(V)} \C^{\infty}_{\M}(U).
\end{equation}
This defines the so called \textbf{inverse image presheaf} $\ul{\phi}_{P}^{-1} \C^{\infty}_{\M} \in \PSh(N,\gcAs)$. The colimit exists and it can be constructed in the same way as in the proof of Proposition \ref{tvrz_stalk}. Elements of $(\ul{\phi}_{P}^{-1} \C^{\infty}_{\M})(V)$ are classes $[f]_{V}$ represented by $f \in \C^{\infty}_{\M}(U)$ on some open subset $U \supseteq \ul{\phi}(V)$, where $[f]_{V} = [g]_{V}$, if $f|_{W} = g|_{W}$ for some open subset $W \supseteq \ul{\phi}(V)$. 

Note that $\ul{\phi}_{P}^{-1} \C^{\infty}_{\M}$ is in general not a sheaf. Indeed, if $\ul{\phi}$ is a constant mapping, then $\ul{\phi}_{P}^{-1} \C^{\infty}_{\M}$ is a constant presheaf, which in general is not a sheaf. The \textbf{inverse image sheaf} is thus defined as the corresponding sheafification, $\ul{\phi}^{-1} \C^{\infty}_{\M} := \Sff( \ul{\phi}_{P}^{-1} \C^{\infty}_{\M})$. 

Similarly, for any $\F \in \Sh^{\C^{\infty}_{\M}}(M,\gVect)$, one can define the inverse image presheaf $\ul{\phi}_{P}^{-1} \F \in \PSh(N,\gVect)$. It is not difficult to see that for each $V \in \Op(N)$, $(\ul{\phi}_{P}^{-1} \F)(V)$ has a structure of a graded $(\ul{\phi}_{P}^{-1} \C^{\infty}_{\M})(V)$-module. Moreover, there is a structure of a graded $(\ul{\phi}_{P}^{-1} \C^{\infty}_{\M})(V)$-module on $\C^{\infty}_{\cN}(V)$. Indeed, for any $[f]_{V} \in (\ul{\phi}_{P}^{-1} \C^{\infty}_{\M})(V)$ and $g \in \C^{\infty}_{\cN}(V)$, set $[f]_{V} \tr g := \phi^{\ast}_{U}(f)|_{V} \cdot g$, where $[f]_{V}$ is represented by $f \in \C^{\infty}_{\M}(U)$ on some open subset $U \supseteq \ul{\phi}(V)$.

For each $V \in \Op(N)$, we may thus consider the tensor product
\begin{equation}
(\phi^{\ast}_{P} \F)(V) := \C^{\infty}_{\cN}(V) \otimes_{(\ul{\phi}_{P}^{-1} \C^{\infty}_{\M})(V)} (\ul{\phi}_{P}^{-1} \F)(V).
\end{equation}
This graded vector space has a natural structure of a graded $\C^{\infty}_{\cN}(V)$-module. In fact, it defines a \textit{presheaf} of graded $\C^{\infty}_{\cN}$-modules, $\phi^{\ast}_{P} \F \in \PSh^{\C^{\infty}_{\cN}}(N, \gVect)$. Again, in general, this may fail to be a sheaf. The \textbf{pullback sheaf} is then defined as its sheaffication 
\begin{equation}
\phi^{\ast} \F := \Sff( \phi^{\ast}_{P} \F) \in \Sh^{\C^{\infty}_{\cN}}(N, \gVect). 
\end{equation}
Importantly, the assignment $\F \mapsto \phi^{\ast} \F$ defines a functor 
\begin{equation} \label{eq_pullbackfunctor}
\phi^{\ast}: \Sh^{\C^{\infty}_{\M}}(M,\gVect) \rightarrow \Sh^{\C^{\infty}_{\cN}}(N, \gVect).
\end{equation}
Now, observe that in Remark \ref{rem_pushforwardmodules}, we have in fact constructed a pushforward functor 
\begin{equation}
\phi_{\ast}: \Sh^{\C^{\infty}_{\cN}}(N,\gVect) \rightarrow \Sh^{\C^{\infty}_{\M}}(M,\gVect),
\end{equation}
These two functors are mutually adjoint, that is there is a canonical bijection 
\begin{equation} \label{eq_pushpulladjunction}
\mu_{\F,\G} : \Sh^{\C^{\infty}_{\cN}}_{0}( \phi^{\ast} \F, \G) \rightarrow \Sh^{\C^{\infty}_{\M}}_{0}(\F, \phi_{\ast} \G),
\end{equation}
natural in $\F$ and $\G$ (also see Definition \ref{def_ShAFG}). This concludes the construction of the pullback sheaf of graded modules. Note that pullback is compatible with restrictions. For any $U \in \Op(M)$, let $\phi' := \phi|_{\ul{\phi}^{-1}(U)}: \cN|_{\ul{\phi}^{-1}(U)} \rightarrow \M|_{U}$. Then there is a canonical identification
\begin{equation} \label{eq_pullrestriction}
\phi'^{\ast} \F|_{U} \cong (\phi^{\ast} \F)|_{\ul{\phi}^{-1}(U)}.
\end{equation}

\item \textbf{Producing the local trivialization}: Let $K \in \gVect$ be a finite-dimensional graded vector space. It is straightforward to prove that there is a canonical isomorphism $\phi^{\ast} (\C^{\infty}_{\M}[K]) \cong \C^{\infty}_{\cN}[K]$. Now, let $\Gamma_{\E}$ be a sheaf of sections of a graded vector bundle $\E$. Let $(U,\varphi)$ be a local trivialization chart for $\E$. We thus have the $\C^{\infty}_{\M}|_{U}$-linear sheaf isomorphism 
\begin{equation}
\varphi: \Gamma_{\E}|_{U} \rightarrow \C^{\infty}_{\M}|_{U}[K].
\end{equation}
As $\phi'^{\ast}$ is a functor, we obtain a $\C^{\infty}_{\cN}|_{\ul{\phi}^{-1}(U)}$-linear sheaf isomorphism 
\begin{equation}
\phi'^{\ast}(\varphi): (\phi^{\ast} \Gamma_{\E})_{\ul{\phi}^{-1}(U)} \rightarrow \C^{\infty}_{\cN}|_{\ul{\phi}^{-1}(U)}[K], 
\end{equation}
where we have used (\ref{eq_pullrestriction}) and the above observation. This proves that $\phi^{\ast} \Gamma_{\E}$ is locally freely and finitely generated of a constant graded rank. One can thus define $\Gamma_{\phi^{\ast} \E} := \phi^{\ast} \Gamma_{\E}$. We have shown that any local trivialization chart $(U,\varphi)$ for $\E$ induces a local trivialization chart $(\ul{\phi}^{-1}(U), \phi'^{\ast}(\varphi))$ for $\phi^{\ast} \E$. We have also proved the claim $(i)$. 

Next, let us prove the claim $(iv)$. Let $\sigma \in \Gamma_{\E}(U)$ and consider the section 
\begin{equation}
\sigma^{!}_{P} := 1 \otimes [\sigma]_{\ul{\phi}^{-1}(U)} \in (\phi_{P}^{\ast} \Gamma_{\E})(\ul{\phi}^{-1}(U)).
\end{equation}
Recall that one always has a $\C^{\infty}_{\cN}$-linear presheaf morphism $\eta: \phi^{\ast}_{P} \Gamma_{\E} \rightarrow \phi^{\ast} \Gamma_{\E} \equiv \Sff( \phi^{\ast}_{P} \Gamma_{\E})$. Define the pullback section as
\begin{equation}
\sigma^{!} := \eta_{\ul{\phi}^{-1}(U)}( \sigma^{!}_{P}) \in \Gamma_{\phi^{\ast} \E}(\ul{\phi}^{-1}(U)).
\end{equation}
Next, if $(\Phi_{\lambda})_{\lambda=1}^{m}$ is a local frame for $\E$ over $U$, there is a local trivialization chart $(U,\varphi)$ for $\E$ and a total basis $(\vartheta_{\lambda})_{\lambda=1}^{m}$ for $K$, such that $\Phi_{\lambda} = \varphi_{U}^{-1}( 1 \otimes \vartheta_{\lambda})$. As noted above, $(\ul{\phi}^{-1}(U), \phi'^{\ast}(\varphi))$ is then a local trivialization chart for $\varphi^{\ast} \E$. By unfolding the definitions, one can compute that 
\begin{equation}
\Phi_{\lambda}^{!} = (\phi'^{\ast}(\varphi))^{-1}_{\ul{\phi}^{-1}(U)}( 1 \otimes \vartheta_{\lambda}),
\end{equation} 
for all $\lambda \in \{1,\dots,m\}$. This proves that $(\Phi^{!}_{\lambda})_{\lambda=1}^{m}$ is a local frame for $\phi^{\ast} \E$ over $\ul{\phi}^{-1}(U)$. 

\item \textbf{Pullback of the dual}: We claim that there is a canonical $\C^{\infty}_{\cN}$-linear sheaf isomorphism 
\begin{equation} \label{eq_pullbackofdual}
\nu: \phi^{\ast} \Gamma^{\ast}_{\E} \rightarrow \Gamma^{\ast}_{\phi^{\ast} \E}.
\end{equation}
In other words, there is a canonical vector bundle isomorphism $\nu: \phi^{\ast} \E^{\ast} \rightarrow (\phi^{\ast} \E)^{\ast}$. Thanks to the universality of the sheafification, we only need to construct a $\C^{\infty}_{\cN}$-linear presheaf isomorphism $\nu_{P}: \phi^{\ast}_{P} \Gamma^{\ast}_{\E} \rightarrow \Gamma^{\ast}_{\phi^{\ast}\E}$. One can use the simplified interpretation of the sections of the dual sheaf (see Proposition \ref{tvrz_dualsectionseasy}) to our advantage. 

For each $V \in \Op(N)$, let $g \otimes [\alpha]_{V} \in (\phi^{\ast}_{P} \Gamma^{\ast}_{\E})(V)$, where $g \in \C^{\infty}_{\cN}(V)$ and $\alpha \in \Omega^{1}_{\E}(U)$ for some $U \supseteq \ul{\phi}(V)$. We are to define $(\nu_{P})_{V}(g \otimes [\alpha]_{V}) \in \Omega^{1}_{\phi^{\ast}\E}(V)$. It suffices to define its values on the sections of $\varphi^{\ast} \E$ in the form $\sigma^{!}|_{V}$, where $\sigma \in \Gamma_{\E}(U)$. Set
\begin{equation}
[(\nu_{P})_{V}(g \otimes [\alpha]_{V})]( \sigma^{!}|_{V}) := g \cdot \phi^{\ast}_{U}(\alpha(\sigma))|_{V}. 
\end{equation}
The left-hand side defines an element of $\Omega^{1}_{\phi^{\ast}\E}(V)$, if and only if 
\begin{equation}
[(\nu_{P})_{V}(g \otimes [\alpha]_{V})]( (f\sigma)^{!}|_{V}) = (-1)^{(|g| + |\alpha|)|f|)} \phi^{\ast}_{U}(f)|_{V} \cdot [(\nu_{P})_{V}(g \otimes [\alpha]_{V})]( \sigma^{!}|_{V}),
\end{equation}
for any $f \in \C^{\infty}_{\M}(U)$. As $\phi^{\ast}_{U}$ is a graded algebra morphism and $\alpha \in \Omega^{1}_{\E}(U)$, we obtain 
\begin{equation}
\begin{split}
g \cdot \phi^{\ast}_{U}(\alpha(f \sigma))|_{V} = & \ (-1)^{|\alpha||f|} g \cdot \phi^{\ast}_{U}(f \cdot \alpha(\sigma))|_{V} = (-1)^{|\alpha||f|} g \cdot \phi^{\ast}_{U}(f)|_{V} \cdot \phi^{\ast}_{U}(\alpha(\sigma))|_{V} \\
= & \ (-1)^{(|g|+|\alpha|)|f|} \phi^{\ast}_{U}(f)|_{V} \cdot ( g \cdot \phi^{\ast}_{U}(\alpha(\sigma))|_{V}). 
\end{split}
\end{equation}
Next, one has to prove that $(\nu_{P})_{V}$ is well-defined. As we define it on the generators of the tensor product over $(\ul{\phi}_{P}^{-1} \C^{\infty}_{\M})(V)$, for every $f \in \C^{\infty}_{\M}(U)$ we have to check that 
\begin{equation}
(\nu_{P})_{V}( ([f]_{V} \tr g) \otimes [\alpha]_{V}) = (-1)^{|f||g|} (\nu_{P})_{V}( g \otimes [f \alpha]_{V})
\end{equation}
But for every $\sigma \in \Gamma_{\E}(U)$, we get 
\begin{equation}
\begin{split}
([f]_{V} \tr g) \cdot \phi^{\ast}_{U}( \alpha(\sigma))|_{V} = & \ ( \phi^{\ast}_{U}(f)|_{V} \cdot g) \cdot \phi^{\ast}_{U}(\alpha(\sigma))|_{V} = (-1)^{|f||g|} g \cdot \phi^{\ast}_{U}( f \cdot \alpha(\sigma))|_{V} \\
= & \ (-1)^{|f||g|} g \cdot \phi^{\ast}_{U}( (f\alpha)(\sigma))|_{V}. 
\end{split}
\end{equation}
Moreover, it is easy to check that $(\nu_{P})_{V}$ is $\C^{\infty}_{\cN}(V)$-linear and natural in $V$, hence $\nu_{P}: \phi^{\ast}_{P} \Gamma_{\E}^{\ast} \rightarrow \Gamma^{\ast}_{\phi^{\ast}\E}$ is a $\C^{\infty}_{\cN}$-linear sheaf morphism. By the universal property of the sheafification, it determines a unique $\C^{\infty}_{\cN}$-linear sheaf morphism $\nu: \phi^{\ast} \Gamma^{\ast}_{\E} \rightarrow \Gamma^{\ast}_{\phi^{\ast}\E}$. 

Finally, let $(\Phi_{\lambda})_{\lambda=1}^{m}$ be the local frame for $\E$ over $U$. We have the induced dual local frame $(\Phi^{\lambda})_{\lambda=1}^{m}$ for $\E^{\ast}$ over $U$, and consequently the local frame $((\Phi^{\lambda})^{!})_{\lambda=1}^{m}$ for $\phi^{\ast} \E^{\ast}$ over $\ul{\phi}^{-1}(U)$. On the other hand, there is the local frame $(\Phi^{!}_{\lambda})_{\lambda=1}^{m}$ for $\phi^{\ast} \E$ over $\ul{\phi}^{-1}(U)$, so we may take consider its dual $((\Phi^{!})^{\lambda})_{\lambda=1}^{m}$ for $(\phi^{\ast} \E)^{\ast}$ over $\ul{\phi}^{-1}(U)$. To prove that $\nu$ is a sheaf isomorphism, it suffices to show that for each $\lambda \in \{1,\dots,m\}$, one has 
\begin{equation}
\nu_{\ul{\phi}^{-1}(U)}( (\Phi^{\lambda})^{!}) = (\Phi^{!})^{\lambda}. 
\end{equation}
To see this, observe that for each $\kappa \in \{1,\dots,m\}$, one has 
\begin{equation}
\begin{split}
[\nu_{\ul{\phi}^{-1}(U)}( (\Phi^{\lambda})^{!})]( \Phi_{\kappa}^{!}) = & \ [(\nu_{P})_{\ul{\phi}^{-1}(U)}( 1 \otimes [\Phi^{\lambda}]_{\ul{\phi}^{-1}(U)})](\Phi^{!}_{\kappa}) \\
= & \ \phi^{\ast}_{U}( \Phi^{\lambda}( \Phi_{\kappa})) = \phi^{\ast}_{U}( \delta^{\lambda}_{\kappa}) = \delta^{\lambda}_{\kappa}. 
\end{split}
\end{equation}
This proves the claim and we conclude that $\nu$ is a $\C^{\infty}_{\cN}$-linear sheaf isomorphism. 
\item \textbf{Proving the universal property}: Let us construct a canonical graded vector bundle morphism $\hat{\phi}: \phi^{\ast} \E \rightarrow \E$ over $\phi$. We thus look for a $\C^{\infty}_{\M}$-linear sheaf morphism $\hat{\phi}^{\dagger}: \Gamma^{\ast}_{\E} \rightarrow \phi_{\ast} \Gamma^{\ast}_{\phi^{\ast} \E}$. In (c), we have constructed the element $\nu \in \Sh^{\C^{\infty}_{\cN}}_{0}( \phi^{\ast} \Gamma^{\ast}_{\E}, \Gamma^{\ast}_{\phi^{\ast} \E})$. Let  
\begin{equation}
\hat{\phi}^{\dagger} := \mu_{ \Gamma^{\ast}_{\E}, \Gamma^{\ast}_{\phi^{\ast}\E}}( \nu) \in \Sh_{0}^{\C^{\infty}_{\M}}( \Gamma^{\ast}_{\E}, \phi_{\ast} \Gamma^{\ast}_{\phi^{\ast}\E}),
\end{equation}
that is we just use the adjunction (\ref{eq_pushpulladjunction}). This proves the claim $(ii)$. It remains to prove the universal property $(iii)$. Let $F: \E' \rightarrow \E$ be a given graded vector bundle morphism over $\phi$. We thus have $F = (\phi, F^{\dagger})$, where $F^{\dagger}: \Gamma_{\E}^{\ast} \rightarrow \phi_{\ast} \Gamma_{\E'}^{\ast}$ is a $\C^{\infty}_{\M}$-linear sheaf morphism. As $\nu$ is a sheaf isomorphism, there is a unique $\C^{\infty}_{\cN}$-linear sheaf morphism $\hat{F}^{\dagger}: \Gamma^{\ast}_{\phi^{\ast} \E} \rightarrow \Gamma^{\ast}_{\E'}$ such that 
\begin{equation} \label{eq_FdaggerFhatdaggernu}
\mu_{\Gamma^{\ast}_{\E}, \Gamma^{\ast}_{\E'}}^{-1}(F^{\dagger}) = \hat{F}^{\dagger} \circ \nu. 
\end{equation}
This morphism defines a graded vector bundle morphism $\hat{F} = (\1_{N}, \hat{F}^{\dagger}): \E' \rightarrow \phi^{\ast} \E$. The naturality of the adjunction now gives 
\begin{equation}
F^{\dagger} = \phi_{\ast} \hat{F}^{\dagger} \circ \mu_{ \Gamma^{\ast}_{\E}, \Gamma^{\ast}_{\phi^{\ast}\E}}( \nu) = \phi_{\ast} \hat{F}^{\dagger} \circ \hat{\phi}^{\dagger}. 
\end{equation}
But this proves that $F = \hat{\phi} \circ \hat{F}$, see (\ref{eq_gvbmorfcomp}). Every such $\hat{F}$ must fit into (\ref{eq_FdaggerFhatdaggernu}), hence it is unique. This proves the universal property $(iii)$. 
\item \textbf{Pullback by composition}: Let $\psi: \cS \rightarrow \cN$ be another graded smooth map. It is easy to see that $(\psi \circ \phi)_{\ast} \H = \psi_{\ast}(\phi_{\ast} \H)$ for any $\H \in \Sh^{\C^{\infty}_{\cS}}(S, \gVect)$. It thus follows from (\ref{eq_pushpulladjunction}) that $(\phi \circ \psi)^{\ast}$ and $\psi^{\ast} \circ \phi^{\ast}$ are adjoint to the same functor $(\psi \circ \phi)_{\ast} = \psi_{\ast} \circ \phi_{\ast}$. But this implies that they are canonically naturally isomorphic, see e.g. Chapter IV, Section 1 of \cite{mac2013categories}. In particular, for each $\F \in \Sh^{\C^{\infty}_{\M}}(M,\gVect)$, there is a canonical $\C^{\infty}_{\M}$-linear sheaf isomorphism
\begin{equation}
(\phi \circ \psi)^{\ast}\F \cong \psi^{\ast}(\phi^{\ast} \F).
\end{equation}
For $\F = \Gamma_{\E}$, this implies (\ref{eq_pullbackcomp}) and the claim $(v)$ is verified. 
\end{enumerate}
The proposition is thereby proved. 
\end{proof}
\begin{tvrz} \label{tvrz_ap_pullbacktensor} For any $\F,\G \in \Sh^{\C^{\infty}_{\M}}(M,\gVect)$ and any graded smooth map $\phi: \cN \rightarrow \M$, there is a canonical $\C^{\infty}_{\cN}$-linear isomorphism 
\begin{equation} \label{eq_pullbacktensormain}
\phi^{\ast}( \F \otimes_{\C^{\infty}_{\M}} \G) \cong \phi^{\ast}\F \otimes_{\C^{\infty}_{\cN}} \phi^{\ast}\G,
\end{equation}
natural in $\F$ and $\G$. In particular, if $\F = \Gamma_{\E}$ and $\G = \Gamma_{\E'}$ are sheaves of sections of two graded vector bundles $\E$ and $\E'$ over $\M$, there is a canonical graded vector bundle isomorphism 
\begin{equation}
\phi^{\ast}(\E \otimes \E') \cong \phi^{\ast}\E \otimes \phi^{\ast}\E'.
\end{equation}
\end{tvrz}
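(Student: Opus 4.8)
The plan is to prove (\ref{eq_pullbacktensormain}) by the Yoneda lemma, reducing everything to adjunctions already established: the pullback--pushforward adjunction (\ref{eq_pushpulladjunction}), $\phi^{\ast}\dashv\phi_{\ast}$, the tensor--internal-hom adjunction (\ref{eq_tensorinternalhom}), $-\otimes_{\A}\G\dashv\ul{\Sh}^{\A}(\G,-)$, and the compatibility (\ref{eq_uShpulllpush}) of $\ul{\Sh}$ with push/pull. First I would fix $\F,\G\in\Sh^{\C^{\infty}_{\M}}(M,\gVect)$ and, for an arbitrary $\H\in\Sh^{\C^{\infty}_{\cN}}(N,\gVect)$, assemble the chain of bijections
\begin{align*}
\Sh^{\C^{\infty}_{\cN}}_{0}\big(\phi^{\ast}(\F\otimes_{\C^{\infty}_{\M}}\G),\H\big)
&\cong\Sh^{\C^{\infty}_{\M}}_{0}\big(\F\otimes_{\C^{\infty}_{\M}}\G,\phi_{\ast}\H\big)
\cong\Sh^{\C^{\infty}_{\M}}_{0}\big(\F,\ul{\Sh}^{\C^{\infty}_{\M}}(\G,\phi_{\ast}\H)\big) \\
&\cong\Sh^{\C^{\infty}_{\M}}_{0}\big(\F,\phi_{\ast}\ul{\Sh}^{\C^{\infty}_{\cN}}(\phi^{\ast}\G,\H)\big)
\cong\Sh^{\C^{\infty}_{\cN}}_{0}\big(\phi^{\ast}\F,\ul{\Sh}^{\C^{\infty}_{\cN}}(\phi^{\ast}\G,\H)\big) \\
&\cong\Sh^{\C^{\infty}_{\cN}}_{0}\big(\phi^{\ast}\F\otimes_{\C^{\infty}_{\cN}}\phi^{\ast}\G,\H\big),
\end{align*}
where the first and fourth isomorphisms are instances of (\ref{eq_pushpulladjunction}), the second and fifth of (\ref{eq_tensorinternalhom}), and the third of (\ref{eq_uShpulllpush}). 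Each bijection is natural in $\H$, so the composite exhibits $\Sh^{\C^{\infty}_{\cN}}_{0}\big(\phi^{\ast}(\F\otimes_{\C^{\infty}_{\M}}\G),-\big)$ and $\Sh^{\C^{\infty}_{\cN}}_{0}\big(\phi^{\ast}\F\otimes_{\C^{\infty}_{\cN}}\phi^{\ast}\G,-\big)$ as naturally isomorphic $\Set$-valued functors, whence by the Yoneda lemma there is a canonical $\C^{\infty}_{\cN}$-linear sheaf isomorphism $\phi^{\ast}(\F\otimes_{\C^{\infty}_{\M}}\G)\cong\phi^{\ast}\F\otimes_{\C^{\infty}_{\cN}}\phi^{\ast}\G$. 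Since every bijection in the chain is also natural in $\F$ and $\G$, this isomorphism is natural in both, which is exactly (\ref{eq_pullbacktensormain}).

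For the vector bundle statement I would simply specialise: taking $\F=\Gamma_{\E}$ and $\G=\Gamma_{\E'}$, we have by definition $\Gamma_{\E\otimes\E'}=\Gamma_{\E}\otimes_{\C^{\infty}_{\M}}\Gamma_{\E'}$, $\Gamma_{\phi^{\ast}\E}=\phi^{\ast}\Gamma_{\E}$, $\Gamma_{\phi^{\ast}\E'}=\phi^{\ast}\Gamma_{\E'}$ and $\Gamma_{\phi^{\ast}\E\otimes\phi^{\ast}\E'}=\phi^{\ast}\Gamma_{\E}\otimes_{\C^{\infty}_{\cN}}\phi^{\ast}\Gamma_{\E'}$, so (\ref{eq_pullbacktensormain}) becomes a $\C^{\infty}_{\cN}$-linear sheaf isomorphism of the two sheaves of sections; by Remark \ref{rem_VBmorphismsaremorphisms} this is precisely a graded vector bundle isomorphism $\phi^{\ast}(\E\otimes\E')\cong\phi^{\ast}\E\otimes\phi^{\ast}\E'$ over $\1_{\cN}$, consistent with $\phi^{\ast}\C^{\infty}_{\M}\cong\C^{\infty}_{\cN}$ from Example \ref{ex_pullbacktrivial} and with the dual identifications already recorded.

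The step I expect to be the crux is the third isomorphism, i.e. the identity $\ul{\Sh}^{\C^{\infty}_{\M}}(\G,\phi_{\ast}\H)\cong\phi_{\ast}\ul{\Sh}^{\C^{\infty}_{\cN}}(\phi^{\ast}\G,\H)$: it genuinely encodes that $\phi^{\ast}$ is strong monoidal, which is information beyond the two bare adjunctions, and a naive Yoneda argument for it alone would be circular. If (\ref{eq_uShpulllpush}) is not available in precisely this form, the honest way to supply it is to descend to presheaves: one checks directly that $\phi^{\ast}_{P}$ commutes with the presheaf tensor products, i.e. for each $V\in\Op(N)$ there is a natural isomorphism $(\phi^{\ast}_{P}\F)(V)\otimes_{\C^{\infty}_{\cN}(V)}(\phi^{\ast}_{P}\G)(V)\cong\big(\C^{\infty}_{\cN}(V)\otimes_{(\ul{\phi}^{-1}_{P}\C^{\infty}_{\M})(V)}(\ul{\phi}^{-1}_{P}(\F\otimes^{P}_{\C^{\infty}_{\M}}\G))(V)\big)$, which is a routine use of associativity and base change for tensor products of graded modules over graded commutative rings (together with the fact that the inverse-image presheaf, being a filtered colimit, commutes with tensor products; compare (\ref{eq_pretensoroffreemodules})). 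Since the pullback sheaf is $\Sff\circ\phi^{\ast}_{P}$ and the $\C^{\infty}$-module tensor product is the sheafification of the presheaf tensor product, applying the left adjoint $\Sff$ to this presheaf isomorphism yields (\ref{eq_pullbacktensormain}) directly, bypassing (\ref{eq_uShpulllpush}) entirely. I would present the Yoneda argument as the conceptual proof and keep the presheaf computation in reserve as the verification that the required internal-hom compatibility actually holds.
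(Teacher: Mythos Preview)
Your proof is correct and follows essentially the same approach as the paper: the paper assembles precisely the same five-step chain of natural bijections (the adjunction $\phi^{\ast}\dashv\phi_{\ast}$ twice, the tensor--hom adjunction twice, and the internal-hom compatibility (\ref{eq_uShpulllpush}) once) and then invokes Yoneda. The paper simply asserts (\ref{eq_uShpulllpush}) as ``not difficult to prove'' without further justification, so your additional discussion of why this step is the genuine content and your fallback presheaf-level argument are more careful than what the paper provides.
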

\begin{proof}
It is not difficult to prove the following generalization of the adjunction (\ref{eq_pushpulladjunction}): For every $\G \in \Sh^{\C^{\infty}_{\M}}(M,\gVect)$ and $\H \in \Sh^{\C^{\infty}_{\cN}}(N, \gVect)$, there is a canonical $\C^{\infty}_{\M}$-linear sheaf isomorphism 
\begin{equation} \label{eq_uShpulllpush}
\ul{\Sh}^{\C^{\infty}_{\M}}(\G, \phi_{\ast} \H) \cong \phi_{\ast} (\ul{\Sh}^{\C^{\infty}_{\cN}}( \phi^{\ast} \G, \H))
\end{equation}
natural in $\G$ and $\H$, see Proposition \ref{tvrz_uSh} for the definitions. Now, for any $\F,\G \in \Sh^{\C^{\infty}_{\M}}(M, \gVect)$ and any $\H \in \Sh^{\C^{\infty}_{\cN}}(N, \gVect)$, we obtain the following sequence of canonical bijections:
\begin{equation} \label{eq_pulltensorbijection}
\begin{split}
\Sh_{0}^{\C^{\infty}_{\cN}}( \phi^{\ast}(\F \otimes_{\C^{\infty}_{\M}} \G), \H) \cong & \ \Sh_{0}^{\C^{\infty}_{\M}}( \F \otimes_{\C^{\infty}_{\M}} \G, \phi_{\ast} \H) \cong \Sh_{0}^{\C^{\infty}_{\M}}( \F, \ul{\Sh}(\G,\phi_{\ast} \H)) \\
\cong & \ \Sh_{0}^{\C^{\infty}_{\M}}(\F, \phi_{\ast}( \ul{\Sh}^{\C^{\infty}_{\cN}}( \phi^{\ast} \G, \H))) \\
\cong & \ \Sh_{0}^{\C^{\infty}_{\cN}}( \phi^{\ast}\F, \ul{\Sh}^{\C^{\infty}_{\cN}}( \phi^{\ast}\G, \H)) \\
\cong & \ \Sh_{0}^{\C^{\infty}_{\cN}}( \phi^{\ast}\F \otimes_{\C^{\infty}_{\cN}} \phi^{\ast} \G, \H),
\end{split}
\end{equation}
where each one of them is natural in $\F$, $\G$ and $\H$. 

Now, the Yoneda lemma (see e.g. Chapter III, Section 2 of \cite{mac2013categories}) states that the assignment $\cK \mapsto \Sh_{0}^{\C^{\infty}_{\cN}}(\cK,-)$ defines a full and faithful functor from $\Sh^{\C^{\infty}_{\cN}}(N,\gVect)$ to the category of functors from $\Sh^{\C^{\infty}_{\cN}}(N,\gVect)$ to $\Set$. In particular, every bijection $\Sh^{\C^{\infty}_{\cN}}_{0}(\cK,\H) \cong \Sh^{\C^{\infty}_{\cN}}_{0}(\cK', \H)$ natural in $\H$ is induced by the unique $\C^{\infty}_{\cN}$-linear sheaf isomorphism $\cK \cong \cK'$. 

Applying this on the bijection (\ref{eq_pulltensorbijection}), $\cK = \phi^{\ast}(\F \otimes_{\C^{\infty}_{\M}} \G)$ and $\cK' = \phi^{\ast}\F \otimes_{\C^{\infty}_{\cN}} \phi^{\ast} \G$, we obtain the $\C^{\infty}_{\cN}$-linear sheaf isomorphism (\ref{eq_pullbacktensormain}). As (\ref{eq_pulltensorbijection}) is natural in $\F$ and $\G$, the faithfulness assertion of the Yoneda lemma implies that this isomorphism is also natural in $\F$ and $\G$. 
\end{proof}
\begin{tvrz}\label{tvrz_ap_totalspaceunique}
Using the assumptions and notation of Proposition \ref{tvrz_totalspace}, we have the following additional statements regarding the uniqueness of the construction:
\begin{enumerate}[(a)]
\item Suppose $\{ V_{\mu} \}_{\mu \in J}$ is a refinement of the cover $\{ U_{\alpha} \}_{\alpha \in I}$, that is $\{ V_{\mu} \}_{\mu \in J}$ is an at most countable open cover of $M$ and there exists a map $\zeta: J \rightarrow I$, such that $V_{\mu} \subseteq U_{\zeta(\mu)}$. 

Let $\{ (V_{\mu}, \varphi'_{\mu}) \}_{\mu \in J}$ and $\A' = \{ (V_{\mu}, \phi'_{\mu}) \}_{\mu \in J}$ be obtained by restrictions. Suppose $\E' = (E', \C^{\infty}_{\E'})$, $\pi': \E' \rightarrow \M$, and $\varphi'_{\mu}: \E'|_{\V'_{\mu}} \rightarrow \M|_{V_{\mu}} \times K$ are objects constructed using using these data from $\Gamma_{\E}$ as in Subsection \ref{subsec_totalspace}, where we write $\V'_{\mu} := \ul{\pi}'^{-1}(V_{\mu})$. 

Then there exists the unique graded diffeomorphism $\phi: \E \rightarrow \E'$, such that $\ul{\phi}: E \rightarrow E'$ is a vector bundle isomorphism over $\1_{M}$, and $\varphi'_{\mu} \circ \phi|_{\V_{\mu}} = \varphi_{\zeta(\mu)}|_{\V_{\mu}}$ for all $\mu \in J$, where $\V_{\mu} := \ul{\pi}^{-1}(V_{\mu})$. Consequently, one has $\pi' \circ \phi = \pi$. 
\item Let $\{ (U_{\alpha}, \varphi'_{\alpha}) \}_{\alpha \in I}$ and $\A' = \{ (U_{\alpha}, \phi'_{\alpha}) \}_{\alpha \in I}$ be another local trivialization of $\E$ and another graded smooth atlas for $\M$, respectively, defined on the same open cover $\{ U_{\alpha} \}_{\alpha \in I}$ of $M$. 

For each $\alpha \in I$, one can thus consider a $\C^{\infty}_{\M}|_{U_{\alpha}}$-linear sheaf isomorphism 
\begin{equation}
\chi_{\alpha} := \varphi'_{\alpha} \circ \varphi_{\alpha}^{-1}: \C^{\infty}_{\M}|_{U_{\alpha}}[K] \rightarrow \C^{\infty}_{\M}|_{U_{\alpha}}[K].
\end{equation}
It defines a collection of functions $(\chi_{\alpha}){}^{\kappa}_{\lambda} \in \C^{\infty}_{\M}(U_{\alpha})$ by $\chi_{\alpha}(1 \otimes \vartheta_{\lambda}) = (\chi_{\alpha})^{\kappa}{}_{\lambda} \otimes \vartheta_{\kappa}$, where $(\vartheta_{\lambda})_{\lambda=1}^{m}$ is a fixed total basis of $K$. One can use these functions to define a graded diffeomorphism $\hat{\chi}_{\alpha}: \M|_{U_{\alpha}} \times K \rightarrow \M|_{U_{\alpha}} \times K$. Indeed, if $\pi_{1}$ and $\pi_{2}$ are the projections, set $\pi_{1} \circ \hat{\chi}_{\alpha} := \pi_{1}$ and 
\begin{equation}
(\pi_{2} \circ \hat{\chi}_{\alpha})^{\ast}_{K_{0}}( \vartheta^{\lambda}) := (-1)^{|\vartheta_{\lambda}|(|\vartheta_{\kappa}| - |\vartheta_{\lambda}|)} (\pi_{1})^{\ast}_{U_{\alpha}}( (\chi_{\alpha})^{\lambda}{}_{\kappa}) \cdot \vartheta^{\kappa},
\end{equation}
where $(\vartheta^{\lambda})_{\lambda=1}^{m}$ are viewed as global coordinate functions on $K$ corresponding to the total basis of $K^{\ast}$ dual to $(\vartheta_{\lambda})_{\lambda=1}^{m}$. 

Suppose $\E' = (E', \C^{\infty}_{\E'})$, $\pi': \E' \rightarrow \M$, and $\varphi'_{\alpha}: \E'|_{\U'_{\alpha}} \rightarrow \M|_{U_{\alpha}} \times K$ are objects constructed using these data from $\Gamma_{\E}$ as in Subsection \ref{subsec_totalspace}, where we write $\U'_{\alpha} := \ul{\pi}'^{-1}(U_{\alpha})$. 

Then there exists the unique graded diffeomorphism $\phi: \E \rightarrow \E'$, such that $\ul{\phi}: E \rightarrow E'$ is a vector bundle isomorphism over $\1_{M}$, and $\varphi'_{\alpha} \circ \phi|_{\U_{\alpha}} = \hat{\chi}_{\alpha} \circ \varphi_{\alpha}$ for all $\alpha \in I$. Consequently, one has $\pi' \circ \phi = \pi$. 
\end{enumerate}
This shows that the construction of $\E$ does not depend on the particular choice of a local trivialization of $\E$ and a graded smooth atlas $\A$ for $\M$. 
\end{tvrz}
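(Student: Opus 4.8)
The statement is a uniqueness-of-construction claim, and both parts $(a)$ and $(b)$ have the same structure: one is given two sets of gluing data that produce total spaces $\E$ and $\E'$, and one must exhibit a unique graded diffeomorphism $\phi$ intertwining them and covering a vector bundle isomorphism on the underlying manifolds. The natural strategy is to build $\phi$ locally on each piece of a common open cover and then glue via Proposition \ref{tvrz_gLRSgluing}, exactly in the spirit of the construction of $\E$ itself in Subsection \ref{subsec_totalspace}. I would first record that for the underlying ordinary vector bundles, the analogous uniqueness is classical (this is the content hidden in the ``proof of Proposition \ref{tvrz_ap_totalspaceunique}'' for the smooth case, cf. Proposition \ref{tvrz_totalspaceE}), so there is a vector bundle isomorphism $\ul{\phi}: E \to E'$ over $\1_M$, compatible with the respective atlases $\ul{\B}$ and $\ul{\B}'$ and with the local trivializations $\{(U_\alpha,\ul{\varphi_\alpha})\}$ and their primed counterparts. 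This fixes the underlying map once and for all, and reduces the problem to constructing a sheaf isomorphism $\phi^\ast$ covering it.

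For part $(a)$: on each $\V_\mu := \ul\pi^{-1}(V_\mu)$, both $\varphi_{\zeta(\mu)}|_{\V_\mu}: \E|_{\V_\mu} \to \M|_{V_\mu}\times K$ and $\varphi'_\mu: \E'|_{\V'_\mu} \to \M|_{V_\mu}\times K$ are graded diffeomorphisms by Proposition \ref{tvrz_totalspace}-$(i)$, so I would simply \emph{define} $\phi|_{\V_\mu} := (\varphi'_\mu)^{-1}\circ \varphi_{\zeta(\mu)}|_{\V_\mu}$, checking that its underlying map is $\ul\phi|_{\V_\mu}$. Then on $\V_{\mu}\cap\V_{\nu}$ one must verify $\phi|_{\V_\mu}$ and $\phi|_{\V_\nu}$ agree; this boils down to the cocycle relation satisfied by the transition maps $\varphi_{\alpha\beta}$ (used in building both $\E$ and $\E'$) together with $G^\vee_{\alpha\beta}$, i.e.\ the same relations (\ref{eq_cocyclegmgluing}) and (\ref{eq_VBtransfunctcocycle}) that made the construction of $\E$ consistent in the first place. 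Concretely one compares pullbacks of the base coordinate functions $\bbz^A$ and the fiber coordinate functions $\Xi^\lambda$ on the induced charts $\rho_\alpha$, $\rho'_\alpha$, using the explicit formulas (\ref{eq_transmaptot1}), (\ref{eq_transmaptot2}). Gluing by Proposition \ref{tvrz_gLRSgluing} then yields $\phi$; the equations $\varphi'_\mu\circ\phi|_{\V_\mu} = \varphi_{\zeta(\mu)}|_{\V_\mu}$ hold by construction, and uniqueness follows because these equations pin down $\phi$ on the cover $\{\V_\mu\}$, so any two solutions agree by the monopresheaf property. The identity $\pi'\circ\phi = \pi$ follows by restricting to the cover and using $\pi_1\circ\varphi_\alpha = \pi$, $\pi_1\circ\varphi'_\alpha = \pi'$ from Proposition \ref{tvrz_totalspace}-$(i)$.

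For part $(b)$: the same local-then-glue strategy applies, but now on $\U_\alpha = \ul\pi^{-1}(U_\alpha)$ I would define $\phi|_{\U_\alpha} := (\varphi'_\alpha)^{-1}\circ \hat\chi_\alpha \circ \varphi_\alpha$, where $\hat\chi_\alpha$ is the ``fiberwise linear reparametrization'' graded diffeomorphism of $\M|_{U_\alpha}\times K$ encoding the change of local trivialization. The key computation is to check agreement on $\U_{\alpha\beta}$: this requires relating the new transition functions $(G'_{\alpha\beta})^\kappa{}_\lambda$ of $\Gamma_\E$ with respect to $\{\varphi'_\alpha\}$ to the old ones $(G_{\alpha\beta})^\kappa{}_\lambda$ via $G'_{\alpha\beta} = \chi_\alpha \circ G_{\alpha\beta}\circ \chi_\beta^{-1}$ (a direct consequence of $\chi_\alpha = \varphi'_\alpha\circ\varphi_\alpha^{-1}$), and then using the transformation behaviour of the dual transition functions $(G^\vee_{\alpha\beta})_\kappa{}^\lambda$ from Proposition \ref{tvrz_dualtrans}, together with the compatibility of $\hat\chi_\alpha$ with $\hat\chi_\beta$ built from $\chi_\alpha,\chi_\beta$. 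Again one reduces everything to comparing pullbacks of coordinate functions on the induced charts and invoking Proposition \ref{tvrz_gLRSgluing}; uniqueness and $\pi'\circ\phi=\pi$ are argued exactly as in $(a)$.

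The main obstacle I anticipate is not conceptual but the sign bookkeeping in part $(b)$: verifying that the cocycle relations for the $\hat\chi_\alpha$'s are compatible with those for the transition maps on the fiber side requires tracking the Koszul signs appearing in (\ref{eq_GalphabetaVeecomps}), in the definition of $\hat\chi_\alpha$ (the factor $(-1)^{|\vartheta_\lambda|(|\vartheta_\kappa|-|\vartheta_\lambda|)}$), and in the passage between a module automorphism and its induced action on dual coordinate functions. The ``peculiar signs'' warned about in Remark \ref{rem_peculiardoubledual} and Proposition \ref{tvrz_dualtrans} are exactly the ones that must cancel correctly; getting them to line up so that the locally defined $\phi|_{\U_\alpha}$ genuinely agree on overlaps is where the real work lies. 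Part $(a)$, by contrast, should be essentially routine once the underlying vector bundle isomorphism is in hand, since restriction to a refinement introduces no new data.
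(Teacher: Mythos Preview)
Your proposal is correct and would work, but it takes a more hands-on route than the paper. The paper's proof is shorter and more structural: after obtaining the underlying vector bundle isomorphism $\ul{\phi}: E \to E'$ from the classical case (exactly as you do), it observes that $\E$ and $\E'$ do \emph{not} share the same underlying manifold, which is precisely what prevents one from directly invoking Proposition \ref{tvrz_gmgluing2}. To fix this, the paper introduces the pushforward graded manifold $\ul{\phi}_{\ast}\E := (E', \ul{\phi}_{\ast}\C^{\infty}_{\E})$ and argues (via the uniqueness clause of Proposition \ref{tvrz_gmgluing1}) that $\ul{\phi}_{\ast}\E$ is obtained by the gluing procedure from the transported atlas $\{(\U'_{\alpha}, \ul{\rho_{\alpha}}\circ\ul{\phi}^{-1}|_{\U'_{\alpha}})\}$ with \emph{the same} transition data as $\E$. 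Once both $\ul{\phi}_{\ast}\E$ and $\E'$ live over $E'$, parts (a) and (b) become direct applications of Proposition \ref{tvrz_gmgluing2}-$(i)$ and $(ii)$, respectively. The resulting graded diffeomorphism $\eta: \ul{\phi}_{\ast}\E \to \E'$ over $\1_{E'}$ is then reinterpreted as $\phi = (\ul{\phi}, \eta^{\ast}): \E \to \E'$.

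What your approach buys is self-containment: you never leave the category of graded manifolds and you build $\phi$ explicitly as $(\varphi'_\mu)^{-1}\circ\varphi_{\zeta(\mu)}|_{\V_\mu}$ (resp.\ $(\varphi'_\alpha)^{-1}\circ\hat{\chi}_\alpha\circ\varphi_\alpha$), gluing via Proposition \ref{tvrz_gLRSgluing}. This is essentially reproving the relevant cases of Proposition \ref{tvrz_gmgluing2} by hand in this specific context, together with the sign bookkeeping you correctly anticipate for part (b). The paper's approach buys economy: the pushforward trick reduces both parts to already-established collation results, so no explicit cocycle verification or sign chasing is needed at this stage --- all of that was absorbed into the earlier propositions. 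Your worry about Koszul signs in part (b) is legitimate for your route but is sidestepped entirely by the paper's reduction.
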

\begin{proof}
We will only briefly sketch the proof. First, recall that the statements $(a)$ and $(b)$ hold for ordinary vector bundles, thus providing the uniqueness claims for Proposition \ref{tvrz_totalspaceE}. This is a standard statement which follows immediately from the way the total space manifold $E$ is actually constructed, see e.g. Chapter II, Section 6 of \cite{kolar2013natural}. In this way, we obtain the unique vector bundle isomorphism $\ul{\phi}: E \rightarrow E'$ satisfying 
\begin{enumerate}[(a)]
\item $\ul{\varphi'_{\mu}} \circ \ul{\phi}|_{\V_{\mu}} = \ul{\varphi_{\zeta(\mu)}}|_{\V_{\mu}}$ for all $\mu \in J$, under the assumptions of the statement (a);
\item $\ul{\varphi'_{\alpha}} \circ \ul{\phi}|_{\U_{\alpha}} = \ul{\hat{\chi}_{\alpha}} \circ \ul{\varphi_{\alpha}}$ for all $\alpha \in I$, under the assumptions of the statement (b); 
\end{enumerate}
The idea is to prove the statements themselves using Proposition \ref{tvrz_gmgluing2}, except that $\E$ and $\E'$ obtained by the gluing procedure of Proposition \ref{tvrz_gmgluing1} do not share the underlying manifold. 

However, one can consider the ``pushforward'' graded manifold $\ul{\phi}_{\ast} \E := (E', \ul{\phi}_{\ast} \C^{\infty}_{\E})$. The uniqueness claim of Proposition \ref{tvrz_gmgluing1} shows that $\ul{\phi_{\ast}} \E$ can be viewed as a graded manifold obtained by the gluing procedure using the atlas $\ul{\phi}_{\ast} \ul{\B} := \{ (\U'_{\alpha}, \ul{\rho_{\alpha}} \circ \ul{\phi}^{-1}|_{\U'_{\alpha}}) \}_{\alpha \in I}$ and \textit{the same} transition data. 

The rest is a straightforward application of Proposition \ref{tvrz_gmgluing2}. Note that every graded diffeomorphism $\eta: \ul{\phi}_{\ast} \E \rightarrow \E'$ over $\1_{E'}$ can be equivalently viewed as a graded diffeomorphism $\phi = (\ul{\phi}, \eta^{\ast})$ from $\E$ to $\E'$, as $\eta^{\ast}: \C^{\infty}_{\E'}\rightarrow \C^{\infty}_{\ul{\phi}_{\ast} \E} \equiv \ul{\phi}_{\ast} \C^{\infty}_{\E}$. This finishes the proof. 
\end{proof}
\subsection{Submanifolds}
\begin{lemma} \label{lem_fdecompasgenerators}
Let $(s_{j})_{j \in \Z}$ and $(n_{j})_{j \in \Z}$ be any sequences of non-negative integers, such that $\sum_{j \in \Z} s_{j} < \infty$ and $\sum_{j \in \Z} n_{j} < \infty$. Let $\hat{U} \subseteq \R^{s_{0}}$ and $\hat{W} \subseteq \R^{n_{0} - s_{0}}$ be two open cubes containing the respective origins. Suppose $\iota_{0}: \hat{U}^{(s_{j})} \rightarrow (\hat{U} \times \hat{W})^{(n_{j})}$ is the ``zero section'', that is 
\begin{align}
(\iota_{0})^{\ast}_{\hat{U} \times \hat{V}}(\bbz^{A_{j}}_{(j)}) = & \ \bbu^{A_{j}}_{(j)}, \text{ for } A_{j} \in \{1, \dots, s_{j} \}, \\
(\iota_{0})^{\ast}_{\hat{U} \times \hat{V}}(\bbz^{A_{j}}_{(j)}) = & \ 0, \text{ for } A_{j} \in \{s_{j}+1, \dots, n_{j} \},
\end{align}
where for each $j \in \Z$, $( \bbz^{A_{j}}_{(j)} )_{A_{j}=1}^{n_{j}}$ and $( \bbu_{(j)}^{A_{j}})_{K_{j}=1}^{s_{j}}$ denote the standard coordinate functions of degree $j$ on $(\hat{U} \times \hat{W})^{(n_{j})}$ and $\hat{U}^{(s_{j})}$, respectively. 

Let $\hat{V} \in \Op(\hat{U} \times \hat{W})$ be arbitrary and consider a function $f \in \C^{\infty}_{(n_{j})}(\hat{V})$, such that $(\iota_{0})^{\ast}_{\hat{V}}(f) = 0$. Then $f$ can be written as a finite sum 
\begin{equation} \label{eq_fdecompasgenerators}
f = \sum_{j \in \Z} \sum_{A_{j}=s_{j}+1}^{n_{j}} f^{A_{j}}_{(j)} \cdot \bbz^{A_{j}}_{(j)},
\end{equation} 
for some functions $f^{A_{j}}_{(j)} \in \C^{\infty}_{(n_{j})}(\hat{V})$.
\end{lemma}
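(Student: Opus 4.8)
The plan is to reduce the statement to the ordinary (non\-graded) slice version of Hadamard's lemma applied coefficient by coefficient. Write the purely graded coordinates on $(\hat U\times\hat W)^{(n_{j})}$ as $\xi_{1},\dots,\xi_{n_{\ast}}$ and partition the index set $\{1,\dots,n_{\ast}\}$ into the \emph{base} indices $B$ (those $\mu$ with $\xi_{\mu}=\bbz^{A_{j}}_{(j)}$ for some $A_{j}\le s_{j}$) and the \emph{transverse} indices $T$ (those with $A_{j}>s_{j}$); the degree zero transverse coordinates are $\bbz^{s_{0}+1}_{(0)},\dots,\bbz^{n_{0}}_{(0)}$, which I abbreviate $x^{s_{0}+1},\dots,x^{n_{0}}$. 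Expanding $f=\sum_{\fp\in\N^{n_{\ast}}_{|f|}}f_{\fp}\,\xi^{\fp}$ with $f_{\fp}\in\C^{\infty}_{n_{0}}(\hat V)$, I split $f=f^{(T)}+f^{(B)}$, where $f^{(T)}$ collects the monomials $\xi^{\fp}$ with $\supp(\fp)\cap T\neq\emptyset$ and $f^{(B)}$ collects those with $\supp(\fp)\subseteq B$. Using graded commutativity to move one transverse generator to the far right of each monomial in $f^{(T)}$, one obtains $f^{(T)}=\sum_{\mu\in T}g_{\mu}\,\xi_{\mu}$ for suitable $g_{\mu}\in\C^{\infty}_{(n_{j})}(\hat V)$; since these $\xi_{\mu}$ are exactly the transverse coordinates $\bbz^{A_{j}}_{(j)}$ of nonzero degree, this piece already has the required form.

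Next I would compute $(\iota_{0})^{\ast}_{\hat V}(f)$ using the graded domain theorem (Theorem~\ref{thm_gradedomaintheorem}) and formula (\ref{eq_themorphismformula}) of Lemma~\ref{lem_themorphism}: for $\iota_{0}$ all the $\bar{y}^{j}$ vanish, $\bar{\theta}_{\nu}=\xi_{\nu}$ for $\nu\in B$ and $\bar{\theta}_{\nu}=0$ for $\nu\in T$, so $(\iota_{0})^{\ast}_{\hat V}(f)=\sum_{\supp(\fp)\subseteq B}(\,\bar{\iota}_{0}\text{-pullback of }f_{\fp})\,\xi^{\fp}$, and in particular $(\iota_{0})^{\ast}_{\hat V}(f^{(T)})=0$. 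Hence the hypothesis $(\iota_{0})^{\ast}_{\hat V}(f)=0$, together with the linear independence of the $\xi^{\fp}$, forces each ordinary smooth function $f_{\fp}$ with $\supp(\fp)\subseteq B$ to vanish on the slice $\hat V\cap(\hat U\times\{0\})$.

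The remaining input is classical: a smooth function on an open $\hat V\subseteq\R^{n_{0}}$ vanishing on the coordinate slice $\{x^{s_{0}+1}=\dots=x^{n_{0}}=0\}$ can be written as $\sum_{i=s_{0}+1}^{n_{0}}f^{i}\,x^{i}$ with $f^{i}\in\C^{\infty}_{n_{0}}(\hat V)$. On an open cube this follows from the integral form of Taylor's theorem exactly as in the $q=0$ step of the proof of Lemma~\ref{lem_ap_Hadimrdclassic}, via $f^{i}(x)=\int_{0}^{1}(\partial_{i}f)(x^{1},\dots,x^{s_{0}},t x^{s_{0}+1},\dots,t x^{n_{0}})\,dt$; for a general open $\hat V$ one patches these local expressions with a partition of unity, again as in that proof. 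Applying this to each coefficient $f_{\fp}$ with $\supp(\fp)\subseteq B$ gives $f_{\fp}=\sum_{i=s_{0}+1}^{n_{0}}f_{\fp}^{i}\,x^{i}$, and reassembling yields $f^{(B)}=\sum_{i=s_{0}+1}^{n_{0}}\bigl(\sum_{\supp(\fp)\subseteq B}f_{\fp}^{i}\,\xi^{\fp}\bigr)x^{i}$. Together with the expression for $f^{(T)}$ this is precisely the decomposition (\ref{eq_fdecompasgenerators}), with $f^{A_{j}}_{(j)}$ for $j\neq 0$ given by the $g_{\mu}$ and $f^{A_{0}}_{(0)}$ given by the bracketed series.

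I do not expect a genuine obstruction: the only non\-routine ingredient is the classical slice lemma on an arbitrary (not star\-shaped) open set, and that is handled exactly as Lemma~\ref{lem_ap_Hadimrdclassic} handles the analogous issue. The point requiring a little care is to check that the reassembled coefficient $f^{A_{0}}_{(0)}=\sum_{\supp(\fp)\subseteq B}f_{\fp}^{i}\,\xi^{\fp}$ is a bona fide element of $\C^{\infty}_{(n_{j})}(\hat V)$, homogeneous of degree $|f|$: this holds because the summation runs over $\fp\in\N^{n_{\ast}}_{|f|}$, i.e.\ precisely the index set of the ambient formal power series algebra $\bar S(\R^{(n_{j})}_{\ast},\C^{\infty}_{n_{0}}(\hat V))$, so an infinite sum there is legitimate, while the outer sum over coordinates in (\ref{eq_fdecompasgenerators}) is finite since only finitely many $n_{j}$ are nonzero.
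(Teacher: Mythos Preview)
Your proposal is correct and follows essentially the same route as the paper: split off the monomials containing a transverse purely graded generator (your $f^{(T)}$, the paper's $f-f_{0}$), observe that the pullback by $\iota_{0}$ kills these so the remaining coefficients vanish on the slice, and then apply the ordinary Hadamard-type slice lemma coefficientwise. The only place where the paper is more explicit is the classical step for a general $\hat V$: it carries out the two-case analysis (near the slice use the integral formula on a small product cube $\hat U_{u}\times\hat W_{0}$; away from the slice simply divide by a nonvanishing $\bbz^{A_{0}}_{(0)}$) before patching with a partition of unity, whereas you defer to the argument of Lemma~\ref{lem_ap_Hadimrdclassic}---but that is indeed the same mechanism.
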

\begin{proof}
First, one can certainly write $f$ as a finite sum 
\begin{equation} \label{eq_fdecompasgenerators0}
f = f_{0} + \sum_{j \neq 0} \sum_{A_{j}=s_{j}+1}^{n_{j}} f^{A_{j}}_{(j)} \cdot \bbz^{A_{j}}_{(j)},
\end{equation}
for some functions $f^{A_{j}}_{(j)} \in \C^{\infty}_{(n_{j})}(\hat{V})$, where $f_{0} \in \C^{\infty}_{(m_{j}+s_{j})}(\hat{V})$ does \emph{not} contain any $\bbz^{A_{j}}_{(j)}$ for $j \neq 0$ and $s_{j} < A_{j} \leq n_{j}$. Consequently, for each $\fp \in \N^{m_{\ast} + s_{\ast}}_{|f|}$, the component $(f_{0})_{\fp} \in \C^{\infty}_{n_{0}}(\hat{V})$ must vanish when pulled back by $\ul{\iota}_{0}: \hat{U} \rightarrow \hat{U} \times \hat{W}$ to $\ul{\iota_{0}}^{-1}(\hat{V}) = \{ u \in \hat{U} \; | \; (u,0) \in \hat{V} \}$.

First, suppose that $v = (u,w) \in \hat{V}$ satisfies $w \neq 0$. We can then find an open cube $\hat{V}_{v} \subseteq \hat{V}$ around $v$, such that $\bbz^{A_{0}}_{(0)}(v') \neq 0$ for all $v' \in \hat{V}_{v}$ and some $A_{0} \in \{s_{0}+1,\dots,n_{0}\}$. Let 
\begin{equation}
(f^{A_{0}}_{(0),v})_{\fp} := \frac{1}{\bbz^{A_{0}}_{(0)}} \cdot (f_{0})_{\fp}|_{\hat{V}_{v}}, \; \; (f^{A'_{0}}_{(0),v})_{\fp} := 0 \text{ for all } A'_{0} \in \{s_{0}+1,\dots, \hat{A}_{0}, \dots, n_{0} \}.
\end{equation}
We can thus write $(f_{0})_{\fp}|_{\hat{V}_{v}} = \sum_{A_{0}=s_{0}+1}^{n_{0}} (f^{A_{0}}_{(0),v})_{\fp} \cdot \bbz^{A_{0}}_{(0)}$ for some smooth functions $(f^{A_{0}}_{(0),v})_{\fp} \in \C^{\infty}_{n_{0}}(\hat{V}_{v})$. 

Next, consider $v = (u,0) \in \hat{V}$. One can find open cubes $\hat{U}_{u} \subseteq \R^{s_{0}}$ around $u$ and $\hat{W}_{0} \subseteq \R^{n_{0}-s_{0}}$ around $0$, so that $\hat{V}_{v} := \hat{U}_{u} \times \hat{W}_{0} \subseteq \hat{V}$. By assumption, we have $(f_{0})_{\fp}(u',0) = 0$ for all $u' \in \hat{U}_{u}$. By using Proposition \ref{lem_Hadimrdclassic} for the functions $(f_{0})_{\fp}(u',\cdot): \hat{W}_{0} \rightarrow \R$, we thus obtain smooth functions $(f^{A_{0}}_{(0),v})_{\fp} \in \C^{\infty}_{m_{0}+s_{0}}( \hat{V}_{v})$, where $A_{0} \in \{s_{0}+1,\dots,n_{0}\}$, such that $(f_{0})_{\fp}|_{\hat{V}_{v}} = \sum_{A_{0}=s_{0}+1}^{n_{0}} (f^{A_{0}}_{(0),v})_{\fp} \cdot \bbz^{A_{0}}_{(0)}$. 

Now, let $\{ \lambda_{v} \}_{v \in \hat{V}}$ be the partition of unity subordinate to the open cover $\{ \hat{V}_{v} \}_{v \in \hat{V}}$ of $\hat{V}$. For each $A_{0} \in \{s_{0}+1,\dots,n_{0}\}$, define $(f^{A_{0}}_{(0)})_{\fp} \in \C^{\infty}_{n_{0}}(\hat{V})$ by $(f^{A_{0}}_{(0)})_{\fp} := \sum_{v \in \hat{V}} \lambda_{v} \cdot (f^{A_{0}}_{(0),v})_{\fp}$. By construction, it follows that $(f_{0})_{\fp}$ can be written as 
\begin{equation}
(f_{0})_{\fp} = \sum_{A_{0}=1}^{m_{0}} (f^{A_{0}}_{(0)})_{\fp} \cdot \bbz^{A_{0}}_{(0)}. 
\end{equation}
For each $A_{0} \in \{s_{0}+1,\dots,n_{0}\}$, declare $(f^{A_{0}}_{(0)})_{\fp}$ to be the component functions of $f^{A_{0}}_{(0)} \in \C^{\infty}_{(n_{j})}(\hat{V})$. Consequently, one finds the expression 
\begin{equation}
f_{0} = \sum_{A_{0}=s_{0}+1}^{n_{0}} f^{A_{0}}_{(0)} \cdot \bbz^{A_{0}}_{(0)}.
\end{equation}
Plugging this into (\ref{eq_fdecompasgenerators0}), we obtain the decomposition (\ref{eq_fdecompasgenerators}). 
\end{proof}
\begin{theorem}\label{thm_ap_idealissubmanifold}
This is the proof of some technical details of the proof of Theorem \ref{thm_idealissubmanifold}. 
\end{theorem}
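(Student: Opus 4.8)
The plan is to establish, in turn, the items deferred from the proof of Theorem \ref{thm_idealissubmanifold}: the adjunction between the inverse image functor $\ul{\iota}^{-1}$ and the pushforward functor $\ul{\iota}_{\ast}$; the computation of the stalks of $\C^{\infty}_{\cS}$ and of their Jacobson radicals; the verification that $(S,\C^{\infty}_{\cS})\in\gLRS$ and that $\iota=(\ul{\iota},\iota^{\ast})$ is a morphism there; and finally Lemma \ref{lem_factorthroughI}. I would begin with the adjunction. Recall from part $(a)$ of the proof of Proposition \ref{tvrz_ap_pullbackVB} that for a continuous map $\ul{\iota}\colon S\to M$ and $\F\in\Sh(M,\gcAs)$ the inverse image sheaf $\ul{\iota}^{-1}\F$ is the sheafification of the inverse image presheaf $\ul{\iota}^{-1}_{P}\F$, whose sections over $V\in\Op(S)$ form the filtered colimit $\colim_{U\supseteq\ul{\iota}(V)}\F(U)$. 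First I would record that $(\ul{\iota}^{-1}\F)_{s}\cong(\ul{\iota}^{-1}_{P}\F)_{s}\cong\F_{\ul{\iota}(s)}$ for every $s\in S$, the first isomorphism being Proposition \ref{tvrz_sheafification}-(ii) and the second obtained by interchanging the two filtered colimits (over open neighbourhoods of $s$ in $S$, and over open sets $U$ of $M$ containing $\ul{\iota}(V)$) using cofinality. To produce the natural bijection $\mu_{\F,\G}\colon\Sh_{0}(\ul{\iota}^{-1}\F,\G)\to\Sh_{0}(\F,\ul{\iota}_{\ast}\G)$ I would exhibit the unit $\F\to\ul{\iota}_{\ast}\ul{\iota}^{-1}\F$, assembled from the canonical maps $\F(U)\to(\ul{\iota}^{-1}_{P}\F)(\ul{\iota}^{-1}(U))\to(\ul{\iota}^{-1}\F)(\ul{\iota}^{-1}(U))$, and the counit $\ul{\iota}^{-1}\ul{\iota}_{\ast}\G\to\G$, obtained by sheafifying the presheaf map induced over $V$ by the restrictions $\G(\ul{\iota}^{-1}(U))\to\G(V)$, and then check the triangle identities; naturality and bijectivity are routine diagram chases with the universal property of sheafification, and everything is carried out in $\gcAs$ since the relevant restrictions and colimits are taken there. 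Applying the stalk formula to $\F=\C^{\infty}_{\M}/\I$, and using the exactness of the stalk functor ($(\C^{\infty}_{\M}/\I)_{m}\cong\C^{\infty}_{\M,m}/\I_{m}$, immediate from the colimit description of stalks), I obtain the canonical identification $\C^{\infty}_{\cS,s}\cong\C^{\infty}_{\M,s}/\I_{s}$.

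Next I would settle the local ringed space structure. For $s\in S$ the defining condition $\I(M)\subseteq\J^{s}_{\M}(M)$ gives $\I_{s}\subseteq\frJ(\C^{\infty}_{\M,s})$ by Lemma \ref{lem_regularideals}-\textit{4)}. Since $\C^{\infty}_{\M,s}$ is a local graded ring with unique maximal left ideal $\frJ(\C^{\infty}_{\M,s})$ and $\I_{s}$ is contained in it, the standard correspondence of graded left ideals under a quotient shows that $\C^{\infty}_{\M,s}/\I_{s}$ is a nonzero graded ring with a unique maximal left ideal, namely $\frJ(\C^{\infty}_{\M,s})/\I_{s}$; hence it is a local graded ring and $\frJ(\C^{\infty}_{\M,s}/\I_{s})=\frJ(\C^{\infty}_{\M,s})/\I_{s}$ (cf. Proposition \ref{tvrz_grlocal}). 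Thus $\C^{\infty}_{\cS,s}\cong\C^{\infty}_{\M,s}/\I_{s}$ is a local graded ring for every $s\in S$, so $(S,\C^{\infty}_{\cS})\in\gLRS$ — this is the precise content of the assertion in the main text, the point being that the nonzero stalks of $\C^{\infty}_{\M}/\I$ are exactly those over $S$. The morphism $\iota^{\ast}$ of (\ref{eq_iotaast}) corresponds, under the adjunction of the first paragraph, to the inverse image of the sheafified quotient map $\natural\colon\C^{\infty}_{\M}\to\C^{\infty}_{\M}/\I$ (which is a sheaf by Proposition \ref{eq_quotientpresheafissheaf}); tracing the definitions, its induced stalk map $\iota_{(s)}\colon\C^{\infty}_{\M,s}\to\C^{\infty}_{\cS,s}$ is, under the identification above, the canonical quotient $\C^{\infty}_{\M,s}\to\C^{\infty}_{\M,s}/\I_{s}$. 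By the previous sentence this sends $\frJ(\C^{\infty}_{\M,s})$ onto $\frJ(\C^{\infty}_{\cS,s})$, so property (iii) of Definition \ref{def_gLRS} holds and $\iota$ is a morphism of graded locally ringed spaces.

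For Lemma \ref{lem_factorthroughI}, let $\chi_{0}=(\ul{\chi_{0}},\chi_{0}^{\ast})\colon\cN\to\M|_{U}$ satisfy $\ul{\chi_{0}}(N)\subseteq U\cap S$ and $\chi_{0}^{\ast}(\I|_{U})=0$. Since $\ul{\iota}$ embeds $S$ as a closed subset of $M$, the underlying map factors uniquely as $\ul{\chi_{0}}=\ul{\iota}|_{U\cap S}\circ\ul{\chi}$ with $\ul{\chi}\colon N\to U\cap S$ continuous. The hypothesis $\chi_{0}^{\ast}(\I|_{U})=0$ lets $\chi_{0}^{\ast}$ descend through the quotient to a sheaf morphism $\widehat{\chi_{0}^{\ast}}\colon(\C^{\infty}_{\M}/\I)|_{U}\to(\ul{\iota}|_{U\cap S})_{\ast}(\ul{\chi}_{\ast}\O_{N})$, using $\ul{\chi_{0}}_{\ast}=(\ul{\iota}|_{U\cap S})_{\ast}\circ\ul{\chi}_{\ast}$; applying the adjunction of the first paragraph for the map $\ul{\iota}|_{U\cap S}$ together with the compatibility $\C^{\infty}_{\cS}|_{U\cap S}=(\ul{\iota}|_{U\cap S})^{-1}((\C^{\infty}_{\M}/\I)|_{U})$ of the inverse image with restriction, $\widehat{\chi_{0}^{\ast}}$ corresponds to a unique sheaf morphism $\chi^{\ast}\colon\C^{\infty}_{\cS}|_{U\cap S}\to\ul{\chi}_{\ast}\O_{N}$. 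Checking on stalks, the identification $\C^{\infty}_{\cS,\ul{\chi}(n)}\cong\C^{\infty}_{\M,\ul{\chi}(n)}/\I_{\ul{\chi}(n)}$ shows that $\chi_{(n)}$ is the map induced by $(\chi_{0})_{(n)}$ on the quotient, hence a local graded ring morphism because $(\chi_{0})_{(n)}$ is; so $\chi=(\ul{\chi},\chi^{\ast})$ is a morphism in $\gLRS$ and $\iota|_{U\cap S}\circ\chi=\chi_{0}$ by construction. Uniqueness is automatic: $\ul{\chi}$ is forced, and $\chi^{\ast}$ corresponds, through the bijective adjunction and the universal property of the quotient, to $\widehat{\chi_{0}^{\ast}}$, which is itself determined by $\chi_{0}^{\ast}$.

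The main obstacle I anticipate is organisational rather than conceptual: every ingredient here is a standard fact of sheaf theory or of local (graded) algebra, but one must carefully thread the sheafification functor and its universal property through each construction and verify at the level of stalks that all the ``evident'' maps really are the quotient maps one expects, so that the three pieces — the adjunction, the stalk identification, and the definition of $\iota^{\ast}$ in (\ref{eq_iotaast}) — cohere. Setting the adjunction up cleanly at the outset is what makes the remaining verifications routine.
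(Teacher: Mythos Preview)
Your proposal is correct and follows essentially the same route as the paper: both establish the adjunction $\mu_{\F,\G}$ between $\ul{\iota}^{-1}$ and $\ul{\iota}_{\ast}$, identify the stalks $\C^{\infty}_{\cS,s}\cong\C^{\infty}_{\M,s}/\I_{s}$, show the quotient of a local graded ring by a proper ideal is again local with Jacobson radical $\frJ(\C^{\infty}_{\M,s})/\I_{s}$, and then prove Lemma \ref{lem_factorthroughI} by descending $\chi_{0}^{\ast}$ through the quotient and transporting across the adjunction. The only cosmetic difference is that you set up the adjunction via unit and counit while the paper writes down explicit formulas for $\mu^{P}_{\F,\G}$ and its inverse, and the paper isolates the fact $\psi_{s}=\iota_{(s)}$ as a separate general lemma (Lemma \ref{lem_adjunctarelocal}) rather than folding it into the stalk computation.
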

\begin{proof} Let us divide the proof in several parts. 
\begin{enumerate}[(a)]
\item \textbf{The inverse image sheaf}: Let $\ul{\iota}: S \rightarrow M$ be a continuous map. 

For any $\F \in \Sh(M,\gcAs)$, the inverse image sheaf $\ul{\iota}^{-1} \F \in \Sh(S,\gcAs)$ is defined in the part (a) of the proof of Proposition \ref{tvrz_ap_pullbackVB}. The assignment $\F \mapsto \ul{\iota}^{-1} \F$ defines a functor from $\Sh(M,\gcAs)$ to $\Sh(S,\gcAs)$. For any sheaf morphism $\varphi: \F \rightarrow \G$, define the presheaf morphism $\ul{\iota}_{P}^{-1}(\varphi): \ul{\iota}_{P}^{-1}\F \rightarrow \ul{\iota}_{P}^{-1}\G$ as follows. For each $V \in \Op(S)$ and $[f]_{V} \in (\ul{\iota}_{P}^{-1} \F)(V)$ represented by $f \in \F(U)$ for some $U \supseteq \ul{\iota}(V)$, set 
\begin{equation}
(\ul{\iota}_{P}^{-1}(\varphi))_{V}([f]_{V}) := [ \varphi_{U}(f)]_{V}.
\end{equation}  
It is straightforward to see that this is well-defined graded algebra morphism natural in $V$. The sheaf morphism $\ul{\iota}^{-1}(\varphi): \ul{\iota}^{-1}\F \rightarrow \ul{\iota}^{-1}\G$ is then defined by the universality of the sheafification to be the unique sheaf morphism fitting into the commutative diagram
\begin{equation}
\begin{tikzcd}[column sep=large]
\ul{\iota}^{-1}_{P}\F \arrow{r}{\ul{\iota}_{P}^{-1}(\varphi)} \arrow{d}{\eta_{\ul{\iota}_{P}^{-1}\F}} & \ul{\iota}^{-1}_{P}\G \arrow{d}{\eta_{\ul{\iota}_{P}^{-1}\G}} \\
\ul{\iota}^{-1} \F \arrow[dashed]{r}{\ul{\iota}^{-1}(\varphi)} & \ul{\iota}^{-1}\G
\end{tikzcd},
\end{equation}
where for each $\H \in \PSh(S,\gcAs)$, $\eta_{\H}: \H \rightarrow \Sff(\H)$ is the presheaf morphism induced by the sheafification procedure. 

For each $\F \in \Sh(M,\gcAs)$ and $\G \in \Sh(S,\gcAs)$, the bijection
\begin{equation}
\mu_{\F,\G}: \Sh_{0}( \ul{\iota}^{-1} \F, \G) \rightarrow \Sh_{0}( \F, \ul{\iota}_{\ast} \G) 
\end{equation}
is defined as follows. First, construct a bijection $\mu_{\F,\G}^{P}: \PSh_{0}( \ul{\iota}_{P}^{-1} \F, \G) \rightarrow \Sh_{0}(\F, \ul{\iota}_{\ast} \G)$. Let $\psi: \ul{\iota}_{P}^{-1}\F \rightarrow \G$ be a presheaf morphism. For each $U \in \Op(M)$ and $f \in \F(U)$, set
\begin{equation}
(\mu^{P}_{\F,\G}(\psi))_{U}(f) := \psi_{\ul{\iota}^{-1}(U)}( [f]_{\ul{\iota}^{-1}(U)}) \in \G( \ul{\iota}^{-1}(U)) = (\ul{\iota}_{\ast} \G)(U). 
\end{equation}
Conversely, let $\psi': \F \rightarrow \ul{\iota}_{\ast} \G$ be a sheaf morphism. For each $V \in \Op(S)$ and $[f]_{V} \in (\ul{\iota}_{P}^{-1} \F)(V)$ represented by $f \in \F(U)$ for some $U \supseteq \ul{\iota}(V)$, define
\begin{equation}
((\mu^{P}_{\F,\G})^{-1}(\psi'))_{V}([f]_{V}) := \psi'_{U}(f)|_{V}. 
\end{equation}
It is an easy exercise to check that $\mu^{P}_{\F,\G}$ is natural in both $\F$ and $\G$ and $(\mu^{P}_{\F,\G})^{-1}$ is indeed its inverse. The bijection $\mu_{\F,\G}$ is obtained by composing $\mu^{P}_{\F,\G}$ with the canonical bijection 
\begin{equation}
\Sh_{0}( \ul{\iota}^{-1}\F, \G) \cong \PSh_{0}( \ul{\iota}^{-1}_{P} \F, \G)
\end{equation}
obtained by the universality of the sheafification. It is also natural in both $\F$ and $\G$. 

Finally, observe that for any $s \in S$, there is a canonical identification 
\begin{equation} \label{eq_pullbackstalk}
(\ul{\iota}^{-1} \F)_{s} \cong \F_{\ul{\iota}(s)}.
\end{equation}
It suffices to prove that $(\ul{\iota}_{P}^{-1}\F)_{s} \cong \F_{\ul{\iota}(s)}$. To do so, it suffices to construct a universal colimiting cone $\{ \pi_{V,s} \}_{V \in \Op_{s}(S)}$, where $\pi_{V,s}: (\ul{\iota}_{P}^{-1} \F)(V) \rightarrow \F_{\ul{\iota}(s)}$. Let $[f]_{V} \in (\ul{\iota}_{P}^{-1} \F)(V)$ be represented by $f \in \F(U)$ for some $U \supseteq \ul{\iota}(V)$. As $U \in \Op_{\ul{\iota}(s)}(M)$, we can define
\begin{equation}
\pi_{V,s}([f]_{V}) := [f]_{\ul{\iota}(s)} \in \F_{\ul{\iota}(s)}. 
\end{equation}
The fact that this is a universal colimiting cone is left to the reader as an excercise. 
\item \textbf{Some claims about graded locally ringed spaces}:
First, we will use the following general statement for local graded commutative rings.
\begin{lemma} \label{lem_quotientringislocal}
Let $R \in \gcRng$ be a local graded commutative ring. Let $I \subsetneq R$ be its ideal.

Then the quotient $R / I$ is also a local graded commutative ring, and $\frJ(R/I) = \frJ(R) / I$. 
\end{lemma}
\begin{proof}
This can be proved directly from the definition. Let $q: R \rightarrow R/I$ denote the canonical quotient map. Suppose $\hat{J} \subsetneq R/I$ is a maximal ideal in $R/I$. It is easy to prove that $J := q^{-1}(\hat{J}) \subseteq R$ is a maximal ideal in $R$. Consequently, we have $J = \frJ(R)$ as $R$ is local. Whence $\hat{J} = \frJ(R)/I$ is the unique maximal ideal in $R/I$, and $\frJ(R/I) = \frJ(R)/I$. 
\end{proof}

Now, let $\F \in \Sh(M,\gcAs)$ be any sheaf, and let $\I \subseteq \F$ be a sheaf of ideals if $\F$. We can then define a quotient presheaf $\F / \I$ as in Remark \ref{rem_quotientsheaf}. For each $m \in M$, we may view the stalk $\I_{m}$ as an ideal in $\F_{m}$. Observe that there is a canonical graded algebra isomorphism
\begin{equation} \label{eq_stalkofquotient}
(\F / \I)_{m} \cong \F_{m} / \I_{m}
\end{equation}
Let $\natural: \F \rightarrow \F / \I$ be the canonical presheaf morphism obtained from the quotient maps. For each $m \in M$, let $q_{m}: \F_{m} \rightarrow \F_{m} / \I_{m}$ be the quotient map. For each $U \in \Op(M)$ and $f \in \F(U)$, define $\pi^{\natural}_{U,m}: (\F/\I)(U) \rightarrow \F_{m} / \I_{m}$ by 
\begin{equation}
\pi^{\natural}_{U,m}( \natural_{U}(f)) := q_{m}([f]_{m}).
\end{equation}
It is easy to see that this defines a universal colimiting cone from $\F$ to $\F_{m} / \I_{m}$, which proves the existence of a canonical graded algebra isomorphism (\ref{eq_stalkofquotient}). Moreover, with respect to this identification, the induced stalk map $\natural_{m}: \F_{m} \rightarrow (\F/\I)_{m} \cong \F_{m} / \I_{m}$ corresponds precisely to the quotient map $q_{m}$. 

We can now prove the following observation:
\begin{lemma} Let $\ul{\iota}: S \rightarrow M$ be the inclusion of the subset $S = \{ m \in M \; | \; \I(M) \subseteq \J^{m}_{\M}(M) \}$, see the part (a) of the proof of Theorem \ref{thm_idealissubmanifold}. Let $\C^{\infty}_{\cS} := \ul{\iota}^{-1}( \C^{\infty}_{\M} / \I)$. 

Then $(S,\C^{\infty}_{\cS})$ is a graded locally ringed space. For any $s \in S$, one has $\frJ(\C^{\infty}_{S,s}) \cong \frJ(\C^{\infty}_{\M,s}) / \I_{s}$.
\end{lemma}
\begin{proof}
It follows from (\ref{eq_pullbackstalk}) and (\ref{eq_stalkofquotient}) that for each $s \in S$, there is a canonical isomorphism $\C^{\infty}_{\cS,s} \cong \C^{\infty}_{\M,s} / \I_{s}$, where we identify $s \equiv \ul{\iota}(s)$. It follows from Lemma \ref{lem_quotientringislocal} that $\C^{\infty}_{\cS,s}$ is a local graded ring, if we argue that $\I_{s} \neq \C^{\infty}_{\M,s}$. But every $[f]_{s} \in \I_{s}$ must satisfy $f(s) = 0$ as $\I(M) \subseteq \J^{s}_{\M}(M)$. In particular, $\I_{s}$ cannot contain $[1]_{s}$. Moreover, under the above isomorphism, one has $\frJ(\C^{\infty}_{\M,s}) \cong \frJ(\C^{\infty}_{\M,s}) / \I_{s}$. This finishes the proof.
\end{proof}

Now, we will use the following general observation:
\begin{lemma} \label{lem_adjunctarelocal}
Let $\ul{\iota}: S \rightarrow M$ be any continuous map. Suppose $(M,\F)$ and $(S,\G)$ are graded locally ringed spaces. Let $\psi: \ul{\iota}^{-1}\F \rightarrow \G$ be any sheaf morphism. 

Then the pair $\iota := (\ul{\iota}, \mu_{\F,\G}(\psi))$ defines a morphism of graded locally ringed spaces $(S,\G)$ and $(M,\F)$, iff for each $s \in S$, the induced stalk map $\psi_{s}: (\ul{\iota}^{-1}\F)_{s} \cong \F_{\ul{\iota}(s)} \rightarrow \G_{s}$ is a local graded ring morphism.
\end{lemma}
\begin{proof}
It suffices to prove the statement for the inverse image presheaf. Let $\psi: \ul{\iota}_{P}^{-1}\F \rightarrow \G$ be a presheaf morphism. For each $s \in S$, let us look on the induced stalk map $\psi_{s}: \F_{\ul{\iota}(s)} \rightarrow \G_{s}$. Let $[f]_{\ul{\iota}(s)} \in \F_{\ul{\iota}(s)}$ be represented by $f \in \F(U)$ for some $U \in \Op_{\ul{\iota}(s)}(M)$. On the other hand, $[f]_{\ul{\iota}(s)}$ is a germ of the section $[f]_{\ul{\iota}^{-1}(U)} \in (\ul{\iota}_{P}^{-1} \F)(\ul{\iota}^{-1}(U))$. Hence
\begin{equation}
\psi_{s}([f]_{\ul{\iota}(s)}) = [\psi_{\ul{\iota}^{-1}(U)}( [f]_{\ul{\iota}^{-1}(U)})]_{s}.
\end{equation}
Next, $\mu_{\F,\G}(\psi): \F \rightarrow \ul{\iota}_{\ast} \G$ is a sheaf morphism, so $\iota := (\ul{\iota}, \mu_{\F,\G}(\psi))$ is a morphism of graded locally ringed spaces $(S,\G)$ and $(M,\F)$, if it has the property (iii) of Definition \ref{def_gLRS}. In other words, $\iota_{(s)}: \F_{\ul{\iota}(s)} \rightarrow \G_{s}$ defined by (\ref{eq_vartphixmap2}) is a local graded ring morphism. Explicitly, one has 
\begin{equation}
\iota_{(s)}( [f]_{\ul{\iota}(s)}) = [ (\mu_{\F,\G}(\psi))_{U}(f)]_{s} = [\psi_{\ul{\iota}^{-1}(U)}( [f]_{\ul{\iota}^{-1}(U)})]_{s},
\end{equation}
for all $[f]_{\ul{\iota}(s)} \in \F_{\ul{\iota}(s)}$ represented $f \in \F(U)$ for some $U \in \Op_{\ul(\iota(s)}(M)$. But we have just proved that $\psi_{s} = \iota_{(s)}$ and the statement follows immediately. 
\end{proof}
We can now apply this lemma to $\iota = (\ul{\iota}, \iota^{\ast})$, where $\iota^{\ast}: \C^{\infty}_{\M} \rightarrow \ul{\iota}_{\ast} \C^{\infty}_{\cS}$ is defined by (\ref{eq_iotaast}). For each $s \in S$, with respect to the identification (\ref{eq_pullbackstalk}), the induced stalk map $(\ul{\iota}^{-1}(\natural))_{s}$ corresponds to the quotient map $\natural_{s}: \C^{\infty}_{\M,s} \rightarrow \C^{\infty}_{\M,s} / \I_{s}$. By Lemma \ref{lem_quotientringislocal}, this is a morphism of local graded rings. Consequently, $\iota = (\ul{\iota}, \iota^{\ast}) = (\ul{\iota}, \mu_{\C^{\infty}_{\M},\C^{\infty}_{\cS}}( \ul{\iota}^{-1}(\natural)))$ is a morphism of graded locally ringed spaces by Lemma \ref{lem_adjunctarelocal}. 

\item \textbf{The proof of Lemma \ref{lem_factorthroughI}}: Let $\cN = (N, \O_{N})$ be any graded locally ringed space. Suppose $\chi_{0}: \cN \rightarrow \M|_{U}$ is a morphism of graded locally ringed spaces, such that $\ul{\chi_{0}}(N) \subseteq U \cap S$ and $\chi_{0}^{\ast}(\I|_{U}) = 0$. We look for $\chi: \cN \rightarrow \cS|_{U \cap S}$, such that $\iota|_{U \cap S} \circ \chi = \chi_{0}$. 

Without the loss of generality assume that $U = M$, so we do not have to explicitly write the restrictions. As $S$ is an embedded submanifold of $M$, the assumption $\ul{\chi_{0}}(N) \subseteq S$ ensures that there is a continuous map $\ul{\chi}: N \rightarrow S$, such that $\ul{\iota} \circ \ul{\chi} = \ul{\chi_{0}}$. The second assumption ensures that there is a unique sheaf morphism $\hat{\chi}_{0}^{\ast}: \C^{\infty}_{\M} / \I \rightarrow \ul{\chi_{0}}_{\ast} \C^{\infty}_{\cN}$ making the diagram
\begin{equation}
\begin{tikzcd}
\C^{\infty}_{\M} \arrow{d}{\natural} \arrow{r}{\chi_{0}^{\ast}}  & \ul{\chi_{0}}_{\ast} \C^{\infty}_{\cN} \\
\C^{\infty}_{\M} / \I \arrow[dashed]{ur}[swap]{\hat{\chi}_{0}^{\ast}} & 
\end{tikzcd}
\end{equation}
commutative. Now, suppose that such $\chi$ exists. It thus has to satisfy 
\begin{equation}
\begin{split}
\chi_{0}^{\ast} = & \ \ul{\iota}_{\ast}( \chi^{\ast}) \circ \iota^{\ast} = \ul{\iota}_{\ast}(\chi^{\ast}) \circ \mu_{\C^{\infty}_{\M}, \C^{\infty}_{\cS}}( \ul{\iota}^{-1}(\natural)) = \mu_{\C^{\infty}_{\M}, \ul{\chi}_{\ast} \C^{\infty}_{\cN}}( \chi^{\ast} \circ \ul{\iota}^{-1}(\natural)) \\
= & \ \mu_{\C^{\infty}_{\M}/\I, \ul{\chi}_{\ast} \C^{\infty}_{\cN}}( \chi^{\ast}) \circ \natural,
\end{split}
\end{equation}
where we have used the naturality of the adjunction twice. This proves that necessarily 
\begin{equation}
\mu_{ \C^{\infty}_{\M}/\I, \ul{\chi}_{\ast} \C^{\infty}_{\cN}}( \chi^{\ast}) = \hat{\chi}_{0}^{\ast}. 
\end{equation}
This shows that if $\chi$ exists, it is unique. Moreover, we may use this equation to \textit{define} $\chi^{\ast}$. Note that this makes sense, as we already have the continuous map $\ul{\chi}: N \rightarrow S$ and $\hat{\chi}_{0}^{\ast}: \C^{\infty}_{\M} / \I \rightarrow \ul{\chi_{0}}_{\ast} \C^{\infty}_{\cN} = \ul{\iota}_{\ast}( \ul{\chi}_{\ast} \C^{\infty}_{\cN})$. To finish the proof, we only have to argue that $\chi$ is a morphism of graded locally ringed spaces. For each $n \in N$, we have the canonical identification \begin{equation}
\C^{\infty}_{\cS,\ul{\chi}(n)} \cong \C^{\infty}_{\M,\ul{\chi_{0}}(n)} / \I_{\ul{\chi_{0}}(n)}.
\end{equation}
After some effort, one can show that with respect to this identification, the induced map $\chi_{(n)}: \C^{\infty}_{\cS,\ul{\chi}(n)} \rightarrow \C^{\infty}_{\cN,n}$ looks as follows. For each $U \in \Op_{\ul{\chi_{0}}(n)}(M)$ and $f \in \C^{\infty}_{\M}(U)$, one has 
\begin{equation}
\chi_{(n)}( \natural_{\ul{\chi_{0}}(n)}([f]_{\ul{\chi_{0}}(n)})) = (\chi_{0})_{(n)}( [f]_{\ul{\chi_{0}}(n)}). 
\end{equation}
Since $\frJ(\C^{\infty}_{\cS, \ul{\chi}(n)}) \cong \frJ( \C^{\infty}_{\M,\ul{\chi_{0}}(n)}) / \I_{\ul{\chi_{0}}(n)}$, we see that $\chi_{(n)}$ is a morphism of local graded rings due to the assumption that $\chi_{0}$ is a morphism of graded locally ringed spaces. 
\end{enumerate}
The proof is now finished. 
\end{proof}
\begin{theorem} \label{thm_ap_inverseimage}
This is the proof of some technical details of the proof of Theorem \ref{thm_inverseimage}. 
\end{theorem}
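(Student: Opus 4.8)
The single point left open in the proof of Theorem \ref{thm_inverseimage} is the identification (\ref{eq_stalkinverseideal}) of the stalks of $\I' = \im(\phi^{\ast}(j))$: for every $n \in N$ one must show that
\begin{equation}
\I'_{n} = \< \phi_{(n)}( (\J_{\cS})_{\ul{\phi}(n)}) \> \subseteq \C^{\infty}_{\cN,n},
\end{equation}
where $\phi_{(n)}: \C^{\infty}_{\M,\ul{\phi}(n)} \to \C^{\infty}_{\cN,n}$ is the graded algebra morphism of Definition \ref{def_gLRS}-(iii) and the right-hand side is the ideal produced by Proposition \ref{tvrz_idealgenerated}. The strategy is to reduce the whole claim to the computation of a single stalk map, exploiting that every construction entering the definition of $\I'$ — sheafification, the inverse image presheaf, the tensor product of presheaves of graded modules, and the passage to kernel/image — commutes with taking stalks.

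First I would note that, since $\I' \cong \phi^{\ast}\J_{\cS}/\ker(\phi^{\ast}(j))$ and kernels and quotients of sheaves of graded $\C^{\infty}_{\cN}$-modules are computed sectionwise (exactness of filtered colimits, together with the analogue of Proposition \ref{eq_quotientpresheafissheaf}), one has, compatibly with the embedding $\I' \hookrightarrow \C^{\infty}_{\cN}$,
\begin{equation}
\I'_{n} \cong (\phi^{\ast}\J_{\cS})_{n}/\ker\big((\phi^{\ast}(j))_{n}\big) \cong \im\big((\phi^{\ast}(j))_{n}\big),
\end{equation}
see also (\ref{eq_stalkofquotient}); so it suffices to make the source and the map explicit. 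For the source I would establish — by interchanging the filtered colimit over $\Op_{n}(N)$ with the tensor product over $\ul{\phi}^{-1}_{P}\C^{\infty}_{\M}$ and with the colimit defining the latter, and then using that sheafification preserves stalks (Proposition \ref{tvrz_sheafification}-(ii)) — that for any $\F \in \Sh^{\C^{\infty}_{\M}}(M,\gVect)$ there is a canonical identification $(\phi^{\ast}\F)_{n} \cong \C^{\infty}_{\cN,n} \otimes_{\C^{\infty}_{\M,\ul{\phi}(n)}} \F_{\ul{\phi}(n)}$ extending (\ref{eq_pullbackstalk}), with $\C^{\infty}_{\cN,n}$ carrying the $\C^{\infty}_{\M,\ul{\phi}(n)}$-module structure through $\phi_{(n)}$. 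Applying this to $\F = \J_{\cS}$, and viewing $(\J_{\cS})_{\ul{\phi}(n)}$ as an ideal of $\C^{\infty}_{\M,\ul{\phi}(n)}$ via Lemma \ref{lem_regularideals}-\textit{1)}, identifies the source.

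With these identifications the map $(\phi^{\ast}(j))_{n}$ can be read off the explicit formula (\ref{eq_phiPjmap}) for $\phi^{\ast}_{P}(j)$: a germ of a generator $g \otimes [f]_{V}$ goes to $[g]_{n} \otimes [f]_{\ul{\phi}(n)}$ on one side and to $[\,g \cdot \phi^{\ast}_{U}(f)|_{V}\,]_{n} = [g]_{n} \cdot \phi_{(n)}([f]_{\ul{\phi}(n)})$ on the other, the last equality by the definition (\ref{eq_vartphixmap2}) of $\phi_{(n)}$. Hence $\im((\phi^{\ast}(j))_{n})$ is exactly the set of finite sums $\sum_{i} [g_{i}]_{n}\cdot\phi_{(n)}([f_{i}]_{\ul{\phi}(n)})$ with $[g_{i}]_{n} \in \C^{\infty}_{\cN,n}$ and $[f_{i}]_{\ul{\phi}(n)} \in (\J_{\cS})_{\ul{\phi}(n)}$. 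Since $\C^{\infty}_{\cN,n}$ is graded commutative, Proposition \ref{tvrz_idealgenerated} gives that $\< \phi_{(n)}((\J_{\cS})_{\ul{\phi}(n)}) \>$ is spanned in each degree by elements $a\cdot\phi_{(n)}(s)\cdot b = (-1)^{|s||b|}(a\cdot b)\cdot\phi_{(n)}(s)$, i.e.\ precisely by the $c\cdot\phi_{(n)}(s)$; this is the same set, and (\ref{eq_stalkinverseideal}) follows.

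The main obstacle, and the only part that is not immediate bookkeeping, is the stalk formula $(\phi^{\ast}\F)_{n} \cong \C^{\infty}_{\cN,n} \otimes_{\C^{\infty}_{\M,\ul{\phi}(n)}} \F_{\ul{\phi}(n)}$: one has to verify carefully that the filtered colimit over $\Op_{n}(N)^{\op}$ commutes both with the tensor product and with the colimit in the definition of $\ul{\phi}^{-1}_{P}$, and in particular that the resulting $\C^{\infty}_{\M,\ul{\phi}(n)}$-action on $\C^{\infty}_{\cN,n}$ comes out to be exactly the one via $\phi_{(n)}$, after which Proposition \ref{tvrz_sheafification}-(ii) lets one pass from the presheaf pullback to $\phi^{\ast}\F$. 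All of this is standard given the filteredness of $\Op_{n}(N)^{\op}$, but it is where the real content of the argument sits.
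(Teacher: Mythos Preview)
Your proposal is correct and follows essentially the same route as the paper: identify the stalk of the pullback (pre)sheaf as $\C^{\infty}_{\cN,n} \otimes_{\C^{\infty}_{\M,\ul{\phi}(n)}} (\J_{\cS})_{\ul{\phi}(n)}$, read off the induced stalk map from (\ref{eq_phiPjmap}) as $[g]_{n}\otimes[f]_{\ul{\phi}(n)}\mapsto [g]_{n}\cdot\phi_{(n)}([f]_{\ul{\phi}(n)})$, and conclude that its image is the ideal generated by $\phi_{(n)}((\J_{\cS})_{\ul{\phi}(n)})$. The only cosmetic difference is that the paper builds the universal colimiting cone for $(\phi^{\ast}_{P}\J_{\cS})_{n}$ by hand and then passes to $\phi^{\ast}$ via $\im((\phi^{\ast}(j))_{n})=\im((\phi^{\ast}_{P}(j))_{n})$, whereas you invoke commutation of filtered colimits with $\otimes$ and Proposition \ref{tvrz_sheafification}-(ii) more abstractly.
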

\begin{proof}
We have to prove the equation (\ref{eq_stalkinverseideal}). To do so, let us first examine stalks of the pullback presheaf $\phi^{\ast}_{P} \J_{\cS}$. We claim that for each $n \in N$, one has 
\begin{equation} \label{eq_stalkpullbacksheafofJS}
(\phi^{\ast}_{P} \J_{\cS})_{n} \cong \C^{\infty}_{\cN,n} \otimes_{\C^{\infty}_{\M, \ul{\phi}(n)}} (\J_{\cS})_{\ul{\phi}(n)},
\end{equation}
where the action of $\C^{\infty}_{\M,\ul{\phi}(n)}$ on $\C^{\infty}_{\cN,n}$ is induced by the graded algebra morphism $\phi_{(n)}$. To prove this, one simply constructs a universal colimiting cone $\{ \pi^{\otimes}_{V,n} \}_{V \in \Op_{n}(N)}$, where 
\begin{equation}
\pi_{V,n}^{\otimes}: (\phi^{\ast}_{P} \J_{\cS})(V) \rightarrow \C^{\infty}_{\cN,n} \otimes_{\C^{\infty}_{\M, \ul{\phi}(n)}} (\J_{\cS})_{\ul{\phi}(n)},
\end{equation}
for each $V \in \Op_{n}(N)$. Let $g \in \C^{\infty}_{\cN}(V)$ and let $[f]_{V} \in (\ul{\phi}_{P}^{-1} \J_{\cS})(V)$ be represented by $f \in \J_{\cS}(U)$ for some $U \supseteq \ul{\phi}(V)$. Note that $U \in \Op_{\ul{\phi}(n)}(M)$ and one can define 
\begin{equation}
\pi^{\otimes}_{V,n}(g \otimes [f]_{V}) := [g]_{n} \otimes [f]_{\ul{\phi}(n)}.
\end{equation}
It is easy to see that this is well-defined and defines a universal colimiting cone, see the proof of Proposition \ref{tvrz_stalk}. With respect to the identification, we may examine the induced map $(\phi^{\ast}_{P}(j))_{n}$ of stalks. Using (\ref{eq_phiPjmap}), one finds
\begin{equation}
(\phi^{\ast}_{P}(j))_{n}( [g]_{n} \otimes [f]_{\ul{\phi}(n)}) = [ (\phi^{\ast}_{P}(j))_{V}( g \otimes [f]_{V}) ]_{n} = [ g \cdot \phi^{\ast}_{U}(f)|_{V}]_{n} = [g]_{n} \cdot \phi_{(n)}( [f]_{\ul{\phi}(n)}), 
\end{equation}
for all $[g]_{n} \in \C^{\infty}_{\cN,n}$ and $[f]_{\ul{\phi}(n)} \in (\J_{\cS})_{\ul{\phi}(n)}$, represented by $g \in \C^{\infty}_{\cN}(V)$ for some $V \in \Op_{n}(N)$ and $f \in C^{\infty}_{\M}(U)$ for some $U \supseteq \ul{\phi}(V)$. But this proves that 
\begin{equation}
\I'_{n} = \im( \phi^{\ast}(j))_{n} = \im( (\phi^{\ast}(j))_{n}) = \im( (\phi^{\ast}_{P}(j))_{n}) = \< \phi_{(n)}( (\J_{\cS})_{\ul{\phi}(n)}) \>. 
\end{equation}
This is precisely the equation (\ref{eq_stalkinverseideal}) we were to prove. 
\end{proof}

\begin{theorem} \label{thm_ap_inverseimagegen}
This is the proof of Theorem \ref{thm_inverseimagegen}. 
\end{theorem}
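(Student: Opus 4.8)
The strategy is to reduce the general case to the already-established closed case, Theorem~\ref{thm_inverseimage}, by passing to a well-chosen open submanifold of $\cN$. First I would invoke Proposition~\ref{tvrz_embeddinghassurjectivepullback}: since $(\cS,\iota)$ is an embedded submanifold of $\M$, there is $Y \in \Op(M)$ with $\ul{\iota}(S) \subseteq Y$ such that $\iota^{\ast}_{Y}$ is surjective, and then $(\cS,\iota)$ is a \emph{closed} embedded submanifold of $\M|_{Y}$. Put $Y' := \ul{\phi}^{-1}(Y) \in \Op(N)$; then $\ul{\phi}^{-1}(S) \subseteq Y'$, and the restriction $\phi|_{Y'} \colon \cN|_{Y'} \rightarrow \M|_{Y}$ is still transversal to $(\cS,\iota)$, because transversality is a condition on differentials at points of $\ul{\iota}(S) \subseteq Y$, and stalks, tangent spaces and differentials are unchanged by restriction to open submanifolds (Proposition~\ref{tvrz_restrictedgLRS} and the definition of $T_{m}\phi$). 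Applying Theorem~\ref{thm_inverseimage} to $\phi|_{Y'}$ and the closed embedded submanifold $(\cS,\iota)$ of $\M|_{Y}$ yields a closed embedded submanifold $\big((\phi|_{Y'})^{-1}(\cS),\iota''\big)$ of $\cN|_{Y'}$ over the subset $\ul{\phi|_{Y'}}^{-1}(S) = \ul{\phi}^{-1}(S)$, a graded smooth map $\phi'' \colon (\phi|_{Y'})^{-1}(\cS) \rightarrow \cS$ making the analogue of (\ref{eq_inverseimagediagrampullback}) a pullback square in $\gMan^{\infty}$, the tangent space formula (\ref{eq_inversetangent}), and the graded dimension count $\gdim = (s_{j} + m_{j} - n_{j})_{j \in \Z}$.

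Next I would transport this datum along the canonical open inclusion $j \colon \cN|_{Y'} \rightarrow \cN$. Set $\phi^{-1}(\cS) := (\phi|_{Y'})^{-1}(\cS)$ as a graded manifold, $\iota' := j \circ \iota''$, and $\phi' := \phi''$. Since $j$ is an immersion with $\ul{j}$ an open embedding and $\iota''$ is an embedding, $(\phi^{-1}(\cS),\iota')$ is an embedded submanifold of $\cN$; its underlying space $\ul{\phi}^{-1}(S)$ is closed in $Y'$ but in general not closed in $N$, which is precisely why one can only conclude that $(\phi^{-1}(\cS),\iota')$ is embedded and not closed embedded. Commutativity $\iota \circ \phi' = \phi \circ \iota'$ follows from the corresponding identity in $\cN|_{Y'}$ together with the fact that $\phi \circ j$ equals the restriction of $\phi$ to $\cN|_{Y'}$ followed by the inclusion $\M|_{Y} \hookrightarrow \M$. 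The tangent space identity (\ref{eq_inversetangent}) and the graded dimension formula are then inherited verbatim from Theorem~\ref{thm_inverseimage}, since nothing relevant changes under restriction to open submanifolds.

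The one genuinely load-bearing step is the universal property, and this is where the independence of the choice of $Y$ must be made to work. Given a graded manifold $\Q$ with $\mu \colon \Q \rightarrow \cN$, $\chi \colon \Q \rightarrow \cS$ satisfying $\phi \circ \mu = \iota \circ \chi$, one has on underlying spaces $\ul{\phi}(\ul{\mu}(Q)) = \ul{\iota}(\ul{\chi}(Q)) \subseteq \ul{\iota}(S) \subseteq Y$, hence $\ul{\mu}(Q) \subseteq Y'$; by the restriction property of $\gLRS$-morphisms along open subsets of the target, $\mu$ factors uniquely as $j \circ \mu'$ with $\mu' \colon \Q \rightarrow \cN|_{Y'}$. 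Then $\phi|_{Y'} \circ \mu' = \iota \circ \chi$, so the universal property of $\big((\phi|_{Y'})^{-1}(\cS),\iota''\big)$ from Theorem~\ref{thm_inverseimage} produces a unique $\varphi \colon \Q \rightarrow \phi^{-1}(\cS)$ with $\iota'' \circ \varphi = \mu'$ and $\phi'' \circ \varphi = \chi$; composing with $j$ gives $\iota' \circ \varphi = \mu$. For uniqueness of such a $\varphi$ viewed as a map into $\cN$, note that $\mu = \iota' \circ \varphi$ automatically satisfies $\ul{\mu}(Q) \subseteq \ul{\phi}^{-1}(S)$ and $\mu^{\ast}(\J_{\phi^{-1}(\cS)}) = 0$ (since $\J_{\phi^{-1}(\cS)} = \ker(\iota'^{\ast})$), so Proposition~\ref{tvrz_weakembedd} applied to the embedded submanifold $(\phi^{-1}(\cS),\iota')$ forces $\varphi$ to be unique. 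This establishes the pullback property, hence by Lemma~\ref{lem_equivalentsubmafolds} uniqueness up to equivalence, and since the assumed transversality guarantees equality (not merely inclusion) in (\ref{eq_inversetangent}), all claims of Theorem~\ref{thm_inverseimage} carry over. I expect the only real work to be the bookkeeping around open submanifolds just described, rather than any new conceptual difficulty.
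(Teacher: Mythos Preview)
Your argument is correct and takes a genuinely different route from the paper. The paper's proof works purely locally: it first shows that an embedded submanifold is, near each of its points, a regular level set of a local submersion (Lemma~\ref{lem_embeddedislocallylevel}); it then constructs, for each $n \in N'$, a local piece $\cN'_{(n)} = (\psi_{(n)} \circ \phi)^{-1}(0)$ as a regular level set, checks that these pieces agree on overlaps, collates them via Proposition~\ref{tvrz_gmcollation1}, and reassembles the embedding $\iota'$, the map $\phi'$, the universal property, and the tangent-space formula by hand from the local data. Your approach sidesteps all of this: you observe that Proposition~\ref{tvrz_embeddinghassurjectivepullback} lets you pass to an open $Y \subseteq M$ in which $(\cS,\iota)$ becomes \emph{closed} embedded, invoke Theorem~\ref{thm_inverseimage} once, and transport the resulting closed embedded submanifold of $\cN|_{Y'}$ back along the open inclusion $j \colon \cN|_{Y'} \rightarrow \cN$. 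This is substantially shorter and more conceptual; the factorization of $\mu$ through $\cN|_{Y'}$ and the uniqueness via Proposition~\ref{tvrz_weakembedd} are exactly the right bookkeeping. The paper's approach has the minor advantage of producing explicit adapted local charts for $\phi^{-1}(\cS)$ along the way and never relies on the existence of a single global open $Y$, but for the purpose of proving the theorem your reduction is a legitimate and cleaner alternative.
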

\begin{proof}
Let $(\cS,\iota)$ be a given embedded submanifold of $\M$. Let $\phi: \cN \rightarrow \M$ be a graded smooth map transversal to $\cS$. We divide the proof into several parts. To somewhat simplify the notation, we shall write $N' := \ul{\phi}^{-1}(S)$ and $\cN' := \phi^{-1}(\cS)$. 
\begin{enumerate}[a)]
\item \textbf{Embedded submanifolds as level sets}: We start by proving the following lemma.
\begin{lemma} \label{lem_embeddedislocallylevel}
Let $(\cS, \iota)$ be an embedded submanifold. Let $(s_{j})_{j \in \Z} := \gdim(\cS)$.

Then for any $s \in S$, there exists $U \in \Op_{s}(M)$ and a graded smooth submersion $\psi: \M|_{U} \rightarrow \hat{W}^{(n_{j} - s_{j})}$ over a surjective map $\ul{\psi}: U \rightarrow \hat{W} \subseteq \R^{n_{0} - s_{0}}$, such that $(\cS|_{U \cap S}, \iota|_{U \cap S})$ is a closed embedded submanifold of $\M|_{U}$ equivalent to the regular level set submanifold $(\psi^{-1}(0), \iota')$.
\end{lemma}
\begin{proof}
Let $s \in S$ be fixed. As in the proof of Proposition \ref{tvrz_JSprops}-$(ii)$, we may find a graded local chart $(U,\varphi)$ inducing for each $j \in \Z$ the coordinate functions $(\bbz^{A_{j}}_{(j)})_{A_{j}=1}^{n_{j}}$, such that 
\begin{enumerate}[(i)]
\item $\ul{\varphi}(U) = \hat{V} \times \hat{W}$ for open cubes $\hat{V} \subseteq \R^{s_{0}}$ and $\hat{W} \subseteq \R^{n_{0}-s_{0}}$, and $\ul{\varphi}(s) = (0,0)$;
\item $\ul{\varphi}$ is an $s_{0}$-slice chart for $S$, that is $\ul{\varphi}(U \cap S) = \ul{\varphi}(U) \cap (\R^{s_{0}} \times \{0\})$;
\item the graded set $\{ \{ \bbz^{A_{j}}_{(j)}|_{V} \}_{A_{j}=s_{j}+1}^{n_{j}} \}_{j \in \Z}$ generates $\J_{\cS}(V)$ for all $V \in \Op(U)$. 
\end{enumerate}
First, $U \cap S$ is homeomorphic to a $s_{0}$-slice, hence a closed subset of $U$. Consequently, the restriction  $(\cS|_{U \cap S}, \iota|_{U \cap S})$ is a closed embedded submanifold of $\M|_{U}$. 

Next, consider the graded domain $\hat{W}^{(n_{j}-s_{j})}$ with standard coordinate functions $(\bby^{K_{j}}_{(j)})_{K_{j}=1}^{n_{j} - s_{j}}$ and define $\psi: \M|_{U} \rightarrow \hat{W}^{(n_{j}-s_{j})}$ by declaring 
\begin{equation}
\psi^{\ast}_{\hat{W}}( \bby^{K_{j}}_{(j)}) := \bbz^{K_{j}+s_{j}}_{(j)},
\end{equation}
for all $j \in \Z$ and $K_{j} \in \{1,\dots,n_{j}-s_{j}\}$. By Proposition \ref{thm_globaldomain}, this uniquely determines a graded smooth map $\psi$, and for all $m \in U$, one has 
\begin{equation} \ul{\psi}(m) = ( \bbz^{s_{0}+1}_{(0)}(m), \dots, \bbz^{n_{0}}_{(0)}(m)). \end{equation}
Clearly $\psi$ is a submersion, see Theorem \ref{thm_submersion}. Moreover, $\ul{\psi}$ is surjective, as it is just a bijection $\ul{\varphi}: U \rightarrow \hat{V} \times \hat{W}$ composed by the projection $\hat{V} \times \hat{W} \rightarrow \hat{W}$. 

It remains to prove that $(\cS|_{U \cap S}, \iota|_{U \cap S})$ is equivalent to the regular level set submanifold $(\psi^{-1}(0), \iota')$. By Lemma \ref{lem_equivalentsubmafolds}, it suffices to prove that $\J_{\cS|_{U \cap S}} = \I'$, where $\I' \subseteq \C^{\infty}_{\M|_{U}}$ is the sheaf of regular ideals corresponding to $(\psi^{-1}( 0), \iota')$. Clearly $\J_{\cS|_{U \cap S}} = \J_{\cS}|_{U}$. 

Recall that $\I' = \im( \psi^{\ast}(j))$, where $j: \J^{0}_{(n_{j}-s_{j})} \rightarrow \C^{\infty}_{(n_{j}-s_{j})}$ is the inclusion of the sheaf of functions vanishing at $0 \in \hat{W}$. Observe that the finite graded set $\G = \{ \{ \bby^{K_{j}}_{(j)} \}_{K_{j}=1}^{n_{j} - s_{j}} \}_{j \in \Z}$ has the property that $\G|_{\hat{X}}$ generates $\J^{0}_{(n_{j} - s_{j})}( \hat{X})$ for all $\hat{X} \in \Op(\hat{W})$. 

As noted in the part (b) of the proof of Theorem \ref{thm_inverseimage}, this implies that the finite graded set 
\begin{equation} \G'|_{V} = \{ \{ \psi^{\ast}_{\hat{W}}( \bby^{K_{j}}_{(j)})|_{V} \}_{K_{j}=1}^{n_{j}-s_{j}} \}_{j \in \Z} = \{ \{ \bbz^{A_{j}}_{(j)}|_{V} \}_{A_{j}=s_{j}+1}^{n_{j}} \}_{j \in \Z}
\end{equation}
 generates the ideal $\I'(V)$ for any $V \in \Op(U)$. But by the property (iii) of the graded local chart $(U,\varphi)$ we have started with, this proves that $\J_{\cS}(V) = \I'(V)$ for all $V \in \Op(U)$. Hence $\J_{\cS}|_{U} = \I'$ and the proof of the lemma is finished. 
\end{proof}
\item \textbf{Constructing the pieces}: Now, let $\phi: \cN \rightarrow \M$ be a graded smooth map transversal to an embedded submanifold $(\cS,\iota)$. Let $(n_{j})_{j \in \Z} := \gdim(\M)$, $(m_{j})_{j \in \Z} := \gdim(\cN)$ and $(s_{j})_{j \in \Z} := \gdim(\cS)$. For each $n \in N'$, we get $U_{(n)} \in \Op_{\ul{\phi}(n)}(M)$ and a graded smooth submersion 
\begin{equation}
\psi_{(n)}: \M|_{U_{(n)}} \rightarrow \hat{W}_{(n)}^{(n_{j}-s_{j})},
\end{equation}
having the properties as in Lemma \ref{lem_embeddedislocallylevel}. Let $V_{(n)} := \ul{\phi}^{-1}(U_{(n)}) \in \Op_{n}(N)$ and consider a graded smooth map $\psi'_{(n)} := \psi_{(n)} \circ \phi|_{V_{(n)}}: \cN|_{V_{(n)}} \rightarrow \hat{W}^{(n_{j}-s_{j})}_{(n)}$. 

\begin{lemma}
$0 \in \hat{W}^{(n_{j}-s_{j})}_{(n)}$ is a regular value of $\psi'_{(n)}$.
\end{lemma}
\begin{proof}
First, observe that $\ul{ \psi'_{(n)}}^{-1}(0) = V_{(n)} \cap N'$. Let $v \in T_{0} \hat{W}^{(n_{j}-s_{j})}$. Let $n' \in V_{(n)} \cap N'$. As $\phi \pitchfork \cS$, we have $T_{\ul{\phi}(n')} \M = (T_{n'}\phi)( T_{n'}\cN) + T_{\ul{\phi}(n')} \cS$. Since $0$ is a regular value of $\psi_{(n)}$ and $\ul{\psi_{(n)}}^{-1}(0) = U_{(n)} \cap S$, for any $s \in U_{(n)} \cap S$, there exists $x \in T_{s} \M$, such that $v = (T_{s} \psi_{(n)})(x)$. In particular, this is true for $s = \ul{\phi}(n')$. We can decompose $x$ as 
\begin{equation}
x = (T_{n'}\phi)(y) + z, 
\end{equation}
where $y \in T_{n'}\cN$ and $z \in T_{\ul{\phi}(n')} \cS$. But $\cS|_{U_{(n)} \cap S}$ is equivalent to the regular level set submanifold $\psi_{(n)}^{-1}(0)$. In particular, it must have the same tangent space at each its point. This follows from Lemma \ref{lem_equivalentsubmafolds} and Proposition \ref{tvrz_tangenttosubmafold}. By Proposition \ref{tvrz_levelset}, we thus have $T_{\ul{\phi}(n')} \cS = \ker( T_{\ul{\phi}(n')} \psi_{(n)})$. It follows that we can write $v$ as 
\begin{equation}
v = (T_{\ul{\phi}(n')} \psi_{(n)})(x) = (T_{\ul{\phi}(n')} \psi_{(n)})((T_{n'} \phi)(y) + z) = (T_{n'} \psi'_{(n)})(y). 
\end{equation}
This shows that $\psi'_{(n)}$ indeed is a submersion for all $n' \in \ul{\psi'_{(n)}}^{-1}(0)$. 
\end{proof} 

Consequently, for each $n \in N'$, we may define a closed embedded submanifold $(\cN'_{(n)}, \iota'_{(n)})$ of $\cN|_{V_{(n)}}$ as $\cN'_{(n)} := \psi'^{-1}_{(n)}(0)$. Its underlying submanifold is $N'_{(n)} := \ul{\psi'_{(n)}}^{-1}(0)  = V_{(n)} \cap N'$. Let $\J'_{(n)} \subseteq \C^{\infty}_{\cN}|_{V_{(n)}}$ denote its corresponding sheaf of regular ideals. 
\item \textbf{Collating the pieces}: Observe that $N' \subseteq N$ is a second countable Hausdorff topological space, and $\{ N'_{(n)} \}_{n \in N'}$ is its open cover. We have constructed a collection $\{ \cN'_{(n)} \}_{n \in N'}$ of graded manifolds of the same graded dimension $\gdim(\cN'_{(n)}) = (s_{j} + m_{j} - n_{j})_{j \in \Z}$, see Proposition \ref{tvrz_levelset}. Write $N'_{(nn')} := N'_{(n)} \cap N'_{(n')}$, $V_{(nn')} := V_{(n)} \cap V_{(n')}$ and $U_{(nn')} := U_{(n)} \cap U_{(n')}$. 

We will now construct a collection of graded diffeomorphisms $\{ \phi'_{(nn')} \}_{(n,n') \in N'^{2}}$, where 
\begin{equation} \label{eq_phinnprime}
\phi'_{(nn')}: \cN'_{(n')}|_{N'_{(nn')}} \rightarrow \cN'_{(n)}|_{N'_{(nn')}},
\end{equation}
such that $\ul{\phi'_{(nn')}} = \1_{N'_{(nn')}}$, and for all $(n,n',n'') \in N'^{3}$, there holds the cocycle condition 
\begin{equation} \label{eq_phinnprimecocycle}
\phi'_{(nn'')} = \phi'_{(nn')} \circ \phi'_{(n' n'')},
\end{equation}
where all maps are assumed to be restricted to the open subset $N'_{(nn'n'')} := N'_{(n)} \cap N'_{(n')} \cap N'_{(n'')}$. The graded manifold $\cN'$ will be then constructed using Proposition \ref{tvrz_gmcollation1}. 

For any $(n,n') \in N'^{2}$, we have a pair of closed embedded submanifolds $(\cN'_{(n')}|_{N'_{(nn')}}, \iota'_{(n')}|_{N'_{(nn')}})$ and $(\cN'_{(n)}|_{N'_{(nn')}}, \iota'_{(n)}|_{N'_{(nn')}})$ of $\cN|_{V_{(nn')}}$. We claim that they are equivalent, that is there is a unique graded smooth diffeomorphism (\ref{eq_phinnprime}), such that 
\begin{equation} \label{eq_phinnprimeequation}
\iota'_{(n)}|_{N'_{(nn')}} \circ \phi'_{(nn')} = \iota'_{(n')}|_{N'_{(nn')}}. 
\end{equation}
Thanks to Lemma \ref{lem_equivalentsubmafolds}, it suffices to prove the following statement:

\begin{lemma} For each $(n,n') \in N'^{2}$, one has $\J'_{(n)}|_{V_{(nn')}} = \J'_{(n')}|_{V_{(nn')}}$. \end{lemma}
\begin{proof}
For each $j \in \Z$, let $\{ \bby^{K_{j}}_{(j)} \}_{K_{j}=1}^{n_{j} - s_{j}}$ be the standard coordinate functions in $\hat{W}^{n_{j} - s_{j}}_{(n)}$ of degree $j$. As noted in the part (a) of this proof, this implies that for any $W \in \Op(V_{(nn')})$, the ideal  $\J'_{(n)}(W)$ is generated by a finite graded set $\{ \{ (\psi'_{(n)})^{\ast}_{\hat{W}_{(n)}}( \bby^{K_{j}}_{(j)})|_{W} \}_{K_{j}=1}^{n_{j}-s_{j}} \}_{j \in \Z}$. 

On the other hand, for any function $f \in \J^{0}_{(n_{j}-s_{j})}(\hat{W}_{(n)})$, one has 
\begin{equation}
(\psi'_{(n)})^{\ast}_{\hat{W}_{(n)}}(f)|_{W} = \phi^{\ast}_{U_{(n)}}( (\psi_{(n)})^{\ast}_{\hat{W}_{(n)}}(f))|_{W} = \phi^{\ast}_{U_{(nn')}}( (\psi_{(n)})^{\ast}_{\hat{W}_{(n)}}(f)|_{U_{(nn')}})|_{W}.
\end{equation}
It follows from the construction that $(\psi_{(n)})^{\ast}_{\hat{W}_{(n)}}(f)|_{U_{(nn')}} \in \J_{\cS}(U_{(nn')})$. But this ideal is also generated by the finite graded set $\{ \{ (\psi_{(n')})^{\ast}_{\hat{W}_{(n')}}( \bby^{K_{j}}_{(j)})|_{U_{(nn')}} \}_{K_{j}=1}^{n_{j}-s_{j}} \}_{j \in \Z}$. Whence
\begin{equation}
(\psi_{(n)})^{\ast}_{\hat{W}_{(n)}}(f)|_{U_{(nn')}} = \sum_{j \in \Z} \sum_{L_{j}=1}^{n_{j}-s_{j}} h^{L_{j}}_{(j)} \cdot (\psi_{(n')})^{\ast}_{\hat{W}_{(n')}}( \bby^{L_{j}}_{(j)})|_{U_{(nn')}},
\end{equation}
for some functions $h^{L_{j}}_{(j)} \in \C^{\infty}_{\M}( U_{(nn')})$. Consequently, one has 
\begin{equation}
(\psi'_{(n)})^{\ast}_{\hat{W}_{(n)}}(f)|_{W} = \sum_{j \in \Z} \sum_{L_{j}=1}^{n_{j}-s_{j}} \phi^{\ast}_{U_{(nn')}}( h^{L_{j}}_{(j)})|_{W} \cdot (\psi'_{(n')})^{\ast}_{\hat{W}_{(n')}}( \bby^{L_{j}}_{(j)})|_{W} \in \J'_{(n')}(W).
\end{equation}
If we choose $f := \bby^{K_{j}}_{(j)}$ for each $j \in \Z$ and $K_{j} \in \{1,\dots,n_{j}-s_{j} \}$, this proves that any element of the finite generating set of $\J'_{(n)}(W)$ lies in $\J'_{(n')}(W)$. Hence $\J'_{(n)}(W) \subseteq \J'_{(n')}(W)$. By replacing the role of $n$ and $n'$, we can prove the converse inclusion. As $W \in \Op(V_{(nn')})$ was arbitrary, the statement of the lemma follows. 
\end{proof}
This proves that the both submanifolds are equivalent and we obtain the unique graded smooth diffeomorphism (\ref{eq_phinnprime}) satisfying (\ref{eq_phinnprimeequation}). Since both $\ul{\iota_{(n)}}: N'_{(n)} \rightarrow V_{(n)}$ and $\ul{\iota_{(n')}}: N'_{(n')} \rightarrow V_{(n')}$ are just the inclusions of subsets, it follows that $\ul{\phi_{(nn')}} = \1_{V_{(nn')}}$. Finally, the cocycle condition (\ref{eq_phinnprimecocycle}) follows immediately from the uniqueness assertion in the definition of equivalent embedded submanifolds. 

By Proposition \ref{tvrz_gmcollation1}, there is thus a graded smooth manifold $\cN' = (N',\C^{\infty}_{\cN'})$ together with a collection $\{ \lambda_{(n)} \}_{n \in N'}$, where each $\lambda_{(n)}: \cN'_{(n)} \rightarrow \cN'|_{N'_{(n)}}$ is a graded diffeomorphism such that $\ul{\lambda_{(n)}} = \1_{N'_{(n)}}$ and for all $(n,n') \in N'^{2}$, one has $\lambda_{(n)}|_{N'_{(nn')}} \circ \phi'_{(nn')} = \lambda_{(n')}|_{N'_{(nn')}}$. 

\item \textbf{Embedding and the pullback diagram}: 
Let us now construct the embedding $\iota': \cN' \rightarrow \cN$. Recall that for each $n \in N'$, we have constructed the embedding $\iota'_{(n)}: \cN'_{(n)} \rightarrow \cN|_{V_{(n)}}$. It can be viewed as a graded smooth map $\iota'_{(n)}: \cN'_{(n)} \rightarrow \cN$. Define a graded smooth map
\begin{equation}
\iota'_{[n]} := \iota'_{(n)} \circ \lambda_{(n)}^{-1}:  \cN'|_{N'_{(n)}} \rightarrow \cN
\end{equation}
For each $(n,n') \in N'^{2}$, one can use (\ref{eq_phinnprimeequation}) to show that $\iota'_{[n]}|_{N'_{(nn')}} = \iota'_{[n']}|_{N'_{(nn')}}$. By Proposition \ref{tvrz_gLRSgluing}, there thus exists a unique graded smooth map $\iota': \cN' \rightarrow \cN$, such that $\iota'|_{N'_{(n)}} = \iota'_{[n]}$. Observe that $\ul{\iota}': N' \rightarrow N$ is just the inclusion of a subset $N' = \ul{\phi}^{-1}(S)$. It is obvious that $\iota'$ is an immersion. Moreover, for each $n \in N'$, we have $V_{(n)} \in \Op_{n}(N)$, such that $V_{(n)} \cap N'$ is a regular level set of a submersion $\ul{\psi'_{(n)}}: V_{(n)} \rightarrow \hat{W}^{n_{0} - s_{0}}$. It is a well-known fact that in this case $(N', \ul{\iota}')$ is an embedded submanifold of $N$, see e.g. Proposition 5.16 in \cite{lee2012introduction}.  

We conclude that $(\cN', \iota')$ is an embedded submanifold of $\cN$. 

The next step is to construct a graded smooth map $\phi': \cN' \rightarrow \cS$, such that $\phi \circ \iota' = \iota \circ \phi'$. We will use Proposition \ref{tvrz_weakembedd} to do so. We consider a graded smooth map $\phi \circ \iota': \cN' \rightarrow \M$. Clearly $\ul{\phi \circ \iota'}(N') \subseteq S$. It suffices to prove that $(\phi \circ \iota')^{\ast}( \J_{\cS}) = 0$. By Remark \ref{rem_weakweakembedd}, it suffices to find $U \in \Op(M)$, such that $S \subseteq U$ and $(\phi \circ \iota')^{\ast}_{U}(\J_{\cS}(U)) = 0$. Let $U := \cup_{n \in N'} U_{(n)}$ and consider $f \in \J_{\cS}(U)$. Then for each $n \in N'$, one has 
\begin{equation}
(\phi \circ \iota')^{\ast}_{U}(f)|_{N'_{(n)}} = \iota'^{\ast}_{V_{(n)}}( \phi^{\ast}_{U_{(n)}}( f|_{U_{(n)}})).
\end{equation}
But $\J_{\cS}(U_{(n)})$ is generated by a finite graded set $\{ \{ (\psi_{(n)})^{\ast}_{\hat{W}_{(n)}}( \bby^{K_{j}}_{(j)}) \}_{K_{j}=1}^{n_{j} - s_{j}} \}_{j \in \Z}$.  Hence
\begin{equation}
\phi^{\ast}_{U_{(n)}}(f|_{U_{(n)}}) = \sum_{j \in \Z} \sum_{K_{j}=1}^{n_{j}-s_{j}} \phi^{\ast}_{U_{(n)}}(h^{K_{j}}_{(j)}) \cdot (\psi'_{(n)})^{\ast}_{\hat{W}_{(n)}}( \bby^{K_{j}}_{(j)}) \in \J'_{(n)}(V_{(n)}),
\end{equation}
where $h^{K_{j}}_{(j)} \in \C^{\infty}_{\M}(U_{(n)})$ are some graded smooth functions. One has 
\begin{equation}
\ker( \iota'^{\ast}_{V_{(n)}}) = \ker( (\iota'_{[n]})^{\ast}_{V_{(n)}}) = \ker( (\iota'_{(n)})^{\ast}_{V_{(n)}}) = \J'_{(n)}(V_{(n)}).
\end{equation}
But this proves that $(\phi \circ \iota')^{\ast}_{U}(f)|_{N'_{(n)}} = 0$ for any $n \in N'$. As $\{ N'_{(n)} \}_{n \in N'}$ covers $N'$, this proves that $(\phi \circ \iota')^{\ast}_{U}( \J_{\cS}(U)) = 0$. We thus obtain a graded smooth map $\phi': \cN' \rightarrow \cN$ fitting into the commutative diagram (\ref{eq_inverseimagediagrampullback}). 

\item \textbf{Universality of the pullback}: Let us now prove that the pullback diagram (\ref{eq_inverseimagediagrampullback}) has the universal property. To any graded manifold $\Q$ and a pair of graded smooth maps $\mu: \Q \rightarrow \cS$ and $\chi: \Q \rightarrow \cN$ satisfying $\phi \circ \mu = \iota \circ \chi$, we must find a unique graded smooth map $\varphi: \Q \rightarrow \cN'$, such that $\phi' \circ \varphi = \chi$ and $\iota' \circ \varphi = \mu$. 

Since $(\cN',\iota')$ is an embedded submanifold, we will employ Proposition \ref{tvrz_weakembedd} to construct $\varphi$. As $\phi \circ \mu = \iota \circ \chi$, we see that $\ul{\mu}(Q) \subseteq N'$. By Remark \ref{rem_weakweakembedd}, it suffices to find $V \in \Op(N)$, such that $N' \subseteq V$ and $\mu^{\ast}_{V}( \J_{\cN'}(V)) = 0$. Let $V := \cup_{n \in N'} V_{(n)}$. Let $f \in \J_{\cN'}(V)$. For any $n \in N'$, one has 
\begin{equation}
\mu^{\ast}_{V}(f)|_{\ul{\mu}^{-1}(V_{(n)})} = \mu^{\ast}_{V_{(n)}}( f|_{V_{(n)}}).
\end{equation}
But $\J_{\cN'}(V_{(n)}) = \J'_{(n)}(V_{(n)})$ is generated by a finite graded set $\{ \{ (\psi'_{(n)})^{\ast}_{\hat{W}_{(n)}}(\bby^{K_{j}}_{(j)}) \}_{K_{j}=1}^{n_{j}-s_{j}} \}_{j \in \Z}$. There thus exist graded smooth functions $h^{K_{j}}_{(j)} \in \C^{\infty}_{\cN}(V_{(n)})$, such that 
\begin{equation}
\begin{split}
\mu^{\ast}_{V_{(n)}}(f|_{V_{(n)}}) = & \ \sum_{j \in \Z} \sum_{K_{j}=1}^{n_{j} - s_{j}} \mu^{\ast}_{V_{(n)}}(h^{K_{j}}_{(j)}) \cdot (\mu^{\ast}_{V_{(n)}} \circ \phi^{\ast}_{U_{(n)}})( (\psi_{(n)})^{\ast}_{\hat{W}_{(n)}}( \bby^{K_{j}}_{(j)})) \\
= & \ \sum_{j \in \Z} \sum_{K_{j}=1}^{n_{j} - s_{j}} \mu^{\ast}_{V_{(n)}}(h^{K_{j}}_{(j)}) \cdot (\chi^{\ast}_{U_{(n)} \cap S} \circ \iota^{\ast}_{U_{(n)}})( (\psi_{(n)})^{\ast}_{\hat{W}_{(n)}}( \bby^{K_{j}}_{(j)})) = 0,
\end{split}
\end{equation}
where we have used the fact that $(\psi_{(n)})^{\ast}_{\hat{W}_{(n)}}( \bby^{K_{j}}_{(j)}) \in \J_{\cS}(U_{(n)}) = \ker(\iota^{\ast}_{U_{(n)}})$ for all $j \in \Z$ and $K_{j} \in \{1,\dots,n_{j}-s_{j}\}$. Since $\{ \ul{\mu}^{-1}(V_{(n)}) \}_{n \in N'}$ covers $\ul{\mu}^{-1}(V)$ and $f \in \J_{\cN'}(V)$ was arbitrary, this proves that that $\mu^{\ast}_{V}(\J_{\cN'}(V)) = 0$. There is thus a unique graded smooth map $\varphi: \Q \rightarrow \cN'$, such that $\iota' \circ \varphi = \mu$. The other condition $\phi' \circ \varphi = \chi$ now follows automatically from the fact that $(\cS,\iota)$ is an embedded submanifold of $\M$, using the same arguments as in the part (c) of the proof of Theorem \ref{thm_inverseimage}. Hence (\ref{eq_inverseimagediagrampullback}) is a pullback square in $\gMan^{\infty}$. 

\item \textbf{Tangent space}: It remains to prove the formula (\ref{eq_inversetangent}). 

But that is trivial, as for any $n \in N'$, one has 
\begin{equation}
\begin{split}
T_{n} \cN' = & \ T_{n} \cN'_{(n)} = (T_{n} \psi'_{(n)})^{-1}( T_{0} \hat{W}^{(n_{j}-s_{j})}) = (T_{n}\phi)^{-1}( (T_{\ul{\phi}(n)} \psi_{(n)})^{-1}(  T_{0} \hat{W}^{(n_{j}-s_{j})})) \\
= & \ (T_{n}\phi)^{-1}( T_{\ul{\phi}(n)} \cS).
\end{split}
\end{equation}
We have used the fact that $\cN'_{(n)} = \psi'^{-1}_{(n)}(0)$ and $\cS|_{U_{(n)} \cap S}$ is equivalent to $\psi_{(n)}^{-1}(0)$, so one can employ the formula (\ref{eq_inversetangent}) for inverse images of closed embedded submanifolds. 
\end{enumerate}
The proof is now finished.
\end{proof}

\end{document}